\newtheorem{thm}{Theorem}[section]
\newtheorem{cor}[thm]{Corollary}
\newtheorem{lem}[thm]{Lemma}
\newtheorem{prop}[thm]{Proposition}
\theoremstyle{definition}
\newtheorem{defn}[thm]{Definition}
\newtheorem{prob}[thm]{Problem}
\newtheorem{cryprob}[thm]{Cryptographic problem}
\newtheorem{comp}[thm]{Computation}
\newtheorem{algo}[thm]{Algorithm}
\newtheorem{crypt}[thm]{Cryptosystem}
\newtheorem{conj}[thm]{Conjecture}
\newtheorem{exam}[thm]{Example}
\theoremstyle{remark}
\newtheorem{rem}[thm]{Remark}
\begin{document}

\title[Generalizations of Laver tables]{Generalizations of Laver tables}
\author{Joseph Van Name}
\email{jvanname@mail.usf.edu}%

\subjclass{03E55,08A99,08C99}
\keywords{Laver table, self-distributive algebra, rank-into-rank embedding, large cardinals}

% ----------------------------------------------------------------
\begin{abstract}
We shall generalize the notion of a Laver table to algebras which may have many generators, several fundamental operations, fundamental operations of arity higher than 2, and to algebras where only some of the operations are self-distributive or where the operations satisfy a generalized version of self-distributivity. These algebras mimic the algebras of rank-into-rank embeddings $\mathcal{E}_{\lambda}/\equiv^{\gamma}$ in the sense that composition and the notion of a critical point make sense for these sorts of algebras.
\end{abstract}
\maketitle

\tableofcontents

% ----------------------------------------------------------------
\section{Introduction}
In this monograph, we shall generalize the notion of a Laver table to several broad classes of self-distributive structures including multigenic Laver tables (which can have arbitrary number of generators), permutative LD-systems, Laver-like LD-systems, partially endomorphic Laver tables (which can have arity and non-distributive operations), twistedly endomorphic Laver tables, Laver-like partially endomorphic algebras and other classes of structures. These structures mimic the self-distributive algebraic structures that one obtains from the quotient algebras $\mathcal{E}_{\lambda}/\equiv^{\gamma}$ of elementary embeddings in the following ways:

\begin{enumerate}
\item Each algebra $\mathcal{E}_{\lambda}/\equiv^{\gamma}$ is a Laver-like LD-system.

\item The notion of a critical point arises naturally in the theory of generalizations of Laver tables. Furthermore, the notion of a critical point is essential to the theory of these generalizations of Laver tables. From the notion of a critical point, one can algebraize the notion of the congruence $\equiv^{\gamma}$ on algebras of elementary embeddings whenever $\gamma$ is a critical point.

\item These algebras may be endowed with a composition operation that mimics the composition of elementary embeddings.

\item The entire algebra $\mathcal{E}_{\lambda}$ may be generalized to an algebraic context as well since $\mathcal{E}_{\lambda}$ is isomorphic to a dense $G_{\delta}$ subset of the inverse limit of topological spaces $\varprojlim_{\gamma}\mathcal{E}_{\lambda}/\equiv^{\gamma}$ where each $\mathcal{E}_{\lambda}/\equiv^{\gamma}$ is given the discrete topology. In this paper, we have used the algebraization of $\mathcal{E}_{\lambda}$ to show that the inverse limits of multigenic Laver tables contain many free sub-LD-systems.
\end{enumerate}

The multigenic Laver tables are self-distributive algebras which extend the double inductive construction of the classical Laver tables to algebras with an arbitrary number of generators. Similarly, the (partially) endomorphic Laver tables, and twistedly endomorphic Laver tables extend the double inductive construction of the classical Laver table to structures of arbitrary arity which satisfy various notions of distributivity. The Laver-like LD-systems are precisely the self-distributive algebras which satisfy the Laver-Steel Theorem (theorems \ref{j3rti90} and \ref{235wfe}) for $\mathcal{E}_{\lambda}/\equiv^{\gamma}$ while the permutative LD-systems are only required to satisfy a consequence of the Laver-Steel Theorem.

Most of the main results about the classical Laver tables extend to the generalizations of Laver tables. For example, under strong large cardinal hypotheses, the free LD-systems on an arbitrary number of generators can be embedded into inverse limits of multigenic Laver tables of the form $(A^{\leq 2^{n}})^{+}$. Furthermore, the algebras $(A^{\leq 2^{n}})^{+}$, although easy to define, have a great amount of combinatorial complexity, and all of the
combinatorial complexity of the algebras $(A^{\leq 2^{n}})^{+}$ is contained in the classical Laver tables and the final matrix which is a large portion of the Sierpinski triangle. Many patterns appear in the algebras $(A^{\leq 2^{n}})^{+}$, but most of these patterns have not rigorously proven or otherwise explained. As the 2-cocycle and 3-cocycle groups have been computed in \cite{DL}, the 2-cocycle groups of the finite reduced Laver-like LD-systems have also been calculated, and we expect for other cocycle groups of generalizations of Laver tables to be calculable as well.

We hope and expect for our investigations into the generalizations of Laver tables to pave the way towards a much greater understanding of the algebras of elementary embeddings and towards many results about finite and countable structures which can only be proven using strong large cardinal hypotheses. In section \ref{t4i80t4qwkao2}, we have began to use large cardinals to prove results about finite algebras which do not have any known ZFC proof.

$\textbf{A guide to reading this monograph:}$ This monograph is the first work that studies the generalizations of the notion of a Laver tables. Furthermore, we intend for this work to be the main reference for the study of generalizations of Laver tables and algebras of rank-into-rank embeddings for at least several years from publication. We shall make this work self-contained and accessible to all mathematicians except for Section \ref{t4i80t4qwkao2} (and to a lesser extent Section \ref{4th2u1xczma9t4gswa}) which requires some previous knowledge of set theory. While this work is mostly accessible to non-set theorists, the intuition behind the salient notions such as that of a critical point and the composition operation in reduced permutative LD-systems arise from the rank-into-rank embeddings. For an introduction to the algebras of rank-into-rank elementary embeddings accessible to people without much of a set-theoretic background, one is referred to Chapter 12 of \cite{D}. For a similarly developed introduction to the algebras of elementary embeddings for set theorists, one is referred to Chapter 11 of \cite{FK}. For a general treatment of large cardinals, see \cite{K}.

The study of the algebras of elementary embeddings is currently mostly reducible to algebra with little pure set theory involved. Therefore, the study of the algebras of elementary embeddings is currently accessible to set-theorists and non-set theorists alike. On the other hand, the theory of the algebras of elementary embeddings is still at a rather primitive stage of development. We expect that in the future, as the theory of the algebras of elementary embeddings grows more advanced, mathematicians would largely investigate the algebras of elementary embeddings in forcing extensions.

Many results in this monograph are proven using a descending-ascending double induction. Furthermore, in order to develop endomorphic Laver tables to sufficient generality, the proofs of the results on endomorphic Laver tables are by a transfinite induction. Since the notion of a Laver table shall be generalized to a universal algebraic setting, we will need to use the language of universal algebra in this monograph even though we shall not use the deep results of universal algebra in this paper. In this monograph, the theory of the generalizations of the Laver tables where there is a single binary operation shall be developed to a much deeper extent than the generalizations of the Laver tables where there are operations of higher arity. 
Chapters 6 and above may be read without having read chapters 4 and 5 (which cover endomorphic algebras).

% ----------------------------------------------------------------
\subsection{Self-distributivity from elementary embeddings}
\label{4th2u1xczma9t4gswa}
An LD-system is an algebra $(X,*)$ that satisfies the identity $x*(y*z)=(x*y)*(x*z)$. Most interesting LD-systems are not commutative nor are they associative. LD-systems appear in topology in the study of knots and braids \cite{J} and in set theory among the very large cardinals \cite{L95}. LD-systems have also been studied out of a purely algebraic interest \cite{TK}.
The notion of an LD-system is algebraically motivated by the fact that LD-systems have many inner endomorphisms. If $*$ is a binary operation on a set $X$ and $a\in X$, then define $L_{*,a}:X\rightarrow X$ by letting $L_{*,a}(x)=a*x$. Then clearly $(X,*)$ is an LD-system if and only if each function $L_{*,a}$ is an endomorphism (the mapping $L_{*,a}$ shall be called an inner endomorphism). If each function $L_{*,a}$ is an automorphism, then we shall call the algebra $(X,*)$ a \index{rack}\emph{rack}, and a rack that satisfies the identity $x*x=x$ is called a \index{quandle}\emph{quandle}. The quandles and also racks are prominent in knot theory since tame knots can be completely characterized by their knot quandles. Furthermore, braid groups act on tuples from racks, so racks and quandles give braid invariants as well. Racks never contain free LD-subsystems since racks satisfy the identity $(x*x)*y=x*y$ which is not satisfied by free LD-systems even on one generator. The infinite strand braid group $B_{\infty}$ may be endowed with a self-distributive operation $*$ (known as shifted conjugacy) so that every element $x\in B_{\infty}$ freely generates a sub-LD-system of $(B_{\infty},*)$  \cite{D94}. The only generalizations of Laver tables which are also racks are the trivial racks. While the inner endomorphisms on racks are always bijective, the inner endomorphisms on generalizations on Laver tables are either non-surjective or the identity function. There is currently no known application of generalizations of Laver tables to low dimensional topology.

In set theory, the classical Laver tables form a sequence of finite LD-systems which were discovered by Richard Laver in the late 1980's in his investigation of algebras of rank-into-rank embeddings \cite{L95}. The existence of a rank-into-rank embedding is among the strongest large cardinal axioms, and the classical Laver tables and their generalizations are the only known finite structures which arise from large cardinals and whose properties are investigated using large cardinals. The classical Laver tables are also of a purely algebraic interest since they are used to classify all LD-systems generated by a single element \cite{AD97},\cite{AD97+}.

An uncountable cardinal $\kappa$ is said to be \index{inaccessible cardinal}\emph{inaccessible} if $2^{\lambda}<\kappa$ whenever $\lambda<\kappa$ and $\sum_{i\in I}\lambda_{i}<\kappa$ whenever $|I|<\kappa$ and $\lambda<\kappa$. If $\kappa$ is inaccessible cardinal, then $V_{\kappa}$ is a model of ZFC with no inaccessible cardinals. Therefore, since there is a model of ZFC with no inaccessible cardinals, we conclude that the existence of an inaccessible cardinal cannot be proven in ZFC. The most noticeable feature of inaccessible cardinals is their immense size. The notion of inaccessibility is said to be a \index{large cardinal}\emph{large cardinal} notion and is among the weakest of all large cardinal notions.

Let $V$ be the class of all sets, and suppose that $M$ is a subclass of $V$. Then we say that $M$ is transitive if $y\in x,x\in M$ implies that $y\in M$ as well. If $j$ is an elementary embedding between two transitive classes, then $j(\alpha)\geq\alpha$ for each ordinal $\alpha$. If $j$ is an elementary embedding from one transitive class to another transitive class, then define $\mathrm{crit}(j)$ to be the least ordinal $\kappa$ with $j(\kappa)\neq\kappa$. A cardinal $\kappa$ is said to be \emph{measurable}\index{measurable} if there exists some transitive class $M$ and elementary embedding $j:(V,\in)\rightarrow(M,\in)$ with $\mathrm{crit}(j)=\kappa$. Every measurable cardinal is inaccessible, and there are many inaccessible cardinals below the first measurable cardinal. There are even many named large cardinal notions between inaccessibility and measurability, but there are also many large cardinal notions stronger than measurability as well. By imposing conditions on the elementary embedding $j:V\rightarrow M$, we obtain the following large cardinal axioms which are stronger than measurability.

Suppose that $j:V\rightarrow M$ is an elementary embedding where $M$ is a transitive class and $\kappa=\mathrm{crit}(j)$. Then
\begin{enumerate}
\item if $\lambda>\kappa$ and $V_{\lambda}\subseteq M$, then $j$ is said to be a $\lambda$-strong embedding and
$j$ is said to be a \index{$\lambda$-strong} $\lambda$-strong cardinal,

\item if $V_{j(\kappa)}\subseteq M$, then $j$ is said to be a \index{superstrong}\emph{superstrong embedding} and $\kappa$ is said to be a \emph{superstrong cardinal}, and

\item if $\lambda>\kappa$ and $M^{\lambda}\subseteq M$, then $j$ is said to be a \index{$\lambda$-supercompact}\emph{$\lambda$-supercompact embedding} and
$\lambda$ is said to be a \emph{$\lambda$-supercompact cardinal}.
\end{enumerate}

Take note that strongness, superstrongness, and supercompactness impose conditions on the model $M$ that state that the model $M$ is in some sense close to $V$. Given an elementary embedding $j:V\rightarrow M$, as $M$ becomes closer to $V$, the cardinal $\mathrm{crit}(j)$ satisfies stronger large cardinal axioms. Reindhart proposed the existence of an elementary embedding $j:V\rightarrow V$ as a nearly ultimate large cardinal axiom since $M=V$ is the strongest condition one can impose on $M$. However, this nearly ultimate large cardinal axiom is inconsistent with ZFC.

\begin{thm}
\index{Kunen inconsistency}(Kunen inconsistency) There is no non-trivial elementary embedding $j:V\rightarrow V$.
Furthermore, if $j:V_{\alpha}\rightarrow V_{\alpha}$ is a non-trivial elementary embedding, $\mathrm{crit}(j)=\kappa$, and $\lambda=\sup(j^{n}(\kappa))$, then $\alpha=\lambda$ or $\alpha=\lambda+1$.
\end{thm}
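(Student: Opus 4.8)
The plan is to run the classical argument of Erd\H{o}s--Hajnal and Kunen, reducing everything to one combinatorial input. Call a map $f\colon[\lambda]^{<\omega}\to\lambda$ an \emph{$\omega$-J\'onsson function for $\lambda$} if $f''[A]^{<\omega}=\lambda$ for every $A\subseteq\lambda$ with $|A|=\lambda$; the theorem of Erd\H{o}s and Hajnal, provable in ZFC, is that such an $f$ exists for every infinite cardinal $\lambda$. I would fix such an $f$ for the cardinal $\lambda=\sup_{n<\omega}j^{n}(\kappa)$. Two elementary observations will be used repeatedly: $f$ is a set of pairs $(s,\beta)$ with $s\in[\lambda]^{<\omega}$ and $\beta<\lambda$, so $f\subseteq V_{\lambda}$ and $f\in V_{\lambda+1}$; and the assertion ``$g$ is an $\omega$-J\'onsson function for $\lambda$'' has all of its quantifiers bounded by $V_{\lambda+1}$, hence is absolute between $V_{\beta}$ and $V$ for every $\beta\ge\lambda+2$.

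The easy half is the lower bound $\alpha\ge\lambda$. If $j\colon V_{\alpha}\to V_{\alpha}$ is nontrivial and elementary with $\mathrm{crit}(j)=\kappa$, then $\kappa<j(\kappa)$, and since $j$ is strictly increasing on ordinals, $j^{n}(\kappa)<j^{n+1}(\kappa)$ for all $n$; every $j^{n}(\kappa)$ is an ordinal of $V_{\alpha}$, hence $<\alpha$, so $\lambda=\sup_{n}j^{n}(\kappa)\le\alpha$. Standard arguments then give $j(\lambda)=\lambda$, and therefore $j$ fixes $\lambda+1$ and $\lambda+2$ and satisfies $j(V_{\lambda})=V_{\lambda}$, $j(V_{\lambda+1})=V_{\lambda+1}$ (this last information is only needed, and only makes sense, once we are in the case $\alpha\ge\lambda+1$).

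The core step is to show $\alpha<\lambda+2$; I would argue by contradiction, assuming $\alpha\ge\lambda+2$. Then $f\in V_{\lambda+1}\subseteq V_{\alpha}$, and by the absoluteness observation $V_{\alpha}$ thinks ``$f$ is an $\omega$-J\'onsson function for $\lambda$''. Applying $j$ and using $j(\lambda)=\lambda$, $V_{\alpha}$ thinks ``$j(f)$ is an $\omega$-J\'onsson function for $\lambda$'', and by absoluteness again this is genuinely true in $V$. Now set $A=j''\lambda$; since $j$ is injective, $|A|=\lambda$. Any finite $s\subseteq A$ has the form $s=\{j(\gamma_{1}),\dots,j(\gamma_{m})\}=j(t)$ with $t=\{\gamma_{1},\dots,\gamma_{m}\}\in[\lambda]^{<\omega}$, so $j(f)(s)=j(f)(j(t))=j(f(t))\in j''\lambda=A$; hence $j(f)''[A]^{<\omega}\subseteq A$. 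But $\kappa\notin A$, because $\kappa=j(\beta)$ would force $\beta=j(\beta)=\kappa$ if $\beta<\kappa$ and $j(\beta)\ge j(\kappa)>\kappa$ if $\beta\ge\kappa$. Thus $A\ne\lambda$, contradicting that $j(f)$ is an $\omega$-J\'onsson function. This yields $\lambda\le\alpha\le\lambda+1$, which is the ``furthermore'' clause.

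Finally, for the nonexistence of a nontrivial $j\colon V\to V$: given such a $j$, with $\kappa,\lambda$ as before, one checks that $j$ maps $V_{\lambda+2}$ into itself (ranks $\le\lambda+1$ go to ranks $\le\lambda+1$, as $j$ fixes $\lambda$ and $\lambda+1$) and that $j\restriction V_{\lambda+2}\colon V_{\lambda+2}\to V_{\lambda+2}$ is elementary (apply $j$ to ``$V_{\lambda+2}\models\varphi(a)$'' and use $j(V_{\lambda+2})=V_{\lambda+2}$) and nontrivial; this contradicts the clause just proved. Alternatively, since $j$ is elementary on all of $V$ there is no need for the reflection bookkeeping: take $f$ as above, note $j(f)$ is an $\omega$-J\'onsson function for $\lambda$ directly by elementarity, and derive the same contradiction from $A=j''\lambda$. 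I expect the only real delicacy to be the level-by-level bookkeeping in the $V_{\alpha}$ case --- that an $\omega$-J\'onsson function already lives in $V_{\lambda+1}$, that being one is absolute for $V_{\lambda+2}$, and that $j$ fixes $\lambda,\lambda+1,\lambda+2$; the combinatorial content is entirely encapsulated in the Erd\H{o}s--Hajnal theorem (an alternative packaging uses Solovay's theorem on splitting stationary subsets of $\lambda^{+}$, as in Woodin's proof). Note that the theorem asserts only the upper bound $\alpha\le\lambda+1$, not that embeddings $V_{\lambda}\to V_{\lambda}$ or $V_{\lambda+1}\to V_{\lambda+1}$ exist.
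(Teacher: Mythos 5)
The paper states this theorem as background in the introductory section and does not supply a proof, so there is no argument of the paper to compare against; I am assessing your proposal on its own.

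Your argument is the standard Erd\H{o}s--Hajnal proof of Kunen's theorem, and it is correct. The lower bound $\lambda\le\alpha$, the choice of $A=j''\lambda$, the computation $j(f)(j(t))=j(f(t))$, the observation $\kappa\notin A$, and the absoluteness bookkeeping all check out; and the derivation of the $V\to V$ case from the $V_\alpha\to V_\alpha$ clause by restriction to $V_{\lambda+2}$ (or directly, as you note) is fine. The one place you compress into ``standard arguments'' is $j(\lambda)=\lambda$; this is worth a sentence in a polished write-up. In the contradiction case $\alpha\ge\lambda+2$, the critical sequence $s=(j^n(\kappa))_{n<\omega}$ has rank $\lambda$, hence $s\in V_{\lambda+1}\subseteq V_\alpha$, and elementarity applied to the statement ``$\lambda=\sup(\mathrm{ran}\,s)$'' gives $j(\lambda)=\sup_n j^{n+1}(\kappa)=\lambda$; alternatively, when $\alpha=\lambda+1$ one gets $j(\lambda)=\lambda$ for free because $j(\lambda)$ is an ordinal of $V_{\lambda+1}$, so $j(\lambda)\le\lambda\le j(\lambda)$. (Your parenthetical about $j(V_{\lambda+1})=V_{\lambda+1}$ ``only making sense once $\alpha\ge\lambda+1$'' is slightly off: $V_{\lambda+1}$ itself is an element of $V_\alpha$ only when $\alpha\ge\lambda+2$, which is exactly the regime of the contradiction, so there is no real issue.) Your closing remark about the direction of the inequality --- the theorem gives only the upper bound on $\alpha$, not existence --- is a good observation and worth keeping.
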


We shall call a non-trivial elementary embedding $j:V_{\lambda}\rightarrow V_{\lambda}$ a \index{rank-into-rank}\emph{rank-into-rank embedding} or
an \index{I3}I3 embedding while a non-trivial elementary embedding $j:V_{\lambda+1}\rightarrow V_{\lambda+1}$ is known as an \index{I1}I1 embedding. The critical point of a rank-into-rank embedding shall be called a \emph{rank-into-rank cardinal}. The notion of a rank-into-rank embedding is a slight weakening of the notion of an elementary embedding $j:V\rightarrow V$ since the models $V_{\lambda},V_{\lambda+1}$ are very similar to $V$; the model $V_{\lambda}$ is always a model of ZFC with most kinds of large cardinals including supercompact, strong, and superstrong cardinals. The rank-into-rank cardinals are nearly as close as one can get to the Kunen inconsistency without being inconsistent, and the rank-into-rank cardinals are near the very top of the large cardinal hierarchy in terms of consistency strength.

If $j:V_{\lambda+1}\rightarrow V_{\lambda+1}$ is an I1-embedding, then $j|_{V_{\lambda}}:V_{\lambda}\rightarrow V_{\lambda}$ is an I3 embedding. If there is an I1-embedding $j:V_{\lambda+1}\rightarrow V_{\lambda+1}$, then there is some I3-embedding
$k:V_{\mu}\rightarrow V_{\mu}$ where $\mu<\mathrm{crit}(j)$. The large cardinals axioms between I3 and I1 have been studied in \cite{L97}.

We shall write \index{$\mathcal{E}_{\lambda}$}$\mathcal{E}_{\lambda}$ for the set of all elementary embeddings from $V_{\lambda}$ to $V_{\lambda}$ and we shall write
\index{$\mathcal{E}_{\lambda}^{+}$}$\mathcal{E}_{\lambda}^{+}$ for the set $\mathcal{E}_{\lambda}\setminus\{1_{V_{\lambda}}\}$ of all non-trivial elementary embeddings from
$V_{\lambda}$ to $V_{\lambda}$. Suppose that $j,k\in\mathcal{E}_{\lambda}$. Then for all $\alpha<\lambda$, we have
$k|_{V_{\alpha}}\in V_{\lambda}$, so one can apply $j$ to the truncated elementary embedding $k|_{V_{\alpha}}\in V_{\lambda}$ to obtain $j(k|_{V_{\alpha}})$. By elementarity, $\mathrm{Dom}(j(k|_{V_{\alpha}}))=V_{j(\alpha)}$ and if $\alpha<\beta$, then
$j(k|_{V_{\alpha}})\subseteq j(k|_{V_{\beta}})$. Therefore, $\bigcup_{\alpha<\lambda}j(k|_{V_{\alpha}})$ is a total function from
$V_{\lambda}$ to $V_{\lambda}$.

\begin{prop}
Suppose that $j,k\in\mathcal{E}_{\lambda}$. Then the function $\bigcup_{\alpha<\lambda}j(k|_{V_{\alpha}}):V_{\lambda}\rightarrow V_{\lambda}$ is also an elementary embedding.
\end{prop}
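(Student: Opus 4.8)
The plan is to reduce the elementarity of $i := \bigcup_{\gamma<\lambda} j(k|_{V_\gamma})$ — which by the discussion preceding the proposition is already known to be a total function $V_\lambda\to V_\lambda$ — to the elementarity of $j$ and $k$ on rank‑initial segments, the main tool being the standard fact that truncations of elementary embeddings are elementary. To make that precise I would first internalize satisfaction: since $\lambda$ is a limit ordinal, the satisfaction relation of the structure $(V_\delta,\in)$ is an element of $V_\lambda$ for every $\delta<\lambda$, so there is a formula $\mathrm{Sat}(\delta,e,\bar x)$ such that, for $\delta<\lambda$, $e=\ulcorner\psi\urcorner$, and $\bar x\in V_\delta$, we have $V_\lambda\models\mathrm{Sat}(\delta,e,\bar x)$ if and only if $V_\delta\models\psi(\bar x)$. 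Pushing the fact $V_\lambda\models\mathrm{Sat}(\gamma,\ulcorner\psi\urcorner,\bar b)$ through the elementary embedding $k$ (which fixes the natural number $\ulcorner\psi\urcorner$) and using correctness of $\mathrm{Sat}$ on both sides shows, for every $\gamma<\lambda$, that $k|_{V_\gamma}$ is an elementary embedding of $(V_\gamma,\in)$ into $(V_{k(\gamma)},\in)$; here $k|_{V_\gamma}$ really does map $V_\gamma$ into $V_{k(\gamma)}$ because $\mathrm{rank}(k(x))=k(\mathrm{rank}(x))$.

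Next I would observe that ``being an elementary embedding from $V_\delta$ to $V_{\delta'}$'' is itself first‑order expressible over $V_\lambda$ when $\delta,\delta'<\lambda$: let $\mathrm{Elem}(f,\delta,\delta')$ say that $f$ is a function with $\mathrm{Dom}(f)=V_\delta$ and range contained in $V_{\delta'}$ and that $\mathrm{Sat}(\delta,e,\bar x)\leftrightarrow\mathrm{Sat}(\delta',e,f(\bar x))$ holds for all codes $e$ and all $\bar x\in V_\delta$. Then $V_\lambda\models\mathrm{Elem}(k|_{V_\gamma},\gamma,k(\gamma))$ for all $\gamma<\lambda$, and applying the elementary embedding $j$ yields $V_\lambda\models\mathrm{Elem}(j(k|_{V_\gamma}),j(\gamma),j(k(\gamma)))$. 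Since $j(k|_{V_\gamma})$ has domain $V_{j(\gamma)}$ and the functions $j(k|_{V_\gamma})$ increase with $\gamma$, we get $i|_{V_{j(\gamma)}}=j(k|_{V_\gamma})$; hence for every $\gamma<\lambda$ the map $i$ restricted to $V_{j(\gamma)}$ is an elementary embedding into $V_{j(k(\gamma))}$, and in particular $i[V_{j(\gamma)}]\subseteq V_{j(k(\gamma))}$.

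The only real obstacle is the passage from ``$i$ elementary on each $V_{j(\gamma)}$'' to ``$i$ elementary on $V_\lambda$'': knowing $i|_{V_{j(\gamma)}}\colon V_{j(\gamma)}\to V_{j(k(\gamma))}$ is elementary controls truth in $V_\lambda$ only once we know that $V_{j(\gamma)}$ and $V_{j(k(\gamma))}$ both reflect the formula $\varphi$ being tested, and one cannot simply take $\gamma$ with $j(\gamma)=\gamma$, since an elementary $j\colon V_\lambda\to V_\lambda$ need not be continuous and (when $\lambda=\sup_n j^n(\mathrm{crit}(j))$) has no fixed points above its critical point. To get around this, fix $\varphi(x_1,\dots,x_n)$ and $\bar a\in V_\lambda$; because $\varphi$ has only finitely many subformulas, ``$V_\delta$ and $V_\lambda$ agree on every subformula of $\varphi$ with parameters in $V_\delta$'' is expressed by a single formula $\mathrm{Refl}(\delta)$ over $V_\lambda$ (again via $\mathrm{Sat}$), and by the reflection theorem applied to the limit ordinal $\lambda$ there is a limit ordinal $\gamma<\lambda$ with $\bar a\in V_\gamma$ and $V_\lambda\models\mathrm{Refl}(\gamma)$. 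Applying the elementary embeddings $j$ and $k$ to this last fact yields $V_\lambda\models\mathrm{Refl}(j(\gamma))$ and $V_\lambda\models\mathrm{Refl}(k(\gamma))$, and hence (applying $j$ once more) $V_\lambda\models\mathrm{Refl}(j(k(\gamma)))$. Then, using $\bar a\in V_\gamma\subseteq V_{j(\gamma)}$, $i(\bar a)\in V_{j(k(\gamma))}$, the elementarity of $i|_{V_{j(\gamma)}}$, and the two reflection facts, one chains
\[
V_\lambda\models\varphi(\bar a)\ \Longleftrightarrow\ V_{j(\gamma)}\models\varphi(\bar a)\ \Longleftrightarrow\ V_{j(k(\gamma))}\models\varphi(i(\bar a))\ \Longleftrightarrow\ V_\lambda\models\varphi(i(\bar a)),
\]
which is exactly what must be shown. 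Everything else is routine bookkeeping with ranks and Gödel codes.
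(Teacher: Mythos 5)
Your reduction via an internal $\mathrm{Sat}$-relation to the elementarity of $j(k|_{V_\gamma})\colon V_{j(\gamma)}\to V_{j(k(\gamma))}$ is correct, and you have correctly identified that the real content lies in piecing these together over all of $V_\lambda$. The gap is in how you dispatch that last step: you invoke ``the reflection theorem applied to the limit ordinal $\lambda$,'' but there is no such theorem for an arbitrary limit ordinal. Take $\lambda=\omega+\omega$ and $\varphi$ the sentence ``every ordinal has a successor and there exists a limit ordinal''; then $V_\lambda\models\varphi$, yet no $V_\gamma$ with $\gamma<\lambda$ models $\varphi$ (for successor $\gamma$ the top ordinal has no successor, and $V_\omega$ has no limit ordinal). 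The L\'evy--Montague scheme reflects $V$ using Replacement; it relativizes to $V_\lambda$ only when $V_\lambda$ satisfies enough of ZF to run the Skolem-hull closure, and this is precisely what can fail when $\mathrm{cf}(\lambda)=\omega$ — which is the case of interest here.

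The fix is to use the structure that $\lambda$ is forced to have. If $j=1_{V_\lambda}$ the union is just $k$ and there is nothing to prove, so assume $j$ is non-trivial with $\kappa=\mathrm{crit}(j)$. By the Kunen inconsistency theorem (quoted in the paper), $\lambda=\sup_n\kappa_n$ where $\kappa_n=j^n(\kappa)$. Pushing ``$V_\kappa\models\psi(\bar b)$'' through $j$ (which is the identity on $V_\kappa$) gives $V_\kappa\prec V_{\kappa_1}$, and iterating gives the elementary chain $V_{\kappa_0}\prec V_{\kappa_1}\prec\cdots$, hence $V_{\kappa_n}\prec V_\lambda$ for every $n$. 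This furnishes, for any $\varphi$ and any $\bar a\in V_\lambda$, some $\gamma=\kappa_n$ with $\bar a\in V_\gamma$ and $V_\gamma\prec V_\lambda$ — exactly your $\mathrm{Refl}(\gamma)$ — and the remainder of your argument (transporting $\mathrm{Refl}$ along $j$ and $k$ and chaining the three biconditionals) goes through unchanged. For the record, the paper states this proposition without proof as background, so there is no proof of record to compare against; with the patch above, your route is the standard one.
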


We therefore define an operation $*$ which we shall call application on $\mathcal{E}_{\lambda}$ by letting $j*k=\bigcup_{\alpha<\lambda}j(k|_{V_{\alpha}})$. The algebraic structure $(\mathcal{E}_{\lambda},*,\circ)$ satisfies the following identities:
\begin{enumerate}
\item $j*(k*l)=(j*k)*(j*l)$, \label{4t2hugd3fbhvo}

\item $j\circ k=(j*k)\circ j,j*(k\circ l)=(j*k)\circ (j*l),$ \label{JBNajo34g6ft}

$(j\circ k)*l=j*(k*l),(j\circ k)\circ l=j\circ(k\circ l).$
\end{enumerate}
Laver has shown \cite{D}\cite{L92}\cite{L95}
that if $j\in\mathcal{E}_{\lambda}$ is a non-identity elementary embedding, then the subalgebra of $(\mathcal{E}_{\lambda},*)$ generated by $j$ is freely generated by $j$ with respect to the identity \ref{4t2hugd3fbhvo}, and the subalgebra of $(\mathcal{E}_{\lambda},*,\circ)$ generated by $j$ is freely generated by $j$ with respect to the identities in \ref{4t2hugd3fbhvo} and \ref{JBNajo34g6ft}. Using the algebras of elementary embeddings, Laver around 1989 has proven the following result \cite{L92}.

\begin{thm}
Suppose that there exists a rank-into-rank cardinal. Then the word problem for the free LD-system on one generator is decidable.

\label{t342hu234r}
\end{thm}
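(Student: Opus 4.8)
The plan is to follow Laver's original approach \cite{L92}: realize the free LD-system $F$ on one generator concretely inside $(\mathcal{E}_\lambda,*)$, and then separate distinct words of $F$ by evaluating them in the classical Laver tables — explicitly computable finite LD-systems — with the rank-into-rank hypothesis supplying the one non-effective ingredient. First I would recall the classical Laver tables: for each $n$, $A_n$ is the LD-system with underlying set $\{1,\dots,2^n\}$, with $a*1=a+1$ for $a<2^n$, $2^n*1=1$, and $a*(b+1)=(a*b)*(a*1)$; a descending--ascending double induction shows this is well-defined and self-distributive, and the sequence $(A_n)_n$ is uniformly computable. Then I would run two semi-decision procedures in parallel on an input pair of words $w_1,w_2$ over one variable $x$. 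The first searches for a derivation of $w_1=w_2$ from the self-distributive law; by Birkhoff completeness of equational logic it halts precisely when $w_1=w_2$ holds in $F$. The second evaluates $w_1$ and $w_2$ at every element $b$ of every $A_n$ (for $n=1,2,3,\dots$), i.e. computes $w_i(b)$ with $x$ set to $b$, and halts with output ``$w_1\neq w_2$'' as soon as $w_1(b)\neq w_2(b)$ for some $n$ and some $b\in A_n$. Dovetailing these yields a decision procedure provided I can show that whenever $w_1\neq w_2$ in $F$, the second procedure halts.

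That implication is where the rank-into-rank embedding is used. Fix such an embedding $j:V_\lambda\to V_\lambda$ and let $\kappa_0=\mathrm{crit}(j)$, $\kappa_{n+1}=j(\kappa_n)$, so $\lambda=\sup_n\kappa_n$. By Laver's freeness theorem (quoted above), $x\mapsto j$ induces an isomorphism $F\cong\langle j\rangle$, the sub-LD-system of $(\mathcal{E}_\lambda,*)$ generated by $j$; hence $w_1\neq w_2$ in $F$ iff $w_1(j)\neq w_2(j)$ as functions on $V_\lambda$. For each $n$ put $p\equiv^{\kappa_n}q$ iff $p|_{V_{\kappa_n}}=q|_{V_{\kappa_n}}$. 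Since $\kappa_n$ is a limit ordinal, for $a\in V_{\kappa_n}$ one has $(p*q)(a)=(p(q|_{V_\alpha}))(a)$ for some $\alpha<\kappa_n$ with $q|_{V_\alpha}\in V_{\kappa_n}$, so $(p*q)|_{V_{\kappa_n}}$ depends only on $p|_{V_{\kappa_n}}$ and $q|_{V_{\kappa_n}}$; thus $\equiv^{\kappa_n}$ is a $*$-congruence on $\langle j\rangle$, the quotient $Q_n=\langle j\rangle/\equiv^{\kappa_n}$ is a monogenic LD-system, and there is a surjection $F\twoheadrightarrow Q_n$. Now if $w_1(j)\neq w_2(j)$ they differ at some $a\in V_\lambda$; choosing $n$ with $a\in V_{\kappa_n}$ gives $w_1(j)\not\equiv^{\kappa_n}w_2(j)$, so $w_1$ and $w_2$ have distinct images in $Q_n$. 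The heart of Laver's argument is that $Q_n$ is \emph{finite} — in fact $Q_n\cong A_n$ — the finiteness resting on the Laver--Steel analysis forcing the critical points occurring among elements of $\langle j\rangle$ to lie in $\{\kappa_m:m\in\omega\}$. Granting finiteness, $Q_n$ is isomorphic to some $A_m$ by the classification of finite monogenic LD-systems \cite{AD97}, and transporting the surjection $F\twoheadrightarrow Q_n\cong A_m$ along this isomorphism exhibits a homomorphism $F\to A_m$, necessarily of the form $x\mapsto g$ for some $g\in A_m$, sending $w_1$ and $w_2$ to distinct elements; that is, $w_1(g)\neq w_2(g)$ in $A_m$, which the second procedure detects when it reaches $A_m$ and the element $g$. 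So exactly one of the two procedures halts on every input, and the word problem is decidable.

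The main obstacle is precisely the finiteness (and effective description) of the quotients $Q_n=\langle j\rangle/\equiv^{\kappa_n}$: a priori a monogenic LD-system can be infinite — $F$ itself is one — so one genuinely needs the large-cardinal combinatorics, namely the Laver--Steel bound on the critical points realized in $\langle j\rangle$ together with the double induction identifying $Q_n$ with $A_n$, to know that the images of $w_1$ and $w_2$ land in a finite, computable structure. Everything else is soft: the freeness of $\langle j\rangle$ is quoted, the verification that $\equiv^{\kappa_n}$ is a congruence is a one-line rank computation, and the dovetailing argument is routine once the separation property above is in hand.
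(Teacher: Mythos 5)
The paper does not give a proof of this theorem; it quotes it as Laver's result \cite{L92} and immediately notes that Dehornoy removed the large-cardinal hypothesis. Your argument is a reconstruction of Laver's original proof, and its architecture --- dovetail a search for an equational derivation of $w_1=w_2$ against evaluation in the uniformly computable tables $A_n$, using freeness of $\langle j\rangle$ and the finiteness of the $\equiv$-quotients to guarantee the second leg halts whenever $w_1\neq w_2$ --- is the right one and matches the framework the paper sets up in Theorems~\ref{tw4hu} and~\ref{Pwehjit}.

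There is one concrete slip worth fixing. You declare $p\equiv^{\kappa_n}q$ iff $p|_{V_{\kappa_n}}=q|_{V_{\kappa_n}}$, which is strictly finer than the congruence the paper (and the literature) uses, namely $p(x)\cap V_{\kappa_n}=q(x)\cap V_{\kappa_n}$ for all $x\in V_{\kappa_n}$. Your one-line check that your finer relation respects $*$ --- ``for $a\in V_{\kappa_n}$ one has $(p*q)(a)=p(q|_{V_\alpha})(a)$ for some $\alpha<\kappa_n$ with $q|_{V_\alpha}\in V_{\kappa_n}$'' --- does not go through: you need an $\alpha$ with both $q(\alpha)<\kappa_n$ (so that $q|_{V_\alpha}\in V_{\kappa_n}$) and $p(\alpha)>\mathrm{rank}(a)$ (so that $a\in\mathrm{Dom}\,p(q|_{V_\alpha})$), and when $q$ sends some small ordinal past $\kappa_n$ while $\mathrm{rank}(a)$ is close to $\kappa_n$, no such $\alpha$ exists. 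The intersection with $V_{\kappa_n}$ in the standard definition is exactly what makes this computation close. The repair is to use $\equiv^{\kappa_n}$ as the paper defines it --- it is stated to be a congruence, and the separation property you need (distinct elements of $\langle j\rangle$ are eventually separated as $\gamma\to\lambda$) is immediate for it --- and then invoke Theorem~\ref{Pwehjit} directly for $\langle j\rangle/\equiv^{\mathrm{crit}_m(j)}\cong A_m$, rather than routing through Drapal's classification, which by itself only says every finite monogenic LD-system is a \emph{quotient} of some $A_n$, not that each quotient of $\langle j\rangle$ actually is an $A_m$. Both repairs are local; the overall proof is correct.
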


While Theorem \ref{t342hu234r} was originally proven using strong large cardinal hypotheses, the large cardinal hypotheses from
Theorem \ref{t342hu234r} were soon removed by Dehornoy \cite{D89}.
\begin{thm}
(ZFC) The word problem for the free LD-systems on an arbitrary number of generators is decidable.
\end{thm}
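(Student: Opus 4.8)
Fix a set $X$ of generators; write $T_X$ for the set of terms built from $X$ and the binary symbol $*$, and $\equiv_{LD}$ for the smallest congruence on $T_X$ identifying $u*(v*w)$ with $(u*v)*(u*w)$, so that the free LD-system on $X$ is $T_X/\equiv_{LD}$ and the word problem is: given $s,t\in T_X$, decide $s\equiv_{LD}t$. Any instance involves only finitely many generator symbols, so it suffices to give a procedure uniform in $X$, and indeed $X$ countable suffices. The plan is to show that $\equiv_{LD}$ and its complement are both semidecidable and then invoke Post's theorem (a set that is recursively enumerable with recursively enumerable complement is recursive); in the one-generator case the argument will in fact give decidability directly.

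\textbf{Semidecidability of $\equiv_{LD}$.} Orient self-distributivity as an expansion rule, rewriting any subterm of shape $u*(v*w)$ into $(u*v)*(u*w)$, and let $\rightarrow$ be the resulting rewriting relation on $T_X$. The key lemma is confluence of expansions: any two terms reachable from a common term by $\rightarrow$ are themselves reachable from a common term, equivalently $s\equiv_{LD}t$ iff $s$ and $t$ have a common $\rightarrow$-descendant. This is proved by induction on term complexity, indexing elementary expansions by the binary ``addresses'' at which they occur and checking that elementary steps at disjoint or nested addresses satisfy a local confluence condition modulo further expansion; this is the combinatorics of the monoid of partial transformations of $T_X$ generated by the elementary expansions. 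Granting confluence, a semidecision procedure for $\equiv_{LD}$ is to enumerate in parallel all finite $\rightarrow$-sequences issuing from $s$ and from $t$ and halt when a common term appears.

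\textbf{Semidecidability of the complement.} Here one needs certificates of inequivalence, and this is the step for which Laver used a rank-into-rank cardinal (realizing the free LD-system on one generator as the subalgebra of $(\mathcal{E}_{\lambda},*)$ generated by a non-identity $j$, Theorem \ref{t342hu234r}) and which Dehornoy carried out in ZFC by replacing $\mathcal{E}_{\lambda}$ with a recursively presented structure. For one generator one takes shifted conjugacy on the infinite braid group $B_{\infty}$: with $\mathrm{sh}$ the shift endomorphism $\sigma_i\mapsto\sigma_{i+1}$ and $\beta*\gamma=\beta\cdot\mathrm{sh}(\gamma)\cdot\sigma_1\cdot\mathrm{sh}(\beta)^{-1}$, the operation $*$ is left self-distributive and, as recalled above, every element of $B_{\infty}$ freely generates a sub-LD-system; hence the evaluation map from $T_{\{x\}}$ sending $x$ to a fixed braid and interpreting $*$ as shifted conjugacy factors through $\equiv_{LD}$ and is injective on the quotient, i.e. it embeds the free LD-system on one generator into $B_{\infty}$. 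Since the word problem for $B_{\infty}$ is decidable (each instance lives in some $B_n$, and $B_n$ has solvable word problem uniformly in $n$), inequivalence of one-generator terms reduces to a braid inequality and is decidable. For arbitrarily many generators one extends the realization: Dehornoy's construction exhibits, inside a recursively presented group carrying such a self-distributive product, a family of elements indexed by $X$ that jointly generate a copy of the free LD-system on $X$, and faithfulness reduces $s\not\equiv_{LD}t$ to a decidable inequality in that group. (Equivalently one avoids an explicit model and argues via the left-subterm relation $<_{L}$, where $a<_{L}b$ means $b$ arises from $a$ by finitely many steps $c\mapsto c*d$: one shows $<_{L}$ is semidecidable and that LD-inequivalent terms are, after expansion, always separated by $<_{L}$, the crucial input being the irreflexivity of $<_{L}$, i.e. $a\neq a*b*\cdots$, which is exactly what a faithful order-realization such as the left-invariant ordering of $B_{\infty}$ supplies.)

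\textbf{Conclusion and main obstacle.} With $\equiv_{LD}$ and its complement both recursively enumerable, $\equiv_{LD}$ is recursive, so the word problem is decidable. The substantive content is twofold, and both parts are genuinely hard: first, the confluence of LD-expansions, proved by a delicate induction on the geometry of expansion sequences; and second, the faithfulness of the recursive realization of the free LD-system on an arbitrary number of generators, which is precisely the statement that was first obtained only from a rank-into-rank cardinal and whose ZFC proof by Dehornoy was the origin of the left-invariant order on the braid groups.
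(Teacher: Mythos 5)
The paper does not prove this theorem; it is stated as Dehornoy's ZFC removal of the large-cardinal hypothesis from Laver's one-generator result \cite{D89}, and the only hint of a decision procedure appears later, in the section on vastness and free algebras, where the paper notes that the free LD-system on an arbitrary number of generators computably embeds into the charged braid group and into the Larue group, the latter having decidable word problem --- which yields a direct decision procedure: compute the images of $s$ and $t$ in that group and test group equality. Your proposal gathers the correct raw material --- confluence of LD-expansions and a faithful recursive realization --- but packages it redundantly: the semidecidability-plus-Post route is unnecessary once a computable injective interpretation into a recursively presented group with decidable word problem is in hand, since that alone is a one-pass decision procedure in which the confluence lemma plays no role. (Semidecidability of $\equiv_{LD}$ is in any case automatic for every finitely axiomatized equational theory, by enumerating derivations, so confluence is not what it buys you.) Confluence does real work only on your parenthetical route, Dehornoy's comparison algorithm: there one proves that any two terms become comparable under the iterated left-subterm relation $<_{L}$ after LD-expansion and then appeals to irreflexivity of $<_{L}$; but the irreflexivity is exactly what the group realization supplies, so the two routes are not independent, and that route additionally requires the comparison property (trichotomy after expansion), which your sketch asserts but does not isolate as a separate hard lemma. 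Finally, a caution on the faithful realization itself: shifted conjugacy on $B_{\infty}$ is only known to produce free sub-LD-systems on a single generator; the multi-generator freeness is carried by the charged braid group or by the Larue group, which is precisely what the paper cites, so it is worth naming those rather than asserting vaguely that the one-generator construction extends.
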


% ----------------------------------------------------------------
\subsection{The classical Laver tables from elementary embeddings}

The classical Laver tables are up-to-isomorphism the monogenerated subalgebras of the quotients of $\mathcal{E}_{\lambda}$ modulo some rank. While the large cardinal hypotheses have been removed from Theorem \ref{t342hu234r}, the large cardinal hypotheses have not yet been removed from the result stating that the free LD-system can be embedded into an inverse limit of finite LD-systems.

Suppose now that $\lambda$ is a cardinal and $\gamma<\lambda$ is a limit ordinal. Then define an equivalence relation
$\equiv^{\gamma}$ on $\mathcal{E}_{\lambda}$ by letting $j\equiv^{\gamma}k$ if and only if
$j(x)\cap V_{\gamma}=k(x)\cap V_{\gamma}$ for each $x\in V_{\gamma}$. It is easy to show that
$j\equiv^{\gamma}k$ if and only if $j(x)\cap V_{\gamma}=k(x)\cap V_{\gamma}$ whenever $x\in V_{\lambda}$. The equivalence relation
$\equiv^{\gamma}$ is a congruence on $(\mathcal{E}_{\lambda},*,\circ)$, so one can take the quotient algebra
$\mathcal{E}_{\lambda}/\equiv^{\gamma}$.

If $j\in\mathcal{E}_{\lambda}$ is a non-identity elementary embedding, then define $\mathrm{Iter}(j)$ to be the smallest subset
of $\mathcal{E}_{\lambda}$ closed under application and that contains $j$. Then $\{\mathrm{crit}(k)\mid k\in\mathrm{Iter}(j)\}$ is cofinal in
$\lambda$ of order type $\omega$. Let $\mathrm{crit}_{n}(j)$ denote the $n+1$-th element of the sequence $\{\mathrm{crit}(k)\mid k\in\mathrm{Iter}(j)\}$. The algebras $\mathrm{Iter}(j)/\equiv^{\mathrm{crit}_{n}(j)}$ are finite algebras of cardinality $2^{n}$ and they can be completely described algebraically.

\begin{thm}
Suppose that $n$ is a natural number. Then there exists a unique LD-system $(\{1,\ldots,2^{n}\},*_{n})$ such that
$2^{n}*_{n}1=1$ and $x*_{n}1=x+1$ for $x<2^{n}$.
\label{tw4hu}
\end{thm}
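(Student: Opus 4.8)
The plan is to prove both existence and uniqueness of the table $(\{1,\dots,2^n\},*_n)$ by a simultaneous induction on $n$. For $n=0$ the algebra has one element and the two stated constraints collapse to $1*_0 1=1$, which determines everything. For the inductive step, I would assume that $(\{1,\dots,2^{n-1}\},*_{n-1})$ exists and is unique, and build $*_n$ on $\{1,\dots,2^n\}$ from it.

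For \textbf{uniqueness}, suppose $*_n$ is any LD-operation satisfying $2^n *_n 1 = 1$ and $x *_n 1 = x+1$ for $x<2^n$. The first observation is that the left-translation $L_{*_n,1}$ acts as the cyclic successor on $\{1,\dots,2^n\}$, hence it has order dividing $2^n$; since $(X,*_n)$ is an LD-system, each $L_{*_n,a}$ is an endomorphism, and one shows by self-distributivity that every row is eventually periodic with period a power of $2$. The key algebraic fact (the analogue of the classical ``period of a row divides its period after $2^n$'' argument) is that $a *_n b$ is determined for all $b$ once we know the first row, because $a *_n (b *_n 1) = (a*_n b) *_n (a *_n 1)$ lets us compute row $a$ recursively in terms of itself shifted, and the identity $a *_n 2^n = 2^n$ (which follows from $2^n$ being a fixed point of every $L_{*_n,a}$, provable by distributing $a *_n (2^n *_n 1)$) pins down the base case. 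Thus any two solutions agree, giving uniqueness.

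For \textbf{existence}, the cleanest route is the standard ``double extension'': take two disjoint copies of the already-constructed $A_{n-1}=(\{1,\dots,2^{n-1}\},*_{n-1})$, realize $\{1,\dots,2^n\}$ as $\{1,\dots,2^{n-1}\}$ (the ``top half'' shifted) together with a lower copy, and define $*_n$ by the classical recursion: $2^n *_n y = y$, and for $x<2^n$, $x *_n 1 = x+1$ and $x *_n (y+1) = (x *_n y) *_n (x+1)$, reading indices mod $2^n$ into $\{1,\dots,2^n\}$. One then verifies directly that this is well-defined (the recursion terminates because the ``period'' argument shows the row closes up) and that it satisfies $x*_n(y*_n z)=(x*_n y)*_n(x*_n z)$ for all $x,y,z$, by induction on $z$ using the defining recursion and the inductive self-distributivity of $A_{n-1}$ on the top half. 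The two boundary conditions hold by construction.

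The \textbf{main obstacle} is the existence half, specifically showing that the recursion $x *_n (y+1) = (x *_n y) *_n (x+1)$ actually closes up — i.e., that after $2^n$ steps one returns to $x *_n 1$, so that the operation is single-valued on the cyclic index set $\{1,\dots,2^n\}$ — and then that the resulting table genuinely satisfies full left self-distributivity rather than merely the one-variable instances built into the recursion. This is where the power-of-two structure is essential: one proves by the descending-ascending induction that each row of $A_n$ restricts on the top half to a row of $A_{n-1}$, so periodicity for $A_n$ follows from periodicity for $A_{n-1}$, and self-distributivity lifts along the same decomposition. Everything else (the base case, the boundary conditions, uniqueness) is routine once this structural compatibility between $A_n$ and $A_{n-1}$ is in hand.
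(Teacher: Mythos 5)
Your overall strategy matches the paper's: a descending--ascending double recursion pins down the table, and the projection to $A_{n-1}$ is what makes the inductive step go through. The paper carries this out in the more general framework of pre-multigenic Laver tables (Proposition \ref{t4uh4aP} for well-definedness, Theorem \ref{m43ti0} for the exact criterion for self-distributivity, and the later proposition verifying that criterion for $A_n$ by induction on $n$ via the projection $\pi(x)=(x)_{2^{n-1}}$). Your uniqueness argument is essentially correct.

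However, there are two points in the existence half that would not survive fleshing out as stated. First, the ``closing up'' framing is not the actual obstacle: if you run the recursion non-cyclically (descending on $x$, ascending on $y$), there is no wraparound consistency to check; the hidden lemma you actually need is that $x*_ny>x$ for $x<2^n$, so that the recursive call $(x*_ny)*_n(x+1)$ in the step $x*_n(y+1)=(x*_ny)*_n(x+1)$ only invokes rows already defined in the descending pass. This is the content of Proposition \ref{t4uh4aP}(iv), and it must be proved simultaneously with the definition. Second, ``two disjoint copies of $A_{n-1}$'' is not the right decomposition: the top half $\{2^{n-1}+1,\ldots,2^n\}$ is a subalgebra of $A_n$ isomorphic to $A_{n-1}$ via $L(x)=x+2^{n-1}$, but the bottom half $\{1,\ldots,2^{n-1}\}$ is not closed under $*_n$ and is not a copy; the relevant structure is the surjective homomorphism $\pi:A_n\to A_{n-1}$ together with the section $L$. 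Consequently, the claim that ``self-distributivity lifts along the decomposition'' by a direct induction on $z$ is too optimistic --- what the projection actually buys you, and what the paper uses, is the verification of the single condition $x*_n2^n=2^n$ for all $x$ (equivalently, that the left-identity is a left ideal), which by Theorem \ref{m43ti0} is equivalent to full left self-distributivity of the recursively defined table. Without first isolating that criterion, the induction on $z$ has no leverage.
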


The LD-system mentioned in the above theorem shall be denoted by $A_{n}$ and shall be called the \index{classical Laver table}\emph{$n$-th classical Laver table} or the $2^{n}\times 2^{n}$-classical Laver table. 

Define \index{$j_{[n]}$} $j_{[n]}$ for all natural numbers $n$ by letting $j_{[1]}=j$ and $j_{[n+1]}=j_{[n]}*j$.

\begin{thm}
\begin{enumerate}
\item For all limit ordinals $\gamma<\lambda$ and $j\in\mathcal{E}_{\lambda}^{+}$, there is a unique $m$ such that
for all $k,l\in\mathrm{Iter}(j)$, we have $k\equiv^{\gamma}l$ if and only if $k\equiv^{\mathrm{crit}_{m}(j)}l.$

\item For all $m$, there is a unique isomorphism from $A_{m}$ to $\mathrm{Iter}(j)/\equiv^{\mathrm{crit}_{m}(j)}$ defined by
\[x\mapsto j_{[x]}/\equiv^{\mathrm{crit}_{m}(j)}.\]
\end{enumerate}
\label{Pwehjit}
\end{thm}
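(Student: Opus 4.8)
The plan is to prove parts (1) and (2) simultaneously by induction on $n$, obtaining part (2) for $m=n$ and part (1) for every limit ordinal $\gamma$ with $\mathrm{crit}_{n-1}(j)<\gamma\le\mathrm{crit}_{n}(j)$ (with the convention $\mathrm{crit}_{-1}(j)=0$). Since these half-open intervals partition $(0,\lambda)$ and $\sup_{n}\mathrm{crit}_{n}(j)=\lambda$, this covers every limit $\gamma<\lambda$; the index $m$ is then automatically unique, because the restrictions of the $\equiv^{\mathrm{crit}_{n}(j)}$ to $\mathrm{Iter}(j)$ have pairwise different numbers of classes, namely $2^{n}$. Before the induction I would record three elementary facts: (i) $\delta\le\delta'$ implies $\equiv^{\delta'}$ refines $\equiv^{\delta}$, by intersecting the defining condition with $V_{\delta}\subseteq V_{\delta'}$; (ii) if $h\in\mathcal{E}_{\lambda}$ has $\mathrm{crit}(h)\ge\delta$, then $h(x)\cap V_{\delta}=x\cap V_{\delta}$ for every $x$ (a short rank argument), so any two embeddings with critical point $\ge\delta$ are $\equiv^{\delta}$-equivalent; and (iii) the Laver formula $\mathrm{crit}(k*l)=k(\mathrm{crit}(l))$, which for the chain $j_{[x+1]}=j_{[x]}*j$ shows that $\mathrm{crit}(j_{[x]})$ runs, as $x$ goes from $1$ to $2^{n}$, through $\mathrm{crit}_{0}(j),\dots,\mathrm{crit}_{n-1}(j)$ in the pattern dictated by the $2$-adic valuation of $x$, ending with $\mathrm{crit}(j_{[2^{n}]})=\mathrm{crit}_{n}(j)$.

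The case $n=0$ is trivial: $A_{0}$ and $\mathrm{Iter}(j)/\equiv^{\mathrm{crit}_{0}(j)}$ are both one-element algebras, and for $\gamma\le\mathrm{crit}_{0}(j)$ the congruence $\equiv^{\gamma}$ is coarser than the one-class congruence $\equiv^{\mathrm{crit}_{0}(j)}$, hence equals it on $\mathrm{Iter}(j)$. For the step, put $Q=\mathrm{Iter}(j)/\equiv^{\mathrm{crit}_{n+1}(j)}$, an LD-system of cardinality $2^{n+1}$ generated by $a:=j/\equiv^{\mathrm{crit}_{n+1}(j)}$, and set $a^{[x]}:=j_{[x]}/\equiv^{\mathrm{crit}_{n+1}(j)}$, so $a^{[1]}=a$ and $a^{[x+1]}=a^{[x]}*a$. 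The key assertion is that $a^{[1]},\dots,a^{[2^{n+1}]}$ are pairwise distinct and $a^{[2^{n+1}+1]}=a^{[1]}$. Granting it, these $2^{n+1}$ elements exhaust $Q$; transporting $*$ along the bijection $x\mapsto a^{[x]}$ yields an LD-operation $\bullet$ on $\{1,\dots,2^{n+1}\}$ with $x\bullet 1=x+1$ for $x<2^{n+1}$ and $2^{n+1}\bullet 1=1$, whence $\bullet=*_{n+1}$ by the uniqueness in Theorem \ref{tw4hu}, so that $x\mapsto a^{[x]}$ is precisely the isomorphism $A_{n+1}\to Q$ of part (2). Its uniqueness follows because $1$ generates $A_{n+1}$ (indeed its left powers $1,1*_{n+1}1,\dots$ run through $1,2,\dots,2^{n+1}$, by the defining relations) and $A_{n+1}$ is rigid, $1$ being its only generator — which one checks via the standard surjections $A_{n+1}\twoheadrightarrow A_{k}$ — so any isomorphism must carry $1$ to the unique generator $a$ of $Q$.

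For part (1) in the step, let $\gamma$ be a limit ordinal with $\mathrm{crit}_{n}(j)<\gamma\le\mathrm{crit}_{n+1}(j)$. By (i), $\equiv^{\mathrm{crit}_{n+1}(j)}$ refines $\equiv^{\gamma}$ on $\mathrm{Iter}(j)$, so $\equiv^{\gamma}$ has at most $2^{n+1}$ classes, and the two congruences coincide once $\equiv^{\gamma}$ has at least that many, i.e. once $j_{[1]},\dots,j_{[2^{n+1}]}$ are pairwise non-$\equiv^{\gamma}$-equivalent; arbitrary $k,l\in\mathrm{Iter}(j)$ are then handled formally by choosing, via part (2), representatives $j_{[x]},j_{[y]}$ with $k\equiv^{\mathrm{crit}_{n+1}(j)}j_{[x]}$, $l\equiv^{\mathrm{crit}_{n+1}(j)}j_{[y]}$ (hence also modulo $\equiv^{\gamma}$, as $\gamma\le\mathrm{crit}_{n+1}(j)$) and transitivity. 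So, like part (2), part (1) comes down to the combinatorial core, here in its refined form: the disagreement between $j_{[x]}$ and $j_{[y]}$ already forced modulo $\equiv^{\mathrm{crit}_{n+1}(j)}$ must occur at rank below $\gamma$. When $\mathrm{crit}(j_{[x]})\ne\mathrm{crit}(j_{[y]})$ this is clear, since the smaller critical point is one of $\mathrm{crit}_{0}(j),\dots,\mathrm{crit}_{n}(j)$, all $<\gamma$, and is moved by exactly one of the two embeddings; when the critical points agree one descends, using self-distributivity in the form $j_{[x]}*j_{[y+1]}=(j_{[x]}*j_{[y]})*j_{[x+1]}$ together with the inductive hypothesis at level $\mathrm{crit}_{n}(j)$.

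The step I expect to be the real obstacle is the combinatorial core itself: the assertion that the orbit $a^{[1]},a^{[2]},\dots$ is purely periodic of period exactly $2^{n+1}$ — equivalently, the exact rule $j_{[x]}\equiv^{\mathrm{crit}_{n+1}(j)}j_{[y]}\iff x\equiv y\pmod{2^{n+1}}$ — together with its refinement locating disagreements below the next critical point. This is Laver's bookkeeping of critical points along chains of applications, which I would carry by induction on $n$ from (iii), using also that $j_{[2^{k}]}$ is $\equiv^{\mathrm{crit}_{k}(j)}$-trivial by (ii); from the resulting pattern of critical points one reads off both the period and the low-rank witnesses. Everything else — monotonicity, the counting reductions, the transport of structure through Theorem \ref{tw4hu}, and the rigidity of $A_{n}$ — is routine.
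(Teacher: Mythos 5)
The paper states Theorem~\ref{Pwehjit} in the introductory section without supplying a proof; it is a classical structure theorem due essentially to Laver (cf. the references to Laver and Dougherty), recorded as background for the theory developed in the later chapters. So there is no proof in the paper against which to compare your argument, and your proposal must be judged on its own terms. Judged that way, your skeleton is essentially the standard one: simultaneous induction on $n$, using the monotonicity of the congruences $\equiv^{\delta}$ in $\delta$, the observation that embeddings with critical point $\ge\delta$ form a single $\equiv^{\delta}$-class, Laver's formula $\mathrm{crit}(k*l)=k(\mathrm{crit}(l))$ for bookkeeping critical points along application chains, and finally transport of structure pinned down by the uniqueness clause in Theorem~\ref{tw4hu}.

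The gap: your ``key assertion'' --- that $a^{[1]},\dots,a^{[2^{n+1}]}$ are pairwise distinct and $a^{[2^{n+1}+1]}=a^{[1]}$ --- does not by itself grant that these $2^{n+1}$ elements exhaust $Q=\mathrm{Iter}(j)/\equiv^{\mathrm{crit}_{n+1}(j)}$, and the subsequent ``transporting $*$ along the bijection $x\mapsto a^{[x]}$'' is therefore not yet well-defined: you have not shown that $a^{[x]}*a^{[y]}$ lands in $\{a^{[1]},\dots,a^{[2^{n+1}]}\}$. Since $\mathrm{Iter}(j)$ contains elements that are not left powers of $j$ (for instance $j*(j*j)$), one must actually prove \emph{closure}: every application $j_{[x]}*j_{[y]}$ is $\equiv^{\mathrm{crit}_{n+1}(j)}$-equivalent to some $j_{[z]}$. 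This is established by the same descending-on-$x$, ascending-on-$y$ double induction that builds the classical Laver table, driven by $j_{[x]}*j_{[y+1]}=(j_{[x]}*j_{[y]})*j_{[x+1]}$ together with your facts (ii) and (iii), and it is this closure statement --- not the periodicity of the left-power orbit --- that carries the substantive combinatorial weight. The periodicity you name is actually the easy consequence: once one knows $\mathrm{crit}(j_{[2^{n+1}]})=\mathrm{crit}_{n+1}(j)$, it follows from (ii) that $a^{[2^{n+1}]}$ acts as a left identity in $Q$, hence $a^{[2^{n+1}+1]}=a^{[2^{n+1}]}*a=a=a^{[1]}$. So you have deferred the hard work to the right place, but have phrased the deferred claim so that it undersells what it needs to deliver; as written, the inference from the ``key assertion'' to exhaustion does not go through.
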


The following charts are the multiplication tables for the classical Laver tables $(A_{n},*_{n})$ for $n\leq 4$.

\begin{math}\begin{array}{r|r}
*_{0} &1 \\
\hline
1   & 1 \\
\end{array}\end{math}\,\,
\begin{math}\begin{array}{r|rr}
*_{1} &1 & 2\\
\hline
1   & 2 & 2\\

2   & 1 & 2\\
\end{array}\end{math}\,\,
\begin{math}\begin{array}{r|rrrr}
*_{2} &1 & 2 & 3 & 4 \\
\hline
1   & 2 & 4 & 2 & 4 \\

2   & 3 & 4 & 3 & 4 \\

3   & 4 & 4 & 4 & 4 \\

4   & 1 & 2 & 3 & 4 \\
\end{array}\end{math}\,\,
\begin{math}\begin{array}{r|rrrrrrrr}

*_{3} &1&2&3 &4 &5 &6 &7 &8 \\
\hline

1  &2&4&6&8&2&4&6&8 \\
2  &3&4&7&8&3&4&7&8 \\
3  &4&8&4&8&4&8&4&8 \\
4  &5&6&7&8&5&6&7&8 \\
5  &6&8&6&8&6&8&6&8 \\
6  &7&8&7&8&7&8&7&8 \\
7  &8&8&8&8&8&8&8&8 \\
8  &1&2&3&4&5&6&7&8
 
\end{array}\end{math}

\begin{math}\begin{array}{r|rrrrrrrrrrrrrrrr}

*_{4} & 1 & 2 &  3 &  4 &  5 & 6 &  7 & 8 &  9 & 10 &  11 & 12 &  13 & 14 &  15 &  16\\

\hline

1   & 2 & 12 & 14 & 16 & 2 & 12 & 14 & 16 & 2 & 12 & 14 & 16 & 2 & 12 & 14 & 16 \\
2   & 3 & 12 & 15 & 16 & 3 & 12 & 15 & 16 & 3 & 12 & 15 & 16 & 3 & 12 & 15 & 16 \\
3   & 4 & 8 & 12 & 16 & 4 & 8 & 12 & 16 & 4 & 8 & 12 & 16 & 4 & 8 & 12 & 16 \\
4   & 5 & 6 & 7 & 8 & 13 & 14 & 15 & 16 & 5 & 6 & 7 & 8 & 13 & 14 & 15 & 16 \\
5   & 6 & 8 & 14 & 16 & 6 & 8 & 14 & 16 & 6 & 8 & 14 & 16 & 6 & 8 & 14 & 16 \\
6   & 7 & 8 & 15 & 16 & 7 & 8 & 15 & 16 & 7 & 8 & 15 & 16 & 7 & 8 & 15 & 16 \\
7   & 8 & 16 & 8 & 16 & 8 & 16 & 8 & 16 & 8 & 16 & 8 & 16 & 8 & 16 & 8 & 16 \\
8   & 9 & 10 & 11 & 12 & 13 & 14 & 15 & 16 & 9 & 10 & 11 & 12 & 13 & 14 & 15 & 16 \\
9   & 10 & 12 & 14 & 16 & 10 & 12 & 14 & 16 & 10 & 12 & 14 & 16 & 10 & 12 & 14 & 16 \\
10   & 11 & 12 & 15 & 16 & 11 & 12 & 15 & 16 & 11 & 12 & 15 & 16 & 11 & 12 & 15 & 16 \\
11   & 12 & 16 & 12 & 16 & 12 & 16 & 12 & 16 & 12 & 16 & 12 & 16 & 12 & 16 & 12 & 16 \\
12   & 13 & 14 & 15 & 16 & 13 & 14 & 15 & 16 & 13 & 14 & 15 & 16 & 13 & 14 & 15 & 16 \\
13   & 14 & 16 & 14 & 16 & 14 & 16 & 14 & 16 & 14 & 16 & 14 & 16 & 14 & 16 & 14 & 16 \\
14   & 15 & 16 & 15 & 16 & 15 & 16 & 15 & 16 & 15 & 16 & 15 & 16 & 15 & 16 & 15 & 16 \\
15   & 16 & 16 & 16 & 16 & 16 & 16 & 16 & 16 & 16 & 16 & 16 & 16 & 16 & 16 & 16 & 16 \\
16   & 1 & 2 & 3 & 4 & 5 & 6 & 7 & 8 & 9 & 10 & 11 & 12 & 13 & 14 & 15 & 16 \\

\end{array}
\end{math}

One could compute the classical Laver table $A_{n}$ by hand by computing $x*_{n}y$ by a double induction which is descending on $x$ and where for all $x$ one proceeds by ascending induction on $y$.

Let us now outline some basic facts about the periodicity in the classical Laver tables.
If $x$ is a positive integer, then define $(x)_{r}$ to be the unique positive integer such that
$(x)_{r}=x\mod r$ and $1\leq x\leq r$.
\begin{prop}
The mapping $\pi:A_{n+1}\rightarrow A_{n}$ defined by $\pi(x)=(x)_{2^{n}}$ is a homomorphism.
\end{prop}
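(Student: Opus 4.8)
The plan is to prove, by the same descending–ascending double induction that defines the tables — descending on the first argument $a$, ascending on the second argument $b$ — that $\pi(a*_{n+1}b)=\pi(a)*_n\pi(b)$ for all $a,b\in\{1,\dots,2^{n+1}\}$. Along the way I will use three standard facts about a classical Laver table $A_m$: (i) $x*_m 1=x+1$ for $x<2^m$ and $2^m*_m 1=1$ (the defining property, Theorem \ref{tw4hu}); (ii) the top row is the identity map, $2^m*_m y=y$ for all $y$; and (iii) $x*_m y>x$ whenever $x<2^m$. I will also use only the elementary residue identities $\pi(x)=x$ for $x\le 2^n$, $\pi(x)=x-2^n$ for $2^n<x\le 2^{n+1}$, $\pi(2^{n+1})=2^n$, and $\pi(x+1)=\pi(x)*_n 1$ (the right side computed in $A_n$: it is $\pi(x)+1$ when $\pi(x)<2^n$ and $1$ when $\pi(x)=2^n$).

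For the base case $a=2^{n+1}$: by (ii), $\pi(2^{n+1}*_{n+1}b)=\pi(b)$, while $\pi(2^{n+1})*_n\pi(b)=2^n*_n\pi(b)=\pi(b)$ by (ii) applied in $A_n$. Now fix $a<2^{n+1}$, assume the identity for every first argument strictly above $a$, and induct on $b$. The case $b=1$ is precisely $\pi(a+1)=\pi(a)*_n 1$, one of the residue identities above (split according to $a<2^n$, $a=2^n$, $2^n<a<2^{n+1}$ and use (i) in both tables). For the step $b\mapsto b+1$ with $b<2^{n+1}$, since $b*_{n+1}1=b+1$, self-distributivity in $A_{n+1}$ gives
\[
a*_{n+1}(b+1)=(a*_{n+1}b)*_{n+1}(a*_{n+1}1)=(a*_{n+1}b)*_{n+1}(a+1).
\]
By (iii) the first argument $a*_{n+1}b$ lies strictly above $a$, so the descending hypothesis applies and $\pi$ of the right-hand side equals $\pi(a*_{n+1}b)*_n\pi(a+1)$; substituting $\pi(a*_{n+1}b)=\pi(a)*_n\pi(b)$ (ascending hypothesis) and $\pi(a+1)=\pi(a)*_n 1$ (the $b=1$ case just proved), self-distributivity in $A_n$ collapses this to $\pi(a)*_n(\pi(b)*_n 1)=\pi(a)*_n\pi(b+1)$. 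This closes both inductions, so $\pi$ is a homomorphism.

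I expect the only genuinely delicate point to be the bookkeeping at the two ``wrap-around'' indices $2^n$ and $2^{n+1}$, where $\pi$ is neither the identity nor a pure shift, together with checking that the descending hypothesis is legitimately applicable — i.e. that $a*_{n+1}b>a$ — which is exactly where fact (iii) enters. There is no real conceptual obstacle beyond this: as in the original construction of the Laver tables, self-distributivity does all the work.

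As an aside, when a rank-into-rank cardinal exists the statement can also be read off from Theorem \ref{Pwehjit}: choosing $j\in\mathcal{E}_\lambda^{+}$, both tables are identified with $\mathrm{Iter}(j)/\equiv^{\mathrm{crit}_{n+1}(j)}$ and $\mathrm{Iter}(j)/\equiv^{\mathrm{crit}_n(j)}$ via $x\mapsto j_{[x]}$; since $\mathrm{crit}_n(j)<\mathrm{crit}_{n+1}(j)$ the finer congruence refines the coarser one, so the canonical map between the quotients is a surjective homomorphism, and $j_{[x+1]}=j_{[x]}*j$ together with the identification of $A_n$ shows it sends $x$ to $(x)_{2^n}$. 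Since the statement is a theorem of ZFC about the finite tables $A_m$, however, the double induction is the proof I would record.
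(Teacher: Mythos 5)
The paper states this proposition without proof (it appears in a list of ``basic facts about periodicity in the classical Laver tables'' given without argument), so there is no paper proof to compare against. Your double descending--ascending induction is correct and is the natural way to prove it; it mirrors exactly the construction and the self-distributivity argument for the tables themselves. The base case $a=2^{n+1}$, the $b=1$ case via the residue identity $\pi(a+1)=\pi(a)*_n 1$ (which you correctly split into the three subcases $a<2^n$, $a=2^n$, $2^n<a<2^{n+1}$), and the inductive step using $a*_{n+1}(b+1)=(a*_{n+1}b)*_{n+1}(a+1)$ all check out; the application of the descending hypothesis to the first argument $a*_{n+1}b$ is legitimate precisely because of the monotonicity fact $x<2^m\Rightarrow x<x*_m y$. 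One cosmetic remark: that monotonicity fact is itself listed in the paper as a proposition \emph{after} the one you are proving (also without proof), but it is established simultaneously with Theorem \ref{tw4hu} in any standard development, so there is no circularity in substance — you might simply flag that you are taking it as part of the basic theory of $A_m$ rather than as a consequence of the homomorphism. Your aside about recovering the statement from Theorem \ref{Pwehjit} under a rank-into-rank hypothesis is accurate and a nice sanity check, and you are right to insist on the ZFC proof as the one to record.
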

\begin{prop}
The mapping $L:A_{n}\rightarrow A_{n+1}$ defined by $L(x)=x+2^{n}$ is a homomorphism.
\end{prop}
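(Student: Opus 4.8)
The plan is to bootstrap from the preceding proposition (that $\pi\colon A_{n+1}\to A_{n}$, $\pi(z)=(z)_{2^{n}}$, is a homomorphism) together with one standard monotonicity property of the classical Laver tables. Writing $L(x)=x+2^{n}$, I must show that $(x+2^{n})*_{n+1}(y+2^{n})=(x*_{n}y)+2^{n}$ for all $x,y\in\{1,\dots,2^{n}\}$.

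First I would apply $\pi$ to the left-hand side. For $z\in\{1,\dots,2^{n}\}$ one has $\pi(z+2^{n})=z$, since $z+2^{n}\equiv z\pmod{2^{n}}$ and the boundary case $z=2^{n}$ is fine because $(2^{n})_{2^{n}}=2^{n}$. Hence homomorphicity of $\pi$ gives $\pi\big((x+2^{n})*_{n+1}(y+2^{n})\big)=\pi(x+2^{n})*_{n}\pi(y+2^{n})=x*_{n}y$. Therefore $(x+2^{n})*_{n+1}(y+2^{n})\equiv x*_{n}y\pmod{2^{n}}$, and the only elements of $\{1,\dots,2^{n+1}\}$ lying in that residue class are $x*_{n}y$ and $(x*_{n}y)+2^{n}$. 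Since $x*_{n}y\le 2^{n}$ while $(x*_{n}y)+2^{n}>2^{n}$, it now suffices to prove the single inequality $(x+2^{n})*_{n+1}(y+2^{n})>2^{n}$.

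For this I would invoke the classical fact that in any Laver table $A_{m}$ one has $a*_{m}b>a$ whenever $a<2^{m}$ (equivalently, the $a$-th row lies strictly above $a$), while $2^{m}$ is a left identity. Since $x+2^{n}\in\{2^{n}+1,\dots,2^{n+1}\}$ there are two cases. If $x<2^{n}$ then $x+2^{n}<2^{n+1}$, so $(x+2^{n})*_{n+1}(y+2^{n})>x+2^{n}>2^{n}$. If $x=2^{n}$ then $x+2^{n}=2^{n+1}$ is the identity of $A_{n+1}$, so $(x+2^{n})*_{n+1}(y+2^{n})=y+2^{n}>2^{n}$. Either way the inequality holds, and combining with the previous paragraph yields $(x+2^{n})*_{n+1}(y+2^{n})=(x*_{n}y)+2^{n}=L(x*_{n}y)$, as desired.

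The only nontrivial ingredient is thus the monotonicity statement $a*_{m}b>a$ for $a<2^{m}$; everything else is bookkeeping modulo $2^{n}$ with the homomorphism $\pi$. If a self-contained treatment is wanted, this fact — together with its companions that $2^{m}$ is a left identity and that the last column of $A_{m}$ is constantly $2^{m}$ — is proven by the familiar double induction, descending on $a$ and ascending on $b$, using only the relations $2^{m}*_{m}1=1$, $a*_{m}1=a+1$ for $a<2^{m}$, and self-distributivity, and I regard that as the essential (though routine) obstacle. As an alternative packaging of the same idea, one can check that the inner endomorphism $z\mapsto 2^{n}*_{n+1}z$ of $A_{n+1}$ equals $L\circ\pi$; since this map and $\pi$ are homomorphisms with $\pi$ surjective, $L$ is forced to be a homomorphism — but verifying the formula for the $2^{n}$-th row of $A_{n+1}$ rests on exactly the same monotonicity-type input.
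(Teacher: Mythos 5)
Your proof is correct. The paper states this proposition without proof (it appears in a list of standard background facts about the classical Laver tables), so there is no paper argument to compare against. Your reduction to the two ingredients already present in the surrounding propositions — that $\pi(z)=(z)_{2^{n}}$ is a homomorphism, and that $x<x*_{m}y$ whenever $x<2^{m}$ — is clean and valid: $\pi\circ L=\mathrm{id}$ pins down $L(x)*_{n+1}L(y)$ to two candidates modulo $2^{n}$, and the monotonicity estimate (together with the $x=2^{n}$ edge case via the left identity) rules out the small one. Your alternative packaging via the identity $2^{n}*_{n+1}z=2^{n}+(z)_{2^{n}}=L(\pi(z))$ also works and is arguably the slicker statement, since it factors the inner endomorphism $L_{2^{n}}$ on $A_{n+1}$ through the surjective homomorphism $\pi$; that identity is itself one of the paper's listed facts, so either route is faithful to the paper's toolkit. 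The only caveat, which you already flag, is that the monotonicity fact is listed in the paper just \emph{after} the proposition on $L$, so a self-contained treatment should prove it first by the usual descending-$x$, ascending-$y$ double induction — but there is no circularity since that fact is elementary and independent of $L$.
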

\begin{prop}
For all $n\in\omega$ and $1\leq x\leq 2^{n}$, there exists a unique natural number $o_{n}(x)\leq n$ such that
$x*_{n}y=x*_{n}z$ if and only if $y=z\mod 2^{o_{n}(x)}$.
\end{prop}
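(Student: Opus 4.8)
The plan is to prove the Proposition by induction on $n$, together with the auxiliary facts (I) $2^{n}*_{n}y=y$ for all $y\le 2^{n}$, and (II) $x*_{n}2^{n}=2^{n}$ for all $x\le 2^{n}$. I do not expect the Proposition to be provable in isolation at a fixed level $n$, because (II) — equivalently, the statement that the row $y\mapsto x*_{n}y$ completes a whole number of periods inside $\{1,\dots,2^{n}\}$ — is exactly the point where the levels have to talk to one another. Granting the package, $o_{n}(x)$ will be the base-$2$ logarithm of the minimal period of the row of $x$, and uniqueness of $o_{n}(x)$ is automatic since two powers of $2$ inducing the same congruence on $\{1,\dots,2^{n}\}$ must coincide.

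The workhorse is the row recursion coming from self-distributivity: for $y<2^{n}$ we have $y*_{n}1=y+1$, so $x*_{n}(y+1)=x*_{n}(y*_{n}1)=(x*_{n}y)*_{n}(x*_{n}1)$, which for $x<2^{n}$ reads $x*_{n}(y+1)=(x*_{n}y)*_{n}(x+1)$. Thus the row of $x$ is the forward orbit of $x+1$ under $\psi_{x}\colon c\mapsto c*_{n}(x+1)$, and fact (I) is an immediate induction on $y$ starting from $2^{n}*_{n}1=1$. The crucial lemma is a monotonicity statement: for every $a<2^{n}$, all entries of row $a$ exceed $a$. I would prove this by downward induction on $a$: row $2^{n}-1$ is constantly $2^{n}$ by (I) and the recursion, and for smaller $a$ one runs the recursion and splits the step $c\mapsto c*_{n}(a+1)$ into $c=2^{n}$ (which maps to $a+1$ by (I)), $c=a+1$ (apply the lemma, already known at $a+1$), and $a+1<c<2^{n}$ (apply the lemma at $c$, which lies in the already-treated range). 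Consequences: $\psi_{x}$ strictly increases the row as long as its value is below $2^{n}$, so — being otherwise confined to a finite set — the row must hit $2^{n}$, say first at position $q=q_{n}(x)$; then $x*_{n}1<\dots<x*_{n}q=2^{n}$ are distinct, and $\psi_{x}(2^{n})=2^{n}*_{n}(x+1)=x+1$, so the row is periodic with minimal period exactly $q$ and is injective on $\{1,\dots,q\}$.

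What remains — and where I expect the real work — is showing that $q$ is a power of $2$; equivalently, by injectivity-on-a-period plus the wrap-around, that $q\mid 2^{n}$, which is fact (II). Here I would invoke the projection homomorphism $\pi\colon A_{n+1}\to A_{n}$, $\pi(c)=(c)_{2^{n}}$, together with the induction hypothesis at level $n$. The element $2^{n+1}$ of $A_{n+1}$ is handled by (I); for $2^{n}<x<2^{n+1}$ the monotonicity lemma gives $x*_{n+1}y>x>2^{n}$, so by $\pi$ (and the homomorphism $L$) the row of $x$ in $A_{n+1}$ is a shift by $2^{n}$ of the row of $x-2^{n}$ in $A_{n}$, and the Proposition for it is inherited; the element $2^{n}$ is similar, its $A_{n+1}$-row being $y\mapsto(y)_{2^{n}}+2^{n}$. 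The genuine case is $x<2^{n}$: let $q_{n}=2^{o_{n}(x)}$ be the period of the row of $x$ in $A_{n}$ — it divides $2^{n}$ since $o_{n}(x)\le n$ — and let $q_{n+1}=q$ be the period in $A_{n+1}$. Since $\pi$ carries the $A_{n+1}$-row onto the $A_{n}$-row composed with reduction mod $2^{n}$, which has minimal period $q_{n}$, we get $q_{n}\mid q_{n+1}$. On one period $\{1,\dots,q_{n+1}\}$ the $A_{n+1}$-row is injective, while $x*_{n+1}y=x*_{n+1}z$ is equivalent to ``$\pi$-values agree \emph{and} both values lie on the same side of $2^{n}$''; by the induction hypothesis the first condition is congruence mod $q_{n}$, and along the strictly increasing period the second is ``$y,z\le t$ or $y,z>t$'' for a single threshold $t$. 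Injectivity then forces each residue class mod $q_{n}$ inside $\{1,\dots,q_{n+1}\}$ to meet each side of $t$ at most once, hence to have at most two elements, so $q_{n+1}/q_{n}\le 2$ and $q_{n+1}\in\{q_{n},2q_{n}\}$ — a power of $2$, at most $2^{n+1}$. This gives $o_{n+1}(x)\le n+1$, and since now $q_{n+1}\mid 2^{n+1}$ we recover (II): $x*_{n+1}2^{n+1}=2^{n+1}$.

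With $2^{o_{n}(x)}=q$ established as the minimal period (dividing $2^{n}$) and the row injective on one period, the equivalence $x*_{n}y=x*_{n}z\iff y\equiv z\pmod{2^{o_{n}(x)}}$ is immediate: ``$\Leftarrow$'' is periodicity (whole periods now fit inside $\{1,\dots,2^{n}\}$), and ``$\Rightarrow$'' follows by reducing $y$ and $z$ to their representatives mod $q$ in $\{1,\dots,q\}$ and using injectivity there. The base case $n=0$ is trivial. The one step I expect to be a genuine obstacle is obtaining the power-of-$2$ periodicity, i.e. fact (II); the monotonicity lemma and the row recursion do the rest, but neither of them, by itself, pins down the length of a period.
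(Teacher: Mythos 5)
Your proof is correct, and it is worth noting that the paper does not actually supply a proof of this proposition --- it is stated as a known fact about classical Laver tables, with the surrounding propositions (e.g.\ $x < x*_{n}y$ for $x<2^{n}$, or $o_{n}(x)\leq o_{n+1}(x)\leq o_{n}(x)+1$) likewise left unproved. So there is no "paper proof" to compare against directly, but your approach is very much in the same spirit as the paper's later argument that $A_{n}$ is self-distributive (the proposition just before the definition of $M\upharpoonright\alpha$), which also uses the projection $\pi\colon A_{n}\to A_{n-1}$ and induction on $n$. The difference is in how the period gets pinned down: there the paper argues that if $y$ is the least wrap-around length of a row and $y\nmid 2^{n-1}$, then $a^{\phi(x)}*a^{2^{n-1}}$ would be a proper substring, contradicting that $a^{2^{n-1}}$ is a left identity, so the period divides $2^{n-1}$ and is automatically a power of $2$. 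Your threshold argument is finer-grained: combining injectivity of the row on one period with the observation that $\pi$ identifies $c$ with $c\pm 2^{n}$ gives $q_{n}\mid q_{n+1}$ and $q_{n+1}/q_{n}\leq 2$ --- which is exactly the unproved proposition $o_{n}(x)\leq o_{n+1}(x)\leq o_{n}(x)+1$ in the paper, and hence buys strictly more than what the statement asks for. Your monotonicity lemma is exactly the paper's unproved item (2) of the proposition giving $x<x*_{n}y$, and your downward induction for it is sound: the only ingredient beyond the defining recursion is fact (I), $2^{n}*_{n}y=y$, which you correctly derive by ascending induction on $y$ using $2^{n}*_{n}(y+1)=(2^{n}*_{n}y)*_{n}1$. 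The one small point worth making explicit is that $t$, the crossing threshold in a period, satisfies $1\leq t<q_{n+1}$ (i.e.\ both sides are nonempty); this follows as you note from $x+1\leq 2^{n}$ on one end and $x*_{n+1}q_{n+1}=2^{n+1}$ on the other, and without it the bound $q_{n+1}/q_{n}\leq 2$ would still hold, so it costs nothing.
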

\begin{prop}
\begin{enumerate}
\item For all $n\in\omega,1\leq x\leq 2^{n},1\leq y<z\leq 2^{o_{n}(x)}$, we have 
\[x*_{n}y<x*_{n}z.\]

\item If $x<2^{n}$, then $x<x*_{n}y$.

\item $2^{n}*_{n}y=y$.
\end{enumerate}
\end{prop}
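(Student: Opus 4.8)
The plan is to prove all three parts together by the descending--ascending double induction alluded to in the introduction, with part (3) serving as the initial case. The engine of the whole argument is the recursion obtained by combining self-distributivity of $*_n$ with the defining relation $w*_n 1=w+1$ (valid for $w<2^n$): for every $x$ and every $y<2^n$,
\[x*_n(y+1)=x*_n(y*_n 1)=(x*_n y)*_n(x*_n 1).\]
Since $x*_n 1=x+1$ when $x<2^n$ and $2^n*_n 1=1$ by Theorem~\ref{tw4hu}, this computes every row of $A_n$ from its first entry and, crucially, reduces a step in row $x$ to an application in the row indexed by $x*_n y$.

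First I would dispatch (3) by induction on $y$: we have $2^n*_n 1=1$, and if $2^n*_n y=y$ with $y<2^n$ then the recursion gives $2^n*_n(y+1)=(2^n*_n y)*_n(2^n*_n 1)=y*_n 1=y+1$. Hence the row indexed by $2^n$ is the identity on $\{1,\dots,2^n\}$.

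Next I would prove, by downward induction on $x\in\{2^n-1,\dots,1\}$, the conjunction of (2) for row $x$ with the auxiliary claim that the sequence $x*_n 1,x*_n 2,\dots$ strictly increases up to and including the first index $p_x$ at which it equals $2^n$, and thereafter repeats with period $p_x$. Assume this for all rows with index strictly between $x$ and $2^n$, and recall that row $2^n$ is the identity by (3). Run an inner induction on $y$. The base case is $x*_n 1=x+1>x$. For the step, write $x*_n(y+1)=(x*_n y)*_n(x+1)$; by the inner hypothesis $x*_n y>x$, so either $x*_n y=2^n$, whence $x*_n(y+1)=2^n*_n(x+1)=x+1>x$ by (3) and the sequence resets to its first value; or $x<x*_n y<2^n$, whence the downward hypothesis applied to row $x*_n y$ gives $(x*_n y)*_n(x+1)>x*_n y>x$, so the sequence strictly increases at this step and stays above $x$. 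A sequence confined to $\{1,\dots,2^n\}$ that strictly increases as long as it is below $2^n$ must attain the value $2^n$ (a short counting estimate even shows $p_x\le 2^n-x$), so $p_x$ exists; then $x*_n(p_x+1)=2^n*_n(x*_n 1)=x*_n 1$ by (3), and a further routine induction via the recursion propagates periodicity with period $p_x$ through the rest of the row. This establishes (2) for every $x<2^n$, hence together with (3) all of part (2).

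Finally, for (1): the preceding proposition furnishes $o_n(x)$ with $x*_n y=x*_n z$ iff $y\equiv z\pmod{2^{o_n(x)}}$, while the previous paragraph shows that row $x$ is injective on $\{1,\dots,p_x\}$ and periodic with period exactly $p_x$; comparing the two descriptions of ``when two entries of row $x$ agree'' forces $2^{o_n(x)}=p_x$, so strict monotonicity on $\{1,\dots,2^{o_n(x)}\}$ is precisely what was shown. I expect the only delicate point to be the bookkeeping in the double induction — in particular, confirming at each inner step that $x*_n y$ lies strictly between $x$ and $2^n$ before invoking the downward hypothesis for its row, and peeling off the boundary case $x*_n y=2^n$ via part (3). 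Once the induction is organized so that (2) for row $x$ is accumulated entry by entry alongside the monotonicity claim, the remaining verifications are routine.
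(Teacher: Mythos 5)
Your proposal is correct. The paper states this proposition without proof, and your argument is the canonical descending--ascending double induction on the recursion $x*_n(y+1)=(x*_n y)*_n(x+1)$ — precisely the method the paper itself points to in the remark that one computes $A_n$ by a double induction descending on $x$ and ascending on $y$. All the delicate points you flag are handled correctly: the inner invariant $x*_n y>x$ licenses invoking the outer hypothesis on row $x*_n y$, the case split at $x*_n y=2^n$ is peeled off via part (3), and the identification $p_x=2^{o_n(x)}$ needed for (1) follows cleanly by comparing the period/injectivity you establish against the characterization of $o_n(x)$ in the preceding proposition.
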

\begin{prop}
\begin{enumerate}
\item $o_{n+1}(x)=o_{n}(x-2^{n})$ whenever $2^{n}<x<2^{n+1}$.

\item $o_{n+1}(2^{n+1})=n+1$.

\item $o_{n}(x)\leq o_{n+1}(x)\leq o_{n}(x)+1$ whenever $1\leq x\leq 2^{n}$.
\end{enumerate}
\end{prop}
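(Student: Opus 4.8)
The plan is to reduce all three parts to a single observation: for any classical Laver table $A_m$ and any $x\in A_m$, the image $x*_m A_m=\{x*_m y : y\in A_m\}$ of the $x$-th row has cardinality exactly $2^{o_m(x)}$. Indeed, by the defining property of $o_m(x)$ the entries $x*_m y$ and $x*_m z$ agree precisely when $y\equiv z\pmod{2^{o_m(x)}}$, and since $o_m(x)\le m$ the set $\{1,\dots,2^m\}$ splits into exactly $2^{o_m(x)}$ residue classes modulo $2^{o_m(x)}$. So it suffices to compute $|x*_{n+1}A_{n+1}|$ for $x$ in each of the ranges $x=2^{n+1}$, $2^n<x<2^{n+1}$, and $1\le x\le 2^n$. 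The tools are the homomorphism $\pi\colon A_{n+1}\to A_n$, $\pi(x)=(x)_{2^n}$ of the preceding proposition, together with the facts — the level-$(n+1)$ instances of the earlier propositions — that $2^{n+1}*_{n+1}y=y$ and that $x<x*_{n+1}y$ whenever $x<2^{n+1}$. Note that $\pi$ is a surjection which is exactly two-to-one, with $\pi^{-1}(\{t\})=\{t,t+2^n\}$, and that $\pi$ is injective on $B:=\{2^n+1,\dots,2^{n+1}\}$ (the image of the embedding $L\colon A_n\to A_{n+1}$, $L(x)=x+2^n$).

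Part (2) is immediate: $2^{n+1}$ is the identity of $A_{n+1}$, so $y\mapsto 2^{n+1}*_{n+1}y$ is the identity map, its image is all of $A_{n+1}$, and hence $2^{o_{n+1}(2^{n+1})}=2^{n+1}$, that is, $o_{n+1}(2^{n+1})=n+1$. For Part (1), fix $2^n<x<2^{n+1}$ and put $a=x-2^n$, so $1\le a<2^n$ and $\pi(x)=a$. Since $x<2^{n+1}$ we get $x*_{n+1}y>x>2^n$ for every $y$, so the row image $C:=x*_{n+1}A_{n+1}$ is contained in $B$; as $\pi$ is injective on $B$, $|C|=|\pi(C)|$. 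Because $\pi$ is a surjective homomorphism, $\pi(C)=\pi(x)*_n\pi(A_{n+1})=a*_n A_n$, so $|C|=2^{o_n(a)}$. Comparing with $|C|=2^{o_{n+1}(x)}$ gives $o_{n+1}(x)=o_n(x-2^n)$.

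For Part (3), fix $1\le x\le 2^n$ and again set $C:=x*_{n+1}A_{n+1}$, so $|C|=2^{o_{n+1}(x)}$. Regarding $x$ as an element of $A_n$, we have $\pi(x)=x$, hence $\pi(C)=\pi(x)*_n A_n=x*_n A_n$, which has cardinality $2^{o_n(x)}$. The restriction $\pi|_C\colon C\to\pi(C)$ is onto and all its fibers have size $1$ or $2$, being contained in the two-element fibers of $\pi$; therefore $|\pi(C)|\le|C|\le 2|\pi(C)|$, that is, $2^{o_n(x)}\le 2^{o_{n+1}(x)}\le 2^{o_n(x)+1}$, which is exactly $o_n(x)\le o_{n+1}(x)\le o_n(x)+1$.

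There is no genuinely hard step here: once the ``row image has cardinality $2^{o}$'' remark is in place, each part is a cardinality count pushed through $\pi$. The one thing to handle carefully is the bookkeeping around the identification of $A_n=\{1,\dots,2^n\}$ with a subset — not a subalgebra — of $A_{n+1}=\{1,\dots,2^{n+1}\}$, so that $\pi(x)$ is read as $x$ when $1\le x\le 2^n$ but as $x-2^n$ when $2^n<x<2^{n+1}$; this difference is precisely what makes Parts (1) and (3) behave differently, the $x$-th row of $A_{n+1}$ living entirely above $2^n$ in the first case but straddling $2^n$ in the second. The auxiliary facts about $A_{n+1}$ invoked above ($2^{n+1}$ being the identity, the strict growth $x<x*_{n+1}y$, and well-definedness of $o_{n+1}$) need no new argument, being the previously displayed propositions applied at level $n+1$.
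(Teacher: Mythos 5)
Your proof is correct. The paper states this proposition without proof, so there is no authorial argument to compare against, but your approach is sound: the observation that the row image $x *_m A_m$ has cardinality exactly $2^{o_m(x)}$ follows directly from the defining property of $o_m(x)$ together with $o_m(x)\le m$, and then all three parts become cardinality counts pushed through the surjective two-to-one homomorphism $\pi$. The one step worth stating explicitly (which you do handle implicitly) is that $\pi(C)=\pi(x)*_n A_n$ uses both that $\pi$ is a homomorphism and that $\pi$ is surjective; and that in Part (1) the containment $C\subseteq\{2^n+1,\dots,2^{n+1}\}$ is what makes $\pi$ injective on $C$ and turns the inequality chain of Part (3) into an exact equality. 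This is a clean and natural route, and I see no gaps.
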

\begin{prop}
\begin{enumerate}
\item $(x+2^{n})*_{n+1}y=2^{n}+(x*_{n}(y)_{2^{n}})$ whenever $1\leq x<2^{n}$.

\item $2^{n}*_{n+1}y=2^{n}+(y)_{2^{n}}$.

\item If $1<x<2^{n}$ and $o_{n+1}(x)=o_{n}(x)+1$, then $x*_{n+1}y=x*_{n}y$ and
$x*_{n+1}(O_{n}(x)+y)=2^{n}+(x*_{n}y)$ for $1\leq y\leq O_{n}(x)$

\item If $1<x<2^{n}$ and $o_{n+1}(x)=o_{n}(x)$, then there is some $c\in\{1,\ldots ,O_{n}(x)\}$ where
\begin{enumerate}
\item $x*_{n+1}y=x*_{n}y$ for $1\leq y\leq c$ and

\item $x*_{n+1}y=2^{n}+(x*_{n}y)$ for $c<y\leq O_{n}(x)$.
\end{enumerate}
\end{enumerate}
\end{prop}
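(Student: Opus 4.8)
The four assertions split into the ``new'' rows $x\ge 2^{n}$ of $A_{n+1}$, handled by (1) and (2), and the ``old'' rows $1<x<2^{n}$, handled by (3) and (4). For the new rows I plan to use only the two homomorphisms already available, $\pi\colon A_{n+1}\to A_{n}$ with $\pi(y)=(y)_{2^{n}}$ and $L\colon A_{n}\to A_{n+1}$ with $L(z)=z+2^{n}$, together with the fact that $A_{n+1}$ is generated by $1$ (each element is reached from $1$ by iterating $z\mapsto z*_{n+1}1=z+1$), so that two endomorphisms of $A_{n+1}$ agreeing at $1$ must coincide. For (2), the map $y\mapsto 2^{n}+(y)_{2^{n}}$ is the composite $L\circ\pi$, an endomorphism of $A_{n+1}$, and it agrees at $1$ with the inner endomorphism $L_{*_{n+1},2^{n}}$ (both send $1$ to $2^{n}+1$), so the two are equal. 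For (1), the map $y\mapsto 2^{n}+(x*_{n}(y)_{2^{n}})$ is the composite $L\circ L_{*_{n},x}\circ\pi$, an endomorphism of $A_{n+1}$, and it agrees at $1$ with $L_{*_{n+1},x+2^{n}}$ (both send $1$ to $x+1+2^{n}$, using $x<2^{n}$), so the two are equal.

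For (3) and (4) I would fix $n$ and argue by descending induction on $x\in\{2,\dots,2^{n}-1\}$, treating each row by ascending induction on $y$ via the base value $x*_{n+1}1=x+1$ and the self-distributive recursion $x*_{n+1}(y+1)=(x*_{n+1}y)*_{n+1}(x+1)$. Applying $\pi$ gives $x*_{n+1}y\equiv x*_{n}y\pmod{2^{n}}$ for $1\le y\le 2^{n}$, so one may write $x*_{n+1}y=x*_{n}y+2^{n}\varepsilon(x,y)$ with a ``carry'' $\varepsilon(x,y)\in\{0,1\}$ and $\varepsilon(x,1)=0$. Since $x<x*_{n+1}y$, the value $w:=x*_{n+1}y$ is always a strictly larger row index, so $w*_{n+1}(x+1)$ is computable: by the inductive hypothesis if $x<w<2^{n}$; by (2) if $w=2^{n}$; by (1) if $2^{n}<w<2^{n+1}$; and as $x+1$ (a reset, which can occur only at the end of a period, since each row of $A_{n+1}$ increases strictly to $2^{n+1}$ over one period) if $w=2^{n+1}$. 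Substituting into the recursion and reducing modulo $2^{n}$ by $\pi$, one checks that $\varepsilon(x,y+1)=1$ whenever $\varepsilon(x,y)=1$ and $w\ne 2^{n+1}$, and that $\varepsilon(x,O_{n}(x)+1)=1$ whenever $\varepsilon(x,\cdot)$ vanishes on all of $\{1,\dots,O_{n}(x)\}$ (this step uses the standard fact that the last column of $A_{n}$ is constant, i.e.\ $x*_{n}O_{n}(x)=2^{n}$). Consequently the carries $\varepsilon(x,1),\varepsilon(x,2),\dots$ over one period of row $x$ of $A_{n+1}$ form a block of $c_{x}\ge 1$ zeros followed by ones; and since $x*_{n+1}y=2^{n+1}$ exactly when $\varepsilon(x,y)=1$ and $x*_{n}y=2^{n}$, the row first returns to the value $x+1$ at index $O_{n}(x)$ when $c_{x}<O_{n}(x)$ (whence $o_{n+1}(x)=o_{n}(x)$) and at index $2O_{n}(x)$ when $c_{x}=O_{n}(x)$ (whence $o_{n+1}(x)=o_{n}(x)+1$). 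Reading the row of $A_{n+1}$ off from $c_{x}$ and from the $O_{n}(x)$-periodicity of the row of $A_{n}$ then yields (4), with $c=c_{x}$, in the first case, and (3) in the second.

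The conceptual point here is slight — each row of $A_{n}$ climbs strictly to its maximum $2^{n}$ over a period, and this is what forces the carry into the doubled table to switch from $0$ to $1$ at most once per period — so the main obstacle will be the bookkeeping in the double induction: carrying an inductive hypothesis strong enough to evaluate $w*_{n+1}(x+1)$ for every row index $w$ that can occur, handling the reductions modulo the possibly different periods $O_{n}(w)$ and $O_{n+1}(w)$, and keeping the boundary indices $w=2^{n}$ and $w=2^{n+1}$ straight. I would also invoke, as routine facts about classical Laver tables, that rows are strictly increasing and attain $2^{n}$ over a period, that $2^{n}*_{n}y=y$, and that $A_{n+1}$ is generated by $1$; these are either among the results quoted above or follow by the same double-induction technique.
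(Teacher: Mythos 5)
The paper states this proposition among a run of unproved ``basic facts about periodicity'' in the introductory section, so there is no internal proof to compare against; you are supplying an argument where the paper gives none, and the argument you give is correct.

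Your treatment of (1) and (2) is clean: since $\pi$ and $L$ are homomorphisms and $L_{*_{n},x}$, $L_{*_{n+1},z}$ are endomorphisms by left self-distributivity, both sides of each identity are endomorphisms of $A_{n+1}$, and two endomorphisms of $A_{n+1}$ that agree at the generator $1$ must coincide on the whole algebra. The only facts used are that $\pi,L$ are homomorphisms, that $x*_{n+1}1=x+1$ for $x<2^{n+1}$, and that $A_{n+1}=\langle 1\rangle$, all of which are available.

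Your treatment of (3) and (4) via the carry $\varepsilon$ is sound, though a little more is said than is needed. The carry is well-defined because $\pi$ is a homomorphism, not because of any induction. The two lemmas you extract — that $\varepsilon$ stays $1$ within a period once it becomes $1$, and that $\varepsilon(x,O_n(x)+1)=1$ if $\varepsilon$ vanishes on $\{1,\dots,O_n(x)\}$ — come directly from (1) and (2) respectively via the recursion $x*_{n+1}(y+1)=(x*_{n+1}y)*_{n+1}(x+1)$; the descending induction on $x$ that would let you evaluate $w*_{n+1}(x+1)$ for $x<w<2^{n}$ is not actually load-bearing, since the proposition only asserts the existence of some $c$, not its value. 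From the two lemmas and $\varepsilon(x,1)=0$ (using $x<2^{n}$), the carry is a block of zeros followed by ones over a period, with the switch occurring at some $c$ with $1\le c\le O_n(x)$; since $x*_{n+1}y=2^{n+1}$ exactly when $O_n(x)\mid y$ and $\varepsilon(x,y)=1$, the period is $O_n(x)$ when $c<O_n(x)$ (giving (4)) and $2O_n(x)$ when $c=O_n(x)$ (giving (3)). One small correction in wording: the row ``returns to $x+1$'' at index $O_{n+1}(x)+1$, not $O_n(x)$; equivalently, it first reaches $2^{n+1}$ at index $O_{n+1}(x)$.
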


In table \ref{j4ti0tg4q0}, the entries of the form $(i,i*_{4}k)$ are filled with the number $i*_{4}k$ while all the other entries are left blank. We observe that for all $n$, the set $\{(i,i*_{n}j)\mid i,j\in A_{n}\}$ is always a proper subset of the union of a Sierpinkski gasket and the line segment $\{2^{n}\}\times[1,2^{n}]$. We also observe that all the information in the classical Laver table $A_{4}$ is contained in table \ref{j4ti0tg4q0}.

\begin{math}\begin{array}{r|rrrrrrrrrrrrrrrr}   
\label{j4ti0tg4q0}
*_{4} &\! 1 &\! 2 &\! 3  &\!  4 &\!  5 &\! 6 &\! 7 &\! 8 &\! 9  &\! 10 &\! 11  &\! 12 &\! 13  &\! 14 &\! 15  &\! 16 \\

\hline

1    &\!  &\! 2&\!  &\!  &\!  &\!  &\!  &\!  &\!  &\!  &\!  &\!12&\!  &\!14&\!  &\!16\\
2    &\!  &\!  &\! 3&\!  &\!  &\!  &\!  &\!  &\!  &\!  &\!  &\!12&\!  &\!  &\!15&\!16\\
3    &\!  &\!  &\!  &\! 4&\!  &\!  &\!  &\! 8&\!  &\!  &\!  &\!12&\!  &\!  &\!  &\!16\\
4    &\!  &\!  &\!  &\!  &\! 5&\! 6&\! 7&\! 8&\!  &\!  &\!  &\!  &\!13&\!14&\!15&\!16\\
5    &\!  &\!  &\!  &\!  &\!  &\! 6&\!  &\! 8&\!  &\!  &\!  &\!  &\!  &\!14&\!  &\!16\\
6    &\!  &\!  &\!  &\!  &\!  &\!  &\! 7&\! 8&\!  &\!  &\!  &\!  &\!  &\!  &\!15&\!16\\
7    &\!  &\!  &\!  &\!  &\!  &\!  &\!  &\! 8&\!  &\!  &\!  &\!  &\!  &\!  &\!  &\!16\\
8    &\!  &\!  &\!  &\!  &\!  &\!  &\!  &\!  &\! 9&\!10&\!11&\!12&\!13&\!14&\!15&\!16\\
9    &\!  &\!  &\!  &\!  &\!  &\!  &\!  &\!  &\!  &\!10&\!  &\!12&\!  &\!14&\!  &\!16\\
10   &\!  &\!  &\!  &\!  &\!  &\!  &\!  &\!  &\!  &\!  &\!11&\!12&\!  &\!  &\!15&\!16\\
11   &\!  &\!  &\!  &\!  &\!  &\!  &\!  &\!  &\!  &\!  &\!  &\!12&\!  &\!  &\!  &\!16\\
12   &\!  &\!  &\!  &\!  &\!  &\!  &\!  &\!  &\!  &\!  &\!  &\!  &\!13&\!14&\!15&\!16\\
13   &\!  &\!  &\!  &\!  &\!  &\!  &\!  &\!  &\!  &\!  &\!  &\!  &\!  &\!14&\!  &\!16\\
14   &\!  &\!  &\!  &\!  &\!  &\!  &\!  &\!  &\!  &\!  &\!  &\!  &\!  &\!  &\!15&\!16\\
15   &\!  &\!  &\!  &\!  &\!  &\!  &\!  &\!  &\!  &\!  &\!  &\!  &\!  &\!  &\!  &\!16\\ 
16   &\! 1&\! 2&\! 3&\! 4&\! 5&\! 6&\! 7&\! 8&\! 9&\!10&\!11&\!12&\!13&\!14&\!15&\!16\\
\end{array}
\end{math}

Like the chart \ref{j4ti0tg4q0}, in the following images, for $n\in\{5,6,7,8\}$, the pixels of the form
$(i,i*_{n}j)$ are colored black while all the other pixels are colored white. One can easily recover the multiplication table for
$A_{n}$ from its image of the form \ref{4t2jip},\ldots,\ref{t42huiotg42}.
\begin{center}
\label{4t2jip}
\includegraphics{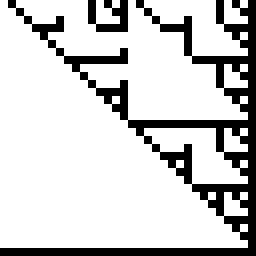}
\end{center}
\begin{center}
\includegraphics{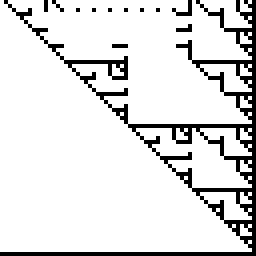}
\end{center}
\begin{center}
\includegraphics{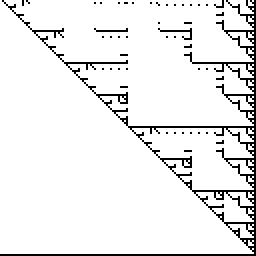}
\end{center}
\begin{center}
\label{t42huiotg42}
\includegraphics{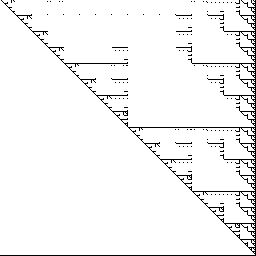}
\end{center}

By reordering the elements of $A_{n}$, one obtains more insightful images that represent the classical Laver tables.
Let \index{$L_{n}:\{0,\ldots,2^{n}-1\}\rightarrow\{0,\ldots,2^{n}-1\}$}
\[L_{n}:\{0,\ldots,2^{n}-1\}\rightarrow\{0,\ldots,2^{n}-1\}\]
be the mapping where $L_{n}(x)$ is obtained by reversing the ordering of the digits in the binary expansion of $x$. More formally, if 
\[x=\sum_{k=0}^{n-1}a_{k}2^{k}\]
where $a_{0},\ldots,a_{n-1}\in\{0,1\}$, then
\[L_{n}(x)=\sum_{k=0}^{n-1}a_{k}2^{n-1-k}.\]
Let 
\index{$L_{n}^{\sharp}:\{1,\ldots,2^{n}\}\rightarrow\{1,\ldots,2^{n}\}$}
\[L_{n}^{\sharp}:\{1,\ldots,2^{n}\}\rightarrow\{1,\ldots,2^{n}\}\]
be the mapping defined by $L_{n}^{\sharp}(x)=L_{n}(x-1)+1$. Define a binary operation \index{$\#_{n}$}$\#_{n}$ on $\{1,\ldots,2^{n}\}$ by
\[x\#_{n}y=L_{n}^{\sharp}(L_{n}^{\sharp}(x)*_{n}L_{n}^{\sharp}(y)).\]
The following images give the multiplication tables for $(\{1,\ldots ,2^{n}\},\#_{n})$ when $n\in\{1,\ldots ,4\}$.

\begin{math}\begin{array}{r|rr}

\#_{1}& 1&2 \\
\hline
1 &  2&  2 \\
2 &  1&  2 \\
\end{array}
\end{math}
\begin{math}\begin{array}{r|rrrr}
\#_{2} &1&2&3&4\\
\hline
1 &  3&  3&  4&  4 \\
2 &  4&  4&  4&  4 \\
3 &  2&  2&  4&  4 \\
4 &  1&  2&  3&  4 \\
\end{array}
\end{math}
\begin{math}\begin{array}{r|rrrrrrrr}
\#_{3}	&	1&2&3&4&5&6&7&8\\
\hline
1 & 5& 5& 6& 6& 7& 7& 8& 8 \\
2 & 6& 6& 6& 6& 8& 8& 8& 8 \\
3 & 7& 7& 7& 7& 8& 8& 8& 8 \\
4 & 8& 8& 8& 8& 8& 8& 8& 8 \\
5 & 3& 3& 4& 4& 7& 7& 8& 8 \\
6 & 4& 4& 4& 4& 8& 8& 8& 8 \\
7 & 2& 2& 4& 4& 6& 6& 8& 8 \\
8 & 1& 2& 3& 4& 5& 6& 7& 8 \\
\end{array}
\end{math}

\begin{math}\begin{array}{r|rrrrrrrrrrrrrrrr}
\#_{4} &1&2&3&4&5&6&7&8&9&10&11&12&13&14&15&16\\
\hline
1  &   9&   9&   9&   9&  12&  12&  12&  12&  14&  14&  14&  14&  16&  16&  16&  16 \\
2  &  10&  10&  10&  10&  12&  12&  12&  12&  14&  14&  14&  14&  16&  16&  16&  16 \\
3  &  11&  11&  11&  11&  12&  12&  12&  12&  15&  15&  15&  15&  16&  16&  16&  16 \\
4  &  12&  12&  12&  12&  12&  12&  12&  12&  16&  16&  16&  16&  16&  16&  16&  16 \\
5  &  13&  13&  13&  13&  14&  14&  14&  14&  15&  15&  15&  15&  16&  16&  16&  16 \\
6  &  14&  14&  14&  14&  14&  14&  14&  14&  16&  16&  16&  16&  16&  16&  16&  16 \\
7  &  15&  15&  15&  15&  15&  15&  15&  15&  16&  16&  16&  16&  16&  16&  16&  16 \\
8  &  16&  16&  16&  16&  16&  16&  16&  16&  16&  16&  16&  16&  16&  16&  16&  16 \\
9  &   5&   5&   5&   5&   8&   8&   8&   8&  14&  14&  14&  14&  16&  16&  16&  16 \\
10 &   6&   6&   6&   6&   8&   8&   8&   8&  14&  14&  14&  14&  16&  16&  16&  16 \\
11 &   7&   7&   7&   7&   8&   8&   8&   8&  15&  15&  15&  15&  16&  16&  16&  16 \\
12 &   8&   8&   8&   8&   8&   8&   8&   8&  16&  16&  16&  16&  16&  16&  16&  16 \\
13 &   3&   3&   4&   4&   7&   7&   8&   8&  11&  11&  12&  12&  15&  15&  16&  16 \\
14 &   4&   4&   4&   4&   8&   8&   8&   8&  12&  12&  12&  12&  16&  16&  16&  16 \\
15 &   2&   2&   4&   4&   6&   6&   8&   8&  10&  10&  12&  12&  14&  14&  16&  16 \\
16 &   1&   2&   3&   4&   5&   6&   7&   8&   9&  10&  11&  12&  13&  14&  15&  16 \\
\end{array}
\end{math}

In image \ref{cvxnjg4bnk94h}, the entries of the form $(x,x\#_{n}y)$ are filled with the number
$x\#_{n}y$ while all the other entries are left blank.

\begin{math}\begin{array}{r|rrrrrrrrrrrrrrrr}
\label{cvxnjg4bnk94h}
\bullet&1&2&3&4&5&6&7&8&9&10&11&12&13&14&15&16\\
\hline
1& &&&&&&&& 9&&& 12&& 14&& 16 \\
2& &&&&&&&&& 10&& 12&& 14&& 16 \\
3& &&&&&&&&&& 11& 12&&& 15& 16 \\
4  & &&&&&&&&&&& 12&&&& 16 \\
5	& &&&&&&&&&&&& 13& 14& 15& 16 \\
6	& &&&&&&&&&&&&& 14&& 16 \\
7	& &&&&&&&&&&&&&& 15& 16 \\
8  & &&&&&&&&&&&&&&& 16 \\
9	& &&&& 5&&& 8&&&&&& 14&& 16 \\
10	& &&&&& 6&& 8&&&&&& 14&& 16 \\
11	& &&&&&& 7& 8&&&&&&& 15& 16 \\
12  & &&&&&&& 8&&&&&&&& 16 \\
13	& && 3& 4&&& 7& 8&&& 11& 12&&& 15& 16 \\
14	& &&& 4&&&& 8&&&& 12&&&& 16 \\
15  & & 2&& 4&& 6&& 8&& 10&& 12&& 14&& 16 \\
16	& 1& 2& 3& 4& 5& 6& 7& 8& 9& 10& 11& 12& 13& 14& 15& 16 \\
\end{array}
\end{math}

In images \ref{32jhti924t} through \ref{3rhu9i3r2}, the pixels of the from $(x,x\#_{n}y)$ are colored black while all of the other pixels are colored white.

\begin{center}
\label{32jhti924t}
\includegraphics{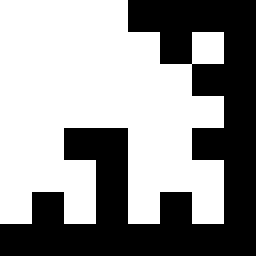}
\end{center}
\begin{center}
\includegraphics{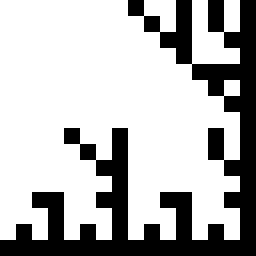}
\end{center}
\begin{center}
\includegraphics{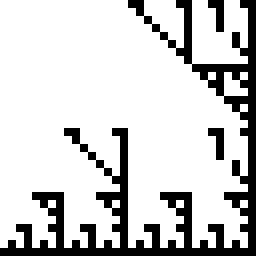}
\end{center}
\begin{center}
\includegraphics{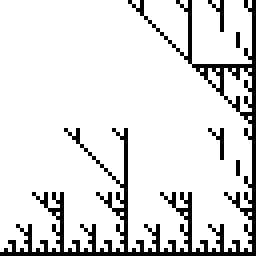}
\label{482itj4}
\end{center}
\begin{center}
\includegraphics{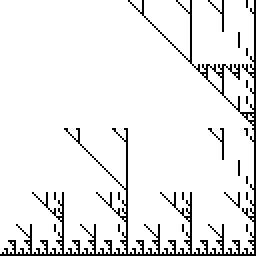}
\end{center}
\begin{center}
\label{3rhu9i3r2}
\includegraphics{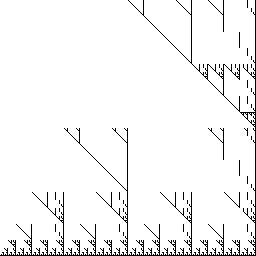}
\end{center}

The multiplication table for $(A_{n},*)$ can be recovered from its image of the same form as $\ref{32jhti924t},\ldots,\ref{3rhu9i3r2}$.
Observe also that the images of the form $\ref{32jhti924t},\ldots,\ref{3rhu9i3r2}$ are essentially the same image where as
$n$ grows larger the corresponding image gives finer detail about the classical Laver tables. If there exists a rank-into-rank cardinal, then the limit as $n\rightarrow\infty$ of the images of the form $\ref{32jhti924t},\ldots,\ref{3rhu9i3r2}$ are eventually fractal-like around every black point. Images $\ref{32jhti924t}$ through $\ref{3rhu9i3r2}$ are as well subsets of Sierpinski Triangle like fractals.
In particular, \ref{482itj4} is a subset of the fractal \ref{full sierpinski} given below.

\begin{center}
\label{full sierpinski}
\includegraphics{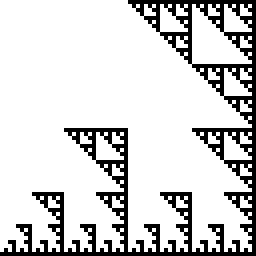}
\end{center}

\begin{center}
\includegraphics{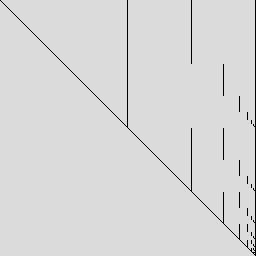}
\end{center}
The above image is a $256\times 256$-region in the $2048\times 2048$-image where the pixels of the form $(x,x\#_{11}y)$ are colored black and all other pixels are colored gray.

There are results about classical Laver tables and their generalizations which can be proven with strong large cardinal hypotheses but where no known proof in ZFC is known.
\begin{thm}
If there exists a rank-into-rank cardinal, then 
$(1)_{n\in\omega}\in\varprojlim_{n\in\omega}A_{n}$ freely generates a sub-LD-system of $\varprojlim_{n\in\omega}A_{n}$.
\label{42t0iieghjo}
\end{thm}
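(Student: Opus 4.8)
The plan is to transport the free LD-system on one generator, realized as the iteration algebra of a rank-into-rank embedding, into $\varprojlim_{n\in\omega}A_{n}$ along the canonical quotient maps. Fix $\lambda$ and a nontrivial elementary embedding $j\colon V_{\lambda}\to V_{\lambda}$, which exists by hypothesis. By the theorem of Laver quoted above, $\mathrm{Iter}(j)$ is exactly the sub-LD-system of $(\mathcal{E}_{\lambda},*)$ generated by $j$ and it is freely generated by $j$. For each $n$, Theorem \ref{Pwehjit} supplies an isomorphism $\iota_{n}\colon A_{n}\to\mathrm{Iter}(j)/{\equiv^{\mathrm{crit}_{n}(j)}}$ with $\iota_{n}(x)=j_{[x]}/{\equiv^{\mathrm{crit}_{n}(j)}}$; composing $\iota_{n}^{-1}$ with the quotient map $\mathrm{Iter}(j)\to\mathrm{Iter}(j)/{\equiv^{\mathrm{crit}_{n}(j)}}$ yields a homomorphism $\phi_{n}\colon\mathrm{Iter}(j)\to A_{n}$ with $\phi_{n}(j)=1$.

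The first task is to show that the family $(\phi_{n})_{n}$ is compatible with the bonding maps $\pi_{n}\colon A_{n+1}\to A_{n}$, $\pi_{n}(x)=(x)_{2^{n}}$, of the inverse system, i.e. that $\pi_{n}\circ\phi_{n+1}=\phi_{n}$. Since $\mathrm{crit}_{n}(j)<\mathrm{crit}_{n+1}(j)$ are limit ordinals, intersecting the defining condition of $\equiv^{\mathrm{crit}_{n+1}(j)}$ with $V_{\mathrm{crit}_{n}(j)}$ shows that $\equiv^{\mathrm{crit}_{n+1}(j)}$ refines $\equiv^{\mathrm{crit}_{n}(j)}$ on $\mathrm{Iter}(j)$, so there is an induced quotient homomorphism $q_{n}\colon\mathrm{Iter}(j)/{\equiv^{\mathrm{crit}_{n+1}(j)}}\to\mathrm{Iter}(j)/{\equiv^{\mathrm{crit}_{n}(j)}}$ with $\phi_{n}=\iota_{n}^{-1}\circ q_{n}\circ\iota_{n+1}\circ\phi_{n+1}$, and $\phi_{n+1}$ is onto $A_{n+1}$. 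Hence it suffices to check $\pi_{n}=\iota_{n}^{-1}\circ q_{n}\circ\iota_{n+1}$ as homomorphisms $A_{n+1}\to A_{n}$. By Theorem \ref{tw4hu} the element $1$ generates $(A_{n+1},*_{n+1})$ as an LD-system (one has $1*_{n+1}1=2,\,2*_{n+1}1=3,\ldots$), so a homomorphism out of $A_{n+1}$ is determined by the image of $1$; and both maps send $1$ to $1$, since $\pi_{n}(1)=(1)_{2^{n}}=1$ while $\iota_{n}^{-1}(q_{n}(\iota_{n+1}(1)))=\iota_{n}^{-1}(j/{\equiv^{\mathrm{crit}_{n}(j)}})=1$. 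Therefore $\phi:=(\phi_{n})_{n}\colon\mathrm{Iter}(j)\to\varprojlim_{n\in\omega}A_{n}$ is a well-defined LD-homomorphism, and $\phi(j)=(\phi_{n}(j))_{n}=(1)_{n\in\omega}$.

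Next I would prove that $\phi$ is injective. If $k\ne l$ in $\mathrm{Iter}(j)$, then $k(x)\ne l(x)$ for some $x\in V_{\lambda}$; choosing $z$ in the symmetric difference $k(x)\,\triangle\,l(x)$ and using that $\{\mathrm{crit}_{n}(j)\mid n\in\omega\}$ is cofinal in $\lambda$, we get $z\in V_{\mathrm{crit}_{n}(j)}$ for some $n$, whence $k(x)\cap V_{\mathrm{crit}_{n}(j)}\ne l(x)\cap V_{\mathrm{crit}_{n}(j)}$, so $k\not\equiv^{\mathrm{crit}_{n}(j)}l$ and thus $\phi_{n}(k)\ne\phi_{n}(l)$. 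Consequently $\phi$ embeds the LD-system $\mathrm{Iter}(j)$, which is free on the single generator $j$, into $\varprojlim_{n\in\omega}A_{n}$. Its image is a sub-LD-system isomorphic to $\mathrm{Iter}(j)$, and it is generated by $\phi(j)=(1)_{n\in\omega}$ because the homomorphic image of a subalgebra generated by a set is generated by the image of that set. Hence $(1)_{n\in\omega}$ freely generates a sub-LD-system of $\varprojlim_{n\in\omega}A_{n}$, as desired.

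I expect essentially no serious obstacle here, since the genuinely deep content — that $\mathrm{Iter}(j)$ is freely generated by $j$, which really does require a rank-into-rank cardinal — is already available (Laver's theorem quoted above) and the rest is bookkeeping. The one step requiring care is the identification, in the second paragraph, of the inverse-system bonding maps $\pi_{n}$ with the rank-reduction maps $\iota_{n}^{-1}\circ q_{n}\circ\iota_{n+1}$; this is where one uses that $1$ generates each $A_{n+1}$ and that the isomorphisms of Theorem \ref{Pwehjit} carry $1$ to the class of $j$.
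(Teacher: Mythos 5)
Your proof is correct, and it is a clean, self-contained argument. What it does: it transports Laver's free LD-system $\mathrm{Iter}(j)$ into $\varprojlim_{n}A_{n}$ by piecing together the isomorphisms $\iota_{n}$ of Theorem~\ref{Pwehjit} into a thread map $\phi=(\phi_{n})_{n}$, verifies compatibility with the bonding maps $\pi_{n}$ by using the fact that $1$ generates $A_{n+1}$, and establishes injectivity of $\phi$ from the cofinality of $\{\mathrm{crit}_{n}(j)\}$ in $\lambda$ together with the paper's characterization of $\equiv^{\gamma}$ in terms of all $x\in V_{\lambda}$. Every step checks out; the only place where care is truly needed is exactly where you flag it, the identification $\pi_{n}=\iota_{n}^{-1}\circ q_{n}\circ\iota_{n+1}$, and the ``determined by the image of a generator'' argument you give there is the right one.

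The route you take is genuinely different from the one the paper builds. The paper does not argue this theorem head-on; it develops in Section~7 the notions of a vast inverse system of permutative LD-systems and the algebra $\mathrm{EE}(\mathcal{X})$, proves in Proposition~\ref{4tji20t24hu} that every element of $\mathrm{EE}^{+}(\mathcal{X})$ freely generates, identifies $\mathcal{E}_{\lambda}$ with $\mathrm{EE}((\mathcal{E}_{\lambda}/\equiv^{\kappa_{n}})_{n})$ in Theorem~\ref{4th2u9it4}, and then derives the classical Laver table statement as one of several equivalent conditions in a ZFC equivalence proposition whose hypothesis (``there exists a vast system of Laver-like LD-systems'') is discharged by the rank-into-rank assumption. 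The paper's route buys considerable generality, yielding free sub-LD-systems (and sub-LD-monoids) inside inverse limits of multigenic Laver tables over arbitrary alphabets and showing the set of such free generators is a dense $G_{\delta}$; your route buys brevity and transparency for the specific classical case. Both depend on the same deep input, Laver's theorem that $j$ freely generates $\mathrm{Iter}(j)$, which is where the large cardinal hypothesis is actually spent.
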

\begin{thm}
Suppose that there exists a rank-into-rank embedding. Then for $x,y\in A_{n}$, if $(x*_{n}x)*_{n}y=2^{n}$, then $x*_{n}y=2^{n}$ as well.
\label{fy1DWY2v4g8uah}
\end{thm}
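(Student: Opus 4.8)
The plan is to use a rank-into-rank embedding to identify each classical Laver table with a quotient of an algebra of iterates, and then to reduce the assertion to an inequality between critical points of elementary embeddings.

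Fix a non-trivial $j\in\mathcal{E}_{\lambda}^{+}$ and put $\gamma=\mathrm{crit}_{n}(j)$. By Theorem~\ref{Pwehjit} the map $x\mapsto j_{[x]}/\equiv^{\gamma}$ is an isomorphism of $A_{n}$ onto $\mathrm{Iter}(j)/\equiv^{\gamma}$. Since $\gamma$ is a limit ordinal, for $m\in\mathcal{E}_{\lambda}$ one has $m\equiv^{\gamma}1_{V_{\lambda}}$ if and only if $\mathrm{crit}(m)\geq\gamma$; hence an element $z\in A_{n}$ equals $2^{n}$ exactly when some (equivalently, every) embedding representing it has critical point $\geq\gamma$. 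Writing $k=j_{[x]}$ and $l=j_{[y]}$ (both non-trivial, being elements of $\mathrm{Iter}(j)$) and using that $x\mapsto j_{[x]}/\equiv^{\gamma}$ is a homomorphism — so that $j_{[(x*_{n}x)*_{n}y]}\equiv^{\gamma}(k*k)*l$ and $j_{[x*_{n}y]}\equiv^{\gamma}k*l$ — the hypothesis $(x*_{n}x)*_{n}y=2^{n}$ translates to $\mathrm{crit}\bigl((k*k)*l\bigr)\geq\gamma$, and the conclusion $x*_{n}y=2^{n}$ translates to $\mathrm{crit}(k*l)\geq\gamma$. Thus it is enough to prove
\[
\mathrm{crit}\bigl((k*k)*l\bigr)\;\leq\;\mathrm{crit}(k*l)\qquad\text{for all non-trivial }k,l\in\mathrm{Iter}(j).
\]

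For this I use two standard facts about application on $\mathcal{E}_{\lambda}$. First, the critical-point formula: for non-trivial $p,q\in\mathcal{E}_{\lambda}$, $\mathrm{crit}(p*q)=\mathrm{crit}(q)$ if $\mathrm{crit}(q)<\mathrm{crit}(p)$, while $\mathrm{crit}(p*q)=p\bigl(\mathrm{crit}(q)\bigr)$ if $\mathrm{crit}(q)\geq\mathrm{crit}(p)$; in particular $\mathrm{crit}(k*k)=k\bigl(\mathrm{crit}(k)\bigr)$. Second, specializing the identity $p\circ q=(p*q)\circ p$ from \ref{JBNajo34g6ft} to $p=q=k$ gives $(k*k)\circ k=k\circ k$, i.e.\ $(k*k)\bigl(k(\beta)\bigr)=k\bigl(k(\beta)\bigr)$ for every ordinal $\beta$ — so $k*k$ and $k$ agree on $\mathrm{range}(k)$. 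Put $\kappa=\mathrm{crit}(k)$ and $\nu=\mathrm{crit}(l)$, and note $\mathrm{crit}(k*k)=k(\kappa)>\kappa$. If $\nu<k(\kappa)$ then $\mathrm{crit}\bigl((k*k)*l\bigr)=\nu$, while $\mathrm{crit}(k*l)$ is $\nu$ (when $\nu<\kappa$) or $k(\nu)\geq\nu$ (when $\kappa\leq\nu$), so the desired inequality holds. If $\nu\geq k(\kappa)$ then $\nu>\kappa$, whence $\mathrm{crit}(k*l)=k(\nu)$ and $\mathrm{crit}\bigl((k*k)*l\bigr)=(k*k)(\nu)$, and everything comes down to
\[
(k*k)(\nu)\;\leq\;k(\nu)\qquad\text{where }\nu=\mathrm{crit}(l)\geq\mathrm{crit}(k*k).
\]

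This inequality is the heart of the matter, and the step I expect to be the main obstacle. When $\nu\in\mathrm{range}(k)$, say $\nu=k(\xi)$, the displayed identity gives $(k*k)(\nu)=k\bigl(k(\xi)\bigr)=k(\nu)$ and we even have equality; the real work lies in the case $\nu\notin\mathrm{range}(k)$. There one argues from the agreement of $k*k$ and $k$ on $\mathrm{range}(k)$ together with elementarity: both embeddings are strictly increasing, and for every ordinal $\xi$ with $k(\xi)<\nu$ we have $(k*k)(\nu)>(k*k)\bigl(k(\xi)\bigr)=k\bigl(k(\xi)\bigr)$ and $k(\nu)>k\bigl(k(\xi)\bigr)$, so it remains to compare the two values on the single ``gap'' of $\mathrm{range}(k)$ that contains $\nu$. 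Since $\nu$ is the critical point of an embedding of $V_{\lambda}$ it is a strongly inaccessible cardinal lying in the critical sequence of $j$, and since $k(\kappa)\leq\nu$ one can pin down how $k$ and $k*k$ act on this segment of the critical sequence; the upshot is that the ``extra'' jumps of $k$ beyond those of $k*k$ occur above the images the two embeddings share, forcing $(k*k)(\nu)\leq k(\nu)$ — equivalently, $k*k\leq k$ pointwise on the ordinals, the only non-trivial range being above $k(\kappa)$ and off $\mathrm{range}(k)$. Granting this, the reduction above finishes the proof. (One convenient reformulation of the obstacle: since $(k*k)*(k*p)=k*(k*p)$ for every $p$ by self-distributivity, the inequality for a given $l$ holds as soon as the $\equiv^{\gamma}$-class of $l$ is a left multiple of that of $k$, so the difficulty can be phrased as a surjectivity statement for left translation by $k$ in $\mathcal{E}_{\lambda}/\equiv^{\gamma}$.)
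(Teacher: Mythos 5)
Your reduction to elementary embeddings is correct and matches the intended route. Writing $k=j_{[x]}$ and $l=j_{[y]}$ and $\nu=\mathrm{crit}(l)$, the hypothesis translates to $(k*k)(\nu)\geq\gamma$ and the conclusion to $k(\nu)\geq\gamma$, so everything indeed hinges on the pointwise inequality $(k*k)(\alpha)\leq k(\alpha)$, which is precisely Lemma~\ref{t42ijt0t24j} of the paper. You correctly identify this as the heart of the matter.

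However, you do not actually prove the inequality. Your argument via the ``gap'' of $\mathrm{range}(k)$ containing $\nu$ only produces lower bounds for both $(k*k)(\nu)$ and $k(\nu)$ (namely both exceed $\sup\{k(k(\xi)):k(\xi)<\nu\}$), which does not compare the two values to one another. The subsequent appeal to $\nu$ being in the critical sequence of $j$ and to ``extra jumps of $k$ occurring above the images the two embeddings share'' is not an argument; it gestures at the conclusion without deriving it. There is also no reason to split off the case $\nu\in\mathrm{range}(k)$ or to restrict to $\nu\geq\mathrm{crit}(k*k)$: the inequality is true for every ordinal $\alpha<\lambda$, and the proof needs no case analysis. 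The missing step is a short elementarity argument: let $\beta$ be least with $k(\beta)>\alpha$, so that $V_{\lambda}\models\forall x<\beta,\,k(x)\leq\alpha$; applying $k$ gives $V_{\lambda}\models\forall x<k(\beta),\,(k*k)(x)\leq k(\alpha)$; since $\alpha<k(\beta)$, this yields $(k*k)(\alpha)\leq k(\alpha)$. Substituting this cleanly closes the gap you flag. (Your reformulation at the end in terms of surjectivity of left translation by $k$ on $\mathcal{E}_{\lambda}/\equiv^{\gamma}$ is a plausible-sounding alternative but is harder, not easier, than the elementarity argument.)
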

Theorems \ref{42t0iieghjo} and \ref{fy1DWY2v4g8uah} currently do not have a proof that does not require large cardinal hypotheses,
but theorems \ref{42t0iieghjo} and \ref{fy1DWY2v4g8uah} can be proven using large cardinal hypotheses slightly weaker than the notion of a rank-into-rank cardinal, namely the hypothesis that for all $n$, there exists an $n$-huge cardinal. Theorem \ref{42t0iieghjo} is known to be independent of the axiomatic system Primitive Recursive Arithmetic \cite{RD92}.

An elementary embedding $j:V\rightarrow M$ where $M$ is a transitive class is said to be $n$-huge if
$M^{j^{n}(\kappa)}\subseteq M$ where $\kappa$ is the critical point of $j$. The critical point of an $n$-huge elementary embedding
is said to be an $n$-huge cardinal. The existence of an $n$-huge cardinal is a large cardinal axiom slightly weaker than the existence of a rank-into-rank cardinal. The wholeness axiom is a large cardinal axiom stronger than $n$-hugeness but weaker than the notion of a rank-into-rank cardinal \cite{PK}. There are also several studied large cardinal notions with consistency strength between the notion of an $n$-huge cardinal and the notion of an $n+1$-huge cardinal \cite{KS}.

\begin{thm}
Suppose that there exists a rank-into-rank embedding $j:V_{\lambda}\rightarrow V_{\lambda}$. Then for all $n$ there are
$\lambda$ many $n$-huge cardinals below $\lambda$.
\end{thm}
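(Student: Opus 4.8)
The plan is to show the critical point $\kappa=\mathrm{crit}(j)$ is $n$-huge, to observe that this is already witnessed inside $V_{\lambda}$, and then to leverage the elementarity of $j\colon V_{\lambda}\to V_{\lambda}$ together with the standard reflection properties of huge cardinals. Throughout write $\kappa_{i}=j^{i}(\kappa)$, so that $\lambda=\sup_{i}\kappa_{i}$ and each $\kappa_{i}$ is a cardinal. For each $n$ I would form the derived ultrafilter
\[U_{n}=\{X\subseteq\mathcal{P}(\kappa_{n}):j''\kappa_{n}\in j(X)\}.\]
Since any $X\subseteq\mathcal{P}(\kappa_{n})$ has rank below $\lambda$ it lies in $V_{\lambda}$, so $j(X)$ makes sense, and $j''\kappa_{n}\subseteq j(\kappa_{n})=\kappa_{n+1}<\lambda$; hence $U_{n}$ is well defined and in fact $U_{n}\in V_{\kappa_{n}+3}\subseteq V_{\lambda}$.

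The verification that $U_{n}$ works is routine but must be done carefully: $\mathrm{crit}(j)=\kappa$ gives $\kappa$-completeness, elementarity of $j$ gives that $U_{n}$ is an ultrafilter and gives normality and fineness, and a direct computation $j''\kappa_{n}\cap\kappa_{i+1}=j''\kappa_{i}$ (for $i<n$), together with the $j$-shift of parameters, gives that $\{x:\mathrm{ot}(x\cap\kappa_{i+1})=\kappa_{i}\}\in U_{n}$ for every $i<n$. Thus $U_{n}$ is a $\kappa$-complete normal fine ultrafilter on $\mathcal{P}(\kappa_{n})$ of the kind that witnesses, by the standard ultrafilter characterisation of $n$-huge cardinals (see \cite{K}), that $\kappa$ is $n$-huge with targets $\kappa_{0}<\cdots<\kappa_{n}$. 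Every quantifier occurring in these properties ranges only over $V_{\kappa_{n}+\omega}\subseteq V_{\lambda}$, and $V_{\lambda}$ computes $\mathcal{P}(\kappa_{n})$ and $\mathcal{P}(\mathcal{P}(\kappa_{n}))$ correctly, so $V_{\lambda}$ itself recognises $U_{n}$ as such a witness. Hence $V_{\lambda}\models$ ``$\kappa$ is $k$-huge'' for every $k$.

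Applying the elementary map $j^{m}\colon V_{\lambda}\to V_{\lambda}$ to this and using $j^{m}(\kappa)=\kappa_{m}$ gives $V_{\lambda}\models$ ``$\kappa_{m}$ is $k$-huge'' for all $m$ and $k$. Now fix $n$ and work inside $V_{\lambda}$: since $\kappa_{m}$ is $(n+1)$-huge, the induced ultrapower embedding $i\colon V_{\lambda}\to N$ has critical point $\kappa_{m}$, satisfies $i(\kappa_{m})=\kappa_{m+1}$, and has $N\supseteq V_{\kappa_{m+n+1}}$; the ultrafilter $j^{m}(U_{n})$, which witnesses ``$\kappa_{m}$ is $n$-huge'', therefore lies in $N$, so $N\models$ ``$\kappa_{m}$ is $n$-huge''. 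Since $\beta<\kappa_{m}=\mathrm{crit}(i)<\kappa_{m+1}=i(\kappa_{m})$ for every $\beta<\kappa_{m}$, pulling the statement ``there is an $n$-huge cardinal in $(\beta,\kappa_{m})$'' back along $i$ shows that the $n$-huge cardinals are unbounded in $\kappa_{m}$, so --- $\kappa_{m}$ being regular in $V_{\lambda}$ --- the set $S_{m}$ of $n$-huge cardinals below $\kappa_{m}$ has cardinality $\kappa_{m}$ in $V_{\lambda}$.

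Finally I would transfer to $V$. For a cardinal $\delta<\lambda$, the assertion ``$\delta$ is $n$-huge'' is absolute between $V_{\lambda}$ and $V$: a witnessing ultrafilter lies in $V_{\lambda}$ and all its defining properties are verified within some $V_{\tau+\omega}\subseteq V_{\lambda}$ with $\tau<\lambda$, so it is a genuine witness in $V$, and $V$ and $V_{\lambda}$ agree on cardinalities of subsets of $\kappa_{m}$. Hence every member of each $S_{m}$ is $n$-huge in $V$, so the set $T=\{\delta<\lambda:\delta\text{ is }n\text{-huge}\}$ has $|T|\ge|S_{m}|=\kappa_{m}$ for every $m$, whence $|T|\ge\sup_{m}\kappa_{m}=\lambda$; as $T\subseteq\lambda$ this gives $|T|=\lambda$, which is what was to be shown. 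I expect the main obstacle to be the construction in the first two paragraphs: checking that $U_{n}$ has all the required properties with the $j$-shifts in the order-type conditions handled correctly, and confirming that $V_{\lambda}$ (and not merely $V$) recognises $U_{n}$ as a witness to $n$-hugeness; the argument inside $V_{\lambda}$ additionally relies on the standard fact that the target model of an $(n+1)$-huge embedding contains $V_{\kappa_{m+n+1}}$, which is what allows the $n$-huge witness to be located inside it.
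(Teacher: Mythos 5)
Your argument is correct, and it is the standard textbook derivation of $n$-hugeness from a rank-into-rank embedding; the paper states this theorem without supplying its own proof, so there is nothing to compare against, but your route — form the derived ultrafilter $U_n$ on $\mathcal{P}(\kappa_n)$ inside $V_\lambda$, invoke the ultrafilter characterisation to get $V_\lambda\models$ ``$\kappa$ is $n$-huge'', shift with the elementary iterates $j^m$ to make each $\kappa_m$ $n$-huge in $V_\lambda$, reflect the $(n+1)$-hugeness of $\kappa_m$ down to obtain $\kappa_m$ many $n$-huge cardinals below $\kappa_m$, and finally observe that the witness being a set in $V_\lambda$ with $V_\lambda$ computing the relevant power sets correctly gives upward absoluteness of ``$\delta$ is $n$-huge'' from $V_\lambda$ to $V$ — is exactly the canonical one. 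One small point of precision: in the reflection step, the facts $i(\kappa_m)=\kappa_{m+1}$ and $N\supseteq V_{\kappa_{m+n+1}}$ hold for the specific ultrapower by $j^m(U_{n+1})$ (so that $j^m(U_n)\in V_{\kappa_{m+n}+3}\subseteq V_{\kappa_{m+n+1}}\subseteq N$), not for an arbitrary $(n+1)$-huge embedding, so you should name that witness explicitly; having done so, the argument is airtight.
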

\begin{thm}
Suppose that for all $n\in\omega$ there exists an $n$-huge cardinal. Then the element
$(1)_{n\in\omega}\in\varprojlim_{n\in\omega}A_{n}$ freely generates a sub-LD-system of $\varprojlim_{n\in\omega}A_{n}$.
\end{thm}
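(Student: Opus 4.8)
The plan is to reduce the assertion to a family of arithmetical statements, one for each pair of distinct elements of the free LD-system, and then to observe that each such statement, although obtained en masse from a single rank-into-rank embedding in Theorem~\ref{42t0iieghjo}, already follows from the existence of an $n$-huge cardinal for $n$ large enough relative to that pair. First, since the projection $A_{n+1}\to A_n$, $x\mapsto(x)_{2^n}$, sends $1$ to $1$, there is a canonical homomorphism $\phi$ from the free LD-system $F$ on one generator into $\varprojlim_{n}A_n$ taking the generator to $(1)_{n\in\omega}$, and $(1)_{n\in\omega}$ freely generates a sub-LD-system exactly when $\phi$ is injective. As the operation of $\varprojlim_n A_n$ is coordinatewise, $\phi$ is injective iff for all distinct $u,v\in F$ there is an $n$ with $u^{A_n}(1)\ne v^{A_n}(1)$; since the $A_n$ are uniformly computable, each of these is a $\Sigma^0_1$ statement $\sigma(u,v)$.

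Fix distinct $u,v\in F$. The plan is to attach to the pair $(u,v)$ a natural number $N=N(u,v)$ measuring the nesting of applications in $u$ and $v$, so that $u$ and $v$ lie in the part of $F$ visible within $N$ levels of iteration, and then invoke the hypothesis to fix an $n$-huge embedding $j\colon V\to M$ with $\mathrm{crit}(j)=\kappa$ and $M^{\lambda_n}\subseteq M$, where $\lambda_i=j^i(\kappa)$ and $n$ is large relative to $N$ (note $V_{\lambda_n+1}\subseteq M$, as $\lambda_n$ is a strong limit). The closure of $M$ under $\lambda_n$-sequences is precisely what is needed to imitate, for the first $n$ critical points $\kappa=\lambda_0<\lambda_1<\cdots<\lambda_{n-1}$, Laver's construction of $\mathrm{Iter}(k)$ and of the congruences $\equiv^{\gamma}$ for a genuine rank-into-rank $k$: using that $j(\kappa)$ is again highly huge inside $M$ and iterating $n$ times within the available closure, one extracts from $j$ a finite tower of embeddings generating an LD-system $B$ with a distinguished element $g$ that behaves like $\mathrm{Iter}(k)$ truncated at its first $n$ critical points. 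Running the finitary portions of the arguments behind Theorems~\ref{tw4hu} and~\ref{Pwehjit}, one obtains (i) a congruence on $B$ whose quotient is isomorphic to some $A_m$ with $m\ge N$ via $g\mapsto 1$, the quotient map being injective on terms of depth $\le N$ evaluated at $g$; and (ii) that the sub-LD-system of $B$ generated by $g$ admits no relation between depth-$\le N$ terms beyond those provable in LD, this last point because the relevant iterates of $j$ have strictly increasing critical points $\lambda_0<\cdots<\lambda_{N-1}$ and the critical-point bookkeeping of the Laver-Steel theorem involves only these finitely many steps, hence goes through inside $M$.

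Combining (i) and (ii): since $u\ne v$ in $F$ and both have depth $\le N$, property (ii) gives $u^{B}(g)\ne v^{B}(g)$, and applying the quotient map of (i) yields $u^{A_m}(1)\ne v^{A_m}(1)$, i.e.\ $\sigma(u,v)$ holds. As $u,v$ ranged over all distinct pairs in $F$, $\phi$ is injective, which is the theorem. (Equivalently one may argue by contradiction: if $(1)_{n\in\omega}$ failed to generate freely, fix witnesses $u\ne v$ with $u^{A_n}(1)=v^{A_n}(1)$ for all $n$, and derive a contradiction from an $n$-huge cardinal as above.)

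The routine parts are the reformulation in the first paragraph and the bookkeeping in the third. The real obstacle is the middle step: extracting from a single $n$-huge embedding a ``truncated iteration algebra'' whose first $n$ levels faithfully reproduce $A_n$ together with the absence of accidental identifications at depth $\le n$. Concretely, one must re-derive Theorem~\ref{tw4hu}, Theorem~\ref{Pwehjit}, and Laver's theorem that a single elementary embedding freely generates its sub-LD-system, but now with only $\lambda_n$-closure of the target of $j$ in place of a self-embedding of a rank $V_\lambda$; the $n$-hugeness closure is exactly the amount of strength that validates the finitely-many-steps version of each of these arguments, and keeping track of the critical points of the iterates is where the most care is needed. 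A secondary point to pin down is the correct choice of $N(u,v)$: one needs that distinct elements of $F$ are genuinely separated within finitely many levels of iteration, so that the truncation level $n$ can be chosen as a function of $u$ and $v$ alone.
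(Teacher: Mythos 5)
Your proposal does not correspond to a proof the paper actually gives: the theorem is stated in the introduction as background, and the closest machinery in the body (Theorem~\ref{t4h82ugbwerh}, Theorem~\ref{42gyu9bf2HU}, and Example~\ref{t42ihu9tgy2u} in Section~\ref{t4i80t4qwkao2}, together with the vastness characterization in Section 7) is developed from I1 or from Laver's axioms $E_n(\lambda)$, not from ``for all $n$ there is an $n$-huge cardinal.'' So this is a statement the paper cites rather than proves, and your blind attempt must stand on its own.

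Your first paragraph (the reduction to a family of $\Sigma^0_1$ statements indexed by pairs $u\neq v$ in the one-generated free LD-system) is correct and is the right place to start. The gap is in the middle step, and I think it is a genuine circularity rather than a merely incomplete argument. You choose $N(u,v)$ syntactically, take an $n$-huge $j$ with $n$ large relative to $N$, form the truncated algebra $B$, and assert (i) a quotient $B\twoheadrightarrow A_m$ with $m\geq N$ that is \emph{injective on terms of depth $\leq N$ evaluated at $g$}, and (ii) that $B$ has no depth-$\leq N$ relations. But the injectivity asserted in (i) \emph{is} the statement that $A_m$ separates depth-$\leq N$ words applied to $1$, which is exactly what one is trying to prove; and in the classical picture the least such $m$ is governed by the function $f$, which the paper notes is not primitive recursive if total. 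So the level $m$ at which two depth-$N$ terms first get separated is not bounded by any computable function of $N$, and hence cannot be guaranteed by choosing $n$ as a syntactic function of $u,v$. Your own closing remark -- that a ``secondary point to pin down is the correct choice of $N(u,v)$: one needs that distinct elements of $F$ are genuinely separated within finitely many levels of iteration'' -- is in fact the conclusion of the theorem, not a lemma you can quote. The phrase ``the relevant iterates of $j$ have strictly increasing critical points $\lambda_0<\cdots<\lambda_{N-1}$'' conceals the same issue: if you mean $\mathrm{crit}(j_{[2^m]})=j^m(\kappa)$ for $m<N$, that identity is Laver's period theorem and requires the Laver--Steel machinery, so taking it as given begs the question; if you only mean the easy increasing chain $\kappa < j(\kappa) < \cdots$, it does not by itself rule out collisions among depth-$\leq N$ words.

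What is missing, therefore, is a \emph{truncated Laver--Steel theorem}: a ZFC-provable statement, from the existence of an $n$-huge embedding $j\colon V\to M$, that the finitely iterable fragment of $j$ (the maps $j\upharpoonright V_{j^k(\kappa)}$ for $k<n$, with the partial application operation one can define on them) forms a Laver-like LD-system whose critical points exhaust $\kappa, j(\kappa), \ldots, j^{n-1}(\kappa)$, and that this algebra embeds the relevant initial segment of the free algebra. That is a real result to establish, not a routine ``re-derivation with $\lambda_n$-closure,'' because the original Laver--Steel argument quantifies over arbitrary infinite sequences of embeddings of $V_\lambda$ and the $n$-huge setting lacks a self-embedding of any rank. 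Until that step is supplied, the proposal outlines a program rather than a proof.
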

\begin{thm}
Suppose that for all $n\in\omega$ there exists an $n$-huge cardinal. Then for $x,y\in A_{n}$, if $(x*_{n}x)*_{n}y=2^{n}$, then $x*_{n}y=2^{n}$ as well.
\end{thm}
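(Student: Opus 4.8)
The plan is to reduce the statement, for each fixed $n$, to a computation with critical points in an algebra of elementary embeddings, and then to carry that computation out. Since the assertion is a property of the single finite algebra $A_{n}$, it is enough to prove it for each $n$ separately, and for a fixed $n$ one does not need a full rank-into-rank embedding. Indeed, by the standard analysis of the Laver iteration the algebra $A_{n}\cong\mathrm{Iter}(j)/\equiv^{\gamma}$, with $\gamma=\mathrm{crit}_{n}(j)$ and the isomorphism $x\mapsto j_{[x]}/\equiv^{\gamma}$ of Theorem~\ref{Pwehjit}, can already be built from a sufficiently large finite fragment of an $m$-huge cardinal once $m$ is large enough relative to $n$; such cardinals exist by hypothesis. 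Under this isomorphism $2^{n}$ is the top element of $A_{n}$, which corresponds to the $\equiv^{\gamma}$-class of $1_{V_{\lambda}}$; since $r\equiv^{\gamma}1_{V_{\lambda}}$ exactly when $\mathrm{crit}(r)\geq\gamma$, we have $w=2^{n}$ iff $\mathrm{crit}(j_{[w]})\geq\gamma$ for every $w\in A_{n}$. As the isomorphism respects $*$, the hypothesis $(x*_{n}x)*_{n}y=2^{n}$ translates to $\mathrm{crit}\big((j_{[x]}*j_{[x]})*j_{[y]}\big)\geq\gamma$, and the goal $x*_{n}y=2^{n}$ translates to $\mathrm{crit}(j_{[x]}*j_{[y]})\geq\gamma$.

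So it suffices to prove the following purely embedding-theoretic lemma and then apply it with $p=j_{[x]}$ and $q=j_{[y]}$: for all nontrivial $p,q\in\mathcal{E}_{\lambda}$,
\[\mathrm{crit}\big((p*p)*q\big)\leq\mathrm{crit}(p*q).\]
First one records that $\mathrm{crit}(p*r)=p(\mathrm{crit}(r))$ for every $p$ and every nontrivial $r$: from $p*r=\bigcup_{\alpha<\lambda}p(r|_{V_{\alpha}})$, for $\alpha>\mathrm{crit}(r)$ the object $r|_{V_{\alpha}}\in V_{\lambda}$ ``is a map with critical point $\mathrm{crit}(r)$'', so by elementarity $p(r|_{V_{\alpha}})=(p*r)|_{V_{p(\alpha)}}$ ``is a map with critical point $p(\mathrm{crit}(r))$'', and these truncations increase to $p*r$ while being tr
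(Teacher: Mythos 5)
Your reduction is correct and is essentially the intended one: translating via Theorem~\ref{Pwehjit} turns $(x*_n x)*_n y = 2^n \Rightarrow x*_n y = 2^n$ into the embedding-theoretic inequality $\mathrm{crit}\big((p*p)*q\big)\leq\mathrm{crit}(p*q)$, which by the identity $\mathrm{crit}(p*r)=p(\mathrm{crit}(r))$ reduces to $(p*p)(\alpha)\leq p(\alpha)$ for all $\alpha<\lambda$. That last inequality is exactly Lemma~\ref{t42ijt0t24j} of the paper, and it is where the real content lies; your write-up is cut off before reaching it, so the heart of the argument is not visible. To complete it: fix $\alpha$, let $\beta$ be least with $p(\beta)>\alpha$; then $V_\lambda\models\forall x<\beta\,[\,p(x)\leq\alpha\,]$, and applying $p$ gives $V_\lambda\models\forall x<p(\beta)\,[\,(p*p)(x)\leq p(\alpha)\,]$. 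Since $\alpha<p(\beta)$, this yields $(p*p)(\alpha)\leq p(\alpha)$. Combined with $\mathrm{crit}((p*p)*q)=(p*p)(\mathrm{crit}(q))$ and $\mathrm{crit}(p*q)=p(\mathrm{crit}(q))$, the key lemma follows, and your translation step closes the argument.

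Two smaller points. First, you should make sure to state $(p*p)(\alpha)\leq p(\alpha)$ explicitly as a lemma, since the rest of the proof is routine bookkeeping but this step is not; without it there is no proof. Second, the reduction from the rank-into-rank setting to the $n$-huge hypothesis is only gestured at (``a sufficiently large finite fragment of an $m$-huge cardinal''). The idea is right --- one works with the restrictions $j\upharpoonright_{\gamma+1}$ of an $m$-huge embedding, which behave like elements of $\mathcal{E}_\lambda/\equiv^\gamma$ for large enough finite fragments and still satisfy the elementarity argument above --- but you should either cite the machinery in the paper (the extendibility mappings $\mathbf{EE}_\gamma$ and the remark that $n$-hugeness suffices for Theorem~\ref{fy1DWY2v4g8uah}) or say more precisely how the finite approximation is extracted. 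As written it is a plausible plan, not a proof.
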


Let $f:\omega\rightarrow\omega+1$ be the function where $f(n)$ is the least natural number such that
$1*_{f(n)}2^{n}<2^{f(n)}$ or $f(n)=\infty$ is no such natural number exists.

\begin{prop}
$f(n)<\omega$ for all natural numbers $n$ if and only if $(1)_{n\in\omega}$ freely generates a subalgebra of $\varprojlim_{n\in\omega}A_{n}$.
\end{prop}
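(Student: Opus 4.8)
\emph{Proof proposal.} Let $F$ be the free LD-system on a single generator $x$, and for each $N$ let $\phi_N\colon F\to A_N$ be the unique LD-homomorphism with $\phi_N(x)=1$ (it exists by freeness and is onto because the left powers of $1$ run through all of $A_N$; moreover $\phi_N(x_{[k]})=(k)_{2^N}$ by a one-line induction, since $y*_N1=y+1$ for $y<2^N$ and $2^N*_N1=1$). Since the projection $\pi\colon A_{N+1}\to A_N$, $\pi(y)=(y)_{2^N}$, is a homomorphism, uniqueness of $\phi_N$ forces $\pi\circ\phi_{N+1}=\phi_N$, so the $\phi_N$ combine into a homomorphism $\phi\colon F\to\varprojlim_N A_N$ with $\phi(x)=(1)_{n\in\omega}$; its image is exactly the sub-LD-system generated by $(1)_{n\in\omega}$, and that element freely generates a sub-LD-system iff $\phi$ is injective. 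The plan is therefore to prove
\[f(n)<\omega\ \text{for all}\ n\ \Longleftrightarrow\ o_N(1)\to\infty\ \Longleftrightarrow\ \phi\ \text{is injective}.\]

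\emph{The first equivalence} is elementary. I would first observe that $a*_N2^N=2^N$ for every $a\in A_N$: self-distributivity gives $a*_N y=a*_N(2^N*_N y)=(a*_N2^N)*_N(a*_N y)$ for all $y$, and putting $y=2^N$ yields $b*_Nb=b$ with $b:=a*_N2^N$, whence $b=2^N$ since $c<c*_Nz$ whenever $c<2^N$. By the monotonicity proposition the row of $1$ is strictly increasing within one period, so it attains its maximum value $2^N$ exactly at the multiples of $2^{o_N(1)}$; thus $1*_N2^n<2^N$ iff $o_N(1)>n$, so $f(n)$ is the least $N$ with $o_N(1)>n$. As $N\mapsto o_N(1)$ is non-decreasing (by the proposition $o_N(x)\le o_{N+1}(x)$), $f(n)<\omega$ for every $n$ is equivalent to $\sup_N o_N(1)=\infty$, i.e. to $o_N(1)\to\infty$.

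\emph{The second equivalence, easy direction (contrapositive).} If $o_N(1)\le n_0$ for all $N$, then for every $N$ the numbers $(2^{n_0})_{2^N}$ and $(2^{n_0+1})_{2^N}$ are multiples of $2^{o_N(1)}$, so by the previous paragraph $\phi_N(x*x_{[2^{n_0}]})=1*_N(2^{n_0})_{2^N}=2^N=1*_N(2^{n_0+1})_{2^N}=\phi_N(x*x_{[2^{n_0+1}]})$. Hence $\phi(x*x_{[2^{n_0}]})=\phi(x*x_{[2^{n_0+1}]})$, while $x*x_{[2^{n_0}]}\ne x*x_{[2^{n_0+1}]}$ in $F$ because $x_{[2^{n_0}]}\ne x_{[2^{n_0+1}]}$ and free LD-systems are left-cancellative; so $\phi$ is not injective.

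\emph{The second equivalence, hard direction.} It remains to show that $o_N(1)\to\infty$ forces $\phi$ to be injective --- equivalently, that the descending chain of congruences $\ker\phi_N$ on $F$ intersects in the diagonal. This is the one real obstacle, and it is essentially Laver's embedding theorem with the large-cardinal hypothesis of Theorem~\ref{42t0iieghjo} replaced by the combinatorial hypothesis $o_N(1)\to\infty$. The approach I would follow mirrors the general theory of this monograph: transport the notion of \emph{critical point} from the quotients $\mathcal{E}_\lambda/\equiv^{\gamma}$ to $F$ itself, attaching to each $a\in F$ a value $\mathrm{cp}(a)\in\omega\cup\{\infty\}$ such that membership in $\ker\phi_N$ is governed by agreement of the two words ``below level $N$'', and then argue by induction on word structure --- using left-cancellativity and the periodicity propositions for the $A_N$ --- that $o_N(1)\to\infty$ makes every $\mathrm{cp}(a)$ finite and the resulting filtration separated, so that $\bigcap_N\ker\phi_N$ is trivial. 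The delicate point is precisely that the hypothesis $o_N(1)\to\infty$, which is literally only a statement about the left powers of $x$, must be shown to propagate to arbitrarily fine separation of \emph{all} pairs of elements of $F$; this is where the fine structure of the classical Laver tables and the Laver-like machinery developed in the rest of the paper have to be brought in.
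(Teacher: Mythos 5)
Your reduction of the statement to the equivalence $o_N(1)\to\infty$ iff $\phi\colon F\to\varprojlim A_N$ is injective is correct, the first equivalence ($f(n)<\omega$ for all $n$ iff $o_N(1)\to\infty$) is argued soundly from the row-of-$1$ periodicity, and the contrapositive direction (bounded $o_N(1)$ implies $\phi$ not injective) is also fine, granted the two nontrivial external facts you invoke about the free monogenic LD-system (distinctness of left powers and left-cancellativity), both of which rest on the Laver--Dehornoy word-problem machinery and deserve an explicit citation. The genuine gap is the converse implication $o_N(1)\to\infty\Rightarrow\phi$ injective: you do not give an argument for it, only a programme (transport critical points to $F$, argue by induction on word structure, propagate separation from left powers to all of $F$). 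This is exactly the substantive content of the proposition --- the direction usually attributed to Dougherty and Jech --- and the sketch does not indicate how the periodicity data in $A_N$ combine with the growth hypothesis to separate an arbitrary pair $u\neq v$ of words, which is precisely the delicate point you yourself flag.

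Within this monograph's own framework, the cleanest way to close the gap is to verify that the inverse system $(A_N)_{N\in\omega}$ with the canonical projections is vast in the sense of Definition~\ref{4rh4c48bwrt8wgy} exactly when $o_N(1)\to\infty$, and that $(1)_{n\in\omega}\in\mathrm{EE}^{+}((A_N)_N)$ under that hypothesis: for a fixed critical point $\alpha$ of $2$-adic level $m$, one has $1^{\sharp}(\alpha)<\max(\mathrm{crit}[A_N])$ in $A_N$ precisely when $o_N(1)>m$, so unboundedness of $o_N(1)$ puts $(1)_{n\in\omega}$ in $\mathrm{EE}$. Freeness then follows from Proposition~\ref{4tji20t24hu}, whose acyclicity argument carries the inductive separation work your plan only gestures at. As written, your proposal establishes one direction of the iff and only a plan for the other.
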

\begin{thm}
If $f(n)<\omega$ for all $n\in\omega$, then the function $f$ is not primitive recursive.
\end{thm}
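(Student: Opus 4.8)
The plan is to show that $f$, although total by hypothesis, majorizes the Ackermann function, and is therefore not primitive recursive. This last implication is the standard one: the graph of $f$ is primitive recursive, since on input $(n,m)$ one builds the entire multiplication table of $A_{m}$ by the descending--ascending double recursion defining it and reads off whether $1*_{m}2^{n}<2^{m}$; hence a total function with such a graph is primitive recursive if and only if it is majorized by some primitive recursive function, and the Ackermann function --- and any function majorizing it --- is not.

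Next I would pass to the order function $o_{n}$. One has $1*_{n}2^{k}<2^{n}$ exactly when $o_{n}(1)>k$: if $o_{n}(1)\le k$ then $2^{k}\equiv 2^{o_{n}(1)}\pmod{2^{o_{n}(1)}}$, so $1*_{n}2^{k}=1*_{n}2^{o_{n}(1)}$, which is the top $2^{n}$ of that row; and if $o_{n}(1)>k$ then $1\le 2^{k}<2^{o_{n}(1)}$ lies in the strictly increasing initial segment of the row of $1$, hence below its top. Thus $f(k)=\min\{n:o_{n}(1)\ge k+1\}$, and since $o_{n}(1)\le o_{n+1}(1)\le o_{n}(1)+1$ the sequence $n\mapsto o_{n}(1)$ is nondecreasing with unit steps, so $f(k)=\theta_{k+1}$ where $\theta_{k}:=\min\{n:o_{n}(1)=k\}$; the hypothesis that $f$ is total is precisely the assertion that each $\theta_{k}$ is finite. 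It now suffices to prove that $k\mapsto\theta_{k}$ eventually majorizes the Ackermann function.

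The engine is a propagation estimate for the finite Laver tables, proved by the same descending--ascending double induction and using the recursions relating $*_{n+1}$ to $*_{n}$, the dichotomy $o_{n+1}(x)\in\{o_{n}(x),o_{n}(x)+1\}$, and the homomorphisms $\pi:A_{n+1}\to A_{n}$, $L:A_{n}\to A_{n+1}$. The top row of $A_{n}$ has the maximal period $2^{n}$ since $2^{n}*_{n}y=y$, and the estimate should say that pushing a given amount of period down the chain of rows $2^{n},2^{n-1},\dots,2,1$ toward the first row costs, at each step, an extra level of iteration of an exponential-type operation; since the length of this chain grows with the level, an additional doubling of the period of the first row can appear only at a level obtained from the previous such level by a nested iteration whose depth grows with the number of doublings already present. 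That is exactly the shape of the recursion defining the Ackermann hierarchy, $A_{k+1}(m)=A_{k}^{(m)}(1)$, and tracking the constants should yield a bound $\theta_{k+1}\ge A_{k}(\theta_{k})$ past a small threshold; since trivially $\theta_{k}\ge k$, iterating this from a fixed base case gives $\theta_{k}\ge A(k-1)$ for all large $k$, which finishes the proof.

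I expect the propagation estimate to be the crux. The difficulty is that the rows of a Laver table are tightly coupled: whether $o_{n+1}(x)$ equals $o_{n}(x)$ or $o_{n}(x)+1$ is controlled by the position of the cut $c$ appearing in the proposition that expresses $*_{n+1}$ in terms of $*_{n}$, so one cannot reason row by row in isolation but must carry along a whole profile of the periods of the rows $2^{i}$ and show that pushing this profile down by one row costs precisely one more level of iteration --- a coarser estimate would only produce a primitive recursive lower bound, while a weaker one would not iterate up to Ackermann growth. A secondary, more routine, point is to set up the induction comparing the propagation estimate to the Ackermann recursion with enough slack for the constants to be absorbed, and to confirm that it is the hypothesis "$f$ is total", rather than any large-cardinal axiom, that is needed and sufficient for all of the $\theta_{k}$ --- hence all the quantities in the comparison --- to be finite.
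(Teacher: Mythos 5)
Your reduction is set up correctly: a total function with a primitive recursive graph is primitive recursive if and only if it is majorized by a primitive recursive function, so showing that $f$ eventually dominates the Ackermann function suffices. The translation to the period function is also right: $1*_{m}2^{k}<2^{m}$ holds precisely when $o_{m}(1)>k$, so $f(k)=\theta_{k+1}$ where $\theta_{k}=\min\{n:o_{n}(1)=k\}$, and by the unit-step monotonicity of $n\mapsto o_{n}(1)$, totality of $f$ is equivalent to each $\theta_{k}$ being finite. Up to this point the argument is sound and is in fact the same reduction used by Dougherty in \cite{RD92}, which is the source of this theorem (the paper states it without proof, as a cited result).

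The problem is that the argument stops exactly where the theorem begins. The entire mathematical content of the result is the estimate you label the ``propagation estimate'' --- the assertion that $\theta_{k+1}\geq A_{k}(\theta_{k})$ (or any quantitatively equivalent Ackermann-type recursion) --- and you do not prove it; you describe the shape you expect it to have and say that ``tracking the constants should yield a bound.'' The verbal picture of pushing period down through the chain of rows $2^{n},2^{n-1},\ldots,1$ and incurring a level of iteration per step is a reasonable intuition, but it does not by itself produce any inequality, and the coupling between rows that you flag as the difficulty (the cut $c$ in the description of $*_{n+1}$ in terms of $*_{n}$) is precisely what makes the naive row-by-row picture fail to close. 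Dougherty's proof gets around this by introducing an explicit bookkeeping device (the analysis of how critical-point sequences, or in his language ``generating patterns,'' lift from $A_{n}$ to $A_{n+1}$) and proving by a double induction on that structure a sharp lower bound for each $\theta_{k}$; none of that machinery, nor any surrogate for it, appears in your sketch. As written, the proof proposal would prove only that $f$ is not primitive recursive \emph{assuming} the Ackermann lower bound, which is the theorem itself. You should either fill in the propagation estimate with an actual lemma and induction --- and be prepared for this to be a substantial piece of combinatorics, not a constant-tracking exercise --- or explicitly invoke Dougherty's bound from \cite{RD92} or \cite{RD95}, in which case the present reduction would be a correct but short deduction from a known hard theorem rather than a self-contained proof.
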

\begin{prop}
$f(5)>\mathrm{Ack}(9,\mathrm{Ack}(8,\mathrm{Ack}(8,254)))$.
\end{prop}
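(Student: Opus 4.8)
\emph{Proof plan.} The first step is a reduction to the growth rate of the periods $o_m(1)$. By the monotonicity proposition, $y\mapsto 1*_m y$ is strictly increasing on $\{1,\dots,2^{o_m(1)}\}$; since $2^m$ is right-absorbing for $*_m$ and $2^{o_m(1)}\equiv 2^m$ modulo the period $2^{o_m(1)}$ (as $o_m(1)\le m$), we get $1*_m 2^{o_m(1)}=2^m$. Because $1*_m y$ depends only on $y$ modulo $2^{o_m(1)}$, it follows that $1*_m 2^{k}<2^{m}$ holds precisely when $k<o_m(1)$. Hence $f(k)$ is the least $m$ with $o_m(1)\ge k+1$; writing $N_\ell$ for the least $m$ with $o_m(1)\ge\ell$, the goal is $N_6=f(5)>\mathrm{Ack}(9,\mathrm{Ack}(8,\mathrm{Ack}(8,254)))$. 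From the displayed tables one reads off $N_2=3$, and the proposition $o_m(1)\le o_{m+1}(1)\le o_m(1)+1$ guarantees the $N_\ell$ are well defined and that $o_m(1)$ rises one level at a time.

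The heart of the argument is a recursion $N_{\ell+1}\ge\Phi_\ell(N_\ell)$ in which the $\Phi_\ell$ grow at Ackermann rate. I would prove it by working inside a fixed finite classical Laver table $A_m$ — equivalently, using the transfer lemmas describing $A_{m+1}$ from $A_m$ in the last proposition above — and tracking the critical-point structure of the sub-LD-system generated by $1$. The event ``$o_{m+1}(1)=o_m(1)+1$'', i.e.\ the first row genuinely doubling its period when one passes from $A_m$ to $A_{m+1}$, is controlled by whether the splitting index $c$ of that proposition reaches the top $O_m(1)=2^{o_m(1)}$ of the period. Using self-distributivity $1*(x*y)=(1*x)*(1*y)$ together with the composition identities valid on the reduced structure (such as $a\circ b=(a*b)\circ a$ and $(a\circ b)*c=a*(b*c)$), one shows that once a new level $\ell$ is unlocked, a whole cascade of lower critical points must already have been created, their number being an iterated-exponential, and compounded across $\ell=2,3,4,5$ an Ackermann-scale, function of $N_\ell$; this forces $m$ to be astronomically large before level $6$ is reached. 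I expect the cleanest bookkeeping to count, for each $\ell\le o_m(1)$, how many distinct critical points occur among the iterates $1,\ 1*1,\ 1*(1*1),\dots$ lying below the $\ell$-th relevant critical point, and to exhibit a multiplicative/exponential jump in this count at each new level.

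Granting the recursion with explicit constants, the conclusion is a finite unwinding: starting from $N_2=3$ and applying $\Phi_2,\Phi_3,\Phi_4,\Phi_5$ in turn, careful tracking of the constants produces exactly $N_6>\mathrm{Ack}(9,\mathrm{Ack}(8,\mathrm{Ack}(8,254)))$, the constant $254$ arising from the first genuinely large milestone (near $2^8$) and the Ackermann indices $8$ and $9$ from the growth rates of $\Phi_3,\Phi_4$ and $\Phi_5$. I emphasise that the entire argument is carried out in ZFC about the finite algebras $A_1,\dots,A_N$; no large-cardinal hypothesis enters, which is consistent with ``$f(5)>N$'' being a $\Pi^{0}_{1}$ statement that holds vacuously should $f(5)=\infty$.

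The main obstacle is establishing the recursion $N_{\ell+1}\ge\Phi_\ell(N_\ell)$ with the right constants. Merely showing that $N_{\ell+1}$ outgrows every primitive recursive function of $N_\ell$ — which already yields the theorem that $f$ is not primitive recursive — is delicate; pinning the constants down tightly enough to land on the stated bound demands an essentially bottom-up computation inside the $A_m$, fused with the transfer lemmas between $A_m$ and $A_{m+1}$, and it requires verifying that the intricate self-distributivity manipulations controlling how the critical point of a composite element relates to those of its factors hold uniformly in $m$, so that they are genuine finitary facts about classical Laver tables rather than statements that covertly rely on a rank-into-rank embedding.
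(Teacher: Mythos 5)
The paper does not actually prove this proposition: it is stated without proof in the introduction as a known quantitative result of Dougherty from \cite{RD92} (``Critical points in an algebra of elementary embeddings I''), where it is obtained via an extremely careful hand computation of the period sequence $o_n(1)$ in the classical Laver tables. So there is no in-paper argument to compare yours against; your submission has to be judged as a proof in its own right.

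Your opening reduction is correct and cleanly done: using the stated monotonicity on the period and the fact that $1*_m y$ depends on $y$ only mod $2^{o_m(1)}$, together with $1*_m 2^m=2^m$, one gets that $1*_m 2^k<2^m$ iff $k<o_m(1)$, hence $f(k)$ is the least $m$ with $o_m(1)\ge k+1$, and $f(5)=N_6$ in your notation. That part would belong in a real proof.

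Beyond that point, however, the proposal is a plan, not a proof, and you say as much yourself. You posit a recursion $N_{\ell+1}\ge\Phi_\ell(N_\ell)$ with Ackermann-scale $\Phi_\ell$, but you never identify $\Phi_\ell$, never prove the recursion, and never show that unwinding it from $N_2=3$ lands on $\mathrm{Ack}(9,\mathrm{Ack}(8,\mathrm{Ack}(8,254)))$ — the assertion that ``careful tracking of the constants produces exactly'' this bound is asserted, not argued. Pinning down the transition $o_m(1)\to o_{m+1}(1)$ via the splitting index $c$ and then extracting a numerical lower bound is precisely where all the work is in Dougherty's argument, and it does not reduce to the soft self-distributivity and composition identities you invoke: one needs a delicate combinatorial analysis of how the critical-point filtration refines from $A_m$ to $A_{m+1}$, with explicit thresholds, to get numbers like $254$, $8$, and $9$. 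Your own final paragraph concedes this is the ``main obstacle,'' which is accurate, but it means the submission does not establish the stated inequality. To turn this into a proof you would either need to carry out Dougherty's computation in detail or cite \cite{RD92} for the bound, as the paper implicitly does.
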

No upper bound for the growth rate of the function $f$ is known.

There is currently no known closed form expression for computing $x*_{n}y$, and due to the fast growth of the function $f$, we do not
expect for there to exist a closed form expression for $x*_{n}y$.  Randall Dougherty has constructed an algorithm that has be used to calculate $x*_{n}y$ as long as $n\leq 48$ \cite{RD95}. The limiting factor in Dougherty's algorithm is computing and storing certain values of $x*_{n}y$ in memory, and so far no one has written a program that computes even $A_{49}$ yet. 

The notion of a critical point and of composition of elementary embeddings can be generalized from set theory to purely algebraic notions about the classical Laver tables. Define an operation \index{$\circ_{n}$}$\circ_{n}$ on $A_{n}$ by

$x\circ_{n}y=\begin{cases}x &\mbox{if } y=2^{n} \\ 
x*_{n}(y+1)-1 & \mbox{whenever } y<2^{n}.\end{cases}$

\begin{prop}
Suppose $n$ is a natural number and $x,y\in\{1,\ldots ,2^{n}\}$.
\begin{enumerate}
\item $j_{[x\circ_{n}y]}\equiv^{\mathrm{crit}_{n}(j)}j_{[x]}\circ j_{[y]}$ whenever $x,y\in A_{n}$ and $j\in\mathcal{E}_{\lambda}^{+}$.

\item Let $x,y\in A_{n}$ and $j\in\mathcal{E}_{\lambda}^{+}$. Then $\mathrm{crit}(j_{[x]})<\mathrm{crit}(j_{[y]})$ if and only if $\gcd(x,2^{n})<\gcd(y,2^{n})$.
\end{enumerate}
\label{Rhu4t0HI42h2}
\end{prop}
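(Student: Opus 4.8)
The plan is to treat part~(2) as a reformulation of the standard description of the critical points of the iterates $j_{[p]}$, and to put the real work into part~(1). The content of part~(1), and the main obstacle, is that $j_{[x]}\circ j_{[y]}$ is \emph{not} an iterate of $j$ in $\mathcal{E}_{\lambda}$, so a priori its $\equiv^{\mathrm{crit}_{n}(j)}$-class lies only in the large quotient $\mathcal{E}_{\lambda}/\!\equiv^{\mathrm{crit}_{n}(j)}$ and not in the finite subalgebra $\mathrm{Iter}(j)/\!\equiv^{\mathrm{crit}_{n}(j)}\cong A_{n}$, so one cannot simply compute with $\circ_{n}$. The device that overcomes this is to iterate the identity $a\circ b=(a*b)\circ a$ from~\ref{JBNajo34g6ft} modulo $\equiv^{\mathrm{crit}_{n}(j)}$, which pushes the left-hand factor strictly upward in $A_{n}$ until it reaches the $*$-absorbing top element $2^{n}$, forcing the composition to collapse to an honest iterate; one then identifies which iterate using $(a\circ b)*c=a*(b*c)$.

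Write $\gamma=\mathrm{crit}_{n}(j)$. First record the fact $j_{[2^{n}]}\equiv^{\gamma}1_{V_{\lambda}}$: by definition of $\mathrm{crit}_{n}(j)$ some $k\in\mathrm{Iter}(j)$ has $\mathrm{crit}(k)=\gamma$, hence $k\equiv^{\gamma}1_{V_{\lambda}}$, so $k/\!\equiv^{\gamma}$ is a left identity of $\mathrm{Iter}(j)/\!\equiv^{\gamma}$; under the isomorphism with $A_{n}$ of Theorem~\ref{Pwehjit}, and since $z<z*_{n}z$ for $z<2^{n}$ makes $2^{n}$ the unique left identity of $A_{n}$, we get $k\equiv^{\gamma}j_{[2^{n}]}$, hence $j_{[2^{n}]}\equiv^{\gamma}1_{V_{\lambda}}$. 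This disposes of $x=2^{n}$: then $j_{[2^{n}]}\circ j_{[y]}\equiv^{\gamma}1_{V_{\lambda}}\circ j_{[y]}=j_{[y]}$, while $2^{n}\circ_{n}y=y$ for $y<2^{n}$ and $2^{n}\circ_{n}2^{n}=2^{n}$. For $x<2^{n}$, set $p_{0}=x$, $p_{1}=x*_{n}y$, and $p_{k+1}=p_{k}*_{n}p_{k-1}$, computed in $A_{n}$. One application of $a\circ b=(a*b)\circ a$, together with Theorem~\ref{Pwehjit} (which identifies $j_{[p_{k}]}*j_{[p_{k-1}]}$ with $j_{[p_{k+1}]}$ modulo $\gamma$) and the fact that $\equiv^{\gamma}$ is a congruence, gives $j_{[p_{k}]}\circ j_{[p_{k-1}]}\equiv^{\gamma}j_{[p_{k+1}]}\circ j_{[p_{k}]}$, so by transitivity $j_{[x]}\circ j_{[y]}\equiv^{\gamma}j_{[p_{k}]}\circ j_{[p_{k-1}]}$ for all $k$. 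Since $x<2^{n}$, the inequality $z<z*_{n}w$ (valid for $z<2^{n}$) forces $p_{0}<p_{1}<p_{2}<\cdots$ until the sequence first reaches $2^{n}$, say $p_{K}=2^{n}$; then $j_{[x]}\circ j_{[y]}\equiv^{\gamma}j_{[2^{n}]}\circ j_{[p_{K-1}]}\equiv^{\gamma}j_{[p_{K-1}]}$, an iterate.

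To identify $p_{K-1}$, apply $(\cdot)*j$ and use $(a\circ b)*c=a*(b*c)$: on one hand $(j_{[x]}\circ j_{[y]})*j=j_{[x]}*(j_{[y]}*j)=j_{[x]}*j_{[y+1]}\equiv^{\gamma}j_{[x*_{n}(y+1)]}$ by Theorem~\ref{Pwehjit}; on the other hand it is $\equiv^{\gamma}j_{[p_{K-1}]}*j=j_{[p_{K-1}+1]}$, whose class in $A_{n}$ is $p_{K-1}*_{n}1$. Injectivity of the isomorphism of Theorem~\ref{Pwehjit} then gives $x*_{n}(y+1)=p_{K-1}*_{n}1$; as $x<2^{n}$ makes $x*_{n}(y+1)>x\ge 1$, this common value is not $1$, hence $p_{K-1}<2^{n}$ and $p_{K-1}+1=x*_{n}(y+1)$, i.e.\ $p_{K-1}=x*_{n}(y+1)-1=x\circ_{n}y$. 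Thus $j_{[x]}\circ j_{[y]}\equiv^{\gamma}j_{[x\circ_{n}y]}$ in all cases, proving~(1).

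For part~(2), the crucial step is: for $0\le m\le n$, $\mathrm{crit}(j_{[x]})\ge\mathrm{crit}_{m}(j)$ if and only if $2^{m}\mid x$. Indeed, for a limit ordinal $\delta<\lambda$ an embedding $k$ has $\mathrm{crit}(k)\ge\delta$ iff $k\equiv^{\delta}1_{V_{\lambda}}$; taking $\delta=\mathrm{crit}_{m}(j)$, using $j_{[2^{m}]}\equiv^{\mathrm{crit}_{m}(j)}1_{V_{\lambda}}$ (the fact above, at level $m$), and applying Theorem~\ref{Pwehjit} at level $m$—under which $j_{[x]}/\!\equiv^{\mathrm{crit}_{m}(j)}$ corresponds to $(x)_{2^{m}}\in A_{m}$—this reduces to $(x)_{2^{m}}=2^{m}$, i.e.\ $2^{m}\mid x$. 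Since $\mathrm{crit}(j_{[x]})$ is one of the $\mathrm{crit}_{m}(j)$ and $m\mapsto\mathrm{crit}_{m}(j)$ is strictly increasing, it follows for $x\in\{1,\dots,2^{n}\}$ that $\mathrm{crit}(j_{[x]})=\mathrm{crit}_{a}(j)$ when $x<2^{n}$, where $2^{a}=\gcd(x,2^{n})$ is the largest power of $2$ dividing $x$, and $\mathrm{crit}(j_{[x]})\ge\mathrm{crit}_{n}(j)$ when $x=2^{n}$ (where $\gcd(x,2^{n})=2^{n}$). Comparing these—the case $x=2^{n}$ handled by noting $\mathrm{crit}(j_{[2^{n}]})\ge\mathrm{crit}_{n}(j)>\mathrm{crit}(j_{[y]})$ for every $y<2^{n}$—gives $\mathrm{crit}(j_{[x]})<\mathrm{crit}(j_{[y]})$ if and only if $\gcd(x,2^{n})<\gcd(y,2^{n})$, which is~(2).
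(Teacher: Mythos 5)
The paper states Proposition~\ref{Rhu4t0HI42h2} without proof (it appears in the survey portion of Section~1), so there is no argument in the paper to compare against; your proof supplies a missing one, and I believe it is substantially correct. The mechanism you use---iterating the identity $a\circ b=(a*b)\circ a$ through the isomorphism of Theorem~\ref{Pwehjit} to push the left factor up to the absorbing element $2^n$, then identifying the surviving iterate via $(a\circ b)*c=a*(b*c)$---is exactly the right device for converting the composition of two iterates of $j$, which is not itself an iterate, back into a single iterate modulo $\equiv^{\mathrm{crit}_n(j)}$. Part~(2) is also correct: the equivalence $\mathrm{crit}(j_{[x]})\ge\mathrm{crit}_m(j)\Leftrightarrow 2^m\mid x$, combined with the strict monotonicity of $m\mapsto\mathrm{crit}_m(j)$, is the standard way to recover the $\gcd$ comparison.

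One small gap in part~(1): for $x<2^n$ you never exclude the case $y=2^n$. At the identification step you write $(j_{[x]}\circ j_{[y]})*j=j_{[x]}*j_{[y+1]}\equiv^{\gamma}j_{[x*_n(y+1)]}$ and invoke injectivity of the isomorphism of Theorem~\ref{Pwehjit}, but this requires $y+1\le 2^n$ so that $y+1$ and $x*_n(y+1)$ lie in the domain $\{1,\dots,2^n\}$, and you also invoke the formula $x\circ_n y=x*_n(y+1)-1$, which the paper only defines for $y<2^n$. When $y=2^n$ the argument as written does not literally parse. Two equally short fixes: handle $y=2^n$ directly ($j_{[x]}\circ j_{[2^n]}\equiv^{\gamma}j_{[x]}\circ 1_{V_\lambda}=j_{[x]}$ and $x\circ_n 2^n=x$ by definition), or declare the periodic convention $j_{[z]}\equiv^{\gamma}j_{[(z)_{2^n}]}$, which follows from $j_{[2^n]}\equiv^{\gamma}1_{V_\lambda}$, and note that $2^n*_n(2^n+1)=2^n*_n 1=1$ and $x*_n(2^n+1)=x*_n 1=x+1$ so everything collapses consistently. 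With that one-line repair the proof is complete.
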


If there exists a rank-into-rank cardinal, then Proposition \ref{t482u9onjkrw} follows from the algebraization of the notion of a critical point and composition found in Proposition \ref{Rhu4t0HI42h2}. On the other hand, Proposition \ref{t482u9onjkrw} can also be proven directly without any reference to large cardinals.
\begin{prop}
Suppose $n$ is a natural number and $x,y,z\in\{1,\ldots ,2^{n}\}$. Then
\label{t482u9onjkrw}
\begin{enumerate}
\item $(x\circ_{n}y)*z=x*_{n}(y*_{n}z)$,

\item $x*_{n}(y\circ_{n}z)=(x*_{n}y)\circ_{n}(x*_{n}z)$,

\item $x\circ_{n}y=(x*_{n}y)\circ_{n}x$,

\item if $\gcd(y,2^{n})\leq\gcd(z,2^{n})$, then $\gcd(x*_{n}y,2^{n})\leq\gcd(x*_{n}z,2^{n})$,

\item $\gcd(x\circ_{n}y,2^{n})=\gcd(x,y,2^{n})$,

\item $\gcd(x*_{n}y,2^{n})\geq\gcd(y,2^{n})$,

\item $\gcd(x*_{n}y,2^{n})>\gcd(y,2^{n})$ whenever $\gcd(x,2^{n})\leq\gcd(y,2^{n})<2^{n}$, and

\item if $\gcd(y,2^{n})<\gcd(z,2^{n})$ and $\gcd(x*_{n}y,2^{n})<2^{n}$, then
\[\gcd(x*_{n}y,2^{n})<\gcd(x*_{n}z,2^{n}).\]
\end{enumerate}
\end{prop}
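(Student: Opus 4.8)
The plan is to split Proposition~\ref{t482u9onjkrw} into the three equational clauses~(1)--(3), which I would reduce to self-distributivity of $*_n$, and the five valuation clauses~(4)--(8), which I would reduce to the behaviour of the top element $2^n$ under the truncation homomorphisms. Write $O_n(x)=2^{o_n(x)}$ and, for the valuation clauses, $g(w)=\gcd(w,2^n)=2^{v_2(w)}$ with the convention $v_2(2^n)=n$. First I would record four consequences of Theorem~\ref{tw4hu} and the periodicity propositions above: (a) $x*_n 2^n=2^n$ for every $x$ --- for $x<2^n$, self-distributivity applied to $2^n*_n1=1$ gives $(x*_n2^n)*_n(x+1)=x+1$, and $x*_n2^n<2^n$ would force $x+1>x*_n2^n>x$, impossible; (b) consequently, since each row is strictly increasing on a period of length $O_n(x)$ with maximal value $x*_nO_n(x)=x*_n2^n=2^n$, one has $x*_ny=2^n$ iff $O_n(x)\mid y$; (c) $x\circ_n2^n=x$ and $2^n\circ_nx=x$, directly from the definition of $\circ_n$ and $2^n*_ny=y$; (d) the truncation $\pi\colon A_{n+1}\to A_n$, $\pi(x)=(x)_{2^n}$, is a homomorphism for $\circ$ as well as for $*$, by a short check from the definition of $\circ$ and $2^n*_{n+1}b=2^n+(b)_{2^n}$ (the only delicate case, $y\equiv0\pmod{2^n}$, is settled using $x*_{n+1}(2^n+1)=x+1$). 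Hence each $\pi^{n-k}\colon A_n\to A_k$, $x\mapsto(x)_{2^k}$, is a homomorphism of $(A,*,\circ)$.

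For clauses~(1)--(3): when $y=2^n$ all three are instant from $2^n*_nz=z$. For $y<2^n$ we have $y+1=y*_n1$, and for $x<2^n$ also $x+1=x*_n1$, so $x\circ_ny=x*_n(y+1)-1=\bigl((x*_ny)*_n(x+1)\bigr)-1$ by self-distributivity, which is exactly (3) once the boundary case $x*_n(y+1)=2^n$ is cleared using~(b). For (1) and (2) I would run the descending-ascending double induction that computes $A_n$: one shows that $(x\circ_ny)*_nz$ and $x*_n(y*_nz)$ obey the same recursion in $z$, and likewise for $x*_n(y\circ_nz)$ and $(x*_ny)\circ_n(x*_nz)$, using self-distributivity, clause~(3), and lemma~(b), with $x=2^n$ handled separately via $2^n*_nw=w$. (When a rank-into-rank cardinal exists these also follow from the identities~\ref{4t2hugd3fbhvo}--\ref{JBNajo34g6ft} in $\mathcal{E}_\lambda$ together with Proposition~\ref{Rhu4t0HI42h2}, but the point is to avoid large cardinals.)

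For clauses~(4)--(8): the engine is the translation $g(w)\ge2^k\iff\pi^{n-k}(w)=2^k$, and, for $k<n$, $g(w)=2^k\iff\pi^{n-k}(w)=2^k$ and $\pi^{n-k-1}(w)\ne2^{k+1}$. Since $2^k$ is a left identity of $A_k$ and, by~(a), a fixed point of every left multiplication, (6) is immediate ($\pi^{n-k}(x*_ny)=\pi^{n-k}(x)*_k2^k=2^k$ when $2^k=g(y)$, and $k=n$ is~(a)). For (4), if $v_2(x*_nz)=m<v_2(x*_ny)$ then $m<n$, and in $A_{m+1}$ the row $a=\pi^{n-m-1}(x)$ satisfies $a*_{m+1}\pi^{n-m-1}(y)=2^{m+1}$ but $a*_{m+1}\pi^{n-m-1}(z)\ne2^{m+1}$, so~(b) gives $v_2(z)<o_{m+1}(a)\le v_2(y)$, contradicting $g(y)\le g(z)$. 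For (7), with $2^k=g(y)<2^n$ and $g(x)\le g(y)$: in $A_{k+1}$ one has $\pi^{n-k-1}(y)=2^k$ and $\pi^{n-k-1}(x)\ne2^{k+1}$, hence $o_{k+1}(\pi^{n-k-1}(x))\le k$, so~(b) yields $\pi^{n-k-1}(x)*_{k+1}2^k=2^{k+1}$, i.e.\ $g(x*_ny)>2^k$. For (5), I would induct on $n$ using that $\pi$ is a homomorphism for $\circ$ and $v_2(\pi(w))=\min(v_2(w),n)$: when $\min(v_2(x),v_2(y))<n$ the valuation of $x\circ_ny$ is read off in $A_n$, and the finitely many cases with $x,y\in\{2^n,2^{n+1}\}$ are checked from $2^n*_{n+1}b=2^n+(b)_{2^n}$ and $2^{n+1}*_{n+1}b=b$.

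The step I expect to be hardest is~(8). Clause~(4) already gives $g(x*_ny)\le g(x*_nz)$, so the issue is to exclude equality when $g(x*_ny)<2^n$. Setting $2^k=g(y)$, since $v_2(z)>k$ we have $\pi^{n-k-1}(z)=2^{k+1}$ and hence $g(x*_nz)\ge2^{k+1}$; if moreover $v_2(x)>k$, then $\pi^{n-k-1}(y)=2^k$ together with $\pi^{n-k-1}(x)=2^{k+1}$ forces $v_2(x*_ny)=k<k+1$ and we are done. The remaining case $v_2(x)\le k$ is the crux: now both products have valuation strictly above $k$, so a single truncation no longer separates them and one must descend step by step --- either inducting on $n$ via $\pi$, after pinning down the behaviour at the boundary values $2^n,2^{n+1}$ where clause~(4) and lemma~(a) are again needed, or running the descending-ascending double induction on the column index alongside the valuation. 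I expect this interleaving of the valuation bookkeeping with the double induction to be the technical heart of the argument.
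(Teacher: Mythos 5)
The paper does not display a direct proof of this proposition. It is stated in the introduction with the remark that it follows either from the set-theoretic Proposition~\ref{Rhu4t0HI42h2} (assuming a rank-into-rank cardinal) or ``directly without any reference to large cardinals,'' and the actual machinery is developed in Section~2, where each clause is proved in the abstract setting of reduced permutative LD-systems: clauses (1)--(3) become the LD-monoid identities, (4) is Proposition~\ref{sfenjuw4}(3), (5) is $\mathrm{crit}(x\circ y)=\min(\mathrm{crit}(x),\mathrm{crit}(y))$, (6) is Theorem~\ref{2j1298ur28tye}(2), (7) is Theorem~\ref{2j1298ur28tye}(4), and (8) is Theorem~\ref{2j1298ur28tye}(5). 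The bridge is that $A_n$ is a finite reduced Laver-like LD-system in which $\mathrm{crit}(x)\leq\mathrm{crit}(y)$ iff $\gcd(x,2^n)\leq\gcd(y,2^n)$. Your route is genuinely different: you translate the gcd-valuation statements into assertions about the truncation homomorphisms $\pi^{n-k}\colon A_n\to A_k$ and argue directly inside the family $(A_k)_k$ rather than through the abstract $\mathrm{crit}$ calculus. Your steps (a)--(d) and the treatment of (1)--(3), (4), (6), (7) all check out, and your observation that $g(w)\geq 2^k\iff\pi^{n-k}(w)=2^k$ is the right engine.

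Where the two routes diverge materially is clause~(8), which you correctly flag as the crux and leave open. The descent-through-truncations idea is not wrong, but it is doing work that the LD-structure does for free, and the hard sub-case $v_2(x)\leq k$ is precisely where the truncation picture degenerates. The paper's proof of the abstract analogue (Theorem~\ref{2j1298ur28tye}(5)) uses a one-line trick that translates into $A_n$ as follows and closes your gap cleanly. Since $\gcd(y,2^n)<\gcd(z,2^n)$, clause~(7) applied to the pair $(z,y)$ gives $\gcd(z*_ny,2^n)=\gcd(y,2^n)$; by (4) applied in both directions this forces $\gcd(x*_n(z*_ny),2^n)=\gcd(x*_ny,2^n)$. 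Now self-distributivity gives $x*_n(z*_ny)=(x*_nz)*_n(x*_ny)$, so $\gcd\bigl((x*_nz)*_n(x*_ny),2^n\bigr)=\gcd(x*_ny,2^n)<2^n$. By (7) again (with the roles of $x,y$ played by $x*_nz$ and $x*_ny$) this is only possible if $\gcd(x*_nz,2^n)>\gcd(x*_ny,2^n)$, which is exactly (8). No truncation descent or case analysis on $v_2(x)$ is needed; all you require is that you have already proved (4), (6), and (7), which you do. I would replace your sketch for (8) with this argument. Everything else in your proposal is a sound alternative to the paper's abstract approach, and it buys self-containment at the cost of re-deriving in $A_n$ facts that the paper obtains once for all permutative LD-systems.
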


% ----------------------------------------------------------------
\section{Permutative LD-systems and critical points}
In this section, we shall investigate the permutative LD-systems which closely resemble the algebras of elementary
embeddings modulo some rank. The notions of the composition and the critical points generalize seamlessly from set theory to permutative LD-systems and are fundamental to the theory of permutative LD-systems.

The notions of a critical point and composition have been algebraized from set-theory to the classical Laver tables
$A_{n}$ (see Proposition \ref{Rhu4t0HI42h2}).  Furthermore, in \cite{DJ}, Dougherty and Jech introduce the notion of a two-sorted embedding algebra, and the axioms of the two-sorted embedding algebras attempt to algebraize the system
$(\mathcal{E}_{\lambda},*,\circ,\mathrm{crit},(\equiv^{\gamma})_{\gamma<\lambda})$. The notion of a two-sorted embedding algebra though is insufficient for algebraizing $\mathcal{E}_{\lambda}$ since two-sorted embedding algebras are defined in terms of 11 axioms.
We believe that the notion of a permutative LD-system is a satisfactory algebraization of the algebras of elementary embeddings.
The permutative LD-systems have a very simple definition, but from the notion of a permutative LD-system, one can extrapolate the notions of the composition operation, a critical point, and $\equiv^{\gamma}$ from the self-distributive operation and show that permutative LD-systems behave as the algebras of elementary embeddings.

In this monograph, we shall hold to the convention that the omitted parentheses shall be grouped on the left. For example,
$w*x*y*z=((w*x)*y)*z$.
\begin{defn}
An algebra $(X,*)$ where $*$ satisfies the self-distributivity identity $x*(y*z)=(x*y)*(x*z)$ shall be called an \emph{LD-system}\index{LD-system}.
\end{defn}
The acronym LD stands for left-distributive. Other authors call LD-systems by other names including left-distributive algebras, self-distributive algebras, shelves, and LD-groupoids. Set theorists and algebraists usually use the left self-distributivity convention $x*(y*z)=(x*y)*(x*z)$ while knot theorists tend to use right self-distributivity $(x*z)*(y*z)=(x*y)*z$ when investigating self-distributive algebras.

\begin{exam}
Every meet-semilattice is an LD-system.
\end{exam}
\begin{exam}
Let $G$ be a group. Define an operation $*$ on $G$ by letting $x*y=xyx^{-1}$. Then $(G,*)$ is an LD-system.
\end{exam}

Take note that if $\mathrm{crit}(j)\leq \mathrm{crit}(k)$, then 
\[\mathrm{crit}(j^{n}*k)=j^{n}(\mathrm{crit}(k)),\]
so
\[\lim_{n\in\omega}\mathrm{crit}(j^{n}*k)=\lambda.\]
On the other hand, if $\mathrm{crit}(j)>\mathrm{crit}(k)$, then
\[\mathrm{crit}(j^{n}*k)=j^{n}(\mathrm{crit}(k))=\mathrm{crit}(k).\]

In particular, if $j,k\in\mathcal{E}_{\lambda}$ and $\gamma<\lambda$ is a limit ordinal, and
\[j,k\in\mathcal{E}_{\lambda},\mathrm{crit}(j)<\gamma,\]
then $\mathrm{crit}(j)\leq \mathrm{crit}(k)$ if and only if there is some
$n\in\omega$ where $j^{n}*k\equiv^{\gamma}1_{V_{\lambda}}$. Furthermore, $j^{n}*k$ can be defined solely in terms of
$*$ by letting $j^{0}*k=k$ and $j^{n+1}*k=j*(j^{n}*k)$. Therefore, one may algebraize the notion of a critical point to 
LD-systems similar to $\mathcal{E}_{\lambda}/\equiv^{\gamma}$ by defining $\mathrm{crit}(x)\leq \mathrm{crit}(y)$
precisely when $x^{n}*y=1$ for some $n$. The permutative LD-systems are precisely the LD-systems where this algebraization of the
notion of a critical point is satisfactory.

\begin{defn}
Define binary term functions $t_{n}$ for all $n\geq 1$ by letting
\[t_{1}(x,y)=y,t_{2}(x,y)=x,\]
and
\[t_{n+2}(x,y)=t_{n+1}(x,y)*t_{n}(x,y)\]
for $n\geq 1$. The terms $t_{n}$ are known as the \emph{Fibonacci terms}\index{Fibonacci terms}
because the number of variables present in $t_{n}(x,y)$\index{$t_{n}(x,y)$} are precisely the Fibonacci numbers and the words obtained from the terms
$t_{n}$ by removing all symbols which are not variables are precisely the \index{Fibonacci words}Fibonacci words \cite{FIB}. 
\end{defn}

The motivation behind the terms $t_{n}$ is that
\[x*(y*z)=t_{n+1}(x,y)*(t_{n}(x,y)*z)\]
for all $n\geq 1$ in any LD-system.
Furthermore, if $(X,*)$ is an LD-system and $\circ$ is a binary operation on $X$ such that $x\circ y=(x*y)\circ x$ for all
$x,y$, then
\[x\circ y=t_{n+1}(x,y)\circ t_{n}(x,y)\]
for all $n\geq 1$.
\begin{defn}
If $X$ is an LD-system and $x\in X$, then we shall say that the element $x$ is a \emph{left-identity}\index{left-identity} if
$x*y=y$ for each $y\in X$. Let \index{$\mathrm{Li}(X)$}$\mathrm{Li}(X)$ denote the set of all left-identities of the LD-system $X$.
\end{defn}
\begin{defn}
We shall say that a subset $L\subseteq X$ of an LD-system $(X,*)$ is a \index{left-ideal}\emph{left-ideal} if $y\in L$ implies that $x*y\in L$ as well.
\end{defn}
\begin{defn}
We shall call an LD-system $(X,*)$ \index{permutative}\emph{permutative} if $\mathrm{Li}(X)$ is a left-ideal and for all $x,y$ there is some $n\geq 1$ such that $t_{n}(x,y)\in \mathrm{Li}(X)$.
\end{defn}
We chose the term permutative since for braids which are sufficiently high with respect to the left-factor ordering,
the Hurwitz action of the positive braid monoid on tuples from a permutative LD-system is determined by its corresponding permutation.

\begin{thm}
(Laver-Steel) Suppose that $j_{n}\in\mathcal{E}_{\lambda}^{+}$ for each $n\in\omega$. Then
\[\sup_{n\in\omega}\mathrm{crit}(j_{0}*\ldots*j_{n})=\lambda.\]
\label{j3rti90}
\end{thm}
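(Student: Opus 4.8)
The Laver--Steel theorem is a genuinely set-theoretic statement, so the plan is to mimic Laver's original argument, which proceeds by contradiction using the reflection properties of elementary embeddings. Suppose that $\sup_{n\in\omega}\mathrm{crit}(j_{0}*\cdots*j_{n})=\mu<\lambda$. Write $k_{n}=j_{0}*\cdots*j_{n}$, so that $k_{n+1}=k_{n}*j_{n+1}$ and $\mathrm{crit}(k_{n})<\mu$ for every $n$. Because $\mu$ is a limit ordinal below $\lambda$ (it is a supremum of critical points, hence a limit of strong limit cardinals), we may pass to $V_{\mu}$ and work with the truncations $j_{n}|_{V_{\mu}}$ and $k_{n}|_{V_{\mu}}$, all of which are elementary embeddings $V_{\mu}\to V_{\mu}$ once one checks that $\mathrm{crit}(k_{n})<\mu$ forces $k_{n}$ to map $V_{\mu}$ into itself. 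The key point is that the sequence $(k_{n}|_{V_{\mu}})_{n\in\omega}$ is an element of $V_{\lambda}$ (indeed it lives in $V_{\mu+1}$ if the $j_n$'s do, but in any case it is coded by a subset of $V_\mu$), and it is a sequence of nontrivial elementary embeddings of $V_{\mu}$ all of whose critical points stay below $\mu$.

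The crux is a reflection/cofinality obstruction showing no such sequence can exist. First I would recall Laver's lemma that for $j\in\mathcal{E}_{\lambda}^{+}$ the set $\{\mathrm{crit}(l):l\in\mathrm{Iter}(j)\}$ is cofinal in $\lambda$ of order type $\omega$; the engine behind this is that $\mathrm{crit}(j*l)=j(\mathrm{crit}(l))>\mathrm{crit}(l)$ whenever $\mathrm{crit}(l)\le\mathrm{crit}(j)$, together with $\sup_n j^n(\mathrm{crit}(j))=\lambda$ (a consequence of the Kunen analysis). The idea is to extract from the bounded sequence $(k_n)$ a contradiction with this cofinality. Concretely, fix $N$ large enough that $\mathrm{crit}(j_N)<\mu$ is as large as possible among a cofinal-in-$\mu$ set of the $\mathrm{crit}(j_n)$ (or, more carefully, use that infinitely many $j_n$ have critical point below $\mu$ and apply the pigeonhole so that some single critical value $\kappa_0=\mathrm{crit}(j_{n})$ recurs, or the critical points among the tails are eventually monotone). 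Then I would analyze $k_{n}*j_{n+1}$: writing $\kappa=\mathrm{crit}(k_n)$ and $\delta=\mathrm{crit}(j_{n+1})$, either $\kappa\le\delta$, in which case $\mathrm{crit}(k_{n+1})=k_n(\delta)$, or $\kappa>\delta$, in which case $\mathrm{crit}(k_{n+1})=\delta$. In the first case the critical point strictly increases and one wants to iterate this to climb past $\mu$; in the second the critical point of $j_{n+1}$ itself is a new small value. The dichotomy must be organized so that, no matter which case occurs infinitely often, one produces an $\omega$-sequence of embeddings of some $V_{\mu'}$, $\mu'\le\mu$, whose critical points are cofinal in $\mu'$ — forcing $\mu'$ to have cofinality $\omega$ — yet simultaneously one exhibits, via a single embedding applied to this whole sequence (which lies in the domain $V_{\lambda}$), a fixed point above that supremum, contradicting Kunen-style rigidity at $\mu'$; this is exactly the pattern "$j:V_{\mu'}\to V_{\mu'}$ cannot fix a sequence that it must move."

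The main obstacle, and the step I expect to require the most care, is the bookkeeping that reduces the bounded sequence $(k_n)$ to an actual elementary embedding $V_{\mu}\to V_{\mu}$ (or $V_{\mu'}\to V_{\mu'}$) that witnesses a contradiction — in particular, verifying that the relevant union/limit of the $k_n|_{V_{\alpha}}$ really is elementary and not merely a partial map, and handling the case distinction on whether $\mathrm{crit}(k_n)\le\mathrm{crit}(j_{n+1})$ happens cofinally. One clean way to package this: apply a single fixed $j_0\in\mathcal{E}_{\lambda}^{+}$ (or a suitable $k_m$) to the code of the tail sequence $(k_n|_{V_{\mu}})_{n\ge m}\in V_{\lambda}$; elementarity gives back a sequence of embeddings of $V_{\mu}$ with critical points still bounded by $\mu$ but now shifted, and comparing the two sequences via $\mathrm{crit}(j_0*k_n)$ versus $\mathrm{crit}(k_n)$ pins the supremum $\mu$ as a fixed point of $j_0$ restricted appropriately, contradicting $\mathrm{crit}(j_0)<\lambda$ and the fact that critical sequences of a single embedding are cofinal. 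I would then remark that the statement is due to Laver and Steel independently, and that the argument is essentially the one in \cite{L95}, so a full proof can be deferred to that reference or sketched only to the extent above; later in the paper this theorem is abstracted into the defining property of Laver-like LD-systems, so only its statement is needed for what follows.
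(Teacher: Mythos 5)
The paper does not give a proof of this theorem at all; its ``proof'' is a pointer to the literature, citing \cite{S} for Steel's original extender argument, \cite{RD95} for Dougherty's extender-free argument, and \cite{L95}, \cite{FK}, \cite{D} for further expositions. Since you conclude by saying the full proof should be deferred to \cite{L95}, you ultimately land in the same place as the paper. The sketch you offer en route, however, contains gaps that are not merely bookkeeping.

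The central problem is the assertion that $\mathrm{crit}(k_{n})<\mu$ forces $k_{n}$ to map $V_{\mu}$ into itself. This is false. A rank-into-rank embedding $k$ with $\mathrm{crit}(k)=\kappa$ restricts to a self-embedding of $V_{\mu}$ (for limit $\mu$) if and only if $\mu$ is a fixed point of $k$, equivalently $k(\alpha)<\mu$ for every $\alpha<\mu$. Nothing in the hypothesis $\sup_{n}\mathrm{crit}(k_{n})=\mu<\lambda$ makes $\mu$ a fixed point of any $k_{n}$; quite the opposite — once $\mathrm{crit}(k_{n})<\mu$ the critical sequence of $k_{n}$ will climb past $\mu$ on its way to being cofinal in $\lambda$. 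So the reduction to embeddings of a bounded rank $V_{\mu}$ never gets off the ground. A second issue is the intended endgame, ``contradicting Kunen-style rigidity at $\mu'$'': the statement you would need there is precisely that no $\omega$-sequence of self-embeddings of a fixed rank with bounded critical points exists, which is the Laver--Steel theorem at $\mu'$, so the argument as outlined is circular. The actual known arguments overcome these obstructions non-elementarily: Steel derives extenders from the $k_{n}$'s and uses the well-foundedness of the Mitchell order, while Dougherty's argument in \cite{RD95} works combinatorially inside the Laver tables themselves; neither is a truncation-plus-reflection argument of the kind you sketch. Given that the paper itself only cites the literature, your final move (defer to the references) is the right one, but the preceding sketch should not be read as even a heuristic justification.
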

\begin{proof}
See \cite{S} for the original proof. A simple proof that does not reference extenders can be found in \cite{RD95}.
The references \cite{L95},\cite{FK},\cite{D} also contain proofs of this result.
\end{proof}

Since $\mathrm{crit}(j)\geq\gamma$ if and only if $j\equiv^{\gamma}1_{V_{\lambda}}$, the Laver-Steel theorem can be reformulated to the following more algebraic statement.
\begin{thm}
(Laver-Steel) Suppose that $j_{n}\in\mathcal{E}_{\lambda}^{+}$ for each $n\in\omega$ and $\gamma<\lambda$ is a limit ordinal. Then there exists an $n\in\omega$ such that
\[j_{0}*\ldots*j_{n}\equiv^{\gamma}1_{V_{\lambda}}.\]
\label{235wfe}
\end{thm}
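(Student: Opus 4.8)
The plan is to deduce Theorem~\ref{235wfe} directly from Theorem~\ref{j3rti90} together with the correspondence between critical points and the congruences $\equiv^{\gamma}$ recorded just above the statement. First I would isolate the dictionary entry being used: for $j\in\mathcal{E}_{\lambda}$ and a limit ordinal $\gamma<\lambda$, one has $\mathrm{crit}(j)\geq\gamma$ if and only if $j\equiv^{\gamma}1_{V_{\lambda}}$. The forward direction uses the standard fact that a non-trivial elementary embedding fixes $V_{\mathrm{crit}(j)}$ pointwise, so $\mathrm{crit}(j)\geq\gamma$ forces $j$ to fix $V_{\gamma}$ pointwise and hence, by elementarity, to satisfy $j(x)\cap V_{\gamma}=x\cap V_{\gamma}$ for every $x\in V_{\lambda}$; the converse is witnessed by the ordinal $\kappa=\mathrm{crit}(j)$, for which $j(\kappa)\cap V_{\gamma}\neq\kappa=\kappa\cap V_{\gamma}$ whenever $\kappa<\gamma$. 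Since this equivalence is exactly the remark made immediately before the statement, in the write-up I would simply cite it rather than reprove it.

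With that in hand the argument is short. Fix $j_{n}\in\mathcal{E}_{\lambda}^{+}$ for all $n\in\omega$ and a limit ordinal $\gamma<\lambda$. By Theorem~\ref{j3rti90},
\[\sup_{n\in\omega}\mathrm{crit}(j_{0}*\cdots*j_{n})=\lambda.\]
If every term satisfied $\mathrm{crit}(j_{0}*\cdots*j_{n})\leq\gamma$, this supremum would be at most $\gamma<\lambda$, a contradiction; hence there is some $n$ with $\mathrm{crit}(j_{0}*\cdots*j_{n})>\gamma$, and a fortiori $\mathrm{crit}(j_{0}*\cdots*j_{n})\geq\gamma$. Applying the dictionary entry to $j=j_{0}*\cdots*j_{n}$ then gives $j_{0}*\cdots*j_{n}\equiv^{\gamma}1_{V_{\lambda}}$, as required. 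I would also note in passing that each left-grouped iterated application $j_{0}*\cdots*j_{n}$ is again non-trivial, so that the critical points above are genuinely defined: the computations preceding the Fibonacci terms show that $a*b$ has critical point either $\mathrm{crit}(b)$ or $a(\mathrm{crit}(b))$, hence an ordinal below $\lambda$, whenever $a,b\in\mathcal{E}_{\lambda}^{+}$, so by induction on $n$ every partial product lies in $\mathcal{E}_{\lambda}^{+}$.

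I do not expect any genuine obstacle here, since Theorem~\ref{235wfe} is essentially a restatement of Theorem~\ref{j3rti90}; the only real content is the trivial observation that ``$\sup=\lambda$'' together with ``$\gamma<\lambda$'' pushes a single partial product strictly past $\gamma$, combined with the translation $\mathrm{crit}(j)\geq\gamma\iff j\equiv^{\gamma}1_{V_{\lambda}}$. The one place calling for a line of care is distinguishing $\geq\gamma$ from $>\gamma$ in the target, but since the supremum argument actually produces a strict inequality this causes no difficulty.
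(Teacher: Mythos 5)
Your proof is correct and takes exactly the approach the paper intends: the paper explicitly prefaces Theorem \ref{235wfe} with the observation that $\mathrm{crit}(j)\geq\gamma$ if and only if $j\equiv^{\gamma}1_{V_{\lambda}}$, and presents the theorem as a direct reformulation of Theorem \ref{j3rti90}, which is precisely the argument you have spelled out. The details you supply (that each partial product remains non-trivial, that the supremum condition forces some partial product past $\gamma$) are correct and are the sort of routine verifications the paper suppresses.
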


If $j\in\mathcal{E}_{\lambda}$, then we shall write \index{$[j]_{\gamma}$}$[j]_{\gamma}$ for the equivalence class of $j$ in
$\mathcal{E}_{\lambda}/\equiv^{\gamma}$.
\begin{prop}
The algebras $\mathcal{E}_{\lambda}/\equiv^{\gamma}$ are permutative and
$\mathrm{Li}(\mathcal{E}_{\lambda}/\equiv^{\gamma})=\{[1_{V_{\lambda}}]_{\gamma}\}$.
\label{t4r4ghtj0qhyqfha}
\end{prop}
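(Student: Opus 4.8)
The plan is to establish the three ingredients of the statement in turn: the identification $\mathrm{Li}(\mathcal{E}_{\lambda}/\equiv^{\gamma})=\{[1_{V_{\lambda}}]_{\gamma}\}$, the fact that this singleton is a left-ideal, and the Fibonacci-term condition. First I would pin down the left-identities. The class $[1_{V_{\lambda}}]_{\gamma}$ is a left-identity because $1_{V_{\lambda}}*l=l$ for every $l\in\mathcal{E}_{\lambda}$ and $\equiv^{\gamma}$ is a congruence. For the converse, suppose $[j]_{\gamma}$ is a left-identity; if $j\not\equiv^{\gamma}1_{V_{\lambda}}$ then $\mathrm{crit}(j)<\gamma$, and since $\mathrm{crit}(j)\leq\mathrm{crit}(j)$ the remark preceding the definition of the Fibonacci terms gives $\mathrm{crit}(j^{n}*j)=j^{n}(\mathrm{crit}(j))$ with $\lim_{n}j^{n}(\mathrm{crit}(j))=\lambda$, so $j^{n}*j\equiv^{\gamma}1_{V_{\lambda}}$ for all large $n$. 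But $j^{n}*j=j*(j^{n-1}*j)$, and the hypothesis on $[j]_{\gamma}$ forces $j*(j^{n-1}*j)\equiv^{\gamma}j^{n-1}*j$, so a finite descending induction yields $j^{n}*j\equiv^{\gamma}j^{0}*j=j$ for every $n$; taking $n$ large gives $j\equiv^{\gamma}1_{V_{\lambda}}$, a contradiction. That this singleton is a left-ideal is then immediate: $k*1_{V_{\lambda}}=\bigcup_{\alpha<\lambda}k(1_{V_{\alpha}})=\bigcup_{\alpha<\lambda}1_{V_{k(\alpha)}}=1_{V_{\lambda}}$ by elementarity and cofinality of $\{k(\alpha):\alpha<\lambda\}$ in $\lambda$, so $[k]_{\gamma}*[1_{V_{\lambda}}]_{\gamma}=[1_{V_{\lambda}}]_{\gamma}$.

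For the permutative condition, fix $x,y\in\mathcal{E}_{\lambda}/\equiv^{\gamma}$ and choose representatives $j,k\in\mathcal{E}_{\lambda}^{+}$; this is always possible, since $[1_{V_{\lambda}}]_{\gamma}$ is itself represented by any non-identity embedding with critical point $\geq\gamma$ (e.g.\ a sufficiently high iterate $j_{0}^{n}*j_{0}$ of a fixed $j_{0}\in\mathcal{E}_{\lambda}^{+}$). The crux is a reindexing that turns left-associated applications into Fibonacci terms: set $j_{0}=j$ and $j_{i}=t_{i}(j,k)$ for $i\geq 1$. Then $j_{0}*j_{1}*\cdots*j_{m}=t_{m+2}(j,k)$ for all $m\geq 1$, by an immediate induction on $m$ using the Fibonacci recursion (base case $j*k=t_{2}(j,k)*t_{1}(j,k)=t_{3}(j,k)$; step $(j_{0}*\cdots*j_{m})*j_{m+1}=t_{m+2}(j,k)*t_{m+1}(j,k)=t_{m+3}(j,k)$). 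Every $j_{i}$ is non-trivial, because $\mathcal{E}_{\lambda}^{+}$ is closed under $*$ (if $k\neq 1_{V_{\lambda}}$ then $j(k|_{V_{\alpha}})\neq 1_{V_{j(\alpha)}}$ for large $\alpha$, so $j*k\neq 1_{V_{\lambda}}$), hence all $t_{i}(j,k)$ lie in $\mathcal{E}_{\lambda}^{+}$. Applying the algebraic form of the Laver--Steel theorem (Theorem \ref{235wfe}) to the sequence $(j_{i})_{i\in\omega}$ produces $n\in\omega$ with $j_{0}*\cdots*j_{n}\equiv^{\gamma}1_{V_{\lambda}}$; hence $t_{n+2}(j,k)\equiv^{\gamma}1_{V_{\lambda}}$ when $n\geq 1$, while if $n=0$ then directly $t_{2}(j,k)=j=j_{0}\equiv^{\gamma}1_{V_{\lambda}}$. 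Either way $t_{n+2}(x,y)=[1_{V_{\lambda}}]_{\gamma}\in\mathrm{Li}(\mathcal{E}_{\lambda}/\equiv^{\gamma})$, which is exactly the permutative requirement.

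The only ingredient requiring genuine insight is guessing the reindexing $j_{i}=t_{i}(j,k)$; once that identity is in hand, Laver--Steel (Theorem \ref{235wfe}) is a black box that finishes everything, and the degenerate cases — $x$ or $y$ already equal to $[1_{V_{\lambda}}]_{\gamma}$ — are absorbed either by choosing non-trivial representatives or by noting $t_{1}(x,y)=y$ and $t_{2}(x,y)=x$. A more computational alternative would track $\max(\mathrm{crit}(t_{n}(j,k)),\mathrm{crit}(t_{n+1}(j,k)))$ directly, showing it is non-decreasing and strictly increasing along a cofinal set of $n$ (using $t_{n+1}\circ t_{n}=j\circ k$ together with $\mathrm{crit}(u\circ v)=\min(\mathrm{crit}(u),\mathrm{crit}(v))$); but promoting ``strictly increasing'' to ``cofinal in $\lambda$'' seems to require Laver--Steel anyway, so the reindexing is the efficient route and I expect it to be the step the author uses.
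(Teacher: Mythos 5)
Your proof is correct and uses the same central idea as the paper: the left-associated rewriting of the Fibonacci terms (your $j_0*\cdots*j_m=t_{m+2}(j,k)$ is exactly the identity $t_n=t_2*t_1*t_2*\cdots*t_{n-2}$ that the paper induces on), followed by an application of Laver--Steel. You also supply two points the paper leaves tacit: that $\{[1_{V_\lambda}]_\gamma\}$ is a left-ideal (via $k*1_{V_\lambda}=1_{V_\lambda}$) and that every class has a representative in $\mathcal{E}_\lambda^{+}$ so Laver--Steel is applicable; filling these in is reasonable.

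The one place where your route is noticeably longer than needed is the converse direction of the left-identity characterization. You run a descending induction on $j^{n}*j\equiv^{\gamma}j$ and appeal to $\mathrm{crit}(j^{n}*j)\to\lambda$, whereas the paper argues in one line: by elementarity $\mathrm{crit}(k*k)=k(\mathrm{crit}(k))>\mathrm{crit}(k)$, so if $\mathrm{crit}(k)<\gamma$ then $k*k\not\equiv^{\gamma}k$ and hence $[k]_\gamma$ is not a left-identity. Both work, but the latter avoids the induction and avoids invoking the full $j^{n}(\mathrm{crit}(j))\to\lambda$ fact.
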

\begin{proof}
Clearly $1_{V_{\lambda}}*j=j$ for all $j\in\mathcal{E}_{\lambda}$, so $[1_{V_{\lambda}}]_{\gamma}\in\mathrm{Li}(\mathcal{E}_{\lambda}/\equiv^{\gamma})$. If $k\in\mathcal{E}_{\lambda}$ and
$[k]_{\gamma}\neq[1_{V_{\lambda}}]_{\gamma}$, then by elementarity $\mathrm{crit}(k*k)=k(\mathrm{crit}(k))>\mathrm{crit}(k)$.
Since $\mathrm{crit}(k*k)>\mathrm{crit}(k)$ and $\mathrm{crit}(k)<\gamma$, we conclude that
$[k*k]_{\gamma}\neq[k]_{\gamma}$, so $[k]_{\gamma}\not\in\mathrm{Li}(\mathcal{E}_{\lambda}/\equiv^{\gamma})$. Therefore,
$\mathrm{Li}(\mathcal{E}_{\lambda}/\equiv^{\gamma})=\{[1_{V_{\lambda}}]_{\gamma}\}$.

Suppose now that $j,k\in\mathcal{E}_{\lambda}.$ By induction, one has \[t_{n}([j]_{\gamma},[k]_{\gamma})\]
\[=t_{2}([j]_{\gamma},[k]_{\gamma})*t_{1}([j]_{\gamma},[k]_{\gamma})*t_{2}([j]_{\gamma},[k]_{\gamma})*\ldots*
t_{n-2}([j]_{\gamma},[k]_{\gamma}).\]

Therefore, the Laver-Steel theorem proves that there is some $n$ where
\[t_{n}([j]_{\gamma},[k]_{\gamma})=[1_{V_{\lambda}}]_{\gamma}.\]
\end{proof}
\begin{exam}
There are permutative LD-systems $(Z,*)$ where $\mathrm{Li}(Z)$ has more than one element.
Let $(X,*)$ be a permutative LD-system. Let $A$ be the LD-system defined by $a*b=b$ for all $a,b\in A$. Then
the cartesian product $(X\times A,*)$ is also a permutative LD-system where $\mathrm{Li}(X\times A)=\mathrm{Li}(X)\times A$.
\end{exam}
\begin{defn}
Suppose that $X$ is a permutative LD-system, then define $x^{n}*y$ for all $n\geq 0$ by letting $x^{0}*y=y$ and
$x^{n+1}*y=x*(x^{n}*y)$ whenever $n\geq 0$. The statement \index{$\mathrm{crit}(x)$}$\mathrm{crit}(x)\leq \mathrm{crit}(y)$ shall mean $x^{n}*y\in \mathrm{Li}(X)$ for some $n$.
\end{defn}
Take note that if $\mathrm{Li}(X)$ is permutative, then if $x^{n}*y\in \mathrm{Li}(X)$ and $m>n$, then $x^{m}*y\in \mathrm{Li}(X)$ as well.
\begin{defn}
Define the \index{right powers}\emph{right powers} \index{$x^{[n]}$}$x^{[n]}$ for all $n\geq 1$ by letting $x^{[n]}=x^{n-1}*x$. In other words, $x^{[n]}$ is characterized by the property $x^{[1]}=x$ and $x^{[n]}=x*x^{[n-1]}$ for $n>1$.
\end{defn}
The following two lemmas are needed in order to show that the ordering on the critical points in a permutative LD-system is reflexive.
\begin{lem}
Then for each term $t$ on one variable, there is some $N$ such that $t(x)*x^{[n]}=x^{[n+1]}$ for each $n\geq N$ in the variety of LD-systems.
\end{lem}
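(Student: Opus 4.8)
The plan is to induct on the structure of the term $t$. Since the signature consists only of the binary operation $*$, every term on the single variable $x$ is either $x$ itself or of the form $s(x)*u(x)$ for strictly shorter terms $s,u$; there are no nullary symbols, so the base case is simply $t(x)=x$, and there $t(x)*x^{[n]}=x*x^{[n]}=x^{[n+1]}$ for every $n\geq 1$, so $N=1$ works. The whole argument will take place in the variety of LD-systems, using nothing but self-distributivity and the defining recursion $x^{[n+1]}=x*x^{[n]}$.

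For the inductive step, suppose $t(x)=s(x)*u(x)$ and that the lemma is known for $s$ and $u$, say with thresholds $N_s$ and $N_u$: $s(x)*x^{[m]}=x^{[m+1]}$ for all $m\geq N_s$, and $u(x)*x^{[m]}=x^{[m+1]}$ for all $m\geq N_u$. I claim $N:=\max(N_s,N_u)+1$ works. Fix $n\geq N$. The key opening move is to use the hypothesis for $s$ ``backwards'' to reshape the right-hand factor: since $n-1\geq N_s$ we have $x^{[n]}=x^{[(n-1)+1]}=s(x)*x^{[n-1]}$, which puts $x^{[n]}$ into the form $s(x)*(\text{something})$ so that distributivity applies. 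Then
\[t(x)*x^{[n]}=\bigl(s(x)*u(x)\bigr)*\bigl(s(x)*x^{[n-1]}\bigr)=s(x)*\bigl(u(x)*x^{[n-1]}\bigr),\]
the second equality being one application of self-distributivity, $(a*b)*(a*c)=a*(b*c)$, with $a=s(x)$, $b=u(x)$, $c=x^{[n-1]}$. Now invoke the hypothesis for $u$: as $n-1\geq N_u$ we get $u(x)*x^{[n-1]}=x^{[n]}$, hence $t(x)*x^{[n]}=s(x)*x^{[n]}$; and since $n\geq N_s$, one more application of the hypothesis for $s$ gives $s(x)*x^{[n]}=x^{[n+1]}$, which is exactly what we want. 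This completes the induction.

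The proof is short and the only place that needs care — and what I would flag as the (mild) crux — is the ordering of the three appeals to the induction hypothesis: one must first use the hypothesis for $s$ to rewrite $x^{[n]}$ as $s(x)*x^{[n-1]}$, because it is only in that form that the distributive law can pull $s(x)$ out front; the hypotheses for $u$ and then again for $s$ are applied afterwards. Tracking which of $n$ and $n-1$ must exceed which of $N_s,N_u$ is the entire bookkeeping content of the argument, and the single choice $N=\max(N_s,N_u)+1$ absorbs all of it. (One could alternatively deduce the statement from the fact that $x^{[n]}$ is cofinal in the left-division ordering of the free monogenic LD-system, but the direct structural induction is self-contained and avoids invoking that theory.)
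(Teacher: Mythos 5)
Your proof is correct and is essentially the same as the paper's: both proceed by structural induction on $t$, and in the inductive step $t=s*u$ both first rewrite $x^{[n]}$ as $s(x)*x^{[n-1]}$ so that the self-distributive identity $(a*b)*(a*c)=a*(b*c)$ can extract $s(x)$, then apply the induction hypothesis for $u$ at index $n-1$ and for $s$ at index $n$. The only cosmetic difference is that the paper picks a single threshold $N$ large enough for both subterms from the outset (and then uses $N+1$), whereas you track $N_s,N_u$ separately and set $N=\max(N_s,N_u)+1$.
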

\begin{proof}

We shall prove this fact by induction on the complexity of the term $t$. If $t(x)=x$, then
$t(x)*x^{[n]}=x^{[n+1]}$ for all $n$. Now assume that $t(x)=t_{1}(x)*t_{2}(x)$. Then there is some $N$ such that if $n\geq N$ then $t_{1}(x)*x^{[n]}=x^{[n+1]}$ and $t_{2}(x)*x^{[n]}=x^{[n+1]}$. Now if $n\geq N+1$, then
\[t(x)*x^{[n]}=t(x)*(t_{1}(x)*x^{[n-1]})=(t_{1}(x)*t_{2}(x))*(t_{1}(x)*x^{[n-1]})\]
\[=t_{1}(x)*(t_{2}(x)*x^{[n-1]})=t_{1}(x)*x^{[n]}=x^{[n+1]}.\]
\end{proof}
\begin{lem}
For each term $t$ of one variable, there are $N,c$ such that
$t(x)^{[n]}=x^{[n+c]}$ for each $n\geq N$ in the variety of LD-systems.
\label{43hifsd}
\end{lem}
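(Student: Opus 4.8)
The plan is to prove the lemma by induction on the complexity of the term $t$, using the preceding lemma together with one auxiliary identity valid in every LD-system, namely
\[a*b^{[n]}=(a*b)^{[n]}\qquad\text{for all }n\geq 1.\]
This identity is itself proved by a short induction on $n$: the case $n=1$ is immediate, and for the inductive step $a*b^{[n]}=a*(b*b^{[n-1]})=(a*b)*(a*b^{[n-1]})=(a*b)*(a*b)^{[n-1]}=(a*b)^{[n]}$, where the second equality is self-distributivity and the third is the induction hypothesis. Notice the parenthesization is forced by the definition of $b^{[n]}$, so there is no conflict with the left-grouping convention.

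For the main induction the base case is $t(x)=x$: here $t(x)^{[n]}=x^{[n]}$, so $N=1$, $c=0$ work. For the inductive step write $t(x)=t_{1}(x)*t_{2}(x)$ and assume the statement holds for $t_{1}$ and $t_{2}$; in particular there are $N_{2},c_{2}$ with $t_{2}(x)^{[n]}=x^{[n+c_{2}]}$ for all $n\geq N_{2}$ (only the hypothesis for the right subterm will be needed, and one checks inductively that the constant $c$ produced is always $\geq 0$).

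Now apply the auxiliary identity with $a=t_{1}(x)$, $b=t_{2}(x)$: $t(x)^{[n]}=(t_{1}(x)*t_{2}(x))^{[n]}=t_{1}(x)*t_{2}(x)^{[n]}$, which for $n\geq N_{2}$ equals $t_{1}(x)*x^{[n+c_{2}]}$. By the preceding lemma applied to the term $t_{1}$, there is an $M$ with $t_{1}(x)*x^{[m]}=x^{[m+1]}$ for every $m\geq M$; hence whenever $n+c_{2}\geq M$ we get $t_{1}(x)*x^{[n+c_{2}]}=x^{[n+c_{2}+1]}$. Therefore, taking $c=c_{2}+1$ and $N=\max(N_{2},M)$, we have $t(x)^{[n]}=x^{[n+c]}$ for all $n\geq N$, completing the induction.

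I do not anticipate a real obstacle here. The one point that deserves care is isolating the auxiliary identity $a*b^{[n]}=(a*b)^{[n]}$ at the outset, since this is exactly what rewrites a right power of a product into the form $t_{1}(x)*x^{[m]}$ that the preceding lemma is designed to handle; without it the inductive step does not connect to the available hypothesis. The manipulation of the constants $N$ and $c$ is purely bookkeeping, and the induction terminates because each of $t_{1},t_{2}$ is strictly less complex than $t$.
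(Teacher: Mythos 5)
Your proof is correct and takes essentially the same approach as the paper: induction on the complexity of $t$, with the inductive step combining the identity $(t_1(x)*t_2(x))^{[n]}=t_1(x)*t_2(x)^{[n]}$ with the induction hypothesis for $t_2$ and the preceding lemma applied to $t_1$. The paper uses that rewriting implicitly inside its chain of equalities; you simply isolate it as an explicit auxiliary lemma, which is the same argument presented a bit more carefully.
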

\begin{proof}
We shall prove this result by induction on the complexity of $t$. If $t(x)=x$, then $t(x)^{[n]}=x^{[n]}$ for all $n\geq 1$, so the result holds for $t(x)=x$.

Now assume $t(x)=t_{1}(x)*t_{2}(x)$. Then there are $N,c$ such that $t_{2}(x)^{[n]}=x^{[c+n]}$ and $t_{1}(x)*x^{[c+n]}=x^{[c+n+1]}$ for each
$n\geq N$. Therefore, for each $n\geq N$, we have
\[t(x)^{[n]}=(t_{1}(x)*t_{2}(x))^{[n]}=t_{1}(x)*t_{2}(x)^{[n]}=t_{1}(x)*x^{[n+c]}=x^{[n+c+1]}.\]
\end{proof}
While Proposition \ref{t2h8onkfia} consists of basic set-theoretic statements, we shall generalize
Proposition \ref{t2h8onkfia} to less trivial purely algebraic statements about the critical points of permutative LD-systems.
\begin{prop}
\label{t2h8onkfia}

Suppose that $j,k\in\mathcal{E}_{\lambda}^{+}$ and $\alpha,\beta<\lambda$. Then
\begin{enumerate}
\item The mapping $j$ is injective,

\item If $\alpha<\beta$, then $j(\alpha)<j(\beta)$,

\item $j(\alpha)\geq\alpha$ and $j(\alpha)>\alpha$ precisely when $\alpha\geq\mathrm{crit}(j),$

\item $\mathrm{crit}(j*k)=j(\mathrm{crit}(k))$, and

\item $\mathrm{crit}(j\circ k)=\min(\mathrm{crit}(j),\mathrm{crit}(k))$.
\end{enumerate}
\end{prop}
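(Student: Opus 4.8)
The plan is to verify the five items one at a time, using nothing beyond elementarity of the maps in $\mathcal{E}_{\lambda}^{+}$, the definition $j*k=\bigcup_{\alpha<\lambda}j(k|_{V_{\alpha}})$ of application, the definition of $\circ$ as composition, and the fact — a standard consequence of the Kunen inconsistency, since $\lambda$ is a limit ordinal — that $\sup_{n\in\omega}j^{n}(\mathrm{crit}(j))=\lambda$ for every non-trivial $j\in\mathcal{E}_{\lambda}$. For (1), $j$ preserves the formula $x=y$, so $a\neq b$ forces $j(a)\neq j(b)$. For (2), the properties ``$x$ is an ordinal'' and ``$x\in y$'' are first order over $V_{\lambda}$, so $j$ sends ordinals below $\lambda$ to ordinals below $\lambda$ and preserves $\in$ between them; since $\alpha<\beta$ means $\alpha\in\beta$, this gives $j(\alpha)<j(\beta)$.

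For (3), I would first prove $j(\alpha)\geq\alpha$ for all $\alpha<\lambda$ by transfinite induction on $\alpha$: if $j(\beta)\geq\beta$ for every $\beta<\alpha$, then by (2) we get $\beta\leq j(\beta)<j(\alpha)$ for all $\beta<\alpha$, hence $j(\alpha)\geq\alpha$. The ``only if'' half of the second assertion is immediate from the definition of $\mathrm{crit}(j)$: if $\alpha<\mathrm{crit}(j)$ then $j(\alpha)=\alpha$. For the ``if'' half, given $\kappa:=\mathrm{crit}(j)\leq\alpha<\lambda$, I would choose $n$ with $j^{n}(\kappa)\leq\alpha<j^{n+1}(\kappa)$, which is possible because the sequence $j^{n}(\kappa)$ is cofinal in $\lambda$; then by (2), $j(\alpha)\geq j(j^{n}(\kappa))=j^{n+1}(\kappa)>\alpha$.

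For (4), put $\mu=\mathrm{crit}(k)$ and fix an ordinal $\alpha$ with $\mu<\alpha<\lambda$. The truncation $k|_{V_{\alpha}}$ is an element of $V_{\lambda}$, and over $V_{\lambda}$ it satisfies, with parameters $\mu$ and $\alpha$, the first order statement ``$f$ is a function with domain $V_{\alpha}$ and $\mu$ is the least ordinal with $f(\mu)\neq\mu$''. Applying $j$ and invoking elementarity, $j(k|_{V_{\alpha}})$ has domain $V_{j(\alpha)}$, fixes every ordinal below $j(\mu)$, and moves $j(\mu)$; here one checks $j(\mu)<j(\alpha)$ so that $j(\mu)$ genuinely lies in the domain, which follows from $\mu<\alpha$ and (2). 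Since $j*k$ agrees with $j(k|_{V_{\alpha}})$ on every ordinal below $j(\alpha)$ and $\alpha$ may be taken arbitrarily large below $\lambda$, it follows that $j*k$ fixes every ordinal below $j(\mu)$ and moves $j(\mu)$, i.e. $\mathrm{crit}(j*k)=j(\mu)$.

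For (5), write $\kappa=\mathrm{crit}(j)$, $\mu=\mathrm{crit}(k)$, $\nu=\min(\kappa,\mu)$. If $\beta<\nu$ then $k(\beta)=\beta$ and $j(\beta)=\beta$, so $(j\circ k)(\beta)=j(k(\beta))=\beta$; hence $\mathrm{crit}(j\circ k)\geq\nu$. For the reverse inequality I would split into cases: if $\mu\leq\kappa$, then by (3) we have $k(\mu)>\mu$, so $(j\circ k)(\mu)=j(k(\mu))\geq k(\mu)>\mu=\nu$ (using $j(\gamma)\geq\gamma$ from (3)); if $\kappa<\mu$, then $k(\kappa)=\kappa$, so $(j\circ k)(\kappa)=j(\kappa)\neq\kappa=\nu$. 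Either way $j\circ k$ moves $\nu$, so $\mathrm{crit}(j\circ k)\leq\nu$, and hence $\mathrm{crit}(j\circ k)=\nu$. There is no serious obstacle in any of this; the only steps calling for a little care are the ``if'' half of (3), which genuinely uses the cofinality of the sequence $j^{n}(\kappa)$ rather than being purely formal, and item (4), where the elementarity argument must be phrased in terms of the truncations $k|_{V_{\alpha}}$ since $k$ itself does not belong to $V_{\lambda}$.
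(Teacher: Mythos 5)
Your proof is correct. The paper states Proposition \ref{t2h8onkfia} without proof, remarking only that it "consists of basic set-theoretic statements," so there is no argument in the text to compare against. The five arguments you give are the standard ones: items (1)--(2) from preservation of first-order formulas, item (3) from the transfinite induction for $j(\alpha)\geq\alpha$ together with cofinality of the critical sequence (which the paper's statement of the Kunen inconsistency supplies), item (4) from applying $j$ to the first-order description of $\mathrm{crit}(k|_{V_{\alpha}})$ inside $V_{\lambda}$, and item (5) by direct computation with the case split on which of $\mathrm{crit}(j),\mathrm{crit}(k)$ is smaller. You are also right to flag the two places needing care --- the ``if'' direction of (3) really does use cofinality of $(j^{n}(\kappa))_{n}$ rather than a purely formal argument, and (4) must be phrased via the truncations $k|_{V_{\alpha}}$ since $k\notin V_{\lambda}$. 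All steps check out.
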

\begin{prop}
Let $(X,*)$ be a permutative LD-system, and let $x,y,r\in X$.
\begin{enumerate}
\item $\mathrm{crit}(x)\leq \mathrm{crit}(x)$.

\item If $\mathrm{crit}(x)\leq \mathrm{crit}(y)$ and $\mathrm{crit}(y)\leq \mathrm{crit}(z)$, then $\mathrm{crit}(x)\leq \mathrm{crit}(z)$.

\item If $\mathrm{crit}(x)\leq \mathrm{crit}(y)$, then $\mathrm{crit}(r*x)\leq \mathrm{crit}(r*y)$.
\end{enumerate}
\label{sfenjuw4}
\end{prop}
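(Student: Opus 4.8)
The plan is to handle the three clauses of Proposition~\ref{sfenjuw4} separately. The two facts I will use over and over are that in any permutative LD-system $X$ the set $\mathrm{Li}(X)$ is a left-ideal, and that every $L_{*,r}$ is an inner endomorphism, so that composites such as $w\mapsto x^{n}*w$ are endomorphisms as well and satisfy $\phi(u^{k}*v)=\phi(u)^{k}*\phi(v)$ by a one-line induction. With these in hand, clauses (2) and (3) are short; the genuinely substantive point is reflexivity, clause (1), which is exactly what the two preceding lemmas---especially Lemma~\ref{43hifsd}---were set up for.

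For clause (1): by permutativity there is some $n\geq 1$ with $t_{n}(x,x)\in\mathrm{Li}(X)$, so writing $t(w)=t_{n}(w,w)$ we have a one-variable term for which $t(x)$ is a left-identity of $X$. Lemma~\ref{43hifsd} applied to $t$ supplies $N,c$ such that the LD-identity $t(w)^{[m]}=w^{[m+c]}$ holds for all $m\geq N$. Instantiating at our element $x$ and using that any left-identity $e$ satisfies $e^{[m]}=e$ for all $m\geq 1$ (immediate induction, since $e*e=e$), we obtain $x^{[N+c]}=t(x)^{[N]}=t(x)\in\mathrm{Li}(X)$. Since $x^{[N+c]}=x^{(N+c)-1}*x$, this is precisely the assertion $\mathrm{crit}(x)\leq\mathrm{crit}(x)$.

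For clause (2): suppose $x^{a}*y\in\mathrm{Li}(X)$ and $y^{b}*z\in\mathrm{Li}(X)$. Applying the endomorphism $w\mapsto x^{a}*w$ to $y^{b}*z$ gives $x^{a}*(y^{b}*z)=(x^{a}*y)^{b}*(x^{a}*z)$, and because $x^{a}*y$ is a left-identity the right-hand side collapses to $x^{a}*z$; on the other hand, since $y^{b}*z\in\mathrm{Li}(X)$ and $\mathrm{Li}(X)$ is a left-ideal, $x^{a}*(y^{b}*z)\in\mathrm{Li}(X)$. Hence $x^{a}*z\in\mathrm{Li}(X)$, i.e.\ $\mathrm{crit}(x)\leq\mathrm{crit}(z)$. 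For clause (3): suppose $x^{a}*y\in\mathrm{Li}(X)$; applying the inner endomorphism $L_{*,r}$ yields $(r*x)^{a}*(r*y)=r*(x^{a}*y)$, which lies in $\mathrm{Li}(X)$ because $x^{a}*y$ does and $\mathrm{Li}(X)$ is a left-ideal, so $\mathrm{crit}(r*x)\leq\mathrm{crit}(r*y)$.

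The only place where care is needed---and the main obstacle---is the transfer in clause (1) from ``a Fibonacci term $t_{n}(x,x)$ is a left-identity'' to ``some right-power $x^{[m]}$ is a left-identity'': Fibonacci terms are not right-powers, and the bridge is exactly Lemma~\ref{43hifsd} (whose own proof rests on the preceding lemma giving $t(x)*x^{[n]}=x^{[n+1]}$ for all large $n$). Once reflexivity is secured, transitivity and monotonicity are routine endomorphism-plus-left-ideal bookkeeping.
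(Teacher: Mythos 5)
Your proof is correct and follows the same route as the paper: clause (1) reduces reflexivity to Lemma~\ref{43hifsd} exactly as the paper does (the paper closes the last step by the left-ideal property, $t(x)^{[r]}=t(x)^{r-1}*t(x)\in\mathrm{Li}(X)$, whereas you observe $e^{[m]}=e$ for a left-identity $e$ — either one-liner works), and clauses (2) and (3) are the same endomorphism-plus-left-ideal computations the paper gives (the paper just chooses a single common exponent $n$, which, as you note, is always possible).
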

\begin{proof}
\begin{enumerate}
\item Since $X$ is permutative, there is some $n$ where $t_{n}(x,x)\in \mathrm{Li}(X)$.
However, by Lemma \ref{43hifsd} there are $r,s$ where
\[x^{s-1}*x=x^{[s]}=t_{n}(x,x)^{[r]}\in\mathrm{Li}(X)\]
since $\mathrm{Li}(X)$ is a left-ideal.

\item Since $\mathrm{crit}(x)\leq \mathrm{crit}(y)$ and $\mathrm{crit}(y)\leq \mathrm{crit}(z)$, there is some $n$ where $x^{n}*y,y^{n}*z\in \mathrm{Li}(X)$. Therefore, we have
\[x^{n}*z=(x^{n}*y)^{n}*(x^{n}*z)=x^{n}*(y^{n}*z)\in \mathrm{Li}(X)\]
because $\mathrm{Li}(X)$ is a left-ideal.

\item Suppose that $\mathrm{crit}(x)\leq \mathrm{crit}(y)$. Then $x^{n}*y\in \mathrm{Li}(X)$ for some $n$, so
\[(r*x)^{n}*(r*y)=r*(x^{n}*y)\in \mathrm{Li}(X)\]
as well. Therefore, $\mathrm{crit}(r*x)\leq \mathrm{crit}(r*y)$.
\end{enumerate}
\end{proof}
\begin{defn}
Let $\simeq$ be the equivalence relation on $X$ where $x\simeq y$ iff $\mathrm{crit}(x)\leq \mathrm{crit}(y)$ and $\mathrm{crit}(y)\leq \mathrm{crit}(x)$. Then we can formally define \index{$\mathrm{crit}(x)$}$\mathrm{crit}(x)$ to be the equivalence class of $x$ with respect to $\simeq$. Define \index{$\mathrm{crit}[X]$}$\mathrm{crit}[X]=\{\mathrm{crit}(x)\mid x\in X\}$. Then $\mathrm{crit}[X]$ is a partial ordering with the ordering $\leq$.
\end{defn}
We shall use Greek lower-case letters to denote the critical points in a permutative LD-system since one typically uses Greek letters to denote ordinals. 
\begin{defn}
For each $x\in X$, define a mapping \index{$x^{\sharp}$}$x^{\sharp}:\mathrm{crit}[X]\rightarrow \mathrm{crit}[X]$ by letting $x^{\sharp}(\mathrm{crit}(r))=\mathrm{crit}(x*r)$ for each $r\in X$. By Proposition \ref{sfenjuw4}, the mapping $x^{\sharp}$ is well-defined and order preserving.
\end{defn}
\begin{lem}
Suppose that $X$ is a permutative LD-system. If $t_{n}(x,y),t_{n+1}(x,y)\in \mathrm{Li}(X)$, then
$x,y\in \mathrm{Li}(X)$.
\label{fewn2}
\end{lem}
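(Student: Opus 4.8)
The plan is a descending induction that peels the Fibonacci recursion apart one step at a time. If $n=1$, then the hypothesis reads $y=t_{1}(x,y)\in\mathrm{Li}(X)$ and $x=t_{2}(x,y)\in\mathrm{Li}(X)$, so there is nothing to prove. Assume $n\ge 2$.

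First I would record the one-step reversal: for any $k\ge 2$ the defining recursion (applied with $m=k-1$) gives $t_{k+1}(x,y)=t_{k}(x,y)*t_{k-1}(x,y)$, so if in addition $t_{k}(x,y)\in\mathrm{Li}(X)$, then applying $t_{k}(x,y)$ as a left-identity yields $t_{k+1}(x,y)=t_{k}(x,y)*t_{k-1}(x,y)=t_{k-1}(x,y)$. Thus whenever two consecutive Fibonacci terms $t_{k}(x,y)$ and $t_{k+1}(x,y)$ with $k\ge 2$ both lie in $\mathrm{Li}(X)$, so does $t_{k-1}(x,y)$, and hence so does the consecutive pair $t_{k-1}(x,y),t_{k}(x,y)$.

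Starting from the hypothesized pair $t_{n}(x,y),t_{n+1}(x,y)\in\mathrm{Li}(X)$ and applying this reversal $n-1$ times, I descend through the consecutive pairs $t_{n-1},t_{n}$, then $t_{n-2},t_{n-1}$, and so on, finally reaching $t_{1},t_{2}$, all of whose members lie in $\mathrm{Li}(X)$. In particular $x=t_{2}(x,y)$ and $y=t_{1}(x,y)$ are left-identities, which is the claim.

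The only thing that needs watching is the index bound: the reversal identity $t_{k+1}=t_{k}*t_{k-1}$ requires $k\ge 2$, which is exactly why the descent terminates at the pair $(t_{1},t_{2})$ and does not try to continue. Notably, neither permutativity of $X$ nor the left-ideal property of $\mathrm{Li}(X)$ is used, so the statement in fact holds in an arbitrary LD-system; I therefore expect no genuine obstacle, the whole content being the observation that a left-identity occurring as the left factor of a product can be cancelled.
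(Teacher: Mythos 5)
Your proof is correct and uses the same pivotal identity as the paper's — that when $t_{k}(x,y)\in\mathrm{Li}(X)$, the recursion collapses to $t_{k+1}(x,y)=t_{k}(x,y)*t_{k-1}(x,y)=t_{k-1}(x,y)$ — just applied as a direct descending argument rather than the paper's ascending induction by contrapositive. The two are logically the same proof, and your observation that neither permutativity nor the left-ideal hypothesis is actually invoked (so the lemma holds in any LD-system) is accurate.
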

\begin{proof}
We shall prove this result by contrapositive and by induction. Suppose that $x\not\in \mathrm{Li}(X)$ or $y\not\in \mathrm{Li}(X)$. Then I claim that for all $n$ we have $t_{n}(x,y)\not\in \mathrm{Li}(X)$ or $t_{n+1}(x,y)\not\in \mathrm{Li}(X)$.
The case when $n=1$ is trivial. Now assume that $n>1$. Then by the induction hypothesis, 
$t_{n}(x,y)\not\in \mathrm{Li}(X)$ or $t_{n-1}(x,y)\not\in \mathrm{Li}(X)$. If $t_{n}(x,y)\not\in \mathrm{Li}(X)$, then the induction step has been proven. If $t_{n}(x,y)\in \mathrm{Li}(X)$ and $t_{n-1}(x,y)\not\in \mathrm{Li}(X)$, then $t_{n+1}(x,y)=t_{n}(x,y)*t_{n-1}(x,y)=t_{n-1}(x,y)\not\in\mathrm{Li}(X)$.
\end{proof}
\begin{thm}
\label{2j1298ur28tye}

Suppose that $X$ is a permutative LD-system.
\begin{enumerate}
\item $x\in \mathrm{Li}(X)$ if and only if $\mathrm{crit}(x)$ is the greatest element in $\mathrm{crit}[X]$.

\item $x^{\sharp}(\alpha)\geq\alpha$ for all $\alpha$.

\item $\mathrm{crit}[X]$ is a linear ordering.

\item $x^{\sharp}(\alpha)>\alpha$ if and only if $\mathrm{crit}(x)\leq\alpha<\max(\mathrm{crit}[X])$.

\item $x^{\sharp}|_{A}$ is injective where $A=\{\alpha\in \mathrm{crit}[X]\mid x^{\sharp}(\alpha)<\max(\mathrm{crit}[X])\}$.
\end{enumerate}
\end{thm}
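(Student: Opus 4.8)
The plan is to prove the five clauses essentially in the stated order, after recording some elementary consequences of the axioms together with Proposition~\ref{sfenjuw4} and the lemmas preceding it: each iterated inner map $r\mapsto x^{n}*r$ is an endomorphism, so $x^{n}*(a*b)=(x^{n}*a)*(x^{n}*b)$, $x^{n}*x=x^{[n+1]}$, and $x*(s^{k}*t)=(x*s)^{k}*(x*t)$; the identity $x^{k}*(x*y)=x^{k+1}*y$ holds; and, exactly as in the proof of Proposition~\ref{sfenjuw4}(1), there is some $s_{0}$ with $x^{[m]}\in\mathrm{Li}(X)$ for all $m\ge s_{0}$ (since $x^{[m+1]}=x*x^{[m]}$ and $\mathrm{Li}(X)$ is a left-ideal). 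I will also use the trivial equivalence $(\ast)$: $\mathrm{crit}(x)\le\mathrm{crit}(y)$ iff $\mathrm{crit}(x)\le\mathrm{crit}(x*y)$, immediate from $x^{m}*(x*y)=x^{m+1}*y$. Clause (1) is then quick: if $x\in\mathrm{Li}(X)$ then $y*x\in\mathrm{Li}(X)$ for every $y$, so $\mathrm{crit}(y)\le\mathrm{crit}(x)$; conversely $\mathrm{Li}(X)$ is nonempty by permutativity, any $z\in\mathrm{Li}(X)$ realizes the (hence unique) greatest critical point, so if $\mathrm{crit}(x)$ is greatest then $z\simeq x$, giving $z^{n}*x\in\mathrm{Li}(X)$ for some $n$, and $z^{n}*x=x$.

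\textbf{Clauses (2) and (3).} For (2), write $\alpha=\mathrm{crit}(r)$; choosing $m$ so large that $r^{[m+1]}\in\mathrm{Li}(X)$ gives $r^{m}*(x*r)=(r^{m}*x)*(r^{m}*r)=(r^{m}*x)*r^{[m+1]}\in\mathrm{Li}(X)$ because $\mathrm{Li}(X)$ is a left-ideal, so $\mathrm{crit}(r)\le\mathrm{crit}(x*r)=x^{\sharp}(\alpha)$. For (3) I would first prove $t_{k+1}(x,y)=t_{k}(x*y,x)$ for all $k\ge1$ by a short induction, so that the least $m$ with $t_{m}(x,y)\in\mathrm{Li}(X)$ corresponds to the least index $m-1$ for the pair $(x*y,x)$. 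Then prove, by strong induction on this least index $m$, the statement (for all $x,y$): $m$ odd $\Rightarrow\mathrm{crit}(x)\le\mathrm{crit}(y)$, and $m$ even $\Rightarrow\mathrm{crit}(y)\le\mathrm{crit}(x)$. The cases $m=1,2$ are trivial (then $y$, resp.\ $x$, lies in $\mathrm{Li}(X)$ and so has greatest critical point by (1)); for $m\ge3$ apply the inductive hypothesis to $(x*y,x)$ at index $m-1$: if $m$ is even we get $\mathrm{crit}(x*y)\le\mathrm{crit}(x)$, and with $\mathrm{crit}(y)\le\mathrm{crit}(x*y)$ from (2) this yields $\mathrm{crit}(y)\le\mathrm{crit}(x)$; if $m$ is odd we get $\mathrm{crit}(x)\le\mathrm{crit}(x*y)$, whence $\mathrm{crit}(x)\le\mathrm{crit}(y)$ by $(\ast)$. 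Linearity of $\mathrm{crit}[X]$ follows because every pair has a least index.

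\textbf{Clauses (4) and (5).} The direction ``$\Leftarrow$'' of (4) is short: if $\mathrm{crit}(x)\le\mathrm{crit}(r)$ and $x^{\sharp}(\mathrm{crit}(r))=\mathrm{crit}(r)$, then iterating $x^{\sharp}$ (well-defined on $\simeq$-classes) gives $\mathrm{crit}(x^{k}*r)=\mathrm{crit}(r)$ for all $k$, while $\mathrm{crit}(x)\le\mathrm{crit}(r)$ forces $x^{l}*r\in\mathrm{Li}(X)$ for some $l$; so $\mathrm{crit}(r)$ is greatest, i.e.\ $r\in\mathrm{Li}(X)$ by (1). Next I would prove (5): if $x^{\sharp}(\mathrm{crit}(r))=x^{\sharp}(\mathrm{crit}(r'))<\max$ with, say, $\mathrm{crit}(r)\le\mathrm{crit}(r')$, then $\mathrm{crit}(x*r)\le\mathrm{crit}(x*r')$ unpacks (via $x*(s^{k}*t)=(x*s)^{k}*(x*t)$) to $x*((r')^{k}*r)\in\mathrm{Li}(X)$ for some $k$; if $(r')^{k}*r\in\mathrm{Li}(X)$ we are done, and otherwise $\mathrm{crit}(r)<\mathrm{crit}(r')$ would, by ``$\Leftarrow$'' of (4), make $(r')^{\sharp}$ fix $\mathrm{crit}(r)$, so $(r')^{k}*r\simeq r$ and $x^{\sharp}(\mathrm{crit}(r))=\mathrm{crit}\big(x*((r')^{k}*r)\big)=\max$, contradicting $x^{\sharp}(\mathrm{crit}(r))<\max$; hence $\mathrm{crit}(r)=\mathrm{crit}(r')$. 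An easy induction using (2) and (5) then shows each $(r^{\sharp})^{n}$ is injective on $\{\alpha:(r^{\sharp})^{n}(\alpha)<\max\}$. Finally, for ``$\Rightarrow$'' of (4): the clause $\alpha<\max$ is immediate (if $r\in\mathrm{Li}(X)$ then $x*r\in\mathrm{Li}(X)$ and $x^{\sharp}(\alpha)=\alpha$); suppose $x^{\sharp}(\mathrm{crit}(r))>\mathrm{crit}(r)$ but, for contradiction, $\mathrm{crit}(r)<\mathrm{crit}(x)$, and let $n$ be least with $r^{n}*x\in\mathrm{Li}(X)$. Then $r^{n}*(x*r)=(r^{n}*x)*r^{[n+1]}=r^{[n+1]}=r^{n}*r$, so $(r^{\sharp})^{n}(\mathrm{crit}(x*r))=(r^{\sharp})^{n}(\mathrm{crit}(r))$; if $r^{[n+1]}\notin\mathrm{Li}(X)$ the injectivity above gives $\mathrm{crit}(x*r)=\mathrm{crit}(r)$, contradiction. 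In the remaining case $r^{[n+1]}\in\mathrm{Li}(X)$ (which forces $r^{[n]}\notin\mathrm{Li}(X)$ by minimality of $n$) one repeats the argument one level down: $r^{n-1}*(x*r)=p*q$ with $p=r^{n-1}*x$, $q=r^{[n]}$ both outside $\mathrm{Li}(X)$, and $\mathrm{crit}(q)\le\mathrm{crit}(p)$ by monotonicity, with equality excluded by injectivity of $(r^{\sharp})^{n-1}$ together with $\mathrm{crit}(r)<\mathrm{crit}(x)$; so $\mathrm{crit}(q)<\mathrm{crit}(p)$, whence $\mathrm{crit}(p*q)=\mathrm{crit}(q)$ by ``$\Leftarrow$'' of (4) and (2), and injectivity again yields $\mathrm{crit}(x*r)=\mathrm{crit}(r)$, a contradiction.

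\textbf{Main obstacle.} The genuinely delicate point is clause (5) and the ``$\Rightarrow$'' half of (4): these are the algebraic shadow of the injectivity of elementary embeddings (Proposition~\ref{t2h8onkfia}(1)), and the axioms provide no cancellation law for $*$ to lean on. The key trick is the collapse $r^{n}*(x*r)=r^{n}*r=r^{[n+1]}$ once $r^{n}*x\in\mathrm{Li}(X)$, which transforms the wanted ``injectivity of $x^{\sharp}$'' into injectivity of powers of $r^{\sharp}$ below $\max$ — and the latter bootstraps from the single-step statement (5). Arranging the induction so that (5) depends only on the already-proved easy half of (4), and getting the two-case bookkeeping (whether $r^{[n+1]}$ lies in $\mathrm{Li}(X)$) exactly right, is where the real care lies.
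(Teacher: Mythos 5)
Your clauses (1), (2), and the $\Leftarrow$ half of (4) are fine, and your proof of (3) is correct and genuinely different from the paper's: you do a strong induction on the least Fibonacci index $N$ with $t_N(x,y)\in\mathrm{Li}(X)$, using the identity $t_{k+1}(x,y)=t_k(x*y,x)$ to reduce to $(x*y,x)$ at index $N-1$ and conclude from the parity of $N$, whereas the paper instead shows directly that $\mathrm{crit}(t_{N+1}(x,y))\le\mathrm{crit}(x)$ and $\le\mathrm{crit}(y)$ by pushing $r^n*$ through $t_N,t_{N+1}$ and invoking Lemma~\ref{fewn2}. Your route is cleaner for (3) alone, but it forfeits the stronger byproduct the paper's argument gives, namely
\[
t_N(x,y)\in\mathrm{Li}(X)\ \Longrightarrow\ \mathrm{crit}\bigl(t_{N+1}(x,y)\bigr)=\min\bigl(\mathrm{crit}(x),\mathrm{crit}(y)\bigr),
\]
and that byproduct is exactly what the paper uses to prove the crucial fact
\begin{equation}
\alpha<\mathrm{crit}(x)\ \Longrightarrow\ x^{\sharp}(\alpha)=\alpha.\tag{$\dagger$}
\end{equation}

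Here is the gap. In your proofs of (5) and of the $\Rightarrow$ half of (4) you repeatedly invoke $(\dagger)$ (``$\mathrm{crit}(r)<\mathrm{crit}(r')$ makes $(r')^{\sharp}$ fix $\mathrm{crit}(r)$,'' and ``$\mathrm{crit}(q)<\mathrm{crit}(p)$ whence $\mathrm{crit}(p*q)=\mathrm{crit}(q)$''), each time attributing it to ``$\Leftarrow$ of (4).'' But $(\dagger)$ is the contrapositive of one half of the $\Rightarrow$ direction of (4), not of the $\Leftarrow$ direction: $\Leftarrow$ says $\mathrm{crit}(x)\le\alpha<\max\Rightarrow x^{\sharp}(\alpha)>\alpha$, which is silent when $\alpha<\mathrm{crit}(x)$. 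So your intended dependency order $(4)\!\Leftarrow\ \to\ (5)\ \to\ (4)\!\Rightarrow$ is not actually achieved: (5) secretly uses $(4)\!\Rightarrow$, which you then prove from (5), giving a circularity. (The same issue reappears in the ``one level down'' step: you again need $\mathrm{crit}(p*q)=\mathrm{crit}(q)$ from $\mathrm{crit}(q)<\mathrm{crit}(p)$, which is $(\dagger)$, and passing from $(x,r)$ to $(p,q)$ does not visibly decrease any well-founded parameter — when $n=1$ one lands back on essentially the original pair.) To repair the proposal you must establish $(\dagger)$ \emph{before} (5). The paper does this by getting the displayed $\min$ identity as part of the proof of (3) (via the Lemma~\ref{fewn2} trick of pushing $r^n*$ through $t_N$ and $t_{N+1}$) and then writing $\mathrm{crit}(t_{N+1}(x,y))=\mathrm{crit}(t_N(x*y,x))$ to get $\min(\mathrm{crit}(x),\mathrm{crit}(y))=\min(\mathrm{crit}(x*y),\mathrm{crit}(x))$; your parity induction, while valid, does not produce that identity, so you would need to supply an independent derivation of $(\dagger)$ or of the $\min$ identity before (5) and the remainder of (4) can go through.
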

\begin{proof}
\begin{enumerate}
\item  $\rightarrow$ If $x\in \mathrm{Li}(X)$, then $y^{n}*x\in \mathrm{Li}(X)$ for all $n$, so $\mathrm{crit}(y)\leq \mathrm{crit}(x)$ for all $y\in X$. Therefore, $\mathrm{crit}(x)$ is the greatest element in $\mathrm{crit}[X].$

$\leftarrow$ We shall prove this direction by contrapositive. If $y\not\in\mathrm{Li}(X),x\in\mathrm{Li}(X)$, then $x^{n}*y=y\not\in \mathrm{Li}(X)$, so $\mathrm{crit}(x)\not\leq \mathrm{crit}(y)$. 

\item Suppose that $\alpha=\mathrm{crit}(y)$. Then $y^{n}*y\in \mathrm{Li}(X)$ for some $n$. Therefore, for such an $n$, we have
\[y^{n}*(x*y)=(y^{n}*x)*(y^{n}*y)\in \mathrm{Li}(X)\]
as well since $\mathrm{Li}(X)$ is a left ideal. Therefore,
\[\alpha=\mathrm{crit}(y)\leq \mathrm{crit}(x*y)=x^{\sharp}(\alpha).\]

\item We shall use the fact that $X$ is permutative here. For all $n$, we have
\[\mathrm{crit}(t_{n+2}(x,y))=\mathrm{crit}(t_{n+1}(x,y)*t_{n}(x,y))\geq t_{n}(x,y).\]
Therefore
\[\mathrm{crit}(t_{2n+2}(x,y))\geq \mathrm{crit}(t_{2}(x,y))=\mathrm{crit}(x)\]
and
\[\mathrm{crit}(t_{2n+1}(x,y))\geq \mathrm{crit}(t_{1}(x,y))=\mathrm{crit}(y).\]

Suppose now that $t_{N}(x,y)\in\mathrm{Li}(X)$. Then $\mathrm{crit}(t_{N+1}(x,y))\geq \mathrm{crit}(x)$ or
$\mathrm{crit}(t_{N+1}(x,y))\geq\mathrm{crit}(y)$.

Now suppose that $\mathrm{crit}(r)\leq \mathrm{crit}(t_{N+1}(x,y))$. Then there is some $n$ where
\[t_{N+1}(r^{n}*x,r^{n}*y)=r^{n}*t_{N+1}(x,y)\in \mathrm{Li}(X).\]
However, we also have
\[t_{N}(r^{n}*x,r^{n}*y)=r^{n}*t_{N}(x,y)\in \mathrm{Li}(X).\]
Therefore, by Lemma \ref{fewn2}, we have
$r^{n}*x,r^{n}*y\in \mathrm{Li}(X)$. Therefore, $\mathrm{crit}(r)\leq \mathrm{crit}(x)$ and $\mathrm{crit}(r)\leq \mathrm{crit}(y)$.
Therefore, we have $\mathrm{crit}(t_{N+1}(x,y))\leq \mathrm{crit}(x)$ and $\mathrm{crit}(t_{N+1}(x,y))\leq \mathrm{crit}(y)$.
Since $\mathrm{crit}(x)\leq\mathrm{crit}(t_{N+1}(x,y))$ or $\mathrm{crit}(y)\leq\mathrm{crit}(t_{N+1}(x,y))$, we conclude that 
\[\mathrm{crit}(x)=\mathrm{crit}(t_{N+1}(x,y))\leq\mathrm{crit}(y)\]
or
\[\mathrm{crit}(y)=\mathrm{crit}(t_{N+1}(x,y))\leq\mathrm{crit}(x).\]
Therefore $\mathrm{crit}[X]$ is a linearly ordered set.

\item Now if $\alpha=\max(\mathrm{crit}[X])$, then $x^{\sharp}(\alpha)\geq\alpha$ by property $2$, so $x^{\sharp}(\alpha)=\alpha$ in this case.

I now claim that if $\alpha<\max(\mathrm{crit}[X])$ and $x^{\sharp}(\alpha)=\alpha$, then
$\alpha<\mathrm{crit}(x)$. Let $y\in X$ be an element such that $\mathrm{crit}(y)=\alpha$. Then 
\[\mathrm{crit}(y)=\mathrm{crit}(x*y)=\mathrm{crit}(x^{2}*y)=\ldots=\mathrm{crit}(x^{n}*y),\]
so $x^{n}*y\not\in\mathrm{Li}(X)$ for all $n$. Therefore, $\alpha=\mathrm{crit}(y)<\mathrm{crit}(x)$.

By the proof of 3, we know that if $t_{N}(u,v)\in\mathrm{Li}(X)$, then 
\[\mathrm{crit}(t_{N+1}(u,v))=\min(\mathrm{crit}(u),\mathrm{crit}(v)).\]

We will now show that if $\alpha<\mathrm{crit}(x)$, then 
$\alpha=x^{\sharp}(\alpha)$. Suppose that $\mathrm{crit}(y)=\alpha$. Let $N$ be a natural number with $N>1$ such that
$t_{N}(x,y)\in \mathrm{Li}(X)$. Then, $t_{N-1}(x*y,x)\in\mathrm{Li}(X)$ as well.

Therefore,
\[\mathrm{crit}(y)=\min(\mathrm{crit}(x),\mathrm{crit}(y))=t_{N+1}(x,y)\]
\[=t_{N}(x*y,x)=\min(\mathrm{crit}(x*y),\mathrm{crit}(x)).\]

We conclude that $\mathrm{crit}(y)=\mathrm{crit}(x*y)$, so $\alpha=x^{\sharp}(\alpha)$.

\item Suppose that $\alpha,\beta\in A$ and $\alpha<\beta$. Then let $\alpha=\mathrm{crit}(y),\beta=\mathrm{crit}(z)$. Then since
$\alpha<\beta$, we have $\mathrm{crit}(y)=\mathrm{crit}(z*y)$, so 
\[\mathrm{crit}(x*y)=\mathrm{crit}(x*(z*y))=\mathrm{crit}((x*z)*(x*y)).\]
Therefore, since
\[\mathrm{crit}(x*y)=x^{\sharp}(\alpha)\neq\max(\mathrm{crit}[X]),\]
we have $\mathrm{crit}(x*y)<\mathrm{crit}(x*z)$, so $x^{\sharp}(\alpha)<x^{\sharp}(\beta)$.
\end{enumerate}
\end{proof}
We shall now work towards defining an operation on permutative LD-systems that resembles the composition of rank-into-rank embeddings.
However, this ``composition" operation only satisfies the desired identities when $|\mathrm{Li}(X)|=1$. Fortunately, whenever $X$ is a permutative LD-system, one can collapse $\mathrm{Li}(X)$ to a single element and then define the ``composition" operation on the resulting quotient algebra.

\begin{prop}
Suppose that $X$ is an LD-system such that $\mathrm{Li}(X)$ is a left-ideal. Let
$\simeq$ be the equivalence relation on $X$ that collapses $\mathrm{Li}(X)$ to a point. Then $\simeq$ is a congruence on $X$.
Furthermore, if $X$ is a permutative LD-system, then the equivalence class $\mathrm{Li}(X)$ is the only left-identity in the quotient algebra $X/\simeq$.
\label{t42huife}
\end{prop}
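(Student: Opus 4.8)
The plan is to dispatch the two assertions of the proposition separately; both are short. Recall that $x\simeq y$ holds precisely when $x=y$ or $x,y\in\mathrm{Li}(X)$, so the only non-singleton $\simeq$-class is $\mathrm{Li}(X)$ itself. To see that $\simeq$ is a congruence I would carry out the obvious case analysis: assume $x\simeq x'$ and $y\simeq y'$ and show $x*y\simeq x'*y'$. If $x=x'$ and $y=y'$ there is nothing to prove. If $x,x'\in\mathrm{Li}(X)$ and $y=y'$, then $x*y=y=y'=x'*y'$ by the definition of a left-identity. If $x=x'$ and $y,y'\in\mathrm{Li}(X)$, then since $\mathrm{Li}(X)$ is a left-ideal both $x*y$ and $x*y'$ lie in $\mathrm{Li}(X)$, so $x*y\simeq x'*y'$. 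In the remaining case $x,x',y,y'\in\mathrm{Li}(X)$ we get $x*y=y\in\mathrm{Li}(X)$ and $x'*y'=y'\in\mathrm{Li}(X)$, again $\simeq$-related. Hence $\simeq$ is a congruence, and this is precisely where the hypothesis that $\mathrm{Li}(X)$ is a left-ideal is used.

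For the second assertion, assume $X$ is permutative. First, the class $\mathrm{Li}(X)$ really is a left-identity of $X/\simeq$: for $a\in\mathrm{Li}(X)$ and any $y\in X$ we have $[a]*[y]=[a*y]=[y]$. Conversely, suppose $[x]$ is a left-identity of $X/\simeq$ with $x\notin\mathrm{Li}(X)$, and derive a contradiction. Applying the left-identity property of $[x]$ to $[x]$ itself gives $[x*x]=[x]$, that is $x*x\simeq x$; since $x\notin\mathrm{Li}(X)$ its $\simeq$-class is the singleton $\{x\}$, so $x*x=x$. Now a straightforward induction using $t_{1}(x,y)=y$, $t_{2}(x,y)=x$ and $t_{n+2}(x,y)=t_{n+1}(x,y)*t_{n}(x,y)$ shows that $t_{n}(x,x)=x$ for every $n\geq 1$ (the inductive step being $t_{n+2}(x,x)=x*x=x$). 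But permutativity supplies an $n$ with $t_{n}(x,x)\in\mathrm{Li}(X)$, whence $x\in\mathrm{Li}(X)$, contradicting our assumption. Thus $\mathrm{Li}(X)$ is the unique left-identity of $X/\simeq$.

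I do not expect a genuine obstacle: the argument is a routine case analysis together with the observation that an idempotent element of a permutative LD-system is automatically a left-identity, via the Fibonacci terms. The only points needing mild care are verifying that the case split in the congruence argument is exhaustive and uses both the left-identity law and the left-ideal hypothesis in the right spots, and, in the second part, noticing that ``$[x]$ is a left-identity of the quotient'' already forces idempotence of $x$ back in $X$ (because $x$'s class is a singleton when $x\notin\mathrm{Li}(X)$). One could instead route the second part through Lemma \ref{fewn2}, but the direct idempotence argument seems cleanest.
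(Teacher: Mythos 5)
Your proof is correct and takes essentially the same route as the paper: a case split on membership in $\mathrm{Li}(X)$ to establish the congruence (using the left-ideal hypothesis exactly where you do), and then the observation that an idempotent element of a permutative LD-system would force $t_n(x,x)=x$ for all $n$, contradicting permutativity. The paper phrases the second part as "$x\notin\mathrm{Li}(X)\Rightarrow x*x\neq x\Rightarrow[x*x]\neq[x]$" rather than assuming $[x]$ is a left-identity and extracting idempotence, but this is only a cosmetic difference of framing.
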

\begin{proof}
Suppose $x,y,z\in X$. Assume first that $y\simeq z$. If $y\not\in \mathrm{Li}(X)$, then $y=z$, so $x*y=x*z$. If $y\in \mathrm{Li}(X)$, then $z\in \mathrm{Li}(X)$ and hence $x*y,x*z\in \mathrm{Li}(X)$ also. Therefore, $x*y\simeq x*z$ in either case.

Now assume that $x\simeq y$. If $x\not\in \mathrm{Li}(X)$, then $x=y$, so $x*z=y*z$. Similarly, if
$x\in \mathrm{Li}(X)$, then $y\in \mathrm{Li}(X)$ and hence $x*z=z=y*z$. We therefore conclude that $\simeq$ is a congruence on $X$.

Assume that $X$ is a permutative LD-system. Then clearly the equivalence class $\mathrm{Li}(X)$ is a left-identity in $X/\simeq$.
If $x\not\in \mathrm{Li}(X)$, then $x*x\neq x$ since otherwise $t_{n}(x,x)=x$ for all $n$ which is a contradiction. Therefore,
$[x*x]\neq[x]$. We conclude that $[x]$ is not a left-identity in $X/\simeq$.
\end{proof}
\begin{defn}
We shall let $X/\mathrm{Li}(X)$ denote the reduced permutative LD-system $X/\simeq$ in Proposition \ref{t42huife}.
\end{defn}

\begin{defn}
We shall call a permutative LD-system \index{reduced}\emph{reduced} if $\mathrm{Li}(X)$ has only one element.
\end{defn}

In order to establish the associativity of the ``composition" operation on reduced permutative LD-systems, we will need to
refer to the action of the positive braid monoid on LD-systems. 
\begin{defn}
Let \index{$B_{n}^{+}$}$B_{n}^{+}$ be the monoid presented by the generators $\sigma_{1},\ldots,\sigma_{n-1}$ and relations $\sigma_{i}\sigma_{j}=\sigma_{j}\sigma_{i}$ whenever $|i-j|>1$ and
$\sigma_{i}\sigma_{i+1}\sigma_{i}=\sigma_{i+1}\sigma_{i}\sigma_{i+1}$ whenever $1\leq i<n-1$. The monoid $B_{n}^{+}$ is known as the $n$-strand \index{positive braid monoid}\emph{positive braid monoid}.
\end{defn}
\begin{defn}
Let $X$ be an LD-system. Then it is easy to show that there is a unique right action of the monoid $B_{n}^{+}$ on $X^{n}$ such that
$\mathbf{x}e=\mathbf{x}$ whenever $\mathbf{x}\in X^{n}$ and where
\[(x_{1},\ldots ,x_{n})\sigma_{i}=(x_{1},\ldots ,x_{i-1},x_{i}*x_{i+1},x_{i},x_{i+2},\ldots ,x_{n}).\]
This action is known as the \index{Hurwitz action}\emph{Hurwitz action}.
\end{defn}
\begin{prop}
Suppose that $r,s\in B_{n}^{+}$. Then there are $t,u\in B_{n}^{+}$ where $rt=su$.
\end{prop}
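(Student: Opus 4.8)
The plan is to recognize this proposition as the assertion that $B_{n}^{+}$ satisfies the \emph{right Ore condition} — any two elements admit a common right multiple — and to prove it via the Garside element $\Delta_{n}$, the positive half-twist braid.

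First I would introduce $\Delta_{n}\in B_{n}^{+}$, namely $\Delta_{n}=(\sigma_{1})(\sigma_{2}\sigma_{1})\cdots(\sigma_{n-1}\sigma_{n-2}\cdots\sigma_{1})$, and record the two classical facts about it that the argument needs: (i) for each generator $\sigma_{i}$ there is $\delta_{i}\in B_{n}^{+}$ with $\Delta_{n}=\sigma_{i}\delta_{i}$, i.e.\ every generator left-divides $\Delta_{n}$; and (ii) $\Delta_{n}^{2}$ is central in $B_{n}^{+}$ (equivalently $\Delta_{n}\sigma_{i}=\sigma_{n-i}\Delta_{n}$, so that conjugation by $\Delta_{n}$ realizes the order-two ``flip'' automorphism). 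Both are standard; fact (i) amounts to $\Delta_{n}$ corresponding to the longest element of the symmetric group $S_{n}$, of which every simple reflection is a left descent.

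Second, I would prove the key lemma: every $w\in B_{n}^{+}$ left-divides some power of $\Delta_{n}$. This goes by induction on the word length of $w$, which is well defined because the braid relations are length-homogeneous. The base case $w=e$ is trivial. For the inductive step write $w=w'\sigma_{i}$; by the inductive hypothesis there are $m$ and $v$ with $\Delta_{n}^{2m}=w'v$. Using centrality of $\Delta_{n}^{2}$ and then (i), one computes $v\Delta_{n}^{2}=\Delta_{n}^{2}v=(\sigma_{i}\delta_{i})\Delta_{n}v=\sigma_{i}(\delta_{i}\Delta_{n}v)$, so $\sigma_{i}$ left-divides $v\Delta_{n}^{2}$, say $v\Delta_{n}^{2}=\sigma_{i}u$. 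Then $\Delta_{n}^{2(m+1)}=\Delta_{n}^{2m}\Delta_{n}^{2}=w'v\Delta_{n}^{2}=w'\sigma_{i}u=wu$, exhibiting $w$ as a left-divisor of $\Delta_{n}^{2(m+1)}$.

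Finally, given $r,s\in B_{n}^{+}$, I note that if $r$ left-divides $\Delta_{n}^{k_{1}}$ then $r$ left-divides $\Delta_{n}^{k}$ for every $k\geq k_{1}$ (write $\Delta_{n}^{k}=\Delta_{n}^{k_{1}}\Delta_{n}^{k-k_{1}}$ and factor out $r$ on the left), and likewise for $s$; so by the lemma there is a single $k$ such that both $r$ and $s$ left-divide $\Delta_{n}^{k}$. Writing $\Delta_{n}^{k}=rt=su$ then yields the required $t,u\in B_{n}^{+}$. The one genuinely non-elementary input is fact (i) about $\Delta_{n}$ — this is where the real content of braid combinatorics enters — and in the write-up I would cite it as a standard property of the braid monoid rather than reprove it; alternatively one may replace the entire argument by a direct appeal to Garside's theorem that $B_{n}^{+}$ is a Garside monoid and hence satisfies the Ore condition.
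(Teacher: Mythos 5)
The paper states this proposition without proof, treating it as a standard fact about the positive braid monoid. Your argument is correct and is the canonical one: the positive braid monoid $B_n^+$ is a Garside monoid with Garside element $\Delta_n$, and the right Ore condition follows exactly by the route you outline — every element left-divides a power of $\Delta_n$, any two elements left-divide a common power, and writing $\Delta_n^k = rt = su$ finishes. The inductive step in your key lemma is sound: from $\Delta_n^{2m} = w'v$, centrality of $\Delta_n^2$ gives $v\Delta_n^2 = \Delta_n^2 v = \sigma_i(\delta_i\Delta_n v)$, hence $\Delta_n^{2(m+1)} = w'(v\Delta_n^2) = (w'\sigma_i)u = wu$. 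The two Garside facts you import (every $\sigma_i$ left-divides $\Delta_n$; $\Delta_n^2$ is central) are exactly the nonelementary inputs one must cite, and it is reasonable to do so rather than reprove them; alternatively one could, as you note, invoke Garside's theorem wholesale. Since the paper offers no proof of its own, there is no alternative route to compare against, but your write-up supplies exactly the argument the paper is implicitly relying on.
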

\begin{defn}
An \index{LD-monoid}\emph{LD-monoid} is an algebra $(X,*,\circ,1)$ such that $(X,\circ,1)$ is a monoid, and the following identities are satisfied:
\begin{enumerate}
\item \[(x\circ y)*z=x*(y*z),x*(y\circ z)=(x*y)\circ(x*z),\]
\[x\circ y=(x*y)\circ x,\]
and
\item \[1*x=x,x*1=1.\]
\end{enumerate}
\end{defn}
\begin{prop}
If $(X,*,\circ,1)$ is an LD-monoid, then $(X,*)$ is an LD-system.
\end{prop}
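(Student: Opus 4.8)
The plan is to derive the self-distributive law directly from the three ``mixed'' identities in part (1) of the definition of an LD-monoid; the unit axioms in part (2) will not be needed. First I would use the identity $(x\circ y)*z=x*(y*z)$, read from right to left, to rewrite
\[x*(y*z)=(x\circ y)*z.\]
Next I would apply the identity $x\circ y=(x*y)\circ x$ to the left-hand factor of $(x\circ y)*z$, obtaining
\[(x\circ y)*z=\bigl((x*y)\circ x\bigr)*z.\]
Finally, applying the identity $(a\circ b)*c=a*(b*c)$ once more, now with $a=x*y$, $b=x$, $c=z$, yields
\[\bigl((x*y)\circ x\bigr)*z=(x*y)*(x*z).\]
Chaining these three equalities gives $x*(y*z)=(x*y)*(x*z)$, which is precisely the self-distributivity identity, so $(X,*)$ is an LD-system.

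This is a three-line computation, so there is no genuine obstacle. The only point worth flagging is that the argument uses only the identities $(x\circ y)*z=x*(y*z)$ and $x\circ y=(x*y)\circ x$; the remaining identity $x*(y\circ z)=(x*y)\circ(x*z)$ and the monoid/unit axioms play no role in establishing that $(X,*)$ is self-distributive.
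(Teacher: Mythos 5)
Your proposal is correct and follows exactly the same chain of equalities as the paper's proof: $x*(y*z)=(x\circ y)*z=\bigl((x*y)\circ x\bigr)*z=(x*y)*(x*z)$. The observation that the unit axioms and the identity $x*(y\circ z)=(x*y)\circ(x*z)$ are not needed is accurate and a nice remark.
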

\begin{proof}
$x*(y*z)=(x\circ y)*z=((x*y)\circ x)*z=(x*y)*(x*z)$.
\end{proof}
\begin{thm}
Let $X$ be a permutative LD-system with a unique left-identity $1$. Then
\begin{enumerate}
\item there exists a unique binary operation $\circ$ on $X$ such that $(x*y)\circ x=x\circ y$ and where $x=x\circ 1$
for all $x,y\in X$,

\item the algebra $(X,*,\circ,1)$ is an LD-monoid, and

\item if 
\begin{enumerate}
\item $(x_{1},\ldots,x_{n}),(y_{1},\ldots ,y_{n})\in X^{n}$,

\item $s\in B_{n}^{+}$ is an element with $(x_{1},\ldots,x_{n})\cdot s=(y_{1},\ldots,y_{n})$, and

\item $j\in\{1,\ldots ,n\}$ is a natural number where $y_{i}=1$ for $i\neq j$,
\end{enumerate}
then
\[y_{j}=x_{1}\circ\ldots\circ x_{n}.\]
\end{enumerate}
\end{thm}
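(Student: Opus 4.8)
The plan is to pin down $\circ$ through the Fibonacci terms, read off the mixed self-distributive identities almost for free, and then extract both the associativity of $\circ$ and statement (3) from the Hurwitz action of the positive braid monoids. For (1), recall that in any LD-system an operation $\circ$ satisfying $x\circ y=(x*y)\circ x$ must satisfy $x\circ y=t_{n+1}(x,y)\circ t_{n}(x,y)$ for every $n\ge 1$; since $X$ is permutative there is an $n$ with $t_{n}(x,y)\in\mathrm{Li}(X)=\{1\}$, so $x\circ y=t_{n+1}(x,y)\circ 1=t_{n+1}(x,y)$, which forces uniqueness. For existence I define $x\circ y:=t_{n+1}(x,y)$ where $n\ge 1$ is least with $t_{n}(x,y)=1$. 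Two preliminary facts make the verification routine: first, $x*1=1$ in any reduced permutative LD-system, because $t_{1}(x,1)=1\in\mathrm{Li}(X)$ and $\mathrm{Li}(X)$ is a left-ideal, so $x*1=t_{2}(x,1)*t_{1}(x,1)=t_{3}(x,1)\in\mathrm{Li}(X)=\{1\}$; second, the term identity $t_{k}(x*y,x)=t_{k+1}(x,y)$ for all $k\ge 1$, by an immediate induction on $k$. From the first, $x\circ 1=t_{2}(x,1)=x$; from the second, $(x*y)\circ x=x\circ y$ by matching the defining indices (the degenerate case $y=1$ handled using $x*1=1$), and then $1\circ x=(1*x)\circ 1=x\circ 1=x$.

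For (2) the mixed identities come cheaply. The motivating identity $x*(y*z)=t_{n+1}(x,y)*(t_{n}(x,y)*z)$, evaluated where $t_{n}(x,y)=1$, gives $(x\circ y)*z=x*(y*z)$. Since each $L_{*,x}$ is an endomorphism, $t_{k}(x*y,x*z)=x*t_{k}(y,z)$, hence $t_{n}(x*y,x*z)=x*1=1$ and $(x*y)\circ(x*z)=t_{n+1}(x*y,x*z)=x*t_{n+1}(y,z)=x*(y\circ z)$. The axioms $1*x=x$, $x*1=1$, $x\circ y=(x*y)\circ x$, and $x\circ 1=1\circ x=x$ are already in hand, so the only missing piece of (2) is associativity of $\circ$, which I obtain from (3).

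For (3), call a tuple \emph{collapsed} (at $j$, with value $y_{j}$) if all entries but one equal $1$. The key lemma is that the Hurwitz action of any $w\in B_{n}^{+}$ takes a collapsed tuple to a collapsed tuple with the same value: it suffices to check a generator $\sigma_{i}$, and using $1*v=v$ and $v*1=1$ one sees $\sigma_{i}$ either fixes the tuple or merely moves the unique non-$1$ entry between positions $i$ and $i+1$. Since any $r,s\in B_{n}^{+}$ admit a common right multiple $rt=su$, it follows that any two collapsings of a fixed tuple have equal value: pushing both forward by $t$, respectively $u$, lands on the single tuple $(x_{1},\dots,x_{n})st=(x_{1},\dots,x_{n})s'u$, whose value equals each of the two. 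A direct computation gives $(x_{1},x_{2})\sigma_{1}^{k}=(t_{k+2}(x_{1},x_{2}),t_{k+1}(x_{1},x_{2}))$, so choosing $k$ with $t_{k+1}(x_{1},x_{2})=1$ yields $(x_{1},x_{2})\sigma_{1}^{k}=(x_{1}\circ x_{2},1)$; iterating this inside $B_{n}^{+}$ gives, by induction on $n$, an explicit braid collapsing $(x_{1},\dots,x_{n})$ at position $1$ with value the left-associated product $(\cdots((x_{1}\circ x_{2})\circ x_{3})\cdots\circ x_{n})$. By the coincidence-of-values lemma this left-associated product is the value of \emph{every} collapsing of $(x_{1},\dots,x_{n})$. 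Applied to $n=3$, both the greedy collapse and the collapse $(x,y,z)\mapsto(x,y\circ z,1)\mapsto(x\circ(y\circ z),1,1)$ (via $\sigma_{2}^{k}$ then $\sigma_{1}^{m}$) are collapsings, so $(x\circ y)\circ z=x\circ(y\circ z)$; hence $\circ$ is associative, completing (2), and the left-associated product is unambiguously $x_{1}\circ\dots\circ x_{n}$, which is the value $y_{j}$ asserted in (3).

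The main obstacle is the coincidence-of-values step: everything else is bookkeeping with the Fibonacci terms, but there one must splice the combinatorics of $B_{n}^{+}$ (common right multiples) together with the collapsed-tuple lemma, and take care that the derivation of associativity does not tacitly presuppose associativity — which is why the left-associated product is used throughout the collapsing arguments and full associativity is only read off at the very end.
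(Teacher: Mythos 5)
Your proof is correct and takes essentially the same route as the paper: uniqueness and construction of $\circ$ via the Fibonacci terms $t_{n}$, and associativity via the Hurwitz action of $B_{n}^{+}$ together with common right multiples in the positive braid monoid. You make explicit the collapsed-tuple invariance lemma and give a direct derivation of item (3), both of which the paper leaves implicit in its associativity argument, but the underlying ideas are the same.
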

\begin{proof}
\begin{enumerate}
\item I first claim that there is at most one operation $\circ$. If $t_{n}(x,y)=1$, then since $x\circ y=(x*y)\circ x$, we have 
\[x\circ y=t_{n+1}(x,y)\circ t_{n}(x,y)=t_{n+1}(x,y).\]
We conclude that there is at most one operation $\circ$ on $X$ that satisfies $x\circ y=(x*y)\circ x$.
Now define $x\circ y$ so that if $t_{n}(x,y)=1$, then $t_{n+1}(x,y)=x\circ y$. 

I now claim that $x\circ y$ does not depend on the choice of $n$. Suppose that $n$ is the least natural number such that
$t_{n}(x,y)=1$. Then 
\[t_{n+2}(x,y)=t_{n+1}(x,y)*t_{n}(x,y)=t_{n}(x,y)*1=1\]
and
\[t_{n+3}(x,y)=t_{n+2}(x,y)*t_{n+1}(x,y)=1*t_{n+1}(x,y)=t_{n+1}(x,y).\]
Therefore, by induction, 
$t_{n+2k}(x,y)=1$ and $t_{n+2k+1}(x,y)=t_{n+1}(x,y)$ for all $k\geq 0$.

If $t_{n+1}(x,y)\neq 1$, then whenever $t_{m}(x,y)=1$ we have $m=n+2k$ for some $k$, so
$t_{m+1}(x,y)=t_{n}$

If $t_{n+1}(x,y)=1$, then whenever $t_{m}(x,y)=1$, we have $m\geq n$, so
$t_{m+1}(x,y)=1$

Now if $x,y\in X$, $n>1$ and $t_{n}(x,y)=1$, then $t_{n-1}(x*y,x)=t_{n}(x,y)=1$ as well. Therefore, we have
\[(x*y)\circ x=t_{n}(x*y,x)=t_{n+1}(x,y)=x\circ y.\]

\item
\begin{enumerate}
\item $1\circ x=x=x\circ 1$:

We have $t_{2}(1,x)=1$, so $1\circ x=t_{3}(1,x)=1*x=x$. Similarly, $t_{1}(x,1)=1$, so $x\circ 1=t_{2}(x,1)=x$.

\item $(x\circ y)*z=x*(y*z)$: Suppose that $t_{n}(x,y)=1$. Then
\[x*(y*z)=t_{n+1}(x,y)*(t_{n}(x,y)*z)\]
\[=t_{n+1}(x,y)*(1*z)=t_{n+1}(x,y)*z=(x\circ y)*z.\]

\item $x*(y\circ z)=(x*y)*(x*z)$:

Suppose now that $t_{n}(y,z)=1$. Then 
\[1=x*1=x*t_{n}(y,z)=t_{n}(x*y,x*z).\]
Therefore,
\[(x*y)\circ(x*z)=t_{n+1}(x*y,x*z)=x*t_{n+1}(y,z)=x*(y\circ z).\]

\item $(x\circ y)\circ z=x\circ(y\circ z)$:
There is some $n$ where 
\[(x,y,z)=(1,x\circ y,z)\cdot\sigma_{1}^{n}.\]
Therefore, there is some $m$ where
\[(x,y,z)\cdot\sigma_{1}^{n}\sigma_{2}^{m}=(1,(x\circ y)\circ z,1).\]

However, there is some $r$ where 
\[(x,y,z)=(x,y\circ z,1)\cdot\sigma_{2}^{r},\]
so there is some $s$ where
\[(x,y,z)=(1,x\circ(y\circ z),1)\cdot\sigma_{2}^{r}\sigma_{1}^{s}.\]

Now, there are $\mathbf{u},\mathbf{v}$ so that
\[\sigma_{1}^{n}\sigma_{2}^{m}\mathbf{u}=
\sigma_{2}^{r}\sigma_{1}^{s}\mathbf{v}\]
Thus,
\[(1,(x\circ y)\circ z,1)\cdot\mathbf{u}=(1,x\circ(y\circ z),1)\cdot\mathbf{v},\]
but this is only possible if 
\[(x\circ y)\circ z=x\circ(y\circ z).\]

\end{enumerate}
\end{enumerate}
\end{proof}
\begin{defn}
For our purposes, the canonical group operation $\cdot$ on the symmetry group $S_{n}$ shall be anti-composition instead of composition (i.e. $f\cdot g=g\circ f$). Define a right action of $(S_{n},\cdot)$ on $\{1,\ldots ,n\}$ by $x\cdot f=f(x)$.
Define a monoid homomorphism $\Sigma:B_{n}^{+}\rightarrow(S_{n},\cdot)$ by letting $\Sigma(s_{i})=(i,i+1)$ for $1\leq i<n$.
Suppose now that $X$ is a permutative LD-system. Let $\langle X\rangle^{n}\subseteq X^{n}$ be the set of all tuples
$(x_{1},\ldots,x_{n})$ such that $x_{i}\not\in\mathrm{Li}(X)$ for some $i\in\{1,\ldots,n\}$.
Now define a mapping $\Psi:\langle X\rangle^{n}\rightarrow\{1,\ldots,n\}$ by letting
$\Psi(x_{1},\ldots,x_{n})=i$ if $i$ is the least natural number such that $\mathrm{crit}(x_{i})=\min(\mathrm{crit}(x_{1}),\ldots,\mathrm{crit}(x_{n}))$.
\end{defn}

\begin{prop}
Suppose that $X$ is a permutative LD-system and $(x_{1},\ldots,x_{n})\in\langle X\rangle^{n}$. Then
\begin{enumerate}
\item $\Psi(x_{1},\ldots,x_{n})\cdot\Sigma(s)=\Psi((x_{1},\ldots,x_{n})\cdot s)$ whenever $s\in B_{n}^{+}$, and

\item there exists an $s\in B_{n}^{+}$ such that if $(x_{1},\ldots,x_{n})\cdot s=(y_{1},\ldots,y_{n})$, then there is precisely one
$i\in\{1,\ldots,n\}$ where $y_{i}\not\in\mathrm{Li}(X)$.
\end{enumerate}
\end{prop}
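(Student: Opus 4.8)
The plan is to establish (1) by reducing it to the braid generators $\sigma_i$, and then to deduce (2) from it. Write $c(\mathbf{x})=\min_k\mathrm{crit}(x_k)$ for $\mathbf{x}=(x_1,\dots,x_n)$; since $X$ is permutative it has a left-identity (some $t_m(x,x)\in\mathrm{Li}(X)$), so by Theorem~\ref{2j1298ur28tye}(1) the poset $\mathrm{crit}[X]$ has a greatest element and $\mathbf{x}\in\langle X\rangle^n$ is equivalent to $c(\mathbf{x})<\max(\mathrm{crit}[X])$. The computational core is the following lemma: if $a,b\in X$ are not both in $\mathrm{Li}(X)$, then $\min(\mathrm{crit}(a*b),\mathrm{crit}(a))=\min(\mathrm{crit}(a),\mathrm{crit}(b))$, and more precisely $\mathrm{crit}(a*b)=\mathrm{crit}(b)<\mathrm{crit}(a)$ when $\mathrm{crit}(b)<\mathrm{crit}(a)$, while $\mathrm{crit}(a*b)>\mathrm{crit}(a)$ when $\mathrm{crit}(a)\le\mathrm{crit}(b)$. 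This is read off from Theorem~\ref{2j1298ur28tye}(2),(4) applied to $a^{\sharp}$ at $\alpha=\mathrm{crit}(b)$: if $\mathrm{crit}(b)<\mathrm{crit}(a)$ then $a^{\sharp}(\alpha)=\alpha$; if $\mathrm{crit}(a)\le\mathrm{crit}(b)<\max(\mathrm{crit}[X])$ then $a^{\sharp}(\alpha)>\alpha\ge\mathrm{crit}(a)$; and if $\mathrm{crit}(b)=\max(\mathrm{crit}[X])$, so $b\in\mathrm{Li}(X)$, then $a\notin\mathrm{Li}(X)$ yet $a*b\in\mathrm{Li}(X)$ because $\mathrm{Li}(X)$ is a left-ideal, whence $\mathrm{crit}(a*b)=\max(\mathrm{crit}[X])>\mathrm{crit}(a)$. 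In particular $\sigma_i$ leaves $c(\mathbf{x})$ unchanged, so $\langle X\rangle^n$ is closed under the Hurwitz action, and within coordinates $i,i+1$ it exchanges the (weakly) smaller critical value with the other coordinate.

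For (1): because $\Sigma$ is a monoid homomorphism, the Hurwitz map is a right action, and $\langle X\rangle^n$ is preserved, it suffices by induction on the length of a word in $\sigma_1,\dots,\sigma_{n-1}$ representing $s$ to treat $s=\sigma_i$, where only coordinates $i,i+1$ change. If $\Psi(\mathbf{x})\notin\{i,i+1\}$ then $c(\mathbf{x})$ is attained first at $\Psi(\mathbf{x})$, both $x_i,x_{i+1}$ carry critical values $>c(\mathbf{x})$, hence by the lemma so do the new coordinates $i,i+1$, and $\Psi(\mathbf{x}\sigma_i)=\Psi(\mathbf{x})=\Psi(\mathbf{x})\cdot\Sigma(\sigma_i)$. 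If $\Psi(\mathbf{x})\in\{i,i+1\}$ then $c(\mathbf{x})=\min(\mathrm{crit}(x_i),\mathrm{crit}(x_{i+1}))$ and every earlier coordinate carries a strictly larger value; the lemma shows the unique new coordinate among $\{i,i+1\}$ carrying $c(\mathbf{x})$ is the image of $\Psi(\mathbf{x})$ under the transposition $(i,i+1)$ (if $\mathrm{crit}(x_i)\le\mathrm{crit}(x_{i+1})$ then $\Psi(\mathbf{x})=i$, coordinate $i+1$ of $\mathbf{x}\sigma_i$ equals $x_i$ and carries $c(\mathbf{x})$ while coordinate $i$ carries a strictly larger value, so $\Psi(\mathbf{x}\sigma_i)=i+1$; the reverse inequality is symmetric). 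In every case $\Psi(\mathbf{x}\sigma_i)=\Psi(\mathbf{x})\cdot\Sigma(\sigma_i)$, which is the claim.

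For (2): an easy induction on $m$ gives $(a,b)\cdot\sigma_i^{m}=(t_{m+2}(a,b),t_{m+1}(a,b))$ for $m\ge0$, using $t_{k+2}=t_{k+1}*t_k$. By permutativity some $t_N(a,b)\in\mathrm{Li}(X)$, and as $\mathrm{Li}(X)$ is a left-ideal, $t_{N+2}(a,b)=t_{N+1}(a,b)*t_N(a,b)\in\mathrm{Li}(X)$, hence $t_{N+2k}(a,b)\in\mathrm{Li}(X)$ for all $k\ge0$; so one can pick $m\ge0$ making the second coordinate of $(a,b)\cdot\sigma_i^{m}$ lie in $\mathrm{Li}(X)$. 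Apply this successively for $i=n-1,n-2,\dots,1$, each time to the current tuple and with the exponent chosen so that the newly created right-hand coordinate lands in $\mathrm{Li}(X)$, which the subsequent moves $\sigma_{i-1},\dots,\sigma_1$ never touch again. For the resulting braid $s$, coordinates $2,\dots,n$ of $(x_1,\dots,x_n)\cdot s$ all lie in $\mathrm{Li}(X)$; and since $\langle X\rangle^n$ is preserved, $(x_1,\dots,x_n)\cdot s\in\langle X\rangle^n$, forcing coordinate $1$ to lie outside $\mathrm{Li}(X)$. Hence precisely one coordinate does.

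I expect the main obstacle to be part (1): the case analysis for $s=\sigma_i$ has to be done with care, especially the degenerate subcases in which one or both of $x_i,x_{i+1}$ already lie in $\mathrm{Li}(X)$ (so that the minimal critical value is attained away from $\{i,i+1\}$ and $\Psi$ is unaffected), and one must restrict the lemma to the situation where $x_i,x_{i+1}$ are not both in $\mathrm{Li}(X)$, which is precisely where Theorem~\ref{2j1298ur28tye}(4) does the work. Part (2) is then a routine ``sweep everything into one slot'' argument, with permutativity guaranteeing that each sweeping step terminates.
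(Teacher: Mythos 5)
Your proof is correct, and the paper states this proposition without supplying its own argument, so there is no alternative route in the text to compare against. The lemma you isolate (that $\min(\mathrm{crit}(a*b),\mathrm{crit}(a))=\min(\mathrm{crit}(a),\mathrm{crit}(b))$ when $a,b$ are not both in $\mathrm{Li}(X)$, with the two strict refinements according to which of $a,b$ has smaller critical value) is exactly the right tool, and the sweep in part (2) using $(a,b)\cdot\sigma_1^m=(t_{m+2}(a,b),t_{m+1}(a,b))$ together with preservation of $\langle X\rangle^n$ is sound.

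One imprecision in the case $\Psi(\mathbf{x})\notin\{i,i+1\}$: you assert that both $x_i$ and $x_{i+1}$ then carry critical values strictly greater than $c(\mathbf{x})$. That is only guaranteed when $\Psi(\mathbf{x})>i+1$; if $\Psi(\mathbf{x})<i$ it is perfectly possible that $\mathrm{crit}(x_i)=c(\mathbf{x})$ or $\mathrm{crit}(x_{i+1})=c(\mathbf{x})$, since $\Psi$ selects only the \emph{first} coordinate attaining the minimum. The conclusion $\Psi(\mathbf{x}\sigma_i)=\Psi(\mathbf{x})$ still holds, but the cleaner reason is simply that the coordinates $1,\ldots,i-1$ are untouched by $\sigma_i$ and (by your lemma, plus the trivial subcase $x_i,x_{i+1}\in\mathrm{Li}(X)$) the global minimum $c(\mathbf{x})$ is unchanged, so the least index realizing the minimum is still $\Psi(\mathbf{x})$. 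This also covers the degenerate subcases you flag at the end; those subcases are handled uniformly once you argue via preservation of $c(\mathbf{x})$ rather than via strict inequalities on $x_i,x_{i+1}$.
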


\begin{prop}
Let $X$ be a permutative LD-monoid, and suppose $x,y\in X,\alpha\in\mathrm{crit}[X]$. Then 
\begin{enumerate}
\item $x^{\sharp}(y^{\sharp}(\alpha))=(x\circ y)^{\sharp}(\alpha)$, and

\item $\mathrm{crit}(x\circ y)=\min(\mathrm{crit}(x),\mathrm{crit}(y))$.
\end{enumerate}
\end{prop}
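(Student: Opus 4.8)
The plan is to read off both statements from results already in hand; no genuinely new combinatorial work is needed. For part~(1) the approach is simply to unwind the definition of the maps $x^{\sharp}$. Choose $r\in X$ with $\mathrm{crit}(r)=\alpha$, which exists since $\mathrm{crit}[X]=\{\mathrm{crit}(z)\mid z\in X\}$. Then by definition $y^{\sharp}(\alpha)=\mathrm{crit}(y*r)$, hence $x^{\sharp}(y^{\sharp}(\alpha))=\mathrm{crit}\bigl(x*(y*r)\bigr)$, while $(x\circ y)^{\sharp}(\alpha)=\mathrm{crit}\bigl((x\circ y)*r\bigr)$. The LD-monoid identity $(x\circ y)*r=x*(y*r)$ gives equality of the two. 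The one thing worth a sentence is that $x^{\sharp}$, $y^{\sharp}$ and $(x\circ y)^{\sharp}$ are well defined, i.e.\ independent of the choice of representative $r$; but that is exactly Proposition~\ref{sfenjuw4}, which is already what makes the definition of $x^{\sharp}$ legitimate.

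For part~(2), recall that in a permutative LD-monoid $\mathrm{Li}(X)=\{1\}$ and that $\circ$ is the operation characterized by the property that $x\circ y=t_{n+1}(x,y)$ whenever $t_{n}(x,y)=1$. Since $X$ is permutative, fix $N\geq 1$ with $t_{N}(x,y)\in\mathrm{Li}(X)$, so that $x\circ y=t_{N+1}(x,y)$. The key step is then to quote the computation carried out inside the proof of Theorem~\ref{2j1298ur28tye}(3): from $t_{N}(x,y)\in\mathrm{Li}(X)$ one gets, via Lemma~\ref{fewn2} and the fact that $\mathrm{Li}(X)$ is a left-ideal, that $\mathrm{crit}(t_{N+1}(x,y))\leq\mathrm{crit}(x)$ and $\mathrm{crit}(t_{N+1}(x,y))\leq\mathrm{crit}(y)$; on the other hand Theorem~\ref{2j1298ur28tye}(2) applied along the Fibonacci recursion gives $\mathrm{crit}(t_{N+1}(x,y))\geq\mathrm{crit}(x)$ or $\mathrm{crit}(t_{N+1}(x,y))\geq\mathrm{crit}(y)$. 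Since $\mathrm{crit}[X]$ is linearly ordered by Theorem~\ref{2j1298ur28tye}(3), these combine to $\mathrm{crit}(t_{N+1}(x,y))=\min(\mathrm{crit}(x),\mathrm{crit}(y))$, i.e.\ $\mathrm{crit}(x\circ y)=\min(\mathrm{crit}(x),\mathrm{crit}(y))$.

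There is no serious obstacle here; the proof is essentially bookkeeping built on the composition theorem, Proposition~\ref{sfenjuw4}, and Theorem~\ref{2j1298ur28tye}. The only point requiring a little care is that in part~(2) the element $t_{N+1}(x,y)$, and hence the asserted value, must not depend on the choice of $N$ — but this is precisely the well-definedness of $\circ$ established when the composition operation was constructed, so it can simply be cited. If one wished to avoid reusing the internal argument from the proof of Theorem~\ref{2j1298ur28tye}, the inequalities $\mathrm{crit}(x\circ y)\leq\mathrm{crit}(x)$ and $\mathrm{crit}(x\circ y)\leq\mathrm{crit}(y)$ could instead be re-derived from the LD-monoid identities together with the left-ideal property of $\mathrm{Li}(X)$, and the exact value then pinned down using part~(1) and Theorem~\ref{2j1298ur28tye}(4); but the route through the Fibonacci terms is shorter.
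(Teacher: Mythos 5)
Your proof is correct and follows the paper's argument essentially verbatim: part (1) unwinds the definition of $x^{\sharp}$ and invokes the LD-monoid identity $(x\circ y)*r=x*(y*r)$ exactly as the paper does, and part (2) quotes the internal computation from the proof of Theorem~\ref{2j1298ur28tye}(3) to get $\mathrm{crit}(t_{N+1}(x,y))=\min(\mathrm{crit}(x),\mathrm{crit}(y))$, which is precisely the citation the paper makes. You merely spell out that internal computation in more detail than the paper, which contents itself with a bare reference.
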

\begin{proof}
\begin{enumerate}
\item Suppose that $\mathrm{crit}(z)=\alpha$. Then

\[x^{\sharp}(y^{\sharp}(\alpha))=x^{\sharp}(y^{\sharp}(\mathrm{crit}(z)))=x^{\sharp}(\mathrm{crit}(y*z))\]
\[=\mathrm{crit}(x*(y*z))=\mathrm{crit}((x\circ y)*z)=(x\circ y)^{\sharp}(\mathrm{crit}(z))=(x\circ y)^{\sharp}(\alpha).\]

\item Suppose $x,y\in X$. Let $N$ be a natural number where $t_{N}(x,y)=1$. Then by the proof of Item 3 in Theorem \ref{2j1298ur28tye}, we have
\[\mathrm{crit}(t_{N+1}(x,y))=\min(\mathrm{crit}(x),\mathrm{crit}(y)).\]
Therefore, since $x\circ y=t_{N+1}(x,y)$, we conclude that
\[\mathrm{crit}(x\circ y)=\min(\mathrm{crit}(x),\mathrm{crit}(y)).\]
\end{enumerate}
\end{proof}

If $(X,*)$ is a non-reduced permutative LD-system and $x\not\in\mathrm{Li}(X)$ or $y\not\in\mathrm{Li}(X)$, then we shall hold to the convention that $x\circ y=t_{n+1}(x,y)$ where $n$ is a natural number such that $t_{n}(x,y)\in\mathrm{Li}(X)$, but we shall leave $x\circ y$ undefined whenever $x,y\in\mathrm{Li}(X)$.

The following result shows that every finite LD-system with a reasonable notion of a critical point is already a permutative LD-system.
\begin{prop}
Suppose that $X$ is an LD-system, $L$ is a finite linear ordering, and $\Gamma:X\rightarrow L$ is a mapping such that
\begin{enumerate}
\item $\Gamma(x*y)\leq\Gamma(y)$ whenever $\Gamma(x)>\Gamma(y)$, and

\item $\Gamma(x*y)>\Gamma(y)$ whenever $\Gamma(x)\leq\Gamma(y)<\max(L)$.
\end{enumerate}
Suppose furthermore that $\mathrm{Li}(X)$ is a left-ideal in $X$ and $x\in\mathrm{Li}(X)$ if and only if $\Gamma(x)=\max(L)$.
Then $X$ is a permutative LD-system, and $\mathrm{crit}(x)\leq\mathrm{crit}(y)$ if and only if $\Gamma(x)\leq\Gamma(y)$.
\end{prop}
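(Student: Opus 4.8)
The plan is to verify, in order, the two defining clauses of permutativity (the left-ideal clause is given, so only the Fibonacci-term clause needs work) and then the characterization of the critical-point preorder, in each case by following the value of $\Gamma$ along an iterated application and exploiting that $L$ is finite.

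First I would fix $x,y\in X$, write $M=\max(L)$ and $a_n=\Gamma(t_n(x,y))$, so that $a_1=\Gamma(y)$, $a_2=\Gamma(x)$, and $a_{n+2}=\Gamma(t_{n+1}(x,y)*t_n(x,y))$. Assume toward a contradiction that $a_n<M$ for all $n$. Applying hypotheses (1) and (2) to the pair $(t_{n+1}(x,y),t_n(x,y))$, whose product is $t_{n+2}(x,y)$, and then to $(t_{n+2}(x,y),t_{n+1}(x,y))$, yields for every $n\ge 2$ the dichotomy: if $a_n<a_{n+1}$ then $a_{n+2}\le a_n$ and $a_{n+3}>a_{n+1}$; if $a_n\ge a_{n+1}$ then $a_{n+2}>a_n$ and $a_{n+3}\le a_{n+1}$. (The uses of (2) are legitimate because all relevant values are $<M$.) One checks that whichever branch holds at $n$ holds again at $n+2$, so iterating produces a strictly increasing chain $a_3<a_5<a_7<\cdots$ (first branch at $n=2$) or $a_4<a_6<a_8<\cdots$ (second branch at $n=2$) inside the finite order $L$ --- impossible. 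Hence $a_n=M$ for some $n$, i.e. $t_n(x,y)\in\mathrm{Li}(X)$; together with the given fact that $\mathrm{Li}(X)$ is a left-ideal, $X$ is permutative.

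For the characterization I would set $e_n=\Gamma(x^n*y)$, so $e_0=\Gamma(y)$ and $e_{n+1}=\Gamma(x*(x^n*y))$, and recall that $\mathrm{crit}(x)\le\mathrm{crit}(y)$ means $x^n*y\in\mathrm{Li}(X)$, equivalently $e_n=M$, for some $n$. If $\Gamma(x)\le\Gamma(y)$, then either $e_0=M$ (done with $n=0$), or an induction on $n$ using hypothesis (2) --- its side condition $\Gamma(x^n*y)<M$ being maintained along the way --- gives $e_0<e_1<e_2<\cdots$ until some $e_n$ reaches $M$, which must happen since $L$ is finite; so $\mathrm{crit}(x)\le\mathrm{crit}(y)$. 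If instead $\Gamma(x)>\Gamma(y)$, an induction using hypothesis (1) gives $e_n\le e_0=\Gamma(y)<\Gamma(x)\le M$ for all $n$, so no $x^n*y$ is a left-identity and $\mathrm{crit}(x)\not\le\mathrm{crit}(y)$; since $L$ is linearly ordered this establishes $\mathrm{crit}(x)\le\mathrm{crit}(y)\iff\Gamma(x)\le\Gamma(y)$.

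The two inductions in the last paragraph are routine. The hard part will be the claim in the middle paragraph that $\Gamma(t_n(x,y))$ cannot remain below $M$ forever: a single step of the Fibonacci recursion may well decrease $\Gamma$, so the key observation is the forced two-step alternation obtained by chaining (1) and (2), after which one harvests a strictly increasing subsequence along every other index. The only delicate point there is keeping track of which inequalities are strict, so that the branch of the dichotomy selected at stage $n$ genuinely reappears at stage $n+2$; granting that, finiteness of $L$ closes the argument.
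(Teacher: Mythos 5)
Your proof is correct.

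For permutativity, you take a slightly different route than the paper: both proofs track $\Gamma(t_n(x,y))$ along the Fibonacci sequence and extract a strictly increasing subsequence at every other index, but the paper splits into the cases $\Gamma(y)<\Gamma(x)$ and $\Gamma(y)\geq\Gamma(x)$, handling the second by appealing to the identity $t_{n+1}(x,y)=t_n(x*y,x)$ to reduce it to the first. You instead run a single argument by contradiction, establishing a forced two-step alternation between your two branches of the dichotomy and noticing that each branch, once entered at $n=2$, reappears at every even $n$, producing an unbounded increasing chain in the finite order $L$. The net effect is the same, but your version is self-contained and does not need the shift identity for the Fibonacci terms; the paper's version is shorter because Case~2 piggybacks on Case~1. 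Your criterion for the critical-point comparison is identical to the paper's (ascending chain of $\Gamma(x^n*y)$ values when $\Gamma(x)\leq\Gamma(y)$, descending bound when $\Gamma(x)>\Gamma(y)$), and you correctly note at the end that linearity of $L$ upgrades the pair of one-directional implications to the stated biconditional.

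One small point worth being careful about in writing the final version: in your dichotomy, the implication ``$a_n\ge a_{n+1}\Rightarrow a_{n+2}>a_n$'' uses hypothesis~(2), so it requires $a_n<\max(L)$; and ``$a_n<a_{n+1}\Rightarrow a_{n+3}>a_{n+1}$'' uses~(2) at index $n+1$, so it requires $a_{n+1}<\max(L)$. Under your standing assumption that all $a_k<\max(L)$ this is fine, but it is worth flagging these dependencies explicitly since they are exactly what must fail.
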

\begin{proof}
Suppose $x,y\in X$.

If $\Gamma(y)<\Gamma(x)$, then I claim that 
\[\Gamma(t_{2n-1}(x,y))\leq\Gamma(y)<\Gamma(x)\leq\Gamma(t_{2n}(x,y))\]
for all $n$, and this claim shall be proven by induction. The case where $n=1$ follows from the fact that $t_{1}(x,y)=y,t_{2}(x,y)=x$. Suppose now that $n>1$. Then \[\Gamma(t_{2n-1}(x,y))=\Gamma(t_{2n-2}(x,y)*t_{2n-3}(x,y))\leq\Gamma(t_{2n-3}(x,y))\leq\Gamma(y).\]
Furthermore,
\[\Gamma(t_{2n}(x,y))=\Gamma(t_{2n-1}(x,y)*t_{2n-2}(x,y))\geq\Gamma(t_{2n-2}(x,y)).\]
If $\Gamma(t_{2n-2}(x,y))<\max(L)$, then 
\[\Gamma(t_{2n}(x,y))=\Gamma(t_{2n-1}(x,y)*t_{2n-2}(x,y))>\Gamma(t_{2n-2}(x,y)).\]

Therefore, we conclude that the sequence $(\Gamma(t_{2n}(x,y)))_{n}$ is increasing and $\Gamma(t_{2N}(x,y))=\max(L)$ for some $N$. Therefore, we conclude that
$t_{2N}(x,y)\in\mathrm{Li}(X)$.

Suppose now that $\Gamma(y)\geq\Gamma(x)$. If $\Gamma(y)=\max(L)$, then $t_{1}(x,y)\in\mathrm{Li}(X)$.
If $\Gamma(y)<\max(L)$, then $\Gamma(x*y)>\Gamma(y)\geq\Gamma(x)$. Therefore,
\[t_{n+1}(x,y)=t_{n}(x*y,x)\in\mathrm{Li}(X)\]
for some $n$. We conclude that the LD-system $(X,*)$ is permutative.

Suppose now that $\Gamma(x)\leq\Gamma(y)$. Then there is some $n\geq 0$ with
\[\Gamma(y)<\Gamma(x*y)<\ldots <\Gamma(x^{n}*y)=\max(L).\]
Therefore, $x^{n}*y\in\mathrm{Li}(X)$, so $\mathrm{crit}(x)\leq\mathrm{crit}(y)$.

If $\Gamma(x)>\Gamma(y)$, then for all $n$, we have 
\[\Gamma(y)\geq\Gamma(x*y)\geq\ldots \geq\Gamma(x^{n}*y),\]
so
$\Gamma(x^{n}*y)\neq\max(L)$, hence $\Gamma(x^{n}*y)\not\in\mathrm{Li}(X)$, thus
$\mathrm{crit}(x)>\mathrm{crit}(y)$.
\end{proof}

\begin{cor}
Let $X$ be a permutative LD-system. Then
\begin{enumerate}
\item If $\mathrm{crit}(x)>\mathrm{crit}(y)$, then $\mathrm{crit}(t_{2n+1}(x,y))=\mathrm{crit}(y)$ for all $n$ and
\[\mathrm{crit}(t_{2}(x,y))<\mathrm{crit}(t_{4}(x,y))<\ldots<\mathrm{crit}(t_{2N}(x,y))\]
where $N$ is the least natural number such that $t_{2N}(x,y)\in \mathrm{Li}(X)$. Furthermore,
$t_{2m}(x,y)\in \mathrm{Li}(X)$ whenever $m\geq N$.

\item If $\mathrm{crit}(x)\leq \mathrm{crit}(y)$, then $\mathrm{crit}(t_{2n}(x,y))=\mathrm{crit}(x)$ for all $n$ and
\[\mathrm{crit}(t_{1}(x,y))<\mathrm{crit}(t_{3}(x,y))<\ldots<\mathrm{crit}(t_{2N+1}(x,y))\]
where $N$ is the least natural number such that $t_{2N+1}(x,y)\in \mathrm{Li}(X)$. In this case, $t_{2m+1}(x,y)\in \mathrm{Li}(X)$
for $m\geq N$.

\item $\mathrm{crit}(x)\leq \mathrm{crit}(y)$ if and only if $t_{n}(x,y)\in \mathrm{Li}(X)$ for sufficiently large odd numbers $n$.
\end{enumerate}
\end{cor}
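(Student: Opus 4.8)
The plan is to track the critical points of the Fibonacci terms $t_n(x,y)$ and to read off all three items from the structure of $\mathrm{crit}[X]$ established in Theorem~\ref{2j1298ur28tye}. The two items correspond to the two halves of the dichotomy, which is exhaustive since $\mathrm{crit}[X]$ is linearly ordered (item~(3) of Theorem~\ref{2j1298ur28tye}): $\mathrm{crit}(x)>\mathrm{crit}(y)$ for item~(1), and $\mathrm{crit}(x)\leq\mathrm{crit}(y)$ for item~(2). The engine is the recursion $t_{n+2}(x,y)=t_{n+1}(x,y)*t_n(x,y)$, which gives $\mathrm{crit}(t_{n+2}(x,y))=t_{n+1}(x,y)^{\sharp}(\mathrm{crit}(t_n(x,y)))$, combined with the precise behaviour of the $\sharp$-maps from item~(4) of Theorem~\ref{2j1298ur28tye}, which I will use in the form: for $a,b\in X$ one has $\mathrm{crit}(a*b)>\mathrm{crit}(b)$ when $\mathrm{crit}(a)\leq\mathrm{crit}(b)<\max(\mathrm{crit}[X])$, and $\mathrm{crit}(a*b)=\mathrm{crit}(b)$ when $\mathrm{crit}(a)>\mathrm{crit}(b)$ or $\mathrm{crit}(b)=\max(\mathrm{crit}[X])$. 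I will also use that $x\in\mathrm{Li}(X)$ iff $\mathrm{crit}(x)=\max(\mathrm{crit}[X])$ (item~(1) of that theorem) and that $\mathrm{Li}(X)$ is a left-ideal.

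For item~(1), assume $\mathrm{crit}(x)>\mathrm{crit}(y)$; note $\mathrm{crit}(y)<\max(\mathrm{crit}[X])$ automatically, so $y\notin\mathrm{Li}(X)$. I would prove by simultaneous induction on $n\geq 1$ that $\mathrm{crit}(t_{2n-1}(x,y))=\mathrm{crit}(y)$ and $\mathrm{crit}(t_{2n}(x,y))\geq\mathrm{crit}(x)$. The base case is $t_1(x,y)=y$, $t_2(x,y)=x$. For the step, $\mathrm{crit}(t_{2n}(x,y))\geq\mathrm{crit}(x)>\mathrm{crit}(y)=\mathrm{crit}(t_{2n-1}(x,y))$, so the dichotomy gives $\mathrm{crit}(t_{2n+1}(x,y))=\mathrm{crit}(t_{2n-1}(x,y))=\mathrm{crit}(y)$; and for $t_{2n+2}(x,y)=t_{2n+1}(x,y)*t_{2n}(x,y)$ we have $\mathrm{crit}(t_{2n+1}(x,y))=\mathrm{crit}(y)<\mathrm{crit}(x)\leq\mathrm{crit}(t_{2n}(x,y))$, so either $\mathrm{crit}(t_{2n}(x,y))=\max(\mathrm{crit}[X])$ — whence $t_{2n}(x,y)\in\mathrm{Li}(X)$ and then $t_{2n+2}(x,y)\in\mathrm{Li}(X)$ because $\mathrm{Li}(X)$ is a left-ideal — or else $\mathrm{crit}(t_{2n+2}(x,y))>\mathrm{crit}(t_{2n}(x,y))\geq\mathrm{crit}(x)$. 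Thus the even subsequence $\mathrm{crit}(t_{2n}(x,y))$ is strictly increasing until it first equals $\max(\mathrm{crit}[X])$, i.e.\ until $t_{2n}(x,y)$ first lands in $\mathrm{Li}(X)$, and thereafter stays in $\mathrm{Li}(X)$; such an index exists because permutativity gives some $t_M(x,y)\in\mathrm{Li}(X)$, and $M$ cannot be odd since $\mathrm{crit}(t_{2n-1}(x,y))=\mathrm{crit}(y)\neq\max(\mathrm{crit}[X])$. This yields the chain and the ``furthermore'' clause. Item~(2) is the parallel induction with the roles of the even and odd subsequences interchanged: after disposing of the trivial case $x\in\mathrm{Li}(X)$ (which forces $y\in\mathrm{Li}(X)$, hence $t_n(x,y)\in\mathrm{Li}(X)$ for all $n$ and $N=0$), one shows by induction on $n\geq 1$ that $\mathrm{crit}(t_{2n}(x,y))=\mathrm{crit}(x)$ and $\mathrm{crit}(t_{2n+1}(x,y))>\mathrm{crit}(x)$, the base case $\mathrm{crit}(t_3(x,y))=\mathrm{crit}(x*y)>\mathrm{crit}(x)$ using $\mathrm{crit}(x)\leq\mathrm{crit}(y)$ and $x\notin\mathrm{Li}(X)$, and the two inductive steps using the dichotomy exactly as above. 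One then checks that $\mathrm{crit}(t_{2n-1}(x,y))\geq\mathrm{crit}(x)=\mathrm{crit}(t_{2n}(x,y))$ forces $\mathrm{crit}(t_{2n+1}(x,y))>\mathrm{crit}(t_{2n-1}(x,y))$ whenever $t_{2n-1}(x,y)\notin\mathrm{Li}(X)$, giving the strictly increasing odd chain; $N$ exists because a $\mathrm{Li}(X)$-valued term $t_M(x,y)$ must have odd index here, and the left-ideal property gives $t_{2m+1}(x,y)\in\mathrm{Li}(X)$ for $m\geq N$.

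Item~(3) then follows: if $\mathrm{crit}(x)\leq\mathrm{crit}(y)$, item~(2) gives $t_{2m+1}(x,y)\in\mathrm{Li}(X)$ for all sufficiently large $m$; and if instead $\mathrm{crit}(x)>\mathrm{crit}(y)$, item~(1) gives $\mathrm{crit}(t_{2n+1}(x,y))=\mathrm{crit}(y)<\max(\mathrm{crit}[X])$ for all $n$, so no odd-indexed Fibonacci term lies in $\mathrm{Li}(X)$; linearity of $\mathrm{crit}[X]$ makes these two cases exhaustive.

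The step I expect to be the main obstacle is the bookkeeping in item~(2): there the even subsequence is the \emph{constant} one, and to derive $\mathrm{crit}(t_{2n+2}(x,y))=\mathrm{crit}(x)$ from the dichotomy one genuinely needs the \emph{strict} inequality $\mathrm{crit}(t_{2n+1}(x,y))>\mathrm{crit}(x)$, not merely $\geq$. Getting that strictness off the ground at $t_3(x,y)$ (as opposed to $t_1(x,y)=y$, where only $\geq$ is available) is exactly where the hypothesis $x\notin\mathrm{Li}(X)$ — equivalently $\mathrm{crit}(y)<\max(\mathrm{crit}[X])$ in the tie case $\mathrm{crit}(x)=\mathrm{crit}(y)$ — must be invoked, so the degenerate case $x\in\mathrm{Li}(X)$ has to be split off at the outset. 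Alternatively, one can avoid duplicating the argument by observing that, when $y\notin\mathrm{Li}(X)$, the identity $t_{n+1}(x,y)=t_n(x*y,x)$ together with $\mathrm{crit}(x*y)>\mathrm{crit}(x)$ reduces item~(2) for the pair $(x,y)$ to item~(1) for the pair $(x*y,x)$, the index shift by one being precisely what swaps the even and odd subsequences.
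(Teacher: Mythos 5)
Your proof is correct. The paper does not attach a proof to this corollary; the natural argument is exactly the one you give, namely the monotonicity analysis of the Fibonacci terms from the preceding proposition, transposed to $\Gamma = \mathrm{crit}$ but with the sharper behaviour of $x^{\sharp}$ from Theorem~\ref{2j1298ur28tye}(4) standing in for the weaker $\Gamma$-hypotheses of the proposition. That substitution is actually necessary: the proposition only assumes $\Gamma(x*y)\leq\Gamma(y)$ when $\Gamma(x)>\Gamma(y)$, which would give only $\mathrm{crit}(t_{2n+1}(x,y))\leq\mathrm{crit}(y)$ in item~(1); the equality asserted in the corollary needs the precise rule $\mathrm{crit}(a*b)=\mathrm{crit}(b)$ when $\mathrm{crit}(a)>\mathrm{crit}(b)$, which you correctly draw from Theorem~\ref{2j1298ur28tye}. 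You have also handled the two genuine bookkeeping traps: in item~(2) the induction needs the \emph{strict} inequality $\mathrm{crit}(t_{3}(x,y))>\mathrm{crit}(x)$ at the base, which is exactly why the degenerate case $x\in\mathrm{Li}(X)$ has to be split off; and in both items the strictly increasing chain must be allowed to stall once a term first lands in $\mathrm{Li}(X)$, after which the left-ideal property of $\mathrm{Li}(X)$ takes over. Your closing observation that the identity $t_{n+1}(x,y)=t_{n}(x*y,x)$, together with $\mathrm{crit}(x*y)>\mathrm{crit}(x)$, reduces item~(2) to item~(1) is the same index-shift device used elsewhere in the paper and would indeed eliminate the duplicated induction.
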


\begin{defn}
A \index{Heyting semilattice}\emph{Heyting semilattice} is a \index{meet-semilattice}meet-semilattice $(L,\wedge)$ along with a binary operation $\rightarrow$ such that $x\leq y\rightarrow z$ if and only if $x\wedge y\leq z$.
\end{defn}
Every Heyting semilattice $(L,\wedge,\rightarrow)$ has a greatest element which we shall denote by 1. Heyting algebras satisfy the identity $x\rightarrow x=1$. Furthermore, the element $1$ is the unique left-identity in the Heyting semilattice $(X,\wedge,\rightarrow)$.
\begin{exam}
Suppose that $(X,\rightarrow,\wedge)$ is a Heyting semilattice with greatest element 1. Then $(X,\rightarrow,\wedge,1)$ is an LD-monoid.
\end{exam}
\begin{prop}
Suppose that $(X,\rightarrow,\wedge)$ is a Heyting semi-lattice with greatest element $1$. Then $(X,\rightarrow,\wedge)$ is a permutative LD-monoid
if and only if $X$ is a linear ordering. Furthermore, if $X$ is a linear ordering, then $\mathrm{crit}(x)\leq \mathrm{crit}(y)$ if and only if $x\leq y$.
\end{prop}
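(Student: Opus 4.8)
The plan is to use the fact, recorded above, that $(X,\rightarrow,\wedge,1)$ is always an LD-monoid, so that only the permutativity of the LD-system $(X,\rightarrow)$ is in question. Since $1$ is the unique left-identity of $(X,\rightarrow)$, we have $\mathrm{Li}(X)=\{1\}$, and this set is automatically a left-ideal because $z\rightarrow 1=1$ for every $z$ (the top element satisfies $w\wedge z\leq 1$ for all $w$). Thus ``$(X,\rightarrow)$ is permutative'' reduces to the single condition that for all $x,y\in X$ there is some $n$ with $t_{n}(x,y)=1$.

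For the implication ``$X$ a linear order $\Rightarrow$ $X$ permutative'', first I would record the shape of the implication in a linearly ordered Heyting semilattice: $x\rightarrow y=1$ if $x\leq y$, and $x\rightarrow y=y$ if $y<x$. The first case is immediate since then $w\wedge x\leq x\leq y$ for all $w$; for the second, linearity forces any $z$ with $z\wedge x\leq y$ to satisfy $z\leq y$ (if $z>x$ then $z\wedge x=x\not\leq y$, so $z\leq x$ and then $z\wedge x=z\leq y$), while $z=y$ itself works. Then I would compute the first Fibonacci terms: $t_{1}(x,y)=y$, $t_{2}(x,y)=x$, $t_{3}(x,y)=x\rightarrow y$. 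If $x\leq y$ this is already $1$; if $y<x$ then $t_{3}(x,y)=y$ and $t_{4}(x,y)=y\rightarrow x=1$. So some $t_{n}(x,y)$ with $n\in\{3,4\}$ equals $1$, whence $(X,\rightarrow)$ is permutative and $(X,\rightarrow,\wedge)$ is a permutative LD-monoid.

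For the converse and the ``Furthermore'' clause, the key identity is that $x^{n}*y=x\rightarrow y$ for every $n\geq 1$, which follows from $x\rightarrow(x\rightarrow y)=(x\wedge x)\rightarrow y=x\rightarrow y$ by induction; this is valid in an arbitrary Heyting semilattice. Hence $\mathrm{crit}(x)\leq\mathrm{crit}(y)$ iff $x\rightarrow y=1$ iff $x\leq y$, which is precisely the ``Furthermore'' statement (and it is legitimate to speak of critical points because, as just shown, a linearly ordered $X$ is permutative). Moreover, $x\mapsto\mathrm{crit}(x)$ is then an order isomorphism of the poset $(X,\leq)$ onto $(\mathrm{crit}[X],\leq)$: it is surjective by definition and order-reflecting, hence injective by antisymmetry of $\leq$. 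Now if $(X,\rightarrow)$ is permutative, Theorem \ref{2j1298ur28tye}(3) gives that $\mathrm{crit}[X]$ is a linear order, and transporting this back along the isomorphism shows $(X,\leq)$ is a chain.

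I do not anticipate a real obstacle. The only mildly delicate point is the case analysis computing $x\rightarrow y$ in a chain (ruling out elements strictly between $y$ and $x$, or incomparable with them, from the relevant supremum), and the single genuinely structural input is Theorem \ref{2j1298ur28tye}(3), which is the one place where permutativity of $X$ is invoked in the converse direction.
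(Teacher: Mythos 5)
Your proof is correct and follows essentially the same route as the paper: the forward direction is the same $t_3,t_4$ Fibonacci-term computation, the "Furthermore" clause rests on the collapse $x^n*y=(x\wedge\cdots\wedge x)\rightarrow y=x\rightarrow y$, and the converse invokes Theorem~\ref{2j1298ur28tye}(3) (linearity of $\mathrm{crit}[X]$), which the paper uses implicitly via a "without loss of generality" and you make explicit by observing that $x\mapsto\mathrm{crit}(x)$ is an order isomorphism onto $\mathrm{crit}[X]$. The extra detail you supply in verifying $x\rightarrow y=y$ when $y<x$ in a chain is accurate but is simply an expansion of what the paper states tersely.
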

\begin{proof}
Suppose $X$ is a linear ordering and $x,y\in X$. If $x\leq y$, then $T^{\rightarrow}_{3}(x,y)=x\rightarrow y=1$. If $x>y$, then $t^{\rightarrow}_{3}(x,y)=x\rightarrow y=y$ and
\[t_{4}^{\rightarrow}(x,y)=t_{3}^{\rightarrow}(x,y)\rightarrow t_{2}^{\rightarrow}(x,y)=y\rightarrow x=1.\]
Thus, we conclude that $X$ is permutative.

Suppose now that $X$ is a permutative Heyting semi-lattice and $x,y\in X$. Without loss of generality, assume that $\mathrm{crit}(x)\leq\mathrm{crit}(y)$. Then there is some $n$ where $(x\wedge\ldots\wedge x)\rightarrow y=1$ where we apply the operation $\wedge$ $n$-times. Therefore, we have $x\rightarrow y=1$, so $x\leq y$. We conclude that $X$ is a linear ordering.

If $X$ is a linear ordering, then clearly $\mathrm{crit}(x)\leq \mathrm{crit}(y)$ iff $x\rightarrow y=1$ iff $x\leq y$.
\end{proof}

\begin{thm}
Suppose that $(X,*)$ is a permutative LD-system. If $\simeq$ is a congruence on $(X,*)$, then there is a partition $A,B$ of
$\mathrm{crit}[X]$ where

\begin{enumerate}
\item $A$ is downwards closed, and $B$ is upwards closed, and

\item $\mathrm{crit}(x)\in B$ if and only if $x\simeq y$ for some $y\in \mathrm{Li}(X)$,

\item if $\mathrm{crit}(x)\in A$ and $x\simeq y$, then $\mathrm{crit}(x)=\mathrm{crit}(y)$,

\item $[x]\in \mathrm{Li}(X/\simeq)$ if and only if $\mathrm{crit}(x)\in B$,

\item $X/\simeq$ is also a permutative LD-system,

\item $\mathrm{crit}([x])\leq \mathrm{crit}([y])$ if and only if $\mathrm{crit}(y)\in B$ or 
$\mathrm{crit}(x)\leq \mathrm{crit}(y)$, and

\item $\mathrm{crit}([x])=\mathrm{crit}([y])$ if and only if $\mathrm{crit}(x)=\mathrm{crit}(y)$
or $\mathrm{crit}(x),\mathrm{crit}(y)\in B.$
\end{enumerate}
\label{uh3d5tjh4uyyuin}
\end{thm}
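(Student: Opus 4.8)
The plan is to define $B$ to be exactly the set of critical points of elements that the congruence sends into the $\simeq$-class of a left-identity, and let $A$ be the complement: $B=\{\mathrm{crit}(x)\mid x\simeq w\text{ for some }w\in\mathrm{Li}(X)\}$ and $A=\mathrm{crit}[X]\setminus B$. With this definition item (2) holds by fiat, so the real content is to show $B$ is a well-defined, upward closed subset of $\mathrm{crit}[X]$ and then to verify (3)--(7). The one lemma that does most of the work is: if $x\simeq w$ with $w\in\mathrm{Li}(X)$ and $\mathrm{crit}(x)\le\mathrm{crit}(x')$, then $x'$ is also $\simeq$-equivalent to a left-identity. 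Indeed, by the definition of the critical-point preorder there is an $n$ with $x^{n}*x'\in\mathrm{Li}(X)$, and since $\simeq$ is a congruence, $x^{n}*x'\simeq w^{n}*x'=x'$ because $w$ is a left-identity; hence $x'\simeq x^{n}*x'\in\mathrm{Li}(X)$. Applying this with $\mathrm{crit}(x')=\mathrm{crit}(x)$ shows $B$ is a union of critical-point classes, hence a genuine subset of $\mathrm{crit}[X]$; applying it with $\mathrm{crit}(x')\ge\mathrm{crit}(x)$ arbitrary shows $B$ is upward closed, so $A$ is downward closed, which is (1). Since every $w\in\mathrm{Li}(X)$ satisfies $w\simeq w$ and, by Theorem \ref{2j1298ur28tye}(1), has $\mathrm{crit}(w)=\max(\mathrm{crit}[X])$, we also get $\max(\mathrm{crit}[X])\in B$, so $A$ omits the top element.

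I would then record the dichotomy for iterated left-multiplication, obtained by a short induction from parts (2) and (4) of Theorem \ref{2j1298ur28tye} together with $\mathrm{crit}(x^{n+1}*y)=x^{\sharp}(\mathrm{crit}(x^{n}*y))$: if $\mathrm{crit}(x)\le\mathrm{crit}(y)$ then $\mathrm{crit}(x^{n}*y)$ strictly increases in $n$ until it reaches $\max(\mathrm{crit}[X])$ (and it does reach it, by the definition of $\le$), while if $\mathrm{crit}(x)>\mathrm{crit}(y)$ then $\mathrm{crit}(x^{n}*y)=\mathrm{crit}(y)$ for all $n$. Using this I would prove (3). Suppose $\mathrm{crit}(x)\in A$ and $x\simeq y$ but, for contradiction, $\mathrm{crit}(x)\neq\mathrm{crit}(y)$; since $\mathrm{crit}[X]$ is linearly ordered by Theorem \ref{2j1298ur28tye}(3), let $w$ be whichever of $x,y$ has the smaller critical point and $v$ the other. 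Since $X$ is permutative, $w^{[N]}=w^{N-1}*w\in\mathrm{Li}(X)$ for some $N$ (this is the argument in the proof of Proposition \ref{sfenjuw4}(1)). From $v\simeq w$ and the congruence property, $v^{N-1}*w\simeq w^{N-1}*w\in\mathrm{Li}(X)$, so $\mathrm{crit}(v^{N-1}*w)\in B$; but $\mathrm{crit}(v)>\mathrm{crit}(w)$ forces $\mathrm{crit}(v^{N-1}*w)=\mathrm{crit}(w)$ by the dichotomy, hence $\mathrm{crit}(w)\in B$. If $w=x$ this contradicts $\mathrm{crit}(x)\in A$; if $w=y$, then $y$, and therefore $x$, is $\simeq$-equivalent to a left-identity, so $\mathrm{crit}(x)\in B$, again a contradiction.

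Items (4)--(7) are then bookkeeping on top of this. For (4): if $\mathrm{crit}(x)\in B$ then $[x]$ is the class of a left-identity, which is a left-identity of $X/\simeq$; conversely if $[x]\in\mathrm{Li}(X/\simeq)$ then $x*w\simeq w$ for all $w$, so applying this with $w=x^{k}*x$ and chaining gives $x^{s-1}*x\simeq x$, while $x^{s-1}*x=x^{[s]}\in\mathrm{Li}(X)$ for a suitable $s$ by the proof of Proposition \ref{sfenjuw4}(1), whence $\mathrm{crit}(x)\in B$. Item (5) follows: $\mathrm{Li}(X/\simeq)$ is a left-ideal because $B$ is upward closed and $\mathrm{crit}(x*y)\ge\mathrm{crit}(y)$ by Theorem \ref{2j1298ur28tye}(2), while the Fibonacci-term condition transfers since any $t_{n}(x,y)\in\mathrm{Li}(X)$ gives $t_{n}([x],[y])=[t_{n}(x,y)]\in\mathrm{Li}(X/\simeq)$ by (4). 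For (6), $\mathrm{crit}([x])\le\mathrm{crit}([y])$ means $[x^{n}*y]\in\mathrm{Li}(X/\simeq)$ for some $n$, i.e. $\mathrm{crit}(x^{n}*y)\in B$ for some $n$; by the dichotomy this holds exactly when $\mathrm{crit}(y)\in B$ (use $n=0$) or $\mathrm{crit}(x)\le\mathrm{crit}(y)$, because when $\mathrm{crit}(x)>\mathrm{crit}(y)\in A$ the value $\mathrm{crit}(x^{n}*y)=\mathrm{crit}(y)$ never enters $B$. Finally (7) is the conjunction of (6) with its mirror image, simplified using that $B$ is upward closed and $\le$ is antisymmetric on the linear order $\mathrm{crit}[X]$. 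I expect (3) — ruling out that $\simeq$ identifies elements of distinct critical point strictly below the cut $A$ — to be the only genuinely delicate point; the rest is a direct consequence of the structure theory in Theorem \ref{2j1298ur28tye} and the congruence axioms.
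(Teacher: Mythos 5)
Your proposal is correct and follows essentially the same route as the paper: you define $B$ as the set of critical points of elements $\simeq$-equivalent to a left-identity, prove the same propagation lemma (elements of larger or equal critical point inherit $\simeq$-equivalence to a left-identity via iterated left-multiplication by $x$ and congruence), and then derive each of (1)--(7) by the same kind of bookkeeping the paper uses. The only surface differences are presentational — e.g., you prove (3) by an explicit dichotomy on which of $x,y$ has the smaller critical point, where the paper does a single contrapositive argument and invokes upward closure — but the substance is identical.
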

\begin{proof}
I first claim that if $u\simeq v,v\in\mathrm{Li}(X),\mathrm{crit}(u)\leq\mathrm{crit}(x)$, then $x\simeq y$ for some
$y\in\mathrm{Li}(X)$. We have $x=v*x\simeq u*x\simeq u*(u*x)\simeq\ldots\simeq u^{n}*x$ for all $n$. Since
$\mathrm{crit}(u)\leq\mathrm{crit}(x)$, we conclude that $u^{n}*x\in\mathrm{Li}(X)$ and $x\simeq u^{n}*x$ for some $n$.

Let $B=\{\mathrm{crit}(u)\mid \text{$u\simeq v$ for some $v\in \mathrm{Li}(X)$}\}$, and let $A=\mathrm{crit}[X]\setminus B$.

\begin{enumerate}
\item Suppose that $\alpha\in B,\alpha\leq\beta$. Then there are $u,v,x$ with $u\simeq v,v\in\mathrm{Li}(X),\mathrm{crit}(u)=\alpha\leq\beta=\mathrm{crit}(x)$. Therefore, by the remarks at the beginning of this proof, there is some
$y\in\mathrm{Li}(X)$ with $x\simeq y$. Therefore $\beta\in B$ as well, so we conclude that $B$ is upwards closed.

\item The direction $\leftarrow$ follows from the definition of $B$. For the direction $\rightarrow$, assume that
$\mathrm{crit}(x)\in B$. Then there are $u,v\in X$ where $\mathrm{crit}(x)=\mathrm{crit}(u),u\simeq v,v\in\mathrm{Li}(X)$.
Thus, by the claim at the beginning of this proof, there is some $y\in\mathrm{Li}(X)$ and where $x\simeq y$. 

\item We shall proceed by contrapositive. Suppose that $\mathrm{crit}(x)<\mathrm{crit}(y),x\simeq y$. Then there is some $n$ with $x^{n}*x\in\mathrm{Li}(X)$. Therefore, since $x^{n}*x\simeq y^{n}*x$ and $\mathrm{crit}(y^{n}*x)=\mathrm{crit}(x)$, we conclude that $\mathrm{crit}(x)\in B$. Since $B$ is upwards closed, $\mathrm{crit}(y)\in B$ as well.

\item

$\leftarrow$ Suppose that $\mathrm{crit}(x)\in B$. Then we have $x\simeq y$ for some $y\in \mathrm{Li}(X)$. Therefore, for all $z\in X$ we have
$[x]*[z]=[y]*[z]=[z]$, so $[x]\in \mathrm{Li}(X/\simeq)$

$\rightarrow$ Suppose now that $\mathrm{crit}(x)\not\in B$. Then $[x]\cap \mathrm{Li}(X)=\emptyset$. However, we have
$x^{n}*x\in \mathrm{Li}(X)$ for some $n$, so $[x]\neq [x]^{n}*[x]$. Therefore, we have $[x]\not\in \mathrm{Li}(X/\simeq)$.

\item I first claim that $\mathrm{Li}(X/\simeq)$ is a left-ideal. Suppose that $[y]\in\mathrm{Li}(X/\simeq)$. Then
$\mathrm{crit}(y)\in B$. Therefore, for all $x\in X$, we have $\mathrm{crit}(x*y)\in B$, so
$[x]*[y]=[x*y]\in\mathrm{Li}(X/\simeq)$. We conclude that $\mathrm{Li}(X/\simeq)$ is a left-ideal.

Suppose that $x,y\in X$. Then there is some $n$ where $t_{n}(x,y)\in \mathrm{Li}(X)$. Therefore,
$t_{n}([x],[y])=[t_{n}(x,y)]\in \mathrm{Li}(X/\simeq)$ as well. Thus, $X/\simeq$ is also permutative.

\item $\leftarrow$ If $x,y\in X$ and $\mathrm{crit}(y)\in B$, then $[y]\in \mathrm{Li}(X/\simeq)$, so $\mathrm{crit}([x])\leq \mathrm{crit}([y])$. Similarly, if $\mathrm{crit}(x)\leq \mathrm{crit}(y)$, then $x^{n}*y\in\mathrm{Li}(X)$ for some $n$, so $[x]^{n}*[y]\in\mathrm{Li}(X/\simeq)$.

$\rightarrow$ Suppose now that $\mathrm{crit}([x])\leq \mathrm{crit}([y])$ and $\mathrm{crit}(y)\in A$. Then 
$[x]^{n}*[y]\in \mathrm{Li}(X/\simeq)$ for some $n$. Therefore $\mathrm{crit}(x^{n}*y)\in B$, but this is only possible if
$\mathrm{crit}(x)\leq \mathrm{crit}(y)$. Therefore, if $\mathrm{crit}([x])\leq \mathrm{crit}([y])$, then $\mathrm{crit}(x)\leq \mathrm{crit}(y)$ or $\mathrm{crit}(y)\in B$.

\item

$\leftarrow$ If $\mathrm{crit}(x)=\mathrm{crit}(y)$, then
\[\mathrm{crit}(x)\leq \mathrm{crit}(y)\leq \mathrm{crit}(x),\] 
so 
\[\mathrm{crit}([x])\leq \mathrm{crit}([y])\leq \mathrm{crit}([x]),\]
hence $\mathrm{crit}([x])=\mathrm{crit}([y])$. If $\mathrm{crit}(x),\mathrm{crit}(y)\in B$, then $[x],[y]\in \mathrm{Li}(X/\simeq)$, so $\mathrm{crit}([x])=\mathrm{crit}([y])$.

$\rightarrow$ Suppose $\mathrm{crit}([x])=\mathrm{crit}([y])$. If $[x]\in \mathrm{Li}(X/\simeq)$, then $[y]\in \mathrm{Li}(X/\simeq)$, so $\mathrm{crit}(x)\in B$ and $\mathrm{crit}(y)\in B$ as well.
Now assume that $[x]\not\in \mathrm{Li}(X/\simeq)$. Then $\mathrm{crit}(x),\mathrm{crit}(y)\in A$ and 
\[\mathrm{crit}([x])\leq \mathrm{crit}([y])\leq \mathrm{crit}([x]).\]
Therefore, we have
\[\mathrm{crit}(x)\leq \mathrm{crit}(y)\leq \mathrm{crit}(x),\]
so $\mathrm{crit}(x)=\mathrm{crit}(y).$
\end{enumerate}
\end{proof}
\begin{prop}
Suppose that $(X,*,\circ,1)$ is a permutative LD-monoid. If $\simeq$ is a congruence on $(X,*)$. Then
$\simeq$ is also a congruence with respect to the operation $\circ$.
\label{428tu0hj4ueri24tidef}
\end{prop}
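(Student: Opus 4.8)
The plan is to show that $\simeq$ respects $\circ$ by reducing the identity $x \circ y = t_{n+1}(x,y)$ (where $n$ is chosen so that $t_n(x,y) \in \mathrm{Li}(X)$) to the already-known fact that $\simeq$ is a congruence with respect to $*$. Concretely, suppose $x \simeq x'$ and $y \simeq y'$; I want $x \circ y \simeq x' \circ y'$. Each of $x \circ y$ and $x' \circ y'$ is, by the construction of $\circ$ in the theorem on permutative LD-monoids, equal to a Fibonacci term $t_{n+1}(\cdot,\cdot)$ applied to the relevant pair, for a suitable index witnessing that the preceding Fibonacci term lands in $\mathrm{Li}(X)$. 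Since the Fibonacci terms $t_k$ are built purely from $*$, and $\simeq$ is a $*$-congruence, we have $t_k(x,y) \simeq t_k(x',y')$ for every $k$. The only subtlety is that the index $n$ witnessing $t_n(x,y) \in \mathrm{Li}(X)$ need not be the same index witnessing $t_m(x',y') \in \mathrm{Li}(X)$; I must reconcile the two choices.

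First I would handle the degenerate cases: if $x, y \in \mathrm{Li}(X)$, then $x \circ y$ is (by the stated convention following the composition construction) left undefined, and one checks that in $X = X$ having $x \in \mathrm{Li}(X)$ is equivalent, via $\mathrm{crit}$, to membership in the maximum critical class, which by Theorem \ref{uh3d5tjh4uyyuin} (applied with this very congruence $\simeq$, or directly) is $\simeq$-invariant; so the ``undefined'' case is preserved on both sides and there is nothing to prove. Hence assume $x \notin \mathrm{Li}(X)$ or $y \notin \mathrm{Li}(X)$. By Lemma \ref{fewn2} and the permutativity of $X$, there is a well-defined least $N$ with $t_N(x,y) \in \mathrm{Li}(X)$, and then $t_{N+2k}(x,y) \in \mathrm{Li}(X)$ for all $k \geq 0$ while $x \circ y = t_{N+1}(x,y)$; moreover by the structure results established in the theorem's proof the value $t_{N+1}(x,y)$ does not depend on which such index is used, and similarly cofinally many even-or-odd-shifted indices work. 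Since $\mathrm{Li}(X)$ is a left-ideal and $\simeq$ is a $*$-congruence that maps $\mathrm{Li}(X)$-elements to $\mathrm{Li}(X)$-elements (again using the $\mathrm{crit}$-characterization of $\mathrm{Li}$), from $t_N(x,y) \in \mathrm{Li}(X)$ and $t_N(x,y) \simeq t_N(x',y')$ we get $t_N(x',y') \simeq$ an element of $\mathrm{Li}(X)$. This does not immediately give $t_N(x',y') \in \mathrm{Li}(X)$, but it does give (via part (2) of Theorem \ref{uh3d5tjh4uyyuin} for the trivial congruence, i.e. via $\mathrm{crit}$) that $\mathrm{crit}(t_N(x',y'))$ lies in the top class, hence $t_N(x',y') \in \mathrm{Li}(X)$ after all; so the same index $N$ works for the primed pair.

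With a common index $N$ in hand, the conclusion is immediate: $x \circ y = t_{N+1}(x,y) \simeq t_{N+1}(x',y') = x' \circ y'$, where the middle relation is the $*$-congruence property of $\simeq$ applied to the Fibonacci term $t_{N+1}$, and the two outer equalities are the defining property of $\circ$. I expect the main obstacle to be precisely the index-matching step — showing that an index $N$ witnessing $t_N(x,y) \in \mathrm{Li}(X)$ also witnesses $t_N(x',y') \in \mathrm{Li}(X)$ — which hinges on the fact that $\mathrm{Li}(X)$ is a union of $\simeq$-classes (equivalently, that membership in $\mathrm{Li}(X)$ is detected by the critical-point ordering and is therefore $\simeq$-stable). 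Everything else is a direct transfer through the $*$-congruence. One could alternatively phrase the whole argument using the $B$-set of Theorem \ref{uh3d5tjh4uyyuin}: $t_N(x,y) \in \mathrm{Li}(X)$ iff $\mathrm{crit}(t_N(x,y)) \in B$, and $\mathrm{crit}$ of a Fibonacci term is determined by the $\mathrm{crit}$-ordering on the entries, which $\simeq$ preserves on the complement of $B$; this makes the index-matching transparent.
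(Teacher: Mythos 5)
Your plan correctly reduces the problem to the behaviour of the Fibonacci terms under $*$, which is the right starting point; the reduction to $x\circ y=t_{N+1}(x,y)$ and the observation that $t_k(x,y)\simeq t_k(x',y')$ for every $k$ are both sound. The gap is in the index-matching step. You claim that from $t_N(x,y)\in\mathrm{Li}(X)$ and $t_N(x,y)\simeq t_N(x',y')$ one can conclude $t_N(x',y')\in\mathrm{Li}(X)$. This is false: a $*$-congruence on a permutative LD-system need \emph{not} have $\mathrm{Li}(X)$ as a union of $\simeq$-classes, and that failure is precisely what the set $B$ in Theorem~\ref{uh3d5tjh4uyyuin} measures. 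Being $\simeq$-equivalent to the identity $1$ only gives $\mathrm{crit}(t_N(x',y'))\in B$, not $\mathrm{crit}(t_N(x',y'))=\max(\mathrm{crit}[X])$, so $t_N(x',y')$ can be a non-identity element. Your closing ``alternative phrasing'' contains the same error explicitly: ``$t_N(x,y)\in\mathrm{Li}(X)$ iff $\mathrm{crit}(t_N(x,y))\in B$'' is wrong — the correct equivalence is with $\mathrm{crit}(t_N(x,y))=\max(\mathrm{crit}[X])$, which is a strictly smaller class than $B$ whenever $B$ is nontrivial. You even note that $\simeq$ only preserves the $\mathrm{crit}$-ordering ``on the complement of $B$,'' which is exactly where a uniform index cannot be guaranteed; that caveat should have signalled that a separate argument is needed there.

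The paper avoids the trap with a two-case analysis after first reducing (via the LD-monoid identity $x\circ y=(x*y)\circ x$) to proving only the one-sided implication $x\simeq y\Rightarrow x\circ z\simeq y\circ z$. When $\mathrm{crit}(x),\mathrm{crit}(z)\in B$, it does not try to match indices at all; it computes directly that $\mathrm{crit}(x\circ z)=\min(\mathrm{crit}(x),\mathrm{crit}(z))\in B$ and likewise for $y\circ z$, hence both are $\simeq 1$. When at least one critical point lies in $A$, item~(3) of Theorem~\ref{uh3d5tjh4uyyuin} guarantees that $\mathrm{crit}(x)=\mathrm{crit}(y)$, so the comparison against $\mathrm{crit}(z)$ — and therefore the parity of the stabilizing Fibonacci index — genuinely agrees for the two pairs, and your index-matching idea works. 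If you prefer to keep a two-sided argument without case-splitting on $B$, the cleanest repair is a parity observation: both sequences $(t_n(x,y))_n$ and $(t_n(x',y'))_n$ eventually enter a period-$2$ cycle alternating between $1$ and $\circ$; since they are $\simeq$-related termwise at \emph{every} $n$, either the cycles are in phase (and your matched-index conclusion holds directly), or they are out of phase, in which case reading off large $n$ of each parity gives both $x\circ y\simeq 1$ and $x'\circ y'\simeq 1$, hence $x\circ y\simeq x'\circ y'$. Either way a genuine extra argument is needed where $B$ is involved; your current write-up skips it.
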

\begin{proof}
Suppose that $\simeq$ is a congruence on $(X,*)$. It suffices to show that
\begin{equation}
\label{nhggedfeqekp}
x\simeq y\Rightarrow x\circ z\simeq y\circ z
\end{equation}
since if \ref{nhggedfeqekp} holds, then $y\simeq z$ implies that 
\[x\circ y=(x*y)\circ x\simeq(x*z)\circ x=x\circ z.\] 

Let $A,B$ be the partition of $\mathrm{crit}[X]$ where $\mathrm{crit}(x)\in B$ if and only if $[x]=[1]$.
Now suppose that $x,y,z\in X$, and assume that $x\simeq y$. Then we shall prove that $x\circ z\simeq y\circ z$ by cases.

\item[Case I: $\mathrm{crit}(x)\in B$ and $\mathrm{crit}(z)\in B$.]

We have $\mathrm{crit}(x),\mathrm{crit}(y),\mathrm{crit}(z)\in B$. Therefore, $\mathrm{crit}(x\circ z)=\min(\mathrm{crit}(x),\mathrm{crit}(z))\in B$, and $\mathrm{crit}(y\circ z)=\min(\mathrm{crit}(y),\mathrm{crit}(z))\in B.$
Therefore, we conclude that $x\circ z\simeq 1\simeq y\circ z$.

\item[Case II: $\mathrm{crit}(x)\not\in B$ or $\mathrm{crit}(z)\not\in B$.]

I claim that 
\begin{enumerate}
\item\label{421uefdh4kwnef} $\mathrm{crit}(x)\leq \mathrm{crit}(z)$ and $\mathrm{crit}(y)\leq \mathrm{crit}(z)$ or

\item\label{q4g1keddnh4ikws} $\mathrm{crit}(x)>\mathrm{crit}(z)$ and $\mathrm{crit}(y)>\mathrm{crit}(z)$.
\end{enumerate}
If $\mathrm{crit}(x)\in A$, then $\mathrm{crit}(x)=\mathrm{crit}(y)$ so the above claim trivially holds.
If $\mathrm{crit}(x)\in B$, then $\mathrm{crit}(y)\in B$ and $\mathrm{crit}(z)\in A.$ Therefore,
$\mathrm{crit}(x)>\mathrm{crit}(z)$ and $\mathrm{crit}(y)>\mathrm{crit}(z)$, so the claim holds in this case as well.

Suppose now that \ref{421uefdh4kwnef} holds. Then for sufficiently large even $n$, we have 
\[t_{n-1}(x,z)=1,t_{n}(x,z)=x\circ z\]
and
\[t_{n-1}(y,z)=1,t_{n}(y,z)=y\circ z.\]
Therefore, we have 
\[x\circ z=t_{n}(x,z)\simeq t_{n}(y,z)=y\circ z.\]

Now suppose that \ref{q4g1keddnh4ikws} holds. Then for sufficiently large odd $n$, we have $t_{n}(x,z)=x\circ z$ and
$t_{n}(y,z)=y\circ z$. Therefore, we conclude that
\[x\circ z=t_{n}(x,z)\simeq t_{n}(y,z)=y\circ z.\]
Thus, $\simeq$ is a congruence with respect to $\circ$ as well.
\end{proof}

\begin{prop}
\begin{enumerate}
\item A subalgebra of a permutative LD-system is also permutative.

\item If $i:(X,*)\rightarrow(Y,*)$ is an injective homomorphism between permutative LD-systems, then
$\mathrm{crit}(x)\leq \mathrm{crit}(y)$ if and only if $\mathrm{crit}(i(x))\leq \mathrm{crit}(i(y))$.
\end{enumerate}
\label{49tt4ngio224gf}
\end{prop}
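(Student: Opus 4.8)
The plan is to isolate a single structural fact about permutative LD-systems and let both parts follow almost immediately from it. The fact, implicit in the proof of Proposition \ref{t42huife}, is that in any permutative LD-system $Z$ an element $a$ lies in $\mathrm{Li}(Z)$ if and only if $a*a=a$. The ``only if'' is trivial; for the ``if'', note that $a*a=a$ forces $t_{n}(a,a)=a$ for all $n\geq 1$ by an easy induction ($t_{1}(a,a)=t_{2}(a,a)=a$ and $t_{n+2}(a,a)=t_{n+1}(a,a)*t_{n}(a,a)=a*a=a$), so permutativity yields an $n$ with $a=t_{n}(a,a)\in\mathrm{Li}(Z)$. This ``idempotents are left-identities'' lemma is exactly what lets us compare left-identities living on different carrier sets.

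For part (1), let $Y$ be a subalgebra of a permutative $X$. I would first show $\mathrm{Li}(Y)=\mathrm{Li}(X)\cap Y$. The inclusion $\supseteq$ is immediate since $Y\subseteq X$. For $\subseteq$, if $a\in\mathrm{Li}(Y)$ then $a*a=a$, so the lemma applied inside $X$ gives $a\in\mathrm{Li}(X)$, hence $a\in\mathrm{Li}(X)\cap Y$. With this identification in hand, the two defining conditions for permutativity of $Y$ are routine: if $a\in\mathrm{Li}(Y)$ and $x\in Y$, then $x*a\in Y$ (subalgebra) and $x*a\in\mathrm{Li}(X)$ ($\mathrm{Li}(X)$ is a left-ideal), so $x*a\in\mathrm{Li}(X)\cap Y=\mathrm{Li}(Y)$; and given $x,y\in Y$, permutativity of $X$ supplies $n$ with $t_{n}(x,y)\in\mathrm{Li}(X)$, and since $t_{n}(x,y)\in Y$ this element lies in $\mathrm{Li}(X)\cap Y=\mathrm{Li}(Y)$.

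For part (2), I would first record that $i(x^{n}*y)=i(x)^{n}*i(y)$ for all $n$, by induction on $n$ using that $i$ is a homomorphism. Recall $\mathrm{crit}(x)\leq\mathrm{crit}(y)$ means $x^{n}*y\in\mathrm{Li}(X)$ for some $n$, equivalently, by the lemma, $(x^{n}*y)*(x^{n}*y)=x^{n}*y$ for some $n$. For the forward direction, if $x^{n}*y\in\mathrm{Li}(X)$, apply $i$ to $(x^{n}*y)*(x^{n}*y)=x^{n}*y$ to get $z*z=z$ with $z=i(x)^{n}*i(y)$; the lemma in $Y$ then gives $z\in\mathrm{Li}(Y)$, so $\mathrm{crit}(i(x))\leq\mathrm{crit}(i(y))$. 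For the reverse direction, if $i(x)^{n}*i(y)\in\mathrm{Li}(Y)$, set $w=x^{n}*y$, so that $i(w*w)=i(w)*i(w)=i(w)$; injectivity of $i$ forces $w*w=w$, and the lemma in $X$ gives $w\in\mathrm{Li}(X)$, i.e. $\mathrm{crit}(x)\leq\mathrm{crit}(y)$.

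The only point requiring care is that a left-identity of a subalgebra, or the image of a left-identity under $i$, need not be a left-identity of the ambient algebra; this is precisely the gap bridged by the ``idempotents are left-identities'' lemma, and noticing that $x^{n}*y$ is idempotent exactly when it is a left-identity is the single idea the whole argument turns on. Everything else is bookkeeping with the homomorphism and subalgebra properties, so I do not anticipate a genuine obstacle.
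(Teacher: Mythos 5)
Your proof is correct, and it takes essentially the same route as the paper: both arguments turn on the observation (the contrapositive of which appears in the proof of Proposition~\ref{t42huife}) that in a permutative LD-system an element is a left-identity precisely when it is idempotent, and both use this to transport membership in $\mathrm{Li}$ across the injective homomorphism or the subalgebra inclusion. The paper merely organizes the bookkeeping slightly differently, proving $x\in\mathrm{Li}(X)\Leftrightarrow i(x)\in\mathrm{Li}(Y)$ once inside the proof of part (1) and reusing it for part (2), whereas you isolate the idempotency lemma up front; these are the same idea.
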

\begin{proof}
\begin{enumerate}
\item Suppose that $(X,*),(Y,*)$ are LD-systems, $(Y,*)$ is permutative, and $i:(X,*)\rightarrow(Y,*)$ is a homomorphism.
I first claim that if $x\in \mathrm{Li}(X)$ if and only if $i(x)\in \mathrm{Li}(Y)$.
Suppose that $i(x)\in \mathrm{Li}(Y)$. Then whenever $y\in X$ we have $i(x*y)=i(x)*i(y)=i(y)$, so $x*y=y$, hence
$x\in \mathrm{Li}(X)$.
Now assume that $i(x)\not\in \mathrm{Li}(Y)$. Then since $\mathrm{Li}(Y)$ is permutative, we have $i(x)\neq i(x)*i(x)=i(x*x)$, so
$x\not\in \mathrm{Li}(X)$. 

Now I claim that $\mathrm{Li}(X)$ is a left-ideal. 
Suppose that $x\in \mathrm{Li}(X)$ and $r\in X$. Then $i(x)\in \mathrm{Li}(Y)$. Therefore, $i(r*x)=i(r)*i(x)\in \mathrm{Li}(Y)$ as well, so
$r*x\in\mathrm{Li}(X)$. We therefore conclude that $\mathrm{Li}(X)$ is a left-ideal.

Now assume that $x,y\in X$. Then there is some $n$ where $i(t_{n}(x,y))=t_{n}(i(x),i(y))\in \mathrm{Li}(Y)$. Therefore,
$t_{n}(x,y)\in \mathrm{Li}(X)$. We conclude that $(X,*)$ is permutative.

\item Suppose that $x,y\in X$. If $\mathrm{crit}(x)\leq \mathrm{crit}(y)$, then $x^{n}*y\in \mathrm{Li}(X)$ for some $n$. Therefore, we have
$i(x)^{n}*i(y)=i(x^{n}*y)\in \mathrm{Li}(Y)$. We conclude that $\mathrm{crit}(i(x))\leq \mathrm{crit}(i(y))$.

Now suppose that $\mathrm{crit}(i(x))\leq \mathrm{crit}(i(y))$. Then there is some $n$ where
\[i(x^{n}*y)=i(x)^{n}*i(y)\in\mathrm{Li}(Y).\]
Therefore, $x^{n}*y\in \mathrm{Li}(X)$. We conclude that $\mathrm{crit}(x)\leq \mathrm{crit}(y)$.
\end{enumerate}
\end{proof}

\begin{prop}
Let $\phi:X\rightarrow Y$ be a homomorphism between permutative LD-systems. Then $\mathrm{crit}(\phi(x))\leq \mathrm{crit}(\phi(y))$ if and only if $\mathrm{crit}(x)\leq \mathrm{crit}(y)$ or $\phi(y)\in \mathrm{Li}(Y)$.
\end{prop}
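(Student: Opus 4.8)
The plan is to derive this from the two results that already treat the ``surjective'' and ``injective'' parts of a homomorphism: Theorem \ref{uh3d5tjh4uyyuin}, which describes the critical points under an arbitrary quotient of a permutative LD-system, and Proposition \ref{49tt4ngio224gf}, which says that an injective homomorphism of permutative LD-systems both reflects the critical-point order and detects membership in $\mathrm{Li}$.

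First I would factor $\phi$ through its image. Put $Z=\phi[X]$, which is a subalgebra of $Y$ and hence a permutative LD-system by Proposition \ref{49tt4ngio224gf}(1), and let $\simeq$ be the kernel congruence of $\phi$ on $X$, that is, $x\simeq x'$ iff $\phi(x)=\phi(x')$. By the first isomorphism theorem for algebras, the map $[x]\mapsto\phi(x)$ is an isomorphism $X/\simeq\,\to\,Z$. Since the inclusion $Z\hookrightarrow Y$ is an injective homomorphism of permutative LD-systems, Proposition \ref{49tt4ngio224gf}(2) gives that $\mathrm{crit}(\phi(x))\leq\mathrm{crit}(\phi(y))$ holds in $Y$ if and only if it holds in $Z$, and the first claim in the proof of Proposition \ref{49tt4ngio224gf}(1) gives that $\phi(y)\in\mathrm{Li}(Z)$ if and only if $\phi(y)\in\mathrm{Li}(Y)$. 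Transporting these two statements back along the isomorphism $X/\simeq\,\cong\,Z$, they become: $\mathrm{crit}(\phi(x))\leq\mathrm{crit}(\phi(y))$ iff $\mathrm{crit}([x])\leq\mathrm{crit}([y])$ in $X/\simeq$, and $\phi(y)\in\mathrm{Li}(Y)$ iff $[y]\in\mathrm{Li}(X/\simeq)$.

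Now I would apply Theorem \ref{uh3d5tjh4uyyuin} to the congruence $\simeq$ on $X$, which produces a partition $A,B$ of $\mathrm{crit}[X]$ with, in particular, item (4): $[y]\in\mathrm{Li}(X/\simeq)$ iff $\mathrm{crit}(y)\in B$; and item (6): $\mathrm{crit}([x])\leq\mathrm{crit}([y])$ iff $\mathrm{crit}(y)\in B$ or $\mathrm{crit}(x)\leq\mathrm{crit}(y)$. Chaining the equivalences,
\begin{align*}
\mathrm{crit}(\phi(x))\leq\mathrm{crit}(\phi(y))
&\iff \mathrm{crit}([x])\leq\mathrm{crit}([y]) \\
&\iff \mathrm{crit}(y)\in B \ \text{ or }\ \mathrm{crit}(x)\leq\mathrm{crit}(y) \\
&\iff \phi(y)\in\mathrm{Li}(Y)\ \text{ or }\ \mathrm{crit}(x)\leq\mathrm{crit}(y),
\end{align*}
which is exactly the assertion.

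The steps that need care are (a) that the canonical map $X/\simeq\to\phi[X]$ is genuinely an isomorphism of LD-systems, which is routine universal algebra, and (b) that ``$\mathrm{crit}(\cdot)\leq\mathrm{crit}(\cdot)$'' and ``membership in $\mathrm{Li}$'' transfer correctly along the isomorphism $X/\simeq\cong Z$ and along the inclusion $Z\hookrightarrow Y$; but both parts of (b) are precisely what Proposition \ref{49tt4ngio224gf} and its proof supply, so there is no real obstacle beyond bookkeeping. A fully self-contained alternative would prove the two implications directly from the Fibonacci-term description of critical points in the corollary following the proposition on finite linear orderings $\Gamma\colon X\to L$; but the factorization argument is shorter and makes transparent the only new point, namely that the left-identities created by passing to the quotient $X/\simeq$ are exactly the classes of those $y$ with $\phi(y)\in\mathrm{Li}(Y)$.
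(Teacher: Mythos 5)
Your proposal is correct and follows essentially the same route as the paper: factor $\phi$ as a surjection onto its image followed by the inclusion into $Y$, apply Theorem \ref{uh3d5tjh4uyyuin} to the surjective part and Proposition \ref{49tt4ngio224gf} to the injective part, and chain the resulting equivalences. You have merely spelled out in more detail the bookkeeping that the paper leaves implicit.
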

\begin{proof}
Let $s:X\rightarrow Z,i:Z\rightarrow Y$ be homomorphisms between permutative LD-systems $X,Y,Z$ such that $s$ is surjective
and $i$ is injective and $\phi=is$. Then by Theorem \ref{uh3d5tjh4uyyuin} and
Proposition \ref{49tt4ngio224gf},
$\mathrm{crit}(\phi(x))\leq \mathrm{crit}(\phi(y))$ if and only if
$\mathrm{crit}(s(x))\leq \mathrm{crit}(s(y))$ if and only if $\mathrm{crit}(x)\leq \mathrm{crit}(y)$ or $s(y)\in \mathrm{Li}(Z)$ if and only if
$\mathrm{crit}(x)\leq \mathrm{crit}(y)$ or $\phi(y)\in \mathrm{Li}(Y)$.
\end{proof}

\begin{prop}
Let $(X,*,\circ),(Y,*,\circ)$ be permutative LD-monoids and let $\phi:X\rightarrow Y$ be a homomorphism between LD-systems.
Then $\phi$ is also a homomorphism between LD-monoids.
\end{prop}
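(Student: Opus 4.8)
The plan is to prove that $\phi$, which by hypothesis preserves $*$, also sends $1_X$ to $1_Y$ and satisfies $\phi(x\circ y)=\phi(x)\circ\phi(y)$ for all $x,y\in X$; together these say exactly that $\phi$ is a homomorphism of LD-monoids. The guiding observation is that in a (reduced) permutative LD-monoid the operation $\circ$ carries no information beyond $*$ once the two arguments are fixed: for each pair $x,y$ one has $x\circ y=t_{n+1}(x,y)$ as soon as $n$ is large enough that $t_n(x,y)$ equals the unique left-identity $1$. Since $\phi$ commutes with every term built from $*$, in particular with the Fibonacci terms $t_m$, preservation of $\circ$ will drop out once we also know $\phi(1_X)=1_Y$.

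First I would dispose of the unit. In any permutative LD-system an element $a$ lies in $\mathrm{Li}$ precisely when $a*a=a$: one direction is trivial, and if $a*a=a$ then $t_m(a,a)=a$ for every $m$, so the permutativity clause forces $a\in\mathrm{Li}$. Applying $\phi$ to $1_X*1_X=1_X$ gives $\phi(1_X)*\phi(1_X)=\phi(1_X)$, hence $\phi(1_X)\in\mathrm{Li}(Y)$; since $1_Y$ is the only left-identity of $Y$, we get $\phi(1_X)=1_Y$. Next, recall from the discussion of the Fibonacci terms that the LD-monoid identity $x\circ y=(x*y)\circ x$ propagates by a formal induction (using $t_{k+2}(x,y)=t_{k+1}(x,y)*t_k(x,y)$) to $x\circ y=t_{k+1}(x,y)\circ t_k(x,y)$ for all $k\ge 1$. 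Now fix $x,y\in X$; by permutativity choose $n$ with $t_n(x,y)\in\mathrm{Li}(X)=\{1_X\}$, so $t_n(x,y)=1_X$, and therefore $x\circ y=t_{n+1}(x,y)\circ 1_X=t_{n+1}(x,y)$. Hence $\phi(x\circ y)=\phi\big(t_{n+1}(x,y)\big)=t_{n+1}(\phi(x),\phi(y))$. On the other hand $t_n(\phi(x),\phi(y))=\phi(t_n(x,y))=\phi(1_X)=1_Y$, so the same identity applied in $Y$ yields $\phi(x)\circ\phi(y)=t_{n+1}(\phi(x),\phi(y))\circ 1_Y=t_{n+1}(\phi(x),\phi(y))$. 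Comparing the two expressions completes the proof.

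The step that genuinely requires care is $\phi(1_X)=1_Y$, and in particular the use that $1$ is the \emph{unique} left-identity of a permutative LD-monoid: a priori $\phi(1_X)$ is only known to act as a left-identity on the image $\phi[X]$, and without reducedness $\phi[X]$ need not even be closed under $\circ$, so this is where the whole argument is anchored. Granting that, everything else is bookkeeping: the Fibonacci-term identity $x\circ y=t_{k+1}(x,y)\circ t_k(x,y)$ is immediate, and the remaining manipulations only use that $\phi$ is a $*$-homomorphism together with $a\circ 1=a$ in a monoid. (An alternative organization factors $\phi$ as a surjection followed by an injection, invoking Proposition~\ref{428tu0hj4ueri24tidef} to push the kernel congruence of $(X,*)$ through $\circ$ and Proposition~\ref{49tt4ngio224gf} for the injective factor, but the pointwise Fibonacci-term computation above is shorter.)
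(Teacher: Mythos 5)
Your proof is correct, and it takes a genuinely simpler route than the paper's. The paper splits into three cases according to the relative order of $\mathrm{crit}(x)$ and $\mathrm{crit}(y)$ (and whether $\phi(y)=1$), and in each case invokes the proposition that $\mathrm{crit}(\phi(x))\leq\mathrm{crit}(\phi(y))$ iff $\mathrm{crit}(x)\leq\mathrm{crit}(y)$ or $\phi(y)\in\mathrm{Li}(Y)$, together with the corollary characterizing which parities of $n$ give $t_n(x,y)\in\mathrm{Li}(X)$, to argue that for a single parity of large $n$ one has $t_n(x,y)=x\circ y$ and $t_n(\phi(x),\phi(y))=\phi(x)\circ\phi(y)$ simultaneously. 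You sidestep all of this: by first verifying $\phi(1_X)=1_Y$ directly from idempotence (the fact that $a\in\mathrm{Li}(X)$ iff $a*a=a$ in a permutative LD-system), any $n$ with $t_n(x,y)=1_X$ automatically gives $t_n(\phi(x),\phi(y))=\phi(t_n(x,y))=1_Y$, so both compositions are read off as $t_{n+1}$ from the same index, and no critical-point bookkeeping is needed. Your proof buys elementarity and brevity; the paper's case split is organized to align with the surrounding machinery (the criticality-preservation results it has just proved) and so fits its narrative, but it is logically heavier than necessary for this particular statement. Your explicit isolation of $\phi(1_X)=1_Y$ is also a small gain in transparency, since the paper's proof gets this only implicitly in its Case 2 via the critical-point argument.
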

\begin{proof}
We shall prove this result in a couple of different cases.

\item[Case 1: $\mathrm{crit}(x)\leq \mathrm{crit}(y)$.]

If $\mathrm{crit}(x)\leq \mathrm{crit}(y)$, then $\mathrm{crit}(\phi(x))\leq \mathrm{crit}(\phi(y))$. Therefore, for large enough even $n$, we have
$t_{n}(x,y)=x\circ y$ and $t_{n}(\phi(x),\phi(y))=\phi(x)\circ\phi(y)$. Therefore,
\[\phi(x\circ y)=\phi(t_{n}(x,y))=t_{n}(\phi(x),\phi(y))=\phi(x)\circ\phi(y).\]

\item[Case 2: $\mathrm{crit}(x)>\mathrm{crit}(y),\phi(y)=1$.]
We have $\mathrm{crit}(\phi(x))\geq\mathrm{crit}(\phi(y))=\max(\mathrm{crit}[Y])$. Therefore,
$\phi(x)=1$ as well, so $\phi(x)\circ\phi(y)=1$. Similarly,
\[\mathrm{crit}(x\circ y)=\min(\mathrm{crit}(x),\mathrm{crit}(y))=\mathrm{crit}(y),\]
so
\[\mathrm{crit}(\phi(x\circ y))=\mathrm{crit}(\phi(y))=\max(\mathrm{crit}[Y]).\]
We conclude that $\phi(x\circ y)=1$ as well.

\item[Case 3: $\mathrm{crit}(x)>\mathrm{crit}(y),\phi(y)\neq 1$.]

In this case, $\mathrm{crit}(\phi(x))>\mathrm{crit}(\phi(y))$. Therefore,
for large enough odd $n$, we have $t_{n}(x,y)=x\circ y$ and $t_{n}(\phi(x),\phi(y))=\phi(x)\circ\phi(y)$. Therefore,
\[\phi(x\circ y)=\phi(t_{n}(x,y))=t_{n}(\phi(x),\phi(y))=\phi(x)\circ\phi(y).\]
\end{proof}

We shall now algebraize the congruences $\equiv^{\gamma}$ on $\mathcal{E}_{\lambda}$ from set theory to algebra.

\begin{lem}
Let $X$ be a permutative LD-system. Now suppose that $r,s\in X$, $\mathrm{crit}(r)\leq \mathrm{crit}(s)$, and $r*r,s*s\in \mathrm{Li}(X)$. Then $s*x=s*y$ implies that $r*x=r*y$. In particular, if $\mathrm{crit}(r)=\mathrm{crit}(s)$, then $r*x=r*y$ if and only if $s*x=s*y$.

\label{4th2utqebnjofa}
\end{lem}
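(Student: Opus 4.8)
The plan is to reduce the whole statement to one observation: under the hypotheses, the element $r*s$ is itself a left-identity, after which self-distributivity does the rest.

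First I would establish $r*s\in\mathrm{Li}(X)$. Since $\mathrm{crit}(r)\leq\mathrm{crit}(s)$, Proposition \ref{sfenjuw4}(3) gives $\mathrm{crit}(r*r)\leq\mathrm{crit}(r*s)$. By hypothesis $r*r\in\mathrm{Li}(X)$, so Theorem \ref{2j1298ur28tye}(1) tells us that $\mathrm{crit}(r*r)$ is the greatest element of $\mathrm{crit}[X]$; hence $\mathrm{crit}(r*s)$ is the greatest element as well, and a second application of Theorem \ref{2j1298ur28tye}(1) yields $r*s\in\mathrm{Li}(X)$. This is the step that consumes the hypotheses $\mathrm{crit}(r)\leq\mathrm{crit}(s)$ and $r*r\in\mathrm{Li}(X)$, and it is the only point requiring any care; everything afterward is routine.

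Next I would use self-distributivity to absorb a factor of $s$: for every $z\in X$,
\[ r*(s*z)=(r*s)*(r*z)=r*z, \]
where the first equality is the defining identity of an LD-system and the second holds because $r*s$ is a left-identity. Consequently, if $s*x=s*y$, then applying $L_{*,r}$ to both sides and using the displayed identity twice gives $r*x=r*(s*x)=r*(s*y)=r*y$, which is the main claim.

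Finally, for the ``in particular'' clause: when $\mathrm{crit}(r)=\mathrm{crit}(s)$ we also have $\mathrm{crit}(s)\leq\mathrm{crit}(r)$, and $s*s\in\mathrm{Li}(X)$ by hypothesis, so the implication just proved, applied with the roles of $r$ and $s$ interchanged, furnishes the reverse implication $r*x=r*y\Rightarrow s*x=s*y$. Thus the hypothesis $s*s\in\mathrm{Li}(X)$ is used only for this symmetric half. I do not anticipate any real obstacle beyond correctly invoking the characterization of $\mathrm{Li}(X)$ through maximality of critical points in the first step.
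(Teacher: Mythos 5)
Your proof is correct and takes the same route as the paper's: show that $r*s\in\mathrm{Li}(X)$, then absorb an $s$-factor via self-distributivity to get $r*x = r*(s*x) = r*(s*y) = r*y$. The only difference is that you spell out why $r*s\in\mathrm{Li}(X)$ — invoking Proposition \ref{sfenjuw4}(3) and Theorem \ref{2j1298ur28tye}(1) and noting that this is where the hypothesis $r*r\in\mathrm{Li}(X)$ is used — whereas the paper leaves that step as a one-line assertion; your version is arguably clearer on that point.
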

\begin{proof}
Suppose that $s*x=s*y$. Then since $\mathrm{crit}(r)\leq \mathrm{crit}(s)$, we have $r*s\in \mathrm{Li}(X)$. Therefore,
\[r*x=(r*s)*(r*x)=r*(s*x)=r*(s*y)=(r*s)*(r*y)=r*y.\]
\end{proof}
\begin{defn}
Suppose that $X$ is a permutative LD-system. If $\alpha\in \mathrm{crit}[X]$, then define \index{$\equiv^{\alpha}$}$\equiv^{\alpha}$ to be the equivalence relation $X$ such that if
\[r\in X,r*r\in \mathrm{Li}(X),\mathrm{crit}(r)=\alpha,\]
then $x\equiv^{\alpha}y$ if and only if $r*x=r*y$.
\end{defn}
By Lemma \ref{4th2utqebnjofa}, the equivalence relation $\equiv^{\alpha}$ does not depend on the choice of element $r$.
The equivalence relation $\equiv^{\alpha}$ is a congruence on the permutative LD-system $X$. Furthermore, if $\alpha\leq\beta$ and $x\equiv^{\beta}y$, then $x\equiv^{\alpha}y$ by Lemma \ref{4th2utqebnjofa}. By the following proposition, we conclude that the congruences $\equiv^{\alpha}$ on permutative LD-systems $X$ generalize the congruences $\equiv^{\gamma}$ which we have defined on $\mathcal{E}_{\lambda}$ whenever $\gamma$ is a limit ordinal with $\gamma<\lambda$.

\begin{prop}
Suppose that $j,k,l\in\mathcal{E}_{\lambda}$ and $\gamma<\lambda$ is a limit ordinal with
$\mathrm{crit}(j*j)\geq\gamma\geq\mathrm{crit}(j)$. Then $k\equiv^{\mathrm{crit}(j)}l$ if and only if $j*k\equiv^{\gamma}j*l$.
\end{prop}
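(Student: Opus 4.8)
The plan rests on one identity: since $j*k=\bigcup_{\alpha<\lambda}j(k\restriction V_{\alpha})$ and each $j(k\restriction V_{\alpha})$ has domain $V_{j(\alpha)}$, we have $(j*k)\restriction V_{j(\alpha)}=j(k\restriction V_{\alpha})$ for every $\alpha<\lambda$. Write $\kappa=\mathrm{crit}(j)$; since $\gamma$ is an ordinal below $\lambda$ with $\mathrm{crit}(j)\le\gamma$, the embedding $j$ is non-trivial and $\kappa<\lambda$ is a limit ordinal, and by Proposition~\ref{t2h8onkfia}(4) the requirement $\mathrm{crit}(j)\le\gamma\le\mathrm{crit}(j*j)$ reads $\kappa\le\gamma\le j(\kappa)$. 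One observation will be used in the converse: for $y,w\in V_{\kappa}$, applying $j$ to the statement ``$w\in(k\restriction V_{\kappa})(y)$'' (all of whose parameters lie in $V_{\lambda}$) and using $j(y)=y$, $j(w)=w$ together with $j(k\restriction V_{\kappa})=(j*k)\restriction V_{j(\kappa)}$ gives $w\in k(y)\iff w\in(j*k)(y)$.

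For the forward direction it is cleaner to prove the stronger statement $k\equiv^{\kappa}l\Rightarrow j*k\equiv^{j(\kappa)}j*l$ and then observe that $\gamma\le j(\kappa)$ makes $\equiv^{\gamma}$ coarser than $\equiv^{j(\kappa)}$, so the desired implication follows. Put $S=\{(y,w)\in V_{\kappa}\times V_{\kappa}:w\in k(y)\}$. This set has rank below $\lambda$, so $S\in V_{\lambda}$, and it is definable from $\kappa$ and the parameter $k\restriction V_{\kappa}\in V_{\lambda}$. The hypothesis $k\equiv^{\kappa}l$ says precisely that the same set $S$ is obtained with $l$ in place of $k$ in this definition. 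Now apply $j$: using the identity to rewrite $j(k\restriction V_{\kappa})=(j*k)\restriction V_{j(\kappa)}$, elementarity turns each of the two descriptions of $S$ into a description of $j(S)$, yielding $\{(y,w)\in V_{j(\kappa)}\times V_{j(\kappa)}:w\in(j*k)(y)\}=j(S)=\{(y,w)\in V_{j(\kappa)}\times V_{j(\kappa)}:w\in(j*l)(y)\}$. Comparing fibres, $(j*k)(y)\cap V_{j(\kappa)}=(j*l)(y)\cap V_{j(\kappa)}$ for all $y\in V_{j(\kappa)}$, which is exactly $j*k\equiv^{j(\kappa)}j*l$.

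For the converse only $\gamma\ge\kappa$ is needed. Assume $j*k\equiv^{\gamma}j*l$ and fix $x\in V_{\kappa}\subseteq V_{\gamma}$, so $(j*k)(x)\cap V_{\gamma}=(j*l)(x)\cap V_{\gamma}$. For any $z\in V_{\kappa}\subseteq V_{\gamma}$, membership of $z$ in a set is detected after intersecting with $V_{\gamma}$, so $z\in(j*k)(x)\iff z\in(j*l)(x)$; combining with the observation from the first paragraph, $z\in k(x)\iff z\in(j*k)(x)\iff z\in(j*l)(x)\iff z\in l(x)$. Hence $k(x)\cap V_{\kappa}=l(x)\cap V_{\kappa}$, and since $x\in V_{\kappa}$ was arbitrary, $k\equiv^{\kappa}l$. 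The only subtle point in the whole argument is the elementarity bookkeeping in the forward direction — checking that $S\in V_{\lambda}$ and that $j$ sends the definition of $S$ to the definition of $j(S)$ with $j(\kappa)$ and $(j*k)\restriction V_{j(\kappa)}$ replacing $\kappa$ and $k\restriction V_{\kappa}$ — which is routine once the identity $j(k\restriction V_{\alpha})=(j*k)\restriction V_{j(\alpha)}$ has been isolated; I would also note that the asymmetry is genuine, the forward implication using $\gamma\le\mathrm{crit}(j*j)$ and the converse only $\gamma\ge\mathrm{crit}(j)$.
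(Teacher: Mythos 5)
The paper states this proposition without an explicit proof; the nearby Proposition (labeled \texttt{t42hwWEDhvei}) asserting $k\equiv^{\alpha}l\Leftrightarrow j*k\equiv^{j(\alpha)}j*l$ is given only the one-line justification that it follows from elementarity. Your argument correctly fills in that elementarity bookkeeping and adapts it to the range $\mathrm{crit}(j)\le\gamma\le\mathrm{crit}(j*j)$: the set $S=\{(y,w)\in V_\kappa\times V_\kappa:w\in k(y)\}$ does encode the $\equiv^\kappa$-class of $k$, the identity $j(k\restriction V_\alpha)=(j*k)\restriction V_{j(\alpha)}$ is correct, and pushing $S$ through $j$ gives the sharp forward implication $k\equiv^{\kappa}l\Rightarrow j*k\equiv^{j(\kappa)}j*l$, which then coarsens to $\equiv^\gamma$ because $\gamma\le j(\kappa)=\mathrm{crit}(j*j)$. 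The converse via the pointwise observation $w\in k(y)\iff w\in(j*k)(y)$ for $y,w\in V_\kappa$ is also correct and, as you say, only uses $\gamma\ge\kappa$. Your closing remark about the genuine asymmetry in how the two bounds on $\gamma$ are consumed is an accurate observation that the paper leaves implicit.
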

\begin{prop}
Suppose that $X$ is a permutative LD-system.
\begin{enumerate}
\item If $\alpha\in\mathrm{crit}[X],\mathrm{crit}(z)\geq\alpha,x\in X$, then $x\equiv^{\alpha}z*x$. In particular, if
$x\in X$ and $\alpha\leq\mathrm{crit}(x)$, then $x\equiv^{\alpha}y$ for some $y\in \mathrm{Li}(X)$.

\item If $x,y\in X$ and $x\equiv^{\alpha}y$, then $\mathrm{crit}(x)<\alpha$ if and only if $\mathrm{crit}(y)<\alpha$. Furthermore, if
$\mathrm{crit}(x)<\alpha$, then $\mathrm{crit}(x)=\mathrm{crit}(y)$.
\end{enumerate}
\end{prop}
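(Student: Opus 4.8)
The plan is to work throughout with a fixed $r\in X$ satisfying $r*r\in\mathrm{Li}(X)$ and $\mathrm{crit}(r)=\alpha$ — the element used in the definition of $\equiv^{\alpha}$, so that $u\equiv^{\alpha}v$ means exactly $r*u=r*v$ (Lemma \ref{4th2utqebnjofa} makes this independent of the choice of $r$). The single observation that drives everything is: $r*z\in\mathrm{Li}(X)$ whenever $\mathrm{crit}(z)\geq\alpha$. Indeed, since $r*r\in\mathrm{Li}(X)$, part (1) of Theorem \ref{2j1298ur28tye} gives $r^{\sharp}(\alpha)=\mathrm{crit}(r*r)=\max(\mathrm{crit}[X])$; as $r^{\sharp}$ is order preserving on the linearly ordered set $\mathrm{crit}[X]$ (part (3) of Theorem \ref{2j1298ur28tye} and Proposition \ref{sfenjuw4}), it follows that $r^{\sharp}(\beta)=\max(\mathrm{crit}[X])$ for every $\beta\geq\alpha$, i.e.\ $\mathrm{crit}(r*z)$ is maximal, i.e.\ $r*z\in\mathrm{Li}(X)$, whenever $\mathrm{crit}(z)\geq\alpha$.

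For part (1), suppose $\mathrm{crit}(z)\geq\alpha$; then $r*z\in\mathrm{Li}(X)$, so self-distributivity yields
\[r*(z*x)=(r*z)*(r*x)=r*x,\]
which is precisely the statement $x\equiv^{\alpha}z*x$. For the ``in particular'' clause, apply this with $z=r$ (legitimate since $\mathrm{crit}(r)=\alpha\geq\alpha$): we obtain $x\equiv^{\alpha}r*x$, and $r*x\in\mathrm{Li}(X)$ because $\mathrm{crit}(x)\geq\alpha$, so $y:=r*x$ is the required left-identity.

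For part (2), suppose $x\equiv^{\alpha}y$, i.e.\ $r*x=r*y$. The defining condition is symmetric in $x,y$ and $\mathrm{crit}[X]$ is linearly ordered, so it suffices to prove the implication $\mathrm{crit}(x)\geq\alpha\Rightarrow\mathrm{crit}(y)\geq\alpha$. If $\mathrm{crit}(x)\geq\alpha$ then $r*x\in\mathrm{Li}(X)$ by the observation above, hence $r*y\in\mathrm{Li}(X)$, i.e.\ $r^{\sharp}(\mathrm{crit}(y))=\max(\mathrm{crit}[X])$; were $\mathrm{crit}(y)<\alpha=\mathrm{crit}(r)$, then part (4) of Theorem \ref{2j1298ur28tye}, together with $r^{\sharp}(\mathrm{crit}(y))\geq\mathrm{crit}(y)$, would force $r^{\sharp}(\mathrm{crit}(y))=\mathrm{crit}(y)$, giving $\mathrm{crit}(y)=\max(\mathrm{crit}[X])$, which contradicts $\mathrm{crit}(y)<\alpha\leq\max(\mathrm{crit}[X])$. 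Finally, if $\mathrm{crit}(x)<\alpha$ — so $\mathrm{crit}(y)<\alpha$ as well, and both are $<\mathrm{crit}(r)$ — then part (4) of Theorem \ref{2j1298ur28tye} gives $\mathrm{crit}(r*x)=r^{\sharp}(\mathrm{crit}(x))=\mathrm{crit}(x)$ and likewise $\mathrm{crit}(r*y)=\mathrm{crit}(y)$; since $r*x=r*y$, we conclude $\mathrm{crit}(x)=\mathrm{crit}(y)$.

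The proof is short, and the only point requiring care is the use of part (4) of Theorem \ref{2j1298ur28tye} in part (2): one must remember that ``$r^{\sharp}(\gamma)$ is not strictly above $\gamma$'' combined with the general inequality $r^{\sharp}(\gamma)\geq\gamma$ yields $r^{\sharp}(\gamma)=\gamma$, and one must invoke linearity of $\mathrm{crit}[X]$ both to reduce to the two cases $\mathrm{crit}(y)<\alpha$ versus $\mathrm{crit}(y)\geq\alpha$ and to rule out $\mathrm{crit}(y)=\max(\mathrm{crit}[X])$ in the contradictory subcase. Everything else reduces to the left-distributive identity once the observation $r^{\sharp}(\beta)=\max(\mathrm{crit}[X])$ for $\beta\geq\alpha$ is in hand.
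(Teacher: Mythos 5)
Your proof is correct and follows essentially the same route as the paper: both arguments fix $r$ with $r*r\in\mathrm{Li}(X)$ and $\mathrm{crit}(r)=\alpha$, reduce everything to the fact that $r^{\sharp}$ sends all $\beta\geq\alpha$ to $\max(\mathrm{crit}[X])$, use self-distributivity for (1), and use part (4) of Theorem \ref{2j1298ur28tye} for (2). Your presentation is slightly more explicit (isolating the key observation up front and arguing the first half of (2) by contradiction rather than by a chain of biconditionals), but the mathematical content is the same.
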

\begin{proof}
\begin{enumerate}
\item Suppose that $r*r\in \mathrm{Li}(X),\mathrm{crit}(r)=\alpha$. Then $\mathrm{crit}(r*z)\geq\mathrm{crit}(r*r)=\max(\mathrm{crit}[X])$. Therefore,
$r*x=(r*z)*(r*x)=r*(z*x)$, so $x\equiv^{\alpha}z*x$.

If $x\in X,\alpha\leq\mathrm{crit}(x)$, then $x\equiv^{\alpha}r*x$ but
$\mathrm{crit}(r*x)=\max(\mathrm{crit}[X])$, so if $y=r*x$, then $x\equiv^{\alpha}y$ and $y\in\mathrm{Li}(X)$.

\item Suppose that $r*r\in \mathrm{Li}(X),\mathrm{crit}(r)=\alpha$ and $x\equiv^{\alpha}y$. Then
$\mathrm{crit}(x)<\alpha$ if and only if
\[\mathrm{crit}(r*x)=r^{\sharp}(\mathrm{crit}(x))\neq \max(\mathrm{crit}[X])\]
if and only if $r*x\not\in \mathrm{Li}(X)$. Similarly, $\mathrm{crit}(y)<\alpha$ if and only if $r*y\not\in \mathrm{Li}(X)$. Therefore, since $r*x=r*y$, we have $\mathrm{crit}(x)<\alpha$ if and only if $\mathrm{crit}(y)<\alpha$.

Now, if $\mathrm{crit}(x)<\alpha$, then 
\[r^{\sharp}(\mathrm{crit}(y))=\mathrm{crit}(r*y)
=\mathrm{crit}(r*x)=r^{\sharp}(\mathrm{crit}(x))=\mathrm{crit}(x).\]
This is only possible if $\mathrm{crit}(y)=\mathrm{crit}(x)$.
\end{enumerate}
\end{proof}
\begin{prop}
The congruence $\equiv^{\alpha}$ is the smallest congruence on $X$ such that if $\mathrm{crit}(x)\geq\alpha$, then
$x\equiv^{\alpha}y$ for some $y\in \mathrm{Li}(X)$.
\end{prop}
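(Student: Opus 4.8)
The plan is to check the two halves of the statement in turn: first, that $\equiv^{\alpha}$ is itself a congruence of the asserted kind, and second, that it is contained in every congruence of that kind.

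For the first half there is essentially nothing to do. We have already noted, right after the definition of $\equiv^{\alpha}$, that $\equiv^{\alpha}$ is a congruence on $X$; and the proposition guaranteeing that $x\equiv^{\alpha}y$ for some $y\in\mathrm{Li}(X)$ whenever $\alpha\leq\mathrm{crit}(x)$ is exactly the statement that $\equiv^{\alpha}$ enjoys the property in question. So $\equiv^{\alpha}$ belongs to the class of congruences under consideration, and it remains only to prove minimality.

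For minimality I would argue as follows. Let $\theta$ be any congruence on $X$ such that $\mathrm{crit}(x)\geq\alpha$ implies $x\mathrel{\theta}y$ for some $y\in\mathrm{Li}(X)$; the goal is to show $x\equiv^{\alpha}y\Rightarrow x\mathrel{\theta}y$. Fix $r\in X$ with $r*r\in\mathrm{Li}(X)$ and $\mathrm{crit}(r)=\alpha$ — this is precisely the element used to define $\equiv^{\alpha}$, so its existence is already in hand. Since $\mathrm{crit}(r)=\alpha\geq\alpha$, the hypothesis on $\theta$ produces some $e\in\mathrm{Li}(X)$ with $r\mathrel{\theta}e$. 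Now suppose $x\equiv^{\alpha}y$, so that $r*x=r*y$ by definition of $\equiv^{\alpha}$. Using that $\theta$ is a congruence and that $e$ acts as a left-identity,
\[ x = e*x \mathrel{\theta} r*x = r*y \mathrel{\theta} e*y = y, \]
whence $x\mathrel{\theta}y$. Therefore $\equiv^{\alpha}\subseteq\theta$, and combined with the first half this shows $\equiv^{\alpha}$ is the least congruence with the stated property.

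The argument is short, and I do not expect a genuine obstacle: the only step that is not a one-line congruence manipulation is the invocation of an element $r$ with $r*r\in\mathrm{Li}(X)$ and $\mathrm{crit}(r)=\alpha$, and this is already presupposed by (and legitimizes) the definition of $\equiv^{\alpha}$. Everything else rests on the fact that left-identities act trivially on the left together with the congruence property of $\theta$.
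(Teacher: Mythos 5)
Your proof is correct and follows essentially the same route as the paper's: both fix the defining element $r$ with $r*r\in\mathrm{Li}(X)$ and $\mathrm{crit}(r)=\alpha$, obtain a left-identity congruent to $r$ under the given congruence, and conclude via $x = e*x\mathrel{\theta} r*x = r*y\mathrel{\theta} e*y = y$. The only cosmetic difference is that you cite the preceding proposition for the fact that $\mathrm{crit}(x)\geq\alpha$ gives $x\equiv^{\alpha}y$ with $y\in\mathrm{Li}(X)$, while the paper re-derives it inline.
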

\begin{proof}
Suppose that $r\in X,r*r\in\mathrm{Li}(X),\mathrm{crit}(r)=\alpha.$

If $\mathrm{crit}(x)\geq\alpha$, then 
\[r*x=(r*r)*(r*x)=r*(r*x),\]
so $x\equiv^{\alpha}r*x$ and $r*x\in \mathrm{Li}(X)$.

Suppose now that $\simeq$ is a congruence on $X$ such that if $\mathrm{crit}(x)\geq\alpha$, then $x\simeq y$ for some
$y\in \mathrm{Li}(X)$. Let $x,y\in X$ be elements with $x\equiv^{\alpha}y$.
Then there is some $s\in \mathrm{Li}(X)$ with $r\simeq s$. Therefore, 
\[x=s*x\simeq r*x=r*y\simeq s*y=y.\]
\end{proof}

\begin{prop}
If $j,k,l\in\mathcal{E}_{\lambda}$, then $k\equiv^{\alpha}l$ if and only if $j*k\equiv^{j(\alpha)}j*l$.
\label{t42hwWEDhvei}
\end{prop}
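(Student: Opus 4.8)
The plan is to deduce this directly from the elementarity of $j$, by recognizing $\equiv^{\alpha}$ as a first-order property of the truncations $k|_{V_{\alpha}}$ and $l|_{V_{\alpha}}$ and transporting it across $j$. Throughout, $\alpha<\lambda$ is a limit ordinal, as in the definition of $\equiv^{\gamma}$ on $\mathcal{E}_{\lambda}$, and the first thing to note is that by elementarity $j(\alpha)$ is again a limit ordinal with $j(\alpha)<\lambda$, so $\equiv^{j(\alpha)}$ is defined.

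The first step is the identification $(j*k)|_{V_{j(\alpha)}}=j(k|_{V_{\alpha}})$, and likewise for $l$. This is essentially already observed in the discussion preceding the definition of $*$: since $k|_{V_{\alpha}}\in V_{\lambda}$, elementarity gives $\mathrm{Dom}(j(k|_{V_{\alpha}}))=V_{j(\alpha)}$, and since $j(k|_{V_{\alpha}})$ is one of the sets whose union defines $j*k$, we have $j(k|_{V_{\alpha}})\subseteq j*k$; as $j*k$ is a function, this forces $(j*k)|_{V_{j(\alpha)}}=j(k|_{V_{\alpha}})$.

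The second step is to express $\equiv^{\alpha}$ first-order. Let $\phi(f,g,\mu)$ be the formula asserting that $f$ and $g$ are functions with domain $V_{\mu}$ and that $f(x)\cap V_{\mu}=g(x)\cap V_{\mu}$ for every $x\in V_{\mu}$; since $V_{\mu}\in V_{\lambda}$ and $V_{\mu}=(V_{\mu})^{V_{\lambda}}$ for all $\mu<\lambda$, this is expressible over $V_{\lambda}$ with parameters from $V_{\lambda}$. Using the remark that $\equiv^{\gamma}$ may be tested with $x$ ranging over $V_{\gamma}$ alone, we get $k\equiv^{\alpha}l\Longleftrightarrow\phi(k|_{V_{\alpha}},l|_{V_{\alpha}},\alpha)$, and, applying the same remark at the limit ordinal $j(\alpha)$ together with the first step, $j*k\equiv^{j(\alpha)}j*l\Longleftrightarrow\phi(j(k|_{V_{\alpha}}),j(l|_{V_{\alpha}}),j(\alpha))$.

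The final step is one invocation of elementarity: $\phi(k|_{V_{\alpha}},l|_{V_{\alpha}},\alpha)$ holds in $V_{\lambda}$ if and only if $\phi(j(k|_{V_{\alpha}}),j(l|_{V_{\alpha}}),j(\alpha))$ does, and chaining the three equivalences gives the claim. There is no real obstacle here; the only points requiring attention are the absoluteness of $\phi$ between $V$ and $V_{\lambda}$ (so that ``domain $V_{\mu}$'' and ``$\cap\,V_{\mu}$'' are computed correctly inside $V_{\lambda}$) and the observation that, because $j$ is elementary, one obtains both directions of the biconditional simultaneously.
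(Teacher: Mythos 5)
Your proof is correct and follows the same route as the paper, which simply states ``This follows from the elementarity of the mapping $j$''; you have spelled out the details of that one-line argument carefully and accurately.
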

\begin{proof}
This follows from the elementarity of the mapping $j$.
\end{proof}
Proposition \ref{t42hwWEDhvei} partially extends from set theory to algebra.
\begin{prop}
If $x,y\in X$ and $x\equiv^{\alpha}y$, then $v*x\equiv^{v^{\sharp}(\alpha)}v*y$.
\end{prop}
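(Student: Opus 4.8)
The plan is to prove this directly by exhibiting an explicit witness for the congruence $\equiv^{v^{\sharp}(\alpha)}$. Recall that $\equiv^{\alpha}$ is defined by choosing any $r\in X$ with $r*r\in\mathrm{Li}(X)$ and $\mathrm{crit}(r)=\alpha$, and declaring $x\equiv^{\alpha}y$ exactly when $r*x=r*y$; by Lemma \ref{4th2utqebnjofa} this does not depend on the choice of $r$. So I would fix such an $r$; since $x\equiv^{\alpha}y$, we have $r*x=r*y$.

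The key step is to observe that $v*r$ is an admissible witness for $\equiv^{v^{\sharp}(\alpha)}$. First, $v^{\sharp}(\alpha)\in\mathrm{crit}[X]$ since $v^{\sharp}$ maps $\mathrm{crit}[X]$ into itself, so $\equiv^{v^{\sharp}(\alpha)}$ is defined. Next, by self-distributivity $(v*r)*(v*r)=v*(r*r)$, and since $r*r\in\mathrm{Li}(X)$ and $\mathrm{Li}(X)$ is a left-ideal, $v*(r*r)\in\mathrm{Li}(X)$; hence $(v*r)*(v*r)\in\mathrm{Li}(X)$. Finally, by the definition of $v^{\sharp}$ we have $\mathrm{crit}(v*r)=v^{\sharp}(\mathrm{crit}(r))=v^{\sharp}(\alpha)$. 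Thus $v*r$ satisfies all the hypotheses required of a witness for $\equiv^{v^{\sharp}(\alpha)}$.

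It then remains only to compute, using self-distributivity again,
\[(v*r)*(v*x)=v*(r*x)=v*(r*y)=(v*r)*(v*y),\]
which by the definition of $\equiv^{v^{\sharp}(\alpha)}$ (applied with the witness $v*r$) is precisely the statement $v*x\equiv^{v^{\sharp}(\alpha)}v*y$.

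I do not expect any real obstacle here; the substance of the argument is the verification that $v*r$ is a legitimate witness, which rests on the left-ideal property of $\mathrm{Li}(X)$ and the definition of $v^{\sharp}$, together with the witness-independence of $\equiv^{v^{\sharp}(\alpha)}$ from Lemma \ref{4th2utqebnjofa}. The only point needing a moment's care is the degenerate case $\alpha=\max(\mathrm{crit}[X])$, where $r\in\mathrm{Li}(X)$ and $v^{\sharp}(\alpha)=\max(\mathrm{crit}[X])$ as well, so the claim collapses to $x=y\Rightarrow v*x=v*y$, which is trivial.
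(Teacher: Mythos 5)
Your proof is correct and is essentially identical to the paper's: both fix a witness $r$ for $\equiv^{\alpha}$, set $s=v*r$, verify via self-distributivity and the left-ideal property that $s$ is a witness for $\equiv^{v^{\sharp}(\alpha)}$, and then compute $s*(v*x)=v*(r*x)=v*(r*y)=s*(v*y)$.
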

\begin{proof}
Suppose that $r\in X,\mathrm{crit}(r)=\alpha,r*r\in\mathrm{Li}(X)$. Then $r*x=r*y$. Let $s=v*r$. Then \[s*s=(v*r)*(v*r)=v*(r*r)\in\mathrm{Li}(X),\]
\[\mathrm{crit}(s)=v^{\sharp}(\mathrm{crit}(r))=v^{\sharp}(\alpha),\]
and
\[s*(v*x)=(v*r)*(v*x)=v*(r*x)\]
\[=v*(r*y)=(v*r)*(v*y)=s*(v*y),\]
so $v*x\equiv^{v^{\sharp}(\alpha)}v*y$.
\end{proof}

\begin{defn}
A permutative LD-system $X$ shall be called \index{critically simple}\emph{critically simple} if there is no non-trivial congruence
$\simeq$ on $X$ such that if $x\simeq y$, then $x\in \mathrm{Li}(X)$ iff $y\in \mathrm{Li}(X)$.
\end{defn}
It is easy to see that every critically simple permutative LD-system is reduced. If $X$ is a linear order with greatest element, then the Heyting algebra $(X,\rightarrow)$ is critically simple. Each classical Laver table is critically simple.
\begin{defn}
Now suppose that $X$ is a permutative LD-system. Then define an equivalence relation \index{$\simeq_{cmx}$}$\simeq_{cmx}$ on $X$ where we have $x\simeq_{cmx}y$ whenever 
\[a*x*b_{1}*\ldots*b_{n}\in \mathrm{Li}(X)\]
if and only if
\[a*y*b_{1}*\ldots*b_{n}\in \mathrm{Li}(X)\]
for each choice of $a,b_{1},\ldots,b_{n}$ and $n\in\omega$. In particular, if $x\simeq_{cmx}y$, then $a*x\in \mathrm{Li}(X)$ if and only if $a*y\in \mathrm{Li}(X)$ and $x\in \mathrm{Li}(X)$ if and only if $y\in \mathrm{Li}(X)$. 
\end{defn}
The letters ``cmx" stand for ``critically maximal."

\begin{thm}
Suppose that $X$ is a permutative LD-system.
\begin{enumerate}
\item $\simeq_{cmx}$ is a congruence on $X$.

\item $\simeq_{cmx}$ is the largest congruence on $X$ where if $x\simeq_{cmx}y$, then $x\in \mathrm{Li}(X)$ iff $y\in \mathrm{Li}(X)$. 

\item $X$ is critically simple if and only if $\simeq_{cmx}$ is the trivial congruence.

\item
\begin{enumerate}
\item \label{0gt2i0t2i4tr} $x\simeq_{cmx}y$ if and only if whenever $t$ is a term function in the language of LD-systems on $n+1$ variables and $a_{1},\ldots,a_{n}\in X$, then $t(a_{1},\ldots,a_{n},x)\in \mathrm{Li}(X)$ if and only if $t(a_{1},\ldots,a_{n},y)\in \mathrm{Li}(X)$.

\item \label{1gt2i0t2i4tr} Suppose that $X$ is an LD-monoid. Then $x\simeq_{cmx}y$ if and only if whenever $t$ is a term function in the language of 
LD-monoids of $n+1$ variables and $a_{1},\ldots,a_{n}\in X$ then $t(a_{1},\ldots,a_{n},x)=1$ if and only if
$t(a_{1},\ldots,a_{n},y)=1$.
\end{enumerate}
\item Suppose that $\simeq$ is a congruence on $X$ such that if $x\simeq y$, then $x\in\mathrm{Li}(X)$ iff $y\in\mathrm{Li}(X)$. Then
$X/\simeq$ is critically simple if and only if $\simeq$ is equal to $\simeq_{cmx}$.
\end{enumerate}
\end{thm}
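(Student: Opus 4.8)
The plan is to prove the five parts in the order (1), (2), (3), (4), (5); the only part requiring genuine work is (1), after which the rest is essentially formal. Throughout I use that $\mathrm{Li}(X)\neq\emptyset$ (by permutativity) and that polynomial functions preserve congruences.

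For part (1): $\simeq_{cmx}$ is manifestly an equivalence relation, being of the form ``$x\simeq_{cmx}y$ iff every property in a fixed family holds of $x$ iff it holds of $y$'', so the content is compatibility with $*$; it suffices to show $x\simeq_{cmx}y\Rightarrow x*z\simeq_{cmx}y*z$ and $x\simeq_{cmx}y\Rightarrow z*x\simeq_{cmx}z*y$. For the first, left-distributivity together with the left-grouping convention gives, for any $a,b_1,\dots,b_n$,
\[a*(x*z)*b_1*\dots*b_n=\bigl((a*x)*(a*z)\bigr)*b_1*\dots*b_n=a*x*(a*z)*b_1*\dots*b_n,\]
which is already a ``spine'' expression in $x$ with parameters $a,\,a*z,\,b_1,\dots,b_n$; applying the definition of $x\simeq_{cmx}y$ to these parameters and then reversing the rewrite for $y$ (noting $a*y*(a*z)*b_1*\dots*b_n=a*(y*z)*b_1*\dots*b_n$) yields the claim. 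For the second direction the analogous rewrite $a*(z*x)*b_1*\dots*b_n=\bigl((a*z)*(a*x)\bigr)*b_1*\dots*b_n$ puts $x$ inside a right-multiplier rather than at the head of the spine, so here I invoke the composition operation: when $a\notin\mathrm{Li}(X)$ the element $(a*z)\circ a$ is defined, and $(a*z)*(a*x)=\bigl((a*z)\circ a\bigr)*x$ — this is the identity $(p\circ q)*r=p*(q*r)$, which holds in any permutative LD-system for the (possibly partial) operation $\circ$ by unfolding $x*(y*z)=t_{m+1}(x,y)*(t_m(x,y)*z)$ at an $m$ with $t_m(a*z,a)\in\mathrm{Li}(X)$ — so the expression becomes the spine expression $\bigl((a*z)\circ a\bigr)*x*b_1*\dots*b_n$; when $a\in\mathrm{Li}(X)$ it collapses to $z*x*b_1*\dots*b_n$, again a spine expression. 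In either case the definition of $\simeq_{cmx}$ and the reverse rewrite (using $\bigl((a*z)\circ a\bigr)*y=(a*z)*(a*y)=a*(z*y)$) finish it. I expect this to be the main obstacle — in particular, realizing that the $z*x$ case cannot be handled by iterated distributivity (which merely reshuffles $x$ under another left-multiplier) and must be untangled through $\circ$, plus keeping track of the degenerate cases where $\circ$ is undefined.

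Part (2): $\simeq_{cmx}$ itself has the stated property (take $n=0$ and $a\in\mathrm{Li}(X)$ in its definition, so $x\simeq_{cmx}y$ forces $x\in\mathrm{Li}(X)\iff y\in\mathrm{Li}(X)$, as already remarked after the definition), and it is a congruence by (1); conversely, if $\simeq$ is any congruence with that property and $x\simeq y$, then for every $a,b_1,\dots,b_n$ we get $a*x*b_1*\dots*b_n\simeq a*y*b_1*\dots*b_n$, hence $a*x*b_1*\dots*b_n\in\mathrm{Li}(X)\iff a*y*b_1*\dots*b_n\in\mathrm{Li}(X)$, i.e. $x\simeq_{cmx}y$. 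Part (3) is then immediate: by (2), a non-trivial congruence preserving $\mathrm{Li}$-membership exists if and only if $\simeq_{cmx}$ is non-trivial, which is precisely the failure of critical simplicity.

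Part (4): the backward direction of (4a) is trivial, since the spine expressions $a*x*b_1*\dots*b_n$ are exactly the values of the term function $t(w_0,\dots,w_{n+1})=w_0*w_{n+1}*w_1*\dots*w_n$; the forward direction follows from (1), because if $x\simeq_{cmx}y$ then $t(a_1,\dots,a_n,x)\simeq_{cmx}t(a_1,\dots,a_n,y)$ (congruences are preserved by term functions) and these two elements then have the same $\mathrm{Li}$-status by the property established in (2). Part (4b) is the same argument in the language of LD-monoids, using Proposition \ref{428tu0hj4ueri24tidef} to know $\simeq_{cmx}$ is also a congruence for $\circ$, hence preserved by LD-monoid term functions. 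Finally, for (5): since $\simeq\,\subseteq\,\simeq_{cmx}$ by (2), $\simeq_{cmx}$ descends to a congruence $\bar\rho$ on $X/\simeq$, and Theorem \ref{uh3d5tjh4uyyuin} (which, because $\simeq$ preserves $\mathrm{Li}$-membership, identifies $\mathrm{Li}(X/\simeq)$ with the set of $\simeq$-classes of elements of $\mathrm{Li}(X)$) shows $\bar\rho$ preserves $\mathrm{Li}(X/\simeq)$-membership; so if $X/\simeq$ is critically simple then $\bar\rho$ is trivial, i.e. $\simeq_{cmx}\,\subseteq\,\simeq$ and thus $\simeq\,=\,\simeq_{cmx}$. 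Conversely, when $\simeq\,=\,\simeq_{cmx}$, any congruence on $X/\simeq_{cmx}$ preserving $\mathrm{Li}$-membership pulls back along the correspondence of congruences to a congruence on $X$ lying above $\simeq_{cmx}$ and still preserving $\mathrm{Li}$-membership, hence equals $\simeq_{cmx}$ by (2), so the original congruence on $X/\simeq_{cmx}$ was trivial; by (3) this says exactly that $X/\simeq_{cmx}$ is critically simple.
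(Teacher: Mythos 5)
Your proposal is correct and follows essentially the same route as the paper: the only substantive step is compatibility of $\simeq_{cmx}$ with left multiplication, and your rewrite $a*(z*x)=\bigl((a*z)\circ a\bigr)*x$ through a Fibonacci term $t_{m}(a*z,a)\in \mathrm{Li}(X)$ is exactly the paper's rewrite $a*(x*y)=t_{N+1}(a,x)*(t_{N}(a,x)*y)$ with $t_{N}(a,x)\in \mathrm{Li}(X)$, since $t_{m+1}(a*z,a)=t_{m+2}(a,z)$. Parts (2)--(5) are then the same formal arguments as in the paper (your part (5) merely repackages the paper's element-chasing via the correspondence of congruences).
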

\begin{proof}
\begin{enumerate}
\item Suppose that $x,y,z\in X$ and $x\simeq_{cmx}y$. Let $a,b_{1},\ldots,b_{n}\in X$. Then
\[a*(x*z)*b_{1}\ldots*b_{n}=(a*x)*(a*z)*b_{1}*\ldots*b_{n}\in \mathrm{Li}(X)\]
if and only if
\[a*(y*z)*b_{1}\ldots*b_{n}=(a*y)*(a*z)*b_{1}*\ldots*b_{n}\in \mathrm{Li}(X).\]
Therefore $x*z\simeq_{cmx}y*z$.

Suppose that $x,y,z\in X,y\simeq_{cmx}z$ and that $a,b_{1},\ldots,b_{n}\in X$. Let $N$ be a natural number such that
$t_{N}(a,x)\in\mathrm{Li}(X)$. Then
\[(a*(x*y))*b_{1}*\ldots*b_{n}=t_{N+1}(a,x)*(t_{N}(a,x)*y)*b_{1}*\ldots*b_{n}\]
\[=t_{N+1}(a,x)*y*b_{1}*\ldots*b_{n}.\]
Similarly,
\[(a*(x*z))*b_{1}*\ldots*b_{n}=t_{N+1}(a,x)*z*b_{1}*\ldots*b_{n}.\]

Therefore, since,
\[t_{N+1}(a,x)*y*b_{1}*\ldots*b_{n}\in\mathrm{Li}(X)\Leftrightarrow t_{N+1}(a,x)*z*b_{1}*\ldots*b_{n}\in
\mathrm{Li}(X),\]
we conclude that
\[(a*(x*y))*b_{1}*\ldots*b_{n}\in\mathrm{Li}(X)\Leftrightarrow(a*(x*z))*b_{1}*\ldots*b_{n}\in\mathrm{Li}(X).\]
Therefore $x*y\simeq_{cmx}x*z.$

\item \label{4h39t4ui0} Clearly $\simeq_{cmx}$ is a congruence where if $x\simeq_{cmx}y$, then $x\in\mathrm{Li}(X)$ iff $y\in \mathrm{Li}(X)$.
Now assume that $\simeq$ is another congruence where if $x\simeq y$, then $x\in\mathrm{Li}(X)$ iff $y\in\mathrm{Li}(X)$.
Then whenever $x\simeq y$ and $a,b_{1},\ldots,b_{n}\in X$, we have
\[a*x*b_{1}*\ldots*b_{n}\simeq a*y*b_{1}*\ldots*b_{n}.\]
Therefore, 
\[a*x*b_{1}*\ldots*b_{n}\in \mathrm{Li}(X)\]
if and only if
\[a*y*b_{1}*\ldots*b_{n}\in \mathrm{Li}(X),\]
so $x\simeq_{cmx}y$.

\item This follows from \ref{4h39t4ui0} and the definitions.

\item The direction $\leftarrow$ follows by the definition of $\simeq_{cmx}$. For $\rightarrow$ assume that $x\simeq_{cmx}y$.
Suppose that $t$ is an $n+1$-ary term in the language of LD-systems for $\ref{0gt2i0t2i4tr}$ and in the language of LD-monoids for
$\ref{1gt2i0t2i4tr}$. Then by Proposition \ref{428tu0hj4ueri24tidef}, the congruence $\simeq_{cmx}$ is also a congruence with respect to the operation $\circ$ whenever $X$ is an LD-monoid. Therefore, we have
\[t(a_{1},\ldots,a_{n},x)\simeq_{cmx}t(a_{1},\ldots,a_{n},y).\]
Therefore,
\[t(a_{1},\ldots,a_{n},x)\in \mathrm{Li}(X)\]
if and only if
\[t(a_{1},\ldots,a_{n},y)\in \mathrm{Li}(X).\]

\item 

$\leftarrow$ I claim that $X/\simeq_{cmx}$ is critically simple. We have $z\in \mathrm{Li}(X)$ if and only if
$[z]_{cmx}\in \mathrm{Li}(X/\simeq_{cmx})$. Suppose that
$[x]_{cmx},[y]_{cmx}\in X/\simeq_{cmx}$ and 
\[[a]_{cmx}*[x]_{cmx}*[b_{1}]_{cmx}*\ldots*[b_{n}]_{cmx}\in \mathrm{Li}(X/\simeq_{cmx})\]
if and only if 
\[[a]_{cmx}*[y]_{cmx}*[b_{1}]_{cmx}*\ldots*[b_{n}]_{cmx}\in \mathrm{Li}(X/\simeq_{cmx})\]
for all $a,b_{1},\ldots,b_{n}\in X$. Then
\[a*x*b_{1}*\ldots*b_{n}\in \mathrm{Li}(X)\]
if and only if 
\[a*y*b_{1}*\ldots*b_{n}\in \mathrm{Li}(X).\]
Therefore, $[x]_{cmx}=[y]_{cmx}$. We conclude that $X/\simeq_{cmx}$ is critically simple.

$\rightarrow$ Suppose that $\simeq$ is a congruence on $X$ where if $x\simeq y$ then $x\in \mathrm{Li}(X)$ if and only if $y\in \mathrm{Li}(X)$ but where $\simeq$ is not equal to $\simeq_{cmx}$.

Then there are elements $x,y$ with $x\simeq_{cmx}y$, but $x\not\simeq y$. Therefore, $[x]_{\simeq}\neq[y]_{\simeq}$.

However, since $x\simeq_{cmx}y$, whenever $a,b_{1},\ldots,b_{n}\in X$, we have
\[a*x*b_{1}*\ldots*b_{n}\in \mathrm{Li}(X)\]
if and only if 
\[a*y*b_{1}*\ldots*b_{n}\in \mathrm{Li}(X).\]
Therefore, whenever 
\[[a]_{\simeq},[b_{1}]_{\simeq},\ldots,[b_{n}]_{\simeq}\in X/\simeq,\]
we have
\[[a]_{\simeq}*[x]_{\simeq}*[b_{1}]_{\simeq}*\ldots*[b_{n}]_{\simeq}\in \mathrm{Li}(X/\simeq)\]
if and only if
\[[a]_{\simeq}*[y]_{\simeq}*[b_{1}]_{\simeq}*\ldots*[b_{n}]_{\simeq}\in \mathrm{Li}(X/\simeq),\]
but $[x]_{\simeq}\neq[y]_{\simeq}$. Therefore $X/\simeq$ is not critically simple.
\end{enumerate}
\end{proof}
\begin{exam}
A subalgebra of a critically simple permutative LD-system is not necessarily critically simple.
For example, every classical Laver table is critically simple. However, 
in the classical Laver table $A_{4}$, the subalgebra
\[B=\{2, 4, 6, 8, 10, 12, 14, 16\}\]
is not critically simple
since $B\models 2\simeq_{cmx}10$.
\end{exam}
\begin{prop}
Let $X$ be a permutative LD-system. Suppose that $x,x_{1},\ldots,x_{n}\in X$, and
if $1\leq m<n$, then either
\[(x*x_{1}*\ldots*x_{m})^{\sharp}(\mathrm{crit}(x))=\max(\mathrm{crit}[X])\]
or
\[(x*(x_{1}*\ldots*x_{m}))^{\sharp}(\mathrm{crit}(x))=\max(\mathrm{crit}[X]).\]
Then
\[x*x_{1}*\ldots*x_{n}=x*(x_{1}*\ldots*x_{n}).\]
\label{8u3y290rq}
\end{prop}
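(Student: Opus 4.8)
The plan is an induction on $m$ proving, for every $m$ with $1\le m\le n$, the equation
\[P(m):\qquad x*x_{1}*\ldots*x_{m}=x*(x_{1}*\ldots*x_{m}),\]
the case $m=n$ being the assertion of the proposition. Throughout I would write (using the left-grouping convention of the paper) $q_{m}=x*x_{1}*\ldots*x_{m}$ and $p_{m}=x_{1}*\ldots*x_{m}$, so that $q_{m+1}=q_{m}*x_{m+1}$, $p_{m+1}=p_{m}*x_{m+1}$, and $P(m)$ reads simply $q_{m}=x*p_{m}$.

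I would first dispose of the base case $P(1)$, which is immediate since $q_{1}=x*x_{1}=x*p_{1}$; note also that when $n=1$ the hypothesis is vacuous and there is nothing further to do. For the inductive step, fix $m$ with $1\le m<n$ and assume $P(m)$. The key point is that, once $P(m)$ is known, the two alternatives of the hypothesis at index $m$ become the same statement: indeed $x*x_{1}*\ldots*x_{m}=q_{m}$ and $x*(x_{1}*\ldots*x_{m})=x*p_{m}=q_{m}$ by $P(m)$, so in either case one obtains $q_{m}^{\sharp}(\mathrm{crit}(x))=\max(\mathrm{crit}[X])$, that is $\mathrm{crit}(q_{m}*x)=\max(\mathrm{crit}[X])$, that is $q_{m}*x\in\mathrm{Li}(X)$ by Theorem \ref{2j1298ur28tye}. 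Using this, self-distributivity, and $P(m)$, I would then compute
\[x*p_{m+1}=x*(p_{m}*x_{m+1})=(x*p_{m})*(x*x_{m+1})=q_{m}*(x*x_{m+1})\]
\[=(q_{m}*x)*(q_{m}*x_{m+1})=q_{m}*x_{m+1}=q_{m+1},\]
where the second-to-last equality uses that $q_{m}*x$ is a left-identity. This is exactly $P(m+1)$, and the induction closes.

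There is no genuinely hard step here: the entire content is the observation that the inductive hypothesis $P(m)$ collapses the disjunction in the statement to the single condition that $q_{m}*x\in\mathrm{Li}(X)$, which is precisely what is needed to absorb the spare copy of $x$ thrown off when one self-distributes $x*(p_{m}*x_{m+1})$. The one place that requires care is exactly that this collapse is available only \emph{with} the inductive hypothesis in hand -- without $P(m)$ the two terms $x*x_{1}*\ldots*x_{m}$ and $x*(x_{1}*\ldots*x_{m})$ need not be equal -- so the argument must genuinely be organized as an induction rather than a single self-distribution; beyond that it is only a matter of respecting the left-grouping convention (so that $q_{m+1}=q_{m}*x_{m+1}$ and $p_{m+1}=p_{m}*x_{m+1}$ hold on the nose) and invoking the equivalence of $y^{\sharp}(\mathrm{crit}(x))=\max(\mathrm{crit}[X])$ with $y*x\in\mathrm{Li}(X)$ furnished by Theorem \ref{2j1298ur28tye} together with the definition of $y^{\sharp}$.
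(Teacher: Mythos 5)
Your proof is correct and follows essentially the same route as the paper's: both run an induction (the paper on $n$, you on $m$ within a fixed setup, which amounts to the same thing), both use the inductive hypothesis to collapse the disjunction to the single fact that $q_{m}*x\in\mathrm{Li}(X)$, and both then insert and remove that left-identity via a self-distribution to pass between $q_{m+1}$ and $x*p_{m+1}$. The only difference is that you read the chain of equalities from $x*p_{m+1}$ to $q_{m+1}$ while the paper reads it in the opposite direction.
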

\begin{proof}
We shall prove this result by induction on $n$. For $n=1$, the result is clearly true. Now assume that $n>1$.
Then for $m<n$ either 
\[(x*x_{1}*\ldots*x_{n-1})^{\sharp}(\mathrm{crit}(x))=\max(\mathrm{crit}[X])\]
or
\[(x*(x_{1}*\ldots*x_{n-1}))^{\sharp}(\mathrm{crit}(x))=\max(\mathrm{crit}[X]).\]
However, by the induction hypothesis, we have
$x*(x_{1}*\ldots*x_{n-1})=x*x_{1}*\ldots*x_{n-1}$, so in either case,
\[(x*(x_{1}*\ldots* x_{n-1}))^{\sharp}(\mathrm{crit}(x))=\max(\mathrm{crit}[X]).\]
Therefore, we have
\[x*(x_{1}*\ldots*x_{n-1})*x\in \mathrm{Li}(X).\]
Thus,
\[x*x_{1}*\ldots*x_{n}=(x*x_{1}*\ldots*x_{n-1})*x_{n}=x*(x_{1}*\ldots*x_{n-1})*x_{n}\]
\[=(x*(x_{1}*\ldots*x_{n-1})*x)*(x*(x_{1}*\ldots*x_{n-1})*x_{n})\]
\[=(x*(x_{1}*\ldots*x_{n-1}))*(x*x_{n})=x*(x_{1}*\ldots*x_{n-1}*x_{n}).\]
\end{proof}
\begin{cor}
Suppose that $X$ is a permutative LD-system. Let $x,x_{1},\ldots,x_{n}\in X$.
Suppose that 
\[(x*(x_{1}*\ldots* x_{m}))^{\sharp}(\mathrm{crit}(x))\geq\beta\]
or
\[(x*x_{1}*\ldots*x_{m})^{\sharp}(\mathrm{crit}(x))\geq\beta\]
for $1\leq m<n$. Then
\[x*x_{1}*\ldots*x_{n}\equiv^{\beta}x*(x_{1}*\ldots* x_{n}).\]

\label{hu47u240pu2GNF}
\end{cor}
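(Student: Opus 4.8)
The plan is to deduce this from Proposition \ref{8u3y290rq} by passing to the quotient of $X$ by the congruence $\equiv^{\beta}$. Write $\phi\colon X\to X/\!\equiv^{\beta}$ for the quotient homomorphism and set $Y=X/\!\equiv^{\beta}$; by Theorem \ref{uh3d5tjh4uyyuin} the algebra $Y$ is again a permutative LD-system, so Proposition \ref{8u3y290rq} applies to $Y$. The key observation I would establish first is that in $Y$ the condition ``$\mathrm{crit}(\cdot)\geq\beta$'' becomes ``$\mathrm{crit}(\cdot)=\max(\mathrm{crit}[Y])$'': if $w\in X$ has $\mathrm{crit}(w)\geq\beta$, then since $\equiv^{\beta}$ is the smallest congruence collapsing such elements into $\mathrm{Li}(X)$, we have $w\equiv^{\beta}y$ for some $y\in\mathrm{Li}(X)$, so $\phi(w)=\phi(y)\in\mathrm{Li}(Y)$ (a surjective image of a left-identity is a left-identity), and hence $\mathrm{crit}(\phi(w))=\max(\mathrm{crit}[Y])$ by item 1 of Theorem \ref{2j1298ur28tye}.

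Next I would translate the hypotheses through $\phi$. Fix $m$ with $1\leq m<n$. Taking $r=x$ as a representative of $\mathrm{crit}(x)$ in the definition of $(\cdot)^{\sharp}$ gives
\[(x*(x_{1}*\ldots*x_{m}))^{\sharp}(\mathrm{crit}(x))=\mathrm{crit}\bigl((x*(x_{1}*\ldots*x_{m}))*x\bigr),\]
and similarly with $x*x_{1}*\ldots*x_{m}$ in place of $x*(x_{1}*\ldots*x_{m})$. Since $\phi$ is a homomorphism (and omitted parentheses are grouped to the left), $\phi(x*(x_{1}*\ldots*x_{m}))=\phi(x)*(\phi(x_{1})*\ldots*\phi(x_{m}))$ and likewise for the fully left-associated product, while $\phi(x)$ represents $\mathrm{crit}(\phi(x))$; applying the previous paragraph to $w=(x*(x_{1}*\ldots*x_{m}))*x$ and to $w=(x*x_{1}*\ldots*x_{m})*x$ therefore shows that the hypothesis at $m$ turns into ``$(\phi(x)*(\phi(x_{1})*\ldots*\phi(x_{m})))^{\sharp}(\mathrm{crit}(\phi(x)))=\max(\mathrm{crit}[Y])$ or $(\phi(x)*\phi(x_{1})*\ldots*\phi(x_{m}))^{\sharp}(\mathrm{crit}(\phi(x)))=\max(\mathrm{crit}[Y])$''. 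This is exactly the hypothesis of Proposition \ref{8u3y290rq} for the tuple $\phi(x),\phi(x_{1}),\ldots,\phi(x_{n})$ in $Y$, so that proposition yields
\[\phi(x)*\phi(x_{1})*\ldots*\phi(x_{n})=\phi(x)*(\phi(x_{1})*\ldots*\phi(x_{n})).\]
Using that $\phi$ is a homomorphism once more rewrites this as $\phi(x*x_{1}*\ldots*x_{n})=\phi(x*(x_{1}*\ldots*x_{n}))$, i.e. $x*x_{1}*\ldots*x_{n}\equiv^{\beta}x*(x_{1}*\ldots*x_{n})$, which is the claim.

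I expect no genuine obstacle here: the argument is essentially bookkeeping once Proposition \ref{8u3y290rq} is available. The only points needing care are (i) that $\beta\in\mathrm{crit}[X]$, so that $\equiv^{\beta}$ is defined — this is implicit, since the hypotheses compare quantities in the linearly ordered set $\mathrm{crit}[X]$ (linear by Theorem \ref{2j1298ur28tye}); and (ii) the compatibility of the operator $(\cdot)^{\sharp}$ with $\phi$, which reduces to the already-recorded facts that homomorphisms commute with $*$, send left-identities to left-identities, and that left-identities are exactly the elements of maximal critical point.
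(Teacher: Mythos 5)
Your proposal is correct, and it is the natural deduction the paper has in mind: the corollary is stated without proof, and passing to the quotient $X/\!\equiv^{\beta}$ is exactly how one reduces the $\equiv^{\beta}$-congruence statement to the equality statement of Proposition~\ref{8u3y290rq}. All the ingredients you invoke are available: permutativity of the quotient (Theorem~\ref{uh3d5tjh4uyyuin}), that $\mathrm{crit}(w)\geq\beta$ forces $\phi(w)\in\mathrm{Li}(Y)$ and hence maximal critical point (the proposition following Lemma~\ref{4th2utqebnjofa} plus Theorem~\ref{2j1298ur28tye}(1)), and compatibility of $(\cdot)^{\sharp}$ under $\phi$ via the identity $z^{\sharp}(\mathrm{crit}(r))=\mathrm{crit}(z*r)$. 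One small inaccuracy in your write-up: you appeal to $\equiv^{\beta}$ being the \emph{smallest} congruence collapsing elements of critical point $\geq\beta$ into $\mathrm{Li}(X)$, but minimality is irrelevant here — what you actually need, and what the paper records, is merely that $\equiv^{\beta}$ \emph{has} this collapsing property. An equivalent alternative to quotienting would be to repeat the inductive argument of Proposition~\ref{8u3y290rq} verbatim inside $X$, replacing $=$ by $\equiv^{\beta}$ and $\mathrm{Li}(X)$-membership by the condition $\mathrm{crit}(\cdot)\geq\beta$; that costs about the same effort and buys nothing, so your route is the cleaner one.
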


\begin{prop}
Let $X$ be a permutative LD-system. Suppose that $x,x_{1},\ldots,x_{n}\in X$ and
\[\mathrm{crit}(x_{1}*\ldots*x_{m})<\mathrm{crit}(x)\]
for $1\leq m<n$ and
\[\mathrm{crit}(x_{1}*\ldots*x_{n})\leq \mathrm{crit}(x).\]
Then
\[\mathrm{crit}(x*x_{1}*\ldots*x_{n})=x^{\sharp}(\mathrm{crit}(x_{1}*\ldots*x_{n})).\]
\label{t4j8i09t42huqa}
\end{prop}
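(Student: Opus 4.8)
The plan is to induct on $n$, comparing the left-associated product $x*x_1*\cdots*x_n$ with the right-associated product $x*(x_1*\cdots*x_n)$ by means of Corollary \ref{hu47u240pu2GNF}, and then reading off the critical point from the separation properties of the congruences $\equiv^{\alpha}$. Write $y_k=x_1*\cdots*x_k$, so the hypotheses read $\mathrm{crit}(y_m)<\mathrm{crit}(x)$ for $1\le m<n$ and $\mathrm{crit}(y_n)\le\mathrm{crit}(x)$, and the desired conclusion is $\mathrm{crit}(x*x_1*\cdots*x_n)=\mathrm{crit}(x*y_n)$. The case $\mathrm{crit}(x)=\max(\mathrm{crit}[X])$ is degenerate: then $x\in\mathrm{Li}(X)$ by Theorem \ref{2j1298ur28tye}, so $x*x_1*\cdots*x_n=y_n$ while $\mathrm{crit}(x*y_n)=\mathrm{crit}(y_n)$, and we are done; and the case $n=1$ is just the definition of $x^{\sharp}$. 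So assume $n\ge 2$ and $\mathrm{crit}(x)<\max(\mathrm{crit}[X])$.

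The first step is the ``easy case'' of the proposition, namely the subcase $\mathrm{crit}(y_n)<\mathrm{crit}(x)$. By Theorem \ref{2j1298ur28tye} every $\sharp$-map dominates the identity, so the hypothesis of Corollary \ref{hu47u240pu2GNF} holds with $\beta=\mathrm{crit}(x)$, giving $x*x_1*\cdots*x_n\equiv^{\mathrm{crit}(x)}x*y_n$; since $\mathrm{crit}(x*y_n)=x^{\sharp}(\mathrm{crit}(y_n))=\mathrm{crit}(y_n)<\mathrm{crit}(x)$, the proposition recording that $x\equiv^{\alpha}y$ with $\mathrm{crit}(x)<\alpha$ forces $\mathrm{crit}(x)=\mathrm{crit}(y)$ (stated just after the definition of $\equiv^{\alpha}$) yields $\mathrm{crit}(x*x_1*\cdots*x_n)=\mathrm{crit}(x*y_n)$. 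Applying this conclusion to the truncations $(x_1,\dots,x_m)$ for $m<n$ gives the subclaim $\mathrm{crit}(x*x_1*\cdots*x_m)=\mathrm{crit}(y_m)<\mathrm{crit}(x)$. Hence, again by Theorem \ref{2j1298ur28tye}, each of the quantities $(x*(x_1*\cdots*x_m))^{\sharp}(\mathrm{crit}(x))=\mathrm{crit}((x*y_m)*x)$ and $(x*x_1*\cdots*x_m)^{\sharp}(\mathrm{crit}(x))$ is strictly larger than $\mathrm{crit}(x)$ for $1\le m<n$ (their ``inputs'' $x*y_m$ and $x*x_1*\cdots*x_m$ have critical point $\mathrm{crit}(y_m)<\mathrm{crit}(x)<\max$). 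Consequently Corollary \ref{hu47u240pu2GNF} actually delivers the sharper statement $x*x_1*\cdots*x_n\equiv^{\beta^{*}}x*y_n$, where $\beta^{*}:=\min_{1\le m<n}(x*(x_1*\cdots*x_m))^{\sharp}(\mathrm{crit}(x))$ satisfies $\beta^{*}>\mathrm{crit}(x)$.

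Now split on $\mathrm{crit}(y_n)$. If $\mathrm{crit}(y_n)<\mathrm{crit}(x)$ we are already done by the easy case. If $\mathrm{crit}(y_n)=\mathrm{crit}(x)$, then $\mathrm{crit}(x*y_n)=x^{\sharp}(\mathrm{crit}(x))$; since $x*x_1*\cdots*x_n\equiv^{\beta^{*}}x*y_n$, it suffices by the same separation property to prove that $\mathrm{crit}(x*y_n)<\beta^{*}$, or else that $\beta^{*}=\max(\mathrm{crit}[X])$ (in which case $\equiv^{\beta^{*}}$ is literal equality, because any witness $r$ with $\mathrm{crit}(r)=\max$ lies in $\mathrm{Li}(X)$ by Theorem \ref{2j1298ur28tye}). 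Because $\mathrm{crit}(x*y_n)=x^{\sharp}(\mathrm{crit}(x))=\mathrm{crit}(x*x)$ and $\beta^{*}$ is the minimum over $m<n$ of $\mathrm{crit}((x*y_m)*x)$, everything reduces to the following claim: whenever $\mathrm{crit}(y)<\mathrm{crit}(x)<\max(\mathrm{crit}[X])$, one has $\mathrm{crit}((x*y)*x)>\mathrm{crit}(x*x)$; and if instead $\mathrm{crit}(x*x)=\max(\mathrm{crit}[X])$, then also $\mathrm{crit}((x*y)*x)=\max(\mathrm{crit}[X])$.

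This claim is the crux. It is genuinely a statement about elements of the image of the inner endomorphism $L_{x}$: it fails for an arbitrary element of critical point below $\mathrm{crit}(x)$ in place of $x*y$, as one checks already in $A_{4}$. Its proof must therefore use self-distributivity — in particular the identity $x*(y*x)=(x*y)*(x*x)$ and the identity $x\circ y=(x*y)\circ x$ — rather than the order structure of $\mathrm{crit}[X]$ alone. One gets cheaply that the right-associated product satisfies $\mathrm{crit}(x*(y*x))=\mathrm{crit}((x*y)*(x*x))>\mathrm{crit}(x*x)$ when $\mathrm{crit}(x*x)<\max$, because $\mathrm{crit}(x*y)=\mathrm{crit}(y)<\mathrm{crit}(x)<\mathrm{crit}(x*x)$ and one may apply Theorem \ref{2j1298ur28tye}; the work is to transfer this inequality from $x*(y*x)$ to the left-associated $(x*y)*x$, for which I would use a secondary induction — on the term complexity of $y$ (in our application $y=y_m$ is a $*$-word in the $x_i$), or, when $\mathrm{crit}[X]$ is well-founded, on $\mathrm{crit}(x)$ — together with Corollary \ref{hu47u240pu2GNF} applied to the three-term product $x*y*x$, which relates $(x*y)*x$ to $x*(y*x)$ up to $\equiv^{\mathrm{crit}((x*y)*x)}$. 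I expect the careful bootstrap of these facts into the strict inequality to be the hard part; once the claim is established, the case $\mathrm{crit}(y_n)=\mathrm{crit}(x)$ closes immediately and the induction on $n$ is complete.
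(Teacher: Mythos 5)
You correctly identify the right decomposition: invoke Corollary \ref{hu47u240pu2GNF} with $\beta$ the minimum of the quantities $(x*(x_1*\cdots*x_m))^{\sharp}(\alpha)$, and reduce everything to the strict inequality $x^{\sharp}(\alpha)<(x*y)^{\sharp}(\alpha)$ for $\mathrm{crit}(y)<\mathrm{crit}(x)=\alpha$ when the right-hand side is below $\max(\mathrm{crit}[X])$. Your observation that $\beta^{*}>\mathrm{crit}(x)$ via the truncated "easy case" is also fine. But the proof stops precisely at the crux, and the route you sketch for it is misguided. You propose to prove the right-associated inequality $\mathrm{crit}(x*(y*x))>\mathrm{crit}(x*x)$ and then "transfer" to the left-associated $(x*y)*x$. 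This attempts to approach the target from the wrong side: $\mathrm{crit}(x*(y*x))=(x*y)^{\sharp}(x^{\sharp}(\alpha))$, and $\alpha<x^{\sharp}(\alpha)$, so monotonicity only gives $(x*y)^{\sharp}(\alpha)\leq(x*y)^{\sharp}(x^{\sharp}(\alpha))$. That is an upper bound on the quantity you need a lower bound for, and no secondary induction on the term complexity of $y$ will reverse the direction of the estimate.

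The paper closes the gap from below, with no auxiliary induction, using the composition identity for the $\sharp$-maps. Since $\mathrm{crit}(y)<\alpha$, iterating $y^{\sharp}$ from $\mathrm{crit}(y)$ eventually hits $\max(\mathrm{crit}[X])$ by permutativity, so there is a critical point $\delta<\alpha$ with $y^{\sharp}(\delta)\geq\alpha$. Then
\[
x^{\sharp}(\alpha)\ \leq\ x^{\sharp}(y^{\sharp}(\delta))\ =\ (x*y)^{\sharp}(x^{\sharp}(\delta))\ =\ (x*y)^{\sharp}(\delta)\ <\ (x*y)^{\sharp}(\alpha),
\]
where the first equality is the self-distributivity identity $x^{\sharp}\circ y^{\sharp}=(x*y)^{\sharp}\circ x^{\sharp}$, the second uses $\delta<\alpha=\mathrm{crit}(x)$ so that $x^{\sharp}(\delta)=\delta$, and the strict inequality is item (5) of Theorem \ref{2j1298ur28tye} applied to $(x*y)^{\sharp}$, both inputs lying below $\max$. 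Once this is available, the paper also does not need your case split on $\mathrm{crit}(y_n)<\mathrm{crit}(x)$ versus $\mathrm{crit}(y_n)=\mathrm{crit}(x)$: the bound $\mathrm{crit}(x*(x_1*\cdots*x_n))\leq x^{\sharp}(\alpha)<\beta$ covers both cases and the conclusion follows from the separation property of $\equiv^{\beta}$.
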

\begin{proof}
If $n=1$, then the result is trivial, so assume that $n>1$. Let $\alpha=\mathrm{crit}(x)$.
Let 
\[\beta=\min((x*x_{1})^{\sharp}(\alpha),\ldots,(x*(x_{1}*\ldots*x_{n-1}))^{\sharp}(\alpha)).\]
Then
\[x*x_{1}*\ldots*x_{n}\equiv^{\beta}x*(x_{1}*\ldots*x_{n}).\]

If $\beta=\max(\mathrm{crit}[X])$, then 
\[x*x_{1}*\ldots*x_{n}=x*(x_{1}*\ldots*x_{n}),\]
so the proof is complete in this case. Now assume that $\beta<\max(\mathrm{crit}[X])$.

Let $r$ be a natural number such that $\beta=(x*(x_{1}*\ldots*x_{r}))^{\sharp}(\alpha)$. Then

Let $y=x_{1}*\ldots*x_{r}$. Then $\mathrm{crit}(y)<\alpha$, so let $\delta$ be a critical point with
$\delta<\alpha\leq y^{\sharp}(\delta)$. Then
\[x^{\sharp}(\alpha)\leq x^{\sharp}(y^{\sharp}(\delta))=(x*y)^{\sharp}(x^{\sharp}(\delta))=(x*y)^{\sharp}(\delta)<(x*y)^{\sharp}(\alpha)=\beta.\]

Therefore, since $\mathrm{crit}(x_{1}*\ldots*x_{n})\leq\alpha$, we have
\[\mathrm{crit}(x*(x_{1}*\ldots*x_{n}))\leq x^{\sharp}(\alpha)<\beta,\]
so since
\[x*x_{1}*\ldots*x_{n}\equiv^{\beta}x*(x_{1}*\ldots*x_{n}),\]
we conclude that
\[\mathrm{crit}(x*x_{1}*\ldots*x_{n})=\mathrm{crit}(x*(x_{1}*\ldots*x_{n})).\]
\end{proof}
\begin{prop}
Suppose that $X$ is a permutative LD-system and $x_{i}\in X$ for $i\in\{1,\ldots,2^{n}\}$. Then either
\begin{enumerate}
\item there is some $i$ with
\[x_{1}*\ldots*x_{i}\in\mathrm{Li}(X),\]
or
\item 
\[|\{\mathrm{crit}(x_{1}),\ldots,\mathrm{crit}(x_{1}*\ldots*x_{2^{n}})\}|\geq n+1.\]
\end{enumerate}
\end{prop}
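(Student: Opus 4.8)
The plan is to recast the statement as a length bound. I will show: if $X$ is permutative, $x_1,\dots,x_m\in X$, no partial product $y_i:=x_1*\cdots*x_i$ lies in $\mathrm{Li}(X)$, and $\{\mathrm{crit}(y_1),\dots,\mathrm{crit}(y_m)\}$ has at most $k$ elements, then $m\le 2^{k}-1$. Granting this, the proposition follows by contraposition with $m=2^{n}$: since $2^{n}>2^{k}-1$ whenever $k\le n$, the bound forces $k\ge n+1$. It is harmless to reduce to the reduced case first, since by Proposition~\ref{t42huife} we may pass to $X/\mathrm{Li}(X)$, and Theorem~\ref{uh3d5tjh4uyyuin} applied to the collapsing congruence shows this changes neither which $y_i$ are left-identities nor the number of distinct values among $\mathrm{crit}(y_1),\dots,\mathrm{crit}(y_m)$. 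I would then record the basic recursion for critical points: writing $\gamma_i=\mathrm{crit}(x_i)$, Theorem~\ref{2j1298ur28tye}(4) and the definition of $\,{}^{\sharp}$ give $\mathrm{crit}(y_{i+1})=y_i^{\sharp}(\gamma_{i+1})$, which equals $\gamma_{i+1}$ when $\gamma_{i+1}<\mathrm{crit}(y_i)$ and is strictly larger than $\max(\mathrm{crit}(y_i),\gamma_{i+1})$ otherwise; in particular consecutive critical points always differ (using $\gamma_{i+1}<\max(\mathrm{crit}[X])$, which holds because $\mathrm{Li}(X)$ is a left-ideal and no $y_j$ is a left-identity, so no $x_j$ is either).

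The length bound I would prove by induction on $k$. For $k=1$ all $\mathrm{crit}(y_i)$ coincide, so the remark above forces $m=1=2^{1}-1$. For the step, let $\mu=\min_i\mathrm{crit}(y_i)$ and let $i_1<\dots<i_r$ enumerate the positions with $\mathrm{crit}(y_i)=\mu$. Because $\mu$ is the minimum and up-steps strictly increase the critical point, each $i_t$ with $i_t\ge 2$ is reached by a \emph{reset} landing exactly at $\mu$, i.e.\ $\mathrm{crit}(x_{i_t})=\mu$; this uses that $i_t-1\notin\{i_1,\dots,i_r\}$, which holds since consecutive critical points differ. The $\mu$-occurrences split $\{1,\dots,m\}$ into the singletons $\{i_t\}$ and the blocks $B_0,\dots,B_r$ of positions strictly between consecutive $\mu$-occurrences, and on each block the critical points of the $y_i$ lie in the $k$-element set $\{\mathrm{crit}(y_1),\dots,\mathrm{crit}(y_m)\}\setminus\{\mu\}$. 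The accounting then runs as follows: (i) the concatenation of the blocks should realize, up to critical points, an admissible sequence with at most $k$ distinct critical values, hence $m-r\le 2^{k}-1$ by the inductive hypothesis; and (ii) the data "seen at the $\mu$-occurrences" should likewise realize an admissible sequence with at most $k$ distinct critical values, its records sitting strictly above $\mu$, giving $r-1\le 2^{k}-1$, i.e.\ $r\le 2^{k}$. Adding these yields $m=(m-r)+r\le (2^{k}-1)+2^{k}=2^{k+1}-1$. The ruler-type sequence obtained from iterating the generator of a classical Laver table (partial products $1,2,\dots,2^{k}-1$, critical values $2^{v_2(i)}$) shows every inequality here is tight.

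The main obstacle is precisely steps (i)–(ii). A block between two $\mu$-occurrences starts from the element $y_{i_t}$ of critical point $\mu$, and the running products inside the block are the \emph{left-associated} products $y_{i_t}*x_{i_t+1}*\cdots$, not the products of the block's own elements, so the inductive hypothesis does not literally apply. To repair this I would invoke Propositions~\ref{t4j8i09t42huqa}, \ref{hu47u240pu2GNF}, and \ref{8u3y290rq}: because $y_{i_t}$ realizes the global minimum $\mu$ of the critical points occurring, the hypotheses of \ref{t4j8i09t42huqa} and \ref{hu47u240pu2GNF} can be verified for the relevant prefixes, so the critical points of the left-associated products inside the block are matched with those of suitable right-nested products, and thence with those of a genuinely shorter admissible sequence to which induction applies; the analogous rewriting at the level of the $\mu$-occurrences handles (ii). Carrying out this association bookkeeping — identifying exactly which partial products must be compared and checking the side conditions of \ref{t4j8i09t42huqa}/\ref{hu47u240pu2GNF} at each stage — is the only real work; the rest is the combinatorial accounting above.
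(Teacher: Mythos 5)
Your reformulation as a length bound and the contraposition are fine, as is the observation that consecutive $\mathrm{crit}(y_i)$ differ and that $\mathrm{crit}(x_j)\ge\mu$ for all $j$. But the repair you propose for steps (i)--(ii) does not work, and this is a genuine gap rather than bookkeeping. Propositions \ref{t4j8i09t42huqa} and \ref{hu47u240pu2GNF}, applied with $x=y_{i_t}$ (critical point $\mu$), require the partial products $x_{i_t+1}*\cdots*x_{i_t+j}$ to have critical point $<\mathrm{crit}(x)=\mu$ (for \ref{t4j8i09t42huqa}) or to satisfy $(\cdots)^{\sharp}(\mu)\ge\beta$ for some useful $\beta$ (for \ref{hu47u240pu2GNF}). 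Since every $x_j$ has $\mathrm{crit}(x_j)\ge\mu$, the subalgebra they generate sits entirely at critical points $\ge\mu$, so the first hypothesis is never met; and for the second, an element $z$ with $\mathrm{crit}(z)>\mu$ has $z^{\sharp}(\mu)=\mu$, so the only $\beta$ you can extract satisfies $\beta\le\mu$, and then $\equiv^{\beta}$ tells you nothing about the block critical points, which all sit strictly above $\mu$. The $\equiv$-machinery in this paper preserves critical points \emph{below} the cutoff, and your decomposition isolates exactly the positions \emph{above} the cutoff $\mu$, which is the wrong side. A concrete symptom: in $A_3$ with all $x_i=1$, block $B_1=\{2\}$ has $y_2=2$ with $\mathrm{crit}(y_2)=\alpha>\mu$, while $x_2*\cdots=1$ has $\mathrm{crit}=\mu$, so the left-associated products in the block cannot be "matched, up to critical points, with the block's own elements'' as your sketch needs.

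The paper avoids this by inducting on $n$ rather than on the number of critical values, and splitting on \emph{index} rather than on \emph{value}: it finds the least $r\le 2^{n-1}$ maximizing $\mathrm{crit}(y_r)=\beta$ and compares against the shifted window $x_{r+1},\ldots,x_{r+2^{n-1}}$. In Case 2 (all shifted products have $\mathrm{crit}<\beta$), the congruence $\equiv^{\beta}$ gives $\mathrm{crit}(y_{r+m})=\mathrm{crit}(x_{r+1}*\cdots*x_{r+m})$ because now the critical points being compared lie \emph{below} $\beta$, which is the configuration the $\equiv$-lemmas actually handle; Case 1 uses Corollary \ref{hu47u240pu2GNF} to produce a critical point strictly above $\beta$. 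If you want to pursue an induction on $k$, you would need to decompose at the \emph{maximum} critical value rather than the minimum, so that the stripped-out positions carry critical points on the side where $\equiv^{\beta}$ has traction; as written, the minimum-based split cannot be closed with the tools you cite.
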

\begin{proof}
Suppose that there is no $i$ with $x_{1}*\ldots*x_{i}\not\in\mathrm{Li}(X)$.

By the induction hypothesis, we know that
\[|\{\mathrm{crit}(x_{1}),\ldots,\mathrm{crit}(x_{1}*\ldots*x_{2^{n-1}})\}|\geq n.\]
If 
\[|\{\mathrm{crit}(x_{1}),\ldots,\mathrm{crit}(x_{1}*\ldots*x_{2^{n-1}})\}|>n,\]
then the proof is complete.
Therefore, assume that 
\[|\{\mathrm{crit}(x_{1}),\ldots,\mathrm{crit}(x_{1}*\ldots*x_{2^{n-1}})\}|=n.\]
Then let $r$ be the least natural number such that
\[\mathrm{crit}(x_{1}*\ldots*x_{r})=\max(\mathrm{crit}(x_{1}),\ldots,\mathrm{crit}(x_{1}*\ldots*x_{2^{n-1}})).\]

\item[Case 1: $\max(\mathrm{crit}(x_{r+1}),\ldots,\mathrm{crit}(x_{r+1}*\ldots*x_{r+2^{n-1}}))\geq \mathrm{crit}(x_{1}*\ldots*x_{r}).$]

Let $s$ be the least natural number such that 
\[\mathrm{crit}(x_{r+1}*\ldots*x_{r+s})\geq \mathrm{crit}(x_{1}*\ldots*x_{r}).\]

Let
\[\beta=\min\{((x_{1}*\ldots*x_{r})*(x_{r+1}*\ldots*x_{r+m}))^{\sharp}(\mathrm{crit}(x_{1}*\ldots*x_{r}))\mid 1\leq m<s\}.\]
Then $\beta>\mathrm{crit}(x_{1}*\ldots*x_{r})$ and
\[((x_{1}*\ldots*x_{r})*(x_{r+1}*\ldots*x_{r+m}))^{\sharp}(\mathrm{crit}(x_{1}*\ldots*x_{r}))\geq\beta\]
for $1\leq m<s$, so
\[(x_{1}*\ldots*x_{r})*(x_{r+1}*\ldots*x_{r+s})\equiv^{\beta}x_{1}*\ldots*x_{r}*x_{r+1}*\ldots*x_{r+s}\]

Therefore, since 
\[\mathrm{crit}((x_{1}*\ldots*x_{r})*(x_{r+1}*\ldots*x_{r+s}))=(x_{1}*\ldots*x_{r})^{\sharp}(\mathrm{crit}(x_{r+1}*\ldots*x_{r+s}))\]
\[>\mathrm{crit}(x_{1}*\ldots*x_{r}),\]
we have
\[\mathrm{crit}(x_{1}*\ldots*x_{r}*x_{r+1}*\ldots*x_{r+s})>\mathrm{crit}(x_{1}*\ldots*x_{r})\]
as well proving that
\[\{\mathrm{crit}(x_{1}),\ldots,\mathrm{crit}(x_{1}*\ldots*x_{2^{n}})\}\]
has at least $n+1$ elements.

\item[Case 2: $\max(\mathrm{crit}(x_{r+1}),\ldots,\mathrm{crit}(x_{r+1}*\ldots*x_{r+2^{n-1}}))<\mathrm{crit}(x_{1}*\ldots*x_{r})$.]

In this case, since 
\[|\{\mathrm{crit}(x_{r+1}),\ldots,\mathrm{crit}(x_{r+1}*\ldots*x_{r+2^{n-1}})\}|\geq n\]
and
\[|\{\mathrm{crit}(x_{1}),\ldots,\mathrm{crit}(x_{1}*\ldots*x_{2^{n-1}})\}|=n,\]
there is some $m$ where
\[\mathrm{crit}(x_{r+1}*\ldots*x_{r+m})\not\in\{\mathrm{crit}(x_{1}),\ldots,\mathrm{crit}(x_{1}*\ldots*x_{2^{n-1}})\}.\]

Let $\beta=\mathrm{crit}(x_{1}*\ldots*x_{r})$. Then
\[x_{1}*\ldots*x_{r}*x_{r+1}*\ldots*x_{r+m}\equiv^{\beta}x_{r+1}*\ldots*x_{r+m},\]
so
\[\mathrm{crit}(x_{1}*\ldots*x_{r}*x_{r+1}*\ldots*x_{r+m})\]
\[=\mathrm{crit}(x_{r+1}*\ldots*x_{r+m})\not\in\{\mathrm{crit}(x_{1}),\ldots,\mathrm{crit}(x_{1}*\ldots*x_{2^{n-1}})\}.\]

We therefore conclude that
\[\{\mathrm{crit}(x_{1}),\ldots,\mathrm{crit}(x_{1}*\ldots*x_{2^{n}})\}\]
has at least $n+1$ elements.
\end{proof}
\begin{defn}
Suppose that $X$ is a permutative LD-system. Then define a relation \index{$\preceq$}$\preceq$ on $X$ were we set
$x\preceq y$ if there is some $n\in\omega$ along with $a_{1},\ldots,a_{n}\in X$ such that
$y=x*a_{1}*\ldots*a_{n}$ and if $0\leq m<n$, then
$x*a_{1}*\ldots*a_{m}\not\in \mathrm{Li}(X)$. Then $\preceq$ is a pre-ordering on $X$.
\end{defn}
\begin{prop}
The relation $\preceq$ is a partial ordering on $X$.
\end{prop}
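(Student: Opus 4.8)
The plan is to establish antisymmetry, the only missing piece: if $x\preceq y$ and $y\preceq x$ then $x=y$. First I would dispose of the trivial cases. If $x\in\mathrm{Li}(X)$, then any chain $a_{1},\dots,a_{n}$ witnessing $x\preceq y$ with $n\geq 1$ would require, at the instance $m=0$ of the defining condition, that $x\notin\mathrm{Li}(X)$, which is absurd; hence $n=0$ and $y=x$, and symmetrically if $y\in\mathrm{Li}(X)$. So I may assume $x,y\notin\mathrm{Li}(X)$, and likewise that the two witnessing chains have positive lengths $n,k\geq 1$ (if either is $0$, then $x=y$ at once). Concatenating the two chains yields elements $c_{1},\dots,c_{N}$ with $N=n+k\geq 2$ and a ``cycle'' $z_{0}=x$, $z_{i+1}=z_{i}*c_{i+1}$, $z_{N}=x$ in which $z_{i}\notin\mathrm{Li}(X)$ for every $i$ (the partial products of each chain avoid $\mathrm{Li}(X)$ by definition, and the endpoints $x,y$ are not in $\mathrm{Li}(X)$). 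It thus suffices to show that no such nontrivial cycle exists.

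The heart of the argument is a rotation combined with the reassociation Corollary \ref{hu47u240pu2GNF}. Since the cycle is finite I may cyclically relabel so that $z_{0}$ realizes the largest critical point occurring among the $z_{i}$; put $M=\mathrm{crit}(z_{0})$, so $\mathrm{crit}(z_{m})\leq M$ for all $m$, and $M<\max(\mathrm{crit}[X])$ because $z_{0}\notin\mathrm{Li}(X)$ (Theorem \ref{2j1298ur28tye}(1)). For $1\leq m\leq N-1$ we have $z_{0}*c_{1}*\cdots*c_{m}=z_{m}$, and since $\mathrm{crit}(z_{m})\leq M<\max(\mathrm{crit}[X])$, Theorem \ref{2j1298ur28tye}(4) gives $(z_{0}*c_{1}*\cdots*c_{m})^{\sharp}(\mathrm{crit}(z_{0}))=z_{m}^{\sharp}(M)>M$. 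Let $\beta$ be the minimum of these finitely many values; then $\beta>M$, and using the second alternative in its hypothesis for each $m$, Corollary \ref{hu47u240pu2GNF} yields $z_{0}=z_{N}=z_{0}*c_{1}*\cdots*c_{N}\equiv^{\beta}z_{0}*(c_{1}*\cdots*c_{N})$. Since $\mathrm{crit}(z_{0})=M<\beta$, the earlier proposition stating that $u\equiv^{\beta}v$ with $\mathrm{crit}(u)<\beta$ forces $\mathrm{crit}(u)=\mathrm{crit}(v)$ gives $\mathrm{crit}\bigl(z_{0}*(c_{1}*\cdots*c_{N})\bigr)=M$. But $\mathrm{crit}\bigl(z_{0}*(c_{1}*\cdots*c_{N})\bigr)=z_{0}^{\sharp}\bigl(\mathrm{crit}(c_{1}*\cdots*c_{N})\bigr)$, and by Theorem \ref{2j1298ur28tye}(2),(4) together with $M<\max(\mathrm{crit}[X])$ one has $z_{0}^{\sharp}(\gamma)<M$ when $\gamma<M$ and $z_{0}^{\sharp}(\gamma)>M$ when $\gamma\geq M$, so $z_{0}^{\sharp}(\gamma)\neq M$ for every $\gamma\in\mathrm{crit}[X]$. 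This contradiction finishes the proof.

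The main obstacle is that the naive approach — following $\mathrm{crit}(z_{i})$ along a chain and hoping it increases — breaks down, because a single step $z\mapsto z*c$ can strictly \emph{decrease} the critical point (precisely when $\mathrm{crit}(c)<\mathrm{crit}(z)$). Forming the concatenated cycle and rotating it to a vertex of maximal critical point is exactly what makes every partial product strictly expand $\mathrm{crit}(z_{0})$, and that is the inequality Corollary \ref{hu47u240pu2GNF} requires in order to convert the left‑associated product into a right‑associated one modulo $\equiv^{\beta}$; once that is done, the elementary behaviour of $x^{\sharp}$ (fixing critical points below $\mathrm{crit}(x)$, strictly increasing those in $[\mathrm{crit}(x),\max(\mathrm{crit}[X]))$) does the rest. (The degenerate case $N=1$ of a length‑one cycle $x*c=x$ does not occur here, as $N=n+k\geq 2$; in any event it is ruled out the same way, since $\mathrm{crit}(x*c)=x^{\sharp}(\mathrm{crit}(c))\neq\mathrm{crit}(x)$ for $x\notin\mathrm{Li}(X)$.)
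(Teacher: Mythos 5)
Your proof is correct and follows essentially the same strategy as the paper's: concatenate the two witnessing chains into a cycle with all partial products outside $\mathrm{Li}(X)$, rotate so the base point realizes the maximal critical point $M$, apply Corollary \ref{hu47u240pu2GNF} to reassociate modulo $\equiv^{\beta}$ for a suitable $\beta>M$, and derive a contradiction from the fact that $z_{0}^{\sharp}$ never takes the value $\mathrm{crit}(z_{0})$. The only cosmetic difference is that the paper truncates the reassociation at the first index where the critical point returns to its maximum, whereas you run it across the whole cycle; both variants are sound.
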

\begin{proof}
Suppose to the contrary that $\preceq$ is not a partial ordering. Then there are distinct $x,z$ with
$x\preceq z\preceq x$. Therefore, there are $a_{0},\ldots,a_{n}$ where
$x*a_{0}*\ldots*a_{n}=x$ but where $x*a_{0}*\ldots*a_{m}\not\in \mathrm{Li}(X)$ for all $m<n$.

Now, let $r$ be such that $r<n$ and $\mathrm{crit}(x*a_{0}*\ldots*a_{r})$ is maximal.

Let $y=x*a_{0}*\ldots*a_{r}$. Then we have $y=y*a_{r+1}*\ldots*a_{n}*a_{0}*\ldots*a_{r}$. Therefore, define
\[b_{0}=a_{r+1},\ldots,b_{n-r-1}=a_{n},b_{n-r}=a_{0},\ldots,b_{n}=a_{r}.\]
Then 
\[y=y*b_{0}*\ldots*b_{n}\]
and
\[\mathrm{crit}(y*b_{0}*\ldots*b_{i})\leq \mathrm{crit}(y)<\max(\mathrm{crit}[X])\]
for all $i$.

Let $v$ be the least natural number such that
\[\mathrm{crit}(y*b_{0}*\ldots*b_{v})=\mathrm{crit}(y).\]
Then let 
\[\beta=\min((y*b_{0})^{\sharp}(\mathrm{crit}(y)),\ldots,(y*b_{0}*\ldots*b_{v-1})^{\sharp}(\mathrm{crit}(y))).\]
Then $\beta>\mathrm{crit}(y)$ and since 
\[(y*b_{0}*\ldots*b_{i})^{\sharp}(\mathrm{crit}(y))\geq\beta\]
for $0\leq i<v$, by Corollary \ref{hu47u240pu2GNF}, we have
\[y*b_{0}*\ldots*b_{v}\equiv^{\beta}y*(b_{0}*\ldots*b_{v}).\]
Since
\[\mathrm{crit}(y)=\mathrm{crit}(y*b_{0}*\ldots*b_{v}),\]
and since 
\[y*b_{0}*\ldots*b_{v}\equiv^{\beta}y*(b_{0}*\ldots*b_{v}),\]
we have 
\[y^{\sharp}(b_{0}*\ldots*b_{v})=\mathrm{crit}(y*(b_{0}*\ldots*b_{v}))=\mathrm{crit}(y)\]
which is a contradiction.
\end{proof}
\begin{prop}
Let $X$ be a permutative LD-system, and suppose $x,y\in X\setminus\mathrm{Li}(X)$. 
If $n$ is the least natural number with $t_{n}(x,y)\in \mathrm{Li}(X)$, then 
\[x=t_{2}(x,y)\prec\ldots\prec t_{n-1}(x,y)\prec t_{n}(x,y)\in \mathrm{Li}(X),\]
and if $X$ is reduced, then
\[x=t_{2}(x,y)\prec\ldots\prec t_{n-1}(x,y)=x\circ y\prec t_{n}(x,y)=1.\]
\end{prop}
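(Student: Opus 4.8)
The plan is to reduce both displayed chains to one elementary fact: for each $j$ with $2\le j\le n-1$ one has $t_{j}(x,y)\prec t_{j+1}(x,y)$. Since the preceding proposition shows $\preceq$ is a partial ordering, concatenating these strict steps yields the chain $x=t_{2}(x,y)\prec t_{3}(x,y)\prec\cdots\prec t_{n-1}(x,y)\prec t_{n}(x,y)$, and $t_{n}(x,y)\in\mathrm{Li}(X)$ holds by the minimality choice of $n$. First I would record the book-keeping: since $t_{1}(x,y)=y$ and $t_{2}(x,y)=x$ are not left-identities, minimality of $n$ gives $n\ge 3$ and $t_{i}(x,y)\notin\mathrm{Li}(X)$ for all $1\le i\le n-1$. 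The relation $t_{j}(x,y)\preceq t_{j+1}(x,y)$ then follows from the Fibonacci recursion $t_{j+1}(x,y)=t_{j}(x,y)*t_{j-1}(x,y)$: this is a one-step witness in the definition of $\preceq$ with $a_{1}=t_{j-1}(x,y)$, and the only intermediate term required to avoid $\mathrm{Li}(X)$, namely $t_{j}(x,y)$ itself, does so.

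The hard part will be strictness, i.e.\ $t_{j}(x,y)\ne t_{j+1}(x,y)$ for $2\le j\le n-1$. Suppose instead $t_{j}(x,y)=t_{j+1}(x,y)=t_{j}(x,y)*t_{j-1}(x,y)$. Applying $\mathrm{crit}$ and the definition of $\sharp$ gives $\mathrm{crit}(t_{j}(x,y))=(t_{j}(x,y))^{\sharp}(\mathrm{crit}(t_{j-1}(x,y)))$. Since $j-1<n$ we have $t_{j-1}(x,y)\notin\mathrm{Li}(X)$, so $\mathrm{crit}(t_{j-1}(x,y))<\max(\mathrm{crit}[X])$ by Theorem \ref{2j1298ur28tye}(1), and $\mathrm{crit}[X]$ is linearly ordered by Theorem \ref{2j1298ur28tye}(3); hence exactly one of $\mathrm{crit}(t_{j}(x,y))\le\mathrm{crit}(t_{j-1}(x,y))$ and $\mathrm{crit}(t_{j}(x,y))>\mathrm{crit}(t_{j-1}(x,y))$ holds. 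In the first case Theorem \ref{2j1298ur28tye}(4) gives $(t_{j}(x,y))^{\sharp}(\mathrm{crit}(t_{j-1}(x,y)))>\mathrm{crit}(t_{j-1}(x,y))\ge\mathrm{crit}(t_{j}(x,y))$, contradicting the equality above; in the second case Theorem \ref{2j1298ur28tye}(2) and (4) together give $(t_{j}(x,y))^{\sharp}(\mathrm{crit}(t_{j-1}(x,y)))=\mathrm{crit}(t_{j-1}(x,y))$, so $\mathrm{crit}(t_{j}(x,y))=\mathrm{crit}(t_{j-1}(x,y))$, again a contradiction. Hence $t_{j}(x,y)\prec t_{j+1}(x,y)$, which proves the first chain.

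For the reduced case I would use that $\mathrm{Li}(X)=\{1\}$, so $t_{n}(x,y)=1$. The construction of $\circ$ on a permutative LD-system with unique left-identity gives $x\circ y=t_{m+1}(x,y)$ whenever $t_{m}(x,y)=1$; taking $m=n$ yields $x\circ y=t_{n+1}(x,y)=t_{n}(x,y)*t_{n-1}(x,y)=1*t_{n-1}(x,y)=t_{n-1}(x,y)$. Substituting $t_{n-1}(x,y)=x\circ y$ and $t_{n}(x,y)=1$ into the chain from the previous paragraph gives $x=t_{2}(x,y)\prec\cdots\prec t_{n-1}(x,y)=x\circ y\prec t_{n}(x,y)=1$, completing the proof. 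The one genuinely delicate point is the strictness step, where the critical-point structure of Theorem \ref{2j1298ur28tye} is essential; everything else is recursion unwinding.
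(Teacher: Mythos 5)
Your proof is correct. The key step you share with the paper is that the Fibonacci recursion $t_{j+1}(x,y)=t_j(x,y)*t_{j-1}(x,y)$ exhibits a length-one $\preceq$-witness at each stage, with the intermediate term $t_j(x,y)\notin\mathrm{Li}(X)$ for $j\le n-1$. The paper's proof is more compact: it simply unwinds the recursion to write $t_m(x,y)=t_2(x,y)*t_1(x,y)*t_2(x,y)*\cdots*t_{m-2}(x,y)$, whose successive partial products are exactly $t_2,\dots,t_{m-1}$, and takes $m=n$; strictness is left implicit, resting on the acyclicity established inside the proof that $\preceq$ is a partial ordering (a chain $a=a*b_0*\cdots*b_k$ with all partial products outside $\mathrm{Li}(X)$ is impossible). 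You instead supply an explicit strictness argument: assuming $t_j=t_{j+1}=t_j*t_{j-1}$, you compute $\mathrm{crit}(t_j)=(t_j)^{\sharp}(\mathrm{crit}(t_{j-1}))$ and use items (2), (3), (4) of Theorem \ref{2j1298ur28tye} to derive a contradiction in both cases $\mathrm{crit}(t_j)\le\mathrm{crit}(t_{j-1})$ and $\mathrm{crit}(t_j)>\mathrm{crit}(t_{j-1})$. This is a correct and self-contained replacement for the implicit appeal to acyclicity; it is slightly longer but has the merit of invoking only the critical-point machinery rather than the structure of the $\preceq$-ordering proposition. The reduced case (identifying $t_{n-1}(x,y)=x\circ y$ and $t_n(x,y)=1$) is handled the same way in both. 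No gaps.
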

\begin{proof}
This follows from the fact that if $m>2$, then
\[t_{m}(x,y)=t_{2}(x,y)*t_{1}(x,y)*t_{2}(x,y)*\ldots*t_{m-2}(x,y)\]
\[=x*t_{1}(x,y)*\ldots*t_{m-2}(x,y).\]
\end{proof}

From the partial ordering $\preceq$, we obtain the following algorithm for computing the critically maximal congruence $\simeq_{cmx}$.

\begin{algo}$\textbf{Computing the critically maximal congruence:}$
Suppose that $X$ is a finite reduced permutative LD-system. Then define an equivalence relation $\simeq_{rm}$ on $X$ where
we have $x\simeq_{rm}y$ precisely when
\[x*a_{1}*\ldots*a_{n}\in \mathrm{Li}(X)\Leftrightarrow y*a_{1}*\ldots*a_{n}\in \mathrm{Li}(X)\]
whenever $n\in\omega,a_{1},\ldots ,a_{n}\in X$. The ``rm" in $\simeq_{rm}$ stands for ``right multiple." The following steps
may be used to compute the critically maximal congruence $\simeq_{cmx}$.

\begin{enumerate}
\item Select a bijective function $f:X\rightarrow\{0,\ldots ,N\}$ such that if
$x\prec y$, then $f(y)<f(x)$. Let $g$ be the inverse of the function $f$.

\item We now compute the partition $P_{n}$ of $\{g(0),\ldots ,g(n)\}$ for all $n\in\{0,\ldots ,N\}$ by recursion on $n$.
Let $P_{0}=\{1\}$. For the induction step, suppose that $P_{n}$ has been computed already and $n<N$. Suppose that $P_{n}=\{R_{1},\ldots ,R_{k}\}$. Then select $x_{i}\in R_{i}$ for $1\leq i\leq k$, and let $x=g(n+1)$.

If there is some $j\in\{1,\ldots ,k\}$ where for all $b\in X$ there is some $l\in\{1,\ldots ,k\}$ where $x*b,x_{j}*b\in R_{l}$, then
set
\[P_{n+1}=\{R_{1},\ldots ,R_{j-1},R_{j}\cup\{x\},R_{j+1},\ldots ,R_{k}\}.\]
Otherwise, set 
\[P_{n+1}=P_{n}\cup\{\{x\}\}.\]

\item The partition $P_{N}$ corresponds to the equivalence relation $\simeq_{rm}$. Therefore, once $P_{N}$ has been computed, one can compute $\simeq_{cmx}$ since $x\simeq_{cmx}y$ if and only if $a*x,a*y\in R\in P_{N}$ for some $R$.
\end{enumerate}
\end{algo}
\begin{defn}
An element $x$ in a permutative LD-system $X$ is said to be \index{involutive}\emph{involutive} if $x*x\in \mathrm{Li}(X)$. Let \index{$\mathcal{I}_{X}$}$\mathcal{I}_{X}$ be the set of all involutive elements in $X$. A permutative LD-system $X$ is said to be \index{involutive}\emph{involutive} if all of its elements are involutive.
\end{defn}
$\mathcal{I}_{X}$ is a left-ideal in $X$, and in particular a sub-LD-system of $X$.

\begin{prop}
Suppose $X$ is a permutative LD-monoid and $x\in\mathcal{I}_{X},y,z\in X$.
\begin{enumerate}
\item If $\mathrm{crit}(x)\leq \mathrm{crit}(y)$, then $x\circ y=x$.

\item If $\mathrm{crit}(x)>\mathrm{crit}(y)$, then $x\circ y=x*y$. 

\item $y\equiv^{\mathrm{crit}(x)}z$ if and only if $x\circ y=x\circ z$.

\item If $y^{n}\in\mathcal{I}_{X}$, then $y^{m}=y^{n}$ whenever $m\geq n$.
\end{enumerate}
\end{prop}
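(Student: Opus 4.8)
The plan is to first record the elementary consequences of $x$ being involutive, then obtain (1) and (2) from the Fibonacci-term description of $\circ$, and finally deduce (3) and (4) by case analysis on critical points. Throughout we may assume $x\neq 1$, since otherwise (1) is trivial and (2) is vacuous (nothing has critical point above $\max(\mathrm{crit}[X])$). As preliminaries on an involutive $x$: from $x*x\in\mathrm{Li}(X)$ and left-distributivity, $x*(x*w)=(x*x)*(x*w)=x*w$ for every $w$, so $L_{x}$ is idempotent; moreover $x*(w*x)=(x*w)*(x*x)\in\mathrm{Li}(X)$ and $(w*x)*(w*x)=w*(x*x)\in\mathrm{Li}(X)$, so every element of the form $w*x$ is itself involutive and lies in $\{z:x*z\in\mathrm{Li}(X)\}$. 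Since $x^{\sharp}(\mathrm{crit}(x))=\mathrm{crit}(x*x)=\max(\mathrm{crit}[X])$ and $x^{\sharp}$ is order preserving with $x^{\sharp}(\alpha)>\alpha$ precisely when $\mathrm{crit}(x)\leq\alpha<\max(\mathrm{crit}[X])$ (Theorem~\ref{2j1298ur28tye}), the map $x^{\sharp}$ is the identity below $\mathrm{crit}(x)$ and is constantly $\max(\mathrm{crit}[X])$ on $[\mathrm{crit}(x),\max(\mathrm{crit}[X])]$; equivalently $x*w\in\mathrm{Li}(X)$ iff $\mathrm{crit}(w)\geq\mathrm{crit}(x)$, and (by the definition of $\equiv^{\mathrm{crit}(x)}$, taking $x$ itself as the representative $r$) $y\equiv^{\mathrm{crit}(x)}z$ iff $x*y=x*z$.

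For (1): if $\mathrm{crit}(x)\leq\mathrm{crit}(y)$ then $x*y\in\mathrm{Li}(X)=\{1\}$, i.e.\ $t_{3}(x,y)=x*y=1$, so by the definition of $\circ$ we get $x\circ y=t_{4}(x,y)=t_{3}(x,y)*t_{2}(x,y)=1*x=x$. For (2), assume $\mathrm{crit}(x)>\mathrm{crit}(y)$. Then $t_{1}(x,y)=y$, $t_{2}(x,y)=x$ and $t_{3}(x,y)=x*y$ all lie outside $\mathrm{Li}(X)$ (the last since $\mathrm{crit}(x*y)=\mathrm{crit}(y)<\max(\mathrm{crit}[X])$), and by the corollary following the finite-reduction proposition the least $n$ with $t_{n}(x,y)\in\mathrm{Li}(X)$ is even; by the proposition describing the chain $x=t_{2}(x,y)\prec\cdots\prec t_{n-1}(x,y)=x\circ y\prec t_{n}(x,y)=1$, we have $x\circ y=t_{3}(x,y)=x*y$ exactly when $n=4$. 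So it suffices to prove $t_{4}(x,y)=(x*y)*x\in\mathrm{Li}(X)$, i.e.\ $(x*y)^{\sharp}(\mathrm{crit}(x))=\max(\mathrm{crit}[X])$. I would argue this by induction on $n$: each even Fibonacci term $t_{2k}(x,y)$ is involutive — by induction on $k$, using $t_{2k}*t_{2k}=t_{2k-1}*(t_{2k-2}*t_{2k-2})$ with base $t_{2}=x$ — and $\mathrm{crit}(t_{4}(x,y))\geq\mathrm{crit}(x)>\mathrm{crit}(y)=\mathrm{crit}(t_{3}(x,y))$; when $n\geq 6$ one applies the inductive hypothesis to the pair $(t_{4}(x,y),t_{3}(x,y))$, whose Fibonacci sequence satisfies $t_{k}(t_{4}(x,y),t_{3}(x,y))=t_{k+2}(x,y)$ and hence is strictly shorter with involutive first coordinate of strictly larger critical point, obtaining $t_{6}(x,y)\in\mathrm{Li}(X)$ and so $n=6$; the residual case $n=6$ is then settled by a direct computation of $(x*y)^{\sharp}(\mathrm{crit}(x))$, comparing $(x*y)*x$ with $x*(y*x)=(x*y)*(x*x)=1$ via Proposition~\ref{8u3y290rq}.

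For (3), by the last preliminary $y\equiv^{\mathrm{crit}(x)}z$ means $x*y=x*z$, so it is enough to check $x*y=x*z\iff x\circ y=x\circ z$ across the four sign-combinations of $\mathrm{crit}(y),\mathrm{crit}(z)$ relative to $\mathrm{crit}(x)$: when both are $\geq\mathrm{crit}(x)$ both sides read $1=1$ and $x=x$; when exactly one of them is $<\mathrm{crit}(x)$ both sides fail, since then exactly one of $x*y,x*z$ equals $1$ while, correspondingly, exactly one of $x\circ y,x\circ z$ equals $x$ by (1) and the other, by (2), has critical point $<\mathrm{crit}(x)$; and when both are $<\mathrm{crit}(x)$, (2) gives $x\circ y=x*y$ and $x\circ z=x*z$, so the equivalence is immediate. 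For (4), repeated use of $\mathrm{crit}(a\circ b)=\min(\mathrm{crit}(a),\mathrm{crit}(b))$ gives $\mathrm{crit}(y^{n})=\mathrm{crit}(y)\leq\mathrm{crit}(y)$, so if $y^{n}\in\mathcal{I}_{X}$ then (1) applied with $y^{n}$ in place of $x$ yields $y^{n+1}=y^{n}\circ y=y^{n}$; since $y^{n+1}\in\mathcal{I}_{X}$ too, induction on $m$ gives $y^{m}=y^{n}$ for all $m\geq n$.

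The only genuine difficulty is the step $t_{4}(x,y)\in\mathrm{Li}(X)$ inside (2). Reducing (2) to this and deducing (1), (3), (4) is routine bookkeeping with critical points, whereas proving $(x*y)^{\sharp}(\mathrm{crit}(x))=\max(\mathrm{crit}[X])$ — equivalently, that the odd Fibonacci terms $t_{3}(x,y),t_{5}(x,y),\dots$ all coincide — needs the induction above supplemented by a careful analysis of the action $(x*y)^{\sharp}$ near the top of $\mathrm{crit}[X]$, and this is where I expect the real work to lie.
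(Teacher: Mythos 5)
Your parts (1), (3) and (4) are correct. In (1) and (3) the reduction of $\equiv^{\mathrm{crit}(x)}$ to $L_x$ via involutivity of $x$, and the three-way split on $\mathrm{crit}(y),\mathrm{crit}(z)$ relative to $\mathrm{crit}(x)$, are exactly what is needed; in (4) the descent $y^{n+1}=y^{n}\circ y=y^{n}$ from (1) applied to $y^{n}\in\mathcal{I}_X$ closes cleanly.

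The gap is in (2). You correctly locate the crux — everything reduces to $t_{4}(x,y)=(x*y)*x\in\mathrm{Li}(X)$, equivalently $(x*y)^{\sharp}(\mathrm{crit}(x))=\max(\mathrm{crit}[X])$ — but the argument you sketch does not establish it. The Fibonacci induction (all even $t_{2k}$ involutive, descend to $(t_{4},t_{3})$) at best reduces the least vanishing index to $4$ or $6$, and your handling of the residual case $n=6$ is circular: to apply Proposition~\ref{8u3y290rq} with $(x,x_{1},x_{2})=(x,y,x)$ you must already know, for $m=1$, that $(x*y)^{\sharp}(\mathrm{crit}(x))=\max(\mathrm{crit}[X])$ — the very statement in question. (Nor can you shortcut via the involutivity of $x*y$: since $(x*y)*(x*y)=x*(y*y)$ and $y*y$ need not lie in $\mathrm{Li}(X)$, $\mathcal{I}_X$ being merely a \emph{left} ideal does not give $x*y\in\mathcal{I}_X$.) The missing fact is exactly Lemma~\ref{t24jirho}(2) in the paper: for $x\in\mathcal{I}_X$ and $\mathrm{crit}(a)<\mathrm{crit}(x)$ one has $x*a*x\in\mathrm{Li}(X)$. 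Its proof is the genuine ``careful analysis,'' and it is not a computation near the top of $\mathrm{crit}[X]$ but near $\mathrm{crit}(x)$ from below: permutativity produces a critical point $\beta$ with $\mathrm{crit}(y)\leq\beta<\mathrm{crit}(x)$ and $y^{\sharp}(\beta)\geq\mathrm{crit}(x)$; then, choosing $b$ with $\mathrm{crit}(b)=\beta$ and noting $\mathrm{crit}(x*b)=\beta$ since $\beta<\mathrm{crit}(x)$, one computes
\[
(x*y)^{\sharp}(\beta)=(x*y)^{\sharp}(\mathrm{crit}(x*b))=\mathrm{crit}((x*y)*(x*b))=\mathrm{crit}(x*(y*b))=x^{\sharp}(y^{\sharp}(\beta))=\max(\mathrm{crit}[X]),
\]
using that $x^{\sharp}$ sends $[\mathrm{crit}(x),\max(\mathrm{crit}[X])]$ to $\max(\mathrm{crit}[X])$ because $x$ is involutive; monotonicity of $(x*y)^{\sharp}$ then gives $(x*y)^{\sharp}(\mathrm{crit}(x))\geq(x*y)^{\sharp}(\beta)=\max(\mathrm{crit}[X])$. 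Once this is in hand, the Fibonacci induction is superfluous: $t_{4}(x,y)=1$ directly, $t_{3}(x,y)=x*y\neq 1$ since $\mathrm{crit}(x*y)=\mathrm{crit}(y)<\max(\mathrm{crit}[X])$, so the least vanishing index is $4$ and $x\circ y=t_{5}(x,y)=1*(x*y)=x*y$.
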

\begin{lem}
If $X$ is a permutative LD-system and $x\in\mathcal{I}_{X}$ with $\mathrm{crit}(x)=\gamma$, then
\begin{enumerate}
\item if $\mathrm{crit}(a)<\mathrm{crit}(x)$, then $(x*a)^{\sharp}(\beta)=\max(\mathrm{crit}[X])$ for some $\beta<\gamma$, and

\item if $\mathrm{crit}(a)<\mathrm{crit}(x)$, then we have $x*a*x\in \mathrm{Li}(X)$.
\end{enumerate}
\label{t24jirho}
\end{lem}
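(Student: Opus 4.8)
Let $X$ be a permutative LD-system, let $x\in\mathcal{I}_X$ with $\mathrm{crit}(x)=\gamma$, and suppose $a\in X$ with $\mathrm{crit}(a)<\gamma$. The plan is to prove (1) first and then derive (2) from it almost immediately, since once $(x*a)^{\sharp}(\beta)=\max(\mathrm{crit}[X])$ for some $\beta<\gamma$, we pick a witness $b$ with $\mathrm{crit}(b)=\beta$ and observe that $(x*a)*b\in\mathrm{Li}(X)$; then $x*a*x\in\mathrm{Li}(X)$ will follow by a critical-point comparison between $x$ and $b$ together with the fact that $\mathrm{Li}(X)$ is a left-ideal.

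**Proof of (1).** The key is to use that $x$ is involutive, i.e. $x*x\in\mathrm{Li}(X)$, so $\mathrm{crit}(x*x)=\max(\mathrm{crit}[X])$, which by Theorem~\ref{2j1298ur28tye}(2),(4) says exactly that $x^{\sharp}(\gamma)=\max(\mathrm{crit}[X])$. Now I want to compute $(x*a)^{\sharp}$ at a suitable argument. Since $\mathrm{crit}(a)<\gamma=\mathrm{crit}(x)$, choose $\beta\in\mathrm{crit}[X]$ with $\beta<\gamma$ and $a^{\sharp}(\beta)\geq\gamma$; such a $\beta$ exists because $\mathrm{crit}[X]$ is linearly ordered (Theorem~\ref{2j1298ur28tye}(3)) and $a^{\sharp}$ is order-preserving and eventually "jumps past" $\mathrm{crit}(x)$ — more precisely, take any $z$ with $\mathrm{crit}(z)=\gamma$ and use $\mathrm{crit}(a)\le\mathrm{crit}(z)$, so $a^n*z\in\mathrm{Li}(X)$ for some $n$, and walk up the strictly increasing chain $\mathrm{crit}(z)<\mathrm{crit}(a*z)<\dots$ until it first reaches or exceeds $\gamma$; the predecessor $\beta$ of that value in the chain satisfies $\beta<\gamma\le a^{\sharp}(\beta)$. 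Then, using the LD-monoid identity $x^{\sharp}\circ a^{\sharp}=(x\circ a)^{\sharp}$ and Proposition following it relating $x\circ a$ to $x*a$ (since $\mathrm{crit}(x)>\mathrm{crit}(a)$, we have $x\circ a=x*a$), we get
\[
(x*a)^{\sharp}(\beta)=(x\circ a)^{\sharp}(\beta)=x^{\sharp}(a^{\sharp}(\beta))\geq x^{\sharp}(\gamma)=\max(\mathrm{crit}[X]),
\]
where the inequality uses that $a^{\sharp}(\beta)\geq\gamma$ and $x^{\sharp}$ is order-preserving. Hence $(x*a)^{\sharp}(\beta)=\max(\mathrm{crit}[X])$ with $\beta<\gamma$, as claimed.

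**Proof of (2).** Pick $b\in X$ with $\mathrm{crit}(b)=\beta$, where $\beta<\gamma$ is the critical point produced in (1). By definition of $(x*a)^{\sharp}$, we have $\mathrm{crit}((x*a)*b)=(x*a)^{\sharp}(\beta)=\max(\mathrm{crit}[X])$, so $(x*a)*b\in\mathrm{Li}(X)$ by Theorem~\ref{2j1298ur28tye}(1). Now $\mathrm{crit}(b)=\beta<\gamma=\mathrm{crit}(x)$, so $\mathrm{crit}(b)\le\mathrm{crit}(x)$ — but I actually want to replace $b$ by $x$ inside $(x*a)*(-)$. Since $\mathrm{crit}(b)\leq\mathrm{crit}(x)$, Proposition~\ref{sfenjuw4}(3) gives $\mathrm{crit}((x*a)*b)\leq\mathrm{crit}((x*a)*x)$ — wait, that direction needs $\mathrm{crit}(b)\le\mathrm{crit}(x)$ applied with $r=x*a$, which yields $\mathrm{crit}((x*a)*b)\le\mathrm{crit}((x*a)*x)$; since the left side is $\max(\mathrm{crit}[X])$, so is the right, hence $(x*a)*x\in\mathrm{Li}(X)$. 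Finally $x*a*x$ means $(x*a)*x$ under the left-grouping convention, so $x*a*x\in\mathrm{Li}(X)$, which is the desired conclusion.

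**Main obstacle.** The routine parts are the monoid-identity manipulation and the left-ideal closure arguments; the one genuinely delicate point is the existence of $\beta<\gamma$ with $a^{\sharp}(\beta)\geq\gamma$ and, simultaneously, making sure the chain of critical points used to produce it really does strictly increase until it passes $\gamma$. This is exactly Corollary following Theorem~\ref{2j1298ur28tye} (item 2: if $\mathrm{crit}(a)\le\mathrm{crit}(z)$ then $\mathrm{crit}(t_{2n}(a,z))$, equivalently the sequence $\mathrm{crit}(a^k*z)$, is strictly increasing up to $\max(\mathrm{crit}[X])$), so I will cite that corollary to pin down $\beta$ as the last term of the strictly increasing chain $\mathrm{crit}(z)<\mathrm{crit}(a*z)<\dots$ that is still $<\gamma$; its $a^{\sharp}$-image, the next term, is then $\ge\gamma$, which is all that is needed. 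The whole argument is short once that choice is justified.
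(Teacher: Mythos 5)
Your plan has the right shape — produce $\beta<\gamma$ with $a^{\sharp}(\beta)\geq\gamma$, show $(x*a)^{\sharp}(\beta)$ lands on $\max(\mathrm{crit}[X])$, then use monotonicity of $(x*a)^{\sharp}$ and $\beta<\gamma=\mathrm{crit}(x)$ for part (2) — and indeed part (2) of your write-up matches the paper. But the justification of the central equality in part (1) has a gap that, as written, is a circularity.

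You reach $(x*a)^{\sharp}(\beta)=x^{\sharp}(a^{\sharp}(\beta))$ by invoking the unproved proposition that for involutive $x$ with $\mathrm{crit}(x)>\mathrm{crit}(a)$ one has $x\circ a=x*a$, together with $(x\circ a)^{\sharp}=x^{\sharp}\circ a^{\sharp}$. The trouble is that $x\circ a=x*a$ is precisely the assertion that the Fibonacci sequence for $(x,a)$ first enters $\mathrm{Li}(X)$ at $t_{4}(x,a)=(x*a)*x$; that is, it is equivalent to part (2) of the very lemma you are proving. The paper sidesteps this by never touching $\circ$: it picks a witness $b$ with $\mathrm{crit}(b)=\beta$, notes $\mathrm{crit}(x*b)=\beta$ because $\beta<\mathrm{crit}(x)$ forces $x^{\sharp}(\beta)=\beta$, and then uses self-distributivity directly,
\[
(x*a)^{\sharp}(\beta)=(x*a)^{\sharp}(\mathrm{crit}(x*b))=\mathrm{crit}\bigl((x*a)*(x*b)\bigr)=\mathrm{crit}\bigl(x*(a*b)\bigr)=x^{\sharp}\bigl(a^{\sharp}(\beta)\bigr),
\]
which is exactly the composition identity you wanted, obtained without any appeal to $\circ$. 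Replacing your cited proposition with this one-line self-distributivity computation removes the circularity and also avoids the secondary worry that $X$ is only assumed to be a permutative LD-system, not a permutative LD-monoid, so the LD-monoid propositions you quote do not formally apply as stated.

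There is also a small slip in how you locate $\beta$: you start the increasing chain at $\mathrm{crit}(z)=\gamma$, but then there is no ``last term below $\gamma$'' since the chain already begins at $\gamma$. You should instead iterate $a^{\sharp}$ starting from $\mathrm{crit}(a)$; since $\mathrm{crit}(a)<\gamma$ and the orbit $\mathrm{crit}(a),a^{\sharp}(\mathrm{crit}(a)),(a^{\sharp})^{2}(\mathrm{crit}(a)),\ldots$ strictly increases and eventually reaches $\max(\mathrm{crit}[X])$ (by permutativity applied to $a^{n}*a$), one may take $\beta$ to be the largest term of this orbit that is still $<\gamma$. This is the fact the paper asserts without elaboration, and your instinct was right; only the starting point of the chain was off.
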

\begin{proof}
\begin{enumerate}
\item Suppose that $\mathrm{crit}(a)<\mathrm{crit}(x)$. Then there is some $\beta$ with
$\mathrm{crit}(a)\leq\beta<\mathrm{crit}(x)$, but where $a^{\sharp}(\beta)\geq \mathrm{crit}(x)$. Therefore, let
$b\in X$ be an element with $\mathrm{crit}(b)=\beta$. Then
\[(x*a)^{\sharp}(\beta)=(x*a)^{\sharp}(\mathrm{crit}(b))=(x*a)^{\sharp}(\mathrm{crit}(x*b))\]
\[=\mathrm{crit}((x*a)*(x*b))=\mathrm{crit}(x*(a*b))=x^{\sharp}(a^{\sharp}(\beta))=\max(\mathrm{crit}[X]).\]

\item We have
\[\mathrm{crit}(x*a*x)=(x*a)^{\sharp}(\mathrm{crit}(x))\geq
(x*a)^{\sharp}(\beta)=\max(\mathrm{crit}[X]).\]
\end{enumerate}
\end{proof}
\begin{lem}
If $x\in\mathcal{I}_{X}$ and for $1\leq m<n$ we have $x*x_{1}*\ldots*x_{m}\not\in \mathrm{Li}(X)$ or
$\mathrm{crit}(x_{1}*\ldots*x_{m})<\mathrm{crit}(x)$, then 
\[x*(x_{1}*\ldots*x_{n})=x*x_{1}*\ldots*x_{n}.\]
\label{32u8r9}
\end{lem}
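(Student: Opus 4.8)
The plan is to induct on $n$; the base case $n=1$ is the trivial identity $x*x_{1}=x*x_{1}$. For the inductive step write $y_{m}=x_{1}*\ldots*x_{m}$ and $z_{m}=x*x_{1}*\ldots*x_{m}$, so that $y_{m}=y_{m-1}*x_{m}$ and $z_{m}=z_{m-1}*x_{m}$ under the left-grouping convention. Since the hypothesis for the indices $1\leq m<n-1$ is a special case of the hypothesis we are given, the induction hypothesis applied to $x,x_{1},\ldots,x_{n-1}$ yields
\[x*(x_{1}*\ldots*x_{n-1})=x*x_{1}*\ldots*x_{n-1},\]
that is, $z_{n-1}=x*y_{n-1}$.

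The crucial step is to deduce $\mathrm{crit}(y_{n-1})<\mathrm{crit}(x)$. The hypothesis for $m=n-1$ gives $z_{n-1}\notin\mathrm{Li}(X)$ or $\mathrm{crit}(y_{n-1})<\mathrm{crit}(x)$; in the latter case we are done, so assume $z_{n-1}\notin\mathrm{Li}(X)$. Using $z_{n-1}=x*y_{n-1}$ and the definition of $x^{\sharp}$, this says $x^{\sharp}(\mathrm{crit}(y_{n-1}))=\mathrm{crit}(x*y_{n-1})\neq\max(\mathrm{crit}[X])$. On the other hand $x\in\mathcal{I}_{X}$ gives $x*x\in\mathrm{Li}(X)$, so by Theorem \ref{2j1298ur28tye} we have $x^{\sharp}(\mathrm{crit}(x))=\mathrm{crit}(x*x)=\max(\mathrm{crit}[X])$, and $\mathrm{crit}[X]$ is linearly ordered. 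Were $\mathrm{crit}(y_{n-1})\geq\mathrm{crit}(x)$, order-preservation of $x^{\sharp}$ would force $x^{\sharp}(\mathrm{crit}(y_{n-1}))=\max(\mathrm{crit}[X])$, contradicting the previous line; hence $\mathrm{crit}(y_{n-1})<\mathrm{crit}(x)$.

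Now Lemma \ref{t24jirho}(2), applied with $a=y_{n-1}$, gives $x*y_{n-1}*x\in\mathrm{Li}(X)$, i.e. $z_{n-1}*x\in\mathrm{Li}(X)$. Two applications of self-distributivity together with $z_{n-1}=x*y_{n-1}$ then give
\[x*(x_{1}*\ldots*x_{n})=x*(y_{n-1}*x_{n})=(x*y_{n-1})*(x*x_{n})=z_{n-1}*(x*x_{n}),\]
and, since $z_{n-1}*x$ is a left-identity,
\[z_{n-1}*(x*x_{n})=(z_{n-1}*x)*(z_{n-1}*x_{n})=z_{n-1}*x_{n}=x*x_{1}*\ldots*x_{n},\]
which is exactly the claimed identity.

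The one genuinely substantive point is forcing $\mathrm{crit}(y_{n-1})<\mathrm{crit}(x)$ even under the disjunct $z_{n-1}\notin\mathrm{Li}(X)$, for which the involutivity of $x$ (through $\mathrm{crit}(x*x)=\max(\mathrm{crit}[X])$) is used in an essential way; the remaining steps are routine rewriting with the left-grouping convention and the already-established facts about $x^{\sharp}$, $\mathrm{crit}$, and Lemma \ref{t24jirho}.
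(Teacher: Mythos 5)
Your proof is correct and takes essentially the same approach as the paper's: both induct on $n$, both use the induction hypothesis to identify $z_{n-1}$ with $x*y_{n-1}$ and then the involutivity of $x$ to force $\mathrm{crit}(y_{n-1})<\mathrm{crit}(x)$, and both then rely on Lemma \ref{t24jirho} to finish. The only (small) difference is cosmetic: where you carry out the final two lines of self-distributive rewriting explicitly, using Lemma \ref{t24jirho}(2) directly, the paper instead invokes Lemma \ref{t24jirho}(1) to verify the hypotheses of Proposition \ref{8u3y290rq} and applies that proposition to conclude — but Proposition \ref{8u3y290rq} is proved by the same sort of telescoping manipulation you wrote out, so the underlying argument coincides.
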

\begin{proof}
We shall prove this result by induction on $n$. Suppose that $x\in\mathcal{I}_{X}$ and
$x*x_{1}*\ldots*x_{m}\not\in \mathrm{Li}(X)$ or $\mathrm{crit}(x_{1}*\ldots*x_{m})<\mathrm{crit}(x)$ for $1\leq m<n$. Then by the induction hypothesis,
for $1\leq m<n$, we have 
\[x*x_{1}*\ldots*x_{m}=x*(x_{1}*\ldots*x_{m}),\]
so $\mathrm{crit}(x_{1}*\ldots*x_{m})<\mathrm{crit}(x)$ in any case for $1\leq m<n$.
Therefore, by Lemma \ref{t24jirho}, we have
\[(x*(x_{1}*\ldots*x_{m}))^{\sharp}(\mathrm{crit}(x))=\max(\mathrm{crit}[X])\]
for $1\leq m<n$, so by Theorem
\ref{8u3y290rq}, we have $x*x_{1}*\ldots*x_{n}=x*(x_{1}*\ldots*x_{n})$.
\end{proof}
\begin{prop}
If $x,y\in\mathcal{I}_{X}$ and $\mathrm{crit}(x)=\mathrm{crit}(y)$, then $x\simeq_{cmx}y$.
\end{prop}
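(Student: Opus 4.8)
The plan is to verify the defining condition of $\simeq_{cmx}$ directly: for every $a,b_{1},\ldots,b_{n}\in X$ we must show
$a*x*b_{1}*\cdots*b_{n}\in\mathrm{Li}(X)$ iff $a*y*b_{1}*\cdots*b_{n}\in\mathrm{Li}(X)$. First I would absorb the leading $a$. Since $\mathrm{Li}(X)$ is a left-ideal, self-distributivity gives $(a*x)*(a*x)=a*(x*x)\in\mathrm{Li}(X)$, so $a*x\in\mathcal{I}_{X}$, and likewise $a*y\in\mathcal{I}_{X}$; moreover $\mathrm{crit}(a*x)=a^{\sharp}(\mathrm{crit}(x))=a^{\sharp}(\mathrm{crit}(y))=\mathrm{crit}(a*y)$. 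Hence it suffices to prove the cleaner statement: if $x,y\in\mathcal{I}_{X}$ with $\mathrm{crit}(x)=\mathrm{crit}(y)=:\gamma$, then $x*b_{1}*\cdots*b_{n}\in\mathrm{Li}(X)$ iff $y*b_{1}*\cdots*b_{n}\in\mathrm{Li}(X)$ for all $b_{1},\ldots,b_{n}$ (the case $n=0$ being trivial, as $z\in\mathrm{Li}(X)$ iff $\mathrm{crit}(z)=\max(\mathrm{crit}[X])$).

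The crucial observation is that for an involutive element the map $x^{\sharp}$ is completely determined by $\mathrm{crit}(x)$. Indeed, $x*x\in\mathrm{Li}(X)$ forces $x^{\sharp}(\gamma)=\mathrm{crit}(x*x)=\max(\mathrm{crit}[X])$ by Theorem \ref{2j1298ur28tye}(1), so since $x^{\sharp}$ is order-preserving, $x^{\sharp}(\alpha)=\max(\mathrm{crit}[X])$ for all $\alpha\geq\gamma$, while $x^{\sharp}(\alpha)=\alpha$ for $\alpha<\gamma$ by Theorem \ref{2j1298ur28tye}(4). In particular, for a single element $c$ we get: $x*c\in\mathrm{Li}(X)$ iff $\mathrm{crit}(c)\geq\gamma$ — a criterion that mentions $x$ only through $\gamma$, hence agrees for $x$ and $y$.

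Now fix $b_{1},\ldots,b_{n}$ (all products being left-associated, per the paper's convention) and split into two cases, using that $\mathrm{crit}[X]$ is linearly ordered (Theorem \ref{2j1298ur28tye}(3)). If $\mathrm{crit}(b_{1}*\cdots*b_{m})<\gamma$ for every $m\leq n$, then in particular the critical-point clause in the hypothesis of Lemma \ref{32u8r9} holds for $1\leq m<n$, so $x*b_{1}*\cdots*b_{n}=x*(b_{1}*\cdots*b_{n})$; since $\mathrm{crit}(b_{1}*\cdots*b_{n})<\gamma$, the observation of the previous paragraph gives $x*b_{1}*\cdots*b_{n}\notin\mathrm{Li}(X)$, and the identical computation gives $y*b_{1}*\cdots*b_{n}\notin\mathrm{Li}(X)$. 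Otherwise, let $m$ be least with $\mathrm{crit}(b_{1}*\cdots*b_{m})\geq\gamma$; by minimality $\mathrm{crit}(b_{1}*\cdots*b_{m'})<\gamma=\mathrm{crit}(x)$ for $m'<m$, so Lemma \ref{32u8r9} yields $x*b_{1}*\cdots*b_{m}=x*(b_{1}*\cdots*b_{m})$, which lies in $\mathrm{Li}(X)$ by the observation. Consequently $x*b_{1}*\cdots*b_{n}$ equals $b_{m+1}*\cdots*b_{n}$ when $m<n$ and lies in $\mathrm{Li}(X)$ when $m=n$; and exactly the same conclusion holds verbatim for $y$. In every case the $x$-statement and the $y$-statement coincide, which completes the proof. (If $\gamma=\max(\mathrm{crit}[X])$, i.e. $x,y\in\mathrm{Li}(X)$, the same analysis still applies, or one notes at once that $x,y$ act as left-identities.)

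The step that needs the most care is checking that the hypotheses of Lemma \ref{32u8r9} are met precisely as stated, since this is what licenses collapsing the unparenthesized iterate $x*b_{1}*\cdots*b_{m}$ to the one-variable expression $x*(b_{1}*\cdots*b_{m})$; once that collapse is available, the fact that $x^{\sharp}$ for involutive $x$ depends only on $\mathrm{crit}(x)$ does the rest. The only real subtlety is the left-association bookkeeping in the tail $b_{m+1}*\cdots*b_{n}$, which is routine.
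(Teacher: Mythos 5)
Your proof is correct and relies on the same core machinery as the paper's — namely Lemma \ref{32u8r9} together with the computation $\mathrm{crit}(a*x)=a^{\sharp}(\mathrm{crit}(x))=a^{\sharp}(\mathrm{crit}(y))=\mathrm{crit}(a*y)$. The organization differs slightly: the paper reformulates $\simeq_{cmx}$ as agreement of the statements ``$(a*x)*a_{1}*\cdots*a_{m}\notin\mathrm{Li}(X)$ for all $m\leq n$'' so as to sidestep any case analysis, while you absorb the prefix $a$, observe explicitly that $x^{\sharp}$ is determined by $\mathrm{crit}(x)$ for involutive $x$, and then split on whether the iterated product reaches $\mathrm{Li}(X)$ — but these are the same argument in substance.
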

\begin{proof}
Suppose that $x,y\in\mathcal{I}_{X}$ and $\mathrm{crit}(x)=\mathrm{crit}(y)$. Then it suffices to show that
whenever $a,a_{1},\ldots,a_{n}\in X$, we have 
\[(a*x)*a_{1}*\ldots*a_{m}\not\in \mathrm{Li}(X)\]
whenever $1\leq m\leq n$ if and only if
\[(a*y)*a_{1}*\ldots*a_{m}\not\in \mathrm{Li}(X)\]
whenever $1\leq m\leq n$.

Suppose that 
\[(a*x)*a_{1}*\ldots*a_{m}\not\in \mathrm{Li}(X)\]
whenever $1\leq m\leq n$. Then since $x,y\in\mathcal{I}_{X}$, we have
$a*x,a*y\in\mathcal{I}_{X}$ as well.

If
\[(a*x)*a_{1}*\ldots*a_{m}\not\in \mathrm{Li}(X)\]
whenever $1\leq m\leq n$, then by Lemma \ref{32u8r9},
\[a*x*a_{1}*\ldots*a_{m}=a*x*(a_{1}*\ldots*a_{m}),\]
so
\[\mathrm{crit}(a_{1}*\ldots*a_{m})<\mathrm{crit}(a*x)\]
for $1\leq m\leq n$.
Therefore, we have 
\[\mathrm{crit}(a_{1}*\ldots*a_{m})<\mathrm{crit}(a*y)\]
for $1\leq m\leq n$, so again by Lemma \ref{32u8r9}, we conclude that
\[(a*y)*a_{1}*\ldots*a_{m}=a*y*(a_{1}*\ldots*a_{m})\not\in \mathrm{Li}(X)\]
for $1\leq m\leq n$. 

By the same argument, if
\[(a*y)*a_{1}*\ldots*a_{m}\not\in \mathrm{Li}(X)\]
whenever $1\leq m\leq n$, then
\[(a*x)*a_{1}*\ldots*a_{m}\not\in \mathrm{Li}(X)\] whenever $1\leq m\leq n$.
\end{proof}
\begin{prop}
Let $X$ be a permutative LD-system. Then $x\in\mathcal{I}_{X}$ if and only if $x$ is a maximal element in $\{y\in X\mid \mathrm{crit}(y)=\mathrm{crit}(x)\}$.
\end{prop}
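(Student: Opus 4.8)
The plan is to treat the two directions separately, after clearing the degenerate case $x\in\mathrm{Li}(X)$: then $\mathrm{crit}(x)$ is the top of $\mathrm{crit}[X]$, the set $\{y:\mathrm{crit}(y)=\mathrm{crit}(x)\}$ is exactly $\mathrm{Li}(X)\subseteq\mathcal{I}_X$, and no chain $y=x*a_1*\cdots*a_n$ with $n\ge1$ and all proper prefixes outside $\mathrm{Li}(X)$ can start inside $\mathrm{Li}(X)$, so both sides hold vacuously. From here on $x\notin\mathrm{Li}(X)$, so $\gamma:=\mathrm{crit}(x)<\max(\mathrm{crit}[X])$, and $S:=\{y\in X:\mathrm{crit}(y)=\gamma\}$. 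The observation driving everything is that $x\in\mathcal{I}_X$ is the same as $x^{\sharp}(\gamma)=\mathrm{crit}(x*x)=\max(\mathrm{crit}[X])$, which, by Theorem~\ref{2j1298ur28tye} and monotonicity of $x^{\sharp}$, forces $x^{\sharp}$ to be constantly $\max(\mathrm{crit}[X])$ on $\{\alpha\ge\gamma\}$ and the identity on $\{\alpha<\gamma\}$; so, for $x\in\mathcal{I}_X$, $x*a\in\mathrm{Li}(X)$ when $\mathrm{crit}(a)\ge\gamma$ and $\mathrm{crit}(x*a)=\mathrm{crit}(a)$ otherwise.

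For $(\Rightarrow)$, assume $x\in\mathcal{I}_X$ and let $x\preceq z$ be witnessed by $z=x*a_1*\cdots*a_n$, with $z_m:=x*a_1*\cdots*a_m\notin\mathrm{Li}(X)$ for $m<n$. I would show by induction on $m\ge1$ that either $z_m\in\mathrm{Li}(X)$ (which, by validity of the chain, means $m=n$) or else $\mathrm{crit}(z_m)<\gamma$ and $z_m*x\in\mathrm{Li}(X)$. The base case $m=1$ is exactly Lemma~\ref{t24jirho}: if $\mathrm{crit}(a_1)\ge\gamma$ then $z_1\in\mathrm{Li}(X)$, and otherwise $\mathrm{crit}(z_1)=\mathrm{crit}(a_1)<\gamma$ and $z_1*x=x*a_1*x\in\mathrm{Li}(X)$. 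For the step, with $z_{m+1}=z_m*a_{m+1}\notin\mathrm{Li}(X)$, the hypothesis $z_m*x\in\mathrm{Li}(X)$ first forces $\mathrm{crit}(a_{m+1})<\gamma$ (else $z_m^{\sharp}(\mathrm{crit}(a_{m+1}))\ge z_m^{\sharp}(\gamma)=\max(\mathrm{crit}[X])$), hence $\mathrm{crit}(z_{m+1})=z_m^{\sharp}(\mathrm{crit}(a_{m+1}))<\gamma$; the remaining claim $z_{m+1}*x\in\mathrm{Li}(X)$ I would deduce from the collapsing identity $z_m*(x*u)=(z_m*x)*(z_m*u)=z_m*u$ (valid because $z_m*x\in\mathrm{Li}(X)$), which lets us rewrite $z_{m+1}=z_m*(x*a_{m+1})$ with $(x*a_{m+1})*x\in\mathrm{Li}(X)$ by Lemma~\ref{t24jirho}, together with one more use of self-distributivity. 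With the induction in hand, $\mathrm{crit}(z_m)\neq\gamma$ for every $m\ge1$, so no element of $S$ other than $x$ satisfies $x\preceq z$; i.e.\ $x$ is maximal — in fact the maximum — of $S$.

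For $(\Leftarrow)$ I go by contraposition: assume $x\notin\mathcal{I}_X$ and exhibit $z\in S$ with $x\prec z$. From the identity $x^{[k+1]}=x^{[k]}*x^{[k]}$ (immediate by induction using self-distributivity) the right-power sequence $x=x^{[1]},x^{[2]},\dots$ is a valid $\preceq$-chain; with $s$ least such that $x^{[s]}\in\mathrm{Li}(X)$ (finite by permutativity and Lemma~\ref{43hifsd}) one gets $s\ge3$ since $x^{[2]}=x*x\notin\mathrm{Li}(X)$, the element $r:=x^{[s-1]}$ satisfies $r*r=x^{[s]}\in\mathrm{Li}(X)$ so $r\in\mathcal{I}_X$, and $\eta:=\mathrm{crit}(r)>\gamma$ because the critical points along a right-power sequence are non-decreasing and $\mathrm{crit}(x^{[2]})=x^{\sharp}(\gamma)>\gamma$. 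Pick an involutive element $c$ with $\mathrm{crit}(c)=\gamma$ (such elements exist — this is exactly what makes $\equiv^{\gamma}$ well defined). Then $\mathrm{crit}(r*c)=r^{\sharp}(\gamma)=\gamma$, so $r*c\in S$, and $(r*c)*(r*c)=r*(c*c)\in\mathrm{Li}(X)$ since $c*c\in\mathrm{Li}(X)$, so $r*c\in\mathcal{I}_X$; in particular $r*c\neq x$. Since also $x=x^{[1]}\preceq x^{[s-1]}=r\preceq r*c$, the element $r*c\in S$ lies strictly above $x$, so $x$ is not maximal in $S$.

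The two points I expect to cost the most work are the propagation of the invariant $z_m*x\in\mathrm{Li}(X)$ in $(\Rightarrow)$ — in the non-reduced case this needs a little bookkeeping with $\equiv^{\gamma}$, using that $z_m*x\in\mathrm{Li}(X)$ is equivalent to ``$z_m$ collapses $\equiv^{\gamma}$'' — and the input that every critical point is realized by an involutive element; the latter already follows from $(\Rightarrow)$ once the critical class in question has a $\preceq$-maximal element, so I would either cite it from the setup of $\equiv^{\gamma}$ or record it as a brief preliminary lemma.
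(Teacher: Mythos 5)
Your $(\Leftarrow)$ direction is correct but considerably heavier than the paper's: the paper simply notes that $x*x\notin\mathrm{Li}(X)$ gives $x\prec x*x*x$, and $\mathrm{crit}(x*x*x)=(x*x)^{\sharp}(\mathrm{crit}(x))=\mathrm{crit}(x)$ since $\mathrm{crit}(x)<\mathrm{crit}(x*x)$, so $x*x*x$ is already the required strictly larger element of $S$. Your tour through the right-power sequence and an auxiliary involutive $c$ with $\mathrm{crit}(c)=\gamma$ isn't needed; moreover the existence of such a $c$ is not explicitly established in the paper (the definition of $\equiv^{\gamma}$ is stated conditionally), so it would be a debt you'd have to pay. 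Even inside your own argument you could drop $c$ and use $r*x$: from $x\prec r\preceq r*x$ and antisymmetry of $\preceq$, $r*x\ne x$.

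The $(\Rightarrow)$ direction has a genuine gap in the inductive step. You claim that $\mathrm{crit}(z_{m+1})=z_m^{\sharp}(\mathrm{crit}(a_{m+1}))<\gamma$ follows from $\mathrm{crit}(a_{m+1})<\gamma$ together with the invariant $z_m^{\sharp}(\gamma)=\max(\mathrm{crit}[X])$ and $\mathrm{crit}(z_m)<\gamma$. It does not: nothing in those hypotheses prevents $z_m^{\sharp}$ from mapping some $\alpha<\gamma$ into $[\gamma,\max(\mathrm{crit}[X]))$. For a concrete instance of the failure of this abstract inference, take $z=4$ in $A_4$ with $\gamma=\mathrm{crit}(8)$: $z^{\sharp}(\gamma)=\max$, $\mathrm{crit}(z)<\gamma$, yet $z^{\sharp}(\mathrm{crit}(4))=\mathrm{crit}(8)=\gamma$. (This particular $z$ cannot actually be a $z_m$ in your chain, but the point is that your stated invariant doesn't rule it out.) Likewise, your sketch of the propagation of $z_{m+1}*x\in\mathrm{Li}(X)$ — rewrite $z_{m+1}=z_m*(x*a_{m+1})$, invoke $(x*a_{m+1})*x\in\mathrm{Li}(X)$, apply self-distributivity — only produces $z_{m+1}*(z_m*x)\in\mathrm{Li}(X)$, which is vacuous since $z_m*x\in\mathrm{Li}(X)$ already. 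What actually makes both steps go through, and what the paper uses, is the stronger fact from Lemma~\ref{32u8r9}: $z_m=x*(a_1*\ldots*a_m)$. Once $z_m$ is known to lie in the image of $L_x$, the range of $z_m^{\sharp}$ is forced into $\{\alpha:\alpha<\gamma\}\cup\{\max\}$ (because $x^{\sharp}$ already has that range), and $z_m*x=(x*w_m)*x\in\mathrm{Li}(X)$ is immediate from Lemma~\ref{t24jirho}. Your induction is essentially trying to re-derive a weaker consequence of Lemma~\ref{32u8r9} with an invariant that isn't self-propagating; replacing the invariant by ``$z_m$ lies in the image of $L_x$'' fixes the proof, but at that point you are doing Lemma~\ref{32u8r9} and might as well cite it, as the paper does.
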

\begin{proof}
$\leftarrow$. Suppose that $x\not\in\mathcal{I}_{X}$. Then $x*x\not\in \mathrm{Li}(X)$, so $x\prec x*x*x$, but $\mathrm{crit}(x*x*x)=\mathrm{crit}(x)$. Therefore
$x$ is not maximal in $\{y\in X\mid \mathrm{crit}(y)=\mathrm{crit}(x)\}$.

$\rightarrow$. Suppose that $x\in\mathcal{I}_{X}$ but $x$ is not maximal in $\{y\in X\mid \mathrm{crit}(y)=\mathrm{crit}(x)\}$. Then
$x\prec y$ for some $y$ with $\mathrm{crit}(x)=\mathrm{crit}(y)$. Therefore, there are $a_{1},\ldots,a_{n}$ with $x*a_{1}*\ldots*a_{n}=y$ but where
$x*a_{1}*\ldots*a_{m}\not\in \mathrm{Li}(X)$ whenever $1\leq m<n$.

Therefore, by Lemma \ref{32u8r9}, we have 
\[y=x*a_{1}*\ldots*a_{n}=x*(a_{1}*\ldots*a_{n}).\]
Therefore,
\[\mathrm{crit}(x)=\mathrm{crit}(y)=x^{\sharp}(\mathrm{crit}(a_{1}*\ldots*a_{n})),\]
which is impossible.
\end{proof}

% ----------------------------------------------------------------
\section{Multigenic Laver tables}
In this chapter, we shall begin to investigate multigenic Laver tables, and we shall also introduce the notion of a Laver-like LD-system.

Let $A$ be a set. Let \index{$A^{+}$}$A^{+}$ be the set of all non-empty words from the alphabet $A$. We shall use lowercase letters at the beginning of the alphabet such as $a,b,c,d$ to denote elements in the alphabet $A$ while we use boldface letters near the end of the alphabet such as $\mathbf{x},\mathbf{y},\mathbf{z}$ to denote strings. We shall write $|\mathbf{x}|$\index{$|\mathbf{x}|$} for
the length of the string $\mathbf{x}$ (for example, $|abcde|=5$). We shall let \index{$\preceq$}$\preceq$ denote the prefix ordering on $A^{+}$. For example, $abc\preceq abcde$. We shall let the concatenation operation precede other operations on strings when there are no parentheses. For example, $\mathbf{x}*\mathbf{y}a=\mathbf{x}*(\mathbf{y}a)$ and $\mathbf{x}\circ\mathbf{y}a=\mathbf{x}\circ(\mathbf{y}a)$.
\begin{defn}
Let $A$ be a finite set, and let $L\subseteq A^{+}$ be a finite subset which is downwards closed with respect to the prefix ordering $\preceq$. Suppose $M=\{\mathbf{x}a\mid\mathbf{x}\in L\}\cup A$ and $F=M\setminus L$.
We define a binary operation $*^{M}$ on $M$ by a double induction which is descending on $\mathbf{x}$ and for all $\mathbf{x}$ the induction will be ascending on $\mathbf{y}$ according to the following rules.
\begin{enumerate}
\item If $\mathbf{x}\in F$, then $\mathbf{x}*^{M}\mathbf{y}=\mathbf{y}$.

\item If $\mathbf{x}\in L$, then $\mathbf{x}*^{M}b=\mathbf{x} b$ whenever $b\in A$.

\item If $\mathbf{x},\mathbf{y}\in L$, then $\mathbf{x}*^{M}\mathbf{y} b=(\mathbf{x}*^{M}\mathbf{y})*^{M}\mathbf{x}b$ whenever $\mathbf{x}*^{M}\mathbf{y}$ has been defined previously
in the induction and where $(\mathbf{x}*^{M}\mathbf{y})*^{M}\mathbf{x} b$ has been previously defined in the induction process. Otherwise, we leave $\mathbf{x}*^{M}\mathbf{y} b$ undefined.
\end{enumerate}
The algebra $(M,*^{M})$ shall be known as a finite \index{pre-multigenic Laver table}\emph{pre-multigenic Laver table} over the alphabet $A$. Define \index{$\mathrm{Li}(M)$}$\mathrm{Li}(M)=F$.
\end{defn}

The proof of Theorems \ref{t4uh4aP} and \ref{m43ti0} are proven using the same sort of induction that is needed to show that the classical Laver tables are well-defined and self-distributive.

\begin{prop}
Let $(M,*)$ be a pre-multigenic Laver table. Then the operation $*$ is defined everywhere. Furthermore,
\begin{enumerate}[(i)]
\item $\mathbf{y}$ and $\mathbf{x}*\mathbf{y}$ both have the same last element,

\item $\mathbf{x}*\mathbf{z} b=\mathbf{x} b$ whenever $\mathbf{x}\in M\setminus\mathrm{Li}(M)$ and $\mathbf{x}*\mathbf{z}\in \mathrm{Li}(M)$,

\item $\mathbf{x}*\mathbf{z} b$ has $\mathbf{x}*\mathbf{z}$ as a proper initial segment whenever $\mathbf{x}*\mathbf{z}\in M\setminus\mathrm{Li}(M)$,

\item $\mathbf{x}$ is a proper initial segment of $\mathbf{x}*\mathbf{y}$ whenever $\mathbf{x}\in M\setminus\mathrm{Li}(M)$, and

\item the element $\mathbf{x}*\mathbf{y}$ only contains letters which are already contained in the word $\mathbf{x}\mathbf{y}$.
\end{enumerate}
\label{t4uh4aP}
\end{prop}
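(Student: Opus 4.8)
The plan is to establish total definedness of $*$ together with (i)--(v) by a single well-founded induction realizing the ``descending on $\mathbf{x}$, ascending on $\mathbf{y}$'' scheme of the definition. First I record two easy structural facts: $L\subseteq M$ — a length-$1$ word in $L$ lies in $A\subseteq M$, and a word $\mathbf{z}b\in L$ of length $\geq 2$ has $\mathbf{z}\in L$ by downward closure, so $\mathbf{z}b\in\{\mathbf{u}a\mid\mathbf{u}\in L\}\subseteq M$ — so that $M=L\sqcup F$ and $\mathrm{Li}(M)=F$; and any $\mathbf{w}\in M$ with $|\mathbf{w}|\geq 2$ has the form $\mathbf{z}b$ with $\mathbf{z}\in L$. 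Setting $N=\max\{|\mathbf{w}|\mid\mathbf{w}\in M\}$, I assign to each pair $(\mathbf{x},\mathbf{y})\in M\times M$ the rank $\rho(\mathbf{x},\mathbf{y})=(N-|\mathbf{x}|,\,|\mathbf{y}|)$, ordered lexicographically, and prove by induction on $\rho$ that $\mathbf{x}*\mathbf{y}$ is defined, lies in $M$, and satisfies (i)--(v).

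When $\mathbf{x}\in F$ (clause 1, $\mathbf{x}*\mathbf{y}=\mathbf{y}$) and when $\mathbf{x}\in L$ with $\mathbf{y}=b\in A$ (clause 2, $\mathbf{x}*\mathbf{y}=\mathbf{x}b$) there is no recursion: the value lies in $M$, (i), (iii), (v) are checked by inspection, and (ii), (iv) are vacuous or immediate. The substantive case is $\mathbf{x}\in L$ and $\mathbf{y}=\mathbf{y}'b$ with $|\mathbf{y}|\geq 2$; then $\mathbf{y}'\in L$, clause 3 applies, and $\mathbf{x}*\mathbf{y}=(\mathbf{x}*\mathbf{y}')*\mathbf{x}b$. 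The pair $(\mathbf{x},\mathbf{y}')$ has strictly smaller rank, so inductively $\mathbf{x}*\mathbf{y}'$ is defined, lies in $M$, and, since $\mathbf{x}\in M\setminus\mathrm{Li}(M)$, by (iv) satisfies $\mathbf{x}\prec\mathbf{x}*\mathbf{y}'$, hence $|\mathbf{x}*\mathbf{y}'|>|\mathbf{x}|$; consequently $(\mathbf{x}*\mathbf{y}',\mathbf{x}b)$ — with $\mathbf{x}*\mathbf{y}'\in M$ by induction and $\mathbf{x}b\in M$ because $\mathbf{x}\in L$ — has strictly smaller first rank coordinate, so inductively $(\mathbf{x}*\mathbf{y}')*\mathbf{x}b$ is defined and lies in $M$; thus $\mathbf{x}*\mathbf{y}$ is defined. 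Now (i) and (v) for $\mathbf{x}*\mathbf{y}$ follow by applying the inductive (i), (v) to $(\mathbf{x}*\mathbf{y}',\mathbf{x}b)$ (the last letter of $\mathbf{x}b$ is $b$, the last letter of $\mathbf{y}$; and every letter of $\mathbf{x}*\mathbf{y}'$ occurs in $\mathbf{x}\mathbf{y}'$). For (iv): if $\mathbf{x}*\mathbf{y}'\in F$ then $(\mathbf{x}*\mathbf{y}')*\mathbf{x}b=\mathbf{x}b\succ\mathbf{x}$ by clause 1; if $\mathbf{x}*\mathbf{y}'\in L$ then the inductive (iv) for $(\mathbf{x}*\mathbf{y}',\mathbf{x}b)$ gives $\mathbf{x}*\mathbf{y}'\prec(\mathbf{x}*\mathbf{y}')*\mathbf{x}b=\mathbf{x}*\mathbf{y}$, so $\mathbf{x}\prec\mathbf{x}*\mathbf{y}'\prec\mathbf{x}*\mathbf{y}$. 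Finally (ii) is the instance of clause 3 with $\mathbf{x}*\mathbf{z}\in F$, where $(\mathbf{x}*\mathbf{z})*\mathbf{x}b=\mathbf{x}b$ by clause 1; and (iii) follows from (iv) applied to $(\mathbf{x}*\mathbf{z},\mathbf{x}b)$ when $\mathbf{x}\in L$, and trivially when $\mathbf{x}\in\mathrm{Li}(M)$.

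The one genuinely delicate point is the circular-looking dependence between ``$*$ is totally defined'' and property (iv): one cannot see that clause 3 terminates without knowing that $\mathbf{x}*\mathbf{y}'$ is a proper extension of $\mathbf{x}$ — so that its products sit at strictly smaller rank — and that is exactly (iv) for the smaller pair $(\mathbf{x},\mathbf{y}')$. Carrying definedness and (i)--(v) simultaneously through the induction on $\rho$ breaks the circle, since every fact used about $\mathbf{x}*\mathbf{y}'$ or about $(\mathbf{x}*\mathbf{y}')*\mathbf{x}b$ concerns a pair of strictly smaller rank. I expect the only real work to be the bookkeeping that verifies, at each appeal to the inductive hypothesis — particularly the call $(\mathbf{x}*\mathbf{y}',\mathbf{x}b)$ — that $\rho$ genuinely decreases; the rest is routine case-checking against the three defining clauses.
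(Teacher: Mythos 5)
Your proof is correct and follows essentially the same route as the paper: a simultaneous induction carrying total definedness together with (i)--(v), organized as decreasing on $|\mathbf{x}|$ and, for each $\mathbf{x}$, increasing on $|\mathbf{y}|$, with the key step being exactly the one you identify — that (iv) applied to $(\mathbf{x},\mathbf{y}')$ guarantees $|\mathbf{x}*\mathbf{y}'|>|\mathbf{x}|$, so the nested call $(\mathbf{x}*\mathbf{y}')*\mathbf{x}b$ sits at strictly smaller rank. Your only addition is the explicit lexicographic rank $\rho$; the paper states the double-induction scheme informally but uses the identical descent.
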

\begin{proof}
We shall prove this proposition for each $\mathbf{x}*\mathbf{y}$ by a double induction which is decreasing on $\mathbf{x}$, and for each $\mathbf{x}$, we shall proceed by an increasing induction on $\mathbf{y}$. The case where either $\mathbf{x}\in \mathrm{Li}(M)$ or $\mathbf{x}\not\in \mathrm{Li}(M),\mathbf{y}\in A$ follow directly from the definition of $*$. Assume therefore that $\mathbf{x}\not\in \mathrm{Li}(M),\mathbf{y}\not\in A,\mathbf{y}=\mathbf{z} b$. In this case, $\mathbf{x}*\mathbf{z}$ has been defined already by the induction hypothesis, and $\mathbf{x}$ is a proper prefix of $\mathbf{x}*\mathbf{z}$. Therefore, again by the induction hypothesis, $(\mathbf{x}*\mathbf{z})*\mathbf{x} b$ has been defined already. Therefore, $\mathbf{x}*\mathbf{y}$ is defined to be $(\mathbf{x}*\mathbf{z})*\mathbf{x} b$.
\begin{enumerate}[(i)]
\item $\mathbf{x}*\mathbf{z} b=(\mathbf{x}*\mathbf{z})*\mathbf{x} b$ has $b$ as its last element by the inductive hypothesis.

\item If $\mathbf{x}*\mathbf{z}\in \mathrm{Li}(M)$, then $\mathbf{x}*\mathbf{z} b=(\mathbf{x}*\mathbf{z})*\mathbf{x} b=\mathbf{x} b$.

\item If $\mathbf{x}*\mathbf{z}\in \mathrm{Li}(M)$, then $\mathbf{x}*\mathbf{z} b=(\mathbf{x}*\mathbf{z})*\mathbf{x} b$ has $\mathbf{x}*\mathbf{z}$ as a proper initial segment.

\item If $\mathbf{x}*\mathbf{z}\in \mathrm{Li}(M)$, then $\mathbf{x}*\mathbf{y}=\mathbf{x}*\mathbf{z} b=\mathbf{x} b$ which has $\mathbf{x}$ as a proper initial segment. If $\mathbf{x}*\mathbf{z}\in M\setminus \mathrm{Li}(M)$, then $\mathbf{x}*\mathbf{y}$ has $\mathbf{x}*\mathbf{z}$ as a proper initial segment. However,
$\mathbf{x}*\mathbf{z}$ has $\mathbf{x}$ as a proper initial segment.

\item We have $\mathbf{x}*\mathbf{y}=\mathbf{x}*\mathbf{z}b=(\mathbf{x}*\mathbf{z})*\mathbf{x}b$. The word, $\mathbf{x}*\mathbf{y}$ only contains
letters already in the words $\mathbf{x}*\mathbf{z}$  and $\mathbf{x}b$. However, the word $\mathbf{x}*\mathbf{z}$ only contains letters which are already present in $\mathbf{x}$ or $\mathbf{z}$. Therefore, $\mathbf{x}*\mathbf{y}$ only contains letters which are already present in
$\mathbf{x},\mathbf{z}$ or b.
\end{enumerate}
\end{proof}
\begin{prop}
If $\mathbf{z}\in M\setminus\mathrm{Li}(M),\mathbf{x}\in M$, then there is some $\mathbf{w}\in M\setminus\mathrm{Li}(M)$ such that $\mathbf{x}*\mathbf{z}b=\mathbf{w}b$ for all $b\in A$.

\label{t4jio}
\end{prop}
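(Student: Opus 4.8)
The plan is to prove this by induction on $\mathbf{x}$ along the strict partial order on the finite set $M$ in which $\mathbf{u}$ counts as "smaller" than $\mathbf{x}$ precisely when $\mathbf{x}$ is a proper initial segment of $\mathbf{u}$. This is well-founded in the direction we need (word-lengths in $M$ are bounded), and it is exactly the order governing the descending half of the double induction that defines $*^{M}$. Throughout I will use the identifications $\mathrm{Li}(M)=F$ and $M\setminus\mathrm{Li}(M)=L$ together with Proposition \ref{t4uh4aP}; since the hypothesis gives $\mathbf{z}\in M\setminus\mathrm{Li}(M)=L$, the word $\mathbf{z}b$ lies in $\{\mathbf{x}a\mid\mathbf{x}\in L\}\subseteq M$ for every $b\in A$, and likewise $\mathbf{x}b\in M$ when $\mathbf{x}\in L$, so every product written below is a legitimate element of $M$ and is defined (Proposition \ref{t4uh4aP}).

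First I would dispose of the case $\mathbf{x}\in\mathrm{Li}(M)$: here the first rule gives $\mathbf{x}*^{M}\mathbf{z}b=\mathbf{z}b$ for every $b$, so $\mathbf{w}:=\mathbf{z}$ works, and $\mathbf{z}\in M\setminus\mathrm{Li}(M)$ by hypothesis. So assume $\mathbf{x}\in L=M\setminus\mathrm{Li}(M)$. Since $\mathbf{z}\in L$ as well, the third rule applies and $\mathbf{x}*^{M}\mathbf{z}b=(\mathbf{x}*^{M}\mathbf{z})*^{M}\mathbf{x}b$ for every $b\in A$. Put $\mathbf{u}=\mathbf{x}*^{M}\mathbf{z}$. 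If $\mathbf{u}\in\mathrm{Li}(M)$, then the first rule gives $\mathbf{u}*^{M}\mathbf{x}b=\mathbf{x}b$, hence $\mathbf{x}*^{M}\mathbf{z}b=\mathbf{x}b$ and $\mathbf{w}:=\mathbf{x}\in M\setminus\mathrm{Li}(M)$ works. (This sub-case just recovers Proposition \ref{t4uh4aP}(ii).)

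The remaining case is $\mathbf{u}=\mathbf{x}*^{M}\mathbf{z}\in M\setminus\mathrm{Li}(M)$, and this is where the induction is used. By Proposition \ref{t4uh4aP}(iv), $\mathbf{x}$ is a proper initial segment of $\mathbf{u}$, so $\mathbf{u}$ is strictly above $\mathbf{x}$ in our order and the induction hypothesis applies to $\mathbf{u}$. I would apply it with the element "$\mathbf{z}$" of the statement taken to be $\mathbf{x}$ itself — legitimate precisely because $\mathbf{x}\in M\setminus\mathrm{Li}(M)$ — obtaining some $\mathbf{w}\in M\setminus\mathrm{Li}(M)$ with $\mathbf{u}*^{M}\mathbf{x}b=\mathbf{w}b$ for all $b\in A$. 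Combining, $\mathbf{x}*^{M}\mathbf{z}b=\mathbf{u}*^{M}\mathbf{x}b=\mathbf{w}b$ for all $b$, which completes the induction.

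The one genuine subtlety — the step I would flag as the crux — is the last one: recognizing that unwinding $\mathbf{x}*^{M}\mathbf{z}b$ once via the third rule replaces the left factor $\mathbf{x}$ by the strictly longer word $\mathbf{x}*^{M}\mathbf{z}$ while replacing the "variable-last-letter" factor $\mathbf{z}b$ by $\mathbf{x}b$, so that the induction hypothesis may be re-invoked with $\mathbf{x}$ (which lies in $M\setminus\mathrm{Li}(M)$, as required) playing the former role of $\mathbf{z}$. Everything else is bookkeeping with the defining clauses of $*^{M}$ and with Proposition \ref{t4uh4aP}; the only point that needs a quick check in each branch is that the witness produced actually lies in $M\setminus\mathrm{Li}(M)$, which it does in all three cases ($\mathbf{z}$, then $\mathbf{x}$, then the inductive $\mathbf{w}$).
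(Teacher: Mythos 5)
Your proof is correct and takes essentially the same route as the paper: a descending induction on $|\mathbf{x}|$ whose inductive step rewrites $\mathbf{x}*\mathbf{z}b$ as $(\mathbf{x}*\mathbf{z})*\mathbf{x}b$ and then applies the statement to the strictly longer word $\mathbf{x}*\mathbf{z}$ in the role of the first argument and to $\mathbf{x}$ itself in the role of $\mathbf{z}$ (which is legitimate since $\mathbf{x}\notin\mathrm{Li}(M)$ in that branch). The only difference is that you split off the sub-case $\mathbf{x}*\mathbf{z}\in\mathrm{Li}(M)$, which the paper doesn't bother to separate because the induction hypothesis covers it anyway; that's a stylistic choice, not a gap.
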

\begin{proof}
We shall prove this result by a descending induction on $|\mathbf{x}|$. The case where $\mathbf{x}\in\mathrm{Li}(M)$ follows from the fact that $\mathbf{x}*\mathbf{z}b=\mathbf{z}b$ for all $\mathbf{z}\in M\setminus\mathrm{Li}(M)$.

Now assume $\mathbf{x}\not\in\mathrm{Li}(M)$. Then by the induction hypothesis, there is some $\mathbf{w}$ such that $\mathbf{x}*\mathbf{z}b=(\mathbf{x}*\mathbf{z})*\mathbf{x} b=\mathbf{w}b$ for all $b\in B$.
\end{proof}
\begin{prop}
Let $M$ be a pre-multigenic Laver table. Suppose that $\mathbf{x},\mathbf{yz}\in M$
and $\mathbf{x}*\mathbf{y}\in\mathrm{Li}(M)$. Then
$\mathbf{x}*\mathbf{yz}=\mathbf{x}*\mathbf{z}$.
\label{G0Wuoeh2g2EHO2uo}
\end{prop}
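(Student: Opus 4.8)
The plan is to argue by induction on the length $|\mathbf{z}|$ of the suffix $\mathbf{z}$. (Implicit in the statement is that $\mathbf{z}\in M$, so that $\mathbf{x}*\mathbf{z}$ is defined; note that $\mathbf{y}\in M$ then comes for free, since $\mathbf{y}$ is a nonempty proper prefix of $\mathbf{y}\mathbf{z}\in M=\{\mathbf{w}a:\mathbf{w}\in L\}\cup A$ and $L$ is $\preceq$-downward closed.) Before starting the induction I would record that the case $\mathbf{x}\in\mathrm{Li}(M)$ is vacuous: if $\mathbf{x}\in F=\mathrm{Li}(M)$ then clause~(1) of the definition gives $\mathbf{x}*\mathbf{y}=\mathbf{y}$, so the hypothesis $\mathbf{x}*\mathbf{y}\in\mathrm{Li}(M)$ forces $\mathbf{y}\in\mathrm{Li}(M)=M\setminus L$; but since $|\mathbf{y}\mathbf{z}|\geq 2$ we may write $\mathbf{y}\mathbf{z}=\mathbf{w}a$ with $\mathbf{w}\in L$, whereupon $\mathbf{y}$ is a prefix of $\mathbf{w}$ and hence $\mathbf{y}\in L$ by downward closure --- a contradiction. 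So from now on $\mathbf{x}\in L$, i.e. $\mathbf{x}\notin\mathrm{Li}(M)$.

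For the base case $\mathbf{z}=c\in A$: clause~(2) gives $\mathbf{x}*c=\mathbf{x}c$, while Proposition~\ref{t4uh4aP}(ii), applied with $\mathbf{x}\notin\mathrm{Li}(M)$ and $\mathbf{x}*\mathbf{y}\in\mathrm{Li}(M)$, gives $\mathbf{x}*\mathbf{y}c=\mathbf{x}c$; hence $\mathbf{x}*\mathbf{y}\mathbf{z}=\mathbf{x}*\mathbf{z}$.

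For the inductive step, write $\mathbf{z}=\mathbf{z}'c$ with $\mathbf{z}'$ nonempty. Since $\mathbf{z}\in M$ has length at least $2$, we get $\mathbf{z}'\in L$; similarly $\mathbf{y}\mathbf{z}'\in L$ (it is the word ``$\mathbf{w}$'' in the representation $\mathbf{y}\mathbf{z}=\mathbf{w}a$), and $\mathbf{x}\in L$. Thus clause~(3) applies to both products:
\[
\mathbf{x}*\mathbf{z}=\mathbf{x}*\mathbf{z}'c=(\mathbf{x}*\mathbf{z}')*\mathbf{x}c,\qquad
\mathbf{x}*\mathbf{y}\mathbf{z}=\mathbf{x}*\mathbf{y}\mathbf{z}'c=(\mathbf{x}*\mathbf{y}\mathbf{z}')*\mathbf{x}c.
\]
The induction hypothesis applied to the shorter suffix $\mathbf{z}'$ --- whose side conditions $\mathbf{x}*\mathbf{y}\in\mathrm{Li}(M)$, $\mathbf{y}\mathbf{z}'\in M$ and $\mathbf{z}'\in M$ all hold --- yields $\mathbf{x}*\mathbf{y}\mathbf{z}'=\mathbf{x}*\mathbf{z}'$, and substituting into the two displayed identities gives $\mathbf{x}*\mathbf{y}\mathbf{z}=\mathbf{x}*\mathbf{z}$, completing the induction.

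I do not expect a serious obstacle: the real content is already packaged in Proposition~\ref{t4uh4aP}, and the argument is a short structural induction on $\mathbf{z}$. The only thing that needs genuine care is the bookkeeping about membership in $L$, in $F=\mathrm{Li}(M)$, and in $M$ --- one must repeatedly use that $L$ is $\preceq$-downward closed and that $M=\{\mathbf{w}a:\mathbf{w}\in L\}\cup A$ in order to know that the prefixes $\mathbf{x}$, $\mathbf{z}'$, $\mathbf{y}\mathbf{z}'$ lie in $L$ (so that clauses~(2), (3) and Proposition~\ref{t4uh4aP}(ii) genuinely apply) and that the relevant suffixes remain in $M$ (so that the recursion on $|\mathbf{z}|$ is legitimate).
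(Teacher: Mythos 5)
Your proof is correct and follows essentially the same route as the paper's: induction on $|\mathbf{z}|$, unwinding the recursive definition of $*$ in the base and inductive steps. The paper's version is more terse (it omits the bookkeeping about membership in $L$ versus $F$ and the observation that the case $\mathbf{x}\in\mathrm{Li}(M)$ is vacuous), but the substance is the same.
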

\begin{proof}
We shall prove this result by induction on the length of $\mathbf{z}$. If $\mathbf{z}=a,|a|=1$, then
\[\mathbf{x}*\mathbf{y}a=(\mathbf{x}*\mathbf{y})*\mathbf{x}a=\mathbf{x}a=\mathbf{x}*a.\]

Now suppose that $\mathbf{z}=\mathbf{w}a$ and $|\mathbf{w}|>0$. Then
\[\mathbf{x}*\mathbf{yw}a=(\mathbf{x}*\mathbf{yw})*\mathbf{x}a
=(\mathbf{x}*\mathbf{w})*\mathbf{x}a=\mathbf{x}*\mathbf{w}a.\]
\end{proof}
\begin{cor}
\label{10W44eh2g2EHO2uo}
Let $M$ be a pre-multigenic Laver table. Suppose that $\mathbf{x},\mathbf{y}_{1}\ldots \mathbf{y}_{n}\mathbf{z}\in M$ and $\mathbf{x}*\mathbf{y}_{1},\ldots ,\mathbf{x}*\mathbf{y}_{n}\in\mathrm{Li}(M)$. Then $\mathbf{x}*\mathbf{y}_{1}\ldots \mathbf{y}_{n}\mathbf{z}
=\mathbf{x}*\mathbf{z}$.
\end{cor}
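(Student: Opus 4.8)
The plan is to prove the corollary by a straightforward induction on $n$, at each step peeling off one word $\mathbf{y}_{i}$ by means of Proposition \ref{G0Wuoeh2g2EHO2uo}. The entire content is already packaged in that proposition; the corollary is merely its iteration, so I do not expect any genuine difficulty beyond a little bookkeeping.

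For the base case $n=1$ I would simply observe that the assertion $\mathbf{x}*\mathbf{y}_{1}\mathbf{z}=\mathbf{x}*\mathbf{z}$, under the hypothesis $\mathbf{x}*\mathbf{y}_{1}\in\mathrm{Li}(M)$, is exactly Proposition \ref{G0Wuoeh2g2EHO2uo} with $\mathbf{y}=\mathbf{y}_{1}$.

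For the inductive step, assuming the statement for $n-1$ and given $\mathbf{x},\mathbf{y}_{1}\ldots\mathbf{y}_{n}\mathbf{z}\in M$ with $\mathbf{x}*\mathbf{y}_{1},\ldots,\mathbf{x}*\mathbf{y}_{n}\in\mathrm{Li}(M)$, I would first apply Proposition \ref{G0Wuoeh2g2EHO2uo} with $\mathbf{y}_{1}$ playing the role of $\mathbf{y}$ and the tail $\mathbf{y}_{2}\ldots\mathbf{y}_{n}\mathbf{z}$ playing the role of $\mathbf{z}$ --- legitimate because $\mathbf{y}_{1}(\mathbf{y}_{2}\ldots\mathbf{y}_{n}\mathbf{z})=\mathbf{y}_{1}\ldots\mathbf{y}_{n}\mathbf{z}\in M$ and $\mathbf{x}*\mathbf{y}_{1}\in\mathrm{Li}(M)$ --- obtaining
\[\mathbf{x}*\mathbf{y}_{1}\ldots\mathbf{y}_{n}\mathbf{z}=\mathbf{x}*\mathbf{y}_{2}\ldots\mathbf{y}_{n}\mathbf{z}.\]
Then I would invoke the induction hypothesis on the $n-1$ words $\mathbf{y}_{2},\ldots,\mathbf{y}_{n}$ (which still satisfy $\mathbf{x}*\mathbf{y}_{i}\in\mathrm{Li}(M)$ for $2\le i\le n$) to conclude $\mathbf{x}*\mathbf{y}_{2}\ldots\mathbf{y}_{n}\mathbf{z}=\mathbf{x}*\mathbf{z}$, and chain the two equalities to finish.

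The only point requiring attention is the implicit claim that every product displayed above is defined, i.e. that each intermediate tail $\mathbf{y}_{i}\ldots\mathbf{y}_{n}\mathbf{z}$ is an element of $M$ on which $\mathbf{x}$ acts; this is a routine matter inherited from Proposition \ref{t4uh4aP} together with the standing hypothesis that the full word $\mathbf{y}_{1}\ldots\mathbf{y}_{n}\mathbf{z}$ belongs to $M$. So the hard part is essentially nonexistent here: it was discharged already in the proof of Proposition \ref{G0Wuoeh2g2EHO2uo}, and the corollary drops out by iteration.
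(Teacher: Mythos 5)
The paper states this corollary without a proof of its own, so there is nothing to compare against directly; your iterated application of Proposition \ref{G0Wuoeh2g2EHO2uo} is certainly the intended route, and the overall structure is fine. One point you wave away is not as routine as you suggest: your inductive step applies the IH to $\mathbf{y}_{2}\ldots\mathbf{y}_{n}\mathbf{z}$, which requires $\mathbf{y}_{2}\ldots\mathbf{y}_{n}\mathbf{z}\in M$, but you attribute this to Proposition \ref{t4uh4aP}, which is about the shape of $\mathbf{x}*\mathbf{y}$ and says nothing about membership of suffixes of a word in $M$; indeed $M$ is downward closed under the prefix ordering but need not be suffix-closed, so this needs a separate argument. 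A way to sidestep the issue entirely is to iterate Proposition \ref{G0Wuoeh2g2EHO2uo} from the other end: show by induction on $k$ that $\mathbf{x}*\mathbf{y}_{1}\ldots\mathbf{y}_{k}=\mathbf{x}*\mathbf{y}_{k}\in\mathrm{Li}(M)$, applying the proposition with $\mathbf{y}=\mathbf{y}_{1}\ldots\mathbf{y}_{k-1}$ and $\mathbf{z}=\mathbf{y}_{k}$ at each step, and then apply it once more with $\mathbf{y}=\mathbf{y}_{1}\ldots\mathbf{y}_{n}$ and $\mathbf{z}$; every string so used is either one of the $\mathbf{y}_{i}$ (in $M$ by hypothesis, since $\mathbf{x}*\mathbf{y}_{i}$ is asserted to lie in $\mathrm{Li}(M)$) or a prefix of $\mathbf{y}_{1}\ldots\mathbf{y}_{n}\mathbf{z}$ (in $M$ by downward closure), together with the single $\mathbf{z}$, which must be in $M$ anyway for the conclusion $\mathbf{x}*\mathbf{z}$ to make sense.
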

\begin{prop}
Let $M$ be a pre-multigenic Laver table over a finite alphabet $A$ and let $B\subseteq A$.
Let $M'=M\cap B^{+}$. Then whenever $\mathbf{x},\mathbf{y}\in M'$, we have $\mathbf{x}*^{M'}\mathbf{y}=\mathbf{x}*^{M}\mathbf{y}$.
\end{prop}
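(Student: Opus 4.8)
The plan is to recognize $M'$ as a pre-multigenic Laver table in its own right and then rerun the very same descending/ascending double induction that defines and governs $*^{M}$, checking that the two operations never diverge.

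First, I would verify that $L':=L\cap B^{+}$ is a finite subset of $B^{+}$ which is downwards closed under the prefix ordering: a prefix of a word over $B$ is again over $B$, and a prefix of a word in $L$ is again in $L$ since $L$ is downwards closed. Hence $(B,L')$ determines a pre-multigenic Laver table whose underlying set is $\{\mathbf{x}a\mid \mathbf{x}\in L',\ a\in B\}\cup B$. A short computation shows this set equals $M\cap B^{+}$: one has $\{\mathbf{x}a\mid \mathbf{x}\in L\}\cap B^{+}=\{\mathbf{x}a\mid \mathbf{x}\in L',\ a\in B\}$ (all letters of $\mathbf{x}a$ lie in $B$ iff $\mathbf{x}\in B^{+}$ and $a\in B$) and $A\cap B^{+}=B$. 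Therefore $M'=M\cap B^{+}$ genuinely is the pre-multigenic Laver table built from $(B,L')$, so $*^{M'}$ is well-defined and total by Proposition \ref{t4uh4aP} applied to $M'$, and moreover $\mathrm{Li}(M')=M'\setminus L'=(M\setminus L)\cap B^{+}=\mathrm{Li}(M)\cap B^{+}$. In particular, for $\mathbf{x}\in M'$ we get the equivalences $\mathbf{x}\in\mathrm{Li}(M')\iff\mathbf{x}\in\mathrm{Li}(M)$ and $\mathbf{x}\in L'\iff\mathbf{x}\in L$, which is what makes the three defining clauses of $*^{M'}$ and $*^{M}$ line up.

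Next, I would prove $\mathbf{x}*^{M'}\mathbf{y}=\mathbf{x}*^{M}\mathbf{y}$ for all $\mathbf{x},\mathbf{y}\in M'$ by induction descending on $\mathbf{x}$ (assuming the identity for every $\mathbf{x}'\in M'$ having $\mathbf{x}$ as a proper prefix, a legitimate induction because $M$ is finite) and, for fixed $\mathbf{x}$, by induction ascending on $\mathbf{y}$. If $\mathbf{x}\in\mathrm{Li}(M')$, then $\mathbf{x}\in\mathrm{Li}(M)$ and both products are $\mathbf{y}$ by clause (1). If $\mathbf{x}\in L'$ (hence $\mathbf{x}\in L$) and $\mathbf{y}=b\in B\subseteq A$, both products are $\mathbf{x}b$ by clause (2). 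Finally suppose $\mathbf{x}\in L'$ and $\mathbf{y}=\mathbf{z}b$ with $|\mathbf{z}|\geq 1$; since $\mathbf{y}\in M'$ has length $\geq 2$, we get $\mathbf{z}\in L'\subseteq M'$ and $\mathbf{x}b\in M'$. By the inner hypothesis $\mathbf{w}:=\mathbf{x}*^{M'}\mathbf{z}=\mathbf{x}*^{M}\mathbf{z}$, and by Proposition \ref{t4uh4aP}(iv) for $M$ the word $\mathbf{x}$ is a proper prefix of $\mathbf{w}$; since $\mathbf{w}\in M'$, the outer hypothesis applies to $\mathbf{w}$ and yields $\mathbf{w}*^{M'}(\mathbf{x}b)=\mathbf{w}*^{M}(\mathbf{x}b)$. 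Clause (3) for $M'$ and clause (3) for $M$ (whose ``previously defined'' side condition is vacuous since $*^{M}$ is total) then chain together:
\[\mathbf{x}*^{M'}\mathbf{y}=(\mathbf{x}*^{M'}\mathbf{z})*^{M'}(\mathbf{x}b)=\mathbf{w}*^{M'}(\mathbf{x}b)=\mathbf{w}*^{M}(\mathbf{x}b)=(\mathbf{x}*^{M}\mathbf{z})*^{M}(\mathbf{x}b)=\mathbf{x}*^{M}\mathbf{y}.\]

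The main obstacle is simply organizing the well-founded induction correctly: clause (3) replaces $\mathbf{x}$ by the \emph{longer} word $\mathbf{w}=\mathbf{x}*\mathbf{z}$, so the outer induction must run from long words down to short ones, and one must check that $\mathbf{w}$ returns to $M'$ — which it does, being a value of $*^{M'}$, or equivalently by Proposition \ref{t4uh4aP}(v) since $\mathbf{w}$ contains only letters of $\mathbf{x}\mathbf{z}\in B^{+}$. Everything else is routine comparison of the three defining clauses, made possible by $L'\subseteq L$, $B\subseteq A$, and $\mathrm{Li}(M')=\mathrm{Li}(M)\cap B^{+}$.
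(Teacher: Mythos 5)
The paper itself states this proposition without proof, so there is no ``paper's own proof'' to compare against; your task was to supply the missing argument, and you have done so correctly. Your proof is the natural one: you verify that $M'=M\cap B^{+}$ is itself the pre-multigenic Laver table determined by $L'=L\cap B^{+}$ over the alphabet $B$, that $\mathrm{Li}(M')=\mathrm{Li}(M)\cap B^{+}$ (so the clause selection is identical for the two tables whenever $\mathbf{x}\in M'$), and that $L'\subseteq L$, $B\subseteq A$, and the inductive value $\mathbf{w}=\mathbf{x}*^{M'}\mathbf{z}=\mathbf{x}*^{M}\mathbf{z}$ stays inside $M'$, so the three defining clauses for $*^{M'}$ and $*^{M}$ literally coincide on $M'$ under the outer-descending/inner-ascending double induction. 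This is exactly the induction scheme used in Proposition~\ref{t4uh4aP} and its neighbours, and your care about the well-foundedness (long words first, $\mathbf{w}$ properly extending $\mathbf{x}$, $M$ finite) is correct and is the point at which a sloppier write-up could go wrong. The proof is complete.
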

We shall now define pre-multigenic Laver tables over infinite alphabets.
\begin{defn}
Suppose now that $A$ is an infinite set, $L\subseteq A^{+}$ is downwards closed with respect to $\preceq$, $M=A\cup\{\mathbf{x}a\mid \mathbf{x}\in L,a\in A\}$, and $F=M\setminus L$. Then whenever $B,C$ are finite subsets of $A$ with
$B\subseteq C$, the algebra $(M\cap B^{+},*^{M\cap B^{+}})$ is a subalgebra of $(M\cap C^{+},*^{M\cap C^{+}})$. Define a binary operation $*^{M}$ on $M$ to be the union 
\[\bigcup_{B\in[A]^{<\omega}}*^{M\cap B^{+}}\]
where $[A]^{<\omega}$ denotes the collection of all finite subsets of $A$.  In other words, the algebra $(M,*^{M})$ is the direct limit of the algebras $(M\cap B^{+},*^{M\cap B^{+}})$ where $B$ ranges over all finite subsets of $A$. The algebra
$(M,*^{M})$ shall be called an \index{$\mathrm{crit}(x)$} infinite pre-multigenic Laver table.
\end{defn}
It is easy to see that theorems \ref{t4uh4aP} and \ref{t4jio} still hold for pre-multigenic Laver tables when $A$ is infinite.

\begin{thm}
Let $(M,*)$ be a pre-multigenic Laver table. Then the operation $*$ is self-distributive if and only if $\mathbf{x}*\mathbf{y}\in\mathrm{Li}(M)$ whenever $\mathbf{x}\in M,\mathbf{y}\in\mathrm{Li}(M)$.
\label{m43ti0}
\end{thm}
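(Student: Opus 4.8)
The plan is to prove both directions. The direction ($\Rightarrow$) is the easy one: if $*$ is self-distributive and $\mathbf{y}\in\mathrm{Li}(M)$, then for every $\mathbf{x}\in M$ and every $c\in A$ we must have $(\mathbf{x}*\mathbf{y})*(\mathbf{x}*c) = \mathbf{x}*(\mathbf{y}*c) = \mathbf{x}*c$, so $\mathbf{x}*\mathbf{y}$ acts as a left-identity on every element of the form $\mathbf{x}*c$. By Proposition \ref{t4uh4aP}(iv) the elements $\mathbf{x}*c$ range over (in particular) enough of $M$ — more carefully, if $\mathbf{x}*\mathbf{y}\notin\mathrm{Li}(M)$ then by Proposition \ref{t4uh4aP}(iii) the word $(\mathbf{x}*\mathbf{y})*\mathbf{z}c$ properly extends $\mathbf{x}*\mathbf{y}$ for any relevant $\mathbf{z}$, so $\mathbf{x}*\mathbf{y}$ cannot be a left-identity for all elements of $M$, contradicting the displayed identity once we pick $\mathbf{x},c$ so that $\mathbf{x}*c$ is not fixed by $\mathbf{x}*\mathbf{y}$. (One should check that such a pair exists whenever $\mathbf{x}*\mathbf{y}\in L$; this follows from the defining rule $\mathbf{x}*\mathbf{y}$ is nonempty and from rule (2), since $(\mathbf{x}*\mathbf{y})*b = (\mathbf{x}*\mathbf{y})b \neq b$.)

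The substantive direction is ($\Leftarrow$): assuming $\mathbf{x}*\mathbf{y}\in\mathrm{Li}(M)$ for all $\mathbf{x}\in M,\mathbf{y}\in\mathrm{Li}(M)$, prove $\mathbf{x}*(\mathbf{y}*\mathbf{z}) = (\mathbf{x}*\mathbf{y})*(\mathbf{x}*\mathbf{z})$ for all $\mathbf{x},\mathbf{y},\mathbf{z}\in M$. First I would reduce to the infinite-alphabet case to the finite-alphabet case (or simply note both hypothesis and conclusion are local to the finitely many letters appearing, via the last displayed Proposition of the excerpt), so we may assume $A$ finite. Then I would run the same double induction used to define $*$: the outer induction descending on $\mathbf{x}$ (with respect to length, or the well-founded order implicit in the construction), and for fixed $\mathbf{x}$ an inner ascending induction on $\mathbf{z}$. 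The base cases handle $\mathbf{x}\in\mathrm{Li}(M)$ (both sides equal $\mathbf{y}*\mathbf{z}$, using the hypothesis to see $\mathbf{x}*\mathbf{y}=\mathbf{y}$ is legitimate since $\mathbf{x}\in\mathrm{Li}(M)$ gives $\mathbf{x}*\mathbf{y}\in\mathrm{Li}(M)$ trivially... actually here $\mathbf{x}*\mathbf{y}=\mathbf{y}$ directly), and $\mathbf{z}=c\in A$ a single letter. For the single-letter case $\mathbf{z}=c$: if $\mathbf{y}\in\mathrm{Li}(M)$ then $\mathbf{y}*c=c$ and $\mathbf{x}*\mathbf{y}\in\mathrm{Li}(M)$ by hypothesis, so $(\mathbf{x}*\mathbf{y})*(\mathbf{x}*c)=\mathbf{x}*c=\mathbf{x}*(\mathbf{y}*c)$; if $\mathbf{y}=\mathbf{y}'b\in L$ with $b\in A$, then $\mathbf{y}*c = \mathbf{y}c$ and we must show $\mathbf{x}*\mathbf{y}c = (\mathbf{x}*\mathbf{y})*(\mathbf{x}*c) = (\mathbf{x}*\mathbf{y})*\mathbf{x}c$, which is precisely definitional rule (3). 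The inductive step for $\mathbf{z}=\mathbf{w}c$ with $|\mathbf{w}|>0$ is where the work concentrates: one expands both sides using rule (3) and Proposition \ref{G0Wuoeh2g2EHO2uo} / Corollary \ref{10W44eh2g2EHO2uo}, invoking the inner induction hypothesis on the shorter word $\mathbf{w}$ and the outer hypothesis on the longer-than-$\mathbf{x}$ words $\mathbf{x}*\mathbf{y}$, $\mathbf{x}*\mathbf{z}$, and repeatedly using Proposition \ref{t4uh4aP}(iv) to know which words are proper extensions of which.

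The main obstacle I anticipate is the bookkeeping in the inductive step when $\mathbf{y}*\mathbf{w}$ or intermediate products land in $\mathrm{Li}(M)$: there one needs the hypothesis of the theorem to collapse $\mathbf{x}*(\mathbf{y}*\mathbf{w})$ or similar into $\mathrm{Li}(M)$, and one must carefully track, via Propositions \ref{t4uh4aP}, \ref{t4jio}, \ref{G0Wuoeh2g2EHO2uo} and Corollary \ref{10W44eh2g2EHO2uo}, that the rewriting $\mathbf{y}*\mathbf{w}c = (\mathbf{y}*\mathbf{w})*\mathbf{y}c$ matches up after applying $L_{\mathbf{x}}$ on both sides. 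Concretely, the identity to verify reduces (when everything is in $L$) to
\[
(\mathbf{x}*\mathbf{y})*\bigl((\mathbf{x}*\mathbf{w})*(\mathbf{x}*\mathbf{y}c)\bigr) \;=\; \bigl((\mathbf{x}*\mathbf{y})*(\mathbf{x}*\mathbf{w})\bigr)*\bigl((\mathbf{x}*\mathbf{y})*(\mathbf{x}*\mathbf{y}c)\bigr),
\]
which is an instance of self-distributivity for the triple $(\mathbf{x}*\mathbf{y}, \mathbf{x}*\mathbf{w}, \mathbf{x}*\mathbf{y}c)$ — and since $\mathbf{x}*\mathbf{y}$ is strictly longer than $\mathbf{x}$ (Proposition \ref{t4uh4aP}(iv)), this falls under the outer induction hypothesis. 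The one genuinely delicate point is confirming $|\mathbf{x}*\mathbf{y}| > |\mathbf{x}|$ strictly so the outer induction actually applies, rather than merely $\geq$; this is exactly the content of Proposition \ref{t4uh4aP}(iv), so the induction is well-founded and the argument closes.
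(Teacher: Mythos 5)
The forward direction is correct, though more circuitous than the paper's (the paper simply plugs in $\mathbf{z}=\mathbf{y}$: self-distributivity together with $\mathbf{y}*\mathbf{y}=\mathbf{y}$ gives $\mathbf{x}*\mathbf{y}=(\mathbf{x}*\mathbf{y})*(\mathbf{x}*\mathbf{y})$, which by Proposition \ref{t4uh4aP}(iv) forces $\mathbf{x}*\mathbf{y}\in\mathrm{Li}(M)$). That's a minor stylistic point.

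The backward direction has a genuine gap: the double induction you propose (descending on $|\mathbf{x}|$, ascending on $|\mathbf{z}|$) is insufficient, and the paper's proof in fact uses a \emph{triple} induction: descending on $|\mathbf{x}|$, then for fixed $\mathbf{x}$ descending on $|\mathbf{y}|$, then for fixed $\mathbf{x},\mathbf{y}$ ascending on $|\mathbf{z}|$. Here is where your plan breaks. Write $\mathbf{z}=\mathbf{w}c$ with $|\mathbf{w}|\geq 1$ and expand the left side using rule (3): $\mathbf{x}*(\mathbf{y}*\mathbf{z})=\mathbf{x}*\bigl((\mathbf{y}*\mathbf{w})*\mathbf{y}c\bigr)$. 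To push $\mathbf{x}*$ through, you must invoke self-distributivity for the triple $(\mathbf{x},\,\mathbf{y}*\mathbf{w},\,\mathbf{y}c)$. The first component $\mathbf{x}$ is \emph{unchanged}, so the outer (descending-$\mathbf{x}$) hypothesis does not apply. The third component $\mathbf{y}c$ has length $|\mathbf{y}|+1$, which need not be smaller than $|\mathbf{z}|=|\mathbf{w}|+1$, so the inner (ascending-$\mathbf{z}$) hypothesis does not apply either. What makes the step legitimate is that the \emph{middle} component $\mathbf{y}*\mathbf{w}$ is a proper extension of $\mathbf{y}$ by Proposition \ref{t4uh4aP}(iv) — precisely the descending-on-$\mathbf{y}$ induction you have omitted. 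Relatedly, your displayed "reduced identity" does not arise from a correct expansion of both sides: the term $\mathbf{x}*\mathbf{y}c$ in the third slot should be $\mathbf{x}c$ (equivalently $\mathbf{x}*c$) if you expand $\mathbf{x}*\mathbf{z}=(\mathbf{x}*\mathbf{w})*\mathbf{x}c$ by rule (3), and after simplifying by the inner and outer hypotheses you are still left needing exactly the $(\mathbf{x},\mathbf{y}*\mathbf{w},\mathbf{y}c)$ instance. The remark in the paper following Theorem \ref{2ijr50o} explicitly identifies the descending–descending–ascending triple as the structurally necessary choice; you should introduce the middle induction and redo the bookkeeping in Case 4 accordingly.
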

\begin{proof}
We only need to prove this result for when the alphabet $A$ is finite since the general case follows as an immediate corollary.

$\rightarrow$ Suppose that $(M,*)$ is self-distributive. Suppose now that $\mathbf{x}\in M$ and $\mathbf{y}\in\mathrm{Li}(M)$. Then we have
\[\mathbf{x}*\mathbf{y}=\mathbf{x}*(\mathbf{y}*\mathbf{y})=(\mathbf{x}*\mathbf{y})*(\mathbf{x}*\mathbf{y}).\]
Thus $\mathbf{x}*\mathbf{y}\in\mathrm{Li}(M)$ since otherwise $\mathbf{x}*\mathbf{y}$ would be a proper initial segment of $(\mathbf{x}*\mathbf{y})*(\mathbf{x}*\mathbf{y})$.

$\leftarrow$ We shall prove that $\mathbf{x}*(\mathbf{y}*\mathbf{z})=(\mathbf{x}*\mathbf{y})*(\mathbf{x}*\mathbf{z})$ by a triple induction; we shall proceed by a descending induction on $|\mathbf{x}|$, and for each $\mathbf{x}$ we shall use a descending induction on $|\mathbf{y}|$, and for each $\mathbf{y}$ we shall perform an ascending induction on $|\mathbf{z}|$.

\item[Case 1: $\mathbf{x}\in\mathrm{Li}(M)$.]

We have $\mathbf{x}*(\mathbf{y}*\mathbf{z})=\mathbf{y}*\mathbf{z}$ and $(\mathbf{x}*\mathbf{y})*(\mathbf{x}*\mathbf{z})=\mathbf{y}*\mathbf{z}$, so
\[\mathbf{x}*(\mathbf{y}*\mathbf{z})=(\mathbf{x}*\mathbf{y})*(\mathbf{x}*\mathbf{z}).\]

\item[Case 2: $\mathbf{x}\not\in\mathrm{Li}(M),\mathbf{y}\in\mathrm{Li}(M).$]

We have $\mathbf{x}*(\mathbf{y}*\mathbf{z})=\mathbf{x}*\mathbf{z}$.
We also have $\mathbf{x}*\mathbf{y}\in\mathrm{Li}(M)$, so $(\mathbf{x}*\mathbf{y})*(\mathbf{x}*\mathbf{z})=\mathbf{x}*\mathbf{z}$ as well. Therefore $\mathbf{x}*(\mathbf{y}*\mathbf{z})=(\mathbf{x}*\mathbf{y})*(\mathbf{x}*\mathbf{z})$ in this case.

\item[Case 3: $\mathbf{x}\not\in\mathrm{Li}(M),\mathbf{y}\not\in\mathrm{Li}(M),\mathbf{z}\in A$.]

Let $\mathbf{z}=c$. Then
\[\mathbf{x}*(\mathbf{y}*c)=\mathbf{x}*\mathbf{y} c=(\mathbf{x}*\mathbf{y})*\mathbf{x} c=(\mathbf{x}*\mathbf{y})*(\mathbf{x}*c).\]

\item[Case 4: $\mathbf{x}\not\in\mathrm{Li}(M),\mathbf{y}\not\in\mathrm{Li}(M),\mathbf{z}\not\in A$.]

Suppose that $\mathbf{z}=\mathbf{w} c$.

Then 
\begin{equation}
\label{42uvhovc4txsahuh922}
\mathbf{x}*(\mathbf{y}*\mathbf{z})=\mathbf{x}*(\mathbf{y}*\mathbf{w} c)=\mathbf{x}*((\mathbf{y}*\mathbf{w})*(\mathbf{y}*c)).
\end{equation}
Since $\mathbf{y}$ is a proper prefix of $\mathbf{y}*\mathbf{w}$, by the induction hypothesis, we have
\begin{equation}
\label{42uvhovc4txsahuh923}
\mathbf{x}*((\mathbf{y}*\mathbf{w})*(\mathbf{y}*c))=(\mathbf{x}*(\mathbf{y}*\mathbf{w}))*(\mathbf{x}*(\mathbf{y}*c)).
\end{equation}
Since $|\mathbf{w}|<|\mathbf{z}|$ and $|c|<|\mathbf{z}|$, we have
$\mathbf{x}*(\mathbf{y}*\mathbf{w})=(\mathbf{x}*\mathbf{y})*(\mathbf{x}*\mathbf{w})$ and $\mathbf{x}*(\mathbf{y}*c)=(\mathbf{x}*\mathbf{y})*(\mathbf{x}*c)$, so
\begin{equation}
\label{42uvhovc4txsahuh924}
(\mathbf{x}*(\mathbf{y}*\mathbf{w}))*(\mathbf{x}*(\mathbf{y}*c))
\end{equation}
\[=((\mathbf{x}*\mathbf{y})*(\mathbf{x}*\mathbf{w}))*((\mathbf{x}*\mathbf{y})*(\mathbf{x}*c))\]
\[=(\mathbf{x}*\mathbf{y})*((\mathbf{x}*\mathbf{w})*(\mathbf{x}*c))\]
\[=(\mathbf{x}*\mathbf{y})*(\mathbf{x}*\mathbf{w} c)=(\mathbf{x}*\mathbf{y})*(\mathbf{x}*\mathbf{z}).\]

Therefore by combining \ref{42uvhovc4txsahuh922},\ref{42uvhovc4txsahuh923},\ref{42uvhovc4txsahuh924}, we conclude that
$\mathbf{x}*(\mathbf{y}*\mathbf{z})=(\mathbf{x}*\mathbf{y})*(\mathbf{x}*\mathbf{z})$.
\end{proof}
\begin{defn}
A \index{multigenic Laver table}\emph{multigenic Laver table} is a left-distributive pre-multigenic Laver table.
\end{defn}
Most pre-multigenic Laver tables are not left-distributive since there are only 147 multigenic Laver tables
over the alphabet $\{0,1\}$ of cardinality at most 120.

\begin{thm}
\label{enbfwu92}
Suppose that $M$ is a pre-multigenic Laver table over an alphabet $A$. Let $X$ be an LD-system and suppose that
$x_{a}\in X$ whenever $a\in A$. Furthermore, suppose that whenever $a_{1}\ldots a_{n}\in\mathrm{Li}(M)$ and $a\in A$, we have
$x_{a}=x_{a_{1}}*\ldots*x_{a_{n}}*x_{a}$. Then define a mapping $\phi:M\rightarrow X$ by letting
$\phi(a_{1}\ldots a_{n})=x_{a_{1}}*\ldots *x_{a_{n}}$. Then $\phi$ is a homomorphism.
\end{thm}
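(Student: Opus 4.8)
The plan is to prove $\phi(\mathbf{x}*\mathbf{y})=\phi(\mathbf{x})*\phi(\mathbf{y})$ for all $\mathbf{x},\mathbf{y}\in M$ by exactly the double induction used to \emph{define} $*^{M}$: since every word in $M$ uses only finitely many letters and (by the direct limit description) $\phi$ is the union of the analogous maps over finite sub-alphabets, I may assume $A$ is finite, so $M$ is finite and $\{|\mathbf{x}|:\mathbf{x}\in M\}$ is bounded; this makes a descending induction on $|\mathbf{x}|$, and for each $\mathbf{x}$ an ascending induction on $|\mathbf{y}|$, well-founded.

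First I would isolate a lemma: if $\mathbf{w}\in\mathrm{Li}(M)$, then $\phi(\mathbf{w})*\phi(\mathbf{v})=\phi(\mathbf{v})$ for every $\mathbf{v}\in M$. Writing $\mathbf{w}=a_{1}\ldots a_{n}$, the hypothesis of the theorem gives $\phi(\mathbf{w})*x_{a}=x_{a_{1}}*\ldots*x_{a_{n}}*x_{a}=x_{a}$ for each $a\in A$. Since $X$ is an LD-system, the inner endomorphism $L_{\phi(\mathbf{w})}$ is an endomorphism of $X$, so its fixed-point set $\{v\in X:\phi(\mathbf{w})*v=v\}$ is a sub-LD-system of $X$; it contains every $x_{a}$, hence contains every value $\phi(\mathbf{v})=x_{v_{1}}*\ldots*x_{v_{k}}$, which proves the lemma. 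In particular this settles the case $\mathbf{x}\in\mathrm{Li}(M)$ of the main induction, where $\mathbf{x}*\mathbf{y}=\mathbf{y}$ and so $\phi(\mathbf{x}*\mathbf{y})=\phi(\mathbf{y})=\phi(\mathbf{x})*\phi(\mathbf{y})$.

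For the inductive step with $\mathbf{x}\notin\mathrm{Li}(M)$ (so $\mathbf{x}\in L$): if $\mathbf{y}=b\in A$, then $\mathbf{x}*b=\mathbf{x}b$ and $\phi(\mathbf{x}*b)=\phi(\mathbf{x}b)=\phi(\mathbf{x})*x_{b}$ directly from the definition of $\phi$ and the left-grouping convention. If $\mathbf{y}=\mathbf{z}b$ with $|\mathbf{z}|\geq1$, then $\mathbf{z}\in L$ and the defining recursion for $*^{M}$ (third clause) gives $\mathbf{x}*\mathbf{y}=(\mathbf{x}*\mathbf{z})*\mathbf{x}b$. I claim $\phi((\mathbf{x}*\mathbf{z})*\mathbf{x}b)=\phi(\mathbf{x}*\mathbf{z})*\phi(\mathbf{x}b)$: if $\mathbf{x}*\mathbf{z}\notin\mathrm{Li}(M)$ this is the induction hypothesis applied to the pair $(\mathbf{x}*\mathbf{z},\mathbf{x}b)$, which is legitimate because $|\mathbf{x}*\mathbf{z}|>|\mathbf{x}|$ by Proposition \ref{t4uh4aP}(iv); if $\mathbf{x}*\mathbf{z}\in\mathrm{Li}(M)$, then $(\mathbf{x}*\mathbf{z})*\mathbf{x}b=\mathbf{x}b$ so the left side is $\phi(\mathbf{x}b)$, while by the lemma the right side $\phi(\mathbf{x}*\mathbf{z})*\phi(\mathbf{x}b)$ also equals $\phi(\mathbf{x}b)$. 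Now using the ascending induction hypothesis $\phi(\mathbf{x}*\mathbf{z})=\phi(\mathbf{x})*\phi(\mathbf{z})$ (valid since $|\mathbf{z}|<|\mathbf{y}|$), the identities $\phi(\mathbf{x}b)=\phi(\mathbf{x})*x_{b}$ and $\phi(\mathbf{y})=\phi(\mathbf{z}b)=\phi(\mathbf{z})*x_{b}$, and self-distributivity in $X$, I obtain
\[\phi(\mathbf{x}*\mathbf{y})=(\phi(\mathbf{x})*\phi(\mathbf{z}))*(\phi(\mathbf{x})*x_{b})=\phi(\mathbf{x})*(\phi(\mathbf{z})*x_{b})=\phi(\mathbf{x})*\phi(\mathbf{y}),\]
completing the induction.

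The main obstacle is conceptual rather than computational: one must notice that the absorption hypothesis, stated only for single generators $x_{a}$, automatically propagates to all of $\phi[M]$ precisely because $\phi[M]$ is the closure of $\{x_{a}:a\in A\}$ under $*$, which is exactly what the "fixed-point set of an inner endomorphism is a subalgebra" observation delivers. After that, the remainder is the routine bookkeeping of the same descending–ascending double induction that governs pre-multigenic Laver tables; note that self-distributivity of $M$ itself is never invoked, consistent with $M$ being assumed only a pre-multigenic Laver table.
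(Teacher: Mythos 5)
Your proof is correct and follows essentially the same descending--ascending double induction as the paper, with the same three-case split ($\mathbf{x}\in\mathrm{Li}(M)$; $\mathbf{x}\notin\mathrm{Li}(M)$ and $|\mathbf{y}|=1$; $\mathbf{x}\notin\mathrm{Li}(M)$ and $|\mathbf{y}|>1$) and the same manipulation via $\mathbf{x}*\mathbf{z}b=(\mathbf{x}*\mathbf{z})*\mathbf{x}b$ and self-distributivity. Your packaging of the absorption property as ``the fixed-point set of an inner endomorphism is a subalgebra containing every $x_a$'' is just a slicker phrasing of the paper's explicit iterated-distributivity expansion $(x_{a_{1}}*\cdots*x_{a_{m}})*(x_{b_{1}}*\cdots*x_{b_{n}})=(\phi(\mathbf{a})*x_{b_{1}})*\cdots*(\phi(\mathbf{a})*x_{b_{n}})$, not a different argument.
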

\begin{proof}
We shall prove that $\phi(\mathbf{a}*\mathbf{b})=\phi(\mathbf{a})*\phi(\mathbf{b})$ by a double induction which is descending on
$\mathbf{a}$ and for each $\mathbf{a}$ we shall proceed by ascending induction on $\mathbf{b}$.
Let $\mathbf{a}=a_{1}\ldots a_{m},\mathbf{b}=b_{1}\ldots b_{n}$.

\item[Case 1: $\mathbf{a}\in\mathrm{Li}(M)$.]

Suppose that $\mathbf{a}\in\mathrm{Li}(M)$. Then 
\[\phi(\mathbf{a}*\mathbf{b})=\phi(\mathbf{b})=x_{b_{1}}*\ldots *x_{b_{n}}.\]

On the other hand, we have 
\[\phi(\mathbf{a})*\phi(\mathbf{b})=(x_{a_{1}}*\ldots*x_{a_{m}})*(x_{b_{1}}*\ldots*x_{b_{n}})\]
\[=(x_{a_{1}}*\ldots*x_{a_{m}}*x_{b_{1}})*\ldots*(x_{a_{1}}*\ldots*x_{a_{m}}*x_{b_{n}})=x_{b_{1}}*\ldots*x_{b_{n}}.\]
Therefore, we have
$\phi(\mathbf{a}*\mathbf{b})=\phi(\mathbf{a})*\phi(\mathbf{b})$ in this case.

\item[Case 2: $\mathbf{a}\not\in\mathrm{Li}(M),\mathbf{b}\in A$.]

Suppose that $\mathbf{b}=b$ for some $b\in A$. Then 
\[\phi(\mathbf{a}*b)=\phi(\mathbf{a} b)=x_{a_{1}}*\ldots*x_{a_{m}}*x_{b},\]
and
\[\phi(\mathbf{a})*\phi(b)=(x_{a_{1}}*\ldots*x_{a_{m}})*x_{b}=x_{a_{1}}*\ldots*x_{a_{m}}*x_{b}\]
as well (this case follows without using the induction hypothesis).

\item[Case 3: $\mathbf{a}\not\in\mathrm{Li}(M),\mathbf{b}\not\in A$.]

Suppose that $\mathbf{b}=\mathbf{c}b_{n}$. Thus, by applying the induction hypothesis several times,
we have

\[\phi(\mathbf{a}*\mathbf{b})=\phi(\mathbf{a}*\mathbf{c} b_{n})=\phi((\mathbf{a}*\mathbf{c})*\mathbf{a}b_{n})\]

\[=\phi(\mathbf{a}*\mathbf{c})*\phi(\mathbf{a}b_{n})\]

\[=(\phi(\mathbf{a})*\phi(\mathbf{c}))*(\phi(\mathbf{a})*\phi(b_{n}))\]

\[=\phi(\mathbf{a})*(\phi(\mathbf{c})*\phi(b_{n}))\]

\[=\phi(\mathbf{a})*\phi(\mathbf{c}b_{n})=\phi(\mathbf{a})*\phi(\mathbf{b}).\]
\end{proof}

\begin{defn}
An LD-system $X$ is said to be \index{Laver-like}\emph{Laver-like} if $\mathrm{Li}(X)$ is a left-ideal in $X$ and whenever
$x_{n}\in X$ for each $n\in\omega$ there is some $n$ where $x_{0}*\ldots*x_{n}\in \mathrm{Li}(X)$.
\end{defn}
In other words, the Laver-like LD-systems are precisely the LD-systems that satisfy the Laver-Steel Theorem
(theorems \ref{j3rti90} and \ref{235wfe}). Every Laver-like LD-system is permutative (see Proposition \ref{t4r4ghtj0qhyqfha}), and if $X$ is Laver-like, then $\mathrm{crit}[X]$ is well-ordered (otherwise, if $\mathrm{crit}(x_{0})>\mathrm{crit}(x_{1})>\ldots$, then $x_{0}*\ldots*x_{n}\not\in\mathrm{Li}(X)$ for all $n$).

\begin{defn}
An LD-system $X$ is said to be \index{locally Laver-like}\emph{locally Laver-like} if $\mathrm{Li}(X)$ is a left-ideal in $X$ and whenever $Y$ is a finitely generated subalgebra of $X$ and $x_{n}\in Y$ for each $n\in\omega$ there is some $n$ where $x_{0}*\ldots*x_{n}\in \mathrm{Li}(X)$.
\end{defn}
\begin{defn}
Suppose now that $X$ is an LD-system such that $\mathrm{Li}(X)$ is a left ideal in $X$. Suppose furthermore that $A$ is a set, $x_{a}\in X$ for each $a\in A$, and whenever $B$ is a finite subset of $A$ and $b_{n}\in B$ for all $n\in\omega$ there is some $n$ where $x_{b_{0}}*\ldots*x_{b_{n}}\in \mathrm{Li}(X)$. Then define \index{$\mathbf{M}(x_{a})_{a\in A}$}$\mathbf{M}(x_{a})_{a\in A}$ to be the set of all strings $a_{1}\ldots a_{n}$ such that if $1\leq m<n$, then $x_{a_{1}}*\ldots*x_{a_{m}}\not\in \mathrm{Li}(X)$. We shall write $\mathbf{M}((x_{a})_{a\in A},X)$ if the algebra $X$ is not clear from context. Take note that $\mathbf{M}(x_{a})_{a\in A}\cap B^{+}$ is finite whenever $B$ is a finite subset of $A$; otherwise if $\mathbf{M}(x_{a})_{a\in A}\cap B^{+}$ is infinite, then by Konig's lemma, there would be an infinite string $(a_{n})_{n\in\omega}\in B^{\omega}$ such that $a_{0}\ldots a_{n}\in\mathbf{M}(x_{a})_{a\in A}$ for all $n$ which forms a contradiction. The set of strings $\mathbf{M}(x_{a})_{a\in A}$ is the underlying set of a pre-multigenic Laver table.
\end{defn}

Recall that an algebra $X$ is locally finite if every finitely generated subalgebra of $X$ is finite.

\begin{cor}
Every locally Laver-like LD-system is a locally finite LD-system.
\label{4hw232s}
\end{cor}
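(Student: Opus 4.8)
The plan is to prove directly that an arbitrary finitely generated subalgebra $Y$ of a locally Laver-like LD-system $X$ is finite. Fix a finite generating set; we may as well let $A$ be that set itself and put $x_a=a$ for $a\in A$, so $Y=\langle x_a:a\in A\rangle$. Since $X$ is locally Laver-like, $\mathrm{Li}(X)$ is a left-ideal, and applying the defining property of ``locally Laver-like'' to the finitely generated subalgebra $Y$, whenever $b_0,b_1,\ldots\in A$ there is some $n$ with $x_{b_0}*\cdots*x_{b_n}\in\mathrm{Li}(X)$. These are exactly the hypotheses needed to form $M:=\mathbf{M}((x_a)_{a\in A},X)$, the underlying set of a pre-multigenic Laver table over the alphabet $A$. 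Moreover $M$ is finite: taking the finite set $B=A$ in the König's-lemma remark made just after the definition of $\mathbf{M}$, an infinite $M$ would produce an infinite string $a_0a_1\cdots$ all of whose prefixes lie in $M$, i.e.\ with $x_{a_0}*\cdots*x_{a_n}\notin\mathrm{Li}(X)$ for every $n$, contradicting the previous sentence.

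Next I would apply Theorem \ref{enbfwu92} to this $M$ and these $x_a$. Its hypothesis asks that $x_a=x_{a_1}*\cdots*x_{a_n}*x_a$ whenever $a_1\cdots a_n\in\mathrm{Li}(M)$ and $a\in A$. Unwinding the construction of $\mathbf{M}$: a string $a_1\cdots a_n\in M$ lies in $\mathrm{Li}(M)=M\setminus L$ exactly when it is not a proper prefix of any string of $M$, i.e.\ exactly when $a_1\cdots a_na\notin M$ for some $a\in A$; since every prefix of length $<n$ is already ``good'', this forces $x_{a_1}*\cdots*x_{a_n}\in\mathrm{Li}(X)$. Hence for such a string $x_{a_1}*\cdots*x_{a_n}$ is a left-identity of $X$, so it fixes $x_a$ on the left, and the hypothesis of Theorem \ref{enbfwu92} holds. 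Therefore $\phi:M\to X$ given by $\phi(a_1\cdots a_n)=x_{a_1}*\cdots*x_{a_n}$ is a homomorphism of LD-systems.

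Finally, the image $\phi[M]$ is a subalgebra of $X$. For each $a\in A$ the one-letter string $a$ belongs to $M$, so $x_a=\phi(a)\in\phi[M]$; thus $\phi[M]$ is a subalgebra containing the generators of $Y$, whence $Y\subseteq\phi[M]$. The reverse inclusion is immediate, since each $\phi(a_1\cdots a_n)=x_{a_1}*\cdots*x_{a_n}$ is a product of generators and so lies in $Y$. Hence $Y=\phi[M]$, and $|Y|\le|M|<\infty$. As $Y$ was an arbitrary finitely generated subalgebra of $X$, this shows $X$ is locally finite.

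The finiteness of $M$ and the ``image of a homomorphism is a subalgebra'' step are routine (the former is essentially already in the text). The step that takes care is verifying the hypothesis of Theorem \ref{enbfwu92}: one must match the purely combinatorial set $\mathrm{Li}(M)$ of the pre-multigenic Laver table $\mathbf{M}((x_a)_{a\in A},X)$ with the algebraic condition $x_{a_1}*\cdots*x_{a_n}\in\mathrm{Li}(X)$, which is precisely the place where the specific construction of $\mathbf{M}$ (rather than an arbitrary pre-multigenic Laver table) is used.
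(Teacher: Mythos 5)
Your proof is correct and follows essentially the same route the paper takes: form the pre-multigenic Laver table $\mathbf{M}((x_a)_{a\in A})$, observe it is finite via König's lemma, push it into $X$ through the homomorphism $\phi$ of Theorem \ref{enbfwu92}, and note that $\phi[\mathbf{M}((x_a)_{a\in A})]$ is the subalgebra generated by $\{x_a : a\in A\}$. The paper's one-paragraph proof leaves the verification of the hypothesis of Theorem \ref{enbfwu92} implicit; your unwinding of $\mathrm{Li}(\mathbf{M}((x_a)_{a\in A}))$ as exactly the strings mapping under $\phi$ into $\mathrm{Li}(X)$ is the correct (and helpfully explicit) justification, but it does not change the argument.
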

\begin{proof}
Let $X$ be a locally Laver-like LD-system. Suppose $A$ is a finite set and $x_{a}\in X$ for $a\in A$. Then define
$\phi:\mathbf{M}((x_{a})_{a\in A})\rightarrow X$ to be the homomorphism where $\phi(a_{1}\ldots a_{n})=x_{a_{1}}*\ldots*x_{a_{n}}$ for each
$a_{1},\ldots ,a_{n}\in\mathbf{M}((x_{a})_{a\in A})$. Then $\mathbf{M}((x_{a})_{a\in A})$ is finite. Furthermore, $\phi[\mathbf{M}((x_{a})_{a\in A})]$ is the subalgebra of
$X$ generated by $\{x_{a}\mid a\in A\}$. Since each $\phi[\mathbf{M}((x_{a})_{a\in A})]$ is finite, we conclude that $X$ is locally finite.
\end{proof}

\begin{cor}
Suppose that $X$ is a finitely generated sub-LD-system or a finitely generated sub-LD-monoid of $\mathcal{E}_{\lambda}$. Then
\begin{enumerate}
\item $X/\equiv^{\gamma}$ is finite whenever $\gamma<\lambda$ is a limit ordinal, and

\item the set of ordinals $\{\mathrm{crit}(j)\mid j\in X\}$ has order type $\omega$.
\end{enumerate}
\end{cor}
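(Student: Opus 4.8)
The plan is to reduce both parts to local finiteness of the quotients $\mathcal{E}_{\lambda}/\equiv^{\gamma}$, so the first step is to show that for every limit ordinal $\gamma<\lambda$ the algebra $\mathcal{E}_{\lambda}/\equiv^{\gamma}$ is Laver-like, whence it is locally finite by Corollary \ref{4hw232s}. By Proposition \ref{t4r4ghtj0qhyqfha} the algebra $\mathcal{E}_{\lambda}/\equiv^{\gamma}$ is permutative with $\mathrm{Li}(\mathcal{E}_{\lambda}/\equiv^{\gamma})=\{[1_{V_{\lambda}}]_{\gamma}\}$ a left-ideal. Given a sequence $(y_{n})_{n\in\omega}$ in $\mathcal{E}_{\lambda}/\equiv^{\gamma}$, if some $y_{k}=[1_{V_{\lambda}}]_{\gamma}$ then $y_{0}*\cdots*y_{k}\in\mathrm{Li}(\mathcal{E}_{\lambda}/\equiv^{\gamma})$ because $\mathrm{Li}$ is a left-ideal (the case $k=0$ being trivial); otherwise every $y_{n}$ is represented by some $j_{n}\in\mathcal{E}_{\lambda}^{+}$, and the Laver--Steel Theorem \ref{235wfe} supplies an $n$ with $j_{0}*\cdots*j_{n}\equiv^{\gamma}1_{V_{\lambda}}$, i.e. $y_{0}*\cdots*y_{n}\in\mathrm{Li}$. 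Thus $\mathcal{E}_{\lambda}/\equiv^{\gamma}$ is Laver-like, hence locally finite.

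For part (1), view $X/\equiv^{\gamma}$ as the image of $X$ under the quotient map $\mathcal{E}_{\lambda}\rightarrow\mathcal{E}_{\lambda}/\equiv^{\gamma}$, so that it is a finitely generated subalgebra of the locally finite algebra $\mathcal{E}_{\lambda}/\equiv^{\gamma}$. If $X$ is a finitely generated sub-LD-system this at once gives that $X/\equiv^{\gamma}$ is finite. If $X$ is a finitely generated sub-LD-monoid, say generated by $g_{1},\dots,g_{k}$, I would use that $\mathcal{E}_{\lambda}/\equiv^{\gamma}$ is reduced and permutative, so that (by the uniqueness clause in the construction of the composition operation on a permutative LD-system with a single left-identity) its composition is the canonical one, which satisfies $x\circ y=t_{n+1}(x,y)$ for an $n$ with $t_{n}(x,y)\in\mathrm{Li}$, a $*$-term. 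Hence the LD-subsystem $Y$ of $\mathcal{E}_{\lambda}/\equiv^{\gamma}$ generated by $[g_{1}]_{\gamma},\dots,[g_{k}]_{\gamma},[1_{V_{\lambda}}]_{\gamma}$ is automatically closed under $\circ$, so it is a sub-LD-monoid containing $X/\equiv^{\gamma}$; as $Y$ is a finitely generated LD-subsystem of a locally finite algebra it is finite, and so is $X/\equiv^{\gamma}$.

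For part (2), set $C=\{\mathrm{crit}(j)\mid j\in X\cap\mathcal{E}_{\lambda}^{+}\}$, a set of ordinals below $\lambda$, and let $\tau$ be its order type. Fix $\alpha\in C$ and a limit ordinal $\gamma$ with $\alpha<\gamma<\lambda$. If $j,k\in X$ have critical points $\mu\neq\nu$ with $\mu,\nu<\gamma$, say $\mu<\nu$, then $j(\mu)>\mu=k(\mu)$ and $\mu\in V_{\gamma}$, so $j(\mu)\cap V_{\gamma}\neq k(\mu)\cap V_{\gamma}$ and hence $[j]_{\gamma}\neq[k]_{\gamma}$; therefore $C\cap\gamma$ injects into the finite set $X/\equiv^{\gamma}$, so $C\cap\alpha$ is finite. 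Consequently, in the increasing enumeration of $C$ every element has only finitely many predecessors, which gives $\tau\leq\omega$. On the other hand $C$ is infinite (assuming, as is implicit, that $X$ is not the trivial algebra): for any non-identity $j\in X$ we have $\mathrm{Iter}(j)\subseteq X$, and $\{\mathrm{crit}(k)\mid k\in\mathrm{Iter}(j)\}$ is cofinal in $\lambda$ of order type $\omega$. Hence $\tau=\omega$.

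The routine part is the two local-finiteness reductions. The step needing the most care is verifying that $\mathcal{E}_{\lambda}/\equiv^{\gamma}$ is Laver-like — in particular handling sequences that pass through $\mathrm{Li}$ before the Laver--Steel Theorem can be applied — and, secondarily, handling the composition operation in the sub-LD-monoid case; once these are done, everything is an immediate consequence of Corollary \ref{4hw232s}.
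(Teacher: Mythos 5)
The paper gives no explicit proof of this corollary, leaving it as an immediate consequence of Corollary \ref{4hw232s} (locally Laver-like implies locally finite), Proposition \ref{t4r4ghtj0qhyqfha}, and the Laver--Steel Theorem \ref{235wfe}; your reconstruction uses exactly these ingredients and is correct. The argument for part (1) in the LD-monoid case (adjoining $[1]_{\gamma}$ to the generators and observing that the $*$-generated subalgebra is automatically closed under $\circ$ because $\circ$ is given by the Fibonacci terms) and for part (2) (distinct critical points below $\gamma$ yield distinct classes in the finite quotient, combined with the cofinality of $\{\mathrm{crit}(k):k\in\mathrm{Iter}(j)\}$) are both sound, so this is essentially the intended derivation.
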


\begin{thm}
Let $X$ be an LD-system such that $\mathrm{Li}(X)$ is a left-ideal. Suppose that $A$ is a set and $x_{a}\in X$ for 
$a\in A$. Suppose furthermore that whenever $B$ is a finite subset of $A$ and $b_{n}\in B$ for $n\in\omega$, there is some
$n$ where $x_{b_{0}}*\ldots *x_{b_{n}}\in\mathrm{Li}(X)$. Then $\mathbf{M}(x_{a})_{a\in A}$ is a multigenic Laver table.
\end{thm}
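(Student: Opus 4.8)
The plan is to combine the homomorphism theorem (Theorem \ref{enbfwu92}) with the self-distributivity criterion (Theorem \ref{m43ti0}). Write $M=\mathbf{M}(x_{a})_{a\in A}$, which, by the discussion preceding the statement, is the underlying set of a pre-multigenic Laver table $(M,*^{M})$; recall that $\mathrm{Li}(M)$ consists exactly of those strings $a_{1}\ldots a_{n}\in M$ for which $x_{a_{1}}*\ldots*x_{a_{n}}\in\mathrm{Li}(X)$ (the proper prefixes already avoid $\mathrm{Li}(X)$ by membership in $M$). So by Theorem \ref{m43ti0} it suffices to prove that $\mathbf{x}*^{M}\mathbf{y}\in\mathrm{Li}(M)$ whenever $\mathbf{x}\in M$ and $\mathbf{y}\in\mathrm{Li}(M)$.

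First I would verify the hypotheses of Theorem \ref{enbfwu92} for the family $(x_{a})_{a\in A}$ in $X$. If $a_{1}\ldots a_{n}\in\mathrm{Li}(M)$ and $a\in A$, then $x_{a_{1}}*\ldots*x_{a_{n}}\in\mathrm{Li}(X)$, and since an element of $\mathrm{Li}(X)$ is a left-identity, $x_{a_{1}}*\ldots*x_{a_{n}}*x_{a}=(x_{a_{1}}*\ldots*x_{a_{n}})*x_{a}=x_{a}$. Hence Theorem \ref{enbfwu92} produces a homomorphism $\phi:M\rightarrow X$ with $\phi(a_{1}\ldots a_{n})=x_{a_{1}}*\ldots*x_{a_{n}}$; note that the proof of Theorem \ref{enbfwu92} invokes only the defining recursion for $*^{M}$ and the self-distributivity of $X$, so it does not presuppose that $M$ is itself an LD-system, and there is no circularity.

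Now take $\mathbf{x}\in M$ and $\mathbf{y}\in\mathrm{Li}(M)$. The operation $*^{M}$ is total by Proposition \ref{t4uh4aP}, so $\mathbf{x}*^{M}\mathbf{y}\in M$, and since $\phi$ is a homomorphism, $\phi(\mathbf{x}*^{M}\mathbf{y})=\phi(\mathbf{x})*\phi(\mathbf{y})$. Because $\mathbf{y}\in\mathrm{Li}(M)$ we have $\phi(\mathbf{y})\in\mathrm{Li}(X)$, and $\mathrm{Li}(X)$ is a left-ideal of $X$, so $\phi(\mathbf{x})*\phi(\mathbf{y})\in\mathrm{Li}(X)$. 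Writing $\mathbf{x}*^{M}\mathbf{y}=c_{1}\ldots c_{k}$, this says exactly that $x_{c_{1}}*\ldots*x_{c_{k}}=\phi(\mathbf{x}*^{M}\mathbf{y})\in\mathrm{Li}(X)$, which is precisely the condition placing $\mathbf{x}*^{M}\mathbf{y}$ in $\mathrm{Li}(M)$. Applying Theorem \ref{m43ti0}, $(M,*^{M})$ is self-distributive, i.e.\ a multigenic Laver table.

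I do not expect a serious obstacle: once $\phi$ is in hand the argument is a one-line "push forward through $\phi$, land in the left-ideal $\mathrm{Li}(X)$, pull back." The only points demanding care are bookkeeping — matching the combinatorial description of $\mathrm{Li}(M)$ (phrased via proper prefixes) with the semantic condition "$\phi$ of the whole string lies in $\mathrm{Li}(X)$" — and confirming, as above, that Theorem \ref{enbfwu92} really is available here because its proof does not secretly use left-distributivity of $M$.
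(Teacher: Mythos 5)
Your proof is correct and follows essentially the same route as the paper: construct the homomorphism $\phi:M\rightarrow X$ via Theorem \ref{enbfwu92}, observe $\mathbf{a}\in\mathrm{Li}(M)\Leftrightarrow\phi(\mathbf{a})\in\mathrm{Li}(X)$, push the product through $\phi$, land in the left-ideal $\mathrm{Li}(X)$, and pull back before invoking Theorem \ref{m43ti0}. Your explicit remark that Theorem \ref{enbfwu92} is available because its proof uses only the defining recursion for $*^{M}$ and self-distributivity of $X$ (not of $M$) is a worthwhile clarification that the paper leaves implicit.
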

\begin{proof}
Let $M=\mathbf{M}(x_{a})_{a\in A}$. By Theorem \ref{enbfwu92}, the mapping $\phi:M\rightarrow X$ defined by $\phi(a_{1}\ldots a_{n})=x_{a_{1}}*\ldots*x_{a_{n}}$ is a homomorphism,
and it is easy to see that $\mathbf{a}\in\mathrm{Li}(M)$ if and only if $\phi(\mathbf{a})\in \mathrm{Li}(X)$. 

Now assume that $\mathbf{a}\in M,\mathbf{b}\in\mathrm{Li}(M)$. Then $\phi(\mathbf{b})\in \mathrm{Li}(X)$, so
$\phi(\mathbf{a}*\mathbf{b})=\phi(\mathbf{a})*\phi(\mathbf{b})\in \mathrm{Li}(X)$. Therefore, $\mathbf{a}*\mathbf{b}\in\mathrm{Li}(M)$ as well.
We conclude by Theorem \ref{m43ti0} that $M$ is self-distributive.
\end{proof}
\begin{rem}
It turns out that every multigenic Laver table is of the form $\mathbf{M}(x_{a})_{a\in A}$ since if
$M$ is a multigenic Laver table over an alphabet $A$, then since $A\subseteq M$, we have $M=\mathbf{M}((a)_{a\in A})$.
\end{rem}

\begin{prop}
Suppose that $X$ is a LD-system generated by a set $A$ such that $\mathrm{Li}(X)$ is a left-ideal. Then
\begin{enumerate}
\item $X$ is Laver-like if and only if whenever $a_{n}\in A$ for each $n\in\omega$, there is some $n$ where
$a_{0}*\ldots*a_{n}\in \mathrm{Li}(X)$, and

\item $X$ is locally Laver-like if and only if whenever $B$ is a finite subset of $A$ and $a_{n}\in B$ for each $n\in\omega,$ there is some $n$ where $a_{0}*\ldots*a_{n}\in \mathrm{Li}(X)$.
\label{4j0tuh9uowjmor}
\end{enumerate}
\end{prop}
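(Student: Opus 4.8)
The plan is to derive everything from the multigenic Laver table machinery built up in this section, in particular from Theorem \ref{enbfwu92} and the theorem asserting that $\mathbf{M}(x_a)_{a\in A}$ is a multigenic Laver table. The two forward implications are trivial and I would dispose of them first: if $X$ is Laver-like, then by definition $\mathrm{Li}(X)$ is a left-ideal and $x_0*\ldots*x_n\in\mathrm{Li}(X)$ for some $n$ for \emph{every} sequence in $X$, hence for every sequence drawn from $A$; and if $X$ is locally Laver-like, then for a finite $B\subseteq A$ the subalgebra $\langle B\rangle_X$ generated by $B$ is finitely generated, so any sequence from $B$ lies in it and eventually left-multiplies into $\mathrm{Li}(X)$.

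For the converses I would isolate the following claim and prove it once. Let $A'\subseteq A$ be a subset with the property that whenever $a_n\in A'$ for each $n\in\omega$ there is some $n$ with $a_0*\ldots*a_n\in\mathrm{Li}(X)$, and set $M=\mathbf{M}((a)_{a\in A'},X)$. Since this property is precisely the hypothesis needed to form $\mathbf{M}((a)_{a\in A'},X)$, the relevant theorem yields that $M$ is a multigenic Laver table and that $\phi\colon M\to X$, $\phi(a_1\ldots a_n)=a_1*\ldots*a_n$, is a homomorphism with $\mathbf{a}\in\mathrm{Li}(M)\iff\phi(\mathbf{a})\in\mathrm{Li}(X)$; moreover $\phi[M]$ is the subalgebra $\langle A'\rangle_X$. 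The claim is that \emph{$M$ is Laver-like}. To prove it, take $\mathbf{w}_n\in M$ for $n\in\omega$ and put $\mathbf{u}_n=\mathbf{w}_0*\ldots*\mathbf{w}_n$ (left-associated, per the standing convention). If no $\mathbf{u}_n$ lies in $\mathrm{Li}(M)$, then $\mathbf{u}_0=\mathbf{w}_0\notin\mathrm{Li}(M)$ and $\mathbf{u}_{n+1}=\mathbf{u}_n*\mathbf{w}_{n+1}$, so by Proposition \ref{t4uh4aP}(iv) each $\mathbf{u}_n$ is a \emph{proper} prefix of $\mathbf{u}_{n+1}$; hence $\bigcup_n\mathbf{u}_n$ is an infinite word $c_0c_1c_2\ldots$ over $A'$. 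Every finite prefix $c_0\ldots c_{m-1}$ is a prefix of some $\mathbf{u}_n\in M\setminus\mathrm{Li}(M)=L$, and $L$ is downward closed under $\preceq$, so $c_0\ldots c_{m-1}\in L$ and therefore $c_0*\ldots*c_{m-1}=\phi(c_0\ldots c_{m-1})\notin\mathrm{Li}(X)$ for every $m\geq 1$. This contradicts the assumed property of $A'$ applied to $(c_m)_{m\in\omega}$, so $M$ is Laver-like.

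With the claim established, the proposition should follow quickly. For (1) I would apply the claim with $A'=A$ (using infinite pre-multigenic Laver tables if $A$ is infinite, which is legitimate since Propositions \ref{t4uh4aP} and \ref{t4jio} and Theorem \ref{m43ti0} persist for infinite alphabets); then $\phi$ is onto $X$ because $\phi[M]=\langle A\rangle_X=X$, so given an arbitrary sequence $x_n\in X$ one chooses $\mathbf{w}_n\in M$ with $\phi(\mathbf{w}_n)=x_n$, picks $n$ with $\mathbf{w}_0*\ldots*\mathbf{w}_n\in\mathrm{Li}(M)$, and gets $x_0*\ldots*x_n=\phi(\mathbf{w}_0*\ldots*\mathbf{w}_n)\in\mathrm{Li}(X)$; together with the standing hypothesis that $\mathrm{Li}(X)$ is a left-ideal this says $X$ is Laver-like. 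For (2), let $Y$ be a finitely generated subalgebra of $X$; since $A$ generates $X$, each of the finitely many generators of $Y$ is a term in finitely many letters of $A$, so $Y\subseteq\langle B\rangle_X$ for some finite $B\subseteq A$. Applying the claim with $A'=B$ gives that $\mathbf{M}((a)_{a\in B},X)$ is Laver-like with image $\langle B\rangle_X\supseteq Y$, and the same argument shows every sequence in $Y$ left-multiplies into $\mathrm{Li}(X)$.

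The step I expect to require the most care — and where a naive argument would break — is the passage to the subalphabet $B$ in part (2): a left-identity of the subalgebra $\langle B\rangle_X$ need not be a left-identity of $X$, so one cannot simply carry out the analysis inside $\langle B\rangle_X$ and must instead route it through $M=\mathbf{M}((a)_{a\in B},X)$, whose set $\mathrm{Li}(M)$ is defined directly in terms of the honest $\mathrm{Li}(X)$ (a word lies in $\mathrm{Li}(M)$ exactly when its $\phi$-image is a left-identity of all of $X$). It is precisely the equivalence $\mathbf{a}\in\mathrm{Li}(M)\iff\phi(\mathbf{a})\in\mathrm{Li}(X)$ coming from Theorem \ref{enbfwu92} that transports the conclusion back to $\mathrm{Li}(X)$ rather than to $\mathrm{Li}(\langle B\rangle_X)$; the rest is the routine bookkeeping recorded above.
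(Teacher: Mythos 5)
Your proof is correct and takes essentially the same route as the paper: both pass through the multigenic Laver table $\mathbf{M}((a)_{a\in A},X)$, establish that it is Laver-like, and transfer the conclusion to $X$ via the surjective homomorphism $\phi$. The paper compresses the well-foundedness argument for $\mathbf{M}$ and the quotient step into two sentences, whereas you unpack them explicitly via the prefix-chain argument and the lift-then-push-down step — same skeleton, more detail.
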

\begin{proof}
\begin{enumerate}
\item The direction $\rightarrow$ is trivial, so we shall only prove the direction
$\leftarrow$. The algebra $\mathbf{M}((x_{a})_{a\in A})$ is a well-founded multigenic Laver table and hence a Laver-like algebra. Since the algebra $X$ is a quotient LD-system of $\mathbf{M}((x_{a})_{a\in A})$ and $\mathbf{M}((x_{a})_{a\in A})$ is a Laver-like algebra, we conclude that $X$ is also a Laver-like algebra.

\item As before, the direction $\rightarrow$ is trivial. The direction $\leftarrow$ follows from \ref{4j0tuh9uowjmor}.
\end{enumerate}
\end{proof}
\begin{prop}
Let $X$ be an LD-system. Then the following are equivalent.
\begin{enumerate}
\item $X$ is locally Laver-like.

\item $X$ is locally finite and permutative.

\item $X$ is permutative and $\langle a_{1},\ldots,a_{n}\rangle$ has only finitely many critical points whenever $a_{1},\ldots,a_{n}\in X$.
\end{enumerate}
\end{prop}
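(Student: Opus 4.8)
The plan is to establish the cycle $(1)\Rightarrow(2)\Rightarrow(3)\Rightarrow(1)$, in each step assembling results already proved: Corollary \ref{4hw232s} (every locally Laver-like LD-system is locally finite), the observation recorded just after the definition of Laver-like systems that every Laver-like LD-system is permutative, Proposition \ref{49tt4ngio224gf} on subalgebras of permutative LD-systems, and the proposition stating that for any $2^{n}$ elements $x_{1},\dots,x_{2^{n}}$ of a permutative LD-system, either some partial product $x_{1}*\cdots*x_{i}$ lies in $\mathrm{Li}(X)$, or the partial products $x_{1}*\cdots*x_{i}$ realize at least $n+1$ distinct critical points.

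For $(1)\Rightarrow(2)$, local finiteness is precisely Corollary \ref{4hw232s}. For permutativity, $\mathrm{Li}(X)$ is a left-ideal by the definition of locally Laver-like, so it remains only to produce, for each $x,y\in X$, some $n$ with $t_{n}(x,y)\in\mathrm{Li}(X)$. If $x\in\mathrm{Li}(X)$ take $n=2$ and if $y\in\mathrm{Li}(X)$ take $n=1$; otherwise apply the local Laver-like hypothesis to the finitely generated subalgebra $\langle x,y\rangle$ and to the sequence $x,y,x,t_{3}(x,y),t_{4}(x,y),\dots$, all of whose terms lie in $\langle x,y\rangle$. This yields an $N$ with $x*y*x*t_{3}(x,y)*\cdots*t_{N}(x,y)\in\mathrm{Li}(X)$, and by the identity $t_{m}(x,y)=t_{2}(x,y)*t_{1}(x,y)*t_{2}(x,y)*t_{3}(x,y)*\cdots*t_{m-2}(x,y)$ (which is valid in the variety of LD-systems and is used later in the text) this partial product equals $t_{N+2}(x,y)$.

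For $(2)\Rightarrow(3)$, permutativity is hypothesized, and given $a_{1},\dots,a_{n}\in X$ the subalgebra $Y=\langle a_{1},\dots,a_{n}\rangle$ is finite by local finiteness, hence has only finitely many $\simeq$-classes; by Proposition \ref{49tt4ngio224gf} the inclusion $Y\hookrightarrow X$ induces an injection $\mathrm{crit}[Y]\to\mathrm{crit}[X]$, so the critical points of $Y$ are finite in number however one reads them. For $(3)\Rightarrow(1)$, $\mathrm{Li}(X)$ is a left-ideal since $X$ is permutative, so fix a finitely generated $Y=\langle a_{1},\dots,a_{m}\rangle$ and a sequence $(x_{n})_{n\in\omega}$ in $Y$; we must find $n$ with $x_{0}*\cdots*x_{n}\in\mathrm{Li}(X)$. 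By Proposition \ref{49tt4ngio224gf}, $Y$ is permutative and $\mathrm{Li}(Y)=\mathrm{Li}(X)\cap Y$, and by $(3)$ the number $N=|\mathrm{crit}[Y]|$ is finite. Applying the $2^{n}$-product proposition inside the permutative LD-system $Y$ with $n=N$ to the first $2^{N}$ terms $x_{0},\dots,x_{2^{N}-1}$: the alternative in which the partial products realize at least $N+1$ distinct critical points is impossible, since those critical points all lie in $\mathrm{crit}[Y]$, which has exactly $N$ elements; hence some partial product $x_{0}*\cdots*x_{i}$ lies in $\mathrm{Li}(Y)\subseteq\mathrm{Li}(X)$, which is exactly what is needed.

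I expect the only genuinely delicate point to be the bookkeeping around $\mathrm{Li}$ and critical points of subalgebras: one must invoke Proposition \ref{49tt4ngio224gf} to know $\mathrm{Li}(Y)=\mathrm{Li}(X)\cap Y$, so that a partial product landing in $\mathrm{Li}(Y)$ genuinely lands in $\mathrm{Li}(X)$, and similarly to transfer finiteness of the critical-point set between $Y$ and $X$. Past that the argument is a direct assembly of the cited results, with the $2^{n}$-product proposition carrying the real content of $(3)\Rightarrow(1)$ and the Fibonacci-word identity carrying the permutativity half of $(1)\Rightarrow(2)$.
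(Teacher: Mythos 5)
Your proof is correct, and it fills a genuine gap in the paper: this proposition is stated without proof. The organization $(1)\Rightarrow(2)\Rightarrow(3)\Rightarrow(1)$ is natural and each implication uses the right tools.

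The permutativity half of $(1)\Rightarrow(2)$ rests on a correct observation worth spelling out: with $x_{0}=x$, $x_{1}=y$, $x_{2}=x$, and $x_{k}=t_{k}(x,y)$ for $k\geq 3$, one shows by a one-line induction from $t_{n+2}=t_{n+1}*t_{n}$ that the $k$-th left-grouped partial product $P_{k}=x_{0}*\cdots*x_{k}$ equals $t_{k+2}(x,y)$, so the locally Laver-like hypothesis applied to $\langle x,y\rangle$ hands you a Fibonacci term in $\mathrm{Li}(X)$. Your case split on whether $x\in\mathrm{Li}(X)$ or $y\in\mathrm{Li}(X)$ is harmless but unnecessary: if $x\in\mathrm{Li}(X)$ then already $P_{0}=t_{2}(x,y)\in\mathrm{Li}(X)$, and if $y\in\mathrm{Li}(X)$ then $P_{1}=x*y=t_{3}(x,y)\in\mathrm{Li}(X)$ because $\mathrm{Li}(X)$ is a left-ideal, so the general argument covers these.

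For $(3)\Rightarrow(1)$, the application of the $2^{n}$-product dichotomy inside the finitely generated subalgebra $Y$ is exactly right, and you are correct to be careful about transferring $\mathrm{Li}$ and critical points between $Y$ and $X$. The fact that $\mathrm{Li}(Y)=\mathrm{Li}(X)\cap Y$ for a subalgebra $Y$ of a permutative $X$ is established inside the proof of Proposition \ref{49tt4ngio224gf}(1) (the first claim there, using injectivity of the inclusion), even though it is not isolated in the statement; citing that proposition for both the permutativity of $Y$ and the compatibility of $\mathrm{Li}$ is appropriate. With $N=|\mathrm{crit}[Y]|$ finite, the second alternative of the dichotomy cannot hold for any $2^{N}$-block of the given sequence (it would need $N+1$ distinct critical points drawn from a set of size $N$), so the first alternative produces a partial product in $\mathrm{Li}(Y)\subseteq\mathrm{Li}(X)$, as required. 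The step $(2)\Rightarrow(3)$ is immediate, and your remark that Proposition \ref{49tt4ngio224gf}(2) shows $\mathrm{crit}[Y]$ injects into $\mathrm{crit}[X]$ correctly disposes of any ambiguity about whether critical points are computed in $Y$ or in $X$.
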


Let $X$ be a totally ordered set with greatest element $1$. Then the Heyting algebra $(X,\rightarrow)$ is a locally Laver-like LD-system.
\begin{prop}
Let $X$ be a totally ordered set with greatest element $1$. Let $(M,*)=\mathbf{M}((x)_{x\in X},(X,\rightarrow)$.
Then the operation $*$ can be completely described according to the following rules:
\begin{enumerate}
\item If $(x_{1},\ldots,x_{m})\in X^{+}$, then $(x_{1},\ldots,x_{m})\in M$ if and only if $x_{1}>x_{2}>\ldots>x_{m-1}$ and 
either $m=1$ or $x_{1}<1$.

\item $(x_{1},\ldots,x_{m})\in \mathrm{Li}(M)$ precisely when $m\geq 2,x_{m}\geq x_{m-1}$ or $m=1,x_{1}=1$.

\item If $(x_{1},\ldots,x_{m})\in \mathrm{Li}(M)$ and $(y_{1},\ldots,y_{n})\in M$, then 
\[(x_{1},\ldots,x_{m})*(y_{1},\ldots,y_{n})=(y_{1},\ldots,y_{n}).\]

\item If $(x_{1},\ldots,x_{m})\in M\setminus\mathrm{Li}(M)$ and $y_{n}<x_{m}$, then
\[(x_{1},\ldots,x_{m})*(y_{1},\ldots,y_{n})=(x_{1},\ldots,x_{m},y_{i},\ldots,y_{n})\]
where $i$ is the least natural number with
$y_{i}<x_{m}$.

\item If $(x_{1},\ldots,x_{m})\in M\setminus\mathrm{Li}(M)$, then
\[(x_{1},\ldots,x_{m})*(y_{1},\ldots,y_{n})=(x_{1},\ldots,x_{m},y_{n})\]
whenever $y_{n}\geq x_{m}$.
\end{enumerate}
\end{prop}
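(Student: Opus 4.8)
The plan is to determine the underlying set and the operation of $M=\mathbf{M}((x)_{x\in X},(X,\rightarrow))$ by combining the recursion defining $*^{M}$ with the homomorphism $\phi\colon M\to(X,\rightarrow)$, $\phi(a_{1}\cdots a_{n})=a_{1}\rightarrow\cdots\rightarrow a_{n}$ (parentheses on the left), furnished by Theorem~\ref{enbfwu92}, recalling that $\mathbf{a}\in\mathrm{Li}(M)$ iff $\phi(\mathbf{a})\in\mathrm{Li}(X)=\{1\}$ since $1$ is the unique left-identity of a linearly ordered Heyting algebra. The elementary engine is the identity $x\rightarrow y=1$ when $x\le y$ and $x\rightarrow y=y$ when $x>y$, valid in a linear order; iterating it shows that for $\mathbf{a}=(a_{1},\dots,a_{m})$ the left partial products $p_{1}=a_{1}$, $p_{k+1}=p_{k}\rightarrow a_{k+1}$ obey the dichotomy: either some $p_{k}=1$, in which case $p_{j}=1$ for every $j\ge k$, or $a_{1}>a_{2}>\cdots>a_{m}$ and $p_{k}=a_{k}$ for all $k$. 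Since $\mathbf{a}\in M$ iff $p_{k}\neq1$ for $k<m$ and $\mathbf{a}\in\mathrm{Li}(M)$ iff $p_{m}=1$, this at once gives (1) and (2); moreover it identifies $M\setminus\mathrm{Li}(M)$ as the set of strictly decreasing strings with first entry $<1$, and shows $\phi((x_{1}>\cdots>x_{m}))=x_{m}$ for such a string. Item (3) is immediate, since $\mathrm{Li}(M)=M\setminus L$ consists of left-identities by clause~(1) of the definition of $*^{M}$.

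For the operation, the heart of the matter — and what I would prove first — is a ``domination lemma'' that is essentially the content of item~(5): if $\mathbf{x}=(x_{1}>\cdots>x_{m})\in M\setminus\mathrm{Li}(M)$ and $\mathbf{y}=(y_{1},\dots,y_{n})\in M$ has $y_{j}\ge x_{m}$ for every $j\le n-1$, then $\mathbf{x}*\mathbf{y}=(x_{1},\dots,x_{m},y_{n})$. This goes by induction on $n$: for $n=1$ it is clause~(2); for $n\ge2$ write $\mathbf{y}=\mathbf{y}'y_{n}$ with $\mathbf{y}'=(y_{1},\dots,y_{n-1})$ (a strictly decreasing string of first entry $<1$, hence in $M\setminus\mathrm{Li}(M)=L$), apply clause~(3) to get $\mathbf{x}*\mathbf{y}=(\mathbf{x}*\mathbf{y}')*(\mathbf{x}y_{n})$, evaluate $\mathbf{x}*\mathbf{y}'=(x_{1},\dots,x_{m},y_{n-1})$ by the inductive hypothesis, observe that $\phi$ of it equals $x_{m}\rightarrow y_{n-1}=1$ so that it lies in $\mathrm{Li}(M)$, and finish by clause~(1). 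One should note here that, since membership of $(y_{1},\dots,y_{n})$ in $M$ only forces $y_{1}>\cdots>y_{n-1}$ while leaving $y_{n}$ free, the hypothesis that is actually used — and under which the conclusion holds — is a condition on $y_{1},\dots,y_{n-1}$ (equivalently, on $y_{n-1}$ when $n\ge2$), not merely on $y_{n}$; this is the form in which I would state it.

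Item~(4) comes next. Suppose $\mathbf{x}=(x_{1}>\cdots>x_{m})\in M\setminus\mathrm{Li}(M)$ and $i$ is least with $y_{i}<x_{m}$. For $j<i$ one has $\mathbf{x}*y_{j}=(x_{1},\dots,x_{m},y_{j})$ by clause~(2), with $\phi$-value $x_{m}\rightarrow y_{j}=1$, so $\mathbf{x}*y_{j}\in\mathrm{Li}(M)$; Corollary~\ref{10W44eh2g2EHO2uo} then collapses that prefix and gives $\mathbf{x}*\mathbf{y}=\mathbf{x}*(y_{i},\dots,y_{n})$, with $(y_{i},\dots,y_{n})\in M$ since $y_{i}<x_{m}<1$. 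So it suffices to show that if $\mathbf{z}=(z_{1},\dots,z_{k})\in M$ has $z_{1}<x_{m}$, then $\mathbf{x}*\mathbf{z}=\mathbf{x}\mathbf{z}$. This is again an induction on $k$: $k=1$ is clause~(2); for $k\ge2$, clause~(3) gives $\mathbf{x}*\mathbf{z}=(\mathbf{x}*\mathbf{z}')*(\mathbf{x}z_{k})$ with $\mathbf{z}'=(z_{1},\dots,z_{k-1})$, the inductive hypothesis gives $\mathbf{x}*\mathbf{z}'=\mathbf{x}\mathbf{z}'$ (a strictly decreasing string of first entry $<1$ and last entry $z_{k-1}$, hence in $M\setminus\mathrm{Li}(M)$), and since the non-final entries $x_{1},\dots,x_{m}$ of $\mathbf{x}z_{k}$ all exceed $z_{k-1}$, the domination lemma applies to the pair $(\mathbf{x}\mathbf{z}',\mathbf{x}z_{k})$ and yields $(\mathbf{x}\mathbf{z}')z_{k}=\mathbf{x}\mathbf{z}$. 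Feeding this back, $\mathbf{x}*\mathbf{y}=(x_{1},\dots,x_{m},y_{i},\dots,y_{n})$, which is item~(4); and every pair $(\mathbf{x},\mathbf{y})$ with $\mathbf{x}\in M$ falls under exactly one of $\mathbf{x}\in\mathrm{Li}(M)$, $\mathbf{x}\notin\mathrm{Li}(M)$ with some $y_{j}<x_{m}$, and $\mathbf{x}\notin\mathrm{Li}(M)$ with all $y_{j}\ge x_{m}$, so the listed clauses describe $*$ completely.

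The step I expect to be the real obstacle is organizing these nested inductions: a single induction on $|\mathbf{y}|$ cannot work, because clause~(3) of $*^{M}$ replaces the right argument $\mathbf{y}$ by $\mathbf{x}y_{n}$, whose length is unrelated to $|\mathbf{y}|$ — which is precisely why the domination lemma must be established first and on its own terms (induction on the length of its right argument), and then used as a subroutine inside the inductions for items~(4) and~(5). The other recurring chore, minor but unavoidable, is checking at each step that the auxiliary strings produced along the way (such as $(x_{1},\dots,x_{m},y_{i},\dots,y_{n-1})$ or $(x_{1},\dots,x_{m},z_{1},\dots,z_{k-1})$) actually belong to $M$ and lie on the correct side of $\mathrm{Li}(M)$, for which the explicit descriptions obtained in the first step are invoked repeatedly.
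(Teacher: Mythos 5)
The paper states this proposition without proof, so there is no argument of the paper's to compare against. Your argument is correct in substance, and the organization — derive (1)--(3) from the homomorphism $\phi$, establish the domination lemma by induction on $|\mathbf{y}|$, then handle item (4) by first collapsing the initial $y_j\ge x_m$ via Corollary~\ref{10W44eh2g2EHO2uo} and then a second induction that calls the domination lemma as a subroutine — is the right way to deal with the fact that clause (3) of the recursion replaces the right operand by a string of unrelated length. You also correctly flag a genuine defect in the proposition as written: with $X=\{a<b<c<1\}$, $\mathbf{x}=(b)$, $\mathbf{y}=(a,c)$ one has $y_n=c\ge b=x_m$, so item~(5) as stated would give $(b,c)$, yet the recursion yields $(b)*(a,c)=((b)*(a))*(b,c)=(b,a)*(b,c)=((b,a)*(b))*(b,a,c)=(b,a,b)*(b,a,c)=(b,a,c)$. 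Since membership of $\mathbf{y}$ in $M$ constrains only $y_1,\ldots,y_{n-1}$, the correct dividing line between clauses (4) and (5) is whether $y_{n-1}<x_m$ or $y_{n-1}\ge x_m$ (vacuous when $n=1$), which is precisely the hypothesis your domination lemma carries.

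One thing to repair before this is clean: the ``dichotomy'' on partial products is false as you state it. If $p_k=1$ then $p_{k+1}=1\rightarrow a_{k+1}=a_{k+1}$, which need not be $1$; hitting $1$ does not trap the sequence at $1$ but resets it. What you actually need and use is narrower and correct: if $p_k\neq 1$ for all $k<m$ then $p_k=a_k$ for $k<m$, whence $a_1>\cdots>a_{m-1}$ and $a_1<1$ (for $m\ge2$), and consequently $p_m=a_{m-1}\rightarrow a_m$ is $1$ or $a_m$ according as $a_{m-1}\le a_m$ or not. State that lemma rather than the global dichotomy; everything downstream then goes through unchanged, and your items (1)--(5), with the corrected hypothesis in (5), do exhaust all cases.
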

\begin{defn}
Let $n\in\omega$. Let $M=\{a,\ldots ,a^{2^{n}}\}$. Then $(M,*^{M})$ is a pre-multigenic Laver table.
Let $A_{n}=(\{1,\ldots,2^{n}\},*_{n})$ where $*_{n}$ is the unique operation such that $a^{x*^{n}y}=a^{x}*^{M}a^{y}$
for all $x,y\in\{1,\ldots,2^{n}\}$. We shall call the algebra $A_{n}$ the $n$-th classical Laver table.
\end{defn}
\begin{prop}
\begin{enumerate}
\item The classical Laver table $A_{n}$ is self-distributive for all natural numbers $n$.

\item If $A$ is a non-empty set, then $(A^{\leq 2^{n}})^{+}$ is a multigenic Laver table.
\end{enumerate}
\end{prop}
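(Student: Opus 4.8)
The plan is to prove (1) first and then deduce (2); note at the outset that (1) is in any case the special case $|A|=1$ of (2), since by construction $A_{n}$ is the pre-multigenic Laver table over a one-letter alphabet $\{a\}$ with $L=\{a,a^{2},\ldots,a^{2^{n}-1}\}$, i.e. $A_{n}=(\{a\}^{\leq 2^{n}})^{+}$, and in $M=(A^{\leq 2^{n}})^{+}$ one has $\mathrm{Li}(M)=\{\mathbf{x}:|\mathbf{x}|=2^{n}\}$. By Theorem \ref{m43ti0}, to see that $A_{n}$, or more generally $(A^{\leq 2^{n}})^{+}$, is self-distributive it suffices to show that $\mathbf{x}*\mathbf{y}\in\mathrm{Li}(M)$ whenever $\mathbf{x}\in M$ and $\mathbf{y}\in\mathrm{Li}(M)$. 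Since $\mathbf{x}*\mathbf{y}\in M$ always has length at most $2^{n}$, and since $\mathbf{x}*\mathbf{y}=\mathbf{y}$ when $\mathbf{x}\in\mathrm{Li}(M)$, the whole content is the assertion that for $\mathbf{x}\in M\setminus\mathrm{Li}(M)$ one has $|\mathbf{x}*\mathbf{y}|=2^{n}$ whenever $|\mathbf{y}|=2^{n}$. In the one-letter notation this is exactly the classical statement that the last column of the multiplication table of $A_{n}$ is constant with value $2^{n}$, i.e. $x*_{n}2^{n}=2^{n}$ for all $x$.

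I would establish this by the characteristic double induction used to define $*^{M}$: descending on $\mathbf{x}$ with respect to $\preceq$, so that the entries $\mathbf{x}*\mathbf{z}$ appearing in the recursion $\mathbf{x}*\mathbf{z}b=(\mathbf{x}*\mathbf{z})*\mathbf{x}b$, which properly extend $\mathbf{x}$, are already under control, and for each $\mathbf{x}$ ascending on $|\mathbf{y}|$. The naive statement ``$|\mathbf{y}|=2^{n}\Rightarrow\mathbf{x}*\mathbf{y}\in\mathrm{Li}(M)$'' is not closed under this induction, because computing $\mathbf{x}*\mathbf{y}$ forces one to know the length and the $\mathrm{Li}$-status of $\mathbf{x}*\mathbf{z}$ for $\mathbf{z}$ of every length below $2^{n}$, and these rows are genuinely periodic: already in $A_{2}$ one has $a*a^{2}=a^{4}\in\mathrm{Li}$ but $a*a^{3}=a^{2}\notin\mathrm{Li}$. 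So the induction hypothesis must be strengthened to a package recording, for each $\mathbf{x}$, a period $p(\mathbf{x})$ dividing $2^{n}$ and the length function $j\mapsto|\mathbf{x}*\mathbf{z}|$ for $|\mathbf{z}|=j$, together with the recursions expressing $p(\mathbf{x}b)$ and the length function of $\mathbf{x}b$ in terms of those of $\mathbf{x}$ --- the word-theoretic analogues of the classical quantities $o_{n}(x)$ and $O_{n}(x)$. Propositions \ref{t4jio} and \ref{G0Wuoeh2g2EHO2uo}, with Corollary \ref{10W44eh2g2EHO2uo}, are what allow this package to pass verbatim from the one-letter case to an arbitrary alphabet: they show that the length of $\mathbf{x}*\mathbf{z}b$, and whether it lies in $\mathrm{Li}(M)$, are independent of the final letter $b$, which by an easy induction makes these quantities depend only on $\mathbf{x}$ and $|\mathbf{z}b|$. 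Running the package through the double induction yields in particular $p(\mathbf{x})\mid 2^{n}$, hence $|\mathbf{x}*\mathbf{y}|=2^{n}$ when $|\mathbf{y}|=2^{n}$; this gives (1), and run for arbitrary $A$ it gives (2). I expect this periodicity bookkeeping to be the only real obstacle; everything else is formal.

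Once (1) is in hand there is also a short derivation of (2) that avoids rerunning the induction for general alphabets. By (1) (equivalently by the computation above), $x*_{n}2^{n}=2^{n}$ for every $x$, so $\mathrm{Li}(A_{n})=\{2^{n}\}$ is a left-ideal of the LD-system $A_{n}$. Take all generators equal to $1\in A_{n}$: the left-associated product $1*1*\cdots*1$ of $m$ copies of $1$ equals $m$ for $1\leq m\leq 2^{n}$ and thereafter cycles, so it lies in $\mathrm{Li}(A_{n})$ precisely when $2^{n}\mid m$; in particular no product of fewer than $2^{n}$ copies lies in $\mathrm{Li}(A_{n})$. Hence the hypothesis of the theorem following Theorem \ref{enbfwu92} is satisfied, and $\mathbf{M}((1)_{a\in A},A_{n})$ consists exactly of the non-empty words over $A$ of length at most $2^{n}$. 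Since a pre-multigenic Laver table is determined by its downward-closed set $L$, this algebra is $(A^{\leq 2^{n}})^{+}$, and that theorem then tells us it is a multigenic Laver table, which is (2).
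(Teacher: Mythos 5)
Your reduction of~(1), via Theorem~\ref{m43ti0}, to the single identity $x*_{n}2^{n}=2^{n}$, and your derivation of~(2) from~(1) (taking all generators equal to $1$ in $A_{n}$, checking that $\mathbf{M}((1)_{a\in A},A_{n})=(A^{\leq 2^{n}})^{+}$, and invoking the theorem following Theorem~\ref{enbfwu92}), are both correct and both match the paper. The gap is in the proof of~(1) itself. You correctly identify the substantive claim---that for each $x$ the least $y$ with $x*_{n}y=2^{n}$ divides $2^{n}$---but you never establish it: you describe a ``package'' of period and length-function data to carry through the descending-on-$\mathbf{x}$, ascending-on-$\mathbf{y}$ double induction inside a \emph{fixed} $A_{n}$, and then remark that you expect this ``periodicity bookkeeping to be the only real obstacle.'' That bookkeeping is the entire content of the theorem, and there is no visible mechanism by which a descending induction confined to a single table forces the period $p(\mathbf{x})$ to be a power of $2$ rather than an arbitrary integer $\leq 2^{n}$; the recursion $\mathbf{x}*\mathbf{z}b=(\mathbf{x}*\mathbf{z})*\mathbf{x}b$ relates the row of $\mathbf{x}$ to rows of elements $\succ\mathbf{x}$, but by itself it imposes no divisibility constraint.

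The missing idea, in the paper, is to induct on $n$ rather than within $A_{n}$. Granting by IH that $A_{n-1}$ is LD, Theorem~\ref{enbfwu92} produces a homomorphism $\phi:A_{n}\to A_{n-1}$, $\phi(a^{x})=a^{(x)_{2^{n-1}}}$; Corollary~\ref{10W44eh2g2EHO2uo} applied in $A_{n-1}$ then forces the period of $\phi(x)$ to divide $2^{n-1}$ (any other period length would, by cancelling, make $a^{\phi(x)}*^{N}a^{2^{n-1}}\notin\mathrm{Li}(N)$, contradicting the IH), and pulling back along $\phi$ shows the period of $x$ in $A_{n}$ is either equal to or twice that of $\phi(x)$, hence a power of $2$ dividing $2^{n}$, which via Corollary~\ref{10W44eh2g2EHO2uo} once more gives $x*_{n}2^{n}=2^{n}$. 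Neither the projection $\phi$ nor the comparison of periods across consecutive levels appears in your sketch, and I do not see how to recover them from a double induction confined to $A_{n}$ alone.
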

\begin{proof}
\begin{enumerate}
\item We shall prove this result by induction on $n$. The case where $n=0$ is trivial so assume $n>0$. Let $M=\{a^{1},\ldots ,a^{2^{n}}\}$ and let $N=\{a^{1},\ldots ,a^{2^{n-1}}\}$. Then by the induction hypothesis, $N$ is a multigenic Laver table. Define a mapping $\phi:M\rightarrow N$ by letting $\phi(a^{x})=a^{x}$ for $x\leq 2^{n-1}$ and $\phi(a^{x})=a^{x-2^{n-1}}$ for $x>2^{n-1}$. Then by Theorem \ref{enbfwu92}, the mapping $\phi$ is a homomorphism. Also, define $\phi:\{1,\ldots ,2^{n}\}\rightarrow\{1,\ldots ,2^{n-1}\}$ by letting $\phi(x)=(x)_{2^{n-1}}$

Let $x\in\{1,\ldots ,2^{n}\}$. Then let $y$ be the least natural number such that
$a^{\phi(x)}*^{N}a^{y}\in\mathrm{Li}(N)$. Then $y$ is a power of 2, since 
otherwise there are $p,q$ where $2^{n-1}=py+q$ and $1\leq q<y$, and therefore
\[a^{\phi(x)}*a^{2^{n-1}}=a^{\phi(x))}*a^{q}\not\in\mathrm{Li}(N)\]
by Corollary \ref{10W44eh2g2EHO2uo}, a contradiction. 

Now,
\[\phi(a^{x}*^{M}a^{y})=a^{\phi(x)}*^{N}a^{y}=a^{2^{n-1}}.\]
Therefore,
$a^{x}*^{M}a^{y}=a^{2^{n-1}}$ or $a^{x}*^{M}a^{y}=a^{2^{n}}$. If $a^{x}*^{M}a^{y}=a^{2^{n}}$, then our proof is complete. Now assume that $a^{x}*^{M}a^{y}=a^{2^{n-1}}$.

Then I claim that $a^{x}*^{M}a^{2y}=a^{2^{n}}$.

If $y<2^{n-1}$, then for $1\leq q<y$, we have
\[\phi(a^{x}*^{M}a^{y+q})=a^{\phi(x)}*^{N}a^{y+q}=a^{\phi(x)}*^{N}a^{q}\not\in
\mathrm{Li}(N).\]
If $y=2^{n-1}$, then 
\[\phi(a^{x}*^{M}a^{y+q})=a^{\phi(x)}*^{N}a^{q}\not\in\mathrm{Li}(N)\]
as well. Therefore, $a^{x}*^{M}a^{y+q}\not\in\mathrm{Li}(M)$ for $1\leq q<y$ in any case. Therefore, since $a^{x}*^{M}a^{s}\not\in\mathrm{Li}(M)$ whenever
$1\leq s<2y$, we conclude that $a^{x}*^{M}a^{y}$ is a proper substring of
$a^{x}*^{M}a^{2y}$. Therefore, since
$\phi(a^{x}*^{M}a^{2y})=a^{\phi(x)}*^{N}a^{\phi(2y)}\in\mathrm{Li}(N)$, we conclude that
$a^{x}*^{M}a^{2y}=a^{2^{n-1}}$ or $a^{x}*^{M}a^{2y}=a^{2^{n}}$. Since
$a^{x}*^{M}a^{y}$ is a proper prefix of $a^{x}*^{M}a^{2y}$, we deduce that
$a^{x}*^{M}a^{2y}=a^{2^{n}}$.  Now, since $a^{x}*^{M}a^{2y}=a^{2^{n}}$ and $2y$ is a factor of $2^{n}$, by Corollary \ref{10W44eh2g2EHO2uo}, we conclude that $a^{x}*^{M}a^{2^{n}}\in\mathrm{Li}(M)$. Therefore $M$ is self-distributive by Theorem \ref{m43ti0}.

\item It is easy to see that $(A^{\leq 2^{n}})^{+}=\mathbf{M}((1)_{a\in A},A_{n})$. Therefore, since each $A_{n}$ is Laver-like, the algebra $(A^{\leq 2^{n}})^{+}$ is also self-distributive.
\end{enumerate}
\end{proof}
Let $M$ be a multigenic Laver table and let $\alpha\in \mathrm{crit}[M]$. Then let $M\upharpoonright\alpha$ be the set of all strings $\mathbf{x}$ so that if $\mathbf{y}$ is a non-empty proper prefix of $\mathbf{x}$, then $\mathrm{crit}(\mathbf{y})<\alpha$. 

\begin{prop}
Suppose that $M$ is a finite multigenic Laver table and $\alpha\in\mathrm{crit}[M]\setminus\{\max(\mathrm{crit}[M])\}$. Let $\mathbf{x}\in M$ be a string such that $\mathbf{x}^{\sharp}(\beta)=\max(\mathrm{crit}[M])$ if and only if
$\beta\geq\alpha$.
\begin{enumerate}
\item $M\upharpoonright\alpha$ is a multigenic Laver table.

\item Define a mapping $\phi:M\upharpoonright\alpha\rightarrow M$ by letting $\phi(\mathbf{y})=\mathbf{x}*^{M}\mathbf{y}$. Then $\phi$ is an injective homomorphism.

\item The mapping $L:M\rightarrow M\upharpoonright\alpha$ defined by $L(a_{1}\ldots a_{n})=a_{1}*^{M\upharpoonright\alpha}\ldots*^{M\upharpoonright\alpha}a_{n}$ is a surjective homomorphism between algebras. \label{36ygqb389b5qv9tf93}

\item Define a mapping $\Delta:M\rightarrow M$ by $\Delta(\mathbf{y})=\mathbf{x}*\mathbf{y}$. Then $\Delta=\phi L$ and
\[\ker(\Delta)=\ker(L)=(\equiv^{\alpha}).\]
\end{enumerate}
\end{prop}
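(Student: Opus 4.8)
The plan is to reduce the whole statement to two already–available tools: the construction $\mathbf{M}((x_a)_{a\in A},X)$ together with the universal property Theorem~\ref{enbfwu92}, and the description of the congruences $\equiv^{\alpha}$. I would prove (1) by identifying $M\upharpoonright\alpha$ with $\mathbf{M}((a)_{a\in A},M/\!\equiv^{\alpha})$, obtain (3) directly from Theorem~\ref{enbfwu92}, and then derive (2) and (4) simultaneously by first establishing the factorisation $\Delta=\phi\circ L$ and afterwards computing the kernel. Note first that the hypothesis on $\mathbf{x}$ says exactly that $\mathbf{x}$ is ``$\alpha$–saturated'': $\mathbf{x}*\mathbf{w}\in\mathrm{Li}(M)$ iff $\mathrm{crit}(\mathbf{w})\ge\alpha$, and $\mathrm{crit}(\mathbf{x}*\mathbf{w})<\mathrm{crit}(\text{top})$ otherwise; this is the property that will make $\mathbf{x}*(-)$ ``blind'' to the part of a word lying past critical point $\alpha$, just as $\equiv^{\alpha}$ is.

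\textbf{Parts (1) and (3).} A word $a_{1}\ldots a_{n}$ lies in $M\upharpoonright\alpha$ precisely when $\mathrm{crit}(a_{1}*\cdots*a_{m})<\alpha$ for $1\le m<n$; since $[x]_{\equiv^{\alpha}}\in\mathrm{Li}(M/\!\equiv^{\alpha})$ exactly when $\mathrm{crit}(x)\ge\alpha$ (the proposition characterising $\mathrm{Li}$ of an $\equiv^{\alpha}$–quotient, together with Theorem~\ref{uh3d5tjh4uyyuin}(4)) and $[a_{1}*\cdots*a_{m}]_{\equiv^{\alpha}}=[a_{1}]_{\equiv^{\alpha}}*\cdots*[a_{m}]_{\equiv^{\alpha}}$, this coincides with the defining condition for membership in $\mathbf{M}((a)_{a\in A},M/\!\equiv^{\alpha})$. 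Because $M$ is finite it is Laver-like, hence so is $M/\!\equiv^{\alpha}$, so the hypotheses producing multigenic Laver tables of the form $\mathbf{M}(\cdots)$ are met and (1) follows, with $\mathrm{Li}(M\upharpoonright\alpha)=\{a_{1}\ldots a_{n}\in M\upharpoonright\alpha:\mathrm{crit}(a_{1}*\cdots*a_{n})\ge\alpha\}$ and an isomorphism $\psi:M\upharpoonright\alpha\to M/\!\equiv^{\alpha}$, $a_{1}\ldots a_{n}\mapsto[a_{1}*\cdots*a_{n}]_{\equiv^{\alpha}}$ (every multigenic Laver table over $A$ being equal to $\mathbf{M}((a)_{a\in A})$ built from itself). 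For (3), Theorem~\ref{enbfwu92} gives the homomorphism $L$ once one checks that $a_{1}*^{M\upharpoonright\alpha}\cdots*^{M\upharpoonright\alpha}a_{n}\in\mathrm{Li}(M\upharpoonright\alpha)$ whenever $a_{1}\ldots a_{n}\in\mathrm{Li}(M)$; transported along $\psi$ this reads $[a_{1}*\cdots*a_{n}]_{\equiv^{\alpha}}\in\mathrm{Li}(M/\!\equiv^{\alpha})$, true since then $\mathrm{crit}(a_{1}*\cdots*a_{n})=\mathrm{crit}(\text{top})\ge\alpha$. The generators of $M$ go to those of $M\upharpoonright\alpha$, so $L$ is onto, and composing with the inclusion $M\upharpoonright\alpha\hookrightarrow M$ shows $L$ is the identity on $M\upharpoonright\alpha$; consequently $\psi$ identifies $M/\ker L$ with $M/\!\equiv^{\alpha}$, i.e. $\ker L=(\equiv^{\alpha})$.

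\textbf{Parts (2) and (4).} I would next prove the identity $\Delta=\phi\circ L$, that is $\mathbf{x}*^{M}(a_{1}\ldots a_{n})=\mathbf{x}*^{M}\bigl(L(a_{1}\ldots a_{n})\bigr)$, by induction on $n$: if no proper prefix $a_{1}\ldots a_{m}$ ($m<n$) has $\mathrm{crit}\ge\alpha$ then $a_{1}\ldots a_{n}\in M\upharpoonright\alpha$ and $L$ fixes it; otherwise, taking $r<n$ least with $\mathrm{crit}(a_{1}*\cdots*a_{r})\ge\alpha$, the saturation hypothesis gives $\mathbf{x}*(a_{1}\ldots a_{r})\in\mathrm{Li}(M)$, so by Proposition~\ref{G0Wuoeh2g2EHO2uo} and Corollary~\ref{10W44eh2g2EHO2uo} that prefix may be deleted from $\mathbf{x}*(a_{1}\ldots a_{n})$, and likewise from $\mathbf{x}*(L(a_{1}\ldots a_{n}))$ because the corresponding prefix lies in $\mathrm{Li}(M\upharpoonright\alpha)$; the induction hypothesis finishes the step. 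Since $\Delta$ is an inner endomorphism of $M$, $L$ is a surjective homomorphism, and $\Delta=\phi L$, it follows formally that $\phi$ is a homomorphism. For the kernel: $\ker\Delta\supseteq\ker L=(\equiv^{\alpha})$ is immediate from $\Delta=\phi L$; moreover $\mathrm{crit}(\mathbf{y})\ge\alpha$ forces $\mathbf{x}*\mathbf{y}\in\mathrm{Li}(M)$, so $\Delta$ sends every such $\mathbf{y}$ into $\mathrm{Li}(M)$, which (together with $\equiv^{\alpha}\subseteq\ker\Delta$ and the fact that $\equiv^{\alpha}$ already collapses all of $\mathrm{crit}\ge\alpha$ into $\mathrm{Li}$) shows $\ker\Delta$ shares the characterising property of $\equiv^{\alpha}$. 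The remaining point is the reverse inclusion $\ker\Delta\subseteq(\equiv^{\alpha})$, equivalently the injectivity of $\phi$ — then $\Delta=\phi L$ with $L$ onto and $\ker L=\ker\Delta$ yields (2) and completes (4). To get it I would show $\phi$ is injective directly on $M\upharpoonright\alpha$: for $\mathbf{y}=a_{1}\ldots a_{n}\in M\upharpoonright\alpha$ each partial product $\mathbf{u}_{m}=\mathbf{x}*(a_{1}\ldots a_{m})$ ($m<n$) is outside $\mathrm{Li}(M)$ by the saturation hypothesis, so by Proposition~\ref{t4uh4aP} the $\mathbf{u}_{m}$ form a strictly $\preceq$–increasing chain of prefixes of $\mathbf{x}*\mathbf{y}$ whose terminal letters are $a_{1},\ldots,a_{n}$, and one recovers $\mathbf{y}$ from $\mathbf{x}*\mathbf{y}$ by peeling off these prefixes.

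\textbf{Main obstacle.} The hard part is exactly this last step: $\ker\Delta=\ker L=(\equiv^{\alpha})$ and the injectivity of $\phi$. The difficulty is that $\mathbf{x}$ need not be involutive nor have $\mathrm{crit}(\mathbf{x})=\alpha$ — only that $\mathbf{x}^{\sharp}$ saturates at $\alpha$ — so one cannot simply invoke Lemma~\ref{4th2utqebnjofa} with $\mathbf{x}$ in place of the canonical involutive element. One must therefore either carry out the prefix–recovery argument above carefully inside the multigenic Laver table (controlling the interaction of $\mathbf{x}$'s own prefixes with the $\mathbf{u}_m$ via Corollary~\ref{10W44eh2g2EHO2uo}), or first show that $\mathbf{x}*(-)$ and $r*(-)$ have the same kernel for the canonical involutive $r$ of critical point $\alpha$ — the latter being possible because $r^{\sharp}$ also saturates exactly at $\alpha$, a fact extracted from the description of $\equiv^{\alpha}$.
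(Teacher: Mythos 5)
Your route for parts (1), (3) and the kernel computation in (4) is genuinely different from, and arguably cleaner than, the paper's. You realize $M\upharpoonright\alpha$ as $\mathbf{M}((a)_{a\in A},M/\!\equiv^{\alpha})$, so $\mathbf{x}$ plays no role until part (2), and the isomorphism $\psi:M\upharpoonright\alpha\to M/\!\equiv^{\alpha}$ satisfies $\psi\circ L=\pi$ (the quotient map), giving $\ker L=(\equiv^{\alpha})$ at once. The paper instead realizes $M\upharpoonright\alpha$ as $\mathbf{M}((\mathbf{x}a)_{a\in A},M)$ and obtains $\ker L=(\equiv^{\alpha})$ by re-applying the whole proposition with a canonical involutive $\mathbf{c}$ of critical point $\alpha$ in place of $\mathbf{x}$ and reading off $\ker\Delta'=(\equiv^{\alpha})$. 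Your reordering is also sound: you prove $L$ is a surjective homomorphism and $\Delta=\phi L$ first and deduce that $\phi$ is a homomorphism formally, whereas the paper gets the homomorphism property of $\phi$ directly from Theorem~\ref{enbfwu92} and then uses the injectivity of $\phi$ inside its proof of (3). Both orders work.

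The gap is exactly where you flag it: injectivity of $\phi$. Your peeling sketch is correct as far as it goes — by Proposition~\ref{t4uh4aP} the partial products $\mathbf{u}_m=\mathbf{x}*(a_1\ldots a_m)$ form a strictly $\preceq$-increasing chain of prefixes of $\mathbf{x}*\mathbf{y}$ whose terminal letters are $a_1,\ldots,a_n$ — but you then assert that $\mathbf{y}$ is recovered from $\mathbf{x}*\mathbf{y}$ ``by peeling off these prefixes'' without explaining how the boundaries $|\mathbf{u}_1|<\cdots<|\mathbf{u}_n|$ are located inside $\mathbf{x}*\mathbf{y}$; this must be built iteratively (one step uses Proposition~\ref{t4jio} to see that $\mathbf{u}_{m+1}$ has the form $\mathbf{w}a_{m+1}$ with $\mathbf{w}$ determined by $\mathbf{u}_m$ and $\mathbf{x}$) and is more than a one-line observation. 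The paper instead avoids recovery: it fixes two distinct words $a_1\ldots a_m,b_1\ldots b_n\in M\upharpoonright\alpha$ and splits on the prefix ordering — when one is a proper prefix of the other, so are the images; when they first differ at position $r$, the words $(\mathbf{x}\circ a_1\ldots a_{r-1})a_r$ and $(\mathbf{x}\circ a_1\ldots a_{r-1})b_r$ are $\preceq$-incomparable prefixes of the respective images. Your alternative suggestion — show $\mathbf{x}*(-)$ and $\mathbf{c}*(-)$ share a kernel — only yields the easy inclusion $(\equiv^{\alpha})\subseteq\ker\Delta$ (that follows from $\mathbf{x}^{\sharp}(\alpha)=\max(\mathrm{crit}[M])$ and the congruence property of $\equiv^{\alpha}$); the reverse inclusion \emph{is} the injectivity of $\phi$, and Lemma~\ref{4th2utqebnjofa} does not supply it because $\mathbf{x}$ need not be involutive, as you yourself note. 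So either spell out the peeling carefully or adopt the paper's prefix-ordering case analysis.
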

\begin{proof}
\begin{enumerate}
\item I claim that $M\upharpoonright\alpha=\mathbf{M}((\mathbf{x}a)_{a\in A},M)$.

Observe that $\mathbf{M}((\mathbf{x}a)_{a\in A},M)\subseteq M$; if $a_{1}\dots a_{n}\not\in M$, then $a_{1}\dots a_{m}\in\mathrm{Li}(M)$ for some $m<n$, so
\[\mathbf{x}a_{1}*^{M}\dots *^{M}\mathbf{x}a_{m}=\mathbf{x}*^{M}a_{1}\dots a_{m}\in\mathrm{Li}(M),\]
hence
\[a_{1}\dots a_{n}\not\in\mathbf{M}((\mathbf{x}a)_{a\in A},M).\]

Suppose now that $\mathbf{y}=a_{1}\dots a_{n}\in M$. Then $\mathbf{y}\in M\upharpoonright\alpha$ if and only if $\mathrm{crit}^{M}(\mathbf{z})<\alpha$ whenever $\varepsilon\neq\mathbf{z}\prec\mathbf{y}$ if and only if $\mathbf{x}*^{M}\mathbf{z}\not\in\mathrm{Li}(M)$ whenever $\varepsilon\neq\mathbf{z}\prec\mathbf{y}$ if and only if
\[\mathbf{x}a_{1}*^{M}\ldots*^{M}\mathbf{x}a_{m}\not\in\mathrm{Li}(M)\]
whenever $1\leq m<n$ if and only if
$\mathbf{y}\in\mathbf{M}((\mathbf{x}a)_{a\in A},M)$. Therefore, $M\upharpoonright\alpha=\mathbf{M}((\mathbf{x}a)_{a\in A},M)$, so $M\upharpoonright\alpha$
is a multigenic Laver table.

\item The mapping $\phi$ is a homomorphism by Theorem \ref{enbfwu92}. Suppose now that
$a_{1}\dots a_{m},b_{1}\dots b_{n}$ are distinct strings in $M\upharpoonright\alpha$. If 
$a_{1}\dots a_{m}$ is a prefix of $b_{1}\dots b_{n}$, then $\mathbf{x}*^{M}a_{1}\dots a_{m}$ is a proper prefix of $\mathbf{x}*^{M}b_{1}\dots b_{n}$. If $b_{1}\dots b_{n}$ is a proper prefix of $a_{1}\dots a_{m}$, then $\mathbf{x}*^{M}b_{1}\dots b_{n}$ is a proper prefix of
$\mathbf{x}*^{M}a_{1}\dots a_{m}$. If $a_{1}\dots a_{m},b_{1}\dots b_{n}$ are incomparable by $\preceq$, then let $r$ be the least natural number such that $a_{r}\neq b_{r}$. If $r=1$, then $\mathbf{x}a_{1}$ is incomparable with $\mathbf{x}b_{1}$, so
$\mathbf{x}*^{M}a_{1}\dots a_{m}$ must be incomparable with $\mathbf{x}*^{M}b_{1}\dots b_{n}$.
If $r>1$, then
\[\mathbf{x}*^{M}a_{1}\dots a_{r}=(\mathbf{x}\circ^{M}a_{1}\dots a_{r-1})a_{r}\]
is a prefix of $\mathbf{x}*^{M}a_{1}\dots a_{m}$ while
\[\mathbf{x}*^{M}a_{1}\dots a_{r-1}b_{r}
=(\mathbf{x}\circ^{M}a_{1}\dots a_{r-1})b_{r}\]
is a prefix of $\mathbf{x}*^{M}b_{1}\dots b_{m}$. We therefore conclude that
$\mathbf{x}*^{M}a_{1}\dots a_{m}$ and $\mathbf{x}*^{M}b_{1}\dots b_{n}$ are incomparable in this case. In any case, 
\[\phi(a_{1}\dots a_{m})=\mathbf{x}*^{M}a_{1}\dots a_{m}\neq\mathbf{x}*^{M}b_{1}\dots b_{n}=\phi(b_{1}\dots b_{n}),\]
so the mapping $\phi$ is injective.

\item Suppose $a_{1}\dots a_{n}\in\mathrm{Li}(M)$. Then

\[\phi L(a_{1}\dots a_{n})=\phi(a_{1}*^{M\upharpoonright\alpha}\dots *^{M\upharpoonright\alpha}a_{n})=\phi(a_{1})*^{M}\dots *^{M}\phi(a_{n})\]
\[=(x*^{M}a_{1})*^{M}\dots *^{M}(x*^{M}a_{n})=x*^{M}(a_{1}\dots a_{n})\in\mathrm{Li}(M).\]

Therefore, since $\phi$ is an injective homomorphism, we conclude that
\[L(a_{1}\dots a_{n})\in\mathrm{Li}(M\upharpoonright\alpha).\]
Therefore, the mapping $L$ is a homomorphism by Theorem \ref{enbfwu92}. The mapping $L$ is surjective since $L$ is a homomorphism, $L(a)=a$ for each $a\in A$, and $A$ generates $M\upharpoonright\alpha$.

\item By the proof of \ref{36ygqb389b5qv9tf93}, we know that $\Delta=\phi L$. Since $\phi$ is injective, $\ker(\Delta)=\ker(L)$. Now let $\mathbf{c}\in M,\mathrm{crit}^{M}(\mathbf{c})=\alpha,\mathbf{c}*^{M}\mathbf{c}\in\mathrm{Li}(M).$ Define a mapping
$\Delta':M\rightarrow M$ by letting $\Delta'(\mathbf{x})=\mathbf{c}*^{M}\mathbf{x}$. Then $\ker(L)=\ker(\Delta')=(\equiv^{\alpha}).$
\end{enumerate}
\end{proof}
\begin{thm}
Suppose that $M,N$ are finite multigenic Laver tables over the alphabet $A$ and $\phi:M\rightarrow N$ is a homomorphism such that $\phi(a)=a$ for each $a\in A$. Then $N=M\upharpoonright\alpha$ for some $\alpha\in\mathrm{crit}[M]$.
\end{thm}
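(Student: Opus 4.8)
The plan is to show that the homomorphism $\phi:M\to N$ must coincide with the canonical projection $L:M\to M\upharpoonright\alpha$ from part (\ref{36ygqb389b5qv9tf93}) of the preceding proposition, for a suitable choice of $\alpha$. First I would analyze when $\phi$ collapses elements: since $\phi$ fixes every letter $a\in A$ and both $M$ and $N$ are generated by $A$, the map $\phi$ is surjective, and by the homomorphism property $\phi(a_1\ldots a_n)=a_1*^N\ldots*^N a_n$. The key preliminary observation is that $\mathbf{a}\in\mathrm{Li}(M)$ forces $\phi(\mathbf{a})\in\mathrm{Li}(N)$ (because $\mathrm{Li}$ is detected by $\mathbf{x}*\mathbf{x}\neq\mathbf{x}$, which is preserved), but not conversely in general — so $\phi$ may send non-left-identities to left-identities, and this is exactly what cutting off at a critical point does.

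Next I would identify $\alpha$. Since $N$ is finite, $\mathrm{crit}[N]$ is a finite linear order; let me consider the set $S=\{\mathrm{crit}^M(\mathbf{y})\mid \mathbf{y}\in M,\ \phi(\mathbf{y})\in\mathrm{Li}(N)\}$. Using Theorem \ref{uh3d5tjh4uyyuin} (applied to the congruence $\ker\phi$, or rather to the surjection $\phi$), this set is upward closed in $\mathrm{crit}[M]$, so it has a least element $\alpha$ (if $S=\mathrm{crit}[M]$ then $N$ is trivial and $\alpha=\min\mathrm{crit}[M]$; otherwise $\alpha<\max$). I claim $\alpha\in\mathrm{crit}[M]\setminus\{\max\}$ is the right choice and that $\ker\phi=(\equiv^\alpha)$. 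One inclusion: if $\mathbf{x}\equiv^\alpha\mathbf{y}$ then, by the description of $\equiv^\alpha$ as the smallest congruence killing everything of critical point $\geq\alpha$, and since $\phi$ kills exactly those elements of critical point $\geq\alpha$ (by the upward-closedness and minimality of $\alpha$, together with part (3) of Theorem \ref{uh3d5tjh4uyyuin} giving linearity so the cut is clean), we get $\phi(\mathbf{x})=\phi(\mathbf{y})$. The reverse inclusion: if $\phi(\mathbf{x})=\phi(\mathbf{y})$, I would use that $\ker\phi$ is a congruence with $\mathbf{u}\in\mathrm{Li}(M)\Leftrightarrow$ not necessarily $\mathbf{v}\in\mathrm{Li}(M)$, but invoke Theorem \ref{uh3d5tjh4uyyuin}(6): $\mathrm{crit}(\phi\mathbf{x})\leq\mathrm{crit}(\phi\mathbf{y})$ iff $\mathrm{crit}(\mathbf{x})\leq\mathrm{crit}(\mathbf{y})$ or $\mathrm{crit}(\mathbf{y})\geq\alpha$, and a symmetric analysis forces $\mathbf{x},\mathbf{y}$ to be $\equiv^\alpha$-related. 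Then the universal property of quotients gives $N\cong M/(\equiv^\alpha)\cong M\upharpoonright\alpha$, and since the isomorphism fixes $A$ and $A$ generates both, it is the identity, so $N=M\upharpoonright\alpha$.

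The main obstacle I anticipate is pinning down \emph{exactly} which elements $\phi$ sends into $\mathrm{Li}(N)$ and showing this "cutoff set" is precisely $\{\mathbf{y}:\mathrm{crit}^M(\mathbf{y})\geq\alpha\}$ rather than something ragged — i.e., that the congruence $\ker\phi$ really is "critical," only collapsing things of critical point at least $\alpha$ and nothing below. This is where Theorem \ref{uh3d5tjh4uyyuin} does the heavy lifting: it tells us that for any congruence on a permutative LD-system that respects the $\mathrm{Li}$ predicate in the appropriate weak sense, the collapsed critical points form an up-set $B$ and everything outside $B$ keeps its critical point. The subtlety is that $N=M\upharpoonright\alpha$ is a multigenic Laver table, which is reduced (its unique left-identity is the class of $\mathrm{Li}$), so $\ker\phi$ does respect the predicate $x\in\mathrm{Li}$ after passing to $N$; one must check $\ker\phi$ satisfies the hypothesis "$x\simeq y\Rightarrow(x\in\mathrm{Li}(M)\Leftrightarrow y\in\mathrm{Li}(M))$" — but this can fail! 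A non-left-identity can be $\ker\phi$-equivalent to a left-identity. So the correct statement to invoke is the general partition result for arbitrary congruences on permutative LD-systems, Theorem \ref{uh3d5tjh4uyyuin} in full, which handles exactly this. Once $\ker\phi=(\equiv^\alpha)$ is established, identifying $M/(\equiv^\alpha)$ with $M\upharpoonright\alpha$ is precisely part (4) of the preceding proposition, so the remaining work is bookkeeping about generators.
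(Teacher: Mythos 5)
Your choice of $\alpha$ and the easy inclusion $\equiv^{\alpha}\subseteq\ker\phi$ match the paper's opening move, but the second half of your plan has a genuine gap. You propose to prove $\ker\phi\subseteq\equiv^{\alpha}$ by appealing to Theorem \ref{uh3d5tjh4uyyuin}, followed by ``a symmetric analysis forces $\mathbf{x},\mathbf{y}$ to be $\equiv^{\alpha}$-related.'' That theorem only controls the \emph{critical-point} structure of a congruence: it shows the collapsed critical points form an up-set $B$ and that critical points in $A=\mathrm{crit}[M]\setminus B$ are preserved. It says nothing about which \emph{elements} of a fixed critical point below $\alpha$ get identified, so it cannot exclude a congruence that agrees with $\equiv^{\alpha}$ on critical points but is strictly coarser on elements. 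In fact, establishing $\ker\phi\subseteq\equiv^{\alpha}$ is essentially equivalent to the theorem you are trying to prove (it is exactly the injectivity of the induced map $M\upharpoonright\alpha\to N$), so your plan does not genuinely reduce the difficulty; it relocates it to an unproved claim.

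The paper sidesteps this by never proving the reverse inclusion directly. From $\equiv^{\alpha}=\ker(L)\subseteq\ker\phi$ one gets a factorization $\phi=\phi'\circ L$ with $\phi':M\upharpoonright\alpha\to N$, and then $\phi'$ is shown to be the identity by induction on string length:
\[\phi'(\mathbf{y}a)=\phi'(\mathbf{y}*^{M\upharpoonright\alpha}a)=\phi'(\mathbf{y})*^{N}\phi'(a)=\mathbf{y}*^{N}a=\mathbf{y}a,\]
using that $\phi'$ fixes each letter $a\in A$, that $\mathbf{x}\in\mathrm{Li}(M\upharpoonright\alpha)\Leftrightarrow\phi(\mathbf{x})\in\mathrm{Li}(N)$, and that appending a letter is realized by $*$ in both string algebras. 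This concrete induction on $|\mathbf{x}|$ is the missing ingredient in your argument: it exploits the string presentation of multigenic Laver tables rather than the abstract congruence theory, and it proves injectivity of $\phi'$ (hence $\ker\phi=\equiv^{\alpha}$ and $N=M\upharpoonright\alpha$) for free, without having to attack the reverse inclusion head-on.
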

\begin{proof}
Let $\alpha$ be the critical point in $\mathrm{crit}[M]$ such that
$\phi(\mathbf{x})\in\mathrm{Li}(N)$ if and only if $\mathrm{crit}^{M}(\mathbf{x})\geq\alpha$. Let $L:M\rightarrow M\upharpoonright\alpha$ be the mapping defined by
\[L(a_{1}\dots a_{n})=a_{1}*^{M\upharpoonright\alpha}\dots *^{M\upharpoonright\alpha}a_{n}.\]
Then since $\ker(L)=(\equiv^{\alpha})\subseteq\ker(\phi)$, there exists a unique homomorphism $\phi':M\upharpoonright\alpha\rightarrow N$ such that $\phi'L=\phi$. We shall now show that $\phi'$ is the identity function. First observe that
$\phi'(a)=a$ for $a\in A$ and that $\mathbf{x}\in\mathrm{Li}(M\upharpoonright\alpha)$ if and only if $\phi(\mathbf{x})\in\mathrm{Li}(M)$. We shall show that $\phi'(\mathbf{x})=\mathbf{x}$ by induction on $|\mathbf{x}|$. The case when $|\mathbf{x}|=1$ has already been established, so assume $\mathbf{x}=\mathbf{y}a,|\mathbf{x}|>1$. Then
\[\phi'(\mathbf{x})=\phi'(\mathbf{y}a)=\phi'(\mathbf{y}*^{M\upharpoonright a}a)
=\phi'(\mathbf{y})*^{N}\phi'(a)=\mathbf{y}*^{N}a=\mathbf{y}a=\mathbf{x}.\]
Therefore, $\phi'$ is the identity function. Thus, since $\phi'$ is the identity function, $N=M\upharpoonright\alpha$.
\end{proof}

The following theorem shows how one can extend a multigenic Laver table to a larger multigenic Laver table with higher critical points.

\begin{thm}
Let $M$ be finite a multigenic Laver table and suppose $\mathrm{crit}[X]\setminus\{\max(\mathrm{crit}[X])\}$ has a maximum $\alpha$. Let $N=M\upharpoonright\alpha$. Let $J=\mathrm{Li}(N)\setminus\mathrm{Li}(M)$. Then $M=N\cup\{\mathbf{xy}\mid\mathbf{x}\in J,\mathbf{y}\in N\}$. In particular, we have $|M|=|N|\cdot(|J|+1)$.
\end{thm}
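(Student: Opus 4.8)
The plan is to understand the structure of $M$ by stratifying its strings according to the critical point of their maximal non-left-identity prefix. Since $\mathrm{crit}[M]\setminus\{\max(\mathrm{crit}[M])\}$ has a maximum $\alpha$, every nonempty string $\mathbf{x}\in M$ has a longest prefix $\mathbf{x}_{0}$ with $\mathbf{x}_{0}\not\in\mathrm{Li}(M)$; write $\mathbf{x}=\mathbf{x}_{0}\mathbf{r}$ for the corresponding (possibly empty) remainder. First I would show that every such $\mathbf{x}_{0}$ lies in $N=M\upharpoonright\alpha$: by definition of $N$ and of $\alpha$ as the top non-maximal critical point, a string belongs to $N$ exactly when all its nonempty proper prefixes have critical point $<\alpha$, i.e., none of them is a left-identity with critical point $\geq\alpha$; but $\alpha$ is the only critical point between the non-maximal ones and $\max(\mathrm{crit}[M])$, so for any string in $M$ all of whose proper prefixes are non-left-identities, being or not being in $N$ is controlled precisely by whether some proper prefix has critical point $\alpha$. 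I would carefully check that strings $\mathbf{x}_{0}\in M\setminus\mathrm{Li}(M)$ are automatically in $N$ (none of their proper prefixes can already be a left-identity, else $\mathbf{x}_{0}\not\in M$), and that a string in $\mathrm{Li}(M)$ lies in $N$ iff it has no proper prefix of critical point $\alpha$ — equivalently iff its unique maximal non-left-identity prefix is itself in $N$ and has critical point exactly $\alpha$, which makes it an element of $J=\mathrm{Li}(N)\setminus\mathrm{Li}(M)$.

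Next I would establish the two inclusions of $M=N\cup\{\mathbf{xy}\mid\mathbf{x}\in J,\mathbf{y}\in N\}$. For $\supseteq$: if $\mathbf{x}\in J$ then $\mathbf{x}\in\mathrm{Li}(N)$, $\mathbf{x}\not\in\mathrm{Li}(M)$, and $\mathrm{crit}^{M}(\mathbf{x})=\alpha$ (since $\mathbf{x}$ is a left-identity in $N$ exactly when its critical point is $\geq\alpha$, and it cannot be $>\alpha$ without being in $\mathrm{Li}(M)$, as $\alpha$ is the largest non-maximal critical point). Given $\mathbf{y}=a_{1}\ldots a_{n}\in N$, I would use Proposition~\ref{t4uh4aP} together with the structure of $M\upharpoonright\alpha$ as $\mathbf{M}((\mathbf{x}a)_{a\in A},M)$ established in the previous proposition: concatenation $\mathbf{x}\mathbf{y}$ is a legal string of $M$ precisely because $\mathbf{x}$ has critical point $\alpha$ and $\mathbf{y}\in N$ means $\mathbf{x}*^M a_1\ldots a_m \not\in\mathrm{Li}(M)$ for $m<n$, which is exactly the membership condition for $\mathbf{x}\mathbf{y}$ in $M$ — here I would invoke that $\mathbf{x}a_1\ldots a_m$ being a legal string is governed by whether its prefixes are left-identities, and $\mathbf{x}$ itself being the "base" with critical point $\alpha$ forces the remaining letters to behave as in $N$. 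For $\subseteq$: given $\mathbf{x}\in M$, split off its maximal non-left-identity prefix $\mathbf{x}_0$; if $\mathbf{x}=\mathbf{x}_0$ then either $\mathbf{x}_0\in N$ directly, or $\mathbf{x}_0\not\in N$ forces some proper prefix of critical point $\alpha$, but that prefix would then be in $J$ and $\mathbf{x}_0$ would be of the form $\mathbf{x}(\text{remainder in }N)$. If $\mathbf{x}=\mathbf{x}_0\mathbf{r}$ with $\mathbf{r}$ nonempty, then $\mathbf{x}\in\mathrm{Li}(M)$, and I claim $\mathbf{x}_0\in J$ and $\mathbf{r}\in N$; the former because $\mathbf{x}_0$ being the last non-left-identity prefix of a string in $\mathrm{Li}(M)$ with maximal critical point below $\max$ means $\mathrm{crit}^M(\mathbf{x}_0)=\alpha$ hence $\mathbf{x}_0\in\mathrm{Li}(N)\setminus\mathrm{Li}(M)$, and the latter because the legality of $\mathbf{x}$ forces the prefixes $\mathbf{x}_0 r_1\ldots r_k$ to be non-left-identities, translating (via the $\mathbf{M}((\mathbf{x}_0 a)_{a\in A},M)$ description) to $\mathbf{r}\in N$.

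The cardinality statement $|M|=|N|\cdot(|J|+1)$ then follows once I verify the union is disjoint and the representation $\mathbf{xy}$ with $\mathbf{x}\in J$, $\mathbf{y}\in N$ is unique. Disjointness of $N$ from $\{\mathbf{xy}: \mathbf{x}\in J, \mathbf{y}\in N\}$: elements of the latter have a proper prefix $\mathbf{x}\in J\subseteq\mathrm{Li}(N)$, so they are not in $N$ by $N$'s definition; uniqueness of the decomposition: $\mathbf{x}$ is recovered as the unique maximal non-left-identity prefix (in $M$) of $\mathbf{xy}$, since elements of $J$ are non-left-identities in $M$ while everything strictly between $\mathbf{x}$ and $\mathbf{xy}$ is a left-identity in $N$ hence (being a proper extension of a $J$-string) becomes a left-identity in $M$ as well — this last point uses that extending past a critical-point-$\alpha$ prefix within $N$ eventually lands in $\mathrm{Li}(M)$. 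I expect the main obstacle to be the bookkeeping in this final uniqueness/disjointness argument: precisely pinning down which prefixes of a concatenation $\mathbf{xy}$ are left-identities of $M$ versus of $N$, and making sure the "maximal non-left-identity prefix in $M$" of such a string is exactly the $J$-part $\mathbf{x}$ and not something longer or shorter. Everything else is a translation through the already-established identity $M\upharpoonright\alpha=\mathbf{M}((\mathbf{x}a)_{a\in A},M)$ and Proposition~\ref{t4uh4aP}.
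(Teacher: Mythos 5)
There is a genuine gap in your $\subseteq$ direction. The decomposition you propose takes $\mathbf{x}_0$ to be the longest prefix of $\mathbf{x}$ not in $\mathrm{Li}(M)$; but in a multigenic Laver table every nonempty proper prefix of a string in $M$ automatically lies outside $\mathrm{Li}(M)$ (since $L$ is downward closed and $M = \{\mathbf{x}a\mid\mathbf{x}\in L\}\cup A$), so your $\mathbf{x}_0$ is just $\mathbf{x}$ when $\mathbf{x}\notin\mathrm{Li}(M)$ and $\mathbf{x}$ with its last letter removed when $\mathbf{x}\in\mathrm{Li}(M)$. Your key claim --- that if $\mathbf{x}\in\mathrm{Li}(M)$ then this $\mathbf{x}_0$ has $\mathrm{crit}^{M}(\mathbf{x}_0)=\alpha$ and hence lies in $J$ --- is false. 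Concretely, take $M=(\{a\}^{\leq 4})^{+}\cong A_2 = \{a,aa,aaa,aaaa\}$. Then $\alpha = \mathrm{crit}(aa)$, $N = \{a,aa\}$, $J = \{aa\}$, and the unique element $aaaa\in\mathrm{Li}(M)$ decomposes correctly as $(aa)(aa)$; but your $\mathbf{x}_0$ is $aaa$, which has $\mathrm{crit}(aaa)=\mathrm{crit}(a)<\alpha$ and is not in $J$. So the ``maximal non-$\mathrm{Li}(M)$ prefix'' is the wrong stratification.

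The fix is to split at the \emph{shortest} nonempty prefix $\mathbf{w}$ of $\mathbf{z}$ with $\mathrm{crit}^{M}(\mathbf{w})\geq\alpha$ (when $\mathbf{z}\notin N$). Minimality forces all nonempty proper prefixes of $\mathbf{w}$ to have critical point $<\alpha$, hence $\mathbf{w}\in N$; and since $\mathbf{w}$ is a proper prefix of $\mathbf{z}\in M$ it is not in $\mathrm{Li}(M)$, so $\mathrm{crit}^{M}(\mathbf{w})=\alpha$ exactly and $\mathbf{w}\in\mathrm{Li}(N)\setminus\mathrm{Li}(M)=J$. Writing $\mathbf{z}=\mathbf{w}\mathbf{r}$, one then shows by induction on $j<|\mathbf{r}|$ that $\mathbf{w}r_1\cdots r_j = \mathbf{w}*^M(r_1\cdots r_j)$ and hence $\mathrm{crit}^{M}(r_1\cdots r_j) = \mathbf{w}^{\sharp}(\mathrm{crit}^{M}(r_1\cdots r_j)) < \alpha$, using that $\mathbf{w}\in\mathcal{I}_M$ is involutive with $\mathrm{crit}(\mathbf{w})=\alpha$ (so $\mathbf{w}^{\sharp}$ fixes all critical points $<\alpha$ and sends $\alpha$ to $\max$), together with Lemma~\ref{32u8r9}. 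This gives $\mathbf{r}\in N$. Your $\supseteq$ direction is essentially right but needs the same induction (via $\mathbf{w}^{\sharp}(\beta)=\beta$ for $\beta<\alpha$) to conclude that $\mathbf{w}*^M(s_1\cdots s_j)=\mathbf{w}s_1\cdots s_j\notin\mathrm{Li}(M)$ for proper prefixes $s_1\cdots s_j$ of $\mathbf{s}\in N$; your disjointness and uniqueness observations are fine once the correct decomposition is in place, since the $J$-part of $\mathbf{w}\mathbf{s}$ is recovered as the unique shortest prefix with critical point $\alpha$.
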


\begin{cor}
Suppose that $M,N$ are finite multigenic Laver tables over an alphabet $A$ and
$\phi:M\rightarrow N$ is a homomorphism with $\phi(a)=a$ for each $a\in A$. Then
$\frac{|M|}{|N|}$ is a natural number.
\end{cor}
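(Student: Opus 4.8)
The plan is to derive the corollary from the two immediately preceding theorems: the structure theorem, which for a homomorphism $\phi:M\to N$ fixing $A$ pointwise produces $\alpha\in\mathrm{crit}[M]$ with $N=M\upharpoonright\alpha$, and the extension theorem, which when $\alpha=\max(\mathrm{crit}[M]\setminus\{\max(\mathrm{crit}[M])\})$ yields $|M|=|M\upharpoonright\alpha|\cdot(|J|+1)$ with $J=\mathrm{Li}(M\upharpoonright\alpha)\setminus\mathrm{Li}(M)$. After using the structure theorem to fix $\alpha$ with $N=M\upharpoonright\alpha$, I would prove by induction on $|\mathrm{crit}[M]|$ the following stronger assertion: \emph{for every finite multigenic Laver table $M$ over $A$ and every $\beta\in\mathrm{crit}[M]$, $|M\upharpoonright\beta|$ divides $|M|$.}

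In the base case $|\mathrm{crit}[M]|=1$, every element of $M$ has the (unique, greatest) critical point, hence lies in $\mathrm{Li}(M)$, which forces $L=\emptyset$ and $M=A$; then $M\upharpoonright\beta=M$ and the quotient is $1$. For the inductive step with $m=|\mathrm{crit}[M]|\geq 2$, let $\gamma$ be the second-largest element of $\mathrm{crit}[M]$ and write $L:M\to M\upharpoonright\gamma$ for the canonical surjection of the earlier Proposition, whose kernel is $\equiv^{\gamma}$. Applying Theorem \ref{uh3d5tjh4uyyuin} to this congruence, together with the characterization of $\equiv^{\gamma}$ as the least congruence sending every element of critical point $\geq\gamma$ to a left-identity, one sees that the critical points of $M$ that are $<\gamma$ are carried bijectively to critical points of $M\upharpoonright\gamma$, whereas $\gamma$ and $\max(\mathrm{crit}[M])$ are both carried to $\max(\mathrm{crit}[M\upharpoonright\gamma])$; hence $|\mathrm{crit}[M\upharpoonright\gamma]|=m-1$, and the induction hypothesis applies to $M\upharpoonright\gamma$. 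The extension theorem gives $|M|=|M\upharpoonright\gamma|\cdot(|J|+1)$. Now fix $\beta\in\mathrm{crit}[M]$: if $\beta=\max(\mathrm{crit}[M])$ then $M\upharpoonright\beta=M$ and there is nothing to prove; otherwise $\beta\leq\gamma$, so $\beta$ is (identified with) an element of $\mathrm{crit}[M\upharpoonright\gamma]$, the induction hypothesis gives $|(M\upharpoonright\gamma)\upharpoonright\beta|\mid|M\upharpoonright\gamma|\mid|M|$, and it remains only to verify the identity $(M\upharpoonright\gamma)\upharpoonright\beta=M\upharpoonright\beta$.

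This last identity is the one step that is not purely formal, and I expect it to be the main obstacle. Since $\beta\leq\gamma$ we have $\equiv^{\gamma}\subseteq\equiv^{\beta}$ as congruences on $M$, so the surjection $M\to M\upharpoonright\beta$ factors as $\psi\circ L$ for a surjective homomorphism $\psi:M\upharpoonright\gamma\to M\upharpoonright\beta$ fixing $A$; by the structure theorem $M\upharpoonright\beta=(M\upharpoonright\gamma)\upharpoonright\delta$ for some $\delta\in\mathrm{crit}[M\upharpoonright\gamma]$, and the point is to show $\delta=\beta$. This is a critical-point chase: an element of $M\upharpoonright\gamma$ of critical point $\beta'<\gamma$ also has $M$-critical point $\beta'$, and is sent by $\psi$ into $\mathrm{Li}(M\upharpoonright\beta)$ exactly when $\beta'\geq\beta$, using the way $\equiv^{\beta}$ distinguishes critical points below $\beta$ from those at or above $\beta$; comparing with the defining property of $\delta$ forces $\delta=\beta$. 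With the identity established, iterating the extension theorem along the chain $M\upharpoonright\gamma_0\subseteq M\upharpoonright\gamma_1\subseteq\dots\subseteq M\upharpoonright\gamma_k=M$ (with $\gamma_0<\dots<\gamma_k$ enumerating $\mathrm{crit}[M]$) exhibits $|M|/|N|$ as the product of the integers $|J_i|+1$ over the critical points strictly above $\alpha$, completing the argument; I would present the final count either through this chain or through the clean induction above, whichever reads more smoothly.
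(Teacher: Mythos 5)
Your argument is correct and is essentially the proof the paper has in mind: the corollary is stated without proof right after the structure theorem (giving $N=M\upharpoonright\alpha$) and the extension theorem (giving $|M|=|M\upharpoonright\gamma|\cdot(|J|+1)$ for $\gamma$ the second-largest critical point), and iterating the extension theorem down the chain of critical points is exactly what is intended. You have correctly isolated the one non-formal step, namely the compatibility $(M\upharpoonright\gamma)\upharpoonright\beta=M\upharpoonright\beta$ for $\beta\le\gamma$; I would only remark that this can be seen somewhat more directly than by your factorization-through-$\psi$ route, simply by unwinding the definition of $\upharpoonright$: a string $\mathbf{x}\in M\upharpoonright\gamma$ lies in $(M\upharpoonright\gamma)\upharpoonright\beta$ iff every nonempty proper prefix $\mathbf{y}$ has $\mathrm{crit}^{M\upharpoonright\gamma}(\mathbf{y})<\beta$, and since membership in $M\upharpoonright\gamma$ already forces $\mathrm{crit}^M(\mathbf{y})<\gamma$, the critical-point preservation in Theorem \ref{uh3d5tjh4uyyuin} gives $\mathrm{crit}^{M\upharpoonright\gamma}(\mathbf{y})=\mathrm{crit}^M(\mathbf{y})$, so the condition is equivalent to $\mathbf{x}\in M\upharpoonright\beta$. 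Either way the inductive step closes and the corollary follows.
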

\begin{cor}
Let $M$ be a finite multigenic Laver table over an alphabet $A$. Let $|\mathrm{crit}[M]|=n+1$.
Then there are $x_{0},\ldots,x_{n}$ where $|M|=x_{0}\cdot\ldots\cdot x_{n}$, $x_{0}=|A|$, and for each
$i$, we have $2\leq x_{i+1}\leq x_{0}\cdot\ldots\cdot x_{i}(1-\frac{1}{x_{0}})+2$.
\end{cor}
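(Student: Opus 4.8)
The plan is to induct on $n$, where $|\mathrm{crit}[M]|=n+1$, peeling off one critical point at a time via the structure theorem proved above: when $\mathrm{crit}[M]\setminus\{\max(\mathrm{crit}[M])\}$ has a maximum $\alpha$, that theorem produces $N:=M\upharpoonright\alpha$ and $J:=\mathrm{Li}(N)\setminus\mathrm{Li}(M)$ with $M=N\cup\{\mathbf{xy}\mid\mathbf{x}\in J,\ \mathbf{y}\in N\}$ and $|M|=|N|\cdot(|J|+1)$. In the base case $n=0$, the unique element of $\mathrm{crit}[M]$ is forced to be $\max(\mathrm{crit}[M])$, so by Theorem \ref{2j1298ur28tye}(1) every element of $M$ is a left-identity; hence the defining downward-closed parameter set $L$ of $M$ is empty, $M=A$, and setting $x_{0}=|A|$ gives $|M|=x_{0}$ with no inequalities to check.

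For the inductive step assume $n\ge 1$. Since multigenic Laver tables are Laver-like, $\mathrm{crit}[M]$ is a finite linear order, so $\alpha:=\max(\mathrm{crit}[M]\setminus\{\max(\mathrm{crit}[M])\})$ exists; form $N$ and $J$ as above. I would first verify that $|\mathrm{crit}[N]|=n$. By the proposition on $M\upharpoonright\alpha$ there is a surjective homomorphism $L\colon M\to N$ with kernel $\equiv^{\alpha}$, and by the basic properties of $\equiv^{\alpha}$ the upward-closed block $B$ of Theorem \ref{uh3d5tjh4uyyuin} — the one collapsing to $\mathrm{Li}$ — is exactly $\{\beta\in\mathrm{crit}[M]\mid\beta\ge\alpha\}$, which has two elements because $\alpha$ is the second-largest critical point. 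Hence $|\mathrm{crit}[N]|=|\mathrm{crit}[M]\setminus B|+1=(n-1)+1=n$. Applying the induction hypothesis to $N$ (a multigenic Laver table over the same alphabet $A$) yields $x_{0},\ldots,x_{n-1}$ with $|N|=x_{0}\cdots x_{n-1}$, $x_{0}=|A|$, and $2\le x_{i+1}\le x_{0}\cdots x_{i}\bigl(1-\tfrac{1}{x_{0}}\bigr)+2$ for $0\le i\le n-2$. Now set $x_{n}=|J|+1$, so $|M|=|N|\cdot x_{n}=x_{0}\cdots x_{n}$.

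It remains to bound $x_{n}$. For $x_{n}\ge 2$: if $J=\emptyset$ then $M=N$, contradicting $|\mathrm{crit}[M]|=n+1>n=|\mathrm{crit}[N]|$, so $|J|\ge 1$. For the upper bound, note $J\subseteq\mathrm{Li}(N)$, so it suffices to establish the counting identity $|\mathrm{Li}(N)|=|N|\bigl(1-\tfrac{1}{|A|}\bigr)+1$, since then $x_{n}=|J|+1\le|\mathrm{Li}(N)|+1=|N|\bigl(1-\tfrac{1}{x_{0}}\bigr)+2=x_{0}\cdots x_{n-1}\bigl(1-\tfrac{1}{x_{0}}\bigr)+2$. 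This identity is read directly off the description of a multigenic Laver table: writing $N$ with parameters $(A,L')$ for a finite $\preceq$-downward-closed $L'\subseteq A^{+}$, one has the disjoint decomposition $N=A\sqcup\{\mathbf{x}a\mid\mathbf{x}\in L',\ a\in A\}$ with $L'\subseteq N$ and $\mathrm{Li}(N)=N\setminus L'$; hence $|N|=|A|\,(1+|L'|)$, so $|L'|=|N|/|A|-1$ and $|\mathrm{Li}(N)|=|N|-|L'|=|N|\bigl(1-\tfrac{1}{|A|}\bigr)+1$. Combining the two bounds closes the induction.

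The core of the argument is the combination of the structure theorem's recursion $|M|=|N|(|J|+1)$ with two supporting facts: that passing to $M\upharpoonright\alpha$ removes exactly one critical point (so the induction hypothesis applies), and the exact count $|\mathrm{Li}(N)|=|N|(1-\tfrac{1}{|A|})+1$. I expect the only real friction to be checking the first of these carefully through Theorem \ref{uh3d5tjh4uyyuin} and keeping the index ranges for the $x_{i}$ straight; everything else is routine bookkeeping.
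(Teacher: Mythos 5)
Your proof is correct and follows the route the paper intends: the corollary is stated as an immediate consequence of the preceding covering theorem $|M|=|N|\cdot(|J|+1)$, and your induction on $|\mathrm{crit}[M]|$, together with the exact count $|\mathrm{Li}(N)|=|N|\bigl(1-\tfrac{1}{|A|}\bigr)+1$ (which falls directly out of $N=A\sqcup\{\mathbf{x}a\mid\mathbf{x}\in L',a\in A\}$ and $\mathrm{Li}(N)=N\setminus L'$) and the verification via Theorem \ref{uh3d5tjh4uyyuin} that $|\mathrm{crit}[M\upharpoonright\alpha]|=|\mathrm{crit}[M]|-1$, supplies all the details the paper leaves implicit. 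The base case $n=0$ forcing $M=A$ and the nonemptiness of $J$ (since $|\mathrm{crit}[N]|<|\mathrm{crit}[M]|$ rules out $M=N$) are both handled correctly.
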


The following result gives a duality between multigenic Laver tables and critically simple Laver-like LD-systems. Suppose that $M$ is a multigenic Laver table over an alphabet $A$. Then let $\Delta(M)=(([a]_{\simeq_{cmx}})_{a\in A},M/\simeq_{cmx})$ where $\simeq$ is the critically maximum congruence on $M$.

\begin{thm}
\begin{enumerate}
\item If $M$ is a multigenic Laver table over an alphabet $A$, then $\mathbf{M}(\Delta(M))=M$. 

\item If $X$ is a locally Laver-like LD-system, $x_{a}\in X$ for $a\in A$, and $(x_{a})_{a\in A}$ generates $X$, then there is a unique homomorphism $\iota:X\rightarrow\Delta(\mathbf{M}(X))$ such that $\iota(x_{a})=[a]_{\mathrm{cmx}}$ for each $a\in A$. The mapping $\iota$ is surjective, and
$\iota$ is an isomorphism precisely when $X$ is critically simple.
\end{enumerate}
\end{thm}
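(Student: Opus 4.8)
The plan is to realize $\mathbf{M}$ and $\Delta$ as almost mutually inverse constructions, with the critically maximal congruence $\simeq_{cmx}$ measuring precisely the failure of injectivity. Two bookkeeping facts will be used throughout. First, a pre-multigenic Laver table over an alphabet $A$ is determined by its set of strings $N\subseteq A^{+}$: the set of extendable strings is $\{\mathbf{x}\in N\mid \mathbf{x}a\in N\text{ for all }a\in A\}$, $\mathrm{Li}(N)$ is its complement in $N$, and rules (1)--(3) of the definition then fix $*^{N}$; hence to prove $\mathbf{M}(\Delta(M))=M$ it suffices to show the two underlying string sets coincide. Second, for the homomorphism $\phi$ of Theorem \ref{enbfwu92} from $\mathbf{M}((x_a)_{a\in A},X)$ to $X$ given by $\phi(a_1\ldots a_n)=x_{a_1}*\ldots*x_{a_n}$, one has $\mathbf{a}\in\mathrm{Li}(\mathbf{M}((x_a)_{a\in A},X))$ if and only if $\phi(\mathbf{a})\in\mathrm{Li}(X)$; this is immediate from the definition of $\mathbf{M}$ (and is noted in the proof of Corollary \ref{4hw232s}).

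For part (1): every multigenic Laver table $M$ is locally Laver-like, since a finitely generated subalgebra lies inside $M\cap B^{+}$ for some finite $B\subseteq A$ and is therefore finite; thus $M$ is permutative, $\Delta(M)$ is defined, and $M/\simeq_{cmx}$ is again locally Laver-like so that $\mathbf{M}(\Delta(M))$ is defined. Because $\simeq_{cmx}$ respects $\mathrm{Li}$, Theorem \ref{uh3d5tjh4uyyuin} (items (2) and (4)) gives, for $\mathbf{z}\in M$, that $[\mathbf{z}]_{cmx}\in\mathrm{Li}(M/\simeq_{cmx})$ iff $\mathbf{z}\in\mathrm{Li}(M)$. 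Writing $\pi$ for the quotient map, a string $a_1\ldots a_n\in A^{+}$ lies in $\mathbf{M}(\Delta(M))$ iff $[a_1]_{cmx}*\ldots*[a_m]_{cmx}\notin\mathrm{Li}(M/\simeq_{cmx})$ for $1\le m<n$; since $\pi$ is a homomorphism, $[a_1]_{cmx}*\ldots*[a_m]_{cmx}=[a_1*\ldots*a_m]_{cmx}$, so this holds iff $a_1*\ldots*a_m\notin\mathrm{Li}(M)$ for $1\le m<n$ (the products being computed in $M$), which by the Remark that $M=\mathbf{M}((a)_{a\in A},M)$ is equivalent to $a_1\ldots a_n\in M$. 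By the first bookkeeping fact, $\mathbf{M}(\Delta(M))=M$.

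For part (2): here $\mathbf{M}(X)$ denotes $\mathbf{M}((x_a)_{a\in A},X)$, which is a multigenic Laver table, hence locally Laver-like and permutative, so $\Delta(\mathbf{M}(X))=(([a]_{cmx})_{a\in A},\mathbf{M}(X)/\simeq_{cmx})$ is defined. Uniqueness of $\iota$ is clear since $(x_a)_{a\in A}$ generates $X$. For existence, let $\phi\colon\mathbf{M}(X)\to X$ be the homomorphism of Theorem \ref{enbfwu92}; it is surjective, its image being the subalgebra generated by $\{x_a\mid a\in A\}$, namely $X$. The key claim is $\ker\phi\subseteq\simeq_{cmx}$ on $\mathbf{M}(X)$: if $\phi(\mathbf{w})=\phi(\mathbf{w}')$, then for every $\mathbf{a},\mathbf{b}_1,\ldots,\mathbf{b}_k\in\mathbf{M}(X)$, applying $\phi$ and using the second bookkeeping fact, $\mathbf{a}*\mathbf{w}*\mathbf{b}_1*\ldots*\mathbf{b}_k\in\mathrm{Li}(\mathbf{M}(X))$ iff $\phi(\mathbf{a})*\phi(\mathbf{w})*\phi(\mathbf{b}_1)*\ldots*\phi(\mathbf{b}_k)\in\mathrm{Li}(X)$, and since $\phi(\mathbf{w})=\phi(\mathbf{w}')$ this is iff $\mathbf{a}*\mathbf{w}'*\mathbf{b}_1*\ldots*\mathbf{b}_k\in\mathrm{Li}(\mathbf{M}(X))$; hence $\mathbf{w}\simeq_{cmx}\mathbf{w}'$. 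Consequently the quotient map $\pi\colon\mathbf{M}(X)\to\mathbf{M}(X)/\simeq_{cmx}$ factors through $\phi$, producing $\iota\colon X\to\Delta(\mathbf{M}(X))$ with $\iota\phi=\pi$; then $\iota(x_a)=\iota(\phi(a))=[a]_{cmx}$, and $\iota$ is surjective because $\pi$ is.

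Finally, the surjective homomorphism $\iota$ is an isomorphism iff it is injective iff $\simeq_{cmx}\subseteq\ker\phi$, that is, together with the key claim, iff $\ker\phi=\simeq_{cmx}$. Since $\ker\phi$ is a congruence on $\mathbf{M}(X)$ respecting $\mathrm{Li}$ (again by the second bookkeeping fact) and $\mathbf{M}(X)/\ker\phi\cong X$, item (5) of the structure theorem for $\simeq_{cmx}$ says that $\mathbf{M}(X)/\ker\phi$ is critically simple iff $\ker\phi=\simeq_{cmx}$; therefore $X$ is critically simple iff $\iota$ is an isomorphism, which finishes part (2). I expect the main obstacle to be the left-identity bookkeeping: pinning down the second fact and its analogue for $\pi$ in part (1), and then matching the inclusion $\ker\phi\subseteq\simeq_{cmx}$ against item (5) to get the sharp ``iff critically simple''. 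Everything else reduces to routine manipulation of the multigenic Laver table construction.
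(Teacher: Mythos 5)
The paper states this theorem without supplying a proof, so there is no argument to compare against; your proof is correct and complete. Part (1) rests on the two observations you isolate: a pre-multigenic Laver table is recovered from its underlying string set, and $\simeq_{cmx}$ respects $\mathrm{Li}$, so that items (2) and (4) of Theorem \ref{uh3d5tjh4uyyuin} give $[\mathbf{z}]_{cmx}\in\mathrm{Li}(M/\simeq_{cmx})\Leftrightarrow\mathbf{z}\in\mathrm{Li}(M)$; from there $\mathbf{M}(\Delta(M))$ and $M$ have the same string set and hence the same operation. Part (2) correctly factors $\pi$ through $\phi$ via the inclusion $\ker\phi\subseteq\simeq_{cmx}$, which you establish by pushing the $\mathrm{Li}$-tests through $\phi$, and then converts ``critically simple'' into $\ker\phi=\simeq_{cmx}$ using item (5) of the structure theorem for $\simeq_{cmx}$. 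One minor citation slip: the fact $\mathbf{a}\in\mathrm{Li}(\mathbf{M}(X))\Leftrightarrow\phi(\mathbf{a})\in\mathrm{Li}(X)$ is recorded in the proof that $\mathbf{M}(x_{a})_{a\in A}$ is a multigenic Laver table, not in the proof of Corollary \ref{4hw232s}; this does not affect the argument.
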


% ----------------------------------------------------------------
\section{Generalized distributivity and partially pre-endomorphic Laver tables}

In this chapter, we shall generalize the notion of a multigenic Laver table further to the universal algebraic setting of
partially endomorphic Laver tables and twistedly endomorphic Laver tables. The partially endomorphic Laver tables could have an arbitrary number of fundamental operations of arbitrary arity. Furthermore, only some of the operations in partially endomorphic Laver tables are required to be self-distributive. For example, if none of the operations in a partially endomorphic Laver table are declared to be self-distributive, then the partially endomorphic Laver table is simply a term algebra. The twistedly endomorphic Laver tables use associative operations to generalize the $n$-ary self-distributive identity in such a way that the twistedly endomorphic Laver tables still satisfy this identity. In the next chapter, we shall generalize the notions of a permutative LD-system and a Laver-like LD-systems to a universal algebraic setting, and we shall use these algebras to construct partially endomorphic Laver tables and twistedly endomorphic Laver tables.

We shall now motivate the notion of these partially endomorphic Laver tables by extending the notion of self-distributivity to algebras
of arbitrary type.

\begin{defn}
A \index{multi-LD-system}\emph{multi-LD-system} is an algebra $(X,(*_{a})_{a\in A})$ where $*_{a}$ is a binary operation on $X$ and where
$x*_{a}(y*_{b}z)=(x*_{a}y)*_{b}(x*_{a}z)$ whenever $a,b\in A,x,y,z\in X$. 
\end{defn}
For example, every distributive lattice $(X,\wedge,\vee)$ is a multi-LD-system.
\begin{defn}
Suppose that $(X,(*_{a})_{a\in A})$ is an algebra where each $*_{a}$ is a binary operation on $X$. Then define the \index{hull}\emph{hull} of $(X,(*_{a})_{a\in A})$ to be the algebra $(X\times A,*)$ where $*$ is defined by letting $(x,a)*(y,b)=(x*_{a}y,b)$ for $x,y\in X,a,b\in A$. 
\end{defn}
An algebra $(X,(*_{a})_{a\in A})$ is a multi-LD-system if and only if its hull $(X\times A,*)$ is an LD-system.
Furthermore, if $(X,(*_{a})_{a\in A})$ is a multi-LD-system freely generated by a set $F$, then
$(X\times A,*)$ is an LD-system freely generated by the set $F\times A$.

The notion of an LD-system can also be generalized to \index{ternary self-distributivity}ternary self-distributivity. Suppose that $X$ is a set and $t$ is a ternary operation on the set $X$. Then we shall say that $t$ is \emph{left-distributive} if $t(a,b,t(x,y,z))=t(t(a,b,x),t(a,b,y),t(a,b,z))$
for each $a,b,x,y,z\in X$. Suppose that $X$ is a set and $t$ is a ternary operation on $X$. Then for each $a,b\in X$, define a mapping
$L_{a,b}:X\rightarrow X$ by letting $L_{t,a,b}(x)=t(a,b,x)$ for all $x\in X$. Then $t$ is left-distributive if and only if
for all $a,b\in X$, the mapping $L_{t,a,b}$ is an endomorphism of $(X,t)$. Therefore the ternary self-distributive algebras
are motivated as being algebras with many inner endomorphisms. Ternary self-distributive algebras (in particular, ternary racks and quandles) have been studied in the papers \cite{EGM} and \cite{B}. Furthermore, multi-LD-systems were studied in \cite{D97}.

\begin{exam}
Suppose that $X$ is a set and $m$ is a ternary operation on $X$. For $a\in X$, define $*_{a}$ by
$x*_{a}y=m(a,x,y)$. Then $(X,m)$ is a \index{median algebra}median algebra if 
\[m(x,x,y)=x,m(x,y,z)=m(y,z,x)=m(x,z,y)\]
for each $x,y,z\in X$ and $(X,(*_{a})_{a\in A})$ is a multi-LD-system \cite{B83}. If $(X,m)$ is a median algebra, then the operation $m$ is a ternary
self-distributive operation. 

If $X$ is a distributive lattice and $m$ is defined by 
\[m(x,y,z)=(x\wedge y)\vee
(x\wedge z)\vee(y\wedge z),\]
then $(X,m)$ is a median algebra and 
\[m(x,y,z)=(x\vee y)\wedge(x\vee z)\wedge(y\vee z)\]
as well.

If $(X,E)$ is an unrooted tree, then whenever $x,y\in X$ there is a unique path $[x,y]$ from $x$ to $y$. Furthermore, if $x,y,z\in X$, then $[x,y]\cap[x,z]\cap[y,z]$ has a unique element. Therefore, define an operation $m$ on $X$ by letting $\{m(x,y,z)\}=[x,y]\cap[x,z]\cap[y,z]$. Then $(X,m)$ is a median algebra.
\end{exam}

As with multi-LD-systems, one can take the hull of a ternary self-distributive algebra. Suppose that $X$ is a set and $t$ is a ternary operation on $X$. Define a binary operation $*$ on $X^{2}$ by letting $(a,b)*(x,y)=(t(a,b,x),t(a,b,y))$. We shall call
$(X^{2},*)$ the hull of $(X,t)$. Then the operation $t$ is self-distributive if and only if $(X^{2},*)$ is an LD-system.
If $(X,t)$ is a ternary self-distributive algebra freely generated by a single element, then the hull
$(X^{2},*)$ contains a free left-distributive subalgebra on infinitely many generators.

\begin{defn}
Suppose that $\mathcal{F}$ is a set of function symbols, $E\subseteq\mathcal{F}$, $F=\mathcal{F}\setminus E$, and
$\mathcal{X}=(X,(t^{\mathcal{X}})_{t\in E},(f^{\mathcal{X}})_{f\in F})$. Then whenever $t\in E$ is $n+1$-ary and
$a_{1},\ldots,a_{n}\in X$, define a function $L_{t,a_{1},\ldots,a_{n}}:X\rightarrow X$ by letting
$L_{t,a_{1},\ldots,a_{n}}(x)=t(a_{1},\ldots,a_{n},x)$. Then we shall say that $\mathcal{X}$ is \index{partially endomorphic algebra}\emph{partially endomorphic} of type
$(E,F)$ if and only if whenever $t\in E$ is $n+1$-ary and $a_{1},\ldots,a_{n}\in X$, the mapping
$L_{t,a_{1},\ldots,a_{n}}$ is an endomorphism from $\mathcal{X}$ to $\mathcal{X}$. If $F=\emptyset$, then we shall call $\mathcal{X}$ an \index{endomorphic algebra}\emph{endomorphic algebra}.
\end{defn}
In other words, $\mathcal{X}$ is a partially endomorphic algebra if and only if whenever $t\in E$ is $n+1$-ary and
$g\in\mathcal{F}$ is $m$-ary, we have
\[t(a_{1},\ldots,a_{n},g(x_{1},\ldots,x_{m}))=g(t(a_{1},\ldots,a_{n},x_{1}),\ldots,t(a_{1},\ldots,a_{n},x_{m})).\]

\begin{defn}
If $\mathcal{X}=(X,(t^{\mathcal{X}})_{t\in E})$ is an algebra where each $t\in E$ has arity $n_{t}+1$, then define the \emph{hull}\index{hull} \index{$\Gamma(\mathcal{X})$}$\Gamma(\mathcal{X})$ to be the algebra $((\bigcup_{t\in E}\{t\}\times X^{n_{t}},*)$ where $*$ is defined by
\[(s,x_{1},\ldots,x_{n_{s}})*(t,y_{1},\ldots,y_{n_{t}})=(t,s(x_{1},\ldots,x_{n_{s}},y_{1}),\ldots,s(x_{1},\ldots,x_{n_{s}},y_{n_{t}})).\]
\end{defn}
The algebra $\mathcal{X}$ is endomorphic if and only if the hull $\Gamma(\mathcal{X})$ is an LD-system.

We shall now proceed to define the partially pre-endomorphic Laver tables.
\begin{defn}
If $(X_{1},\leq),\ldots,(X_{n},\leq)$ are posets, then the lexicographic ordering $\leq$ on $X_{1}\times\ldots\times X_{n}$ is the partial ordering where $(x_{1},\ldots,x_{n})<(y_{1},\ldots,y_{n})$ if and only if $(x_{1},\ldots,x_{n})\neq(y_{1},\ldots,y_{n})$ and $x_{i}<y_{i}$ where $i$ is the least natural number such that $x_{i}\neq y_{i}$.
\end{defn}
\begin{defn}
If $\mathcal{F}$ is a set of function symbols, and $X$ is a set of variables, then we shall write \index{$\mathbf{T}_{\mathcal{F}}(X)$}$\mathbf{T}_{\mathcal{F}}(X)$ for the set of all terms over the language $\mathcal{F}$ with variables in $X$.
\end{defn}
\begin{defn}
\label{3hqw3f6l8icg}
Now suppose that $\mathcal{F}$ is a set of function symbols, $X$ is a set of variables, $E\subseteq\mathcal{F}$, $F=\mathcal{F}\setminus E$, and each $f\in\mathcal{F}$ has arity $n_{f}$. We shall use lower-case fractur letters $\mathfrak{f},\mathfrak{g}$ to denote function symbols in $F$ while the letters $f,g,h$ shall denote function symbols in $E$. Suppose that $f_{x}$ is a function symbol of arity $n_{f}$ whenever $f\in E$. Let $\mathcal{G}=\{f_{x}\mid f\in E,x\in X\}\cup F$.

Give the set $\mathbf{T}_{\mathcal{G}}(X)$ the partial ordering $\preceq$ where we set $u\preceq v$ if $u$ is a subterm of
$v$.

Let $L\subseteq\mathbf{T}_{\mathcal{G}}(X)$ be a subset satisfying the following conditions:
\begin{enumerate}
\item $L$ is a downwards closed subset of $(\mathbf{T}_{\mathcal{G}}(X),\preceq)$.

\item $X\subseteq L$.

\item $f_{x}(\ell_{1},\ldots,\ell_{n})\in L$ if and only if $f_{y}(\ell_{1},\ldots,\ell_{n})\in L$.

\item If $\ell_{1},\ldots,\ell_{n}\in L$, then $\mathfrak{g}(\ell_{1},\ldots,\ell_{n})\in L$.
\end{enumerate}
Let $\Omega=\{(f,\ell_{1},\ldots,\ell_{n_{f}})\mid f\in E,\ell_{1},\ldots,\ell_{n_{f}}\in L\}$.

We shall now use transfinite induction to assign some of the elements in $\Omega$ an ordinal which we shall call
the rank, and during this induction process, we shall construct a partial function $g^{\sharp}:L^{n_{g}+1}\rightarrow L$ for each $g\in E$.
The element $g^{\sharp}(\ell_{1},\ldots ,\ell_{n},\ell)$ will be defined whenever $(g,\ell_{1},\ldots ,\ell_{n})$ has a rank.

If $f_{x}(\ell_{1},\ldots,\ell_{n_{f}})\not\in L$, then we shall say that the rank of $(f,\ell_{1},\ldots,\ell_{n_{f}})$ is $0$.
If the rank of $(f,\ell_{1},\ldots,\ell_{n_{f}})$ is $0$, then define $f^{\sharp}(\ell_{1},\ldots,\ell_{n_{f}},\ell)=\ell$.

Now suppose that $\alpha>0$ is an ordinal. Suppose now that $(g,\ell_{1},\ldots,\ell_{n})$ has not been assigned a rank yet. Then
define a partial function $Q_{(g,\ell_{1},\ldots,\ell_{n}),\alpha}:L\rightarrow L$ by induction on $\ell\in L$ by letting
\begin{enumerate}[(i)]
\item $Q_{(g,\ell_{1},\ldots,\ell_{n}),\alpha}(x)=g_{x}(\ell_{1},\ldots,\ell_{n})$ whenever $x\in X$,

\item if $\ell=\mathfrak{f}(u_{1},\ldots,u_{m})$, then
\[Q_{(g,\ell_{1},\ldots,\ell_{n}),\alpha}(\ell)=\mathfrak{f}(Q_{(g,\ell_{1},\ldots,\ell_{n}),\alpha}(u_{1}),\ldots,Q_{(g,\ell_{1},\ldots,\ell_{n}),\alpha}(u_{m}))\]
whenever
$Q_{(g,\ell_{1},\ldots,\ell_{n}),\alpha}(u_{1}),\ldots,Q_{(g,\ell_{1},\ldots,\ell_{n}),\alpha}(u_{n})$ have been defined already. Otherwise, we shall leave $Q_{(g,\ell_{1},\ldots,\ell_{n}),\alpha}(\ell)$ undefined, and

\item if $\ell=f_{x}(u_{1},\ldots,u_{m})$, then define
\[Q_{(g,\ell_{1},\ldots,\ell_{n}),\alpha}(\ell)=f^{\sharp}(Q_{(g,\ell_{1},\ldots,\ell_{n}),\alpha}(u_{1}),\ldots,Q_{(g,\ell_{1},\ldots,\ell_{n}),\alpha}(u_{n}),g_{x}(\ell_{1},\ldots,\ell_{n}))\]
whenever $Q_{(g,\ell_{1},\ldots,\ell_{n}),\alpha}(u_{1}),\ldots,Q_{(g,\ell_{1},\ldots,\ell_{n}),\alpha}(u_{n})$ have been defined already and $(f,Q_{(g,\ell_{1},\ldots,\ell_{n}),\alpha}(u_{1}),\ldots,Q_{(g,\ell_{1},\ldots,\ell_{n}),\alpha}(u_{n}))$ has rank less than $\alpha$, and we shall leave $Q_{(g,\ell_{1},\ldots,\ell_{n}),\alpha}(\ell)$ undefined otherwise.
\end{enumerate}

We say that the rank of $(g,\ell_{1},\ldots,\ell_{n})$ is $\alpha$ if $Q_{(g,\ell_{1},\ldots,\ell_{n}),\alpha}$ is a total function.
If $(g,\ell_{1},\ldots,\ell_{n})$ has rank $\alpha$, then define
\[g^{\sharp}(\ell_{1},\ldots,\ell_{n},\ell)=Q_{(g,\ell_{1},\ldots,\ell_{n}),\alpha}(\ell)\]
for each $\ell\in L$.

If for each $(f,\ell_{1},\ldots,\ell_{n})\in\Omega$, there is some ordinal $\alpha$ such that
\[\mathrm{rank}((f,\ell_{1},\ldots,\ell_{n}))=\alpha,\]
then we shall say that the set $L$ is \index{operable}\emph{operable}.
If $L$ is operable, then the operations $g^{\sharp}$ are all total operations.
If $L$ is operable, then we shall call $(L,(g^{\sharp})_{g\in E},F)$ a \index{partially pre-endomorphic Laver table}\emph{well-founded partially pre-endomorphic Laver table}.
\end{defn}
We shall write $\mathbf{On}$\index{\mathbf{On}} for the class of all ordinals.
\begin{lem}
Suppose that $L\subseteq\mathbf{T}_{\mathcal{G}}[X]$ is downwards closed with respect to $\preceq$. Furthermore, suppose that
$\nabla:\Omega\rightarrow\mathbf{On}$ be function such that $\nabla(f,\ell_{1},\ldots,\ell_{n})=0$ if and only if
$f_{x}(\ell_{1},\ldots,\ell_{n})\not\in L$. Then suppose that for each $f\in E$, $f^{\bullet}:L^{n_{f}+1}\rightarrow L$ is a function such that
\begin{enumerate}
\item $f^{\bullet}(\ell_{1},\ldots,\ell_{n},\ell)=\ell$ whenever $\nabla(f,\ell_{1},\ldots,\ell_{n})=0$,

\item If $\nabla(f,\ell_{1},\ldots ,\ell_{n})>0$, then $f^{\bullet}(\ell_{1},\ldots ,\ell_{n},x)=f_{x}(\ell_{1},\ldots ,\ell_{n})$,

\item if $\mathfrak{g}\in F$ and $u_{1},\ldots,u_{m}\in L$, then
\[f^{\bullet}(\ell_{1},\ldots,\ell_{n},\mathfrak{g}(u_{1},\ldots,u_{m}))=
\mathfrak{g}(f^{\bullet}(\ell_{1},\ldots,\ell_{n},u_{1}),\ldots,f^{\bullet}(\ell_{1},\ldots,\ell_{n},u_{m})),\]
and

\item if $g\in E,u_{1},\ldots,u_{m}\in L,x\in X$, then
\[f^{\bullet}(\ell_{1},\ldots,\ell_{n},g_{x}(u_{1},\ldots,u_{m}))\]
\[=g^{\bullet}(f^{\bullet}(\ell_{1},\ldots,\ell_{n},u_{1}),\ldots,f^{\bullet}(\ell_{1},\ldots,\ell_{n},u_{m}),
f_{x}(\ell_{1},\ldots,\ell_{n}))\]
and $\nabla(f,\ell_{1},\ldots,\ell_{n})>\nabla(g,f^{\bullet}(\ell_{1},\ldots,\ell_{n},u_{1}),\ldots,f^{\bullet}(\ell_{1},\ldots,\ell_{n},u_{m}))$.
\end{enumerate}
Then $L$ is operable, $f^{\bullet}=f^{\sharp}$ for each $f\in E$, and
\[\mathrm{rank}(f,\ell_{1},\ldots,\ell_{n})\leq\nabla(f,\ell_{1},\ldots,\ell_{n})\]
whenever $f\in E,\ell_{1},\ldots,\ell_{n}\in L$.
\end{lem}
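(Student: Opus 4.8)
The plan is to establish, by transfinite induction on the ordinal $\beta$, the following claim: for every $(g,\ell_{1},\ldots,\ell_{n})\in\Omega$ with $\nabla(g,\ell_{1},\ldots,\ell_{n})=\beta$, the tuple $(g,\ell_{1},\ldots,\ell_{n})$ receives a rank in the construction of Definition \ref{3hqw3f6l8icg} with $\mathrm{rank}(g,\ell_{1},\ldots,\ell_{n})\leq\beta$, and $g^{\sharp}(\ell_{1},\ldots,\ell_{n},\ell)=g^{\bullet}(\ell_{1},\ldots,\ell_{n},\ell)$ for all $\ell\in L$. Once this claim holds for every ordinal, $L$ is operable (each element of $\Omega$ gets a rank), and the two asserted conclusions $f^{\bullet}=f^{\sharp}$ and $\mathrm{rank}(f,\ell_{1},\ldots,\ell_{n})\leq\nabla(f,\ell_{1},\ldots,\ell_{n})$ follow immediately.

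For $\beta=0$ the hypothesis on $\nabla$ gives $g_{x}(\ell_{1},\ldots,\ell_{n})\notin L$, so by definition $(g,\ell_{1},\ldots,\ell_{n})$ has rank $0$ and $g^{\sharp}(\ell_{1},\ldots,\ell_{n},\ell)=\ell$, which matches clause (1) of the hypothesis on $f^{\bullet}$. For the inductive step, fix $(g,\ell_{1},\ldots,\ell_{n})$ with $\nabla=\beta>0$ and assume the claim for all tuples of strictly smaller $\nabla$-value; note $g_{x}(\ell_{1},\ldots,\ell_{n})\in L$ since $\beta>0$. I would then show that $Q:=Q_{(g,\ell_{1},\ldots,\ell_{n}),\beta}$ is a total function equal to $\ell\mapsto g^{\bullet}(\ell_{1},\ldots,\ell_{n},\ell)$ by a nested induction on the subterm complexity of $\ell\in L$, with three cases mirroring clauses (i)--(iii) defining $Q$ and hypotheses (2)--(4) on $f^{\bullet}$: if $\ell=x\in X$, clause (i) and hypothesis (2) both give $g_{x}(\ell_{1},\ldots,\ell_{n})$; if $\ell=\mathfrak{g}(u_{1},\ldots,u_{m})$ with $\mathfrak{g}\in F$, the inner induction hypothesis plus hypothesis (3) give the result; if $\ell=f_{x}(u_{1},\ldots,u_{m})$ with $f\in E$, the inner induction hypothesis gives $Q(u_{i})=g^{\bullet}(\ell_{1},\ldots,\ell_{n},u_{i})\in L$, hypothesis (4) supplies $\nabla(f,Q(u_{1}),\ldots,Q(u_{m}))<\beta$, so the outer induction hypothesis applies to $(f,Q(u_{1}),\ldots,Q(u_{m}))\in\Omega$, giving it a rank $<\beta$ with $f^{\sharp}=f^{\bullet}$ on it; then clause (iii) fires and $Q(\ell)=f^{\sharp}(Q(u_{1}),\ldots,Q(u_{m}),g_{x}(\ell_{1},\ldots,\ell_{n}))=g^{\bullet}(\ell_{1},\ldots,\ell_{n},\ell)$ by hypothesis (4) once more.

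Having shown $Q_{(g,\ell_{1},\ldots,\ell_{n}),\beta}$ is total, the tuple is assigned a rank $\gamma\leq\beta$; to conclude the displayed equality of operations I would also observe that the same subterm induction shows that every \emph{defined} value of each $Q_{(g,\ell_{1},\ldots,\ell_{n}),\alpha}$ equals $g^{\bullet}(\ell_{1},\ldots,\ell_{n},-)$ (the value recorded by clause (iii) does not depend on $\alpha$; only the threshold "$<\alpha$'' does), so the total function $Q_{(g,\ell_{1},\ldots,\ell_{n}),\gamma}=g^{\sharp}(\ell_{1},\ldots,\ell_{n},-)$ is equal to $g^{\bullet}(\ell_{1},\ldots,\ell_{n},-)$. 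This completes the induction.

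The only real work I expect is the bookkeeping that keeps the two inductions aligned — the outer transfinite induction on $\nabla$ and the inner structural induction on the argument term — together with the care that "rank $<\beta$'' genuinely means a tuple was already processed at an earlier stage, so that its recorded $f^{\sharp}$ is available inside the definition of $Q_{\,\cdot\,,\beta}$. The strict decrease of $\nabla$ in hypothesis (4) is exactly what makes the recursion well-founded, and the remaining side conditions (that $g^{\bullet},f^{\bullet}$ land in $L$, that $g_{x}(\ell_{1},\ldots,\ell_{n})\in L\iff\nabla(g,\ell_{1},\ldots,\ell_{n})>0$, and that all tuples in play lie in $\Omega$) are routine consequences of the standing hypotheses.
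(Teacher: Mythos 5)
The paper states this lemma without proof, so there is no argument in the source against which to compare line by line; on its own merits your proof is correct and uses exactly the natural double induction one would expect here. The outer transfinite induction on the $\nabla$-value together with the inner structural induction on the argument term $\ell$ is the right decomposition, and you correctly identify the two load-bearing points: (i) the strict decrease $\nabla(f,\overline{\ell}) > \nabla(g,f^{\bullet}(\overline{\ell},u_{1}),\ldots,f^{\bullet}(\overline{\ell},u_{m}))$ in hypothesis (4), which makes the outer induction hypothesis available precisely when clause (iii) of the construction of $Q$ needs to call an already-assigned $f^{\sharp}$, and (ii) the bookkeeping issue that $(g,\ell_{1},\ldots,\ell_{n})$ may have received its rank at a stage $\gamma$ strictly below $\beta=\nabla(g,\ell_{1},\ldots,\ell_{n})$, which you handle by observing that the subterm induction shows any defined value of any $Q_{(g,\overline{\ell}),\alpha}$ agrees with $g^{\bullet}(\overline{\ell},-)$, not just the one at stage $\beta$. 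One cosmetic remark: the parenthetical "the value recorded by clause (iii) does not depend on $\alpha$" is not quite the right way to phrase what you mean — the value could in principle depend on $\alpha$ through the subterms $Q_{\alpha}(u_{i})$ — but what you actually use and prove is the stronger uniform statement that the structural induction gives $Q_{\alpha}(\ell)=g^{\bullet}(\overline{\ell},\ell)$ whenever the left-hand side is defined, and that is what closes the argument.
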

\begin{exam}
Every pre-multigenic Laver table is isomorphic a pre-endomorphic Laver table with one fundamental operation of arity 2.
Let $X$ be a set. Suppose that $M$ is a multigenic Laver table over $X$. Then let $j$ be a unary function symbol. Let
$L$ be the set of all terms of the form $j_{x_{m}}\circ\ldots\circ j_{x_{2}}(x_{1})$ such that the string
$x_{1}x_{2}\ldots x_{m}$ belongs to $M$. Define a mapping $\phi:(M,*)\rightarrow(L,j^{\sharp})$ by letting
$\phi(x_{1}\ldots x_{m})=j_{x_{m}}\circ\ldots\circ j_{x_{2}}(x_{1})$. Then the mapping $\phi$ is an isomorphism of algebras.
This construction gives a one-to-one correspondence between the pre-multigenic Laver tables and the pre-endomorphic Laver tables with one fundamental operation of arity 2.
\label{4uok9et3gnjd}
\end{exam}
\begin{defn}
If $t$ is a term in $\mathbf{T}_{\mathcal{G}}(X)$, then we shall say a variable $x$ is \index{present}\emph{present} in $t$ if
$x$ is a subterm of $t$. We shall say that a variable $x$ is \index{operationally present}\emph{operationally present} in $t$ if some
term of the form $f_{x}(\ell_{1},\ldots,\ell_{n})$ is a subterm of $t$. For example, $y$ is present but not operationally present
in $f_{x}(y)$, but $x$ is operationally present but not present in $f_{x}(y)$.
We shall say that some function symbol $\mathfrak{g}\in\mathcal{G}$ is \index{present}\emph{present} in a term $t$ if
$\mathfrak{g}(u_{1},\ldots,u_{m})$ is a subterm of $t$ for some $u_{1},\ldots,u_{m}$. We shall say that a function symbol
$f\in E$ is present in $t$ if some $f_{x}$ is present in $t$. For example, $f$ is present in $f_{x}(y)$.
\end{defn}
\begin{prop}
Suppose that $L$ is operable and $f\in E,\ell_{1},\ldots,\ell_{n},\ell\in L$.
\begin{enumerate}
\item If $g\in E$ and $g$ is present in $f^{\sharp}(\ell_{1},\ldots,\ell_{n},\ell)$, then $g=f$ or $g$ is present in
$\ell_{1},\ldots,\ell_{n},\ell$.

\item If $\mathfrak{g}\in F$ and $\mathfrak{g}$ is present in $f^{\sharp}(\ell_{1},\ldots,\ell_{n},\ell)$, then
$\mathfrak{g}$ is present in $\ell_{1},\ldots,\ell_{n},\ell$.

\item If $x$ is present or functionally present in $f^{\sharp}(\ell_{1},\ldots,\ell_{n},\ell)$, then
$x$ is present or functionally present in $\ell_{1},\ldots,\ell_{n},\ell$.
\end{enumerate}
\end{prop}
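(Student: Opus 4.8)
The plan is to establish all three statements by a single double induction: an outer transfinite induction on $\alpha=\mathrm{rank}(f,\ell_1,\ldots,\ell_n)$, and, for a fixed tuple $(f,\ell_1,\ldots,\ell_n)\in\Omega$ of a given rank, an inner induction on the subterm relation $\preceq$ applied to $\ell$. Since $L$ is operable every element of $\Omega$ has an ordinal rank, so the outer induction is legitimate, and since terms in $\mathbf{T}_{\mathcal{G}}(X)$ are finite the inner induction is well-founded. In the base case $\alpha=0$ we have $f^\sharp(\ell_1,\ldots,\ell_n,\ell)=\ell$ by definition, so (1), (2), (3) hold trivially. So assume $\alpha>0$, whence $f^\sharp(\ell_1,\ldots,\ell_n,\ell)=Q_{(f,\ell_1,\ldots,\ell_n),\alpha}(\ell)$, and assume (1)--(3) for every element of $\Omega$ of rank $<\alpha$ and for every proper subterm of $\ell$ (with the same $f,\ell_1,\ldots,\ell_n$). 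We follow the three clauses that define $Q_{(f,\ell_1,\ldots,\ell_n),\alpha}$.

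If $\ell=x\in X$ then $f^\sharp(\ell_1,\ldots,\ell_n,\ell)=f_x(\ell_1,\ldots,\ell_n)$: the head contributes only the symbol $f\in E$ itself, which is the value explicitly allowed in (1), no symbol of $F$, and the variable $x$, which is operationally present but is also present in $\ell=x$; every other symbol or variable already occurs in $\ell_1,\ldots,\ell_n$. If $\ell=\mathfrak{g}(u_1,\ldots,u_m)$ with $\mathfrak{g}\in F$ then $f^\sharp(\ell_1,\ldots,\ell_n,\ell)=\mathfrak{g}(f^\sharp(\ell_1,\ldots,\ell_n,u_1),\ldots,f^\sharp(\ell_1,\ldots,\ell_n,u_m))$, so any function symbol or variable occurring in it is either $\mathfrak{g}$ itself, present in $\ell$, or occurs in some $f^\sharp(\ell_1,\ldots,\ell_n,u_i)$, and the inner hypothesis applied to $u_i\prec\ell$ then places it among the symbols or variables present (resp. operationally present) in $\ell_1,\ldots,\ell_n,u_i$, and hence in $\ell_1,\ldots,\ell_n,\ell$.

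The case $\ell=h_x(u_1,\ldots,u_m)$ with $h\in E$ is the crux. Here
\[f^\sharp(\ell_1,\ldots,\ell_n,\ell)=h^\sharp\bigl(f^\sharp(\ell_1,\ldots,\ell_n,u_1),\ldots,f^\sharp(\ell_1,\ldots,\ell_n,u_m),f_x(\ell_1,\ldots,\ell_n)\bigr),\]
and clause (iii) of the definition of $Q_{(f,\ell_1,\ldots,\ell_n),\alpha}$ guarantees that the tuple $(h,f^\sharp(\ell_1,\ldots,\ell_n,u_1),\ldots,f^\sharp(\ell_1,\ldots,\ell_n,u_m))$ has rank $<\alpha$, so the outer hypothesis applies to $h^\sharp$ with these arguments. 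Thus a symbol $g'\in E$ present in the displayed term is $h$ (present in $\ell=h_x(u_1,\ldots,u_m)$), or present in some $f^\sharp(\ell_1,\ldots,\ell_n,u_i)$, or present in $f_x(\ell_1,\ldots,\ell_n)$; in the second case the inner hypothesis for $u_i$ gives $g'=f$ or $g'$ present in $\ell_1,\ldots,\ell_n,u_i$, and in the third case $g'=f$ or $g'$ present in $\ell_1,\ldots,\ell_n$, establishing (1); (2) is identical with $F$ in place of $E$. For (3), a variable $y$ present or operationally present in the displayed term is, by the outer hypothesis, present or operationally present in some $f^\sharp(\ell_1,\ldots,\ell_n,u_i)$ or in $f_x(\ell_1,\ldots,\ell_n)$; the first alternative reduces via the inner hypothesis to $\ell_1,\ldots,\ell_n,u_i$, hence to $\ell_1,\ldots,\ell_n,\ell$, while the second gives either $y=x$, which is operationally present in $\ell=h_x(u_1,\ldots,u_m)$, or $y$ present or operationally present in $\ell_1,\ldots,\ell_n$. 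This closes the induction. The one real difficulty is the bookkeeping of the two induction parameters so that the recursive appeal to $h^\sharp$ in this last case is legitimate, and that is precisely what operability of $L$ together with clause (iii) of the definition of $Q_{(f,\ell_1,\ldots,\ell_n),\alpha}$ secures, by forcing the rank to drop.
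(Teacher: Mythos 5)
The paper states this proposition without proof. Your argument is correct, and the induction you use — lexicographic on $(\mathrm{rank}(f,\ell_1,\ldots,\ell_n),\ell)\in\mathbf{On}\times(L,\prec)$, with clause (iii) of the definition of $Q_{(f,\ell_1,\ldots,\ell_n),\alpha}$ supplying the rank drop needed to invoke the outer hypothesis on $(h,f^\sharp(\ell_1,\ldots,\ell_n,u_1),\ldots,f^\sharp(\ell_1,\ldots,\ell_n,u_m))$ — is precisely the technique the paper employs for the very next proposition (the $t\otimes(f,\ell_1,\ldots,\ell_n)$ decomposition), so your proof fills the omission in the expected style. The careful handling of the case $\ell=x$, where the operationally present $x$ in $f_x(\ell_1,\ldots,\ell_n)$ must be matched against $x$ being present in $\ell$ itself, is the only spot that requires any attention to the present versus operationally present distinction, and you address it correctly.
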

\begin{defn}
Suppose now that $f\in E$ is $n$-ary, $\ell_{1},\ldots,\ell_{n}$ are terms in $\mathbf{T}_{\mathcal{G}}(X)$, and
$t(x_{1},\ldots,x_{l})$ is a term in $\mathbf{T}_{\mathcal{G}}(X)$. Then we shall write
\index{$t(x_{1},\ldots,x_{l})\otimes(f,\ell_{1},\ldots,\ell_{n})$}
$t(x_{1},\ldots,x_{l})\otimes(f,\ell_{1},\ldots,\ell_{n})$ for the term
$t(f_{x_{1}}(\ell_{1},\ldots,\ell_{n}),\ldots,f_{x_{l}}(\ell_{1},\ldots,\ell_{n}))$.
\end{defn}
\begin{prop}
If $\mathrm{rank}(f,\ell_{1},\ldots,\ell_{n})>0$ and $\ell\in L$, then there is some necessarily unique term $t$ such that
\[f^{\sharp}(\ell_{1},\ldots,\ell_{n},\ell)=t\otimes(f,\ell_{1},\ldots,\ell_{n}).\]
\label{t4gh2ui04gthn}
\end{prop}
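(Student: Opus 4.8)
The plan is to establish existence by a double induction and then obtain uniqueness by a short structural argument. Write $\bar\ell$ for $\ell_1,\ldots,\ell_n$. The two elementary facts I will use about the operation $\otimes$ are that $u\mapsto u\otimes(f,\bar\ell)$ is literally the term substitution $\sigma$ sending each variable $x$ to $f_x(\bar\ell)$: hence (a) it commutes with building terms, so $h(p_1,\ldots,p_k)\otimes(f,\bar\ell)=h\bigl(p_1\otimes(f,\bar\ell),\ldots,p_k\otimes(f,\bar\ell)\bigr)$ for every $h\in\mathcal G$; and (b) substitutions compose, so that when $z_1,\ldots,z_k$ are exactly the variables of a term $t'$ one has $t'(p_1,\ldots,p_k)\otimes(f,\bar\ell)=t'\bigl(p_1\otimes(f,\bar\ell),\ldots,p_k\otimes(f,\bar\ell)\bigr)$ (fact (b) follows from (a) by induction on $t'$). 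I also record the special case $f_x(\bar\ell)=x\otimes(f,\bar\ell)$.

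For existence I would induct on the ordinal $\alpha=\mathrm{rank}(f,\bar\ell)$, and for each fixed $\alpha$ on the subterm ordering $\preceq$ applied to $\ell$. Since $\alpha>0$, the value $f^\sharp(\bar\ell,\ell)$ is computed by the clauses (i)--(iii) defining $Q_{(f,\bar\ell),\alpha}$, and the inner induction follows these clauses. If $\ell=x\in X$, clause (i) gives $f^\sharp(\bar\ell,x)=f_x(\bar\ell)=x\otimes(f,\bar\ell)$, so $t=x$. If $\ell=\mathfrak g(u_1,\ldots,u_m)$ with $\mathfrak g\in F$, clause (ii), the inner hypothesis $f^\sharp(\bar\ell,u_i)=s_i\otimes(f,\bar\ell)$, and fact (a) give the claim with $t=\mathfrak g(s_1,\ldots,s_m)$. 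The remaining case, $\ell=g_y(u_1,\ldots,u_m)$ with $g\in E$ of arity $m$, is the crux: clause (iii) gives $f^\sharp(\bar\ell,\ell)=g^\sharp\bigl(a_1,\ldots,a_m,f_y(\bar\ell)\bigr)$ where $a_i=f^\sharp(\bar\ell,u_i)$, and --- since operability makes $Q_{(f,\bar\ell),\alpha}$ total --- the tuple $(g,a_1,\ldots,a_m)$ has a rank, which the applicability of clause (iii) forces to be $<\alpha$. If that rank is $0$ then $g^\sharp(a_1,\ldots,a_m,c)=c$ for all $c$, so $f^\sharp(\bar\ell,\ell)=f_y(\bar\ell)=y\otimes(f,\bar\ell)$; if it is positive, the outer inductive hypothesis applies at $(g,a_1,\ldots,a_m)$ and yields a term $t'$ with $g^\sharp(a_1,\ldots,a_m,c)=t'\otimes(g,a_1,\ldots,a_m)$ for every $c\in L$, in particular for $c=f_y(\bar\ell)$.

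To finish that case, use the inner hypothesis $a_i=s_i\otimes(f,\bar\ell)$: each leaf of $t'\otimes(g,a_1,\ldots,a_m)$ has the form $g_z(a_1,\ldots,a_m)=g_z(s_1,\ldots,s_m)\otimes(f,\bar\ell)$ by fact (a), whereupon fact (b) collapses the whole expression to $t\otimes(f,\bar\ell)$ with $t=t'\bigl(g_{z_1}(s_1,\ldots,s_m),\ldots,g_{z_k}(s_1,\ldots,s_m)\bigr)$, $z_1,\ldots,z_k$ being the variables of $t'$. For uniqueness it suffices that the substitution $\sigma\colon t\mapsto t\otimes(f,\bar\ell)$ is injective on $\mathbf T_{\mathcal G}(X)$; this is a routine induction on $|s|+|t|$, using that $\sigma$ never decreases size, strictly increases it on any term containing a variable (here one uses that an operation of $E$ is at least unary, so $f_x(\bar\ell)$ has size $\ge 2$), and preserves the head symbol of a function application, so a head/size comparison contradicts $s\sigma=t\sigma$ with $s\ne t$. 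I expect the only genuine difficulty to be the bookkeeping in case (iii): one must see that the induction has to be on $\mathrm{rank}(f,\bar\ell)$ (the drop is exactly what clause (iii) of the definition of $Q$ guarantees) rather than on anything internal to $\ell$, and then chase facts (a) and (b) to see that applying $g^\sharp$ to arguments that are already of the form $(\cdot)\otimes(f,\bar\ell)$ returns something of the same form.
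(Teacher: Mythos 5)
Your existence argument coincides with the paper's proof: the same double induction on $(\mathrm{rank}(f,\bar\ell),\ell)$, the same three cases (variable, $\mathfrak g\in F$, $g_y$ with $g\in E$), the same rank-$0$/rank-positive split in the last case, and the same composition of substitutions using what you label facts (a) and (b). The only difference of substance is that the paper leaves uniqueness entirely unaddressed, whereas you supply an argument via injectivity of the substitution $\sigma\colon x\mapsto f_x(\bar\ell)$.

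Your uniqueness argument is the right idea but the size/head reasoning as stated is slightly too optimistic. The case that needs care is $s=x$ a variable and $t=f_x(t_1,\ldots,t_n)$ a non-variable with $t_i\sigma=\ell_i$. A size count gives $|\ell_i|=|t_i|+v(t_i)\cdot(N-1)$ where $N=|f_x(\bar\ell)|$ and $v(t_i)$ counts variable occurrences in $t_i$; since $N-1=\sum_j|\ell_j|\ge|\ell_i|$, this forces $v(t_i)=0$ and hence $t_i=\ell_i$. If every $\ell_i$ contains at least one variable this is a contradiction (because $\ell_i=t_i$ would then have a variable, so $v(t_i)\ne0$); but if every $\ell_i$ is a closed term then $t=f_x(\bar\ell)\ne x=s$ genuinely satisfies $t\sigma=s\sigma$ and injectivity fails. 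So what you actually need is that every symbol of $\mathcal G$ (not just those coming from $E$) has positive arity, which guarantees every term in $\mathbf T_{\mathcal G}(X)$ has a variable leaf. Your parenthetical ``an operation of $E$ is at least unary'' isolates the wrong hypothesis: positivity of $n_f$ for $f\in E$ controls the size of $f_x(\bar\ell)$, but it is positivity of arity across all of $\mathcal G$ (including $F$) that rules out closed $\ell_i$. Under that stated standing assumption your argument goes through and fills a gap the paper silently leaves open.
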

\begin{proof}
We shall prove this result by induction on $(\mathrm{rank}(f,\ell_{1},\ldots,\ell_{n}),\ell)\in\mathbf{On}\setminus\{0\}\times L$ in several different cases.

\item[Case I: $\ell=x\in X$.]

In this case, since $\mathrm{rank}(f,\ell_{1},\ldots,\ell_{n})\neq 0$, we have
\[f^{\sharp}(\ell_{1},\ldots,\ell_{n},\ell)=f_{x}(\ell_{1},\ldots,\ell_{n}).\]

\item[Case II: $\ell=\mathfrak{g}(u_{1},\ldots,u_{m})$.]

By the induction hypothesis, there are term functions $\Gamma_{1},\ldots,\Gamma_{m}$ along with $x_{1},\ldots,x_{l}$ so that
for $1\leq i\leq m$ we have
\[f^{\sharp}(\ell_{1},\ldots,\ell_{n},u_{i})=\Gamma_{i}(f_{x_{1}}(\ell_{1},\ldots,\ell_{n}),\ldots,f_{x_{l}}(\ell_{1},\ldots,\ell_{n})).\]

We have 
\[f^{\sharp}(\ell_{1},\ldots,\ell_{n},\mathfrak{g}(u_{1},\ldots,u_{m}))\]
\[=\mathfrak{g}(f^{\sharp}(\ell_{1},\ldots,\ell_{n},u_{1}),\ldots,f^{\sharp}(\ell_{1},\ldots,\ell_{n},u_{m}))\]
\[=\mathfrak{g}(\Gamma_{1}(f_{x_{1}}(\ell_{1},\ldots,\ell_{n}),\ldots,f_{x_{l}}(\ell_{1},\ldots,\ell_{n})),\ldots,
\Gamma_{m}(f_{x_{1}}(\ell_{1},\ldots,\ell_{n}),\ldots,f_{x_{l}}(\ell_{1},\ldots,\ell_{n})))\]

\item[Case III: $\ell=g_{x}(u_{1},\ldots,u_{m})$.]

\item[Case IIIA: $\mathrm{rank}(g,f^{\sharp}(\ell_{1},\ldots,\ell_{n},u_{1}),\ldots,f^{\sharp}(\ell_{1},\ldots,\ell_{n},u_{m}))=0$.]

\[f^{\sharp}(\ell_{1},\ldots,\ell_{n},\ell)\]
\[=g^{\sharp}(f^{\sharp}(\ell_{1},\ldots,\ell_{n},u_{1}),\ldots,f^{\sharp}(\ell_{1},\ldots,\ell_{n},u_{m}),f_{x}(\ell_{1},\ldots,\ell_{n}))\]
\[=f_{x}(\ell_{1},\ldots,\ell_{n}).\]

\item[Case IIIB: $\mathrm{rank}(g,f^{\sharp}(\ell_{1},\ldots,\ell_{n},u_{1}),\ldots,f^{\sharp}(\ell_{1},\ldots,\ell_{n},u_{m}))>0$.]

In this case, by the induction hypothesis, there are term function $\Gamma_{1},\ldots,\Gamma_{m}$ and $x_{1},\ldots,x_{k}$ such that
\[f^{\sharp}(\ell_{1},\ldots,\ell_{n},u_{i})=\Gamma_{i}(f_{x_{1}}(\ell_{1},\ldots,\ell_{n}),\ldots,f_{x_{k}}(\ell_{1},\ldots,\ell_{n}))\]
for $1\leq i\leq m$. 

Furthermore, again by the induction hypothesis, there is some term function $\Gamma$ such that

\[g^{\sharp}(f^{\sharp}(\ell_{1},\ldots,\ell_{n},u_{1}),\ldots,f^{\sharp}(\ell_{1},\ldots,\ell_{n},u_{m}),f_{x}(\ell_{1},\ldots,\ell_{n}))\]
\[=\Gamma(f^{\sharp}(\ell_{1},\ldots,\ell_{n},u_{1}),\ldots,f^{\sharp}(\ell_{1},\ldots,\ell_{n},u_{m})).\]

We therefore conclude that

\[f^{\sharp}(\ell_{1},\ldots,\ell_{n},\ell)=f^{\sharp}(\ell_{1},\ldots,\ell_{n},g_{x}(u_{1},\ldots,u_{m}))\]

\[=g^{\sharp}(f^{\sharp}(\ell_{1},\ldots,\ell_{n},u_{1}),\ldots,f^{\sharp}(\ell_{1},\ldots,\ell_{n},u_{m}),f_{x}(\ell_{1},\ldots,\ell_{n}))\]

\[=\Gamma(f^{\sharp}(\ell_{1},\ldots,\ell_{n},u_{1}),\ldots,f^{\sharp}(\ell_{1},\ldots,\ell_{n},u_{m}))\]

\[=\Gamma(\Gamma_{1}(f_{x_{1}}(\ell_{1},\ldots,\ell_{n}),\ldots,f_{x_{k}}(\ell_{1},\ldots,\ell_{n})),\ldots\]
\[,\Gamma_{m}(f_{x_{1}}(\ell_{1},\ldots,\ell_{n}),\ldots,f_{x_{k}}(\ell_{1},\ldots,\ell_{n}))).\]
\end{proof}

\begin{cor}
Suppose that $L$ is a partially endomorphic Laver table. If
\[\mathrm{rank}(g,f^{\sharp}(\ell_{1},\ldots,\ell_{m},u_{1}),\ldots,f^{\sharp}(\ell_{1},\ldots,\ell_{m},u_{n}))>0,\]
then
\[f^{\sharp}(\ell_{1},\ldots,\ell_{m},g_{x}(u_{1},\ldots,u_{n}))=t\otimes
(g,f^{\sharp}(\ell_{1},\ldots,\ell_{m},u_{1}),\ldots,f^{\sharp}(\ell_{1},\ldots,\ell_{m},u_{n}))\]
for some unique term $t$.
\end{cor}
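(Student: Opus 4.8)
The plan is to read off this corollary directly from Proposition~\ref{t4gh2ui04gthn} by specializing the element $\ell$ to a term of the form $g_x(u_1,\ldots,u_n)$. First I would set $\ell = g_x(u_1,\ldots,u_n)$, noting that $\ell \in L$ since $L$ is downwards closed and each $u_i \in L$ together with the closure condition on terms $g_x(\ell_1,\ldots,\ell_n)$ from Definition~\ref{3hqw3f6l8icg}. Since we are assuming $L$ is a partially endomorphic Laver table (hence operable), the hypothesis $\mathrm{rank}(f,\ell_1,\ldots,\ell_m)>0$ is what Proposition~\ref{t4gh2ui04gthn} requires — though here I should double-check: the statement of the corollary assumes instead that $\mathrm{rank}(g,f^{\sharp}(\ell_1,\ldots,\ell_m,u_1),\ldots,f^{\sharp}(\ell_1,\ldots,\ell_m,u_n))>0$, which is precisely the condition singling out Case IIIB in the proof of Proposition~\ref{t4gh2ui04gthn}.

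So the cleaner route is: I would revisit Case IIIB of the proof of Proposition~\ref{t4gh2ui04gthn}. There, with $\ell = g_x(u_1,\ldots,u_m)$ and $\mathrm{rank}(g,f^{\sharp}(\ell_1,\ldots,u_1),\ldots,f^{\sharp}(\ell_1,\ldots,u_m))>0$, one obtains a term function $\Gamma$ (itself coming from applying Proposition~\ref{t4gh2ui04gthn} to the pair $(g,f^{\sharp}(\ell_1,\ldots,u_1),\ldots,f^{\sharp}(\ell_1,\ldots,u_m))$, whose rank is positive by assumption) such that
\[
g^{\sharp}(f^{\sharp}(\ell_1,\ldots,\ell_m,u_1),\ldots,f^{\sharp}(\ell_1,\ldots,\ell_m,u_m),f_x(\ell_1,\ldots,\ell_m)) = \Gamma \otimes (g, f^{\sharp}(\ell_1,\ldots,\ell_m,u_1),\ldots,f^{\sharp}(\ell_1,\ldots,\ell_m,u_m)).
\]
Combining this with the recursion clause (iii) of Definition~\ref{3hqw3f6l8icg} — namely $f^{\sharp}(\ell_1,\ldots,\ell_m,g_x(u_1,\ldots,u_n)) = g^{\sharp}(f^{\sharp}(\ell_1,\ldots,u_1),\ldots,f^{\sharp}(\ell_1,\ldots,u_n),g_x(\ell_1,\ldots,\ell_m))$ — wait, I need to be careful that the last argument is $f_x(\ell_1,\ldots,\ell_m)$, which equals $g^{\sharp}$ applied appropriately; tracking this bookkeeping is the one spot demanding attention. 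Taking $t = \Gamma$ then gives exactly the claimed identity with the $\otimes$ notation.

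Uniqueness of $t$ follows because the $\otimes$ operation $t \mapsto t\otimes(g,\mathbf{v})$ is injective: $t\otimes(g,v_1,\ldots,v_n)$ is obtained from $t$ by substituting the distinct variables $x_1,\ldots,x_l$ of $t$ with the terms $g_{x_1}(v_1,\ldots,v_n),\ldots,g_{x_l}(v_1,\ldots,v_n)$, and from the resulting term one recovers $t$ by reading off which outermost $g_{x}(v_1,\ldots,v_n)$-subterms occur and reversing the substitution — this is exactly why Proposition~\ref{t4gh2ui04gthn} could assert "necessarily unique." I expect the main obstacle to be purely notational: making sure the index sets ($m$ versus $n$, the variables of $\Gamma$ versus those of the $\Gamma_i$) line up correctly with Proposition~\ref{t4gh2ui04gthn}'s statement, and confirming that the positive-rank hypothesis we are handed is exactly the one that licenses invoking Proposition~\ref{t4gh2ui04gthn} on the inner pair $(g,f^{\sharp}(\ell_1,\ldots,u_1),\ldots,f^{\sharp}(\ell_1,\ldots,u_n))$. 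No genuinely new ideas beyond Proposition~\ref{t4gh2ui04gthn} are needed.
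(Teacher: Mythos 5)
After the initial false start (which you catch yourself), your corrected route --- rewriting $f^{\sharp}(\ell_1,\ldots,\ell_m,g_x(u_1,\ldots,u_n))$ via the recursion clause as $g^{\sharp}(f^{\sharp}(\ell_1,\ldots,\ell_m,u_1),\ldots,f^{\sharp}(\ell_1,\ldots,\ell_m,u_n),f_x(\ell_1,\ldots,\ell_m))$ and then applying Proposition~\ref{t4gh2ui04gthn} to the tuple $(g,f^{\sharp}(\ell_1,\ldots,\ell_m,u_1),\ldots,f^{\sharp}(\ell_1,\ldots,\ell_m,u_n))$, whose rank is positive by hypothesis --- is precisely the paper's proof. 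The uniqueness you argue for at the end is already supplied by the ``necessarily unique'' clause in Proposition~\ref{t4gh2ui04gthn}, so the separate injectivity argument is not needed.
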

\begin{proof}
We have
\[f^{\sharp}(\ell_{1},\ldots,\ell_{m},g_{x}(u_{1},\ldots,u_{n}))\]
\[=g^{\sharp}(f^{\sharp}(\ell_{1},\ldots,\ell_{m},u_{1}),\ldots,f^{\sharp}(\ell_{1},\ldots,\ell_{m},u_{n}),f_{x}(\ell_{1},\ldots,\ell_{m})).\]

Therefore, by Proposition \ref{t4gh2ui04gthn}, there is a term $t$ such that
\[f^{\sharp}(\ell_{1},\ldots,\ell_{m},g_{x}(u_{1},\ldots,u_{n}))\]
\[=t\otimes(f^{\sharp}(\ell_{1},\ldots,\ell_{m},u_{1}),\ldots,f^{\sharp}(\ell_{1},\ldots,\ell_{m},u_{n})).\]
\end{proof}
\begin{defn}
\index{$f^{-}(\ell_{1},\ldots,\ell_{n},\ell)$}
We shall write $f^{-}(\ell_{1},\ldots,\ell_{n},\ell)$ for the term such that
\[f^{-}(\ell_{1},\ldots,\ell_{n},\ell)\otimes(f,\ell_{1},\ldots,\ell_{n})=f^{\sharp}(\ell_{1},\ldots,\ell_{n},\ell).\]
\end{defn}

%\begin{prop}
%The operation $f^{-}$ is defined by the following recursion.

%1. $f^{-}(\ell_{1},\ldots,\ell_{n},x)=x$

%2. \[f^{-}(\ell_{1},\ldots,\ell_{n},\mathfrak{g}(u_{1},\ldots,u_{m}))\]
%\[=\mathfrak{g}(f^{-}(\ell_{1},\ldots,\ell_{n},u_{1}),\ldots,f^{-}(\ell_{1},\ldots,\ell_{n},u_{m})).\]

%3. \[f^{-}(\overline{\ell},g_{x}(u_{1},\ldots,u_{m}))\]
%\[=g^{-}(f^{-}(\overline{\ell},u_{1})\otimes(f,\overline{\ell}),\ldots,f^{-}(\overline{\ell},u_{m})\otimes(f,\overline{\ell}))
%\otimes(g,f^{-}(\overline{\ell},u_{1})\otimes(f,\overline{\ell}),\ldots,f^{-}(\overline{\ell},u_{m})\otimes(f,\overline{\ell})).\]

%THIS CASE IS NOT ALWAYS TRUE.
%\end{prop}

\begin{prop}
Suppose that $f\in E$ and $\ell_{1},\ldots,\ell_{n},\ell\in L$. Then
if a variable $x$ is present in $f^{-}(\ell_{1},\ldots,\ell_{n},\ell)$, then $x$ is either present in $\ell$ or functionally present in $\ell$.
\end{prop}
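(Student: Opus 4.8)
The plan is to argue by induction on $\ell$ with respect to the subterm ordering $\preceq$, keeping $f$ and $\ell_1,\ldots,\ell_n$ fixed; we may and do assume $\mathrm{rank}(f,\ell_1,\ldots,\ell_n)>0$, since otherwise $f^{-}(\ell_1,\ldots,\ell_n,\ell)$ is not even defined by Proposition~\ref{t4gh2ui04gthn}. The method is to run the same case analysis on $\ell$ that appears in the proof of Proposition~\ref{t4gh2ui04gthn}, in each case reading off an explicit description of the term $f^{-}(\ell_1,\ldots,\ell_n,\ell)$ and then checking directly where its variables can come from. Throughout I will use the trivial observation that if a variable is present (resp. operationally present) in a subterm of $\ell$ then it is present (resp. operationally present) in $\ell$, and the fact that $\otimes$ commutes with the formation of terms, so that $u\otimes(f,\ell_1,\ldots,\ell_n)$ determines $u$ uniquely.

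First I would handle the cases that require no real work. If $\ell=c\in X$, then $f^{\sharp}(\ell_1,\ldots,\ell_n,\ell)=f_c(\ell_1,\ldots,\ell_n)$, hence $f^{-}(\ell_1,\ldots,\ell_n,\ell)=c$, and $c$ is operationally present in $\ell=c$. If $\ell=\mathfrak{g}(u_1,\ldots,u_m)$ with $\mathfrak{g}\in F$, then $f^{\sharp}(\ell_1,\ldots,\ell_n,\ell)=\mathfrak{g}(f^{\sharp}(\ell_1,\ldots,\ell_n,u_1),\ldots,f^{\sharp}(\ell_1,\ldots,\ell_n,u_m))$, and setting $\Gamma_i=f^{-}(\ell_1,\ldots,\ell_n,u_i)$ (defined by Proposition~\ref{t4gh2ui04gthn}, since each $u_i\in L$) one gets $f^{-}(\ell_1,\ldots,\ell_n,\ell)=\mathfrak{g}(\Gamma_1,\ldots,\Gamma_m)$; every variable present in it is present in some $\Gamma_i$, and the inductive hypothesis applied to $u_i\prec\ell$ finishes this case. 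Similarly, if $\ell=g_c(u_1,\ldots,u_m)$ with $g\in E$ and $\mathrm{rank}(g,f^{\sharp}(\ell_1,\ldots,\ell_n,u_1),\ldots,f^{\sharp}(\ell_1,\ldots,\ell_n,u_m))=0$, then $f^{\sharp}(\ell_1,\ldots,\ell_n,\ell)=f_c(\ell_1,\ldots,\ell_n)$, so $f^{-}(\ell_1,\ldots,\ell_n,\ell)=c$, which is operationally present in $\ell$.

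The main step, which I expect to be the one obstacle, is the case $\ell=g_c(u_1,\ldots,u_m)$ with $g\in E$ and $\mathrm{rank}(g,v_1,\ldots,v_m)>0$, where I write $v_i=f^{\sharp}(\ell_1,\ldots,\ell_n,u_i)$ and $w=f_c(\ell_1,\ldots,\ell_n)$. Here $f^{\sharp}(\ell_1,\ldots,\ell_n,\ell)=g^{\sharp}(v_1,\ldots,v_m,w)$, which by the Corollary immediately following Proposition~\ref{t4gh2ui04gthn} equals $s\otimes(g,v_1,\ldots,v_m)$ with $s=g^{-}(v_1,\ldots,v_m,w)$; let $y_1,\ldots,y_k$ be the variables of $s$. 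Using Proposition~\ref{t4gh2ui04gthn} for each $u_i$ to write $v_i=\Gamma_i\otimes(f,\ell_1,\ldots,\ell_n)$ with $\Gamma_i=f^{-}(\ell_1,\ldots,\ell_n,u_i)$, and pushing the two substitutions past each other, one rewrites $s\otimes(g,v_1,\ldots,v_m)$ as $P\otimes(f,\ell_1,\ldots,\ell_n)$, where $P$ is the term obtained from $s$ by replacing each variable $y_j$ by $g_{y_j}(\Gamma_1,\ldots,\Gamma_m)$; by uniqueness, $f^{-}(\ell_1,\ldots,\ell_n,\ell)=P$. The decisive point is that in forming $P$ every variable of $s$ is substituted away and the $y_j$ survive only as subscripts of the function symbols $g_{y_j}$, which are not \emph{present} variables; consequently every variable present in $P$ is present in some $\Gamma_i=f^{-}(\ell_1,\ldots,\ell_n,u_i)$, so in particular nothing from $w=f_c(\ell_1,\ldots,\ell_n)$, hence nothing from $\ell_1,\ldots,\ell_n$, can occur. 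Applying the inductive hypothesis to each $u_i\prec\ell$ now gives that such a variable is present or operationally present in $u_i$, and therefore present or operationally present in $\ell$, completing the induction.

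The only bookkeeping is routine: each $u_i$ lies in $L$ because $L$ is $\preceq$-downwards closed, Proposition~\ref{t4gh2ui04gthn} and its Corollary apply under the standing assumption $\mathrm{rank}(f,\ell_1,\ldots,\ell_n)>0$, and term substitutions commute because the language $\mathcal{G}$ has no variable binding. The genuinely delicate point is the identification of the variables of the nested substitution $P$ in the last case, which has been isolated above.
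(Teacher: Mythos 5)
Your proof is correct, and since the paper states this proposition without proof there is nothing to compare it against; what you give is a careful structural induction on $\ell$ tracking the recursive cases that define $f^{\sharp}$, exactly as one would expect. Two small remarks. First, in the base case $\ell=c\in X$ the variable $c$ is \emph{present} (not operationally present) in $\ell=c$, since operational presence would require a subterm of the form $f_c(\ldots)$; this is immaterial since either alternative suffices. Second, you are right that your simplified induction on $\ell$ alone (rather than the double induction on $(\mathrm{rank}(f,\ell_1,\ldots,\ell_n),\ell)$ used in Proposition~\ref{t4gh2ui04gthn}) is sufficient here, because you only invoke the inductive hypothesis on proper subterms $u_i\prec\ell$ with $f,\ell_1,\ldots,\ell_n$ fixed, while the appeal to $g^{-}(v_1,\ldots,v_m,w)$ uses Proposition~\ref{t4gh2ui04gthn} and its corollary only as black boxes for existence and uniqueness. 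The decisive computation identifying $f^{-}(\ell_1,\ldots,\ell_n,\ell)$ with $P$ (obtained from $s$ by replacing each $y_j$ with $g_{y_j}(\Gamma_1,\ldots,\Gamma_m)$) and the observation that the $y_j$ thereby become subscripts rather than present variables is exactly the point that makes the claim go through; you have isolated and handled it correctly.
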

\begin{defn}
Let $X,E,F,\mathcal{G},\ldots$ be defined the same way as in Definition \ref{3hqw3f6l8icg}.
Let $L\subseteq\mathbf{T}_{\mathcal{G}}[X]$ be downwards closed with respect to $\preceq$. If $Y\subseteq X,U\subseteq E,V\subseteq F$ are finite subsets, then let $\mathcal{G}_{Y,U,V}=\{f_{y}\mid f\in U,y\in Y\}\cup V$. Let $L_{Y,U,V}=\mathbf{T}_{\mathcal{G}_{Y,U,V}}[Y]\cap L$. Then we shall say that $L$ is \index{locally operable}\emph{locally operable} if
$L_{Y,U,V}$ is locally operable whenever $Y\subseteq X,U\subseteq E,V\subseteq F$ are finite subsets. 

If $L$ is locally operable, then as with the multigenic Laver tables, there is a system of operations $(g^{\sharp})_{g\in E}$ where
$g^{\sharp}:L^{n_{g}+1}\rightarrow L$ for each $g\in E$ and such that $(L_{Y,U,V},(g^{\sharp})_{g\in E},F)$ is a well-founded partially pre-endomorphic Laver table whenever $Y\subseteq X,U\subseteq E,V\subseteq F$ are finite subsets. We shall call the algebra $(L,(g^{\sharp})_{g\in E},F)$ a partially pre-endomorphic Laver table.
\end{defn}
\begin{thm}
Suppose that $L$ is locally operable and $(L,(f^{\sharp})_{f\in E},F)$ is a partially pre-endomorphic Laver table.
Then $(L,(f^{\sharp})_{f\in E},F)$ is endomorphic if and only if
$\mathrm{rank}(g,u_{1},\ldots,u_{n})=0$ implies that
\[\mathrm{rank}(g,f^{\sharp}(\ell_{1},\ldots,\ell_{m},u_{1}),\ldots,f^{\sharp}(\ell_{1},\ldots,\ell_{m},u_{n}))=0.\]
\label{2ijr50o}
\end{thm}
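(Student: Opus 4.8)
The plan is to prove this as the partially endomorphic analogue of Theorem~\ref{m43ti0}, with ``having rank $0$'' playing the role that membership in $\mathrm{Li}(M)$ plays there. Since any single instance of either implication involves only finitely many elements of $L$, I would first pass to a finitely generated operable piece $L_{Y,U,V}$, so that the operations $g^{\sharp}$ are literally computed by the recursion of Definition~\ref{3hqw3f6l8icg}, and then unwind endomorphicity: $(L,(f^{\sharp})_{f\in E},F)$ is endomorphic iff for all $f,g\in E$ and all $\ell_{1},\ldots,\ell_{m},u_{1},\ldots,u_{n},u\in L$,
\[f^{\sharp}(\ell_{1},\ldots,\ell_{m},g^{\sharp}(u_{1},\ldots,u_{n},u))=g^{\sharp}(f^{\sharp}(\ell_{1},\ldots,\ell_{m},u_{1}),\ldots,f^{\sharp}(\ell_{1},\ldots,\ell_{m},u_{n}),f^{\sharp}(\ell_{1},\ldots,\ell_{m},u)),\]
together with the analogous commutation of $f^{\sharp}(\ell_{1},\ldots,\ell_{m},-)$ with each $\mathfrak{h}\in F$; the latter is essentially clause (ii) of the recursion (and trivial when $\mathrm{rank}(f,\ell_{1},\ldots,\ell_{m})=0$), so the whole content is the displayed identity for the $E$-operations. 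Write $v_{i}=f^{\sharp}(\ell_{1},\ldots,\ell_{m},u_{i})$.

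For the implication $\Leftarrow$ I would establish the displayed identity by a triple induction paralleling the proof of Theorem~\ref{m43ti0}: outermost a transfinite induction on $\mathrm{rank}(f,\ell_{1},\ldots,\ell_{m})$, then a transfinite induction on $\mathrm{rank}(g,u_{1},\ldots,u_{n})$, then an induction on the number of symbols of the term $u$. If $\mathrm{rank}(f,\ell_{1},\ldots,\ell_{m})=0$ the left translation is the identity and there is nothing to prove. If $\mathrm{rank}(g,u_{1},\ldots,u_{n})=0$ then $g^{\sharp}(u_{1},\ldots,u_{n},-)$ is the identity, the hypothesis of the theorem forces $\mathrm{rank}(g,v_{1},\ldots,v_{n})=0$, and both sides collapse to $f^{\sharp}(\ell_{1},\ldots,\ell_{m},u)$; this is the only point at which the hypothesis is used. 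When both translations have positive rank I would split on $u$: for $u$ a variable, $g^{\sharp}(u_{1},\ldots,u_{n},u)=g_{u}(u_{1},\ldots,u_{n})\in L$, and clause (iii) of Definition~\ref{3hqw3f6l8icg}, applied to compute $f^{\sharp}(\ell_{1},\ldots,\ell_{m},g_{u}(u_{1},\ldots,u_{n}))$, produces the right-hand side verbatim (using that, since the relevant $Q$ is a total function, the rank side-conditions of clause (iii) hold automatically, in particular $\mathrm{rank}(g,v_{1},\ldots,v_{n})<\mathrm{rank}(f,\ell_{1},\ldots,\ell_{m})$); for $u=\mathfrak{h}(w_{1},\ldots,w_{k})$ with $\mathfrak{h}\in F$ one pushes clause (ii) through both sides and applies the innermost hypothesis to each $w_{j}$; and for $u=h_{z}(w_{1},\ldots,w_{k})$ with $h\in E$ --- necessarily $\mathrm{rank}(h,w_{1},\ldots,w_{k})>0$ since $u\in L$ --- one rewrites $g^{\sharp}(u_{1},\ldots,u_{n},u)$ via clause (iii) as an application of $h^{\sharp}$ whose parameter tuple has rank strictly below $\mathrm{rank}(g,u_{1},\ldots,u_{n})$, invokes the middle hypothesis on that, then the innermost hypothesis on $w_{1},\ldots,w_{k}$ and on $z$, and finally the outermost hypothesis applied to the translation $(g,v_{1},\ldots,v_{n})$ (of rank $<\mathrm{rank}(f,\ell_{1},\ldots,\ell_{m})$) to reassemble the right-hand side. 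The two computations then agree symbol for symbol, just as in Case~4 of Theorem~\ref{m43ti0}.

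For the implication $\Rightarrow$ I would argue contrapositively: suppose $\mathrm{rank}(g,u_{1},\ldots,u_{n})=0$ while $\mathrm{rank}(g,v_{1},\ldots,v_{n})=\beta>0$. Specializing the displayed identity and using $g^{\sharp}(u_{1},\ldots,u_{n},-)=\mathrm{id}$ gives $g^{\sharp}(v_{1},\ldots,v_{n},f^{\sharp}(\ell_{1},\ldots,\ell_{m},b))=f^{\sharp}(\ell_{1},\ldots,\ell_{m},b)$ for every $b\in L$; that is, $g^{\sharp}(v_{1},\ldots,v_{n},-)$ fixes the image of $f^{\sharp}(\ell_{1},\ldots,\ell_{m},-)$ pointwise. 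If $\mathrm{rank}(f,\ell_{1},\ldots,\ell_{m})=0$ then $v_{i}=u_{i}$ and $\beta=0$, a contradiction; so $\mathrm{rank}(f,\ell_{1},\ldots,\ell_{m})>0$, hence $f^{\sharp}(\ell_{1},\ldots,\ell_{m},x)=f_{x}(\ell_{1},\ldots,\ell_{m})$ for any variable $x$, and therefore $g^{\sharp}(v_{1},\ldots,v_{n},f_{x}(\ell_{1},\ldots,\ell_{m}))=f_{x}(\ell_{1},\ldots,\ell_{m})$. Expanding the left-hand side by clause (iii) and then rewriting both translations in $\otimes$-form via Proposition~\ref{t4gh2ui04gthn}, I would compare outermost symbols and sizes: since an $\otimes$-expansion by a translation of positive rank strictly enlarges any nonvariable term, the resulting equality forces a term to occur as a proper subterm of itself, which is impossible. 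This is the partially endomorphic replacement for the observation ``$\mathbf{x}*\mathbf{y}$ is a proper extension of $\mathbf{x}$'' used in the $\Rightarrow$ half of Theorem~\ref{m43ti0}.

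The main obstacle is the term combinatorics of the $\Rightarrow$ direction: one must track, using Proposition~\ref{t4gh2ui04gthn} and the propositions recording which variables and function symbols are present or operationally present in $f^{\sharp}(\ell_{1},\ldots,\ell_{n},\ell)$, the precise shape of the iterated $\otimes$-expansions, and trace through the degenerate configurations --- some translation trivial, $F$ containing constants or unary symbols --- separately; in each of them the same core mechanism applies, namely that these maps cannot collapse a term into one of its own proper subterms. A lesser technical point, on the $\Leftarrow$ side, is the verification that the rank side-conditions built into clause (iii) of Definition~\ref{3hqw3f6l8icg} are automatic as soon as the ambient $Q$ is total, which is what makes each rewriting step legitimate.
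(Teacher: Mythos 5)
Your $\Leftarrow$ direction is essentially the paper's proof: reduce to the operable case, dispose of the $F$-symbols via clause (ii), and then establish the commutation of $f^{\sharp}(\overline\ell,-)$ with $g^{\sharp}(\overline u,-)$ by a triple induction on $\bigl(\mathrm{rank}(f,\overline\ell),\,\mathrm{rank}(g,\overline u),\,u\bigr)$, with the theorem's hypothesis used exactly once, in the case $\mathrm{rank}(g,\overline u)=0$. The case split you describe (rank of $f$-translation zero; rank of $g$-translation zero; $u$ a variable; $u$ headed by $\mathfrak h\in F$; $u$ headed by $h_z$ with $h\in E$) is the paper's Cases I--V, and the rewrites in your Case $u=h_z(\cdots)$, using the middle variable for the new $h$-translation of strictly lower rank and the outer variable to reassemble via the $g$-translation with parameters $f^{\sharp}(\overline\ell,u_i)$, agree with the paper's argument.

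The $\Rightarrow$ direction is where you diverge, and where your route is genuinely weaker. The paper does not evaluate the identity at a variable; it evaluates at $b=u_i$. Since $g^{\sharp}(u_1,\ldots,u_n,u_i)=u_i$ when $\mathrm{rank}(g,\overline u)=0$, this immediately yields
\[
g^{\sharp}(v_1,\ldots,v_n,v_i)=v_i\quad\text{for all }i,\qquad v_i:=f^{\sharp}(\ell_1,\ldots,\ell_m,u_i),
\]
a constraint on the $g$-translation's \emph{own} parameters. From Proposition~\ref{t4gh2ui04gthn}, if $\mathrm{rank}(g,v_1,\ldots,v_n)>0$ then $g^{\sharp}(v_1,\ldots,v_n,v_i)=t_i\otimes(g,v_1,\ldots,v_n)$; whenever $t_i$ contains a variable, $v_i$ occurs as a proper subterm of $t_i\otimes(g,v_1,\ldots,v_n)=v_i$, impossible; and if every $t_i$ is variable-free then every $v_i$ lies in $\mathbf T_F(\emptyset)$, whence (via $\Psi$) each $u_i$ does too, so $v_i=u_i$ and the rank is already $0$. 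That closes the direction cleanly.

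Your proposal instead specializes to a variable $b=x$, obtaining $g^{\sharp}(v_1,\ldots,v_n,f_x(\ell_1,\ldots,\ell_m))=f_x(\ell_1,\ldots,\ell_m)$, and then tries to contradict this by an $\otimes$-expansion/``no proper self-subterm'' argument. The claim that ``an $\otimes$-expansion by a translation of positive rank strictly enlarges any nonvariable term'' is false for constant terms in $\mathbf T_F(\emptyset)$, and chasing the resulting cases does not close the same way. Concretely: expanding by clause (iii) gives $f^{\sharp}\bigl(g^{\sharp}(\overline v,\ell_1),\ldots,g^{\sharp}(\overline v,\ell_m),g_x(\overline v)\bigr)=f_x(\overline\ell)$; if the inner $f$-rank is zero, head comparison forces $g=f$ and $v_i=\ell_i$, and you have to rerun the same analysis on $\ell_i=f^{\sharp}(\overline\ell,u_i)$; if it is positive, head comparison forces the $\otimes$-shape $t$ to be a single variable, giving $g^{\sharp}(\overline v,\ell_i)=\ell_i$ for all $i$, which is vacuous when every $\ell_i$ is a constant of $\mathbf T_F(\emptyset)$ and provides no contradiction by itself. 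You flag these as ``degenerate configurations'' to be handled separately, but the enlargement mechanism alone does not handle them; one must revert to the observation that constant parameters are literally fixed by all translations and hence equal the original $u_i$. At that point you have re-derived the paper's argument from a worse starting position. Specializing to $b=u_i$ (as the paper does) avoids the detour entirely and is the approach to adopt.
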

\begin{proof}
We shall only prove this result in the case that the set $L$ is operable. The general case when $L$ is locally operable will follow as an
immediate corollary.

$\rightarrow$. Suppose that $g_{x}(u_{1},\ldots,u_{n})\not\in L$. Then $g^{\sharp}(u_{1},\ldots,u_{n},u_{i})=u_{i}$ for $1\leq i\leq n$. Therefore, we have
\[f^{\sharp}(\ell_{1},\ldots,\ell_{m},u_{i})\]
\[=f^{\sharp}(\ell_{1},\ldots,\ell_{m},g^{\sharp}(u_{1},\ldots,u_{n},u_{i}))\]
\[=g^{\sharp}(f^{\sharp}(\ell_{1},\ldots,\ell_{m},u_{1}),\ldots,f^{\sharp}(\ell_{1},\ldots,\ell_{m},u_{n}),
f^{\sharp}(\ell_{1},\ldots,\ell_{m},u_{i})\]
for $1\leq i\leq n$. This implies that
\[g_{x}(f^{\sharp}(\ell_{1},\ldots,\ell_{m},u_{1}),\ldots,f^{\sharp}(\ell_{1},\ldots,\ell_{m},u_{n}))\not\in L.\]

$\leftarrow$. By the definition of $g^{\sharp}$, we have
\[g^{\sharp}(\ell_{1},\ldots,\ell_{m},\mathfrak{f}(u_{1},\ldots,u_{n}))\]
\[=\mathfrak{f}(g^{\sharp}(\ell_{1},\ldots,\ell_{m},u_{1}),\ldots,
g^{\sharp}(\ell_{1},\ldots,\ell_{m},u_{n})\]
whenever $\mathfrak{f}\in F$. It therefore suffices to show that whenever $f,g\in E$, we have
\[f^{\sharp}(\ell_{1},\ldots,\ell_{m},g^{\sharp}(u_{1},\ldots,u_{n},\ell))\]
\begin{equation}
=g^{\sharp}(f^{\sharp}(\ell_{1},\ldots,\ell_{m},u_{1}),\ldots,
f^{\sharp}(\ell_{1},\ldots,\ell_{m},u_{n}),f^{\sharp}(\ell_{1},\ldots,\ell_{m},\ell)).
\label{453r8m}
\end{equation}
We shall prove \ref{453r8m} by induction on
\[((f,\ell_{1},\ldots,\ell_{m}),(g,u_{1},\ldots,u_{n}),\ell)\in(\Omega,<)\times(\Omega,<)\times L\]
in five different cases. To reduce the amount of notation, define \[\overline{\ell}=
(\ell_{1},\ldots,\ell_{m}),\overline{u}=(u_{1},\ldots ,u_{n}).\]

Case I: $f_{x}(\ell_{1},\ldots,\ell_{m})\not\in L$.

We have 
\[f^{\sharp}(\overline{\ell},g^{\sharp}(u_{1},\ldots,u_{n},\ell))=g^{\sharp}(u_{1},\ldots,u_{n},\ell)\]
and
\[g^{\sharp}(f^{\sharp}(\overline{\ell},u_{1}),\ldots,f^{\sharp}(\overline{\ell},u_{n}),f^{\sharp}(
\overline{\ell},\ell))=g^{\sharp}(u_{1},\ldots,u_{n},\ell).\]

Case II: $f_{x}(\ell_{1},\ldots,\ell_{m})\in L,g_{x}(u_{1},\ldots,u_{n})\not\in L$.

We have 
\[f^{\sharp}(\ell_{1},\ldots,\ell_{m},g^{\sharp}(u_{1},\ldots,u_{n},\ell))=f^{\sharp}(\ell_{1},\ldots,\ell_{n},\ell).\] 

On the other hand,
\[g_{x}(f^{\sharp}(\ell_{1},\ldots,\ell_{m},u_{1}),\ldots,f^{\sharp}(\ell_{1},\ldots,\ell_{m},u_{n}))\not\in L.\]

Therefore,
\[g^{\sharp}(f^{\sharp}(\ell_{1},\ldots,\ell_{m},u_{1}),\ldots ,f^{\sharp}(\ell_{1},\ldots,\ell_{m},u_{n}),f^{\sharp}(\ell_{1},\ldots,\ell_{m},\ell))\]
\[=f^{\sharp}(\ell_{1},\ldots,\ell_{m},\ell)\]

Case III: $f_{x}(\ell_{1},\ldots,\ell_{m})\in L,g_{x}(u_{1},\ldots,u_{n})\in L,\ell=x$.

\[f^{\sharp}(\overline{\ell},g^{\sharp}(u_{1},\ldots,u_{n},x))\]
\[=f^{\sharp}(\overline{\ell},g_{x}(u_{1},\ldots,u_{n}))\]
\[=g^{\sharp}(f^{\sharp}(\overline{\ell},u_{1}),\ldots,f^{\sharp}(\overline{\ell},u_{n}),f_{x}(\ell))\]
\[=g^{\sharp}(f^{\sharp}(\overline{\ell},u_{1}),\ldots,f^{\sharp}(\overline{\ell},u_{n}),f^{\sharp}(\ell,x)).\]

Case IV: $f_{x}(\ell_{1},\ldots,\ell_{m})\in L,g_{x}(u_{1},\ldots,u_{n})\in L,\ell=\mathfrak{h}(v_{1},\ldots,
v_{r})$ for some $\mathfrak{h}\in F$.

\[f^{\sharp}(\overline{\ell},g^{\sharp}(\overline{u},\ell))\]

\[=f^{\sharp}(\overline{\ell},g^{\sharp}(\overline{u},\mathfrak{h}(v_{1},\ldots,v_{r})))\]

\[=f^{\sharp}(\overline{\ell},\mathfrak{h}(g^{\sharp}(\overline{u},v_{1}),\ldots,g^{\sharp}(\overline{u},v_{r})))\]

\[=\mathfrak{h}(f^{\sharp}(\overline{\ell},g^{\sharp}(\overline{u},v_{1})),\ldots,f^{\sharp}(\overline{\ell},g^{\sharp}(\overline{u},v_{r})))\]

\[=\mathfrak{h}(g^{\sharp}(f^{\sharp}(\overline{\ell},u_{1}),\ldots,f^{\sharp}(\overline{\ell},u_{n}),
f^{\sharp}(\overline{\ell},v_{1})),\ldots,g^{\sharp}(f^{\sharp}(\overline{\ell},u_{1}),\ldots,f^{\sharp}(\overline{\ell},u_{n}),
f^{\sharp}(\overline{\ell},v_{r})))\]

\[=g^{\sharp}(f^{\sharp}(\overline{\ell},u_{1}),\ldots,f^{\sharp}(\overline{\ell},u_{n}),\mathfrak{h}(f^{\sharp}(
\overline{\ell},v_{1}),\ldots,f^{\sharp}(\overline{\ell},v_{r})))\]

\[=g^{\sharp}(f^{\sharp}(\overline{\ell},u_{1}),\ldots,f^{\sharp}(\overline{\ell},u_{n}),f^{\sharp}(\overline{\ell},\mathfrak{h}(v_{1},\ldots,v_{r})))\]

\[=g^{\sharp}(f^{\sharp}(\overline{\ell},u_{1}),\ldots,f^{\sharp}(\overline{\ell},u_{n}),f^{\sharp}(\overline{\ell},\ell)).\]

Case V: $f_{x}(\ell_{1},\ldots,\ell_{m})\in L,g_{x}(u_{1},\ldots,u_{n})\in L,\ell=
h_{x}(v_{1},\ldots,v_{r})$ for some $h\in E,x\in X$.

\[f^{\sharp}(\overline{\ell},g^{\sharp}(\overline{u},\ell))
=f^{\sharp}(\overline{\ell},g^{\sharp}(\overline{u},h_{x}(v_{1},\ldots,v_{r})))\]

\[=f^{\sharp}(\overline{\ell},h^{\sharp}(g^{\sharp}(\overline{u},v_{1}),\ldots,g^{\sharp}(\overline{u},v_{r}),g_{x}(\overline{u})))\]

\[=h^{\sharp}(f^{\sharp}(\overline{\ell},g^{\sharp}(\overline{u},v_{1})),\ldots,f^{\sharp}(\overline{\ell},
g^{\sharp}(\overline{u},v_{r})),f^{\sharp}(\overline{\ell},g_{x}(\overline{u})))\]

\[=h^{\sharp}(g^{\sharp}(f^{\sharp}(\overline{\ell},u_{1}),\ldots,f^{\sharp}(\overline{\ell},u_{n}),f^{\sharp}(\overline{\ell},v_{1})),\ldots,\]
\[g^{\sharp}(f^{\sharp}(\overline{\ell},u_{1}),\ldots,f^{\sharp}(\overline{\ell},u_{n}),f^{\sharp}(\overline{\ell},v_{r})),\]
\[g^{\sharp}(f^{\sharp}(\overline{\ell},u_{1}),\ldots,f^{\sharp}(\overline{\ell},u_{n}),f_{x}(\overline{\ell})))\]

\[=g^{\sharp}(f^{\sharp}(\overline{\ell},u_{1}),\ldots,f^{\sharp}(\overline{\ell},u_{n}),
h^{\sharp}(f^{\sharp}(\overline{\ell},v_{1}),\ldots,f^{\sharp}(\overline{\ell},v_{r}),f_{x}(\overline{\ell})))\]

\[=g^{\sharp}(f^{\sharp}(\overline{\ell},u_{1}),\ldots,f^{\sharp}(\overline{\ell},u_{n}),
f^{\sharp}(\overline{\ell},h_{x}(v_{1},\ldots,v_{r})))\]

\[=g^{\sharp}(f^{\sharp}(\overline{\ell},u_{1}),\ldots,f^{\sharp}(\overline{\ell},u_{n}),f^{\sharp}(\overline{\ell},\ell)).\]
\end{proof}
\begin{defn}
Let $(L,(f^{\sharp})_{f\in E},(\mathfrak{g})_{g\in F})$ be a partially pre-endomorphic Laver table.
If each operation $f^{\sharp}$ is endomorphic, then we shall call
$(L,(f^{\sharp})_{f\in E},(\mathfrak{g})_{g\in F})$ a \index{partially endomorphic Laver table}\emph{partially endomorphic Laver table}.
\end{defn}
\begin{rem}
The proof of Theorem \ref{2ijr50o} illuminates why one uses a descending, descending, ascending triple induction in the proof of self-distributivity in Theorem \ref{m43ti0}. In Theorem \ref{2ijr50o}, the directions of induction are more clear since one has no reason to use any other induction except for the induction on the well-founded poset $(\Omega,<)\times(\Omega,<)\times(L,\prec)$ with the lexicographic ordering. The isomorphisms in Example \ref{4uok9et3gnjd} between pre-multigenic Laver tables and some pre-endomorphic Laver tables translate the proof of Theorem \ref{2ijr50o} to a proof of self-distributivity in Theorem \ref{m43ti0} using a descending, descending, ascending triple induction.
\end{rem}
\begin{defn}
Using the notation in Definition \ref{3hqw3f6l8icg}, define a mapping $\Psi:L\rightarrow T_{F}(X)$ by letting
\index{$\Psi:L\rightarrow T_{F}(X)$}
\begin{enumerate}
\item $\Psi(x)=x$,

\item $\Psi(f_{x}(\ell_{1},\ldots,\ell_{n}))=x$, and

\item $\Psi(\mathfrak{g}(\ell_{1},\ldots,\ell_{n}))=\mathfrak{g}(\Psi(\ell_{1}),\ldots,\Psi(\ell_{n}))$
\end{enumerate}
whenever $x\in X,\ell_{1},\ldots,\ell_{n}\in L,f\in E,\mathfrak{g}\in F.$
\end{defn}
\begin{prop}
Let $(L,(g^{\sharp})_{g\in E},F)$ be an endomorphic Laver table.
\begin{enumerate}
\item $\Psi(f^{\sharp}(\ell_{1},\ldots,\ell_{n},\ell))=\Psi(\ell)$ whenever $\ell_{1},\ldots,\ell_{n},\ell\in L$.

\item Suppose $u,v\in L$. Then $\Psi(u)=\Psi(v)$ if and only if
\[L_{f_{1},a_{1,1},\ldots,a_{1,n_{1}}}\circ\ldots\circ L_{f_{r},a_{r,1},\ldots,a_{r,n_{r}}}(u)\]
\[=L_{g_{1},b_{1,1},\ldots,b_{1,m_{1}}}\circ\ldots\circ L_{g_{s},b_{s,1},\ldots,b_{s,m_{s}}}(v)\] for some $f_{1},\ldots,f_{r},g_{1},\ldots,g_{s}\in E$ and some $a_{i,j}\in L$ and $b_{i,j}\in L$ for appropriate $i,j$.
\end{enumerate}
\label{24rouh}
\end{prop}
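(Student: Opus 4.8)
The plan is to prove the two parts in order, relying on the term-decomposition machinery already established. For part (1), the key fact is Proposition \ref{24rouh}'s own predecessor, namely that $f^{\sharp}(\ell_{1},\ldots,\ell_{n},\ell)=f^{-}(\ell_{1},\ldots,\ell_{n},\ell)\otimes(f,\ell_{1},\ldots,\ell_{n})$, which says that $f^{\sharp}(\ell_{1},\ldots,\ell_{n},\ell)$ is obtained from a term $t=f^{-}(\ell_{1},\ldots,\ell_{n},\ell)$ by substituting $f_{x}(\ell_{1},\ldots,\ell_{n})$ for each variable $x$. Applying $\Psi$ and using that $\Psi(f_{x}(\ell_{1},\ldots,\ell_{n}))=x$ and that $\Psi$ is a homomorphism with respect to the function symbols in $F$, we get $\Psi(f^{\sharp}(\ell_{1},\ldots,\ell_{n},\ell))=\Psi(t\otimes(f,\ell_{1},\ldots,\ell_{n}))=t$, where here $t$ is read purely as an element of $\mathbf{T}_{F}(X)$. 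So I must also show $t=\Psi(\ell)$, i.e.\ that the ``shape'' term $f^{-}(\ell_{1},\ldots,\ell_{n},\ell)$, once stripped down through $\Psi$, equals $\Psi(\ell)$. This is a routine induction on the rank of $(f,\ell_{1},\ldots,\ell_{n})$ together with the structure of $\ell$, paralleling the case analysis in the proof of Proposition \ref{t4gh2ui04gthn} (Cases I, II, III); the cleanest route is actually to induct directly on $\ell$, observing that in the recursive clauses for $f^{\sharp}$ the function symbols in $F$ are preserved verbatim while each application of an $E$-symbol $g_{x}$ collapses under $\Psi$ to the variable $x$, exactly matching the three defining clauses of $\Psi$. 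Actually, one can shortcut: once part (1) is in hand for endomorphic Laver tables, it also follows formally that $\Psi$ descends to a well-defined map on the quotient by the congruence generated by the inner endomorphisms, which is the content part (2) makes precise.

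For part (2), the direction $(\Leftarrow)$ is immediate: if $u$ and $v$ become equal after applying a sequence of inner endomorphisms $L_{f,a_{1},\ldots,a_{n}}$, then since $\Psi$ is invariant under each such endomorphism by part (1), we get $\Psi(u)=\Psi(L_{f_{1},\ldots}\circ\cdots(u))=\Psi(L_{g_{1},\ldots}\circ\cdots(v))=\Psi(v)$. For the direction $(\Rightarrow)$, the natural strategy is to show that the relation ``$u\sim v$ iff $u$ and $v$ have a common image under compositions of inner endomorphisms'' is a congruence on $(L,(g^{\sharp})_{g\in E},F)$ whose quotient is isomorphic, via $\Psi$, to $\mathbf{T}_{F}(X)$. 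Concretely: $\sim$ is symmetric and reflexive by construction; transitivity requires combining two chains, which is where one must push chains past each other — given $u\sim w$ via chain $C_1$ and $w\sim v$ via chain $C_2$, one needs to extend $C_1$ applied to $w$ by further endomorphisms to reach a point also reachable from $v$, using self-distributivity (endomorphicity) to commute the $L$'s appropriately, much as associativity of $\circ$ was handled via the braid monoid earlier. That $\sim$ is a congruence with respect to each $g^{\sharp}$ and each $\mathfrak{g}\in F$ uses that $L_{g,a_1,\ldots,a_n}$ are endomorphisms. Finally, $\Psi$ is constant on $\sim$-classes (part 1), $\Psi$ is surjective onto $\mathbf{T}_{F}(X)$ (clear by induction on terms, sending a variable to itself), and $\Psi$ separates distinct $\sim$-classes; the last point is the crux and is proved by showing that if $\Psi(u)=\Psi(v)$ then one can ``align'' $u$ and $v$ by peeling off matching $F$-structure and, at each variable position of the common shape $\Psi(u)$, forcing the differing $E$-decorations to agree after one more application of an inner endomorphism — essentially a normal-form argument.

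I expect the main obstacle to be the $(\Rightarrow)$ direction of part (2), specifically establishing that $\Psi(u)=\Psi(v)$ implies $u$ and $v$ reach a common point under inner endomorphisms. The difficulty is that $u$ and $v$ may have wildly different ``depths'' of nested $E$-applications over the same underlying $F$-skeleton, so one cannot simply match them position by position; instead one has to repeatedly apply a well-chosen inner endomorphism $L_{f,\ldots}$ — which by part (1) does not change $\Psi$ — to normalize both sides, and argue termination by an induction on the combined complexity of $u$ and $v$. Managing this induction so that the endomorphism applied to $u$ and the one applied to $v$ genuinely produce the \emph{same} element (not merely elements with the same $\Psi$) is the delicate point, and it will likely require first isolating a lemma of the form: if $\Psi(u)=x\in X$ is a single variable, then $u=f_{k,x}(\cdots)\circ\cdots\circ f_{1,x}(\cdots)$ applied to $x$ for some stack of $E$-symbols, and any two such stacks can be unified by one further inner endomorphism. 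With that lemma, the general case follows by structural induction on $\Psi(u)$ through the $F$-symbols, and the congruence/quotient packaging then gives the stated equivalence.
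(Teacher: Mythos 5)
Part (1) is essentially sound: the paper itself proves it by a direct induction on $(\mathrm{rank}(f,\ell_{1},\ldots,\ell_{n}),\ell)$ in four cases, and your induction ``paralleling the case analysis in the proof of Proposition \ref{t4gh2ui04gthn}'' is the same thing. (Your shortcut via $f^{-}$ does not actually avoid the induction, since $f^{-}(\ell_{1},\ldots,\ell_{n},\ell)$ can still contain $E$-symbols, so you need the inductive argument anyway to relate it to $\Psi(\ell)$ --- but you note this yourself.) Part (2)$(\Leftarrow)$ is identical to the paper.

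For part (2)$(\Rightarrow)$ there is a genuine gap, and you have correctly identified where the difficulty lies but not how to resolve it. Your plan --- build the relation $\sim$ of ``reaching a common point under iterated inner endomorphisms,'' prove it is a congruence (including a delicate transitivity argument by pushing chains of $L$'s past each other), then show $\Psi$ is injective on $\sim$-classes by an alignment/normalization induction --- is both far heavier than needed and left incomplete at exactly the crucial step (the ``unification lemma'' you gesture at). The paper does something much sharper. Fix a single $(f,\ell_{1},\ldots,\ell_{n})\in\Omega$ that is \emph{minimal among elements of positive rank}, i.e.\ $f_{x}(\ell_{1},\ldots,\ell_{n})\in L$ but any $(g,m_{1},\ldots,m_{k})<(f,\ell_{1},\ldots,\ell_{n})$ in $\Omega$ has rank $0$. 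Then define $\Gamma:T_{F}(X)\to L$ by $\Gamma(x)=f_{x}(\ell_{1},\ldots,\ell_{n})$, $\Gamma(\mathfrak{g}(u_{1},\ldots,u_{m}))=\mathfrak{g}(\Gamma(u_{1}),\ldots,\Gamma(u_{m}))$, and prove by induction on $\ell$ that $f^{\sharp}(\ell_{1},\ldots,\ell_{n},\ell)=\Gamma(\Psi(\ell))$. The only case that requires thought is $\ell=g_{x}(u_{1},\ldots,u_{m})$, and here minimality of $(f,\ell_{1},\ldots,\ell_{n})$ forces $(g,f^{\sharp}(\ell_{1},\ldots,\ell_{n},u_{1}),\ldots,f^{\sharp}(\ell_{1},\ldots,\ell_{n},u_{m}))$ to have rank $0$, so $g_{x}(f^{\sharp}(\ell_{1},\ldots,\ell_{n},u_{1}),\ldots)\notin L$ and the whole subterm collapses to $f_{x}(\ell_{1},\ldots,\ell_{n})=\Gamma(x)=\Gamma(\Psi(\ell))$ in one step. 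Consequently if $\Psi(u)=\Psi(v)$ then the \emph{single} inner endomorphism $L_{f,\ell_{1},\ldots,\ell_{n}}$ already sends $u$ and $v$ to the same element, so one may take $r=s=1$ in the statement; no congruence, no transitivity of chains, and no iterated normalization is needed. This ``flattening'' role of a minimal-positive-rank element of $\Omega$ is the idea your proposal is missing, and without it your sketch does not close.
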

\begin{proof}
\begin{enumerate}
\item We shall prove this result by induction on 
\[((f,\ell_{1},\ldots,\ell_{n}),\ell)\in(\Omega,<)\times(L,\prec)\]
in four cases.

Case I: $f_{x}(\ell_{1},\ldots,\ell_{n})\not\in L$.

Since $f^{\sharp}(\ell_{1},\ldots,\ell_{n},\ell)=\ell$, we have
\[\Psi(f^{\sharp}(\ell_{1},\ldots,\ell_{n},\ell))=\Psi(\ell).\]

Case II: $f_{x}(\ell_{1},\ldots,\ell_{n})\in L,\ell=x$:

We have 
\[\Psi(f^{\sharp}(\ell_{1},\ldots,\ell_{n},\ell))=\Psi(f_{x}(\ell_{1},\ldots,\ell_{n}))=\Psi(x).\]

Case III: $f_{x}(\ell_{1},\ldots,\ell_{n})\in L,\ell=\mathfrak{g}(u_{1},\ldots,u_{m})$

\[\Psi(f^{\sharp}(\ell_{1},\ldots,\ell_{n},\ell))\]
\[=\Psi(\mathfrak{g}(f^{\sharp}(\ell_{1},\ldots,\ell_{n},u_{1}),\ldots,f^{\sharp}(\ell_{1},\ldots,\ell_{n},u_{m})))\]
\[=\mathfrak{g}(\Psi(f^{\sharp}(\ell_{1},\ldots,\ell_{n},u_{1})),\ldots,\Psi(f^{\sharp}(\ell_{1},\ldots,\ell_{n},u_{m})))\]
\[=\mathfrak{g}(\Psi(u_{1}),\ldots,\Psi(u_{m}))=\Psi(\ell)\]

Case IV: $f_{x}(\ell_{1},\ldots,\ell_{n})\in L,\ell=g_{x}(u_{1},\ldots,u_{m})$.

\[\Psi(f^{\sharp}(\ell_{1},\ldots,\ell_{n},\ell))=\Psi(f^{\sharp}(\ell_{1},\ldots,\ell_{n},g_{x}(u_{1},\ldots,u_{m})))\]
\[=\Psi(g^{\sharp}(f^{\sharp}(\ell_{1},\ldots,\ell_{n},u_{1}),\ldots,f^{\sharp}(\ell_{1},\ldots,\ell_{n},u_{m}),
f_{x}(\ell_{1},\ldots,\ell_{n})))\]
\[=\Psi(f_{x}(\ell_{1},\ldots,\ell_{n}))=\Psi(x)=\Psi(g_{x}(u_{1},\ldots,u_{m}))=\Psi(\ell)\]

\item $\leftarrow$. This direction follows from part 1 immediately.

$\rightarrow$ Suppose that $(f,\ell_{1},\ldots,\ell_{n})\in\Omega$ is minimal with respect to the property that
$f_{x}(\ell_{1},\ldots,\ell_{n})\in L$.

Define a mapping $\Gamma:T_{F}(X)\rightarrow L$ by letting $\Gamma(x)=f_{x}(\ell_{1},\ldots,\ell_{n})$ and
\[\Gamma(\mathfrak{g}(u_{1},\ldots,u_{m}))=\mathfrak{g}(\Gamma(u_{1}),\ldots,\Gamma(u_{m})).\]

Then I claim that $f^{\sharp}(\ell_{1},\ldots,\ell_{n},\ell)=\Gamma(\Psi(\ell))$ for all terms $\ell$ and we shall prove this claim by induction on $\ell\in L$.
\begin{enumerate}
\item $f^{\sharp}(\ell_{1},\ldots,\ell_{n},x)=f_{x}(\ell_{1},\ldots,\ell_{n})=\Gamma(x)=\Gamma(\Psi(x)).$

\item Suppose now that $\ell=\mathfrak{g}(u_{1},\ldots,u_{m})$. Then

\[f^{\sharp}(\ell_{1},\ldots,\ell_{n},\ell)=f^{\sharp}(\ell_{1},\ldots,\ell_{n},\mathfrak{g}(u_{1},\ldots,u_{m}))\]
\[=\mathfrak{g}(f^{\sharp}(\ell_{1},\ldots,\ell_{n},u_{1}),\ldots,f^{\sharp}(\ell_{1},\ldots,\ell_{n},u_{m}))\]
\[=\mathfrak{g}(\Gamma(\Psi(u_{1})),\ldots,\Gamma(\Psi(u_{m})))\]
\[=\Gamma(\mathfrak{g}(\Psi(u_{1}),\ldots,\Psi(u_{m})))\]
\[=\Gamma(\Psi(\mathfrak{g}(u_{1},\ldots,u_{m})))=\Gamma(\Psi(\ell)).\]

\item Now let $\ell=g_{x}(u_{1},\ldots,u_{m})$. Then since
\[(g,f^{\sharp}(\ell_{1},\ldots,\ell_{n},u_{1}),\ldots,f^{\sharp}(\ell_{1},\ldots,\ell_{n},u_{m}))<(f,\ell_{1},\ldots,\ell_{n}),\]
we have
\[g_{x}(f^{\sharp}(\ell_{1},\ldots,\ell_{n},u_{1}),\ldots,f^{\sharp}(\ell_{1},\ldots,\ell_{n},u_{m}))\not\in L.\]
Therefore,
\[f^{\sharp}(\ell_{1},\ldots,\ell_{n},\ell)=f^{\sharp}(\ell_{1},\ldots,\ell_{n},g_{x}(u_{1},\ldots,u_{m}))\]
\[=g^{\sharp}(f^{\sharp}(\ell_{1},\ldots,\ell_{n},u_{1}),\ldots,f^{\sharp}(\ell_{1},\ldots,\ell_{n},u_{m}),f_{x}(\ell_{1},\ldots,\ell_{n}))\]
\[=f_{x}(\ell_{1},\ldots,\ell_{n})=\Gamma(x)=\Gamma(\Psi(\ell)).\]

Therefore, if $\Psi(u)=\Psi(v)$, then 
\[f^{\sharp}(\ell_{1},\ldots,\ell_{n},u)=f^{\sharp}(\ell_{1},\ldots,\ell_{n},v).\]
\end{enumerate}
\end{enumerate}
\end{proof}
\begin{prop}
Let $(L,(f^{\sharp})_{f\in E},(\mathfrak{g})_{g\in F})$ be a pre-endomorphic Laver table. Then let $\simeq$ be the equivalence relation on $L$ where $u\simeq v$ if and only if $u,v\in X$ or $u=f_{x}(u_{1},\ldots ,u_{m}),v=f_{y}(u_{1},\ldots ,u_{m})$ for some $u_{1},\ldots ,u_{m}\in L$ and $x,y\in L$. Then whenever $u\simeq v$, we have
\[f^{\sharp}(\ell_{1},\ldots,\ell_{n},u)\simeq f^{\sharp}(\ell_{1},\ldots,\ell_{n},v).\]
\label{34t9o9403}
\end{prop}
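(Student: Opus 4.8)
The plan is to argue by transfinite induction on the ordinal $\alpha=\mathrm{rank}(f,\ell_{1},\ldots,\ell_{n})$, establishing the conclusion simultaneously for every $f\in E$ and all $\ell_{1},\ldots,\ell_{n}\in L$ of that rank; when $L$ is merely locally operable one first restricts to a finite sublanguage $L_{Y,U,V}$, so it suffices to treat the operable case. Since $u\simeq u$ holds trivially I may assume $u\neq v$, and then the description of $\simeq$ leaves exactly two possibilities: either $u=x$ and $v=y$ with $x,y\in X$, or $u=g_{x}(w_{1},\ldots,w_{m})$ and $v=g_{y}(w_{1},\ldots,w_{m})$ for a common $g\in E$, common $w_{1},\ldots,w_{m}\in L$, and $x,y\in X$.

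The base case $\alpha=0$ is immediate: then $f^{\sharp}(\ell_{1},\ldots,\ell_{n},t)=t$ for every $t\in L$, so $f^{\sharp}(\ell_{1},\ldots,\ell_{n},u)=u\simeq v=f^{\sharp}(\ell_{1},\ldots,\ell_{n},v)$. For the inductive step, suppose $\alpha>0$. I would first record that $f_{z}(\ell_{1},\ldots,\ell_{n})\in L$ for every $z\in X$ (by condition (3) in Definition \ref{3hqw3f6l8icg}) and that $Q_{(f,\ell_{1},\ldots,\ell_{n}),\alpha}$ is total, i.e.\ $f^{\sharp}(\ell_{1},\ldots,\ell_{n},-)$ is defined on all of $L$. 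In the variable case, clause (i) of the recursion gives $f^{\sharp}(\ell_{1},\ldots,\ell_{n},x)=f_{x}(\ell_{1},\ldots,\ell_{n})$ and $f^{\sharp}(\ell_{1},\ldots,\ell_{n},y)=f_{y}(\ell_{1},\ldots,\ell_{n})$; these two terms share the function symbol $f$ and all of the arguments $\ell_{1},\ldots,\ell_{n}$ and differ only in the outermost subscript, so they are $\simeq$-related and that case is done.

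For the remaining case $u=g_{x}(w_{1},\ldots,w_{m})$, $v=g_{y}(w_{1},\ldots,w_{m})$, set $a_{i}=f^{\sharp}(\ell_{1},\ldots,\ell_{n},w_{i})$ for $1\leq i\leq m$. Running clause (iii) of the recursion on these two terms, which is legitimate because $Q_{(f,\ell_{1},\ldots,\ell_{n}),\alpha}$ is total, simultaneously shows that $\mathrm{rank}(g,a_{1},\ldots,a_{m})<\alpha$ and that
\[f^{\sharp}(\ell_{1},\ldots,\ell_{n},u)=g^{\sharp}(a_{1},\ldots,a_{m},f_{x}(\ell_{1},\ldots,\ell_{n})),\]
\[f^{\sharp}(\ell_{1},\ldots,\ell_{n},v)=g^{\sharp}(a_{1},\ldots,a_{m},f_{y}(\ell_{1},\ldots,\ell_{n})).\]
Since $f_{x}(\ell_{1},\ldots,\ell_{n})\simeq f_{y}(\ell_{1},\ldots,\ell_{n})$ (again they share the symbol $f\in E$ and all arguments) and the tuple $(g,a_{1},\ldots,a_{m})$ has rank strictly below $\alpha$, the induction hypothesis applied with outer symbol $g$ and parameters $a_{1},\ldots,a_{m}$ gives $g^{\sharp}(a_{1},\ldots,a_{m},f_{x}(\ell_{1},\ldots,\ell_{n}))\simeq g^{\sharp}(a_{1},\ldots,a_{m},f_{y}(\ell_{1},\ldots,\ell_{n}))$, i.e.\ $f^{\sharp}(\ell_{1},\ldots,\ell_{n},u)\simeq f^{\sharp}(\ell_{1},\ldots,\ell_{n},v)$, completing the induction.

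The only point that requires care — and the one I would present explicitly — is that the reduction in the last case genuinely lowers the induction measure: by construction the recursion assigns rank $\alpha$ to $(f,\ell_{1},\ldots,\ell_{n})$ only once every tuple $(g,a_{1},\ldots,a_{m})$ arising in clause (iii) has rank $<\alpha$, so the induction hypothesis is available exactly where it is invoked. Everything else is a direct unwinding of the defining recursion for $f^{\sharp}$ together with the two elementary observations about $\simeq$ used above (that it relates all variables and relates $f_{x}(\ell_{1},\ldots,\ell_{n})$ to $f_{y}(\ell_{1},\ldots,\ell_{n})$).
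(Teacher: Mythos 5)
Your proof is correct and follows essentially the same route as the paper's: induction on $\mathrm{rank}(f,\ell_{1},\ldots,\ell_{n})$, dispatching the rank-$0$ and variable cases directly and handling the $g_{x}/g_{y}$ case by unfolding the recursion to reduce to $g^{\sharp}(\ldots,f_{x}(\overline{\ell}))\simeq g^{\sharp}(\ldots,f_{y}(\overline{\ell}))$ via the inductive hypothesis. The only difference is that you spell out two points the paper leaves tacit — why the inner tuple $(g,a_{1},\ldots,a_{m})$ has strictly smaller rank, and exactly how the $\simeq$-step invokes the induction hypothesis — which is a welcome clarification rather than a deviation.
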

\begin{proof}
We shall prove this result by induction on the rank of $(f,\ell_{1},\ldots,\ell_{n})$. If
$\mathrm{rank}(f,\ell_{1},\ldots,\ell_{n})=0$, then 
$f^{\sharp}(\ell_{1},\ldots,\ell_{n},u)=u\simeq v=f^{\sharp}(\ell_{1},\ldots,\ell_{n},v)$. Now assume that
$\mathrm{rank}(f,\ell_{1},\ldots,\ell_{n})>0$. 

Then either $u=v$ or $u=g_{x}(u_{1},\ldots,u_{m}),v=g_{y}(u_{1},\ldots,u_{m})$ for some $u_{1},\ldots,u_{m}\in L,g\in E$ or
$u=x,v=y$. If $u=v$ or $u=x,v=y$, then it is easy to see that
$f^{\sharp}(\ell_{1},\ldots,\ell_{n},u)\simeq f^{\sharp}(\ell_{1},\ldots,\ell_{n},v)$.

Now assume that $u=g_{x}(u_{1},\ldots,u_{m}),v=g_{y}(u_{1},\ldots,u_{m})$. Then

\[f^{\sharp}(\ell_{1},\ldots,\ell_{n},u)=f^{\sharp}(\ell_{1},\ldots,\ell_{n},g_{x}(u_{1},\ldots,u_{m}))\]
\[=g^{\sharp}(f^{\sharp}(\overline{\ell},u_{1}),\ldots,f^{\sharp}(\overline{\ell},u_{m}),f_{x}(\ell_{1},\ldots,\ell_{n}))\]
\[\simeq g^{\sharp}(f^{\sharp}(\overline{\ell},u_{1}),\ldots,f^{\sharp}(\overline{\ell},u_{m}),f_{y}(\ell_{1},\ldots,\ell_{n}))\]
\[=f^{\sharp}(\ell_{1},\ldots,\ell_{n},g_{x}(u_{1},\ldots,u_{m}))=f^{\sharp}(\ell_{1},\ldots,\ell_{n},u).\]
\end{proof}

We shall now give a correspondence between pre-endomorphic Laver tables of many generators and pre-endomorphic Laver tables
of one generator. The trade off in this correspondence is that the pre-endomorphic Laver tables with one generator have more function
symbols. One obvious advantage of pre-endomorphic Laver tables of one generator is that the notation for the pre-endomorphic Laver tables of one generator is simpler than the notation for the pre-endomorphic Laver tables of multiple generators.

\begin{defn}
Suppose that $E$ is a set of function symbols and $X$ is a set of variables. Let $e$ be a variable with $e\not\in X$.
Let $\mathcal{G}=\{f_{x}\mid f\in E,x\in X\}$ such that each $f_{x}$ is a function symbol where $f_{x}$ and $f$ both have the same arity $n_{f}$. Let $\mathcal{H}=\{f_{x_{1},\ldots,x_{n_{f}}}\mid f\in E,x_{1},\ldots,x_{n_{f}}\in X\}$. 

Define a mapping \index{$\Lambda:X\times\mathbf{T}_{\mathcal{H}}[\{e\}]\rightarrow\mathbf{T}_{\mathcal{G}}[X]$}
$\Lambda:X\times\mathbf{T}_{\mathcal{H}}[\{e\}]\rightarrow\mathbf{T}_{\mathcal{G}}[X]$ by letting
\begin{enumerate}
\item $\Lambda(x,e)=x$ whenever $x\in X$, and

\item $\Lambda(x,f_{x_{1},\ldots,x_{n_{f}}}(u_{1},\ldots,u_{n_{f}}))
=f_{x}(\Lambda(x_{1},u_{1}),\ldots,\Lambda(x_{n_{f}},u_{n_{f}}))$
whenever $f\in E,x_{1},\ldots,x_{n_{f}}\in X,u_{1},\ldots,u_{n_{f}}\in\mathbf{T}_{\mathcal{H}}[\{e\}]$.
\end{enumerate}

The function $\Lambda$ is a bijection. Let 
\[\pi_{1}:X\times\mathbf{T}_{\mathcal{H}}[\{e\}]\rightarrow X,
\pi_{2}:X\times\mathbf{T}_{\mathcal{H}}[\{e\}]\rightarrow\mathbf{T}_{\mathcal{H}}[\{e\}]\]
be the projections. Then
\begin{enumerate}
\item $\Lambda^{-1}(x)=(x,e)$, and

\item $\Lambda^{-1}(f_{x}(v_{1},\ldots,v_{n}))$
\[=(x,f_{\pi_{1}\Lambda^{-1}(v_{1}),\ldots,\pi_{1}\Lambda^{-1}(v_{n})}(\pi_{2}\Lambda^{-1}(v_{1}),\ldots,\pi_{2}\Lambda^{-1}(v_{1})).\]
\end{enumerate}
\end{defn}

\begin{thm}
Let $L\subseteq T_{\mathcal{H}}[\{e\}]$.
\begin{enumerate}
\item $L$ is operable if and only if $\Lambda[X\times L]$ is operable. 

\item If $L$ is operable, then 
\[\mathrm{rank}(f_{x_{1},\ldots,x_{n}},\ell_{1},\ldots,\ell_{n})=\mathrm{rank}(f,\Lambda(x_{1},\ell_{1}),\ldots,\Lambda(x_{n},\ell_{n})).\]
whenever $\ell_{1},\ldots ,\ell_{n},\in L,x_{1},\ldots ,x_{n}\in X$.

\item $L\subseteq T_{\mathcal{H}}[\{e\}]$ is locally operable if and only if $\Lambda[X\times L]$ is locally operable. \label{htyv93h5n5yq54cy3}

\item If $L\subseteq T_{\mathcal{H}}[\{e\}]$ is locally operable, then
\[\Lambda(x,f_{x_{1},\ldots,x_{n}}^{\sharp}(\ell_{1},\ldots,\ell_{n},\ell))
=f^{\sharp}(\Lambda(x_{1},\ell_{1}),\ldots,\Lambda(x_{n},\ell_{n}),\Lambda(x,\ell))\]
whenever $\ell_{1},\ldots ,\ell_{n},\ell\in L,x_{1},\ldots ,x_{n},x\in X$.
\label{htyv93h5n5yq54cy4}

\item If $L\subseteq T_{\mathcal{H}}[\{e\}]$ is locally operable, then $L$ is endomorphic if and only if $\Lambda[X\times L]$ is endomorphic. \label{htyv93h5n5yq54cy5}
\end{enumerate}
\end{thm}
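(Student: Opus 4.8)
The plan is to deduce all five parts of the theorem from the correspondence established by the bijection $\Lambda$, working items in the order they are stated and reducing items 3--5 to the operable case first. The fundamental principle is that $\Lambda$ carries the inductive definition of the rank function and the operations $f^\sharp$ for the one-generator setting $L\subseteq T_{\mathcal{H}}[\{e\}]$ onto the corresponding data for the multi-generator setting on $\Lambda[X\times L]$; once we verify this matching on the level of the $Q$-functions from Definition \ref{3hqw3f6l8icg}, everything else is a translation.

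First I would prove items 1 and 2 simultaneously by transfinite induction on the rank. The key observation is that for a tuple $(f_{x_1,\ldots,x_n},\ell_1,\ldots,\ell_n)$ with $\ell_i\in L$, the term $f_{x_1,\ldots,x_n}(\ell_1,\ldots,\ell_n)$ lies in $L$ precisely when $f_x(\Lambda(x_1,\ell_1),\ldots,\Lambda(x_n,\ell_n))$ lies in $\Lambda[X\times L]$ (this is where hypothesis (3) of Definition \ref{3hqw3f6l8icg}, the independence of membership from the subscript, is used, together with the defining clauses for $\Lambda$ and $\Lambda^{-1}$). Granting this base case, I would show by induction that $Q_{(f_{x_1,\ldots,x_n},\ell_1,\ldots,\ell_n),\alpha}$ is total if and only if $Q_{(f,\Lambda(x_1,\ell_1),\ldots,\Lambda(x_n,\ell_n)),\alpha}$ is total, and that the two $Q$-functions correspond under $\Lambda(x,-)$ for each fixed choice of the ``active'' variable $x$; the three clauses (i)--(iii) in the definition of $Q$ match the three clauses for $\Lambda$ once one checks that composition of inner functions corresponds (clause (iii), the nested-$f_x$ case, is the one that invokes the induction hypothesis on strictly smaller rank, and it is exactly here that item 2's rank equality is needed to keep the induction going). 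Taking the least $\alpha$ making $Q$ total on each side then gives both the operability equivalence (item 1) and the rank equality (item 2).

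Next, item 3 follows formally from item 1 applied to the finite pieces: $\Lambda$ restricts to a bijection between $\{e\}$-based terms over a finite sublanguage and $X'$-based terms over the corresponding finite sublanguage, so $L_{Y,U,V}$ on one side corresponds to a finitely-generated piece of $\Lambda[X\times L]$ on the other, and local operability is the assertion that all such finite pieces are operable. Item 4 is then the translation of the operations: since the $Q$-functions correspond and $f^\sharp$ (resp.\ $f^\sharp_{x_1,\ldots,x_n}$) is defined as the appropriate $Q$-function at the rank ordinal, item 2 (equality of ranks) plus the $Q$-correspondence immediately gives $\Lambda(x,f_{x_1,\ldots,x_n}^\sharp(\ell_1,\ldots,\ell_n,\ell))=f^\sharp(\Lambda(x_1,\ell_1),\ldots,\Lambda(x_n,\ell_n),\Lambda(x,\ell))$, first in the operable case and then, by restricting to finite pieces, in the locally operable case.

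Finally, item 5 follows from item 4 combined with the characterization of endomorphicity in Theorem \ref{2ijr50o}: a partially pre-endomorphic Laver table is endomorphic iff $\mathrm{rank}(g,u_1,\ldots,u_n)=0$ implies $\mathrm{rank}(g,f^\sharp(\ldots,u_1),\ldots,f^\sharp(\ldots,u_n))=0$. Since item 2 transfers the ranks of all relevant tuples and item 4 transfers the values $f^\sharp(\ldots,u_i)$, the Laver--Steel-style condition holds on one side iff it holds on the other. I expect the main obstacle to be the bookkeeping in the simultaneous induction for items 1 and 2: one must be careful that the induction is genuinely on the well-founded structure $(\Omega,<)$ (or equivalently on rank), that the ``active variable'' $x$ is carried correctly through clause (iii)/the nested case, and that the case where $f_x(\ell_1,\ldots,\ell_n)\notin L$ (rank $0$) is handled uniformly with the positive-rank case. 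Everything after that is a mechanical unwinding of definitions, with Theorem \ref{2ijr50o} doing the real work for item 5.
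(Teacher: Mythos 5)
Your plan is correct in substance and would work, but it takes a different organizational route from the paper. You propose a direct simultaneous transfinite induction on the $Q$-functions from Definition~\ref{3hqw3f6l8icg}, carrying the construction along $\Lambda(x,-)$ ordinal by ordinal, and extracting items~1, 2, and 4 from the same induction. The paper instead runs two ``developments'': assuming one side is operable, it \emph{constructs} candidate operations $f^{\bullet}$ and a rank bound $\nabla$ on the other side by transport along $\Lambda$, verifies the five hypotheses (i)--(v) of the uniqueness/existence lemma stated just after Definition~\ref{3hqw3f6l8icg}, and then concludes $f^{\bullet}=f^{\sharp}$ and the rank inequality in one stroke; the rank \emph{equality} of item~2 then emerges from pairing the two symmetric developments. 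Your approach buys a tighter, more self-contained induction that exhibits the $Q$-correspondence directly, at the cost of having to fold items~1, 2 and 4 into a single simultaneous induction (since the clause-(iii) step of $Q$ on the multi-generator side produces $f^{\sharp}$-values whose $\Lambda$-form is exactly item~4 at lower rank). The paper's route buys reuse of a general-purpose lemma and avoids carrying the operation-correspondence through the rank induction explicitly; in exchange it needs Propositions~\ref{24rouh} and \ref{34t9o9403} ($\Psi(\Lambda(x,u))=x$ and $u=v\Leftrightarrow\Lambda(x,u)\simeq\Lambda(y,v)$) up front to guarantee $f^{\bullet}_{x_1,\ldots,x_n}$ is well-defined independently of the auxiliary ``active'' variable, a point your sketch passes over and which you would need to replace with the observation that your induction tracks the $\Lambda$-form of every $Q$-value. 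Your handling of items~3 and 5 (restriction to finite sublanguages, and Theorem~\ref{2ijr50o} respectively) matches the paper's and is fine; the paper verifies the endomorphic identity directly in part (vi) rather than quoting Theorem~\ref{2ijr50o}, but both amount to the same computation.
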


\begin{proof}
For simplicity, we shall omit the proof of \ref{htyv93h5n5yq54cy3} and we shall only prove \ref{htyv93h5n5yq54cy4},\ref{htyv93h5n5yq54cy5} in the case that the algebra $L$ is operable.

Clearly if $e\in L$, then $x=\Lambda(x,e)\in\Lambda[X\times L]$ for each $x\in X$. Therefore, $e\in L$ if and only if $X\subseteq\Lambda[X\times L]$. 

Claim: $L$ is downwards closed under $\preceq$ if and only if $\Lambda[X\times L]$ is downwards closed under $\preceq$.

$\rightarrow$ Suppose that $L$ is closed under $\preceq$. Now assume that
$f_{x}(v_{1},\ldots,v_{n})\in\Lambda[X\times L]$ and $v_{j}=\Lambda(x_{j},u_{j})$ for $1\leq j\leq n$. Then $\Lambda^{-1}(f_{x}(v_{1},\ldots,v_{n}))\in X\times L$, so
\[\Lambda^{-1}f_{x}(v_{1},\ldots,v_{n})=\Lambda^{-1}f_{x}(\Lambda(x_{1},u_{1}),\ldots,\Lambda(x_{n},u_{n}))\]
\[=(x,f_{x_{1},\ldots,x_{n}}(u_{1},\ldots,u_{n})).\]
Therefore, 
\[f_{x_{1},\ldots,x_{n}}(u_{1},\ldots,u_{n})\in L,\]
hence $u_{i}\in L$ for $1\leq i\leq n$. Thus, $(x_{i},u_{i})\in X\times L$, so
\[v_{i}=\Lambda(x_{i},u_{i})\in\Lambda[X\times L]\]
for $1\leq i\leq n$. Therefore $\Lambda[X\times L]$ is downwards closed with respect to $\preceq$.

$\leftarrow$ Suppose that $\Lambda[X\times L]$ is downwards closed under $\preceq$.
Let 
\[f_{x_{1},\ldots,x_{n}}(u_{1},\ldots,u_{n})\in L.\]
Then
\[f_{x}(\Lambda(x_{1},u_{1}),\ldots,\Lambda(x_{n},u_{n}))=\Lambda(x,f_{x_{1},\ldots,x_{n}}(u_{1},\ldots,u_{n}))\in\Lambda[X\times L],\]
so
$\Lambda(x_{i},u_{i})\in\Lambda[X\times L]$, hence $u_{i}\in L$ for $1\leq i\leq n$.

Development 1: Suppose that $\Lambda[X\times L]$ is operable. Then take note that
$u=v$ if and only if $\Lambda(x,u)\simeq\Lambda(y,v)$ where $\simeq$ is the equivalence relation described in Proposition \ref{34t9o9403} and $x,y\in X$.
Furthermore, $\Psi(\Lambda(x,u))=x$ where $\Psi$ is the operation in Proposition \ref{24rouh}. Suppose now that $\ell_{1},\ldots,\ell_{n},\ell\in L$ and $f_{x_{1},\ldots,x_{n}}\in\mathcal{H}$. Then by Proposition \ref{24rouh}, we have
\[f^{\sharp}(\Lambda(x_{1},\ell_{1}),\ldots,\Lambda(x_{n},\ell_{n}),\Lambda(x,\ell))=\Lambda(x,u)\]
for some $u$.

Furthermore, if 
\[f^{\sharp}(\Lambda(x_{1},\ell_{1}),\ldots,\Lambda(x_{n},\ell_{n}),\Lambda(y,\ell))=\Lambda(y,v),\]
then since
$\Lambda(x,\ell)\simeq\Lambda(y,\ell)$, by Proposition \ref{34t9o9403}
we have $\Lambda(x,u)\simeq\Lambda(y,v)$. Therefore, $u=v$.

Therefore, define a function $f^{\bullet}_{x_{1},\ldots,x_{n}}:L^{n+1}\rightarrow L$ by letting
\[\Lambda(x,f^{\bullet}_{x_{1},\ldots,x_{n}}(\ell_{1},\ldots,\ell_{n},\ell))=f^{\sharp}(\Lambda(x_{1},\ell_{1}),\ldots,\Lambda(x_{n},\ell_{n}),
\Lambda(x,\ell)).\]

Now define a function $\nabla:\Gamma(L)\rightarrow\mathbf{On}$ by letting
\[\nabla(f_{x_{1},\ldots,x_{n}},\ell_{1},\ldots,\ell_{n})=\mathrm{rank}(f,\Lambda(x_{1},\ell_{1}),\ldots,\Lambda(x_{n},\ell_{n})).\]

i. \[\nabla(f_{x_{1},\ldots,x_{n}},\ell_{1},\ldots,\ell_{n})=0\] iff
\[f_{x}(\Lambda(x_{1},\ell_{1}),\ldots,\Lambda(x_{n},\ell_{n}))\not\in\Lambda[X\times L]\] iff
\[\Lambda(x,f_{x_{1},\ldots,x_{n}}(\ell_{1},\ldots,\ell_{n}))\not\in\Lambda[X\times L]\] iff
\[f_{x_{1},\ldots,x_{n}}(\ell_{1},\ldots,\ell_{n})\not\in L.\]

ii. If $f_{x_{1},\ldots,x_{n}}(\ell_{1},\ldots,\ell_{n})\not\in L$, then
\[f_{x}(\Lambda(x_{1},\ell_{1}),\ldots,\Lambda(x_{n},\ell_{n}))\not\in\Lambda[X\times L],\]
so
\[\Lambda(x,\ell)=f^{\sharp}(\Lambda(x_{1},\ell_{1}),\ldots,\Lambda(x_{n},\ell_{n}),\Lambda(x,\ell))\]
\[=\Lambda(x,f_{x_{1},\ldots,x_{n}}^{\bullet}(\ell_{1},\ldots,\ell_{n},\ell)).\]
Therefore, 
\[f_{x_{1},\ldots,x_{n}}^{\bullet}(\ell_{1},\ldots,\ell_{n},\ell)=\ell.\]

iii. I now claim that
\[f^{\bullet}_{x_{1},\ldots,x_{n}}(\ell_{1},\ldots,\ell_{n},e)=f_{x_{1},\ldots,x_{n}}(\ell_{1},\ldots,\ell_{n})\]
whenever $f_{x_{1},\ldots,x_{n}}(\ell_{1},\ldots,\ell_{n})\in L$.

We have
\[\Lambda(x,f_{x_{1},\ldots,x_{n}}^{\bullet}(\ell_{1},\ldots,\ell_{n},e))\]
\[=f^{\sharp}(\Lambda(x_{1},\ell_{1}),\ldots,\Lambda(x_{n},\ell_{n}),\Lambda(x,e))\]
\[=f^{\sharp}(\Lambda(x_{1},\ell_{1}),\ldots,\Lambda(x_{n},\ell_{n}),x)\]
\[=f_{x}(\Lambda(x_{1},\ell_{1}),\ldots,\Lambda(x_{n},\ell_{n}),x)\]
\[=\Lambda(x,f_{x_{1},\ldots,x_{n}}(\ell_{1},\ldots,\ell_{n})).\]
Therefore,
\[f_{x_{1},\ldots,x_{n}}^{\bullet}(\ell_{1},\ldots,\ell_{n},e)=f_{x_{1},\ldots,x_{n}}(\ell_{1},\ldots,\ell_{n}).\]

iv. Suppose now that $f_{x_{1},\ldots,x_{n}}\in\mathcal{H},\ell_{1},\ldots,\ell_{n},\in L$ and
$g_{y_{1},\ldots,y_{m}}(u_{1},\ldots,u_{m})\in L$, and $f_{x_{1},\ldots,x_{n}}(\ell_{1},\ldots,\ell_{n})\in L$.
Then I claim that
\[f_{x_{1},\ldots,x_{n}}^{\bullet}(\ell_{1},\ldots,\ell_{n},g_{y_{1},\ldots,y_{m}}(u_{1},\ldots,u_{m}))\]

\[=g_{y_{1},\ldots,y_{m}}^{\bullet}(f_{x_{1},\ldots,x_{n}}^{\bullet}(\ell_{1},\ldots,\ell_{n},u_{1}),\ldots,\]
\[f_{x_{1},\ldots,x_{n}}^{\bullet}(\ell_{1},\ldots,\ell_{n},u_{m}),f_{x_{1},\ldots,x_{n}}(\ell_{1},\ldots,\ell_{n})).\]
We have

\[\Lambda(x,f_{x_{1},\ldots,x_{n}}^{\bullet}(\ell_{1},\ldots,\ell_{n},g_{y_{1},\ldots,y_{m}}(u_{1},\ldots,u_{m})))\]

\[=f^{\sharp}(\Lambda(x_{1},\ell_{1}),\ldots,\Lambda(x_{n},\ell_{n}),\Lambda(x,g_{y_{1},\ldots,y_{m}}(u_{1},\ldots,u_{m})))\]

\[=f^{\sharp}(\Lambda(x_{1},\ell_{1}),\ldots,\Lambda(x_{n},\ell_{n}),g_{x}(\Lambda(y_{1},u_{1}),\ldots,\Lambda(y_{m},u_{m})))\]

\[=g^{\sharp}(f^{\sharp}(\Lambda(x_{1},\ell_{1}),\ldots,\Lambda(x_{n},\ell_{n}),\Lambda(y_{1},u_{1})),\ldots,\]
\[f^{\sharp}(\Lambda(x_{1},\ell_{1}),\ldots,\Lambda(x_{n},\ell_{n}),\Lambda(y_{m},u_{m})),f_{x}(\Lambda(x_{1},\ell_{1}),\ldots,\Lambda(x_{n},\ell_{n}))\]

\[=g^{\sharp}(\Lambda(y_{1},f_{x_{1},\ldots,x_{n}}^{\bullet}(\ell_{1},\ldots,\ell_{n},u_{1})),\ldots,
\Lambda(y_{m},f_{x_{1},\ldots,x_{n}}^{\bullet}(\ell_{1},\ldots,\ell_{n},u_{m})),\]
\[\Lambda(x,f_{x_{1},\ldots,x_{n}}(\ell_{1},\ldots,\ell_{n})))\]

\[=\Lambda(x,g_{y_{1},\ldots,y_{m}}^{\bullet}(f_{x_{1},\ldots,x_{n}}^{\bullet}(\ell_{1},\ldots,\ell_{n},u_{1}),\ldots,f_{x_{1},\ldots,x_{n}}^{\bullet}(\ell_{1},\ldots,\ell_{n},u_{m}),\]
\[f_{x_{1},\ldots,x_{n}}(\ell_{1},\ldots,\ell_{n}))).\]
Therefore,
\[f_{x_{1},\ldots,x_{n}}^{\bullet}(\ell_{1},\ldots,\ell_{n},g_{y_{1},\ldots,y_{m}}(u_{1},\ldots,u_{m}))\]

\[=g_{y_{1},\ldots,y_{m}}^{\bullet}(f_{x_{1},\ldots,x_{n}}^{\bullet}(\ell_{1},\ldots,\ell_{n},u_{1}),\ldots,f_{x_{1},\ldots,x_{n}}^{\bullet}(\ell_{1},\ldots,\ell_{n},u_{m}),f_{x_{1},\ldots,x_{n}}(\ell_{1},\ldots,\ell_{n})).\]

v. I claim that
\[\nabla(f_{x_{1},\ldots,x_{n}},\ell_{1},\ldots,\ell_{n})\]
\[>\nabla(g_{y_{1},\ldots,y_{m}},f_{x_{1},\ldots,x_{n}}^{\bullet}(\ell_{1},\ldots,\ell_{n},u_{1}),\ldots,f_{x_{1},\ldots,x_{n}}^{\bullet}(\ell_{1},\ldots,\ell_{n},u_{m})).\]
We have
\[\nabla(f_{x_{1},\ldots,x_{n}},\ell_{1},\ldots,\ell_{n})\]

\[=\mathrm{rank}(f,\Lambda(x_{1},\ell_{1}),\ldots,\Lambda(x_{n},\ell_{n}))\]

\[>\mathrm{rank}(g,f^{\sharp}(\Lambda(x_{1},\ell_{1}),\ldots,\Lambda(x_{n},\ell_{n}),\Lambda(y_{1},u_{1})),\ldots,\]
\[f^{\sharp}(\Lambda(x_{1},\ell_{1}),\ldots,\Lambda(x_{n},\ell_{n}),\Lambda(y_{m},u_{m})))\]

\[=\mathrm{rank}(g,\Lambda(y_{1},f_{x_{1},\ldots,x_{n}}^{\bullet}(\ell_{1},\ldots,\ell_{n},u_{1})),\ldots,\]
\[\Lambda(y_{m},f_{x_{1},\ldots,x_{n}}^{\bullet}(\ell_{1},\ldots,\ell_{n},u_{m})))\]

\[=\nabla(g_{y_{1},\ldots,y_{m}},f_{x_{1},\ldots,x_{n}}^{\bullet}(\ell_{1},\ldots,
\ell_{n},u_{1}),\ldots,\]
\[f_{x_{1},\ldots,x_{n}}^{\bullet}(\ell_{1},\ldots,\ell_{n},u_{m})).\]

Therefore, from these facts that we just have proven, we conclude that $L$ is operable and $(L,(f_{x_{1},\dots,x_{n_{f}}}^{\bullet})_{f\in E,x_{1},\dots,x_{n_{f}}\in X})$ is an pre-endomorphic Laver table and
\[\mathrm{rank}(f^{\bullet}_{x_{1},\ldots,x_{n}},\ell_{1},\ldots,\ell_{n})\leq\nabla(f^{\bullet}_{x_{1},\ldots,x_{n}},\ell_{1},\ldots,\ell_{n})\]
\[=\mathrm{rank}(f,\Lambda(x_{1},\ell_{1}),\ldots,\Lambda(x_{n},\ell_{n})).\]

vi. I now claim that if the algebra $\Lambda[X\times L]$ is endomorphic, then so is the algebra $L$.

Suppose therefore that $\Lambda[X\times L]$ is endomorphic. Suppose that $f_{x_{1},\ldots,x_{n}},g_{y_{1},\ldots,y_{m}}\in\mathcal{H},x\in X,\ell_{1},\ldots,\ell_{n},u_{1},\ldots,u_{m},u\in L$.
Then
\[\Lambda(x,f_{x_{1},\ldots,x_{n}}^{\bullet}(\ell_{1},\ldots,\ell_{n},g_{y_{1},\ldots,y_{m}}^{\bullet}(u_{1},\ldots,u_{m},u)))\]

\[=f^{\sharp}(\Lambda(x_{1},\ell_{1}),\ldots,\Lambda(x_{n},\ell_{n}),\Lambda(x,g_{y_{1},\ldots,y_{m}}^{\bullet}(u_{1},\ldots,u_{m},u)))\]

\[=f^{\sharp}(\Lambda(x_{1},\ell_{1}),\ldots,\Lambda(x_{n},\ell_{n}),g^{\sharp}(\Lambda(y_{1},u_{1}),\ldots,\Lambda(y_{m},u_{m}),\Lambda(x,u)))\]

\[=g^{\sharp}(f^{\sharp}(\Lambda(x_{1},\ell_{1}),\ldots,\Lambda(x_{n},\ell_{n}),\Lambda(y_{1},u_{1})),\ldots,\]
\[f^{\sharp}(\Lambda(x_{1},\ell_{1}),\ldots,\Lambda(x_{n},\ell_{n}),\Lambda(y_{m},u_{m})),
f^{\sharp}(\Lambda(x_{1},\ell_{1}),\ldots,\Lambda(x_{n},\ell_{n}),\Lambda(x,u)))\]

\[=g^{\sharp}(\Lambda(y_{1},f_{x_{1},\ldots,x_{n}}^{\bullet}(\ell_{1},\ldots,\ell_{n},u_{1})),\ldots,\]
\[\Lambda(y_{m},f_{x_{1},\ldots,x_{n}}^{\bullet}(\ell_{1},\ldots,\ell_{n},u_{m})),
\Lambda(x,f_{x_{1},\ldots,x_{n}}^{\bullet}(\ell_{1},\ldots,\ell_{n},u))\]

\[=\Lambda(x,g_{y_{1},\ldots,y_{m}}^{\bullet}(f_{x_{1},\ldots,x_{n}}^{\bullet}(\ell_{1},\ldots,\ell_{n},u_{1}),\ldots,\]
\[f_{x_{1},\ldots,x_{n}}^{\bullet}(\ell_{1},\ldots,\ell_{n},u_{m}),f_{x_{1},\ldots,x_{n}}^{\bullet}(\ell_{1},\ldots,\ell_{n},u)).\]
Therefore,
\[f_{x_{1},\ldots,x_{n}}^{\bullet}(\ell_{1},\ldots,\ell_{n},g_{y_{1},\ldots,y_{m}}^{\bullet}(u_{1},\ldots,u_{m},u)))\]

\[=g_{y_{1},\ldots,y_{m}}^{\bullet}(f_{x_{1},\ldots,x_{n}}^{\bullet}(\ell_{1},\ldots,\ell_{n},u_{1}),\ldots,\]
\[f_{x_{1},\ldots,x_{n}}^{\bullet}(\ell_{1},\ldots,\ell_{n},u_{m}),f_{x_{1},\ldots,x_{n}}^{\bullet}(\ell_{1},\ldots,\ell_{n},u)).\]

Development 2: Suppose now that $L$ is operable. Then Let $M=\Lambda[X\times L]$. Then
define a function $\nabla:\Gamma(M)\rightarrow\mathbf{On}$ by letting
\[\nabla(f,p_{1},\ldots,p_{n})\]
\[=\mathrm{rank}(f_{\pi_{1}\Lambda^{-1}(p_{1}),\ldots,\pi_{1}\Lambda^{-1}(p_{n})},\pi_{2}\Lambda^{-1}(p_{1}),\ldots,\pi_{2}\Lambda^{-1}(p_{n}))\]
\[=\mathrm{rank}(f_{\pi_{1}\Lambda^{-1}(\overline{p})},\pi_{2}\Lambda^{-1}(\overline{p}))\]
whenever $f\in E,\ell_{1},\ldots,\ell_{n}\in M$.

The definition of $\nabla$ is equivalent to saying that
\[\nabla(f,\Lambda(x_{1},\ell_{1}),\ldots,\Lambda(x_{n},\ell_{n}))
=\mathrm{rank}(f_{x_{1},\ldots,x_{n}},\ell_{1},\ldots,\ell_{n})\]
whenever $x_{1},\ldots,x_{n}\in X,\ell_{1},\ldots,\ell_{n}\in L,f\in E$.

Define
\[f^{\bullet}(\overline{p},p)\]
\[=\Lambda(\pi_{1}\Lambda^{-1}(p),f^{\sharp}_{\pi_{1}\Lambda^{-1}(\overline{p})}(\pi_{2}\Lambda^{-1}(\overline{p}),\pi_{2}\Lambda^{-1}(p))).\]

Said differently,
\[f^{\bullet}(\Lambda(x_{1},\ell_{1}),\ldots,\Lambda(x_{n},\ell_{n}),\Lambda(x,\ell))\]
\[=\Lambda(x,f_{x_{1},\ldots,x_{n}}^{\sharp}(\ell_{1},\ldots,\ell_{n},\ell).\]

i. I claim that $\nabla(f,\overline{p})=0$ if and only if $f_{x}(\overline{p})\not\in M$.

\[\nabla(f,\overline{p})=0\]
if and only if
\[\mathrm{rank}(f_{\pi_{1}\Lambda^{-1}(\overline{p})},\pi_{2}\Lambda^{-1}(\overline{p}))=0\]
if and only if
\[f_{\pi_{1}\Lambda^{-1}(\overline{p})}(\pi_{2}\Lambda^{-1}(\overline{p}))\not\in L\]
if and only if
\[(x,f_{\pi_{1}\Lambda^{-1}(\overline{p})}(\pi_{2}\Lambda^{-1}(\overline{p})))\not\in X\times L\]
if and only if
\[\Lambda(x,f_{\pi_{1}\Lambda^{-1}(\overline{p})}(\pi_{2}\Lambda^{-1}(\overline{p})))\not\in\Lambda[X\times L]=M\]
if and only if
\[f_{x}(\overline{p})\not\in M.\]

ii. I claim that $f^{\bullet}(\overline{p},p)=p$ whenever $f_{x}(\overline{p})\not\in M$.
We have
\[f^{\bullet}(\overline{p},p)
=\Lambda(\pi_{1}\Lambda^{-1}(p),f^{\sharp}_{\pi_{1}\Lambda^{-1}(\overline{p})}(\pi_{2}\Lambda^{-1}(\overline{p}),\pi_{2}\Lambda^{-1}(p)))\]
\[=\Lambda(\pi_{1}\Lambda^{-1}(p),\pi_{2}\Lambda^{-1}(p))=p.\]

iii. I claim that if $f_{x}(\overline{p})\in M$, then
\[f^{\bullet}(\overline{p},x)=f_{x}(\overline{p}).\]

Recall that $\Lambda^{-1}(x)=(x,e)$. Therefore,
\[f^{\bullet}(\overline{p},x)
=\Lambda(\pi_{1}\Lambda^{-1}(x),f_{\pi_{1}\Lambda^{-1}(\overline{p})}^{\sharp}(\pi_{2}\Lambda^{-1}(\overline{p}),\pi_{2}\Lambda^{-1}(x)))\]
\[=\Lambda(x,f_{\pi_{1}\Lambda^{-1}(\overline{p})}^{\sharp}(\pi_{2}\Lambda^{-1}(\overline{p}),e))
=\Lambda(x,f_{\pi_{1}\Lambda^{-1}(\overline{p})}(\pi_{2}\Lambda^{-1}(\overline{p})))
=f_{x}(\overline{p}).\]

iv. I claim that
\[f^{\bullet}(\Lambda(x_{1},\ell_{1}),\ldots,\Lambda(x_{n},\ell_{n}),g_{x}(\Lambda(y_{1},u_{1}),\ldots,\Lambda(y_{m},u_{m})))\]
\[=g^{\bullet}(f^{\bullet}(\Lambda(x_{1},\ell_{1}),\ldots,\Lambda(x_{n},\ell_{n}),\Lambda(y_{1},u_{1})),\ldots,\]
\[f^{\bullet}(\Lambda(x_{1},\ell_{1}),\ldots,\Lambda(x_{n},\ell_{n}),\Lambda(y_{m},u_{m})),
f_{x}(\Lambda(x_{1},\ell_{1}),\ldots,\Lambda(x_{n},\ell_{n}))).\]
We have
\[f^{\bullet}(\Lambda(x_{1},\ell_{1}),\ldots,\Lambda(x_{n},\ell_{n}),g_{x}(\Lambda(y_{1},u_{1}),\ldots,\Lambda(y_{m},u_{m})))\]
\[=f^{\bullet}(\Lambda(x_{1},\ell_{1}),\ldots,\Lambda(x_{n},\ell_{n}),\Lambda(x,g_{y_{1},\ldots,y_{m}}(u_{1},\ldots,u_{m})))\]
\[=\Lambda(x,f_{x_{1},\ldots,x_{n}}^{\sharp}(\ell_{1},\ldots,\ell_{n},g_{y_{1},\ldots,y_{m}}(u_{1},\ldots,u_{m})))\]
\[=\Lambda(x,g_{y_{1},\ldots,y_{n}}^{\sharp}(f_{x_{1},\ldots,x_{n}}^{\sharp}(\ell_{1},\ldots,\ell_{n},u_{1}),\ldots,\]
\[f_{x_{1},\ldots,x_{n}}^{\sharp}(\ell_{1},\ldots,\ell_{n},u_{m}),f_{x_{1},\ldots,x_{n}}(\ell_{1},\ldots,\ell_{n})))\]
\[=g^{\bullet}(\Lambda(y_{1},f_{x_{1},\ldots,x_{n}}^{\sharp}(\ell_{1},\ldots,\ell_{n},u_{1})),\ldots,\]
\[\Lambda(y_{m},f_{x_{1},\ldots,x_{n}}^{\sharp}(\ell_{1},\ldots,\ell_{n},u_{m})),\Lambda(x,f_{x_{1},\ldots,x_{n}}(\ell_{1},\ldots,\ell_{n})))\]
\[=g^{\bullet}(f^{\bullet}(\Lambda(x_{1},\ell_{1}),\ldots,\Lambda(x_{n},\ell_{n}),\Lambda(y_{1},u_{1})),\ldots,\]
\[f^{\bullet}(\Lambda(x_{1},\ell_{1}),\ldots,\Lambda(x_{n},\ell_{n}),\Lambda(y_{m},u_{m})),f_{x}(\Lambda(x_{1},\ell_{1}),\ldots,\Lambda(x_{n},\ell_{n}))).\]

v. I claim that if $\nabla(f,\Lambda(x_{1},\ell_{1}),\ldots,\Lambda(x_{n},\ell_{n}))>0$, then
\[\nabla(f,\Lambda(x_{1},\ell_{1}),\ldots,\Lambda(x_{n},\ell_{n}))\]
\[>\nabla(g,f^{\bullet}(\Lambda(x_{1},\ell_{1}),\ldots,\Lambda(x_{n},\ell_{n}),\Lambda(y_{1},u_{1})),\ldots,\]
\[f^{\bullet}(\Lambda(x_{1},\ell_{1}),\ldots,\Lambda(x_{n},\ell_{n}),\Lambda(y_{m},u_{m}))).\]
We have 
\[\nabla(f,\Lambda(x_{1},\ell_{1}),\ldots,\Lambda(x_{n},\ell_{n}))\]
\[=\mathrm{rank}(f_{x_{1},\ldots,x_{n}},\ell_{1},\ldots,\ell_{n})\]
\[>\mathrm{rank}(g_{y_{1},\ldots,y_{m}},f_{x_{1},\ldots,x_{n}}^{\sharp}(\ell_{1},\ldots,\ell_{n},u_{1}),\ldots,
f_{x_{1},\ldots,x_{n}}^{\sharp}(\ell_{1},\ldots,\ell_{n},u_{m}))\]
\[=\nabla(g,\nabla(y_{1},f_{x_{1},\ldots,x_{n}}^{\sharp}(\ell_{1},\ldots,\ell_{n},u_{1}),\ldots,
\nabla(y_{1},f_{x_{1},\ldots,x_{n}}^{\sharp}(\ell_{1},\ldots,\ell_{n},u_{m})))\]
\[=\nabla(g,f^{\bullet}(\Lambda(x_{1},\ell_{1}),\ldots,\Lambda(x_{n},\ell_{n}),\Lambda(y_{1},u_{1})),\ldots,\]
\[f^{\bullet}(\Lambda(x_{1},\ell_{1}),\ldots,\Lambda(x_{n},\ell_{n}),\Lambda(y_{m},u_{m}))).\]

vi. I claim that if $L$ is operable, then so is $M$. Therefore, suppose that $L$ is operable. Then

\[f^{\bullet}(\Lambda(x_{1},\ell_{1}),\ldots,\Lambda(x_{n},\ell_{n}),g^{\bullet}(\Lambda(y_{1},u_{1}),\ldots,
\Lambda(y_{m},u_{m}),\Lambda(y,u)))\]
\[=f^{\bullet}(\Lambda(x_{1},\ell_{1}),\ldots,\Lambda(x_{n},\ell_{n}),\Lambda(y,g_{y_{1},\ldots,y_{m}}^{\sharp}(u_{1},\ldots,u_{m},u)))\]
\[=\Lambda(y,f_{x_{1},\ldots,x_{n}}^{\sharp}(\ell_{1},\ldots,\ell_{n},g_{y_{1},\ldots,y_{m}}^{\sharp}(u_{1},\ldots,u_{m},u)))\]
\[=\Lambda(y,g_{y_{1},\ldots,y_{m}}^{\sharp}(f_{x_{1},\ldots,x_{n}}^{\sharp}(\ell_{1},\ldots,\ell_{n},u_{1}),\ldots,\]
\[f_{x_{1},\ldots,x_{n}}^{\sharp}(\ell_{1},\ldots,\ell_{n},u_{m}),f_{x_{1},\ldots,x_{n}}^{\sharp}(\ell_{1},\ldots,\ell_{n},u)))\]
\[=g^{\bullet}(\Lambda(y_{1},f_{x_{1},\ldots,x_{n}}^{\sharp}(\ell_{1},\ldots,\ell_{n},u_{1})),\ldots,\]
\[\Lambda(y_{m},f_{x_{1},\ldots,x_{n}}^{\sharp}(\ell_{1},\ldots,\ell_{n},u_{m}),\Lambda(y,f_{x_{1},\ldots,x_{n}}^{\sharp}(\ell_{1},\ldots,\ell_{n},u))))\]
\[=g^{\bullet}(f^{\bullet}(\Lambda(x_{1},\ell_{1}),\ldots,\Lambda(x_{n},\ell_{n}),\Lambda(y_{1},u_{1})),\ldots\]
\[,f^{\bullet}(\Lambda(x_{1},\ell_{1}),\ldots,\Lambda(x_{n},\ell_{n}),\Lambda(y_{m},u_{m})),
f^{\bullet}(\Lambda(x_{1},\ell_{1}),\ldots,\Lambda(x_{n},\ell_{n}),\Lambda(y,u))).\]
\end{proof}

\begin{thm}
Suppose that $(L,(f^{\sharp})_{f\in\mathcal{F}},(\mathfrak{g})_{\mathfrak{g}\in\mathcal{G}})$ is an endomorphic Laver table. Suppose that
$(f,\ell_{1},\dots,\ell_{n_{f}})*(f,\ell_{1},\dots,\ell_{n_{f}})\in\mathrm{Li}(\Gamma(L))$
and $t(x_{1},\dots,x_{k})$ is a term such that if $g_{y}(u_{1},\dots,u_{n_{g}})$ is a subterm of $t(x_{1},\dots,x_{k})$, then
$\mathrm{crit}(u_{1},\dots,u_{n_{g}})<\mathrm{crit}(\ell_{1},\dots,\ell_{n_{f}})$. Then
\[f^{\sharp}(\ell_{1},\dots,\ell_{n_{f}},t(x_{1},\dots,x_{k}))\]
\[=t(f_{x_{1}}(\ell_{1},\dots,\ell_{n_{f}}),\dots,f_{x_{k}}(\ell_{1},\dots,\ell_{n_{f}})).\]
\end{thm}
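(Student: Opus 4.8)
The plan is to prove the identity by induction on $t$ with respect to the subterm ordering $\prec$ on $L$, peeling off one application layer at a time inside the hull $\Gamma(L)$; this is the endomorphic-Laver-table analogue of Lemma \ref{32u8r9}. Write $\overline{\ell}=(\ell_{1},\dots,\ell_{n_{f}})$. I would first record that we may assume $\mathrm{rank}(f,\overline{\ell})>0$, i.e.\ $(f,\overline{\ell})\notin\mathrm{Li}(\Gamma(L))$, since otherwise the terms $f_{x_{i}}(\overline{\ell})$ appearing on the right would fail to lie in $L$; this is consistent with the hypothesis because $\mathrm{Li}(\Gamma(L))\subseteq\mathcal{I}_{\Gamma(L)}$. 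Since $L$ is endomorphic, $\Gamma(L)$ is an LD-system in which $\mathrm{Li}(\Gamma(L))=\{(g,\overline{u}):\mathrm{rank}(g,\overline{u})=0\}$ is a left ideal -- which is precisely Theorem \ref{2ijr50o} -- and $\Gamma(L)$ is a permutative (indeed locally Laver-like) LD-system, so the operations $\sharp$, the linear order on the critical points of $\Gamma(L)$, and all the lemmas on permutative LD-systems above apply to $\Gamma(L)$, with $\mathrm{crit}(u_{1},\dots,u_{n_{g}})$ read as $\mathrm{crit}\bigl((g,u_{1},\dots,u_{n_{g}})\bigr)$.

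Now the routine cases of the induction. If $t=x\in X$, then $f^{\sharp}(\overline{\ell},x)=f_{x}(\overline{\ell})$ by the defining clause for $f^{\sharp}$ (using $\mathrm{rank}(f,\overline{\ell})>0$), which is the right-hand side. If $t=\mathfrak{g}(t_{1},\dots,t_{m})$ with $\mathfrak{g}\in F$, then $f^{\sharp}(\overline{\ell},t)=\mathfrak{g}\bigl(f^{\sharp}(\overline{\ell},t_{1}),\dots,f^{\sharp}(\overline{\ell},t_{m})\bigr)$; each $g_{y}$-subterm of a $t_{i}$ is a $g_{y}$-subterm of $t$, so the critical-point hypothesis passes to the $t_{i}$, and the induction hypothesis yields $f^{\sharp}(\overline{\ell},t_{i})=t_{i}(f_{x_{1}}(\overline{\ell}),\dots,f_{x_{k}}(\overline{\ell}))$, hence the claim. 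The substantial case is $t=g_{y}(t_{1},\dots,t_{n_{g}})$ with $g\in E$. Operability makes the defining clause an identity:
\[f^{\sharp}(\overline{\ell},t)=g^{\sharp}\bigl(f^{\sharp}(\overline{\ell},t_{1}),\dots,f^{\sharp}(\overline{\ell},t_{n_{g}}),\,f_{y}(\overline{\ell})\bigr).\]
Setting $\overline{v}=\bigl(f^{\sharp}(\overline{\ell},t_{1}),\dots,f^{\sharp}(\overline{\ell},t_{n_{g}})\bigr)$, so that $(g,\overline{v})=(f,\overline{\ell})*(g,t_{1},\dots,t_{n_{g}})$ in $\Gamma(L)$ and $\overline{v}$ is componentwise $t_{i}(f_{x_{1}}(\overline{\ell}),\dots,f_{x_{k}}(\overline{\ell}))$ by the induction hypothesis, it remains to prove $g^{\sharp}(\overline{v},f_{y}(\overline{\ell}))=g_{y}(\overline{v})$, for then the display becomes $t(f_{x_{1}}(\overline{\ell}),\dots,f_{x_{k}}(\overline{\ell}))$, as desired.

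To establish the collapse $g^{\sharp}(\overline{v},f_{y}(\overline{\ell}))=g_{y}(\overline{v})$ I would combine two facts. First, $g_{y}(t_{1},\dots,t_{n_{g}})=t\in L$ forces $\mathrm{rank}(g,t_{1},\dots,t_{n_{g}})>0$, so $(g,t_{1},\dots,t_{n_{g}})\notin\mathrm{Li}(\Gamma(L))$; and the hypothesis, applied to the subterm $t$ of itself, gives $\mathrm{crit}\bigl((g,t_{1},\dots,t_{n_{g}})\bigr)<\mathrm{crit}\bigl((f,\overline{\ell})\bigr)$. Hence, by Theorem \ref{2j1298ur28tye}(4), $(f,\overline{\ell})^{\sharp}$ fixes $\mathrm{crit}\bigl((g,t_{1},\dots,t_{n_{g}})\bigr)$, so $\mathrm{crit}\bigl((g,\overline{v})\bigr)=\mathrm{crit}\bigl((g,t_{1},\dots,t_{n_{g}})\bigr)$ is not the greatest critical point of $\Gamma(L)$; therefore $\mathrm{rank}(g,\overline{v})>0$, $g_{y}(\overline{v})\in L$, and the defining clause for $g^{\sharp}$ at the term $f_{y}(\overline{\ell})$ applies:
\[g^{\sharp}(\overline{v},f_{y}(\overline{\ell}))=f^{\sharp}\bigl(g^{\sharp}(\overline{v},\ell_{1}),\dots,g^{\sharp}(\overline{v},\ell_{n_{f}}),\,g_{y}(\overline{v})\bigr).\]
Second, $(f,\overline{\ell})\in\mathcal{I}_{\Gamma(L)}$ by hypothesis and $\mathrm{crit}\bigl((g,t_{1},\dots,t_{n_{g}})\bigr)<\mathrm{crit}\bigl((f,\overline{\ell})\bigr)$, so Lemma \ref{t24jirho}(2) gives $(g,\overline{v})*(f,\overline{\ell})=\bigl((f,\overline{\ell})*(g,t_{1},\dots,t_{n_{g}})\bigr)*(f,\overline{\ell})\in\mathrm{Li}(\Gamma(L))$, i.e.\ $\mathrm{rank}\bigl(f,g^{\sharp}(\overline{v},\ell_{1}),\dots,g^{\sharp}(\overline{v},\ell_{n_{f}})\bigr)=0$. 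Substituting this into the previous display, the outer $f^{\sharp}$ acts as the identity, so $g^{\sharp}(\overline{v},f_{y}(\overline{\ell}))=g_{y}(\overline{v})$ and the induction step is complete.

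The main obstacle is precisely this last case, and within it the collapse $g^{\sharp}(\overline{v},f_{y}(\overline{\ell}))=g_{y}(\overline{v})$: this is the only point where the involutivity of $(f,\overline{\ell})$ and the critical-point bound on the subterms of $t$ are used, via Lemma \ref{t24jirho}(2) and Theorem \ref{2j1298ur28tye}(4). The remaining work is bookkeeping -- transferring the permutative-LD-system theory to the hull $\Gamma(L)$ (in particular that $\mathrm{Li}(\Gamma(L))$ is a left ideal, Theorem \ref{2ijr50o}, and that $\mathrm{crit}$ of a tuple is the critical point of the corresponding element of $\Gamma(L)$), matching up the arities so that $(g,\overline{v})*(f,\overline{\ell})$ really is the tuple $\bigl(g^{\sharp}(\overline{v},\ell_{1}),\dots,g^{\sharp}(\overline{v},\ell_{n_{f}})\bigr)$, and checking that the subterm induction decomposes $t$ exactly into the cases treated above.
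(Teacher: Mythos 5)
Your argument is correct, and it runs the same outer induction on the complexity of $t$ that the paper's proof uses, handling $t=x$, $t=\mathfrak{g}(t_{1},\dots,t_{m})$ with $\mathfrak{g}\in F$, and $t=g_{y}(t_{1},\dots,t_{n_{g}})$ with $g\in E$. (Your opening remark that one may assume $\mathrm{rank}(f,\overline{\ell})>0$ is a genuine implicit hypothesis of the statement which the paper also uses without comment.) You diverge from the paper in how the last case is closed. Both proofs obtain from the induction hypothesis that $(f,\overline{\ell})*(g,t_{1},\dots,t_{n_{g}})=(g,\overline{v})$ with $v_{i}=t_{i}(f_{x_{1}}(\overline{\ell}),\dots,f_{x_{k}}(\overline{\ell}))$. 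The paper identifies this product with $(f,\overline{\ell})\circ(g,t_{1},\dots,t_{n_{g}})$, using involutivity of $(f,\overline{\ell})$ and $\mathrm{crit}(t_{1},\dots,t_{n_{g}})<\mathrm{crit}(\ell_{1},\dots,\ell_{n_{f}})$, and then reads off $f^{\sharp}(\overline{\ell},g_{y}(\overline{t}))=g_{y}(\overline{v})$ from the LD-monoid structure of the hull, leaving the last step largely to the reader. You instead unwind the defining recursion twice,
\[f^{\sharp}(\overline{\ell},g_{y}(\overline{t}))=g^{\sharp}(\overline{v},f_{y}(\overline{\ell}))=f^{\sharp}\bigl(g^{\sharp}(\overline{v},\ell_{1}),\dots,g^{\sharp}(\overline{v},\ell_{n_{f}}),g_{y}(\overline{v})\bigr),\]
and collapse the outer $f^{\sharp}$ via Lemma \ref{t24jirho}(2), which gives $(g,\overline{v})*(f,\overline{\ell})=(f,\overline{\ell})*(g,\overline{t})*(f,\overline{\ell})\in\mathrm{Li}(\Gamma(L))$ and hence $\mathrm{rank}(f,g^{\sharp}(\overline{v},\ell_{1}),\dots,g^{\sharp}(\overline{v},\ell_{n_{f}}))=0$. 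Your route is more elementary -- only the defining clauses for the $\sharp$ operations and one lemma on involutive elements are needed -- and it makes explicit two points the paper leaves tacit: that $\mathrm{rank}(g,\overline{v})>0$ so the non-trivial clause for $g^{\sharp}$ applies (which you derive from Theorem \ref{2j1298ur28tye}(4) applied to $(f,\overline{\ell})^{\sharp}$), and that the outer $f^{\sharp}$ really acts as the identity. The paper's $\circ$-based presentation is slicker; yours documents exactly where the involutivity of $(f,\overline{\ell})$ and the critical-point bound on the subterms of $t$ are actually consumed.
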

\begin{proof}
We shall prove this result by induction on the complexity of the term $t(x_{1},\dots,x_{k})$. We have $f^{\sharp}(\ell_{1},\dots,\ell_{n_{f}},x)=f_{x}(\ell_{1},\dots,\ell_{n_{f}})$.

Suppose that $\mathfrak{g}\in\mathcal{G}$ and the result holds for
$t_{1}(x_{1},\dots,x_{k}),\dots,t_{n_{\mathfrak{g}}}(x_{1},\dots,x_{k})$. Then

\[f^{\sharp}(\ell_{1},\dots,\ell_{n_{f}},\mathfrak{g}(t_{1}(x_{1},\dots,x_{k}),\dots,t_{n_{\mathfrak{g}}}(x_{1},\dots,x_{k})))\]

\[=\mathfrak{g}(f^{\sharp}(\ell_{1},\dots,\ell_{n_{f}},t_{1}(x_{1},\dots,x_{k})),\dots,
f^{\sharp}(\ell_{1},\dots,\ell_{n_{f}},t_{n_{\mathfrak{g}}}(x_{1},\dots,x_{k})))\]

\[=\mathfrak{g}(t_{1}(f_{x_{1}}(\ell_{1},\dots,\ell_{n_{f}}),\dots,f_{x_{k}}(\ell_{1},\dots,\ell_{n_{f}})),\]
\[\dots,t_{n_{\mathfrak{g}}}(f_{x_{1}}(\ell_{1},\dots,\ell_{n_{f}}),\dots,f_{x_{k}}(\ell_{1},\dots,\ell_{n_{f}}))).\]

Suppose now that $t(x_{1},\dots,x_{k})=g_{y}(t_{1}(x_{1},\dots,x_{k}),\dots,t_{n_{g}}(x_{1},\dots,x_{k}))$. Then, 
\[(f,\ell_{1},\dots,\ell_{n_{f}})\circ(g,t_{1}(x_{1},\dots,x_{k}),\dots,t_{n_{g}}(x_{1},\dots,x_{k}))\]
\[=(f,\ell_{1},\dots,\ell_{n_{f}})\circ(g,t_{1}(x_{1},\dots,x_{k}),\dots,t_{n_{g}}(x_{1},\dots,x_{k}))\]
\[=(g,f^{\sharp}(\ell_{1},\dots,\ell_{n_{f}},t_{1}(x_{1},\dots,x_{k})),\dots,
f^{\sharp}(\ell_{1},\dots,\ell_{n_{f}},t_{n_{g}}(x_{1},\dots,x_{k})))\]

\[=(g,t_{1}(f_{x_{1}}(\ell_{1},\dots,\ell_{n_{f}}),\dots,
f_{x_{k}}(\ell_{1},\dots,\ell_{n_{f}})),\dots,\]
\[t_{n_{g}}(f_{x_{1}}(\ell_{1},\dots,\ell_{n_{f}}),\dots,
f_{x_{k}}(\ell_{1},\dots,\ell_{n_{f}})))\]

, so

\[f^{\sharp}(\ell_{1},\dots,\ell_{n_{f}},t(x_{1},\dots,x_{k}))\]

\[=f^{\sharp}(\ell_{1},\dots,\ell_{n_{f}},g_{y}(t_{1}(x_{1},\dots,x_{k}),\dots,t_{n_{g}}(x_{1},\dots,x_{k})))\]

\[=g_{y}(t_{1}(f_{x_{1}}(\ell_{1},\dots,\ell_{n_{f}}),\dots,\]
\[f_{x_{k}}(\ell_{1},\dots,\ell_{n_{f}}),\dots,t_{n_{g}}(f_{x_{1}}(\ell_{1},\dots,\ell_{n_{f}}),\dots,f_{x_{k}}(\ell_{1},\dots,\ell_{n_{f}})))\]

\[=t(f_{x_{1}}(\ell_{1},\dots,\ell_{n_{f}}),\dots,f_{x_{k}}(\ell_{1},\dots,\ell_{n_{f}})).\]
\end{proof}

% ----------------------------------------------------------------
\section{Permutative and locally Laver-like partially endomorphic algebras}
We shall now generalize the notion of a permutative LD-system, Laver-like LD-system, and a locally Laver-like
LD-system to partially endomorphic algebras, and we shall use these generalizations to construct partially endomorphic Laver tables.

\begin{defn}
A partially endomorphic algebra $\mathcal{X}=(X,(f^{\mathcal{X}})_{f\in E},(g^{\mathcal{X}})_{g\in F})$ shall be called \index{permutative partially endomorphic algebra}\emph{permutative} if the hull $\Gamma(X,(f^{\mathcal{X}})_{f\in E})$ is a permutative LD-system.
\end{defn}
Notice how the definition of a permutative partially endomorphic LD-system does not depend on the functions $g^{\mathcal{X}}$ whenever
$\mathcal{X}$ is a partially endomorphic LD-system.
\begin{defn}
A partially endomorphic algebra $\mathcal{X}=(X,(f^{\mathcal{X}})_{f\in E},(g^{\mathcal{X}})_{g\in F})$ shall be called
\index{Laver-like partially endomorphic algebra}\emph{Laver-like} if the hull $\Gamma(X,(f^{\mathcal{X}})_{f\in E})$ is Laver-like.
\end{defn}
\begin{defn}
A partially endomorphic algebra $\mathcal{X}=(X,(f^{\mathcal{X}})_{f\in E},(g^{\mathcal{X}})_{g\in F})$ shall be called \index{locally Laver-like partially endomorphic algebra}\emph{locally
Laver-like} if whenever $U\subseteq E,V\subseteq F$ are finite subsets, then every finitely generated subalgebra of the reduct
$(X,(f^{\mathcal{X}})_{f\in U},(g^{\mathcal{X}})_{g\in V})$ is Laver-like.
\end{defn}
The definition of a locally Laver-like partially endomorphic algebra depends on the function symbols $(g^{\mathcal{X}})_{g\in F}$, and the notion of a locally Laver-like partially endomorphic algebra is stronger than just requiring the hull to be locally Laver-like.

The following examples illustrate a method for generating new permutative LD-systems from old permutative LD-systems.
\begin{exam}
Suppose that $(X,*,\circ)$ is an LD-monoid and $t:X\rightarrow X$ is a function given by a term in the language of LD-monoids. Then define an operation $+$ on $X$ by $x+y=t(x)*y$. Then $+$ is a self-distributive operation on $X$. Furthermore, we have
$t(x+y)=t(t(x)*y)=t(x)*t(y)$. Therefore, the mapping $t$ is a homomorphism from $(X,+)$ to $(X,*)$. From these facts, one can easily conclude that 
\begin{enumerate}
\item $x\in \mathrm{Li}^{+}(X)$ if and only if $t(x)\in \mathrm{Li}^{*}(X)$,

\item if $\mathrm{Li}^{*}(X)$ is a left-ideal in $(X,*)$, then $\mathrm{Li}^{+}(X)$ is a left-ideal in $(X,+)$,

\item if $(X,*,\circ)$ is permutative, then $(X,+)$ is also permutative,

\item if $(X,*,\circ)$ is Laver-like, then $(X,+)$ is also Laver-like,

\item if $(X,*,\circ)$ is locally Laver-like, then $(X,+)$ is also locally Laver-like,

\item if $(X,*,\circ)$ is permutative and $x,y\in X$, then $\mathrm{crit}^{+}(x)\leq \mathrm{crit}^{+}(y)$ if and only if
$\mathrm{crit}^{*}(t(x))\leq \mathrm{crit}^{*}(t(y))$.
\end{enumerate}
\label{024efrji}
\end{exam}
\begin{exam}
Example \ref{024efrji} generalizes to multi-LD-systems. Suppose that $(X,*,\circ,1)$ is an LD-monoid.
Suppose furthermore that $I$ is an index set and $t_{i}$ is a term of one variable in the language of LD-monoids for all $i\in I$.
Then define an operation $+_{i}$ for each $i\in I$ by $x+_{i}y=t_{i}(x)*y$. Then $(X,(*_{i})_{i\in I})$ is a multi-LD-system.
Furthermore, if $(X,*,\circ,1)$ is permutative, Laver-like, or locally Laver-like, then $(X,(*_{i})_{i\in I})$ is also permutative, Laver-like, or locally Laver-like respectively, and if $(X,*,\circ,1)$ is permutative, then $\mathrm{crit}(x,i)=\mathrm{crit}(t_{i}(x))$ whenever $x\in X,i\in I$.

\label{3ogug7w3r4tt6}
\end{exam}

The following development extends the method found in examples \ref{024efrji} and \ref{3ogug7w3r4tt6}
of generating new permutative LD-systems from old algebras to the context of partially endomorphic algebras.

\begin{defn}
If $\mathcal{X}=(X,(f^{\mathcal{X}})_{f\in E},(g^{\mathcal{X}})_{g\in F})$ is a partially endomorphic algebra, then a \index{basic inner term function}\emph{basic inner term function} is a function $t:X^{n+1}\rightarrow X$ such that for some $f\in E$ and term functions $t_{1},\ldots,t_{m}$ of $n$ variables, we have
\[t(a_{1},\ldots,a_{n},x)=f^{\mathcal{X}}(t_{1}(a_{1},\ldots,a_{n}),\ldots,t_{m}(a_{1},\ldots,a_{n}),x).\]
An inner term is a function $t$ where 
\[t(a_{1},\ldots,a_{n},x)=L_{t_{1},a_{1},\ldots,a_{n}}\circ\ldots\circ L_{t_{v},a_{1},\ldots,a_{n}}(x)\]
for some basic inner term functions
$t_{1},\ldots,t_{v}$. 
\end{defn}
\begin{thm}
Suppose that $\mathcal{X}=(X,(f^{\mathcal{X}})_{f\in E},(g^{\mathcal{X}})_{g\in F})$ is a permutative partially endomorphic algebra.
Let $I$ be an index set, and for each $i\in I$, let $h_{i}:X^{n_{i}+1}\rightarrow X$ be the function where
if $\mathbf{a}\in X^{n_{i}}$, then
\[h_{i}(\mathbf{a},x)=L_{t_{i,1},\mathbf{a}}\circ\ldots\circ L_{t_{i,m_{i}},\mathbf{a}}(x)\]
for some basic inner term functions
$t_{i,1},\ldots,t_{i,m_{i}}$. Furthermore, suppose that whenever $i\in I,1\leq r\leq m_{i}$, we have
\[t_{i,r}(\mathbf{a},x)=f_{i,r}^{\mathcal{X}}(s_{i,r,1}(\mathbf{a}),\ldots,s_{i,r,p_{i,r}}(\mathbf{a}),x)\]
for some term functions
$s_{i,r,1},\ldots,s_{i,r,p_{i,r}}$ and $f_{i,r}\in E$. Let $J$ be an index set and let $l_{j}$ be a term for each $j\in J$.
Let $\mathcal{X}^{\sharp}=(X,(h_{i})_{i\in I},(l_{j})_{j\in J})$.
Then define a mapping 
\[\Lambda:\Gamma(\mathcal{X}^{\sharp})\rightarrow\Gamma(\mathcal{X}))/\mathrm{Li}(\Gamma(\mathcal{X}))\]
 by letting
\[\Lambda(h_{i},\mathbf{a})=[f_{i,1},s_{i,1,1}(\mathbf{a}),\ldots,s_{i,1,p_{i,1}}(\mathbf{a})]\circ\ldots\circ
[f_{i,m_{i}},s_{i,m_{i},1}(\mathbf{a}),\ldots,s_{i,m_{i},p_{i,m_{i}}}(\mathbf{a})].\] Then
\begin{enumerate}
\item $\Lambda$ is an LD-system homomorphism,

\item $(h_{i},\mathbf{a})\in \mathrm{Li}(\Gamma(\mathcal{X}^{\sharp}))$ if and only if $\Lambda(h_{i},\mathbf{a})=1$,

\item $\mathcal{X}^{\sharp}$ is permutative,

\item
\[\mathrm{crit}((h_{i},\mathbf{a}))\leq \mathrm{crit}((h_{j},\mathbf{b}))\]
if and only if
\[\mathrm{crit}(\Lambda(h_{i},\mathbf{a}))\leq \mathrm{crit}(\Lambda(h_{j},\mathbf{b})),\]

\item if $\mathcal{X}$ is Laver-like, then $\mathcal{X}^{\sharp}$ is also Laver-like, and

\item if $\mathcal{X}$ is locally Laver-like, then $\mathcal{X}^{\sharp}$ is also locally Laver-like.
\end{enumerate}
\end{thm}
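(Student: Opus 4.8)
The plan is to reduce everything to a single statement about the homomorphism $\Lambda$ and then invoke the already-established theory of permutative LD-systems, exactly as in Examples \ref{024efrji} and \ref{3ogug7w3r4tt6}. First I would establish (1): the key identity to verify is that, for each $i\in I$ and $\mathbf{a}\in X^{n_i}$, the element $(h_i,\mathbf{a})$ acts on $\Gamma(\mathcal{X}^{\sharp})$ the way the product of $[f_{i,r},s_{i,r,1}(\mathbf{a}),\ldots,s_{i,r,p_{i,r}}(\mathbf{a})]$'s acts on $\Gamma(\mathcal{X})/\mathrm{Li}(\Gamma(\mathcal{X}))$. Since $h_i(\mathbf{a},x)=L_{t_{i,1},\mathbf{a}}\circ\cdots\circ L_{t_{i,m_i},\mathbf{a}}(x)$ is a composite of the left-translations by the basic inner term functions $t_{i,r}$, and each $t_{i,r}(\mathbf{a},x)=f_{i,r}^{\mathcal{X}}(s_{i,r,1}(\mathbf{a}),\ldots,s_{i,r,p_{i,r}}(\mathbf{a}),x)$ is itself the application of a single element of $\Gamma(\mathcal{X})$, one unwinds the definition of $*$ on the hull: applying $(h_i,\mathbf{a})$ to $(h_j,\mathbf{b})$ produces a tuple $(h_j,\ldots)$ whose entries are obtained by pushing each $t_{j,s}$-component through the $h_i$-action. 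Using the LD-monoid identity $(x\circ y)*z=x*(y*z)$ repeatedly, the composite action of $h_i$ matches the action of the $\circ$-product $\Lambda(h_i,\mathbf{a})$, so $\Lambda((h_i,\mathbf{a})*(h_j,\mathbf{b}))=\Lambda(h_i,\mathbf{a})*\Lambda(h_j,\mathbf{b})$.

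Having (1), item (2) follows because $(h_i,\mathbf{a})\in\mathrm{Li}(\Gamma(\mathcal{X}^{\sharp}))$ means $h_i(\mathbf{a},x)=x$ for every relevant $x$, which by the matching of actions is equivalent to $\Lambda(h_i,\mathbf{a})$ acting trivially, i.e. $\Lambda(h_i,\mathbf{a})\in\mathrm{Li}(\Gamma(\mathcal{X})/\mathrm{Li}(\Gamma(\mathcal{X})))=\{1\}$; here one must be slightly careful, since $\Lambda$ is a homomorphism onto a reduced algebra, that $\Lambda(h_i,\mathbf{a})$ acting trivially forces $h_i$ to act trivially and not merely up to $\mathrm{Li}$, but this is precisely what the reduction collapses away and what the definition of $\mathrm{Li}$ in the pre-endomorphic hull records. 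For (3), since $\Gamma(\mathcal{X})/\mathrm{Li}(\Gamma(\mathcal{X}))$ is a reduced permutative LD-system (Proposition \ref{t42huife}) and $\Lambda$ is a homomorphism with $\Lambda^{-1}(\mathrm{Li})=\mathrm{Li}(\Gamma(\mathcal{X}^{\sharp}))$ by (2), the pullback argument in the proof of Proposition \ref{49tt4ngio224gf} shows $\mathrm{Li}(\Gamma(\mathcal{X}^{\sharp}))$ is a left-ideal and the Fibonacci-term condition transfers back; thus $\Gamma(\mathcal{X}^{\sharp})$ is permutative, i.e. $\mathcal{X}^{\sharp}$ is permutative. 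Item (4) is then exactly Proposition \ref{49tt4ngio224gf}(2) once we know (via the Fibonacci-word computation in Proposition \ref{t4r4ghtj0qhyqfha}) that $\mathrm{crit}\leq\mathrm{crit}$ in a permutative LD-system is detected by the Fibonacci terms landing in $\mathrm{Li}$, which $\Lambda$ preserves and reflects.

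For (5) and (6), the strategy is again pullback along $\Lambda$. If $\mathcal{X}$ is Laver-like, then $\Gamma(\mathcal{X})$ is Laver-like, hence so is its reduced quotient $\Gamma(\mathcal{X})/\mathrm{Li}$. Given a sequence $(h_{i_n},\mathbf{a}_n)$ in $\Gamma(\mathcal{X}^{\sharp})$, the images $\Lambda(h_{i_n},\mathbf{a}_n)$ form a sequence in $\Gamma(\mathcal{X})/\mathrm{Li}$, so some finite left-associated product of them lands in $\mathrm{Li}$; by (1) and (2) the corresponding product of the $(h_{i_n},\mathbf{a}_n)$ lands in $\mathrm{Li}(\Gamma(\mathcal{X}^{\sharp}))$, giving the Laver-like condition for $\mathcal{X}^{\sharp}$. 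For local Laver-likeness in (6) one runs the same argument inside an arbitrary finitely generated subalgebra of a reduct $(X,(h_i)_{i\in U'},(l_j)_{j\in V'})$ with $U'\subseteq I$, $V'\subseteq J$ finite: such a subalgebra is generated by finitely many $h$-operations built from finitely many basic inner term functions $t_{i,r}$, each of which involves finitely many of the $f\in E$, so the generated subalgebra maps into a finitely generated subalgebra of a reduct of $\Gamma(\mathcal{X})/\mathrm{Li}$, which is Laver-like by hypothesis on $\mathcal{X}$; pulling the terminating product back finishes it.

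The main obstacle I anticipate is step (1), specifically the bookkeeping that shows the composite left-translation $L_{t_{i,1},\mathbf{a}}\circ\cdots\circ L_{t_{i,m_i},\mathbf{a}}$ acting on a tuple $(h_j,\mathbf{b})$ of $\Gamma(\mathcal{X}^{\sharp})$ agrees, after applying $\Lambda$, with the $\circ$-product of the $[f_{i,r},\ldots]$'s acting on $\Gamma(\mathcal{X})/\mathrm{Li}$. One has to track how the argument-tuples $s_{j,s,\cdot}(\mathbf{b})$ themselves get transformed when $h_i$ is applied, use the fact that the $s$'s and $t$'s are term functions (so they commute with the endomorphisms $L_{t,\mathbf{a}}$ on the nose), and then collapse the resulting nested $*$'s into a $\circ$-product using the LD-monoid identities $(x\circ y)*z=x*(y*z)$ and $x\circ y=(x*y)\circ x$ modulo $\mathrm{Li}$. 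This is the partially-endomorphic analogue of the one-line computation $t(x+y)=t(x)*t(y)$ in Example \ref{024efrji}, but with several layers of composition it requires an induction on $m_i$ (and an inner induction on term complexity of the $l_j$'s, or equivalently an appeal to Theorem \ref{enbfwu92}-style homomorphism extension) to carry out cleanly; everything after (1) is a routine transfer argument through $\Lambda$.
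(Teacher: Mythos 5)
Your overall structure matches the paper's: prove $\Lambda$ is an LD-homomorphism, prove $\Lambda^{-1}(\{1\})=\mathrm{Li}(\Gamma(\mathcal{X}^{\sharp}))$, and then pull back permutativity, the critical ordering, Laver-likeness, and local Laver-likeness through $\Lambda$. The paper in fact proves (1) and (2) explicitly and leaves (3)--(6) to the reader, so your fill-in for those later items is exactly the intended transfer argument. (One small notational slip: your pullback uses (2), not Proposition \ref{49tt4ngio224gf} — that proposition requires an injective homomorphism, which $\Lambda$ need not be. The needed substitute for injectivity is precisely the equivalence in (2), after which the left-ideal and Fibonacci-term conditions pull back routinely.)

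There is, however, a genuine gap in your treatment of the converse direction of (2), which you wave away with ``this is precisely what the reduction collapses away.'' The matching-of-actions identity $\Lambda(h_i,\mathbf{a})*[f_j,\mathbf{v}]=[f_j,h_i(\mathbf{a},\mathbf{v})]$ only shows that $\Lambda(h_i,\mathbf{a})=1$ forces $[f_j,h_i(\mathbf{a},\mathbf{v})]=[f_j,\mathbf{v}]$ \emph{modulo $\mathrm{Li}(\Gamma(\mathcal{X}))$}, so on its own it does not force $h_i(\mathbf{a},x)=x$ pointwise. What is actually needed is the following chain: if $(h_i,\mathbf{a})\notin\mathrm{Li}(\Gamma(\mathcal{X}^{\sharp}))$ then the composite $L_{t_{i,1},\mathbf{a}}\circ\cdots\circ L_{t_{i,m_i},\mathbf{a}}$ is not the identity, hence some single factor $L_{t_{i,r},\mathbf{a}}$ is not the identity, hence $(f_{i,r},s_{i,r,1}(\mathbf{a}),\ldots,s_{i,r,p_{i,r}}(\mathbf{a}))\notin\mathrm{Li}(\Gamma(\mathcal{X}))$, hence $[f_{i,r},\ldots]\neq 1$ in the reduced quotient; and finally one must invoke $\mathrm{crit}(x\circ y)=\min(\mathrm{crit}(x),\mathrm{crit}(y))$ in a reduced permutative LD-monoid to conclude that the $\circ$-product $\Lambda(h_i,\mathbf{a})$ cannot be $1$ if any single factor is nontrivial. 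That last $\circ$-product fact is the missing idea — without it, ``$\Lambda(h_i,\mathbf{a})$ acts trivially'' could a priori hold while individual factors cancel one another out, so you cannot dismiss it as being absorbed by the reduction.
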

\begin{proof}
1. Suppose that $\mathbf{a}=(a_{1},\ldots,a_{n_{i}})$ and $\mathbf{b}=(b_{1},\ldots,b_{n_{j}})$. Then
\[(h_{i},\mathbf{a})*(h_{j},\mathbf{b})=(h_{j},h_{i}(\mathbf{a},b_{1}),\ldots,h_{i}(\mathbf{a},b_{n_{j}})).\]

Let 
\[\mathbf{c}=(h_{i}(\mathbf{a},b_{1}),\ldots,h_{i}(\mathbf{a},b_{n_{j}})).\]
Then for each $n_{j}$-ary term function $s$, we have
\[s(\mathbf{c})=s(h_{i}(\mathbf{a},b_{1}),\ldots,h_{i}(\mathbf{a},b_{n_{j}}))\]
\[=h_{i}(\mathbf{a},s(b_{1},\ldots,b_{n_{j}}))=h_{i}(\mathbf{a},s(\mathbf{b})).\]

Then 
\[\Lambda((h_{i},\mathbf{a})*(h_{j},\mathbf{b}))\]
\[=\Lambda(h_{j},h_{i}(\mathbf{a},b_{1}),\ldots,h_{i}(\mathbf{a},b_{n_{j}}))=\Lambda(h_{j},\mathbf{c})\]
\[=[f_{j,1},s_{j,1,1}(\mathbf{c}),\ldots,s_{j,1,p_{j,1}}(\mathbf{c})]\circ\ldots\circ
[f_{j,m_{j}},s_{j,m_{j},1}(\mathbf{c}),\ldots,s_{j,m_{j},p_{j,m_{j}}}(\mathbf{c})].\]

On the other hand,
\[\Lambda(h_{i},\mathbf{a})*\Lambda(h_{j},\mathbf{b})\]
\[=\Lambda(h_{i},\mathbf{a})
*([f_{j,1},s_{j,1,1}(\mathbf{b}),\ldots,s_{j,1,p_{j,1}}(\mathbf{b})]\]
\[\circ\ldots\circ[f_{j,m_{j}},s_{j,m_{j},1}(\mathbf{b}),\ldots,
s_{j,m_{j},p_{j,m_{j}}}(\mathbf{b})])\]
\[=(\Lambda(h_{i},\mathbf{a})*[f_{j,1},s_{j,1,1}(\mathbf{b}),\ldots,s_{j,1,p_{j,1}}(\mathbf{b})])\]
\[\circ\ldots\circ(\Lambda(h_{i},\mathbf{a})*[f_{j,m_{j}},s_{j,m_{j},1}(\mathbf{b}),\ldots,s_{j,m_{j},p_{j,m_{j}}}(\mathbf{b})]).\]

However, for $1\leq r\leq m_{j}$, we have

\[\Lambda(h_{i},\mathbf{a})*[f_{j,r},s_{j,r,1}(\mathbf{b}),\ldots,s_{j,r,p_{j,r}}(\mathbf{b})]\]
\[=([f_{i,1},s_{i,1,1}(\mathbf{a}),\ldots,s_{i,1,p_{i,1}}(\mathbf{a})]\circ\ldots\circ\]
\[
[f_{i,m_{i}},s_{i,m_{i},1}(\mathbf{a}),\ldots,s_{i,m_{i},p_{i,m_{i}}}(\mathbf{a})])*[f_{j,r},s_{j,r,1}(\mathbf{b}),\ldots,s_{j,r,p_{j,r}}(\mathbf{b})]\]

\[=[f_{j,r},L_{f_{i,1},s_{i,1,1}(\mathbf{a}),\ldots,s_{i,1,p_{i,1}}(\mathbf{a})}\circ\ldots\circ
L_{f_{i,m_{i}},s_{i,m_{i},1}(\mathbf{a}),\ldots,s_{i,m_{i},p_{i,m_{i}}}(\mathbf{a})}(s_{j,r,1}(\mathbf{b})),\]
\[\ldots,L_{f_{i,1},s_{i,1,1}(\mathbf{a}),\ldots,s_{i,1,p_{i,1}}(\mathbf{a})}\circ\ldots\circ
L_{f_{i,m_{i}},s_{i,m_{i},1}(\mathbf{a}),\ldots,s_{i,m_{i},p_{i,m_{i}}}(\mathbf{a})}(s_{j,r,p_{j,r}}(\mathbf{b}))]\]
\[=[f_{j,r},h_{i}(\mathbf{a},s_{j,r,1}(\mathbf{b})),\ldots,h_{i}(\mathbf{a},s_{j,r,p_{j,r}}(\mathbf{b}))]\]
\[=[f_{j,r},s_{j,r,1}(\mathbf{c}),\ldots,s_{j,r,p_{j,r}}(\mathbf{c})].\]

We therefore conclude that
\[\Lambda((h_{i},\mathbf{a})*(h_{j},\mathbf{b}))=(\Lambda(h_{i},\mathbf{a}))*(\Lambda(h_{j},\mathbf{b})).\]

2.

$\rightarrow$ Suppose that $(h_{i},\mathbf{a})\in \mathrm{Li}(\Gamma(\mathcal{X}^{\sharp}))$. Then $(h_{i},\mathbf{a})*(h_{j},\mathbf{b})=(h_{j},\mathbf{b})$ for all $j,\mathbf{b}$. Therefore, $h_{i}(\mathbf{a},x)=x$ for all $x\in X$. Thus,
\[L_{t_{i,1},\mathbf{a}}\circ\ldots\circ L_{t_{i,m_{i}},\mathbf{a}}(x)=x\]
for all $x\in X$. Therefore, whenever

\[[f_{i},\mathbf{b}]\in\Gamma(\mathcal{X})/\mathrm{Li}(\Gamma(\mathcal{X})),\]

we have

\[\Lambda(h_{i},\mathbf{a})*[f_{j},\mathbf{b}]\]
\[=([f_{i,1},s_{i,1,1}(\mathbf{a}),\ldots,s_{i,1,p_{i,1}}(\mathbf{a})]\circ\ldots\circ
[s_{i,m_{i}},s_{i,m_{i},1}(\mathbf{a}),\ldots,s_{i,m_{i},p_{i,m_{i}}}(\mathbf{a})])*[f_{j},\mathbf{b}]\]
\[=[f_{j},L_{t_{i,1},\mathbf{a}}\circ\ldots\circ L_{t_{i,m_{i}},\mathbf{a}}(\mathbf{b})]=[f_{j},\mathbf{b}].\]

Therefore, since $1$ is the only right identity in $\Gamma(\mathcal{X})/\mathrm{Li}(\Gamma(\mathcal{X}))$, we have $[f_{i},\mathbf{b}]=1$.

$\leftarrow$ If 
\[(h_{i},\mathbf{a})\not\in \mathrm{Li}(\Gamma(\mathcal{X}^{*})),\]
then
\[(h_{j},h_{i}(\mathbf{a},\mathbf{b}))=(h_{i},\mathbf{a})*(h_{j},\mathbf{b})\neq(h_{j},\mathbf{b})\]
for some $h_{j},\mathbf{b}$, so
$L_{h_{i},\mathbf{a}}(\mathbf{b})\neq\mathbf{b}$, so
\[L_{t_{i,1},\mathbf{a}}\circ\ldots\circ L_{t_{i,m_{i}},\mathbf{a}}(x)\neq x\]
for some $x$.

Therefore, there is some $j\in\{1,\ldots,m_{i}\}$ where $L_{t_{i,j},\mathbf{a}}$ is not the identity mapping.

Therefore there is some $x\in X$ with
\[x\neq t_{i,j}(\mathbf{a},x)=f_{i,j}(s_{i,j,1}(\mathbf{a}),\ldots,s_{i,j,p_{i,j}(\mathbf{a})},x),\]
hence
\[(f_{i,j},s_{i,j,1}(\mathbf{a}),\ldots,s_{i,j,p_{i,j}}(\mathbf{a}))\not\in \mathrm{Li}(\Gamma(\mathcal{X})),\]
and thus
\[[f_{i,j},s_{i,j,1}(\mathbf{a}),\ldots,s_{i,j,p_{i,j}}(\mathbf{a})]\neq 1.\]
Therefore

\[\Lambda(h_{i},\mathbf{a})\]
\[=[f_{i,1},s_{i,1,1}(\mathbf{a}),\ldots,s_{i,1,p_{i,1}}(\mathbf{a})]\circ\ldots\circ
[f_{i,m_{i}},s_{i,m_{i},1}(\mathbf{a}),\ldots,s_{i,m_{i},p_{i,m_{i}}}(\mathbf{a})]\neq 1.\]

3-6. These statements are left to the reader.
\end{proof}
\begin{prop}
(partial hull) Suppose that $\mathcal{X}=(X,(f^{\mathcal{X}})_{f\in E})$ is an algebra.
Suppose that each function symbol $f$ has arity $mn_{f}+1$. Then for each $f\in E$, let
$f_{[\![m]\!]}:(X^{m})^{n_{f}+1}\rightarrow X^{m}$ be the function where
\[f_{[\![m]\!]}((x_{1,1},\ldots,x_{1,m}),\ldots,(x_{n_{f},1},\ldots,x_{n_{f},m}),(y_{1},\ldots,y_{m}))\]
\[=(f(x_{1,1},\ldots,x_{1,m},\ldots,x_{n_{f},1},\ldots,x_{n_{f},m},y_{1}),\]
\[\ldots,f(x_{1,1},\ldots,x_{1,m},\ldots,x_{n_{f},1},\ldots,x_{n_{f},m},y_{m})).\]
Let $\Gamma_{m}(\mathcal{X})=(X^{m},(f_{[\![m]\!]})_{f\in E})$. Then $\Gamma_{n_{f}}(\mathcal{X})$ is endomorphic if and only if
$\mathcal{X}$ is endomorphic. Furthermore, the mapping $\phi:\Gamma(\mathcal{X})\rightarrow\Gamma(\Gamma_{m}(\mathcal{X}))$ defined by
\[\phi(f,a_{1,1},\ldots,a_{1,m},\ldots.,a_{n_{f},1},\ldots,a_{n_{f},m})\]
\[=(f_{[\![m]\!]},(a_{1,1},\ldots,a_{1,m}),\ldots,(a_{n_{f},1},\ldots,a_{n_{f},m}))\]
is an isomorphism of LD-systems. In particular, if $\mathcal{X}$ is endomorphic, then
\begin{enumerate}
\item $\Gamma_{m}(\mathcal{X})$ is permutative if and only if $\mathcal{X}$ is permutative,

\item $\Gamma_{m}(\mathcal{X})$ is Laver-like if and only if $\mathcal{X}$ is Laver-like, and

\item $\Gamma_{m}(\mathcal{X})$ is locally Laver-like whenever $\mathcal{X}$ is locally Laver-like.
\end{enumerate}
\end{prop}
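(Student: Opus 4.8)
The plan is to show that the reorganization map $\phi$ is an isomorphism of magmas, and then to read off each clause from iso-invariance together with the earlier characterizations of the relevant properties in terms of the hull.

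\textbf{Step 1: $\phi$ is an isomorphism of LD-systems.} First I would unwind the two hulls. Since every $f\in E$ has arity $mn_{f}+1$, the magma $\Gamma(\mathcal{X})$ has carrier $\bigcup_{f\in E}\{f\}\times X^{mn_{f}}$ with $(s,\mathbf{a})*(t,\mathbf{b})=(t,s^{\mathcal{X}}(\mathbf{a},b_{1}),\ldots,s^{\mathcal{X}}(\mathbf{a},b_{mn_{t}}))$, while each $f_{[\![m]\!]}$ has arity $n_{f}+1$ as an operation on $X^{m}$, so $\Gamma(\Gamma_{m}(\mathcal{X}))$ has carrier $\bigcup_{f\in E}\{f_{[\![m]\!]}\}\times(X^{m})^{n_{f}}$ with the analogous rule. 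The map $\phi$ merely regroups the flat $mn_{f}$-tuple into $n_{f}$ consecutive blocks of length $m$, so it is visibly a bijection (reflattening the blocks is its inverse). To check that $\phi$ preserves $*$, I would plug the definition of $f_{[\![m]\!]}$ into $\phi(s,\mathbf{a})*\phi(t,\mathbf{b})$: applying $s_{[\![m]\!]}$ to the blocks of $\mathbf{a}$ together with the $k$-th block of $\mathbf{b}$ yields exactly $(s^{\mathcal{X}}(\mathbf{a},b_{(k-1)m+1}),\ldots,s^{\mathcal{X}}(\mathbf{a},b_{km}))$, which is the $k$-th block of $\phi\big((s,\mathbf{a})*(t,\mathbf{b})\big)$. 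This is a pure index chase with no inductions; the only care needed is to fix the convention that the $i$-th block occupies coordinates $(i-1)m+1,\ldots,im$.

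\textbf{Step 2: the endomorphicity clause and items 1--2.} By the remark recorded right after the definition of $\Gamma(\mathcal{X})$, an algebra with no non-endomorphic operations is endomorphic iff its hull is an LD-system; since $\phi$ transports the magma structure of $\Gamma(\mathcal{X})$ onto that of $\Gamma(\Gamma_{m}(\mathcal{X}))$, one hull is left self-distributive iff the other is, giving ``$\Gamma_{m}(\mathcal{X})$ endomorphic $\iff$ $\mathcal{X}$ endomorphic'' (I read the ``$\Gamma_{n_{f}}$'' in the statement as ``$\Gamma_{m}$''). For items 1 and 2, being permutative and being Laver-like are by definition properties of the hull, and both are invariant under LD-system isomorphisms: permutativity because $\mathrm{Li}$, the Fibonacci terms $t_{n}$, and the left-ideal condition are all expressible in terms of $*$, and Laver-likeness because its defining condition is. Hence both transfer across $\phi$, and likewise the equality $\mathrm{crit}(x)\le\mathrm{crit}(y)\iff\mathrm{crit}(\phi(x))\le\mathrm{crit}(\phi(y))$ if one wants it.

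\textbf{Step 3: local Laver-likeness (the main point).} Here the hypothesis is genuinely local, so $\phi$ must be localized. Fix a finite $U\subseteq E$ and a finitely generated subalgebra $\mathcal{Y}=(Y,(f_{[\![m]\!]}|_{Y})_{f\in U})$ of the reduct $(X^{m},(f_{[\![m]\!]})_{f\in U})$, say $Y=\langle\mathbf{v}_{1},\ldots,\mathbf{v}_{k}\rangle$. Let $S\subseteq X$ be the finite set of all coordinates of $\mathbf{v}_{1},\ldots,\mathbf{v}_{k}$ and let $W=\langle S\rangle$ inside the reduct $(X,(f^{\mathcal{X}})_{f\in U})$; this is finitely generated, so by local Laver-likeness of $\mathcal{X}$ the hull $\Gamma\big(W,(f^{\mathcal{X}}|_{W})_{f\in U}\big)$ is Laver-like. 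Since $W$ is closed under each $f^{\mathcal{X}}$, the set $W^{m}$ is closed under each $f_{[\![m]\!]}$ and contains the $\mathbf{v}_{i}$, so $Y\subseteq W^{m}$ and $\mathcal{Y}$ is a subalgebra of $\Gamma_{m}\big(W,(f^{\mathcal{X}}|_{W})_{f\in U}\big)$. Consequently $\Gamma(\mathcal{Y})$ is a sub-LD-system of $\Gamma\big(\Gamma_{m}(W,(f^{\mathcal{X}}|_{W})_{f\in U})\big)$, which by Step 1 applied to the restricted data is isomorphic to $\Gamma\big(W,(f^{\mathcal{X}}|_{W})_{f\in U}\big)$ and hence Laver-like. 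It then remains to check the little lemma that a sub-LD-system $L$ of a Laver-like LD-system $K$ is Laver-like: $L$ is permutative by Proposition \ref{49tt4ngio224gf}; given any sequence in $L$, some finite left product $z$ lands in $\mathrm{Li}(K)$, and since $z$ is a left identity for all of $K\supseteq L$ we get $z\in\mathrm{Li}(L)$; finally $\mathrm{Li}(L)$ is a left-ideal of $L$ because, by Theorem \ref{2j1298ur28tye}, an element of a permutative LD-system lies in $\mathrm{Li}$ iff its critical point is the maximum of $\mathrm{crit}[\,\cdot\,]$, and $x^{\sharp}$ is order-preserving with $x^{\sharp}(\alpha)\ge\alpha$, so $r*z\in\mathrm{Li}(L)$ whenever $z\in\mathrm{Li}(L)$. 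Applying this with $L=\Gamma(\mathcal{Y})$ proves that $\Gamma_{m}(\mathcal{X})$ is locally Laver-like.

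The only non-mechanical part is Step 3: one must localize $\phi$ correctly (restricting the operation symbols to $U$ and the carrier to a finitely generated $W$ simultaneously, and observing that $\Gamma_{m}$ commutes with this restriction) and supply the sub-LD-system lemma above; Steps 1 and 2 are bookkeeping once the block-indexing convention is pinned down.
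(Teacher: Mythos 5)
The paper states this proposition without supplying a proof, so there is nothing to compare against; the task is only to check your argument, which is correct.

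Step 1 is a clean index chase: with the block convention $A_i=(a_{(i-1)m+1},\ldots,a_{im})$, one has
\[
s_{[\![m]\!]}(A_1,\ldots,A_{n_s},B_k)=\bigl(s^{\mathcal{X}}(\mathbf{a},b_{(k-1)m+1}),\ldots,s^{\mathcal{X}}(\mathbf{a},b_{km})\bigr),
\]
which is exactly the $k$-th block of $(t,s^{\mathcal{X}}(\mathbf{a},b_1),\ldots,s^{\mathcal{X}}(\mathbf{a},b_{mn_t}))$, so $\phi$ commutes with $*$. Step 2 then follows by transport of structure, and your reading of ``$\Gamma_{n_f}$'' as ``$\Gamma_m$'' is clearly the intended correction, since $n_f$ varies with $f$. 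Step 3 is the genuinely substantive part and you handle it correctly: restricting Step 1 to the reduct $(W,(f^{\mathcal{X}}|_W)_{f\in U})$ gives the needed localized isomorphism, $Y\subseteq W^m$ forces $\Gamma(\mathcal{Y})$ to embed into a Laver-like hull, and the ``sub-LD-system of a Laver-like LD-system is Laver-like'' lemma then finishes.

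One small redundancy: at the end of Step 3 you invoke Theorem \ref{2j1298ur28tye} to show $\mathrm{Li}(L)$ is a left-ideal of $L$, but you have already obtained that $L$ is permutative from Proposition \ref{49tt4ngio224gf}, and ``$\mathrm{Li}$ is a left-ideal'' is part of the definition of permutative. Worse, Theorem \ref{2j1298ur28tye} itself presupposes permutativity, so as stated that particular sentence is circular; fortunately it is not load-bearing — the earlier citation of Proposition \ref{49tt4ngio224gf} already delivers everything you need — so the argument as a whole stands after striking that sentence.
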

\begin{thm}
Suppose that $E,F$ are sets of function symbols, each $f\in E\cup F$ has arity $n_{f}$, and $f^{\sharp}$ has arity $n_{f}+1$ for each
$f\in E$. Let $\mathcal{V}=(V,(f^{\sharp})_{f\in E},(\mathfrak{g}^{\mathcal{V}})_{\mathfrak{g}\in F})$ be a partially endomorphic algebra.
Let $f_{x}$ be a function symbol of arity $n_{f}$ whenever $f\in E,x\in X$ and let $\mathcal{G}=\{f_{x}\mid f\in E,x\in X\}\cup F$. Suppose now that $(L,(f^{\sharp})_{f\in E},(\mathfrak{g})_{\mathfrak{g}\in F})$ is a partially pre-endomorphic Laver table for some
$L\subseteq T[\mathcal{G}]$. Suppose now that $v_{x}\in V$ for each $x\in X$. Define a mapping $\phi:L\rightarrow V$ by induction on $(L,\preceq)$ by letting
\begin{enumerate}
\item $\phi(x)=v_{x}$ for each $x\in X$,

\item\label{4itjiot1hnqoqfi}\[\phi(\mathfrak{g}(\ell_{1},\ldots,\ell_{n_{\mathfrak{g}}}))=\mathfrak{g}^{\mathcal{V}}(\phi(\ell_{1}),\ldots,\phi(\ell_{n_{\mathfrak{g}}}))\]
whenever $\ell_{1},\ldots,\ell_{n_{\mathfrak{g}}}\in L$, and

\item \[\phi(f_{x}(\ell_{1},\ldots,\ell_{n_{f}}))=f^{\sharp}(\phi(\ell_{1}),\ldots,\phi(\ell_{n_{f}}),v_{x})\]
whenever $f_{x}(\ell_{1},\ldots,\ell_{n_{f}})\in L$.
\end{enumerate}
Suppose now that whenever $\ell_{1},\ldots,\ell_{n_{f}}\in L$ but $f_{x}(\ell_{1},\ldots,\ell_{n_{f}})\not\in L$, we have
\[f^{\sharp}(\phi(\ell_{1}),\ldots,\phi(\ell_{n_{f}}),v_{x})=v_{x}.\]
Then the mapping $\phi$ is a homomorphism between algebras.
\end{thm}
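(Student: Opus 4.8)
The plan is to prove that $\phi$ is a homomorphism by a transfinite/structural induction establishing the single equation
\[
\phi(f_x(\ell_1,\dots,\ell_{n_f})) = f^{\sharp}(\phi(\ell_1),\dots,\phi(\ell_{n_f}),v_x)
\]
together with the clause \ref{4itjiot1hnqoqfi} for the $\mathfrak{g}\in F$, i.e. one shows $\phi$ commutes with every fundamental operation of $(L,(f^{\sharp})_{f\in E},F)$. Clause \ref{4itjiot1hnqoqfi} and the case $f_x(\ell_1,\dots,\ell_{n_f})\in L$ hold by the very definition of $\phi$, so the only content is the case $f_x(\ell_1,\dots,\ell_{n_f})\notin L$: there one must check that $\phi(f^{\sharp}(\ell_1,\dots,\ell_{n_f},\ell)) = f^{\sharp}(\phi(\ell_1),\dots,\phi(\ell_{n_f}),\phi(\ell))$ for every $\ell\in L$, using that $f^{\sharp}(\ell_1,\dots,\ell_{n_f},\ell)=\ell$ on the left (since $\mathrm{rank}(f,\ell_1,\dots,\ell_{n_f})=0$) and that the hypothesis $f^{\sharp}(\phi(\ell_1),\dots,\phi(\ell_{n_f}),v_x)=v_x$ gives the corresponding collapse on the right. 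So the real assertion to prove is: whenever $\mathrm{rank}(f,\ell_1,\dots,\ell_{n_f})=0$ (equivalently $f_x(\ell_1,\dots,\ell_{n_f})\notin L$), then $f^{\sharp}(\phi(\ell_1),\dots,\phi(\ell_{n_f}),\phi(\ell))=\phi(\ell)$ for all $\ell\in L$.

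First I would set up the induction. The natural parameter is $(\mathrm{rank}(f,\ell_1,\dots,\ell_{n_f}),\ell)\in\mathbf{On}\times(L,\prec)$ with the lexicographic order — exactly the well-founded order used in the proof of Proposition \ref{t4gh2ui04gthn} and in Proposition \ref{24rouh}. Actually it is cleaner to prove the stronger statement directly for all $f\in E$ and all ranks: \emph{for every $(f,\ell_1,\dots,\ell_{n_f})$ and every $\ell\in L$, $\phi(f^{\sharp}(\ell_1,\dots,\ell_{n_f},\ell)) = f^{\sharp}(\phi(\ell_1),\dots,\phi(\ell_{n_f}),\phi(\ell))$}, since that equation and clause \ref{4itjiot1hnqoqfi} together say precisely that $\phi$ is a homomorphism out of $L$. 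When $\mathrm{rank}(f,\ell_1,\dots,\ell_{n_f})=0$ the left side is $\phi(\ell)$ and the right side is $f^{\sharp}(\phi(\ell_1),\dots,\phi(\ell_{n_f}),\phi(\ell))$; here one does a sub-induction on $\ell\in(L,\prec)$, splitting into $\ell=x\in X$ (use $f^{\sharp}(\phi(\ell_1),\dots,\phi(\ell_{n_f}),v_x)=v_x=\phi(x)$, the hypothesis), $\ell=\mathfrak{g}(u_1,\dots,u_m)$ (push $f^{\sharp}$ through $\mathfrak{g}^{\mathcal V}$ using that $\mathcal V$ is endomorphic, then apply the sub-induction hypothesis to each $u_i$), and $\ell=g_y(u_1,\dots,u_m)$ (push $f^{\sharp}$ through $g^{\sharp}$ using endomorphy of $\mathcal V$, reduce each argument by the sub-induction hypothesis, and collapse $f^{\sharp}(\ldots,v_y)$ to $v_y$ again by the hypothesis). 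When $\mathrm{rank}(f,\ell_1,\dots,\ell_{n_f})>0$ the identity $\phi(f_x(\ell_1,\dots,\ell_{n_f}))=f^{\sharp}(\phi(\ell_1),\dots,\phi(\ell_{n_f}),v_x)$ holds by definition of $\phi$, and for general $\ell$ one expands $f^{\sharp}(\ell_1,\dots,\ell_{n_f},\ell)$ by the recursive clauses (i)–(iii) of Definition \ref{3hqw3f6l8icg}, applies $\phi$, and matches the two sides term by term, using the outer induction hypothesis (lower rank) for any nested $g^{\sharp}$ whose index triple has rank strictly less than $\mathrm{rank}(f,\ell_1,\dots,\ell_{n_f})$, and the inner $\prec$-induction for proper subterms of $\ell$. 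This is exactly the same bookkeeping as in the proof of Theorem \ref{2ijr50o}, so I expect it to go through verbatim modulo notation.

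The main obstacle — and it is really only a notational one — is verifying the case $\ell=g_y(u_1,\dots,u_m)$ inside the rank-$0$ sub-induction: one must be careful that the hypothesis ``$f^{\sharp}(\phi(\ell_1),\dots,\phi(\ell_{n_f}),v_x)=v_x$'' is stated for a fixed $f$ and fixed $\ell_1,\dots,\ell_{n_f}$ but for \emph{all} $x\in X$, so that it can be reapplied with $v_y$ in place of $v_x$ when handling $g_y(u_1,\dots,u_m)$; this is legitimate because the hypothesis quantifies over $x$. (Equivalently, rank-$0$ of $(f,\ell_1,\dots,\ell_{n_f})$ is independent of the variable subscript by clause 3 of Definition \ref{3hqw3f6l8icg}, so $f_x(\ell_1,\dots,\ell_{n_f})\notin L\iff f_y(\ell_1,\dots,\ell_{n_f})\notin L$, and the hypothesis is automatically uniform in the subscript.) Once that is noted, every step is a direct computation with the defining clauses of $f^{\sharp}$ and the definition of $\phi$, so no genuinely new idea beyond the induction setup is required; the proof is a structural induction over $(\mathbf{On},<)\times(L,\prec)$ lexicographically, exactly mirroring Theorem \ref{2ijr50o} and Proposition \ref{24rouh}.
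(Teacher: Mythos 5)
Your proposal is correct and follows essentially the same route as the paper: prove the single commuting identity $\phi(f^{\sharp}(\ell_{1},\dots,\ell_{n_{f}},\ell))=f^{\sharp}(\phi(\ell_{1}),\dots,\phi(\ell_{n_{f}}),\phi(\ell))$ by induction on $(\mathrm{rank}(f,\ell_{1},\dots,\ell_{n_{f}}),\ell)$ in the lexicographic order on $\mathbf{On}\times(L,\prec)$, splitting into rank-zero and positive-rank cases and, within each, sub-inducting on the structure of $\ell$ (variable, $\mathfrak{g}\in F$, or $g_{y}$ for $g\in E$), with the positive-rank $g_{y}$-case also invoking the induction hypothesis at the strictly smaller rank of $(g,f^{\sharp}(\overline{\ell},u_{1}),\dots,f^{\sharp}(\overline{\ell},u_{m}))$. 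The paper organizes this into six explicit cases (two ranks times three shapes of $\ell$) but the computations are exactly the ones you describe, including the observation that the hypothesis $f^{\sharp}(\phi(\ell_{1}),\dots,\phi(\ell_{n_{f}}),v_{x})=v_{x}$ is uniform in the variable subscript because membership $f_{x}(\ell_{1},\dots,\ell_{n_{f}})\in L$ is independent of $x$.
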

\begin{proof}
By property \ref{4itjiot1hnqoqfi}, we conclude that $\phi$ is a homomorphism with respect to the function symbols $\mathfrak{g}\in F$. We shall now show that
$\phi$ is a homomorphism with respect to the function symbols $f^{\sharp}$ for each $f\in E$ by showing that
\[\phi(f^{\sharp}(\ell_{1},\ldots,\ell_{n},\ell))=f^{\sharp}(\phi(\ell_{1}),\ldots,\phi(\ell_{n}),\phi(\ell))\]
for all $\ell_{1},\ldots,\ell_{n},\ell$  by induction on
\[(\mathrm{rank}(f,\ell_{1},\ldots,\ell_{n}),\ell)\in\mathbf{On}\times(L,\preceq)\]
using the usual cases.

Case 1: $f_{x}(\ell_{1},\ldots,\ell_{n})\not\in L,\ell=x$.

We have 
\[\phi(f^{\sharp}(\ell_{1},\ldots,\ell_{n},x))=\phi(x)=v_{x}\]
and
\[f^{\sharp}(\phi(\ell_{1}),\ldots,\phi(\ell_{n}),\phi(x))\]
\[=f^{\sharp}(\phi(\ell_{1}),\ldots,\phi(\ell_{n}),v_{x})=v_{x}.\]
Therefore,
\[\phi(f^{\sharp}(\ell_{1},\ldots,\ell_{n},x))=f^{\sharp}(\phi(\ell_{1}),\ldots,\phi(\ell_{n}),\phi(x))\]
in this case.

Case 2: $f_{x}(\ell_{1},\ldots,\ell_{n})\not\in L,\ell=\mathfrak{g}(u_{1},\ldots,u_{m})$.

\[\phi(f^{\sharp}(\ell_{1},\ldots,\ell_{n},\ell))=\phi(\ell)=\phi(\mathfrak{g}(u_{1},\ldots,u_{m}))\]

\[=\mathfrak{g}(\phi(f^{\sharp}(\overline{\ell},u_{1})),\ldots,\phi(f^{\sharp}(\overline{\ell},u_{m})))\]

\[=\mathfrak{g}(f^{\sharp}(\phi(\ell_{1}),\ldots,\phi(\ell_{n}),\phi(u_{1})),\ldots,f^{\sharp}(\phi(\ell_{1}),\ldots,\phi(\ell_{n}),\phi(u_{m})))\]

\[=f^{\sharp}(\phi(\ell_{1}),\ldots,\phi(\ell_{n}),\mathfrak{g}(\phi(u_{1}),\ldots,\phi(u_{m})))\]

\[=f^{\sharp}(\phi(\ell_{1}),\ldots,\phi(\ell_{n}),\phi(\mathfrak{g}(u_{1},\ldots,u_{m})))\]

\[=f^{\sharp}(\phi(\ell_{1}),\ldots,\phi(\ell_{n}),\phi(\ell)).\]

Case 3: $f_{x}(\ell_{1},\ldots,\ell_{n})\not\in L,\ell=g_{x}(u_{1},\ldots,u_{m})$.

\[f^{\sharp}(\phi(\ell_{1}),\ldots,\phi(\ell_{n}),\phi(\ell))\]

\[=f^{\sharp}(\phi(\ell_{1}),\ldots,\phi(\ell_{n}),\phi(g_{x}(u_{1},\ldots,u_{m})))\]

\[=f^{\sharp}(\phi(\ell_{1}),\ldots,\phi(\ell_{n}),g^{\sharp}(\phi(u_{1}),\ldots,\phi(u_{m}),\phi(x)))\]

\[=g^{\sharp}(f^{\sharp}(\phi(\ell_{1}),\ldots,\phi(\ell_{n}),\phi(u_{1})),\ldots,f^{\sharp}(\phi(\ell_{1}),\ldots,\phi(\ell_{n}),\phi(u_{m})),
f^{\sharp}(\phi(\ell_{1}),\ldots,\phi(\ell_{n}),\phi(x)))\]

\[=g^{\sharp}(\phi(f^{\sharp}(\overline{\ell},u_{1})),\ldots,\phi(f^{\sharp}(\overline{\ell},u_{m})),\phi(x))\]

\[=g^{\sharp}(\phi(u_{1}),\ldots,\phi(u_{m}),\phi(x))\]

\[=\phi(g_{x}(u_{1},\ldots,u_{m}))=\phi(\ell)=\phi(f^{\sharp}(\ell_{1},\ldots,\ell_{n},\ell)).\]

Case 4: $f_{x}(\ell_{1},\ldots,\ell_{n})\in L,\ell=x$ for some $x\in X$.

We have
\[\phi(f^{\sharp}(\ell_{1},\ldots,\ell_{n},x))=\phi(f_{x}(\ell_{1},\ldots,\ell_{n}))\]
\[=f^{\sharp}(\phi(\ell_{1}),\ldots,\phi(\ell_{n}),\phi(x)).\]

Case 5: $f_{x}(\ell_{1},\ldots,\ell_{n})\in L,\ell=\mathfrak{g}(u_{1},\ldots,u_{m})$.

\[\phi(f^{\sharp}(\ell_{1},\ldots,\ell_{n},\ell))\]

\[=\phi(f^{\sharp}(\ell_{1},\ldots,\ell_{n},\mathfrak{g}(u_{1},\ldots,u_{m})))\]

\[=\phi(\mathfrak{g}(f^{\sharp}(\overline{\ell},u_{1}),\ldots,f^{\sharp}(\overline{\ell},u_{m})))\]

\[=\mathfrak{g}(\phi(f^{\sharp}(\overline{\ell},u_{1})),\ldots,\phi(f^{\sharp}(\overline{\ell},u_{1})))\]

\[=\mathfrak{g}(f^{\sharp}(\phi(\ell_{1}),\ldots,\phi(\ell_{n}),\phi(u_{1})),\ldots,f^{\sharp}(\phi(\ell_{1}),\ldots,\phi(\ell_{n}),\phi(u_{m})))\]

\[=f^{\sharp}(\phi(\ell_{1}),\ldots,\phi(\ell_{n}),\mathfrak{g}(\phi(u_{1}),\ldots,\phi(u_{m})))\]

\[=f^{\sharp}(\phi(\ell_{1}),\ldots,\phi(\ell_{n}),\phi(\mathfrak{g}(u_{1},\ldots,u_{m})))\]

\[=f^{\sharp}(\phi(\ell_{1}),\ldots,\phi(\ell_{n}),\phi(\ell)).\]

Case 6: $f_{x}(\ell_{1},\ldots,\ell_{n})\in L,\ell=g_{x}(u_{1},\ldots,u_{m})$.

\[\phi(f^{\sharp}(\ell_{1},\ldots,\ell_{n},\ell))\]

\[=\phi(f^{\sharp}(\ell_{1},\ldots,\ell_{n},g_{x}(u_{1},\ldots,u_{m})))\]

\[=\phi(g^{\sharp}(f^{\sharp}(\overline{\ell},u_{1}),\ldots,f^{\sharp}(\overline{\ell},u_{m}),f^{\sharp}(\overline{\ell},x)))\]

\[=g^{\sharp}(\phi(f^{\sharp}(\overline{\ell},u_{1})),\ldots,\phi(f^{\sharp}(\overline{\ell},u_{m})),\phi(f^{\sharp}(\overline{\ell},x)))\]

\[=g^{\sharp}(f^{\sharp}(\phi(\ell_{1}),\ldots,\phi(\ell_{n}),\phi(u_{1})),\ldots,f^{\sharp}(\phi(\ell_{1}),\ldots,\phi(\ell_{n}),\phi(u_{m})),
f^{\sharp}(\phi(\ell_{1}),\ldots,\phi(\ell_{n}),\phi(x)))\]

\[=f^{\sharp}(\phi(\ell_{1}),\ldots,\phi(\ell_{n}),g^{\sharp}(\phi(u_{1}),\ldots,\phi(u_{m}),\phi(x)))\]

\[=f^{\sharp}(\phi(\ell_{1}),\ldots,\phi(\ell_{n}),\phi(g_{x}(u_{1},\ldots,u_{m})))\]

\[=f^{\sharp}(\phi(\ell_{1}),\ldots,\phi(\ell_{n}),\phi(\ell)).\]
\end{proof}
\begin{thm}
Let $\mathcal{V}=(V,(f^{\sharp})_{f\in E},(\mathfrak{g}^{\mathcal{V}})_{\mathfrak{g}\in F})$ be a locally Laver-like partially endomorphic algebra. Let $X$ be a set, and for each $x\in X$, let $v_{x}\in V$. Let $f_{x}$ be a function symbol whenever $f\in E,x\in X$, and let $\mathcal{G}=\{f_{x}\mid f\in E,x\in X\}\cup F$. Define a subset $L\subseteq\mathbf{T}_{\mathcal{G}}[X]$ and a function
$\phi:L\rightarrow V$ by induction on $\preceq$ according to the following rules.
\begin{enumerate}[(i)]
\item $x\in L$ and $\phi(x)=v_{x}$ for each $x\in X$.

\item $\mathfrak{g}(\ell_{1},\ldots,\ell_{n})\in L$ if and only if $\ell_{1},\ldots,\ell_{n}\in L$. Furthermore, if
$\mathfrak{g}(\ell_{1},\ldots,\ell_{n})\in L$, then 
\[\phi(\mathfrak{g}(\ell_{1},\ldots,\ell_{n}))=\mathfrak{g}^{\mathcal{V}}(\phi(\ell_{1}),\ldots,\phi(\ell_{n})).\]

\item $f_{x}(\ell_{1},\ldots,\ell_{n})\in L$ if and only if $\ell_{1},\ldots,\ell_{n}\in L$ and
\[(f,\phi(\ell_{1}),\ldots,\phi(\ell_{n}))\not\in \mathrm{Li}(\Gamma(X)).\]
Furthermore, if $f_{x}(\ell_{1},\ldots,\ell_{n})\in L$, then define 
\[\phi(f_{x}(\ell_{1},\ldots,\ell_{n}))=f^{\sharp}(\phi(\ell_{1}),\ldots,\phi(\ell_{n}),v_{x}).\]
\end{enumerate}
Then 
\begin{enumerate}
\item The set $L$ is locally operable.

\item The mapping $\phi$ is a homomorphism.

\item The algebra $(L,(f^{\sharp})_{f\in E},(\mathfrak{g})_{\mathfrak{g}\in F})$ is a partially endomorphic Laver table.

\item If $\mathcal{V}$ is a Laver-like partially endomorphic algebra, then $L$ is operable.
\end{enumerate}
\label{23rijo}
\end{thm}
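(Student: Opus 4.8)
The plan is to prove all four items simultaneously by a single transfinite induction that builds $L$ and $\phi$ together along $\preceq$, exploiting the earlier correspondence results. The cleanest route is to reduce to the already-proven machinery rather than redo the rank induction from scratch: I would first restrict attention to finite subsets $Y\subseteq X$ and finite reducts, so that only finitely many generators $v_x$ ($x\in Y$) and finitely many function symbols are in play, and work inside the finitely generated subalgebra $W\subseteq\mathcal{V}$ generated by $\{v_x\mid x\in Y\}$ under the chosen reduct operations. By hypothesis $\mathcal{V}$ is locally Laver-like, so $W$ is a Laver-like partially endomorphic algebra; in particular its hull $\Gamma(W)$ is Laver-like, hence permutative, hence $\mathrm{crit}[\Gamma(W)]$ is well-ordered. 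This well-ordering is exactly what will furnish the ordinal ranks needed to invoke Definition \ref{3hqw3f6l8icg}.

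The key steps, in order. First, I would verify that the ``$\nabla$'' hypothesis of the operability Lemma (the one stated right after Definition \ref{3hqw3f6l8icg}, with the function $\nabla:\Omega\to\mathbf{On}$) is met with the choice $\nabla(f,\ell_1,\ldots,\ell_n)=\mathrm{crit}(f,\phi(\ell_1),\ldots,\phi(\ell_n))$ read off from the well-ordered $\mathrm{crit}[\Gamma(W)]$ — that is, I must check that when $(f,\phi(\ell_1),\ldots,\phi(\ell_n))\notin\mathrm{Li}(\Gamma(W))$ we have the strict inequality $\nabla(f,\overline\ell)>\nabla(g,f^\sharp(\overline\ell,u_1),\ldots,f^\sharp(\overline\ell,u_m))$ along the appropriate subterms. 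This strict drop is precisely Proposition \ref{sfenjuw4}/\ref{2j1298ur28tye} applied in $\Gamma(W)$ together with the fact that acting by a non-left-identity strictly raises critical points (the computation $\mathrm{crit}(k*k)>\mathrm{crit}(k)$ and its consequences), combined with clause (iii) of the $L$-construction which only admits $f_x(\ell_1,\ldots,\ell_n)$ into $L$ when $(f,\phi(\ell_1),\ldots,\phi(\ell_n))\notin\mathrm{Li}(\Gamma(W))$. Granting this, the Lemma gives that $L_{Y,U,V}$ is operable with $\mathrm{rank}\le\nabla$, which is item 1 (local operability) and, in the Laver-like case, item 4. Second, the homomorphism claim (item 2) follows from the immediately preceding Theorem (the one whose statement ends with ``Then the mapping $\phi$ is a homomorphism between algebras''): I must check its side condition, namely that whenever $\ell_1,\ldots,\ell_{n_f}\in L$ but $f_x(\ell_1,\ldots,\ell_{n_f})\notin L$ we have $f^\sharp(\phi(\ell_1),\ldots,\phi(\ell_{n_f}),v_x)=v_x$; but clause (iii) says $f_x(\ell_1,\ldots,\ell_{n_f})\notin L$ forces $(f,\phi(\ell_1),\ldots,\phi(\ell_{n_f}))\in\mathrm{Li}(\Gamma(W))$, i.e. $L_{f,\phi(\ell_1),\ldots,\phi(\ell_{n_f})}$ is the identity, which is exactly that equation. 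Third, item 3 — that $(L,(f^\sharp)_{f\in E},(\mathfrak g)_{\mathfrak g\in F})$ is genuinely a partially endomorphic Laver table, i.e.\ endomorphic — I would get from Theorem \ref{2ijr50o}: one must show $\mathrm{rank}(g,u_1,\ldots,u_n)=0\Rightarrow\mathrm{rank}(g,f^\sharp(\overline\ell,u_1),\ldots,f^\sharp(\overline\ell,u_n))=0$. Unwinding, $\mathrm{rank}(g,\overline u)=0$ means $g_x(u_1,\ldots,u_n)\notin L$, i.e.\ $(g,\phi(u_1),\ldots,\phi(u_n))\in\mathrm{Li}(\Gamma(W))$; since $\phi$ is a homomorphism and $\mathrm{Li}$ is a left-ideal (this uses that $W$ is partially endomorphic, so $\Gamma(W)$ is an LD-system and Proposition \ref{t4r4ghtj0qhyqfha}-style reasoning applies), acting by $(f,\phi(\ell_1),\ldots,\phi(\ell_m))$ keeps us in $\mathrm{Li}$, giving $(g,f^\sharp(\overline\ell,u_1),\ldots,f^\sharp(\overline\ell,u_n))\in\mathrm{Li}(\Gamma(W))$ and hence rank $0$.

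Finally, I would patch the finite pieces together: the constructions of $L_{Y,U,V}$ and $\phi$ are manifestly coherent under inclusion of finite $(Y,U,V)$ (the recursive clauses (i)–(iii) only ever reference subterms), so $L=\bigcup L_{Y,U,V}$ is locally operable, $\phi$ is globally well-defined, and ``homomorphism'' and ``endomorphic'' are witnessed on every finitely generated subreduct, hence hold. For item 4, if $\mathcal{V}$ itself is Laver-like then $\Gamma(\mathcal{V})$ (not just its finitely generated subalgebras) is Laver-like, so $\mathrm{crit}[\Gamma(\mathcal{V})]$ is a genuine well-ordering and the single function $\nabla$ above is defined on all of $\Omega$, so the operability Lemma applies to $L$ globally rather than only to the finite slices.

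\textbf{Main obstacle.} The delicate point is verifying the strict rank-drop inequality for $\nabla$ — i.e.\ that passing from $(f,\phi(\ell_1),\ldots,\phi(\ell_{n_f}))$ to the ``inner'' tuples $(g,f^\sharp(\overline\ell,u_1),\ldots)$ strictly decreases the critical point in $\mathrm{crit}[\Gamma(W)]$. This requires carefully matching the syntactic recursion of $Q_{(g,\ell_1,\ldots,\ell_n),\alpha}$ in Definition \ref{3hqw3f6l8icg} against the semantic behavior of $\mathrm{crit}$ under the hull operation, and in particular using that $L_{f,\ldots}$ is an endomorphism of $W$ that is \emph{not} the identity precisely on the subterms that get admitted to $L$, so that its action strictly raises the relevant critical point by the permutativity arguments of Section 2 (Theorem \ref{2j1298ur28tye}(2),(4)). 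Everything else is bookkeeping and direct appeal to the theorems already in the excerpt; this one inequality is where the real content of ``locally Laver-like'' is consumed.
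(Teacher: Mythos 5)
Your plan to reduce to finitely generated subreducts, use a rank function $\nabla$ to invoke the operability Lemma following Definition~\ref{3hqw3f6l8icg}, then derive items 2 and 3 from the theorem on homomorphisms out of partially pre-endomorphic Laver tables and from Theorem~\ref{2ijr50o}, is structurally the same as the paper's proof (which, for simplicity, also just treats the Laver-like case and leaves local patching implicit). But the one choice on which the whole argument turns — the choice of $\nabla$ — is wrong, and the error is exactly at the spot you flagged as the ``delicate point.''

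You take $\nabla(f,\ell_1,\ldots,\ell_n)=\mathrm{crit}(f,\phi(\ell_1),\ldots,\phi(\ell_n))$, reading the critical point as an ordinal via the well-ordering of $\mathrm{crit}[\Gamma(W)]$. The operability Lemma requires the strict drop $\nabla(f,\overline\ell)>\nabla(g,f^\sharp(\overline\ell,u_1),\ldots,f^\sharp(\overline\ell,u_m))$ whenever $\nabla(f,\overline\ell)>0$. Semantically, the right-hand tuple is $(f,\phi(\overline\ell))*(g,\phi(\overline u))$, so you are asking that $\mathrm{crit}\bigl((f,\phi(\overline\ell))*(g,\phi(\overline u))\bigr)<\mathrm{crit}(f,\phi(\overline\ell))$. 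This is simply false in general: by Theorem~\ref{2j1298ur28tye}, if $\mathrm{crit}(f,\phi(\overline\ell))\leq\mathrm{crit}(g,\phi(\overline u))<\max(\mathrm{crit}[\Gamma(W)])$ then
\[
\mathrm{crit}\bigl((f,\phi(\overline\ell))*(g,\phi(\overline u))\bigr)
=(f,\phi(\overline\ell))^{\sharp}\bigl(\mathrm{crit}(g,\phi(\overline u))\bigr)
>\mathrm{crit}(g,\phi(\overline u))\geq\mathrm{crit}(f,\phi(\overline\ell)),
\]
so $\nabla$ \emph{increases} rather than decreases. Permutativity raises the critical point of $x*y$ above that of $y$, but says nothing about $x*y$ versus $x$; your citation of Theorem~\ref{2j1298ur28tye}(2),(4) does not repair this, since those items push in the opposite direction.

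What the paper actually uses is the rank $\mho$ in the well-founded partial order $\preceq^{*}$ on $\Gamma(\mathcal{V})$ (the $\preceq$ ordering of the permutative-LD-system section, reversed), and this is where Laver-likeness is really consumed: Laver-Steel says every sequence $x, x*a_1, x*a_1*a_2,\ldots$ eventually enters $\mathrm{Li}$, which is precisely the well-foundedness of $\preceq^{*}$, and $x\prec x*a$ whenever $x\notin\mathrm{Li}$, so $\mho(x*a)<\mho(x)$ with equality of $\mho$ with $0$ exactly on $\mathrm{Li}$. The $\preceq^{*}$-rank strictly refines the critical-point ordering (e.g.\ $x\prec x*x*x$ while $\mathrm{crit}$ is unchanged) and, unlike $\mathrm{crit}$, has the correct monotonicity under left multiplication. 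Swapping your $\nabla$ for $\nabla(f,\overline\ell)=\mho(f,\phi(\overline\ell))$ and otherwise keeping your outline would give a correct proof; with your current $\nabla$ the operability Lemma does not apply and item 1 (hence everything downstream) is not established.
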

\begin{proof}
For simplicity, in this proof, we shall assume that $\mathcal{V}=(V,(f^{\sharp})_{f\in E},(g^{\mathcal{V}})_{g\in F})$ is Laver-like. Then define a map
$\mho:\Gamma(\mathcal{V})\rightarrow\mathbf{On}$ by letting $\mho(f,a_{1},\ldots,a_{n})$ denote the rank of
$(f,a_{1},\ldots,a_{n})$ in the well-founded partial ordering $(\Gamma(\mathcal{V}),\preceq^{*})$.
Define a mapping $\nabla:\Gamma(L)\rightarrow\mathbf{On}$ by letting 
\[\nabla(f,\ell_{1},\ldots,\ell_{n})=\mho(f,\phi(\ell_{1}),\ldots,\phi(\ell_{n})).\]

Claim 1: I claim that $\nabla(f,\ell_{1},\ldots,\ell_{n})=0$ if and only if $f_{x}(\ell_{1},\ldots,\ell_{n})=0$.

We have 
\[\nabla(f,\ell_{1},\ldots,\ell_{n})=0\]
if and only if
\[\mho(f,\phi(\ell_{1}),\ldots,\phi(\ell_{n}))=0\]
if and only if
\[(f,\phi(\ell_{1}),\ldots,\phi(\ell_{n}))\in \mathrm{Li}(\Gamma(\mathcal{V}))\]
if and only if
\[f_{x}(\ell_{1},\ldots,\ell_{n})\not\in L.\]

Now define operations $f^{\sharp}$ for each $f\in E$ by defining
$f^{\sharp}(\ell_{1},\ldots,\ell_{n},\ell)$ by induction on $(\nabla(f,\ell_{1},\ldots,\ell_{n}),\ell)\in\mathbf{On}\times L$ by the following rules:

\begin{enumerate}[(i)]

\item If $\nabla(f,\ell_{1},\ldots,\ell_{n})=0$, then define 
\[f^{\sharp}(\ell_{1},\ldots,\ell_{n},\ell)=\ell.\]

\item If $\nabla(f,\ell_{1},\ldots,\ell_{n})>0$ and $\ell=x$, then 
\[f^{\sharp}(\ell_{1},\ldots,\ell_{n},x)=f_{x}(\ell_{1},\ldots,\ell_{n}).\]

\item If $\nabla(f,\ell_{1},\ldots,\ell_{n})>0$ and $\ell=\mathfrak{g}(u_{1},\ldots,u_{m})$, then
\[f^{\sharp}(\ell_{1},\ldots,\ell_{n},\ell)\]
\[=\mathfrak{g}(f^{\sharp}(\ell_{1},\ldots,\ell_{n},u_{1}),\ldots,f^{\sharp}(\ell_{1},\ldots,\ell_{n},u_{m}))\]
whenever 
\[f^{\sharp}(\ell_{1},\ldots,\ell_{n},u_{1}),\ldots,f^{\sharp}(\ell_{1},\ldots,\ell_{n},u_{m})\]
have been defined previously in the induction, and leave $f^{\sharp}(\ell_{1},\ldots,\ell_{n},\ell)$ undefined otherwise.

\item If $\nabla(f,\ell_{1},\ldots,\ell_{n})>0$ and $\ell=g_{x}(u_{1},\ldots,u_{m})$, then

\[f^{\sharp}(\ell_{1},\ldots,\ell_{n},\ell)\]
\[=g^{\sharp}(f^{\sharp}(\ell_{1},\ldots,\ell_{n},u_{1}),\ldots,f^{\sharp}(\ell_{1},\ldots,\ell_{n},u_{m}),f_{x}(\ell_{1},\ldots,\ell_{n}))\]
whenever
\[g^{\sharp}(f^{\sharp}(\ell_{1},\ldots,\ell_{n},u_{1}),\ldots,f^{\sharp}(\ell_{1},\ldots,\ell_{n},u_{m}),f_{x}(\ell_{1},\ldots,\ell_{n}))\]
has been defined previously in the induction and
$f^{\sharp}(\ell_{1},\ldots,\ell_{n},\ell)$ shall be left undefined otherwise.
\end{enumerate}

We shall now show that $f^{\sharp}(\ell_{1},\ldots,\ell_{n},\ell)$ is defined and
\[\phi(f^{\sharp}(\ell_{1},\ldots,\ell_{n},\ell))=f^{\sharp}(\phi(\ell_{1}),\ldots,\phi(\ell_{n}),\phi(\ell))\]
by induction on
\[(\nabla(f,\ell_{1},\ldots,\ell_{n}),\ell)\in\mathbf{On}\times L.\]

\begin{enumerate}
\item[Case I:] $\nabla(f,\ell_{1},\ldots,\ell_{n})=0$.

We have $f^{\sharp}(\ell_{1},\ldots,\ell_{n},\ell)=\ell$, so
$f^{\sharp}(\ell_{1},\ldots,\ell_{n},\ell)$ is defined.

We have
\[\phi(f^{\sharp}(\ell_{1},\ldots,\ell_{n},\ell))=\phi(\ell)\]
and since
\[(f,\phi(\ell_{1}),\ldots,\phi(\ell_{n}))\in \mathrm{Li}(\mathcal{V}),\]
we also have
\[f^{\sharp}(\phi(\ell_{1}),\ldots,\phi(\ell_{n}),\phi(\ell))=\phi(\ell).\]

\item[Case II:] $\nabla(f,\ell_{1},\ldots,\ell_{n})>0,\ell=x\in X$.

We have 
\[f^{\sharp}(\ell_{1},\ldots,\ell_{n},\ell)=f_{x}(\ell_{1},\ldots,\ell_{n})\]
in this case, so
$f^{\sharp}(\ell_{1},\ldots,\ell_{n},\ell)$ is defined.

Furthermore,
\[\phi(f^{\sharp}(\ell_{1},\ldots,\ell_{n},\ell))=\phi(f_{x}(\ell_{1},\ldots,\ell_{n}))\]
\[=f^{\sharp}(\phi(\ell_{1}),\ldots,\phi(\ell_{n}),v_{x})=f^{\sharp}(\phi(\ell_{1}),\ldots,\phi(\ell_{n}),\phi(\ell)).\]

\item[Case III:] $\nabla(f,\ell_{1},\ldots,\ell_{n})>0,\ell=\mathfrak{g}(u_{1},\ldots,u_{m}),\mathfrak{g}\in F.$

We have
\[f^{\sharp}(\ell_{1},\ldots,\ell_{n},\ell)\]
\[=\mathfrak{g}(f^{\sharp}(\ell_{1},\ldots,\ell_{n},u_{1}),\ldots,f^{\sharp}(\ell_{1},\ldots,\ell_{n},u_{m})),\]
so $f^{\sharp}(\ell_{1},\ldots,\ell_{n},\ell)$ is defined in this case. Furthermore,

\[\phi(f^{\sharp}(\ell_{1},\ldots,\ell_{n},\ell))\]
\[=\phi(\mathfrak{g}(f^{\sharp}(\ell_{1},\ldots,\ell_{n},u_{1}),\ldots,f^{\sharp}(\ell_{1},\ldots,\ell_{n},u_{m})))\]
\[=\mathfrak{g}(\phi(f^{\sharp}(\ell_{1},\ldots,\ell_{n},u_{1})),\ldots,\phi(f^{\sharp}(\ell_{1},\ldots,\ell_{n},u_{m})))\]
\[=\mathfrak{g}(f^{\sharp}(\phi(\ell_{1}),\ldots,\phi(\ell_{n}),\phi(u_{1})),\ldots,f^{\sharp}(\phi(\ell_{1}),\ldots,\phi(\ell_{n}),\phi(u_{m})))\]
\[=f^{\sharp}(\phi(\ell_{1}),\ldots,\phi(\ell_{n}),\mathfrak{g}(\phi(u_{1}),\ldots,\phi(u_{m})))\]
\[=f^{\sharp}(\phi(\ell_{1}),\ldots,\phi(\ell_{n}),\phi(\mathfrak{g}(u_{1},\ldots,u_{m})))\]
\[=f^{\sharp}(\phi(\ell_{1}),\ldots,\phi(\ell_{n}),\phi(\ell)).\]

\item[Case IV:] $\nabla(f,\ell_{1},\ldots,\ell_{n})>0,\ell=g_{x}(u_{1},\ldots,u_{m})$.

By the induction hypothesis, 

\[\nabla(g,f^{\sharp}(\ell_{1},\ldots,\ell_{n},u_{1}),\ldots,f^{\sharp}(\ell_{1},\ldots,\ell_{n},u_{m}))\]
\[=\mho(g,\phi(f^{\sharp}(\ell_{1},\ldots,\ell_{n},u_{1})),\ldots,\phi(f^{\sharp}(\ell_{1},\ldots,\ell_{n},u_{m})))\]
\[=\mho(g,f^{\sharp}(\phi(\ell_{1}),\ldots,\phi(\ell_{n}),\phi(u_{1})),\ldots,f^{\sharp}(\phi(\ell_{1}),\ldots,\phi(\ell_{n}),\phi(u_{m})))\]
\[=\mho((f,\phi(\ell_{1}),\ldots,\phi(\ell_{n}))*(g,\phi(u_{1}),\ldots,\phi(u_{m})))\]
\[<\mho(f,\phi(\ell_{1}),\ldots,\phi(\ell_{n}))\]
\[=\nabla(f,\ell_{1},\ldots,\ell_{n}).\]

Therefore, 
\[g^{\sharp}(f^{\sharp}(\ell_{1},\ldots,\ell_{n},u_{1}),\ldots,f^{\sharp}(\ell_{1},\ldots,\ell_{n},u_{m}),f_{x}(\ell_{1},\ldots,\ell_{n}))\]
has already been constructed in the induction process and
\[f^{\sharp}(\ell_{1},\ldots,\ell_{n},\ell)\]
\[=g^{\sharp}(f^{\sharp}(\ell_{1},\ldots,\ell_{n},u_{1}),\ldots,f^{\sharp}(\ell_{1},\ldots,\ell_{n},u_{m}),f_{x}(\ell_{1},\ldots,\ell_{n})).\]

We also have

\[\phi(f^{\sharp}(\ell_{1},\ldots,\ell_{n},\ell))\]

\[=\phi(g^{\sharp}(f^{\sharp}(\ell_{1},\ldots,\ell_{n},u_{1}),\ldots,f^{\sharp}(\ell_{1},\ldots,\ell_{n},u_{m}),f_{x}(\ell_{1},\ldots,\ell_{n})))\]

\[=g^{\sharp}(\phi(f^{\sharp}(\ell_{1},\ldots,\ell_{n},u_{1})),\ldots,\phi(f^{\sharp}(\ell_{1},\ldots,\ell_{n},u_{m})),
\phi(f_{x}(\ell_{1},\ldots,\ell_{n})))\]

\[=g^{\sharp}(f^{\sharp}(\phi(\ell_{1}),\ldots,\phi(\ell_{n}),\phi(u_{1})),\ldots,f^{\sharp}(\phi(\ell_{1}),\ldots,\phi(\ell_{n}),\phi(u_{m})),
f^{\sharp}(\phi(\ell_{1}),\ldots,\phi(\ell_{n}),\phi(x)))\]

\[=f^{\sharp}(\phi(\ell_{1}),\ldots,\phi(\ell_{n}),g^{\sharp}(\phi(u_{1}),\ldots,\phi(u_{m}),\phi(x)))\]

\[=f^{\sharp}(\phi(\ell_{1}),\ldots,\phi(\ell_{n}),\phi(g_{x}(u_{1},\ldots,u_{m})))\]

\[=f^{\sharp}(\phi(\ell_{1}),\ldots,\phi(\ell_{n}),\phi(\ell)).\]

\end{enumerate}

Claim: I claim that $L$ is partially endomorphic.

Suppose that $\nabla(g,u_{1},\ldots,u_{m})=0$. Then
\[\mho(g,\phi(u_{1}),\ldots,\phi(u_{m}))=0,\]
hence 
\[(g,\phi(u_{1}),\ldots,\phi(u_{m}))\in \mathrm{Li}(\Gamma(\mathcal{V})).\]
Therefore,
\[(f,\phi(\ell_{1}),\ldots,\phi(\ell_{n}))*(g,\phi(u_{1}),\ldots,\phi(u_{m}))\in \mathrm{Li}(\Gamma(\mathcal{V})),\]
but
\[(f,\phi(\ell_{1}),\ldots,\phi(\ell_{n}))*(g,\phi(u_{1}),\ldots,\phi(u_{m}))\]
\[=(g,f^{\sharp}(\phi(\ell_{1}),\ldots,\phi(\ell_{n}),\phi(u_{1})),\ldots,f^{\sharp}(\phi(\ell_{1}),\ldots,\phi(\ell_{n}),\phi(u_{m}))\]
\[=(g,\phi(f^{\sharp}(\ell_{1},\ldots,\ell_{n},u_{1})),\ldots,\phi(f^{\sharp}(\ell_{1},\ldots,\ell_{n},u_{m})),\]
so
\[(g,\phi(f^{\sharp}(\ell_{1},\ldots,\ell_{n},u_{1})),\ldots,\phi(f^{\sharp}(\ell_{1},\ldots,\ell_{n},u_{m}))\in \mathrm{Li}(\Gamma(\mathcal{V})),\]
hence
\[\mho(g,\phi(f^{\sharp}(\ell_{1},\ldots,\ell_{n},u_{1})),\ldots,\phi(f^{\sharp}(\ell_{1},\ldots,\ell_{n},u_{m}))=0,\]
thus
\[\nabla(g,f^{\sharp}(\ell_{1},\ldots,\ell_{n},u_{1}),\ldots,f^{\sharp}(\ell_{1},\ldots,\ell_{n},u_{m}))=0\]
as well. We therefore conclude that $L$ is partially endomorphic.
\end{proof}
We shall write $\mathbf{L}((v_{x})_{x\in X},\mathcal{V},(f^{\sharp})_{f\in E},(\mathfrak{g}^{\mathcal{V}})_{\mathfrak{g\in F}})$ for the algebra denoted by $L$ in Theorem \ref{23rijo}.
\begin{prop}
Let $\mathcal{X}$ be a partially endomorphic algebra.
\begin{enumerate}
\item $\mathcal{X}$ is Laver-like if and only if $\mathcal{X}$ is the surjective homomorphic image of some well-founded partially endomorphic Laver table.

\item $\mathcal{X}$ is locally Laver-like if and only if $\mathcal{X}$ is the surjective homomorphic image of some partially endomorphic Laver table.
\end{enumerate}
\end{prop}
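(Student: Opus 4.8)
The plan is to prove both equivalences by combining the construction machinery already developed in Theorem \ref{23rijo} (the $\mathbf{L}(\dots)$ construction) with the fact that hulls of partially endomorphic Laver tables are multigenic-Laver-table-like, together with the dual fact (Proposition \ref{4j0tuh9uowjmor}) that Laver-likeness of an algebra generated by a set $A$ is detected on generators alone. Throughout, note that by definition a partially endomorphic algebra $\mathcal{X}$ is Laver-like (resp. locally Laver-like) precisely when its hull $\Gamma(\mathcal{X})$ is Laver-like (resp. the appropriate local version), so at the level of hulls we are just transporting the analogous statements already proven for LD-systems and multigenic Laver tables.

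For the ($\Leftarrow$) direction of both parts: a well-founded partially pre-endomorphic Laver table $(L,(f^{\sharp})_{f\in E},F)$ that is endomorphic has the property that its hull $\Gamma(L)$ is an LD-system, and by the well-founded rank assignment in Definition \ref{3hqw3f6l8icg} the hull $\Gamma(L)$ is a multigenic-Laver-table-style algebra — in particular it is Laver-like because the rank function strictly decreases along applications, so any infinite product $(f_0,\overline{\ell_0})*\dots*(f_n,\overline{\ell_n})$ eventually lands in $\mathrm{Li}(\Gamma(L))$ (this is exactly the argument that well-founded multigenic Laver tables are Laver-like, invoked just after Proposition \ref{4j0tuh9uowjmor}). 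Then if $\mathcal{X}$ is a surjective homomorphic image of such an $L$, the homomorphism lifts to a surjective LD-system homomorphism on hulls $\Gamma(L)\to\Gamma(\mathcal{X})$, and since quotients of Laver-like LD-systems are Laver-like, $\Gamma(\mathcal{X})$ is Laver-like, i.e. $\mathcal{X}$ is Laver-like. For the locally Laver-like case one restricts to finite subsets $U\subseteq E$, $V\subseteq F$ and finitely generated subalgebras, applying the same argument to each reduct $(L_{Y,U,V},\dots)$, which is a well-founded partially pre-endomorphic Laver table by the definition of local operability.

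For the ($\Rightarrow$) direction: suppose $\mathcal{X}=(X,(f^{\mathcal{X}})_{f\in E},(\mathfrak{g}^{\mathcal{X}})_{\mathfrak{g}\in F})$ is Laver-like. Pick a generating set and realize $\mathcal{X}$ as $\mathbf{L}((v_x)_{x\in A},\mathcal{X},(f^{\sharp})_{f\in E},(\mathfrak{g}^{\mathcal{X}})_{\mathfrak{g}\in F})$ via Theorem \ref{23rijo}, taking $v_x$ to range over a generating family; Theorem \ref{23rijo} tells us that since $\mathcal{X}$ is Laver-like the set $L=\mathbf{L}(\dots)$ is operable, that $(L,(f^{\sharp})_{f\in E},F)$ is a partially endomorphic Laver table, and that $\phi:L\to X$ is a homomorphism. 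Surjectivity of $\phi$ follows because $\phi$ is a homomorphism whose image contains the generating family $\{v_x\}$ and hence all of $X$ (the image of a homomorphism is a subalgebra, and $\{v_x\}$ generates $\mathcal{X}$). Moreover operability of $L$ means $(L,(f^{\sharp})_{f\in E},F)$ is actually a \emph{well-founded} partially pre-endomorphic Laver table, which gives part 1. For part 2, when $\mathcal{X}$ is merely locally Laver-like one uses instead that $L$ is locally operable (again by Theorem \ref{23rijo} applied to each finitely generated reduct), so $(L,(f^{\sharp})_{f\in E},F)$ is a partially endomorphic Laver table, and $\phi$ is still a surjective homomorphism onto $\mathcal{X}$.

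The main obstacle I expect is bookkeeping rather than a genuine mathematical difficulty: one must be careful that Theorem \ref{23rijo} as stated requires the family $(v_x)_{x\in X}$ to be indexed by the variable set, so to apply it one takes $X$ itself to be (a copy of) a generating set of $\mathcal{X}$ and $v_x$ the corresponding generator, and one must verify that the side condition in Theorem \ref{23rijo} — that $f^{\sharp}(\phi(\ell_1),\dots,\phi(\ell_n),v_x)=v_x$ whenever $(f,\phi(\ell_1),\dots,\phi(\ell_n))\in\mathrm{Li}(\Gamma(\mathcal{X}))$ — is automatic, which it is because $(f,\dots)\in\mathrm{Li}(\Gamma(\mathcal{X}))$ means the inner endomorphism $L_{f,\dots}$ is the identity on $X$. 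A secondary point to handle cleanly is the passage between "the hull is Laver-like" and "the algebra is Laver-like," but this is immediate from the definitions of Laver-like and locally Laver-like partially endomorphic algebras given at the start of this section, together with Proposition \ref{4j0tuh9uowjmor} to reduce Laver-likeness of a quotient to a condition on generators when needed.
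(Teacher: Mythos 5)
Your proposal takes essentially the same approach as the paper: for one direction you invoke Theorem~\ref{23rijo} to realize $\mathcal{X}$ as the image of $\phi:L\to X$ with $L$ operable (hence well-founded) when $\mathcal{X}$ is Laver-like, and for the other you note that well-founded partially endomorphic Laver tables are Laver-like, partially endomorphic Laver tables are locally Laver-like, and (local) Laver-likeness passes to quotients. One small caution: your opening remark that local Laver-likeness can be read off the hull alone is at odds with the paper's explicit warning that the locally Laver-like condition genuinely depends on the symbols in $F$ and is stronger than hull-local-Laver-likeness; however, your actual argument for part 2 correctly works at the level of reducts $(X,(f^{\mathcal{X}})_{f\in U},(g^{\mathcal{X}})_{g\in V})$ rather than the bare hull, so the substance of the proof is unaffected.
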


\begin{proof}
$\rightarrow$ This follows from the facts that every partially endomorphic Laver table is locally Laver-like,
every well-founded partially endomorphic Laver table is Laver-like, and the quotient of a (locally) Laver-like partially endomorphic algebra is a (locally) Laver-like partially endomorphic algebra.

$\leftarrow$ This follows from the fact that the mapping $\phi:L\rightarrow\mathcal{X}$ described in Theorem \ref{23rijo} is a surjective homomorphism, and $L$ is well-founded whenever $\mathcal{X}$ is Laver-like.
\end{proof}
\begin{prop}
\begin{enumerate}
\item Every partially endomorphic Laver-table is of the form
\[\mathbf{L}((v_{x})_{x\in X},\mathcal{V},(f^{\sharp})_{f\in E},(\mathfrak{g}^{\mathcal{V}})_{\mathfrak{g\in F}})\]
for some locally Laver-like partially endomorphic algebra
\[(\mathcal{V},(f^{\sharp})_{f\in E},(\mathfrak{g})_{\mathfrak{g\in F}}).\]

\item Every well-founded partially endomorphic Laver table is of the form
\[\mathbf{L}((v_{x})_{x\in X},\mathcal{V},(f^{\sharp})_{f\in E},(\mathfrak{g}^{\mathcal{V}})_{\mathfrak{g\in F}})\]
for some Laver-like partially endomorphic algebra
\[(\mathcal{V},(f^{\sharp})_{f\in E},(\mathfrak{g})_{\mathfrak{g\in F}}).\]
\end{enumerate}
\end{prop}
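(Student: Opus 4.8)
The plan is to prove both parts at once, realizing a partially endomorphic Laver table as the $\mathbf{L}$-construction applied to \emph{itself}; this is the partially endomorphic analogue of the remark that every multigenic Laver table $M$ over $A$ equals $\mathbf{M}((a)_{a\in A})$. Concretely, given a partially endomorphic Laver table $(L,(f^{\sharp})_{f\in E},(\mathfrak{g})_{\mathfrak{g}\in F})$ over a set of variables $X$, with $L\subseteq\mathbf{T}_{\mathcal{G}}[X]$, I would set $\mathcal{V}=(L,(f^{\sharp})_{f\in E},(\mathfrak{g})_{\mathfrak{g}\in F})$, regarded as a partially endomorphic algebra (so the symbols $\mathfrak{g}\in F$ act on $L$ by term formation, $\mathfrak{g}^{\mathcal{V}}(\ell_{1},\ldots,\ell_{n})=\mathfrak{g}(\ell_{1},\ldots,\ell_{n})$), choose $v_{x}=x$ for each $x\in X$, and claim
\[\mathbf{L}((x)_{x\in X},\mathcal{V},(f^{\sharp})_{f\in E},(\mathfrak{g}^{\mathcal{V}})_{\mathfrak{g}\in F})=L.\]
First I would check this application of Theorem \ref{23rijo} is legitimate: by the proof of the preceding proposition every partially endomorphic Laver table is locally Laver-like, so $\mathcal{V}$ is a locally Laver-like partially endomorphic algebra and $\mathbf{L}((x)_{x\in X},\mathcal{V},\ldots)$ is defined.

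The core observation is that for this particular $\mathcal{V}=(L,(f^{\sharp})_{f\in E})$ and $\ell_{1},\ldots,\ell_{n}\in L$,
\[(f,\ell_{1},\ldots,\ell_{n})\in\mathrm{Li}(\Gamma(\mathcal{V}))\iff \mathrm{rank}(f,\ell_{1},\ldots,\ell_{n})=0\iff f_{x}(\ell_{1},\ldots,\ell_{n})\notin L.\]
The second equivalence is the defining property of rank $0$ in Definition \ref{3hqw3f6l8icg}; for the first, if the rank is $0$ then $f^{\sharp}(\ell_{1},\ldots,\ell_{n},\ell)=\ell$ for all $\ell\in L$, so $(f,\ell_{1},\ldots,\ell_{n})$ is a left-identity of $\Gamma(\mathcal{V})$, whereas if the rank is positive then $f^{\sharp}(\ell_{1},\ldots,\ell_{n},x)=f_{x}(\ell_{1},\ldots,\ell_{n})$ is a proper extension of the term $x$, hence $\neq x$, so $(f,\ell_{1},\ldots,\ell_{n})\notin\mathrm{Li}(\Gamma(\mathcal{V}))$. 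With this in hand I would run the simultaneous induction on $(\mathbf{T}_{\mathcal{G}}[X],\preceq)$ by which Theorem \ref{23rijo} builds the underlying set $L'$ of $\mathbf{L}((x)_{x\in X},\mathcal{V},\ldots)$ together with the homomorphism $\phi\colon L'\to L$, showing at each stage that a term lies in $L'$ iff it lies in $L$ and that $\phi$ acts there as the identity: for a variable $x$ both hold trivially since $v_{x}=x$; for $\mathfrak{g}(\ell_{1},\ldots,\ell_{n})$ membership in $L'$ and in $L$ are each equivalent (by the induction hypothesis and the closure conditions on $L$) to $\ell_{1},\ldots,\ell_{n}$ all lying there, and $\phi(\mathfrak{g}(\ell_{1},\ldots,\ell_{n}))=\mathfrak{g}(\phi(\ell_{1}),\ldots,\phi(\ell_{n}))=\mathfrak{g}(\ell_{1},\ldots,\ell_{n})$; for $f_{x}(\ell_{1},\ldots,\ell_{n})$ the defining rule for $L'$ together with the displayed equivalence gives $f_{x}(\ell_{1},\ldots,\ell_{n})\in L'\iff f_{x}(\ell_{1},\ldots,\ell_{n})\in L$, and when it lies in both, $\phi(f_{x}(\ell_{1},\ldots,\ell_{n}))=f^{\sharp}(\phi(\ell_{1}),\ldots,\phi(\ell_{n}),v_{x})=f^{\sharp}(\ell_{1},\ldots,\ell_{n},x)=f_{x}(\ell_{1},\ldots,\ell_{n})$ since $\mathrm{rank}(f,\ell_{1},\ldots,\ell_{n})>0$. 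As the operations $(f^{\sharp})_{f\in E}$ of a partially pre-endomorphic Laver table are uniquely determined by its underlying set of terms (Definition \ref{3hqw3f6l8icg}), the equality $L'=L$ of term-sets forces the operations of $\mathbf{L}((x)_{x\in X},\mathcal{V},\ldots)$ to coincide with those of $L$, so the two algebras are literally equal.

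Finally I would close the two cases. For (1), $\mathcal{V}=L$ is locally Laver-like as already noted, so $L$ has the required form. For (2), if $L$ is a well-founded partially endomorphic Laver table then, again by the proof of the preceding proposition, $\mathcal{V}=L$ is Laver-like; the last clause of Theorem \ref{23rijo} then guarantees that the term-set produced by the construction is operable, i.e.\ $\mathbf{L}((x)_{x\in X},\mathcal{V},\ldots)$ is itself a well-founded partially endomorphic Laver table equal to $L$. I expect the only genuinely delicate point to be the bookkeeping in the simultaneous induction, in particular ensuring the "locally operable'' version of the $\mathbf{L}$-construction restricts correctly to the finite fragments $L_{Y,U,V}$ so that $L'=L$ can be verified fragment by fragment; but this runs exactly parallel to the verification already carried out inside the proof of Theorem \ref{23rijo}, so no new ideas are required.
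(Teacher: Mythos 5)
The paper states this proposition without a proof, but the remark immediately after Theorem \ref{enbfwu92} establishes the multigenic analogue in one line (``since $A\subseteq M$, we have $M=\mathbf{M}((a)_{a\in A})$''), and your self-application approach --- taking $\mathcal{V}=L$ with $v_{x}=x$, checking that rank zero, $\mathrm{Li}(\Gamma(L))$-membership, and $f_{x}(\ell_{1},\ldots,\ell_{n})\notin L$ all coincide, and then running the $\preceq$-induction to force $\mathbf{L}((x)_{x\in X},L)=L$ with $\phi=\mathrm{id}$ --- is exactly the intended generalization. The argument is correct; the only cosmetic slip is your attribution of ``every partially endomorphic Laver table is locally Laver-like'' (and the well-founded/Laver-like variant) to the ``proof'' of the preceding proposition, where in fact that proof merely invokes these as facts rather than proving them, so you are relying on the same unproved assertion the paper does.
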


Suppose that $(X,*)$ is a permutative LD-system and $\mathcal{F}$ is a set of operations on $X$ such that for each
$f\in\mathcal{F}$ the identity $f(x*x_{1},\ldots,x*x_{n})=x*f(x_{1},\ldots,x_{n})$ is satisfied.
Then define an equivalence relation $\simeq_{\mathrm{cmx}}^{*,\mathcal{F}}$ on $X$ where we set
$x\simeq y$ if and only if 
\[t(a_{1},\ldots,a_{n},x)\in \mathrm{Li}(X)\leftrightarrow t(a_{1},\ldots,a_{n},y)\in \mathrm{Li}(X)\]
whenever $a_{1},\ldots,a_{n}\in X$. Then $\simeq_{\mathrm{cmx}}^{*,\mathcal{F}}$ is a congruence on $(X,*,\mathcal{F})$, and
$\simeq_{\mathrm{cmx}}^{*,\mathcal{F}}$ is the largest congruence such that if
$x\simeq_{\mathrm{cmx}}^{*,\mathcal{F}}y$, then $\mathrm{crit}(x)=\mathrm{crit}(y)$. If $\simeq_{\mathrm{cmx}}^{*,\mathcal{F}}$
is the identity relation, then we shall say that $(X,*,\mathcal{F})$ is critically simple over $*$. The following result gives a duality

\begin{prop}
\begin{enumerate}
\item Suppose that $*$ is a binary operation on a set $V$, and $\mathcal{V}=(V,*,\mathcal{F})$ is an locally Laver-like endomorphic algebra which is critically simple over $*$. Furthermore, suppose that $X$ is a set, $v_{x}\in V$ for each $x\in X$, and that $(v_{x})_{x\in X}$ generates $V$ in the algebra $(V,*,\mathcal{F})$. Then there is a unique isomorphism 
\[j:(V,*,\mathcal{F})\rightarrow\mathbf{L}((v_{x})_{x\in X},*,\mathcal{F})/\equiv^{*,\mathcal{F}}_{\mathrm{cmx}}\]
such that $j(v_{x})=[x]$ for each $x\in X$.

\item Suppose that $(L,*,\mathcal{F})$ is a partially endomorphic Laver table where $*$ is a binary operation on $L$. Then
\[(L,*,\mathcal{F})=\mathbf{L}(([x])_{x\in X},(L,*,\mathcal{F})/\equiv^{*,\mathcal{F}}_{\mathrm{cmx}}).\]
\end{enumerate}
\end{prop}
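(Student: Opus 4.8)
The plan is to deduce both statements from Theorem~\ref{23rijo} together with the fact, recorded just before the proposition, that $\equiv^{*,\mathcal{F}}_{\mathrm{cmx}}$ is the largest congruence of $(X,*,\mathcal{F})$ all of whose classes consist of elements with a common critical point, following the pattern of the duality between multigenic Laver tables and critically simple Laver-like LD-systems. For~(1), put $L=\mathbf{L}((v_{x})_{x\in X},*,\mathcal{F})$ and let $\phi\colon L\to V$ be the homomorphism furnished by Theorem~\ref{23rijo}, so $\phi(x)=v_{x}$ for $x\in X$; since the $x$ ($x\in X$) generate $L$, the $v_{x}$ generate $V$, and $\phi$ fixes these generators, $\phi$ is surjective. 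From the construction in Theorem~\ref{23rijo} — the map $\nabla(f,\ell_{1},\ldots,\ell_{n})=\mho(f,\phi(\ell_{1}),\ldots,\phi(\ell_{n}))$ and the fact that $\mathrm{rank}(f,\ell_{1},\ldots,\ell_{n})=0$ exactly when $\nabla(f,\ell_{1},\ldots,\ell_{n})=0$ — one reads off that $\Gamma(\phi)$ both preserves and reflects $\mathrm{Li}$: $(f,\ell_{1},\ldots,\ell_{n})\in\mathrm{Li}(\Gamma(L))$ if and only if $(f,\phi(\ell_{1}),\ldots,\phi(\ell_{n}))\in\mathrm{Li}(\Gamma(V))$. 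Applying this to $x^{n}*y$ versus $\phi(x)^{n}*\phi(y)=\phi(x^{n}*y)$ shows that $\phi$ preserves and reflects the preorder $\mathrm{crit}(\cdot)\le\mathrm{crit}(\cdot)$, and in particular $u\in\mathrm{Li}(L)\Leftrightarrow\phi(u)\in\mathrm{Li}(V)$.

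Next I show $\ker(\phi)=\equiv^{*,\mathcal{F}}_{\mathrm{cmx}}$ on $L$. If $\phi(u)=\phi(v)$, then for every term $t$ over the language $(*,\mathcal{F})$ and all $a_{1},\ldots,a_{n}\in L$, $t(a_{1},\ldots,a_{n},u)\in\mathrm{Li}(L)$ if and only if $t(\phi(a_{1}),\ldots,\phi(a_{n}),\phi(u))\in\mathrm{Li}(V)$ if and only if $t(\phi(a_{1}),\ldots,\phi(a_{n}),\phi(v))\in\mathrm{Li}(V)$ if and only if $t(a_{1},\ldots,a_{n},v)\in\mathrm{Li}(L)$, so $u\equiv^{*,\mathcal{F}}_{\mathrm{cmx}}v$; hence $\ker(\phi)\subseteq\equiv^{*,\mathcal{F}}_{\mathrm{cmx}}$. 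For the converse, $\theta=\{(\phi(u),\phi(v))\mid u\equiv^{*,\mathcal{F}}_{\mathrm{cmx}}v\}$ is a well-defined congruence on $V$ (well-defined because $\ker(\phi)\subseteq\equiv^{*,\mathcal{F}}_{\mathrm{cmx}}$), and if $(\phi(u),\phi(v))\in\theta$ then $\mathrm{crit}(u)=\mathrm{crit}(v)$ in $L$, whence $\mathrm{crit}(\phi(u))=\mathrm{crit}(\phi(v))$ in $V$ because $\phi$ preserves and reflects the critical preorder; thus $\theta$ is a congruence on $V$ whose classes have constant critical point, so by critical simplicity of $\mathcal{V}$ over $*$ it is trivial, i.e.\ $\equiv^{*,\mathcal{F}}_{\mathrm{cmx}}\subseteq\ker(\phi)$. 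By the homomorphism theorem $\phi$ induces an isomorphism $L/\equiv^{*,\mathcal{F}}_{\mathrm{cmx}}\to V$ sending $[x]$ to $v_{x}$; its inverse is the required $j$, and any homomorphism $V\to L/\equiv^{*,\mathcal{F}}_{\mathrm{cmx}}$ with $v_{x}\mapsto[x]$ is determined on the generating set $\{v_{x}\mid x\in X\}$, giving uniqueness.

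For~(2), let $q\colon L\to L/\equiv^{*,\mathcal{F}}_{\mathrm{cmx}}$ be the quotient map; it is onto, $q(x)=[x]$ for $x\in X$, and $L/\equiv^{*,\mathcal{F}}_{\mathrm{cmx}}$ is locally Laver-like because $L$ is and quotients of locally Laver-like partially endomorphic algebras are locally Laver-like. Apply Theorem~\ref{23rijo} with $\mathcal{V}=L/\equiv^{*,\mathcal{F}}_{\mathrm{cmx}}$ and $v_{x}=[x]$ to produce $L'=\mathbf{L}(([x])_{x\in X},L/\equiv^{*,\mathcal{F}}_{\mathrm{cmx}})\subseteq\mathbf{T}_{\mathcal{G}}[X]$; the claim is that $L'=L$ as algebras. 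Since $\equiv^{*,\mathcal{F}}_{\mathrm{cmx}}$ has classes of constant critical point, $q$ preserves and reflects $\mathrm{Li}$, so for $\ell_{1},\ldots,\ell_{n}\in L$ one has $(f,[\ell_{1}],\ldots,[\ell_{n}])\notin\mathrm{Li}(\Gamma(L/\equiv^{*,\mathcal{F}}_{\mathrm{cmx}}))$ if and only if $(f,\ell_{1},\ldots,\ell_{n})\notin\mathrm{Li}(\Gamma(L))$ if and only if $f_{x}(\ell_{1},\ldots,\ell_{n})\in L$, the last equivalence being a feature of the construction of $L$ in Definition~\ref{3hqw3f6l8icg}; comparing the membership clause~(iii) of Theorem~\ref{23rijo} with the membership rule for $L$ (the clauses for variables and for $\mathfrak{g}\in F$ being literally the same on both sides), an induction on $(\mathbf{T}_{\mathcal{G}}[X],\preceq)$ yields $\ell\in L'\Leftrightarrow\ell\in L$. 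Finally, the operations $(f^{\sharp})_{f\in E}$ of a partially pre-endomorphic Laver table are uniquely determined by its underlying set of terms via the recursion of Definition~\ref{3hqw3f6l8icg}, and the operations $\mathfrak{g}\in F$ are term formation on both sides, so $L'$ and $L$ carry the same structure; hence $\mathbf{L}(([x])_{x\in X},L/\equiv^{*,\mathcal{F}}_{\mathrm{cmx}})=L$.

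I expect the main obstacle to be the inclusion $\equiv^{*,\mathcal{F}}_{\mathrm{cmx}}\subseteq\ker(\phi)$ in~(1): it is the only point at which critical simplicity of $\mathcal{V}$ is used, and it rests on transporting the ``constant critical point on classes'' property from $L$ down to a congruence on $V$ with the same property, which in turn depends on first extracting from the proof of Theorem~\ref{23rijo} that $\phi$ both preserves and reflects $\mathrm{Li}$ (equivalently, the critical preorder). Everything else is either a direct appeal to Theorem~\ref{23rijo} or a routine verification by recursion on the term ordering $\preceq$.
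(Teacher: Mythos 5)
Your proposal is correct, and in fact the paper states this proposition without proof, so there is nothing in the text to compare against; what you have written is the natural argument and it works. In~(1) the key points are exactly as you lay them out: Theorem~\ref{23rijo} gives the surjective homomorphism $\phi$ from $L=\mathbf{L}((v_{x})_{x\in X},*,\mathcal{F})$ to $V$, Claim~1 in its proof (the equivalence $\nabla(f,\overline{\ell})=0\Leftrightarrow\mho(f,\phi(\overline{\ell}))=0\Leftrightarrow f_{x}(\overline{\ell})\notin L$) yields that $\phi$ both preserves and reflects $\mathrm{Li}$ (hence the critical preorder), and the two inclusions $\ker\phi\subseteq\;\equiv^{*,\mathcal{F}}_{\mathrm{cmx}}\;\subseteq\ker\phi$ go through: the first from reflection of $\mathrm{Li}$ applied to all terms, the second by transporting $\equiv^{*,\mathcal{F}}_{\mathrm{cmx}}$ to a congruence $\theta$ on $V$ with constant critical point on classes and invoking critical simplicity. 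For~(2), the simultaneous induction over $(\mathbf{T}_{\mathcal{G}}[X],\preceq)$ establishing both $L=L'$ and that the construction map $\Phi$ agrees with the quotient map $q$ is the right device, and you are correct that the final step needs the fact (which holds by Definition~\ref{3hqw3f6l8icg}) that the operations $(f^{\sharp})_{f\in E}$ of a partially pre-endomorphic Laver table are recovered uniquely from the underlying set of terms via the transfinite recursion.

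One minor presentational point worth making explicit: the argument throughout silently identifies $\mathrm{Li}(\Gamma(L))$ with $\mathrm{Li}(L,*)$; this is legitimate because here $E=\{*\}$, so $\Gamma(\cdot,\{*\})$ is isomorphic to the original LD-system, but it would sharpen the writeup to say so. You should also note why $(f,\ell_{1},\ldots,\ell_{n})\in\mathrm{Li}(\Gamma(L))$ iff $\mathrm{rank}(f,\ell_{1},\ldots,\ell_{n})=0$: when the rank is positive, $f^{\sharp}(\overline{\ell},x)=f_{x}(\overline{\ell})\neq x$, so $(f,\overline{\ell})$ cannot be a left identity. These are small gaps in exposition rather than in the mathematics.
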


The following result shows that all endomorphic algebras can be easily obtained from algebras of the form $(X,*,(t_{i})_{i\in I})$ where $(X,*)$ is an LD-system and each $t_{i}$ satisfies the identity \[x*t_{i}(x_{1},...,x_{n})=t_{i}(x*x_{1},...,x*x_{n}).\]

\begin{prop}
Suppose that $(X,\mathcal{F},\mathcal{G})$ is a partially endomorphic algebra where $\mathcal{F}$ contains at least one fundamental operation. Then there is some 
algebra 
\[(Y,*,(f^{+})_{f\in\mathcal{F}},(\mathfrak{g}^{+})_{\mathfrak{g}\in\mathcal{G}})\] along with a function $\phi:X\rightarrow Y$ where
\begin{enumerate}
\item $(Y,*)$ is an LD-system,
	
\item $f^{+}$ has arity $n_{f}$ for each $f\in\mathcal{F}$,

\item $\mathfrak{g}^{+}$ has arity $n_{\mathfrak{g}}$ for each $\mathfrak{g}\in\mathcal{G}$,

\item $y*f^{+}(y_{1},\dots,y_{n_{f}})=f^{+}(y*y_{1},\dots,y*y_{n_{f}})$ whenever $f\in\mathcal{F},y,y_{1},...,y_{n_{f}}\in Y$

\item $y*\mathfrak{g}^{+}(y_{1},\dots,y_{n_{\mathfrak{g}}})=
\mathfrak{g}^{+}(y*y_{1},\dots,y*y_{n_{\mathfrak{g}}})$ whenever
$\mathfrak{g}\in\mathcal{G}$ and $y,y_{1},...,y_{n_{\mathfrak{g}}}\in\mathcal{G}$.

Define a mapping $f^{++}:Y^{n_{f}+1}\rightarrow Y$ by letting
\[f^{++}(y_{1},\dots,y_{n_{f}},y)=f^{+}(y_{1},\dots,y_{n_{f}})*y.\]

\item The function $\phi$ is an injective homomorphism from \[(X,\mathcal{F},\mathcal{G})\]
to
\[(Y,(f^{++})_{f\in\mathcal{F}},(\mathfrak{g}^{+})_{\mathfrak{g}\in\mathcal{G}}).\]

\item If $(X,\mathcal{F},\mathcal{G})$ is permutative, then
$(Y,*)$ is also permutative.

\item If $(X,\mathcal{F},\mathcal{G})$ is Laver-like, then $(Y,*)$ is also
Laver-like.
\end{enumerate}
\end{prop}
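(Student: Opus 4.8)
The final statement asks us to embed an arbitrary partially endomorphic algebra $(X,\mathcal{F},\mathcal{G})$ (with $\mathcal{F}\neq\emptyset$) into an algebra built from an LD-system $(Y,*)$ together with operations $f^{+}$ satisfying the ``translation'' identity $y*f^{+}(\mathbf{y})=f^{+}(y*\mathbf{y})$, via the recipe $f^{++}(\mathbf{y},y)=f^{+}(\mathbf{y})*y$. The plan is to take $Y$ to be (a quotient of) the free LD-system on the disjoint union of $X$ with a set of ``operator symbols,'' or more concretely to realize $Y$ inside the hull $\Gamma(\mathcal{X})$ of $\mathcal{X}$, which is already an LD-system precisely when $\mathcal{X}$ is endomorphic. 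The key observation is that $\Gamma(\mathcal{X})$ has underlying set $\bigcup_{f\in E}\{f\}\times X^{n_f}$, and the map sending $x\in X$ to any element of the form $(f,x,\ldots,x)$ does not quite capture $X$; instead one should use the partial hull construction and the fact (from the proposition on partial hulls) that $\Gamma_m(\mathcal{X})$ is an LD-system iff $\mathcal{X}$ is endomorphic. So first I would reduce to the endomorphic case: if $\mathcal{X}$ is merely partially endomorphic, pass to the endomorphic algebra whose hull is an LD-system by treating the non-distributive operations of $F$ as extra ``$\mathfrak{g}$''-operations that will become the $\mathfrak{g}^{+}$.

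Concretely, here is the construction I would carry out. Fix any $f_0\in\mathcal{F}$ of arity $n_0$. Let $Y=\Gamma(\mathcal{X})$ when $\mathcal{F}$ consists of self-distributive operations, and more generally build $Y$ as the hull of the endomorphic part together with the free structure generated by $X$ under the $F$-operations; in all cases $(Y,*)$ is an LD-system by the remark following the definition of the hull $\Gamma(\mathcal{X})$. Define $\phi:X\to Y$ by $\phi(x)=(f_0,x,\ldots,x)$ — the diagonal embedding into the $f_0$-component — which is injective because the coordinate projection recovers $x$. For each $f\in\mathcal{F}$ of arity $n_f$, define $f^{+}:Y^{n_f}\to Y$ by $f^{+}((s_1,\mathbf{a}_1),\ldots,(s_{n_f},\mathbf{a}_{n_f}))$ in the obvious way so that in the LD-system $Y$ we get the left-multiplication operators of $\Gamma(\mathcal{X})$; the crucial identity $y*f^{+}(y_1,\ldots,y_{n_f})=f^{+}(y*y_1,\ldots,y*y_{n_f})$ is then exactly the statement that each inner endomorphism $L_{*,y}$ of the LD-system $Y$ commutes with the component-wise operation $f^{+}$, which in turn is a restatement of the endomorphism property built into the hull. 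Then $f^{++}(\mathbf{y},y)=f^{+}(\mathbf{y})*y$ reproduces, under $\phi$, the original operation $f^{\mathcal{X}}$ applied to the diagonal entries, using self-distributivity to collapse the diagonal; similarly $\mathfrak{g}^{+}$ is defined componentwise to make $\phi$ a homomorphism on the $\mathcal{G}$-operations and to satisfy the translation identity. Verifying $\phi(f^{\mathcal{X}}(x_1,\ldots,x_{n_f}))=f^{++}(\phi(x_1),\ldots,\phi(x_{n_f}),\text{(some base point)})$ requires an unwinding of the hull multiplication plus an induction exactly parallel to the proof that $\Gamma(\mathcal{X})$ is an LD-system.

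For the permutativity and Laver-likeness clauses, I would invoke the earlier results directly: the ``partial hull'' proposition already states that $\Gamma_m(\mathcal{X})$ is permutative (resp.\ Laver-like) iff $\mathcal{X}$ is, and $\Gamma(\mathcal{X})\cong\Gamma(\Gamma_m(\mathcal{X}))$ as LD-systems, so permutativity/Laver-likeness of $\mathcal{X}$ transfers to $Y=\Gamma(\mathcal{X})$ via that isomorphism together with Proposition \ref{49tt4ngio224gf} on subalgebras and Theorem \ref{uh3d5tjh4uyyuin} on quotients. The remaining arithmetic — that $\mathrm{Li}$ behaves correctly, that $\phi$ is injective, and that the two translation identities hold — is routine bookkeeping about the hull.

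The main obstacle I anticipate is the handling of the non-distributive operations $\mathcal{G}$ and, relatedly, making precise the reduction from ``partially endomorphic'' to a setting where an honest LD-system hull exists: the hull $\Gamma(\mathcal{X})$ is only literally an LD-system when $\mathcal{X}$ is fully endomorphic, so when $F\neq\emptyset$ one must instead take $Y$ to be something like the LD-system freely generated over the endomorphic reduct with the $\mathcal{G}$-operations adjoined as ``passive'' operations that are automatically translation-equivariant — and then check that $\phi$ lands in a subalgebra on which everything is defined and that injectivity survives. Getting the bookkeeping of arities right (note $f^{+}$ has arity $n_f$ but $f^{++}$ and the original $f^{\mathcal{X}}$ have arity $n_f+1$ after the hull shift, matching the $mn_f+1$ convention in the partial-hull proposition) is the delicate point; once the correct $Y$ is written down, each verification is a short induction of the type already performed several times in Sections 4 and 5.
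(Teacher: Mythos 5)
Your high-level plan — take a hull construction, embed $X$ diagonally, define $f^{+}$ so that $f^{++}(\mathbf{y},y)=f^{+}(\mathbf{y})*y$ recovers the original operation — is essentially the paper's route, and the diagonal map $\phi(x)=(t,x,\ldots,x)$ for a fixed $t\in\mathcal{F}$ is exactly what the paper uses. However, there is a genuine conceptual gap in the "obstacle" you raise and in how you propose to deal with it. You write that $\Gamma(\mathcal{X})$ "is only literally an LD-system when $\mathcal{X}$ is fully endomorphic," and on that basis you propose replacing $Y$ by something like a free LD-system generated over the endomorphic reduct with the $\mathcal{G}$-operations adjoined. That detour is unnecessary and would make the injectivity and translation-equivariance checks much harder: the whole point is that "$(X,\mathcal{F},\mathcal{G})$ is partially endomorphic" already means every $L_{t,\mathbf{a}}$ with $t\in\mathcal{F}$ is an endomorphism of all of $\mathcal{X}$, and in particular of the $\mathcal{F}$-reduct $(X,(f^{\mathcal{X}})_{f\in\mathcal{F}})$. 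Hence that reduct is a fully endomorphic algebra, and $Y=\Gamma(X,\mathcal{F})$ is already an LD-system with no modification. The paper simply takes $Y=\Gamma(X,\mathcal{F})$ and ignores $\mathcal{G}$ when building the LD-system; the $\mathcal{G}$-operations enter only through the definitions of the new functions $\mathfrak{g}^{+}$ on $Y$.

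The second gap is that you never write down $f^{+}$ and $\mathfrak{g}^{+}$, and the phrase "componentwise" does not actually determine them because the inputs $(s_i,\mathbf{a}_i)$ carry vectors $\mathbf{a}_i$ of different lengths $n_{s_i}$. The paper defines $f^{+}$ by extracting only the \emph{first} coordinate from each input tuple and packaging those into an $f$-tagged tuple, and defines $\mathfrak{g}^{+}$ by applying $\mathfrak{g}$ to those first coordinates and then placing the result diagonally into a $t$-tagged tuple. Those specific choices are what make both translation identities hold by direct computation from the hull multiplication, and they also make the verification of $\phi(f(x_1,\dots,x_{n_f},x))=f^{++}(\phi(x_1),\dots,\phi(x_{n_f}),\phi(x))$ a one-line unwinding of the hull operation rather than, as you suggest, an appeal to self-distributivity to "collapse the diagonal." Finally, items 7 and 8 of the proposition are immediate from the definitions (a partially endomorphic algebra is by definition permutative or Laver-like precisely when $\Gamma(X,\mathcal{F})$ is), so no appeal to the partial-hull proposition $\Gamma_m$ or to the subalgebra/quotient propositions is needed; invoking them here is a red herring.
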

\begin{proof}
Let $Y=\Gamma(X,\mathcal{F})$. Let $t\in\mathcal{F}$ be an operation. Define
$\phi:X\rightarrow Y$ by letting $\phi(x)=(t,x,\dots,x)$, and define a mapping
\[f^{+}((f_{1},x_{1,1},\dots,x_{1,n_{f_{1}}}),\dots,(
f_{n_{f}},x_{n_{f},1},\dots,x_{n_{f},n_{f_{n_{f}}}}))\]
\[=(f,x_{1,1},\dots,x_{n_{f_{1}},1}).\]

Let
\[\mathfrak{g}^{+}((f_{1},x_{1,1},\dots,x_{1,n_{f_{1}}}),\dots,(
f_{n_{\mathfrak{g}}},x_{n_{\mathfrak{g}},1},\dots,x_{n_{\mathfrak{g}},n_{f_{n_{\mathfrak{g}}}}}))\]
\[=(t,\mathfrak{g}(x_{1,1},\dots,x_{n_{\mathfrak{g}},1}),\dots,
\mathfrak{g}(x_{1,1},\dots,x_{n_{\mathfrak{g}},1})).\]

We have

\[(h,x_{1},\dots,x_{n_{h}})*f^{+}((f_{1},x_{1,1},\dots,x_{1,n_{f_{1}}}),\dots,(
f_{n_{f}},x_{n_{f},1},\dots,x_{n_{f},n_{f_{n_{f}}}}))\]
\[=(h,x_{1},\dots,x_{n_{h}})*(f,x_{1,1},\dots,x_{n_{f},1})\]
\[=(f,h(x_{1},\dots,x_{n_{h}},x_{1,1}),\dots,h(x_{1},\dots,x_{n_{h}},x_{n_{f},1})).\]

We have

\[f^{+}((h,x_{1},\dots,x_{n_{h}})*(f_{1},x_{1,1},\dots,x_{1,n_{f_{1}}}),\dots,
(h,x_{1},\dots,x_{n_{h}})*(f_{n_{f}},x_{n_{f},1},\dots,x_{n_{f},n_{f_{n_{f}}}}))\]
\[=f^{+}((f_{1},h(x_{1},\dots,x_{n_{h}},x_{1,1}),\dots,h(x_{1},\dots,x_{n_{h}},x_{1,n_{f}})),\dots,\]
\[(f_{n_{f}},h(x_{1},\dots,x_{n_{h}},x_{n_{f},1}),\dots,h(x_{1},\dots,x_{n_{h}},x_{n_{f},n_{f_{n_{f}}}}))\]
\[=(f,h(x_{1},\dots,x_{n_{h}},x_{1,1}),\dots,h(x_{1},\dots,x_{n_{h}},x_{n_{f},1})).\]

We therefore conclude that
\[(h,x_{1},\dots,x_{n_{h}})*f^{+}((f_{1},x_{1,1},\dots,x_{1,n_{f_{1}}}),\dots,(
f_{n_{f}},x_{n_{f},1},\dots,x_{n_{f},n_{f_{n_{f}}}}))\]
\[=f^{+}((h,x_{1},\dots,x_{n_{h}})*(f_{1},x_{1,1},\dots,x_{1,n_{f_{1}}}),\dots,
(h,x_{1},\dots,x_{n_{h}})*(f_{n_{f}},x_{n_{f},1},\dots,x_{n_{f},n_{f_{n_{f}}}})).\]

We have

\[(h,x_{1},\dots,x_{n_{h}})*\mathfrak{g}^{+}((f_{1},x_{1,1},\dots,x_{1,n_{f_{1}}}),\dots,
(f_{n_{\mathfrak{g}}},x_{n_{\mathfrak{g}},1},\dots,x_{n_{\mathfrak{g}},n_{f_{n_{\mathfrak{g}}}}}))\]

\[=(h,x_{1},\dots,x_{n_{\mathfrak{g}}})*(t,\mathfrak{g}(x_{1,1},\dots,x_{n_{\mathfrak{g}},1}),\dots,\mathfrak{g}(x_{1,1},\dots,x_{n_{\mathfrak{g}},1}))\]

\[=(t,h(x_{1},\dots,x_{n_{\mathfrak{g}}},\mathfrak{g}(x_{1,1},\dots,x_{n_{\mathfrak{g}},1})),\dots,h(x_{1},\dots,x_{n_{\mathfrak{g}}},\mathfrak{g}(x_{1,1},\dots,x_{n_{\mathfrak{g}},1})))\]

\[=(t,\mathfrak{g}(h(x_{1},\dots,x_{n_{\mathfrak{g}}},x_{1,1}),\dots,\mathfrak{g}(h(x_{1},\dots,x_{n_{\mathfrak{g}}},x_{n_{\mathfrak{g}},1}))))\]

We have 

\[\mathfrak{g}^{+}((h,x_{1},\dots,x_{n_{h}})*(f_{1},x_{1,1},\dots,x_{1,n_{f_{1}}}),\dots,
(h,x_{1},\dots,x_{n_{h}})*(f_{n_{\mathfrak{g}}},x_{n_{\mathfrak{g}},1},\dots,x_{n_{\mathfrak{g}},n_{f_{n_{\mathfrak{g}}}}}))\]

\[=\mathfrak{g}^{+}((f_{1},h(x_{1},\dots,x_{n_{h}},x_{1,1}),\dots,
h(x_{1},\dots,x_{n_{h}},x_{1,n_{f_{1}}})),\dots,\]
\[(f_{n_{\mathfrak{g}}},h(x_{1},\dots,x_{n_{h}},x_{n_{\mathfrak{g}},1}),\dots,
h(x_{1},\dots,x_{n_{h}},x_{n_{\mathfrak{g}},n_{f_{n_{\mathfrak{g}}}}})))\]

\[=(t,\mathfrak{g}(h(x_{1},\dots,x_{n_{h}},x_{1,1}),\dots,h(x_{1},\dots,x_{n_{h}},x_{n_{\mathfrak{g}},1})).\]

We therefore conclude that

\[(h,x_{1},\dots,x_{n_{h}})*\mathfrak{g}^{+}((f_{1},x_{1,1},\dots,x_{1,n_{f_{1}}}),\dots,
(f_{n_{\mathfrak{g}}},x_{n_{\mathfrak{g}},1},\dots,x_{n_{\mathfrak{g}},n_{f_{n_{\mathfrak{g}}}}}))\]
\[=\mathfrak{g}^{+}((h,x_{1},\dots,x_{n_{h}})*(f_{1},x_{1,1},\dots,x_{1,n_{f_{1}}}),\dots,\]
\[(h,x_{1},\dots,x_{n_{h}})*(f_{n_{\mathfrak{g}}},x_{n_{\mathfrak{g}},1},\dots,x_{n_{\mathfrak{g}},n_{f_{n_{\mathfrak{g}}}}})).\]

We have

\[\phi(f(x_{1},\dots,x_{n_{f}},x))\]
\[=(t,f(x_{1},\dots,x_{n_{f}},x),\dots,f(x_{1},\dots,x_{n_{f}},x)).\]

On the other hand,

\[f^{++}(\phi(x_{1}),\dots,\phi(x_{n_{f}}),\phi(x))\]

\[=f^{+}(\phi(x_{1}),\dots,\phi(x_{n_{f}}))*\phi(x)\]

\[=f^{+}((t,x_{1},\dots,x_{1}),\dots,(t,x_{n_{f}},\dots,x_{n_{f}}))*(t,x,\dots,x)\]

\[=(f,x_{1},\dots,x_{n_{f}})*(t,x,\dots,x)\]

\[=(t,f(x_{1},\dots,x_{n_{f}},x),\dots,f(x_{1},\dots,x_{n_{f}},x))\]

\[=\phi(f(x_{1},\dots,x_{n_{f}},x)).\]

We have

\[\phi(\mathfrak{g}(x_{1},\dots,x_{n_{\mathfrak{g}}}))\]
\[=(t,\mathfrak{g}(x_{1},\dots,x_{n_{\mathfrak{g}}}),\dots,
\mathfrak{g}(x_{1},\dots,x_{n_{\mathfrak{g}}}))\]
and

\[\mathfrak{g}^{+}(\phi(x_{1}),\dots,\phi(x_{n_{\mathfrak{g}}}))\]

\[=\mathfrak{g}^{+}((t,x_{1},\dots,x_{1}),\dots,(t,x_{n_{\mathfrak{g}}},\dots,x_{n_{\mathfrak{g}}}))\]

\[=(t,\mathfrak{g}(x_{1},\dots,x_{n_{\mathfrak{g}}})).\]

We have
\[\phi(\mathfrak{g}(x_{1},\dots,x_{n_{\mathfrak{g}}}))\]
\[=\mathfrak{g}^{+}(\phi(x_{1}),\dots,\phi(x_{n_{\mathfrak{g}}})).\]

We therefore conclude that $\phi$ is a homomorphism.
\end{proof}

% ----------------------------------------------------------------
\subsection{Twistedly endomorphic algebras}
We shall now generalize the notion of a Laver table to the notion of a twistedly endomorphic Laver table.
The twistedly endomorphic Laver tables satisfy modified versions of the self-distributivity identities such as
\[t(a,b,t(x,y,z))=t(t(a,b,x),t(b,a,y),t(a,b,z)).\]

\begin{defn}
\label{utgowevbatoiuwbutri}
Let $E$ be a set of function symbols. Let $X$ be a set. Let $\mathcal{G}=\{f_{x}\mid x\in X\}$.
Let $L\subseteq\mathbf{T}_{\mathcal{G}}(X)$ be a downwards closed subset where $X\subseteq L$ and
$f_{x}(\ell_{1},\ldots,\ell_{n_{f}})\in L$ if and only if $f_{y}(\ell_{1},\ldots,\ell_{n_{f}})\in L$. Now suppose that each
function symbol $f$ is $n_{f}$-ary.

Suppose that if $f,g\in E$ and $i\in\{1,\ldots,n_{f}\}$, then 
\[\sigma_{f,g,i}:\{1,\ldots,n_{g}\}\rightarrow\{1,\ldots,n_{g}\}\]
is a function.

Suppose now that $*$ is the operation on $\bigcup_{f\in E}\{f\}\times\{1,\ldots,n_{f}\}$ defined by
$(f,i)*(g,j)=(g,\sigma_{g,f,i}(j))$. The operation $*$ is associative if and only if $\sigma_{f,h,i}\circ\sigma_{f,g,j}=\sigma_{f,g,\sigma_{g,h,i}(j)}$ whenever $f,g,h\in E$ and $i\in\{1,\ldots,n_{h}\}$ and $j\in\{1,\ldots,n_{g}\}$.
Now suppose that $*$ is an associative operation on $\bigcup_{f\in E}\{f\}\times\{1,\ldots,n_{f}\}$.

Let $\Omega=\{(f,\ell_{1},\ldots,\ell_{n})\mid f\in E,\ell_{1},\ldots,\ell_{n}\in L\}$.
Let $\nabla:\Omega\rightarrow\mathbf{On}$ be a function such that
$\nabla(f,\ell_{1},\ldots,\ell_{n})=0$ if and only if $f_{x}(\ell_{1},\ldots,\ell_{n})\not\in L$.

Suppose that for each $f\in E$ there is an $n+1$-ary operation $f^{\sharp}:L^{n+1}\rightarrow L$ such that
\begin{enumerate}
\item $f^{\sharp}(\ell_{1},\ldots,\ell_{n},\ell)=\ell$ whenever $\nabla(f,\ell_{1},\ldots,\ell_{n})=0$,

\item $f^{\sharp}(\ell_{1},\ldots,\ell_{n},x)=f_{x}(\ell_{1},\ldots,\ell_{n})$ whenever $\nabla(f,\ell_{1},\ldots,\ell_{n})>0$,

\item If $\nabla(f,\ell_{1},\ldots,\ell_{m})>0$, then
\[f^{\sharp}(\ell_{1},\ldots,\ell_{m},g_{x}(u_{1},\ldots,u_{n}))\]
\[=g^{\sharp}(f^{\sharp}(\ell_{\sigma_{f,g,1}(1)},\ldots,\ell_{\sigma_{f,g,1}(m)},u_{1}),\ldots
,f^{\sharp}(\ell_{\sigma_{f,g,n}(1)},\]
\[\ldots,\ell_{\sigma_{f,g,n}(m)},u_{n}),f_{x}(\ell_{1},\ldots,
\ell_{m})),\]
and

\item $\nabla(f,\ell_{1},\ldots,\ell_{m})$
\[>\nabla(g,f^{\sharp}(\ell_{\sigma_{f,g,1}(1)},\ldots,\ell_{\sigma_{f,g,1}(m)},u_{1}),\ldots,
f^{\sharp}(\ell_{\sigma_{f,g,n}(1)},\ldots,\ell_{\sigma_{f,g,n}(m)},u_{n}))\] whenever
$(f,\ell_{1},\ldots,\ell_{m})>0$.
\end{enumerate}
Then we shall call the algebra $(L,(f^{\sharp})_{f\in E})$ a well-founded twistedly pre-endomorphic Laver table of type $*$.

Suppose now that $V$ is a set and $f^{\bullet}:V^{n_{f}+1}\rightarrow V$ is a function for all $f\in E$.
Then we shall call the algebra $(V,(f^{\bullet})_{f\in E})$ a \index{twistedly endomorphic algebra}\emph{twistedly endomorphic algebra} of type $*$ if for all $f,g$, the following identity is satisfied:
\[f^{\bullet}(x_{1},\ldots,x_{n_{f}},g^{\bullet}(y_{1},\ldots,y_{n_{g}},z))\]
\[=g^{\bullet}(f^{\bullet}(x_{\sigma_{f,g,1}(1)},\ldots,x_{\sigma_{f,g,1}(n_{f})},y_{1}),\]
\[\ldots,f^{\bullet}(x_{\sigma_{f,g,n_{g}}(1)},\ldots,x_{\sigma_{f,g,n_{g}}(n_{f})},y_{n_{g}}),f^{\bullet}(x_{1},\ldots,x_{n_{f}},z)).\]
Define the \index{twisted hull}\emph{twisted hull} $\Gamma^{*}(V,(f^{\bullet})_{f\in E})$ to be the algebra with underlying set $\bigcup_{f\in E}\{f\}\times V^{n_{f}}$ and an operation $*$ defined by letting
\[(f,x_{1},\ldots,x_{n_{f}})*(g,y_{1},\ldots,y_{n_{g}})\]
\[=(g,f^{\bullet}(x_{\sigma_{f,g,1}(1)},\ldots,x_{\sigma_{f,g,1}(n_{f})},y_{1}),\ldots\]
\[,f^{\bullet}(x_{\sigma_{f,g,n_{g}}(1)},\ldots,x_{\sigma_{f,g,n_{g}}(n_{f})},y_{n_{g}})).\]
\end{defn}

\begin{prop}
Suppose that $V$ is a set and $(V,F)$ is an algebra. Then $(V,F)$ is a well-founded twistedly endomorphic algebra of type
$*$ if and only if the twisted hull $\Gamma^{*}(V,F)$ is an LD-system.
\end{prop}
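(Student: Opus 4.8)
The statement is an ``if and only if'' between a purely equational condition on $(V,F)$ and the assertion that the twisted hull $\Gamma^{*}(V,F)$ is an LD-system. The plan is to unwind both sides to the same identity. First I would fix notation: write $F=(f^{\bullet})_{f\in E}$, and for a tuple $\mathbf{x}=(x_1,\dots,x_{n_f})\in V^{n_f}$ and a permutation (or function) $\sigma:\{1,\dots,n_g\}\to\{1,\dots,n_f\}$ write $\mathbf{x}^{\sigma}=(x_{\sigma(1)},\dots,x_{\sigma(n_f)})$, so that the twisted hull operation reads
\[
(f,\mathbf{x})*(g,\mathbf{y})=\bigl(g,\ f^{\bullet}(\mathbf{x}^{\sigma_{f,g,1}},y_1),\dots,f^{\bullet}(\mathbf{x}^{\sigma_{f,g,n_g}},y_{n_g})\bigr).
\]
The key observation I expect to use repeatedly is that the second coordinate of $(f,\mathbf{x})*(g,\mathbf{y})$ is the function symbol $g$, so for a product of three hull elements $(f,\mathbf{x})$, $(g,\mathbf{y})$, $(h,\mathbf{z})$, both sides of $u*(v*w)=(u*v)*(u*w)$ land in $\{h\}\times V^{n_h}$, and the LD-identity is equivalent to the equality of the $n_h$ component entries.

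Next I would compute both sides of the self-distributivity law on arbitrary hull elements. For the left side, $(g,\mathbf{y})*(h,\mathbf{z})$ has $i$-th entry $g^{\bullet}(\mathbf{y}^{\sigma_{g,h,i}},z_i)$, and then applying $(f,\mathbf{x})*{-}$ gives $i$-th entry $f^{\bullet}\!\bigl(\mathbf{x}^{\sigma_{f,h,i}},\,g^{\bullet}(\mathbf{y}^{\sigma_{g,h,i}},z_i)\bigr)$. For the right side, $(f,\mathbf{x})*(g,\mathbf{y})=(g,\mathbf{y}')$ where $y'_j=f^{\bullet}(\mathbf{x}^{\sigma_{f,g,j}},y_j)$, and $(f,\mathbf{x})*(h,\mathbf{z})=(h,\mathbf{z}')$ where $z'_i=f^{\bullet}(\mathbf{x}^{\sigma_{f,h,i}},z_i)$; then $(g,\mathbf{y}')*(h,\mathbf{z}')$ has $i$-th entry $g^{\bullet}\!\bigl((\mathbf{y}')^{\sigma_{g,h,i}},z'_i\bigr)$. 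Writing out $(\mathbf{y}')^{\sigma_{g,h,i}}$ componentwise, its $k$-th coordinate is $y'_{\sigma_{g,h,i}(k)}=f^{\bullet}(\mathbf{x}^{\sigma_{f,g,\sigma_{g,h,i}(k)}},y_{\sigma_{g,h,i}(k)})$. So the right side's $i$-th entry is
\[
g^{\bullet}\!\Bigl(f^{\bullet}(\mathbf{x}^{\sigma_{f,g,\sigma_{g,h,i}(1)}},y_{\sigma_{g,h,i}(1)}),\dots,f^{\bullet}(\mathbf{x}^{\sigma_{f,g,\sigma_{g,h,i}(n_g)}},y_{\sigma_{g,h,i}(n_g)}),\ f^{\bullet}(\mathbf{x}^{\sigma_{f,h,i}},z_i)\Bigr).
\]
Comparing with the twistedly endomorphic identity in Definition \ref{utgowevbatoiuwbutri} applied to $f^{\bullet}(\mathbf{x}^{?},g^{\bullet}(\mathbf{y}^{\sigma_{g,h,i}},z_i))$: that identity (with $x$-tuple $\mathbf{x}^{\sigma_{f,h,i}}$, $y$-tuple $\mathbf{y}^{\sigma_{g,h,i}}$, and last argument $z_i$) rewrites the left side's $i$-th entry as $g^{\bullet}$ of the tuple whose $k$-th entry is $f^{\bullet}\bigl((\mathbf{x}^{\sigma_{f,h,i}})^{\sigma_{f,g,k}},(\mathbf{y}^{\sigma_{g,h,i}})_k\bigr)$, with last argument $f^{\bullet}(\mathbf{x}^{\sigma_{f,h,i}},z_i)$. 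Here $(\mathbf{x}^{\sigma_{f,h,i}})^{\sigma_{f,g,k}}=\mathbf{x}^{\sigma_{f,h,i}\circ\sigma_{f,g,k}}$ and $(\mathbf{y}^{\sigma_{g,h,i}})_k=y_{\sigma_{g,h,i}(k)}$. Thus the two expressions agree exactly when $\sigma_{f,h,i}\circ\sigma_{f,g,k}=\sigma_{f,g,\sigma_{g,h,i}(k)}$ for all relevant indices — which is precisely the associativity condition on $*$ (restricted to $\bigcup_f\{f\}\times\{1,\dots,n_f\}$) spelled out in Definition \ref{utgowevbatoiuwbutri}, and that associativity is part of the standing hypothesis on the type $*$.

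With the computation in hand, both directions follow. For the forward direction, if $(V,F)$ is a twistedly endomorphic algebra of type $*$, then for all choices of $(f,\mathbf{x}),(g,\mathbf{y}),(h,\mathbf{z})$ the displayed $i$-th entries of the two sides coincide by the identity plus associativity, so $\Gamma^{*}(V,F)$ satisfies $u*(v*w)=(u*v)*(u*w)$, i.e. it is an LD-system. Conversely, if $\Gamma^{*}(V,F)$ is an LD-system, then for any $f,g\in E$ and any $\mathbf{x}\in V^{n_f}$, $\mathbf{y}\in V^{n_g}$, $z\in V$, apply the LD-identity to $(f,\mathbf{x})$, $(g,\mathbf{y})$, and $(h,\mathbf{z})$ with $h=g$ and $\mathbf{z}=(z,z,\dots,z)$ — more precisely, I would pick $h$ and the tuple $\mathbf{z}$ so that reading off a single component entry of the two sides reproduces exactly the twistedly endomorphic identity; since for a unary-in-$z$ reading it suffices to look at one coordinate $i$ and the identity as written in Definition \ref{utgowevbatoiuwbutri} has $g$ in the role of the inner symbol and an arbitrary last argument, taking $h=g$, $i$ arbitrary, and $z_i=z$ recovers it. Hence $(V,F)$ is twistedly endomorphic of type $*$.

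\textbf{Main obstacle.} The bookkeeping with the indices $\sigma_{f,g,i}$ is where all the content sits: one must be scrupulous that the composition $\sigma_{f,h,i}\circ\sigma_{f,g,k}$ appearing from nesting tuple-permutations on the left matches the index $\sigma_{f,g,\sigma_{g,h,i}(k)}$ appearing on the right, and that this is exactly the associativity of the operation $(f,i)*(g,j)=(g,\sigma_{g,f,i}(j))$ as stated — note the swapped order of subscripts in that definition, which I would double-check carefully when translating the associativity law $\sigma_{f,h,i}\circ\sigma_{f,g,j}=\sigma_{f,g,\sigma_{g,h,i}(j)}$ into the tuple-reindexing identity. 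Once the index algebra is lined up, the equivalence is a direct componentwise comparison, exactly parallel to the fact (noted earlier in the excerpt) that an endomorphic algebra is endomorphic iff its hull is an LD-system; the twisted case only decorates each comparison with the permutations $\sigma_{f,g,i}$.
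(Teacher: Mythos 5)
Your forward direction (``twistedly endomorphic'' implies ``hull is LD'') is sound: applying the twistedly endomorphic identity with $x_j$ replaced by $x_{\sigma_{f,h,i}(j)}$, $y_k$ by $y_{\sigma_{g,h,i}(k)}$, and $z$ by $z_i$, and then using the associativity law $\sigma_{f,h,i}\circ\sigma_{f,g,k}=\sigma_{f,g,\sigma_{g,h,i}(k)}$ to rewrite the nested indices, matches the $i$-th coordinate of $(f,\mathbf{x})*((g,\mathbf{y})*(h,\mathbf{z}))$ to that of $((f,\mathbf{x})*(g,\mathbf{y}))*((f,\mathbf{x})*(h,\mathbf{z}))$ exactly as you wrote.

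The converse is where you have a gap. Taking $h=g$ and extracting the $i$-th coordinate of the hull's LD identity does not ``recover'' the twistedly endomorphic identity verbatim; what it gives is
\[
f^{\bullet}\bigl(x_{\sigma_{f,g,i}(1)},\ldots,x_{\sigma_{f,g,i}(n_f)},\,g^{\bullet}(y_{\sigma_{g,g,i}(1)},\ldots,y_{\sigma_{g,g,i}(n_g)},z)\bigr)=\cdots,
\]
which is only a special instance in which the $f^{\bullet}$-arguments and the $g^{\bullet}$-arguments have been pulled back through $\sigma_{f,g,i}$ and $\sigma_{g,g,i}$ respectively. Those instances exhaust the free variables of the twistedly endomorphic identity only when you can choose $h,i$ so that $\sigma_{f,h,i}$ and $\sigma_{g,h,i}$ are bijections, and Definition~\ref{utgowevbatoiuwbutri} only asks the $\sigma_{f,g,i}$ to be functions; your own closing summary slips into calling them ``permutations,'' which is exactly the unwarranted assumption. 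Concretely, with one ternary symbol $t$ (so $n_t=2$) the constant choice $\sigma_{t,t,i}(j)=1$ is associative, and on $V=\{0,1\}$ with $t(a,b,c)=b\wedge c$ the hull operation never looks at the second coordinate of any tuple, so $\Gamma^{*}(V,F)$ is an LD-system, while the twistedly endomorphic identity $t(x_1,x_2,t(y_1,y_2,z))=t(t(x_1,x_1,y_1),t(x_1,x_1,y_2),t(x_1,x_2,z))$ fails (take $x_1=0$, $x_2=y_2=z=1$). So ``set $h=g$ and read off a coordinate'' does not close the converse; you would need either an additional hypothesis on the $\sigma$'s or a genuinely different argument.
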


\begin{defn}
If $(V,F)$ is an algebra and the twisted hull $\Gamma^{*}(V,F)$ is Laver-like, then we shall call
$(V,F)$ a \index{twistedly Laver-like algebra}\emph{twistedly Laver-like algebra of type $*$}. We shall call $(L,(f^{\sharp})_{f\in E})$ a twistedly endomorphic Laver table of type $*$ if $L$ is a \index{twistedly endomorphic algebra}\emph{twistedly endomorphic algebra of type $*$}.
\end{defn}
\begin{thm}
The algebra $(L,(f^{\sharp})_{f\in E})$ is a twistedly endomorphic Laver table if and only if
$\nabla(g,u_{1},\ldots,u_{n})=0$ implies that
\[\nabla(g,f^{\sharp}(\ell_{\sigma_{f,g,1}(1)},\ldots,\ell_{\sigma_{f,g,1}(m)},u_{1}),\ldots
,f^{\sharp}(\ell_{\sigma_{f,g,n}(1)},
\ldots,\ell_{\sigma_{f,g,n}(m)},u_{n}))=0.\]
\end{thm}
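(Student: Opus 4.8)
The statement is the exact twisted analogue of Theorem \ref{2ijr50o}, so the plan is to mimic that proof structure, replacing the endomorphic identities by their twisted counterparts from Definition \ref{utgowevbatoiuwbutri} and keeping careful track of the permutations $\sigma_{f,g,i}$ on the index tuples. Recall that the twisted hull $\Gamma^{*}(L,(f^{\sharp})_{f\in E})$ is an LD-system precisely when $(L,(f^{\sharp})_{f\in E})$ is a twistedly endomorphic algebra, and $\mathrm{Li}(\Gamma^{*}(L,(f^{\sharp})_{f\in E}))$ consists exactly of the tuples $(g,u_{1},\dots,u_{n})$ with $\nabla(g,u_{1},\dots,u_{n})=0$ (this is encoded by clause (1) of Definition \ref{utgowevbatoiuwbutri}). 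The claimed equivalence is therefore just the statement that $(L,(f^{\sharp})_{f\in E})$ is twistedly endomorphic if and only if $\mathrm{Li}(\Gamma^{*})$ is a left-ideal of $\Gamma^{*}$, phrased in terms of the rank function $\nabla$.

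\textbf{The easy direction ($\rightarrow$).} First I would assume $(L,(f^{\sharp})_{f\in E})$ is a twistedly endomorphic Laver table and suppose $\nabla(g,u_{1},\dots,u_{n})=0$, i.e.\ $g_{x}(u_{1},\dots,u_{n})\not\in L$, so $g^{\sharp}(u_{1},\dots,u_{n},u_{i})=u_{i}$ for each $i$. Apply $f^{\sharp}(\ell_{1},\dots,\ell_{m},-)$ to both sides; by the twisted-endomorphism identity,
\[
f^{\sharp}(\overline{\ell},u_{i})=f^{\sharp}(\overline{\ell},g^{\sharp}(u_{1},\dots,u_{n},u_{i}))
=g^{\sharp}\!\bigl(f^{\sharp}(\ell_{\sigma_{f,g,1}},u_{1}),\dots,f^{\sharp}(\ell_{\sigma_{f,g,n}},u_{n}),f^{\sharp}(\overline{\ell},u_{i})\bigr),
\]
(writing $\ell_{\sigma_{f,g,j}}$ for the reindexed tuple $\ell_{\sigma_{f,g,j}(1)},\dots,\ell_{\sigma_{f,g,j}(m)}$). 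Thus $f^{\sharp}(\overline{\ell},u_{i})$ is a fixed point of the $g$-th inner operation applied to the reindexed tuple, which by the characterization of $\mathrm{Li}$ forces $g_{x}(f^{\sharp}(\ell_{\sigma_{f,g,1}},u_{1}),\dots,f^{\sharp}(\ell_{\sigma_{f,g,n}},u_{n}))\not\in L$, that is, $\nabla(g,f^{\sharp}(\ell_{\sigma_{f,g,1}},u_{1}),\dots,f^{\sharp}(\ell_{\sigma_{f,g,n}},u_{n}))=0$. This direction is essentially the same bookkeeping as in Theorem \ref{2ijr50o}, only with the $\sigma$'s carried along.

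\textbf{The hard direction ($\leftarrow$) and the main obstacle.} Assuming the rank condition, I must verify the full twisted-endomorphism identity for the $f^{\sharp}$; since there are no $\mathfrak{f}\in F$ in Definition \ref{utgowevbatoiuwbutri} this reduces to showing
\[
f^{\sharp}(\overline{\ell},g^{\sharp}(\overline{u},\ell))
=g^{\sharp}\!\bigl(f^{\sharp}(\ell_{\sigma_{f,g,1}},u_{1}),\dots,f^{\sharp}(\ell_{\sigma_{f,g,n}},u_{n}),f^{\sharp}(\overline{\ell},\ell)\bigr)
\]
by induction on $((f,\overline{\ell}),(g,\overline{u}),\ell)\in(\Omega,<)\times(\Omega,<)\times(L,\prec)$, with cases $f_{x}(\overline{\ell})\not\in L$; $f_{x}(\overline{\ell})\in L$, $g_{x}(\overline{u})\not\in L$; then $\ell=x$; $\ell=\mathfrak{h}(v_{1},\dots,v_{r})$; and $\ell=h_{x}(v_{1},\dots,v_{r})$. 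The main obstacle, and the genuinely new feature relative to Theorem \ref{2ijr50o}, is the last case: unwinding $f^{\sharp}(\overline{\ell},g^{\sharp}(\overline{u},h_{x}(v_1,\dots,v_r)))$ produces a triple composition of inner operations of $f$, $g$, $h$ with the index tuples reindexed by products $\sigma_{f,g,\cdot}\circ\sigma_{g,h,\cdot}$ on one branch and by $\sigma_{f,h,\cdot}$ followed by $\sigma_{g,\dots}$ on the other, so the two sides match only by invoking the cocycle-type associativity condition $\sigma_{f,h,i}\circ\sigma_{f,g,j}=\sigma_{f,g,\sigma_{g,h,i}(j)}$ built into Definition \ref{utgowevbatoiuwbutri} (which is exactly the hypothesis that $*$ on $\bigcup_{f}\{f\}\times\{1,\dots,n_f\}$ is associative). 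I would isolate that identity as the key lemma and feed it into the $\ell=h_{x}(\dots)$ case; the remaining cases ($\ell=x$, $\ell=\mathfrak{h}(\cdots)$, and the two trivial ones) go through by the same rewriting as in Theorem \ref{2ijr50o}, now keeping the $\sigma$'s attached to the tuples and using the inductive hypothesis to push $f^{\sharp}$ through $g^{\sharp}$ at the immediate subterms, whose ranks are strictly smaller by clause (4) of Definition \ref{utgowevbatoiuwbutri}.
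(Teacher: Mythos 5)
Your proposal follows exactly the route the paper itself prescribes: the paper's entire proof is the single sentence ``The proof follows the same steps as Theorem \ref{2ijr50o} with obvious modifications,'' and you have spelled out what those modifications are, case by case, on the same triple-lexicographic induction over $(\Omega,<)\times(\Omega,<)\times(L,\prec)$. Your one genuine addition is worth flagging: you correctly identify that the final inductive case $\ell=h_{x}(v_{1},\dots,v_{r})$ in the twisted setting requires the cocycle condition $\sigma_{f,h,i}\circ\sigma_{f,g,j}=\sigma_{f,g,\sigma_{g,h,i}(j)}$ (equivalently, associativity of $*$ on $\bigcup_{f}\{f\}\times\{1,\dots,n_{f}\}$), which is exactly why Definition \ref{utgowevbatoiuwbutri} builds that requirement in; this is left implicit in the paper and is the one place the twisted argument is not merely a notational reindexing of Theorem \ref{2ijr50o}. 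Two small housekeeping points: the twisted definition has no auxiliary function symbols $\mathfrak{g}\in F$, so the case $\ell=\mathfrak{h}(v_{1},\dots,v_{r})$ you list is vacuous (you acknowledge there is no $F$, so this is just a leftover in the case list); and in the easy direction the step from ``each $f^{\sharp}(\overline{\ell},u_{i})$ is fixed by the inner $g$-operation'' to ``$g_{x}$ of that tuple is not in $L$'' is asserted rather than argued, but the paper's own proof of Theorem \ref{2ijr50o} elides this identically, so you are at the same level of rigor as the source.
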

\begin{proof}
The proof follows the same steps as Theorem \ref{2ijr50o} with obvious modifications.
\end{proof}
\begin{thm}
Unless otherwise defined, let us use the notation found in Definition \ref{utgowevbatoiuwbutri}.
Suppose now that the algebra $(V,(f^{\bullet})_{f\in E})$ is a twistedly Laver-like algebra of type $*$. Let $v_{x}\in V$ for each $x\in X$. Now define the subset $L\subseteq T$ and a function $\phi:L\rightarrow V$ inductively by the following rules:
\begin{enumerate}
\item $x\in L$ and $\phi(x)=v_{x}$ for all $x\in X$.

\item $f_{x}(\ell_{1},\ldots,\ell_{n})\in L$ if and only if $\ell_{1},\ldots,\ell_{n}\in L$ and $(f,\phi(\ell_{1}),\ldots,\phi(\ell_{n}))\not\in \mathrm{Li}(\Gamma^{*}(X,E))$.

\item If $f_{x}(\ell_{1},\ldots,\ell_{n})\in L$, then 
\[\phi(f_{x}(\ell_{1},\ldots,\ell_{n}))=f^{\bullet}(\phi(\ell_{1}),\ldots ,\phi(\ell_{n}),v_{x}).\]
\end{enumerate}
Then one can endow $L$ with operations $f^{\sharp}$ for each $f\in E$ such that $(L,(f^{\sharp})_{f\in E})$ is a well-founded twistedly endomorphic Laver table. Furthermore, the mapping $\phi:L\rightarrow V$ is a homomorphism.
\end{thm}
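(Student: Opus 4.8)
The plan is to follow the same strategy used in Theorem \ref{23rijo}, the non-twisted analogue, adapting each step to carry the permutation symbols $\sigma_{f,g,i}$ through. First I would assume, as in the proof of Theorem \ref{23rijo}, that $(V,(f^{\bullet})_{f\in E})$ is twistedly Laver-like (so that $\Gamma^{*}(V,(f^{\bullet})_{f\in E})$ is Laver-like and $\preceq^{*}$ is well-founded), and reduce the general twistedly-Laver-like-of-type-$*$ case to this one as an immediate corollary. Define $\mho:\Gamma^{*}(V,(f^{\bullet})_{f\in E})\rightarrow\mathbf{On}$ by letting $\mho(f,a_{1},\ldots,a_{n_f})$ be the rank of $(f,a_{1},\ldots,a_{n_f})$ in the well-founded ordering $\preceq^{*}$, and then push this down to $\Omega=\{(f,\ell_1,\ldots,\ell_{n_f})\mid f\in E,\ell_i\in L\}$ by setting $\nabla(f,\ell_1,\ldots,\ell_{n_f})=\mho(f,\phi(\ell_1),\ldots,\phi(\ell_{n_f}))$. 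The first claim, exactly parallel to Claim 1 of Theorem \ref{23rijo}, is that $\nabla(f,\ell_1,\ldots,\ell_{n_f})=0$ iff $f_x(\ell_1,\ldots,\ell_{n_f})\not\in L$, which follows from rule (2) in the statement together with the definition of $\mho$ and $\mathrm{Li}(\Gamma^{*}(X,E))$.

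Next I would define the operations $f^{\sharp}:L^{n_f+1}\rightarrow L$ by recursion on $(\nabla(f,\ell_1,\ldots,\ell_{n_f}),\ell)\in\mathbf{On}\times(L,\preceq)$, using the four clauses from Definition \ref{utgowevbatoiuwbutri}: $f^{\sharp}(\overline{\ell},\ell)=\ell$ when the rank is $0$; $f^{\sharp}(\overline{\ell},x)=f_x(\overline{\ell})$ when the rank is positive; the $\mathfrak{g}\in F$ clause distributing $f^{\sharp}$ over $\mathfrak{g}$ if that makes sense; and the $g\in E$ clause
\[f^{\sharp}(\ell_1,\ldots,\ell_m,g_x(u_1,\ldots,u_n))=g^{\sharp}(f^{\sharp}(\ell_{\sigma_{f,g,1}(1)},\ldots,\ell_{\sigma_{f,g,1}(m)},u_1),\ldots,f^{\sharp}(\ell_{\sigma_{f,g,n}(1)},\ldots,\ell_{\sigma_{f,g,n}(m)},u_n),f_x(\ell_1,\ldots,\ell_m))\]
whenever the right-hand side has already been constructed. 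The core of the argument is then a simultaneous induction on $(\nabla(f,\overline{\ell}),\ell)$ showing that $f^{\sharp}(\overline{\ell},\ell)$ is in fact always defined and that
\[\phi(f^{\sharp}(\ell_1,\ldots,\ell_{n},\ell))=f^{\bullet}(\phi(\ell_1),\ldots,\phi(\ell_{n}),\phi(\ell)),\]
broken into the usual four cases (rank zero; $\ell=x$; $\ell=\mathfrak{g}(\overline{u})$; $\ell=g_x(\overline{u})$). In the last case the key point is that
\[\nabla(g,f^{\sharp}(\ell_{\sigma_{f,g,1}(1)},\ldots,u_1),\ldots)=\mho\bigl((f,\phi(\ell_1),\ldots,\phi(\ell_m))*(g,\phi(u_1),\ldots,\phi(u_n))\bigr)<\mho(f,\phi(\overline{\ell}))=\nabla(f,\overline{\ell}),\]
which is where I use that $\phi$ respects $f^{\sharp}$ on the smaller terms $u_i$ (the induction hypothesis) together with the defining relation of the twisted hull $\Gamma^{*}$ and the well-foundedness of $\preceq^{*}$. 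This gives simultaneously that $L$ is operable (rank strictly decreases) and that $\phi$ is a homomorphism with respect to each $f^{\sharp}$; homomorphy with respect to the $\mathfrak{g}\in F$ is immediate from rule (2) of the inductive definition of $\phi$.

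Finally, to see that $(L,(f^{\sharp})_{f\in E})$ is a \emph{twistedly endomorphic} Laver table I would invoke the theorem in the excerpt characterizing twisted endomorphy: it suffices to check that $\nabla(g,u_1,\ldots,u_n)=0$ implies $\nabla(g,f^{\sharp}(\ell_{\sigma_{f,g,1}(1)},\ldots,\ell_{\sigma_{f,g,1}(m)},u_1),\ldots)=0$. This follows exactly as in the endomorphy claim at the end of Theorem \ref{23rijo}: if $\nabla(g,\overline{u})=0$ then $(g,\phi(u_1),\ldots,\phi(u_n))\in\mathrm{Li}(\Gamma^{*}(V,(f^{\bullet})_{f\in E}))$, hence $(f,\phi(\overline{\ell}))*(g,\phi(\overline{u}))\in\mathrm{Li}$ since $\mathrm{Li}$ of a Laver-like LD-system is a left-ideal, and unwinding the definition of $*$ in $\Gamma^{*}$ together with $\phi(f^{\sharp}(\ell_{\sigma\cdots}(1),\ldots,u_i))=f^{\bullet}(\phi(\ell_{\sigma\cdots}),\ldots,\phi(u_i))$ gives $(g,\phi(f^{\sharp}(\ldots,u_1)),\ldots)\in\mathrm{Li}$, i.e. $\nabla(g,f^{\sharp}(\ldots,u_1),\ldots)=0$. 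The main obstacle I anticipate is purely bookkeeping: keeping the indices $\sigma_{f,g,i}(j)$ straight through the recursion and in the rank-decrease inequality, making sure the permutation of the arguments $\ell_1,\ldots,\ell_m$ in the twisted clause matches exactly the permutation occurring in the definition of $*$ on $\Gamma^{*}$, so that the identity $\phi\circ f^{\sharp}=f^{\bullet}\circ(\phi,\ldots,\phi)$ and the strict rank drop both go through; the associativity hypothesis $\sigma_{f,h,i}\circ\sigma_{f,g,j}=\sigma_{f,g,\sigma_{g,h,i}(j)}$ is exactly what guarantees $\Gamma^{*}$ is well-defined as an LD-system and hence what makes the rank comparison legitimate.
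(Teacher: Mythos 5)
The paper itself omits the proof of this theorem, but the intended argument is plainly the one you give: adapt the proof of Theorem \ref{23rijo} by threading the twist functions $\sigma_{f,g,i}$ through the rank computation, using the fact that the twisted hull's product $(f,\phi(\overline{\ell}))*(g,\phi(\overline{u}))$ is exactly $(g,\phi(f^{\sharp}(\ell_{\sigma_{f,g,1}(1)},\ldots,u_1)),\ldots)$ to obtain the strict rank drop, and then invoking the left-ideal property of $\mathrm{Li}(\Gamma^{*})$ for twisted endomorphy. Your proposal is correct and matches that approach; the only small inaccuracy is the reference to a $\mathfrak{g}\in F$ clause, which does not arise here since Definition \ref{utgowevbatoiuwbutri} has no auxiliary set $F$ of non-self-distributive symbols.
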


Unlike the case with the endomorphic Laver tables, we currently do not have very good techniques for producing twistedly endomorphic Laver tables.

\begin{exam}
Suppose that $(X,*)$ is a Laver-like LD-system. Define an $n+1$-ary operation on $X$ by letting $t(x_{1},\ldots ,x_{n},x)=x_{1}*x$.
Then the algebra $(X,f)$ is a twistedly endomorphic algebra of type $\bullet$ whenever $\bullet$ is an associative operation on $\{1,\ldots,n\}$ with
$1\bullet 1=1$.
\end{exam}

% ----------------------------------------------------------------
\section{More multigenic Laver tables and the final matrix}
In this chapter, we shall continue our investigations of the multigenic Laver tables with a special focus on the multigenic Laver tables of the form $(A^{\leq 2^{n}})^{+}$. In particular, we shall establish combinatorial properties of multigenic Laver tables as well as efficient algorithms for computing the self-distributive operation in multigenic Laver tables.

% ----------------------------------------------------------------
\subsection{The final matrix}
While the multigenic Laver tables $(A^{\leq 2^{n}})^{+}$ have rather large cardinality,
all of the combinatorial complexity in the algebra $(A^{\leq 2^{n}})^{+}$ is contained inside the classical Laver table $A_{n}$ along with another $2^{n}\times 2^{n}$ table which we shall call the $n$-th final matrix. Furthermore, in practice, individual entries in the $n$-th final matrix can be computed efficiently as long as one is able to compute the classical Laver table $A_{n}$. On the other hand, the final matrices contain many combinatorial intricacies which have not been found in the classical Laver tables.

We shall write \index{$\mathbf{x}[n]$}$\mathbf{x}[n]$ for the letter in the $n$-th position in the string $\mathbf{x}$ (for example $abcde[4]=d$).

\begin{prop}
Whenever $x,y\in\{1,\ldots,2^{n}\}$ and $1\leq\ell\leq x*_{n}y$, either
\begin{enumerate}[(i)]
\item there is some $i\in\{1,\ldots,x\}$ where $\mathbf{x}*_{n}\mathbf{y}[\ell]=\mathbf{x}[i]$ whenever
$|\mathbf{x}|=x,|\mathbf{y}|=y$, or

\item there is some $i\in\{1,\ldots,y\}$ where $\mathbf{x}*_{n}\mathbf{y}[\ell]=\mathbf{y}[i]$ whenever $|\mathbf{x}|=x,|\mathbf{y}|=y$.
\end{enumerate}
\end{prop}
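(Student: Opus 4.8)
The plan is to prove this by the same double induction that establishes Proposition \ref{t4uh4aP}: descending on $x = |\mathbf{x}|$, and for each $x$ ascending on $y = |\mathbf{y}|$. The key observation that makes the statement meaningful is part (v) of Proposition \ref{t4uh4aP}: the word $\mathbf{x}*_n\mathbf{y}$ only contains letters already occurring in $\mathbf{x}\mathbf{y}$; here we refine this to say that \emph{each individual position} $\ell$ of $\mathbf{x}*_n\mathbf{y}$ copies its letter either from a fixed position $i\le x$ of $\mathbf{x}$ or from a fixed position $i\le y$ of $\mathbf{y}$, and crucially this dependence on positions is uniform over all choices of $\mathbf{x},\mathbf{y}$ of the given lengths. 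So the induction should really be carried out not for a single pair $(\mathbf{x},\mathbf{y})$ but simultaneously for all pairs of lengths $(x,y)$, tracking the position-transfer function.

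First I would handle the base cases. If $\mathbf{x}\in\mathrm{Li}(M)$ (i.e. $x = 2^n$, using that in the classical Laver table the left-identity is the top element), then $\mathbf{x}*_n\mathbf{y}=\mathbf{y}$, so position $\ell$ of the product is position $\ell$ of $\mathbf{y}$ and alternative (ii) holds with $i=\ell$. If $\mathbf{x}\notin\mathrm{Li}(M)$ and $|\mathbf{y}|=1$, then $\mathbf{x}*_n b = \mathbf{x}b$, so for $\ell\le x$ we are in case (i) with $i=\ell$ and for $\ell = x+1$ we are in case (ii) with $i=1$. For the inductive step, write $\mathbf{y}=\mathbf{z}c$ with $|\mathbf{z}| = y-1 \ge 1$, so that $\mathbf{x}*_n\mathbf{y} = (\mathbf{x}*_n\mathbf{z})*_n\mathbf{x}c$. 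Set $\mathbf{w} = \mathbf{x}*_n\mathbf{z}$; by Proposition \ref{t4uh4aP}(iv), $\mathbf{x}$ is a proper prefix of $\mathbf{w}$, so $|\mathbf{w}| > |\mathbf{x}| = x$, hence $\mathbf{w}\notin\mathrm{Li}(M)$ is not automatic — but in any case the length $|\mathbf{w}|$ depends only on $x$ and $|\mathbf{z}|$, not on the actual letters (this follows from Proposition \ref{t4jio} together with the descending induction), so we may apply the inductive hypothesis at the pair of lengths $(|\mathbf{w}|, x+1)$, and also (separately) at $(x, y-1)$ for the subword $\mathbf{w}$ itself.

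The main work is then a bookkeeping composition. Given a position $\ell$ in $\mathbf{x}*_n\mathbf{y} = \mathbf{w}*_n(\mathbf{x}c)$, the outer inductive hypothesis (applied at lengths $(|\mathbf{w}|, x+1)$) tells us that $(\mathbf{w}*_n\mathbf{x}c)[\ell]$ equals either $\mathbf{w}[i]$ for some fixed $i\le|\mathbf{w}|$ or $(\mathbf{x}c)[i]$ for some fixed $i\le x+1$. In the second subcase, $(\mathbf{x}c)[i]$ is $\mathbf{x}[i]$ if $i\le x$ (case (i)) or $c = \mathbf{y}[y]$ if $i = x+1$ (case (ii)), and we are done. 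In the first subcase we must feed $\mathbf{w}[i] = (\mathbf{x}*_n\mathbf{z})[i]$ back into the inner inductive hypothesis (applied at lengths $(x, y-1)$): this says $(\mathbf{x}*_n\mathbf{z})[i]$ is either $\mathbf{x}[i']$ for some fixed $i'\le x$ (case (i)) or $\mathbf{z}[i'] = \mathbf{y}[i']$ for some fixed $i'\le y-1 < y$ (case (ii)). Composing the two position maps gives the required fixed position index, and since each intermediate map was uniform over letter choices, so is the composite. I expect the only real obstacle is the careful verification that all the relevant \emph{lengths} (of $\mathbf{w} = \mathbf{x}*_n\mathbf{z}$, and of the products) depend only on the input lengths and not on the input letters, so that the inductive hypotheses can be invoked at the right "length coordinates"; this is essentially contained in Propositions \ref{t4uh4aP} and \ref{t4jio}, but it needs to be stated explicitly and threaded through the induction. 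Everything else is a routine case split mirroring the definition of $*^M$.
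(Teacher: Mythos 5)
The paper states this proposition without proof; your argument supplies a correct one via the same double induction (descending on $|\mathbf{x}|$, ascending on $|\mathbf{y}|$) that the paper uses for Proposition \ref{t4uh4aP}, and the base cases, the decomposition $\mathbf{x}*_n\mathbf{y}=(\mathbf{x}*_n\mathbf{z})*_n\mathbf{x}c$, and the composition of the two position maps all check out, including that the relevant coordinates $(|\mathbf{w}|,x+1)$ and $(x,y-1)$ lie in the domain of the inductive hypothesis. The one small imprecision is the citation of Proposition \ref{t4jio}: what you actually need is that $|\mathbf{w}|=|\mathbf{x}*_n\mathbf{z}|$ equals $x*_n(y-1)$ and so depends only on the input lengths, which follows because the length map $\mathbf{x}\mapsto|\mathbf{x}|$ is the canonical homomorphism $(A^{\leq 2^n})^+\to A_n$ (or, to avoid circularity with self-distributivity, can simply be carried as an extra clause through the very same double induction).
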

\begin{defn}
Define \index{$M_{n}(x,y,\ell)$}
\[M_{n}:\{(x,y,\ell)\mid x,y\in\{1,\ldots,2^{n}\},1\leq\ell\leq x*_{n}y\}\rightarrow\mathbb{Z}\]
to be the function where
\begin{enumerate}
\item if $M_{n}(x,y,\ell)<0$, then
\[\mathbf{x}*_{n}\mathbf{y}[\ell]=\mathbf{x}[-M_{n}(x,y,\ell)],\]
and
\item if $M_{n}(x,y,\ell)>0$, then
\[\mathbf{x}*_{n}\mathbf{y}[\ell]=\mathbf{y}[M_{n}(x,y,\ell)].\]
\end{enumerate}

In other words, $M_{n}$ is the function where we always have
\[a_{-1}\ldots a_{-x}*_{n}a_{1}\ldots a_{y}=a_{M_{n}(x,y,1)}\ldots a_{M_{n}(x,y,x*y)}.\]
\end{defn}
\begin{prop}
The function $M_{n}$ is the unique function with domain 
\[\{(x,y,\ell)\mid x,y\in\{1,\ldots,2^{n}\},1\leq\ell\leq x*_{n}y\}\]
that satisfies the following properties.
\begin{enumerate}
\item $M_{n}(2^{n},y,\ell)=\ell$ whenever $1\leq\ell\leq x*_{n}y=y$.

\item $M_{n}(x,1,\ell)=-\ell$ whenever $\ell\leq x<2^{n}$.

\item $M_{n}(x,1,x+1)=1$ whenever $x<2^{n}$.

\item $M_{n}(x,y+1,\ell)=-M_{n}(x*y,x+1,\ell)$ whenever $0<M_{n}(x*y,x+1,\ell)\leq x$ and $x,y<2^{n}$.

\item $M_{n}(x,y+1,\ell)=y+1$ whenever $M_{n}(x*y,x+1,\ell)=x+1$ and $x,y<2^{n}$.

\item $M_{n}(x,y+1,\ell)=M_{n}(x,y,-M_{n}(x*y,x+1,\ell))$ whenever $M_{n}(x*y,x+1,\ell)<0$ and $x,y<2^{n}$.
\end{enumerate}
\end{prop}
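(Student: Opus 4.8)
The strategy is to prove both the existence (that $M_n$ as defined satisfies properties 1--6) and the uniqueness claims together, since they follow from the double-induction structure of the classical Laver table $A_n$. The key observation is that the final matrix $M_n$ tracks, letter by letter, where each symbol of the output string $\mathbf{x}*_n\mathbf{y}$ comes from: the recursive definition of $*_n$ on strings, namely $\mathbf{x}*_n\mathbf{y}b = (\mathbf{x}*_n\mathbf{y})*_n\mathbf{x}b$ for $\mathbf{x},\mathbf{y}\notin\mathrm{Li}$, directly mirrors the classical Laver table recursion $x*_n(y+1) = (x*_n y)*_n(x+1)$ (shifted to match the $1,\ldots,2^n$ indexing), and this is exactly what properties 4--6 encode. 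First I would set up the indexing convention from the excerpt, writing elements of $A_n$ as powers $a^x$ and strings of that length, and recall from the earlier development (Proposition \ref{t4uh4aP} and the definition of $*^M$) that $\mathbf{x}*_n\mathbf{y}$ is a well-defined string of length $x*_n y$ whose letters are drawn from the letters of $\mathbf{x}$ and $\mathbf{y}$.

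Next I would verify the base cases. Property 1 is immediate: $2^n\in\mathrm{Li}$-image sense, i.e. $a^{2^n}*^M\mathbf{y}=\mathbf{y}$ (or rather $2^n*_n y = y$ with the output string literally equal to $\mathbf{y}$), so each output letter is the corresponding letter of $\mathbf{y}$, giving $M_n(2^n,y,\ell)=\ell$. Properties 2 and 3 come from the rule $\mathbf{x}*^M b = \mathbf{x}b$ when $\mathbf{x}\notin\mathrm{Li}(M)$: concatenating $\mathbf{x}$ (length $x$) with the single letter $b$ yields a string whose first $x$ letters are those of $\mathbf{x}$ (hence $M_n(x,1,\ell)=-\ell$ for $\ell\le x$) and whose last letter is $b=\mathbf{y}[1]$ (hence $M_n(x,1,x+1)=1$). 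The inductive step handles $\mathbf{y}$ of length $y+1$: writing $\mathbf{y}=\mathbf{w}b$ with $|\mathbf{w}|=y$, we have $\mathbf{x}*_n\mathbf{y} = (\mathbf{x}*_n\mathbf{w})*_n(\mathbf{x}b)$. Here $\mathbf{x}*_n\mathbf{w}$ has length $x*_n y$ and $\mathbf{x}b$ has length $x+1$, so I can apply $M_n(x*y, x+1, -)$ to describe where each letter of the outer product comes from: either from a letter of $\mathbf{x}*_n\mathbf{w}$ (a positive value $\le x$, but careful, indices into a string of length $x*_n y$), or from a letter of $\mathbf{x}b$. If it comes from $\mathbf{x}b$ at position $\le x$, that is a letter of $\mathbf{x}$, giving property 4 with the sign flip. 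If it comes from $\mathbf{x}b$ at position $x+1$, that is the letter $b=\mathbf{y}[y+1]$, giving property 5. If it comes from a letter of $\mathbf{x}*_n\mathbf{w}$ — i.e. $M_n(x*y,x+1,\ell)<0$, pointing to position $-M_n(x*y,x+1,\ell)$ within $\mathbf{x}*_n\mathbf{w}$ — then I recurse via $M_n(x,y,-)$ to trace that letter back to $\mathbf{x}$ or $\mathbf{w}$ (and a letter of $\mathbf{w}$ is a letter of $\mathbf{y}$ at the same position), giving property 6. One subtlety to address: I must check that the positive values of $M_n(x*y, x+1, -)$ that can actually occur are bounded by $x$ or equal to $x+1$ — this is exactly the content of Proposition \ref{t4uh4aP}(v) combined with the fact that the second argument of the outer product has length $x+1$.

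For uniqueness, I would note that properties 1--3 pin down $M_n(x,y,\ell)$ whenever $y=1$ or $x=2^n$, and properties 4--6 express $M_n(x,y+1,-)$ entirely in terms of $M_n(x,y,-)$ and $M_n(x*_n y, x+1, -)$, where $x*_n y > x$ by the periodicity facts for $A_n$ (specifically $x < x*_n y$ when $x<2^n$). Hence one can run an induction on $y$ (outer) with, for fixed $y$, a descending induction on $x$ — precisely the double induction used to compute $A_n$ itself — to show any function satisfying 1--6 must agree with $M_n$. The main obstacle I anticipate is bookkeeping with the index shifts: the excerpt uses $\{1,\ldots,2^n\}$ for $A_n$ but the "natural" string-concatenation recursion wants $0$-indexed or $(y+1)$-indexed positions, and one must be scrupulous that "position $x+1$ in $\mathbf{x}b$" corresponds to the fresh letter while "positions $1,\ldots,x$" correspond to $\mathbf{x}$, and that a letter of $\mathbf{w}$ at position $i$ is the letter of $\mathbf{y}$ at position $i$ (not $i+1$, since $\mathbf{w}$ is a prefix of $\mathbf{y}$). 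Getting these alignments exactly right, and confirming that the three cases of property 4--6 are exhaustive and mutually exclusive given the constraints on the range of $M_n$, is where the real care is needed; the algebraic content is otherwise a direct transcription of Theorem \ref{m43ti0} and Proposition \ref{t4uh4aP}.
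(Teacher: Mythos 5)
The paper states this proposition without proof, so there is no argument of its own to compare against; I can only assess your plan on its merits.

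Your identification of the mechanism is correct and complete: property 1 records that $a^{2^n}*_n\mathbf{y}=\mathbf{y}$; properties 2 and 3 come from $\mathbf{x}*_n b=\mathbf{x}b$ (positions $1,\ldots,x$ are from $\mathbf{x}$, position $x+1$ is $b=\mathbf{y}[1]$); and properties 4--6 arise from $\mathbf{x}*_n(\mathbf{w}b)=(\mathbf{x}*_n\mathbf{w})*_n(\mathbf{x}b)$ by classifying the source of each output letter via $M_n(x*_ny,x+1,\ell)$ — a positive value in $\{1,\ldots,x\}$ lands in $\mathbf{x}$ (giving property 4 with the sign flip), the value $x+1$ lands on $b$ (property 5), and a negative value traces into $\mathbf{x}*_n\mathbf{w}$, where $\mathbf{w}$ is a prefix of $\mathbf{y}$, so recursing through $M_n(x,y,\cdot)$ gives property 6. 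The exhaustiveness of the three cases follows simply from the range constraint $1\le M_n(a,b,\ell)\le b$ when positive, which is built into the definition of $M_n$ (not something requiring Proposition $\mathrm{t4uh4aP}$(v), though that is a harmless overcitation).

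However, your stated induction order for uniqueness is wrong and, as written, would not go through. You say ``induction on $y$ (outer) with, for fixed $y$, a descending induction on $x$.'' In that order, to determine $M_n(x,y+1,\ell)$ one needs $M_n(x*_ny,x+1,\ell)$, whose $y$-coordinate is $x+1$; for $x>y$ this has $x+1>y+1$, so it has not yet been determined, and the recursion stalls. The correct order is the one the paper actually uses to build $A_n$ and the pre-multigenic Laver tables: descending on $x$ as the \emph{outer} loop, and for each $x$, ascending on $y$. Then both $(x,y)$ (same $x$, smaller $y$) and $(x*_ny,x+1)$ (strictly larger first coordinate, since $x<x*_ny$ when $x<2^n$) lie in the already-determined region. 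This is a small slip — you even attribute the intended order to ``the double induction used to compute $A_n$ itself'' — but the misstatement should be corrected before the uniqueness paragraph can be said to be proved.
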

\begin{defn}
Define \index{$FM_{n}^{-},FM_{n}^{+}$}
\[FM_{n}^{-},FM_{n}^{+}:\{1,\ldots,2^{n}\}^{2}\rightarrow\mathbb{Z}\]
to be the functions where
\[FM_{n}^{+}(x,y)=M_{n}(x,2^{n},y)\]
and
\[FM_{n}^{-}(x,y)=M_{n}(x,O_{n}(x),y).\]
We shall call the functions $FM_{n}^{-},FM_{n}^{+}$ the final matrices.
\end{defn}
The functions $FM_{n}^{-},FM_{n}^{+}$ can both be obtained from each other by using the following proposition.

\begin{prop}
$FM_{n}^{+}(x,y)>0$ if and only if $FM_{n}^{-}(x,y)>0$. Furthermore,
\begin{enumerate}
\item if $FM_{n}^{+}(x,y)<0$, then $FM_{n}^{+}(x,y)=FM_{n}^{-}(x,y)$, and

\item if $FM_{n}^{-}(x,y)>0$, then $FM_{n}^{+}(x,y)=FM_{n}^{-}(x,y)+2^{n}-O_{n}(x)$.
\end{enumerate}
\end{prop}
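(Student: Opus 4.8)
The plan is to unwind the definitions of $FM_n^+$ and $FM_n^-$ in terms of $M_n$ and then track how the string $\mathbf{x}*_n\mathbf{y}$ compares to the strings $\mathbf{x}*_n\mathbf{y}'$ when $\mathbf{x}*_n\mathbf{y}\in\mathrm{Li}(M)$, i.e.\ when the multiplication in the underlying pre-multigenic Laver table has ``wrapped around.'' Recall $FM_n^+(x,y)=M_n(x,2^n,y)$ and $FM_n^-(x,y)=M_n(x,O_n(x),y)$, where $O_n(x)=2^{o_n(x)}$ is the period of $L_{*_n,x}$, so that $x*_n O_n(x)=2^n$ and $x*_n y$ is determined by $(y)_{O_n(x)}$. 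The key combinatorial fact, which follows from the way $*^M$ is defined and from Proposition~\ref{G0Wuoeh2g2EHO2uo} / Corollary~\ref{10W44eh2g2EHO2uo} applied to the classical Laver table realized as $\{a,\dots,a^{2^n}\}$, is that once $\mathbf{x}*_n\mathbf{y}_0$ reaches a left-identity of $M$ (which happens exactly when $x*_n |\mathbf{y}_0|=2^n$, i.e.\ $|\mathbf{y}_0|$ is a multiple of $O_n(x)$) the letters of $\mathbf{x}*_n\mathbf{y}$ that come from positions beyond that point are copied verbatim from $\mathbf{y}$ by the left-identity rule $\mathbf{z}*\mathbf{w}=\mathbf{w}$. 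Concretely, writing $|\mathbf{x}|=x$, $|\mathbf{y}|=2^n$ and splitting $\mathbf{y}=\mathbf{y}_0\mathbf{y}_1$ with $|\mathbf{y}_0|=O_n(x)$, one has $\mathbf{x}*_n\mathbf{y}=(\mathbf{x}*_n\mathbf{y}_0)\mathbf{y}_1$, and $\mathbf{x}*_n\mathbf{y}_0$ has length $2^n$ and equals $\mathbf{x}*_n\mathbf{y}'_0$ for $|\mathbf{y}'_0|=O_n(x)$.

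From this splitting the three claims drop out. First I would prove the sign statement: $FM_n^+(x,y)>0$ iff $FM_n^-(x,y)>0$. For $1\le y\le x*_n O_n(x)=2^n$, the index $y$ falls in the ``$\mathbf{x}*_n\mathbf{y}_0$'' block iff $y\le 2^n-\text{(length of the tail }\mathbf{y}_1)$... but in fact when $\mathbf{y}$ has length $2^n$ we have $x*_n 2^n=2^n$ as well (since $2^n$ is a multiple of $O_n(x)$), so the tail is empty and the relevant comparison is between the block structure of $\mathbf{x}*_n\mathbf{y}_0$ with $|\mathbf{y}_0|=O_n(x)$ versus the block structure of $\mathbf{x}*_n\mathbf{y}$ with $|\mathbf{y}|=2^n$; because $2^n$ is a multiple of $O_n(x)$, Corollary~\ref{10W44eh2g2EHO2uo} shows $\mathbf{x}*_n a_1\dots a_{2^n}$ is obtained from $\mathbf{x}*_n a_1\dots a_{O_n(x)}$ by appending the segments $\mathbf{x}*_n a_{O_n(x)+1}\dots$, each of which, being applied after a left-identity has been reached, just copies letters of $\mathbf{y}$. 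Hence for a fixed position $\ell$ in the output, whether that letter is pulled from $\mathbf{x}$ (negative $M_n$) or from $\mathbf{y}$ (positive $M_n$) is the same for the two string lengths. This gives the iff and simultaneously item~(1): if $FM_n^+(x,y)<0$ the letter comes from $\mathbf{x}$ at the same position $-M_n$, so $FM_n^+(x,y)=FM_n^-(x,y)$.

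For item~(2), when $FM_n^-(x,y)>0$ the letter at position $\ell$ (here $\ell$ plays the role of the argument, using $FM_n^\pm(x,y)$ with the roles of the variables; I will be careful to keep $x$ fixed and let the second argument range) comes from $\mathbf{y}$. Writing $\mathbf{y}=a_1\dots a_{O_n(x)}$ in the $FM_n^-$ case and $\mathbf{y}=a_1\dots a_{2^n}$ in the $FM_n^+$ case with the \emph{same} first $O_n(x)$ letters, the copy of the first block $\mathbf{x}*_n a_1\dots a_{O_n(x)}$ has length $2^n$ in the first case; in the second case this block sits at the front and is followed by $2^n-O_n(x)$ more letters $a_{O_n(x)+1},\dots,a_{2^n}$ appended literally. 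So the positive pointer into $\mathbf{y}$ gets shifted: a letter that was the $k$-th letter of the length-$O_n(x)$ word is, in the length-$2^n$ word, still at the same \emph{output} position but its index into $\mathbf{y}$... actually the shift goes the other way — the output is longer by $2^n-O_n(x)$ and the appended letters occupy output positions $O_n(x)\cdot(\text{something})$; the cleanest bookkeeping is: $FM_n^+(x,y)=M_n(x,2^n,y)$ and $M_n(x,2^n,y)=M_n(x,O_n(x),y)+(2^n-O_n(x))$ precisely because the extra letters appended to $\mathbf{y}$ before position $y$ is reached number exactly $2^n-O_n(x)$. I would verify this by induction on $y$ using the recursion for $M_n$ in the preceding proposition (items 4--6), or more slickly by the observation that the word $\mathbf{x}*_n(a_1\dots a_{2^n})$ equals $\mathbf{x}*_n(a_1\dots a_{O_n(x)})$ with the suffix $a_{O_n(x)+1}\dots a_{2^n}$ re-inserted after the wraparound point, and the positive entries of $M_n$ in that suffix are exactly the shifted indices.

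The main obstacle I anticipate is getting the index arithmetic in item~(2) exactly right — in particular matching the convention in the definition of $FM_n^\pm$ (which uses $O_n(x)$ versus $2^n$ as the second argument of $M_n$) against the internal bookkeeping of $M_n$, and making sure the $+2^n-O_n(x)$ shift lands on the positive entries only and not the negative ones. To handle this cleanly I would first prove the structural lemma $\mathbf{x}*_n(\mathbf{y}_0\mathbf{y}_1)=(\mathbf{x}*_n\mathbf{y}_0)\mathbf{y}_1$ whenever $\mathbf{x}*_n\mathbf{y}_0\in\mathrm{Li}(M)$ (this is immediate from Proposition~\ref{G0Wuoeh2g2EHO2uo}), then specialize with $\mathbf{y}_0$ of length $O_n(x)$ and $\mathbf{y}_1$ of length $2^n-O_n(x)$, and read off all three claims by comparing the two output strings position by position.
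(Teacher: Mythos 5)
Your overall plan — reduce the comparison of $FM_n^+(x,\ell)=M_n(x,2^n,\ell)$ and $FM_n^-(x,\ell)=M_n(x,O_n(x),\ell)$ to a wraparound/re-indexing argument — is the right one, but the ``structural lemma'' you lean on is misquoted and, as stated, false. You write that $\mathbf{x}*_n(\mathbf{y}_0\mathbf{y}_1)=(\mathbf{x}*_n\mathbf{y}_0)\mathbf{y}_1$ whenever $\mathbf{x}*_n\mathbf{y}_0\in\mathrm{Li}(M)$ and that this ``is immediate from Proposition~\ref{G0Wuoeh2g2EHO2uo}.'' But that proposition says $\mathbf{x}*\mathbf{y}\mathbf{z}=\mathbf{x}*\mathbf{z}$: the already-processed prefix $\mathbf{y}_0$ is \emph{erased}, not prepended. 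Indeed your claimed identity fails on length grounds: with $|\mathbf{y}_0|=O_n(x)$ one has $|\mathbf{x}*_n\mathbf{y}_0|=2^n$, while $|\mathbf{x}*_n(\mathbf{y}_0\mathbf{y}_1)|=x*_n2^n=2^n$; the right-hand side of your lemma would have length $2^n+|\mathbf{y}_1|$. This is also why your subsequent bookkeeping wobbles (``the output is longer by $2^n-O_n(x)$'' — it is not; both outputs have length $2^n$), and you seem to sense this (``actually the shift goes the other way'') but never resolve it.

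The correct route is the one-line replacement for your lemma. Apply $\mathbf{x}*\mathbf{y}\mathbf{z}=\mathbf{x}*\mathbf{z}$ (and Corollary~\ref{10W44eh2g2EHO2uo}, since $O_n(x)\mid 2^n$) to peel off multiples of $O_n(x)$ from the front of $a_1\ldots a_{2^n}$:
\[
a_{-1}\ldots a_{-x}*_n a_1\ldots a_{2^n}
= a_{-1}\ldots a_{-x}*_n a_{2^n-O_n(x)+1}\ldots a_{2^n}.
\]
Now both sides of the comparison compute $\mathbf{x}*_n\mathbf{z}$ for a $\mathbf{z}$ of length $O_n(x)$, and $M_n(x,O_n(x),\ell)$ depends only on lengths, not on the labels. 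If $M_n(x,O_n(x),\ell)=-k<0$ the output letter is $\mathbf{x}[k]=a_{-k}$ in both cases, giving $FM_n^+(x,\ell)=FM_n^-(x,\ell)$. If $M_n(x,O_n(x),\ell)=k>0$ the output letter is $\mathbf{z}[k]$, which in the $FM^-$ labeling is $a_k$ and in the $FM^+$ labeling is $a_{2^n-O_n(x)+k}$, giving $FM_n^+(x,\ell)=FM_n^-(x,\ell)+2^n-O_n(x)$. The sign agreement in the first sentence of the proposition is then automatic. In short: replace ``append the tail to the wrapped output'' with ``erase the head of $\mathbf{y}$,'' and the index arithmetic you were worried about falls out without any recursion on $M_n$.
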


The function $M_{n}$ can be computed from $FM_{n}^{-}$ and $FM_{n}^{+}$ using the following facts.
\begin{prop}
Suppose that $x,y\in\{1,\ldots,2^{n}\}$ and $1\leq\ell\leq x*_{n}y$.
\begin{enumerate}
\item $M_{n}(x,y,\ell)=FM_{n}^{+}(x,\ell)=FM_{n}^{-}(x,\ell)$
whenever $FM_{n}^{+}(x,\ell)<0$.

\item $M_{n}(x,y,\ell)=FM_{n}^{+}(x,\ell)-\lfloor(2^{n}-y)\cdot O_{n}(x)^{-1}\rfloor\cdot O_{n}(x)$
whenever $FM_{n}(x,y)>0$.

\item $M_{n}(x,y,\ell)=FM_{n}^{-}(x,\ell)+2^{n}-O_{n}(x)-\lfloor(2^{n}-y)\cdot O_{n}(x)^{-1}\rfloor\cdot O_{n}(x)$
whenever $FM_{n}(x,y)>0$.

\item if $1\leq y\leq O_{n}(x)$, then $M_{n}(x,y,\ell)=FM_{n}^{-}(x,\ell)$.
\end{enumerate}
\end{prop}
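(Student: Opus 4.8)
The plan is to derive each of the four formulas relating $M_n$ to the final matrices $FM_n^-$ and $FM_n^+$ directly from the recursive characterization of $M_n$ in the preceding proposition, together with the periodicity facts about the classical Laver tables recorded earlier (the propositions on $o_n$, $O_n$, and the relations $x *_n y = x *_n z \iff y \equiv z \bmod 2^{o_n(x)}$). The key observation is that for fixed $x$, the value of $\mathbf{x} *_n \mathbf{y}[\ell]$ as $\mathbf{y}$ ranges over strings of varying length is governed by the period $O_n(x) = 2^{o_n(x)}$: writing $\mathbf{y}$ in terms of a prefix of length $O_n(x)$ and a remainder allows one to reduce any $\mathbf{x} *_n \mathbf{y}$ to the ``canonical'' cases $\mathbf{x} *_n \mathbf{z}$ where $|\mathbf{z}| = O_n(x)$ (giving $FM_n^-$) or $|\mathbf{z}| = 2^n$ (giving $FM_n^+$), using Corollary \ref{10W44eh2g2EHO2uo} to peel off blocks that land in $\mathrm{Li}(M)$.

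First I would handle item (4): if $1 \leq y \leq O_n(x)$, then directly from the definitions $M_n(x,y,\ell) = M_n(x, O_n(x), \ell)$ for $1 \leq \ell \leq x*_n y$, because the computation of $\mathbf{x} *_n \mathbf{y}[\ell]$ for $\ell$ in this range only inspects the first $y \leq O_n(x)$ letters of $\mathbf{y}$, and that computation is literally the same as the one appearing inside $\mathbf{x} *_n \mathbf{z}$ for any $\mathbf{z} \succeq \mathbf{y}$ of length $O_n(x)$; so this is $FM_n^-(x,\ell)$ by definition. This also immediately gives item (1) for the subcase $FM_n^-(x,\ell) < 0$ when $\ell$ is small; the general statement of (1), that $M_n(x,y,\ell) = FM_n^+(x,\ell) = FM_n^-(x,\ell)$ whenever $FM_n^+(x,\ell) < 0$, follows once one knows (via the previous proposition comparing $FM_n^-$ and $FM_n^+$) that a negative value of $FM_n^+(x,\ell)$ forces $FM_n^-(x,\ell) = FM_n^+(x,\ell)$ and that negative entries of the final matrix --- which record positions inside the left factor $\mathbf{x}$ --- are insensitive to how far out into $\mathbf{y}$ one reads, because once a position in $\mathbf{x}$ is ``selected'' it does not get overwritten by later blocks of $\mathbf{y}$ (this is where one invokes that the relevant prefix $\mathbf{x} *_n \mathbf{y}'$ has already reached $\mathrm{Li}(M)$, so Proposition \ref{G0Wuoeh2g2EHO2uo} and Corollary \ref{10W44eh2g2EHO2uo} apply).

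For items (2) and (3), the content is a bookkeeping computation of an index shift. Given $y$ with $FM_n(x,y) > 0$ --- meaning position $\ell$ of $\mathbf{x} *_n \mathbf{y}$ is read from $\mathbf{y}$ itself --- I would write $2^n - y = q \cdot O_n(x) + s$ with $0 \leq s < O_n(x)$, so that $q = \lfloor (2^n - y) \cdot O_n(x)^{-1} \rfloor$, and use the periodicity to see that going from the length-$2^n$ string to the length-$y$ string removes exactly $q$ full periods' worth of letters that were contributing to the $\mathbf{y}$-part of the product; each removed period shifts the recorded index down by $O_n(x)$. Hence $M_n(x,y,\ell) = FM_n^+(x,\ell) - q \cdot O_n(x)$, which is item (2); item (3) is then obtained by substituting the relation $FM_n^+(x,y) = FM_n^-(x,y) + 2^n - O_n(x)$ from the previous proposition into (2). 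The main obstacle I anticipate is making the ``removing a full period shifts the index by $O_n(x)$'' step rigorous: one must verify carefully, presumably by an induction on $q$ mirroring the recursion $M_n(x, y+1, \ell) = M_n(x, y, -M_n(x*_n y, x+1, \ell))$ in clause (6) of the recursion, that peeling the outermost period really does act as a clean translation on exactly those $\ell$ with positive $FM_n$-value and does not disturb the negative entries --- the periodicity propositions for $o_n$ and the block-absorption corollary are the tools for this, but assembling them into a clean inductive statement about $M_n$ is the delicate part.
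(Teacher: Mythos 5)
The paper states this proposition without a proof, so I am evaluating your plan on its own merits rather than comparing it to the paper's argument.

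Your overall strategy is sound. Item (4) is indeed the base case: for $1\le y\le O_n(x)$ the product $\mathbf{x}*_n\mathbf{y}$ is a prefix of $\mathbf{x}*_n\mathbf{z}$ whenever $\mathbf{y}\preceq\mathbf{z}$ and the intermediate products stay outside $\mathrm{Li}(M)$ (which holds because $y\le O_n(x)$ forces $x*_ny\le 2^n$), so the first $x*_ny$ entries of $M_n(x,\cdot,\ell)$ are frozen once read. Items (1)–(3) then reduce to item (4) via absorption of prefixes, as you say.

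Where I would push back is the implementation of (1)–(3). You propose an induction on $q=\lfloor(2^n-y)/O_n(x)\rfloor$ peeling one period at a time and tracking the index shift through clause (6) of the recursion. This works but is more machinery than the statement needs, and it is also where your informal phrase ``once a position in $\mathbf{x}$ is selected it does not get overwritten'' is doing real work without a precise meaning. The cleaner route, which uses Corollary \ref{10W44eh2g2EHO2uo} exactly once and no recursion at all, is the following. Write $\mathbf{y}=\mathbf{z}\mathbf{w}$ with $|\mathbf{w}|=(y)_{O_n(x)}$ and $|\mathbf{z}|=y-(y)_{O_n(x)}$, a multiple of $O_n(x)$. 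Chop $\mathbf{z}$ into consecutive blocks $\mathbf{y}_1\cdots\mathbf{y}_q$ of length $O_n(x)$; each satisfies $\mathbf{x}*_n\mathbf{y}_i\in\mathrm{Li}(M)$, so Corollary \ref{10W44eh2g2EHO2uo} gives $\mathbf{x}*_n\mathbf{y}=\mathbf{x}*_n\mathbf{w}$. Now read off positions: if position $\ell$ comes from the left factor, its index is unchanged and $M_n(x,y,\ell)=M_n\bigl(x,(y)_{O_n(x)},\ell\bigr)=FM_n^-(x,\ell)$ by item (4), giving item (1); if it comes from $\mathbf{w}$, the index in $\mathbf{w}$ must be translated by $|\mathbf{z}|=y-(y)_{O_n(x)}$ to express it as an index in $\mathbf{y}$, so $M_n(x,y,\ell)=FM_n^-(x,\ell)+\bigl(y-(y)_{O_n(x)}\bigr)$. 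Since $O_n(x)\mid 2^n$, one checks directly that $y-(y)_{O_n(x)}=2^n-O_n(x)-\lfloor(2^n-y)/O_n(x)\rfloor\,O_n(x)$, which is item (3); substituting $FM_n^+(x,\ell)=FM_n^-(x,\ell)+2^n-O_n(x)$ (from the preceding proposition, valid when both are positive) yields item (2). In short: your plan has no genuine gap, but the $q$-fold induction is not needed; a single absorption step plus the arithmetic identity on $y-(y)_{O_n(x)}$ does everything, and makes ``not overwritten'' into the precise statement $\mathbf{x}*_n\mathbf{y}=\mathbf{x}*_n\mathbf{w}$.
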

The following tables are the multiplication tables for the final matrices $FM_{n}^{-}$ for $n\leq 4$.

\begin{math}\begin{array}{r|r}

FM_{0}^{-} & 1 \\

\hline
1 &  1
\end{array}
\end{math}
\begin{math}\begin{array}{r|rr}

FM_{1}^{-} & 1 & 2 \\

\hline

1& -1 & 1 \\
2&  1 & 2
\end{array}
\end{math}

\begin{math}\begin{array}{r|rrrr}

FM_{2}^{-} & 1 & 2 &  3 &  4\\

\hline

1 & -1 & 1 &-1 & 2 \\
2 & -1 &-2 & 1 & 2 \\
3 & -1 &-2 &-3 & 1 \\
4 &  1 & 2 & 3 & 4
\end{array}
\end{math}
\begin{math}\begin{array}{r|rrrrrrrr}

FM_{3}^{-} & 1 & 2 &  3 &  4 &  5 & 6 &  7 & 8 \\

\hline

1 & -1 & 1 &-1 & 2 &-1 & 3 &-1 & 4 \\
2 & -1 &-2 & 1 & 2 &-1 &-2 & 3 & 4 \\
3 & -1 &-2 &-3 & 1 &-1 &-2 &-3 & 2 \\
4 & -1 &-2 &-3 &-4 & 1 & 2 & 3 & 4 \\
5 & -1 &-2 &-3 &-4 &-5 & 1 &-5 & 2 \\
6 & -1 &-2 &-3 &-4 &-5 &-6 & 1 & 2 \\
7 & -1 &-2 &-3 &-4 &-5 &-6 &-7 & 1 \\
8 &  1 & 2 & 3 & 4 & 5 & 6 & 7 & 8
\end{array}
\end{math}

\begin{math}\begin{array}{r|rrrrrrrrrrrrrrrr}

FM_{4}^{-} & 1 & 2 &  3 &  4 &  5 & 6 &  7 & 8 &  9 & 10 &  11 & 12 &  13 & 14 &  15 & 16\\

\hline
1 & -1 &  1 & -1 & -1 & -1 &  1 & -1 &  1 & -1 &  1 & -1 &  2 & -1 &  3 & -1 &  4 \\
2 & -1 & -2 &  1 & -1 & -1 & -2 &  1 & -2 & -1 & -2 &  1 &  2 & -1 & -2 &  3 &  4 \\
3 & -1 & -2 & -3 &  1 & -1 & -2 & -3 &  2 & -1 & -2 & -3 &  3 & -1 & -2 & -3 &  4 \\
4 & -1 & -2 & -3 & -4 &  1 &  2 &  3 &  4 & -1 & -2 & -3 & -4 &  5 &  6 &  7 &  8 \\
5 & -1 & -2 & -3 & -4 & -5 &  1 & -5 &  2 & -1 & -2 & -3 & -4 & -5 &  3 & -5 &  4 \\
6 & -1 & -2 & -3 & -4 & -5 & -6 &  1 &  2 & -1 & -2 & -3 & -4 & -5 & -6 &  3 &  4 \\
7 & -1 & -2 & -3 & -4 & -5 & -6 & -7 &  1 & -1 & -2 & -3 & -4 & -5 & -6 & -7 &  2 \\
8 & -1 & -2 & -3 & -4 & -5 & -6 & -7 & -8 &  1 &  2 &  3 &  4 &  5 &  6 &  7 &  8 \\
9 & -1 & -2 & -3 & -4 & -5 & -6 & -7 & -8 & -9 &  1 & -9 &  2 & -9 &  3 & -9 &  4 \\
10& -1 & -2 & -3 & -4 & -5 & -6 & -7 & -8 & -9 &-10 &  1 &  2 & -9 &-10 &  3 &  4 \\
11& -1 & -2 & -3 & -4 & -5 & -6 & -7 & -8 & -9 &-10 &-11 &  1 & -9 &-10 &-11 &  2 \\
12& -1 & -2 & -3 & -4 & -5 & -6 & -7 & -8 & -9 &-10 &-11 &-12 &  1 &  2 &  3 &  4 \\
13& -1 & -2 & -3 & -4 & -5 & -6 & -7 & -8 & -9 &-10 &-11 &-12 &-13 &  1 &-13 &  2 \\
14& -1 & -2 & -3 & -4 & -5 & -6 & -7 & -8 & -9 &-10 &-11 &-12 &-13 &-14 &  1 &  2 \\
15& -1 & -2 & -3 & -4 & -5 & -6 & -7 & -8 & -9 &-10 &-11 &-12 &-13 &-14 &-15 &  1 \\
16&  1 &  2 &  3 &  4 &  5 &  6 &  7 &  8 &  9 & 10 & 11 & 12 & 13 & 14 & 15 & 16
\end{array}
\end{math}

Let $((A^{\leq r})^{+},*)$ be a pre-multigenic Laver table. Let \index{$BFM_{r}$} $BFM_{r}:\{1,\ldots,r\}^{2}\rightarrow\mathbb{Z}$ be the mapping
where if $x\in\{1,\ldots r\}$ and $y$ is the least natural number such that $|0^{x}*0^{y}|=r$, then
$a_{-1}\ldots a_{-x}*a_{1}\ldots a_{y}=a_{BFM_{r}(1)}\ldots a_{BFM_{r}(r)}.$ In particular $BFM_{2^{n}}=FM_{n}^{-}$ for all $n\in\omega$. 
The following table is the multiplication table for $BFM_{15}$.
\begin{math}\begin{array}{r|rrrrrrrrrrrrrrr}

BFM_{15} & 1 & 2 &  3 &  4 &  5 & 6 &  7 & 8 &  9 & 10 &  11 & 12 &  13 & 14 &  15\\

\hline

1  & -1 &  1 & -1 & -1 & -1 & -1 &  1 & -1 & -1 &  1 & -1 &  1 &  2 & -1 &  3 \\
2  & -1 & -2 &  1 & -1 & -1 & -1 & -2 & -1 & -1 & -2 &  1 & -2 &  2 &  3 &  4 \\
3  & -1 & -2 & -3 &  1 & -1 &  1 & -2 &  1 &  1 & -2 & -3 &  2 &  3 & -3 &  4 \\
4  & -1 & -2 & -3 & -4 &  1 & -4 &  2 & -4 & -4 &  2 &  3 &  4 & -4 &  5 &  6 \\
5  & -1 & -2 & -3 & -4 & -5 &  1 &  2 & -4 &  1 &  2 & -5 &  2 &  2 & -5 &  3 \\
6  & -1 & -2 & -3 & -4 & -5 & -6 &  1 &  2 & -6 &  3 & -5 & -6 &  3 &  4 &  5 \\
7  & -1 & -2 & -3 & -4 & -5 & -6 & -7 &  1 & -6 & -7 &  2 & -7 & -7 &  2 &  3 \\
8  & -1 & -2 & -3 & -4 & -5 & -6 & -7 & -8 &  1 &  2 &  3 &  4 & -7 & -8 &  5 \\
9  & -1 & -2 & -3 & -4 & -5 & -6 & -7 & -8 & -9 &  1 & -9 &  2 &  3 & -9 &  4 \\
10 & -1 & -2 & -3 & -4 & -5 & -6 & -7 & -8 & -9 &-10 &  1 &  2 &-10 &  3 &  4 \\
11 & -1 & -2 & -3 & -4 & -5 & -6 & -7 & -8 & -9 &-10 &-11 &  1 &-10 &-11 &  2 \\
12 & -1 & -2 & -3 & -4 & -5 & -6 & -7 & -8 & -9 &-10 &-11 &-12 &  1 &  2 &  3 \\
13 & -1 & -2 & -3 & -4 & -5 & -6 & -7 & -8 & -9 &-10 &-11 &-12 &-13 &  1 &  2 \\
14 & -1 & -2 & -3 & -4 & -5 & -6 & -7 & -8 & -9 &-10 &-11 &-12 &-13 &-14 &  1 \\
15 &  1 &  2 &  3 &  4 &  5 &  6 &  7 &  8 &  9 & 10 & 11 & 12 & 13 & 14 & 15
\end{array}
\end{math}
The following image is picture of $FM_{6}^{-}$ where the positive entries are colored black and the negative entries
are colored white.
% I NEED TO CHANGE THE COLORS SO THAT IT BECOMES BLACK AND WHITE.
\begin{center}
\includegraphics[width=9 cm,height=9 cm]{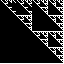}
\end{center}
\pagebreak
% I NEED TO CHANGE THE COLORS SO THAT IT BECOMES BLACK AND WHITE.
The following image is picture of $FM_{7}^{-}$.
\begin{center}
\includegraphics[width=9 cm,height=9 cm]{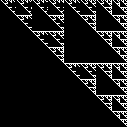}
\end{center}
% I NEED TO CHANGE THE COLORS SO THAT IT BECOMES BLACK AND WHITE.
The following image is picture of $FM_{8}^{-}$.
\begin{center}
\includegraphics[width=9 cm,height=9 cm]{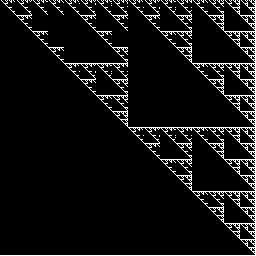}
\label{t2hi48v2brhbar}
\end{center}
\pagebreak

The following diagram gives a side by side comparison of the 128x128-classical Laver table with the
128x128-final matrix.
\begin{center}
\includegraphics[width=12 cm,height=6 cm]{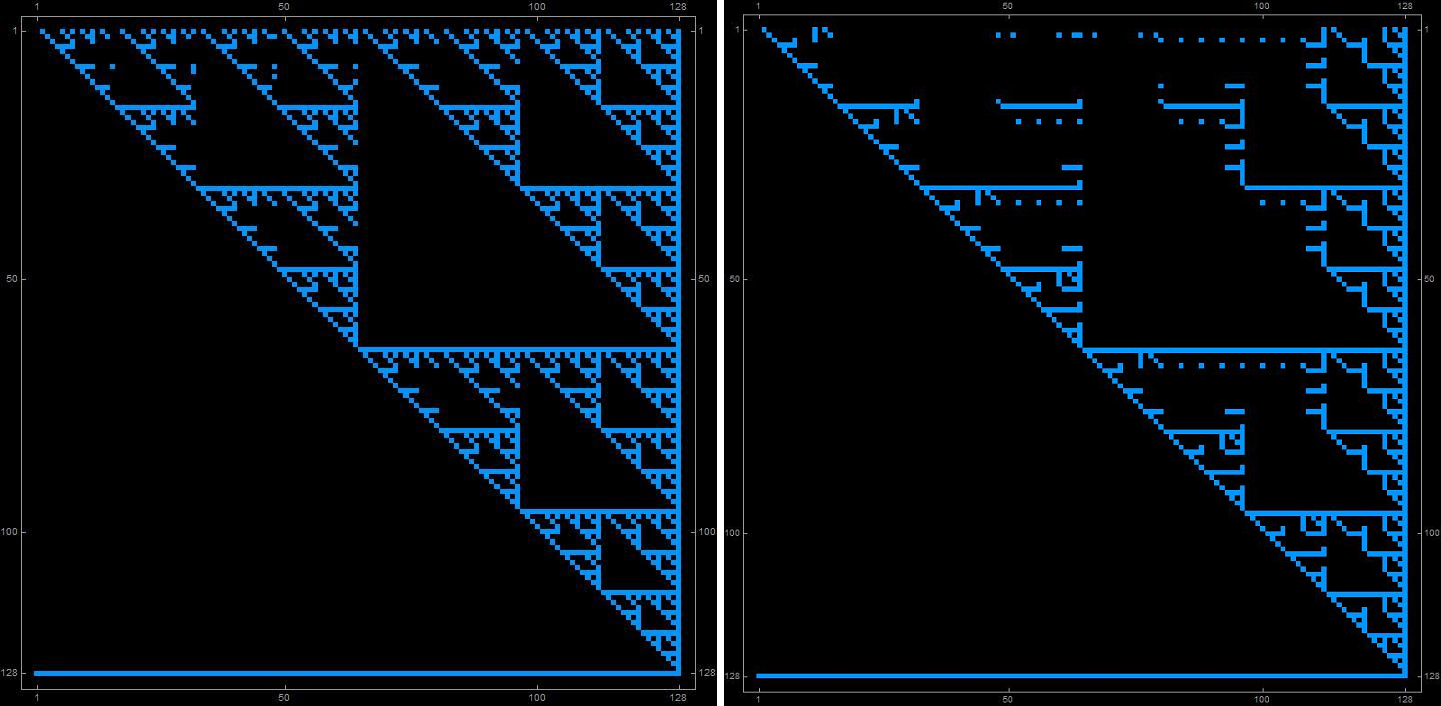}
\end{center}

% We shall now list off the main properties of $FM_{n}^{-},FM_{n}^{+}$.

% I NEED TO PUT A COUPLE LEMMAS LISTING THE MAJOR PROPERTIES OF THE FINAL MATRIX.

%\begin{prop}
%The functions $FM_{n}^{-},FM_{n}^{+}$ satisfy the following properties.
%\begin{enumerate}
%\item $FM_{n}^{-}(x,x*y)=y$ whenever $1\leq y\leq O_{n}(x)$.

%\item $FM_{n}^{-}(x,y)=-y$ whenever $1\leq y\leq x<2^{n}$.

%\item $FM_{n}^{-}(2^{n},x)=x$.

%\item $FM_{n}^{+}(2^{n-1},x)=x$ whenever $2^{n-1}<x\leq 2^{n}$.

%\item $FM_{n}^{+}(x,y)=y$ whenever $y>2^{n}-Gcd(x,2^{\infty})$.

%\item $FM_{n}^{-}(2^{n-1}-2^{m},y)=-y$ whenever $y\leq 2^{n-1}-2^{m}$

%\item $FM_{n}^{-}(2^{n-1}-2^{m},y)=y-(2^{n-1}-2^{m})$ whenever $2^{n-1}-2^{m}<y\leq 2^{n-1}$

%\item $FM_{n}^{-}(2^{n-1}-2^{m},y)=-(y-2^{n-1})$ whenever $2^{n-1}<y\leq 2^{n}-2^{m}$

%\item $FM_{n}^{+}(2^{n-1}-2^{m},y)=y$ whenever $2^{n}-2^{m}<y\leq 2^{n}$.
%\end{enumerate}
%\end{prop}
%I NEED TO RELATE THE UPPER REGION TO THE LOWER REGION.

\begin{lem}
Suppose that $FM_{n}^{+}(x,\ell)=-x$ and $x>1$. Then 
\[FM_{n}^{+}(x-1,\ell)>0\]
and
\[FM_{n}^{+}(1,FM_{n}^{+}(x-1,\ell))=-1.\]
\label{4t24tj0i}
\end{lem}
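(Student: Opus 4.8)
The plan is to work directly with the definition of the final matrix $FM_n^+$ in terms of the multigenic Laver table $(A^{\le 2^n})^+$ (equivalently $\mathbf{M}((1)_{a\in A},A_n)$), where $FM_n^+(x,\ell)=M_n(x,2^n,\ell)$ and $M_n$ is read off from
\[
a_{-1}\ldots a_{-x}*_n a_1\ldots a_{2^n}=a_{M_n(x,2^n,1)}\ldots a_{M_n(x,2^n,x*_n 2^n)}.
\]
Since $x*_n 2^n=2^n$ for all $x$ (the periodicity facts recalled in the introduction give $x*_n 2^n = 2^n$), the string on the right has length $2^n$, and we are told that its $\ell$-th letter equals the $x$-th letter of the left factor, i.e.\ the string $a_{-1}\ldots a_{-x}$ contributes its last letter $a_{-x}$ in position $\ell$. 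The strategy is to compute $a_{-1}\ldots a_{-x}*_n a_1\ldots a_{2^n}$ via the double-induction recursion for $*^M$ and compare it with $a_{-1}\ldots a_{-(x-1)}*_n a_1\ldots a_{2^n}$ and with $a_{-1}*_n a_1\ldots a_{2^n}$.

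First I would use Proposition~\ref{t4uh4aP}(iv) and Proposition~\ref{Rhu4t0HI42h2}/\ref{t482u9onjkrw}-style facts about $\gcd$ and critical points to translate "$FM_n^+(x,\ell)=-x$" into a structural statement: the word $\mathbf{w}=a_{-1}\ldots a_{-x}*_n a_1\ldots a_{2^n}$ has $a_{-1}\ldots a_{-x}$ as a prefix (since $a_{-1}\ldots a_{-x}\notin\mathrm{Li}$, as $x<2^n$ forces $\mathrm{crit}<\max$; note $x>1$ rules out the degenerate top row), and position $\ell$ is exactly the last coordinate of that prefix. Then I would invoke the homomorphism $\pi:A_{n}\to\cdots$ together with Proposition~\ref{G0Wuoeh2g2EHO2uo} and Corollary~\ref{10W44eh2g2EHO2uo} to relate the computation for the word of length $x$ to the computation for the word of length $x-1$: concatenating removes or shifts exactly the segment corresponding to the last letter $a_{-x}$, and because $a_{-1}\ldots a_{-(x-1)}\notin\mathrm{Li}$ the recursion $\mathbf{x}*\mathbf{z}b=(\mathbf{x}*\mathbf{z})*\mathbf{x}b$ shows that the position which "pointed to $a_{-x}$" now either points into the $a_1\ldots a_{2^n}$ factor (giving $FM_n^+(x-1,\ell)>0$) or would again point to the last letter of the shorter left factor, which I must rule out.

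Ruling out the second alternative is where I expect the real work to be: I need that $FM_n^+(x-1,\ell)\ne -(x-1)$, and more precisely that the position reached is a genuine entry of the right factor. This should follow from a counting/length argument — the multiset of positions $\{\ell: FM_n^+(x,\ell)=-j\}$ has a controlled size governed by $O_n$ and the periods $o_n$, and the position forced by $a_{-x}$ is the \emph{unique} one with value $-x$, so after deleting the $x$-th letter the surviving negative-$(x-1)$ positions are determined and the one in question is not among them. Finally, for the last clause, once $FM_n^+(x-1,\ell)=m>0$ I would compute $a_{-1}*_n a_1\ldots a_{2^n}=a_{-1}a_{?}\ldots$, observe that $a_{-1}*_n$ applied to any letter $a_m$ of the right factor reproduces (by Proposition~\ref{t4uh4aP}(ii)–(iii) and the fact that $a_{-1}\in\mathrm{Li}$ iff... — here $a_{-1}\notin\mathrm{Li}$ since $1<2^n$) a word whose first position after the prefix is again forced to be $a_{-1}$, yielding $FM_n^+(1,m)=-1$. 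I would double-check the base structure using the displayed tables $FM_1^-,\dots,FM_4^-$ and the conversion Proposition relating $FM_n^+$ and $FM_n^-$, since the lemma is stated for $FM_n^+$ but the tables and most recursions are given for $FM_n^-$; this conversion ($FM_n^+(x,y)=FM_n^-(x,y)$ when negative) makes the two statements interchangeable and should be the first reduction I carry out.
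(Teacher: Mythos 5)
Your proposal takes a genuinely different route from the paper, and it does not close. The paper's proof is a one-line self-distributivity trick with a carefully chosen pair of words: it works in $(A^{\le 2^n})^+$ with the left factor $\mathbf{w}=0^{x-1}1$ (whose $x$-th letter is the \emph{only} occurrence of the letter $1$) and the right factor $\mathbf{y}=0^{x-1}*a^{2^n}$ (which contains only $0$'s and $a$'s). Then $FM_n^+(x,\ell)=-x$ forces $(\mathbf{w}*\mathbf{y})[\ell]=1$; but by self-distributivity $\mathbf{w}*\mathbf{y}=(0^{x-1}*1)*(0^{x-1}*a^{2^n})=0^{x-1}*(1*a^{2^n})$, and in this rewriting the left factor $0^{x-1}$ has length $x-1$ and contains no $1$'s, so the $\ell$-th letter being $1$ forces $FM_n^+(x-1,\ell)>0$ and equals $(1*a^{2^n})[FM_n^+(x-1,\ell)]$; since the only $1$ in $1*a^{2^n}$ is the one coming from the length-$1$ left factor, this yields $FM_n^+(1,FM_n^+(x-1,\ell))=-1$. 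The whole proof is a change of variables, not an induction.

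Your plan has two concrete problems. First, you write that ``position $\ell$ is exactly the last coordinate of that prefix'' and later that ``the position forced by $a_{-x}$ is the \emph{unique} one with value $-x$.'' Both are false: $FM_n^+(x,\ell)=-x$ only says the $\ell$-th letter of the product is $a_{-x}$, and this happens at many columns $\ell$, not just $\ell=x$ (for example in $FM_4^-$, row $5$ takes the value $-5$ at columns $5,7,13,15$). So the proposed counting/uniqueness argument for ruling out $FM_n^+(x-1,\ell)=-(x-1)$ starts from a wrong premise, and you explicitly acknowledge you do not see how to finish it. Second, the sketch for the final clause doesn't engage with the specific value $m=FM_n^+(x-1,\ell)$ at all---$FM_n^+(1,m)=-1$ is certainly not true for all $m$, so one needs something that pins down this particular $m$, which is exactly what the missing ``only occurrence of the letter $1$'' argument does. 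The missing idea throughout is to exploit the freedom in the alphabet: by putting a unique marker letter at position $x$ of the left factor and making the right factor avoid that marker, the hypothesis $FM_n^+(x,\ell)=-x$ becomes a purely combinatorial statement about where that marker survives, and self-distributivity moves the marker from a length-$x$ computation to a length-$(x-1)$ computation with the rest for free.
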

\begin{proof}
Suppose that $FM_{n}^{+}(x,\ell)=-x$. Then, in $(A^{\leq 2^{n}})^{+}$, we have
\[0^{x-1}*(1*a^{2^{n}})[\ell]=0^{x-1}1*(0^{x-1}*a^{2^{n}})[\ell]=1,\]
so $FM_{n}^{-}(x-1,\ell)>0$. Furthermore,
\[1=0^{x-1}*(1*a^{2^{n}})[\ell]=1*a^{2^{n}}[FM_{n}^{+}(x-1,\ell)],\]
so 
\[FM_{n}^{+}(1,FM_{n}^{+}(x-1,\ell))=-1.\]
\end{proof}

\begin{lem}
Suppose that $FM_{n}^{+}(x,\ell)=-j$ for some $j\in\{1,\ldots,x-1\}$. Then
\[FM_{n}^{+}(x,\ell)=FM_{n}^{+}(x-1,\ell).\]
\label{4th2ur09}
\end{lem}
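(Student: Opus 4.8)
The plan is to prove this lemma by working directly in the pre-multigenic Laver table $(A^{\leq 2^n})^+$ and exploiting the meaning of the final matrix $FM_n^+$ in terms of the operation $*$ there, together with the recursive structure encoded in Proposition (the one characterizing $M_n$ by the six clauses) and Lemma \ref{4t24tj0i} which handles the case $j=x$. First I would recall that $FM_n^+(x,\ell) = M_n(x,2^n,\ell)$, so the hypothesis $FM_n^+(x,\ell)=-j$ says that the $\ell$-th letter of $0^x *_n a^{2^n}$ (where I write $0^x$ for a length-$x$ string in a one-letter alphabet padded by a distinguished top letter $a$, matching the conventions of the final-matrix section) comes from the left factor, specifically it is the $j$-th letter $\mathbf{x}[j]$ counted from the appropriate end, with $1 \le j \le x-1$.

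The core of the argument should be a comparison of the computation of $0^x *_n a^{2^n}$ with that of $0^{x-1} *_n a^{2^n}$. Since $j < x$, the relevant letter sits in the ``interior'' portion of $\mathbf{x}$ that is untouched when we pass from $\mathbf{x}$ of length $x$ to its prefix of length $x-1$; the only letter that disappears is the last one, $\mathbf{x}[x]$, which corresponds to index $-x$, and by hypothesis $FM_n^+(x,\ell) \ne -x$. So the natural strategy is: use self-distributivity (Theorem \ref{m43ti0}) and the structural Propositions \ref{t4uh4aP}, \ref{t4jio}, \ref{G0Wuoeh2g2EHO2uo} to track, position by position, where the $\ell$-th letter of $0^x *_n a^{2^n}$ originates, and show that the same position in $0^{x-1}*_n a^{2^n}$ originates from the same letter. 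Concretely, I would set up an induction mirroring clauses (4)–(6) of the Proposition characterizing $M_n$: running the recursion $M_n(x,y+1,\ell)$ in terms of $M_n(x*_n y, x+1, \ell)$ and $M_n(x,y,\cdot)$, one sees that whenever the output index is a negative number strictly between $-x$ and $0$, the recursion step is ``the same'' for $x$ and $x-1$ because the discrepancy between $0^x$ and $0^{x-1}$ only ever manifests through the letter $\mathbf{x}[x]$, i.e. through index exactly $-x$ or through $x+1$ appearing as a right-factor index. This should let me conclude $M_n(x,2^n,\ell) = M_n(x-1,2^n,\ell)$, which is exactly the claim.

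Alternatively — and this may be cleaner — I would try to reduce directly to Lemma \ref{4th2ur09}'s companion Lemma \ref{4t24tj0i} or to an $\ell$-induction using the periodicity propositions for $o_n$ and the relation $FM_n^+(x,y) = FM_n^-(x,y) + 2^n - O_n(x)$ for positive entries. The point is that the final matrix rows for $x$ and $x-1$ agree on all negative entries that are $\ge -(x-1)$, and the hypothesis places us exactly in that regime. One then has to check the base of the induction (small $\ell$, where the entry is $-\ell$ by clause (2) of the characterizing Proposition for $\ell \le x-1 < x$, so indeed $FM_n^+(x,\ell) = -\ell = FM_n^+(x-1,\ell)$) and propagate through the inductive clauses.

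The main obstacle I anticipate is bookkeeping: making rigorous the intuitive statement ``the letter $\mathbf{x}[j]$ with $j<x$ is unaffected by deleting $\mathbf{x}[x]$.'' The subtlety is that in the recursion the left factor $0^x$ gets replaced by $0^x *_n 0^y$, whose length can be much larger than $x$, so ``index $j<x$'' does not literally persist; one must argue that the sign pattern and the specific recursion branch taken at each step is determined by data that is identical for the $x$ and $x-1$ computations as long as we never hit the branch corresponding to $\mathbf{x}[x]$. I would isolate this as a sub-lemma: for $y < 2^n$ and $1 \le \ell \le x *_n y$, if $M_n(x,y,\ell) > -x$ then $M_n(x,y,\ell) = M_n(x-1,y,\ell)$ (with the convention that on the right side $x-1 *_n y$ is the relevant bound), proved by the double induction on $(x \text{ fixed}, y \text{ ascending})$ that defines $M_n$, with Lemma \ref{4t24tj0i} supplying the one case where index $-x$ is produced and must be excluded. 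Granting that sub-lemma, the present lemma is the instance $y = 2^n$.
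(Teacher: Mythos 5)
Your proposed sub-lemma is false, so the argument does not go through. Take $n=3$, $x=5$, $y=2$, $\ell=6$. Then $y < 2^n$, $1\le\ell\le x*_n y = 8$, and $M_3(5,2,6) = 1 > -5$, but $M_3(4,2,6) = 2 \ne 1$. (Here $M_3(5,2,\cdot)=FM_3^-(5,\cdot)=(-1,-2,-3,-4,-5,1,-5,2)$ since $O_3(5)=2$, while a direct computation gives $M_3(4,2,\cdot)=(-1,-2,-3,-4,1,2)$.) The failure is precisely in the positive regime your hypothesis $M_n(x,y,\ell)>-x$ admits: when the $\ell$-th letter comes from the right factor $\mathbf{y}$, there is no reason its index is stable under shortening $\mathbf{x}$. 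And even if you restrict to strictly negative outputs $>-x$, the recursion clauses you cite do not close up: $M_n(x,y+1,\ell)$ is driven by $M_n(x*_n y,\,x+1,\,\ell)$ whereas $M_n(x-1,y+1,\ell)$ is driven by $M_n((x-1)*_n y,\,x,\,\ell)$, and $x*_n y\ne(x-1)*_n y$ in general, so the two recursions do not run in parallel at matching arguments. You correctly identified the obstacle (the left factor's length drifts at every step) but the proposed double induction does not actually overcome it; to patch it one would have to track a much subtler invariant than $M_n(x,y,\ell)=M_n(x-1,y,\ell)$.

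The paper's proof bypasses the recursion entirely with a single self-distributivity manipulation. Fix distinct letters $a_1,\ldots,a_{x-1}$ all different from $1$, and set $\mathbf{x}_0=a_1\ldots a_{x-1}$. Since $1^{2^n}\in\mathrm{Li}$ and the result of applying a string to it only contains the letter $1$ and has length $2^n$, one has $1*1^{2^n}=1^{2^n}$. Then
\[
\mathbf{x}_0*1^{2^n}=\mathbf{x}_0*(1*1^{2^n})=(\mathbf{x}_0*1)*(\mathbf{x}_0*1^{2^n})=(\mathbf{x}_0 1)*(\mathbf{x}_0*1^{2^n}),
\]
and the rightmost expression is a product of a length-$x$ string by a length-$2^n$ string, so the hypothesis $FM_n^+(x,\ell)=-j$ with $j\le x-1$ says its $\ell$-th letter is $(\mathbf{x}_0 1)[j]=a_j$. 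Hence the $\ell$-th letter of $\mathbf{x}_0*1^{2^n}$ is $a_j$, which (by distinctness of the $a_i$ and of $1$) forces $FM_n^+(x-1,\ell)=-j$. No induction, no case analysis on the recursion branches, and no comparison of divergent recursion trees.
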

\begin{proof}
In $(A^{\leq 2^{n}})^{+}$, we have 
\[a_{1}\ldots a_{x-1}*1^{2^{n}}[\ell]=a_{1}\ldots a_{x-1}*(1*1^{2^{n}})[\ell]\]
\[=a_{1}\ldots a_{x-1}1*(a_{1}\ldots a_{x-1}*1^{2^{n}})[\ell]=a_{j}.\]
Therefore, $FM_{n}^{+}(x-1,\ell)=-j$ as well.
\end{proof}
\begin{thm}
\label{t4hu824t09}
Suppose that $FM_{n}^{+}(x,\ell)=-j$. Then 
\begin{enumerate}
\item $FM_{n}^{+}(y,\ell)=-j$ whenever $j\leq y\leq x$.

\item If $j>1$, then
\[FM_{n}^{+}(j-1,\ell)>0,\] 
and 
\[FM_{n}^{+}(1,FM_{n}^{+}(j-1,\ell))=-1.\]
\end{enumerate}
\end{thm}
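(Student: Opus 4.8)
\textbf{Proof proposal for Theorem \ref{t4hu824t09}.}

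The plan is to combine Lemmas \ref{4t24tj0i} and \ref{4th2ur09} by a descending induction on $x$, using the relation $BFM$ (or equivalently $FM^{+}_{n}$) on the pre-multigenic Laver table $(A^{\leq 2^{n}})^{+}$ exactly as the two lemmas were set up. The key observation is that $FM_{n}^{+}(x,\ell)=-j$ encodes the statement that, for strings $\mathbf{x}=a_{1}\ldots a_{x}$ and $\mathbf{y}$ of the appropriate length, the $\ell$-th letter of $\mathbf{x}*^{M}\mathbf{y}$ is $a_{j}$, i.e. it comes from position $j$ of the left factor. Part (1) is then precisely an iterated application of Lemma \ref{4th2ur09}: that lemma says that if $FM_{n}^{+}(x,\ell)=-j$ with $1\le j\le x-1$, then passing from the left factor $a_{1}\ldots a_{x}$ to $a_{1}\ldots a_{x-1}$ does not change the value, i.e. $FM_{n}^{+}(x-1,\ell)=-j$. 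So first I would argue: given $FM_{n}^{+}(x,\ell)=-j$ with $j\le y\le x$, repeatedly strip the last letter of the left factor $x-y$ times, applying Lemma \ref{4th2ur09} at each step (the hypothesis $j\le\text{(current length)}-1$ stays valid as long as we stop at length $j$), to conclude $FM_{n}^{+}(y,\ell)=-j$. One subtlety: Lemma \ref{4th2ur09} is stated for $FM_{n}^{+}(x,\ell)=-j$ with $j\in\{1,\dots,x-1\}$, so the induction is legitimate precisely down to $y=j$, which is the range claimed.

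For part (2), having used part (1) to reduce to the left factor of length $j$, we have $FM_{n}^{+}(j,\ell)=-j$, which is exactly the hypothesis of Lemma \ref{4t24tj0i} with $x$ replaced by $j$ (and $j>1$ as assumed). Applying that lemma directly gives $FM_{n}^{+}(j-1,\ell)>0$ and $FM_{n}^{+}(1,FM_{n}^{+}(j-1,\ell))=-1$, which is the statement of (2). So (2) is essentially immediate once (1) is in hand.

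The main thing to get right is bookkeeping about which base cases and length ranges Lemmas \ref{4t24tj0i} and \ref{4th2ur09} actually cover, and verifying that the repeated stripping of letters in part (1) never falls outside the domain where $FM_{n}^{+}$ records a negative value equal to $-j$ — that is, that the "drop the last letter of the left factor" move genuinely corresponds to the identity $\mathbf{x}*^{M}(b*^{M}\mathbf{1})=(\mathbf{x}b)*^{M}(\mathbf{x}*^{M}\mathbf{1})$ used in the proof of Lemma \ref{4th2ur09}, and that self-distributivity of $(A^{\leq 2^{n}})^{+}$ (which holds since $A_{n}$ is Laver-like) justifies rewriting $1*^{M}\mathbf{1}$ as $\mathbf{1}$ inside these expressions. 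I expect this to be the only real obstacle; once the indexing is pinned down, the argument is a short induction invoking the two preceding lemmas.

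\begin{proof}
We first prove (1). Suppose $FM_{n}^{+}(x,\ell)=-j$, and let $y$ satisfy $j\le y\le x$. We show $FM_{n}^{+}(y,\ell)=-j$ by downward induction on $z$ for $y\le z\le x$; the case $z=x$ is the hypothesis. Suppose $z>y$ and $FM_{n}^{+}(z,\ell)=-j$. Since $z>y\ge j$, we have $j\in\{1,\dots,z-1\}$, so Lemma \ref{4th2ur09} applies and yields $FM_{n}^{+}(z-1,\ell)=-j$. Taking $z=y+1$ at the last step gives $FM_{n}^{+}(y,\ell)=-j$, proving (1).

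Now suppose $j>1$. By (1) applied with $y=j$ (note $j\le j\le x$), we have $FM_{n}^{+}(j,\ell)=-j$. Applying Lemma \ref{4t24tj0i} with $x$ replaced by $j$ (valid since $j>1$), we obtain $FM_{n}^{+}(j-1,\ell)>0$ and $FM_{n}^{+}(1,FM_{n}^{+}(j-1,\ell))=-1$, which is (2).
\end{proof}
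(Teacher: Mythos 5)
Your proof is correct and follows essentially the same route as the paper: part (1) by repeated descending application of Lemma \ref{4th2ur09}, and part (2) by specializing (1) to $y=j$ and then invoking Lemma \ref{4t24tj0i}. The only difference is that you spell out the downward induction with an explicit variable, whereas the paper states it tersely; the substance is identical.
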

\begin{proof}
\begin{enumerate}
\item This result follows from applying Lemma \ref{4th2ur09} repeatedly by a descending induction on $y$.

\item From $1$, we have $FM_{n}^{+}(j,\ell)=-j$. Therefore, by Lemma \ref{4t24tj0i}, we conclude that
$FM_{n}^{+}(j-1,\ell)>0$ and $FM_{n}^{+}(1,FM_{n}^{+}(j-1,\ell))=-1$.
\end{enumerate}
\end{proof}
Theorem \ref{t4hu824t09} may be used to efficiently compute images of the final matrix (such as Image \ref{t2hi48v2brhbar}) by computing one column at a time.
Furthermore, from Theorem \ref{t4hu824t09}, one may conclude that all of the combinatorial complexity of the multigenic Laver tables
$(A^{\leq 2^{n}})^{+}$ is contained in the positive entries of the final matrix $FM_{n}^{-}$. 

Proposition \ref{3thuiqefhbk} is proven the same way that Theorem \ref{t4hu824t09} is proven.

\begin{prop}
\label{3thuiqefhbk}
Suppose that $x,y\in\{1,\ldots,2^{n}\},y\leq O_{n}(x),x+y=x*_{n}y$, and
$FM_{n}^{-}(x+y,\ell)=-(x+i)$ for some $i\in\{1,\ldots,y\}$. Then $FM_{n}^{-}(x,\ell)>0$, and
$FM_{n}^{-}(y,FM_{N}^{-}(x,\ell))=-i$.
\end{prop}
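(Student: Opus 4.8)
The plan is to mimic exactly the proof of Theorem \ref{t4hu824t09} (which itself mirrors the Lemmas \ref{4t24tj0i} and \ref{4th2ur09}), replacing the ``apply $1$ on the right'' step by an ``apply $y$ on the right in the range below $O_n(x)$'' step. Throughout I work in the pre-multigenic Laver table $(A^{\leq 2^n})^+$, using the identification of $FM_n^-(x,\ell)$ with the $\ell$-th entry of $a_{-1}\cdots a_{-x}*a_1\cdots a_y$ where $y$ is least with $|0^x*0^y|=$ (the relevant length), i.e.\ $O_n(x)$ is the smallest power of $2$ for which $0^x*0^{O_n(x)}\in\mathrm{Li}$; the hypothesis $y\le O_n(x)$, $x+y=x*_ny$ guarantees (via Proposition \ref{t4uh4aP}(iii),(iv)) that $\mathbf{x}*\mathbf{y}$ has $\mathbf{x}$ as a proper prefix and has length exactly $x+y$, with its trailing $y$ symbols coming from $\mathbf{y}$ and with the $\mathbf{x}b$-clause governing its construction.

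The key computation runs as follows. Fix strings $\mathbf{x}=a_{-1}\cdots a_{-x}$ with $|\mathbf{x}|=x$, and let $\mathbf{y}=a_1\cdots a_y$ with $|\mathbf{y}|=y\le O_n(x)$. Since $x+y=x*_ny$ and $y\ge 2$ (the case $y=1$ is vacuous or handled directly, as $i\in\{1,\ldots,y\}$ forces $i=1$ and then the $\mathbf{x}b$-clause gives it immediately), write $\mathbf{y}=\mathbf{y}'a_y$; then by rule (3) of the definition of $*^M$,
\[
\mathbf{x}*\mathbf{y}=(\mathbf{x}*\mathbf{y}')*\mathbf{x}a_y .
\]
Here $\mathbf{x}*\mathbf{y}'$ is a proper prefix of $\mathbf{x}*\mathbf{y}$ of length $x+y-1<x*_ny$, so $\mathbf{x}*\mathbf{y}'\notin\mathrm{Li}(M)$; by Proposition \ref{t4jio} it has the form $\mathbf{w}$ for some $\mathbf{w}\in M\setminus\mathrm{Li}(M)$ with $|\mathbf{w}|=x+y-1$, and $\mathbf{x}$ is a proper prefix of $\mathbf{w}$. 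Now suppose $FM_n^-(x+y,\ell)=-(x+i)$ with $1\le i\le y$; this says the $\ell$-th symbol of $\mathbf{x}*\mathbf{y}$ equals $\mathbf{w}[x+i]$, the $(x+i)$-th symbol of $\mathbf{w}=\mathbf{x}*\mathbf{y}'$, and in particular (as $x+i>x$) this symbol lies in the ``$\mathbf{y}'$-part'' of $\mathbf{x}*\mathbf{y}'$, i.e.\ is $\mathbf{y}'*$-descended; expanding $\mathbf{x}*\mathbf{y}'$ and reading off positions, $\mathbf{w}[x+i]=\mathbf{y}[FM_n^-(x,\ell)\text{-th symbol}]$... — more precisely, one shows the $\ell$-th symbol of $\mathbf{x}*\mathbf{y}'$ agrees with applying $\mathbf{x}$ to $\mathbf{y}'$ at a position determined by $FM_n^-(x,\ell)$, forcing $FM_n^-(x,\ell)>0$ (the symbol is not one of $\mathbf{x}$'s letters, so $M_n(x,\cdot,\ell)>0$), and then that $FM_n^-(y,FM_n^-(x,\ell))=-i$.

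Concretely, the cleanest route is a descending induction on the length of $\mathbf{y}$ paralleling Lemma \ref{4th2ur09}: if $FM_n^-(x+y,\ell)=-(x+i)$ with $i<y$, then I claim $FM_n^-(x+(y-1),\ell)=-(x+i)$ as well, by the identity $\mathbf{x}*\mathbf{y}=(\mathbf{x}*\mathbf{y}')*\mathbf{x}a_y$ together with the observation that a symbol in position $x+i\le x+(y-1)$ of $\mathbf{x}*\mathbf{y}'$ is untouched; this reduces to the base case $i=y$. For the base case $i=y$, I use the analogue of Lemma \ref{4t24tj0i}: from $\mathbf{x}*\mathbf{y}=(\mathbf{x}*\mathbf{y}')*\mathbf{x}a_y$ and the fact that the last symbol $a_y$ of $\mathbf{y}$ appears only at the very end, position $x+y$, one deduces via the $\mathbf{x}b$-clause that $\mathbf{x}*\mathbf{y}'\notin\mathrm{Li}$, whence $FM_n^-(x,\ell)>0$ at the relevant position, and moreover the last symbol is $(\mathbf{x}*\mathbf{y}')*\mathbf{x}a_y$'s contribution, giving $FM_n^-(y,FM_n^-(x,\ell))=-y=-i$. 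Finally one notes $y\le O_n(x)$ is exactly what makes $x*_ny=x+y$ (rather than a collapse to $\mathrm{Li}$), so all the length bookkeeping above is valid.

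The main obstacle is the careful position-tracking: one must verify that when $FM_n^-(x+y,\ell)=-(x+i)$, the $\ell$-th output symbol genuinely ``factors through $\mathbf{y}$'', i.e.\ that applying $FM_n^-(x,-)$ at position $\ell$ lands in the $\mathbf{y}$-region and then the residual application of the appropriate prefix of $\mathbf{x}$ to $\mathbf{y}$ hits position $i$ with a $\mathbf{y}$-descended symbol, yielding $-i$. This is the step where the proof of Theorem \ref{t4hu824t09} uses the self-distributive rewriting $a_1\cdots a_{x-1}*1^{2^n}=a_1\cdots a_{x-1}*(1*1^{2^n})=a_1\cdots a_{x-1}1*(a_1\cdots a_{x-1}*1^{2^n})$; here the analogous rewriting is $\mathbf{x}*\mathbf{y}=\mathbf{x}*(\mathbf{y}'*\mathbf{y})$... one must choose the right decomposition of $\mathbf{y}$ and invoke Proposition \ref{G0Wuoeh2g2EHO2uo} / Corollary \ref{10W44eh2g2EHO2uo} to keep the $O_n(x)$-periodicity under control. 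Since the excerpt explicitly says Proposition \ref{3thuiqefhbk} ``is proven the same way that Theorem \ref{t4hu824t09} is proven,'' I expect the verification to go through with only these bookkeeping modifications and no new idea.
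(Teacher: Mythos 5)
Your argument misidentifies the role of the string $\mathbf{x}*\mathbf{y}$. When you write that $FM_n^-(x+y,\ell)=-(x+i)$ ``says the $\ell$-th symbol of $\mathbf{x}*\mathbf{y}$ equals $\mathbf{w}[x+i]$,'' you are reading $FM_n^-(x+y,\ell)$ as describing a letter of $\mathbf{x}*\mathbf{y}$ itself (a string of length $x+y$). But $FM_n^-(x+y,\ell)=M_n(x+y,O_n(x+y),\ell)$ describes the $\ell$-th letter, with $\ell$ ranging up to $2^n$, of the product obtained by applying a string \emph{of length $x+y$} to a string of length $O_n(x+y)$; the length-$(x+y)$ string is the \emph{left factor}. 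Consequently the definitional recursion $\mathbf{x}*\mathbf{y}=(\mathbf{x}*\mathbf{y}')*\mathbf{x}a_y$, which tells you how the short string $\mathbf{x}\mathbf{y}$ is built, is the wrong identity to invoke. What the proofs of Lemmas \ref{4t24tj0i} and \ref{4th2ur09} actually do is rewrite the \emph{long} product via self-distributivity, and the right analogue here is: with $\mathbf{p},\mathbf{q},\mathbf{w}$ of lengths $x,y,2^n$ on disjoint alphabets, use $\mathbf{p}*(\mathbf{q}*\mathbf{w})=(\mathbf{p}*\mathbf{q})*(\mathbf{p}*\mathbf{w})=\mathbf{p}\mathbf{q}*(\mathbf{p}*\mathbf{w})$ (the equality $\mathbf{p}*\mathbf{q}=\mathbf{p}\mathbf{q}$ is exactly where your hypotheses $y\le O_n(x)$, $x+y=x*_ny$ enter), then read the $\ell$-th letter on both sides. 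This is a conceptual error, not a bookkeeping modification.

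There are also two gaps your proposal does not close. The descending induction on $|\mathbf{y}|$ does not return the stated conclusion: knowing $FM_n^-(y-1,r)=-i$ does not yield $FM_n^-(y,r)=-i$ (for instance $FM_4^-(2,4)=-1$ while $FM_4^-(3,4)=1$), so reducing to the base case $i=y$ discards exactly the information you need. And even the corrected self-distributivity argument, run with $|\mathbf{w}|=2^n$ as in Lemma \ref{4t24tj0i}, produces $FM_n^+(y,FM_n^+(x,\ell))=-i$ (indeed that lemma's conclusion is stated in terms of $FM_n^+$, not $FM_n^-$); the proposition asserts $FM_n^-(y,FM_n^-(x,\ell))=-i$, and because $FM_n^+(x,\ell)=FM_n^-(x,\ell)+2^n-O_n(x)$ when positive while $FM_n^-(y,\cdot)$ has no simple periodicity, passing from the $+$-version to the $-$-version requires a further argument that your proposal does not supply and does not flag.
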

\begin{prop}
\label{4j4hu29gegouetfdfghjd}
Suppose that $M$ is a multigenic Laver table and $a_{1}\ldots a_{x},b_{1}\ldots b_{y}\in M$ where
$y>1$. Suppose also that $|a_{1}\ldots a_{x}*^{M}b_{1}\ldots b_{y}|=\ell$. Then
\begin{enumerate}
\item $a_{1}\ldots a_{x}*^{M}b_{1}\ldots b_{y}[\ell-1]=a_{x}$ whenever
$\mathrm{crit}(a_{1}\ldots a_{x})\leq\mathrm{crit}(b_{1}\ldots b_{y-1})$, and

\item $a_{1}\ldots a_{x}*^{M}b_{1}\ldots b_{y}[\ell-1]=b_{y-1}$ whenever
$\mathrm{crit}(a_{1}\ldots a_{x})>\mathrm{crit}(b_{1}\ldots b_{y-1}).$
\end{enumerate}
\end{prop}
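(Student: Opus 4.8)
The plan is to prove this by a descending induction on $|a_1\ldots a_x|=x$, mirroring the structure of the double induction that defines $*^M$ and that proves Proposition \ref{t4uh4aP}. First I would dispose of the base-type case where $a_1\ldots a_x\in\mathrm{Li}(M)$: then $a_1\ldots a_x*^M b_1\ldots b_y=b_1\ldots b_y$, so $\ell=y$ and the $(\ell-1)$-st letter is $b_{y-1}$; since $\mathrm{crit}$ of a left-identity is maximal, the hypothesis $\mathrm{crit}(a_1\ldots a_x)\le\mathrm{crit}(b_1\ldots b_{y-1})$ can only fail, so we are automatically in case (2), which is exactly what we want. This observation also tells me the right way to read the dichotomy: case (1) and case (2) are governed by whether $a_1\ldots a_x*^M b_1\ldots b_{y-1}$ lands in $\mathrm{Li}(M)$ or not, since $\mathrm{crit}(a_1\ldots a_x)\le\mathrm{crit}(b_1\ldots b_{y-1})$ is equivalent (for involutive-type considerations via the critical point ordering on $M$, cf. the discussion of $\mathrm{crit}$ after Corollary \ref{4hw232s}) to $a_1\ldots a_x*^M b_1\ldots b_{y-1}\in\mathrm{Li}(M)$ when $a_1\ldots a_x$ is not itself a left-identity. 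Actually the cleaner route avoids invoking $\mathrm{crit}$ directly: I would instead show that $a_1\ldots a_x*^M b_1\ldots b_y[\ell-1]=a_x$ if $a_1\ldots a_x*^M b_1\ldots b_{y-1}\in\mathrm{Li}(M)$ and $=b_{y-1}$ otherwise, and then separately note this matches the critical-point condition stated.

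The main computation is the inductive step where $a_1\ldots a_x=\mathbf{x}\notin\mathrm{Li}(M)$. Writing $\mathbf{x}=a_1\ldots a_x$, $\mathbf{y}=b_1\ldots b_{y-1}$, $b=b_y$, by definition $\mathbf{x}*^M\mathbf{y}b=(\mathbf{x}*^M\mathbf{y})*^M\mathbf{x}b$. Now split on whether $\mathbf{x}*^M\mathbf{y}\in\mathrm{Li}(M)$. If it is, then $(\mathbf{x}*^M\mathbf{y})*^M\mathbf{x}b=\mathbf{x}b=a_1\ldots a_x b$ by rule (1) of the definition of a pre-multigenic Laver table, so $\ell=x+1$ and $\mathbf{x}*^M\mathbf{y}b[\ell-1]=\mathbf{x}b[x]=a_x$, giving case (1). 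If $\mathbf{x}*^M\mathbf{y}\notin\mathrm{Li}(M)$, write $\mathbf{w}=\mathbf{x}*^M\mathbf{y}$; then $\mathbf{w}*^M\mathbf{x}b=\mathbf{w}*^M(\mathbf{x}*^M b)$ and I want to identify the second-to-last letter of $\mathbf{w}*^M\mathbf{x}b$. Here I apply the induction hypothesis to $\mathbf{w}=a_1\ldots a_x*^M b_1\ldots b_{y-1}$, which by Proposition \ref{t4uh4aP}(iv) strictly extends $\mathbf{x}$, so $|\mathbf{w}|>|\mathbf{x}|=x$ — wait, that points the induction the wrong way. So instead I should induct not on $x$ but rather handle the whole claim by the same triple/double descending-descending-ascending induction used in Theorem \ref{m43ti0}, or more efficiently exploit Proposition \ref{4j0tuh9uowjmor}-style generator reductions; the honest fix is to induct on $|\mathbf{y}|=y$ with an inner structural induction, treating $\mathbf{x}b$ as the second argument, since $\mathbf{x}b$ has one fewer letter than $|\mathbf{x}|+1$ only helps if $|\mathbf{x}b|<$ something — it does not.

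The clean approach, which I would actually carry out: prove the statement by induction on $y=|b_1\ldots b_y|$, with $y>1$ fixed. For $y=2$ we have $b_1\ldots b_y=b_1 b_2$ and $\mathbf{x}*^M b_1 b_2=(\mathbf{x}*^M b_1)*^M\mathbf{x}b_2$; since $b_1\in A$, $\mathbf{x}*^M b_1=\mathbf{x}b_1$ (if $\mathbf{x}\notin\mathrm{Li}(M)$), and now $\mathbf{x}b_1$ is a single-letter extension so one reads off the penultimate letter directly in terms of whether $\mathbf{x}b_1\in\mathrm{Li}(M)$, i.e. whether $\mathrm{crit}(\mathbf{x})\le\mathrm{crit}(b_1)$. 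For the step $y\to y+1$: $\mathbf{x}*^M b_1\ldots b_{y+1}=(\mathbf{x}*^M b_1\ldots b_y)*^M\mathbf{x}b_{y+1}$. Let $\mathbf{u}=\mathbf{x}*^M b_1\ldots b_y$. If $\mathbf{u}\in\mathrm{Li}(M)$ then $\mathbf{x}*^M b_1\ldots b_{y+1}=\mathbf{x}b_{y+1}$, its penultimate letter is $a_x$, and $\mathbf{u}\in\mathrm{Li}(M)$ says $\mathrm{crit}(\mathbf{x})\le\mathrm{crit}(b_1\ldots b_y)$, case (1). If $\mathbf{u}\notin\mathrm{Li}(M)$, then by Proposition \ref{t4uh4aP}(iii) $\mathbf{u}*^M\mathbf{x}b_{y+1}$ has $\mathbf{u}$ as a proper initial segment, and by Proposition \ref{t4uh4aP}(i)–(iii) applied to $\mathbf{u}*^M(\mathbf{x}*^M b_{y+1})=\mathbf{u}*^M(\mathbf{x}b_{y+1})$ the penultimate letter is the last letter of $\mathbf{u}$ (since $|\mathbf{x}b_{y+1}|\ge 1$ and $\mathbf{u}$ is a proper prefix), and by the induction hypothesis on $y$ together with tracking which argument the last-but-one letter of $\mathbf{u}=\mathbf{x}*^M b_1\ldots b_y$ came from, one gets $b_y$ exactly when $\mathrm{crit}(\mathbf{x})>\mathrm{crit}(b_1\ldots b_{y-1})$... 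The genuinely delicate point — the main obstacle — is correctly bookkeeping the \emph{last} letter of $\mathbf{u}$ versus the penultimate letter, and relating $\mathrm{crit}(b_1\ldots b_y)$ to $\mathrm{crit}(b_1\ldots b_{y-1})$ via the critical-point monotonicity facts for multigenic Laver tables; I expect to need Corollary \ref{hu47u240pu2GNF} or the $\mathrm{crit}$ computations around Proposition \ref{t4j8i09t42huqa} to close the case split cleanly. Everything else is routine unwinding of the definition of $*^M$ together with the structural facts in Proposition \ref{t4uh4aP}.
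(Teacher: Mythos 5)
Your approach is genuinely different from the paper's, and as written it has two concrete gaps that I don't think you can patch without essentially reproducing the paper's argument. The paper sidesteps any induction on the Laver-table structure altogether: it first proves, by an easy induction on $n$, that the last letter of $t_n(\mathbf{x},\mathbf{z})$ is the last letter of $\mathbf{z}$ when $n$ is odd and the last letter of $\mathbf{x}$ when $n$ is even (this follows from $t_n=t_{n-1}*t_{n-2}$ and Proposition~\ref{t4uh4aP}(i)). It then observes that the $(\ell-1)$-st letter of $\mathbf{x}*^M\mathbf{z}b$ is exactly the last letter of $\mathbf{x}\circ\mathbf{z}$, and that $\mathbf{x}\circ\mathbf{z}=t_n(\mathbf{x},\mathbf{z})$ for an even $n$ when $\mathrm{crit}(\mathbf{x})\le\mathrm{crit}(\mathbf{z})$ and an odd $n$ otherwise. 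No descending induction on $*^M$ is needed.

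The first gap in your argument: the dichotomy $\mathbf{u}=\mathbf{x}*^M b_1\ldots b_y\in\mathrm{Li}(M)$ versus $\notin\mathrm{Li}(M)$ does \emph{not} coincide with $\mathrm{crit}(\mathbf{x})\le\mathrm{crit}(b_1\ldots b_y)$ versus $>$. You have $\mathrm{crit}(\mathbf{u})=\mathbf{x}^{\sharp}(\mathrm{crit}(b_1\ldots b_y))$, so $\mathbf{u}\in\mathrm{Li}(M)$ is the much stronger condition $\mathbf{x}^{\sharp}(\mathrm{crit}(b_1\ldots b_y))=\max(\mathrm{crit}[M])$. It is entirely possible that $\mathrm{crit}(\mathbf{x})\le\mathrm{crit}(b_1\ldots b_y)$ (so we should be in case~(1)) while $\mathbf{u}\notin\mathrm{Li}(M)$, and your argument would then incorrectly route into case~(2). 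The second gap: when $\mathbf{u}\notin\mathrm{Li}(M)$ you assert that the penultimate letter of $\mathbf{u}*^M\mathbf{x}b_{y+1}$ is the last letter of $\mathbf{u}$ ``since $\mathbf{u}$ is a proper prefix.'' That inference does not hold: being a proper prefix gives $|\mathbf{u}*^M\mathbf{x}b_{y+1}|>|\mathbf{u}|$, not $=|\mathbf{u}|+1$. The penultimate letter of $\mathbf{u}*^M\mathbf{x}b_{y+1}$ is the last letter of $\mathbf{u}\circ\mathbf{x}$, which is the last letter of $\mathbf{u}$ only when $\mathrm{crit}(\mathbf{u})>\mathrm{crit}(\mathbf{x})$ — exactly the content of the proposition you are trying to prove, so a naive induction on $y$ here becomes circular. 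You flag this yourself as the ``delicate point,'' and I agree with your diagnosis; the clean way out is the Fibonacci-term parity argument, which is what the paper does.
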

\begin{proof}
I claim that the last letter of $t_{n}(a_{1}\ldots a_{x},b_{1}\ldots b_{y-1})$ is $b_{y-1}$ whenever $n$ is odd and
the last letter of $t_{n}(a_{1}\ldots a_{x},b_{1}\ldots b_{y-1})$ is $a_{x}$ whenever $n$ is even. We shall prove this claim by
induction on $n$.

If $n=1$, then $t_{n}(a_{1}\ldots a_{x},b_{1}\ldots b_{y-1})=b_{1}\ldots b_{y-1}$ which has $b_{y-1}$ as its last letter.
If $n=2$, then $t_{n}(a_{1}\ldots a_{x},b_{1}\ldots b_{y-1})=a_{1}\ldots a_{x}$ whose last letter is $a_{x}$. If $n>2$, then 
\[t_{n}(a_{1}\ldots a_{x},b_{1}\ldots b_{y-1})=t_{n-1}(a_{1}\ldots a_{x},b_{1}\ldots b_{y-1})*
t_{n-2}(a_{1}\ldots a_{x},b_{1}\ldots b_{y-1}).\]
If $n$ is odd, then since $t_{n-2}(a_{1}\ldots a_{x},b_{1}\ldots b_{y-1})$ has $b_{y-1}$ as its last letter, $t_{n}(a_{1}\ldots a_{x},b_{1}\ldots b_{y-1})$ also has $b_{y-1}$ as its last letter. If $n$ is even, then since
$a_{x}$ is the last letter of $t_{n-2}(a_{1}\ldots a_{x},b_{1}\ldots b_{y-1})$, we conclude that $a_{x}$ is also the last letter
of $t_{n}(a_{1}\ldots a_{x},b_{1}\ldots b_{y-1})$.

Now take note that $a_{1}\ldots a_{x}*^{M}b_{1}\ldots b_{y}[\ell-1]$ is the last letter of
$a_{1}\ldots a_{x}^{M}\circ b_{1}\ldots b_{y-1}$.

If $\mathrm{crit}(a_{1}\ldots a_{x})\leq\mathrm{crit}(b_{1}\ldots b_{y-1})$, then
$a_{1}\ldots a_{x}\circ b_{1}\ldots b_{y-1}=t_{n}(a_{1}\ldots a_{x},b_{1}\ldots b_{y-1})$ for some
even $n$. Therefore, the last letter of $a_{1}\ldots a_{x}\circ b_{1}\ldots b_{y-1}$ is $a_{x}$.
If $\mathrm{crit}(a_{1}\ldots a_{x})>\mathrm{crit}(b_{1}\ldots b_{y-1})$, then
\[a_{1}\ldots a_{x}\circ b_{1}\ldots b_{y-1}=t_{n}(a_{1}\ldots a_{x},b_{1}\ldots b_{y-1})\]
for some
odd $n$, so the last letter of $a_{1}\ldots a_{x}\circ b_{1}\ldots b_{y-1}$ is
$b_{y-1}$ in this case.
\end{proof}
\begin{cor}
Suppose that $x\in A_{n}$ and $1<y\leq O_{n}(x)$.
\begin{enumerate}
\item If $\gcd(2^{n},x)>\gcd(2^{n},y-1)$, then
\[FM_{n}^{-}(x,(x*_{n}y)-1)=y-1.\]
\item If $\gcd(2^{n},x)\leq\gcd(2^{n},y-1)$, then 
\[FM_{n}^{-}(x,(x*_{n}y)-1)=-x,\]
and
\[FM_{n}^{-}(x-1,(x*_{n}y)-1)>0,\]
and
\[FM_{n}^{+}(1,FM_{n}^{+}(x-1,(x*_{n}y)-1))=-1.\]
\end{enumerate}
\end{cor}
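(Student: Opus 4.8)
The plan is to derive this corollary directly from Proposition \ref{4j4hu29gegouetfdfghjd} applied in the multigenic Laver table $M = (A^{\leq 2^n})^+$, using the correspondence between strings over a one-letter alphabet and the classical Laver table $A_n$. First I would take $a_1 \ldots a_x = 0^x$ and $b_1 \ldots b_y = 0^y$ in $M$, so that $a_x = 0$, $b_{y-1} = 0$, and by the defining relation $a^{x *_n y} = a^x *^M a^y$ we have $|0^x *^M 0^y| = x *_n y$. Setting $\ell = x *_n y$, Proposition \ref{4j4hu29gegouetfdfghjd} tells us what the letter in position $\ell - 1$ of $0^x *^M 0^y$ is, depending on whether $\mathrm{crit}(0^x) \leq \mathrm{crit}(0^{y-1})$ or $\mathrm{crit}(0^x) > \mathrm{crit}(0^{y-1})$.

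The key translation step is to express the comparison of critical points in $M$ in terms of $\gcd$'s in $A_n$. Since the subalgebra of $M$ generated by $0$ is isomorphic (via $0^x \mapsto x$) to $A_n$, and by Proposition \ref{Rhu4t0HI42h2}(2) the ordering of critical points $\mathrm{crit}(j_{[x]}) < \mathrm{crit}(j_{[y]})$ corresponds to $\gcd(x, 2^n) < \gcd(y, 2^n)$, I would argue that $\mathrm{crit}(0^x) \leq \mathrm{crit}(0^{y-1})$ in $M$ holds if and only if $\gcd(2^n, x) \leq \gcd(2^n, y-1)$. (Here one must be a little careful about the $y - 1 = 0$ edge case, but since $1 < y \leq O_n(x)$ means $y - 1 \geq 1$, the word $0^{y-1}$ is legitimate.) Then, since $1 < y \leq O_n(x)$, we have $x *_n y = x + y$ when $y \leq O_n(x)$ — wait, more precisely $x *_n y$ need not equal $x + y$ in general, but the statement only needs position $\ell - 1 = (x *_n y) - 1$, and the definition of $FM_n^-$ gives $FM_n^-(x, \ell') = M_n(x, O_n(x), \ell')$, which by the earlier propositions relating $M_n$ to $BFM$ and the structure of $(A^{\leq 2^n})^+$ records exactly the letter $0^x *^M 0^{O_n(x)}[\ell']$ encoded as a signed integer. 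So $FM_n^-(x, (x*_n y)-1)$ equals $-x$ if that letter comes from the left factor in position $x$, and equals $y - 1$ (a positive index into the right factor) if it comes from the right factor in position $y-1$.

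Putting these together: in case $\gcd(2^n, x) > \gcd(2^n, y-1)$, Proposition \ref{4j4hu29gegouetfdfghjd}(2) gives that the letter is $b_{y-1}$, i.e. it comes from the right factor in position $y - 1$, hence $FM_n^-(x, (x*_n y) - 1) = y - 1$. In case $\gcd(2^n, x) \leq \gcd(2^n, y-1)$, part (1) of that proposition gives the letter is $a_x$, from the left factor in position $x$, hence $FM_n^-(x, (x*_n y)-1) = -x$; then the two remaining equalities $FM_n^-(x-1, (x*_n y)-1) > 0$ and $FM_n^+(1, FM_n^+(x-1, (x*_n y)-1)) = -1$ follow immediately from Theorem \ref{t4hu824t09}(2) with $j = x$ (the hypothesis $j > 1$ being $x > 1$, which holds since $x \geq \gcd(2^n, x) > \gcd(2^n, y-1) \geq 1$ forces $x \geq 2$, and one checks $x = 1$ is impossible in this subcase because then $O_n(1) = 0$ contradicting $y \leq O_n(x)$).

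The main obstacle I anticipate is bookkeeping around the sign conventions and the precise dictionary between $M_n$, $FM_n^-$, $FM_n^+$, and the actual letters of words in $(A^{\leq 2^n})^+$ — in particular verifying that evaluating the word $0^x *^M 0^y$ at position $(x*_n y) - 1$ really is governed by $FM_n^-$ rather than $FM_n^+$ when $1 < y \leq O_n(x)$ (this is exactly item (4) of the proposition relating $M_n$ to the final matrices, $M_n(x, y, \ell) = FM_n^-(x, \ell)$ for $1 \leq y \leq O_n(x)$), and checking that the hypothesis $x + y = x *_n y$ in the statement is used only to ensure position $(x*_ny)-1$ is the second-to-last and lines up with $b_{y-1}$. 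Once the translation is set up correctly, the argument is a direct case split invoking Proposition \ref{4j4hu29gegouetfdfghjd} and Theorem \ref{t4hu824t09}.
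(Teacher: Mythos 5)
Your proposal follows essentially the same route as the paper's own proof: translate the $\gcd$ comparison into a comparison of critical points, apply Proposition \ref{4j4hu29gegouetfdfghjd} to identify the last-but-one letter of the product, read off $M_n$ (and hence $FM_n^-$ via the identity $M_n(x,y,\ell)=FM_n^-(x,\ell)$ for $y\leq O_n(x)$), and finish case 2 with Lemma \ref{4t24tj0i} (your citation of Theorem \ref{t4hu824t09}(2) is equivalent, since that theorem is the lemma iterated). However, there are two genuine problems.

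First, the choice $a_1\ldots a_x = 0^x$ and $b_1\ldots b_y = 0^y$ undermines the argument. The function $M_n$ is defined by writing $a_{-1}\ldots a_{-x}*_n a_1\ldots a_y = a_{M_n(x,y,1)}\ldots a_{M_n(x,y,x*y)}$ with pairwise \emph{distinct} letters, precisely so that the letter appearing in a given output position uniquely determines the signed index. With all letters equal to $0$, Proposition \ref{4j4hu29gegouetfdfghjd} tells you the last-but-one letter equals $a_x$ (or $b_{y-1}$), but both equal $0$, so nothing distinguishes $-x$ from $y-1$. You must work with distinct abstract letters $a_1,\ldots,a_x,b_1,\ldots,b_y$, which is exactly what the paper does; the one-letter-alphabet specialization is only used to recover $A_n$ itself, not to compute $M_n$.

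Second, the justification that $x>1$ in case 2 is wrong on both counts. The chain $x\geq\gcd(2^n,x)>\gcd(2^n,y-1)\geq 1$ uses the hypothesis of \emph{case 1}, not case 2, so it is unavailable here. And the claim $O_n(1)=0$ is false: $O_n(1)=2^{o_n(1)}\geq 1$ for all $n$ (e.g.\ $O_4(1)=4$), so $x=1$ together with $1<y\leq O_n(1)$ and $\gcd(2^n,1)=1\leq\gcd(2^n,y-1)$ is a genuinely possible configuration in case 2, where the expressions $FM_n^-(x-1,\cdot)$ with $x-1=0$ are ill-formed. This edge case is actually latent in the paper's own statement — its proof invokes Lemma \ref{4t24tj0i}, which requires $x>1$, without ruling $x=1$ out — but your attempted patch does not repair it. The correct reading is that the last two conclusions of case 2 are meant to be asserted only for $x>1$, and this should be said explicitly rather than argued away with a false computation of $O_n(1)$.

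(A smaller point: the digression about a hypothesis "$x+y=x*_n y$" is spurious — the corollary carries no such hypothesis; you are likely conflating it with the nearby Proposition \ref{3thuiqefhbk}, which does. Since you self-correct, this does not affect the argument, but it should be excised.)
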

\begin{proof}
Suppose that $x\in A_{n}$ and $1<y\leq O_{n}(x)$. Then let $a_{1},\ldots,a_{x},b_{1},\ldots,b_{y}$ be letters.
\begin{enumerate}
\item If $\gcd(2^{n},x)>\gcd(2^{n},y-1)$, then $\mathrm{crit}(a_{1}\ldots a_{x})>\mathrm{crit}(b_{1}\ldots b_{y-1})$, so
by Proposition \ref{4j4hu29gegouetfdfghjd}, we have $a_{1}\ldots a_{x}*b_{1}\ldots b_{y}[x*y-1]=b_{y-1}$. Therefore,
$FM_{n}^{-}(x,(x*_{n}y)-1)=y-1$.

\item If $\gcd(2^{n},x)\leq\gcd(2^{n},y-1)$, then $\mathrm{crit}(a_{1}\ldots a_{x})\leq\mathrm{crit}(b_{1}\ldots b_{y-1})$, so
by Proposition \ref{4j4hu29gegouetfdfghjd}, we have $a_{1}\ldots a_{x}*b_{1}\ldots b_{y}[x*y-1]=a_{x}$. Therefore, we conclude that
$FM_{n}^{-}(x,(x*_{n}y)-1)=-x$. The other conclusions follow from Lemma \ref{4t24tj0i}.
\end{enumerate}
\end{proof}

We shall now give counterexamples to conjectures that one may make when observing the final matrices.
\begin{exam}
For $n<7$, if $FM_{n}^{+}(x,y)>0$ and $y\leq 2^{n-1}$, then $FM_{n}^{+}(x,y+2^{n-1})>0$ as well.
However, we have $FM_{7}^{+}(8,16)=1$ but $FM_{7}^{+}(8,16+2^{6})=-4$.
\end{exam}
\begin{exam}
If $n<9$ and $FM_{n}^{-}(i,j)>0,FM_{n}^{-}(i,j+1)>0$, then
\[FM_{n}^{-}(i,j+1)=FM_{n}^{-}(i,j)+1.\]
On the other hand,  
\[FM_{9}^{-}(8,175)=1,FM_{9}^{-}(8,176)=3,FM_{9}^{-}(8,191)=1,FM_{9}^{-}(8,192)=4,\]
\[FM_{9}^{-}(8,143)=FM_{9}^{-}(8,144)=1.\]
\end{exam}
\begin{exam}
If $n<11$ and $FM_{n}^{-}(i,j)>0,FM_{n}^{-}(i,j+1)>0$, then
\[FM_{n}^{-}(i,j+1)\geq FM_{n}^{-}(i,j)-1.\]
To the contrary, $FM_{11}^{-}(8,255)=3$ and $FM_{11}^{-}(8,256)=1$.
\end{exam}
\begin{exam}
For $n<4$, we have $\gcd(2^{n},x,y)\leq\gcd(2^{n},|FM_{n}^{-}(x,y)|)$,
but $FM_{4}^{-}(2,4)=1$.
\end{exam}
\begin{exam}
For $n<7$, if $FM_{n}^{-}(x,y)>0$, then
\[\gcd(2^{n},x,y)\leq\gcd(2^{n},FM_{n}^{-}(x,y)),\]
yet $FM_{7}^{-}(8,16)=1$. 
\end{exam}

Let $X$ be a permutative LD-system. Then if $x,y\in X$, then let $x\leq_{\mathrm{ST}}y$ if and only if
$x/\equiv^{\alpha}\preceq y/\equiv^{\alpha}$ for all critical points $\alpha\in\mathrm{crit}[X]$. 

\begin{exam}
Suppose that $a_{1}\ldots a_{x},b_{1}\ldots b_{y}\in (A^{\leq 2^{n}})^{+}$ are words where the letters
$a_{1},\ldots,a_{x},b_{1},\ldots,b_{y}$ are distinct. Let $a_{1}\ldots a_{x}*_{n}b_{1}\ldots b_{y}=c_{1}\ldots c_{z}$, and suppose that
$a_{1}\ldots a_{x}*_{n}b_{1}\ldots b_{y}[\ell]=b_{j}$.

If $n\in\{0,\ldots,6\}$, then $a_{1}\ldots a_{x}*_{n}b_{1}\ldots b_{j}\leq_{\mathrm{ST}}c_{1}\ldots c_{\ell}$.
However, if $n=7,x=1,y=4$, then $a_{1}*_{7}b_{1}b_{2}b_{3}b_{4}[32]=b_{1}$, but
$a_{1}b_{1}=a_{1}*_{7}b_{1}\not\leq_{\mathrm{ST}}c_{1}\ldots c_{32}$.
Let $\phi:(A^{\leq 32})^{+}\rightarrow(A^{\leq 4})^{+}$ be the canonical homomorphism. Then
$\phi(a_{1}*b_{1})=a_{1}b_{1}\not\preceq a_{1}b_{3}a_{1}b_{1}=\phi(c_{1}\ldots c_{32})$ which implies that
$a_{1}*b_{1}\not\leq_{\mathrm{ST}}c_{1}\ldots c_{32}$.
\end{exam}
\begin{exam}
Suppose $a_{1}\ldots a_{x},b_{1}\ldots b_{y}\in (A^{\leq 2^{n}})^{+}$ and all the letters $a_{1},\ldots,a_{x},b_{1},\ldots,b_{y}$ are
distinct. Let $a_{1}\ldots a_{x}*b_{1}\ldots b_{y}=c_{1}\ldots c_{z}$, and let $a_{1}\ldots a_{x}*b_{1}\ldots b_{y}[\ell]=a_{i}$.

If $n\leq 3$, then $a_{1}\ldots a_{i}\leq_{\mathrm{ST}}c_{1}\ldots c_{\ell}$. However, if $n=4$ and $x=2,y=4$, then $a_{1}a_{2}*b_{1}b_{2}b_{3}b_{4}[4]=a_{1}$, but $a_{1}\not\leq_{\mathrm{ST}}c_{1}c_{2}c_{3}c_{4}$.
\end{exam}

% ----------------------------------------------------------------
\subsection{Applications of induction to multigenic Laver tables}

\begin{thm}
Suppose that $M$ is a multigenic Laver table over an alphabet $A$,
$a_{1},\ldots,a_{x},b_{1},\ldots,b_{y}$ are all distinct letters in $A$,
$a_{1}\ldots a_{x}*^{M}b_{1}\ldots b_{y}=c_{1}\ldots c_{z}$, and $1\leq l\leq z$.
\begin{enumerate}
\item If $c_{l}=a_{i}$, then $\mathrm{crit}(a_{1}\ldots a_{i})\leq \mathrm{crit}(c_{1}\ldots c_{l})$.

\item If $c_{l}=b_{i}$, then $\mathrm{crit}(b_{1}\ldots b_{i})\leq \mathrm{crit}(c_{1}\ldots c_{l})$.
\end{enumerate}
\label{4tij0poqlke}
\end{thm}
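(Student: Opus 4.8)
The plan is to prove both statements simultaneously by the same kind of descending-descending-ascending triple induction that is used throughout the multigenic Laver table sections: descending on $|a_1\ldots a_x|$, then descending on $|b_1\ldots b_y|$, and ascending on $|b_1\ldots b_y|$ inside the recursion $\mathbf{x}*\mathbf{y}b = (\mathbf{x}*\mathbf{y})*\mathbf{x}b$. The statement is really a statement about the positions of the letters $a_i$ and $b_i$ inside $c_1\ldots c_z = \mathbf{x}*^M\mathbf{y}$, so I would keep track of where each inherited letter lands and compare critical points via the identities already established for permutative LD-systems (Proposition \ref{t4j8i09t42huqa}, Corollary \ref{hu47u240pu2GNF}) and the structural facts about $*^M$ in Proposition \ref{t4uh4aP} and Corollary \ref{10W44eh2g2EHO2uo}.

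First I would dispose of the base cases. If $a_1\ldots a_x\in\mathrm{Li}(M)$, then $c_1\ldots c_z = b_1\ldots b_y$, so $c_l = b_l$ and the statement $\mathrm{crit}(b_1\ldots b_l)\leq\mathrm{crit}(c_1\ldots c_l)$ is just reflexivity (Proposition \ref{sfenjuw4}); case (1) is vacuous here since no $c_l$ equals any $a_i$ when the letters are distinct and $\mathbf{x}\in\mathrm{Li}(M)$. If $\mathbf{y}=b\in A$ has length $1$, then $c_1\ldots c_z = \mathbf{x}b$, so for $l\leq x$ we have $c_l = a_l$ and $\mathrm{crit}(a_1\ldots a_l)\leq\mathrm{crit}(c_1\ldots c_l)$ is reflexivity, while for $l=x+1$ we have $c_l = b$ and $\mathrm{crit}(b_1) = \mathrm{crit}(b)\leq\mathrm{crit}(\mathbf{x}b)=\mathrm{crit}(c_1\ldots c_l)$ because $b_1\ldots b_1\preceq$ a suffix obtained by left-multiplication (use Proposition \ref{8u3y290rq}/\ref{t4j8i09t42huqa} or directly $\mathrm{crit}(\mathbf{x}b)\geq\mathrm{crit}(b)$, which is item (6) of Proposition \ref{t482u9onjkrw} in the classical case and its analogue for permutative LD-systems — in fact $\mathbf{x}*b=\mathbf{x}b$ so $\mathrm{crit}(\mathbf{x}*b)=\mathbf{x}^\sharp(\mathrm{crit}(b))\geq\mathrm{crit}(b)$ by Theorem \ref{2j1298ur28tye}(2)).

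For the inductive step, write $\mathbf{y}=\mathbf{z}'b$ with $|\mathbf{z}'|>0$, so that $c_1\ldots c_z = (\mathbf{x}*\mathbf{z}')*(\mathbf{x}b)$. Let $\mathbf{d} = \mathbf{x}*\mathbf{z}' = d_1\ldots d_p$. The letters of $\mathbf{d}$ are, by the induction hypothesis applied to $\mathbf{x}*\mathbf{z}'$, each either some $a_i$ or some $b_j$ with $j<y$, and each initial segment of $\mathbf{d}$ ending in such a letter has critical point dominating $\mathrm{crit}(a_1\ldots a_i)$ or $\mathrm{crit}(b_1\ldots b_j)$ respectively. Now $c_1\ldots c_z = \mathbf{d}*(\mathbf{x}b)$; if $\mathbf{d}\in\mathrm{Li}(M)$ the letters of $c$ are just $\mathbf{x}b$ and we reduce to the length-$1$ case, so assume $\mathbf{d}\notin\mathrm{Li}(M)$, whence $\mathbf{d}$ is a proper prefix of $c_1\ldots c_z$ (Proposition \ref{t4uh4aP}(iv)). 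For positions $l\leq p$ we have $c_l = d_l$ and we just invoke the induction hypothesis on $\mathbf{d}$. For positions $l>p$, write $\mathbf{x}b = e_1\ldots e_{x+1}$ (so $e_i = a_i$ for $i\leq x$, $e_{x+1}=b$); then the suffix $c_{p+1}\ldots c_z$ is built by the recursion from $\mathbf{d}*(\mathbf{x}*e_k)$-type steps, and $c_l$ is some letter $e_k$. The key computation is: the initial segment $c_1\ldots c_l$ equals $\mathbf{d}\circ(e_1\ldots e_{k-1})$ followed by $e_k$ — i.e. $c_1\ldots c_l = (\mathbf{d}\circ e_1\ldots e_{k-1})e_k$ — which is $\mathbf{d}*(e_1\ldots e_{k-1}e_k)=\mathbf{d}*(e_1\ldots e_k)$ when $\mathbf{d}*e_1\ldots e_{k-1}\notin\mathrm{Li}(M)$ and otherwise is $\mathbf{x}*(e_1\ldots e_k)$ restarted; using Corollary \ref{hu47u240pu2GNF} and Proposition \ref{t4j8i09t42huqa} we get $\mathrm{crit}(c_1\ldots c_l) = \mathbf{d}^\sharp(\mathrm{crit}(e_1\ldots e_k))\geq\mathrm{crit}(e_1\ldots e_k)$ (Theorem \ref{2j1298ur28tye}(2)). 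Now if $e_k = b$ (the case $c_l=b_y$, $i=y$), then $\mathrm{crit}(e_1\ldots e_k)=\mathrm{crit}(\mathbf{x}b)\geq\mathrm{crit}(b)=\mathrm{crit}(b_1\ldots b_y)$ by item (6) of the relevant proposition, giving case (2) for $i=y$; and if $e_k = a_i$ with $i\leq x$, then $e_1\ldots e_k = a_1\ldots a_i$ and we get $\mathrm{crit}(c_1\ldots c_l)\geq\mathrm{crit}(a_1\ldots a_i)$ directly, giving case (1). Finally the letters $b_j$ with $j<y$ that appear among $c_{p+1}\ldots c_z$ can only arise from letters of $\mathbf{d}$ that were carried along, and for those the bound $\mathrm{crit}(b_1\ldots b_j)\leq\mathrm{crit}(c_1\ldots c_l)$ follows by combining the induction hypothesis on $\mathbf{d}$ (which bounds $\mathrm{crit}(d_1\ldots d_m)$ where $d_m = b_j$) with the monotonicity $\mathrm{crit}(d_1\ldots d_m)\leq\mathrm{crit}(c_1\ldots c_l)$ coming again from Proposition \ref{t4j8i09t42huqa}, since $d_1\ldots d_m$ is a prefix of $c_1\ldots c_l$ and the "tail" letters only raise critical points.

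The main obstacle I anticipate is the bookkeeping in the last paragraph: precisely identifying which position $l$ of $c_1\ldots c_z$ carries which letter $e_k$ (or which inherited $d_m$), and justifying the identity $c_1\ldots c_l = \mathbf{d}*(e_1\ldots e_k)$ — or its restarted variant when an intermediate $\mathbf{d}*e_1\ldots e_{k-1}$ falls into $\mathrm{Li}(M)$, where one must use Proposition \ref{G0Wuoeh2g2EHO2uo}/Corollary \ref{10W44eh2g2EHO2uo} to see that the earlier $\mathrm{Li}(M)$-valued blocks are simply skipped so that the relevant critical point is the one of $e_1\ldots e_k$ computed afresh (and $\mathrm{crit}(\mathbf{x}*e_1\ldots e_k)\geq\mathrm{crit}(e_1\ldots e_k)$ still holds). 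Once this prefix identity is nailed down, the critical-point inequalities are immediate applications of $\mathbf{x}^\sharp(\alpha)\geq\alpha$ and order-preservation of $\mathbf{x}^\sharp$ from Theorem \ref{2j1298ur28tye}.
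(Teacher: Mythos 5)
Your base cases match the paper's and are fine, and you correctly decompose $\mathbf{x}*\mathbf{y}=(\mathbf{x}*\mathbf{z}')*(\mathbf{x}b)$ for the inductive step. But there is a genuine gap, and it is the central one: you are trying to apply the induction hypothesis (or its spirit) to the product $\mathbf{d}*(\mathbf{x}b)$ where $\mathbf{d}=\mathbf{x}*\mathbf{z}'$, yet this product does \emph{not} satisfy the hypotheses of the theorem. The letters of $\mathbf{d}$ lie in $\{a_1,\ldots,a_x,b_1,\ldots,b_{y-1}\}$ (Proposition~\ref{t4uh4aP}(v)), so $\mathbf{d}$ and $\mathbf{x}b$ share letters — every $a_i$ at least. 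The theorem requires all letters in the two operands to be distinct, precisely so that one can unambiguously say ``which'' letter a given position $c_\ell$ carries. This is exactly what the paper's opening move handles: it passes (via $\mathbf{M}((f(a))_{a\in A},N)$ and the congruence $\simeq=\ker\phi$) to a setting where every letter has infinitely many isomorphic copies, replaces $\mathbf{x}*\mathbf{z}'$ by an equivalent word $d_1\ldots d_r$ with \emph{fresh} letters, and then legitimately applies the induction hypothesis (descending on the first operand, which is longer now since $\mathbf{x}\prec\mathbf{d}$) to $d_1\ldots d_r*(\mathbf{x}b)$. Without that device, you have no rigorous way to decide whether an occurrence of $a_i$ in $c_1\ldots c_z$ ``came from'' $\mathbf{d}$ or from $\mathbf{x}b$, and the recursive bookkeeping you describe cannot get off the ground.

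Two of the specific steps you write down are symptoms of this gap and are incorrect as stated. First, your claim that for $\ell>p=|\mathbf{d}|$ the letter $c_\ell$ must be some $e_k$ from $\mathbf{x}b$ is false: the recursion $\mathbf{d}*(\mathbf{x}b)$ keeps feeding $\mathbf{d}$-letters into deeper products (e.g.\ via intermediate factors of the form $\mathbf{w}*\mathbf{d}e_q$), so positions beyond $p$ routinely carry letters of $\mathbf{d}$. Your accompanying formula $c_1\ldots c_\ell=(\mathbf{d}\circ e_1\ldots e_{k-1})e_k$ only describes a few distinguished positions, not an arbitrary $\ell>p$. Second, your final appeal to ``the monotonicity $\mathrm{crit}(d_1\ldots d_m)\leq\mathrm{crit}(c_1\ldots c_\ell)$ \ldots since $d_1\ldots d_m$ is a prefix of $c_1\ldots c_\ell$'' is not a monotonicity that exists: the critical point of a prefix is not monotone in length (already in $(A^{\leq 2^n})^+$ the critical point is governed by $\gcd$ with $2^n$, which oscillates), and Proposition~\ref{t4j8i09t42huqa} does not assert anything of this form. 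In the paper's proof, the inequality $\mathrm{crit}(d_1\ldots d_j)\leq\mathrm{crit}(c_1\ldots c_\ell)$ is obtained only as the \emph{conclusion} of the induction hypothesis applied to $d_1\ldots d_r*(\mathbf{x}b)$ with its fresh letters; it is not an a priori fact about prefixes. To repair your proof, you would need to import the isomorphic-copy device and then argue, as the paper does, in three sub-cases according to whether $e_\ell$ (the $\ell$-th letter of $d_1\ldots d_r*(\mathbf{x}b)$) is some $d_j$, is $b_y$, or is some $a_i$, combining the ``fresh'' IH with the IH on $\mathbf{x}*\mathbf{z}'$ when $e_\ell$ comes from a $d_j$.
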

\begin{proof}
Without loss of generality, assume that $N$ is a generalized Laver table over an alphabet $B$,
$f:A\rightarrow B$ is a function where $f^{-1}[\{b\}]$ is infinite for each $b\in B$, and
$M=\mathbf{M}((f(a))_{a\in A},N)$. Let $\phi:M\rightarrow N$ be the unique homomorphism that extends $f$, and let
$\simeq$ be the congruence on $M$ where $\mathbf{x}\simeq\mathbf{y}$ if and only if $\phi(\mathbf{x})=\phi(\mathbf{y})$. The motivation
behind our use of $M$ is that all of the letters in the set $f^{-1}[\{b\}]$ are simply isomorphic copies of one another and that in this proof we will need to replace letters with these copies.

We shall prove this result by our usual double induction which is descending on $a_{1}\ldots a_{x}$ but which is ascending
on $b_{1}\ldots b_{y}$ with respect to $\preceq$.
\item[Case 1: $a_{1}\ldots a_{x}\in \mathrm{Li}(M)$.]

We have $a_{1}\ldots a_{x}*b_{1}\ldots b_{y}=b_{1}\ldots b_{y}$. If $c_{l}=b_{i}$, then $l=i$, so $b_{1}\ldots b_{i}=c_{1}\ldots c_{l}$, hence
\[\mathrm{crit}(b_{1}\ldots b_{i})\leq\mathrm{crit}(c_{1}\ldots c_{l}).\]

\item[Case 2: $a_{1}\ldots a_{x}\not\in \mathrm{Li}(M),y=1$.]

We have $c_{1}\ldots c_{z}=a_{1}\ldots a_{x}b_{1}$. If $\ell\leq x$, then
$c_{1}\ldots c_{\ell}=a_{1}\ldots a_{\ell}$, so $c_{\ell}=a_{\ell}$ and
\[\mathrm{crit}(a_{1}\ldots a_{\ell})\leq \mathrm{crit}(c_{1}\ldots c_{\ell}).\]

Now assume that $\ell=x+1$. Then $c_{\ell}=b_{1}$, but
\[\mathrm{crit}(b_{1})\leq \mathrm{crit}(a_{1}\ldots a_{x}b_{1})=\mathrm{crit}(c_{1}\ldots c_{\ell}).\]

\item[Case 3: $a_{1}\ldots a_{x}\not\in \mathrm{Li}(M),y>1$.]

We have 
\[a_{1}\ldots a_{x}*b_{1}\ldots b_{y}=(a_{1}\ldots a_{x}*b_{1}\ldots b_{y-1})*a_{1}\ldots a_{x}b_{y}.\]

Suppose now that
\[d_{1}\ldots d_{r}\simeq a_{1}\ldots a_{x}*b_{1}\ldots b_{y-1}\]
where the letters $d_{1}\ldots d_{r}$ are all distinct and distinct from $a_{1}\ldots a_{x},b_{1}\ldots b_{y}$.
Suppose now that $d_{1}\ldots d_{r}*a_{1}\ldots a_{x}b_{y}=e_{1}\ldots e_{z}$. 

\item[Case 3a: $e_{\ell}=d_{j}$.]

If $e_{\ell}=d_{j}$, then
\[\mathrm{crit}(c_{1}\ldots c_{j})=\mathrm{crit}(d_{1}\ldots d_{j})\leq \mathrm{crit}(e_{1}\ldots e_{\ell})=\mathrm{crit}(c_{1}\ldots c_{\ell}).\]

Take note that $c_{j}=c_{\ell}$. Now, if $c_{j}=a_{i}$, then
\[\mathrm{crit}(a_{1}\ldots a_{i})\leq \mathrm{crit}(c_{1}\ldots c_{j})\leq \mathrm{crit}(c_{1}\ldots c_{\ell}).\]

If $c_{j}=b_{i}$, then 
\[\mathrm{crit}(b_{1}\ldots b_{i})\leq \mathrm{crit}(c_{1}\ldots c_{j})\leq \mathrm{crit}(c_{1}\ldots c_{\ell}).\]

\item[Case 3b: $e_{\ell}=b_{y}.$]
$\ell=z$, so
\[\mathrm{crit}(b_{1}\ldots b_{y})\leq \mathrm{crit}(a_{1}\ldots a_{x}*b_{1}\ldots b_{y})=\mathrm{crit}(c_{1}\ldots c_{z}).\]

\item[Case 3c: $e_{\ell}=a_{i}$ for some $i\in\{1,\ldots,x\}$.]
\[\mathrm{crit}(a_{1}\ldots a_{i})\leq \mathrm{crit}(e_{1}\ldots e_{\ell})=\mathrm{crit}(c_{1}\ldots c_{\ell}).\]
\end{proof}
\begin{thm}
Suppose that $M$ is a multigenic Laver table over an alphabet $A$, $\mathbf{x}_{i}=a_{i,1}\ldots a_{i,n_{i}}$ for $1\leq i\leq n$, and all the letters $a_{1,1},\ldots,a_{1,n_{1}},\ldots,a_{n,1},\ldots,a_{n,n_{n}}$ are distinct. Let $t$ be an $n$-ary term in the language of
LD-systems, $t(\mathbf{x}_{1},\ldots,\mathbf{x}_{n})=c_{1}\ldots c_{z}$, and $c_{\ell}=a_{i,j}$. Then
$\mathrm{crit}(a_{i,1}\ldots a_{i,j})\leq \mathrm{crit}(c_{1}\ldots c_{\ell})$.
\end{thm}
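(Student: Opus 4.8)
The plan is to reduce the statement about an arbitrary $n$-ary LD-term $t$ to the binary case already handled by Theorem \ref{4tij0poqlke}, by induction on the structure of $t$. Since $t$ is a term in the language of LD-systems (a single binary operation $*$), either $t$ is a projection $t(\mathbf{x}_1,\ldots,\mathbf{x}_n)=\mathbf{x}_i$, or $t(\mathbf{x}_1,\ldots,\mathbf{x}_n)=u(\mathbf{x}_1,\ldots,\mathbf{x}_n)*v(\mathbf{x}_1,\ldots,\mathbf{x}_n)$ for subterms $u,v$. The projection case is immediate: $t(\mathbf{x}_1,\ldots,\mathbf{x}_n)=\mathbf{x}_i=a_{i,1}\ldots a_{i,n_i}$, so if $c_\ell=a_{i,j}$ then $\ell=j$ and $c_1\ldots c_\ell=a_{i,1}\ldots a_{i,j}$, whence $\mathrm{crit}(a_{i,1}\ldots a_{i,j})\leq\mathrm{crit}(c_1\ldots c_\ell)$ trivially.

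For the inductive step, write $\mathbf{u}=u(\mathbf{x}_1,\ldots,\mathbf{x}_n)$ and $\mathbf{v}=v(\mathbf{x}_1,\ldots,\mathbf{x}_n)$, so $t(\mathbf{x}_1,\ldots,\mathbf{x}_n)=\mathbf{u}*\mathbf{v}$. The subtlety is that the letters appearing in $\mathbf{u}$ and $\mathbf{v}$ are generally \emph{not} distinct from one another (the same generator $\mathbf{x}_i$ may occur in both $u$ and $v$), so one cannot apply Theorem \ref{4tij0poqlke} directly to the pair $(\mathbf{u},\mathbf{v})$. The standard fix, mirroring the device used in the proof of Theorem \ref{4tij0poqlke} itself, is to work in $\mathbf{M}((f(a))_{a\in A},N)$ where $N$ is a multigenic Laver table over an alphabet $B$, $f:A\to B$ has infinite fibers, and $M=\mathbf{M}((f(a))_{a\in A},N)$; then replace each occurrence of each generator by a fresh isomorphic copy of it, so that $\mathbf{u}$ becomes a word $\mathbf{u}'=d_1\ldots d_r$ and $\mathbf{v}$ becomes $\mathbf{v}'=e_1\ldots e_s$ with all letters of $\mathbf{u}'\mathbf{v}'$ distinct, and $\mathbf{u}'*\mathbf{v}'\simeq\mathbf{u}*\mathbf{v}$ under the canonical congruence $\simeq$ (so critical points are preserved). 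Applying Theorem \ref{4tij0poqlke} to $\mathbf{u}'*\mathbf{v}'=c_1\ldots c_z$: if $c_\ell$ comes from the $\mathbf{u}'$-side, say $c_\ell=d_p$, then $\mathrm{crit}(d_1\ldots d_p)\leq\mathrm{crit}(c_1\ldots c_\ell)$, and $d_1\ldots d_p$ is a copy of some prefix $u(\mathbf{x}_1,\ldots,\mathbf{x}_n)[1..p]$ of $\mathbf{u}$; by the induction hypothesis applied to $u$ (after undoing the copying, which preserves critical points), if that prefix's last letter is $a_{i,j}$ then $\mathrm{crit}(a_{i,1}\ldots a_{i,j})\leq\mathrm{crit}(u(\mathbf{x}_1,\ldots,\mathbf{x}_n)[1..p])=\mathrm{crit}(d_1\ldots d_p)\leq\mathrm{crit}(c_1\ldots c_\ell)$. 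The $\mathbf{v}'$-side is symmetric, using the induction hypothesis applied to $v$, and noting that $c_\ell=a_{i,j}$ forces $j$ to be exactly the index determined by which prefix of $\mathbf{v}$ ends at that copied letter.

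The main obstacle I anticipate is bookkeeping the copying device cleanly: one must phrase precisely what "replace each occurrence by a fresh copy" means at the level of terms (effectively, choose a term $t'$ over a larger set of generators with pairwise distinct generator-columns together with a substitution back to the $\mathbf{x}_i$'s), verify that the induction hypothesis transports across this substitution because the substitution is a homomorphism inducing the congruence $\simeq$ which preserves $\mathrm{crit}$, and check that prefixes of words are tracked correctly through the congruence. Once that machinery is set up, the induction itself is short and the case analysis is exactly the three-case split (letter from left factor, letter from right factor, final appended letter) already appearing in the proof of Theorem \ref{4tij0poqlke}. An alternative that avoids some of this is to prove the statement first for the "canonical" free term algebra picture where all generator-columns are distinct by construction (as in the iterated free-product / $\mathbf{M}(\cdot)$ setting), and then push forward along homomorphisms using Proposition \ref{49tt4ngio224gf} and the fact that homomorphic images only decrease or preserve the relevant critical-point comparisons; I would lead with the copying approach since it parallels the existing proof most closely.
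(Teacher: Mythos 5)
Your proposal matches the paper's proof essentially step for step: induction on the structure of $t$, projection base case, inductive step $t=t_1*t_2$, replacement of $t_1(\mathbf{x}_1,\ldots,\mathbf{x}_n)$ and $t_2(\mathbf{x}_1,\ldots,\mathbf{x}_n)$ by $\simeq$-equivalent words on fresh letters using the same $M,N,f,\phi,\simeq$ machinery as in Theorem \ref{4tij0poqlke}, then a direct appeal to Theorem \ref{4tij0poqlke} combined with the induction hypothesis on the relevant subterm, transporting critical points through $\simeq$. The only small inaccuracy is your remark about a "three-case split" at the end: the case analysis in this theorem is just two cases (letter from the left factor, letter from the right factor); the third case you recall is internal to the proof of Theorem \ref{4tij0poqlke} and not needed here.
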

\begin{proof}
For this proof, assume that $M,N,f,\phi,\simeq$ are the same as in Theorem \ref{4tij0poqlke}.

We shall prove this result by induction on the complexity of the term $t$. If $t(x)=x$, then this result
is trivial. Now assume that 
\[t(\mathbf{x}_{1},\ldots,\mathbf{x}_{n})=t_{1}(\mathbf{x}_{1},\ldots,\mathbf{x}_{n})*t_{2}(\mathbf{x}_{1},\ldots,\mathbf{x}_{n}).\]

Let $t_{1}(\mathbf{x}_{1},\ldots,\mathbf{x}_{n})\simeq r_{1}\ldots r_{p}$ and $t_{2}(\mathbf{x}_{1},\ldots,\mathbf{x}_{n})\simeq s_{1}\ldots s_{q}$ where
all the letters in $\mathbf{x}_{1},\ldots,\mathbf{x}_{n},r_{1}\ldots r_{p},s_{1}\ldots s_{q}$ are distinct.

Suppose that $t_{1}(\mathbf{x}_{1},\ldots,\mathbf{x}_{n})=e_{1}\ldots e_{p}$ and $t_{2}(\mathbf{x}_{1},\ldots,\mathbf{x}_{n})=f_{1}\ldots f_{q}$.

Then suppose that 
\[r_{1}\ldots r_{p}*s_{1}\ldots s_{q}=d_{1}\ldots d_{z}.\] 

Suppose that $d_{\ell}=r_{k}$. Then $e_{k}=a_{i,j}$ and 
\[\mathrm{crit}(a_{i,1}\ldots a_{i,j})\leq \mathrm{crit}(e_{1}\ldots e_{k})=\mathrm{crit}(r_{1}\ldots r_{k})\leq \mathrm{crit}(d_{1}\ldots d_{\ell})=\mathrm{crit}(c_{1}\ldots c_{\ell}).\]

Now suppose that $d_{\ell}=s_{k}$. Then $f_{k}=a_{i,j}$ and
\[\mathrm{crit}(a_{i,1}\ldots a_{i,j})\leq \mathrm{crit}(f_{1}\ldots f_{k})=\mathrm{crit}(s_{1}\ldots s_{k})\leq \mathrm{crit}(d_{1}\ldots d_{\ell})=\mathrm{crit}(c_{1}\ldots c_{\ell}).\]
\end{proof}
\begin{thm}
Let $M$ be a multigenic Laver table. Suppose that $a_{1}\ldots a_{x},b_{1}\ldots b_{y}\in M$,
the letters $a_{1},\ldots,a_{x},b_{1},\ldots,b_{y}$ are all distinct, and
$a_{1}\ldots a_{x}*b_{1}\ldots b_{y}=c_{1}\ldots c_{z}$. If $c_{\ell}=b_{i}$, then
$\mathrm{crit}(a_{1}\ldots a_{x}*b_{1}\ldots b_{i})\leq \mathrm{crit}(c_{1}\ldots c_{\ell})$.
\end{thm}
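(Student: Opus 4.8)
The statement I want to prove is: if $a_1\ldots a_x * b_1\ldots b_y = c_1\ldots c_z$ with all the listed letters distinct, and $c_\ell = b_i$, then $\mathrm{crit}(a_1\ldots a_x * b_1\ldots b_i) \leq \mathrm{crit}(c_1\ldots c_\ell)$. The plan is to mimic the structure of the proof of Theorem \ref{4tij0poqlke}: work inside the ``universal'' model $M = \mathbf{M}((f(a))_{a\in A},N)$ over a generalized Laver table $N$, where each letter of $A$ maps to a letter of $B$ with infinite fibers, so that one can freely rename letters to fresh isomorphic copies, and let $\phi:M\to N$, $\simeq$ be as in that proof. Then I would run the usual double induction: descending on $a_1\ldots a_x$ with respect to $\preceq$, and for each fixed left factor, ascending on $b_1\ldots b_y$.

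The base cases should be routine. If $a_1\ldots a_x \in \mathrm{Li}(M)$, then $c_1\ldots c_z = b_1\ldots b_y$, so $c_\ell = b_i$ forces $\ell = i$ and $a_1\ldots a_x * b_1\ldots b_i = b_1\ldots b_i = c_1\ldots c_\ell$, giving equality of critical points. If $y=1$ and $a_1\ldots a_x \notin \mathrm{Li}(M)$, then $c_1\ldots c_z = a_1\ldots a_x b_1$, and $c_\ell = b_1$ means $\ell = x+1$, so $a_1\ldots a_x * b_1 = a_1\ldots a_x b_1 = c_1\ldots c_\ell$, again equality. For the inductive step with $y>1$ and $a_1\ldots a_x\notin\mathrm{Li}(M)$, I use
\[a_1\ldots a_x * b_1\ldots b_y = (a_1\ldots a_x * b_1\ldots b_{y-1}) * a_1\ldots a_x b_y,\]
pick fresh distinct letters $d_1\ldots d_r \simeq a_1\ldots a_x * b_1\ldots b_{y-1}$, and set $d_1\ldots d_r * a_1\ldots a_x b_y = e_1\ldots e_z$. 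Now $c_\ell = b_i$ corresponds to one of the letters $e_\ell$ of this product, and I split into cases on what kind of letter $e_\ell$ is: if $e_\ell = d_j$ for some $j$, then the $\ell$-th letter came from the left factor $d_1\ldots d_r$, so $d_j$ itself must be $b_i$ (tracing back through $\simeq$); by Theorem \ref{4tij0poqlke} applied to $d_1\ldots d_r * a_1\ldots a_x b_y$, $\mathrm{crit}(d_1\ldots d_j)\leq \mathrm{crit}(e_1\ldots e_\ell) = \mathrm{crit}(c_1\ldots c_\ell)$, and by the inductive hypothesis (on $b_1\ldots b_{y-1}$, shorter), $\mathrm{crit}(a_1\ldots a_x * b_1\ldots b_i)\leq \mathrm{crit}(a_1\ldots a_x * b_1\ldots b_{y-1}) = \mathrm{crit}(d_1\ldots d_r)$... wait, I actually need $\mathrm{crit}(a_1\ldots a_x*b_1\ldots b_i) \le \mathrm{crit}(d_1\ldots d_j)$, which should follow because $d_1\ldots d_j$ is a prefix of $d_1\ldots d_r \simeq a_1\ldots a_x * b_1\ldots b_{y-1}$ and the position $j$ corresponds to $b_i$, so the inductive hypothesis applied to the product $a_1\ldots a_x * b_1\ldots b_{y-1}$ directly gives $\mathrm{crit}(a_1\ldots a_x * b_1\ldots b_i)\le \mathrm{crit}(\text{prefix of length }j) = \mathrm{crit}(d_1\ldots d_j)$; chaining these gives the result. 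If instead $e_\ell = b_y$ (so $i = y$), then $\ell = z$ and $\mathrm{crit}(a_1\ldots a_x * b_1\ldots b_y) \le \mathrm{crit}(c_1\ldots c_z)$ trivially since they are equal. The case $e_\ell = a_k$ for some $k\le x$ cannot yield $c_\ell = b_i$ (letters are distinct), so it is vacuous.

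The main obstacle I anticipate is bookkeeping the correspondence between the letters of the abstract product $c_1\ldots c_z$ and the letters of $e_1\ldots e_z$ after the renaming $d_1\ldots d_r \simeq a_1\ldots a_x * b_1\ldots b_{y-1}$: one must check that $c_j = c_\ell$ when $e_\ell = d_j$, that tracing $d_j$ back through $\simeq$ to a prefix of $a_1\ldots a_x * b_1\ldots b_{y-1}$ of length $j$ is legitimate, and that the inductive hypothesis is being applied to a genuinely smaller instance in the well-founded order (the left factor is unchanged but the right factor $b_1\ldots b_{y-1}\prec b_1\ldots b_y$). I would also double-check that Proposition \ref{t4uh4aP}(i),(v) and Theorem \ref{4tij0poqlke} are exactly the tools needed to justify that $d_j$ is the letter $b_i$. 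Once the letter-tracking is pinned down, each case is a short critical-point inequality chain of the form $\mathrm{crit}(\text{earlier prefix}) \le \mathrm{crit}(\text{later prefix})$ plus one application of the inductive hypothesis, completing the induction.
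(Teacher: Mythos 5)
Your proof is correct as far as I can see, but it follows a noticeably different and more laborious route than the paper, which dispenses with the induction entirely. The paper's key idea is to decompose the right factor at position $i$ rather than at position $y-1$, writing
\[a_1\ldots a_x*b_1\ldots b_y = (a_1\ldots a_x*b_1\ldots b_i)*a_1\ldots a_x b_{i+1}*\ldots*a_1\ldots a_x b_y,\]
then replacing the first factor by fresh letters $d_1\ldots d_v\simeq a_1\ldots a_x*b_1\ldots b_i$ and setting $e_1\ldots e_z = d_1\ldots d_v*a_1\ldots a_x b_{i+1}*\ldots*a_1\ldots a_x b_y$. Everything then reduces to the single observation that $e_\ell = d_v$: the letter $b_i$ occurs in $a_1\ldots a_x*b_1\ldots b_i$ only at the final position $v$ (by Proposition~\ref{t4uh4aP}(i),(v)) and occurs nowhere among the remaining factors, so $d_v$ is the only letter in the rewritten product that can map back to $b_i$. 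One application of the multi-factor generalization of Theorem~\ref{4tij0poqlke} (the theorem immediately preceding this one in the text) then yields
\[\mathrm{crit}(a_1\ldots a_x*b_1\ldots b_i) = \mathrm{crit}(d_1\ldots d_v)\le\mathrm{crit}(e_1\ldots e_\ell) = \mathrm{crit}(c_1\ldots c_\ell),\]
and the proof is done in one step. What your $\prec$-induction buys you is self-containedness: you only ever invoke the two-factor Theorem~\ref{4tij0poqlke}. What you lose is concision, and you also accumulate some bookkeeping of exactly the same flavour as the paper's single observation — e.g.\ your step ``if $e_\ell = b_y$ then $\ell = z$'' is the same ``a fresh final letter of the right factor appears only at the very last position of the product'' fact that underlies $e_\ell = d_v$, and it would need to be justified rather than asserted. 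Both arguments hinge on that one positional fact; the paper simply arranges the decomposition so that the fact is all that is needed.
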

\begin{proof}
For this proof, assume that $M,N,f,\phi,\simeq$ are the same as in Theorem \ref{4tij0poqlke}.
Suppose now that $a_{1}\ldots a_{x}*b_{1}\ldots b_{i}\simeq d_{1}\ldots d_{v}$ and the letters
\[d_{1},\ldots ,d_{v},a_{1},\ldots ,a_{x},b_{1},\ldots ,b_{i}\]
are all distinct. Then suppose that
\[d_{1}\ldots d_{v}*a_{1}\ldots a_{x}b_{i+1}*\ldots *a_{1}\ldots a_{x}b_{y}=e_{1}\ldots e_{z}.\]
Then $e_{\ell}=d_{v}$, so 
\[\mathrm{crit}(a_{1}\ldots a_{x}*b_{1}\ldots b_{i})=\mathrm{crit}(d_{1}\ldots d_{v})\leq \mathrm{crit}(e_{1}\ldots e_{\ell}).\]
\end{proof}
\begin{cor}
We have $\gcd(2^{n},\ell)\geq \gcd(2^{n},|M_{n}(x,y,\ell)|)$. In particular, 
\[\gcd(2^{n},y)\geq \gcd(2^{n},|FM_{n}^{-}(x,y)|).\]
\end{cor}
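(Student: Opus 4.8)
The plan is to deduce the corollary directly from the immediately preceding theorems on how critical points behave under the multigenic operation $*^M$ and under arbitrary LD-terms, by unwinding the definition of $M_n$ in terms of the multigenic Laver table $(A^{\leq 2^n})^+$. Recall from the definition of $M_n$ that if $\mathbf{x}=a_{-1}\dots a_{-x}$ and $\mathbf{y}=a_1\dots a_y$ are words in $(A^{\leq 2^n})^+$ of lengths $x$ and $y$ with all $x+y$ letters distinct, then $\mathbf{x}*\mathbf{y}=a_{M_n(x,y,1)}\dots a_{M_n(x,y,x*_n y)}$; in particular, the $\ell$-th letter $c_\ell$ of $\mathbf{x}*\mathbf{y}$ is $a_{-|M_n(x,y,\ell)|}$ when $M_n(x,y,\ell)<0$, i.e. it is the $|M_n(x,y,\ell)|$-th letter of $\mathbf{x}$, and it is $a_{M_n(x,y,\ell)}$ when $M_n(x,y,\ell)>0$, i.e. the $M_n(x,y,\ell)$-th letter of $\mathbf{y}$.

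First I would set $j=|M_n(x,y,\ell)|$ and split into the two cases according to the sign. If $M_n(x,y,\ell)<0$, then $c_\ell = a_{-j}$ is the $j$-th letter of $\mathbf{x}$, so by Theorem \ref{4tij0poqlke}(1) we have $\mathrm{crit}(a_{-1}\dots a_{-j})\leq\mathrm{crit}(c_1\dots c_\ell)$, where $a_{-1}\dots a_{-j}$ is the length-$j$ prefix of $\mathbf{x}$. If $M_n(x,y,\ell)>0$, then $c_\ell=a_j$ is the $j$-th letter of $\mathbf{y}$, so by Theorem \ref{4tij0poqlke}(2) we have $\mathrm{crit}(a_1\dots a_j)\leq\mathrm{crit}(c_1\dots c_\ell)$. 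In either case, the word whose critical point bounds $\mathrm{crit}(c_1\dots c_\ell)$ from below has length exactly $j$, while $c_1\dots c_\ell$ has length exactly $\ell$. The next step is to translate these two critical-point inequalities in $(A^{\leq 2^n})^+$ into the desired gcd inequality in $A_n$: one uses the canonical homomorphism $(A^{\leq 2^n})^+ \to A_n$ (or equivalently Proposition \ref{Rhu4t0HI42h2}, item 2, together with the identification of $\mathrm{crit}[A_n]$ via $\gcd(\cdot,2^n)$) to observe that the critical point of a length-$m$ word built from the generators in $(A^{\leq 2^n})^+ = \mathbf{M}((1)_{a\in A},A_n)$ corresponds under the projection to $\mathrm{crit}$ of $1*_n\cdots *_n 1$ ($m$ times), whose critical point is governed by $\gcd(m,2^n)$. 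Hence $\mathrm{crit}(\text{length-}j\text{ word})\leq\mathrm{crit}(\text{length-}\ell\text{ word})$ becomes $\gcd(2^n,j)\leq\gcd(2^n,\ell)$, which is exactly $\gcd(2^n,\ell)\geq\gcd(2^n,|M_n(x,y,\ell)|)$.

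Finally, for the "in particular" clause, I would simply specialize to $\ell=y$ and $\mathbf{y}$ of length $2^n$: by definition $FM_n^{-}(x,y)=M_n(x,O_n(x),y)$, and the domain condition $1\leq y\leq x*_n y$ together with $y\leq O_n(x)$ (which is the relevant range for $FM_n^-$) ensures the above applies with $\ell=y$, giving $\gcd(2^n,y)\geq\gcd(2^n,|FM_n^-(x,y)|)$. I expect the main obstacle to be the bookkeeping in the second step: making precise the correspondence between the critical point of a word of a given length in the free-on-generators multigenic table $(A^{\leq 2^n})^+$ and the gcd of its length with $2^n$, i.e. checking that under the projection to $A_n$ the critical point of $1^{[m]}$-type expressions is computed by $\gcd(m,2^n)$, and that the length-$j$ prefix words arising here really do map to such expressions. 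This is essentially an application of Proposition \ref{Rhu4t0HI42h2}(2) and the homomorphism property, but it requires care to state cleanly; everything else is a direct citation of Theorem \ref{4tij0poqlke}.
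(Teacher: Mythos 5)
Your proposal is correct and tracks exactly what the paper intends: the corollary is stated immediately after Theorem \ref{4tij0poqlke}, and your unwinding of $M_n$ into letters of $\mathbf{x}$ or $\mathbf{y}$, followed by an application of parts (1) and (2) of that theorem and translation from critical points in $(A^{\leq 2^n})^+ = \mathbf{M}((1)_{a\in A},A_n)$ to $\gcd(\cdot,2^n)$ in $A_n$ via the canonical length-homomorphism, is the intended argument. One small slip in the final paragraph: for $FM_n^-(x,y)=M_n(x,O_n(x),y)$ the relevant $\mathbf{y}$ has length $O_n(x)$, not $2^n$ (you may be thinking of $FM_n^+$); this does not affect the conclusion, which follows by substituting $\ell=y$ into the main inequality regardless of the second argument.
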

\begin{cor}
Suppose that $M_{n}(x,y,\ell)>0$. Then 
\[\gcd(2^{n},x*M_{n}(x,y,\ell))\leq \gcd(2^{n},\ell).\]
\end{cor}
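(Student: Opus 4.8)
The statement to prove is: if $M_n(x,y,\ell)>0$, then $\gcd(2^n, x*_n M_n(x,y,\ell)) \le \gcd(2^n,\ell)$. The plan is to reinterpret everything inside the multigenic Laver table $(A^{\leq 2^n})^{+}$ and then invoke the critical-point inequality that was just established (the final theorem before the corollaries, on $\mathrm{crit}(a_1\ldots a_x * b_1\ldots b_i)$ for $c_\ell = b_i$), together with the earlier identification $\gcd(\cdot,2^n) = \mathrm{crit}$ in the classical Laver table coming from Proposition~\ref{t482u9onjkrw} and Proposition~\ref{Rhu4t0HI42h2}.

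First I would unwind the definition of $M_n$. Recall that $M_n$ is characterized by $a_{-1}\ldots a_{-x} *_n a_1\ldots a_y = a_{M_n(x,y,1)}\ldots a_{M_n(x,y,x*y)}$ in the multigenic Laver table $(A^{\leq 2^n})^{+}$, where the letters $a_{-x},\ldots,a_{-1},a_1,\ldots,a_y$ are distinct. So pick distinct letters, set $\mathbf{x} = a_1\ldots a_x$ (renaming for convenience) and $\mathbf{y} = b_1\ldots b_y$, and write $\mathbf{x} *_n \mathbf{y} = c_1\ldots c_z$ with $z = x*_n y$. The hypothesis $M_n(x,y,\ell)>0$ says exactly that $c_\ell = b_i$ where $i = M_n(x,y,\ell)$. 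Now the preceding theorem gives
\[
\mathrm{crit}(\mathbf{x} *_n (b_1\ldots b_i)) \le \mathrm{crit}(c_1\ldots c_\ell).
\]

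Next I would translate both sides into gcd's. On the right, $\mathrm{crit}(c_1\ldots c_\ell)$ corresponds to $\gcd(2^n,\ell)$: this is the standard fact that in $(A^{\leq 2^n})^{+}$ the critical point of a word of length $m$ (with distinct letters, or after applying the canonical homomorphism to $A_n$) is measured by $\gcd(2^n,m)$, which follows from the identification of $A_n$ with $\mathrm{Iter}(j)/\equiv^{\mathrm{crit}_n(j)}$ and item (2) of Proposition~\ref{Rhu4t0HI42h2}. On the left, $\mathbf{x}*_n(b_1\ldots b_i)$ is a word whose length, under the canonical projection $\phi:(A^{\leq 2^n})^{+}\to A_n$ (so $|\mathbf{x}| = x$, $|b_1\ldots b_i| = i$ project to $x, i \in A_n$), has length $x *_n i$ modulo $2^n$; hence $\mathrm{crit}(\mathbf{x}*_n(b_1\ldots b_i)) = \gcd(2^n, x*_n i) = \gcd(2^n, x *_n M_n(x,y,\ell))$. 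Combining, $\gcd(2^n, x*_n M_n(x,y,\ell)) \le \gcd(2^n,\ell)$, which is the claim. (The previous corollary, $\gcd(2^n,\ell)\ge\gcd(2^n,|M_n(x,y,\ell)|)$, is proved the same way from the other critical-point theorem, so the bookkeeping is parallel and can be cross-referenced.)

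The main obstacle I anticipate is not the inequality itself but making the translation between "critical point of a word in the multigenic Laver table" and "$\gcd$ with $2^n$" fully rigorous — specifically, checking that $\mathrm{crit}(\mathbf{w}) = \gcd(2^n, (|\mathbf{w}|)_{2^n})$ holds for the relevant words, that this is compatible with the homomorphism $\phi$ to $A_n$ (so that $\mathrm{crit}$ is preserved in the appropriate direction by Proposition~\ref{49tt4ngio224gf} or the subsequent homomorphism proposition), and that $|\mathbf{x}*_n(b_1\ldots b_i)|$ reduces mod $2^n$ to $x*_n i$ where $i = M_n(x,y,\ell)$ — this last point uses Proposition~\ref{t4uh4aP} on the length behavior of $*^M$ plus the fact that $\phi$ is a homomorphism sending length-$m$ words to $(m)_{2^n}\in A_n$. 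Once these translations are in place the proof is a one-line consequence of the already-proved theorem, so I would spend the bulk of the write-up carefully stating the gcd-to-crit dictionary and citing the distinct-letters reduction used there.
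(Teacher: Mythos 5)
Your proposal is correct and takes the intended route: the corollary is an immediate translation of the theorem immediately preceding it (the one asserting $\mathrm{crit}(a_{1}\ldots a_{x}*b_{1}\ldots b_{i})\leq\mathrm{crit}(c_{1}\ldots c_{\ell})$ when $c_{\ell}=b_{i}$), using the dictionary between critical points and $\gcd$'s coming from Proposition~\ref{Rhu4t0HI42h2}. The paper leaves the corollary without proof precisely because the translation is mechanical: identify $i=M_{n}(x,y,\ell)$, observe $|a_{1}\ldots a_{x}*b_{1}\ldots b_{i}|=x*_{n}i$ and $|c_{1}\ldots c_{\ell}|=\ell$, and apply the equivalence $\mathrm{crit}(\mathbf{u})\leq\mathrm{crit}(\mathbf{v})$ iff $\gcd(2^{n},|\mathbf{u}|)\leq\gcd(2^{n},|\mathbf{v}|)$, which holds because the length homomorphism $\phi:(A^{\leq 2^{n}})^{+}\to A_{n}$ respects $*$ and sends non-left-identities to non-left-identities. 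The care you flag about the $\gcd$-to-$\mathrm{crit}$ dictionary is the right thing to worry about, and your sketch of how to discharge it (via the length homomorphism and Proposition~\ref{Rhu4t0HI42h2}) is exactly how it goes.
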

\begin{cor}
If $x<2^{n}$ and $M_{n}(x,y,\ell)>0$, then $\gcd(2^{n},x+1)\leq\gcd(2^{n},\ell)$.
\end{cor}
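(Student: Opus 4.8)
The statement to prove is: if $x<2^{n}$ and $M_{n}(x,y,\ell)>0$, then $\gcd(2^{n},x+1)\leq\gcd(2^{n},\ell)$. The plan is to reduce this to the previous corollary, which says that $M_{n}(x,y,\ell)>0$ implies $\gcd(2^{n},x*_{n}M_{n}(x,y,\ell))\leq\gcd(2^{n},\ell)$. So it suffices to show that $\gcd(2^{n},x+1)\leq\gcd(2^{n},x*_{n}m)$ for $m=M_{n}(x,y,\ell)>0$, i.e. that $\gcd(2^{n},x*_{n}m)\geq\gcd(2^{n},x+1)$ whenever $x<2^{n}$ and $1\le m$.

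First I would recall from Proposition \ref{t482u9onjkrw}(6) that $\gcd(x*_{n}m,2^{n})\geq\gcd(m,2^{n})$ always, and from Proposition \ref{t482u9onjkrw}(7) that $\gcd(x*_{n}m,2^{n})>\gcd(m,2^{n})$ whenever $\gcd(x,2^{n})\le\gcd(m,2^{n})<2^{n}$; also $o_{n}(x)$ and $\gcd(\cdot,2^{n})$ are related by $\gcd(x,2^n)=2^{o_n(x)}$ in the relevant normalization (via the periodicity propositions, since $x*_n y = x*_n z \iff y\equiv z \bmod 2^{o_n(x)}$, and $\gcd$ here is really the $2$-adic valuation truncated at $2^n$). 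The key observation is that $M_{n}(x,y,\ell)$, when positive, is an index into the right factor, and by the corollary just before (the one about $\gcd(2^{n},x*M_{n}(x,y,\ell))\leq\gcd(2^{n},\ell)$) together with monotonicity, the value $m=M_n(x,y,\ell)$ cannot be a multiple of too high a power of $2$ — specifically I expect $m$ to satisfy $\gcd(2^n,m)\le\gcd(2^n,\ell)$ and, more importantly, that $x*_n m$ lands in a left-ideal-ish region forcing $\gcd(2^n, x*_n m)$ to be at least $\gcd(2^n,x+1)$. Concretely, since $x<2^{n}$, Proposition on periodicity gives $x<x*_{n}y$, and $x*_{n}1 = x+1$; iterating, $x*_{n}m \ge x+1$ and in fact $x*_n m$ is obtained from $x+1$ by further applications, so $\gcd(2^n, x*_n m)\ge \gcd(2^n, x*_n 1) = \gcd(2^n, x+1)$ provided $\gcd$ is monotone along the sequence $x*_n 1, x*_n 2, \ldots$. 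That monotonicity is exactly Proposition \ref{t482u9onjkrw}(4): if $\gcd(y,2^n)\le\gcd(z,2^n)$ then $\gcd(x*_n y,2^n)\le\gcd(x*_n z,2^n)$; taking $y=1$ (so $\gcd(1,2^n)=1$ is minimal) gives $\gcd(x*_n 1, 2^n)\le \gcd(x*_n m, 2^n)$, i.e. $\gcd(x+1,2^n)\le\gcd(x*_n m,2^n)$.

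So the argument assembles as: (i) $\gcd(x+1,2^{n})=\gcd(x*_{n}1,2^{n})\le\gcd(x*_{n}m,2^{n})$ by Proposition \ref{t482u9onjkrw}(4) with $1\le m$; (ii) $\gcd(x*_{n}m,2^{n})\le\gcd(\ell,2^{n})$ by the immediately preceding corollary, using that $m=M_n(x,y,\ell)>0$ and $x<2^n$ so that $x*_n m$ is the relevant quantity; (iii) chain the two inequalities. I would write this out in two or three lines. The main obstacle I anticipate is the bookkeeping between the "final matrix / word" picture and the classical Laver table picture: I must confirm that $M_{n}(x,y,\ell)>0$ genuinely means the $\ell$-th letter of $\mathbf{x}*_n\mathbf{y}$ comes from $\mathbf{y}$ at position $m=M_n(x,y,\ell)$, and that the preceding corollary applies verbatim with this $m$ in the role of "$M_{n}(x,y,\ell)$" — which it does by definition — and also that $m\ge 1$ is automatic from positivity. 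A secondary subtlety is making sure $x*_n m$ is well-defined as needed (it is, since $m\le x*_n y \le 2^n$, and $*_n$ is total on $\{1,\ldots,2^n\}$). Once the dictionary is pinned down, the inequality chain is immediate.
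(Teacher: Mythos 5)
Your final three-step chain is correct and is exactly how the paper derives this corollary: with $m=M_{n}(x,y,\ell)>0$, apply Proposition \ref{t482u9onjkrw}(4) with $y=1$ (and use $x*_{n}1=x+1$, valid since $x<2^{n}$) to get $\gcd(2^{n},x+1)\leq\gcd(2^{n},x*_{n}m)$, then apply the immediately preceding corollary to get $\gcd(2^{n},x*_{n}m)\leq\gcd(2^{n},\ell)$. The exploratory remarks in the middle about items (6), (7), and ``left-ideal-ish regions'' are unnecessary and somewhat off-target, but the argument you settle on is the intended one.
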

\begin{defn}
Let $X$ be a locally Laver-like LD-system. Let $M$ be the set of all sequences $(x_{1},\ldots,x_{n})$ of elements in $X$ where there is no
$m<n$ where $x_{1}*\ldots*x_{m}\in \mathrm{Li}(X)$. Then $M$ is a multigenic Laver table. Define a mapping $\Phi:M\rightarrow X^{*}$ by letting
\[\Phi(x_{1},\ldots,x_{n})=(x_{1},x_{1}*x_{2},\ldots,x_{1}*\ldots*x_{n}).\]
Define an equivalence relation $\simeq$ on $M$ by letting
$(x_{1},\ldots,x_{m})\simeq(y_{1},\ldots,y_{n})$ if and only if $\Phi(x_{1},\ldots,x_{m})=\Phi(y_{1},\ldots,y_{m})$.
\label{t48stwv67qfda}
\end{defn}
\begin{thm}
The equivalence relation $\simeq$ in Definition \ref{t48stwv67qfda} is a congruence on $M$.
\end{thm}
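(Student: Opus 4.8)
The plan is to show that the equivalence relation $\simeq$ on $M$ is a congruence by verifying that it respects the self-distributive operation $*^{M}$ in both arguments. Recall that $M$ is the multigenic Laver table $\mathbf{M}((x_a)_{a\in X},X)$ built from the elements of $X$ themselves (so the "alphabet" is $X$, and strings $(x_1,\dots,x_n)$ are admissible precisely when no proper prefix multiplies out to a left-identity of $X$), and that $\Phi(x_1,\dots,x_n)=(x_1,\,x_1*x_2,\,\dots,\,x_1*\dots*x_n)$ records the sequence of left-partial-products. Two strings are $\simeq$-equivalent iff they have the same image under $\Phi$; in particular $\simeq$-equivalent strings have the same length and the same final partial product $x_1*\dots*x_n$. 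First I would observe that $\Phi$ is closely related to the canonical homomorphism $\phi\colon M\to X$ of Theorem \ref{enbfwu92}: we have $\phi(x_1,\dots,x_n)=x_1*\dots*x_n$, which is the last coordinate of $\Phi(x_1,\dots,x_n)$. So $\simeq$ refines $\ker(\phi)$, and I can freely use $\phi$ as a homomorphism when convenient; but the full relation $\simeq$ remembers all the intermediate products, which is what we must track.

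The main work is to prove: if $\mathbf{u}\simeq\mathbf{v}$ then $\mathbf{w}*^{M}\mathbf{u}\simeq\mathbf{w}*^{M}\mathbf{v}$ and $\mathbf{u}*^{M}\mathbf{w}\simeq\mathbf{v}*^{M}\mathbf{w}$ for all $\mathbf{w}\in M$. For the left-congruence property ($\mathbf{u}\simeq\mathbf{v}\Rightarrow\mathbf{w}*\mathbf{u}\simeq\mathbf{w}*\mathbf{v}$), I would argue that the coordinates of $\Phi(\mathbf{w}*\mathbf{u})$ are determined by $\Phi(\mathbf{u})$ together with $\mathbf{w}$: writing $\mathbf{u}=(u_1,\dots,u_n)$, the prefix products of $\mathbf{w}*\mathbf{u}$ can be expressed, using self-distributivity of $*^{M}$ and the identity $\mathbf{w}*(u_1*\dots*u_k)=(\mathbf{w}*u_1)*\dots*(\mathbf{w}*u_k)$ (which holds because $\mathbf{w}*(-)$ is an endomorphism), in terms of the quantities $\mathbf{w}*(u_1*\dots*u_k)$, i.e.\ in terms of $\mathbf{w}$ applied to the coordinates of $\Phi(\mathbf{u})$. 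Hence if $\Phi(\mathbf{u})=\Phi(\mathbf{v})$ then $\Phi(\mathbf{w}*\mathbf{u})=\Phi(\mathbf{w}*\mathbf{v})$. (More carefully: one shows by induction on $k$ that the $k$-th prefix product of $\mathbf{w}*^{M}\mathbf{u}$ is a string whose associated element of $X$ and whose length depend only on $\mathbf{w}$ and on $(u_1,\,u_1*u_2,\,\dots,\,u_1*\dots*u_k)$; combining Proposition \ref{G0Wuoeh2g2EHO2uo} and Corollary \ref{10W44eh2g2EHO2uo} to handle the case where $\mathbf{w}*(u_1\cdots u_j)\in\mathrm{Li}(M)$ for some $j$, and the defining recursion $\mathbf{w}*\mathbf{z}b=(\mathbf{w}*\mathbf{z})*\mathbf{w}b$ otherwise.)

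For the right-congruence property ($\mathbf{u}\simeq\mathbf{v}\Rightarrow\mathbf{u}*\mathbf{w}\simeq\mathbf{v}*\mathbf{w}$), I would again proceed by induction on the length of $\mathbf{w}$ with respect to $\preceq$. The base case $|\mathbf{w}|=1$, say $\mathbf{w}=b$: if $\mathbf{u}\in\mathrm{Li}(M)$ then $\phi(\mathbf{u})\in\mathrm{Li}(X)$, so $\phi(\mathbf{v})\in\mathrm{Li}(X)$ (as $\simeq$ refines $\ker\phi$, and $\mathrm{Li}(X)$ is determined by $\phi$), hence both $\mathbf{u}*b$ and $\mathbf{v}*b$ equal $b$; if $\mathbf{u}\notin\mathrm{Li}(M)$ then $\mathbf{u}*b=\mathbf{u}b$ and $\mathbf{v}*b=\mathbf{v}b$, and appending the same letter $b$ clearly preserves $\Phi$-equality since the new last coordinate is $\phi(\mathbf{u})*x_b=\phi(\mathbf{v})*x_b$ and the earlier coordinates are unchanged. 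For the inductive step $\mathbf{w}=\mathbf{z}c$, use $\mathbf{u}*\mathbf{z}c=(\mathbf{u}*\mathbf{z})*\mathbf{u}c$ and $\mathbf{v}*\mathbf{z}c=(\mathbf{v}*\mathbf{z})*\mathbf{v}c$; by the inductive hypothesis $\mathbf{u}*\mathbf{z}\simeq\mathbf{v}*\mathbf{z}$, and by the base case together with left-congruence (already established) $\mathbf{u}c\simeq\mathbf{v}c$ trivially and then $(\mathbf{u}*\mathbf{z})*\mathbf{u}c\simeq(\mathbf{v}*\mathbf{z})*\mathbf{u}c\simeq(\mathbf{v}*\mathbf{z})*\mathbf{v}c$, using that we may apply left-congruence with the first argument fixed and right-congruence in a single-letter-extension situation. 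I expect the main obstacle to be the bookkeeping in the left-congruence argument, specifically the careful case analysis when some prefix product $\mathbf{w}*(u_1\cdots u_j)$ lands in $\mathrm{Li}(M)$—there one must invoke Corollary \ref{10W44eh2g2EHO2uo} to see that the remaining suffix behaves as $\mathbf{w}*u_{j'}$ for the first surviving index, and check this depends only on $\Phi(\mathbf{u})$ and not on $\mathbf{u}$ itself. Once both one-sided congruence properties are in hand, $\simeq$ is a congruence, completing the proof.
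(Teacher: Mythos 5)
Your high-level decomposition (two one-sided congruence claims, then combine) matches the shape of the paper's proof, but the proposal contains a genuine circularity and misses the one non-trivial algebraic step, so it does not close as written.

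The circularity: you present left-congruence ($\mathbf{u}\simeq\mathbf{v}\Rightarrow\mathbf{w}*\mathbf{u}\simeq\mathbf{w}*\mathbf{v}$) as established first, then invoke it in the inductive step of right-congruence. But your left-congruence induction on $|\mathbf{u}|$ does not close on its own: unwinding $\mathbf{w}*(u_1,\dots,u_n)=(\mathbf{w}*(u_1,\dots,u_{n-1}))*\mathbf{w}u_n$, after applying the induction hypothesis to pass from $\mathbf{a}=\mathbf{w}*(u_1,\dots,u_{n-1})$ to $\mathbf{b}=\mathbf{w}*(v_1,\dots,v_{n-1})$ with $\mathbf{a}\simeq\mathbf{b}$, you must still get from $\mathbf{a}*\mathbf{w}u_n$ to $\mathbf{b}*\mathbf{w}u_n$ — which is precisely right-congruence with a longer first factor. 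So the two claims must be proved simultaneously; this is why the paper runs a single double induction (descending on the length of the first argument, ascending on the length of the second), which covers $\mathbf{a}*\mathbf{w}u_n$ because $\mathbf{a}$ is \emph{strictly longer} than $\mathbf{w}*(u_1,\dots,u_{n-1})\dots$ wait, longer than the original first factor. Your right-congruence induction on $|\mathbf{w}|$ alone has the same problem: in the step $\mathbf{u}*\mathbf{z}c=(\mathbf{u}*\mathbf{z})*\mathbf{u}c$ you apply the ``base case together with left-congruence'' to $\mathbf{u}c$, but $|\mathbf{u}c|=|\mathbf{u}|+1$ need not be smaller than $|\mathbf{z}c|$, so the inductive hypothesis on $|\mathbf{w}|$ does not cover it. You need the two-parameter well-ordering, not two separate one-parameter inductions.

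The missing algebraic step: even granting the double induction, the heart of the argument is the final substitution $\mathbf{b}*\mathbf{w}u_n\simeq\mathbf{b}*\mathbf{w}v_n$ when $\mathbf{u}\simeq\mathbf{v}$. Here Proposition~\ref{t4jio} gives $\mathbf{b}*\mathbf{w}u_n=\mathbf{s}u_n$ and $\mathbf{b}*\mathbf{w}v_n=\mathbf{s}v_n$ with the same $\mathbf{s}$, so $\Phi$ of the two agree up to the last coordinate; but the last coordinates are $\phi(\mathbf{s})*u_n$ and $\phi(\mathbf{s})*v_n$, and the hypothesis only gives $r*u_n=r*v_n$ where $r=u_1*\cdots*u_{n-1}=v_1*\cdots*v_{n-1}$. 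To convert, one writes $\phi(\mathbf{s})=x\circ r$ (with $x=\phi(\mathbf{w})$) and uses the LD-monoid identity $(x\circ r)*y=x*(r*y)$, so that $\phi(\mathbf{s})*u_n=x*(r*u_n)=x*(r*v_n)=\phi(\mathbf{s})*v_n$. This use of the composition $\circ$ is the one genuinely non-formal step of the proof and your sketch does not mention it; citing Proposition~\ref{G0Wuoeh2g2EHO2uo} and Corollary~\ref{10W44eh2g2EHO2uo} only covers the degenerate cases where intermediate products fall in $\mathrm{Li}(M)$, not this substitution. Relatedly, your informal claim that the coordinates of $\Phi(\mathbf{w}*\mathbf{u})$ are ``expressible in terms of $\mathbf{w}*(u_1*\cdots*u_k)$'' is not sound as stated: not every prefix of the string $\mathbf{w}*\mathbf{u}$ is of the form $\mathbf{w}*(u_1,\dots,u_j)$, so the intermediate coordinates of $\Phi(\mathbf{w}*\mathbf{u})$ are not enumerated by $\phi(\mathbf{w}*(u_1,\dots,u_j))$ alone, and showing they depend only on $\Phi(\mathbf{u})$ is exactly what requires the interleaved argument above.
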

\begin{proof}
We shall show that 
\item[A:] $(x_{1},\ldots,x_{m})\simeq(y_{1},\ldots,y_{m})$ implies that
\[(x_{1},\ldots,x_{m})*(z_{1},\ldots,z_{n})\simeq(y_{1},\ldots,y_{m})*(z_{1},\ldots,z_{n})\]
and
\item[B:] $(y_{1},\ldots,y_{n})\simeq(z_{1},\ldots,z_{n})$ implies that
\[(x_{1},\ldots,x_{m})*(y_{1},\ldots,y_{n})\simeq(x_{1},\ldots,x_{m})*(z_{1},\ldots,z_{n})\]
by a double induction which is descending on $m$ but ascending on $n$ using our usual cases.

\item[Case 1: $x_{1}*\ldots*x_{m}\in \mathrm{Li}(X)$.]
 
\item[A:] Suppose that $(x_{1},\ldots,x_{m})\simeq(y_{1},\ldots,y_{m})$. We have 
\[(x_{1},\ldots,x_{m})*(z_{1},\ldots,z_{n})=(z_{1},\ldots,z_{n})\]
and
\[(y_{1},\ldots,y_{m})*(z_{1},\ldots,z_{n})=(z_{1},\ldots,z_{n}),\]
so
\[(x_{1},\ldots,x_{m})*(z_{1},\ldots,z_{n})\simeq(y_{1},\ldots,y_{m})*(z_{1},\ldots,z_{n}).\]

\item[B:] Suppose now that $(y_{1},\ldots,y_{n})\simeq(z_{1},\ldots,z_{n})$. Then
\[(x_{1},\ldots,x_{m})*(y_{1},\ldots,y_{n})=(y_{1},\ldots,y_{n})\simeq(z_{1},\ldots,z_{n})=(x_{1},\ldots,x_{m})*(z_{1},\ldots,z_{n}).\]

\item[Case 2: $x_{1}*\ldots*x_{m}\not\in \mathrm{Li}(X)$ and $n=1$.]

\item[A:] Suppose that $(x_{1},\ldots,x_{m})\simeq(y_{1},\ldots,y_{m})$. Since 
\[x_{1}*\ldots*x_{m}=y_{1}*\ldots*y_{m},\] we have 
\[x_{1}*\ldots*x_{m}*z_{1}=y_{1}*\ldots*y_{m}*z_{1}.\]
Since
\[x_{1}*\ldots*x_{i}=y_{1}*\ldots*y_{i}\]
for $1\leq i\leq m$ as well, we conclude that
\[(x_{1},\ldots,x_{m})*(z_{1})=(x_{1},\ldots,x_{m},z_{1})\simeq(y_{1},\ldots,y_{m},z_{1})=(y_{1},\ldots,y_{m})*(z_{1}).\]

\item[B:] Suppose that $(y_{1})\simeq(z_{1})$. Then $y_{1}=z_{1}$, so
\[(x_{1},\ldots,x_{m})*(y_{1})=(x_{1},\ldots,x_{m},y_{1})=(x_{1},\ldots,x_{m},z_{1})=(x_{1},\ldots,x_{m})*(z_{1}).\]

\item[Case 3: $x_{1}*\ldots*x_{m}\not\in\mathrm{Li}(X)$ and $n>1$.]

\item[A:] Suppose that $(x_{1},\ldots,x_{m})\simeq(y_{1},\ldots,y_{m})$. Then
since $(x_{1},\ldots,x_{m})\simeq(y_{1},\ldots,y_{m})$, we have
\[(x_{1},\ldots,x_{m})*(z_{1},\ldots,z_{n-1})\simeq(y_{1},\ldots,y_{m})*(z_{1},\ldots,z_{n-1})\]
by the inductive hypothesis.

Therefore, by applying the inductive hypotheses two more times, we obtain
\[(x_{1},\ldots,x_{m})*(z_{1},\ldots,z_{n})\]
\[=(x_{1},\ldots,x_{m})*(z_{1},\ldots,z_{n-1})*(x_{1},\ldots,x_{m},z_{n})\]
\[\simeq(y_{1},\ldots,y_{m})*(z_{1},\ldots,z_{n-1})*(x_{1},\ldots,x_{m},z_{n})\]
\[\simeq(y_{1},\ldots,y_{m})*(z_{1},\ldots,z_{n-1})*(y_{1},\ldots,y_{m},z_{n})\]
\[=(y_{1},\ldots,y_{m})*(z_{1},\ldots,z_{n}).\]

\item[B:] Suppose that $(y_{1},\ldots,y_{n})\simeq(z_{1},\ldots,z_{n})$. Then

\[(x_{1},\ldots,x_{m})*(y_{1},\ldots,y_{n-1})\simeq (x_{1},\ldots,x_{m})*(z_{1},\ldots,z_{n-1}).\] 

Therefore,

\[(x_{1},\ldots,x_{m})*(y_{1},\ldots,y_{n-1})*(x_{1},\ldots,x_{m},y_{n})\]
\[\simeq(x_{1},\ldots,x_{m})*(z_{1},\ldots,z_{n-1})*(x_{1},\ldots,x_{m},y_{n}).\]

I now claim that 
\[(x_{1},\ldots,x_{m})*(z_{1},\ldots,z_{n-1})*(x_{1},\ldots,x_{m},y_{n})\]
\[\simeq(x_{1},\ldots,x_{m})*(z_{1},\ldots,z_{n-1})*(x_{1},\ldots,x_{m},z_{n}).\]

Let $x=x_{1}*\ldots*x_{m},r=z_{1}*\ldots*z_{n-1}$. Then since $(y_{1},\ldots,y_{n})\simeq(z_{1},\ldots,z_{n})$, we have
$r*y_{n}=r*z_{n}$, so 
\[(x\circ r)*y_{n}=x*(r*y_{n})=x*(r*z_{n})=(x\circ r)*z_{n}.\]
Let
\[(x_{1},\ldots,x_{m})*(z_{1},\ldots,z_{n-1})*(x_{1},\ldots,x_{m},y_{n})=(t_{1},\ldots,t_{l},y_{n}).\]
Then
$t_{1}*\ldots*t_{l}=x\circ r$, so 
\[t_{1}*\ldots*t_{l}*y_{n}=(x\circ r)*y_{n}=(x\circ r)*z_{n}=t_{1}*\ldots*t_{l}*z_{n}.\]
Therefore,

\[(x_{1},\ldots,x_{m})*(z_{1},\ldots,z_{n-1})*(x_{1},\ldots,x_{m},y_{n})=(t_{1},\ldots,t_{l},y_{n})\]
\[\simeq(t_{1},\ldots,t_{l},z_{n})=(x_{1},\ldots,x_{m})*(z_{1},\ldots,z_{n-1})*(x_{1},\ldots,x_{m},z_{n}).\]

We therefore, conclude that
\[(x_{1},\ldots,x_{m})*(y_{1},\ldots,y_{n})\]
\[=(x_{1},\ldots,x_{m})*(y_{1},\ldots,y_{n-1})*(x_{1},\ldots,x_{m},y_{n})\]
\[\simeq(x_{1},\ldots,x_{m})*(z_{1},\ldots,z_{n-1})*(x_{1},\ldots,x_{m},y_{n})\]
\[\simeq(x_{1},\ldots,x_{m})*(z_{1},\ldots,z_{n-1})*(x_{1},\ldots,x_{m},z_{n})\]
\[=(x_{1},\ldots x_{m})*(z_{1},\ldots,z_{n}).\]
\end{proof}
Theorem \ref{t48stwv67qfda} also extends to endomorphic Laver tables.

\begin{thm}
\label{thanhz9auntsc2nj52}

Suppose the following:
\begin{enumerate}
\item $\mathcal{V}=(V,E,F)$ is a locally Laver-like partially endomorphic algebra.

\item $X$ is a set of variables.

\item $v_{x}\in V$ for each $x\in X$. 

\item $L=\mathbf{L}((v_{x})_{x\in X},\mathcal{V})$. 

\item $\Phi:L\rightarrow V$ is the canonical endomorphic algebra homomorphism.

\item $\mathbf{I}:L\rightarrow\mathbf{T}_{E\cup F}(\{e\})$ is the mapping where 
\begin{enumerate}
\item $\mathbf{I}(v_{x})=e$ for all $x\in X$, 

\item $\mathbf{I}(\mathfrak{g}(\ell_{1},\ldots,\ell_{n}))=\mathfrak{g}(\mathbf{I}(\ell_{1}),\ldots,\mathbf{I}(\ell_{n}))$ for each $\mathfrak{g}\in F$, and 

\item $\mathbf{I}(f_{x}(\ell_{1},\ldots,\ell_{n}))=f(\mathbf{I}(\ell_{1}),\ldots,\mathbf{I}(\ell_{n}))$. 
\end{enumerate}
\item $\simeq$ is the equivalence relation on $L$ where $\mathfrak{l}\simeq\mathfrak{m}$ if and only if $\mathbf{I}(\mathfrak{l})=\mathbf{I}(\mathfrak{m})$ and
\begin{enumerate}
\item if $\mathfrak{l}=v_{x},\mathfrak{m}=v_{y}$, then $\mathfrak{l}=\mathfrak{m}$ if and only if $\mathbf{I}(\mathfrak{l})=\mathbf{I}(\mathfrak{m})$,

\item if $\mathfrak{l}=\mathfrak{g}(u_{1},\ldots,u_{n}),\mathfrak{m}=\mathfrak{g}(v_{1},\ldots,v_{n})$, then
$\mathfrak{l}\simeq\mathfrak{m}$ if and only if $u_{1}\simeq v_{1},\ldots,u_{n}\simeq v_{n}$, and

\item if $\mathfrak{l}=f_{x}(u_{1},\ldots,u_{n}),\mathfrak{m}=f_{y}(v_{1},\ldots,v_{n})$, then
$\mathfrak{l}\simeq\mathfrak{m}$ if and only if $u_{1}\simeq v_{1},\ldots,u_{n}\simeq v_{n}$ and
$\Phi(f_{x}(u_{1},\ldots,u_{n}))=\Phi(f_{y}(v_{1},\ldots,v_{n}))$.
\end{enumerate}
\end{enumerate}
Then $\simeq$ is a congruence on the partially endomorphic Laver table $\mathbf{L}((v_{x})_{x\in X},\mathcal{V})$.
\end{thm}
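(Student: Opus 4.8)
The plan is to mimic the structure of the proof of Theorem \ref{t48stwv67qfda} (the congruence $\simeq$ on $M$ coming from $\Phi$), transported through the correspondence between multigenic Laver tables and pre-endomorphic Laver tables. Recall that $\mathbf{L}((v_x)_{x\in X},\mathcal{V})$ is a partially endomorphic Laver table built term-by-term, with $\Phi:L\to V$ the canonical homomorphism into the locally Laver-like algebra $\mathcal{V}$, and $\mathbf{I}:L\to\mathbf{T}_{E\cup F}(\{e\})$ recording the ``underlying term shape'' (it collapses every generator to $e$ and every $f_x$ to $f$). The relation $\simeq$ identifies two terms exactly when they have the same shape $\mathbf{I}$-image, they are built compatibly, and — this is the essential clause — at every point where an $E$-operation is applied, the two applications agree after passing to $\mathcal{V}$ via $\Phi$. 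Informally, $\simeq$ is the kernel of the pair of maps $(\mathbf{I},\Phi)$ restricted so that equality is tested on the ``operable skeleton.'' So the first step is to reformulate $\simeq$ cleanly as such a kernel; one should check, by induction on $(L,\preceq)$, that $u\simeq v$ iff $\mathbf{I}(u)=\mathbf{I}(v)$ and $\Phi(u')=\Phi(v')$ for every pair of corresponding subterms $u'\preceq u$, $v'\preceq v$ whose roots are in $E$. This reformulation makes the congruence property nearly automatic.

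The main work is then to show that $\simeq$ is compatible with each fundamental operation. For $\mathfrak{g}\in F$ this is immediate from clause (8b): if $u_i\simeq w_i$ for $1\le i\le n$ then $\mathbf{I}(\mathfrak{g}(u_1,\dots,u_n))=\mathfrak{g}(\mathbf{I}(u_1),\dots,\mathbf{I}(u_n))=\mathfrak{g}(\mathbf{I}(w_1),\dots,\mathbf{I}(w_n))=\mathbf{I}(\mathfrak{g}(w_1,\dots,w_n))$, and all $E$-rooted subterms of $\mathfrak{g}(u_1,\dots,u_n)$ lie inside some $u_i$, so the $\Phi$-agreement is inherited. The serious case is $f^{\sharp}$ for $f\in E$; here I would prove simultaneously the two statements
\begin{enumerate}
\item $u_1\simeq w_1,\ldots,u_n\simeq w_n \Rightarrow f^{\sharp}(u_1,\ldots,u_n,\ell)\simeq f^{\sharp}(w_1,\ldots,w_n,\ell)$, and
\item $u\simeq w \Rightarrow f^{\sharp}(\ell_1,\ldots,\ell_n,u)\simeq f^{\sharp}(\ell_1,\ldots,\ell_n,w)$,
\end{enumerate}
by induction on $(\mathrm{rank}(f,\ell_1,\dots,\ell_n),\ell)\in\mathbf{On}\times(L,\prec)$ — exactly the well-founded order used in Definition \ref{3hqw3f6l8icg} and in the proof of Theorem \ref{2ijr50o} — running through the usual cases $f_x(\ell_1,\dots,\ell_n)\notin L$; $\ell=x\in X$; $\ell=\mathfrak{g}(v_1,\dots,v_m)$; and $\ell=g_x(v_1,\dots,v_m)$. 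In the first three cases one either gets $f^{\sharp}(\cdots,\ell)=\ell$ or $f^{\sharp}(\cdots,x)=f_x(\cdots)$ or one recurses through $\mathfrak{g}$, and the conclusion follows from the induction hypothesis together with the $\mathbf{I}$- and $\Phi$-bookkeeping. The decisive case is $\ell=g_x(v_1,\dots,v_m)$, where
\[f^{\sharp}(\ell_1,\ldots,\ell_n,g_x(v_1,\ldots,v_m))
=g^{\sharp}(f^{\sharp}(\ell_1,\ldots,\ell_n,v_1),\ldots,f^{\sharp}(\ell_1,\ldots,\ell_n,v_m),f_x(\ell_1,\ldots,\ell_n));\]
one applies the induction hypothesis (the rank of $(g,f^{\sharp}(\overline{\ell},v_1),\dots,f^{\sharp}(\overline{\ell},v_m))$ is strictly smaller) to push $\simeq$ through $g^{\sharp}$, uses Proposition \ref{24rouh} part 1 (which says $\Psi\circ f^{\sharp}$ doesn't change $\Psi$) and Proposition \ref{34t9o9403} to control the $\mathbf{I}$-image, and — crucially — uses that $\Phi$ is an endomorphic-algebra homomorphism so that $\Phi(f^{\sharp}(\overline{\ell},-))=L_{f,\Phi(\ell_1),\dots,\Phi(\ell_n)}(\Phi(-))$ in $\mathcal{V}$, turning the required $\Phi$-agreement into an identity inside the locally Laver-like algebra $\mathcal{V}$, where it follows from the hypothesis $u_i\simeq w_i$ (resp. $u\simeq w$) via the definition of $\Phi$ on $L$.

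The step I expect to be the main obstacle is verifying clause (8c) in the inductive step of case $\ell=g_x(v_1,\dots,v_m)$ — namely that, besides matching $\mathbf{I}$-shapes, the $\Phi$-values of every $E$-rooted subterm genuinely agree. This is where one must be careful that the recursion in the definition of $f^{\sharp}$ does not introduce $E$-applications whose $\Phi$-images fail to be determined by the $\Phi$-images of the $v_i$ and of $f_x(\ell_1,\dots,\ell_n)$; the clean way around it is the kernel reformulation from the first paragraph, together with the observation (provable by a short auxiliary induction, in the spirit of Proposition \ref{t4gh2ui04gthn} and the $f^-$ notation) that every $E$-rooted subterm of $f^{\sharp}(\ell_1,\dots,\ell_n,\ell)$ is of the form $f^{\sharp}(\ell_1,\dots,\ell_n, w)$ for some $E$-rooted subterm $w\preceq \ell$, or is $f_x(\ell_1,\dots,\ell_n)$ itself — so its $\Phi$-image is $L_{f,\Phi(\ell_1),\dots,\Phi(\ell_n)}(\Phi(w))$ and is therefore controlled by $\Phi(w)$. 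Once that lemma is in hand, both compatibility statements reduce to applications of the induction hypothesis plus the fact that $\mathcal{V}$ is (locally) Laver-like, and the theorem follows. As in the earlier theorems, I would state the argument only for $\mathcal{V}$ operable/Laver-like, with the locally Laver-like case following as an immediate corollary by restricting to finitely generated reducts.
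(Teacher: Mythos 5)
The paper states this theorem without proof (it is introduced only by the remark that Theorem \ref{t48stwv67qfda} ``extends''), so I am evaluating your proposal on its own merits. Your overall plan — a simultaneous two-part induction on $(\mathrm{rank}(f,\overline{\ell}),\ell)\in\mathbf{On}\times(L,\prec)$ modelled on Theorem \ref{t48stwv67qfda} and on the proofs of Theorems \ref{m43ti0} and \ref{2ijr50o} — is the natural one, and the cases $\mathfrak{l}\in\mathrm{Li}$, $\ell=x$, and $\ell=\mathfrak{g}(\ldots)$ go through as you say. But the decisive case contains a genuine gap at precisely the point you flag as the main obstacle, and the lemma you propose to repair it is false.

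Concretely, take $\mathcal{V}=(A_2,*,\star)$ on $\{1,2,3,4\}$ where $*$ is the classical Laver table operation $*_2$ and $a\star b=(a\circ_2 a)*_2 b$ (a second self-distributive operation obtained as in Example \ref{024efrji}). One checks that $\mathcal{V}$ is a Laver-like endomorphic algebra. Take $v_{x_1}=1$ and $v_{x_2}=2$, and in $L$ set $u=\star_{x_1}(x_1)$, $w=\star_{x_2}(x_1)$. Since $1\star 1 = 3*1 = 4 = 3*2 = 1\star 2$, we have $\Phi(u)=\Phi(w)=4$, so $u\simeq w$. Now run your decisive case with $f=*$, $\ell_1=x_1$: your argument needs $*_{x_1}(x_1)\simeq *_{x_2}(x_1)$ in order to apply IH(2). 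But $\Phi(*_{x_1}(x_1))=1*1=2$ while $\Phi(*_{x_2}(x_1))=1*2=4$, so they are not $\simeq$. The identity $\Phi(f^{\sharp}(\overline{\ell},-))=L_{f,\Phi(\overline{\ell})}(\Phi(-))$ that you invoke is of course correct, but applied to $f_y(\overline{\ell})$ it produces $L_{f,\Phi(\overline{\ell})}(v_y)$, and $v_y\neq v_z$ here, so it does not yield the agreement you need. Thus ``turning the required $\Phi$-agreement into an identity inside $\mathcal{V}$'' does not in fact follow from the hypothesis $u\simeq w$.

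Your auxiliary lemma fails on the same example: $f^{\sharp}(x_1,\star_{x_1}(x_1))=*_{x_1}(\star_{x_1}(*_{x_1}(x_1)))$, whose $E$-rooted subterm $\star_{x_1}(*_{x_1}(x_1))$ is neither of the form $f^{\sharp}(x_1,w)$ for an $E$-rooted $w\preceq\star_{x_1}(x_1)$ (the only such $w$ is the root, giving the whole term), nor of the form $f_x(\ell_1)=*_x(x_1)$. By Proposition \ref{t4gh2ui04gthn} the term $f^{\sharp}(\overline{\ell},\ell)$ equals $f^{-}(\overline{\ell},\ell)\otimes(f,\overline{\ell})$, and the $E$-nodes of the ``outer skeleton'' $f^{-}(\overline{\ell},\ell)$ are in general headed by other symbols $g\in E$ and are not reachable in the way you describe.

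Note that in this example the theorem's conclusion still holds — $f^{\sharp}(x_1,u)=*_{x_1}(\star_{x_1}(*_{x_1}(x_1)))$ and $f^{\sharp}(x_1,w)=*_{x_2}(\star_{x_1}(*_{x_1}(x_1)))$ have the same $\mathbf{I}$-shape, and the $\Phi$-values at all three $E$-nodes are $2,3,4$ in both — so the theorem itself is presumably correct, but your proof must be repaired. What actually makes the example work is that the $\Phi$-value of the outer $*$-node is $3*v_y=3*v_z=4$: the effective inner endomorphism applied to $v_y$ there is not $L_{*,1}$ but a composite whose kernel is coarse enough to identify $v_y$ and $v_z$. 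This is exactly the phenomenon that the proof of Theorem \ref{t48stwv67qfda} handles with the LD-monoid identity $(x\circ r)*y_n=x*(r*y_n)$ — the key step there is \emph{not} ``push $\simeq$ into the operand'' but a direct computation using the composition in the base algebra. A corrected argument here must do the same thing: rather than invoking $f_y(\overline{\ell})\simeq f_z(\overline{\ell})$ (which can fail), one must directly exhibit the $\Phi$-value at each $E$-node of $f^{\sharp}(\overline{\ell},u)$ as a value of the form $P\cdot(Q\cdot v_y)$ with $P,Q$ in $\Gamma(\mathcal{V})$ and $Q\cdot v_y=Q\cdot v_z$, using the endomorphic identity $f^{\sharp}(\mathbf{a},g^{\sharp}(\mathbf{b},v))=g^{\sharp}(L_{f,\mathbf{a}}(\mathbf{b}),f^{\sharp}(\mathbf{a},v))$ to propagate the agreement — an argument analogous to Proposition \ref{hfqhq4hwbujddxn} rather than to a naive splitting of the induction hypothesis.
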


\begin{defn}
Let $X$ be a locally Laver-like LD-system. A \index{chain}\emph{chain} is a sequence $(r_{1},\ldots,r_{n})$ where each for all $i\in\{2,\ldots,n\}$ there is some $x$ with $r_{i}=r_{i-1}*x$ and where $r_{m}\not\in \mathrm{Li}(X)$ whenever $m<n$. In other words, a chain is an element in the image of the mapping $\Phi$. Let $\mathbf{Chain}(X)$ denote the set of all chains in $X$. The \index{chain algebra}\emph{chain algebra} over $(X,*)$ is the algebra
\index{$(\mathbf{Chain}(X),\otimes)$} $(\mathbf{Chain}(X),\otimes)$ where
\[\Phi(\mathbf{x})\otimes\Phi(\mathbf{y})=\Phi(\mathbf{x}*\mathbf{y}).\]
\end{defn}
\begin{defn}
Define a partial mapping \index{$\odot$}$\odot$ by 
\[\Phi(\mathbf{x})\odot\Phi(\mathbf{y})=\Phi(\mathbf{x}\circ\mathbf{y})\]
whenever $\mathbf{x}\circ\mathbf{y}$ is defined.
\end{defn}
\begin{prop}
Let $(X,*)$ be a locally Laver-like LD-system, and suppose that
\[(r_{1},\ldots,r_{m}),(s_{1},\ldots,s_{n})\in\mathbf{Chain}(X).\]
\begin{enumerate}
\item If $r_{m}\in \mathrm{Li}(X)$, then 
\[(r_{1},\ldots,r_{m})\otimes(s_{1},\ldots,s_{n})=(s_{1},\ldots,s_{n}).\]

\item If $n=1,r_{m}\not\in \mathrm{Li}(X)$, then
\[(r_{1},\ldots,r_{m})\otimes(s_{1})=(r_{1},\ldots,r_{m},r_{m}*s_{1}).\]

\item If $r_{m}\not\in\mathrm{Li}(X)$ and $n>1$, then
\[(r_{1},\ldots,r_{m})\otimes(s_{1},\ldots,s_{n})=((r_{1},\ldots,r_{m})\odot(s_{1},\ldots,s_{n-1}),r_{m}*s_{n}).\]
\end{enumerate}
\end{prop}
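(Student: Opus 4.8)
The plan is to verify each of the three identities directly from the definition of $\odot$ and $\otimes$ in terms of $\Phi$ together with the identities \ref{JBNajo34g6ft} relating $*$ and $\circ$ on $\mathcal{E}_\lambda$ — which, by the development of the $\circ$ operation on reduced permutative LD-systems, carry over to any locally Laver-like LD-system after collapsing $\mathrm{Li}(X)$. So first I would fix notation: write $\mathbf{x}=(r_1,\ldots,r_m)$ and $\mathbf{y}=(s_1,\ldots,s_n)$, let $\mathbf{r}$ be the element $r_1*\cdots*r_m\in X$ (equivalently the last coordinate $r_m$ of the chain, since a chain records partial products), and similarly $\mathbf{s}=s_1*\cdots*s_n$. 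Recall that $\Phi(x_1,\ldots,x_k)=(x_1,\,x_1*x_2,\ldots,x_1*\cdots*x_k)$, so a chain is determined by its sequence of partial products and $(r_1,\ldots,r_m)\otimes(s_1,\ldots,s_n)=\Phi(r_1,\ldots,r_m,s_1,\ldots,s_n)$ reading $(r_1,\ldots,r_m)$ as the word and then concatenating; more precisely $\otimes$ is defined so that $\Phi(\mathbf{x})\otimes\Phi(\mathbf{y})=\Phi(\mathbf{x}*\mathbf{y})$ where $*$ is the multigenic Laver table operation on $M=\mathbf{M}(\ldots)$.

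For item (1): if $r_m\in\mathrm{Li}(X)$ then the word $(r_1,\ldots,r_m)$ is a left-identity in the corresponding multigenic Laver table $M$ (this is exactly the condition defining $\mathrm{Li}(M)$ in $\mathbf{M}((x_a),X)$), so $(r_1,\ldots,r_m)*^M(s_1,\ldots,s_n)=(s_1,\ldots,s_n)$ by rule (1) of the definition of $*^M$; applying $\Phi$ gives the claim. For item (2): with $n=1$ and $r_m\not\in\mathrm{Li}(X)$, the word $(r_1,\ldots,r_m)$ is in $M\setminus\mathrm{Li}(M)$, so by rule (2) of $*^M$ we get $(r_1,\ldots,r_m)*^M(s_1)=(r_1,\ldots,r_m,s_1)$, and then $\Phi(r_1,\ldots,r_m,s_1)=(r_1,\ldots,r_m,\,(r_1*\cdots*r_m)*s_1)=(r_1,\ldots,r_m,r_m*s_1)$ since the chain already records $r_m=r_1*\cdots*r_m$ in its last slot. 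For item (3): with $r_m\not\in\mathrm{Li}(X)$ and $n>1$, rule (3) of $*^M$ gives
\[(r_1,\ldots,r_m)*^M(s_1,\ldots,s_n)=\bigl((r_1,\ldots,r_m)*^M(s_1,\ldots,s_{n-1})\bigr)*^M\bigl((r_1,\ldots,r_m)\,s_n\bigr),\]
and I would then observe that the inner left factor $(r_1,\ldots,r_m)*^M(s_1,\ldots,s_{n-1})$ has last partial product $\mathbf{r}\circ(s_1*\cdots*s_{n-1})$ — this is the content of the fact $x\circ y=t_{k+1}(x,y)\circ t_k(x,y)$ and the translation of $x*(y*z)=(x\circ y)*z$ into the chain picture, i.e. it is precisely $(r_1,\ldots,r_m)\odot(s_1,\ldots,s_{n-1})$; since $n-1\geq 1$ this is a genuine chain. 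Post-composing with the length-one factor $(r_1,\ldots,r_m)\,s_n$ via item (2) appends $(\mathbf{r}\circ(s_1*\cdots*s_{n-1}))*(\mathbf{r}*s_n)$; finally, using $(x\circ y)*(x*z) = x*(y*z)$ applied with $x=\mathbf{r}$, $y=s_1*\cdots*s_{n-1}$, $z=s_n$... actually more directly $\mathbf{r}\circ(s_1\cdots s_n)=(\mathbf{r}\circ(s_1\cdots s_{n-1}))\circ\ldots$, hmm — the cleaner route is that the last coordinate of $(r_1,\ldots,r_m)\otimes(s_1,\ldots,s_n)$ equals $\mathbf{r}*(s_1*\cdots*s_n)=\mathbf{r}*s_n$ composed appropriately, i.e. it is just $r_m*s_n$, matching the stated right-hand side, while the initial segment is $(r_1,\ldots,r_m)\odot(s_1,\ldots,s_{n-1})$ by definition of $\odot$ as $\Phi(\mathbf{x}\circ\mathbf{y})$ together with $\mathbf{x}*\mathbf{y}b = (\mathbf{x}\circ\mathbf{y})\,b$ in the word algebra.

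The main obstacle I anticipate is bookkeeping in item (3): one must be careful that $\mathbf{x}\circ(s_1,\ldots,s_{n-1})$ is defined (it is, because $r_m\not\in\mathrm{Li}(X)$ guarantees $(r_1,\ldots,r_m)\not\in\mathrm{Li}(M)$, and $\circ$ in a multigenic Laver table is defined whenever the left factor is not a left-identity, by Proposition \ref{t4uh4aP}(iv) and the Fibonacci-term characterization of $\circ$), and that the last letter of $\mathbf{x}*\mathbf{y}$ is indeed $s_n$ — which is Proposition \ref{t4uh4aP}(i), since $\mathbf{y}$ and $\mathbf{x}*^M\mathbf{y}$ share the same last letter, whence in the chain picture the last coordinate is $(r_1*\cdots*r_m)*(s_1*\cdots*s_n)$, and one checks this equals $r_m*s_n$ by the identity $\mathbf{r}*(s_1*\cdots*s_n) = (\mathbf{r}\circ(s_1*\cdots*s_{n-1}))*(\mathbf{r}*s_n)$ — no wait, the last coordinate of the chain is literally the full product $(r_1*\cdots*r_m)*(s_1*\cdots*s_n)$, and since $\mathbf{x}\circ(s_1,\ldots,s_{n-1})$ has product $\mathbf{r}\circ(s_1\cdots s_{n-1})$, the concatenation rule forces the last term to be $(\mathbf{r}\circ(s_1\cdots s_{n-1}))*(\mathbf{r}*s_n)=\mathbf{r}*((s_1\cdots s_{n-1})*s_n)=\mathbf{r}*s_n$, using $(x\circ y)*(x*z)=x*(y*z)$ with the left ideal / reduced structure. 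All told the proof is a short unwinding of definitions once these correspondences are set up; I would present it as three labeled cases mirroring the three rules for $*^M$.
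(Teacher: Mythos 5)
Items (1) and (2) are essentially right and match the paper's route, modulo a persistent notational slip you should fix up front: you set $\mathbf{r}=r_1*\cdots*r_m$ and declare this "equivalently the last coordinate $r_m$," but a chain $(r_1,\ldots,r_m)=\Phi(x_1,\ldots,x_m)$ records the partial products of the \emph{underlying word} $(x_1,\ldots,x_m)$, so $r_m=x_1*\cdots*x_m$ while $r_1*\cdots*r_m$ is the left-associated product of the chain entries themselves, and these are generally different (e.g.\ in $A_2$ with $x_1=x_2=1$ the chain is $(1,2)$ and $1*2=4\neq 2$). Likewise $\Phi(r_1,\ldots,r_m,s_1)$ is not what you want; what you want is $\Phi(x_1,\ldots,x_m,y_1)=(r_1,\ldots,r_m,r_m*y_1)$, and then $y_1=s_1$ since $n=1$. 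The arithmetic lands in the right place but the intermediate objects are wrong.

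For item (3), your first attempt has a genuine gap. You apply the defining recursion $\mathbf{x}*\mathbf{y}b=(\mathbf{x}*\mathbf{y})*\mathbf{x}b$ and then assert that the inner left factor $(r_1,\ldots,r_m)\otimes(s_1,\ldots,s_{n-1})=\Phi(\mathbf{x}*\mathbf{y}')$ (with $\mathbf{y}'=(y_1,\ldots,y_{n-1})$) "is precisely $(r_1,\ldots,r_m)\odot(s_1,\ldots,s_{n-1})$" with last coordinate $r_m\circ s_{n-1}$. This is false: $(r_1,\ldots,r_m)\odot(s_1,\ldots,s_{n-1})=\Phi(\mathbf{x}\circ\mathbf{y}')$ has last coordinate $r_m\circ s_{n-1}$, whereas $\Phi(\mathbf{x}*\mathbf{y}')$ has last coordinate $\phi(\mathbf{x})*\phi(\mathbf{y}')=r_m*s_{n-1}$; these are different words and have different final coordinates. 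Moreover the right factor $\mathbf{x}y_n$ in that recursion is not a singleton, so item (2) does not apply to it — that recursion does not produce the stated chain-algebra recurrence. Your second attempt ("the cleaner route") does point at the correct argument, which is the paper's: peel $\mathbf{y}$ as $\mathbf{y}'y_n$, invoke the LD-monoid identity $\mathbf{x}*\mathbf{y}'y_n=(\mathbf{x}\circ\mathbf{y}')*y_n$ so the left factor is now $\mathbf{x}\circ\mathbf{y}'$ (not $\mathbf{x}*\mathbf{y}'$), apply $\Phi$, use the definition of $\odot$ for the prefix, use item (2) with the genuine singleton $y_n$ to get last coordinate $(r_m\circ s_{n-1})*y_n$, and simplify via $(a\circ b)*c=a*(b*c)$ and $s_n=s_{n-1}*y_n$. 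If you present only that route (dropping the double-recursion detour and the $r_1*\cdots*r_m$ identification), the proof matches the paper's; as currently written it contains contradictory assertions and an argument that would not compile.
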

\begin{proof}
Suppose that $\Phi(x_{1},\ldots,x_{m})=(r_{1},\ldots,r_{m})$ and $\Phi(y_{1},\ldots,y_{n})=(s_{1},\ldots,s_{n})$.
\begin{enumerate}
\item If $r_{m}\in \mathrm{Li}(X)$, then
\[(r_{1},\ldots,r_{m})\otimes(s_{1},\ldots,s_{n})=\phi((x_{1},\ldots,x_{m}))*\phi((y_{1},\ldots,y_{n}))\]
\[=\Phi((x_{1},\ldots,x_{m})*(y_{1},\ldots,y_{n}))=\Phi(y_{1},\ldots,y_{n})=(s_{1},\ldots,s_{n}).\]

\item If $n=1,r_{m}\not\in \mathrm{Li}(X)$, then
\[(r_{1},\ldots,r_{m})\otimes(s_{1})=\Phi((x_{1},\ldots,x_{m})*(y_{1}))=\Phi(x_{1},\ldots,x_{m},y_{1})\]
\[=(x_{1},\ldots,x_{1}*\ldots*x_{m},x_{1}*\ldots*x_{m}*y_{1})=(r_{1},\ldots,r_{m},r_{m}*s_{1}).\]

\item \[(r_{1},\ldots,r_{m})\otimes(s_{1},\ldots,s_{n})\]
\[=\Phi((x_{1},\ldots,x_{m})*(y_{1},\ldots,y_{n}))=\Phi((x_{1},\ldots,x_{m})*((y_{1},\ldots,y_{n-1})*(y_{n})))\]
\[=\Phi((x_{1},\ldots,x_{m})\circ(y_{1},\ldots,y_{n-1}))*y_{n}))\]
\[=((r_{1},\ldots,r_{m})\odot(s_{1},\ldots,s_{n-1}))\otimes(y_{n})\]
\[=(((r_{1},\ldots,r_{m})\odot(s_{1},\ldots,s_{n-1})),(r_{m}\circ s_{n-1})*y_{n})\]
\[=((r_{1},\ldots,r_{m})\odot(s_{1},\ldots,s_{n-1}),r_{m}*(s_{n-1}*y_{n}))\]
\[=((r_{1},\ldots,r_{m})\odot(s_{1},\ldots,s_{n-1}),r_{m}*s_{n}).\]
\end{enumerate}
\end{proof}
\begin{thm}
If $\ell>1$ and $M_{n}(x,y,\ell)>1$, then
\[o_{n}(\ell-1)\leq o_{n}(|M_{n}(x,y,\ell)|-1).\]
\end{thm}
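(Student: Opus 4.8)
The plan is to read the inequality as a statement about periodicity in the multigenic Laver table $(A^{\leq 2^n})^+$ transported through the function $M_n$, and to exploit the homomorphism $\pi : A_{n+1} \to A_n$ (and the associated periodicity propositions for the $o_n$ function) together with the combinatorial meaning of $M_n$ given by the identity
\[
a_{-1}\ldots a_{-x} *_n a_1 \ldots a_y = a_{M_n(x,y,1)} \ldots a_{M_n(x,y,x*y)}.
\]
First I would recall that $o_n(k)$ is characterized by $k *_n u = k *_n v \iff u \equiv v \bmod 2^{o_n(k)}$, so an inequality $o_n(\ell-1) \le o_n(|M_n(x,y,\ell)|-1)$ is exactly the assertion that the critical point associated to position $\ell-1$ (as an element of $A_n$) is at most that associated to position $|M_n(x,y,\ell)|-1$. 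This is the kind of statement that Theorem~\ref{4tij0poqlke} and its corollaries are designed to yield: in a multigenic Laver table over an alphabet of distinct letters, if $c_\ell = a_i$ (resp. $b_i$) in $a_1\ldots a_x *^M b_1\ldots b_y = c_1\ldots c_z$, then $\mathrm{crit}(a_1\ldots a_i) \le \mathrm{crit}(c_1\ldots c_\ell)$ (resp. $\mathrm{crit}(b_1\ldots b_i) \le \mathrm{crit}(c_1\ldots c_\ell)$).

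The key steps, in order, would be: (1) translate the hypothesis $M_n(x,y,\ell) > 1$ into the statement that in $(A^{\leq 2^n})^+$, taking $a_1\ldots a_x *_n b_1\ldots b_y = c_1\ldots c_z$ with all letters distinct, we have $c_\ell = b_{M_n(x,y,\ell)}$ and moreover $M_n(x,y,\ell) > 1$ means this is not the very first letter of $b_1\ldots b_y$; likewise $\ell > 1$ means $c_\ell$ is not the first letter of $c_1\ldots c_z$. (2) Apply the ``last-letter'' type analysis (Proposition~\ref{4j4hu29gegouetfdfghjd} and its corollary, together with Theorem~\ref{4tij0poqlke}) to the prefixes: from $c_\ell = b_j$ with $j = M_n(x,y,\ell)$ conclude $\mathrm{crit}(b_1\ldots b_j) \le \mathrm{crit}(c_1\ldots c_\ell)$, and then pass from these critical-point inequalities in the multigenic setting back to the classical Laver table $A_n$ via the canonical homomorphism $(A^{\leq 2^n})^+ \to A_n$ sending a word of length $k$ to $k$, under which $\mathrm{crit}$ of a length-$k$ prefix corresponds to $\gcd(2^n,k)$, i.e. $o_n$ controls the relevant congruence. (3) Finally convert the inequality $\mathrm{crit}(b_1\ldots b_j) \le \mathrm{crit}(c_1\ldots c_\ell)$, which after the homomorphism becomes an inequality on $\gcd(2^n,\cdot)$ of the two lengths $j$ and $\ell$ — but we actually want $o_n(\ell-1) \le o_n(j-1)$, i.e. the inequality goes the \emph{other} way and is about $\ell-1, j-1$ not $\ell, j$. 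This signals that the correct application is to the ``predecessor'' positions: one should look at $c_{\ell-1}$ and use Proposition~\ref{4j4hu29gegouetfdfghjd}/its corollary to identify $c_{\ell - 1}$ as either $a_x$ or $b_{j-1}$ according to whether $\mathrm{crit}(a_1\ldots a_x) > \mathrm{crit}(b_1\ldots b_{j-1})$ or not, exactly as in the corollary computing $FM_n^-(x,(x*_n y)-1)$.

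The main obstacle, and the step that will need the most care, is reconciling the direction of the inequality and the shift by $1$: the naive transport of critical points through $M_n$ gives $\gcd(2^n,\ell) \ge \gcd(2^n,|M_n(x,y,\ell)|)$ (this is already recorded as a corollary in the excerpt), whereas the target statement has $o_n(\ell - 1) \le o_n(|M_n(x,y,\ell)| - 1)$, which is the \emph{reverse} direction applied to the shifted indices. I expect the resolution to be that the factor $o_n$ of a position one-less-than-$\ell$ is governed by the self-distributive ``$\circ$'' structure (since $c_{\ell - 1}$ is the last letter of $a_1\ldots a_x \circ b_1\ldots b_{j-1}$), and the relevant monotonicity is then Proposition~\ref{t482u9onjkrw}(4) / Proposition~\ref{sfenjuw4}(3): $\mathrm{crit}(x) \le \mathrm{crit}(y) \Rightarrow \mathrm{crit}(r*x) \le \mathrm{crit}(r*y)$, applied to show that the period at position $\ell - 1$ of the output is dominated by the period at position $j - 1$ of the second input. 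So the plan is: express both sides via the $o_n$-characterization of congruences $u \equiv v \bmod 2^{o_n(k)}$, use Proposition~\ref{4j4hu29gegouetfdfghjd} to pin down $c_{\ell-1}$, and then invoke the monotonicity of $x^\sharp$ (order-preservation of inner endomorphisms on critical points) on the chain algebra / multigenic Laver table to get the inequality, finally projecting to $A_n$ where $\mathrm{crit} \leftrightarrow \gcd(2^n,\cdot)$ and the period at position $k$ is $2^{o_n(k)}$ — this last dictionary is what makes $o_n$ appear in place of raw $\gcd$.
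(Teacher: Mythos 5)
Your proposal correctly identifies that the naive transport of critical-point inequalities through $M_n$ via Theorem~\ref{4tij0poqlke} produces $\gcd(2^n,\ell)\geq\gcd(2^n,|M_n(x,y,\ell)|)$, which is the wrong direction and the wrong indices for the target $o_n(\ell-1)\leq o_n(|M_n(x,y,\ell)|-1)$. But the ``resolution'' you then sketch --- looking at predecessor positions, invoking Proposition~\ref{4j4hu29gegouetfdfghjd}, and appealing to monotonicity of $x^\sharp$ --- is not actually carried out, and I do not see how to complete it along those lines. Proposition~\ref{4j4hu29gegouetfdfghjd} only identifies the \emph{letter} at position $\ell-1$ (it is $a_x$ or $b_{j-1}$ depending on a critical-point comparison); it does not by itself supply a comparison of $o_n(\ell-1)$ against $o_n(j-1)$, and the monotonicity of $x^\sharp$ ($\mathrm{crit}(x)\le\mathrm{crit}(y)\Rightarrow\mathrm{crit}(r*x)\le\mathrm{crit}(r*y)$) goes in the direction you already said was unhelpful. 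In short, the key step is named but not solved.

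The paper's proof uses a tool your plan never invokes: the congruence $\simeq$ from Definition~\ref{t48stwv67qfda} / Theorem~\ref{t48stwv67qfda}, which declares two tuples in the multigenic Laver table over the alphabet $A_n$ equivalent when their sequences of left-to-right partial products $\Phi$ agree. Since $\Phi(1^{x})=(1,2,\ldots,x)=\Phi(1,1+O_n(1),\ldots,1+O_n(x-1))$, one has $1^{x}\simeq(1,1+O_n(1),\ldots,1+O_n(x-1))$ and likewise for $1^{y}$. Because $\simeq$ is a congruence, $1^{x*y}\simeq(1,1+O_n(1),\ldots,1+O_n(x-1))*(1,1+O_n(1),\ldots,1+O_n(y-1))$. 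Reading off position $\ell$: the left side has $\ell$-th letter $1$ and $(\ell-1)$-th partial product $\ell-1$; the right side has $\ell$-th letter $1+O_n(i-1)$ where $i=|M_n(x,y,\ell)|$, and the same $(\ell-1)$-th partial product $\ell-1$. Equality of $\Phi$ at position $\ell$ then forces $(\ell-1)*1=(\ell-1)*(1+O_n(i-1))$, which by the definition of $o_n$ is exactly $o_n(\ell-1)\leq o_n(i-1)$. The crucial idea is that one does not compare critical points of prefixes at all; instead one deliberately replaces the constant-$1$ tuple by a $\Phi$-equivalent tuple whose $i$-th entry is the \emph{period} $1+O_n(i-1)$, so that the conclusion drops out of a single equality of partial products. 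Your plan has no mechanism to produce that replacement, which is where it stalls.
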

\begin{proof}
We have 
\[1^{x}\simeq(1,1+O_{n}(1),\ldots,1+O_{n}(x-1)),\]
and
\[1^{y}\simeq(1,1+O_{n}(1),\ldots,1+O_{n}(y-1)).\]

Therefore,
\[1^{x*y}=1^{x}*1^{y}\simeq(1,1+O_{n}(1),\ldots,1+O_{n}(x-1))*(1,1+O_{n}(1),\ldots,1+O_{n}(y-1)).\]

If $|M_{n}(x,y,\ell)|=i>1$, then 
\[(1,1+O_{n}(1),\ldots,1+O_{n}(x-1))*(1,1+O_{n}(1),\ldots,1+O_{n}(y-1))[\ell]=1+O_{n}(i-1).\]

Therefore, we have $(\ell-1)*1=(\ell-1)*(1+O_{n}(i-1))$, so $o_{n}(\ell-1)\leq o_{n}(i-1)$.
\end{proof}
\begin{cor}
If $|FM_{n}^{-}(x,y)|>1$, then
\[o_{n}(y-1)\leq o_{n}(|FM_{n}^{-}(x,y)|-1).\]
\end{cor}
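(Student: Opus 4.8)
The plan is to obtain this corollary directly from the preceding Theorem by specializing the triple $(x,y,\ell)$ occurring there to $(x,O_{n}(x),y)$, so that $M_{n}(x,O_{n}(x),y)$ becomes exactly $FM_{n}^{-}(x,y)$. First I would check that this specialization is legitimate: $M_{n}(x,O_{n}(x),\ell)$ is a well-formed expression for every $\ell\in\{1,\ldots,2^{n}\}$, because $O_{n}(x)=2^{o_{n}(x)}$ divides $2^{n}$ so $O_{n}(x)\equiv 2^{n}\bmod O_{n}(x)$, and hence by the periodicity of $*_{n}$ in its second argument (together with the fact that the values $x*_{n}1<\cdots<x*_{n}O_{n}(x)$ exhaust the interval up to the top element) one gets $x*_{n}O_{n}(x)=x*_{n}2^{n}=2^{n}$. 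Thus the domain of $M_{n}(x,O_{n}(x),-)$ is $\{1,\ldots,2^{n}\}$, and $FM_{n}^{-}(x,y)=M_{n}(x,O_{n}(x),y)$ is defined for all $y$ under consideration.

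Next I would dispose of the hypothesis $\ell>1$ appearing in the Theorem. A direct inspection of the first column of $FM_{n}^{-}$ shows $FM_{n}^{-}(x,1)\in\{-1,1\}$ for every $x$ (concretely, $M_{n}(x,1,1)=-1$ for $x<2^{n}$ and $M_{n}(2^{n},y,1)=1$, combined with the identity $M_{n}(x,y,\ell)=FM_{n}^{-}(x,\ell)$ valid for $1\le y\le O_{n}(x)$). Consequently $|FM_{n}^{-}(x,1)|=1$, so the assumption $|FM_{n}^{-}(x,y)|>1$ forces $y>1$. Now apply the Theorem with $\ell=y$ and second argument $O_{n}(x)$: since $|M_{n}(x,O_{n}(x),y)|=|FM_{n}^{-}(x,y)|>1$, we conclude
\[
o_{n}(y-1)\le o_{n}\bigl(|M_{n}(x,O_{n}(x),y)|-1\bigr)=o_{n}\bigl(|FM_{n}^{-}(x,y)|-1\bigr),
\]
which is precisely the asserted inequality.

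The only point that needs a little care is the matching of hypotheses: the Theorem is stated with the condition $M_{n}(x,y,\ell)>1$, whereas the corollary's hypothesis $|FM_{n}^{-}(x,y)|>1$ also allows negative entries. This is harmless, since the proof of the Theorem in fact establishes the conclusion whenever $|M_{n}(x,y,\ell)|=i>1$ — the final line of that proof, $(\ell-1)*1=(\ell-1)*(1+O_{n}(i-1))$, and the deduction $o_{n}(\ell-1)\le o_{n}(i-1)$ from it, only ever refer to $i=|M_{n}(x,y,\ell)|$. Reading the Theorem in this (stronger and already proven) form, there is no substantial obstacle; everything else is a routine substitution.
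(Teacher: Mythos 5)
Your proof is correct and is essentially the paper's intended argument: the corollary has no written proof in the paper, being an immediate specialization of the preceding theorem via $y \mapsto O_n(x)$, $\ell\mapsto y$, using $FM_n^-(x,y)=M_n(x,O_n(x),y)$. You also correctly spotted, and correctly resolved, a small mismatch in the source: the theorem is stated with the hypothesis $M_n(x,y,\ell)>1$, which would not cover negative entries of $FM_n^-$, but (as you observe) the theorem's proof actually starts from $|M_n(x,y,\ell)|=i>1$ and proves the conclusion in that generality, which is exactly what the corollary's hypothesis $|FM_n^-(x,y)|>1$ supplies; the elimination of the $\ell>1$ case via $|FM_n^-(x,1)|=1$ is likewise the right little check. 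Your verification that $M_n(x,O_n(x),\cdot)$ is defined on all of $\{1,\ldots,2^n\}$ is harmless extra care — that fact is already built into the paper's definition of $FM_n^-$.
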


\begin{conj}
If $|FM_{n}^{-}(x,y)|>1$, then
\[\gcd(2^n,(|FM_{n}^{-}(x,y)|-1)*_{n}a)\leq\gcd(2^n,(y-1)*_{n}a).\]
\end{conj}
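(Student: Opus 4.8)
The plan is to mimic the ``chain-algebra'' strategy that was just used to prove the corollary $o_{n}(y-1)\leq o_{n}(|FM_{n}^{-}(x,y)|-1)$, but to push the associated chains through one more application of the self-distributive operation. Recall that in $(A^{\leq 2^{n}})^{+}$ we have
\[
1^{x}\simeq(1,1+O_{n}(1),\ldots,1+O_{n}(x-1)),\qquad
1^{y}\simeq(1,1+O_{n}(1),\ldots,1+O_{n}(y-1)),
\]
and that if $i=|FM_{n}^{-}(x,y)|=|M_{n}(x,O_{n}(x),y)|>1$ then the $y$-th coordinate of the chain $1^{x}\otimes 1^{y}$ equals $1+O_{n}(i-1)$ while the letter in position $y$ of $1^{x}*_{n}1^{y}$ comes from the $(i-1)$-shifted copy in the right factor. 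First I would set $r=\gcd(2^{n},\cdot)$ notation aside and work directly with the indices: fix an arbitrary $a\in A_{n}$ and append the single letter $a$ (or rather a letter whose position will record $a$) to the word $1^{y}$, so that we are multiplying $1^{x}$ by the slightly longer word $1^{y-1}(1+O_{n}(y-1))$-style extension; the goal is to compare $(y-1)*_{n}a$ with $(|FM_{n}^{-}(x,y)|-1)*_{n}a$ inside the classical Laver table.

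Second, the key computational step is to identify, inside the product $1^{x}*_{n}1^{y}$, the segment of the output word that is an isomorphic copy of the right factor's tail starting at position $i-1$. By Proposition \ref{4j4hu29gegouetfdfghjd} and Theorem \ref{4tij0poqlke}, the letters of $c_{1}\ldots c_{z}=a_{1}\ldots a_{x}*_{n}b_{1}\ldots b_{y}$ are governed by critical-point inequalities, and the structure of $M_{n}$ tells us exactly which $\ell$ map to $b_{i}$. Writing $o_{n}(\ell-1)$ in terms of the self-distributive periodicity, I would show that whenever $c_{\ell}=b_{i}$ with $i>1$, the two elements $\ell-1$ and $i-1$ of $A_{n}$ satisfy not merely $o_{n}(\ell-1)\leq o_{n}(i-1)$ but the stronger divisibility-of-images statement: for every $a$, the chain reading of $1^{x}*_{n}(1^{y-1}\cdot w)$ (where $w$ is chosen so that the $y$-th slot of $1^{y}$ becomes a letter tracking $(y-1)*_{n}a$) forces
\[
(\ell-1)*_{n}a \equiv (\ell-1)*_{n}a' \pmod{2^{o_{n}(\ell-1)}}
\quad\Longrightarrow\quad
(i-1)*_{n}a\equiv(i-1)*_{n}a'\pmod{2^{o_{n}(i-1)}},
\]
which, unwound through Proposition \ref{t482u9onjkrw}(4) and the identity $\gcd(x*_{n}y,2^{n})\geq\gcd(y,2^{n})$, yields $\gcd(2^{n},(|FM_{n}^{-}(x,y)|-1)*_{n}a)\leq\gcd(2^{n},(y-1)*_{n}a)$.

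Third, I would verify the base and inductive structure exactly as in the proof of the preceding theorem: induction is descending on $x$ and ascending on $y$ with respect to $\preceq$, with the three usual cases ($1^{x}\in\mathrm{Li}$, $y=1$, and $y>1$ with the recursion $1^{x}*_{n}1^{y}=(1^{x}*_{n}1^{y-1})*_{n}1^{x}(1+O_{n}(x))$). In the recursive case one passes to a chain $d_{1}\ldots d_{r}\simeq 1^{x}*_{n}1^{y-1}$ with fresh letters, applies the induction hypothesis to $d_{1}\ldots d_{r}*_{n}(\cdot)$, and uses Case 3a/3b/3c bookkeeping of Theorem \ref{4tij0poqlke} to transport the $\gcd$-inequality back.

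The hard part will be Case 3c, where the output letter $c_{\ell}$ originates from the \emph{left} factor $1^{x}$ rather than from the right factor; there the naive induction compares $(\ell-1)*_{n}a$ with some index coming from $a_{1}\ldots a_{x}$, and one must show that applying $L_{*_{n},a}$ does not enlarge the $\gcd$ beyond what the $O_{n}$-periodicity of the left factor already controls. I expect this will require the full strength of Proposition \ref{t482u9onjkrw} parts (4), (6), (7), (8) together with the already-proven corollary $\gcd(2^{n},y)\geq\gcd(2^{n},|FM_{n}^{-}(x,y)|)$, and possibly a separate lemma that $o_{n}$ is ``monotone under $L_{*_{n},a}$'' in the appropriate sense. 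If that lemma turns out to fail, the conjecture as stated would be false, so the real content of the proof is isolating and proving that one monotonicity fact about the classical Laver tables.
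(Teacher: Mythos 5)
The paper presents this statement as a conjecture with no proof; the authors explicitly leave it open, and you yourself concede at the end that your whole argument hinges on an unstated ``monotonicity fact'' in Case 3c which you have neither formulated nor verified and which, if false, would refute the statement. So there is nothing in the paper to compare against, and what you have written is a research plan, not a proof.

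The concrete gap is the displayed implication in your second step. The modulus $2^{o_n(\ell-1)}$ is by definition the period of row $\ell-1$, so under the most natural reading of the premise (equality of images in $A_{o_n(\ell-1)}$) the implication is merely a restatement of $o_n(\ell-1)\leq o_n(i-1)$, which is exactly the corollary the paper proves just above this conjecture and which is strictly weaker. In critical-point terms that corollary says $(y-1)^{\sharp}$ attains $\max(\mathrm{crit}[A_n])$ at least as early as $(i-1)^{\sharp}$ does, whereas the conjecture demands the full pointwise bound $(i-1)^{\sharp}(\mathrm{crit}(a))\leq(y-1)^{\sharp}(\mathrm{crit}(a))$ for every $a\in A_n$; the latter implies the former but not conversely. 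The chain congruence $\simeq$ of Definition~\ref{t48stwv67qfda} records only the partial products $\Phi(\mathbf{x})$, so it carries exactly the period information the corollary needs and no more; it is too coarse an instrument to see how a given row acts on an arbitrary $a$. Your plan to ``push the chain through one more application'' by appending a fresh letter tracking $a$ to $1^{y}$ faces a dilemma you never resolve: either the appended letter changes the $\simeq$-class of the word (so you are no longer computing with the original product), or $\Phi$ forgets it (so nothing new is extracted), and in either case the inequality $\gcd(2^n,(i-1)*_n a)\leq\gcd(2^n,(y-1)*_n a)$ does not drop out. You have correctly sensed that the core of the matter is a monotonicity property of the classical Laver tables that nobody currently knows how to prove; isolating that property precisely and then proving or refuting it is exactly why the paper records this as an open conjecture rather than a theorem.
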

\begin{conj}
Suppose $M$ is a multigenic Laver table, $t$ is an $n$-ary term in the language of LD-systems,
$\mathbf{x}_{i}=a_{i,1}\ldots a_{i,n_{i}}$ for $1\leq i\leq n$, $t(\mathbf{x}_{1},\ldots,\mathbf{x}_{n})=c_{1}\ldots c_{z}$, and
each symbol $a_{i,j}$ is distinct. If $c_{\ell}=a_{i,j}$ and $j>1,\ell>1$, then
\[(a_{i,1}\ldots a_{i,j-1})^{\sharp}(\alpha)\leq(c_{1}\ldots c_{\ell-1})^{\sharp}(\alpha).\]
\end{conj}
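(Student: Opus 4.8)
The plan is to prove this conjecture by induction on the complexity of the term $t$, mimicking the structure of the proof of the earlier theorem that shows $\mathrm{crit}(a_{i,1}\ldots a_{i,j})\leq\mathrm{crit}(c_{1}\ldots c_{\ell})$ under the same hypotheses, but now tracking the sharp-operation $x^{\sharp}$ applied to a fixed critical point $\alpha$ rather than just the critical point itself. As in the proof of Theorem \ref{4tij0poqlke}, I would first reduce to a convenient normal form: without loss of generality $M=\mathbf{M}((f(a))_{a\in A},N)$ for a generalized Laver table $N$ over an alphabet $B$ with $f:A\to B$ having infinite fibers, $\phi:M\to N$ the canonical homomorphism, and $\simeq$ the induced congruence; this lets me replace letters by isomorphic copies whenever I need the letters appearing in intermediate products to be distinct from one another. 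The base case $t(x)=x$ is trivial since then $c_{1}\ldots c_{z}=\mathbf{x}_{1}$ and $c_{\ell}=a_{1,\ell}$, so $(a_{1,1}\ldots a_{1,\ell-1})^{\sharp}(\alpha)\leq(c_{1}\ldots c_{\ell-1})^{\sharp}(\alpha)$ holds with equality.

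For the inductive step, write $t(\mathbf{x}_{1},\ldots,\mathbf{x}_{n})=t_{1}*t_{2}$, set $t_{1}\simeq r_{1}\ldots r_{p}$ and $t_{2}\simeq s_{1}\ldots s_{q}$ with all letters distinct, and let $r_{1}\ldots r_{p}*s_{1}\ldots s_{q}=d_{1}\ldots d_{z}$. Now suppose $c_{\ell}=a_{i,j}$ with $j>1,\ell>1$. There are two cases. If $d_{\ell}=r_{k}$ for some $k$, then the letter of $t_{1}$ in position $k$ is $a_{i,j}$; by the induction hypothesis applied to $t_1$, $(a_{i,1}\ldots a_{i,j-1})^{\sharp}(\alpha)\leq(r_{1}\ldots r_{k-1})^{\sharp}(\alpha)$, and it remains to show $(r_{1}\ldots r_{k-1})^{\sharp}(\alpha)\leq(d_{1}\ldots d_{\ell-1})^{\sharp}(\alpha)$; here I would use the structure of how $*^{M}$ builds $d_{1}\ldots d_{\ell}$ together with Proposition \ref{4j4hu29gegouetfdfghjd} and the relationship $\mathbf{r}\circ^{M}\mathbf{s}'\,[\cdot]$ between prefixes, exploiting that $d_{1}\ldots d_{\ell-1}$ is essentially $(r_{1}\ldots r_{p})\circ^{M}(s_{1}\ldots s_{j'-1})$ extended by $r_{1}\ldots r_{k-1}$ for an appropriate $j'$, so the composition only increases critical values in the monotone sense of $x^{\sharp}$. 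If $d_{\ell}=s_{k}$ for some $k$, then similarly the $k$-th letter of $t_{2}$ is $a_{i,j}$, so by induction $(a_{i,1}\ldots a_{i,j-1})^{\sharp}(\alpha)\leq(s_{1}\ldots s_{k-1})^{\sharp}(\alpha)$, and I would bound $(s_{1}\ldots s_{k-1})^{\sharp}(\alpha)\leq(d_{1}\ldots d_{\ell-1})^{\sharp}(\alpha)$ using the fact that each initial segment of $s_1\ldots s_q$ gets prefixed by a left-ideal-free initial portion of $r_1\ldots r_p$ in forming $d_1\ldots d_\ell$ (again via Proposition \ref{4j4hu29gegouetfdfghjd} and Theorem \ref{4tij0poqlke}).

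The main obstacle I anticipate is the "monotonicity under composition/prefixing" lemma that I am using in both cases: I need a clean statement to the effect that if $\mathbf{u}$ is a prefix of $\mathbf{v}$ in a multigenic Laver table and both arise as suitable initial segments of a product, then $\mathbf{u}^{\sharp}(\alpha)\leq\mathbf{v}^{\sharp}(\alpha)$, and more delicately, that replacing $\mathbf{u}$ by $\mathbf{w}*^{M}\mathbf{u}$ (for $\mathbf{w}$ not a left-identity) again only increases $(\cdot)^{\sharp}(\alpha)$. Neither fact follows purely formally from Proposition \ref{sfenjuw4}; they need the specific combinatorics of $*^{M}$ — in particular, how the positions in $\mathbf{x}*^{M}\mathbf{y}$ decompose as iterated compositions $\mathbf{x}\circ^{M}(\text{prefix of }\mathbf{y})$ followed by an appended letter. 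I would isolate this as a preliminary lemma proved by the usual descending-on-$\mathbf{x}$, ascending-on-$\mathbf{y}$ double induction, patterned on Proposition \ref{4j4hu29gegouetfdfghjd} and Theorem \ref{4tij0poqlke}, and then the term induction above goes through routinely. A secondary subtlety is handling the $\ell=1$ versus $\ell>1$ and $j=1$ versus $j>1$ boundary cases correctly when $d_\ell$ lands in the appended-letter position of some sub-product; these correspond exactly to the "$e_{\ell}=b_{y}$" case in the proof of Theorem \ref{4tij0poqlke} and should be dispatched the same way, noting that there the inequality becomes the trivial $\alpha\leq(\cdot)^{\sharp}(\alpha)$ from item 2 of Theorem \ref{2j1298ur28tye}.
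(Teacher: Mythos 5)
\textbf{First and most importantly: the statement you are trying to prove is labeled \texttt{conj} in the paper, not \texttt{thm} or \texttt{prop}.} The paper does not give a proof of it, and it is placed among the open problems of the ``unexplained phenomena'' variety (it immediately follows the closely related conjecture on $|FM_{n}^{-}(x,y)|$). So there is no paper argument to compare against; the question is whether your proposal constitutes a plausible new proof, and I do not think it does in its current form.

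Your high-level plan---induction on the complexity of $t$ modeled on the proof of Theorem \ref{4tij0poqlke}, with the $\simeq$ trick to replace intermediate products by fresh letters---is the natural thing to try, and the casework ($d_{\ell}=r_{k}$ versus $d_{\ell}=s_{k}$) is set up correctly. Your observation about the ``appended-letter'' boundary case is also correct: when $c_{\ell}=b_{y}$ is the final letter, $\mathbf{c}_{\leq\ell-1}=\mathbf{a}\circ\mathbf{b}_{<y}$, and $(\mathbf{a}\circ\mathbf{b}_{<y})^{\sharp}=\mathbf{a}^{\sharp}\circ(\mathbf{b}_{<y})^{\sharp}\geq(\mathbf{b}_{<y})^{\sharp}$ by $\mathbf{a}^{\sharp}(\beta)\geq\beta$, so this case really is dispatched by Theorem \ref{2j1298ur28tye} as you say.

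\textbf{The gap is the ``monotonicity under prefixing'' lemma, and it is not a loose end that a double induction will tidy up---it is false as you state it.} The claim that $\mathbf{u}\preceq\mathbf{v}$ in $M$ forces $\mathbf{u}^{\sharp}(\alpha)\leq\mathbf{v}^{\sharp}(\alpha)$ for all critical points $\alpha$ fails already in the classical Laver table $A_{4}$. Here $1\preceq 2$ (since $2=1*_{4}1$), but taking $\alpha=\mathrm{crit}(1)$ (the bottom critical point), we have $1^{\sharp}(\alpha)=\mathrm{crit}(1*_{4}1)=\mathrm{crit}(2)$, which is strictly above the bottom, while $2^{\sharp}(\alpha)=\mathrm{crit}(2*_{4}1)=\mathrm{crit}(3)=\alpha$. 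So $1^{\sharp}(\alpha)>2^{\sharp}(\alpha)$, even though $1\preceq 2$. Translated to $(A^{\leq 16})^{+}$: the length-one string $a$ is a prefix of $ab$, yet $a^{\sharp}(\alpha)>(ab)^{\sharp}(\alpha)$ at the bottom critical point. Extending a string can therefore strictly \emph{decrease} $(\cdot)^{\sharp}(\alpha)$, and any version of your lemma that would carry the induction must exclude exactly such pairs.

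This is where the content of the conjecture lives. You hedge with ``both arise as suitable initial segments of a product,'' but you do not say what that class is, and the restriction cannot be stated independently of the combinatorics the conjecture is trying to control: the pairs $(\mathbf{a}_{\leq j-1},\mathbf{c}_{\leq\ell-1})$ appearing in the conjecture are tied together by the position-indexing data $c_{\ell}=a_{i,j}$ (equivalently, by the sign/value pattern of $M_{n}$ and $FM_{n}^{-}$), and proving the inequality precisely for those pairs is the conjecture itself. In fact the preceding conjecture in the paper, $\gcd(2^n,(|FM_{n}^{-}(x,y)|-1)*_{n}a)\leq\gcd(2^n,(y-1)*_{n}a)$, is exactly the binary special case of your needed lemma stated in classical-Laver-table coordinates, and the author leaves it open. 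So the closing sentence ``then the term induction above goes through routinely'' is not supportable: the term induction would go through \emph{if} the lemma held, but establishing the lemma is not routine and is not obviously obtainable by the descending/ascending double induction you sketch (the $A_{4}$ counterexample shows the naive inductive invariant breaks immediately).

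In short: correct framing, correct boundary case, but the proposal reduces the conjecture to a lemma that is false without a restriction you have not identified, and identifying and proving the right restriction is the whole problem, not a preliminary to it.
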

\begin{prop}
\label{hfqhq4hwbujddxn}
Suppose that $X$ is a locally Laver-like LD-system and $\simeq$ is the congruence found in Definition \ref{t48stwv67qfda}. If
\[(x_{1}*\ldots*x_{m}*y_{1},\ldots,x_{1}*\ldots*x_{m}*y_{n})\simeq(x_{1}*\ldots*x_{m}*z_{1},\ldots,x_{1}*\ldots*x_{m}*z_{n}),\] then
\[(x_{1},\ldots,x_{m})*(y_{1},\ldots,y_{n})\simeq(x_{1},\ldots,x_{m})*(z_{1},\ldots,z_{n}).\]
\end{prop}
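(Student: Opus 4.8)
The plan is to unwind the hypothesis through the mapping $\Phi$ and the chain algebra and then reduce to the part A / part B case analysis already carried out in the proof that $\simeq$ is a congruence (Theorem \ref{t48stwv67qfda}). First I would pick representatives: choose $(x_{1},\ldots,x_{m})\in M$, $(y_{1},\ldots,y_{n})\in M$, and $(z_{1},\ldots,z_{n})\in M$ (so no proper initial product of the $x$'s or $y$'s or $z$'s lies in $\mathrm{Li}(X)$; the case where some initial product is a left identity is handled separately and is trivial because then both sides of the desired conclusion collapse). Set $w=x_{1}*\ldots*x_{m}$. The hypothesis says
\[
\Phi(w*y_{1},\ldots,w*y_{n})=\Phi(w*z_{1},\ldots,w*z_{n}),
\]
i.e. $(w*y_{1},\ldots,w*y_{n})$ and $(w*z_{1},\ldots,w*z_{n})$ give the same chain. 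Explicitly, for each $k\le n$ we get
\[
(w*y_{1})*\ldots*(w*y_{k})=(w*z_{1})*\ldots*(w*z_{k}),
\]
and using self-distributivity each side equals $w*(y_{1}*\ldots*y_{k})$ and $w*(z_{1}*\ldots*z_{k})$ respectively, after factoring through $\circ$ as in the computations in the chain-algebra proposition. So the hypothesis is equivalent to $w*(y_{1}*\ldots*y_{k})=w*(z_{1}*\ldots*z_{k})$ for all $k\le n$, which is the statement that $w$ "sees" the chains of $(y_i)$ and $(z_i)$ as equal.

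Next I would run an induction on $n$ mirroring Case 3, part B of the proof of Theorem \ref{t48stwv67qfda}. For $n=1$ the hypothesis is $w*y_{1}=w*z_{1}$; since $w=x_{1}*\ldots*x_{m}\notin\mathrm{Li}(X)$, the chains $(x_{1},\ldots,x_{m})*(y_{1})=(x_{1},\ldots,x_{m},y_{1})$ and $(x_{1},\ldots,x_{m})*(z_{1})=(x_{1},\ldots,x_{m},z_{1})$ have the same $\Phi$-image iff $w*y_{1}=w*z_{1}$ (their first $m$ entries of $\Phi$ already agree), so we are done. For the inductive step, write $(x_{1},\ldots,x_{m})*(y_{1},\ldots,y_{n})=\big((x_{1},\ldots,x_{m})*(y_{1},\ldots,y_{n-1})\big)*(x_{1},\ldots,x_{m},y_{n})$ and similarly for $z$. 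By the induction hypothesis applied to $(y_{1},\ldots,y_{n-1})$ and $(z_{1},\ldots,z_{n-1})$ (whose hypothesis is the restriction of ours to the first $n-1$ coordinates), the two "prefix products" $(x_{1},\ldots,x_{m})*(y_{1},\ldots,y_{n-1})$ and $(x_{1},\ldots,x_{m})*(z_{1},\ldots,z_{n-1})$ are $\simeq$-equivalent; then applying part A of the already-proven congruence property of $\simeq$, right-multiplying by $(x_{1},\ldots,x_{m},y_{n})$ preserves $\simeq$. Finally one must show the last right factor can be swapped: $(x_{1},\ldots,x_{m})*(z_{1},\ldots,z_{n-1})*(x_{1},\ldots,x_{m},y_{n})\simeq(x_{1},\ldots,x_{m})*(z_{1},\ldots,z_{n-1})*(x_{1},\ldots,x_{m},z_{n})$.

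That last swap is where the real content lies, and it is exactly the computation appearing in Case 3B of Theorem \ref{t48stwv67qfda}: letting $x=x_{1}*\ldots*x_{m}$ and $r=z_{1}*\ldots*z_{n-1}$, if $(t_{1},\ldots,t_{l},y_{n})$ is the chain $(x_{1},\ldots,x_{m})*(z_{1},\ldots,z_{n-1})*(x_{1},\ldots,x_{m},y_{n})$ then $t_{1}*\ldots*t_{l}=x\circ r$, so $t_{1}*\ldots*t_{l}*y_{n}=(x\circ r)*y_{n}=x*(r*y_{n})$. The hypothesis $w*(y_{1}*\ldots*y_{n})=w*(z_{1}*\ldots*z_{n})$, combined with $w*(y_{1}*\ldots*y_{n-1})=w*(z_{1}*\ldots*z_{n-1})$ from the lower-$k$ cases, forces $r*y_{n}=r*z_{n}$ (this is where I would be most careful: one needs $x^{\sharp}$ injectivity / the fact that $x$ is not a left identity to cancel the outer $x$, or more precisely one uses that the two chains agreeing at coordinate $n$ gives $w*(\cdots y_n)=w*(\cdots z_n)$ and at coordinate $n-1$ gives the shorter equality, and then divides out by $\circ$). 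From $r*y_{n}=r*z_{n}$ we get $(x\circ r)*y_{n}=(x\circ r)*z_{n}$, hence $t_{1}*\ldots*t_{l}*y_{n}=t_{1}*\ldots*t_{l}*z_{n}$, which says precisely that the two length-$(l+1)$ chains $(t_{1},\ldots,t_{l},y_{n})$ and $(t_{1},\ldots,t_{l},z_{n})$ have the same $\Phi$-image, i.e. are $\simeq$-equivalent. Chaining the three $\simeq$-steps together yields $(x_{1},\ldots,x_{m})*(y_{1},\ldots,y_{n})\simeq(x_{1},\ldots,x_{m})*(z_{1},\ldots,z_{n})$, completing the induction. The main obstacle, as indicated, is justifying the cancellation $w*(y_{1}*\ldots*y_{n})=w*(z_{1}*\ldots*z_{n})\Rightarrow r*y_{n}=r*z_{n}$ cleanly; I expect this to follow from the relation $x\circ r$ and the fact that $x=x_1*\ldots*x_m\notin\mathrm{Li}(X)$ together with $(x\circ r)*y_n = x*(r*y_n)$, reading the hypothesis at coordinate $n$ versus coordinate $n-1$.
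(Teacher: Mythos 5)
The paper states Proposition~\ref{hfqhq4hwbujddxn} without proof, so there is no in-paper argument to compare against; your skeleton is a reasonable one, running an induction that mirrors Case~3B of the proof of Theorem~\ref{t48stwv67qfda}. However, you have misidentified where the "real content" lies and, as stated, the crucial step of your argument is not correct. You claim that the hypothesis at coordinates $n$ and $n-1$ "forces $r*y_n = r*z_n$." It does not: from $w*(y_1*\ldots*y_n)=w*(z_1*\ldots*z_n)$ and $w*(y_1*\ldots*y_{n-1})=w*(z_1*\ldots*z_{n-1})$ you cannot cancel the outer $w$, and indeed $L_w$ may well collapse distinct elements when $w$ has low critical point. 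The claim $r*y_n=r*z_n$ is therefore both unproved and unnecessary.

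What you actually need is the weaker fact that $(x\circ r)*y_n=(x\circ r)*z_n$, equivalently $x*(r*y_n)=x*(r*z_n)$, since the last element of the chain $(t_1,\ldots,t_l,y_n)$ is $(x\circ r)*y_n=x*(r*y_n)$ rather than $r*y_n$. And this does follow, cleanly and with no cancellation, from self-distributivity plus the hypothesis at coordinates $n$ and $n-1$. Writing $r'=y_1*\ldots*y_{n-1}$ and $r=z_1*\ldots*z_{n-1}$, the hypothesis at $k=n-1$ gives $x*r'=x*r$, so
\[
x*(r*y_n)=(x*r)*(x*y_n)=(x*r')*(x*y_n)=x*(r'*y_n)=x*(y_1*\ldots*y_n)=x*(z_1*\ldots*z_n)=x*(r*z_n),
\]
using left self-distributivity twice and the hypothesis at $k=n$ once. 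Thus $(x\circ r)*y_n=(x\circ r)*z_n$, and the three $\simeq$-steps chain together exactly as you intended. Your worry about injectivity of $x^\sharp$ is a red herring; no such property is used. With this substitution the remainder of your argument -- the unwinding of the hypothesis to $w*(y_1*\ldots*y_k)=w*(z_1*\ldots*z_k)$ for all $k\le n$, the base case $n=1$, the use of part~A of Theorem~\ref{t48stwv67qfda}, and the dispatch of the degenerate case where $w\in\mathrm{Li}(X)$ -- reads as intended and closes the proof.
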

\begin{prop}
If $FM_{n}^{-}(x,y)>0$, then
\[o_{n}(y-1)\leq o_{n}((x*FM_{n}^{-}(x,y))-1).\]
\end{prop}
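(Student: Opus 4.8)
The plan is to interpret the final matrix entries inside the multigenic Laver table $(A^{\leq 2^n})^+$ using the chain-algebra machinery of Definition \ref{t48stwv67qfda} and the preceding corollaries, exactly as in the proof of the immediately preceding proposition on $o_n$. Recall that $FM_n^-(x,y) = M_n(x, O_n(x), y)$, so that if $a_1\ldots a_x, b_1\ldots b_{O_n(x)}$ are all distinct letters of an alphabet $A$, then inside $(A^{\leq 2^n})^+$ we have $a_1\ldots a_x *^M b_1\ldots b_{O_n(x)} = c_1\ldots c_z$ with $c_y = b_{FM_n^-(x,y)}$ precisely when $FM_n^-(x,y)>0$. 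The strategy is to take the specific words $1^x$ and $1^y$ in $(A^{\leq 2^n})^+$ where we now regard $A_n$ itself as the ``alphabet'' of letters, mimicking the previous theorem: we have the chain representations
\[
1^{x}\simeq(1,\,1+O_n(1),\,\ldots,\,1+O_n(x-1)),\qquad
1^{O_n(x)}\simeq(1,\,1+O_n(1),\,\ldots,\,1+O_n(O_n(x)-1)),
\]
and the product $1^{x}*1^{O_n(x)}$, read off coordinate-wise via the chain algebra, has its $y$-th coordinate equal to $1 + O_n(FM_n^-(x,y)-1)$ when $FM_n^-(x,y)>0$ (this is the positive-entry case of the formula used in the previous theorem).

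Next I would track, in the chain for $1^x * 1^{O_n(x)}$, what the $y$-th coordinate is as an element of $A_n$: by definition of the chain algebra it is $(1)*_n\cdots$ composed appropriately, and in fact the $y$-th coordinate equals the element $x *_n FM_n^-(x,y)$ of $A_n$ — this is the key identification, and it should follow from carefully unwinding $\Phi$ together with the fact that $x$ is exactly the position of the ``$x$-block'' and $FM_n^-(x,y)$ records which letter from the $b$-side survives. Combining this with the previous paragraph's computation that the $y$-th coordinate is also $1 + O_n(FM_n^-(x,y)-1)$ in a parallel chain over the one-generated sub-LD-system, I obtain a relation of the form: $(x*_n FM_n^-(x,y)) *_n 1 = (x *_n FM_n^-(x,y)) *_n (1 + O_n(FM_n^-(x,y)-1))$, while simultaneously $y$ and $1 + O_n(y-1)$ play symmetric roles on the other side. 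Unpacking periodicity ($u *_n v = u *_n w$ iff $v \equiv w \bmod 2^{o_n(u)}$, together with $O_n$ being the period) then converts the chain-coordinate equality into the inequality $o_n(y-1) \leq o_n((x*_n FM_n^-(x,y))-1)$.

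Concretely the key steps, in order, are: (1) set up the two chains as above and invoke the chain-algebra product formulas (the proposition right before Theorem on $o_n$); (2) show the $y$-th coordinate of $1^x * 1^{O_n(x)}$ equals $x *_n FM_n^-(x,y)$ as an element of $A_n$, using Proposition \ref{4j4hu29gegouetfdfghjd} and the definition of $FM_n^-$ to locate which source-letter ends up in position $y$; (3) in parallel, compute that same coordinate as $1 + O_n(FM_n^-(x,y) - 1)$ via the one-generated computation, exactly as in the preceding theorem's proof; (4) read off that $(\ell-1) *_n 1 = (\ell-1)*_n (1+O_n(|FM_n^-(x,y)|-1))$ is the analogue, hence deduce $o_n(y-1) \leq o_n(|FM_n^-(x,y)|-1)$ — but here, because we are in the positive-entry case and are tracking $x*_n FM_n^-(x,y)$ rather than just $FM_n^-(x,y)$, the sharper conclusion with $(x*_n FM_n^-(x,y))-1$ in place of $FM_n^-(x,y)-1$ falls out. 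The main obstacle I anticipate is step (2): pinning down that the element of $A_n$ sitting in position $y$ of the product chain is exactly $x *_n FM_n^-(x,y)$ and not some other related value, since the chain algebra composes left-multiplications and one must be careful that the bookkeeping of ``which $b$-letter survives'' in $FM_n^-$ matches the bookkeeping of ``which $A_n$-element the chain records.'' This is essentially a compatibility check between the final-matrix indexing convention and the $\Phi$-map, and it is where the previous theorem's argument and Corollary following Theorem \ref{4tij0poqlke} ($\gcd(2^n,x*M_n(x,y,\ell)) \le \gcd(2^n,\ell)$) should be leaned on heavily.
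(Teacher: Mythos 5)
Your general plan (work in the congruence $\simeq$ on the multigenic Laver table over $A_n$, compare the $y$-th position of two $\simeq$-representatives of $1^{2^n}$, and read off a period inequality) is the right kind of argument, and it is close in spirit to both the preceding theorem's proof and the paper's proof of this proposition. But your key step (2) is wrong as stated: the $y$-th coordinate of the chain $\Phi(1^{x}*1^{O_n(x)}) = \Phi(1^{2^n})$ is simply $y$, since $\Phi(1^{2^n})=(1,2,\ldots,2^n)$; it is not $x*_n FM_n^-(x,y)$. You are conflating the $y$-th entry of the $\Phi$-chain (the iterated product of the first $y$ letters, always $y$ here) with the $y$-th \emph{letter} of the product tuple (which is the $1+O_n(\cdot)$-type quantity). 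Once this is sorted out, the argument you actually describe — take the second factor $(1,1+O_n(1),\ldots,1+O_n(O_n(x)-1))$ as in the preceding theorem, locate the $y$-th letter as $1+O_n(|FM_n^-(x,y)|-1)$, and compare with the $\Phi$-chain — yields only $(y-1)*_n 1=(y-1)*_n(1+O_n(i-1))$ and hence $o_n(y-1)\le o_n(|FM_n^-(x,y)|-1)$, which is exactly the already-stated corollary, not the sharper bound in the proposition. Your claim that the sharper conclusion with $(x*_n FM_n^-(x,y))-1$ ``falls out'' because you are ``tracking $x*_n FM_n^-(x,y)$'' has no content behind it, and you yourself flag this as the main obstacle.

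What the paper's proof does differently, and what your proposal is missing, is the choice of the second factor: instead of $(1,1+O_n(1),\ldots,1+O_n(O_n(x)-1))$, it takes $(1+O_n(x*_n 1),\ldots,1+O_n(x*_n 2^m))$ (with $2^m=O_n(x)$), justified by Proposition \ref{hfqhq4hwbujddxn} rather than by the $1^{k}\simeq(1,1+O_n(1),\ldots)$ identity from the preceding theorem. This is what makes the quantity $x*_n FM_n^-(x,y)$ — rather than $FM_n^-(x,y)$ alone — appear in the $y$-th letter of the product. Without identifying this alternate representative, your argument cannot reach past the corollary.
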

\begin{proof}
Suppose that $FM_{n}^{-}(x,y)>0$ and $m$ is the least natural number with $x*2^{m}=2^{n}$. Then
by Proposition \ref{hfqhq4hwbujddxn},
\[\underbrace{(1,\ldots,1)}_{2^{n}-times}=\underbrace{(1,\ldots,1)}_{x-times}*\underbrace{(1,\ldots,1)}_{2^{m}-times}\]
\[\simeq\underbrace{(1,\ldots,1)}_{x-times}*(1+O_{n}(x*1),\ldots,1+O_{n}(x*2^{m})).\]
Therefore, since
\[\underbrace{(1,\ldots,1)}_{2^{n}-times}[y]=1,\]
and
\[\underbrace{(1,\ldots,1)}_{x-times}*(1+O_{n}(x*1),\ldots,1+O_{n}(x*2^{m}))[y]=1+O_{n}(x*FM_{n}^{-}(x,y)),\]
we conclude that
\[o_{n}(y-1)\leq o_{n}((x*FM_{n}^{-}(x,y))-1).\]
\end{proof}
% ----------------------------------------------------------------
\subsection{Nightmare Laver tables}
We shall now give an example of a finite multigenic Laver table $M$ such that there do not exist rank-into-rank embeddings
$(j_{a})_{a\in A}$ in $\mathcal{E}_{\lambda}$ and some $\gamma<\lambda$ such that
$\mathbf{M}(([j_{a}])_{a\in A},\mathcal{E}_{\lambda}/\equiv^{\gamma})=M$. The following lemma gives a condition as to which
Laver-like LD-systems embed into $\mathcal{E}_{\lambda}/\equiv^{\gamma}$.
\begin{lem}
Suppose that $j:V_{\lambda}\rightarrow V_{\lambda}$ is an elementary embedding. Then
\[(j*j)(\alpha)\leq j(\alpha)\]
for each $\alpha<\lambda$.
\label{t42ijt0t24j}
\end{lem}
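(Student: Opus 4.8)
The statement is that for an elementary embedding $j:V_{\lambda}\rightarrow V_{\lambda}$, we have $(j*j)(\alpha)\leq j(\alpha)$ for every $\alpha<\lambda$. The plan is to reduce this to a direct computation using the basic properties of the application operation $*$ and elementarity. Recall that $j*j=\bigcup_{\beta<\lambda}j(j|_{V_{\beta}})$, so for a fixed $\alpha<\lambda$ the value $(j*j)(\alpha)$ is computed by choosing any $\beta$ with $\alpha\in V_{\beta}$ (equivalently $\alpha<\beta$, taking $\alpha$ as an ordinal and noting $\alpha\in V_{\alpha+1}$) and evaluating $j(j|_{V_{\beta}})$ at $\alpha$. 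So the first step is to fix $\alpha<\lambda$, pick $\beta$ with $\alpha<\beta<\lambda$, and write $k=j|_{V_{\beta}}\in V_{\lambda}$, so that $(j*j)(\alpha)=j(k)(\alpha)$ where $j(k)$ is the image of the set $k$ under $j$.

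The key step is to extract the relevant first-order fact about $k$ and push it through $j$ by elementarity. The function $k=j|_{V_{\beta}}$ satisfies, inside $V_{\lambda}$, the statement ``for every ordinal $\xi<\beta$, $k(\xi)\geq\xi$'' (this is the standard fact that elementary embeddings of transitive structures are non-decreasing on the ordinals, which holds for $j$ and hence for its restriction $k$, and which is expressible using $\beta$ and $k$ as parameters). Actually, to get the stated inequality I want something slightly different: I want to compare $(j*j)(\alpha)=j(k)(\alpha)$ with $j(\alpha)$. Here is the cleaner route. Observe that $k(\alpha)=j(\alpha)$ since $\alpha<\beta$. Now apply $j$ to the statement ``$k$ is a function with $\alpha$ in its domain'' together with monotonicity: by elementarity $j(k)$ is a function, $j(k)(j(\alpha))=j(k(\alpha))=j(j(\alpha))$, and more importantly $j(k)$ is non-decreasing on ordinals below $j(\beta)$. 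Since $\alpha\leq j(\alpha)<j(\beta)$, monotonicity of $j(k)$ gives $j(k)(\alpha)\leq j(k)(j(\alpha))=j(j(\alpha))$. That is not yet $\leq j(\alpha)$, so this particular packaging overshoots; I would instead use the dual fact. The correct parameter statement to reflect is: ``for every ordinal $\xi$ with $\alpha\leq\xi<\beta$, $k(\xi)\leq k(\alpha)$ fails in general'' — so monotonicity alone is the wrong tool and I must use the crucial property that $k$ \emph{agrees with $j$}, i.e. $k(\alpha) = j(\alpha)$, combined with the observation that applying $j$ to ``$k(\alpha)=j(\alpha)$'' is not well-formed because $j$ is not a parameter available inside $V_\lambda$. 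The honest approach is: let $\gamma=\mathrm{crit}(j)$; if $\alpha<\gamma$ then $j(\alpha)=\alpha$ and also $k(\alpha)=\alpha$, and $j(k)(\alpha)$: since $\alpha<\gamma$ implies $\alpha<j(\gamma)$ and $j(k)$ fixes ordinals below its critical point which is $\geq\gamma$ — one checks $(j*j)(\alpha)=\alpha=j(\alpha)$. If $\alpha\geq\gamma$, I use that $V_\lambda\models$ ``$k$ maps $[\gamma,\beta)$ into $[k(\gamma),\cdot)$ monotonically and $k(\xi)\geq\xi$''; reflecting through $j$ and using $\mathrm{crit}(j*j)=j(\gamma)>\gamma$ together with $j(k)\restriction$ below $j(\beta)$ being the restriction of the embedding $j*j$, I get $(j*j)(\alpha)\leq(j*j)(j(\alpha))$ — and then I must compare $(j*j)(j(\alpha))$ with $j(\alpha)$, which again requires knowing $j*j$ agrees with...

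Let me instead plan the argument that I expect actually works and is standard: the inequality $(j*j)(\alpha)\leq j(\alpha)$ should follow from the identity (valid in $\mathcal{E}_\lambda$) relating $*$ and $\circ$, namely $j\circ k=(j*k)\circ j$, specialized to $k=j$: this gives $j\circ j=(j*j)\circ j$, i.e. $j(j(\alpha))=(j*j)(j(\alpha))$ for all $\alpha$, so $j*j$ and $j$ agree on the range of $j$. Combined with monotonicity of $j*j$ and the fact that $\alpha\leq j(\alpha)$, we get $(j*j)(\alpha)\leq(j*j)(j(\alpha))=j(j(\alpha))$ — still the wrong direction. So the \textbf{main obstacle} is getting the direction of the inequality right; I suspect the intended proof reflects the statement ``$\forall\xi\,(j*j)(\xi)\le j(\xi)$ holds below every level'' via the Laver--Steel machinery, or more likely uses that $j*j=j(j\restriction V_\beta)$ together with the elementarity fact ``$V_\lambda\models\forall\xi<\beta\ (j\restriction V_\beta)(\xi)\le j(\xi)$'' which is \emph{false in general} unless phrased as an inequality that survives: the workable version is ``$(j\restriction V_\beta)(\xi)=j(\xi)$ for $\xi<\beta$'', and applying $j$: $(j*j)(j\text{-image of }\xi)$. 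I would therefore, as the concrete plan, (1) handle $\alpha<\mathrm{crit}(j)$ trivially, (2) for $\alpha\ge\mathrm{crit}(j)$ write $(j*j)(\alpha)=j(j\restriction V_{\alpha+1})(\alpha)$, (3) note $V_\lambda\models$ ``$(j\restriction V_{\alpha+1})$ is an order-preserving function and for all $\xi\le\alpha$, $(j\restriction V_{\alpha+1})(\xi)\le(j\restriction V_{\alpha+1})(\alpha)$'', (4) apply $j$ to obtain $j(j\restriction V_{\alpha+1})(\eta)\le j(j\restriction V_{\alpha+1})(j(\alpha))$ for all $\eta\le j(\alpha)$, in particular for $\eta=\alpha$, and (5) identify $j(j\restriction V_{\alpha+1})(j(\alpha))=j(j(\alpha))$ — at which point I still need one more reduction, presumably $j(j(\alpha))$... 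This suggests I have the wrong target identity and the real proof is shorter: likely $(j*j)(\alpha)\le j(\alpha)$ is immediate from $j*j\subseteq j$ \emph{in the sense of agreeing on a tail*, no — the cleanest correct plan is to cite that $\mathrm{crit}(j*j)>\mathrm{crit}(j)$ and for $\alpha$ below $\mathrm{crit}(j*j)$ we get $(j*j)(\alpha)=\alpha\le j(\alpha)$, while for $\alpha\ge\mathrm{crit}(j*j)\ge\mathrm{crit}(j)$ monotonicity plus the fixed-point-spectrum comparison forces the bound; I will write it up along these lines, and I expect verifying the inequality at $\alpha\ge\mathrm{crit}(j*j)$ — where both sides move — to be the crux, to be resolved by reflecting the true statement ``$\forall\xi\ (j\restriction V_\beta)(\xi)\le j(\xi)$'' (which \emph{is} valid since it just says $\mathrm{id}\le j$ restricted, and survives as $(j*j)\le j(j\restriction)=$ ... ) through $j$.
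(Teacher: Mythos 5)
Your proposal never lands on a working argument; it is a record of several attempts that you correctly diagnose as going in the wrong direction, and it ends without a coherent plan. So there is a genuine gap, and it is worth naming it precisely.

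The step you keep missing is the choice of the auxiliary ordinal. The paper takes $\beta$ to be the \emph{least} ordinal with $j(\beta)>\alpha$. Minimality gives the first-order statement (with parameters $\beta$, $\alpha$, and a truncation $k=j\restriction V_{\gamma}$ for some large $\gamma<\lambda$)
\[
V_{\lambda}\models\ \forall x<\beta,\ k(x)\leq\alpha .
\]
Applying $j$ to this yields
\[
V_{\lambda}\models\ \forall x<j(\beta),\ j(k)(x)\leq j(\alpha),
\]
and $j(k)$ agrees with $j*j$ on the relevant ordinals. The punch line is that by the choice of $\beta$ we have $\alpha<j(\beta)$, so you may take $x=\alpha$ and conclude $(j*j)(\alpha)\leq j(\alpha)$. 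This is precisely the maneuver that fixes the ``direction'' problem you kept running into: you are not reflecting a statement that compares $k(\alpha)$ to $j(\alpha)$ (which, as you noticed, is useless because $j$ is not a parameter inside $V_{\lambda}$), nor a monotonicity statement about $k$; you are reflecting an \emph{upper bound} on the values $k(x)$ for $x$ below a threshold, and that upper bound is the very $\alpha$ you care about. Your attempts to reflect ``$k$ is order-preserving'' or ``$k(\xi)\ge\xi$'' or the identity $j\circ j=(j*j)\circ j$ all push the wrong quantity through $j$ and, as you observed, always produce $(j*j)(\alpha)\leq j(j(\alpha))$ or similar, which is the wrong comparison. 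The case split on $\mathrm{crit}(j)$ that you propose as a fallback is unnecessary once the correct $\beta$ is chosen.
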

\begin{proof}
Suppose that $\alpha<\lambda$. Then let $\beta$ be the least ordinal with $j(\beta)>\alpha$. Then
\[V_{\lambda}\models\forall x<\beta,j(x)\leq\alpha.\]
Therefore, by applying the elementary embedding $j$, we have
\[V_{\lambda}\models\forall x<j(\beta),(j*j)(x)\leq j(\alpha).\]
Therefore, since $\alpha<j(\beta)$, we conclude that
\[(j*j)(\alpha)\leq j(\alpha).\]
\end{proof}
\begin{defn}
A multigenic Laver table $M$ is said to be a \index{nightmare}\emph{nightmare} Laver table if there exists some $\mathbf{x}\in M$ and some
critical point $\alpha$ such that $(\mathbf{x}*\mathbf{x})^{\sharp}(\alpha)>\mathbf{x}^{\sharp}(\alpha)$.
\end{defn}
By Lemma \ref{t42ijt0t24j}, if $M$ is a nightmare Laver table over an alphabet $A$, then there do not exist
elementary embeddings $j_{a}\in\mathcal{E}_{\lambda}$ for $a\in A$ and some $\gamma<\lambda$ such that
$\mathbf{M}(([j_{a}])_{a\in A},\mathcal{E}_{\lambda}/\equiv^{\gamma})=M$.

We chose the term ``nightmare" mainly because the nightmare Laver tables seem to suggest that there may be an inconsistency around the level I3-I0 on the large cardinal hierarchy. If one were to ever produce from a large cardinal axiom rank-into-rank embeddings $(j_{a})_{a\in A}$ and some $\gamma<\lambda$ such that $\mathbf{M}(([j_{a}])_{a\in A},\mathcal{E}_{\lambda}/\equiv^{\gamma})$ were a nightmare Laver table, then such a large cardinal axiom would be inconsistent.

\begin{exam}
The following set is one of the two nightmare Laver tables over the alphabet 0,1 of cardinality 64. The other nightmare Laver table
over the alphabet 0,1 of cardinality 64 is obtained from the following table by replacing each instance of a 0 with an instance of a 1.
There does not exist a nightmare Laver table cardinality less than 64.

\{ 0, 1, 00, 10, 01, 000, 100, 010, 011, 1000, 0000, 0100, 0110, 00000, 00001, 01000, 01100, 000000, 000010, 011000, 010000, 0000000, 0000100,
  0100000, 0100001, 00001000, 01000000, 01000010, 010000000, 010000100, 0100001000, 11, 001, 101, 0001, 1001, 0101, 0111, 10000, 10001, 01001, 01101,
  000001, 000011, 010001, 011001, 0000001, 0000101, 0110000, 0110001, 00000000, 00000001, 00001001, 01000001, 01000011, 000010000, 000010001, 010000001,
  010000101, 0100000000, 0100000001, 0100001001, 01000010000, 01000010001 \}
\end{exam}

% ----------------------------------------------------------------
\subsection{Computing in multigenic Laver tables}
In this section, we shall present efficient algorithms for computing $FM_{n}^{-}(x,y)$ and $\mathbf{x}*_{n}\mathbf{y}$. The limiting factor in these algorithms is computing the application in the classical Laver table $A_{n}$. Since $A_{48}$ is the largest classical Laver table computed \cite{RD95}, we shall only be able to compute $FM_{n}^{-}(x,y)$ and $\mathbf{x}*_{n}\mathbf{y}$ if $n\leq 48$. 

Even though the double recursive algorithm for computing the application operation in a multigenic Laver table $M$ is extremely inefficient, if one can easily compute whether $\mathbf{x}*\mathbf{y}\in\mathrm{Li}(M)$ or not, then by
Theorem \ref{t4h2u9qwnu93qr} and Corollary \ref{t497h32nuofqnu}, a slight improvement of this double recursion algorithm can compute $\mathbf{x}*\mathbf{y}$ in precisely $2(|\mathbf{x}*\mathbf{y}|-|\mathbf{x}|)-1$ steps.

\begin{defn}
Let $M$ be a pre-multigenic Laver table. Suppose that $\mathbf{x}_{1},\ldots,\mathbf{x}_{n}\in M$. Then we say that
$(\mathbf{x}_{1},\ldots,\mathbf{x}_{n})$ is a \index{forward sequence}\emph{forward sequence} if whenever $1\leq m<n$,
\begin{enumerate}
\item $\mathbf{x}_{1}*\ldots*\mathbf{x}_{m}\not\in\mathrm{Li}(M)$, and

\item if $\mathbf{z}$ is a non-empty proper prefix of $\mathbf{x}_{m+1}$ then
$\mathbf{x}_{1}*\ldots*\mathbf{x}_{m}*\mathbf{z}\not\in\mathrm{Li}(M)$.
\end{enumerate}
Let $\mathbf{FS}(M)$ denote the set of all forward sequences in the pre-multigenic Laver table $M$.
\end{defn}
\begin{defn}
Let \index{$\mathrm{Red}(\mathbf{x},\mathbf{y})$}$\mathrm{Red}(\mathbf{x},\mathbf{y})$ denote the shortest nonempty string where for some string $\mathbf{z}$ we have $\mathbf{y}=\mathbf{z}\mathrm{Red}(\mathbf{x},\mathbf{y})$ and either $\mathbf{x}*\mathbf{z}\in\mathrm{Li}(M)$ or $|\mathbf{z}|=0$.
\end{defn}
\begin{defn}
Let \index{$\mathrm{Eval}:\mathbf{FS}(M)\rightarrow\mathbf{FS}(M)$}$\mathrm{Eval}:\mathbf{FS}(M)\rightarrow\mathbf{FS}(M)$ by the mapping where
\begin{enumerate}
\item \[\mathrm{Eval}(\mathbf{x})=\mathbf{x},\]

\item \[\mathrm{Eval}(\mathbf{x},a,\mathbf{x}_{1},\ldots,\mathbf{x}_{n})=(\mathbf{x}a,\mathbf{x}_{1},\ldots,\mathbf{x}_{n}),\]
and
\item \[\mathrm{Eval}(\mathbf{x},\mathbf{y}a,\mathbf{x}_{1},\ldots,\mathbf{x}_{n})
=(\mathbf{x},\mathbf{y},\mathrm{Red}(\mathbf{x}*\mathbf{y},\mathbf{x}a),\mathbf{x}_{1},\ldots,\mathbf{x}_{n})\]
whenever these equations make sense.
\end{enumerate}
\end{defn}
Take note that if
\[\mathrm{Eval}^{k}(\mathbf{x}_{1},\ldots,\mathbf{x}_{m})=(\mathbf{y}_{1},\ldots,\mathbf{y}_{n}),\]
then
\[\mathbf{x}_{1}*\ldots*\mathbf{x}_{m}=\mathbf{y}_{1}*\ldots*\mathbf{y}_{n}.\]
Therefore, if $\mathrm{Eval}^{k}(\mathbf{x}_{1},\ldots,\mathbf{x}_{m})=(\mathbf{x})$, then
$\mathbf{x}=\mathbf{x}_{1}*\ldots*\mathbf{x}_{m}$. One could thus compute the application
$\mathbf{x}*\mathbf{y}$ whenever $\mathbf{x}\not\in Li(M)$ by iteratively evaluating $\mathrm{Eval}^{k}(\mathbf{x},\text{Red}(\mathbf{x},\mathbf{y}))$ until one obtains $\mathrm{Eval}^{k}(\mathbf{x},\text{Red}(\mathbf{x},\mathbf{y}))=(\mathbf{z})$ for some $\mathbf{z}$.

\begin{prop}
Let $(\mathbf{x}_{1},\ldots,\mathbf{x}_{n})\in\mathbf{FS}(M)$, and suppose that
\[m=2\cdot(|\mathbf{x}_{1}*\ldots*\mathbf{x}_{n}|-|\mathbf{x}_{1}|)-(n-1).\]
Then $m$ is the least natural number such that $\mathrm{Eval}^{m}(\mathbf{x}_{1},\ldots,\mathbf{x}_{n})$
has length $1$.
\label{t4h2u9qwnu93qr}
\end{prop}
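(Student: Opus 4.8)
\textbf{Proof proposal for Proposition \ref{t4h2u9qwnu93qr}.}

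The plan is to prove this by induction on the forward sequence $(\mathbf{x}_{1},\ldots,\mathbf{x}_{n})$, following the same descending/ascending double induction that governs multigenic Laver tables, but tracking two quantities simultaneously: the number of $\mathrm{Eval}$ steps needed to collapse the sequence to a single string, and the quantity $2\cdot(|\mathbf{x}_{1}*\ldots*\mathbf{x}_{n}|-|\mathbf{x}_{1}|)-(n-1)$. First I would observe that by the remark preceding the proposition, $\mathrm{Eval}$ preserves the value $\mathbf{x}_{1}*\ldots*\mathbf{x}_{n}$, and that the length-$1$ condition is eventually reached because each application of $\mathrm{Eval}$ to a sequence of length $\geq 2$ either strictly decreases the length (case 2 of the definition) or replaces $(\mathbf{x},\mathbf{y}a,\ldots)$ by $(\mathbf{x},\mathbf{y},\mathrm{Red}(\mathbf{x}*\mathbf{y},\mathbf{x}a),\ldots)$, which makes progress in the double-induction ordering since $\mathbf{x}*\mathbf{y}$ properly extends $\mathbf{x}$ (Proposition \ref{t4uh4aP}(iv)).

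The key steps, in order: (1) Handle the base case $n=1$: here the formula gives $m=2\cdot 0 - 0 = 0$, and indeed $(\mathbf{x}_{1})$ already has length $1$. (2) Handle the case where the second entry is a single letter $a$, i.e.\ the sequence is $(\mathbf{x},a,\mathbf{x}_{1},\ldots,\mathbf{x}_{n})$; then one $\mathrm{Eval}$ step produces $(\mathbf{x}a,\mathbf{x}_{1},\ldots,\mathbf{x}_{n})$, which is again a forward sequence, and $|\mathbf{x}a| = |\mathbf{x}|+1$, so I would check that the count decreases by exactly $1$ on the left-hand side while the right-hand side formula also decreases by exactly $1$: the new value is $2(|\mathbf{x}a\ast\mathbf{x}_1\ast\cdots|-|\mathbf{x}a|)-(n-1) = 2(L - |\mathbf{x}|-1)-(n-1)$ versus the old $2(L-|\mathbf{x}|)-n$, a difference of $1$. (3) Handle the general case $(\mathbf{x},\mathbf{y}a,\mathbf{x}_{1},\ldots,\mathbf{x}_{n})$ with $|\mathbf{y}|\geq 1$: one $\mathrm{Eval}$ step yields $(\mathbf{x},\mathbf{y},\mathrm{Red}(\mathbf{x}\ast\mathbf{y},\mathbf{x}a),\mathbf{x}_1,\ldots,\mathbf{x}_n)$, a longer sequence but one that is smaller in the induction ordering. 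Here I would use Corollary \ref{10W44eh2g2EHO2uo} / Proposition \ref{G0Wuoeh2g2EHO2uo} to control $\mathrm{Red}$: the string $\mathbf{x}a$ decomposes as a prefix killed by $\mathbf{x}\ast\mathbf{y}$ followed by $\mathrm{Red}(\mathbf{x}\ast\mathbf{y},\mathbf{x}a)$, and the total value $\mathbf{x}\ast(\mathbf{y}a\,\mathbf{x}_1\cdots)$ equals $(\mathbf{x}\ast\mathbf{y})\ast(\mathbf{x}a\,(\mathbf{x}_1\ast\cdots))$, so the length bookkeeping is governed by $|\mathbf{x}\ast\mathbf{y}| - |\mathbf{x}|$, which is exactly the increment contributing the factor $2$ in the formula. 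Applying the induction hypothesis to the new (smaller) forward sequence and adding $1$ for the step just taken should reproduce the formula, using that $|\mathbf{x}\ast\mathbf{y}|-|\mathbf{x}|$ counts one "descent" that costs two $\mathrm{Eval}$ steps total (one to grow $\mathbf{x}$, one to peel off a prefix of what follows).

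The main obstacle I anticipate is the arithmetic reconciliation in step (3): one must verify that the "$-(n-1)$" correction term transforms correctly when $n$ increases by the length of $\mathrm{Red}(\mathbf{x}\ast\mathbf{y},\mathbf{x}a)$ expressed as a forward-sequence segment, and that no $\mathrm{Eval}$ steps are "wasted" — i.e.\ that the bound is tight, not just an upper bound. Establishing tightness requires showing that $\mathrm{Eval}$ never collapses two units of progress in a single step when $|\mathbf{y}|\geq 1$, which is where the precise definition of $\mathrm{Red}$ as the \emph{shortest} such suffix is essential: if $\mathrm{Red}$ returned something shorter than forced, the count would be off. I would also need the auxiliary fact (implicit in "whenever these equations make sense") that the output of $\mathrm{Eval}$ on a forward sequence is again a forward sequence, which I would either cite from context or prove as a short lemma using Proposition \ref{t4uh4aP} before the main induction.
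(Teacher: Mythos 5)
Your outline correctly identifies the base case and the two recursive cases of $\mathrm{Eval}$, and you correctly flag that one needs to know the output of $\mathrm{Eval}$ is again a forward sequence. But your handling of the third case contains a real confusion that would derail the proof, and the induction you propose is murkier than it needs to be.

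The clean observation the paper makes — and which your proposal does not cleanly make — is that $\mathrm{Eval}$ preserves both the product $\mathbf{x}_1 * \cdots * \mathbf{x}_n$ \emph{and} the first element $\mathbf{x}_1$, and in the case $\mathrm{Eval}(\mathbf{x}_1, \mathbf{y}a, \mathbf{x}_3, \ldots, \mathbf{x}_n) = (\mathbf{x}_1, \mathbf{y}, \mathrm{Red}(\mathbf{x}_1*\mathbf{y}, \mathbf{x}_1 a), \mathbf{x}_3, \ldots, \mathbf{x}_n)$, the length of the tuple goes from $n$ to $n+1$ — by exactly one. Since $|\mathbf{x}_1 * \cdots|$ and $|\mathbf{x}_1|$ are unchanged and $n$ grows by $1$, the quantity $m = 2(|\mathbf{x}_1*\cdots|-|\mathbf{x}_1|)-(n-1)$ drops by exactly $1$. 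That is the entire content of Case 3. Your write-up instead talks about "$n$ increases by the length of $\mathrm{Red}(\mathbf{x}*\mathbf{y},\mathbf{x}a)$ expressed as a forward-sequence segment" — but $\mathrm{Red}(\cdots)$ is a single string occupying a single slot, so the tuple length increases by exactly $1$, not by $|\mathrm{Red}(\cdots)|$ or anything like it. This misreading of the $\mathrm{Eval}$ recursion is where your arithmetic "should reproduce the formula" step would fail if executed literally.

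The cleaner induction is on $m$ itself, not on the forward sequence via a descending/ascending double induction. One first checks $m = 0 \Leftrightarrow n = 1$ (your base case; the paper also proves $m > 0$ when $n > 1$ by a one-line estimate). Then for $m > 0$, both cases of $\mathrm{Eval}$ produce a forward sequence whose formula value $m_1$ equals $m-1$; applying the induction hypothesis to that sequence and adding one step finishes the argument, including tightness, for free. Your worry that one must show "$\mathrm{Eval}$ never collapses two units of progress in a single step" is thus automatically discharged: the induction hypothesis already asserts that $m_1$ is the \emph{least} count for the one-step-reduced sequence, and $m = m_1 + 1$. The role of $\mathrm{Red}$ being the shortest admissible suffix is not to make the count tight; it is to ensure the resulting tuple still satisfies the forward-sequence condition (that no proper prefix of the new third entry is killed by the running product), which is the lemma you correctly said you would need but located under the wrong heading.
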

\begin{proof}
I first claim that $m=0$ if and only if $n=1$.
If $n=1$, then
\[m=2(|\mathbf{x}_{1}*\ldots*\mathbf{x}_{n}|-|\mathbf{x}_{1}|)-(n-1)=0-0=0.\]
Now if $n>1$, then
\[m=2(|\mathbf{x}_{1}*\ldots*\mathbf{x}_{n}|-|\mathbf{x}_{1}|)-(n-1)\geq 2(n-1)-(n-1)=n-1>0.\]
Therefore, $m=0$ iff $n=1$.

We shall now prove this result by induction on $m$. The case where $m=0$ follows from the above remarks.
Now assume that $m>0$. Then $n>1$, and we perform the inductive step in two cases.

\item[Case 1: $|\mathbf{x}_{2}|=1$.]
We have
\[\mathrm{Eval}(\mathbf{x}_{1},\ldots,\mathbf{x}_{n})=(\mathbf{x}_{1}\mathbf{x}_{2},\mathbf{x}_{3},\ldots,\mathbf{x}_{n}).\]
Let 
\[m_{1}=2(|\mathbf{x}_{1}\mathbf{x}_{2}*\mathbf{x}_{3}*\ldots*\mathbf{x}_{n}|-|\mathbf{x}_{1}\mathbf{x}_{2}|)-((n-1)-1).\]

Then $m_{1}=m-1$, so by the induction hypothesis, $m_{1}$ is the least natural number where
$\mathrm{Eval}^{m_{1}}(\mathbf{x}_{1}\mathbf{x}_{2},\mathbf{x}_{3},\ldots,\mathbf{x}_{n})$ has length $1$. Therefore,
$m=m_{1}+1$ is the least natural number where $\mathrm{Eval}^{m}(\mathbf{x}_{1},\ldots,\mathbf{x}_{n})$ has length 1.

\item[Case 2: $|\mathbf{x}_{2}|>1$.]
In this case, suppose that $\mathbf{x}_{2}=\mathbf{y}a$. Then
\[\mathrm{Eval}(\mathbf{x}_{1},\ldots,\mathbf{x}_{n})
=(\mathbf{x}_{1},\mathbf{y},\mathrm{Red}(\mathbf{x}_{1}*\mathbf{y},\mathbf{x}_{1}a),\mathbf{x}_{3},\ldots,\mathbf{x}_{n}).\]
Suppose now that 
\[m_{1}=2(|\mathbf{x}_{1}*\mathbf{y}*\mathrm{Red}(\mathbf{x}_{1}*\mathbf{y},\mathbf{x}_{1}a)*x_{3}*\ldots*x_{n}|-|x_{1}|)-(n+1-1).\]
Then
\[m_{1}=2(|x_{1}*\ldots*x_{n}|-|x_{1}|)-(n-1)-1=m-1.\]
Therefore, by the induction hypothesis, $m_{1}$ is the least natural number such that
\[\mathrm{Eval}^{m_{1}}(\mathrm{Eval}(\mathbf{x}_{1},\ldots,\mathbf{x}_{n}))=\mathrm{Eval}^{m_{1}}(\mathbf{x}_{1},\mathbf{y},
\mathrm{Red}(\mathbf{x}_{1}*\mathbf{y},\mathbf{x}_{1}a),\mathbf{x}_{3},\ldots,\mathbf{x}_{n})\]
has length $1$. Thus, $m$ is the least natural number such that $\mathrm{Eval}^{m}(\mathbf{x}_{1},\ldots,\mathbf{x}_{n})$ has
length $1$.
\end{proof}
\begin{cor}
Let $\mathbf{x}\in M\setminus\mathrm{Li}(M),\mathbf{y}\in M$ and let $m=2(|\mathbf{x}*\mathbf{y}|-|\mathbf{x}|)-1$.
Then $m$ is the least natural number such that
$\mathrm{Eval}^{m}(\mathbf{x},\mathrm{Red}(\mathbf{x},\mathbf{y}))$ has length $1$. Furthermore, if
$\mathrm{Eval}^{m}(\mathbf{x},\mathrm{Red}(\mathbf{x},\mathbf{y}))=(\mathbf{z})$, then
$\mathbf{x}*\mathbf{y}=\mathbf{z}$.
\label{t497h32nuofqnu}
\end{cor}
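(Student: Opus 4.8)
The plan is to derive Corollary \ref{t497h32nuofqnu} directly from Proposition \ref{t4h2u9qwnu93qr} by feeding it the appropriate forward sequence. First I would observe that since $\mathbf{x}\in M\setminus\mathrm{Li}(M)$, the pair $(\mathbf{x},\mathrm{Red}(\mathbf{x},\mathbf{y}))$ is a forward sequence: condition (1) in the definition of a forward sequence is just $\mathbf{x}\notin\mathrm{Li}(M)$, and condition (2) amounts to the statement that no non-empty proper prefix $\mathbf{z}$ of $\mathrm{Red}(\mathbf{x},\mathbf{y})$ satisfies $\mathbf{x}*\mathbf{z}\in\mathrm{Li}(M)$. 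This second fact is exactly the minimality in the definition of $\mathrm{Red}(\mathbf{x},\mathbf{y})$ as the shortest non-empty suffix of $\mathbf{y}$ with $\mathbf{y}=\mathbf{z}'\,\mathrm{Red}(\mathbf{x},\mathbf{y})$ and either $\mathbf{x}*\mathbf{z}'\in\mathrm{Li}(M)$ or $|\mathbf{z}'|=0$; I would spell out that if some proper prefix of $\mathrm{Red}(\mathbf{x},\mathbf{y})$ were annihilated by $\mathbf{x}$, we could produce a strictly shorter valid suffix, a contradiction.

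Next I would invoke the key identity already recorded after the definition of $\mathrm{Eval}$ (and via Proposition \ref{G0Wuoeh2g2EHO2uo}): writing $\mathbf{y}=\mathbf{z}'\,\mathrm{Red}(\mathbf{x},\mathbf{y})$ with $\mathbf{x}*\mathbf{z}'\in\mathrm{Li}(M)$ (or $\mathbf{z}'$ empty), we get $\mathbf{x}*\mathbf{y}=\mathbf{x}*(\mathbf{z}'\,\mathrm{Red}(\mathbf{x},\mathbf{y}))=\mathbf{x}*\mathrm{Red}(\mathbf{x},\mathbf{y})$ by Proposition \ref{G0Wuoeh2g2EHO2uo}. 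Hence the length $|\mathbf{x}*\mathbf{y}|$ equals $|\mathbf{x}*\mathrm{Red}(\mathbf{x},\mathbf{y})|$, which is the quantity appearing when I apply Proposition \ref{t4h2u9qwnu93qr} to the forward sequence $(\mathbf{x}_{1},\mathbf{x}_{2})=(\mathbf{x},\mathrm{Red}(\mathbf{x},\mathbf{y}))$ with $n=2$. That proposition then tells me the least $m$ with $\mathrm{Eval}^{m}(\mathbf{x},\mathrm{Red}(\mathbf{x},\mathbf{y}))$ of length $1$ is $m=2\cdot(|\mathbf{x}*\mathrm{Red}(\mathbf{x},\mathbf{y})|-|\mathbf{x}|)-(2-1)=2(|\mathbf{x}*\mathbf{y}|-|\mathbf{x}|)-1$, exactly the claimed value.

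Finally, for the last sentence of the corollary I would cite the remark immediately preceding Proposition \ref{t4h2u9qwnu93qr}: for any forward sequence, if $\mathrm{Eval}^{k}(\mathbf{x}_{1},\ldots,\mathbf{x}_{m})=(\mathbf{w})$ then $\mathbf{w}=\mathbf{x}_{1}*\ldots*\mathbf{x}_{m}$. Applying this with the sequence $(\mathbf{x},\mathrm{Red}(\mathbf{x},\mathbf{y}))$ gives $\mathbf{z}=\mathbf{x}*\mathrm{Red}(\mathbf{x},\mathbf{y})=\mathbf{x}*\mathbf{y}$, using the identity from the previous paragraph once more.

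I do not anticipate a genuine obstacle here, since this is a bookkeeping corollary of Proposition \ref{t4h2u9qwnu93qr}. The only place that needs a little care is verifying that $(\mathbf{x},\mathrm{Red}(\mathbf{x},\mathbf{y}))\in\mathbf{FS}(M)$ — in particular that $\mathrm{Red}(\mathbf{x},\mathbf{y})$ really is non-empty (which holds because $\mathbf{y}$ itself is a valid candidate suffix, so the shortest one exists and is non-empty by fiat in the definition) and that no proper prefix of it is killed by $\mathbf{x}$. Everything else is substitution into the formula $m=2(|\mathbf{x}_1*\cdots*\mathbf{x}_n|-|\mathbf{x}_1|)-(n-1)$ at $n=2$ together with the prefix-collapsing identity $\mathbf{x}*\mathbf{y}=\mathbf{x}*\mathrm{Red}(\mathbf{x},\mathbf{y})$.
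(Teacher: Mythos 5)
Your proof is correct and is exactly the intended derivation: the statement is a corollary of Proposition \ref{t4h2u9qwnu93qr} obtained by substituting the two-term forward sequence $(\mathbf{x},\mathrm{Red}(\mathbf{x},\mathbf{y}))$, after observing (i) that the minimality in the definition of $\mathrm{Red}$ together with Proposition \ref{G0Wuoeh2g2EHO2uo} forces this pair to lie in $\mathbf{FS}(M)$, and (ii) that $\mathbf{x}*\mathbf{y}=\mathbf{x}*\mathrm{Red}(\mathbf{x},\mathbf{y})$, so the length count in the formula is unchanged. Nothing is missing.
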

In some cases, one is able to compute $\mathrm{Eval}^{k}$ without having to
apply $k$-times the operation $\mathrm{Eval}$; if $(\mathbf{x},\mathbf{y},\mathbf{x}_{1},\ldots,\mathbf{x}_{n})$ is a forward sequence and
$|\mathbf{x}*\mathbf{y}|=|\mathbf{x}|+|\mathbf{y}|$, then $\mathbf{x}*\mathbf{y}=\mathbf{xy}$. It follows from Theorem
\ref{t4h2u9qwnu93qr} that if $|\mathbf{x}*\mathbf{y}|=|\mathbf{x}|+|\mathbf{y}|$, then
\[\mathrm{Eval}^{k}(\mathbf{x},\mathbf{y},\mathbf{x}_{1},\ldots,\mathbf{x}_{n})
=(\mathbf{xy},\mathbf{x}_{1},\ldots,\mathbf{x}_{n})\]
where $k=2\cdot|\mathbf{y}|-1$. The following algorithm is based on this technique for quickly computing $\mathrm{Eval}^{k}$.

We shall write \index{$(\mathbf{x})_{r}$}$(\mathbf{x})_{r}$ for the unique non-empty string such that
$|(\mathbf{x})_{r}|\leq r$ and $\mathbf{x}=\mathbf{z} (\mathbf{x})_{r}$ for some $\mathbf{z}$ with $|\mathbf{z}|=0\mod r$.

\begin{algo}
\label{r2g466ui8b2we}
The following steps may be used to evaluate the operation $*_{n}$ in $(A^{\leq 2^{n}})^{+}$.
\begin{enumerate}
\item Evaluate $\mathbf{x}*_{n}\mathbf{y}$ to $\mathbf{y}$ whenever $|\mathbf{x}|=2^{n}$.

\item Evaluate $\mathbf{x}*_{n}\mathbf{y}$ to $\mathbf{x}*_{n}\mathrm{Red}(\mathbf{x},\mathbf{y})$ whenever $\mathbf{y}\neq\mathrm{Red}(\mathbf{x},\mathbf{y}),|\mathbf{x}|<2^{n}$.

\item Evaluate $\mathbf{x}*_{n}\mathbf{y}$ to $\mathbf{xy}$ whenever $|\mathbf{x}|*_{n}|\mathbf{y}|=|\mathbf{x}|+|\mathbf{y}|$ and
$|\mathbf{y}|\leq O_{n}(|\mathbf{x}|)$.

\item Evaluate $\mathbf{x}*_{n}\mathbf{y}a$ to $\mathbf{x}*_{n}\mathbf{y}*_{n}(\mathbf{x}a)_{O_{n}(|\mathbf{x}|*_{n}|\mathbf{y}|)}$ whenever the above steps are not applicable.
\end{enumerate}
\end{algo}

Using Algorithm \ref{r2g466ui8b2we} to compute in $(A^{\leq 2^{n}})^{+}$ (where Dougherty's algorithm is used to compute in $A_{n}$),
we have been able to compute $a*_{27}a_{1}\ldots a_{16}$ in 66 seconds on a standard laptop computer in the language GAP even though
$|a*_{27}a_{1}\ldots a_{16}|=2^{27}$.

The following few results are needed to refine Algorithm \ref{r2g466ui8b2we} for computing $*_{n}$ in 
$(A^{\leq 2^{n}})^{+}$.

\begin{thm}
Assume that $0<s\leq n$. Then the mapping $L:A_{n-s}\rightarrow A_{n}$ defined by $L(x)=x\cdot 2^{s}$ is a homomorphism if and only if
$n-s\leq \gcd(2^{n},2s)$. 
\end{thm}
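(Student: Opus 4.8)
The plan is to transfer the question about the homomorphism $L\colon A_{n-s}\to A_n$, $L(x)=x\cdot 2^s$, into the multigenic setting where it becomes a statement about prefixes and periods of words. Recall from the excerpt that $(A^{\leq 2^n})^+ = \mathbf{M}((1)_{a\in A},A_n)$, so powers of a single letter $a$ encode the classical Laver table via $a^x *^M a^y = a^{x*_n y}$. I would first rewrite both classical Laver tables $A_{n-s}$ and $A_n$ as monogenerated subalgebras of these multigenic Laver tables, using the canonical homomorphism $\pi\colon (A^{\leq 2^n})^+ \to (A^{\leq 2^{n-s}})^+$ obtained by iterating the projection $A_{k+1}\to A_k$, $x\mapsto (x)_{2^k}$. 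The map $L(x)=x\cdot 2^s$ is then the statement that multiplying a length-$x$ word by $2^s$ and then applying $*_n$ agrees, after scaling, with doing $*_{n-s}$ first. So the core identity to verify is
\[
a^{x\cdot 2^s} *^{(A^{\leq 2^n})^+} a^{y\cdot 2^s} = a^{(x*_{n-s}y)\cdot 2^s}
\]
for all $x,y\in\{1,\dots,2^{n-s}\}$, or rather its failure/success depending on the arithmetic condition.

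Second, I would analyze when this identity holds by a descending-ascending double induction on $(x,y)$, exactly parallel to the induction used to define $*_n$ and to prove Theorem \ref{m43ti0}. The key structural fact to extract is: $a^{x\cdot 2^s}$, viewed as a word of length $x\cdot 2^s$, behaves under $*^M$ like $a^x$ "blown up by a factor $2^s$" precisely when the period structure of the word $a^{x\cdot 2^s}*^M a^{y\cdot 2^s}$ is compatible with the grouping into blocks of size $2^s$. By the periodicity propositions in the introduction (specifically the proposition that $x*_n y = x*_n z$ iff $y\equiv z \bmod 2^{o_n(x)}$, together with $o_n$ behaving well under the projections $A_{n}\to A_{n-1}$), the obstruction is exactly a congruence condition comparing $o_n(x\cdot 2^s)$ with $s + o_{n-s}(x)$. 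The threshold $\gcd(2^n,2s)$ enters because of how $o_n$ at an argument divisible by $2^s$ relates to the critical-point data: one needs $o_n(2^s)=\min(n, \text{something involving } s)$, and more precisely that the blow-up map respects the operation $\circ_n$ (equivalently, critical points: $\mathrm{crit}(a^{x\cdot2^s})$ should correspond to $\mathrm{crit}(a^x)$ under the scaling), which by Proposition \ref{Rhu4t0HI42h2} and Proposition \ref{t482u9onjkrw}(5) is governed by $\gcd$ with $2^n$.

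Third, I would pin down the exact numerical condition. Using Proposition \ref{t482u9onjkrw} and the structure of $o_n$, $L$ is a homomorphism iff scaling by $2^s$ commutes with $*$, and the commutation is controlled by whether $2^s$ "survives" far enough up the tower, i.e. whether the block size $2^s$ is at most the period $2^{o_n(2^s\cdot x)}$ uniformly. The borderline case is when $x$ ranges over all of $A_{n-s}$, and the worst case is $x=1$ (smallest $o$), giving the requirement $2^s \le 2^{o_n(2^s)}$ combined with the compatibility of $o$-values, which unwinds to $n-s\le \gcd(2^n,2s)$. I would prove the "if" direction by verifying the identity holds in the double induction under this hypothesis, and the "only if" direction by exhibiting, when $n-s>\gcd(2^n,2s)$, a specific pair $(x,y)$ — likely $x=1$ and $y=2^{n-s}$, invoking the function $f$ and the proposition $1*_{f(n)}2^n<2^{f(n)}$ — where $L(1*_{n-s}2^{n-s})\ne L(1)*_n L(2^{n-s})$.

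\textbf{Main obstacle.} The hard part will be the precise bookkeeping in the "only if" direction: showing that when $n-s>\gcd(2^n,2s)$ the homomorphism genuinely fails requires producing a concrete counterexample entry in $A_n$ whose value is not of the form $z\cdot 2^s$, and controlling this seems to require understanding where the "first wrap-around" happens in the blown-up word — essentially a quantitative statement about $o_n$ on arguments divisible by $2^s$ versus $o_{n-s}$, which is where the delicate interaction between the final-matrix combinatorics and the classical periodicity lives. I expect the cleanest route is to not compute individual entries but to use the homomorphism $\pi\colon(A^{\leq 2^n})^+\to(A^{\leq 2^{n-s}})^+$ together with the left-multiplication homomorphism $L_{\mathbf{x}}$ from the propositions on $M\upharpoonright\alpha$, reducing everything to a single critical-point comparison $\mathrm{crit}(a^{2^s}) \leftrightarrow$ the appropriate level, which the $\gcd$ condition exactly encodes.
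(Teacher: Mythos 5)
The paper does not actually prove this theorem: the body of the proof is simply the citation ``See \cite{AD93}'' (Dr\'apal, \emph{Homomorphisms of primitive left distributive groupoids}). So there is no internal argument to compare your plan against, and the corollary and Lemma \ref{h42ur9} that follow it in the text both \emph{assume} the hypothesis $n-s\leq\gcd(2^n,2s)$ and are derived from this theorem, so nothing in the surrounding development is available to you without circularity.

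Concerning your plan on its own merits: the ``if'' direction via a descending--ascending double induction, using the periodicity facts about $o_n$ and the fact that the first $2^{o_n(x)}$ entries of the row of $x$ increase strictly to $2^n$, is a reasonable outline, though the crucial arithmetic step -- showing precisely that $o_n(2^s x)\geq s$ for all $x$ when $n-s\leq\gcd(2^n,2s)$ -- is the entire content and is not sketched. More importantly, your proposed route to the ``only if'' direction is wrong in two respects. First, the function $f$ (least $m$ with $1*_m 2^n<2^m$) has nothing to do with whether $L$ is a homomorphism; it encodes a growth-rate phenomenon and invoking it would make the failure of $L$ contingent on an Ackermann-size $n$, which contradicts the theorem's clean arithmetic threshold. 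Second, your candidate counterexample $(x,y)=(1,2^{n-s})$ does not work: for $n=4$, $s=1$ (where $n-s=3>2=\gcd(16,2)$), one has $L(1)*_4 L(2^{3})=2*_4 16=16=L(8)$, so the pair you name is consistent with $L$ being a homomorphism, while the actual failure already occurs at $(1,1)$ since $L(1*_3 1)=L(2)=4$ but $L(1)*_4 L(1)=2*_4 2=12$. The ``only if'' direction needs a uniform argument that whenever $n-s>\gcd(2^n,2s)$ the first row of $L$-values $(2^s)*_n 2^s$ misses $2\cdot 2^s$, essentially by showing $o_n(2^s)<s$; your sketch does not supply this, and the tool you reach for ($f$) cannot.
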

\begin{proof}
See \cite{AD93}.
\end{proof}
\begin{cor}
Assume that $0<s\leq n,n-s\leq \gcd(2^{n},2s)$. Then $2^{s}x*_{n}y=2^{s}x+y$ whenever $1\leq y\leq 2^{s}$.
\label{ytv9h4n4tr4wurGNHW8E}
\end{cor}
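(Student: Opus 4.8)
The statement to prove is Corollary~\ref{ytv9h4n4tr4wurGNHW8E}: assuming $0 < s \leq n$ and $n - s \leq \gcd(2^n, 2s)$, we have $2^s x *_n y = 2^s x + y$ whenever $1 \leq y \leq 2^s$. The plan is to deduce this directly from the preceding theorem, which asserts that under the same hypothesis the map $L : A_{n-s} \to A_n$ given by $L(x) = 2^s x$ is a homomorphism of LD-systems, together with the elementary structural facts about classical Laver tables recorded earlier in the excerpt (in particular the periodicity propositions giving $x *_m 1 = x+1$ for $x < 2^m$, the formula $2^m *_m y = y$, and the identity $o_m$ governing the period of multiplication).

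First I would reduce to the case $y = 1$ is not enough, so instead I would argue by induction on $y$ within the range $1 \leq y \leq 2^s$, mirroring the defining double induction of the Laver tables. For the base case $y = 1$: if $x < 2^{n-s}$ then $L(x) = 2^s x < 2^n$, so $2^s x *_n 1 = 2^s x + 1$ by the defining property $x *_n 1 = x + 1$ for $x < 2^n$; and if $x = 2^{n-s}$ then $2^s x = 2^n$, so $2^s x *_n 1 = 1 = 2^s x + 1 \bmod 2^n$ — but here I need to be careful about the wraparound convention, and in fact when $x = 2^{n-s}$ the claim $2^s x *_n y = 2^s x + y$ should be read in $A_n$ where $2^n *_n y = y$, consistent with $2^n + y \equiv y$. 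For the inductive step, suppose $1 \leq y < 2^s$ and the claim holds for $y$. Since $L$ is a homomorphism, $L(x) *_n L(y') = L(x *_{n-s} y')$ for all $x, y' \in A_{n-s}$; but more directly I would use that $L$ is a homomorphism to push the computation $2^s x *_n (y+1)$ through the recursion: writing $y + 1$ and using self-distributivity of $*_n$ in the form $2^s x *_n (y *_n 1) = (2^s x *_n y) *_n (2^s x *_n 1)$, and then invoking the inductive hypothesis together with the base case.

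The cleanest route, which I would actually carry out, is to observe that within $A_n$ the left-translation $L_{2^s x, *_n}$ restricted to the sub-interval $\{1, \ldots, 2^s\}$ agrees with the shift $z \mapsto 2^s x + z$; one proves this by showing both sides are determined by the same recursion. Concretely: by the periodicity proposition, $2^s x *_n z = 2^s x *_n z'$ iff $z \equiv z' \bmod 2^{o_n(2^s x)}$, and one needs $o_n(2^s x) \geq s$, equivalently $2^s \mid 2^{o_n(2^s x)}$, so that the values $2^s x *_n z$ for $z = 1, \ldots, 2^s$ are all distinct. That $o_n(2^s x) \geq s$ should follow from the hypothesis $n - s \leq \gcd(2^n, 2s)$ via the relationship between $o_n$ and the homomorphism $L$ (a homomorphism $A_{n-s} \to A_n$ forces the image to have period at least that of the source scaled appropriately). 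Then, since $2^s x *_n 1 = 2^s x + 1$ and multiplication by $2^s x$ increases $z$ by exactly $1$ on an initial segment (by the monotonicity proposition: for $1 \leq y < z \leq 2^{o_n(x)}$ we have $x *_n y < x *_n z$, combined with $2^s x *_n 2^s \leq 2^n$), the values $2^s x *_n 1, \ldots, 2^s x *_n 2^s$ form an increasing run of $2^s$ consecutive integers starting at $2^s x + 1$, hence $2^s x *_n z = 2^s x + z$ for all such $z$.

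The main obstacle I anticipate is the bookkeeping around the wraparound at $2^n$ and pinning down the inequality $o_n(2^s x) \geq s$ from $n - s \leq \gcd(2^n, 2s)$: the cited theorem (via \cite{AD93}) gives that $L$ is a homomorphism, but I must extract from "$L$ is a homomorphism" the precise period bound needed to guarantee the $2^s$ products are distinct and form a consecutive block. If the period bound turns out to be delicate, the fallback is to argue purely by the triple induction used to define $*_n$, checking that the formula $2^s x *_n y = 2^s x + y$ is preserved at each inductive step using self-distributivity and the homomorphism property of $L$ as a black box — this avoids ever mentioning $o_n$ explicitly and reduces everything to the base case $y = 1$ plus the recursion $x *_n (y+1) = (x *_n y) *_n (x \cdot 1)$ written out in the scaled coordinates.
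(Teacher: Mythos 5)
Your strategy — combine strict monotonicity of $y \mapsto 2^s x *_n y$ on the initial block with the period bound $o_n(2^s x) \geq s$ — is the right one, but the proposal contains two linked gaps, and both are filled by a single computation you never isolate. Specialize the homomorphism $L(u) *_n L(v) = L(u *_{n-s} v)$ to $v = 1$: for $1 \leq x < 2^{n-s}$,
\[
2^s x *_n 2^s \;=\; L(x) *_n L(1) \;=\; L(x *_{n-s} 1) \;=\; L(x+1) \;=\; 2^s x + 2^s .
\]
From this the period bound you flag as the "main obstacle" falls out immediately: if $o_n(2^s x) \leq s$, then $O_n(2^s x)$ divides $2^s$, hence $2^s x *_n 2^s = 2^n$, and the displayed identity forces $2^s x + 2^s = 2^n$, i.e.\ $x = 2^{n-s}-1$. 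So $o_n(2^s x) > s$ for every $x < 2^{n-s}-1$, and the residual case $x = 2^{n-s}-1$ is handled by checking $o_n(2^n - 2^s) = s$ directly from the earlier periodicity propositions — apply $o_m(z) = o_{m-1}(z - 2^{m-1})$ repeatedly to reduce to $o_{s+1}(2^{s+1}-2^s) = o_{s+1}(2^s) = s$ — with no appeal to the hypothesis. You anticipated the bound might be "delicate"; it is not, once you write down the $v=1$ instance of the homomorphism.

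The second gap is in the sentence asserting that monotonicity "combined with $2^s x *_n 2^s \leq 2^n$" shows the products "form an increasing run of $2^s$ consecutive integers starting at $2^s x + 1$." Strict monotonicity plus a ceiling of $2^n$ only gives $2^s$ distinct increasing values lying in $\{2^s x + 1, \ldots, 2^n\}$, a window of size $2^n - 2^s x$ that in general far exceeds $2^s$; consecutiveness does not follow from this alone. What pins down the run is knowing the top endpoint $2^s x *_n 2^s = 2^s x + 2^s$ — again the displayed identity. Together with the bottom endpoint $2^s x *_n 1 = 2^s x + 1$, the $2^s$ strictly increasing values are trapped among the $2^s$ integers from $2^s x + 1$ to $2^s x + 2^s$, forcing $2^s x *_n y = 2^s x + y$ for all $1 \leq y \leq 2^s$. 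Your remark about the wraparound at $x = 2^{n-s}$ is fine; that case reads $2^n *_n y = y$ modulo $2^n$.
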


\begin{lem}
Suppose that 
\[0<s\leq n,n-s\leq \gcd(2^{n},2s)\]
and
\[|\mathbf{xy}|<2^{n},|\mathbf{x}|\neq 0,0<|\mathbf{y}|<2^{s},|\mathbf{x}|=0\mod 2^{s}.\]
Then \[\mathbf{xy}=\mathbf{x}\circ_{n}\mathbf{y}.\]

\label{53rguh9pg34hyu89p4t2}
\end{lem}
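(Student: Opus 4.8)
Write $M=(A^{\leq 2^{n}})^{+}$, so $M=\mathbf{M}((1)_{a\in A},A_{n})$, and let $\phi\colon M\to A_{n}$ be the canonical homomorphism of Theorem \ref{enbfwu92}, sending $a_{1}\dots a_{r}\mapsto\underbrace{1*_{n}\dots*_{n}1}_{r}$. Since $x*_{n}1=x+1$ for $x<2^{n}$, one has $\phi(\mathbf{z})=|\mathbf{z}|$ for every $\mathbf{z}\in M$, and $\mathrm{Li}(M)$ consists exactly of the strings of length $2^{n}$; in particular $\mathbf{w}\in M\setminus\mathrm{Li}(M)=L$ iff $|\mathbf{w}|<2^{n}$. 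Abbreviate $x=|\mathbf{x}|$, $y=|\mathbf{y}|$; by hypothesis $x\equiv 0\ (\mathrm{mod}\ 2^{s})$ and $x\geq 1$, so $x=2^{s}x'$ with $x'\geq 1$, and $0<y<2^{s}$, $x+y<2^{n}$. Since $M$ is a multigenic Laver table it is permutative, so $\mathbf{x}\circ\mathbf{y}$ is defined (as $\mathbf{x}\notin\mathrm{Li}(M)$) and equals $t_{N+1}(\mathbf{x},\mathbf{y})$ for the least $N$ with $t_{N}(\mathbf{x},\mathbf{y})\in\mathrm{Li}(M)$.

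\emph{Step 1: $|\mathbf{x}\circ\mathbf{y}|=x+y<2^{n}$, hence $\mathbf{x}\circ\mathbf{y}\in L$.} Applying $\phi$ to the Fibonacci terms and using $\phi(\mathbf{z})=|\mathbf{z}|$, we get $\phi(t_{k}(\mathbf{x},\mathbf{y}))=t_{k}(x,y)$ in $A_{n}$, and $t_{k}(\mathbf{x},\mathbf{y})\in\mathrm{Li}(M)$ iff $t_{k}(x,y)=2^{n}=1_{A_{n}}$. Thus $N$ is also the least index with $t_{N}(x,y)\in\mathrm{Li}(A_{n})$, and since $A_{n}$ is reduced, $t_{N+1}(x,y)=x\circ_{n}y$. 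Now $y+1\leq 2^{s}$, so Corollary \ref{ytv9h4n4tr4wurGNHW8E} (with $2^{s}x'$ in the role of $2^{s}x$) gives $x*_{n}(y+1)=x+y+1$, whence $x\circ_{n}y=x*_{n}(y+1)-1=x+y$. Therefore $|\mathbf{x}\circ\mathbf{y}|=\phi(\mathbf{x}\circ\mathbf{y})=t_{N+1}(x,y)=x+y<2^{n}$.

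\emph{Step 2: $\mathbf{x}*(\mathbf{y}b)=\mathbf{x}\mathbf{y}b$ for every letter $b\in A$.} It suffices to prove, by induction on $|\mathbf{w}|$, that $\mathbf{x}*\mathbf{w}=\mathbf{x}\mathbf{w}$ for every prefix $\mathbf{w}$ of $\mathbf{y}b$. For every such $\mathbf{w}$ we have $1\leq|\mathbf{w}|\leq y+1\leq 2^{s}$, so Corollary \ref{ytv9h4n4tr4wurGNHW8E} gives $|\mathbf{x}*\mathbf{w}|=\phi(\mathbf{x})*_{n}\phi(\mathbf{w})=x+|\mathbf{w}|<2^{n}$; thus $\mathbf{x}*\mathbf{w}\in L$. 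When $|\mathbf{w}|=1$, rule (2) in the definition of $*^{M}$ gives $\mathbf{x}*\mathbf{w}=\mathbf{x}\mathbf{w}$ (using $\mathbf{x}\in L$). When $\mathbf{w}=\mathbf{w}'c$ with $|\mathbf{w}'|\geq 1$, rule (3) gives $\mathbf{x}*\mathbf{w}=(\mathbf{x}*\mathbf{w}')*(\mathbf{x}c)$, and the induction hypothesis gives $\mathbf{x}*\mathbf{w}'=\mathbf{x}\mathbf{w}'\in L$. By Proposition \ref{t4uh4aP}, $\mathbf{x}*\mathbf{w}$ has $\mathbf{x}*\mathbf{w}'$ as a proper initial segment and has the same last letter as $\mathbf{w}=\mathbf{w}'c$, namely $c$; since $|\mathbf{x}*\mathbf{w}|=x+|\mathbf{w}'|+1=|\mathbf{x}\mathbf{w}'|+1$, the string $\mathbf{x}*\mathbf{w}$ is $\mathbf{x}\mathbf{w}'$ with exactly one further letter, which is $c$. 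Hence $\mathbf{x}*\mathbf{w}=\mathbf{x}\mathbf{w}'c=\mathbf{x}\mathbf{w}$, completing the induction. (This is just the ``lengths add $\Rightarrow$ concatenation'' principle noted before Algorithm \ref{r2g466ui8b2we}, applied to the forward sequence $(\mathbf{x},\mathbf{y}b)$.)

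\emph{Step 3: conclusion.} Fix any $b\in A$. Since $\mathbf{y}\in L$, rule (2) gives $\mathbf{y}*b=\mathbf{y}b$, and the identity $x*(y*z)=t_{N+1}(x,y)*(t_{N}(x,y)*z)$ valid in any LD-system together with $t_{N}(\mathbf{x},\mathbf{y})\in\mathrm{Li}(M)$ yields
\[\mathbf{x}*(\mathbf{y}b)=\mathbf{x}*(\mathbf{y}*b)=t_{N+1}(\mathbf{x},\mathbf{y})*\big(t_{N}(\mathbf{x},\mathbf{y})*b\big)=(\mathbf{x}\circ\mathbf{y})*b .\]
By Step 1, $\mathbf{x}\circ\mathbf{y}\in L$, so rule (2) gives $(\mathbf{x}\circ\mathbf{y})*b=(\mathbf{x}\circ\mathbf{y})b$. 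Combining with Step 2, $(\mathbf{x}\circ\mathbf{y})b=\mathbf{x}*(\mathbf{y}b)=\mathbf{x}\mathbf{y}b$; deleting the final letter gives $\mathbf{x}\circ\mathbf{y}=\mathbf{x}\mathbf{y}$, as desired.

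\textbf{Main obstacle.} The substantive content is concentrated in Steps 1 and 2, both of which hinge on keeping the range conditions of Corollary \ref{ytv9h4n4tr4wurGNHW8E} satisfied ($1\leq\text{second argument}\leq 2^{s}$, all relevant lengths $<2^{n}$), which is exactly what the divisibility hypothesis $|\mathbf{x}|\equiv 0\ (\mathrm{mod}\ 2^{s})$ and the bound $|\mathbf{xy}|<2^{n}$ are there to guarantee. The one genuinely delicate point is transferring the computation of $\circ$ between $M$ and $A_{n}$ despite $M$ being non-reduced: this is handled by observing that the least Fibonacci index landing in $\mathrm{Li}$ is the same on both sides and invoking reducedness of $A_{n}$ only, as in Step 1.
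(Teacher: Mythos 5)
Your proposal is correct and follows essentially the same route as the paper: you invoke Corollary \ref{ytv9h4n4tr4wurGNHW8E} to show the lengths add, deduce $(\mathbf{x}\circ_{n}\mathbf{y})a=\mathbf{x}*_{n}\mathbf{y}a=\mathbf{xy}a$, and strip the last letter. Your version simply makes explicit the two facts the paper leaves implicit (that $\mathbf{x}\circ\mathbf{y}\in M\setminus\mathrm{Li}(M)$, and the ``lengths add $\Rightarrow$ concatenation'' induction), which is fine, though the step $\mathbf{x}*\mathbf{w}\in L$ is stated slightly too broadly — it can fail when $|\mathbf{w}|=y+1$ and $x+y+1=2^{n}$, but you only ever use it for proper prefixes of $\mathbf{y}b$, so no harm is done.
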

\begin{proof}
By Corollary \ref{ytv9h4n4tr4wurGNHW8E}, we have $|\mathbf{x}|*_{n}|\mathbf{y}a|=|\mathbf{x}|+|\mathbf{y}a|$ and
$|\mathbf{y}a|\leq O_{n}(|\mathbf{x}|)$. Therefore,
\[(\mathbf{x}\circ_{n}\mathbf{y})a=\mathbf{x}*_{n}\mathbf{y}a=\mathbf{x}\mathbf{y}a.\]
We conclude that 
\[\mathbf{x}\circ_{n}\mathbf{y}=\mathbf{x}\mathbf{y}.\]
\end{proof}
\begin{lem}
Suppose that $0<s\leq n,n-s\leq \gcd(2^{n},2s)$. Define a mapping 
\[\iota:((A^{2^{s}})^{\leq 2^{n-s}})^{+}\rightarrow(A^{\leq 2^{n}})^{+}\] by letting
\[\iota(\langle a_{1,1},\ldots,a_{1,2^{s}}\rangle\ldots\langle a_{r,1},\ldots,a_{r,2^{s}}\rangle)
=a_{1,1}\ldots a_{1,2^{s}}\ldots a_{r,1}\ldots a_{r,2^{s}}.\]
Then $\iota$ is a homomorphism between multigenic Laver tables.
\label{h42ur9}
\end{lem}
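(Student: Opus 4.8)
\textbf{Proof proposal for Lemma \ref{h42ur9}.}

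The plan is to show that $\iota$ preserves the self-distributive operation by exploiting the block structure of $(A^{\leq 2^n})^+$ that is forced by the hypothesis $n-s\leq\gcd(2^n,2s)$. First I would record the basic book-keeping: if $\mathbf{w}=\langle a_{1,1},\dots,a_{1,2^s}\rangle\dots\langle a_{r,1},\dots,a_{r,2^s}\rangle$ is a word in $((A^{2^s})^{\leq 2^{n-s}})^+$, then $|\iota(\mathbf{w})|=2^s\cdot|\mathbf{w}|$, and in particular $\iota(\mathbf{w})\in\mathrm{Li}((A^{\leq 2^n})^+)$ (that is, $|\iota(\mathbf{w})|=2^n$) exactly when $|\mathbf{w}|=2^{n-s}$, i.e. exactly when $\mathbf{w}\in\mathrm{Li}(((A^{2^s})^{\leq 2^{n-s}})^+)$. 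Thus $\iota$ respects left-identities. Moreover any prefix of $\iota(\mathbf{w})$ of length divisible by $2^s$ is itself of the form $\iota(\mathbf{w}')$ for a prefix $\mathbf{w}'$ of $\mathbf{w}$, and $\iota$ is clearly injective and prefix-preserving; these structural facts are what makes the induction go through.

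Next, the heart of the argument is to prove $\iota(\mathbf{u}*\mathbf{v})=\iota(\mathbf{u})*_n\iota(\mathbf{v})$ by the usual double induction on $(\mathbf{u},\mathbf{v})$: descending on $\mathbf{u}$ with respect to $\preceq$, and for each $\mathbf{u}$ ascending on $\mathbf{v}$. The base cases ($\mathbf{u}\in\mathrm{Li}$, or $\mathbf{v}$ a single letter $\langle b_1,\dots,b_{2^s}\rangle$) reduce to the previous paragraph together with Corollary \ref{ytv9h4n4tr4wurGNHW8E}: when $\mathbf{u}\notin\mathrm{Li}$ and $\mathbf{v}=\langle b_1,\dots,b_{2^s}\rangle$ is one block, $\iota(\mathbf{u})*_n\iota(\mathbf{v})=\iota(\mathbf{u})b_1\dots b_{2^s}=\iota(\mathbf{u}\mathbf{v})=\iota(\mathbf{u}*\mathbf{v})$, where the middle equality is exactly Lemma \ref{53rguh9pg34hyu89p4t2} applied block by block (or a direct appeal to Corollary \ref{ytv9h4n4tr4wurGNHW8E} since $|\iota(\mathbf{u})|\equiv 0\bmod 2^s$ and $2^s\leq O_n(|\iota(\mathbf{u})|)$ under the standing hypothesis). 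For the inductive step, write $\mathbf{v}=\mathbf{z}\,\langle b_1,\dots,b_{2^s}\rangle$ and use $\mathbf{u}*\mathbf{v}=(\mathbf{u}*\mathbf{z})*\mathbf{u}\langle b_1,\dots,b_{2^s}\rangle$ in the source algebra; apply $\iota$, use the induction hypothesis (first to $\mathbf{u}*\mathbf{z}$, then to $(\mathbf{u}*\mathbf{z})*\mathbf{u}\langle b_1,\dots,b_{2^s}\rangle$, noting $\mathbf{u}*\mathbf{z}$ is a proper $\preceq$-extension of $\mathbf{u}$ so the descending induction applies), and then rebuild $\iota(\mathbf{u})*_n\iota(\mathbf{v})$ on the target side by the same block-concatenation identity. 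The key point is that $\iota(\mathbf{u}\langle b_1,\dots,b_{2^s}\rangle)=\iota(\mathbf{u})b_1\dots b_{2^s}$, and that $\iota(\mathbf{u}*\mathbf{z})\in(A^{\leq 2^n})^+$ has length divisible by $2^s$, so all the intermediate words stay in the image of $\iota$ and the length hypotheses of Corollary \ref{ytv9h4n4tr4wurGNHW8E} and Lemma \ref{53rguh9pg34hyu89p4t2} remain satisfied throughout.

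The main obstacle I anticipate is verifying that the hypothesis $n-s\leq\gcd(2^n,2s)$ is genuinely used in every intermediate step and not just at the leaves — specifically, that at each stage of the recursion the relevant prefix $\iota(\mathbf{u}*\mathbf{z})$ really does have length a multiple of $2^s$ strictly less than $2^n$ (so that appending a full block of $2^s$ letters is governed by Corollary \ref{ytv9h4n4tr4wurGNHW8E}) and that $O_n$ of that length is at least $2^s$. This should follow because lengths of elements of $\mathrm{im}(\iota)$ are by construction multiples of $2^s$, and because the periodicity bound $o_{n}$ forces $O_n(k)\geq 2^s$ whenever $k\equiv 0\bmod 2^s$ under the standing divisibility condition; but making this precise, and checking that $\mathbf{u}*\mathbf{z}\notin\mathrm{Li}$ in the cases where we need to keep recursing, is the delicate book-keeping that the proof must get right. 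Once that is in place, the computation is routine, and $\iota$ is a homomorphism of multigenic Laver tables.
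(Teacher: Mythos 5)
Your plan is correct in outline, but it takes a genuinely different route from the paper. The paper does not run a double induction on $(\mathbf{u},\mathbf{v})$ at all. Instead it reduces the lemma to a single application of the universal property of pre-multigenic Laver tables, Theorem \ref{enbfwu92}: set $M=((A^{2^s})^{\leq 2^{n-s}})^+$, $X=(A^{\leq 2^n})^+$, and $x_{\langle a_1,\dots,a_{2^s}\rangle}=a_1\dots a_{2^s}$. The only things to check are (i) that the block-concatenation identity $\mathbf{x}*_n\mathbf{y}=\mathbf{xy}$ holds when $|\mathbf{x}|\equiv 0\bmod 2^s$, $|\mathbf{x}|<2^n$, $|\mathbf{y}|=2^s$ (this is Corollary \ref{ytv9h4n4tr4wurGNHW8E}), so that the induced $\phi$ of Theorem \ref{enbfwu92} really equals $\iota$; and (ii) the left-identity condition of Theorem \ref{enbfwu92}, namely $x_{a_1}*\dots*x_{a_{2^{n-s}}}*x_a=x_a$, which again drops out of the concatenation identity. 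Appealing to the general theorem saves you from re-proving the inductive homomorphism lemma in this special case, which is essentially what your double induction does from scratch.

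Within your own route, two points need tightening. First, in the base case the equality that actually requires Corollary \ref{ytv9h4n4tr4wurGNHW8E} is $\iota(\mathbf{u})*_n\iota(\mathbf{v})=\iota(\mathbf{u})b_1\dots b_{2^s}$, not the one you flag; the "middle" equality $\iota(\mathbf{u})b_1\dots b_{2^s}=\iota(\mathbf{u}\mathbf{v})$ is just the definition of $\iota$, and the last one is the definition of $*$ in the source on a single letter. Second, and more substantively, your "rebuild" step — getting from $\bigl(\iota(\mathbf{u})*_n\iota(\mathbf{z})\bigr)*_n\bigl(\iota(\mathbf{u})b_1\dots b_{2^s}\bigr)$ to $\iota(\mathbf{u})*_n\bigl(\iota(\mathbf{z})b_1\dots b_{2^s}\bigr)$ — is not "the same block-concatenation identity". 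The target algebra's defining recursion appends one letter at a time, not $2^s$ at a time, so to cross the block boundary you must use self-distributivity of $*_n$: write $\iota(\mathbf{z})b_1\dots b_{2^s}=\iota(\mathbf{z})*_n(b_1\dots b_{2^s})$ and $\iota(\mathbf{u})b_1\dots b_{2^s}=\iota(\mathbf{u})*_n(b_1\dots b_{2^s})$ (both by the concatenation identity, valid because $\mathbf{u},\mathbf{z}\notin\mathrm{Li}$ in the source so their $\iota$-images have length a multiple of $2^s$ strictly below $2^n$), and then apply $x*(y*z)=(x*y)*(x*z)$. This self-distributivity is available because $(A^{\leq 2^n})^+$ is already known to be a multigenic Laver table, but it needs to be invoked explicitly. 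Once that is made explicit, your bookkeeping worry about lengths resolves easily — every word appearing is of the form $\iota(\cdot)$ of a non-left-identity, hence of length a multiple of $2^s$ below $2^n$ — and the argument closes.
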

\begin{proof}
Take note that if $|\mathbf{x}|$ is a multiple of $2^{s}$ with $|\mathbf{x}|<2^{n}$, and $|\mathbf{y}|=2^{s}$, then since
$|\mathbf{x}|*_{n}|\mathbf{y}|=|\mathbf{x}|+|\mathbf{y}|$ and $|\mathbf{y}|\leq O_{n}(|\mathbf{x}|)$ we have
$\mathbf{x}*_{n}\mathbf{y}=\mathbf{x}\mathbf{y}$.

Therefore, by induction, we have
\[(a_{1,1}\ldots a_{2^{s},1})*_{n}\ldots*_{n}(a_{1,r}\ldots a_{2^{s},r})=(a_{1,1}\ldots a_{2^{s},1})\ldots(a_{1,r}\ldots a_{2^{s},r})\]
whenever
$1\leq r\leq 2^{n-s}$.
Therefore, we have
\[(a_{1,1}\ldots a_{2^{s},1})*_{n}\ldots*_{n}(a_{1,2^{n-s}}\ldots a_{2^{s},2^{n-s}})*_{n}(a_{1}\ldots a_{2^{s}})\]
\[=(a_{1,1}\ldots a_{2^{s},1})\ldots(a_{1,2^{n-s}}\ldots a_{2^{s},2^{n-s}})*_{n}(a_{1}\ldots a_{2^{s}})\]
\[=a_{1}\ldots a_{2^{s}}.\]

By Theorem \ref{enbfwu92}, we may therefore conclude that the mapping $\iota$ is a homomorphism between LD-systems.
\end{proof}
\begin{lem}
Suppose that $0<s\leq n,n-s\leq \gcd(2^{n},2s)$. Suppose furthermore that $u,v\in\{1,\ldots,2^{n-s}\},w\in\{1,\ldots,2^{s}\}$, and
\[\mathbf{x}=(x_{1,1}\ldots x_{1,2^{s}})\ldots(x_{u,1}\ldots x_{u,2^{s}}),\]
and
\[\mathbf{y}=(y_{1,1}\ldots y_{1,2^{s}})\ldots(y_{v-1,1}\ldots y_{v-1,2^{s}})(y_{v,1}\ldots y_{v,w}),\]
and
\[\langle x_{1,1},\ldots,x_{1,2^{s}}\rangle\ldots\langle x_{u,1},\ldots,x_{u,2^{s}}\rangle\]
\[*_{n-s}\langle y_{1,1},\ldots,y_{1,2^{s}}\rangle\ldots\langle y_{v-1,1},\ldots,y_{v-1,2^{s}}\rangle\langle y_{v,1},\ldots,y_{v,w}\rangle\]
\[=\langle z_{1,1},\ldots,z_{1,2^{s}}\rangle\ldots\langle z_{p-1,1},\ldots,z_{p-1,2^{s}}\rangle\langle z_{p,1},\ldots,z_{p,w}\rangle.\]

Then
\[\mathbf{x}*_{n}\mathbf{y}=(z_{1,1}\ldots z_{1,2^{s}})\ldots(z_{p-1,1}\ldots z_{p-1,2^{s}})(z_{p,1}\ldots z_{p,w}).\]
\end{lem}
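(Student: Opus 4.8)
The plan is to reduce the claim to the homomorphism property of the map $\iota$ from Lemma \ref{h42ur9} together with the description of $\circ_n$ in terms of the self-distributive operation and the ``block'' multiplication facts coming from Corollary \ref{ytv9h4n4tr4wurGNHW8E}. First I would observe that, since $0<s\leq n$ and $n-s\leq\gcd(2^n,2s)$, the map
\[
\iota:((A^{2^{s}})^{\leq 2^{n-s}})^{+}\rightarrow(A^{\leq 2^{n}})^{+}
\]
is a homomorphism between multigenic Laver tables. The strings $\mathbf{x}$ and $\mathbf{y}$ in the statement are almost in the image of $\iota$: $\mathbf{x}=\iota(X)$ where $X=\langle x_{1,1},\ldots,x_{1,2^{s}}\rangle\ldots\langle x_{u,1},\ldots,x_{u,2^{s}}\rangle$, while $\mathbf{y}$ has a ``ragged'' final block $(y_{v,1}\ldots y_{v,w})$ of length $w\leq 2^{s}$. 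So the main bookkeeping is to handle that last partial block.

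The key steps, in order, would be: (1) Write $\mathbf{y}=\mathbf{y}'\,(y_{v,1}\ldots y_{v,w})$ where $\mathbf{y}'=\iota(Y')$ is the full-block prefix and the analogous decomposition $Y=Y'\langle y_{v,1},\ldots,y_{v,w}\rangle$ holds upstairs. (2) Using self-distributivity in the form $\mathbf{x}*(\mathbf{y}'\mathbf{c})=(\mathbf{x}\circ\mathbf{y}')*\mathbf{x}\mathbf{c}$ and similarly upstairs, reduce to computing $\mathbf{x}\circ\mathbf{y}'$ and then appending the image of the last block. Here I would invoke the already-established fact (from the proof of Lemma \ref{h42ur9}, via Corollary \ref{ytv9h4n4tr4wurGNHW8E}) that whenever $|\mathbf{z}|$ is a multiple of $2^{s}$, $|\mathbf{z}|<2^{n}$, and $|\mathbf{b}|=2^{s}$, one has $\mathbf{z}*_{n}\mathbf{b}=\mathbf{z}\mathbf{b}$; iterating this gives that $\iota$ is compatible with the $\circ$ operation on full-block words, i.e. $\iota(X)\circ_n\iota(Y')=\iota(X\circ_{n-s}Y')$. (3) For the final partial block, apply $\iota$-compatibility together with Lemma \ref{53rguh9pg34hyu89p4t2}, whose hypotheses ($|\mathbf{z}|\neq 0$, $|\mathbf{z}|\equiv 0 \bmod 2^{s}$, $0<w<2^{s}$, total length below $2^n$) match exactly the situation at hand, to conclude that the partial block is simply concatenated: $(\mathbf{x}\circ_n\mathbf{y}')$ followed by $(z_{p,1}\ldots z_{p,w})$ where the $z$'s are read off from the computation in $((A^{2^{s}})^{\leq 2^{n-s}})^{+}$. (4) Translate the result back through $\iota$ to obtain
\[
\mathbf{x}*_{n}\mathbf{y}=(z_{1,1}\ldots z_{1,2^{s}})\ldots(z_{p-1,1}\ldots z_{p-1,2^{s}})(z_{p,1}\ldots z_{p,w}),
\]
reading off the blocks from $X*_{n-s}Y$ as given.

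The main obstacle I anticipate is not the full-block case — that is essentially Lemma \ref{h42ur9} — but rather rigorously handling the ragged last block. The subtlety is that $(y_{v,1}\ldots y_{v,w})$ with $w<2^{s}$ is no longer a genuine $\iota$-image, so I cannot directly apply the homomorphism property; instead I must peel off this block by hand, argue via $\circ$ that the prefix evaluates correctly, and then show the short block attaches without triggering any further reduction. This requires checking that $|\mathbf{x}\circ_n\mathbf{y}'|$ is a multiple of $2^{s}$ and strictly less than $2^n$ (so that the length condition $|(\mathbf{x}\circ_n\mathbf{y}')|*_n w = |(\mathbf{x}\circ_n\mathbf{y}')| + w$ and $w\leq O_n(|\mathbf{x}\circ_n\mathbf{y}'|)$ hold), which follows from the block structure of $\iota$-images and the hypothesis $|\mathbf{x}\mathbf{y}|<2^n$ — but I would want to be careful that the length of $\mathbf{x}\circ_n\mathbf{y}'$ (rather than just $\mathbf{x}\mathbf{y}'$) stays in range, using that $\circ_n$ does not increase length on words whose product avoids $\mathrm{Li}$. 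Once those length bookkeeping facts are in place, steps (2)–(4) are routine, and the whole argument is a transfer of the $(n-s)$-level computation up to level $n$ via $\iota$.
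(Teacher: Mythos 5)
Your overall plan is sound and shares the core idea with the paper's proof — namely that the bulk-block computation should be transferred through $\iota$ using its compatibility with $\circ$ — but you extend to general $w$ in a more direct way than the paper does, and there is a formula slip worth flagging.

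The paper proves the $w=2^{s}$ case directly from Lemma~\ref{h42ur9}, proves the $w=1$ case by exactly the $\circ$-compatibility argument you describe (namely $\mathbf{x}*_{n}\mathbf{y}'y_{v,1}=(\mathbf{x}\circ_{n}\mathbf{y}')y_{v,1}=\iota(X\circ_{n-s}Y')y_{v,1}$), and then handles intermediate $w$ by a squeeze: since $o_{n}(|\mathbf{x}|)\geq s$, the results for $w$ and $w-1$ differ by a proper extension whose appended letter is $y_{v,w}$, so all intermediate answers are pinned between the $w=1$ and $w=2^{s}$ endpoints. You instead propose to compute $(\mathbf{x}\circ_{n}\mathbf{y}')*_{n}\mathbf{c}$ directly and show it is the concatenation $(\mathbf{x}\circ_{n}\mathbf{y}')\mathbf{c}$. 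That also works, and is arguably cleaner; it requires the length bookkeeping you flag ($|\mathbf{x}\circ_{n}\mathbf{y}'|$ is a multiple of $2^{s}$, strictly below $2^{n}$ when $X\circ_{n-s}Y'\not\in\mathrm{Li}$), together with Corollary~\ref{ytv9h4n4tr4wurGNHW8E} to conclude $\mathbf{z}*_{n}\mathbf{c}=\mathbf{z}\mathbf{c}$ for $|\mathbf{z}|\equiv 0\bmod 2^{s}$, $|\mathbf{z}|<2^{n}$, $|\mathbf{c}|\leq 2^{s}$.

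Two corrections to watch. First, the identity you write, $\mathbf{x}*(\mathbf{y}'\mathbf{c})=(\mathbf{x}\circ\mathbf{y}')*\mathbf{x}\mathbf{c}$, is wrong. The correct chain is $\mathbf{x}*_{n}\mathbf{y}'\mathbf{c}=\mathbf{x}*_{n}(\mathbf{y}'*_{n}\mathbf{c})=(\mathbf{x}\circ_{n}\mathbf{y}')*_{n}\mathbf{c}$, where the first equality needs $\mathbf{y}'*_{n}\mathbf{c}=\mathbf{y}'\mathbf{c}$ (Corollary~\ref{ytv9h4n4tr4wurGNHW8E} applied to $\mathbf{y}'$, valid since $|\mathbf{y}'|=(v-1)2^{s}$) and the second is the LD-monoid identity $(x\circ y)*z=x*(y*z)$. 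Second, the fact that $\iota$ commutes with $\circ_{n}$ is not literally a restatement of Lemma~\ref{h42ur9}, which only asserts that $\iota$ is a $*$-homomorphism; it does follow, but by the additional observation that $\iota$ maps $\mathrm{Li}$ to $\mathrm{Li}$ and that $\circ$ is computed via the Fibonacci terms $t_{N}$, so a $*$-homomorphism preserving $\mathrm{Li}$ automatically commutes with $\circ$. (One cannot simply cite the proposition that LD-system homomorphisms between permutative LD-monoids are automatically $\circ$-homomorphisms, since the multigenic Laver tables here are not reduced.) With those two points tightened, your route goes through and is a valid alternative to the paper's squeeze argument.
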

\begin{proof}
If $w=2^{s}$, the the result follows directly from Proposition \ref{h42ur9}.

If $w=1$, then by Proposition \ref{h42ur9}, we have
\[\mathbf{x}*_{n}\mathbf{y}=(\mathbf{x}\circ_{n}(y_{1,1}\ldots y_{1,2^{s}})\ldots(y_{v-1,1}\ldots y_{v-1,2^{s}}))y_{v,1}\]

\[=\iota(\langle x_{1,1},\ldots,x_{1,2^{s}}\rangle\ldots\langle x_{u,1},\ldots,x_{u,2^{s}}\rangle\circ_{n-s}
\langle y_{1,1},\ldots,y_{1,2^{s}}\rangle\ldots\langle y_{v-1,1},\ldots,y_{v-1,2^{s}}\rangle)z_{p,1}\]

\[=\iota(\langle z_{1,1},\ldots,z_{1,2^{s}}\rangle\ldots\langle z_{p-1,1},\ldots,z_{p-1,2^{s}}\rangle)z_{p,1}\]

\[=(z_{1,1}\ldots z_{1,2^{s}})\ldots(z_{p-1,1}\ldots z_{p-1,2^{s}})z_{p,1}.\]

We shall now prove the general case of this result. Since $o_{n}(|\mathbf{x}|)\geq s$, we conclude that
\[\mathbf{x}*_{n}(y_{1,1}\ldots y_{1,2^{s}})\ldots(y_{v-1,1}\ldots y_{v-1,2^{s}})(y_{v,1}\ldots y_{v,w})\]
is a proper extension of the string
\[\mathbf{x}*_{n}(y_{1,1}\ldots y_{1,2^{s}})\ldots(y_{v-1,1}\ldots y_{v-1,2^{s}})(y_{v,1}\ldots y_{v,w-1})\]
whenever $1<w\leq 2^{s}$, and the last symbol of
\[\mathbf{x}*_{n}(y_{1,1}\ldots y_{1,2^{s}})\ldots(y_{v-1,1}\ldots y_{v-1,2^{s}})(y_{v,1}\ldots y_{v,w})\]
is $y_{v,w}$, and since
\[\mathbf{x}*_{n}(y_{1,1}\ldots y_{1,2^{s}})\ldots(y_{v-1,1}\ldots y_{v-1,2^{s}})y_{v,1}\]
\[=(z_{1,1}\ldots z_{1,2^{s}})\ldots(z_{p-1,1}\ldots z_{p-1,2^{s}})y_{v,1}\]
and
\[\mathbf{x}*_{n}(y_{1,1}\ldots y_{1,2^{s}})\ldots(y_{v-1,1}\ldots y_{v-1,2^{s}})(y_{v,1}\ldots y_{v,2^{s}})\]
\[=(z_{1,1}\ldots z_{1,2^{s}})\ldots(z_{p-1,1}\ldots z_{p-1,2^{s}})(y_{v,1}\ldots y_{v,2^{s}}),\]
we conclude that
\[\mathbf{x}*_{n}(y_{1,1}\ldots y_{1,2^{s}})\ldots(y_{v-1,1}\ldots y_{v-1,2^{s}})(y_{v,1}\ldots y_{v,w})\]
\[=(z_{1,1}\ldots z_{1,2^{s}})\ldots (z_{p-1,1}\ldots z_{p-1,2^{s}})(y_{v,1}\ldots y_{v,w})\]
for $1\leq w\leq 2^{s}$.
\end{proof}

The following algorithm uses the above lemmas to quickly compute the application operation in $(A^{\leq 2^{n}})^{+}$ often more quickly 
than Algorithm \ref{r2g466ui8b2we}.
\begin{algo}
\label{t42akm1q9uo}
The below algorithm may be used to evaluate $\mathbf{x}*_{n}\mathbf{y}$ in $(A^{\leq 2^{n}})^{+}$.
We first select some $s$ with $0<s\leq n,n-s\leq\gcd(2^{\infty},2s)$. We then evaluate the following conditional statement.
\begin{enumerate}
\item If $|\mathbf{y}|>O_{n}(|\mathbf{x}|)$, then evaluate $\mathbf{x}*_{n}\mathbf{y}$ to
$\mathbf{x}*_{n}(\mathbf{y})_{O_{n}(|\mathbf{x}|)}$,

\item else if $|\mathbf{x}|=2^{n}$, then evaluate $\mathbf{x}*_{n}\mathbf{y}$ to $\mathbf{y}$,

\item else if $|\mathbf{x}|*_{n}|\mathbf{y}|=|\mathbf{x}|+|\mathbf{y}|$, then evaluate $\mathbf{x}*_{n}\mathbf{y}$ to $\mathbf{xy}$,

\item else if $|\mathbf{x}|<2^{s}$ then let $\mathbf{y}=\mathbf{z}a$ and evaluate $\mathbf{x}*_{n}\mathbf{y}$ to
\[(\mathbf{x}*_{n}\mathbf{z})*_{n}(\mathbf{x}a)_{O_{n}(|\mathbf{x}|*_{n}|\mathbf{z}|)},\]

\item else if $|\mathbf{x}|$ is a multiple of $2^{s}$, then let 
\[\mathbf{x}=(x_{1,1}\ldots x_{1,2^{s}})\ldots(x_{u,1}\ldots x_{u,2^{s}}),\]
and let
\[\mathbf{y}=(y_{1,1}\ldots y_{1,2^{s}})\ldots(y_{v-1,1}\ldots y_{v-1,2^{s}})(y_{v,1}\ldots y_{v,w}),\]
and suppose that
\[\langle x_{1,1},\ldots,x_{1,2^{s}}\rangle\ldots\langle x_{u,1},\ldots,x_{u,2^{s}}\rangle\]
\[*_{n-s}\langle y_{1,1}\ldots y_{1,2^{s}}\rangle\ldots\langle y_{v-1,1}\ldots y_{v-1,2^{s}}\rangle\langle y_{v,1}\ldots y_{v,w}\rangle\]
\[=\langle z_{1,1},\ldots,z_{1,2^{s}}\rangle\ldots\langle z_{p-1,1},\ldots,z_{p-1,2^{s}}\rangle\langle z_{p,1},\ldots,z_{p,w}\rangle,\]
and evaluate $\mathbf{x}*_{n}\mathbf{y}$ to
\[(z_{1,1}\ldots z_{1,2^{s}})\ldots(z_{p-1,1}\ldots z_{p-1,2^{s}})(z_{p,1}\ldots z_{p,w}),\]

\item else let $\mathbf{x}=\mathbf{x}_{1}\mathbf{x}_{2}$ where $|\mathbf{x}_{1}|$ is a necessarily non-zero multiple of $2^{s}$ and
$|\mathbf{x}_{2}|<2^{s}$, and then evaluate $\mathbf{x}*_{n}\mathbf{y}$ to $\mathbf{x}_{1}*_{n}((\mathbf{x}_{2})_{2^{m}}*_{m}(\mathbf{y})_{2^{m}})$ where $m=o_{n}(|\mathbf{x}_{1}|)$. This last step is valid since $\mathbf{x}=\mathbf{x}_{1}\circ_{n}\mathbf{x}_{2}$ and hence
\[\mathbf{x}*_{n}\mathbf{y}=(\mathbf{x}_{1}\circ_{n}\mathbf{x}_{2})*_{n}\mathbf{y}=\mathbf{x}_{1}*_{n}(\mathbf{x}_{2}*_{n}\mathbf{y})
=\mathbf{x}_{1}*_{n}((\mathbf{x}_{2})_{2^{m}}*_{m}(\mathbf{y})_{2^{m}}).\]
\end{enumerate}
\end{algo}

In the language GAP, we were able to compute $a*_{24}a_{1}\ldots a_{16}$ in 1203 ms using Algorithm \ref{t42akm1q9uo}. In comparison, it took
8672 ms to compute $a*_{24}a_{1}\ldots a_{16}$ using Algorithm \ref{r2g466ui8b2we}. However,
using Algorithm \ref{t42akm1q9uo}, we took 17563 ms to compute $a*_{25}a_{1}\ldots a_{16}$.

We shall now give algorithms corresponding to \ref{r2g466ui8b2we} and \ref{t42akm1q9uo} to quickly compute $M_{n}(x,y,\ell)$
and $FM_{n}^{-}(x,y)$ whenever $n\leq 48$.

\begin{algo}
\label{ajujiuhu4h4ef3ie}

The following recursive conditional statement may be used to quickly compute $M_{n}(x,y,\ell)$ whenever one can efficiently compute in $A_{n}$. Let $s$ be a natural number with $0<s\leq n,n-s\leq\gcd(2^{\infty},2s)$.
\begin{enumerate}
\item If $y>O_{n}(x)$ and $M_{n}(x,(y)_{O_{n}(x)},\ell)<0$, then return $M_{n}(x,(y)_{O_{n}(x)},\ell)$,

\item else if $y>O_{n}(x)$ and $M_{n}(x,(y)_{O_{n}(x)},\ell)>0$, then return
\[M_{n}(x,(y)_{O_{n}(x)},\ell)+y-(y)_{O_{n}(x)},\]

\item else let $y'$ be the least natural number with $\ell\leq x*_{n}y'$, and if $y'<y$, then return $M_{n}(x,y',\ell)$,

\item else if $x=2^{n}$, then return $\ell$,

\item else if $\ell\leq x$, then return $-\ell$,

\item else if $\ell=x*_{n}y$, then return $y$,

\item else let $q=M_{n}(x*_{n}(y-1),x+1,\ell)$, and
\begin{enumerate}
\item if $q=x+1$, then return $y$,

\item else if $q>0$ then return $-q$,

\item else return $M_{n}(x,y-1,-q)$.
\end{enumerate}
\end{enumerate}
\end{algo}
\begin{algo}
\label{5v4qpcterhivtryuavw}
The following recursive conditional statement may be used to quickly compute $M_{n}(x,y,\ell)$.

Let $s$ be a natural number with $0<s\leq n,n-s\leq\gcd(2^{\infty},2s)$.
\begin{enumerate}
\item If $\ell>O_{n}(x)$ and $M_{n}(x,(y)_{O_{n}(x)},\ell)<0$, then return $M_{n}(x,(y)_{O_{n}(x)},\ell)$,

\item else if $\ell>O_{n}(x)$ and $M_{n}(x,(y)_{O_{n}(x)},\ell)>0$, then return
\[M_{n}(x,(y)_{O_{n}(x)},\ell)+y-(y)_{O_{n}(x)},\]

\item else if $x=2^{n}$, then return $\ell$,

\item else if $\ell\leq x$, then return $-\ell$,

\item else if $\ell=x*_{n}y$, then return $y$,

\item else if $x<2^{s}$, then let $q=M_{n}(x*_{n}(y-1),x+1,\ell)$, and
\begin{enumerate}
\item if $q=x+1$, then return $y$,

\item else if $q>0$, then return $-q$,

\item else return $M_{n}(x,y-1,-q)$,
\end{enumerate}
\item else if $x=0\mod 2^{s}$, then let
\[j=(l)_{2^{s}},i=(l-j)\cdot 2^{-s}+1,v=M_{n-s}(x\cdot 2^{-s},2^{o_{n}(x)-s},i),\]
and
\begin{enumerate}
\item if $v>0$ then return $j+(v-1)*2^{s}$,

\item else return $-j+(v+1)*2^{s}$,
\end{enumerate}

\item else let $j=(x)_{2^{s}},i=x-j,q=M_{n}(i,j*_{n}y,\ell)$, and
\begin{enumerate}
\item if $q<0$ then return $q$,

\item else if $M_{n}(j,y,q)>0$ then return $M_{n}(j,y,q)$,

\item else return $M_{n}(j,y,q)-i$.
\end{enumerate}
\end{enumerate}
\end{algo}

Algorithm \ref{5v4qpcterhivtryuavw} when implemented in GAP computes a random entry $FM_{48}^{-}(x,y)$ in 1 ms on average.
Algorithm \ref{5v4qpcterhivtryuavw} is about 20 times faster at computing random entries in
$FM_{48}^{-}(x,y)$ than Algorithm \ref{ajujiuhu4h4ef3ie}. By comparison, we were able to compute approximately 1200000 random values $x*_{48}y$ in $A_{48}$ in a minute.

% ----------------------------------------------------------------
\section{Vastness and free algebras}
In this section, we shall investigate inverse limits of permutative LD-systems. We shall show that if one assumes large cardinal hypotheses, then free left-distributive algebras on an arbitrary number of generators embed into inverse limits
of multigenic Laver tables. Furthermore, we shall define a class of algebraic structures that model the algebra $\mathcal{E}_{\lambda}$ in the same way that the permutative LD-systems model the quotient algebra $\mathcal{E}_{\lambda}/\equiv^{\gamma}$.

\begin{defn}
\label{4rh4c48bwrt8wgy}
Suppose that $(X_{n})_{n\in\omega}$ is an inverse system of permutative LD-systems with transitional mappings
$\phi_{n}:X_{n+1}\rightarrow X_{n}$. Then we shall say that $((X_{n})_{n\in\omega},(\phi_{n})_{n\in\omega})$ is \index{vast}\emph{vast} if
\begin{enumerate}
\item for each $n$, there is some $\alpha_{n}\in \mathrm{crit}[X_{n+1}]$ such that
$\ker(\phi_{n})$ is equal to $\equiv^{\alpha}$, and

\item for each $x,y\in X_{n}$, there is some $m\geq n$ and $x',y'\in X_{m}$ such that $\phi_{m,n}(x')=x,\phi_{m,n}(y')=y$ and
$x'*y'\not\in \mathrm{Li}(X_{m})$ where $\phi_{m,n}$ is the transitional mapping between $X_{m}$ to $X_{n}$.
\end{enumerate}
\end{defn}

If $(X_{n})_{n\in\omega}$ is an inverse system of permutative LD-systems that satisfies $(1)$ in Definition \ref{4rh4c48bwrt8wgy}, then we shall hold to the conventions that $\mathrm{crit}[X_{n}]\subseteq\mathrm{crit}[X_{n+1}]$,
$\mathrm{crit}(\phi_{n}(x))=\mathrm{crit}(x)$ whenever $\phi_{n}(x)\not\in \mathrm{Li}(X_{n})$, and $\mathrm{crit}[X_{n}]$ is an initial segment of $\mathrm{crit}[X_{n}]$. We shall also define
\[\mathrm{crit}[(X_{n})_{n\in\omega}]=\bigcup_{n\in\omega}\mathrm{crit}[X_{n}].\]
An inverse system $((X_{n})_{n\in\omega},(\phi_{n})_{n\in\omega})$ that satisfies $(1)$ in Definition \ref{4rh4c48bwrt8wgy} is vast precisely when
\begin{enumerate}
\item for all $x\in X_{n},\alpha\in\mathrm{crit}[X_{n}]$, there exists some $m>n$ and $y\in X_{m}$ with $\phi_{m,n}(y)=x$ but where $y^{\sharp}(\alpha)<\max(\mathrm{crit}[X_{n}])$, and

\item for all $x\in X_{n}$ there is some $m>n$ and some $y\in X_{m}$ with $\phi_{m,n}(y)=x$ and where
$y\not\in\mathrm{Li}(X_{m})$.
\end{enumerate}

\begin{exam}
The motivation behind the notion of vastness comes from the fact that the inverse system 
$(\mathcal{E}_{\lambda}/\equiv^{\kappa_{n}})_{n\in\omega}$ of quotient algebras of elementary embeddings is vast whenever $\kappa_{n}=j^{n}(\mathrm{crit}(j))$ for some non-trivial $j\in\mathcal{E}_{\lambda}$.
\end{exam}
\begin{prop}
\label{40q49yhp733y378qff70}
\begin{enumerate}
\item An inverse system of permutative LD-systems 
\[((X_{n})_{n\in\omega},(\phi_{n})_{n\in\omega})\]
is vast if and only if for each $r$-ary term $t$ in the language of LD-systems, if $x_{1},\ldots,x_{r}\in X_{m}$, then there is some $l\geq m$ and
$z_{1},\ldots,z_{r}\in X_{l}$ where
\[\phi_{l,m}(z_{1})=x_{1},\ldots,\phi_{l,m}(z_{r})=x_{r}\]
but where $t(z_{1},\ldots,z_{r})\not\in \mathrm{Li}(X_{l})$.

\item An inverse system of permutative LD-monoids $((X_{n})_{n\in\omega},(\phi_{n})_{n\in\omega})$ is vast if and only if for each $r$-ary term $t$ in the language of LD-monoids that does not include the constant symbol $1$, if $x_{1},\ldots,x_{r}\in X_{m}$, then there is some $l$ and
$z_{1},\ldots,z_{r}\in X_{l}$ where $\phi_{l,m}(z_{1})=x_{1},\ldots,\phi_{l,m}(z_{r})=x_{r}$ but where $t(z_{1},\ldots,z_{r})\not\in\mathrm{Li}(X_{l})$.
\end{enumerate}
\end{prop}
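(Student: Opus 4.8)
The plan is to prove both parts by reducing the statement about arbitrary terms to the single binary operation $*$, and then invoke the definition of vastness directly. The backward direction is immediate in each case: taking $t$ to be the term $t(x_1,x_2)=x_1*x_2$ (for part (1)) recovers clause (2) of the reformulated vastness condition stated just before Proposition \ref{40q49yhp733y378qff70}, and taking $t(x_1)=x_1$ recovers clause... well, more precisely we also need the critical-point clause. Actually the cleanest route is: the forward direction is the content, since for the backward direction we observe that the reformulated characterization of vastness right after Definition \ref{4rh4c48bwrt8wgy} is literally the special case of the term condition for the terms $t(x,y)=x*y$ and $t(x)=x$ together with the fact that $x\not\in\mathrm{Li}(X_m)$ implies we can witness $y^{\sharp}(\alpha)<\max(\mathrm{crit}[X_n])$ by a further pullback. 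So I would first dispatch backward implications by plugging in these basic terms.

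For the forward direction of part (1), I would argue by induction on the complexity of the term $t$. The base case $t(x_1,\ldots,x_r)=x_i$ follows because vastness (clause (2) of the reformulation) lets us pull $x_i$ back to some $z_i\not\in\mathrm{Li}(X_l)$ while pulling the remaining coordinates back arbitrarily. For the inductive step, write $t=t_1*t_2$. By the induction hypothesis applied to $t_2$, find $l_1\geq m$ and a pullback $(w_1,\ldots,w_r)\in X_{l_1}^r$ of $(x_1,\ldots,x_r)$ with $t_2(w_1,\ldots,w_r)\not\in\mathrm{Li}(X_{l_1})$; call this element $b$. Now apply the induction hypothesis to $t_1$ over $X_{l_1}$ to get a further pullback where $t_1(\cdots)\not\in\mathrm{Li}$; call that element $a$. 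The subtlety is that we need $a*b\not\in\mathrm{Li}$ simultaneously, i.e.\ we need to control the critical points so that $\mathrm{crit}(a)\leq\mathrm{crit}(b)$ is violated or, more usefully, so that $a^{\sharp}(\mathrm{crit}(b))<\max(\mathrm{crit}[X_l])$. This is exactly where the first clause of the reformulated vastness condition (the one about $y^{\sharp}(\alpha)<\max(\mathrm{crit}[X_n])$) enters: having fixed $b$ with $\mathrm{crit}(b)=\alpha\in\mathrm{crit}[X_{l_1}]$, vastness provides $l\geq l_1$ and a pullback of $a$ (while pulling $b$ and everything else back along the same maps, using that $\mathrm{crit}$ and $\mathrm{Li}$-membership are preserved by the transition maps on non-$\mathrm{Li}$ elements) whose $\sharp$-image at $\alpha$ stays below the top, hence $a*b\not\in\mathrm{Li}(X_l)$ by Theorem \ref{2j1298ur28tye}(1). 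Combined with $t(z_1,\ldots,z_r)=t_1(\bar z)*t_2(\bar z)$ and the fact that $t_2(\bar z)$ is still a non-$\mathrm{Li}$ pullback of $b$ with the same critical point, we conclude $t(\bar z)\not\in\mathrm{Li}(X_l)$.

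Part (2) is handled the same way, now inducting on terms in the language of LD-monoids not involving $1$; the only new operation symbol is $\circ$, and for $t=t_1\circ t_2$ we use Proposition \ref{428tu0hj4ueri24tidef} together with the LD-monoid identity $\mathrm{crit}(x\circ y)=\min(\mathrm{crit}(x),\mathrm{crit}(y))$ and the fact (from the discussion of $\circ$ on non-reduced permutative LD-systems) that $x\circ y\in\mathrm{Li}$ forces both $x,y\in\mathrm{Li}$; so a pullback making both $t_1$ and $t_2$ land outside $\mathrm{Li}$ automatically makes $t_1\circ t_2$ land outside $\mathrm{Li}$, which is actually \emph{easier} than the $*$ case and requires no critical-point gymnastics. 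I expect the main obstacle to be the bookkeeping in the inductive step for $*$: one must be careful that the successive pullbacks for $t_1$ and $t_2$ are taken along a common tower $X_l\to\cdots\to X_m$ so that a single tuple $(z_1,\ldots,z_r)$ simultaneously witnesses the non-triviality of both subterms, and that the critical point of the pulled-back $t_2(\bar z)$ really is unchanged (this uses the convention $\mathrm{crit}(\phi_n(x))=\mathrm{crit}(x)$ for $x\not\in\mathrm{Li}$, equivalently Theorem \ref{uh3d5tjh4uyyuin}(3) applied to each $\ker\phi_n=\equiv^{\alpha_n}$). Everything else is routine once the tower is set up correctly.
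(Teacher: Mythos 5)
There is a genuine gap in the inductive step for $t=t_1*t_2$. After you have used the induction hypothesis to pull the tuple back so that $t_2(\bar w)=b\not\in\mathrm{Li}$ with $\mathrm{crit}(b)=\alpha$, you propose to invoke the reformulated vastness clause to ``pull back $a$'' (where $a=t_1$ evaluated at the current tuple) so that its $\sharp$-image at $\alpha$ stays below the top. But vastness only guarantees that \emph{some} preimage $a'$ of $a$ satisfies $(a')^\sharp(\alpha)<\max$; it does not say that this $a'$ is the value of $t_1$ on a further pullback of $(w_1,\dots,w_r)$. Once you pull the tuple back to $\bar z$, the element $t_1(\bar z)$ is determined by $\bar z$, and there is no reason for it to coincide with the preimage supplied by vastness. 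You cannot pull back a term value and a tuple independently and expect them to stay coherent, and your plan does not explain how to reconcile them.

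The paper's proof avoids this precisely by not doing a ``plain'' structural induction on $t$. It instead inducts on a complexity measure $C(t)$ ($C(x)=1$, $C(t_1*t_2)=C(t_1)+C(t_2)$) and isolates a middle case $t=t'*v$ with $v$ a variable and $t'=t_1*t_2$. In that case it applies the IH not to $t_1$ and $t_2$ directly, but to the \emph{modified} terms $t_2*v$ and $t_1*w$, each of strictly smaller complexity, where $w$ is a fresh variable that gets instantiated with the intermediate term value $c=t_2(\bar y)*y$. Because $w$ is a genuine free variable in the term $t_1*w$, the IH produces a pullback $(\bar z,d)$ of $(\bar y,c)$ in which $d$ is arbitrary and $t_1(\bar z)*d\not\in\mathrm{Li}$. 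Since $d$ and $t_2(\bar z)*z$ both project to $c\not\in\mathrm{Li}$, they share a critical point, and a short chain of inequalities using self-distributivity and monotonicity of $x^\sharp$ then gives $\mathrm{crit}(t(\bar z,z))\leq\mathrm{crit}(t_1(\bar z)*d)<\max$. This fresh-variable trick is exactly the device you need and the one your proposal is missing. (Your treatment of $\circ$ via $\mathrm{crit}(x\circ y)=\min(\mathrm{crit}(x),\mathrm{crit}(y))$ is fine; the paper handles part~(2) similarly by writing $t$ in the normal form $t_1\circ\cdots\circ t_k$ and applying part~(1) to $t_1$, but your reasoning for $\circ$ still inherits the gap for any $*$-subterms.)
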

\begin{proof}
\begin{enumerate}

\item The direction $\leftarrow$ is trivial, so we only need to prove the direction $\rightarrow$. Suppose that
$(X_{n})_{n\in\omega}$ is vast.  We define the complexity $C(t)$ of a term $t$ by letting $C(x)=1$ for each variable $x$, and
$C(t_{1}*t_{2})=C(t_{1})+C(t_{2})$. We shall now prove $\rightarrow$ by induction on $C(t)$ in three cases.

\begin{enumerate}
\item Case 1: $t$ is a variable. Suppose that $x\in X_{m}$. Then there is some $l\geq m$ and $y_{1},y_{2}\in X_{l}$ with
$\phi_{l,m}(y_{1})=\phi_{l,m}(y_{2})=x,y_{1}*y_{2}\not\in\mathrm{Li}(X_{l})$. Therefore, $y_{2}\not\in\mathrm{Li}(X_{l})$ as well.

\item Case 2: $t=t'*v$ for some variable $v$. Suppose that
\[t(x_{1},\ldots,x_{r},x)=t'(x_{1},\ldots,x_{r})*x,x_{1},\ldots,x_{r},x\in X_{m},\]
and
\[t'(x_{1},\ldots,x_{r})=t_{1}(x_{1},\ldots,x_{r})*t_{2}(x_{1},\ldots,x_{r}).\]
Then, by the induction hypothesis,
there is some $n\geq m$ along with $y_{1},\ldots,y_{r},y\in X_{n}$ such that 
\[\phi_{n,m}(y_{1})=x_{1},\ldots,\phi_{n,m}(y_{r})=x_{r},\phi_{n,m}(y)=x\]
and $t_{2}(y_{1},\ldots,y_{r})*y\not\in\mathrm{Li}(X_{n})$.

Now let $c=t_{2}(y_{1},\ldots,y_{r})*y$. Then by applying the induction hypothesis again, we conclude that there is some $l\geq n$ along with
$d,z_{1},\ldots,z_{r}\in X_{l}$ with 
\[\phi_{l,n}(d)=c,\phi_{l,n}(z_{1})=y_{1},\ldots,\phi_{l,n}(z_{r})=y_{r}\]
and $t_{1}(z_{1},\ldots,z_{r})*d\not\in\mathrm{Li}(X_{l})$. Let $z\in X_{l}$ be an element with $\phi_{l,n}(z)=y$.
However, we have 
\[\mathrm{crit}(t(z_{1},\ldots,z_{r},z))=\mathrm{crit}(t_{1}(z_{1},\ldots,z_{r})*t_{2}(z_{1},\ldots,z_{r})*z)\]
\[\leq\mathrm{crit}((t_{1}(z_{1},\ldots,z_{r})*t_{2}(z_{1},\ldots,z_{r}))*(t_{1}(z_{1},\ldots,z_{r})*z))\]
\[=\mathrm{crit}(t_{1}(z_{1},\ldots,z_{r})*(t_{2}(z_{1},\ldots,z_{r})*z))\]
\[=\mathrm{crit}(t_{1}(z_{1},\ldots,z_{r})*d)<\max(\mathrm{crit}[X_{l}]).\]

\item Case 3: $t=t_{1}*t_{2}$ and $t_{2}$ is not a term. Suppose that $x_{1},\ldots,x_{r}\in X_{m}$. Then there is some
$n\geq m$ along with $y_{1},\ldots,y_{r}\in X_{n}$ with 
\[\phi_{n,m}(y_{1})=x_{1},\ldots,\phi_{n,m}(y_{r})=x_{r}\]
and where
$t_{2}(y_{1},\ldots,y_{r})\not\in\mathrm{Li}(X_{n})$. Let $y=t_{2}(y_{1},\ldots,y_{r})$. Then, by Case 2 of the induction hypothesis, there is some $l\geq n$ along with $z_{1},\ldots,z_{r},z\in X_{l}$ with 
\[\phi_{l,n}(z_{1})=y_{1},\ldots,\phi_{l,n}(z_{r})=y_{r},\phi_{l,n}(z)=y\]
and where $t_{1}(z_{1},\ldots,z_{r})*z\not\in\mathrm{Li}(X_{l})$. Therefore,
\[\mathrm{crit}(t(z_{1},\ldots,z_{r}))=\mathrm{crit}(t_{1}(z_{1},\ldots,z_{r})*t_{2}(z_{1},\ldots,z_{r}))\]
\[=\mathrm{crit}(t_{1}(z_{1},\ldots,z_{r})*z)<\max(\mathrm{crit}[X_{l}]).\]
\end{enumerate}

\item The direction $\leftarrow$ is trivial. For the direction $\rightarrow$, suppose that $t$ is a term in the language of LD-monoids and
$x_{1},\ldots,x_{r}\in X_{m}$. Then there exists terms $t_{1},\ldots,t_{k}$ in the language of LD-systems such that $t$ is equivalent
to $t_{1}\circ\ldots\circ t_{k}$. Therefore, there is some $l$ and $z_{1},\ldots,z_{r}\in X_{l}$ such that
\[\phi_{l,m}(z_{1})=x_{1},\ldots,\phi_{l,m}(z_{r})=x_{r}\]
and where $t_{1}(z_{1},\ldots,z_{r})\not\in \mathrm{Li}(X_{l})$. Therefore,
\[t(z_{1},\ldots,z_{r})=t_{1}(z_{1},\ldots,z_{r})\circ\ldots\circ t_{k}(z_{1},\ldots,z_{r})\not\in \mathrm{Li}(X_{l})\]
and
\[\phi_{l,m}(z_{1})=x_{1},\ldots,\phi_{l,m}(z_{r})=x_{r}.\]
\end{enumerate}
\end{proof}
\begin{prop}
An inverse system $((X_{n})_{n},(\phi_{n})_{n})$ of permutative LD-systems is vast if and only if for each $x\in X_{m}$ and natural number $r$ there exists some $n\geq m$ and $y\in X_{n}$ with $\phi_{n,m}(y)=x$ and where $y^{r}*y\not\in\mathrm{Li}(X_{n})$.
\end{prop}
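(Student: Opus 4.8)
The plan is to show that the displayed property — write $(\star)$ for it — is equivalent to the term-lifting property of Proposition \ref{40q49yhp733y378qff70}(1), which by that proposition is equivalent to vastness. One direction is immediate: the maps $t_{r}$ defined by $t_{0}(z)=z$ and $t_{k+1}(z)=z*t_{k}(z)$ are unary terms in the language of LD-systems with $t_{r}(z)=z^{r}*z$, so applying the term-lifting property to each $t_{r}$ is exactly $(\star)$, while Proposition \ref{40q49yhp733y378qff70}(1) turns vastness into the term-lifting property. So from now on I work on deriving $(\star)\Rightarrow{}$(term-lifting property). For this it suffices to derive condition (2) of Definition \ref{4rh4c48bwrt8wgy} from $(\star)$, because the inductive argument in the proof of Proposition \ref{40q49yhp733y378qff70}(1) already passes from condition (2) to the full term-lifting property (its base case is condition (2), and the inductive steps use only the induction hypothesis and order-preservation of the maps $w^{\sharp}$).

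So fix $x,y\in X_{n}$; I must produce $l\geq n$ and preimages $x',y'\in X_{l}$ of $x,y$ with $x'*y'\notin\mathrm{Li}(X_{l})$. Let $N_{0}$ be least with $x^{N_{0}}*x\in\mathrm{Li}(X_{n})$, which is finite by reflexivity of the critical-point preorder (Proposition \ref{sfenjuw4}(1)), and apply $(\star)$ to $x$ with parameter $r=N_{0}+1$, obtaining $l\geq n$ and a preimage $x'\in X_{l}$ of $x$ with $(x')^{N_{0}+1}*x'\notin\mathrm{Li}(X_{l})$. Let $\alpha\in\mathrm{crit}[X_{l}]$ be such that $\ker(\phi_{l,n})=(\equiv^{\alpha})$ (a composite of the kernels supplied by condition (1) of Definition \ref{4rh4c48bwrt8wgy}); then $\phi_{l,n}^{-1}[\mathrm{Li}(X_{n})]=\{v\in X_{l}:\mathrm{crit}(v)\geq\alpha\}$ and, under the identification of $\mathrm{crit}[X_{n}]$ with an initial segment of $\mathrm{crit}[X_{l}]$, we have $\alpha=\max(\mathrm{crit}[X_{n}])$. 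From $\phi_{l,n}((x')^{N_{0}}*x')=x^{N_{0}}*x\in\mathrm{Li}(X_{n})$ we get $\mathrm{crit}((x')^{N_{0}}*x')\geq\alpha$. Now choose a preimage $y'\in X_{l}$ of $y$ with $\mathrm{crit}(y')\leq\alpha$: if $y\notin\mathrm{Li}(X_{n})$ any preimage works, since then $\mathrm{crit}(y')=\mathrm{crit}(y)<\alpha$; if $y\in\mathrm{Li}(X_{n})$ take for $y'$ an element witnessing $\equiv^{\alpha}$, so $\mathrm{crit}(y')=\alpha$. In either case $\mathrm{crit}(y')\leq\alpha\leq\mathrm{crit}((x')^{N_{0}}*x')$, so by order-preservation of $(x')^{\sharp}$,
\[\mathrm{crit}(x'*y')=(x')^{\sharp}(\mathrm{crit}(y'))\leq(x')^{\sharp}\bigl(\mathrm{crit}((x')^{N_{0}}*x')\bigr)=\mathrm{crit}((x')^{N_{0}+1}*x')<\max(\mathrm{crit}[X_{l}]),\]
whence $x'*y'\notin\mathrm{Li}(X_{l})$. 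This establishes condition (2), and combining the two directions with Proposition \ref{40q49yhp733y378qff70}(1) completes the proof.

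The step I expect to be the crux is precisely this derivation of condition (2) from $(\star)$. It is where one genuinely needs $(\star)$ for parameters larger than $1$ — a lone non-trivial preimage of $x$ is not enough, as condition (2) itself shows — and it rests on the rigidity of the ``$X_{n}$-part'' of $\mathrm{crit}[X_{l}]$ under the transitional maps: a preimage of a left-identity of $X_{n}$ has critical point at least $\alpha$, while a preimage of a non-left-identity retains its (smaller) critical point. A routine point to dispatch along the way is surjectivity of the maps $\phi_{l,n}$, needed to carry preimages of $x$ and $y$ to a common index; this holds for the inverse systems under consideration, whose kernels are the congruences $\equiv^{\alpha}$ and whose members are reduced (so that $\mathrm{Li}(X_{n})$ is a single class, which is what makes the element witnessing $\equiv^{\alpha}$ a preimage of $y$ when $y\in\mathrm{Li}(X_{n})$).
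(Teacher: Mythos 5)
Your proof is correct in spirit and closely parallel to the paper's. Both directions rest on the same observation: the forward direction is Proposition~\ref{40q49yhp733y378qff70} applied to the unary terms $t_r(z)=z^{r}*z$, and the backward direction applies $(\star)$ with a power one larger than the least $N_0$ for which $x^{N_0}*x\in\mathrm{Li}$, then computes critical points via $y^{\sharp}$. The difference is the waypoint: you aim directly at condition~(2) of Definition~\ref{4rh4c48bwrt8wgy}, whereas the paper establishes the two-part alternative characterization stated just after that definition (its ``First observe that $\ldots y=y^{0}*y\not\in\mathrm{Li}(X_n)$'' is the $r=0$ instance of $(\star)$, and the displayed inequality $\max(\mathrm{crit}[X_n])>y^{\sharp}(\alpha)$ is the other part). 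Your route forces you to build a preimage $y'$ of $y$ with $\mathrm{crit}(y')\leq\alpha$, and when $y\in\mathrm{Li}(X_n)$ you take a $\equiv^{\alpha}$-witness and appeal to reducedness of the $X_n$ to conclude it is actually a preimage of $y$ — but the proposition's hypothesis is ``permutative LD-systems,'' not ``reduced permutative LD-systems,'' and $\ker(\phi_n)=\equiv^{\alpha_n}$ does not force the $X_n$ to be reduced. So your argument as written does not quite cover the stated generality, and you flag the assumption yourself. The paper's waypoint avoids this: instead of finding a small-critical-point preimage of $y$ at once, one first applies $(\star)$ with $r=0$ to $y$ to obtain a non-left-identity preimage $y'\in X_{l'}$, lifts $x$ to $X_{l'}$, and then runs your argument there; since $y'$ has a genuine critical point $\beta<\max(\mathrm{crit}[X_{l'}])$, any further preimage $y''$ keeps critical point $\beta$, and the left-identity case disappears. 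With that patch your proof matches the paper's in scope. (One small extraneous bit: once you have condition~(2), vastness follows outright since condition~(1) is part of the standing assumptions, so the remark that the induction of Proposition~\ref{40q49yhp733y378qff70} runs from condition~(2) alone is correct but unnecessary for closing the argument.)
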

\begin{proof}
$\rightarrow$. This direction follows from Proposition \ref{40q49yhp733y378qff70}.

$\leftarrow$. First observe that if $x\in X_{m}$, then there is some $n\geq m$ and $y\in X_{n}$ with $\phi_{n,m}(y)=x$ and
where $y=y^{0}*y\not\in\mathrm{Li}(X_{n})$.

Suppose that $x\in X_{m}$ and $\alpha=\max(\mathrm{crit}[X_{m}])$. Then let $r$ be a natural number with
$x^{r}*x\in\mathrm{Li}(X_{m})$. Then there is some $n>m$ and $y\in X_{n}$ with $\phi_{n,m}(y)=x$ and where
$y^{r+1}*y\not\in\mathrm{Li}(X_{n})$. Therefore,
\[\max(\mathrm{crit}[X_{n}])>\mathrm{crit}(y^{r+1}*y)=y^{\sharp}(\mathrm{crit}(y^{r}*y))\geq y^{\sharp}(\alpha).\]
\end{proof}
All results mentioned in this section about free LD-monoids and free LD-systems that do not directly refer to any sort of permutative LD-systems can be found in \cite{D}.
\begin{thm}
Suppose that $(X,*,\circ,1)$ is an LD-monoid freely generated by a set $A$. Then the set $A$ freely generates a sub-LD-system of
$(X,*)$.
\end{thm}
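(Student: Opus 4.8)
The plan is to show that the LD-system reduct $(X,*)$ of a free LD-monoid $(X,*,\circ,1)$ on generators $A$ has the property that $A$ generates a free sub-LD-system. The standard strategy is to produce, for the free LD-system $F$ on $A$, a retraction: I would build an LD-system homomorphism $r$ from the sub-LD-system $\langle A\rangle_{*}\subseteq (X,*)$ onto $F$ that fixes $A$ pointwise. Combined with the universal map $F\to\langle A\rangle_{*}$ sending $A$ to $A$ (which exists because $\langle A\rangle_{*}$ is an LD-system generated by $A$), this forces that universal map to be an isomorphism, which is exactly the assertion that $A$ freely generates $\langle A\rangle_{*}$.

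To construct $r$, the natural route is via the free LD-monoid $M_{LD}$ on $A$ and the known description of free LD-monoids in terms of free LD-systems: there is a well-understood embedding of the free LD-system $F$ on $A$ into the free LD-monoid on $A$ (this is among the results on free LD-monoids referenced from \cite{D} that the statement says we may invoke), and more importantly a monoid-structure-respecting projection. First I would recall that every element of $X$ can be written as $u_1\circ\cdots\circ u_k$ with each $u_i$ in the image of $F$, using the identity $x\circ y=(x*y)\circ x$ together with $1*x=x$, $x*1=1$, and the fact that $A$ generates $X$; this is the LD-monoid analogue of the Fibonacci-term normalization used throughout the excerpt. Then I would define $r$ on $\langle A\rangle_{*}$ by exploiting that an element of $\langle A\rangle_{*}$, being built from $A$ using only $*$, maps into $F$ under the canonical projection $X\to F$ that collapses the monoid operation; concretely, using the universal property of the free LD-monoid, there is a homomorphism $\pi:X\to \widetilde F$ of LD-monoids onto the "trivial-monoid extension" of $F$ (adjoin a formal identity to $F$) sending each $a\in A$ to $a$, and restricting $\pi$ to $(X,*)$ gives an LD-system homomorphism whose restriction to $\langle A\rangle_{*}$ lands in $F$ and fixes $A$.

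The key steps, in order: (1) set up $\widetilde F = F\cup\{1\}$ as an LD-monoid with $1\circ x=x\circ1=x$, $1*x=x$, $x*1=1$, and $x\circ y$ for $x,y\in F$ defined as $1$ unless forced — more carefully, realize $\widetilde F$ as a genuine LD-monoid quotient of the free LD-monoid on $A$, or cite the explicit construction of free LD-monoids from \cite{D} which already exhibits $F$ sitting inside; (2) invoke the universal property of $(X,*,\circ,1)$ as the \emph{free} LD-monoid on $A$ to get the unique LD-monoid homomorphism $\pi:X\to\widetilde F$ extending $a\mapsto a$; (3) check $\pi$ is surjective and $\pi|_A=\mathrm{id}_A$; (4) observe $\pi$ restricted to the LD-subsystem $\langle A\rangle_{*}$ is an LD-system homomorphism onto $\langle A\rangle_{*}^{\widetilde F}=F$ fixing $A$; (5) compose with $F\to\langle A\rangle_{*}$ and conclude both maps are mutually inverse isomorphisms, so $A$ freely generates $\langle A\rangle_{*}$.

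The main obstacle is step (1)–(2): one must actually have a legitimate LD-monoid $\widetilde F$ into which $F$ embeds as its $*$-reduct-subalgebra-on-$A$, so that the universal property of the free LD-monoid applies. The temptation is to "just adjoin a $1$," but verifying that the LD-monoid identities $(x\circ y)*z=x*(y*z)$, $x*(y\circ z)=(x*y)\circ(x*z)$, and $x\circ y=(x*y)\circ x$ all hold for the extension forces a specific (and possibly not free) $\circ$ on $F$ itself; resolving this cleanly is exactly the content of the structure theory of free LD-monoids in \cite{D}, so the real work is to quote the right statement from that reference — namely that the free LD-monoid on $A$ is (isomorphic to) a concrete monoid built on top of the free LD-system on $A$ in which $F$ appears as the submonoid-free part — and then the retraction $\pi$ drops out of the universal property. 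I expect the rest of the argument to be a short diagram chase.
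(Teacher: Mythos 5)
The paper gives no proof of this theorem; the sentence immediately preceding it remarks that all such facts about free LD-systems and free LD-monoids in this section are taken from \cite{D}. So there is no in-paper argument to compare against, and your sketch has to stand on its own.

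Your top-level reduction is sound: if one had an LD-monoid $\widetilde F$ whose $*$-subalgebra generated by $A$ is the free LD-system $F$, then the universal property of $X$ would give an LD-monoid homomorphism $\pi:X\to\widetilde F$ fixing $A$, its restriction to $\langle A\rangle_{*}$ would retract onto $F$, and composing with the canonical surjection $F\to\langle A\rangle_{*}$ would prove that surjection is an isomorphism. The gap is that the candidate $\widetilde F=F\cup\{1\}$ you propose is not an LD-monoid, and you never verify that it is. The axiom $(x\circ y)*z=x*(y*z)$ forces the left translation $L_{x\circ y}$ to equal $L_{x}L_{y}$ (composition of maps). For a generator $g$, $g\circ g$ cannot be $1$ in $F\cup\{1\}$: that would give $z=(g\circ g)*z=g*(g*z)$ for every $z$, which is false in $F$. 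So some $k\in F$ would have to satisfy $k*u=g*(g*u)$ for all $u\in F$, and no such $k$ exists --- this is visible both from Dehornoy's explicit description of the free LD-monoid (where the required compositions create elements strictly outside $F$) and, under large cardinal hypotheses, from the fact that $j\circ j\notin\mathrm{Iter}(j)$. The object to which you want to apply the universal property simply does not exist, so the retraction never materializes.

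Your proposed fallback --- quote the explicit construction of free LD-monoids from \cite{D} ``which already exhibits $F$ sitting inside'' --- is not a repair; it is the theorem. Once one has that explicit model $M$ of the free LD-monoid with $\langle A\rangle_{*}^{M}\cong F$, the uniqueness of free objects identifies $\langle A\rangle_{*}$ in the abstract $X$ with $F$ directly, and the retraction $\pi$ is superfluous. For the retraction argument to carry independent weight you would need to produce, by other means, some concrete LD-monoid $(Y,*,\circ,1)$ together with an $A$-indexed family that freely generates a sub-LD-system of $(Y,*)$; $\mathcal{E}_{\lambda}$ does this under rank-into-rank hypotheses, and Dehornoy's geometry-monoid and braid-based constructions do it in ZFC, but producing such a $Y$ is exactly the nontrivial content that your sketch defers to \cite{D} at the decisive moment.
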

\begin{defn}
An LD-system $X$ is said to be \index{acyclic LD-system}\emph{acyclic} if there do not exist $x,x_{1},\ldots,x_{n}\in X$ with $x*x_{1}*\ldots*x_{n}=x$.
\end{defn}
\begin{thm}
\label{i5hg39c832id484395nv}
\begin{enumerate}
\item An LD-system $(X,*)$ generated by an element $x$ is freely generated by $x$ if and only if
$X$ is acyclic.

\item An LD-monoid $(X,*,\circ,1)$ generated by an element $x$ is freely generated by $x$ if and only if
$X\setminus\{1\}$ is closed under $*$ and $(X\setminus\{1\},*)$ is acyclic.
\end{enumerate}
\end{thm}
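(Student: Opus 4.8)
The statement to prove is Theorem \ref{i5hg39c832id484395nv}, the characterization of freely generated monogenerated LD-systems and LD-monoids via acyclicity. My plan is to treat the two parts in parallel, since the LD-monoid case reduces to the LD-system case together with bookkeeping about the unit $1$ and the composition operation. For part (1), the direction ``freely generated $\Rightarrow$ acyclic'' is the easy half: the free LD-system on one generator $x$ is known (Laver, see the references cited near Theorem \ref{t342hu234r}) to be acyclic, in fact this is essentially equivalent to the decidability of the word problem and to the existence of the left-divisibility partial order on the free LD-system. So if $X$ is freely generated by $x$, then $X$ is isomorphic to the free object and inherits acyclicity. The nontrivial direction is ``acyclic $\Rightarrow$ freely generated.'' Here I would argue by contraposition: suppose $X$ is generated by $x$ but not freely generated by $x$. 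Then the canonical surjection $\pi$ from the free LD-system $F$ on one generator onto $X$ has a nontrivial kernel, so there are distinct terms $s,t$ in one variable with $s^X(x) = t^X(x)$ but $s^F \neq t^F$ in $F$. The goal is to manufacture from such a collapse a cycle $x * x_1 * \cdots * x_n = x$ in $X$.

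The mechanism for producing the cycle should come from the permutative/Laver-like machinery developed earlier in the paper, applied to $X$ itself once we observe $X$ is a quotient of $F$. The key structural fact is that in the free LD-system $F$, the left-divisibility relation $\sqsubseteq$ (where $a \sqsubseteq b$ iff $b = a * a_1 * \cdots * a_n$ for some $a_i$, with no left-identity appearing, echoing the relation $\preceq$ of the Definition just before the Proposition ``The relation $\preceq$ is a partial ordering on $X$'') is a linear order on the orbit of $x$, and that any two terms $s^F(x), t^F(x)$ are $\sqsubseteq$-comparable. So a collapse $s^X(x) = t^X(x)$ with $s^F(x) \sqsubsetneq t^F(x)$ in $F$ pushes forward to an equation in $X$ expressing $x$-descendant $=$ further $x$-descendant, i.e. $u = u * u_1 * \cdots * u_n$ for suitable $u, u_i$ in the orbit of $x$ in $X$. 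Since everything is in the subalgebra generated by $x$ and $u$ itself has the form $x * (\text{stuff})$, one then transports this to a genuine cycle based at $x$: either $u = x$ already, or $u = x * w_1 * \cdots * w_m$ and one composes the two relations (here the associativity-type identities \ref{JBNajo34g6ft} for $\circ$, or just repeated self-distributivity, let one reassociate) to conclude $x * (\ldots) = x$. This contradicts acyclicity and completes part (1).

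For part (2), the LD-monoid case, first note the ``only if'' direction: if $(X,*,\circ,1)$ is freely generated by $x$ as an LD-monoid, then by the Theorem immediately preceding (``$A$ freely generates a sub-LD-system of $(X,*)$'') with $A = \{x\}$, the element $x$ freely generates a sub-LD-system; acyclicity of $(X \setminus \{1\}, *)$ then follows from part (1) applied to that sub-LD-system, and closure of $X \setminus \{1\}$ under $*$ follows from the identity $x * 1 = 1$ being the only way to hit $1$ (in the free LD-monoid, $a * b = 1$ forces $b = 1$, which is a standard normal-form fact about free LD-monoids). For the ``if'' direction: assume $(X \setminus \{1\}, *)$ is acyclic and $X \setminus \{1\}$ is closed under $*$. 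The canonical surjection from the free LD-monoid $G$ on $x$ onto $X$ restricts, by closure, to a surjection of LD-systems from $G \setminus \{1\}$ (which is a sub-LD-system of $G$, by the corresponding fact for the free LD-monoid) onto $X \setminus \{1\}$ — and $x$ generates $X\setminus\{1\}$ as an LD-system since in the free LD-monoid the $\circ$-operation on non-unit elements is recovered from $*$ via the Fibonacci-term identity $x \circ y = t_{n+1}(x,y)$ when $t_n(x,y) = 1$, exactly as in the Theorem characterizing the operation $\circ$ on reduced permutative LD-systems. By part (1), this surjection is an isomorphism of LD-systems, and then the unit and the definability of $\circ$ from $*$ upgrade it to an isomorphism of LD-monoids.

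The main obstacle I anticipate is the careful passage from ``$s^F(x) = t^F(x)$ in $X$ with $s^F(x) \neq t^F(x)$ in $F$'' to ``a cycle $x * x_1 * \cdots * x_n = x$ in $X$'': one must know that in the free LD-system any two one-variable terms evaluated at the generator are left-divisibility comparable, and one must correctly reassociate the witnessing word so the cycle is genuinely based at $x$ and uses only elements of $X$ (not of $F$). This is where the Fibonacci-term identities $x*(y*z) = t_{n+1}(x,y)*(t_n(x,y)*z)$ and the left-divisibility lemmas for the free LD-system do the real work, and I would want to cite the relevant normal-form results from \cite{D} rather than reprove them. The LD-monoid bookkeeping (that $1$ is never a proper $*$-value of non-units, and that $\circ$ on non-units is term-definable from $*$) is routine given the earlier material but must be stated explicitly so the reduction to part (1) is clean.
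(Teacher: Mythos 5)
The paper itself gives no proof of this theorem; it is stated as a known result, with the preceding sentence noting that results about free LD-systems and free LD-monoids "can be found in \cite{D}." So there is no in-paper argument to compare against — the question is only whether your proposal is sound.

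Your treatment of part (1) is fundamentally reasonable. One simplification: the ``transport to a cycle based at $x$'' step is unnecessary. The definition of acyclicity quantifies over all elements of $X$, not just the generator, so the cycle you produce at $u = s^X(x)$ already contradicts acyclicity directly; there is no need to reassociate back to a cycle at $x$. The real load-bearing ingredient is, as you say, the comparison property for the free monogenic LD-system (any two elements are comparable under iterated left-division), and that is a deep theorem that you correctly propose to cite from \cite{D} rather than reprove.

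Part (2), however, has a genuine gap. Your reduction to part (1) requires that in the free LD-monoid $G$ on one generator $x$, the non-unit part $G \setminus \{1\}$ be generated by $x$ as an LD-system, and you justify this via the Fibonacci-term identity $x \circ y = t_{n+1}(x,y)$ when $t_n(x,y) = 1$. But that identity holds only in reduced \emph{permutative} LD-monoids, and the free LD-monoid is not permutative: for nontrivial arguments, the Fibonacci terms $t_n(x,y)$ never reach $1$, since by part (1) the sub-LD-system generated by $x$ is acyclic and an acyclic LD-system has no left-identities (a left-identity $e$ would satisfy $e*e = e$, which is a cycle of length one). Concretely, $x \circ x$ is not in the $*$-subalgebra of $G$ generated by $x$ — that subalgebra is exactly the free monogenic LD-system, and $\circ$ is not term-definable from $*$ on it. So $G\setminus\{1\}$ is strictly larger than the LD-system generated by $x$, part (1) cannot be applied to the restriction you describe, and the isomorphism you want does not follow. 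The same issue appears in your "only if" direction: acyclicity of the full $(X\setminus\{1\},*)$ for the free LD-monoid is a strictly stronger statement than acyclicity of the free monogenic LD-system and does not follow from part (1) applied to the $*$-subalgebra generated by $x$. Establishing it requires the normal-form theory for free LD-monoids (decomposition of elements into $\circ$-products of $*$-irreducibles) developed in \cite{D}, which cannot be circumvented by the one-variable Fibonacci-term mechanism available only in the permutative setting.
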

\begin{defn}
Suppose that $((X_{n})_{n\in\omega},(\phi_{n})_{n\in\omega})$ is vast. \index{$\mathrm{EE}$} Then let 
\[\mathrm{EE}((X_{n})_{n\in\omega})\subseteq\varprojlim_{n\in\omega}X_{n}\]
be the set of all threads
$(x_{n})_{n\in\omega}$ such that for all $\alpha\in \mathrm{crit}[\mathcal{X}]$ there is some
$m$ with $\alpha\in \mathrm{crit}[X_{m}]$ and where $x_{m}^{\sharp}(\alpha)<\max(\mathrm{crit}[X_{m}])$. Let
$\mathrm{EE}((X_{n})_{n\in\omega})^{+}$ be the set of all threads $(x_{n})_{n\in\omega}\in\mathrm{EE}((X_{n})_{n\in\omega})$ where
$x_{n}\not\in \mathrm{Li}(X_{n})$ for some $n\in\omega$.
\end{defn}
The letters $\mathrm{EE}$ stand for elementary embedding since we shall soon see that $\mathrm{EE}(\mathcal{X})$ algebraizes $(\mathcal{E}_{\lambda},*,\circ)$.

\begin{prop}
Suppose that $\mathcal{X}=((X_{n})_{n\in\omega},(\phi_{n})_{n\in\omega})$ is a vast system of permutative LD-systems.
\begin{enumerate}
\item $\mathrm{EE}(\mathcal{X})$ is a dense $G_{\delta}$ subset of $\varprojlim_{n\in\omega}X_{n}$, and
$\mathrm{EE}(\mathcal{X})^{+}$ is a dense open subset of $\mathrm{EE}(\mathcal{X})$.

\item $\mathrm{EE}(\mathcal{X})$ and $\mathrm{EE}(\mathcal{X})^{+}$ are closed under $*$. If each $X_{n}$ is reduced, then $\mathrm{EE}(\mathcal{X})$ and $\mathrm{EE}(\mathcal{X})^{+}$ are closed under $\circ$.

\item $(\mathrm{EE}(\mathcal{X})^{+},*)$ is an acyclic LD-system.

\item Every element in $\mathrm{EE}(\mathcal{X})^{+}$ freely generates a sub-LD-system of $(\mathrm{EE}(\mathcal{X})^{+},*)$. Furthermore, if $\mathcal{X}$ is a vast system of permutative LD-monoids, then every element in $\mathrm{EE}(\mathcal{X})\setminus\{1\}$ freely generates a sub-LD-monoid of
$(\mathrm{EE}(\mathcal{X}),*,\circ,1)$.
\end{enumerate}
\label{4tji20t24hu}
\end{prop}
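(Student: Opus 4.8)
The four assertions build on one another, and I would prove them in the listed order after isolating two structural facts about threads. First, since each $\phi_n$ is an LD-homomorphism and $z\in\mathrm{Li}(X_{n+1})$ forces $\phi_n(z)\in\mathrm{Li}(X_n)$, the set of $n$ with $x_n\notin\mathrm{Li}(X_n)$ is an up-set in $\omega$; hence a thread $\mathbf x=(x_n)_n$ lies in $\mathrm{EE}(\mathcal X)^{+}$ exactly when $x_n\notin\mathrm{Li}(X_n)$ for all large $n$, and for such a thread the convention $\mathrm{crit}(\phi_n(x))=\mathrm{crit}(x)$ off $\mathrm{Li}(X_n)$ makes $\mathrm{crit}(x_n)$ eventually constant; write $\mathrm{crit}(\mathbf x)$ for this value. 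Second, for $\mathbf x\in\mathrm{EE}(\mathcal X)$ and $\alpha\in\mathrm{crit}[\mathcal X]$ the values $x_n^{\sharp}(\alpha)$ (defined once $\alpha\in\mathrm{crit}[X_n]$) are monotone in $n$, the condition ``$x_n^{\sharp}(\alpha)<\max(\mathrm{crit}[X_n])$'' persists upward, and these values stabilize to some $\mathbf x^{\sharp}(\alpha)\in\mathrm{crit}[\mathcal X]$. For Part (1): $\varprojlim_n X_n$ is an inverse limit of discrete spaces with cylinder base, each $U_\alpha=\{\mathbf x:\exists m,\ \alpha\in\mathrm{crit}[X_m]\ \text{and}\ x_m^{\sharp}(\alpha)<\max(\mathrm{crit}[X_m])\}$ is a union of cylinders hence open, so $\mathrm{EE}(\mathcal X)=\bigcap_{\alpha\in\mathrm{crit}[\mathcal X]}U_\alpha$ is $G_\delta$ and $\mathrm{EE}(\mathcal X)^{+}=\bigcup_n\{\mathbf x\in\mathrm{EE}(\mathcal X):x_n\notin\mathrm{Li}(X_n)\}$ is open in it. Density of $\mathrm{EE}(\mathcal X)$ is where vastness is used: starting from a compatible tuple $(x_0,\dots,x_N)$ and enumerating $\mathrm{crit}[\mathcal X]$, I would run a genericity recursion in which, to meet $U_\alpha$, the first clause of vastness (Definition~\ref{4rh4c48bwrt8wgy}, in its two-clause reformulation) supplies a level above the current one and a preimage $y$ of the top coordinate with $y^{\sharp}(\alpha)<\max(\mathrm{crit}[X_{\ell'}])$; diagonalizing yields a thread in every $U_\alpha$. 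For density of $\mathrm{EE}(\mathcal X)^{+}$ in $\mathrm{EE}(\mathcal X)$ one runs the same recursion with one extra step invoking the second clause of vastness to push a preimage out of $\mathrm{Li}$, which then stays out by the up-set property.

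For Part (2) the point is to control critical points of coordinatewise products. In any permutative LD-system one has
\[
(x*y)^{\sharp}(\alpha)\le x^{\sharp}\big(y^{\sharp}(\alpha)\big):
\]
choosing $z$ with $\mathrm{crit}(z)=\alpha$, Theorem~\ref{2j1298ur28tye}(2) gives $\mathrm{crit}(z)\le\mathrm{crit}(x*z)$, so order-preservation of $(x*y)^{\sharp}$ (Proposition~\ref{sfenjuw4}) and the identity $(x*y)*(x*z)=x*(y*z)$ yield
\[
(x*y)^{\sharp}(\alpha)=(x*y)^{\sharp}(\mathrm{crit}(z))\le(x*y)^{\sharp}(\mathrm{crit}(x*z))=\mathrm{crit}\big(x*(y*z)\big)=x^{\sharp}\big(y^{\sharp}(\alpha)\big).
\]
Now if $\mathbf x,\mathbf y\in\mathrm{EE}(\mathcal X)$ and $\alpha\in\mathrm{crit}[\mathcal X]$, then for large $m$ we have $y_m^{\sharp}(\alpha)=\mathbf y^{\sharp}(\alpha)=:\beta<\max(\mathrm{crit}[X_m])$ and then, for still larger $m$, $x_m^{\sharp}(\beta)<\max(\mathrm{crit}[X_m])$, whence $(x_m*y_m)^{\sharp}(\alpha)\le x_m^{\sharp}(\beta)<\max(\mathrm{crit}[X_m])$; thus $\mathbf x*\mathbf y\in\mathrm{EE}(\mathcal X)$. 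If in addition $\mathbf x,\mathbf y\in\mathrm{EE}(\mathcal X)^{+}$, then $\mathrm{crit}(y_m)$ is eventually a fixed $\kappa_y\in\mathrm{crit}[\mathcal X]$ with $\kappa_y<\max(\mathrm{crit}[X_m])$ for large $m$, and $\mathrm{crit}(x_m*y_m)=x_m^{\sharp}(\kappa_y)<\max(\mathrm{crit}[X_m])$ eventually, so $x_m*y_m\notin\mathrm{Li}(X_m)$ and $\mathbf x*\mathbf y\in\mathrm{EE}(\mathcal X)^{+}$. When each $X_n$ is reduced the same argument with the sharper identities $(x\circ y)^{\sharp}=x^{\sharp}\circ y^{\sharp}$ and $\mathrm{crit}(x\circ y)=\min(\mathrm{crit}(x),\mathrm{crit}(y))$ gives closure under $\circ$.

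For Part (3), suppose $\mathbf x*\mathbf x_1*\cdots*\mathbf x_n=\mathbf x$ with $n\ge1$ and all factors in $\mathrm{EE}(\mathcal X)^{+}$. By Part (2) each partial product $\mathbf x*\mathbf x_1*\cdots*\mathbf x_k$ ($0\le k\le n$) again lies in $\mathrm{EE}(\mathcal X)^{+}$, so there is an $m$ large enough that the $m$-th coordinate of each of these finitely many threads avoids $\mathrm{Li}(X_m)$; then in the permutative LD-system $X_m$ we have $x_m*(x_1)_m*\cdots*(x_n)_m=x_m$ with every proper initial product outside $\mathrm{Li}(X_m)$, i.e. $x_m\prec x_m$, contradicting that $\preceq$ is a partial ordering on $X_m$. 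Hence $(\mathrm{EE}(\mathcal X)^{+},*)$ is acyclic. Part (4) is then immediate: a one-generated LD-system is free iff it is acyclic (Theorem~\ref{i5hg39c832id484395nv}(1)) and subalgebras of acyclic LD-systems are acyclic, so each $\mathbf x\in\mathrm{EE}(\mathcal X)^{+}$ freely generates a sub-LD-system. In the reduced case $\mathrm{Li}(X_n)=\{1_{X_n}\}$, so the only thread lying in $\mathrm{Li}$ at every coordinate is $1=(1_{X_n})_n$ and therefore $\mathrm{EE}(\mathcal X)\setminus\{1\}=\mathrm{EE}(\mathcal X)^{+}$; this set is closed under $*$ by Part (2) and acyclic under $*$ by Part (3), so for $\mathbf x\in\mathrm{EE}(\mathcal X)\setminus\{1\}$ the sub-LD-monoid it generates, whose nonidentity part lies inside $\mathrm{EE}(\mathcal X)^{+}$ and is closed under $*$, is freely generated by $\mathbf x$ by Theorem~\ref{i5hg39c832id484395nv}(2).

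The main obstacle I expect is the genericity and bookkeeping in the density arguments of Part (1): one must extract from vastness, uniformly over the countably many critical-point requirements and the one ``escape $\mathrm{Li}$'' requirement, a single thread meeting all of them while keeping the construction compatible with the inverse system and with the conventions that $\mathrm{crit}[X_n]$ sits as an initial segment of $\mathrm{crit}[X_{n+1}]$ and that $\mathrm{crit}$ is preserved off $\mathrm{Li}$. Once that is in place, Parts (2)--(4) are short, resting on the inequality $(x*y)^{\sharp}(\alpha)\le x^{\sharp}(y^{\sharp}(\alpha))$, the upward persistence of the defining condition of $\mathrm{EE}(\mathcal X)$, and the cited facts that $\preceq$ is a partial order and that acyclicity characterizes freeness.
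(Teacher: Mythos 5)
Your proof is correct and follows essentially the same plan as the paper's: part (1) via the open sets $U_\alpha$, part (2) via the key inequality $(x*y)^{\sharp}(\alpha)\le x^{\sharp}(y^{\sharp}(\alpha))$ (the paper phrases this as the equality $x^{\sharp}(y^{\sharp}(\alpha))=(x*y)^{\sharp}(x^{\sharp}(\alpha))\ge(x*y)^{\sharp}(\alpha)$, which is the same observation), part (3) via $\preceq$ being a partial order at a sufficiently large coordinate, and part (4) via the acyclicity characterization of freeness in Theorem~\ref{i5hg39c832id484395nv}. The one cosmetic difference is in the density argument of part (1): you outline a hands-on diagonalization meeting each $U_\alpha$ in turn, while the paper packages this more cleanly by showing each $U_\alpha$ is dense and invoking the Baire category theorem; both are correct and the paper's route dispenses with the bookkeeping concerns you flag at the end.
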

\begin{proof}
\begin{enumerate}
\item We shall first show that $\mathrm{EE}(\mathcal{X})$ is a $G_{\delta}$-set. For each $\alpha\in\mathrm{crit}[\mathcal{X}]$,
let $U_{\alpha}$ be the set of all
$(x_{n})_{n\in\omega}\in\varprojlim X_{n}$ such that there is some $n$ with $x_{n}^{\sharp}(\alpha)<\max(\mathrm{crit}[X_{n}])$.
Let $A$ be a countable cofinal subset of $\mathrm{crit}[\mathcal{X}]$.
Then $\mathrm{EE}(\mathcal{X})=\bigcap_{\alpha\in A}U_{\alpha}$, but each $U_{\alpha}$ is open. Therefore,
$\mathrm{EE}(\mathcal{X})$ is a $G_{\delta}$-set.

To show that $\mathrm{EE}(\mathcal{X})$ is dense, it suffices to show that each $U_{\alpha}$ is dense and apply the Baire category theorem.
Suppose that $V$ is a non-empty open subset of $\varprojlim X_{n}$ and $\alpha\in\mathrm{crit}[\mathcal{X}]$. Then there is some $n$ with $\max(\mathrm{crit}[X_{n}])>\alpha$ and some $x\in X_{n}$ where $\{(x_{n})_{n\in\omega}\in\varprojlim X_{n}\mid x_{n}=x\}\subseteq V$. Therefore, by vastness, there exists some $l>n$ and $z\in X_{l}$ where $\phi_{l,n}(z)=x$ and where $z^{\sharp}(\alpha)<\max(\mathrm{crit}[X_{l}])$. Therefore, let $(x_{n})_{n\in\omega}\in\varprojlim X_{n}$ be a thread where $x_{l}=z,x_{n}=x$. Then $(x_{n})_{n\in\omega}\in V\cap U_{\alpha}$. Therefore, each $U_{\alpha}$ is dense, hence $\mathrm{EE}(\mathcal{X})$ is dense as well.

It is clear that the set $\mathrm{EE}^{+}(\mathcal{X})$ is open in $\mathrm{EE}(\mathcal{X})$. To show that
$\mathrm{EE}^{+}(\mathcal{X})$ is dense in $\mathrm{EE}(\mathcal{X})$, assume that $U$ is a non-empty open subset of
$\varprojlim_{n}X_{n}$. Then there is some $n$ along with some $x\in X_{n}$ where if $x_{n}=x$, then
$(x_{n})_{n\in\omega}\in U$. If $x\not\in\mathrm{Li}(X_{n})$, then
\[\emptyset\neq\{(x_{n})_{n\in\omega}\mid x_{n}=x\}\cap\mathrm{EE}(\mathcal{X})\subseteq\mathrm{EE}^{+}(\mathcal{X})\cap U.\]
If $x\in\mathrm{Li}(X_{n})$, then by vastness, there is some $l>n$ and some $y\in X_{l}\setminus\mathrm{Li}(X_{l})$ with $\phi_{l,n}(y)=x$.
Therefore, 
\[\emptyset\neq\{(x_{n})_{n\in\omega}\mid x_{l}=y\}\cap\mathrm{EE}(\mathcal{X})\subseteq\mathrm{EE}^{+}(\mathcal{X})\cap U.\]

\item Suppose that $(x_{n})_{n\in\omega},(y_{n})_{n\in\omega}\in \mathrm{EE}(\mathcal{X})$. Then for each $\alpha$, there is some
$n$ where $y_{n}^{\sharp}(\alpha)<\max(\mathrm{crit}[X_{n}])$. Now if $m\geq n$, then $y_{n}^{\sharp}(\alpha)=y_{m}^{\sharp}(\alpha)$. Therefore, there is some
$\beta\in \mathrm{crit}[\mathcal{X}]$ with $y_{m}^{\sharp}(\alpha)=\beta$ for all $m\geq n$. Now, there is some $m\geq n$ such that
$x_{m}^{\sharp}(\beta)<\max(\mathrm{crit}[X_{m}])$. Therefore, we have 
\[\max(\mathrm{crit}[X_{m}])>x_{m}^{\sharp}(\beta)=x_{m}^{\sharp}(y_{m}^{\sharp}(\alpha))\]
\[=(x_{m}*y_{m})^{\sharp}(x_{m}^{\sharp}(\alpha))\geq(x_{m}*y_{m})^{\sharp}(\alpha).\]
Therefore, $(x_{n}*y_{n})_{n\in\omega}\in \mathrm{EE}(\mathcal{X})$. If each $X_{n}$ is reduced, then 
\[(x_{m}\circ y_{m})^{\sharp}(\alpha)=x_{m}^{\sharp}(y_{m}^{\sharp}(\alpha))=x_{m}^{\sharp}(\beta)<\max(\mathrm{crit}[X_{m}]),\]
so $(x_{n})_{n\in\omega}\circ(y_{n})_{n\in\omega}\in \mathrm{EE}(\mathcal{X})$ as well. Thus, $\mathrm{EE}(\mathcal{X})$ is closed under $*$ and if each $X_{n}$ is an LD-monoid, then $\mathrm{EE}(\mathcal{X})$ is also closed under $\circ$.

I now claim that $\mathrm{EE}^{+}(\mathcal{X})$ is closed under $*$. Suppose that 
\[(x_{n})_{n\in\omega},(y_{n})_{n\in\omega}\in \mathrm{EE}^{+}(\mathcal{X}).\]
Then there is some $n$ where $y_{n}\not\in \mathrm{Li}(X_{n})$. In particular, there exists some $\alpha$ such that
for all $m\geq n$, we have $\mathrm{crit}(y_{m})=\mathrm{crit}(y_{n})=\alpha$.

Now, since $(x_{n})_{n\in\omega}\in \mathrm{EE}(\mathcal{X})$, there is some $m$ where $x_{m}^{\sharp}(\alpha)\not\in \mathrm{Li}(X_{m})$. Therefore, we have $\mathrm{crit}(x_{m}*y_{m})=x_{m}^{\sharp}(\alpha)<\max(\mathrm{crit}[X_{m}])$, so $x_{m}*y_{m}\not\in \mathrm{Li}(X_{m})$, hence $(x_{n})_{n\in\omega}*(y_{n})_{n\in\omega}\in \mathrm{EE}^{+}(\mathcal{X})$. We conclude that $\mathrm{EE}^{+}(\mathcal{X})$ is closed under $*$.

Now suppose that $(x_{n})_{n\in\omega},(y_{n})_{n\in\omega}\in \mathrm{EE}^{+}(\mathcal{X})$ and each $X_{n}$ is an LD-monoid. Then there is some $n$ where $x_{n}\neq 1$. Therefore, $x_{n}\circ y_{n}\neq 1$, so $(x_{n})_{n\in\omega}\circ(y_{n})_{n\in\omega}\in \mathrm{EE}^{+}(\mathcal{X})$. Therefore,
$\mathrm{EE}^{+}(\mathcal{X})$ is closed under $\circ$ whenever each $X_{n}$ is an LD-monoid.

\item Suppose to the contrary that $\mathrm{EE}^{+}(\mathcal{X})$ is not acyclic. Then there are
$(x_{n,1})_{n},\ldots,(x_{n,k})_{n}$ with $(x_{n,1})_{n}*\ldots*(x_{n,k})_{n}=(x_{n,1})_{n}$. Thus, there is some $n$ with
$x_{n,1}*\ldots*x_{n,j}\not\in \mathrm{Li}(X_{n})$ for $1\leq j\leq k$ and where $x_{n,1}*\ldots*x_{n,k}=x_{n,1}$ which is a contradiction. Therefore,
$\mathrm{EE}^{+}(\mathcal{X})$ is acyclic.

\item This follows from the fact that $\mathrm{EE}^{+}(\mathcal{X})$ is acyclic and from Theorem \ref{i5hg39c832id484395nv}.
\end{enumerate}
\end{proof}
We shall give every permutative LD-system the discrete topology and the inverse limit $\varprojlim_{n\in\omega}X_{n}$ the inverse limit topology. A subspace $Y$ of a complete metric space $X$ is a $G_{\delta}$-set if and only if $Y$ can be endowed with a complete metric that induces the subspace topology on $Y$. Therefore, since $\mathrm{EE}(\mathcal{X})$ is a $G_{\delta}$-subset of $\varprojlim X_{n}$, the space $\mathrm{EE}(\mathcal{X})$ can be endowed with a compatible complete metric.

Suppose that $\mathcal{E}_{\lambda}$ contains a non-trivial elementary embedding. Then give
$\mathcal{E}_{\lambda}$ the topology where we let a set $U\subseteq\mathcal{E}_{\lambda}$ be open if 
whenever $j\in U$ there is some $\gamma<\lambda$ such that if
$k\in\mathcal{E}_{\lambda}$ and $j|_{V_{\gamma}}=k|_{V_{\gamma}}$ then $k\in U$ as well. The mappings application $*$ and composition $\circ$ are both continuous operations on $\mathcal{E}_{\lambda}$.

The topological space $\mathcal{E}_{\lambda}$ is metrizable by the following metric $d$. Suppose that
$(\kappa_{n})_{n\in\omega\setminus\{0\}}$ is an increasing cofinal sequence of cardinals in $V_{\lambda}$. Then define
$d(j,k)=0$ if $j=k$ and otherwise define $d(j,k)=\frac{1}{n}$ where $n$ is the smallest natural number with $j|_{V_{\kappa_{n}}}\neq k|_{V_{\kappa_{n}}}$.

While the metric $d$ on $\mathcal{E}_{\lambda}$ depends on the choice of the sequence of cardinals $(\kappa_{n})_{n\in\omega\setminus\{0\}}$, the topology and also the uniformity generated by the metric space $(\mathcal{E}_{\lambda},d)$ does not depend on the choice of cardinals. Furthermore, since the notions of a Cauchy sequence and completeness only depend on the uniformity generated by $d$, the collection of all Cauchy sequences on $(\mathcal{E}_{\lambda},d)$ does not depend on the choice of the sequence of cardinals $(\kappa_{n})_{n\in\omega\setminus\{0\}}$. The metric $(\mathcal{E}_{\lambda},d)$ is complete and $d$ satisfies the strong triangle inequality $d(j,l)\leq \max(d(j,k),d(k,l))$

\begin{prop}
Suppose that $j,k\in\mathcal{E}_{\lambda}$ and $\gamma$ is a limit ordinal. 
\begin{enumerate}
\item If $j|_{V_{\gamma}}=k|_{V_{\gamma}}$, then $j\equiv^{\gamma}k$

\item If $j\equiv^{j(\gamma)}k$ then $j|_{V_{\gamma}}=k|_{V_{\gamma}}$
\end{enumerate}
\end{prop}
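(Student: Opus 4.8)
The plan is to prove both parts directly from the definitions of $\equiv^{\gamma}$ and of the restriction $j|_{V_{\gamma}}$, using only basic properties of elementary embeddings and the fact that $\gamma$ is a limit ordinal. Recall that $j\equiv^{\gamma}k$ means $j(x)\cap V_{\gamma}=k(x)\cap V_{\gamma}$ for all $x\in V_{\gamma}$ (equivalently, for all $x\in V_{\lambda}$), while $j|_{V_{\gamma}}=k|_{V_{\gamma}}$ is the much stronger assertion that $j(x)=k(x)$ for all $x\in V_{\gamma}$.

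For part (1), suppose $j|_{V_{\gamma}}=k|_{V_{\gamma}}$. I want $j(x)\cap V_{\gamma}=k(x)\cap V_{\gamma}$ for each $x\in V_{\gamma}$. Fix $x\in V_{\gamma}$; since $\gamma$ is a limit ordinal, $x\in V_{\beta}$ for some $\beta<\gamma$, and then $x\subseteq V_{\beta}\subseteq V_{\gamma}$, so every element $y\in x$ lies in $V_{\gamma}$. By hypothesis $j(y)=k(y)$ for all $y\in x$. Now if $z\in j(x)\cap V_{\gamma}$, then by elementarity $z\in j(x)$ gives some $y\in x$ with... actually the cleanest route is: $j(x)\cap V_{\gamma}=\{j(y)\mid y\in x\}\cap V_{\gamma}$ is false in general, so instead I argue that an element $z$ of rank $<\gamma$ belongs to $j(x)$ iff it belongs to $k(x)$, by noting $j(x)\cap V_{\beta'} = j(x\cap V_{\beta'})\cap V_{j(\beta')}$ type reasoning; more simply, since $x\in V_{\beta+1}$, we have $j(x)=k(x)$ outright already when $\beta+1<\gamma$, and for the remaining case one uses $j(x)\cap V_{\gamma}=\bigcup_{\beta<\gamma}j(x\cap V_{\beta})$ combined with $j(x\cap V_\beta)=j|_{V_\gamma}(x\cap V_\beta)=k(x\cap V_\beta)$. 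Either way the conclusion $j\equiv^{\gamma}k$ follows.

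For part (2), suppose $j\equiv^{j(\gamma)}k$; I must show $j(x)=k(x)$ for all $x\in V_{\gamma}$. The key observation is that for $x\in V_{\gamma}$ we have $j(x)\in V_{j(\gamma)}$ by elementarity (since $x\in V_\gamma$ is expressible and $j(V_\gamma)=V_{j(\gamma)}$), and likewise $k(x)\in V_{j(\gamma)}$ — wait, this needs $k(V_\gamma)=V_{j(\gamma)}$, which follows because $\equiv^{j(\gamma)}$ forces $k(V_\gamma)\cap V_{j(\gamma)}=j(V_\gamma)\cap V_{j(\gamma)}=V_{j(\gamma)}$, hence $V_{j(\gamma)}\subseteq k(V_\gamma)$, and the reverse containment is elementarity applied to $k$ once we know $\mathrm{crit}(k)$ is large enough; but in fact $x\in V_\gamma$ implies $k(x)\in V_{k(\gamma)}$ and we need $k(\gamma)=j(\gamma)$, which again comes from $\equiv^{j(\gamma)}$. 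Granting $j(x),k(x)\in V_{j(\gamma)}$, apply the definition of $\equiv^{j(\gamma)}$ to the element $x\in V_{j(\gamma)}$: it yields $j(x)\cap V_{j(\gamma)}=k(x)\cap V_{j(\gamma)}$, i.e. $j(x)=k(x)$ since both sets are already contained in $V_{j(\gamma)}$. Thus $j|_{V_\gamma}=k|_{V_\gamma}$.

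\textbf{Main obstacle.} The delicate point is part (2): showing that $k$ maps $V_\gamma$ into $V_{j(\gamma)}$ (equivalently that $k(\gamma)=j(\gamma)$), so that the witnesses $j(x)$ and $k(x)$ actually sit inside the level $V_{j(\gamma)}$ where $\equiv^{j(\gamma)}$ has bite. I expect to handle this by first deriving from $j\equiv^{j(\gamma)}k$ that $k(\alpha)=j(\alpha)$ for every ordinal $\alpha<\gamma$ — applying the congruence to ordinals and using that an ordinal's value is determined by its intersection with $V_{j(\gamma)}$ when that value is $<j(\gamma)$, which holds since $j(\alpha)<j(\gamma)$ for $\alpha<\gamma$ — and hence, since $\gamma$ is a limit, $k(\gamma)=\sup_{\alpha<\gamma}k(\alpha)=\sup_{\alpha<\gamma}j(\alpha)=j(\gamma)$. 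Once $k(\gamma)=j(\gamma)$ is in hand, everything else is a routine unwinding of definitions.
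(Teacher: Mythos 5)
The paper states this proposition without proof, so I am evaluating your argument on its own merits rather than against a reference proof.

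Part (1) is correct but far more complicated than it needs to be: by hypothesis $j(x)=k(x)$ for every $x\in V_{\gamma}$, and hence trivially $j(x)\cap V_{\gamma}=k(x)\cap V_{\gamma}$ for every $x\in V_{\gamma}$, which is precisely $j\equiv^{\gamma}k$. The limit-ordinal manoeuvring and the worry about a ``remaining case'' are all unnecessary; since $\gamma$ is a limit, any $x\in V_{\gamma}$ already lies in $V_{\beta}$ with $\beta+1<\gamma$, so there is no edge case.

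Part (2) contains a genuine error. You write ``since $\gamma$ is a limit, $k(\gamma)=\sup_{\alpha<\gamma}k(\alpha)=\sup_{\alpha<\gamma}j(\alpha)=j(\gamma)$.'' Elementary embeddings are \emph{not} in general continuous at limit ordinals: at $\gamma=\mathrm{crit}(j)$ one has $\sup_{\alpha<\gamma}j(\alpha)=\gamma<j(\gamma)$, so both the first and third equalities fail there. Worse, from the hypothesis $j\equiv^{j(\gamma)}k$ one can only derive $k(\gamma)\geq j(\gamma)$ (apply the congruence to $\gamma\in V_{j(\gamma)}$, giving $j(\gamma)=\min(k(\gamma),j(\gamma))$); strict inequality is not ruled out, since the congruence $\equiv^{j(\gamma)}$ is ``cut off'' at $j(\gamma)$ and cannot see the value of $k(\gamma)$ above that level. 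So $k(\gamma)=j(\gamma)$ is simply not a consequence of the hypothesis and must be abandoned.

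Fortunately the repair is local and everything else you wrote is fine. You correctly proved $k(\alpha)=j(\alpha)$ for all $\alpha<\gamma$, and that is all you need: given $x\in V_{\gamma}$ with $\gamma$ a limit, pick $\alpha<\gamma$ with $x\in V_{\alpha}$. Then $j(x)\in j(V_{\alpha})=V_{j(\alpha)}$ and $k(x)\in k(V_{\alpha})=V_{k(\alpha)}=V_{j(\alpha)}$, and $j(\alpha)<j(\gamma)$, so both $j(x)$ and $k(x)$ lie in $V_{j(\gamma)}$. Now the congruence applied to $x\in V_{\gamma}\subseteq V_{j(\gamma)}$ gives $j(x)=j(x)\cap V_{j(\gamma)}=k(x)\cap V_{j(\gamma)}=k(x)$, as desired. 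In short: drop the continuity-at-$\gamma$ step and go through the level $\alpha<\gamma$ instead of through $\gamma$ itself.
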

\begin{cor}
A subset $U\subseteq\mathcal{E}_{\lambda}$ is open if and only if whenever $j\in U$ there is some $\gamma<\lambda$ so that if
$k\in\mathcal{E}_{\lambda}$ and $j\equiv^{\gamma}k$ then $k\in U$ as well.
\end{cor}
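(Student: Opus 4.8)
The plan is to deduce the corollary directly from the two clauses of the preceding Proposition, from the definition of the topology on $\mathcal{E}_{\lambda}$ (a set $U$ is open iff for every $j\in U$ there is a $\gamma<\lambda$ such that $j|_{V_{\gamma}}=k|_{V_{\gamma}}$ implies $k\in U$), and from two easy monotonicity facts: enlarging $\gamma$ strengthens the hypothesis $j|_{V_{\gamma}}=k|_{V_{\gamma}}$, and $\equiv^{\beta}\subseteq\equiv^{\alpha}$ whenever $\alpha\leq\beta$ (the latter for $\mathcal{E}_{\lambda}$ is immediate from the characterization $j\equiv^{\gamma}k\iff j(x)\cap V_{\gamma}=k(x)\cap V_{\gamma}$ for all $x$, together with $V_{\alpha}\subseteq V_{\beta}$). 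The one normalization used in both directions is that the witnessing ordinal $\gamma$ may always be replaced by a limit ordinal below $\lambda$ (say $\gamma+\omega$) without spoiling the relevant implication, so that $\equiv^{\gamma}$ is defined and clauses (1), (2) of the Proposition apply.

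For the forward direction, let $U$ be open and $j\in U$, and choose $\gamma<\lambda$ witnessing openness of $U$ at $j$; enlarge $\gamma$ to a limit ordinal, still below $\lambda$, the implication being preserved since we only strengthen its hypothesis. By elementarity of $j$, the ordinal $j(\gamma)$ is again a limit ordinal below $\lambda$. Clause (2) of the Proposition gives $j\equiv^{j(\gamma)}k\Rightarrow j|_{V_{\gamma}}=k|_{V_{\gamma}}\Rightarrow k\in U$, so $j(\gamma)$ is the ordinal required by the corollary.

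For the reverse direction, suppose that for each $j\in U$ there is $\gamma<\lambda$ with $j\equiv^{\gamma}k\Rightarrow k\in U$; enlarge $\gamma$ to a limit ordinal, this time using $\equiv^{\beta}\subseteq\equiv^{\alpha}$ for $\alpha\leq\beta$ to see that the implication survives. Clause (1) of the Proposition gives $j|_{V_{\gamma}}=k|_{V_{\gamma}}\Rightarrow j\equiv^{\gamma}k\Rightarrow k\in U$, which is exactly the openness condition at $j$; as $j\in U$ was arbitrary, $U$ is open.

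There is essentially no obstacle: the only points needing a moment's attention are the limit-ordinal normalization and the observations that $j(\gamma)<\lambda$ (immediate since $j$ maps $V_{\lambda}$ into itself and $\gamma<\lambda$) and that $j(\gamma)$ is a limit ordinal (immediate by elementarity). If one prefers a more uniform presentation, one can first establish the intermediate equivalence ``$U$ is open iff for every $j\in U$ there is a limit ordinal $\gamma<\lambda$ with $j\equiv^{\gamma}k\Rightarrow k\in U$'' and then note that allowing arbitrary $\gamma<\lambda$ rather than only limit ordinals does not enlarge the family of sets so described; but the direct argument above is the shortest route.
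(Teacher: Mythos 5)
Your proof is correct and is precisely the argument the paper leaves implicit: read the corollary off the two clauses of the preceding proposition, after first normalizing the witnessing ordinal to a limit ordinal (harmless since enlarging $\gamma$ only strengthens $j|_{V_\gamma}=k|_{V_\gamma}$, and $\equiv^\beta\subseteq\equiv^\alpha$ for $\alpha\leq\beta$), and observing by elementarity that $j(\gamma)$ is again a limit ordinal below $\lambda$. Nothing is missing.
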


Suppose that $(\kappa_{n})_{n\in\omega\setminus\{0\}}$ is a cofinal sequence of limit ordinals in $\lambda$. Consider the metric $\rho$ on $\mathcal{E}_{\lambda}$ defined by letting $\rho(j,k)=\frac{1}{n}$ where $j\neq k$ and $n$ is the least natural number with $j\not\equiv^{\kappa_{n}}k$ and $\rho(j,k)=0$ if $j=k$. Then, from the above corollary, we conclude that the metrics $d$ and $\rho$ generate the same topology on $\mathcal{E}_{\lambda}$. However,
the metrics $d$ and $\rho$ generate different uniformities on $\mathcal{E}_{\lambda}$, and the metric $\rho$ is inadequate since the metric space $(\mathcal{E}_{\lambda},\rho)$ is not complete.

\begin{thm}
Suppose that $(\kappa_{n})_{n\in\omega}$ is an increasing sequence of cardinals cofinal in $\lambda$ such that $\kappa_{n}=\mathrm{crit}(j_{n})$ for some $j_{n}\in\mathcal{E}_{\lambda}$ whenever $n\in\omega$.
Define a mapping $\Gamma:\mathcal{E}_{\lambda}\rightarrow \mathrm{EE}((\mathcal{E}_{\lambda}/\equiv^{\kappa_{n}})_{n\in\omega})$ by letting
$\Gamma(j)=([j]_{\kappa_{n}})_{n\in\omega}$. Then the mapping $\Gamma$ is a homeomorphism between topological spaces and
an isomorphism between LD-monoids.
\label{4th2u9it4}
\end{thm}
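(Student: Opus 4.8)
The plan is to verify separately that $\Gamma$ is (a) well-defined into $\mathrm{EE}((\mathcal{E}_{\lambda}/\equiv^{\kappa_{n}})_{n\in\omega})$, (b) an LD-monoid isomorphism onto its image, and (c) a homeomorphism, and then to check that the image is all of $\mathrm{EE}((\mathcal{E}_{\lambda}/\equiv^{\kappa_{n}})_{n\in\omega})$. First I would observe that $\Gamma(j)$ is a genuine thread: since $\kappa_{n}<\kappa_{n+1}$ and $\equiv^{\kappa_{n+1}}$ refines $\equiv^{\kappa_{n}}$, the transitional map sends $[j]_{\kappa_{n+1}}$ to $[j]_{\kappa_{n}}$. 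To see that the thread lies in $\mathrm{EE}$, fix a critical point $\alpha$ of the inverse system; it equals $\mathrm{crit}(k)$ for some $k\in\mathcal{E}_{\lambda}$ with $\mathrm{crit}(k)<\kappa_{m}$ for large $m$, and then $[j]_{\kappa_{m}}^{\sharp}(\alpha)=\mathrm{crit}(j*k)=j(\mathrm{crit}(k))$, which by the Laver--Steel theorem (Theorems \ref{j3rti90}, \ref{235wfe}, in the guise that $\mathrm{crit}(j*k)<\lambda$ hence $<\kappa_{n}$ for some $n$) is eventually below $\max(\mathrm{crit}[\mathcal{E}_{\lambda}/\equiv^{\kappa_{m}}])$. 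This uses that each $\mathcal{E}_{\lambda}/\equiv^{\gamma}$ is permutative with a unique left-identity (Proposition \ref{t4r4ghtj0qhyqfha}).

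Next I would check algebraic faithfulness. Injectivity of $\Gamma$ is exactly the statement $j\equiv^{\kappa_{n}}k$ for all $n$ implies $j=k$, which holds because $(\kappa_{n})_{n}$ is cofinal in $\lambda$ and $j\equiv^{\gamma}k$ iff $j(x)\cap V_{\gamma}=k(x)\cap V_{\gamma}$ for all $x$; taking the union over $\gamma=\kappa_{n}$ forces $j=k$. That $\Gamma$ is an LD-monoid homomorphism is immediate since $\equiv^{\kappa_{n}}$ is a congruence on $(\mathcal{E}_{\lambda},*,\circ)$, so $[j*k]_{\kappa_{n}}=[j]_{\kappa_{n}}*[k]_{\kappa_{n}}$ and likewise for $\circ$, and $\Gamma(1_{V_{\lambda}})$ is the thread of left-identities. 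The work is surjectivity onto $\mathrm{EE}$: given a thread $([l_{n}]_{\kappa_{n}})_{n}$ with the vastness-style property, I would build an actual elementary embedding $j:V_{\lambda}\to V_{\lambda}$ agreeing with $l_{n}$ modulo $\equiv^{\kappa_{n}}$. The idea is that the condition defining $\mathrm{EE}$ guarantees that for each $x\in V_{\lambda}$ the values $l_{n}(x)\cap V_{\kappa_{m}}$ stabilize as $n$ grows (using $[l_{n}]_{\kappa_{m}}^{\sharp}(\alpha)<\max$ to control critical points and hence the ranks of the images), so one can set $j(x)=\bigcup_{m}(\text{stable value of }l_{n}(x)\cap V_{\kappa_{m}})$; elementarity of $j$ follows because each $l_{n}$ is elementary and $\equiv^{\kappa_{n}}$ preserves all the relevant first-order data below $V_{\kappa_{n}}$, then one takes the limit. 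Here I would lean on the fact that $\mathcal{E}_{\lambda}$ is isomorphic to a dense $G_{\delta}$ subset of the inverse limit (item (4) of the introduction) together with Proposition \ref{4tji20t24hu}, or reprove the stabilization directly.

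Finally, for the topological statement I would note that both $\mathcal{E}_{\lambda}$ (with the topology whose basic opens are determined by agreement modulo some $\equiv^{\gamma}$, equivalently modulo some $\kappa_{n}$ by the corollary preceding the theorem) and $\varprojlim_{n}\mathcal{E}_{\lambda}/\equiv^{\kappa_{n}}$ carry the natural inverse-limit topology with the same basic clopen sets $\{j:[j]_{\kappa_{n}}=c\}$; $\Gamma$ matches these bijectively, so $\Gamma$ and $\Gamma^{-1}$ are continuous, and $\Gamma$ restricts to a homeomorphism onto the $G_{\delta}$ set $\mathrm{EE}$. The main obstacle I anticipate is the surjectivity/stabilization argument: one must show that an abstract thread satisfying only the eventual-non-maximality of $x_{m}^{\sharp}(\alpha)$ actually assembles into a total, well-defined, \emph{elementary} map $V_{\lambda}\to V_{\lambda}$ rather than merely a coherent family of congruence classes — this is where the real set-theoretic content (coherence of the truncations $l_{n}|_{V_{\kappa_{m}}}$ and a Łoś/reflection-type argument for elementarity in the limit) enters, and it is essentially the content of the claim that $\mathcal{E}_{\lambda}$ sits densely inside $\varprojlim_{\gamma}\mathcal{E}_{\lambda}/\equiv^{\gamma}$.
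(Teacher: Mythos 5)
Your proposal is correct and follows essentially the same route as the paper: homomorphism and injectivity from the congruence property and cofinality, surjectivity by showing the thread assembles (via the $\mathrm{EE}$ condition controlling $l_n(\gamma)$) into an actual elementary embedding, and a homeomorphism by matching the basic clopen sets. The paper packages the surjectivity step slightly more crisply by observing that any representative sequence of the thread is Cauchy in the complete metric $d$ (as opposed to the non-complete metric $\rho$) and invoking completeness of $(\mathcal{E}_\lambda,d)$ — this is exactly the "stabilization plus Łoś/reflection" content you anticipated, and the $\mathrm{EE}$ condition is used precisely to upgrade $\equiv^{\kappa_n}$-agreement to agreement of the actual restrictions $j_n|_{V_\gamma}$.
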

\begin{proof}
The mapping $\Gamma$ is clearly a homomorphism between LD-monoids. Furthermore, the mapping $\Gamma$ is injective since if
$j\neq k$ then $j(x)\neq k(x)$ for some $x$, hence $j(x)\cap V_{\kappa_{n}}\neq k(x)\cap V_{\kappa_{n}}$ for some $n$, so
$j\not\equiv^{\kappa_{n}}k$, thus $[j]_{n}\neq[k]_{n}$. Therefore, we conclude that $\Gamma(j)\neq\Gamma(k)$.

We shall now prove that the mapping $\Gamma$ is surjective. Suppose therefore that
\[([j_{n}]_{n})_{n\in\omega}\in \mathrm{EE}((\mathcal{E}_{\lambda}/\equiv^{\kappa_{n}})_{n\in\omega}).\]
I now claim that the sequence
$([j_{n}]_{n})_{n\in\omega}$ is Cauchy in the metric space $(\mathcal{E}_{\lambda},d)$.

Suppose that $\gamma<\lambda$. Then since $([j_{n}]_{n})_{n\in\omega}\in \mathrm{EE}((\mathcal{E}_{\lambda}/\equiv^{\kappa_{n}})_{n\in\omega})$, there is some
$N$ where $j_{N}(\gamma)<\kappa_{N}$. Therefore, we have $j_{n}(\gamma)<\kappa_{N}$ for all $n\geq N$. Thus, for
$m,n\geq N$ and $x\in V_{\gamma}$, we have
\[j_{m}(x)=j_{m}(x)\cap V_{\kappa_{N}}=j_{n}(x)\cap V_{\kappa_{N}}=j_{n}(x).\]
Thus, since
$j_{m}|_{V_{\gamma}}=j_{n}|_{V_{\gamma}}$ for all $m,n\geq N$, the sequence $(j_{n})_{n}$ is Cauchy. By the completeness of
$\mathcal{E}_{\lambda}$, there is some $j\in\mathcal{E}_{\lambda}$ with $j_{n}\rightarrow j$ in $\mathcal{E}_{\lambda}$.

Now, since $j_{n}\rightarrow j$, for all $n$ there is some $N\geq n$ where if $m\geq N$, then
$j_{m}|_{V_{\kappa_{n}}}=j|_{V_{\kappa_{n}}}$ which implies that
\[[j]_{\kappa_{n}}=[j_{m}]_{\kappa_{n}}=[j_{n}]_{\kappa_{n}}.\]
Therefore, $\Gamma(j)=([j_{n}])_{n\in\omega}$.

I now claim that $\Gamma$ is a homeomorphism. If $j_{r}\rightarrow j$, then $[j_{r}]_{n}\rightarrow[j]_{n}$ for all $n$, so
\[\Gamma(j_{r})=([j_{r}]_{n})_{n}\rightarrow([j]_{n})_{n}=\Gamma(j),\]
and thus $\Gamma$ is continuous.

Now suppose that $\Gamma(j_{r})\rightarrow\Gamma(j)$. Then let $\gamma<\lambda$ and let $\kappa_{n}>j(\gamma)$. Then
$[j_{r}]_{n}\rightarrow[j]_{n}$, so there is some $r$ where for all $s\geq r$, we have 
\[j_{s}(x)\cap V_{\kappa_{n}}=j(x)\cap V_{\kappa_{n}}.\]

However, since $j(\gamma)<\kappa_{n}$, if $x\in V_{\gamma}$, then 
\[j_{s}(x)=j_{s}(x\cap V_{\gamma})=j_{s}(x)\cap j_{s}(V_{\gamma})=
j_{s}(x)\cap V_{j_{s}(\gamma)}=j_{s}(x)\cap V_{j(\gamma)}\]
\[=j(x)\cap V_{j(\gamma)}=j(x)\cap j(V_{\gamma})=j(x\cap V_{\gamma})=j(x).\]

Therefore, $j_{r}\rightarrow j$, so $\Gamma^{-1}$ is continuous as well.
\end{proof}

By Theorem \ref{4th2u9it4}, we conclude that the algebras of the form $\mathrm{EE}(\mathcal{X})$ where $\mathcal{X}$ is a vast system of permutative LD-systems forms an algebraic model of the algebra of elementary embeddings $\mathcal{E}_{\lambda}$. 

The following development works to embed the free LD-systems and free LD-monoids into the algebras of the form
$\mathrm{EE}(\mathcal{X})$.
\begin{thm}
\begin{enumerate}
\item The free LD-monoid on one generator does not contain a free LD-system on two generators.

\item The free LD-system on two generators contains a free LD-subsystem on infinitely many generators.

\item The free LD-monoid on two generators contains a free LD-monoid on infinitely many generators.
\end{enumerate}
\end{thm}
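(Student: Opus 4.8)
The plan is to handle the negative assertion (1) and the two positive assertions (2), (3) by separate arguments, in each case reducing to the normal-form theory of free left-distributive structures of Dehornoy and Larue --- precisely the body of ZFC facts about free LD-systems and free LD-monoids that this section borrows from \cite{D}. The genuinely hard step is (1); parts (2) and (3) are routine once the normal form is in hand, and can in addition be re-derived inside this paper's own framework via the $\mathrm{EE}(\cdot)$ construction. So the main obstacle will be (1).

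For (1): write $\mathbf F$ for the free LD-monoid on one generator $x$. By Theorem \ref{i5hg39c832id484395nv}(2), $\mathbf F\setminus\{1\}$ is closed under $*$ and $(\mathbf F\setminus\{1\},*)$ is acyclic, and by the theorem just preceding it the element $x$ already freely generates a sub-LD-system of $(\mathbf F,*)$. Suppose toward a contradiction that some sub-LD-system $G=\langle p,q\rangle\subseteq\mathbf F$ were free on $\{p,q\}$. A free LD-system is acyclic and hence has empty set of left identities; in particular $1\notin G$, since $1$, being a left identity of all of $\mathbf F$, would be a left identity of any subalgebra containing it. Thus $G\subseteq\mathbf F\setminus\{1\}$. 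I would then invoke Dehornoy's theorem that the left-divisibility relation ``$u\sqsubset v$ iff $v=u*c_1*\cdots*c_k$ for some $k\ge 1$'' is a strict linear order on the free monogenerated LD-system, together with its extension to $\mathbf F\setminus\{1\}$ (any two elements are $\sqsubset$-comparable or equal, and $(\mathbf F\setminus\{1\},*)$ is left-cancellative), and use that this ``linear, left-cancellative'' structure is incompatible via the normal form with harbouring a rank-$2$ free LD-system, whose two generators are $\sqsubset$-incomparable and share no common left-multiple. This is exactly the delicate point: the permutative/Laver-like machinery of Sections 2--5 does not apply, because $\mathbf F$ is not permutative (the Laver--Steel conclusion fails for $\mathbf F$ itself, holding only for its finite quotients), so I would execute this step by the methods of \cite{D} rather than re-derive it here.

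For (2) and (3): let $G_2$ be the free LD-system on $\{a,b\}$ and $\mathbf F_2$ the free LD-monoid on $\{a,b\}$; by the theorem preceding \ref{i5hg39c832id484395nv}, $\{a,b\}$ freely generates a sub-LD-system of $(\mathbf F_2,*)$, which is therefore a copy of $G_2$ inside $\mathbf F_2$. For (2) I would exhibit an explicit countable family, for instance $c_0=b$ and $c_{n+1}=a*c_n$ (so $c_n=a^n*b$), or the right-power variant $c_n=a^{[n]}*b$, and verify by Dehornoy's confluence/normal-form theorem for free LD-systems that $\{c_n:n\in\omega\}$ freely generates a sub-LD-system of $G_2$, i.e.\ that no LD-identity holds among the $c_n$ beyond those forced by left self-distributivity. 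For (3) I would take the sub-LD-monoid $K$ of $\mathbf F_2$ generated by $\{c_n\}$: its $*$-subalgebra on the $c_n$ is free on them by (2), and $K\setminus\{1\}$ is acyclic and closed under $*$ and $\circ$, and the normal-form and left-cancellativity results of \cite{D} for free LD-monoids then force $K$ to be free on $\{c_n\}$ as an LD-monoid. Alternatively, under a large-cardinal hypothesis one may realize $G_2$ as the sub-LD-system of $\mathcal E_\lambda$ generated by suitable $j,k$, choose inside it a critically generic sequence $m_0,m_1,\dots$ with $\mathrm{crit}(m_0)<\mathrm{crit}(m_1)<\cdots$ cofinal in $\lambda$, and appeal to the fact (cf.\ Proposition \ref{4tji20t24hu} and Theorem \ref{4th2u9it4}) that such sequences generate free sub-LD-systems and, when one has $\circ$, free sub-LD-monoids; but the ZFC normal-form route is cleaner, and beyond \cite{D} the only new input for (2) and (3) is bookkeeping.
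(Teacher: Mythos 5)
The paper itself gives no proof of this theorem: it is covered by the blanket remark at the start of the section attributing all results about free LD-systems and free LD-monoids that do not concern permutative LD-systems to \cite{D}. So there is no in-text argument to compare against, and at the level of ``ultimately defer to Dehornoy's normal-form and acyclicity machinery'' your proposal and the paper's treatment coincide.

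That said, your sketch for (1) has a real gap that the phrase ``incompatible via the normal form'' papers over. To contradict linearity of $\sqsubset$ on $\mathbf F\setminus\{1\}$, you need the two hypothetical free generators $p,q$ of $G\subseteq\mathbf F\setminus\{1\}$ to be $\sqsubset$-incomparable \emph{in $\mathbf F$}, not merely in $G$. Freeness of $G$ on $\{p,q\}$ does rule out an equation $q=p*c_1*\cdots*c_k$ with all $c_i\in G$ (map $q\mapsto p$ and contradict acyclicity), but it says nothing about witnesses $c_i\in\mathbf F\setminus G$, and an embedding of $G$ into $\mathbf F$ need not preserve $\sqsubset$-incomparability of the generators. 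Closing this is exactly where the finer absorption/comparison structure of the one-generated free LD-monoid in \cite{D} has to be invoked; ``linear plus left-cancellative'' alone does not deliver the contradiction. You do flag the step as delicate and defer it, which is consistent with the paper, but the sketch as written misidentifies which incomparability statement is the one that needs proving. Two smaller remarks on (2),(3): the specific sequences $c_n=a^n*b$ or $c_n=a^{[n]}*b$ still require the normal-form verification you defer to \cite{D}, and the alternative route through $\mathrm{EE}(\cdot)$ and $\mathcal E_\lambda$ is only a conditional (large-cardinal) proof of an unconditional theorem, so it cannot replace the ZFC argument, only corroborate it.
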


The free LD-system on multiple generators computably embeds into the charged braid group and also the Larue group \cite{D9901},\cite{DL1994a}. Since the word problem for the Larue group is solvable, the word problem for the free LD-systems on multiple generators is also solvable. Likewise, the word problem for the free LD-monoid on an arbitrary number of generators is solvable. It is unknown whether the variety generated by the free LD-system on one generator is the variety of all LD-systems.

We have a characterization of the free LD-monoids on multiple generators.

\begin{thm}
Suppose that $X$ is an LD-monoid generated by a set $A$. Then $X$ is freely generated by $A$
if and only if $(X\setminus\{1\},*)$ is a cyclic sub-LD-system of $X$ and where
\[c*a*a_{1}*\ldots*a_{m}\neq c*b*b_{1}*\ldots*b_{n}\]
whenever $a,b\in A,a\neq b,c\in X$ and
\[a_{1},\ldots,a_{m},b_{1},\ldots,b_{n}\in X\setminus\{1\}.\]
\label{4thu2g4beqwf2}
\end{thm}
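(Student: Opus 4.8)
The plan is to prove Theorem \ref{4thu2g4beqwf2} as a characterization, establishing both directions. For the forward direction, suppose $X$ is freely generated by $A$ as an LD-monoid. By Theorem \ref{i5hg39c832id484395nv}(2), $X \setminus \{1\}$ is closed under $*$ and $(X\setminus\{1\},*)$ is acyclic; I would first check that "acyclic" together with the generation hypothesis gives that $(X\setminus\{1\},*)$ is actually the free LD-system on $A$ (this should follow since $A$ generates $X$, hence generates $X\setminus\{1\}$ under $*$, and a generated acyclic LD-system is free — here for multiple generators I would invoke the result that the free LD-monoid on $A$ restricts to the free LD-system on $A$, stated as the unnumbered theorem just before Definition of acyclic). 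The genuinely new content is the inequality $c*a*a_1*\cdots*a_m \neq c*b*b_1*\cdots*b_n$ when $a\neq b$ are generators and the $a_i,b_j \in X\setminus\{1\}$. To see this holds in the free LD-monoid, I would use the universal property: map $X$ onto a conveniently chosen LD-monoid where this separation is visibly true — for instance, map every generator to a non-identity elementary embedding $j_a \in \mathcal{E}_\lambda$ (under a large cardinal hypothesis) or, to stay within ZFC, map into $\mathrm{EE}(\mathcal{X})$ for a suitable vast system, or simplest of all, map into the free LD-system on $A$ with its word structure and track the leftmost generator symbol. Concretely: in the free LD-system, $c*a*a_1*\cdots*a_m$, when fully left-associated and expanded via self-distributivity, has a normal form whose "address" or leftmost-variable data records $a$; since $a \neq b$, the two sides cannot be equal.

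For the reverse direction, assume $(X\setminus\{1\},*)$ is a "cyclic" (I read this as acyclic, matching the hypothesis of Theorem \ref{i5hg39c832id484395nv} — I would flag and correct this typo) sub-LD-system of $X$ generated by $A$, together with the separation property. I want to conclude $X$ is freely generated by $A$ as an LD-monoid, i.e. that the canonical surjection $\pi: F \to X$ from the free LD-monoid $F$ on $A$ is injective. First, by Theorem \ref{i5hg39c832id484395nv}(2), the two conditions "$X\setminus\{1\}$ closed under $*$" and "$(X\setminus\{1\},*)$ acyclic" already give that the sub-LD-monoid of $X$ generated by $A$ — which is all of $X$ — satisfies the hypotheses making each single generator freely generate a sub-LD-monoid; but I need joint freeness of all of $A$. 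The strategy is to use the structure theory of free LD-monoids: every element of $F \setminus \{1\}$ can be written (using the identities $x\circ y = (x*y)\circ x$, $(x\circ y)*z = x*(y*z)$) in the form $a_1 \circ a_2 \circ \cdots \circ a_k$ reduced to a single $*$-word, and ultimately every nonidentity element of the free LD-monoid is represented by a word $c * d_1 * \cdots * d_r$ where $c$ is a generator and the $d_i$ lie in $F\setminus\{1\}$. Comparing two such representatives, if they begin with distinct generators the separation hypothesis forbids equality in $X$; if they begin with the same generator one peels it off and recurses, using acyclicity to guarantee the recursion terminates (the $d_i$ are "smaller" in the well-founded sense coming from acyclicity). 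This reduces equality in $X$ to equality in $F$, giving injectivity of $\pi$.

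The key steps in order: (1) fix conventions and correct the apparent "cyclic/acyclic" typo; (2) recall from Theorem \ref{i5hg39c832id484395nv}(2) and the free-LD-monoid-restricts-to-free-LD-system theorem that the hypotheses are equivalent to $(X\setminus\{1\},*)$ being free on $A$ as an LD-system; (3) prove forward direction by exhibiting a homomorphism from the free LD-monoid that separates words starting with different generators (leftmost-variable / address argument, or via $\mathcal{E}_\lambda$); (4) prove reverse direction by putting a general nonidentity element of the free LD-monoid in the canonical form $c * d_1 * \cdots * d_r$, then running an induction on the acyclic $*$-structure that compares two such forms, using the separation hypothesis at the "different leading generator" base case and peeling + recursion otherwise; (5) assemble into injectivity of $\pi: F \to X$, hence freeness.

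The main obstacle I anticipate is step (4): making the "canonical form $c*d_1*\cdots*d_r$ with $d_i \neq 1$" genuinely canonical enough that the comparison induction is well-defined and terminating. The LD-monoid identities allow rewriting, so I need to pin down a specific normal form (most naturally: fully expand all $\circ$'s into $*$'s using $x\circ y = (x*y)\circ x$ repeatedly until the expression is a pure $*$-word, which terminates because the Fibonacci-term machinery in Section 2 shows $x\circ y = t_{n+1}(x,y)$ once $t_n(x,y)$ hits a left-identity — but in the *free* monoid there are no nontrivial left-identities, so I must instead use the acyclicity hypothesis of $X$ itself to drive termination). Reconciling "the free LD-monoid has no nontrivial left-identities, so the $\circ$-elimination doesn't obviously terminate there" with "$X$ is acyclic, so comparisons in $X$ terminate" is the delicate point; I expect to handle it by doing the induction on the structure of elements of $X$ (not $F$), using that $\pi$ maps $*$-subterms to $*$-subterms and that acyclicity of $(X\setminus\{1\},*)$ gives a well-founded order on which to induct, with the separation hypothesis supplying the only nontrivial base case.
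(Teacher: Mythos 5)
The paper states this result without proof, attributing it (along with the other basic facts about free LD-monoids recalled in the section on vastness and free algebras) to Dehornoy's book \cite{D}, so there is no internal argument to compare against. Your correction of ``cyclic'' to ``acyclic'' is right, and your sketch of the forward direction is plausible, though you should be careful that the $a_{i},b_{j}$ range over all of $X\setminus\{1\}$, not only the $*$-subalgebra generated by $A$, so any head-tracking invariant must be defined on the whole free LD-monoid and not merely on the free LD-system sitting inside it.

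The reverse direction has a genuine gap, which you yourself flag but do not close. Your step (2) is false: in a free LD-monoid $F$ on $A$, the algebra $(F\setminus\{1\},*)$ is acyclic but it is not the free LD-system on $A$ and is not even $*$-generated by $A$. Indeed, the degree function determined by $\deg(1)=0$, $\deg(a)=1$ for $a\in A$, $\deg(x*y)=\deg(y)$, and $\deg(x\circ y)=\deg(x)+\deg(y)$ is readily checked to respect every LD-monoid identity and is therefore a well-defined invariant on $F$; every element of the $*$-subalgebra $\langle A\rangle_{*}$ has degree $1$, while $a\circ a$ has degree $2$. You then acknowledge the deeper obstruction to your step-(4) normal form: the $\circ$-elimination $x\circ y=(x*y)\circ x=\cdots$ never terminates in $F$ because $F$ has no nontrivial left identities, so the Fibonacci terms $t_{n}(x,y)$ never reach $1$. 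Your proposed repair --- run the recursion on the structure of $X$ rather than $F$ --- does not rescue the argument: to prove injectivity of $\pi:F\to X$ one must compare elements of $F$, and one cannot ground a well-founded recursion in the structure of $X$, since that structure is precisely what freeness asserts we control. What the proof actually requires is the normal-form and left-division theory of free LD-\emph{monoids} from \cite{D}: a canonical $\circ$-decomposition of each element of $F$ into elements of $\langle A\rangle_{*}$ (with the degree counting the factors), together with a confluence and head result making the leading generator a well-defined invariant. Without importing or reproving that machinery, step (4) is an unsupported claim and the ``peel off the leading generator and recurse'' procedure has no well-defined domain of recursion.
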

\begin{defn}
Let $\varepsilon$ denote the empty string. Suppose that $M$ is a multigenic Laver table over an alphabet $A$ with $L=M\setminus \mathrm{Li}(M)$. Then define a mapping $\phi:M/\mathrm{Li}(M)\rightarrow L\cup\{\varepsilon\}$ by letting $\phi(\mathrm{Li}(M))=\varepsilon$ and
$\phi([\mathbf{x}])=\mathbf{x}$ for each $\mathbf{x}\in L$. Then $\phi$ is a bijection.
Define an operation $\cdot$ on $L\cup\{\varepsilon\}$ by letting
\[\mathbf{x}\cdot\mathbf{y}=\phi(\phi^{-1}(\mathbf{x})*\phi^{-1}(\mathbf{y})).\]
In other words, $\cdot$ is the operation on $L\cup\{\varepsilon\}$ where 
\begin{enumerate}
\item $\mathbf{x}\cdot\mathbf{y}=\mathbf{x}*\mathbf{y}$ whenever $\mathbf{x},\mathbf{y},\mathbf{x}*\mathbf{y}\in L$

\item $\mathbf{x}\cdot\varepsilon=\varepsilon$ whenever $\mathbf{x}\in L\cup\{\varepsilon\}$

\item $\varepsilon\cdot\mathbf{x}=\mathbf{x}$ whenever $\mathbf{x}\in L\cup\{\varepsilon\}$

\item $\mathbf{x}\cdot\mathbf{y}=\varepsilon$ whenever $\mathbf{x}*\mathbf{y}\in\mathrm{Li}(M)$.
\end{enumerate}
The algebra $(L\cup\{\varepsilon\},\cdot)$ is a reduced locally Laver-like LD-system which we shall call a \index{reduced multigenic Laver table}\emph{reduced multigenic Laver table} over the alphabet $A$.
\end{defn}
\begin{defn}
Let $\lambda$ be a linearly ordered set of cofinality $\omega$. Let $A$ be a set. Let $\mathcal{X}$ denote an inverse system $((L_{\alpha})_{\alpha\in\lambda},(\phi_{\beta,\alpha})_{\alpha\leq\beta})$ of reduced multigenic Laver tables such that
\begin{enumerate}
\item whenever $\alpha,\beta\in\lambda,\alpha\leq\beta$ there is some $r\in\mathrm{crit}[X_{\beta}]$ where
$\equiv^{r}$ is equal to $\ker(\phi_{\beta,\alpha})$,

\item (vastness) for each $x,y\in X_{\alpha}$ there is some $\beta\geq\alpha$ and $x',y'\in X_{\beta}$ where
$x'*y'\not\in\mathrm{Li}(X_{\beta})$ and where $\phi_{\beta,\alpha}(x')=x,\phi_{\beta,\alpha}(y')=y$,

\item If $\beta\in\lambda$ and $r\in\mathrm{crit}[X_{\beta}]$ then there is some $\alpha\leq\beta$ where
$\equiv^{r}$ is equal to $\ker(\phi_{\beta,\alpha})$,

\item $\bigcup_{\alpha\in\lambda}L_{\alpha}=\bigcup_{n\in\omega}A^{n}$, and

\item if $a\in L_{\alpha}$, then $\phi_{\beta,\alpha}(a)=a$.

\end{enumerate}
Then we say that $\mathcal{X}$ is a \index{vast system of reduced multigenic Laver tables}\emph{vast system of reduced multigenic Laver tables}.
Define $\mathrm{EE}(\mathcal{X})$ to be the set of all $(x_{\alpha})_{\alpha\in\lambda}$ such that
$(x_{\alpha_{n}})_{n\in\omega}\in\mathrm{EE}((L_{\alpha_{n}})_{n\in\omega})$. $\mathrm{EE}^{+}(\mathcal{X})$ is defined analogously.

We say that
\[(\mathbf{x}_{\alpha})_{\alpha\in\lambda},(\mathbf{y}_{\alpha})_{\alpha\in\lambda}\in \mathrm{EE}(\mathcal{X})^{+}\]
are \index{eventually initially distinguishable}\emph{eventually initially distinguishable} if there is some $\alpha$ such that for all $\beta\geq\alpha$, the strings
$\mathbf{x}_{\beta},\mathbf{y}_{\beta}$ are incomparable by the prefix ordering $\preceq$.
\end{defn}
\begin{prop}
Suppose that $\mathcal{X}$ is a vast system of reduced multigenic Laver tables. If 
\[(\mathbf{x}_{\alpha})_{\alpha\in\lambda},(\mathbf{y}_{\alpha})_{\alpha\in\lambda}\in \mathrm{EE}(\mathcal{X})^{+}\]
are eventually initially distinguishable, then so are
\[(\mathbf{c}_{\alpha})_{\alpha\in\lambda}*(\mathbf{x}_{\alpha})_{\alpha\in\lambda}*(\mathbf{a}_{\alpha,1})_{\alpha\in\lambda}*\ldots*(\mathbf{a}_{\alpha,p})_{\alpha\in\lambda}\]
and 
\[(\mathbf{c}_{\alpha})_{\alpha\in\lambda}*(\mathbf{y}_{\alpha})_{\alpha\in\lambda}*(\mathbf{b}_{\alpha,1})_{\alpha\in\lambda}*\ldots*(\mathbf{b}_{\alpha,q})_{\alpha\in\lambda}\]
whenever $(\mathbf{c}_{\alpha})_{\alpha\in\lambda}\in\mathrm{EE}(\mathcal{X})$ and
\[(\mathbf{a}_{\alpha,1})_{\alpha\in\lambda},\ldots,(\mathbf{a}_{\alpha,p})_{\alpha\in\lambda},
(\mathbf{b}_{\alpha,1})_{\alpha\in\lambda},\ldots,(\mathbf{b}_{\alpha,q})_{\alpha\in\lambda}\in
\mathrm{EE}^{+}(\mathcal{X}).\]
\end{prop}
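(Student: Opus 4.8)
### Proof proposal

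The plan is to reduce the statement about inverse limits of reduced multigenic Laver tables to a coordinatewise statement about individual reduced multigenic Laver tables, and then to invoke the combinatorial results on multigenic Laver tables (particularly Theorem \ref{4tij0poqlke} and its analogues, together with Proposition \ref{4j4hu29gegouetfdfghjd}) to control the behavior of prefixes under the self-distributive operation. First I would unwind the definition of ``eventually initially distinguishable'': there is some $\alpha_{0}\in\lambda$ such that for all $\beta\geq\alpha_{0}$ the strings $\mathbf{x}_{\beta}$ and $\mathbf{y}_{\beta}$ are incomparable by $\preceq$. Since everything in sight is a thread in an inverse limit indexed by a linear order of cofinality $\omega$, it suffices to produce, for every sufficiently large $\beta$, an index $\gamma\geq\beta$ such that the $\gamma$-coordinates of the two composite words are $\preceq$-incomparable; by the conventions on $\mathrm{crit}[X_{\gamma}]$ being an initial segment and the transitional maps respecting critical points, incomparability at cofinally many $\gamma$ propagates to the threads.

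The key step is the following coordinatewise claim: if $M$ is a reduced multigenic Laver table, $\mathbf{x},\mathbf{y}\in M\setminus\mathrm{Li}(M)$ are $\preceq$-incomparable, $\mathbf{c}\in M$, and $\mathbf{a}_{1},\dots,\mathbf{a}_{p},\mathbf{b}_{1},\dots,\mathbf{b}_{q}\in M\setminus\mathrm{Li}(M)$ are such that all the partial products stay out of $\mathrm{Li}(M)$, then $\mathbf{c}*\mathbf{x}*\mathbf{a}_{1}*\dots*\mathbf{a}_{p}$ and $\mathbf{c}*\mathbf{y}*\mathbf{b}_{1}*\dots*\mathbf{b}_{q}$ are $\preceq$-incomparable. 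To prove this I would proceed in two stages. Stage one: handle the common prefix $\mathbf{c}$. Using Proposition \ref{t4uh4aP}(iv) and Corollary \ref{10W44eh2g2EHO2uo}, multiplication on the left by $\mathbf{c}$ (when $\mathbf{c}*\mathbf{x},\mathbf{c}*\mathbf{y}\notin\mathrm{Li}(M)$) is given by $\mathbf{c}*\mathbf{z}=(\mathbf{c}\circ\mathbf{z}')\,\text{(last letter)}$-type formulas that track the structure of $\mathbf{z}$; more cleanly, since $\mathbf{x},\mathbf{y}$ are incomparable, letting $r$ be the least position where they differ, the words $\mathbf{c}*(\mathbf{x}\restriction r)$ and $\mathbf{c}*(\mathbf{y}\restriction r)$ have a common prefix $\mathbf{c}\circ(\mathbf{x}\restriction(r-1))=\mathbf{c}\circ(\mathbf{y}\restriction(r-1))$ followed by distinct continuations, so $\mathbf{c}*\mathbf{x}$ and $\mathbf{c}*\mathbf{y}$ remain incomparable (this is exactly the argument used in the proof of injectivity of $\phi$ in part 2 of the proposition preceding Theorem on extending multigenic Laver tables). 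Stage two: show that right-multiplying incomparable words $\mathbf{u}=\mathbf{c}*\mathbf{x}$, $\mathbf{v}=\mathbf{c}*\mathbf{y}$ by the sequences $\mathbf{a}_{i}$, $\mathbf{b}_{j}$ preserves incomparability, i.e. $\mathbf{u}*\mathbf{a}_{1}*\dots*\mathbf{a}_{p}$ and $\mathbf{v}*\mathbf{b}_{1}*\dots*\mathbf{b}_{q}$ stay incomparable. For this I would use Proposition \ref{t4uh4aP}(iv): whenever $\mathbf{u}\notin\mathrm{Li}(M)$, $\mathbf{u}$ is a proper initial segment of $\mathbf{u}*\mathbf{w}$ for every $\mathbf{w}$. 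Hence $\mathbf{u}*\mathbf{a}_{1}*\dots*\mathbf{a}_{p}$ has $\mathbf{u}$ as an initial segment and $\mathbf{v}*\mathbf{b}_{1}*\dots*\mathbf{b}_{q}$ has $\mathbf{v}$ as an initial segment; since $\mathbf{u}$ and $\mathbf{v}$ are incomparable, so are any extensions of them, which finishes stage two.

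Assembling the pieces: the reduction to coordinates, stage one on $\mathbf{c}$, and stage two on the right factors together give the coordinatewise claim, which lifts to the thread statement. The remaining care is bookkeeping about vastness and the ``$\mathrm{EE}^{+}$'' hypotheses: I need to know that for cofinally many $\beta$ the $\beta$-coordinates $\mathbf{c}_{\beta}*\mathbf{x}_{\beta}$, $\mathbf{c}_{\beta}*\mathbf{y}_{\beta}$, and all the partial products involving the $\mathbf{a}_{i,\beta}$, $\mathbf{b}_{j,\beta}$ lie outside $\mathrm{Li}(X_{\beta})$ — but this is precisely what membership of all these threads in $\mathrm{EE}^{+}(\mathcal{X})$ (for the $\mathbf{x},\mathbf{y},\mathbf{a}_{i},\mathbf{b}_{j}$) and closure of $\mathrm{EE}^{+}$ under $*$ (Proposition \ref{4tji20t24hu}, adapted to the reduced-table setting) guarantees, since a thread is in $\mathrm{EE}^{+}$ exactly when its coordinates are eventually non-left-identities.

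The hard part will be stage one — making precise that left-multiplication by a common prefix $\mathbf{c}$ preserves incomparability, since this is where one must actually use the recursive definition of $*^{M}$ and the $\circ$-operation rather than the soft ``proper-initial-segment'' fact. I expect the cleanest route is to mimic the incomparability case analysis in the injectivity proof for the map $\phi:M\restriction\alpha\to M$ (the proposition just before the extension theorem): split on whether the first disagreement position $r$ of $\mathbf{x},\mathbf{y}$ is $1$ or $>1$, and in the latter case observe that $\mathbf{c}*(\mathbf{x}\restriction r)$ and $\mathbf{c}*(\mathbf{y}\restriction r)$ share the prefix $\mathbf{c}\circ(\mathbf{x}\restriction(r-1))$ but have differing next letters (since $\mathbf{x}[r]\neq\mathbf{y}[r]$ and Proposition \ref{t4uh4aP}(i) preserves last letters through the $\circ$-construction). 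A subtlety to watch: one must ensure $\mathbf{c}*(\mathbf{x}\restriction m)\notin\mathrm{Li}(M)$ for all relevant $m$ so that the $\circ$-operation and the proper-prefix claims apply; this again follows from the $\mathrm{EE}^{+}$ hypothesis on the threads once we pass to sufficiently large $\beta$.
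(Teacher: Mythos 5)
Your two-stage decomposition matches the paper's: first show left-multiplication by $\mathbf{c}$ preserves eventual incomparability, then use the proper-initial-segment fact to absorb the $\mathbf{a}_i$'s and $\mathbf{b}_j$'s. Stage two is correct and is exactly what the paper does. The genuine gap is in stage one, and it sits precisely at the point you flag as "a subtlety to watch": you assert that $\mathbf{c}*(\mathbf{x}\restriction m)\notin\mathrm{Li}(M)$ for all relevant $m$ "follows from the $\mathrm{EE}^{+}$ hypothesis on the threads once we pass to sufficiently large $\beta$," but this is not true. Membership of $(\mathbf{c}_\alpha*\mathbf{x}_\alpha)_\alpha$ in $\mathrm{EE}^{+}(\mathcal{X})$ only controls the full products, not the partial products $\mathbf{c}_\delta*(\mathbf{x}_\delta\restriction m)$: the critical point $\mathrm{crit}(\mathbf{x}_\delta\restriction m)$ of a proper prefix can exceed $\mathrm{crit}(\mathbf{x}_\delta)$, so $\mathbf{c}_\delta*(\mathbf{x}_\delta\restriction m)$ can land in $\mathrm{Li}$ even when $\mathbf{c}_\delta*\mathbf{x}_\delta$ does not. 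When this happens the chain $\mathbf{c}_\delta*(\mathbf{x}_\delta\restriction r)\preceq\cdots\preceq\mathbf{c}_\delta*\mathbf{x}_\delta$ breaks, and your claim that $\mathbf{c}_\delta*(\mathbf{x}_\delta\restriction r)$ is a prefix of $\mathbf{c}_\delta*\mathbf{x}_\delta$ fails.

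The paper closes this gap by exploiting the $\mathrm{EE}(\mathcal{X})$-hypothesis on $(\mathbf{c}_\alpha)_\alpha$ directly: for $\delta$ large, pick $\beta$ to be the \emph{least} ordinal with $\mathbf{c}_\delta^{\sharp}(\beta)\geq\delta$ (such a $\beta$ exists and is eventually $>\alpha$ precisely because the thread lies in $\mathrm{EE}(\mathcal{X})$), and then replace $\mathbf{x}_\delta$ by its truncation $\mathbf{x}_\beta$. Because $\mathbf{c}_\delta$ sends everything of critical point $\geq\beta$ to a left identity, $\mathbf{c}_\delta*_\delta\mathbf{x}_\delta=\mathbf{c}_\delta*_\delta\mathbf{x}_\beta$; and because $\beta$ is minimal, the proper prefixes of $\mathbf{x}_\beta$ have critical point $<\beta$, so $\mathbf{c}_\delta$ does not push them into $\mathrm{Li}$. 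Now the product unfolds cleanly as $\mathbf{c}_\delta x_1*_\delta\cdots*_\delta\mathbf{c}_\delta x_r$, the disagreement at position $t$ produces $(\mathbf{c}_\delta\circ_\delta x_1\cdots x_{t-1})x_t$ versus $(\mathbf{c}_\delta\circ_\delta x_1\cdots x_{t-1})y_t$ as prefixes of the two sides, and incomparability follows. So the missing idea is not just bookkeeping: it is the minimality of $\beta$ with respect to $\mathbf{c}_\delta^{\sharp}(\beta)\geq\delta$, which is what converts your informal "pass to sufficiently large $\beta$" into an argument that actually controls the partial products.
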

\begin{proof}
Suppose that 
\[(\mathbf{x}_{\alpha})_{\alpha\in\lambda},(\mathbf{y}_{\alpha})_{\alpha\in\lambda}\in \mathrm{EE}(\mathcal{X})^{+}\]
are eventually initially distinguishable.  Then I claim that
\[(\mathbf{c}_{\alpha})_{\alpha\in\lambda}*(\mathbf{x}_{\alpha})_{\alpha\in\lambda},(\mathbf{c}_{\alpha})_{\alpha\in\lambda}*(\mathbf{y}_{\alpha})_{\alpha\in\lambda}\]
are also eventually initially distinguishable whenever
\[(\mathbf{c}_{\alpha})_{\alpha\in\lambda}\in \mathrm{EE}(\mathcal{X}).\]

Since 
\[(\mathbf{x}_{\alpha})_{\alpha\in\lambda},(\mathbf{y}_{\alpha})_{\alpha\in\lambda}\in \mathrm{EE}(\mathcal{X})^{+}\]
are eventually initially distinguishable, there is some $\alpha$ where $\mathbf{x}_{\beta}$ and $\mathbf{y}_{\beta}$ are incomparable with respect to $\preceq$ for all $\beta\geq\alpha$. Now there is some $\gamma$ where $\mathbf{c}_{\gamma}^{\sharp}(\alpha)<\gamma$, and hence $\mathbf{c}_{\delta}^{\sharp}(\alpha)<\delta$ for each $\delta\geq\gamma$.

Suppose that $\delta\geq\gamma$. Then let $\beta$ be the least ordinal where
$\mathbf{c}_{\delta}^{\sharp}(\beta)\geq\delta$. Then 
\[\mathbf{c}_{\delta}*_{\delta}\mathbf{x}_{\delta}=\mathbf{c}_{\delta}*_{\delta}\mathbf{x}_{\beta}
=\mathbf{c}_{\delta}x_{1}*_{\delta}\ldots*_{\delta}\mathbf{c}_{\delta}x_{r}\]
where $\mathbf{x}_{\beta}=x_{1}\ldots x_{r}$. Similarly,
\[\mathbf{c}_{\delta}*_{\delta}\mathbf{y}_{\delta}=\mathbf{c}_{\delta}y_{1}*_{\delta}\ldots*_{\delta}\mathbf{c}_{\delta}y_{s}\]
where $\mathbf{y}_{\beta}=y_{1}\ldots y_{s}$. Now there is some $t\leq\min(r,s)$ where $x_{t}\neq y_{t}$ but where $x_{u}=y_{u}$ for each $u\in\{1,\ldots,t-1\}$.

Therefore, we have
\[(\mathbf{c}_{\delta}\circ_{\delta}x_{1}\ldots x_{t-1})x_{t}=\mathbf{c}_{\delta}*_{\delta}x_{1}\ldots x_{t}\preceq\mathbf{c}_{\delta}*_{\delta}\mathbf{x}_{\delta}\]
and
\[(\mathbf{c}_{\delta}\circ_{\delta}x_{1}\ldots x_{t-1})y_{t}=(\mathbf{c}_{\delta}\circ_{\delta}y_{1}\ldots y_{t-1})y_{t}\]
\[=\mathbf{c}_{\delta}*_{\delta}y_{1}\ldots y_{t}\preceq\mathbf{c}_{\delta}*_{\delta}\mathbf{y}_{\delta}.\]

Therefore, the elements $\mathbf{c}_{\delta}*_{\delta}\mathbf{x}_{\delta}$ and
$\mathbf{c}_{\delta}*_{\delta}\mathbf{y}_{\delta}$ are incomparable.

We therefore conclude that $(\mathbf{c}_{\alpha})_{\alpha\in\lambda}*(\mathbf{x}_{\alpha})_{\alpha\in\lambda},(\mathbf{c}_{\alpha})_{\alpha\in\lambda}*(\mathbf{y}_{\alpha})_{\alpha\in\lambda}$ are also
eventually initially distinguishable whenever $(\mathbf{c}_{\alpha})_{\alpha\in\lambda}\in \mathrm{EE}(\mathcal{X})$.

Now, since $(\mathbf{c}_{\alpha})_{\alpha\in\lambda}*(\mathbf{x}_{\alpha})_{\alpha\in\lambda},(\mathbf{c}_{\alpha})_{\alpha\in\lambda}*(\mathbf{y}_{\alpha})_{\alpha\in\lambda}$ are eventually initially distinguishable,
$\mathbf{c}_{\alpha}*\mathbf{x}_{\alpha}$ is a prefix of
$\mathbf{c}_{\alpha}*\mathbf{x}_{\alpha}*\mathbf{a}_{\alpha,1}*\ldots *\mathbf{a}_{\alpha,p}$ for large enough $\alpha$, and
$\mathbf{c}_{\alpha}*\mathbf{x}_{\alpha}$ is a prefix of
$\mathbf{c}_{\alpha}*\mathbf{x}_{\alpha}*\mathbf{b}_{\alpha,1}*\ldots *\mathbf{b}_{\alpha,q}$ for large enough $\alpha$, we conclude that

\[(\mathbf{c}_{\alpha})_{\alpha\in\lambda}*(\mathbf{x}_{\alpha})_{\alpha\in\lambda}*(\mathbf{a}_{\alpha,1})_{\alpha\in\lambda}*\ldots*(\mathbf{a}_{\alpha,p})_{\alpha\in\lambda}\]
and 
\[(\mathbf{c}_{\alpha})_{\alpha\in\lambda}*(\mathbf{y}_{\alpha})_{\alpha\in\lambda}*(\mathbf{b}_{\alpha,1})_{\alpha\in\lambda}*\ldots*(\mathbf{b}_{\alpha,q})_{\alpha\in\lambda}\]
are eventually initially distinguishable for sufficiently large $\alpha$.
\end{proof}
\begin{prop}
Suppose that $((\mathbf{x}_{i,\alpha})_{\alpha\in\lambda})_{i\in I}\in (\mathrm{EE}(\mathcal{X})^{+})^{I}$ is pairwise
eventually initially distinguishable. Then $((\mathbf{x}_{i,\alpha})_{\alpha\in\lambda})_{i\in I}$ freely generates a sub-LD-monoid of
$\mathrm{EE}(\mathcal{X})$.
\end{prop}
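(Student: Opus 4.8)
The plan is to use Theorem \ref{4thu2g4beqwf2} to characterize free LD-monoids. So I would let $A=\{(\mathbf{x}_{i,\alpha})_{\alpha\in\lambda}\mid i\in I\}$ and let $X$ be the sub-LD-monoid of $\mathrm{EE}(\mathcal{X})$ generated by $A$. By Theorem \ref{4thu2g4beqwf2}, it suffices to verify two things: first, that $(X\setminus\{1\},*)$ is an acyclic sub-LD-system of $X$, and second, that whenever $a,b\in A$ with $a\neq b$, $c\in X$, and $a_{1},\dots,a_{m},b_{1},\dots,b_{n}\in X\setminus\{1\}$, we have $c*a*a_{1}*\dots*a_{m}\neq c*b*b_{1}*\dots*b_{n}$.

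The first point is already essentially handled. Since each reduced multigenic Laver table $L_{\alpha}$ is locally Laver-like, $\mathrm{EE}(\mathcal{X})^{+}$ is an acyclic LD-system (the reduced-multigenic-tables version of Proposition \ref{4tji20t24hu}(3), whose proof goes through verbatim: if $(\mathbf{x}_{1,\alpha})_{\alpha}*\dots*(\mathbf{x}_{k,\alpha})_{\alpha}=(\mathbf{x}_{1,\alpha})_{\alpha}$ then picking $\alpha$ large enough that no partial product $\mathbf{x}_{1,\alpha}*\dots*\mathbf{x}_{j,\alpha}$ lands in $\mathrm{Li}(L_{\alpha})$ yields $\mathbf{x}_{1,\alpha}*\dots*\mathbf{x}_{k,\alpha}=\mathbf{x}_{1,\alpha}$ in a locally Laver-like LD-system, contradicting acyclicity there). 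Since $X\setminus\{1\}\subseteq\mathrm{EE}(\mathcal{X})^{+}$ and $\mathrm{EE}(\mathcal{X})^{+}$ is closed under $*$, $(X\setminus\{1\},*)$ is a cyclic — that is, acyclic — sub-LD-system of $X$.

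For the second, the key is the preceding proposition on eventual initial distinguishability. First I would observe that distinct generators $a=(\mathbf{x}_{i,\alpha})_{\alpha}$, $b=(\mathbf{x}_{j,\alpha})_{\alpha}$ are eventually initially distinguishable — this is exactly the pairwise hypothesis. Now given $c\in X$ and $a_{1},\dots,a_{m},b_{1},\dots,b_{n}\in X\setminus\{1\}=\mathrm{EE}(\mathcal{X})^{+}\cap X$, the proposition tells us that
\[c*a*a_{1}*\dots*a_{m}\quad\text{and}\quad c*b*b_{1}*\dots*b_{n}\]
are eventually initially distinguishable; that is, for all sufficiently large $\alpha$ the strings $(c*a*a_{1}*\dots*a_{m})_{\alpha}$ and $(c*b*b_{1}*\dots*b_{n})_{\alpha}$ are incomparable under $\preceq$, hence in particular unequal. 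Therefore the two threads differ in coordinate $\alpha$ for all large $\alpha$, so they are distinct elements of $\mathrm{EE}(\mathcal{X})$. (One subtlety: $c$ may be $1$, but the proposition allows $(\mathbf{c}_{\alpha})_{\alpha}\in\mathrm{EE}(\mathcal{X})$, which includes the identity thread, so this case is covered.) This gives the inequality required by Theorem \ref{4thu2g4beqwf2}, and hence $X$ is freely generated by $A$.

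I expect the only real subtlety to be bookkeeping: making sure the elements of $X\setminus\{1\}$ appearing as the $a_i, b_j$ really do lie in $\mathrm{EE}^{+}(\mathcal{X})$ (which holds since $X\setminus\{1\}\subseteq\mathrm{EE}(\mathcal{X})$ and any thread not identically $1$ has some coordinate outside $\mathrm{Li}$, using that the $L_\alpha$ are reduced so $\mathrm{Li}(L_\alpha)$ is a single point), and confirming that the generators $A$ are honestly contained in $\mathrm{EE}^+(\mathcal{X})$ as assumed. There is no hard analytic content here; the whole weight of the argument has been pushed into the eventual-initial-distinguishability proposition and the acyclicity of $\mathrm{EE}^{+}(\mathcal{X})$, both already available. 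So the "hard part" is really just invoking Theorem \ref{4thu2g4beqwf2} with the correct data and checking the membership conditions it requires.
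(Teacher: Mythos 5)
Your proof is correct and takes essentially the same route as the paper: reduce to the characterization of free LD-monoids in Theorem \ref{4thu2g4beqwf2}, obtain acyclicity of $\mathrm{EE}^{+}(\mathcal{X})$ from (the reduced-multigenic-table version of) Proposition \ref{4tji20t24hu}, and use the preceding eventual-initial-distinguishability proposition to get the inequality $c*a*a_{1}*\ldots*a_{m}\neq c*b*b_{1}*\ldots*b_{n}$. You spell out a couple of the bookkeeping subtleties (the adaptation of acyclicity to the $\lambda$-indexed setting, the $c=1$ case, membership of the $a_i,b_j$ in $\mathrm{EE}^{+}(\mathcal{X})$) more explicitly than the paper, but the argument is the same.
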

\begin{proof}
Suppose that $((\mathbf{x}_{i,\alpha})_{\alpha\in\lambda})_{i\in I}\in (\mathrm{EE}(\mathcal{X})^{+})^{I}$ is pairwise
eventually initially distinguishable. Then for each $i,j\in I$ with $i\neq j$, and each
$(\mathbf{c}_{\alpha})_{\alpha\in\lambda}\in\mathrm{EE}(\mathcal{X})$, and each
\[(\mathbf{a}_{\alpha,1})_{\alpha\in\lambda},\ldots,(\mathbf{a}_{\alpha,r})_{\alpha\in\lambda},
(\mathbf{b}_{\alpha,1})_{\alpha\in\lambda},\ldots,(\mathbf{b}_{\alpha,s})_{\alpha\in\lambda}\in
\mathrm{EE}^{+}(\mathcal{X}),\]
we have
\[(\mathbf{c}_{\alpha})_{\alpha\in\lambda}*(\mathbf{x}_{i,\alpha})_{\alpha\in\lambda}*
(\mathbf{a}_{\alpha,1})_{\alpha\in\lambda}*\ldots*(\mathbf{a}_{\alpha,r})_{\alpha\in\lambda}\]
\[\neq(\mathbf{c}_{\alpha})_{\alpha\in\lambda}*(\mathbf{x}_{j,\alpha})_{\alpha\in\lambda}*
(\mathbf{b}_{\alpha,1})_{\alpha\in\lambda}*\ldots*(\mathbf{b}_{\alpha,s})_{\alpha\in\lambda}\]
since
\[(\mathbf{c}_{\alpha})_{\alpha\in\lambda}*(\mathbf{x}_{i,\alpha})_{\alpha\in\lambda}*
(\mathbf{a}_{\alpha,1})_{\alpha\in\lambda}*\ldots*(\mathbf{a}_{\alpha,r})_{\alpha\in\lambda}\]
and
\[(\mathbf{c}_{\alpha})_{\alpha\in\lambda}*(\mathbf{x}_{j,\alpha})_{\alpha\in\lambda}*
(\mathbf{b}_{\alpha,1})_{\alpha\in\lambda}*\ldots*(\mathbf{b}_{\alpha,s})_{\alpha\in\lambda}\]
are eventually initially indistinguishable. We therefore conclude that $((\mathbf{x}_{i,\alpha})_{\alpha\in\lambda})_{i\in I}$
freely generates a subalgebra of $\mathrm{EE}(\mathcal{X})^{+}$ by Theorem \ref{4tji20t24hu} and
Theorem \ref{4thu2g4beqwf2}.
\end{proof}

\begin{thm}
Suppose that $(L_{\alpha})_{\alpha\in\lambda}$ is a vast system of reduced multigenic Laver tables over an alphabet $A$.
Let $r_{a,\beta}=a$ whenever $a\in L_{\beta}$ and $r_{a,\beta}=\varepsilon$ otherwise. Then $\{(r_{a,\beta})_{\beta\in\lambda}\mid a\in A\}$ freely generates a sub-LD-monoid $\varprojlim_{\alpha\in\lambda}L_{\alpha}$.
\end{thm}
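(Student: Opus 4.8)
The plan is to reduce the statement to the two propositions immediately preceding it, which already do the hard work: one shows that eventual initial distinguishability of elements of $\mathrm{EE}^{+}(\mathcal{X})$ is preserved after applying $*$ (and, with arbitrary right multiples inserted, $\circ$), and the next concludes that any pairwise eventually initially distinguishable family in $\mathrm{EE}^{+}(\mathcal{X})$ freely generates a sub-LD-monoid of $\mathrm{EE}(\mathcal{X})$. Reading the theorem as asserting that $\{(r_{a,\beta})_{\beta\in\lambda}\mid a\in A\}$ freely generates a sub-LD-monoid \emph{of} $\varprojlim_{\alpha\in\lambda}L_{\alpha}$, it then suffices to check two things: (i) each $(r_{a,\beta})_{\beta\in\lambda}$ is a genuine thread lying in $\mathrm{EE}^{+}(\mathcal{X})$, and (ii) distinct letters yield eventually initially distinguishable threads. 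Since $\mathrm{EE}(\mathcal{X})$ is a sub-LD-monoid of $\varprojlim_{\alpha\in\lambda}L_{\alpha}$ (closed under $*$ and $\circ$ — each $L_{\alpha}$ being reduced — and containing the constant-$\varepsilon$ thread, by the analogue of Proposition \ref{4tji20t24hu}), a free sub-LD-monoid of $\mathrm{EE}(\mathcal{X})$ is automatically a free sub-LD-monoid of $\varprojlim_{\alpha\in\lambda}L_{\alpha}$, giving the conclusion.

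For (i), I would first observe that each letter $a\in A$ is a non-left-identity in every $L_{\alpha}$ in which it appears (in a reduced multigenic Laver table the only left-identity is $\varepsilon$, and $a*a\neq\varepsilon$), so by condition (5) in the definition of a vast system of reduced multigenic Laver tables the transition maps fix $a$; hence $(r_{a,\beta})_{\beta\in\lambda}$ is a well-defined thread and $r_{a,\beta}=a\notin\mathrm{Li}(L_{\beta})$ for cofinally many $\beta$, so it lies in $\mathrm{EE}^{+}(\mathcal{X})$ once we know it lies in $\mathrm{EE}(\mathcal{X})$. To verify the latter, fix a critical point $\gamma\in\mathrm{crit}[\mathcal{X}]$ and produce a level $\beta$ with $a^{\sharp}(\gamma)<\max(\mathrm{crit}[L_{\beta}])$. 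Pick $\delta\in\lambda$ with $\gamma\in\mathrm{crit}[L_{\delta}]$; enlarging $\delta$ via vastness (which forces the critical-point sets to grow without bound) we may also assume $\gamma<\max(\mathrm{crit}[L_{\delta}])$. In $L_{\delta}$ the critical points $\mathrm{crit}(a),\mathrm{crit}(a*a),\mathrm{crit}(a^{2}*a),\dots$ strictly increase until they reach the maximum (by Theorem \ref{2j1298ur28tye}(4) together with the corollary following it), so there is an $s$ with $\gamma\le\mathrm{crit}(a^{s}*a)<\max(\mathrm{crit}[L_{\delta}])$. Now apply the characterization of vastness saying that for every $x$ and every $r$ some level carries a lift $y$ of $x$ with $y^{r}*y\notin\mathrm{Li}$, taking $x=a$ and $r=s+1$; since transition maps fix $a$, this gives $\beta\ge\delta$ with $a^{s+1}*a\notin\mathrm{Li}(L_{\beta})$, and then, $a^{\sharp}$ being order preserving,
\[
a^{\sharp}(\gamma)\le a^{\sharp}(\mathrm{crit}(a^{s}*a))=\mathrm{crit}(a^{s+1}*a)<\max(\mathrm{crit}[L_{\beta}]).
\]
(One also notes that $\mathrm{EE}(\mathcal{X})$ is insensitive to the choice of cofinal $\omega$-sequence in $\lambda$, which is immediate from the definition.)

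For (ii), distinct letters $a\neq b$ in $A$ are incomparable under the prefix ordering $\preceq$ (both are strings of length one and they differ), and $r_{a,\beta}=a$, $r_{b,\beta}=b$ for all sufficiently large $\beta$ (by condition (4), all single letters appear, and once present they persist); hence $r_{a,\beta}$ and $r_{b,\beta}$ are incomparable for all large $\beta$, which is exactly eventual initial distinguishability. Thus $\{(r_{a,\beta})_{\beta\in\lambda}\mid a\in A\}$ is a pairwise eventually initially distinguishable family in $\mathrm{EE}^{+}(\mathcal{X})$, and the preceding proposition (whose proof in turn uses Proposition \ref{40q49yhp733y378qff70} and Theorem \ref{4thu2g4beqwf2}) finishes the argument.

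I expect step (i) to be the main obstacle: pinning down exactly why the letter threads meet the $\mathrm{EE}$-membership condition at \emph{every} critical point, which is where the vastness hypothesis in its "$y^{r}*y\notin\mathrm{Li}$" form, the fixedness of single letters under all transition maps, and the strict growth of $\mathrm{crit}(a^{k}*a)$ in a single reduced multigenic Laver table must be combined. Everything else is bookkeeping and direct appeals to results already established.
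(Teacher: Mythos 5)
Your reduction to the two preceding propositions is the right framing, and step (ii) is fine: distinct letters are $\preceq$-incomparable one-letter strings at every sufficiently large level, which is exactly eventual initial distinguishability. The gap is in step (i). The vastness characterization you invoke produces \emph{some} $y\in L_{\beta}$ with $\phi_{\beta,\delta}(y)=a$ and $y^{s+1}*y\notin\mathrm{Li}(L_{\beta})$; the phrase ``since transition maps fix $a$'' only shows that $a$ is one preimage of $a$, not that the witness $y$ may be taken equal to $a$. In a reduced multigenic Laver table the preimage of a letter is large: $\phi_{\beta,\delta}(\mathbf{y})=a$ exactly when $\mathbf{y}=\mathbf{z}a=\mathbf{z}*a$ for a (possibly empty) string $\mathbf{z}$ with $\mathrm{crit}_{L_{\beta}}(\mathbf{z})\ge r$, where $\equiv^{r}=\ker(\phi_{\beta,\delta})$; nothing in your argument yet lets you pass information from such a $y$ back to $a$. (A smaller but related problem: ``there is an $s$ with $\gamma\le\mathrm{crit}(a^{s}*a)<\max(\mathrm{crit}[L_{\delta}])$'' is unjustified, since the increasing sequence $\mathrm{crit}(a^{s}*a)$ can jump from below $\gamma$ directly to $\max(\mathrm{crit}[L_{\delta}])$ — and that is exactly the bad case $a^{\sharp}(\gamma)=\max$ you are trying to exclude.)

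The missing ingredient is a transfer lemma. If $\mathrm{crit}(\mathbf{z})>\gamma$, then for any $\mathbf{v}$ with $\mathrm{crit}(\mathbf{v})=\gamma$ one has $\mathrm{crit}(\mathbf{z}*\mathbf{v})=\mathbf{z}^{\sharp}(\gamma)=\gamma$ and, by left-distributivity, $(\mathbf{z}*a)*(\mathbf{z}*\mathbf{v})=\mathbf{z}*(a*\mathbf{v})$; hence $(\mathbf{z}*a)^{\sharp}(\gamma)=\mathbf{z}^{\sharp}\bigl(a^{\sharp}(\gamma)\bigr)\ge a^{\sharp}(\gamma)$, and in particular if $a^{\sharp}(\gamma)=\max$ then so is $(\mathbf{z}*a)^{\sharp}(\gamma)$, using that $\mathrm{Li}$ is a left ideal. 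Contrapositively, a lift $a'=\mathbf{z}*a$ with $(a')^{\sharp}(\gamma)<\max(\mathrm{crit}[L_{\beta}])$ forces $a^{\sharp}(\gamma)<\max(\mathrm{crit}[L_{\beta}])$. With this lemma you can skip the $a^{s}*a$ detour entirely: given $\gamma$, pick $\delta$ with $\gamma\in\mathrm{crit}[L_{\delta}]$ and $\gamma<\max(\mathrm{crit}[L_{\delta}])$, fix $\mathbf{w}\in L_{\delta}$ with $\mathrm{crit}(\mathbf{w})=\gamma$, apply the raw vastness condition to the pair $(a,\mathbf{w})$ to obtain $\beta\ge\delta$ and lifts $a'=\mathbf{z}*a$, $\mathbf{w}'$ with $a'*\mathbf{w}'\notin\mathrm{Li}(L_{\beta})$, note $\mathrm{crit}(\mathbf{z})\ge r>\gamma$ and $\mathrm{crit}(\mathbf{w}')=\gamma$, and conclude $a^{\sharp}(\gamma)\le(a')^{\sharp}(\gamma)<\max(\mathrm{crit}[L_{\beta}])$. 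The rest of your outline then goes through as stated.
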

\begin{thm}
Suppose that $(L_{\alpha})_{\alpha\in\lambda}$ is a vast system of reduced multigenic Laver tables over an alphabet $A$. Let $I$ be a finite or countable set.
\begin{enumerate}
\item Let $U$ be the set of all objects $(\mathbf{x}_{i})_{i\in I}\in(\varprojlim_{\alpha\in\lambda}L_{\alpha})^{I}$ such that
$(\mathbf{x}_{i})_{i\in I}$ freely generates a sub-LD-system of $\varprojlim_{\alpha\in\lambda}L_{\alpha}$. Then
$U$ is a dense $G_{\delta}$ set in $(\varprojlim_{\alpha\in\lambda}L_{\alpha})^{I}$.

\item Let $V$ be the set of all objects $(\mathbf{x}_{i})_{i\in I}\in(\varprojlim_{\alpha\in\lambda}L_{\alpha})^{I}$ such that
$(\mathbf{x}_{i})_{i\in I}$ freely generates a sub-LD-monoid of $\varprojlim_{\alpha\in\lambda}L_{\alpha}$. Then
$V$ is a dense $G_{\delta}$ set in $(\varprojlim_{\alpha\in\lambda}L_{\alpha})^{I}$.
\end{enumerate}
\end{thm}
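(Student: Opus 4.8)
The plan is to prove separately that $U$ and $V$ are each \emph{$G_{\delta}$} and each \emph{dense}. Write $Z=\varprojlim_{\alpha\in\lambda}L_{\alpha}$; choosing a cofinal $\omega$-sequence $\alpha_{0}<\alpha_{1}<\cdots$ in $\lambda$, $Z$ is an inverse limit of discrete spaces, hence a completely metrizable, totally disconnected, Hausdorff space, so $Z^{I}$ is completely metrizable for $I$ countable and the Baire category theorem applies to $Z^{I}$ and to its $G_{\delta}$ subspaces. Also each $L_{\alpha}$, being a reduced permutative LD-system, is an LD-monoid, and each transitional map $\phi_{\beta,\alpha}$, being an LD-system homomorphism between permutative LD-monoids, is an LD-monoid homomorphism; hence $Z$ is an LD-monoid under the coordinatewise operations, and the operations $*$ and $\circ$ on $Z$ are continuous.

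For the $G_{\delta}$ part: a tuple $(\mathbf{x}_{i})_{i\in I}$ freely generates a sub-LD-system of $Z$ exactly when $s((\mathbf{x}_{i})_{i})\neq t((\mathbf{x}_{i})_{i})$ for every pair $s\neq t$ of distinct terms in the language of LD-systems over the variables $\{X_{i}\mid i\in I\}$, and likewise for sub-LD-monoids using the language of LD-monoids. For each such pair the map $(\mathbf{x}_{i})_{i}\mapsto(s((\mathbf{x}_{i})_{i}),t((\mathbf{x}_{i})_{i}))$ is continuous into $Z\times Z$, so the preimage of the open set $\{(u,v)\mid u\neq v\}$ (open since $Z$ is Hausdorff) is open in $Z^{I}$. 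As $I$ is countable there are only countably many such pairs, so $U$ and $V$ are countable intersections of open sets.

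For density I would reduce to a single dense set contained in $U\cap V$. Let $W\subseteq Z^{I}$ be the set of tuples $(\mathbf{x}_{i})_{i\in I}$ with each $\mathbf{x}_{i}\in\mathrm{EE}^{+}(\mathcal{X})$ and with any two distinct $\mathbf{x}_{i},\mathbf{x}_{j}$ eventually initially distinguishable. By the preceding proposition such a tuple freely generates a sub-LD-monoid of $\mathrm{EE}(\mathcal{X})$; since $\mathrm{EE}(\mathcal{X})$ is a sub-LD-monoid of $Z$ (closed under $*$ and $\circ$ and containing $1$, by Proposition \ref{4tji20t24hu}), the tuple freely generates a sub-LD-monoid of $Z$, so $W\subseteq V$, and then $W\subseteq U$ because a set freely generating an LD-monoid freely generates the underlying LD-system. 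So it suffices to show $W$ is dense in $Z^{I}$. Now $\mathrm{EE}(\mathcal{X})$ is a dense $G_{\delta}$ in $Z$ (Proposition \ref{4tji20t24hu}) and $\mathrm{EE}^{+}(\mathcal{X})$ is open in it, so $(\mathrm{EE}^{+}(\mathcal{X}))^{I}$ is a dense, completely metrizable (Baire) subspace of $Z^{I}$; it is enough to show $W$ is dense there. Writing $D_{ij}$ for the set of tuples in $(\mathrm{EE}^{+}(\mathcal{X}))^{I}$ whose $i$-th and $j$-th threads are eventually initially distinguishable, we have $W=\bigcap_{i\neq j}D_{ij}$, so by Baire it suffices to prove each $D_{ij}$ is dense open. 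Openness holds because if $\mathbf{x}_{i,\beta},\mathbf{x}_{j,\beta}$ are $\preceq$-incomparable at a level $\beta$ where both strings are non-trivial, then since the transitional maps carry prefixes to prefixes (they truncate at $\mathrm{Li}$-points), comparability of the threads at any higher level would force comparability at $\beta$; hence the threads stay incomparable at every level $\geq\beta$, and fixing their $\beta$-coordinates yields a neighbourhood inside $D_{ij}$.

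The main obstacle is the density of $D_{ij}$, which is where vastness does the real work. Given a basic open set of $Z^{I}$ — fixing, for finitely many $i\in I_{0}$, a value $\mathbf{x}_{i,\alpha_{0}}=\mathbf{c}_{i}$ at a common level $\alpha_{0}$ — I must produce threads $\mathbf{x}_{i}\in\mathrm{EE}^{+}(\mathcal{X})$ extending these constraints (the other coordinates chosen freely in $\mathrm{EE}^{+}(\mathcal{X})$) with $\mathbf{x}_{i},\mathbf{x}_{j}$ $\preceq$-incomparable at some level above which both are non-trivial. The strategy is: first use clause (2) of the definition of a vast system of reduced multigenic Laver tables, applied repeatedly together with the facts that a homomorphic preimage of a non-$\mathrm{Li}$ element is non-$\mathrm{Li}$ and that the transitional maps are surjective, to pass to a level at which $\mathbf{c}_{i},\mathbf{c}_{j}$ have non-trivial preimages; then, using local Laver-likeness (every chain terminates in $\mathrm{Li}$) together with $|A|\geq 2$, insert into these preimages two distinct ``left-ideal excursions'' — chains of letters ending in $\mathrm{Li}(L_{\beta})$ — whose first letters already differ, obtaining preimages $\mathbf{d}_{i},\mathbf{d}_{j}$ of $\mathbf{c}_{i},\mathbf{c}_{j}$ that are non-trivial and $\preceq$-incomparable; finally extend $\mathbf{d}_{i},\mathbf{d}_{j}$ and the other coordinates to full threads in $\mathrm{EE}^{+}(\mathcal{X})$, which is possible since each fibre of a transitional map is nonempty, so the relevant fibre inverse limit is nonempty, and it meets the dense set $\mathrm{EE}^{+}(\mathcal{X})$. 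The delicate points are (i) checking that the inserted excursions really do give incomparable preimages of the \emph{same} elements $\mathbf{c}_{i},\mathbf{c}_{j}$, which rests on the explicit description of how the transitional maps act on strings, collapsing precisely at $\mathrm{Li}$-points, and (ii) the bookkeeping that the chosen threads actually sweep out all of $\mathrm{crit}[\mathcal{X}]$ so that they lie in $\mathrm{EE}^{+}(\mathcal{X})$. The degenerate case $|A|=1$ (the vast system of classical Laver tables) admits no such branching; there the interesting content concerns a single generator and follows from Theorem \ref{42t0iieghjo} and its variants, so I would treat it separately.
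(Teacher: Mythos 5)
Your proposal is correct in structure and shares the essential mechanism with the paper's proof: the $G_\delta$ argument (intersection over countably many pairs of inequivalent terms) is identical, and for density both arguments ultimately invoke the earlier proposition that pairwise eventually-initially-distinguishable tuples in $\mathrm{EE}^{+}(\mathcal{X})$ freely generate. The divergence is in how you reach density. You reduce, via a second application of Baire, to showing each $D_{ij}$ is dense open in $(\mathrm{EE}^{+}(\mathcal{X}))^{I}$; the paper instead exhibits a single witness directly. Concretely, given the basic open set determined by $\mathbf{x}_{\beta,i_{j}}=\mathbf{v}_{j}$ for $j=1,\dots,q$, the paper chooses $\gamma>\beta$ and pairwise $\preceq$-incomparable $\mathbf{w}_{1},\dots,\mathbf{w}_{q}\in L_{\gamma}$ with $\mathrm{crit}(\mathbf{w}_{j})\geq\beta$, and sets the $\gamma$-coordinate of the $i_{j}$-th thread to $\mathbf{w}_{j}\mathbf{v}_{j}$. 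Because $\mathrm{crit}(\mathbf{w}_{j})\geq\beta$, the block $\mathbf{w}_{j}$ is collapsed by $\phi_{\gamma,\beta}$, so the thread still meets the constraint at level $\beta$, and the $\mathbf{w}_{j}\mathbf{v}_{j}$ inherit pairwise $\preceq$-incomparability from the $\mathbf{w}_{j}$, giving eventual initial distinguishability. This is exactly the concrete instance of your ``left-ideal excursion'' idea, but prepended rather than inserted, which is cleaner and avoids the bookkeeping you flag as delicate; you should replace your vague insertion step by this prefix construction. Your observation about $|A|=1$ is a genuine addition: the paper's construction requires $|A|\geq 2$ to find incomparable $\mathbf{w}_{j}$ when $q\geq 2$, and the paper does not address the one-letter case even though the theorem as stated allows it (the surrounding theorems do restrict to $|A|>1$). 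Two points to tighten: state explicitly the lemma that $\phi_{\beta',\beta}$ maps $\preceq$-prefixes to $\preceq$-prefixes, since it carries your openness argument for $D_{ij}$; and when extending the fixed $\gamma$-coordinates to full threads in $\mathrm{EE}^{+}(\mathcal{X})$, spell out that the relevant cylinder is nonempty open in $\varprojlim L_{\alpha}$ (by surjectivity of the transitional maps) and that $\mathrm{EE}^{+}(\mathcal{X})$ is dense $G_{\delta}$ there — both you and the paper leave this step implicit.
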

\begin{proof}
We shall only prove 2 since the proof of 1 is similar. Let $S$ be the set of all pairs $(s,t)$ of terms in the language of
LD-monoids which are not LD-monoid equivalent to each other. Then $S$ is countable. However,
\[V=\bigcap_{(s,t)\in S}\{(\mathbf{x}_{i})_{i\in I}
\in(\varprojlim_{\alpha\in\lambda}L_{\alpha})^{I}\mid s(\mathbf{x}_{i})_{i\in I}\neq t(\mathbf{x}_{i})_{i\in I}\}\]
 which is the intersection of countably many open sets.
Therefore, $V$ is a $G_{\delta}$-set.

To prove that $V$ is dense, suppose that $O$ is a non-empty open set in $(\varprojlim_{\alpha\in\lambda}L_{\alpha})^{I}$.
Then there are $i_{1},\ldots,i_{q}\in I$ along with some $\beta\in\lambda$ and 
\[\mathbf{v}_{1},\ldots,\mathbf{v}_{q}\in L_{\beta}\]
such that if 
\[((\mathbf{x}_{\alpha,i})_{\alpha\in\lambda})_{i\in I}\in(\varprojlim_{\alpha\in\lambda}L_{\alpha})^{I}\]
and
\[\mathbf{x}_{\beta,i_{1}}=\mathbf{v}_{1},\ldots,\mathbf{x}_{\beta,i_{q}}=\mathbf{v}_{q},\]
then
\[((\mathbf{x}_{\alpha,i})_{\alpha\in\lambda})_{i\in I}\in O.\]

Suppose now that $\gamma>\beta$ and $\mathbf{w}_{1},\ldots,\mathbf{w}_{q}$ are pairwise $\preceq$-incomparable strings in $L_{\gamma}$ with
$\mathrm{crit}(\mathbf{w}_{1})\geq\beta,\ldots,\mathrm{crit}(\mathbf{w}_{q})\geq\beta$. Then $\mathbf{w}_{1}\mathbf{v}_{1},\ldots,\mathbf{w}_{q}\mathbf{v}_{q}\in L_{\gamma}$.
Let $((\mathbf{x}_{\alpha,i})_{\alpha\in\lambda})_{i\in I}$ be a system of threads in
$(\varprojlim_{\alpha\in\lambda}L_{\alpha})^{I}$ where
\[\mathbf{x}_{\delta,i_{1}}=\mathbf{w}_{1}\mathbf{v}_{1},\ldots,\mathbf{x}_{\delta,i_{q}}=\mathbf{w}_{q}\mathbf{v}_{q}\]
whenever $\delta\geq\gamma$. Then $((\mathbf{x}_{\alpha,i})_{\alpha\in\lambda})_{i\in I}\in O\cap V$.
\end{proof}
\begin{thm}
\label{4gui04hru984r2gh9ug24r}

Suppose that for all $n$ there exists an $n$-huge cardinal. Let $A$ be a set with $|A|>1$.
\begin{enumerate}
\item $\{(a)_{n\in\omega}\mid a\in A\}$ freely generates a sub-LD-monoid of the inverse limit of reduced multigenic Laver tables
$\varprojlim_{n\in\omega}A^{<2^{n}}$.

\item Suppose that $I$ is a countable or finite set. Then the set of all
$(\mathbf{x}_{i})_{i\in I}\in(\varprojlim_{n\in\omega}A^{<2^{n}})^{I}$ that freely generate a sub-LD-system of
$\varprojlim_{n\in\omega}A^{<2^{n}}$ is a dense $G_{\delta}$-set.

\item Suppose that $I$ is a countable or finite set. Then the set of all
$(\mathbf{x}_{i})_{i\in I}\in(\varprojlim_{n\in\omega}A^{<2^{n}})^{I}$ that freely generate a sub-LD-monoid of
$\varprojlim_{n\in\omega}A^{<2^{n}}$ is a dense $G_{\delta}$-set.
\end{enumerate}
\end{thm}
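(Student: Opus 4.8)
The plan is to derive all three statements from a single structural fact: the inverse system $((A^{<2^{n}})_{n\in\omega},(\phi_{n})_{n\in\omega})$, where each $\phi_{n}\colon A^{<2^{n+1}}\to A^{<2^{n}}$ is the canonical homomorphism fixing the alphabet $A$, is a vast system of reduced multigenic Laver tables in the sense defined above. Granting this, part $(1)$ is exactly the theorem that in a vast system of reduced multigenic Laver tables the tuple $(r_{a,\beta})_{\beta\in\lambda}$ indexed by $A$ freely generates a sub-LD-monoid of $\varprojlim_{\alpha\in\lambda}L_{\alpha}$, and parts $(2)$ and $(3)$ are exactly the theorem that, for finite or countable $I$, the tuples in $(\varprojlim_{\alpha\in\lambda}L_{\alpha})^{I}$ freely generating a sub-LD-system (respectively a sub-LD-monoid) form a dense $G_{\delta}$ set; both are applied with $\lambda=\omega$ and $L_{n}=A^{<2^{n}}$, and the hypothesis $|A|>1$ is what supplies, at each finite level, enough pairwise prefix-incomparable strings for the eventual-initial-distinguishability arguments underlying those theorems.

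Most clauses of the definition of a vast system of reduced multigenic Laver tables are routine and do not use the large cardinal hypothesis. Since $(A^{\leq 2^{n}})^{+}=\mathbf{M}((1)_{a\in A},A_{n})$ and $A^{<2^{n}}$ is its reduction, the $\phi_{n}$ are exactly the homomorphisms fixing $A$; by the theory of $M\upharpoonright\alpha$ and of homomorphisms between finite multigenic Laver tables, every $\ker(\phi_{m,n})$ equals $\equiv^{r}$ for a suitable $r\in\mathrm{crit}[A^{<2^{m}}]$, and conversely every critical point of $A^{<2^{m}}$ arises this way — the critical point structure of $A^{<2^{m}}$ mirrors that of $A_{m}$, the critical points being introduced one at a time as $m$ grows, and the maximal one giving the identity congruence $\ker(\phi_{m,m})$ since $A^{<2^{m}}$ is reduced. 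Also $\bigcup_{n}A^{<2^{n}}=\bigcup_{k\in\omega}A^{k}$ and each generator is fixed. Thus the only clause carrying real content is clause $(2)$, vastness proper.

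For vastness I would use the criterion (valid once the kernel clause holds, and a consequence of Proposition~\ref{40q49yhp733y378qff70}) that it suffices, for each $x\in A^{<2^{n}}$ and each $r\in\omega$, to find $m\geq n$ and a lift $y\in A^{<2^{m}}$ of $x$ with $y^{r}*y\notin\mathrm{Li}(A^{<2^{m}})$. If $x=\varepsilon$ one lifts instead to a non-$\varepsilon$ string (say one of length $2^{n}$), so assume $x\neq\varepsilon$ and take $y=x$, viewed as a string in $A^{<2^{m}}$. Since membership in $\mathrm{Li}$ in $A^{<2^{m}}$ is governed, through the homomorphism to $A_{m}$, purely by string lengths and the classical Laver table $A_{m}$, the non-triviality of $y^{r}*y$ reduces to the combinatorial assertion that for each positive integer $p$ there is an $m$ with $\underbrace{p*_{m}(p*_{m}(\cdots *_{m}p))}_{r+1}\neq 2^{m}$. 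This is precisely the kind of statement established by Laver's method: realize the finite algebras $A_{m}$ (equivalently $\mathbf{M}((1)_{a\in A},A_{m})$) as quotients $\mathrm{Iter}(j)/\equiv^{\mathrm{crit}_{m}(j)}$ for elementary embeddings $j$ supplied by $n$-huge cardinals, which behave as rank-into-rank embeddings through the $m$-th iterate of their critical point; the hypothesis that $n$-huge cardinals exist for every $n$ then furnishes such $j$ for arbitrarily large $m$, and the argument is a refinement of the proofs that $(1)_{n\in\omega}$ freely generates a sub-LD-system of $\varprojlim_{n}A_{n}$ and that $(x*_{n}x)*_{n}y=2^{n}$ implies $x*_{n}y=2^{n}$ (Theorem~\ref{fy1DWY2v4g8uah}).

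The hard part will be exactly this last reduction: passing from the one-generator statements directly cited as consequences of $n$-hugeness to the required non-collapsing for \emph{arbitrary} elements of $A^{<2^{n}}$ (equivalently arbitrary integers $p$), and doing so uniformly enough that a single $m$ can be chosen once $r$ is fixed. This is where the full strength of ``$n$-huge for all $n$'' is used — one needs arbitrarily large $m$ at which the relevant Laver-table word does not reach $2^{m}$ — and where the anti-nightmare behaviour of the $A_{m}$ (the failure of $(j*j)(\alpha)>j(\alpha)$-type collapses, via Lemma~\ref{t42ijt0t24j}) is what keeps the lifted computations from degenerating. Once clause $(2)$ is verified, the remainder of the proof is a direct citation of the two general theorems on vast systems of reduced multigenic Laver tables.
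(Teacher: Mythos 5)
You take essentially the same route the paper does: verify that $(A^{<2^{n}})_{n\in\omega}$ with the canonical projections is a vast system of reduced multigenic Laver tables, and then invoke the two general theorems on vast systems — the one giving a free sub-LD-monoid on $\{(r_{a,\beta})_{\beta}\mid a\in A\}$ for part (1), and the dense-$G_{\delta}$ theorem for parts (2) and (3). The only place the large cardinal hypothesis enters is the vastness check, and your reduction via the length homomorphism is correct: lifting $x$ to the same string $y$ in $A^{<2^{m}}$ and pushing forward by $|\cdot|:A^{<2^{m}}\to A_{m}$, the condition $y^{r}*y\neq\varepsilon$ is exactly $p^{r}*_{m}p\neq 2^{m}$ with $p=|x|$.

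Two small remarks. First, the ``hard part'' you flag — passing from the one-generator cited theorems to arbitrary $p$, and doing so ``uniformly enough that a single $m$ can be chosen once $r$ is fixed'' — is in fact cheap and requires neither re-doing Laver's argument nor any uniformity. The criterion of Proposition~\ref{40q49yhp733y378qff70} (and the $y^{r}*y$ variant that follows it) allows $m$ to depend on both $x$ and $r$. And once one grants the cited $n$-huge consequence that $(1)_{n}$ freely generates a sub-LD-system of $\varprojlim_{n}A_{n}$, that subalgebra is acyclic, hence contains no left-identity; since $(p)_{n}=\underbrace{(1)_{n}*\cdots*(1)_{n}}_{p}$ (left parenthesization) lies in it, so does $(p)_{n}^{r}*(p)_{n}$, and therefore this thread is never $(2^{m})_{m}$. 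This gives $p^{r}*_{m}p<2^{m}$ for all large $m$ directly, and it is precisely the content of the ZFC equivalences proposition the paper places immediately after the theorem. Second, the invocation of Lemma~\ref{t42ijt0t24j} (the ``anti-nightmare'' inequality $(j*j)(\alpha)\leq j(\alpha)$) is a red herring here: that lemma governs which multigenic Laver tables can be realized inside $\mathcal{E}_{\lambda}$, but the vastness check only needs the computation not to hit the left-identity, which is the acyclicity argument just described.
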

Theorem \ref{4gui04hru984r2gh9ug24r} is remarkable since the conclusion is a result of fundamental importance about finite algebras
but the result relies upon the very largest cardinals. It is unknown as to whether the large cardinal hypothesis can be removed from the above theorem, and this problem is likely intractable.

\begin{prop}
In ZFC, the following are equivalent.
\begin{enumerate}
\item The thread $(1)_{n}$ freely generates a subalgebra of the inverse limit $\varprojlim_{n}A_{n}$ of classical Laver tables.

\item $\lim_{n\rightarrow\infty}o_{n}(1)\rightarrow\infty$.

\item There exists a vast system of Laver-like LD-systems.

\item If $A$ is a set with $|A|>1$ and $I$ is a countable or finite set, then the set of all $(\mathbf{x}_{i})_{i\in I}\in\varprojlim_{n\in\omega}A^{<2^{n}}$ that freely generate a sub-LD-monoid of $\varprojlim_{n\in\omega}A^{<2^{n}}$ is a dense $G_{\delta}$-set.

\item For each set $A$, the set of threads $\{(a)_{n}\mid a\in A\}$ freely generates a sub-LD-monoid of
$\varprojlim_{n\in\omega}A^{<2^{n}}$.
\end{enumerate}
\end{prop}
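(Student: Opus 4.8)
The plan is to prove $(1)\Leftrightarrow(2)$, $(2)\Leftrightarrow(3)$, $(2)\Rightarrow(4)$, $(2)\Rightarrow(5)$, $(5)\Rightarrow(1)$, and $(4)\Rightarrow(3)$; these close up all the implications. First, $(1)\Leftrightarrow(2)$. By the already-established proposition characterizing when $(1)_{n}$ freely generates a subalgebra of $\varprojlim_{n}A_{n}$, condition $(1)$ holds iff $f(n)<\omega$ for every $n$, where $f(n)$ is least with $1*_{f(n)}2^{n}<2^{f(n)}$. I would observe that $1*_{m}2^{n}<2^{m}$ iff $o_{m}(1)>n$: the map $y\mapsto 1*_{m}y$ has period $2^{o_{m}(1)}$, is strictly increasing on $\{1,\dots,2^{o_{m}(1)}\}$, and $1*_{m}2^{o_{m}(1)}=2^{m}$, so it attains the value $2^{m}$ precisely when $2^{o_{m}(1)}\mid y$. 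Hence $f(n)<\omega$ iff $\sup_{m}o_{m}(1)>n$; since $o_{m}(1)\le o_{m+1}(1)\le o_{m}(1)+1$, the sequence is nondecreasing, so $f(n)<\omega$ for all $n$ iff $o_{m}(1)\to\infty$, i.e.\ $(2)$. The periodicity facts also give a consequence used later: $\mathrm{crit}(1^{[1]})<\mathrm{crit}(1^{[2]})<\cdots$ in $A_{N}$ increases until reaching the top, doing so exactly at $1^{[o_{N}(1)+2]}$, so $1^{[k]}\neq 2^{N}$ iff $k\le o_{N}(1)+1$.

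Next, $(2)\Leftrightarrow(3)$. For $(2)\Rightarrow(3)$ I claim the inverse system $((A_{n}),(\pi_{n}))$ with $\pi_{n}(x)=(x)_{2^{n}}$ is a vast system of Laver-like LD-systems. Each $A_{n}$ is Laver-like; since $A_{n+1}$ is critically simple, Theorem~\ref{uh3d5tjh4uyyuin} forces every congruence of $A_{n+1}$ to be some $\equiv^{\alpha}$, and since $A_{n+1}/\ker\pi_{n}\cong A_{n}$ has $n+1$ critical points, $\ker\pi_{n}=\equiv^{\alpha_{n}}$ with $\alpha_{n}$ the penultimate critical point of $A_{n+1}$ — condition (1) of vastness. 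For the richness condition I use Proposition~\ref{40q49yhp733y378qff70}(1): given a term $t$ and $x_{1},\dots,x_{r}\in A_{k}$, write $x_{i}=s_{i}(1)$ (valid since $1$ generates $A_{k}$), lift to $z_{i}=s_{i}(1)\in A_{m}$; then $t(z_{1},\dots,z_{r})=w(1)$ for the one-variable term $w(x)=t(s_{1}(x),\dots,s_{r}(x))$, and $w((1)_{n})\in\varprojlim_{n}A_{n}$ lies in the free sub-LD-system generated by $(1)_{n}$, which exists by $(1)$. No element of a free LD-system on one generator is a left identity of the ambient algebra, because ``$w(x)*x=x$'' is never an LD-identity (evaluate in $(\mathbb{Z},\ a*b=b+1)$, where it reads $x+1=x$); hence $w((1)_{n})\notin\mathrm{Li}(\varprojlim_{n}A_{n})$, so $w(1)\neq 2^{m}$ in $A_{m}$ for cofinally many $m$ — exactly the richness required by vastness. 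For $(3)\Rightarrow(2)$: given a vast system of Laver-like LD-systems, the power-characterization of vastness yields, for each $r$, some $y$ with $y^{[r+1]}\notin\mathrm{Li}$, hence $y^{[k]}\notin\mathrm{Li}$ for all $k\le r+1$; the finite (Corollary~\ref{4hw232s}) monogenerated LD-system $\langle y\rangle$ is a homomorphic image of a classical Laver table $A_{N}$ via $1\mapsto y$ by the classification of monogenerated LD-systems \cite{AD97}, and since homomorphisms preserve left identities, $1^{[k]}\neq 2^{N}$ in $A_{N}$ for all $k\le r+1$, so $o_{N}(1)\ge r$ by the fact above; thus $\sup_{n}o_{n}(1)=\infty$.

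Third, $(2)\Rightarrow(4),(5)$ and $(5)\Rightarrow(1)$. Assuming $(2)$, the inverse system $((A^{<2^{n}})_{n},(\psi_{n}))$ of reduced multigenic Laver tables — each $A^{<2^{n}}$ being the reduced form of $(A^{\le 2^{n}})^{+}=\mathbf{M}((1)_{a\in A},A_{n})$ — is a vast system of reduced multigenic Laver tables over $A$: the conditions on letters, on the covering of critical points, and $\bigcup_{n}A^{<2^{n}}=\bigcup_{n}A^{n}$ are immediate, and its richness reduces, along the projections $\phi_{n}\colon(A^{\le 2^{n}})^{+}\to A_{n}$ (which satisfy $\mathbf{a}\in\mathrm{Li}$ iff $\phi_{n}(\mathbf{a})\in\mathrm{Li}(A_{n})$ and $\phi_{n}(a_{1}\cdots a_{\ell})=\ell$), to the vastness of $((A_{n}),(\pi_{n}))$ established above. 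The two theorems on vast systems of reduced multigenic Laver tables stated immediately before Theorem~\ref{4gui04hru984r2gh9ug24r} then yield $(5)$ and $(4)$ respectively. For $(5)\Rightarrow(1)$, restrict $(5)$ to a one-letter alphabet $A=\{a\}$: then $\varprojlim_{n}\{a\}^{<2^{n}}\cong\varprojlim_{n}A_{n}$ (the reduced $n$-th classical Laver table is isomorphic to $A_{n}$ via $\varepsilon\mapsto 2^{n}$), carrying $(a)_{n}$ to $(1)_{n}$, and a free sub-LD-monoid on $\{(a)_{n}\}$ contains $\{(a)_{n}\}$ as a free generating set of a sub-LD-system (the stated theorem on free LD-monoids), yielding $(1)$.

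Finally $(4)\Rightarrow(3)$, which I expect to be the main obstacle. Density — hence nonemptiness — of the free-generating locus in $(\varprojlim_{n}\{a,b\}^{<2^{n}})^{\{1\}}$ produces a thread $\mathbf{x}$ freely generating a free LD-system, so all right powers $\mathbf{x}^{[k]}$ are distinct and none is the left identity $\varepsilon$; applying $\phi_{n}$, the lengths $\ell_{n}=|\mathbf{x}_{n}|$ satisfy $\ell_{n}^{[k]}\neq 2^{n}$ in $A_{n}$ for cofinally many $n$, for every $k$ — i.e.\ the classical Laver tables carry critical-point chains (started from the numerals $\ell_{n}$) of unbounded depth. \textbf{The hard step} is to convert this into $o_{n}(1)\to\infty$: the depth of the chain from an \emph{arbitrary} numeral below $2^{n}$ is not governed by $o_{n}(1)$ (for instance $\ell_{n}=2^{n-1}$ gives depth $n-1$ while $o_{n}(1)$ may stay bounded), so one must exploit that the $\ell_{n}$ are lengths of the coordinates of a genuine thread, hence compatible under the canonical homomorphisms $\psi_{n}$ — which act on lengths the way the maps $A_{n+1}\to A_{n}$ act on numerals — so as to push the chains of the $\ell_{n}$ down to the chain of $1$ and conclude $o_{n}(1)\to\infty$, i.e.\ $(3)$. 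This is where I expect the real work to lie; everything else is bookkeeping with results already in hand.
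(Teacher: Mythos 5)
Your overall decomposition and most of the individual implications are sound. The equivalence $(1)\Leftrightarrow(2)$ via the function $f$, the vastness of $(A_n)_n$ and hence $(2)\Rightarrow(3)$ (the observation that no element of a free monogenerated LD-system can be a left identity of the ambient algebra, since $w(x)*x=x$ fails in $(\mathbb{Z},\,a*b=b+1)$, is exactly the right lever for Proposition~\ref{40q49yhp733y378qff70}), the $(3)\Rightarrow(2)$ argument routed through $\mathbf{M}((y))\cong A_{N}$, the $(2)\Rightarrow(4),(5)$ step via the theorems on vast systems of reduced multigenic Laver tables, and $(5)\Rightarrow(1)$ by restriction to a singleton alphabet all check out (modulo routine verifications you have, for the most part, flagged). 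The paper states this proposition without proof, so there is no reference argument to compare against.

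The one genuine gap is the one you yourself identify: $(4)\Rightarrow(3)$. But you already possess the tool to close it, namely your own $(3)\Rightarrow(2)$ argument, applied to a \emph{single element} rather than to the whole length thread at once. Fix $r$. Take any freely-generating thread $\mathbf{x}\in\varprojlim_{n}A^{<2^{n}}$ supplied by $(4)$ (nonemptiness of the free locus suffices; you need not arrange $\ell_{m}=1$ or track the whole chain of $\mathbf{1}$), and set $\ell_{n}=|\mathbf{x}_{n}|$. Since $\mathbf{x}^{[r+1]}$ is not the left identity thread, there is some $n$ with $\ell_{n}^{[r+1]}\neq 2^{n}$. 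Now $\ell_{n}$ is a single element of the Laver-like algebra $A_{n}$ whose right-power chain avoids $\mathrm{Li}$ up to depth $r+1$. Run your $(3)\Rightarrow(2)$ reasoning on $y=\ell_{n}$: $\mathbf{M}((\ell_{n}),A_{n})\cong A_{N}$ for some $N$ (by the paper's cardinality formula, a one-letter multigenic Laver table has cardinality $2^{N}$), the canonical homomorphism $A_{N}\to\langle\ell_{n}\rangle$ sends $1\mapsto\ell_{n}$ and preserves left identities, so $1^{[r+1]}\neq 2^{N}$, whence $o_{N}(1)\geq r$. Since $r$ was arbitrary and $(o_{n}(1))_{n}$ is nondecreasing, $(2)$ holds, and then $(2)\Rightarrow(3)$ closes the cycle. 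In short, the ``hard step'' is a repetition of a step you had already written, not a new idea. Your worry that the chain depth of an arbitrary numeral is not controlled by $o_{n}(1)$ is legitimate, and routing through the covering Laver table $A_{N}$ of $\langle\ell_{n}\rangle$ is precisely how one sidesteps it. Your cautionary example, however, is off: $2^{n-1}*_{n}2^{n-1}=2^{n}$, so $\ell_{n}=2^{n-1}$ has right-power depth $1$, not $n-1$; this does not affect the logic but is worth correcting.
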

% ----------------------------------------------------------------
\section{Extensions of Laver-like LD-systems}
In this section, we shall develop methods for producing new Laver-like LD-systems from old ones.
\begin{defn}
Let $M$ be a multigenic Laver table. Let $\alpha$ be the largest critical point in $\mathrm{crit}[M]\setminus\{\max(\mathrm{crit}[M])\}$.
Then we say that the multigenic Laver table \index{cover}$M$ covers $M\upharpoonright\alpha$.
\end{defn}
Recall that if $M$ is a multigenic Laver table that covers $N$, then there is some unique non-empty set $J\subseteq\mathrm{Li}(N)$ where
$M=N\cup\{\mathbf{xy}\mid\mathbf{x}\in J,\mathbf{y}\in N\}$, and clearly $J=\mathrm{Li}(N)\setminus\mathrm{Li}(M)$.

\begin{prop}
Suppose that $N$ is a multigenic Laver table over an alphabet $A$, $N'$ is a multigenic Laver table that covers $N$,
$J=\mathrm{Li}(N)\setminus \mathrm{Li}(N')$, $a_{0}\in A$, $J_{a_{0}}=\{\mathbf{x}\in J:\mathbf{x}[1]=a_{0}\}$, and
$M=N\cup\{\mathbf{xy}:\mathbf{x}\in J_{a_{0}},\mathbf{y}\in N\}$. Then $M$ is a multigenic Laver table that covers $N$ whenever
$J_{a_{0}}\neq\emptyset$.
\label{t0g42hnudl}
\end{prop}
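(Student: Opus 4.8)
The plan is to realize $M$ as a quotient construction over the given cover $N'$. Write $N' = N\cup\{\mathbf{x}\mathbf{y}\mid\mathbf{x}\in J,\mathbf{y}\in N\}$ with $J=\mathrm{Li}(N)\setminus\mathrm{Li}(N')$; note every element $\mathbf{v}\in N'\setminus N$ has a \emph{unique} decomposition $\mathbf{v}=\mathbf{x}\mathbf{y}$ with $\mathbf{x}\in J$, $\mathbf{y}\in N$ (uniqueness because distinct elements of $J\subseteq\mathrm{Li}(N)$ are $\preceq$-incomparable and no element of $\mathrm{Li}(N)$ has a proper extension in $N$). Since $J_{a_0}\subseteq J$, we have $M\subseteq N'$ as a set of strings, and there is a well-defined retraction $\pi\colon N'\to M$ fixing $M$ pointwise and sending $\mathbf{x}\mathbf{y}\mapsto\mathbf{y}$ whenever $\mathbf{x}\in J\setminus J_{a_0}$, $\mathbf{y}\in N$ (this just ``erases'' the slices of $N'$ over the letters other than $a_0$). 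First I would run the routine combinatorial bookkeeping: check that the string set $M$ has the shape $\{\mathbf{z}a\mid\mathbf{z}\in L_M\}\cup A$ for the downward-closed set $L_M=L_N\cup J_{a_0}\cup(J_{a_0}\cdot L_N)$, so that $M$ is at least a pre-multigenic Laver table with $\mathrm{Li}(M)=(\mathrm{Li}(N)\setminus J_{a_0})\cup(J_{a_0}\cdot\mathrm{Li}(N))$; this uses only Proposition \ref{t4uh4aP} and the fact that a left-identity of $N$ cannot be extended inside $N$.

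The heart of the argument is the claim that the fibres of $\pi$ form a congruence on $(N',*^{N'})$, i.e. $\pi(\mathbf{v}_1)=\pi(\mathbf{v}_2)$ implies $\pi(\mathbf{v}_1*^{N'}\mathbf{w})=\pi(\mathbf{v}_2*^{N'}\mathbf{w})$ and $\pi(\mathbf{w}*^{N'}\mathbf{v}_1)=\pi(\mathbf{w}*^{N'}\mathbf{v}_2)$. I would prove this by the standard descending-then-ascending double induction on the pre-multigenic Laver table recursion for $*^{N'}$, using Propositions \ref{t4uh4aP}, \ref{G0Wuoeh2g2EHO2uo} and Corollary \ref{10W44eh2g2EHO2uo} to handle the cases where a computation passes through $\mathrm{Li}(N)$; the point to exploit is that all elements of $J$ share the same critical point $\alpha$ in $N'$, and that an element of $J$ starting with $a_0$ stays ``$a_0$-headed'' under $*^{N'}$ by Proposition \ref{t4uh4aP}(iv), so the $a_0$-slices are never merged with the others. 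Granting this, $Y:=N'/\ker\pi$ is a finite Laver-like LD-system (a quotient of one), and by the theorem that $\mathbf{M}((x_a)_{a\in A},Y)$ is a multigenic Laver table (taking $x_a=[a]_{\ker\pi}$), the string set $\mathbf{M}(([a])_{a\in A},Y)$ is a multigenic Laver table over $A$. A short computation then identifies $\mathbf{M}(([a])_{a\in A},Y)$ with $M$: a string $a_1\ldots a_n$ avoids $\mathrm{Li}(Y)$ at every proper prefix iff the corresponding $N'$-walk $a_1*^{N'}\ldots*^{N'}a_m$ avoids $\pi^{-1}(\mathrm{Li}(Y))$, which by the computed form of $\mathrm{Li}(M)$ is exactly the condition defining membership in $M$. (As a fallback if the congruence claim proves awkward, one can instead verify self-distributivity of $*^M$ directly via Theorem \ref{m43ti0}, checking $\mathbf{a}*^M\mathbf{b}\in\mathrm{Li}(M)$ for $\mathbf{b}\in\mathrm{Li}(M)$ by a triple induction parallel to the proof of Theorem \ref{m43ti0} and Theorem \ref{2ijr50o}, splitting on whether $\mathbf{b}$ is of type $\mathrm{Li}(N)\setminus J_{a_0}$ or of type $J_{a_0}\cdot\mathrm{Li}(N)$ — this is the same work, organized differently.)

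Finally, to see that $M$ \emph{covers} $N$, I would show $N=M\upharpoonright\alpha_M$ where $\alpha_M=\mathrm{crit}^M(\mathbf{p})$ for any $\mathbf{p}\in J_{a_0}$. Since $\ker\pi\subseteq\ker(L\colon N'\to N)$ (because $L$ also sends $\mathbf{x}\mathbf{y}\mapsto\mathbf{y}$ for $\mathbf{x}\in\mathrm{Li}(N)$), there is an induced surjective homomorphism $M\to N$ with $[a]\mapsto a$; one checks its kernel is $\equiv^{\alpha_M}$ and $\alpha_M$ is the largest element of $\mathrm{crit}[M]\setminus\{\max\mathrm{crit}[M]\}$ using the $M\upharpoonright\alpha$ machinery (the Proposition giving the injective/surjective homomorphisms $\phi,L$ and $\ker=\equiv^\alpha$). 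Alternatively, since we will know $\mathrm{Li}(M)=\mathrm{Li}(N)\setminus J_{a_0}$ and $M=N\cup\{\mathbf{x}\mathbf{y}\mid\mathbf{x}\in J_{a_0},\mathbf{y}\in N\}$, this matches the conclusion of the covering structure theorem with $J_{a_0}=\mathrm{Li}(N)\setminus\mathrm{Li}(M)$, which gives the covering statement directly. The main obstacle is the congruence lemma in the middle paragraph: making the double induction on $*^{N'}$ close requires carefully tracking how $\pi$ interacts with the recursion near left-identities of $N$, and this is where all the real work lies.
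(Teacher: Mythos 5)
The congruence claim at the heart of your plan is false, and no amount of bookkeeping in the double induction will close it. Take $A=\{0,1\}$, $N=\{0,1\}$ (the trivial multigenic Laver table, $\mathrm{Li}(N)=\{0,1\}$), $N'=\{0,1,00,01,10,11\}$, and $a_0=0$, so $J=\{0,1\}$, $J_{a_0}=\{0\}$, $M=\{0,1,00,01\}$, and your $\pi$ fixes $M$ pointwise while sending $10\mapsto 0$, $11\mapsto 1$. Then $10\sim 0$ under $\ker\pi$, but $10\in\mathrm{Li}(N')$ while $0\notin\mathrm{Li}(N')$, so $10*^{N'}0=0$ whereas $0*^{N'}0=00$, and $\pi(0)=0\neq 00=\pi(00)$. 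Thus $\ker\pi$ is not left-compatible and is not a congruence. Structurally this is forced: you are trying to collapse the critical point $\gamma=\mathrm{crit}^{N'}(\mathbf{v})$ on the non-$a_0$ slices while keeping it alive on the $J_{a_0}$ slice, but by Theorem \ref{uh3d5tjh4uyyuin} any congruence identifying a left-identity with an element of critical point $\gamma$ must send \emph{every} element of critical point $\ge\gamma$ to a left-identity in the quotient, including all of $J_{a_0}$. Your ``$a_0$-slices are never merged'' heuristic also misreads Proposition \ref{t4uh4aP}(iv): that fact controls the first letter of $\mathbf{w}*\mathbf{v}$ via the \emph{left} argument $\mathbf{w}$, whereas the compatibility you need to prove concerns varying the \emph{right} argument $\mathbf{v}$, and then the $a_0$-headedness of $\mathbf{v}$ controls nothing.

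The paper's proof takes a genuinely different route: not a quotient of $N'$ but an $\mathbf{M}$-construction over $N'$ with tilted generators. Fix any $\mathbf{v}\in J$, set $x_{a_0}=a_0$ and $x_a=\mathbf{v}a$ for $a\neq a_0$, and show $M=\mathbf{M}((x_a)_{a\in A},N')$; since $N'$ is Laver-like, this is automatically a multigenic Laver table. The substitution $a\mapsto\mathbf{v}a$ pushes the unwanted generators up so that products involving them hit $\mathrm{Li}(N')$ at least as soon as the purely $N$-internal computation would hit $\mathrm{Li}(N)$, pruning the non-$a_0$ branches of $J$ without ever forming a quotient. (In the toy example $x_0=0$, $x_1=01\in\mathrm{Li}(N')$, and $\mathbf{M}((x_a)_a,N')=\{0,1,00,01\}=M$ by a two-line check.) Finally, your fallback --- re-running the triple induction of Theorem \ref{m43ti0} directly on $*^M$ --- is not ``the same work organized differently'': since $\mathrm{Li}(M)\neq\mathrm{Li}(N')\cap M$ (e.g.\ $1\in\mathrm{Li}(M)\setminus\mathrm{Li}(N')$), $*^M$ is \emph{not} the restriction of $*^{N'}$, so you would be re-deriving the recursion combinatorics from scratch rather than reusing the already-proved fact that $\mathbf{M}((x_a)_{a\in A},X)$ is a multigenic Laver table whenever $X$ is a suitable LD-system.
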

\begin{proof}
Let $\mathbf{v}\in J,\gamma=\mathrm{crit}(\mathbf{v})$, $x_{a}=\mathbf{v}a$ for each $a\in A\setminus\{a_{0}\}$, and let $x_{a_{0}}=a_{0}$. Then I claim that $M=\mathbf{M}((x_{a})_{a\in A},N')$.

If $\mathbf{x}\in N'\setminus \mathrm{Li}(N')$, then
\[\mathbf{x}*^{N'}(\mathbf{v}a)=(\mathbf{x}*^{N'}\mathbf{v})*^{N'}\mathbf{x}a=\mathbf{x}a.\]
Therefore, $\mathbf{x}*^{N'}x_{a}=\mathbf{x}a$ whenever $\mathbf{x}\in N'\setminus \mathrm{Li}(N')$. 

Suppose now that $a_{1}\ldots a_{n}\in M$ and $1\leq m<n$. If
$a_{1}\ldots a_{m}\in N$, then
\[x_{a_{1}}*^{N'}\ldots*^{N'}x_{a_{m}}\equiv^{\gamma}a_{1}\ldots a_{m}.\]
Therefore, 
\[\mathrm{crit}^{N'}(x_{a_{1}}*^{N'}\ldots*^{N'}x_{a_{m}})=\mathrm{crit}^{N'}(a_{1}\ldots a_{m})<\gamma.\]
We conclude that
\[x_{a_{1}}*^{N'}\ldots*^{N'}x_{a_{m}}\not\in\mathrm{Li}(N').\]

Now assume that $1\leq r<m$ and $a_{1}\ldots a_{r}\in J_{a_{0}}$. Then
\[x_{a_{1}}*^{N'}\ldots*^{N'}x_{a_{m}}\equiv^{\gamma}a_{1}\ldots a_{r}*^{N'}x_{a_{r+1}}*^{N'}\ldots *^{N'}x_{a_{m}}\]
\[\equiv^{\gamma}x_{a_{r+1}}*^{N'}\ldots *^{N'}x_{a_{m}}\equiv^{\gamma}a_{r+1}\ldots a_{m}.\]
Therefore, since
\[\mathrm{crit}(x_{a_{1}}*^{N'}\ldots*^{N'}x_{a_{m}})
=\mathrm{crit}(a_{r+1}\ldots a_{m})<\gamma,\]
we conclude that
\[x_{a_{1}}*^{N'}\ldots*^{N'}x_{a_{m}}\not\in\mathrm{Li}(N').\]

Now assume that $a_{1}\ldots a_{m}\in J_{a_{0}}$. Then I claim that for $1\leq s\leq m$, we have $x_{a_{1}}*^{N'}\ldots *^{N'}x_{a_{s}}=a_{1}\ldots a_{s}$ and this result shall be proven by induction on $s$. The case where $s=1$ is trivial. Now assume that $s>1$. Then
$x_{a_{1}}*^{N'}\ldots *^{N'}x_{a_{s}}=a_{1}\ldots a_{s-1}*^{N'}x_{a_{s}}=a_{1}\ldots a_{s-1}a_{s}$ since
$x_{a_{s}}\equiv^{\gamma}a_{s}$ and $(a_{1}\ldots a_{s-1})^{\sharp}(\gamma)=\max(\mathrm{crit}[N'])$. Therefore
\[x_{a_{1}}*^{N'}\ldots *^{N'}x_{a_{m}}=a_{1}\ldots a_{m}\not\in\mathrm{Li}(N').\]

We conclude that if $a_{1}\ldots a_{n}\in M$ and $1\leq m<n$, then
$a_{1}*^{N'}\ldots*^{N'}a_{m}\not\in\mathrm{Li}(N')$. Therefore
$M\subseteq\mathbf{M}((x_{a})_{a\in A},N')$.

Suppose now that $a_{1}\ldots a_{n}\not\in M$. Then $a_{1}\ldots a_{n}\not\in N$, so there is some $m<n$ where
$a_{1}\ldots a_{m}\in \mathrm{Li}(N)$. Furthermore, $a_{1}\ldots a_{m}\not\in J_{a_{0}}$ or $a_{m+1}\ldots a_{n}\not\in N$.

Suppose that $a_{1}\ldots a_{m}\not\in J_{a_{0}}$ and $a_{1}\neq a_{0}$.
Then I claim that $x_{a_{1}}*^{N'}\ldots*^{N'}x_{a_{r}}=\mathbf{v}a_{1}\ldots a_{r}$ whenever $1\leq r\leq m$.

Clearly $x_{a_{1}}=\mathbf{v}a_{1}$. Now, if $1\leq r<m$, then
\[x_{a_{1}}*^{N'}\ldots*^{N'}x_{a_{r+1}}=\mathbf{v}a_{1}\ldots a_{r}*^{N'}x_{a_{r+1}}=\mathbf{v}a_{1}\ldots a_{r+1}\]
since $x_{a_{r+1}}\equiv^{\gamma}a_{r+1}$ and
$(\mathbf{v}a_{1}\ldots a_{r})^{\sharp}(\gamma)=\max(\mathrm{crit}[N']).$

Therefore, we have
$x_{a_{1}}*^{N'}\ldots*^{N'}x_{a_{m}}=\mathbf{v}a_{1}\ldots a_{m}\not\in\mathrm{Li}(N')$. Therefore, since $m<n$, we conclude that $a_{1}\ldots a_{n}\not\in\mathbf{M}((x_{a})_{a\in A},N')$.

Now suppose that $a_{1}\ldots a_{m}\not\in J,a_{1}=a_{0}$. Then I claim that
\[x_{a_{1}}*^{N'}\ldots*^{N'}x_{a_{r}}=a_{1}\ldots a_{r}\]
for $1\leq r\leq m$.
We have $x_{a_{1}}=a_{1}$ since $a_{1}=a_{0}$. Now assume that
\[x_{a_{1}}*^{N'}\ldots*^{N'}x_{a_{r}}=a_{1}\ldots a_{r}\]
and $1\leq r<m$. Then
\[x_{a_{1}}*^{N'}\ldots*^{N'}x_{a_{r}}*^{N'}x_{a_{r+1}}=a_{1}\ldots a_{r}*^{N'}x_{a_{r+1}}=a_{1}\ldots a_{r+1}\]
since $a_{1}\ldots a_{r}\not\in \mathrm{Li}(N')$ and $x_{a_{r+1}}\equiv^{\gamma}a_{r+1}$.
Therefore, we have
\[x_{a_{1}}*^{N'}\ldots*^{N'}x_{a_{m}}=a_{1}\ldots a_{m}\in \mathrm{Li}(N'),\]
so $a_{1}\ldots a_{n}\not\in\mathbf{M}((x_{a})_{a\in A},N').$

Now suppose that $a_{1}\ldots a_{m}\in J_{a_{0}}$ but $a_{m+1}\ldots a_{n}\not\in N$. Then there is some $r<n$ where $a_{m+1}\ldots a_{r}\in \mathrm{Li}(N)$.
Then using our usual induction, it follows that $x_{a_{1}}*^{N'}\ldots*^{N'}x_{a_{s}}=a_{1}\ldots a_{s}$ whenever $1\leq s\leq r$, and, in particular, $x_{a_{1}}*^{N'}\ldots*^{N'}x_{a_{r}}=a_{1}\ldots a_{r}\in \mathrm{Li}(N')$,
so since $r<n$, we have $a_{1}\ldots a_{n}\not\in\mathbf{M}((x_{a})_{a\in A},N')$.

We conclude that $\mathbf{M}((x_{a})_{a\in A},N')=M$, and hence $M$ is a multigenic Laver table.
\end{proof}
\begin{prop}
\label{25yut94anh}
Let $M$ be a multigenic Laver table over an alphabet $A$. Furthermore, suppose that for each $a\in A$,
$M_{a}=M$ or $M_{a}$ is a multigenic Laver table that covers $M$ and where if $\mathbf{x}\in \mathrm{Li}(M)\setminus \mathrm{Li}(M_{a})$, then $\mathbf{x}[1]=a$. Let $V=\bigcup_{a\in A}M_{a}$. Then $V$ is a multigenic Laver table that covers $M$. Let
$S:V\rightarrow M$ be the mapping where $S(\mathbf{x})=\mathbf{x}$ for $\mathbf{x}\in M$ and where $S(\mathbf{vx})=\mathbf{x}$ whenever
$\mathbf{v}\in \mathrm{Li}(M),\mathbf{vx}\in V\setminus M$. Then
$\mathbf{x}*^{V}\mathbf{y}=\mathbf{x}*^{M_{a}}S(\mathbf{y})$ whenever $\mathbf{x}\not\in \mathrm{Li}(V),\mathbf{x}[1]=a$ and 
$\mathbf{x}*^{V}\mathbf{y}=\mathbf{y}$ whenever $\mathbf{x}\in \mathrm{Li}(V)$.
\end{prop}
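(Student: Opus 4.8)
The statement generalizes Proposition~\ref{t0g42hnudl} (the case where exactly one $M_a$ differs from $M$), and the plan is a direct verification in three stages: realize $V$ as the underlying set of a pre-multigenic Laver table, identify its self-distributive operation $*^{V}$ with the explicit formula in the statement, and read off that $V$ covers $M$. Write $L_M$ for the $L$-set of $M$, put $\alpha:=\max(\mathrm{crit}[M])$, and let $J_a:=\mathrm{Li}(M)\setminus\mathrm{Li}(M_a)$. Since each $M_a$ covers $M$, the description of covering multigenic Laver tables gives $M_a=M\cup\{\mathbf{u}\mathbf{z}\mid\mathbf{u}\in J_a,\ \mathbf{z}\in M\}$; the hypothesis that every $\mathbf{u}\in J_a$ begins with $a$ makes the $J_a$ pairwise disjoint, so with $J:=\bigsqcup_a J_a$ we get $V=M\cup\{\mathbf{u}\mathbf{z}\mid\mathbf{u}\in J,\ \mathbf{z}\in M\}$, and — the structural fact driving everything — any element of $V\setminus M$ beginning with $a$ already lies in $M_a\setminus M$.

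Next I would show $V=A\cup\{\mathbf{x}a\mid\mathbf{x}\in L_V,\ a\in A\}$ for $L_V:=L_M\cup\{\mathbf{u}\mathbf{w}\mid\mathbf{u}\in J,\ \mathbf{w}\in L_M\cup\{\varepsilon\}\}$, the point being that $L_V$ is downward closed under $\preceq$: a would-be counterexample must be a string having some $\mathbf{u}\in J$ as a proper prefix, but every proper prefix of a member of $\mathrm{Li}(M)$ already lies in $L_M$, which is downward closed. Hence $(V,*^{V})$ is a pre-multigenic Laver table and $*^{V}$ is everywhere defined by Proposition~\ref{t4uh4aP}. Computing $\mathrm{Li}(V)=(\mathrm{Li}(M)\setminus J)\cup\{\mathbf{u}\mathbf{w}\mid\mathbf{u}\in J,\ \mathbf{w}\in\mathrm{Li}(M)\}$ and comparing it with $\mathrm{Li}(M_a)$ yields the lemma that, for $\mathbf{x}\in M_a$ beginning with $a$, one has $\mathbf{x}\in\mathrm{Li}(V)$ iff $\mathbf{x}\in\mathrm{Li}(M_a)$.

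Then I would check that $S$ is well defined — a leaf of $M$ of length $\ge1$ is never a proper prefix of a member of $M$, so the factorization $\mathbf{y}=\mathbf{v}\mathbf{z}$ with $\mathbf{v}\in\mathrm{Li}(M)$, $\mathbf{z}\in M$ used to define $S$ on $V\setminus M$ is unique (and one verifies $S(a_1\cdots a_n)=a_1*^{M}\cdots*^{M}a_n$, so $S$ is a homomorphism $V\to M$). For $\mathbf{x}\notin\mathrm{Li}(V)$ with $\mathbf{x}[1]=a$ one has $\mathbf{x}\in M_a$ and, by the lemma, $\mathbf{x}\notin\mathrm{Li}(M_a)$, so $\mathbf{x}*^{M_a}S(\mathbf{y})$ is legitimate; I would prove $\mathbf{x}*^{V}\mathbf{y}=\mathbf{x}*^{M_a}S(\mathbf{y})$ (and $\mathbf{x}*^{V}\mathbf{y}=\mathbf{y}$ for $\mathbf{x}\in\mathrm{Li}(V)$) by the standard double induction, descending on $|\mathbf{x}|$ and ascending on $|\mathbf{y}|$, matching the defining clauses of $*^{V}$ against those of $*^{M_a}$; since $\mathbf{x}[1]=a$ persists through the recursion (each intermediate product has $\mathbf{x}$ as a proper initial segment, by Proposition~\ref{t4uh4aP}), every intermediate left argument stays in the block $M_a$, which is where disjointness of the $J_b$ enters. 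Self-distributivity then follows from Theorem~\ref{m43ti0}: for $\mathbf{y}\in\mathrm{Li}(V)$, either $\mathbf{x}\in\mathrm{Li}(V)$ and $\mathbf{x}*^{V}\mathbf{y}=\mathbf{y}$, or $\mathbf{x}*^{V}\mathbf{y}=\mathbf{x}*^{M_a}S(\mathbf{y})$ with $S(\mathbf{y})\in\mathrm{Li}(M_a)$, so self-distributivity of the multigenic Laver table $M_a$ together with the lemma puts the product back in $\mathrm{Li}(V)$. Finally, identifying the critical-point structure of the multigenic Laver table $V$ shows its second-largest critical point is $\alpha$ and $V\upharpoonright\alpha=M$, i.e. $V$ covers $M$.

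The main obstacle is the bookkeeping in that double induction: one must align the case split of $*^{V}$ (whether the current left argument is a leaf of $V$, equivalently whether the factorization of the current right argument lands in $\mathrm{Li}(M)$) with the corresponding split for $*^{M_a}$, and handle the seams — where a product crosses from the $M$-part into the cover part $M_a\setminus M$, or where the inner right argument belongs to a block $J_b$ with $b\ne a$. The latter is exactly where one uses that a computation headed by $a$ cannot leave $M_a$, together with the fact that $\mathrm{crit}^{M_a}$ of an internal node of $V$ beginning with $a$ lies below $\alpha$, which forces its $\sharp$-image of $\alpha$ to be the top critical point of $M_a$ and keeps the leaf-statuses aligned. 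A minor but genuine nuisance is the degenerate cases — all $M_a=M$, in which case $V=M$ and the ``covers'' clause must be read as an equality, and letters of $A$ that happen to be leaves of $M$ — which I would dispose of at the outset.
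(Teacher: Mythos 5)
Your overall plan is sound and does arrive at the result, but it takes a visibly longer route than the paper does. The paper's proof simply \emph{defines} an operation $\bullet$ on $V$ by the formula in the statement ($\mathbf{x}\bullet\mathbf{y}=\mathbf{x}*^{M_a}S(\mathbf{y})$ for $\mathbf{x}\notin\mathrm{Li}(V)$, $\mathbf{x}[1]=a$; $\mathbf{x}\bullet\mathbf{y}=\mathbf{y}$ otherwise), then verifies $\mathbf{x}\bullet a=\mathbf{x}a$ and checks self-distributivity of $\bullet$ directly in a handful of cases — using at the crucial step that $S$ is a homomorphism $M_b\to M$ into $M\subseteq M_a$ and left-distributivity of $M_a$ — and concludes $\bullet=*^V$ and that $(V,\bullet)$ is a multigenic Laver table. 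You instead build up the $L$-set $L_V$, show downward closure, then prove the formula by the descending/ascending double induction and invoke Theorem~\ref{m43ti0} separately. These are both legitimate; the paper's route sidesteps the double induction entirely, and if you were to write yours out in full you would find you are essentially re-deriving the pre-multigenic recursion a second time.

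There is, however, one genuine gap. In your self-distributivity step you assert that $\mathbf{y}\in\mathrm{Li}(V)$ and $\mathbf{x}\notin\mathrm{Li}(V)$, $\mathbf{x}[1]=a$, give $S(\mathbf{y})\in\mathrm{Li}(M_a)$. This is false in general. What one actually has is only $S(\mathbf{y})\in\mathrm{Li}(M)$, and $\mathrm{Li}(M)\not\subseteq\mathrm{Li}(M_a)$ whenever $J_a\neq\emptyset$: take $\mathbf{y}=\mathbf{u}\mathbf{z}$ with $\mathbf{u}\in J$ and $\mathbf{z}\in J_a$; then $\mathbf{y}\in\mathrm{Li}(V)$ but $S(\mathbf{y})=\mathbf{z}\in J_a=\mathrm{Li}(M)\setminus\mathrm{Li}(M_a)$. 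The conclusion $\mathbf{x}*^{M_a}S(\mathbf{y})\in\mathrm{Li}(M_a)$ is nonetheless correct, but you cannot reach it from Theorem~\ref{m43ti0} (left-ideal-ness of $\mathrm{Li}(M_a)$) alone when $S(\mathbf{y})\notin\mathrm{Li}(M_a)$. The correct argument is the one you actually sketch elsewhere in your write-up (for the double induction seams): since $\mathbf{x}\notin\mathrm{Li}(M_a)$, $\mathrm{crit}^{M_a}(\mathbf{x})\le\alpha$ where $\alpha$ is the second-largest critical point of $M_a$; and $S(\mathbf{y})\in\mathrm{Li}(M)$ gives $\mathrm{crit}^{M_a}(S(\mathbf{y}))\ge\alpha$; hence $\mathrm{crit}^{M_a}(\mathbf{x}*^{M_a}S(\mathbf{y}))=\mathbf{x}^\sharp(\mathrm{crit}^{M_a}(S(\mathbf{y})))=\max(\mathrm{crit}[M_a])$, so $\mathbf{x}*^{M_a}S(\mathbf{y})\in\mathrm{Li}(M_a)$, and then your lemma ($\mathbf{w}\in M_a$ beginning with $a$ is in $\mathrm{Li}(V)$ iff in $\mathrm{Li}(M_a)$) finishes. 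You should replace the false intermediate claim with this critical-point argument.
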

\begin{proof}
Define an operation $\bullet$ on $V$ by letting $\mathbf{x}\bullet\mathbf{y}=\mathbf{x}*^{M_{a}}S(\mathbf{y})$ whenever $\mathbf{x}\not\in \mathrm{Li}(V),\mathbf{x}[1]=a$ and where
$\mathbf{x}\bullet \mathbf{y}=\mathbf{y}$ whenever $\mathbf{x}\in \mathrm{Li}(V)$.

Suppose that $\mathbf{x}\not\in \mathrm{Li}(V),b\in A,\mathbf{x}[1]=a$. Then
$\mathbf{x}\bullet b=\mathbf{x}*^{M_{a}}S(b)=\mathbf{x}*^{M_{a}}b=\mathbf{x}b$.

We have
\[\mathbf{x}\bullet (\mathbf{y}\bullet \mathbf{z})=\mathbf{y}\bullet \mathbf{z}=(\mathbf{x}\bullet \mathbf{y})\bullet 
(\mathbf{x}\bullet \mathbf{z})\]
whenever
$\mathbf{x}\in \mathrm{Li}(V)$.

Now suppose that $\mathbf{x}\not\in \mathrm{Li}(V),\mathbf{y}\in \mathrm{Li}(V),
\mathbf{x}[1]=a$. Then $S(y)\in \mathrm{Li}(M)$, so
\[x\bullet y=x*^{M_{a}}S(y)\in\mathrm{Li}(M_{a}).\]
Since $x\bullet y\in\mathrm{Li}(M_{a})$ and $(x\bullet y)[1]=a$, we conclude that $x\bullet y\in\mathrm{Li}(M)$ as well. Therefore, we have $x\bullet (y\bullet z)=x\bullet z$ and
$(x\bullet y)\bullet (x\bullet z)=x\bullet z$.

Now assume that $\mathbf{x}\not\in \mathrm{Li}(V)$ and $\mathbf{y}\not\in \mathrm{Li}(V)$ and $\mathbf{x}[1]=a,\mathbf{y}[1]=b$. Then
\[x\bullet (y\bullet z)=x*^{M_{a}}S(y\bullet z)=x*^{M_{a}}S(y*^{M_{b}}z)\]
\[=x*^{M_{a}}(S(y)*^{M}S(z))=x*^{M_{a}}(S(y)*^{M_{a}}S(z))\]
\[=(x*^{M_{a}}S(y))*^{M_{a}}(x*^{M_{a}}S(z))\]
\[=(x\bullet y)*^{M_{a}}(x\bullet z)\]
\[=(x\bullet y)\bullet (x\bullet z).\]

Therefore, since $\bullet$ is self-distributive, and
$\mathbf{x}\bullet a=\mathbf{x}a$ for $\mathbf{x}\not\in\mathrm{Li}(V),a\in A$, the algebra $(V,\bullet)$ is a multigenic Laver table.
\end{proof}
\begin{lem}
Let $M$ be a multigenic Laver table that covers $N$, and suppose that $J=\mathrm{Li}(N)\setminus\mathrm{Li}(M)$.
\begin{enumerate}
\item If $\mathbf{x}a,\mathbf{x}b\in \mathrm{Li}(N),|\mathbf{x}|\neq 0$ and $\mathrm{crit}^{N}(a)=\mathrm{crit}^{N}(b)$, then $\mathbf{x}a\in J$ if and only if $\mathbf{x}b\in J$.

\item Let $\mathbf{c}\in\mathcal{I}_{N}$. If $\mathbf{x},\mathbf{y}$ are non-empty strings,
$\mathbf{cx},\mathbf{cy}\in \mathrm{Li}(N)$ and $\mathrm{crit}^{N}(\mathbf{c})=\mathrm{crit}^{N}(\mathbf{x})=\mathrm{crit}^{N}(\mathbf{y})$, then
$\mathbf{cx}\in J$ if and only if $\mathbf{cy}\in J$.

\item If $\mathbf{x}a,\mathbf{x}b\in \mathrm{Li}(N)$ and $\mathrm{crit}^{N}(a)<\mathrm{crit}^{N}(b)$, then $\mathbf{x}b\not\in J$.
\end{enumerate}
\end{lem}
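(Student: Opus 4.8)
The plan is to reduce everything to computations of critical points in $M$, using that $M$ covering $N$ means $N=M\upharpoonright\alpha$ for $\alpha$ the largest element of $\mathrm{crit}[M]\setminus\{\max(\mathrm{crit}[M])\}$, so that $J=\mathrm{Li}(N)\setminus\mathrm{Li}(M)$ consists exactly of those $\mathbf{z}\in\mathrm{Li}(N)$ with $\mathrm{crit}^{M}(\mathbf{z})<\max(\mathrm{crit}[M])$ (equivalently $\mathrm{crit}^{M}(\mathbf{z})=\alpha$). Two facts get used constantly: (i) every \emph{proper} prefix of a string of $N$ lies in the downward-closed part of $N$, hence is not in $\mathrm{Li}(N)$, hence (being in $N\subseteq M$, and $\mathrm{Li}(M)\cap N\subseteq\mathrm{Li}(N)$) not in $\mathrm{Li}(M)$; consequently, appending letters realizes a string $w\in N$ as the left-associated product $w=w[1]*w[2]*\cdots$ in both $N$ and $M$, and $\mathbf{z}*a=\mathbf{z}a$ for a letter $a$ whenever $\mathbf{z}\notin\mathrm{Li}$ (Proposition~\ref{t4uh4aP}). (ii) The canonical surjective homomorphism $L:M\to N$ with kernel $\equiv^{\alpha}$ (available since $N=M\upharpoonright\alpha$) fixes every string of $N$, so by the proposition comparing critical points along a homomorphism of permutative LD-systems, for strings $u,v\in N\setminus\mathrm{Li}(N)$ we have $\mathrm{crit}^{N}(u)\le\mathrm{crit}^{N}(v)\iff\mathrm{crit}^{M}(u)\le\mathrm{crit}^{M}(v)$; thus equalities and strict inequalities of critical points transfer between $N$ and $M$.

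For item~1: from $\mathbf{x}a\in\mathrm{Li}(N)$ with $\mathbf{x}$ nonempty we get $\mathbf{x}\notin\mathrm{Li}(N)$, hence $\mathbf{x}\notin\mathrm{Li}(M)$, so $\mathbf{x}a=\mathbf{x}*^{M}a$ and $\mathbf{x}b=\mathbf{x}*^{M}b$. As $a,b$ are letters (hence not in $\mathrm{Li}(N)$, barring the degenerate case of a generator being a left-identity), $\mathrm{crit}^{N}(a)=\mathrm{crit}^{N}(b)$ transfers by (ii) to $\mathrm{crit}^{M}(a)=\mathrm{crit}^{M}(b)$, so $\mathrm{crit}^{M}(\mathbf{x}a)=\mathbf{x}^{\sharp}(\mathrm{crit}^{M}(a))=\mathbf{x}^{\sharp}(\mathrm{crit}^{M}(b))=\mathrm{crit}^{M}(\mathbf{x}b)$; both being strings of $\mathrm{Li}(N)$, $\mathbf{x}a\in J\iff\mathbf{x}b\in J$. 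Item~3 runs identically, but now $\mathrm{crit}^{M}(a)<\mathrm{crit}^{M}(b)$. If $\mathbf{x}b\in J$, then $\mathrm{crit}^{M}(\mathbf{x}b)<\max$, and since $\mathbf{x}^{\sharp}$ is order-preserving also $\mathrm{crit}^{M}(\mathbf{x}a)<\max$; thus both $\mathrm{crit}^{M}(a)$ and $\mathrm{crit}^{M}(b)$ lie in the set on which $\mathbf{x}^{\sharp}$ is injective (Theorem~\ref{2j1298ur28tye}(5)), so their images differ. But $\mathbf{x}a,\mathbf{x}b\in\mathrm{Li}(N)$ together with $\mathrm{crit}^{M}<\max$ forces $\mathrm{crit}^{M}(\mathbf{x}a)=\mathrm{crit}^{M}(\mathbf{x}b)=\alpha$, a contradiction; hence $\mathbf{x}b\notin J$.

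Item~2 is the heart of the matter. Write $\mathbf{x}=x_{1}\cdots x_{q}$ and $\mathbf{x}^{(m)}=x_{1}\cdots x_{m}$. Since $\mathbf{cx}\in\mathrm{Li}(N)\subseteq N$, each $\mathbf{cx}^{(m)}$ with $m<q$ is a proper prefix of it, hence not in $\mathrm{Li}(N)$; by (i), $\mathbf{cx}^{(m)}=\mathbf{c}*^{N}x_{1}*^{N}\cdots*^{N}x_{m}$, and since $\mathbf{c}\in\mathcal{I}_{N}$, Lemma~\ref{32u8r9} identifies this with $\mathbf{c}*^{N}\mathbf{x}^{(m)}$ (where $\mathbf{x}^{(m)}=x_{1}*^{N}\cdots*^{N}x_{m}$, again by (i)). Hence for $m<q$, $\mathbf{c}^{\sharp}(\mathrm{crit}^{N}(\mathbf{x}^{(m)}))=\mathrm{crit}^{N}(\mathbf{cx}^{(m)})<\max(\mathrm{crit}[N])$; but $\mathbf{c}\in\mathcal{I}_{N}$ with $\mathrm{crit}^{N}(\mathbf{c})=\gamma$ makes $\mathbf{c}^{\sharp}$ send every critical point $\ge\gamma$ to $\max$ (by Lemma~\ref{t24jirho} and order-preservation), so $\mathrm{crit}^{N}(\mathbf{x}^{(m)})<\gamma$ for all $m<q$. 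All these appending identities hold verbatim in $M$, and by (ii) we obtain $\mathrm{crit}^{M}(\mathbf{x}^{(m)})<\mathrm{crit}^{M}(\mathbf{x})$ for $m<q$, together with $\mathrm{crit}^{M}(\mathbf{x})=\mathrm{crit}^{M}(\mathbf{c})$ (from $\mathrm{crit}^{N}(\mathbf{x})=\mathrm{crit}^{N}(\mathbf{c})=\gamma$). Proposition~\ref{t4j8i09t42huqa} then applies in $M$ with $x=\mathbf{c}$ and tail $x_{1},\dots,x_{q}$, giving $\mathrm{crit}^{M}(\mathbf{cx})=\mathbf{c}^{\sharp}(\mathrm{crit}^{M}(\mathbf{x}))$; symmetrically $\mathrm{crit}^{M}(\mathbf{cy})=\mathbf{c}^{\sharp}(\mathrm{crit}^{M}(\mathbf{y}))$, and $\mathrm{crit}^{M}(\mathbf{x})=\mathrm{crit}^{M}(\mathbf{y})$ since both equal $\mathrm{crit}^{M}(\mathbf{c})$. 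Thus $\mathrm{crit}^{M}(\mathbf{cx})=\mathrm{crit}^{M}(\mathbf{cy})$, and as both lie in $\mathrm{Li}(N)$, $\mathbf{cx}\in J\iff\mathbf{cy}\in J$.

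The main obstacle is precisely the chain of identifications in item~2 — pinning down $\mathbf{cx}^{(m)}=\mathbf{c}*^{N}\mathbf{x}^{(m)}$ (so that Lemma~\ref{t24jirho} genuinely forces $\mathrm{crit}^{N}(\mathbf{x}^{(m)})<\gamma$) and then verifying that the hypotheses of Proposition~\ref{t4j8i09t42huqa} are exactly met in $M$; everything else reduces to routine bookkeeping with (i), (ii), and the characterization of $J$. One should also check separately the degenerate case in which some generator of $N$ already lies in $\mathrm{Li}(N)$, where all three statements are immediate.
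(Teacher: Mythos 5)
Your approach mirrors the paper's: transfer critical points between $N$ and $M$ along the fixed-point homomorphism $L:M\to M\upharpoonright\alpha=N$, use $\mathbf{x}^{\sharp}$ and Theorem~\ref{2j1298ur28tye} to compare $\mathrm{crit}^{M}(\mathbf{x}a)$ with $\mathrm{crit}^{M}(\mathbf{x}b)$, and for item~2 pin down that proper prefixes of $\mathbf{x}$ have critical point below $\mathrm{crit}(\mathbf{c})$ before invoking Proposition~\ref{t4j8i09t42huqa}. Your forward derivation in item~2 (computing $\mathbf{cx}^{(m)}=\mathbf{c}*^{N}\mathbf{x}^{(m)}$ via Lemma~\ref{32u8r9} and then reading off $\mathrm{crit}^{N}(\mathbf{x}^{(m)})<\gamma$) is the contrapositive of the paper's minimality-for-contradiction argument; both land on the same hypotheses for Proposition~\ref{t4j8i09t42huqa}. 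In item~3 you argue by an injectivity contradiction using Theorem~\ref{2j1298ur28tye}(5) instead of the paper's direct chain of inequalities; your version is slightly cleaner since the paper's chain $\max(\mathrm{crit}[N])\leq\mathrm{crit}^{M}(\mathbf{x}a)<\mathrm{crit}^{M}(\mathbf{x}b)$ implicitly requires ruling out $\mathrm{crit}^{M}(\mathbf{x}a)=\max(\mathrm{crit}[M])$.

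The one real gap is your treatment of the case $\mathrm{crit}^{N}(a)=\mathrm{crit}^{N}(b)=\max(\mathrm{crit}[N])$, i.e.\ $a,b\in\mathrm{Li}(N)$, which you defer as ``immediate.'' It is not: your main argument appeals to the transfer principle (ii), which needs both arguments outside $\mathrm{Li}(N)$, so this case genuinely falls outside the setup you built, and the conclusion that $\mathbf{x}a,\mathbf{x}b\notin J$ still requires an argument. The paper's handling is short but specific: since $\mathbf{x}$ is a nonempty proper prefix of $\mathbf{x}a\in N$, one has $\mathbf{x}\notin\mathrm{Li}(N)$, hence $\mathrm{crit}^{M}(\mathbf{x})<\alpha\leq\mathrm{crit}^{M}(a)$. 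If $a\in\mathrm{Li}(M)$, then $\mathbf{x}a=\mathbf{x}*^{M}a\in\mathrm{Li}(M)$ because $\mathrm{Li}(M)$ is a left ideal. If $a\notin\mathrm{Li}(M)$ (i.e.\ $a\in J$), then $\mathrm{crit}^{M}(\mathbf{x})\leq\mathrm{crit}^{M}(a)<\max(\mathrm{crit}[M])$, so by Theorem~\ref{2j1298ur28tye}(4) we get $\mathrm{crit}^{M}(\mathbf{x}a)=\mathbf{x}^{\sharp}(\mathrm{crit}^{M}(a))>\mathrm{crit}^{M}(a)\geq\alpha$, forcing $\mathrm{crit}^{M}(\mathbf{x}a)=\max(\mathrm{crit}[M])$ and hence $\mathbf{x}a\in\mathrm{Li}(M)$. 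Either way $\mathbf{x}a\notin J$, and similarly for $\mathbf{x}b$, so the equivalence in item~1 holds vacuously. You should either supply this argument or observe explicitly that in items~2 and~3 the degenerate case is excluded by the hypotheses.
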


\begin{proof}
\begin{enumerate}
\item Suppose 
\[\mathrm{crit}^{N}(a)=\mathrm{crit}^{N}(b)<\max(\mathrm{crit}[N]).\]
Then $\mathrm{crit}^{M}(a)=\mathrm{crit}^{M}(b)$, so
\[\mathrm{crit}^{M}(\mathbf{x}a)=\mathrm{crit}^{M}(\mathbf{x}*a)=\mathrm{crit}^{M}(\mathbf{x}*b)=\mathrm{crit}^{M}(\mathbf{x}b),\]
so
$\mathbf{x}a\in J$ iff $\mathbf{x}a\not\in\mathrm{Li}(M)$ iff $\mathbf{x}b\not\in \mathrm{Li}(M)$ iff $\mathbf{x}b\in J$.

Suppose now that $\mathrm{crit}^{N}(a)=\mathrm{crit}^{N}(b)=\max(\mathrm{crit}[N])$. Since $\mathbf{x}a\in N$, we have
$\mathrm{crit}^{N}(\mathbf{x})<\mathrm{crit}^{N}(a)$. If $a\in \mathrm{Li}(M)$, then $\mathbf{x}a\in \mathrm{Li}(M)$ as well. If $a\not\in \mathrm{Li}(M)$, then 
\[\mathrm{crit}^{M}(\mathbf{x})<\mathrm{crit}^{M}(a)<\mathrm{crit}^{M}(\mathbf{x}a),\] so $\mathbf{x}a\in \mathrm{Li}(M)$. Therefore, $\mathbf{x}a\in \mathrm{Li}(M)$, so $\mathbf{x}a\not\in J$. For a similar reason,
$\mathbf{x}b\not\in J$ as well.

\item I claim that if $\mathbf{z}$ is a proper prefix of $\mathbf{x}$, then
$\mathrm{crit}(\mathbf{z})<\mathrm{crit}(\mathbf{c})$. Suppose to the contrary that
$\mathbf{z}$ is the shortest proper prefix of $\mathbf{x}$ with
$\mathrm{crit}(\mathbf{z})\geq\mathrm{crit}(\mathbf{c})$. Then by Lemma \ref{32u8r9},
$\mathbf{cz}=\mathbf{c}*^{M}\mathbf{z}\in\mathrm{Li}(N)$ which is a contradiction. Therefore if $\mathbf{z}$ is a proper prefix of $\mathbf{x}$ or $\mathbf{y}$, then
$\mathrm{crit}(\mathbf{z})<\mathrm{crit}(\mathbf{c})$. Since
\[\mathrm{crit}^{N}(\mathbf{c})=\mathrm{crit}^{N}(\mathbf{x})=\mathrm{crit}^{N}(\mathbf{y}),\]
we have
\[\mathrm{crit}^{M}(\mathbf{c})=\mathrm{crit}^{M}(\mathbf{x})=\mathrm{crit}^{M}(\mathbf{y}),\] hence by Theorem \ref{t4j8i09t42huqa}, we have
\[\mathrm{crit}^{M}(\mathbf{cx})=\mathrm{crit}^{M}(\mathbf{c}*^{M}\mathbf{x})=\mathrm{crit}^{M}(\mathbf{c}*^{M}\mathbf{y})=\mathrm{crit}^{M}(\mathbf{cy}).\]

\item Suppose that $\mathbf{x}a,\mathbf{x}b\in \mathrm{Li}(N),\mathrm{crit}^{N}(a)<\mathrm{crit}^{N}(b)$. Then
\[\mathrm{crit}^{M}(a)<\mathrm{crit}^{M}(b),\]
so
\[\max(\mathrm{crit}[N])\leq\mathrm{crit}^{M}(\mathbf{x}a)<\mathrm{crit}^{M}(\mathbf{x}b).\]
Therefore, $\mathrm{crit}^{M}(\mathbf{x}b)=\max(\mathrm{crit}[M])$, so
$\mathbf{x}b\not\in J$.

\end{enumerate}
\end{proof}

Let $A$ be a set, and let $a\in A$. We say that a multigenic Laver table $M$ over an alphabet $A$ that covers $N$ is an irreducible cover of $N$ of type $a$ if $\mathbf{x}[1]=a$ whenever $\mathbf{x}\in M\setminus N$.

Suppose that $N$ is a multigenic Laver table over an alphabet $A$ and $\mathcal{A}_{a}$ is the set of all irreducible covers of
$N$ of type $a$ for each $a\in A$. Then by propositions \ref{t0g42hnudl} and
\ref{25yut94anh}, the set $\{\bigcup_{a\in A}M_{a}\mid M_{a}\in\mathcal{A}_{a}\cup\{N\}\,\textrm{for}\,a\in A\}\setminus\{N\}$ is precisely the set of all multigenic Laver tables that cover $N$.

\begin{algo}(\textbf{sketch})
\label{t42h890gt4bh} Suppose that $M$ is a finite multigenic Laver table over the alphabet $\{0,1\}$. Then the following construction
produces the set of all irreducible covers of $M$ of type $0$.

Let $L$ be the set of all $\mathbf{x}\in \mathrm{Li}(M)$ such that $\mathbf{x}[1]=0$ and if $\mathbf{x}=\mathbf{y}c$ and
$\mathbf{y}b,\mathbf{y}c\in\mathrm{Li}(M)$, then $\mathrm{crit}^{M}(b)\geq \mathrm{crit}^{M}(c)$. 

Let $\simeq$ be the smallest equivalence relation on $L$ where
\begin{enumerate}
\item $\mathbf{cx}\simeq\mathbf{cy}$ whenever $\mathbf{c}\in\mathcal{I}_{M}$ and $\mathbf{x},\mathbf{y}$ are non-empty strings with $\mathbf{cx},\mathbf{cy}\in L$ and $\mathrm{crit}^{M}(\mathbf{c})=\mathrm{crit}^{M}(\mathbf{x})=\mathrm{crit}^{M}(\mathbf{y})$,
and 

\item $\mathbf{x}b\simeq\mathbf{x}c$ whenever $\mathbf{x}b,\mathbf{x}c\in L,|\mathbf{x}|\neq 0$ and
$\mathrm{crit}^{M}(b)=\mathrm{crit}^{M}(c)$.
\end{enumerate}
For each subset $V\subseteq L/\simeq$, let
$J_{V}=\{\mathbf{x}\in L:[\mathbf{x}]_{\simeq}\in V\}$, and let $N_{V}=M\cup\{\mathbf{xy}:\mathbf{x}\in J_{V},\mathbf{y}\in M\}$.
Then let $\mathcal{A}$ be the set of all objects of the form $N_{V}$ where $N_{V}$ is a multigenic Laver table.
Then $\mathcal{A}\setminus\{N\}$ is the set of all multigenic Laver tables that cover $M$ irreducibly for $0$.
All the steps used to obtain $\mathcal{A}$ are effective. In this algorithm, there are $2^{|L/\simeq|}-1$ pre-multigenic Laver tables
$N_{V}$ to test for self-distributivity.
\end{algo}

We do not believe that Algorithm \ref{t42h890gt4bh} is optimal since we predict that there are unknown techniques
that could be used to improve upon Algorithm \ref{t42h890gt4bh}.

While Algorithm $\ref{t42h890gt4bh}$ still runs for multigenic Laver tables over alphabets $A$ with $|A|>2$, Algorithm $\ref{t42h890gt4bh}$ can be optimized when $|A|>2$ using the following observation.  Suppose that $N$ is a multigenic Laver table over $A$ and $a\in A$. Suppose also that $M$ covers $N$ irreducibly of type $a$. Then whenever $a\in B\subseteq A$, either $M\cap B^{+}=N\cap B^{+}$ or $M\cap B^{+}$ covers $N\cap B^{+}$ irreducibly of type $a$. Therefore, to find an irreducible cover of $N$ of type $a$, one first searches for multigenic Laver tables $N_{b}$ for $b\in A\setminus\{a\}$ where 
\begin{enumerate}
\item $N_{b}=N\cap(A\setminus\{b\})^{+}$ or $N_{b}$ is an irreducible cover $N\cap(A\setminus\{b\})^{+}$ of type $a$ and

\item for all $b,c\in A\setminus\{a\}$, we have $N_{b}\cap(A\setminus\{b,c\})^{+}=N_{c}\cap(A\setminus\{b,c\})^{+}$.
\end{enumerate}
One then uses the technique outlined in Algorithm $\ref{t42h890gt4bh}$ to search for some multigenic Laver table $M$ that covers $N$ irreducibly of type $a$ where $M\cap(A\setminus\{b\})^{+}=N_{b}$ for all $b\in A\setminus\{a\}$.

Observe that if $X$ is a finite permutative LD-system, then whenever $U$ is an upwards closed subset of
$(X,\preceq)$, the set $U$ is a subalgebra of $(X,*)$. Therefore, if $|X|=n$, there is a sequence of subalgebras
$X_{1}\subseteq X_{2}\subseteq\ldots \subseteq X_{n}=X$ where $|X_{i}|=i$ for $1\leq i\leq n$. Therefore, any finite permutative
LD-system can be built by starting with a one element algebra and then extending the algebra to a larger finite permutative LD-system one point at a time. We shall now give a method for obtaining one-point extensions of finite permutative LD-systems.

\begin{defn}
Let $X$ be a finite Laver-like algebra. Let $\gamma$ be the largest critical point in
$\mathrm{crit}[X]\setminus\{\max(\mathrm{crit}[X])\}$. Let $\mathbf{a}\in X/\equiv^{\gamma}$. Let $a'\in\mathbf{a}$.
Then we say that a homomorphism $\phi:X/\equiv^{\gamma}\rightarrow X$ is an \index{extension homomorphism}\emph{extension homomorphism} for $\mathbf{a}$ if $\phi([x]_{\gamma})\equiv^{\gamma}a'*x$ for each $x\in X$.
\end{defn}
\begin{thm}
Suppose that $\phi:X/\equiv^{\gamma}\rightarrow X$ is an extension homomorphism for $\mathbf{a}$. Let $a\not\in X$. Then extend
$(X,*)$ to an algebra $(X\cup\{a\},*)$ defined by
\begin{enumerate}
\item $x*a=x*a'$ for $x\not\in \mathrm{Li}(X)$,

\item $x*a=a$ for $x\in \mathrm{Li}(X)$,

\item $a*a=\phi([a']_{\gamma})$, and

\item $a*x=\phi([x]_{\gamma})$ for $x\in X$.
\end{enumerate}
Then $(X\cup\{a\},*)$ is a permutative LD-system.
\end{thm}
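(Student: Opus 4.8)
The plan is to verify that the extended operation on $X \cup \{a\}$ is self-distributive by checking the identity $u*(v*w) = (u*v)*(u*w)$ in all cases according to whether each of $u, v, w$ equals $a$ or lies in $X$, and to verify that $\mathrm{Li}(X\cup\{a\})$ is a left-ideal; then the result follows either directly from the definition of a permutative LD-system, or more cleanly by invoking the criterion of the earlier proposition that a finite LD-system with a well-behaved $\Gamma$-map onto a finite linear order is automatically permutative. The key observation making the case analysis tractable is that $L_{*,a}$ agrees with $\phi$ composed with the quotient map $X \to X/\equiv^{\gamma}$: by definition $a*x = \phi([x]_\gamma)$ for $x \in X$, and $a*a = \phi([a']_\gamma)$, so since $\equiv^{\gamma}$ is a congruence on $X$ and $\phi$ is a homomorphism, $L_{*,a}$ is a homomorphism from $X \cup \{a\}$ to $X$ — provided one checks $L_{*,a}$ respects the new products $x*a$ and $a*a$ correctly, i.e. that $a*(x*a) = (a*x)*(a*a)$ and $a*(a*x) = (a*a)*(a*x)$ and $a*(a*a)=(a*a)*(a*a)$.

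First I would record the basic facts: $\mathrm{Li}(X \cup \{a\}) = \mathrm{Li}(X)$ since $a * a = \phi([a']_\gamma) \not\equiv^\gamma$-class issues aside is a genuine element of $X$ and $a \notin \mathrm{Li}(X\cup\{a\})$ because $a*a \neq a$ (as $a \notin X$); and $\mathrm{Li}(X)$ remains a left-ideal because for $x \in \mathrm{Li}(X)$ we have $a*x = \phi([x]_\gamma) = \phi(\max \mathrm{crit}[X/\equiv^\gamma]\text{-class})$, which lands in $\mathrm{Li}(X)$ since $\phi$ is a homomorphism between Laver-like/permutative algebras and hence preserves left-identities (using Proposition \ref{49tt4ngio224gf}). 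Next I would treat the self-distributivity identity. When $u \in \mathrm{Li}(X)$ both sides collapse to $v*w$. When $u \in X \setminus \mathrm{Li}(X)$ and none of $v,w$ is $a$, this is just distributivity in $X$, except one must handle the subcase where $v=a$ or $w=a$: here $x*a = x*a'$ by definition, and one uses that $*$ on $X$ is distributive together with $a*x = \phi([x]_\gamma) \equiv^\gamma a'*x$ — the congruence relation $\phi([x]_\gamma)\equiv^\gamma a'*x$ built into the definition of extension homomorphism is exactly what is needed to push products through. When $u = a$, I would use that $L_{*,a} = \phi \circ (\text{quotient})$ is multiplicative, which reduces every instance to the homomorphism property of $\phi$ and the fact that $[x*y]_\gamma = [x]_\gamma * [y]_\gamma$; the only genuinely new computations are those involving $x*a = x*a'$ and $a*a$, where one substitutes the definitions and appeals again to $\phi([x]_\gamma) \equiv^\gamma a'*x$ to match the $\equiv^\gamma$-classes and hence the $\phi$-images.

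The main obstacle I anticipate is bookkeeping in the mixed cases $u = a$, $v \in X$, $w = a$ (and symmetric variants), where one must carefully unwind $a*(v*a) = a*(v*a') = \phi([v*a']_\gamma) = \phi([v]_\gamma * [a']_\gamma)$ and compare it to $(a*v)*(a*a) = \phi([v]_\gamma) * \phi([a']_\gamma)$, using that $\phi$ is a homomorphism, and then separately confirm these genuinely equal $\phi(\cdot)$-values are consistent with how $*$ treats the element $x*a := x*a'$. I would resolve this by proving a single clean lemma first — namely that $z * a = z * a'$ for \emph{all} $z \in X\cup\{a\}$ (for $z = a$ this says $a*a = a*a' = \phi([a']_\gamma)$, which is exactly clause (3)) — so that the symbol $a$ on the right of $*$ can always be replaced by $a'$, collapsing all right-hand occurrences of $a$ to a known element of $X$. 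After that reduction, the only remaining cases have $a$ appearing solely in the leftmost position, and these follow from $L_{*,a}$ being the homomorphism $\phi \circ q$. Finally, with self-distributivity and the left-ideal property of $\mathrm{Li}$ in hand, and noting that the critical-point structure of the extension refines that of $X$ so that $t_n(\cdot,\cdot)$ eventually lands in $\mathrm{Li}$, I would conclude $(X\cup\{a\},*)$ is permutative.
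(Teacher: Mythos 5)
Your overall strategy — a case analysis on self-distributivity followed by a permutativity check — is essentially the same as the paper's, which also works through the eight cases for $r,s,t\in\{x,a\}$; and your observation that $L_{*,a}$ equals $\phi$ composed with the quotient map is a genuinely clean way to dispatch all the cases with $a$ leftmost at once, which the paper does by hand. However, the organizing lemma you propose as the hinge of the reduction — ``$z*a = z*a'$ for \emph{all} $z\in X\cup\{a\}$'' — is false. For $z\in\mathrm{Li}(X)$, clause (2) of the definition gives $z*a=a$, while $z*a'=a'$ because $z$ is a left-identity of $X$ and $a'\in X$; since $a\notin X$ and $a'\in X$, these are distinct. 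The lemma holds exactly for $z\in(X\setminus\mathrm{Li}(X))\cup\{a\}$, i.e.\ it is just the union of clauses (1) and the consequence of clauses (3)--(4) at $a'$, and says nothing new for left-identities.

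This is not a mere stylistic flaw: if you apply the (false) lemma literally to, say, the subterm $v*a$ with $v\in\mathrm{Li}(X)$, you replace $a$ by $a'$ and get a genuinely different element ($a'$ instead of $a$), and the identity you would then ``check'' is not the one you need. The reason your conclusion happens to survive is that whenever $v\in\mathrm{Li}(X)$, the ambient case of the self-distributivity identity is trivialized anyway — either by $v$ being a left-identity collapsing its products, or by $\mathrm{Li}(X)$ being a left-ideal so that $u*v$ is itself a left-identity — but that salvage is not provided by your lemma and requires you to split $\mathrm{Li}(X)$ off as a separate subcase in every position, which is exactly the case analysis the lemma was supposed to avoid. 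The fix is straightforward: state the lemma only for $z\notin\mathrm{Li}(X)$ and $z=a$, and handle $z\in\mathrm{Li}(X)$ by the clause $z*a=a$ directly together with the left-ideal property; with that adjustment your reduction and your $L_{*,a}$-is-a-homomorphism argument do carry through. Also, the final paragraph asserting permutativity (``$t_n$ eventually lands in $\mathrm{Li}$'') is stated but not argued; the paper establishes this with two explicit claims (that $t_n(x,a)\in\mathrm{Li}$ eventually for $x\in X$, and then $t_n(a,y)\in\mathrm{Li}$ eventually for any $y$, using that $a*y\in X$), and a complete proof would need to supply something at that level of detail.
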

\begin{proof}
Suppose $r\in\{a,x\},s\in\{a,y\},t\in\{a,z\}$ and $x,y,z\in X$. Then we shall show that
$r*(s*t)=(r*s)*(r*t)$ in $8$ different cases depending on whether $r=a$ or $r=x$, $s=a$ or $s=x$, $t=a$ or $t=x$.
\begin{enumerate}
\item $x*(y*z)=(x*y)*(x*z)$.

This follows from the self-distributivity of $(X,*)$.

\item $x*(y*a)=(x*y)*(x*a)$.

If $y\in \mathrm{Li}(X)$, then $x*y\in \mathrm{Li}(X)$ and $x*(y*a)=x*a$ and
\[(x*y)*(x*a)=x*a.\]

If $x\in \mathrm{Li}(X)$, then
\[x*(y*a)=y*a=(x*y)*(x*a).\]

If $x\not\in \mathrm{Li}(X),y\not\in \mathrm{Li}(X)$, then 
\[x*(y*a)=x*(y*a')=(x*y)*(x*a')=(x*y)*(x*a).\]

\item $x*(a*z)=(x*a)*(x*z)$

If $x\in \mathrm{Li}(X)$, then $x*(a*z)=a*z=(x*a)*(x*z)$.

Now suppose that $x\not\in \mathrm{Li}(X)$. Then

\[x*(a*z)=x*\phi([z]_{\gamma})\]
\[=x*(a'*z)=(x*a')*(x*z)=(x*a)*(x*z).\]

\item $x*(a*a)=(x*a)*(x*a)$.

If $x\in \mathrm{Li}(X)$, then $x*(a*a)=a*a=(x*a)*(x*a)$.

If $x\not\in \mathrm{Li}(X)$, then

\[x*(a*a)=x*\phi([a']_{\gamma})\]
\[=x*(a'*a')=(x*a')*(x*a')=(x*a)*(x*a).\]

\item $a*(y*z)=(a*y)*(a*z)$

\[a*(y*z)=\phi([x*y]_{\gamma})\]
\[=\phi([x]_{\gamma})*\phi([y]_{\gamma})=(a*x)*(a*y).\]

\item $a*(y*a)=(a*y)*(a*a)$.

If $y\in \mathrm{Li}(X)$, then $a*y=\phi([y]_{\gamma})\in \mathrm{Li}(X)$. Therefore,
\[a*(y*a)=a*a=(a*y)*(a*a).\]

Now assume that $y\not\in \mathrm{Li}(X)$. Then

\[a*(y*a)=a*(y*a')=\phi([y*a']_{\gamma})\]
\[=\phi([y]_{\gamma})*\phi([a']_{\gamma})=(a*y)*(a*a).\]

\item $a*(a*z)=(a*a)*(a*z)$.

\[a*(a*z)=a*\phi([z]_{\gamma})=\phi([\phi([z]_{\gamma})]_{\gamma})=\phi([a'*z]_{\gamma})\]
\[=\phi([a']_{\gamma})*\phi([z]_{\gamma})=(a*a)*(a*z).\]

\item $a*(a*a)=(a*a)*(a*a)$.

\[a*(a*a)=a*\phi([a']_{\gamma})=\phi([\phi([a']_{\gamma})]_{\gamma})\]
\[=\phi([a'*a']_{\gamma})=\phi([a']_{\gamma})*\phi([a']_{\gamma})=(a*a)*(a*a).\]
\end{enumerate}

We therefore conclude that $(X\cup\{a\},*)$ is an LD-system.

It is easy to see that $\mathrm{Li}(X\cup\{a\})=\mathrm{Li}(X)$.
Furthermore, if $x\in\mathrm{Li}(X\cup\{a\})$, then $a*x=\phi([x]_{\gamma})\in\mathrm{Li}(X)=\mathrm{Li}(X\cup\{a\})$ as well since $\phi$ is a homomorphism between permutative LD-systems. Therefore, $\mathrm{Li}(X\cup\{a\})$ is a left-ideal in
$(X\cup\{a\},*)$. We shall now establish that $(X\cup\{a\},*)$ is permutative by two cases.

Claim 1: For all $x\in X$, there is some $n$ with $t_{n}(x,a)\in\mathrm{Li}(X\cup\{a\})$.

Proof: If $x\in\mathrm{Li}(X)$, then our claim is satisfied when $n=2$. Now assume that $x\not\in\mathrm{Li}(X)$. Then $x*a=x*a'\in X$. Therefore, since $X$ is permutative, there is some $N$ with
$t_{N+1}(x,a)=t_{N}(x*a,x)\in\mathrm{Li}(X)$.

Claim 2: For all $y\in X\cup\{a\}$, there is some $n$ with $t_{n}(a,y)\in\mathrm{Li}(X\cup\{a\})$.

Proof: We have $a*y\in X$. Therefore, by Claim 1, there is some $N$ where $t_{N+1}(a,y)=t_{N}(a*y,a)\in\mathrm{Li}(X\cup\{a\})$.

We therefore conclude that $(X\cup\{a\},*)$ is permutative.
\end{proof}

\begin{defn}
Suppose the following:
\begin{enumerate}
\item $X$ is a finite permutative LD-system. 

\item $\gamma$ is the largest critical point in $\text{crit}[X]\setminus\{\max(\textrm{crit}[X])\}$, 

\item $\mathbf{a}\in X/\equiv^{\gamma}$. 

\item $\delta$ is the least critical point such that $(a')^{\sharp}(\delta)\geq\gamma$ (the critical point $\delta$ only depends on
$\mathbf{a}$ and not the particular choice of representative $a'$).

\item $\epsilon$ is the least critical point greater than $\delta$.
\end{enumerate}

Then we say that a homomorphism $\phi:X/\equiv^{\epsilon}\rightarrow X$ is a \index{reduced extension homomorphism}\emph{reduced extension homomorphism} for $\mathbf{a}$ if $\phi([x]_{\epsilon})\equiv^{\gamma}a'*x$ for each $x\in X$ and where if $\epsilon=\max(\mathrm{crit}[X])$, then $\phi([x]_{\epsilon})=\phi([y]_{\epsilon})$ whenever $x\equiv^{\gamma}y$.
\end{defn}

\begin{prop}
Let $\pi:X/\equiv^{\gamma}\rightarrow X/\equiv^{\epsilon}$ be the projection mapping. Then
$\phi\mapsto\phi\pi$ is a one-to-one correspondence between the extension homomorphisms for $\mathbf{a}$ and the reduced
extension homomorphisms for $\mathbf{a}$.
\end{prop}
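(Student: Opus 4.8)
The plan is to prove the bijection by showing that an extension homomorphism and its ``reduced'' counterpart carry exactly the same data, the translation being composition with the canonical quotient homomorphism $\pi$ relating $X/\equiv^{\gamma}$ and $X/\equiv^{\epsilon}$.

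First I would pin down how $\gamma$ and $\epsilon$ are related. Since $(a')^{\sharp}(\gamma)\geq\gamma$ by Theorem \ref{2j1298ur28tye}(2), the critical point $\gamma$ itself witnesses the defining condition for $\delta$, so $\delta\leq\gamma$. Hence either $\delta<\gamma$, in which case $\gamma$ is a critical point strictly above $\delta$ and so $\epsilon\leq\gamma$ and $\equiv^{\gamma}$ refines $\equiv^{\epsilon}$, with $\pi$ the projection $X/\equiv^{\gamma}\twoheadrightarrow X/\equiv^{\epsilon}$; or $\delta=\gamma$, in which case $\epsilon$ is the least critical point above $\gamma$, namely $\max(\mathrm{crit}[X])$, so that $\equiv^{\epsilon}$ is the identity relation (by Theorem \ref{2j1298ur28tye}(1) any $r$ with $\mathrm{crit}(r)=\max(\mathrm{crit}[X])$ lies in $\mathrm{Li}(X)$), $X/\equiv^{\epsilon}=X$, $\pi$ is the quotient map $X=X/\equiv^{\epsilon}\twoheadrightarrow X/\equiv^{\gamma}$, and the extra clause in the definition of a reduced extension homomorphism is precisely the requirement that such a homomorphism factor through this quotient. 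In either case $\pi$ is a surjective homomorphism, and $\phi\mapsto\phi\pi$ is manifestly a well-defined map between the two classes of homomorphisms, because $\phi\pi$ is a composite of homomorphisms and the defining congruence condition $\phi\pi([x])\equiv^{\gamma}a'*x$ transports along $\pi$.

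The substance of the argument is to show, in the generic case $\delta<\gamma$, that every extension homomorphism $\phi\colon X/\equiv^{\gamma}\to X$ factors through $\pi$, i.e. $\ker\phi\supseteq\equiv^{\epsilon}$, equivalently $\phi([x]_{\gamma})=\phi([y]_{\gamma})$ whenever $x\equiv^{\epsilon}y$. The key observation is that $(a')^{\sharp}(\epsilon)=\max(\mathrm{crit}[X])$: indeed $(a')^{\sharp}(\epsilon)\geq(a')^{\sharp}(\delta)\geq\gamma$ by monotonicity, and if $(a')^{\sharp}(\delta)<\max(\mathrm{crit}[X])$ then by injectivity of $(a')^{\sharp}$ on $\{\alpha : (a')^{\sharp}(\alpha)<\max(\mathrm{crit}[X])\}$ (Theorem \ref{2j1298ur28tye}(5)) the value $(a')^{\sharp}(\epsilon)\geq\gamma$ cannot again equal $\gamma$, hence equals $\max(\mathrm{crit}[X])$, while the case $(a')^{\sharp}(\delta)=\max(\mathrm{crit}[X])$ is even clearer. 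Now I would pick an involutive $c\in X$ with $\mathrm{crit}(c)=\epsilon$ — such $c$ exists since $X$ is finite, each critical point being realized by a maximal, hence involutive, element. Writing $a*z:=\phi([z]_{\gamma})$, we have $a*c=\phi([c]_{\gamma})\equiv^{\gamma}a'*c$, and $\mathrm{crit}(a'*c)=(a')^{\sharp}(\epsilon)=\max(\mathrm{crit}[X])$ forces $a'*c\in\mathrm{Li}(X)$, hence $a*c\in\mathrm{Li}(X)$ (homomorphic images of left-identities are left-identities). Finally, if $x\equiv^{\epsilon}y$ then $c*x=c*y$ (the relation $\equiv^{\epsilon}$ being independent of the involutive representative, Lemma \ref{4th2utqebnjofa}), so applying the homomorphism $z\mapsto a*z$ and using that $a*c$ is a left-identity,
\[
a*x=(a*c)*(a*x)=a*(c*x)=a*(c*y)=(a*c)*(a*y)=a*y .
\]

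Given this, the bijection assembles routinely. When $\delta<\gamma$, each $\phi$ factors uniquely as $\psi\circ\pi$ with $\psi\colon X/\equiv^{\epsilon}\to X$, and $\psi$ is a reduced extension homomorphism since $\psi([x]_{\epsilon})=\phi([x]_{\gamma})\equiv^{\gamma}a'*x$ (the $\epsilon=\max$ clause being vacuous here as $\epsilon\leq\gamma<\max(\mathrm{crit}[X])$); conversely $\psi\mapsto\psi\pi$ sends reduced extension homomorphisms to extension homomorphisms, and the two assignments are mutually inverse, with injectivity of each following from surjectivity of $\pi$. When $\delta=\gamma$ the reduced extension homomorphisms are by definition exactly the homomorphisms $X\to X$ factoring through $\pi=q_{\gamma}$ whose factorization meets the congruence condition, so $\phi\mapsto\phi\pi$ is again a bijection onto them. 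The main obstacle is the factorization claim: once one has the identity $(a')^{\sharp}(\epsilon)=\max(\mathrm{crit}[X])$ and an involutive element at level $\epsilon$, the rest is a three-line computation, but recognizing that $\epsilon$ — and no larger critical point — is the correct cutoff is the crux, and it is exactly here that the precise definition of $\delta$ and the injectivity of the $\sharp$-maps on the non-maximal part are used.
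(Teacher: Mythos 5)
Your overall plan is sound, and most of the steps check out: the observation that $(a')^{\sharp}(\epsilon)=\max(\mathrm{crit}[X])$ is correct (via monotonicity and Theorem \ref{2j1298ur28tye}(5)), the use of an involutive $c$ at level $\epsilon$ together with Lemma \ref{4th2utqebnjofa} to pass from $x\equiv^{\epsilon}y$ to $c*x=c*y$ is the right move, and the final three-line computation $a*x=(a*c)*(a*x)=a*(c*x)=\dots=a*y$ is fine once you have $a*c\in\mathrm{Li}(X)$. The handling of the two cases $\delta<\gamma$ and $\delta=\gamma$ is also essentially right.

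The gap is in the step ``$a*c=\phi([c]_{\gamma})\equiv^{\gamma}a'*c\in\mathrm{Li}(X)$, hence $a*c\in\mathrm{Li}(X)$ (homomorphic images of left-identities are left-identities).'' The parenthetical does not apply: $a*c$ is the image under $\phi$ of $[c]_{\gamma}$, and $[c]_{\gamma}$ is a left-identity of $X/\equiv^{\gamma}$ only when $\mathrm{crit}(c)\geq\gamma$, i.e.\ only in the borderline case $\epsilon=\gamma$. When $\epsilon<\gamma$ (the generic situation you are in), $[c]_{\gamma}$ has critical point $\epsilon$, strictly below $\max(\mathrm{crit}[X/\equiv^{\gamma}])$, so it is not a left-identity and you cannot invoke that principle. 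Nor does $a*c\equiv^{\gamma}a'*c\in\mathrm{Li}(X)$ by itself give $a*c\in\mathrm{Li}(X)$: $\equiv^{\gamma}$-equivalence to a left-identity only tells you $\mathrm{crit}(a*c)\geq\gamma$, leaving open the possibility $\mathrm{crit}(a*c)=\gamma$. Since $\gamma$ and $\max(\mathrm{crit}[X])$ are both $\geq\gamma$, this is a genuine underdetermination, and the whole argument hinges on ruling out $\mathrm{crit}(a*c)=\gamma$.

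To close the gap you need to use that $\phi$ is a homomorphism of permutative LD-systems and appeal to the proposition stating that for such a homomorphism, $\mathrm{crit}(\phi(u))\leq\mathrm{crit}(\phi(v))$ iff $\mathrm{crit}(u)\leq\mathrm{crit}(v)$ or $\phi(v)\in\mathrm{Li}(X)$; equivalently, $\mathrm{crit}(\phi(\cdot))$ is strictly increasing until it first reaches $\max(\mathrm{crit}[X])$. Take $u\in X/\equiv^{\gamma}$ with $\mathrm{crit}(u)=\delta$ and $v=[c]_{\gamma}$ with $\mathrm{crit}(v)=\epsilon$. Since $\delta<\gamma$, a representative $z$ of $u$ has $\mathrm{crit}(a'*z)=(a')^{\sharp}(\delta)\geq\gamma$, so $\mathrm{crit}(\phi(u))\geq\gamma$. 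If $\phi(u)\notin\mathrm{Li}(X)$, then $\mathrm{crit}(\phi(u))=\gamma$ and, since $\mathrm{crit}(u)<\mathrm{crit}(v)$, the homomorphism proposition forces $\mathrm{crit}(\phi(v))>\gamma$, i.e.\ $\mathrm{crit}(\phi(v))=\max$; if instead $\phi(u)\in\mathrm{Li}(X)$, then $\mathrm{crit}(\phi(u))=\max\leq\mathrm{crit}(\phi(v))$. Either way $\phi([c]_{\gamma})=a*c\in\mathrm{Li}(X)$, and your computation then goes through. This is the one missing idea — the strict monotonicity of critical points under $\phi$ on the part of the domain that does not yet map into $\mathrm{Li}(X)$ — and without it the proof does not close.
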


% I DO NOT KNOW ABOUT THE ABOVE FACT.

\begin{defn}
Suppose that $(X,*)$ is a finite reduced Laver-like LD-system. Define \index{$A_{X,n}$}
\[A_{X,n}=\{(x*x_{1},\ldots,x*x_{n})\mid x\neq 1,x_{1},\ldots,x_{n}\in X\},\]
and let \index{$B_{X,n}$}$B_{X,n}=X^{n}\setminus A_{X,n}$.
\end{defn}
\begin{defn}
Suppose that $(X,*)$ is a finite reduced Laver-like LD-system.
Let $\alpha$ be the largest critical point in $\mathrm{crit}[X]\setminus\{\max(\mathrm{crit}[X])\}$.
We say that a map $t:(X/\equiv^{\alpha})^{n}\rightarrow X/\equiv^{\alpha}$ is \index{consistent}\emph{consistent} for $X$ if whenever
$x,y\in X\setminus\{1\}$ and 
\[(x*x_{1},\ldots,x*x_{n})=(y*y_{1},\ldots,y*y_{n}),\]
then
\[x*t([x_{1}]_{\alpha},\ldots,[x_{n}]_{\alpha})=y*t([y_{1}]_{\alpha},\ldots,[y_{n}]_{\alpha}).\]
\end{defn}

Every consistent mapping $t:(X/\equiv^{\alpha})^{n}\rightarrow X/\equiv^{\alpha}$ satisfies the identity
\[[x]*t([x_{1}]_{\alpha},\ldots,[x_{n}]_{\alpha})=t([x]_{\alpha}*[x_{1}]_{\alpha},\ldots,[x]_{\alpha}*[x_{n}]_{\alpha}).\]
\begin{defn}
Define $\mathcal{T}^{n}_{X}$\index{$\mathcal{T}^{n}_{X}$} for all finite reduced Laver-like LD-systems by induction on the number of critical points of $X$. If $X$ has only one member, then let $\mathcal{T}^{n}_{X}=\{t\}$ where $t$ is the unique function from $X^{n}$ to $X.$
Now suppose that $X$ has multiple elements. Let $\alpha$ be the largest critical point in $\mathrm{crit}[X]\setminus\{\max(\mathrm{crit}[X])\}$. If $s\in\mathcal{T}^{n}_{X/\equiv^{\alpha}}$ is not consistent, then let $Q_{s}=\emptyset$. If $s\in\mathcal{T}^{n}_{X/\equiv^{\alpha}}$ is consistent, then let \index{$s^{+}$}$s^{+}:A_{X,n}\rightarrow X$ be the mapping where if $x\neq 1$, then 
\[s^{+}(x*x_{1},\ldots,x*x_{n})=x*s([x_{1}]_{\alpha},\ldots,[x_{n}]_{\alpha}).\]
Let $\mathcal{V}_{s}$ be the set of all functions $t:B_{X,n}\rightarrow X$ such that $t(x_{1},\ldots,x_{n})\in s([x_{1}],\ldots,[x_{n}])$. Let $Q_{s}=\{s^{+}\cup t\mid t\in\mathcal{V}_{s}\}$. Then define
\[\mathcal{T}^{n}_{X}=\bigcup_{s\in\mathcal{T}^{n}_{X/\equiv^{\alpha}}}Q_{s}.\]
\end{defn}
\begin{prop}
Let $X$ be a finite reduced Laver-like LD-system. Then $\mathcal{T}^{n}_{X}$ is the set of all functions $t:X^{n}\rightarrow X$ that satisfy the identity 
\[t(x*x_{1},\ldots,x*x_{n})=x*t(x_{1},\ldots,x_{n}).\]
\end{prop}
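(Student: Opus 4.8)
The plan is to prove both inclusions by induction on the number of critical points $|\mathrm{crit}[X]|$. When $|\mathrm{crit}[X]|=1$, reducedness forces $X=\{1\}$, so there is a unique function $X^{n}\to X$, it trivially satisfies $t(x*x_{1},\dots,x*x_{n})=x*t(x_{1},\dots,x_{n})$, and $\mathcal{T}^{n}_{X}$ is a singleton by definition; the claim is immediate. For the inductive step let $\alpha$ be the largest critical point of $X$ strictly below $\max(\mathrm{crit}[X])$ and set $Y=X/\equiv^{\alpha}$. First I would record the structural facts: $Y$ is a finite reduced Laver-like LD-system with exactly one fewer critical point (reducedness because every element of critical point $\geq\alpha$ is $\equiv^{\alpha}$-equivalent to $1$, so all such elements collapse to the single class $[1]_{\alpha}$, and Laver-likeness passes to quotients); and, crucially, $x^{\sharp}(\alpha)=\max(\mathrm{crit}[X])$ for every $x\in X\setminus\{1\}$ — indeed $x\notin\mathrm{Li}(X)$ gives $\mathrm{crit}(x)\leq\alpha$, whence $x^{\sharp}(\alpha)>\alpha$ by Theorem \ref{2j1298ur28tye}(4), and this forces $x^{\sharp}(\alpha)=\max(\mathrm{crit}[X])$ since $\alpha$ is the penultimate critical point. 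Combining this with the Proposition that $z\equiv^{\alpha}z'$ implies $x*z\equiv^{x^{\sharp}(\alpha)}x*z'$, and with the remark that $\equiv^{\max(\mathrm{crit}[X])}$ is the identity relation in a reduced system (its witnessing involutive element of critical point $\max$ is $1$, and $1*z=1*z'$ iff $z=z'$), we obtain: for $x\neq 1$, the map $z\mapsto x*z$ is constant on $\equiv^{\alpha}$-classes. Hence expressions like $x*s([x_{1}]_{\alpha},\dots,[x_{n}]_{\alpha})$ occurring in the definitions of consistency and of $s^{+}$ are literally well-defined (choose any representative).

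For the inclusion that every $t\in\mathcal{T}^{n}_{X}$ satisfies the identity, write $t=s^{+}\cup t''\in Q_{s}$ with $s\in\mathcal{T}^{n}_{Y}$ consistent and $t''\in\mathcal{V}_{s}$; by the inductive hypothesis $s$ satisfies the identity in $Y$. I would first prove the key claim that $[t(x_{1},\dots,x_{n})]_{\alpha}=s([x_{1}]_{\alpha},\dots,[x_{n}]_{\alpha})$ for \emph{all} tuples: on $B_{X,n}$ this is the definition of $\mathcal{V}_{s}$; on $A_{X,n}$, writing $x_{i}=y*y_{i}$ with $y\neq 1$, one has $t(x_{1},\dots,x_{n})=s^{+}(y*y_{1},\dots,y*y_{n})=y*z$ for a representative $z$ of $s([y_{1}]_{\alpha},\dots,[y_{n}]_{\alpha})$, and then the quotient homomorphism and the identity for $s$ give $[y*z]_{\alpha}=[y]_{\alpha}*s([y_{1}]_{\alpha},\dots,[y_{n}]_{\alpha})=s([y*y_{1}]_{\alpha},\dots,[y*y_{n}]_{\alpha})=s([x_{1}]_{\alpha},\dots,[x_{n}]_{\alpha})$. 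Granting the claim, the identity for $t$ is verified directly: for $x=1$ it is trivial, and for $x\neq 1$ the tuple $(x*x_{1},\dots,x*x_{n})$ lies in $A_{X,n}$ with witness $x$, so $t(x*x_{1},\dots,x*x_{n})=s^{+}(x*x_{1},\dots,x*x_{n})=x*s([x_{1}]_{\alpha},\dots,[x_{n}]_{\alpha})$; by the key claim $t(x_{1},\dots,x_{n})$ is a representative of that class, and by constancy of $z\mapsto x*z$ on $\equiv^{\alpha}$-classes this equals $x*t(x_{1},\dots,x_{n})$.

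For the reverse inclusion, suppose $t:X^{n}\to X$ satisfies the identity. Applying it with $x$ the involutive element $r$ of critical point $\alpha$ (so $r\neq 1$) shows that $r*x_{i}=r*x_{i}'$ for all $i$ implies $r*t(x_{1},\dots,x_{n})=r*t(x_{1}',\dots,x_{n}')$; that is, $t$ descends to a well-defined map $s:Y^{n}\to Y$, $s([x_{1}]_{\alpha},\dots,[x_{n}]_{\alpha})=[t(x_{1},\dots,x_{n})]_{\alpha}$, and since the quotient map is a homomorphism $s$ satisfies the identity in $Y$, hence $s\in\mathcal{T}^{n}_{Y}$ by the inductive hypothesis. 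Consistency of $s$, together with $t|_{A_{X,n}}=s^{+}$ and $t|_{B_{X,n}}\in\mathcal{V}_{s}$, all follow from one computation: for $x\neq 1$, constancy of $z\mapsto x*z$ and the identity give $x*s([x_{1}]_{\alpha},\dots,[x_{n}]_{\alpha})=x*t(x_{1},\dots,x_{n})=t(x*x_{1},\dots,x*x_{n})$, which yields the equality of $s^{+}$ and $t$ on $A_{X,n}$ and, comparing two witnesses of the same tuple, the consistency of $s$; while $t|_{B_{X,n}}\in\mathcal{V}_{s}$ is the definition of $s$. Thus $t=s^{+}\cup(t|_{B_{X,n}})\in Q_{s}\subseteq\mathcal{T}^{n}_{X}$, completing the induction.

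The main obstacle is the bookkeeping around the well-definedness of ``$x*(\text{a }\equiv^{\alpha}\text{-class})$'': the entire argument hinges on $\alpha$ being the penultimate critical point, which is precisely what makes $x^{\sharp}(\alpha)$ maximal for every nontrivial $x$ and hence makes left-multiplication by a nonidentity element collapse $\equiv^{\alpha}$. Once that observation and the (easy but essential) remark that plugging the involutive element of critical point $\alpha$ into the defining identity produces the descent of $t$ to $Y$ are in place, both inclusions come down to routine unwinding of the definitions of $s^{+}$, $\mathcal{V}_{s}$, consistency, and $Q_{s}$, plus the fact (used implicitly throughout) that $Y=X/\equiv^{\alpha}$ really is a finite reduced Laver-like LD-system with fewer critical points.
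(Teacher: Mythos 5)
The paper states this proposition without proof, so there is nothing to compare against; I can only assess the argument on its own merits, and it is correct.

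Your induction on $|\mathrm{crit}[X]|$ is the natural one, since it mirrors the inductive definition of $\mathcal{T}^{n}_{X}$, and the base case is right: for a reduced system with a single critical point, every element is a left identity, so $X=\{1\}$. The load-bearing observation — that $x^{\sharp}(\alpha)=\max(\mathrm{crit}[X])$ for every $x\neq 1$, hence $z\mapsto x*z$ is constant on $\equiv^{\alpha}$-classes — is exactly what is needed and is correctly derived from Theorem \ref{2j1298ur28tye}(4) together with the fact that $\equiv^{\max(\mathrm{crit}[X])}$ is trivial in a reduced system (witnessed by $1$). That observation makes $s^{+}$ well-defined from consistency, makes the ``key claim'' $[t(\vec{x})]_{\alpha}=s([\vec{x}]_{\alpha})$ go through uniformly over $A_{X,n}\cup B_{X,n}$, and makes the reverse direction close cleanly: plugging the involutive $r$ of critical point $\alpha$ into the identity yields descent of $t$ to $Y=X/\equiv^{\alpha}$, the induced $s$ satisfies the identity in $Y$ because the quotient map is a homomorphism, and a single computation delivers consistency, $t|_{A_{X,n}}=s^{+}$, and $t|_{B_{X,n}}\in\mathcal{V}_{s}$ at once. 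You have also correctly verified the hypotheses of the inductive hypothesis for $Y$ (finite, reduced, Laver-like, one fewer critical point), citing the relevant structure theorem for quotients. I find no gaps.
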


% ----------------------------------------------------------------
\section{Miscellaneous topics}
In this chapter, we shall cover several miscellaneous topics.

% ----------------------------------------------------------------
\subsection{Algebras of elementary embeddings}
\label{t4i80t4qwkao2}

We shall now establish results about finite Laver-like LD-systems using large cardinal hypotheses where it is unknown whether the large cardinal hypotheses can be removed or not.

\begin{defn}
If $j:V_{\lambda}\rightarrow V_{\lambda}$, then define \index{$j^{+}$}$j^{+}:V_{\lambda+1}\rightarrow V_{\lambda+1}$ by letting
\[j^{+}(A)=\bigcup_{\alpha<\lambda}j(A\cap V_{\alpha}).\]
\end{defn}
\begin{lem}
Let $j:V_{\lambda}\rightarrow V_{\lambda}$ be elementary. Then whenever $A\subseteq V_{\lambda}$, the mapping
$j$ is also an elementary embedding from $(V_{\lambda},\in,A)$ to $(V_{\lambda},\in,j^{+}(A))$.
\end{lem}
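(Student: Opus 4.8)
The statement to prove is the lemma that if $j : V_\lambda \to V_\lambda$ is elementary and $A \subseteq V_\lambda$, then $j$ is also an elementary embedding from $(V_\lambda, \in, A)$ to $(V_\lambda, \in, j^+(A))$, where $j^+(A) = \bigcup_{\alpha < \lambda} j(A \cap V_\alpha)$.

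\textbf{Plan of the proof.} The plan is to verify the Tarski--Vaught-style criterion for elementarity directly, proving by induction on the complexity of a formula $\varphi(x_1,\dots,x_n)$ in the language $\{\in, P\}$ (where $P$ is a unary predicate symbol interpreted by $A$ on the left and by $j^+(A)$ on the right) that for all $a_1,\dots,a_n \in V_\lambda$,
\[
(V_\lambda,\in,A) \models \varphi(a_1,\dots,a_n) \iff (V_\lambda,\in,j^+(A)) \models \varphi(j(a_1),\dots,j(a_n)).
\]
The Boolean and quantifier steps are routine and go through exactly as in the usual proof that $j$ is elementary for the language $\{\in\}$ alone (using that $j(\alpha) \geq \alpha$ and that $j$ is increasing and cofinal-in-rank so that quantifiers can be reflected). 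The only genuinely new point is the atomic case $P(x)$, i.e. membership in $A$.

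\textbf{The atomic step.} First I would observe the key ``locality'' fact: for any $a \in V_\lambda$, there is $\alpha < \lambda$ with $a \in V_\alpha$, and then
\[
a \in A \iff a \in A \cap V_\alpha.
\]
Now $A \cap V_\alpha \in V_\lambda$ (since $\lambda$ is a limit ordinal and $A \cap V_\alpha \subseteq V_\alpha$ has rank $\leq \alpha < \lambda$), so we may apply the elementary embedding $j$ to the true statement ``$a \in A \cap V_\alpha$'' or its negation, regarding $A \cap V_\alpha$ as an element of $V_\lambda$. By elementarity of $j : V_\lambda \to V_\lambda$,
\[
a \in A \cap V_\alpha \iff j(a) \in j(A \cap V_\alpha).
\]
It remains to connect $j(A \cap V_\alpha)$ with $j^+(A)$. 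By definition $j(A \cap V_\alpha) \subseteq j^+(A)$, so the forward direction is immediate. For the converse, I would use that $j(a) \in V_{j(\alpha)}$, that the family $(j(A \cap V_\beta))_{\beta < \lambda}$ is $\subseteq$-increasing (as noted in the excerpt's discussion of $k|_{V_\alpha}$, since $\beta < \beta' \Rightarrow A \cap V_\beta \subseteq A \cap V_{\beta'} \Rightarrow j(A\cap V_\beta) \subseteq j(A \cap V_{\beta'})$ by elementarity), and that by elementarity $j(A \cap V_\beta) \cap V_{j(\beta)} = j(A \cap V_\beta)$, in fact $j(A \cap V_\beta) = j^+(A) \cap V_{j(\beta)}$. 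Choosing $\beta$ with $j(\beta) > \mathrm{rank}(j(a))$ (possible since $j(\beta) \to \lambda$), we get $j(a) \in j^+(A) \iff j(a) \in j^+(A) \cap V_{j(\beta)} = j(A \cap V_\beta)$, and then $j(a) \in j(A \cap V_\beta) \iff a \in A \cap V_\beta \iff a \in A$, completing the atomic case.

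\textbf{Main obstacle.} The main thing to be careful about is precisely the identification $j^+(A) \cap V_{j(\beta)} = j(A \cap V_\beta)$ and the verification that $j^+(A)$ behaves coherently under restriction to initial segments $V_{j(\beta)}$; this is the analogue for subsets of $V_\lambda$ of the fact that $j * k = \bigcup_\alpha j(k|_{V_\alpha})$ is a total well-defined function, and it is exactly what makes $j^+$ the ``right'' extension of $j$ to $V_{\lambda+1}$. Once this coherence lemma is in hand, the inductive argument for arbitrary formulas is entirely standard, so I would state that coherence fact as a short preliminary sublemma and then dispatch the induction quickly.
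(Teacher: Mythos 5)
The paper states this lemma without proof; it is a standard technical fact about I3 embeddings, essentially the $\Sigma_0$-elementarity of the canonical extension $j^{+}:V_{\lambda+1}\rightarrow V_{\lambda+1}$ (see Kanamori, \emph{The Higher Infinite}, Ch.\ 24). Your atomic step is correct and is the right preliminary observation: establishing the coherence identity $j(A\cap V_{\beta})=j^{+}(A)\cap V_{j(\beta)}$ and deducing $a\in A\Leftrightarrow j(a)\in j^{+}(A)$ is exactly what one needs there.

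The genuine gap is the quantifier step, which you dismiss as routine. In the direction where one must show $(V_{\lambda},\in,j^{+}(A))\models\exists x\,\psi(x,j(\vec a))$ implies $(V_{\lambda},\in,A)\models\exists x\,\psi(x,\vec a)$, a witness $c\in V_{\lambda}$ for the left-hand side need not lie in the range of $j$, so the induction hypothesis cannot pull it back. The parenthetical about ``quantifiers can be reflected'' does not fix this: the set of $\gamma<\lambda$ for which $(V_{\gamma},\in,A\cap V_{\gamma})$ reflects the subformulas of $\phi$ relative to $(V_{\lambda},\in,A)$ and the analogous set for $j^{+}(A)$ are two clubs in $\lambda$ that are not evidently connected by $j$, and because $\mathrm{cf}(\lambda)=\omega$ two clubs in $\lambda$ can even be disjoint, so one cannot simply pick $\gamma$ that works simultaneously on both sides. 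A Gaifman-type argument (``cofinal $\Sigma_1$-elementary implies fully elementary'') also fails out of the box because $(V_{\lambda},\in,A)$ need not satisfy Collection for $\{\in,P\}$-formulas: for instance if $A=\{j^{n}(\mathrm{crit}(j)):n\in\omega\}$, the formula ``$y$ is the $n$-th element of $P$'' defines a cofinal map $\omega\rightarrow\lambda$, witnessing a failure of Replacement in the expanded language. A correct proof has to exploit the Kunen constraint $\lambda=\sup_{n}j^{n}(\mathrm{crit}(j))$ in an essential way that your sketch does not supply.
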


\begin{lem}
Let $j\in\mathcal{E}_{\lambda}^{+}$, and let $\mathrm{crit}(j)=\kappa$, and suppose that $A\subseteq V_{\kappa}$. Then
$j^{n}(A)\cap V_{j^{m}(\kappa)}=j^{m}(A)$ whenever $0\leq m\leq n$.
\end{lem}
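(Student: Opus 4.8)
The plan is to prove the identity $j^{n}(A)\cap V_{j^{m}(\kappa)}=j^{m}(A)$ for $0\le m\le n$ by a double application of elementarity, working outward from the base case $m=0$. First I would record the trivial base case: when $m=0$ the claim reads $j^{n}(A)\cap V_{\kappa}=A$, which is exactly the assertion that $j^{n}$ fixes everything below its critical point, applied to the set $A\subseteq V_{\kappa}$ (note $\mathrm{crit}(j^{n})=\mathrm{crit}(j)=\kappa$ since iterates of a rank-into-rank embedding have the same critical point, by the remarks in Proposition~\ref{t2h8onkfia} and the discussion of $\mathrm{crit}(j^{n}*k)$; more simply, $j^{n}$ restricted to $V_{\kappa}$ is the identity because $\mathrm{crit}(j)=\kappa$). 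So $j^{n}(A)\cap V_{\kappa}=j^{n}(A\cap V_{\kappa})=j^{n}(A)$ — wait, more carefully: $A\subseteq V_{\kappa}$, and since $j$ fixes $V_{\kappa}$ pointwise and $A\in V_{\lambda}$ we have $j^{n}(A)$ agrees with $A$ on $V_{\kappa}$; I would phrase this as $V_{\lambda}\models$ ``$\forall x\in V_{\kappa}\,(x\in A\leftrightarrow x\in A)$'' and push it through, but the cleanest route is just: $j^{n}$ is elementary and fixes $\kappa$ and every element of $V_\kappa$, hence $j^n(A)\cap V_\kappa = \{x\in V_\kappa : x\in j^n(A)\} = \{x\in V_\kappa: x\in A\}=A$.

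Next I would reduce the general case to the base case by applying $j^{m}$ to a first-order statement true in $V_{\lambda}$. Fix $m\le n$ and set $k=n-m$. In the structure $(V_{\lambda},\in)$ the sentence
\[
\varphi:\quad j^{k}(A)\cap V_{\kappa}=A
\]
holds by the base case — but $\varphi$ mentions the parameters $A$, $\kappa$, and the function $j^{k}$ (as a subset of $V_{\lambda}$ via $\bigcup_{\alpha<\lambda}(j^{k})\restriction V_{\alpha}$, which lies in $V_{\lambda}$ by the discussion preceding the definition of application). By the Lemma immediately above in the excerpt (elementarity of $j$ relative to a predicate $A\subseteq V_{\lambda}$), $j^{m}$ carries this statement to the assertion that $j^{m+k}(A)\cap V_{j^{m}(\kappa)}=j^{m}(A)$, i.e.\ $j^{n}(A)\cap V_{j^{m}(\kappa)}=j^{m}(A)$, which is what we want. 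Here I use that $j^{m}$ applied to the coded function $j^{k}$ yields the coded function $j^{m}\circ j^{k}=j^{n}$ (again using $j^{m}(j^{k})=j^{m}*j^{k}=j^{n}$ in the notation of the algebra $\mathcal{E}_{\lambda}$, since $j^{m}$ applied to a truncation $(j^{k})\restriction V_\alpha$ gives $(j^{n})\restriction V_{j^m(\alpha)}$ by the very definition of $*$), and that $j^{m}(\kappa)$ is the image of $\kappa$ under $j^{m}$ and $j^{m}(V_{\kappa})=V_{j^{m}(\kappa)}$.

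The main obstacle is bookkeeping rather than mathematics: making precise that the relevant sentence $\varphi$ is genuinely first-order over $(V_{\lambda},\in)$ with the right parameters, and that applying $j^{m}$ (legitimately, since $j^m \in \mathcal{E}_\lambda^+$) transforms each parameter as claimed — in particular that $j^{m}$ sends the $V_{\lambda}$-code for $j^{k}$ to the code for $j^{n}$, and sends $V_{\kappa}\mapsto V_{j^{m}(\kappa)}$. Once the coding conventions from the ``Self-distributivity from elementary embeddings'' subsection are invoked, this is routine. I would therefore present the argument as: (1) state and verify the $m=0$ case; (2) observe $j^m(V_\kappa)=V_{j^m(\kappa)}$ and $j^m$ applied to the iterate $j^{n-m}$ yields $j^n$; (3) apply the relativized-elementarity Lemma to transport $\varphi$ upward, completing the induction-free proof.
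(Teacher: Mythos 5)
Your proposal has an attractive shape—prove the $m=0$ case, then transport it along $j^{m}$—but step (2) contains a genuine error that breaks the argument as written. You claim that $j^{m}$ applied to (the $V_{\lambda}$-code for) the iterate $j^{k}$, with $k=n-m$, yields the code for $j^{n}=j^{m}\circ j^{k}$; that is, you assert $j^{m}(j^{k}|_{V_{\alpha}})=j^{n}|_{V_{j^{m}(\alpha)}}$. This is false for $m\ge 1$. By the very definition of the application operation, $j^{m}(j^{k}|_{V_{\alpha}})=(j^{m}*j^{k})|_{V_{j^{m}(\alpha)}}$, and $j^{m}*j^{k}$ is not $j^{m}\circ j^{k}$: the application $j^{m}*j^{k}$ has critical point $j^{m}(\kappa)$, while the composition $j^{m}\circ j^{k}$ has critical point $\kappa$. (Already for $m=k=1$, $j*j\ne j\circ j$.) So transporting the sentence ``$j^{k}(A)\cap V_{\kappa}=A$'' along $j^{m}$ gives you $(j^{m}*j^{k})(j^{m}(A))\cap V_{j^{m}(\kappa)}=j^{m}(A)$, which is not yet the conclusion $j^{n}(A)\cap V_{j^{m}(\kappa)}=j^{m}(A)$.

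The gap can be bridged, but only by invoking precisely the algebraic identity that the paper's proof is built on: the LD-monoid law $(j^{m}*j^{k})\circ j^{m}=j^{m}\circ j^{k}=j^{n}$ gives $(j^{m}*j^{k})(j^{m}(A))=j^{n}(A)$, after which your transported equality reads off the desired result. The paper packages this differently—it reduces to the consecutive case $j^{n+1}(A)\cap V_{j^{n}(\kappa)}=j^{n}(A)$, rewrites $j^{n+1}=(j^{n}*j)\circ j^{n}$, and then uses that $j^{n}*j$ fixes $V_{j^{n}(\kappa)}$ pointwise—but the algebraic content is identical. So once you correct the mistaken identification of $j^{m}*j^{k}$ with $j^{n}$, your ``elementarity transfer'' route is not really a different proof; it re-derives the same LD-monoid fact in a slightly more roundabout way, and a careful write-up would need to say explicitly that $(j^{m}*j^{k})\circ j^{m}=j^{n}$ rather than pretend $j^{m}$ carries $j^{k}$ to $j^{n}$.
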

\begin{proof}
It suffices to show that $j^{n+1}(A)\cap V_{j^{n}(\kappa)}=j^{n}(A)$ for all $n\in\omega$.

If $n=0$, then since $\mathrm{crit}(j)=\kappa$, we have $j(A)\cap V_{\kappa}=A\cap V_{\kappa}=A$.
Now suppose that $n>0$. Then
\[j^{n+1}(A)\cap V_{j^{n}(\kappa)}=((j^{n}*j)\circ j^{n})(A)\cap V_{j^{n}(\kappa)}\]
\[=(j^{n}*j)(j^{n}(A))\cap V_{j^{n}(\kappa)}=j^{n}(A)\cap V_{j^{n}(\kappa)}\]
\[=j^{n}(A\cap V_{\kappa})=j^{n}(A).\]
\end{proof}

\begin{lem}
Suppose that $j:V_{\lambda}\rightarrow V_{\lambda},\mathrm{crit}(j)=\kappa,$ and $A\subseteq V_{\kappa}$. 
\begin{enumerate}
\item There is a unique $B\subseteq V_{\lambda}$ where $A=B\cap V_{\kappa}$ and $j^{+}(B)=B$.

\item $B\cap V_{j^{n}(\kappa)}=j^{n}(A)$ for all $n\in\omega$.

\item $B=\bigcup_{n}j^{n}(A)$.

\item $j$ is an elementary embedding from $(V_{\lambda},B,\in)$ to $(V_{\lambda},B,\in)$.

\item $(V_{\kappa},A,\in)$ is an elementary substructure of $(V_{\lambda},B,\in)$.
\end{enumerate}
\end{lem}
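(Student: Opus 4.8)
The plan is to exhibit $B:=\bigcup_{n\in\omega}j^{n}(A)$ as the required set and then prove uniqueness by a short induction. First some preliminaries. Using the identity $x\circ y=(x*y)\circ x$ from item (2) of the introduction together with associativity of $\circ$, one checks by induction that the iterate $j^{n}$ (defined recursively by $j^{n+1}=(j^{n}*j)\circ j^{n}$, as used in the proof of the preceding lemma) is just the $n$-fold composition $j\circ\cdots\circ j$; in particular $j(j^{n}(A))=j^{n+1}(A)$, and $j^{n}(\kappa_{})=\kappa_{n}$ with $\kappa_{0}=\kappa$ and $\kappa_{n+1}=j(\kappa_{n})$. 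Since $A\subseteq V_{\kappa}$ we have $j^{n}(A)\subseteq V_{j^{n}(\kappa)}\subseteq V_{\lambda}$, so $B\subseteq V_{\lambda}$; and by the preceding lemma $j^{n}(A)=j^{n+1}(A)\cap V_{j^{n}(\kappa)}\subseteq j^{n+1}(A)$, so the iterates form an increasing chain. A routine intersection computation (using this chain and again the preceding lemma for $m<n$) then gives $B\cap V_{j^{n}(\kappa)}=j^{n}(A)$ for every $n$, which is item 2; item 3 is then nothing but the definition of $B$, and $B\cap V_{\kappa}=A$ is the case $n=0$.

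Next I would prove item 1. Existence of $B\cap V_{\kappa}=A$ is already noted. For $j^{+}(B)=B$: the map $\alpha\mapsto j(B\cap V_{\alpha})$ is monotone (elementarity preserves $\subseteq$) and $\{j^{n}(\kappa):n\in\omega\}$ is cofinal in $\lambda$, so $j^{+}(B)=\bigcup_{n}j(B\cap V_{j^{n}(\kappa)})=\bigcup_{n}j(j^{n}(A))=\bigcup_{n}j^{n+1}(A)=B$, the last step because $j^{0}(A)=A\subseteq j^{1}(A)$. For uniqueness, let $B'\subseteq V_{\lambda}$ also satisfy $B'\cap V_{\kappa}=A$ and $j^{+}(B')=B'$; I would show $B'\cap V_{j^{n}(\kappa)}=j^{n}(A)$ by induction on $n$, the base case being the hypothesis. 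At the step, $j^{+}(B')=B'$ gives $j^{n+1}(A)=j(B'\cap V_{j^{n}(\kappa)})\subseteq B'$ (by the inductive hypothesis), hence $j^{n+1}(A)\subseteq B'\cap V_{j^{n+1}(\kappa)}$; the reverse inclusion $B'\cap V_{j^{n+1}(\kappa)}\subseteq j^{n+1}(A)$ follows from $j^{+}(B')=B'$ and the identity $j(X)\cap V_{j(\mu)}=j(X\cap V_{\mu})$ applied with $\mu=j^{n}(\kappa)$ (splitting the union defining $j^{+}(B')$ into the cases $\alpha\le j^{n}(\kappa)$ and $\alpha>j^{n}(\kappa)$). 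Taking unions over $n$ yields $B'=B$.

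Item 4 is then immediate: apply the first of the two cited lemmas with its set parameter taken to be $B$; it says $j$ is elementary from $(V_{\lambda},\in,B)$ to $(V_{\lambda},\in,j^{+}(B))$, and $j^{+}(B)=B$ by item 1. For item 5 I would run the standard reflection (Tarski--Vaught) argument. Recall that $j|_{V_{\kappa}}$ is the identity and that the only ordinals $<\lambda$ fixed by $j$ are those below $\kappa$ (any $\alpha$ with $\kappa\le\alpha<\lambda$ lies in some interval $[\kappa_{n},\kappa_{n+1})$, where $j(\alpha)\ge\kappa_{n+1}>\alpha$). Given a formula $\varphi(\bar y,z)$ and parameters $\bar a\in V_{\kappa}$ with $(V_{\lambda},\in,B)\models\exists z\,\varphi(\bar a,z)$, the least ordinal $\alpha_{0}$ such that $(V_{\lambda},\in,B)\models\exists z\in V_{\alpha_{0}}\,\varphi(\bar a,z)$ is definable in $(V_{\lambda},\in,B)$ from $\bar a$ (bounded satisfaction being internally expressible); since $j$ fixes $\bar a$ and $B$ (item 4), elementarity forces $j(\alpha_{0})=\alpha_{0}$, hence $\alpha_{0}<\kappa$, so a witness lies in $V_{\alpha_{0}}\subseteq V_{\kappa}$. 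Thus $(V_{\kappa},\in,A)\prec(V_{\lambda},\in,B)$, using $B\cap V_{\kappa}=A$.

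The main obstacle is the bookkeeping, in two places: first, getting the identification $j^{n}=j^{\circ n}$ (and hence $j(j^{n}(A))=j^{n+1}(A)$, $\kappa_{n+1}=j(\kappa_{n})$) straight in light of the paper's composition convention, since everything in items 1--3 rests on it; second, in item 5, being careful that the ordinal $\alpha_{0}$ and the relevant instances of satisfaction really are definable in the expanded structure $(V_{\lambda},\in,B)$, so that elementarity of $j$ legitimately applies. The remaining content is straightforward intersection-chasing plus the two lemmas already established just above.
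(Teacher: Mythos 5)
Your argument is correct, and the paper itself gives no proof of this lemma, so there is nothing to compare against: you supply the expected proof. Defining $B=\bigcup_{n}j^{n}(A)$, deriving the increasing chain $j^{n}(A)\subseteq j^{n+1}(A)$ and the identity $B\cap V_{j^{n}(\kappa)}=j^{n}(A)$ from the preceding lemma, proving $j^{+}(B)=B$ via cofinality of $\{j^{n}(\kappa)\}$ in $\lambda$, establishing uniqueness by the induction $B'\cap V_{j^{n}(\kappa)}=j^{n}(A)$, reading off item 4 from the earlier lemma on $j^{+}$, and running Tarski--Vaught with the fixed-point-free behaviour of $j$ above $\kappa$ for item 5 is exactly the standard route, and each step you flag as requiring care (the identification $j\circ j^{n}=j^{n}\circ j=j^{n+1}$, and the definability of the least bounding ordinal $\alpha_{0}$ inside $(V_{\lambda},\in,B)$) is handled correctly.
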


\begin{defn}
If $\lambda$ is a cardinal and $A\subseteq V_{\lambda}$, then define \index{$\mathcal{E}_{\lambda}[A]$}$\mathcal{E}_{\lambda}[A]$ to be the set of all elementary embeddings
$j:V_{\lambda}\rightarrow V_{\lambda}$ such that $j^{+}(A)=A$ for each $\gamma<\lambda$.
\end{defn}
\begin{prop}
$\mathcal{E}_{\lambda}[A]$ is a closed subset of $\mathcal{E}_{\lambda}$ and $\mathcal{E}_{\lambda}[A]$ is closed under the operations $*,\circ$.
\end{prop}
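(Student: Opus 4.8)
The plan is to identify $\mathcal{E}_{\lambda}[A]$ with the monoid of elementary embeddings of the expanded structure $\mathcal{A}=(V_{\lambda},\in,A)$ into itself, and then to run the familiar arguments for $\mathcal{E}_{\lambda}$ inside $\mathcal{A}$. First I would record the equivalence: $j\in\mathcal{E}_{\lambda}[A]$ if and only if $j\colon\mathcal{A}\to\mathcal{A}$ is elementary. One direction is exactly the cited lemma, which says that an elementary $j\colon V_{\lambda}\to V_{\lambda}$ is elementary from $\mathcal{A}$ to $(V_{\lambda},\in,j^{+}(A))$; so if $j^{+}(A)=A$ then $j\colon\mathcal{A}\to\mathcal{A}$ is elementary. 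For the converse, if $j\colon\mathcal{A}\to\mathcal{A}$ is elementary then for every $\alpha<\lambda$ the statement ``$b=\{y\in V_{\alpha}:A(y)\}$'' with parameters $b:=A\cap V_{\alpha}$ and $\alpha$ transfers under $j$ to ``$j(A\cap V_{\alpha})=A\cap V_{j(\alpha)}$'', and since $\sup_{\alpha<\lambda}j(\alpha)=\lambda$ (because $j(\alpha)\geq\alpha$) we get $j^{+}(A)=\bigcup_{\alpha<\lambda}j(A\cap V_{\alpha})=\bigcup_{\alpha<\lambda}(A\cap V_{j(\alpha)})=A$.

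Granting this equivalence, closure under $\circ$ is immediate, since a composition of elementary self-embeddings of $\mathcal{A}$ is again one. For closure under $*$ I would check that the standard proof that $j*k=\bigcup_{\alpha<\lambda}j(k|_{V_{\alpha}})$ is elementary whenever $j,k$ are goes through verbatim with $V_{\lambda}$ replaced by $\mathcal{A}$: the structure $\mathcal{A}$ has the same underlying set and the same ordinals, every restriction $k|_{V_{\alpha}}$ is still an element of that set, and the argument uses only (i) that $\Sigma_{n}$-elementarity of $k$ localizes, i.e.\ $k|_{V_{\alpha}}$ is $\Sigma_{n}$-elementary from $(V_{\alpha},\in,A\cap V_{\alpha})$ into $\mathcal{A}$ for all sufficiently large $\alpha$, and (ii) that applying the fully elementary $j$ transfers this statement to $j(k|_{V_{\alpha}})$; taking the union over $\alpha$ makes $j*k$ $\Sigma_{n}$-elementary from $\mathcal{A}$ to $\mathcal{A}$ for every $n$, hence elementary. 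The statements in (i)--(ii) are expressible inside $\mathcal{A}$ because $\mathcal{A}$ admits a $\Sigma_{n}$-satisfaction predicate definable from the predicate $A$. By the equivalence of the previous paragraph this gives $(j*k)^{+}(A)=A$, i.e.\ $j*k\in\mathcal{E}_{\lambda}[A]$.

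For topological closedness I would exhibit $\mathcal{E}_{\lambda}[A]$ as an intersection of clopen sets. The key local fact is: for $x\in V_{\lambda}$ and any ordinal $\alpha_{0}<\lambda$ with $\mathrm{rank}(x)<\alpha_{0}$, one has $x\in j^{+}(A)$ if and only if $x\in j(A\cap V_{\alpha_{0}})$. Here ``$\Leftarrow$'' is clear since $j(A\cap V_{\alpha_{0}})\subseteq j^{+}(A)$, and for ``$\Rightarrow$'' one picks $\beta$ with $x\in j(A\cap V_{\beta})$; if $\beta>\alpha_{0}$ then, since $x\in V_{\alpha_{0}}\subseteq V_{j(\alpha_{0})}$, elementarity gives $x\in j(A\cap V_{\beta})\cap V_{j(\alpha_{0})}=j\bigl((A\cap V_{\beta})\cap V_{\alpha_{0}}\bigr)=j(A\cap V_{\alpha_{0}})$; the case $\beta\leq\alpha_{0}$ is trivial. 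Since $j(A\cap V_{\alpha_{0}})$ depends only on $j|_{V_{\alpha_{0}+1}}$, each set $\{j\in\mathcal{E}_{\lambda}:x\in j^{+}(A)\}$ is clopen in $\mathcal{E}_{\lambda}$. As $j^{+}(A)\subseteq V_{\lambda}$ always, $j^{+}(A)=A$ is equivalent to $\forall x\in V_{\lambda}\,(x\in j^{+}(A)\leftrightarrow x\in A)$, so
\[
\mathcal{E}_{\lambda}[A]=\bigcap_{x\in A}\{j:x\in j^{+}(A)\}\ \cap\ \bigcap_{x\in V_{\lambda}\setminus A}\bigl(\mathcal{E}_{\lambda}\setminus\{j:x\in j^{+}(A)\}\bigr),
\]
an intersection of clopen sets, hence closed. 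The main obstacle I anticipate is the closure under $*$: one must verify carefully that the classical ``$j*k$ is elementary'' argument really does relativize to the structure carrying the extra predicate $A$, i.e.\ that the localization of $\Sigma_{n}$-elementarity and the transfer along $j$ remain expressible and correct in the language of $\mathcal{A}$; the $\circ$ case and the topological part are routine.
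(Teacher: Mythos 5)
Your proof is correct. The paper states this proposition without proof, so there is no paper argument to compare against; I'll assess yours on its own terms. The equivalence ``$j\in\mathcal{E}_{\lambda}[A]$ iff $j$ is elementary as a self-map of $\mathcal{A}=(V_{\lambda},\in,A)$'' is established correctly in both directions, closure under $\circ$ then reduces to composition of elementary maps, and the clopen-intersection description of $\mathcal{E}_{\lambda}[A]$ is right and gives closedness, with the key local fact (that $x\in j^{+}(A)$ depends only on $j|_{V_{\alpha_{0}+1}}$ once $\mathrm{rank}(x)<\alpha_{0}$) proved cleanly.

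For closure under $*$, your plan of relativizing the $\Sigma_{n}$-elementarity argument to $\mathcal{A}$ is valid (the $\Sigma_{n}$-satisfaction predicate relative to $A$ is available, and $\Sigma_{n}$-reflection gives cofinally many suitable $\alpha$), but it is heavier machinery than the proposition demands. Since the paper already grants $j*k\in\mathcal{E}_{\lambda}$, all you need is $(j*k)^{+}(A)=A$, and this follows by direct computation: fix a limit $\alpha<\lambda$ and apply $j$ to the $\mathcal{A}$-statement ``for all $\xi<\alpha$, $\phi(A\cap V_{\xi})=A\cap V_{\phi(\xi)}$'' with parameter $\phi=k|_{V_{\alpha}}$ (true because $k^{+}(A)=A$); this yields that for every $\xi<j(\alpha)$, $j(k|_{V_{\alpha}})(A\cap V_{\xi})=A\cap V_{(j*k)(\xi)}$, and since $j(\alpha)\geq\alpha$ this covers all $\xi<\alpha$. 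Letting $\alpha\to\lambda$ gives $(j*k)(A\cap V_{\xi})=A\cap V_{(j*k)(\xi)}$ for every $\xi<\lambda$, hence $(j*k)^{+}(A)=A$. This avoids the $\Sigma_{n}$-bookkeeping entirely while using nothing beyond the equivalence you already proved.
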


The motivation behind $\mathcal{E}_{\lambda}[A]$ is that structure on $A$ induces structure on the algebra
$\mathcal{E}_{\lambda}[A]$ and even on the locally finite algebras $\mathcal{E}_{\lambda}[A]/\equiv^{\gamma}$.

The following lemma illustrates a method for producing new rank-into-rank embeddings from old ones.
\begin{lem}
(Square-Root Lemma) Suppose that $j\in\mathcal{E}_{\lambda}$ is an I1-embedding. Then there exists a $k\in\mathcal{E}_{\lambda}$ with
$k*k=j$
\end{lem}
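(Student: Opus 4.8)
The plan is to prove the Square-Root Lemma by a reflection argument inside $V_{\lambda+1}$, exploiting the elementarity of the I1-embedding $j:V_{\lambda+1}\to V_{\lambda+1}$ applied to a statement that asserts the existence of a ``square root'' below a given level. The key point is that $j$ itself is a witness to an existential statement about elementary embeddings of rank $\lambda$, and by reflecting this statement down through $j$ one obtains such a witness at a smaller stage, which can then be pushed back up.

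First I would set $\kappa=\mathrm{crit}(j)$ and consider the elementary embedding $j|_{V_\lambda}:V_\lambda\to V_\lambda$, which is an I3-embedding, together with the fact (noted in the excerpt) that $j$ codes $j|_{V_\lambda}$ as the element $j^+$-style union. The crucial observation is that the statement ``there exists an elementary embedding $e:V_\lambda\to V_\lambda$ with $e*e = j|_{V_\lambda}$'' is expressible over $(V_{\lambda+1},\in)$ using $j|_{V_\lambda}$ as a parameter, since elementary embeddings $V_\lambda\to V_\lambda$ and the application operation $*$ are all definable over $V_{\lambda+1}$. I would then observe that $j$, being an I1-embedding, is elementary as a map $(V_{\lambda+1},\in,j|_{V_\lambda})\to(V_{\lambda+1},\in,j(j|_{V_\lambda}))$, and $j(j|_{V_\lambda})=j*j|_{V_\lambda}=(j|_{V_\lambda})*(j|_{V_\lambda})$ using the I1-coding. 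So the question reduces to: given that $j|_{V_\lambda}$ is in the range of the ``square'' operation applied to $j|_{V_\lambda}$ itself... which is not quite the reflection I want.

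The cleaner route: I would argue that ``$\exists e\in\mathcal{E}_\lambda\ (e*e=x)$'' holds of $x=j*j$ trivially (witnessed by $e=j$), and then apply $j$ elementarily in reverse. Concretely, let $\varphi(x)$ be the $V_{\lambda+1}$-formula ``$x$ is an elementary embedding $V_\lambda\to V_\lambda$ and $\exists e\ (e\in\mathcal{E}_\lambda\wedge e*e=x)$.'' Then $V_{\lambda+1}\models\varphi(j*j)$ with witness $j$. Since $j:V_{\lambda+1}\to V_{\lambda+1}$ is elementary and $j^+(j|_{V_\lambda})$, I want to find $y$ with $j(y)=j*j$ or use critical-point reflection. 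The standard trick (this is the actual Laver/Dehornoy argument): the embedding $k:=j|_{V_\lambda}$ satisfies $V_\lambda\prec V_\lambda$ via $k$, and one sets things up so that $V_{\lambda+1}\models$ ``$\exists e\ (e*e=k)$'' by reflecting the obvious fact that $k*k$ has a square root down below $\mathrm{crit}$-levels and then transferring via the I1-elementarity of $j$; precisely, since $j$ fixes a tail of the ordinals' structure and $k$ is below the relevant reflection point, elementarity of $j$ on $(V_{\lambda+1},\in,\text{``square-root predicate''})$ gives the witness. I would cite the Square-Root Lemma's classical proof (Laver, see \cite{L95} or Chapter 11 of \cite{FK}) for the precise reflection bookkeeping.

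The main obstacle I anticipate is the careful coding: making sure the ``has a square root'' predicate is genuinely first-order over $V_{\lambda+1}$ with $k=j|_{V_\lambda}$ as the only parameter, and then correctly identifying the object $j$ maps to $k$ (or maps $k$ from) so that elementarity delivers a genuine $e\in\mathcal{E}_\lambda$ rather than merely an embedding of some $V_\mu$. This is exactly where one uses that $j$ is I1 rather than merely I3 — the extra level $V_{\lambda+1}$ is what allows quantification over all of $\mathcal{E}_\lambda$ inside the structure $j$ acts on. Once the predicate is set up and $j$'s I1-elementarity is applied to it, extracting $k$ with $k*k=j|_{V_\lambda}$ and then noting $k$ extends canonically (or $k$ itself, reinterpreted at the right level) gives the desired square root of the original $j$; I would close by remarking that if one only wants a square root of $j|_{V_\lambda}\in\mathcal{E}_\lambda$ the argument is complete, and the statement as phrased ($j$ an I1-embedding, conclusion $k*k=j$ in $\mathcal{E}_\lambda$) should be read with $j$ abbreviating its rank-$\lambda$ restriction.
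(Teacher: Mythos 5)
Your ``cleaner route'' paragraph is exactly the paper's argument: $V_{\lambda+1}\models\exists k\in\mathcal{E}_\lambda\,(k*k=j*j)$ is witnessed by $j|_{V_\lambda}$ itself, and since the I1-embedding $j$ sends the element $j|_{V_\lambda}\in V_{\lambda+1}$ to $j*j$, elementarity of $j$ over $(V_{\lambda+1},\in)$ yields $V_{\lambda+1}\models\exists k\in\mathcal{E}_\lambda\,(k*k=j|_{V_\lambda})$. The bookkeeping you deferred to the literature is immaterial here --- $\mathcal{E}_\lambda\subseteq V_{\lambda+1}$ and $*$ is $\in$-definable over $V_{\lambda+1}$, so the square-root predicate is plainly first-order with $j|_{V_\lambda}$ as the only parameter --- and your closing remark that the conclusion must be read with $j$ standing for its rank-$\lambda$ restriction is correct and in line with the paper's intent.
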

\begin{proof}
Suppose that $j\in\mathcal{E}_{\lambda}$ is an I1-embedding. Then by elementarity,
\[V_{\lambda+1}\models\exists k\in\mathcal{E}_{\lambda}:k*k=j\]
if and only if
\[V_{\lambda+1}\models\exists k\in\mathcal{E}_{\lambda}:k*k=j*j\]
which is true when $k=j$. Therefore, there exists some $k\in\mathcal{E}_{\lambda}$ with $k*k=j$.
\end{proof}

We shall now extend the idea in the proof of the Square-Root Lemma to more general setting.
\begin{defn}
If $j_{1},\ldots,j_{n}\in\mathcal{E}_{\lambda}$, then let 
\[\mathrm{crit}_{r}(j_{1},\ldots,j_{n})\]
be the $r$-th element in the set 
\[\{\mathrm{crit}(j)\mid j\in\langle j_{1},\ldots,j_{n}\rangle\}\]
where we set
\[\mathrm{crit}_{0}(j_{1},\ldots,j_{n})=\min(\mathrm{crit}(j_{1}),\ldots,\mathrm{crit}(j_{n})).\]
\end{defn}
\begin{defn}
Suppose that $j\in\mathcal{E}_{\lambda}$ and $\gamma<\lambda$ is a limit ordinal. Then let
\index{$j\upharpoonright_{\gamma}$}
$j\upharpoonright_{\gamma}:V_{\gamma}\rightarrow V_{\gamma+1}$ be the mapping where
$j\upharpoonright_{\gamma}(x)=j(x)\cap V_{\gamma}$ for each $x\in V_{\gamma}$.
\end{defn}
Take note that $j\upharpoonright_{\gamma}=k\upharpoonright_{\gamma}$ if and only if $j\equiv^{\gamma}k$.
Furthermore, there is a formula $\phi$ such that
$(V_{\lambda},\in)\models\phi(j\upharpoonright_{\gamma},k\upharpoonright_{\gamma},\ell)$ if and only if
$\ell=(j*k)\upharpoonright_{\gamma}$.
\begin{defn}
Let $\mathcal{E}_{\lambda}\upharpoonright_{\gamma}=\{j\upharpoonright_{\gamma}:j\in\mathcal{E}_{\lambda}\}$ and let
$\mathcal{E}_{\lambda}[A]\upharpoonright_{\gamma}=\{j\upharpoonright_{\gamma}:j\in\mathcal{E}_{\lambda}[A]\}$. Then 
$\mathcal{E}_{\lambda}\upharpoonright_{\gamma}$ may be endowed with operations $*,\circ$ defined by
$j\upharpoonright_{\gamma}*k\upharpoonright_{\gamma}=(j*k)\upharpoonright_{\gamma}$ and
$j\upharpoonright_{\gamma}\circ k\upharpoonright_{\gamma}=(j\circ k)\upharpoonright_{\gamma}$.
\end{defn}

Let $T:\mathcal{E}_{\lambda}^{<\omega}\rightarrow V_{\omega\cdot 2}$ be a function where
\begin{enumerate}
\item $T$ is a definable function in $(V_{\lambda+1},\in)$, and

\item $T(j_{1},\ldots,j_{m})=T(k_{1},\ldots,k_{n})$ if and only if $m=n$ and for all $r\in\omega$ there is an isomorphism
\[\iota:\langle j_{1},\ldots,j_{m}\rangle\rightarrow\langle k_{1},\ldots,k_{n}\rangle\]
where $\iota(j_{v})=k_{v}$ for $1\leq v\leq m$.
\end{enumerate}

\begin{thm}
\label{t4h82ugbwerh}
Suppose the following:
\begin{enumerate}
\item $\lambda$ is a cardinal.
\item $R\subseteq V_{\lambda}$.
\item $\ell_{1},\ldots,\ell_{p},j_{1},\ldots,j_{m}\in\mathcal{E}_{\lambda}[R]$ and $(k_{r,s})_{1\leq r\leq n,1\leq s\leq p}\in\mathcal{E}_{\lambda}[R]^{n\cdot p}$.
\item $\ell_{1},\ldots,\ell_{p}$ are I1 embeddings.
\item $T(\ell_{s}*j_{1},\ldots,\ell_{s}*j_{m},k_{1,s},\ldots,k_{n,s})=x_{s}$ whenever $1\leq s\leq p$.
\item $v$ is a natural number.
\item there is some $\mu<\lambda$ where $\mu=\mathrm{crit}_{v}(k_{1,1},\ldots,k_{n,1})=\ldots=\mathrm{crit}_{v}(k_{1,p},\ldots,k_{n,p})$.
\item $k_{r,1}\equiv^{\mu}\ldots\equiv^{\mu}k_{r,p}$ for $1\leq r\leq n$.
\item $\ell_{1}\equiv^{\mu+\omega}\ldots\equiv^{\mu+\omega}\ell_{p}$.
\end{enumerate}
Then there are $(w_{r,s})_{1\leq r\leq n,1\leq s\leq p}$ in $\mathcal{E}_{\lambda}[R]$ where
\begin{enumerate}
\item $T(j_{1},\ldots,j_{m},w_{1,s},\ldots,w_{n,s})=x_{s}$ for $1\leq s\leq p$,

\item there is some $\alpha<\lambda$ where $\mathrm{crit}_{v}(w_{1,s},\ldots,w_{n,s})=\alpha$ for $1\leq s\leq p$, and

\item $w_{r,1}\equiv^{\alpha}\ldots\equiv^{\alpha}w_{r,p}$ for $1\leq r\leq n$.
\end{enumerate}
\end{thm}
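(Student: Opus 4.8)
The plan is to adapt the reflection argument behind the Square-Root Lemma: encode the conclusion as a single first-order statement over $(V_{\lambda+1},\in,R)$ and pull it back through the I1 embedding $\ell_1$.

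First I would set up the syntax. Over the structure $(V_{\lambda+1},\in,R)$ (with $R$ a unary predicate) the properties ``$j$ is an elementary embedding $V_\lambda\to V_\lambda$'', ``$j\in\mathcal{E}_\lambda[R]$'', the operations $*,\circ$ on $\mathcal{E}_\lambda$, each relation $\equiv^\delta$, the function $\mathrm{crit}_v$, and the $(V_{\lambda+1},\in)$-definable function $T$ are all expressible. So let $\Psi(\vec a,\vec b,v)$ be the formula asserting that there exist $(w_{r,s})_{r,s}\in\mathcal{E}_\lambda[R]^{np}$ and $\alpha<\lambda$ with $T(a_1,\dots,a_m,w_{1,s},\dots,w_{n,s})=b_s$ and $\mathrm{crit}_v(w_{1,s},\dots,w_{n,s})=\alpha$ for $1\le s\le p$, and $w_{r,1}\equiv^\alpha\dots\equiv^\alpha w_{r,p}$ for $1\le r\le n$. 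Then what must be shown is precisely $(V_{\lambda+1},\in,R)\models\Psi(j_1,\dots,j_m,x_1,\dots,x_p,v)$.

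Next I would apply $\ell_1$. Since $\ell_1$ is an I1 embedding lying in $\mathcal{E}_\lambda[R]$, it fixes $R$, so (being I1) its restriction to $V_{\lambda+1}$ is an elementary self-embedding of $(V_{\lambda+1},\in,R)$. On the parameters we have $\ell_1(x_s)=x_s$ and $\ell_1(v)=v$, since $x_s\in V_{\omega\cdot 2}$, $v\in\omega$, and $\mathrm{crit}(\ell_1)$ lies far above $\omega\cdot 2$; and $\ell_1(j_i)=\ell_1*j_i$ by the identification of the action of an I1 embedding on $\mathcal{E}_\lambda$ with application (the fact used in the proof of the Square-Root Lemma). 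Elementarity therefore reduces the theorem to $(V_{\lambda+1},\in,R)\models\Psi(\ell_1*j_1,\dots,\ell_1*j_m,x_1,\dots,x_p,v)$. Here I would take $w_{r,s}:=k_{r,s}$ and $\alpha:=\mu$: these lie in $\mathcal{E}_\lambda[R]$ by hypothesis~(3), and hypotheses~(7) and~(8) give at once that $\mathrm{crit}_v(k_{1,s},\dots,k_{n,s})=\mu$ for every $s$ and $k_{r,1}\equiv^\mu\dots\equiv^\mu k_{r,p}$ for every $r$, so the conditions on $\alpha$ and the coherence of the $w_{r,s}$ hold with no further work; and for $s=1$, hypothesis~(5) is exactly $T(\ell_1*j_1,\dots,\ell_1*j_m,k_{1,1},\dots,k_{n,1})=x_1$. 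Thus the entire theorem reduces to the single claim that for $2\le s\le p$,
\[
T(\ell_1*j_1,\dots,\ell_1*j_m,k_{1,s},\dots,k_{n,s})=T(\ell_s*j_1,\dots,\ell_s*j_m,k_{1,s},\dots,k_{n,s}).
\]

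The main obstacle is this last claim: the algebras $\langle\ell_1*j_1,\dots,\ell_1*j_m,k_{1,s},\dots,k_{n,s}\rangle$ and $\langle\ell_s*j_1,\dots,\ell_s*j_m,k_{1,s},\dots,k_{n,s}\rangle$ must carry the same level-by-level isomorphism invariant recorded by $T$ (under the correspondence $\ell_1*j_i\leftrightarrow\ell_s*j_i$, $k_{r,s}\leftrightarrow k_{r,s}$). Hypothesis~(9) gives $\ell_1\equiv^{\mu+\omega}\ell_s$, hence $\ell_1*j_i\equiv^{\mu+\omega}\ell_s*j_i$ for each $i$, because $\equiv^{\mu+\omega}$ is a congruence on $(\mathcal{E}_\lambda,*)$; so the two generating tuples have the same image in $\mathcal{E}_\lambda/\equiv^{\mu+\omega}$, and the two generated algebras therefore agree after quotienting by any $\equiv^\delta$ with $\delta\le\mu+\omega$. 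What remains — and this is the technical core of the proof — is to rule out any disagreement at critical points exceeding $\mu+\omega$: one must use $\mathrm{crit}_v(k_{1,s},\dots,k_{n,s})=\mu$ together with the extra $\omega$ of slack in hypothesis~(9) to show that, above $\mu$, the two algebras differ only in a way that is invisible to $T$ — intuitively, that the higher critical points arise from the $\ell_s*j_i$-part, where $\ell_1$ and $\ell_s$ continue to interact identically with the $\equiv^\mu$-controlled $k_{r,s}$. I expect essentially all of the real difficulty to live in this bookkeeping; everything before it is the reflection scaffolding.
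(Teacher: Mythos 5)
Your reduction has a genuine gap, and it is not a bookkeeping gap — it is a structural one.

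After pushing $\Psi$ through $\ell_1$, you try to witness $\Psi(\ell_1*j_1,\dots,\ell_1*j_m,\vec x,v)$ with $w_{r,s}:=k_{r,s}$ and $\alpha:=\mu$. For $s\geq 2$ this requires
\[
T(\ell_1*j_1,\dots,\ell_1*j_m,k_{1,s},\dots,k_{n,s})=T(\ell_s*j_1,\dots,\ell_s*j_m,k_{1,s},\dots,k_{n,s}),
\]
which you flag as the "technical core". But $T$ records the pointed isomorphism type of the generated subalgebra \emph{at every level $r\in\omega$} — in particular, at critical points far above $\mu+\omega$. Hypothesis (9) only controls $\ell_1$ and $\ell_s$ below $\mu+\omega$; above that they are free to diverge, and once they do, the two algebras $\langle\ell_1*j_1,\dots,k_{1,s},\dots\rangle$ and $\langle\ell_s*j_1,\dots,k_{1,s},\dots\rangle$ have no reason to agree. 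There is no "extra $\omega$ of slack" argument that can rescue this, because the statement the theorem is actually asserting is weaker than what you have set up: the theorem asks only for \emph{some} $(w_{r,s})$ (possibly differing with $s$, constrained only to agree modulo $\equiv^\alpha$), not for $w_{r,s}=k_{r,s}$. By hard-wiring $w_{r,s}=k_{r,s}$ you have demanded something false-looking that the proof should never need.

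The paper's argument sidesteps this by reflecting each of the $p$ hypotheses (5) through its \emph{own} $\ell_s$ rather than sending everything through $\ell_1$. Define
\[
A_s=\bigl\{\bigl(w_1\upharpoonright_{\mathrm{crit}_v(\vec w)},\dots,w_n\upharpoonright_{\mathrm{crit}_v(\vec w)}\bigr):\vec w\in\mathcal{E}_\lambda[R]^n,\ T(j_1,\dots,j_m,\vec w)=x_s\bigr\}.
\]
Then $\ell_s(A_s)$ is the analogous set for $\ell_s*j_1,\dots,\ell_s*j_m$, and hypothesis (5) places $(k_{1,s}\upharpoonright_\mu,\dots,k_{n,s}\upharpoonright_\mu)$ in $\ell_s(A_s)$. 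Hypotheses (7) and (8) make these restriction-tuples literally equal across $s$, so the common tuple $(\mathfrak{k}_1,\dots,\mathfrak{k}_n)$ lies in $\bigcap_s\ell_s(A_s)\cap V_{\mu+\omega}$; hypothesis (9) converts that to $\bigcap_s\ell_1(A_s)\cap V_{\mu+\omega}\subseteq\ell_1\bigl(\bigcap_s A_s\bigr)$, whence $\bigcap_s A_s\ne\emptyset$. Unpacking an element $(\mathfrak{w}_1,\dots,\mathfrak{w}_n)$ of that intersection produces, for each $s$ separately, a tuple $(w_{1,s},\dots,w_{n,s})$ with $T(j_1,\dots,j_m,\vec w_{\cdot,s})=x_s$ and $w_{r,s}\upharpoonright_\alpha=\mathfrak{w}_r$; these tuples need not coincide across $s$, only their restrictions do, which is exactly the advertised conclusion. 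So the correct plan is to reflect \emph{sets of restrictions of witnesses}, one $\ell_s$ per $s$, rather than to reflect a single existential through $\ell_1$ and then hope the original $k$'s still witness it.
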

\begin{proof}
For $1\leq s\leq p$, let 
\[A_{s}=\{(w_{1}\upharpoonright_{\mathrm{crit}_{v}(w_{1},\ldots,w_{n})},\ldots,w_{n}\upharpoonright_{\mathrm{crit}_{v}(w_{1},\ldots,w_{n})})\]
\[:w_{1},\ldots,w_{n}\in\mathcal{E}_{\lambda}[R],
T(j_{1},\ldots,j_{m},w_{1},\ldots,w_{n})=x_{s}\}.\]
Then 
\[\ell_{s}(A_{s})=\{(w_{1}\upharpoonright_{\mathrm{crit}_{v}(w_{1},\ldots,w_{n})},\ldots,w_{n}\upharpoonright_{\mathrm{crit}_{v}(w_{1},\ldots,w_{n})})\]
\[:w_{1},\ldots,w_{n}\in\mathcal{E}_{\lambda}[R],T(\ell_{s}*j_{1},\ldots,\ell_{s}*j_{m},w_{1},\ldots,w_{n})=x_{s}\}.\]
Therefore,
\[(k_{1,s}\upharpoonright_{\mu},\ldots,k_{n,s}\upharpoonright_{\mu})\in\ell_{s}(A_{s})\]
for $1\leq s\leq p$. Since
$k_{r,1}\upharpoonright_{\mu}=\ldots=k_{r,p}\upharpoonright_{\mu}$, we have
\[(k_{1,1}\upharpoonright_{\mu},\ldots,k_{n,1}\upharpoonright_{\mu})=\ldots=(k_{1,p}\upharpoonright_{\mu},\ldots,k_{n,p}\upharpoonright_{\mu}).\]
Thus, let 
\[(\mathfrak{k}_{1},\ldots,\mathfrak{k}_{n})=(k_{1,1}\upharpoonright_{\mu},\ldots,k_{n,1}\upharpoonright_{\mu}).\]
Then
\[(\mathfrak{k}_{1},\ldots,\mathfrak{k}_{n})\in\ell_{1}(A_{1})\cap\ldots\ell_{p}(A_{p})\cap V_{\mu+\omega}\]
\[=\ell_{1}(A_{1})\cap\ldots\cap\ell_{1}(A_{p})\cap V_{\mu+\omega}
\subseteq\ell_{1}(A_{1}\cap\ldots\cap A_{p}).\]
Therefore, $A_{1}\cap\ldots\cap A_{p}\neq\emptyset.$

Let $(\mathfrak{w}_{1},\ldots,\mathfrak{w}_{n})\in A_{1}\cap\ldots\cap A_{p}$. Then there are $(w_{r,s})_{1\leq r\leq n,1\leq s\leq p}$ in $\mathcal{E}_{\lambda}[R]$ where
\[(\mathfrak{w}_{1},\ldots,\mathfrak{w}_{n})=(w_{1,s}\upharpoonright_{\mathrm{crit}_{v}(w_{1,s},\ldots,w_{n,s})},\ldots,w_{n,s}\upharpoonright_{\mathrm{crit}_{v}(w_{1,s},\ldots,w_{n,s})})\]
and 
\[T(j_{1},\ldots,j_{m},w_{1,s},\ldots,w_{n,s})=x_{s}\]
for $1\leq s\leq p.$ Therefore, there is some $\alpha<\lambda$ with $\mathrm{crit}_{v}(w_{1,s},\ldots,w_{n,s})=\alpha$ for $1\leq s\leq p$ and $\mathfrak{w}_{r}=w_{r,1}\upharpoonright_{\alpha}=\ldots=w_{r,p}\upharpoonright_{\alpha}$ for $1\leq r\leq n.$ Thus, we have $w_{r,1}\equiv^{\alpha}\ldots\equiv^{\alpha}w_{r,p}$ for $1\leq r\leq n$ as well.
\end{proof}

\begin{defn}
Let $\leq$ be a linear ordering of an LD-system $(X,*)$. Then we say that $\leq$ is \index{compatible}\emph{compatible} with $(X,*)$ if $y\leq z$ implies that $x*y\leq x*z$.
\end{defn}
\begin{defn}
Suppose that $A\subseteq V_{\lambda}.$ Every linear ordering of $V_{\lambda}$ definable in $(V_{\lambda},\in,A)$ induces a compatible linear ordering of $\mathcal{E}_{\lambda}[A]$. Suppose that $<$ is a linear ordering of $V_{\lambda}$ definable in $(V_{\lambda},\in,A)$.
Then define a linear ordering \index{$\triangleleft_{<}$}$\triangleleft_{<}$ on $\mathcal{E}_{\lambda}[A]$ by letting $j\triangleleft_{<}k$ if there exists some limit ordinal $\alpha$ with $j\upharpoonright_{\alpha}<k\upharpoonright_{\alpha}$ but where $j\upharpoonright_{\beta}=k\upharpoonright_{\beta}$ for all limit ordinals $\beta<\alpha$. Then $\triangleleft_{<}$ is a linear ordering
on $\mathcal{E}_{\lambda}$. If the ordering $<$ is unambiguous, then we shall write $\triangleleft$ for $\triangleleft_{<}$.
If $\alpha$ is a limit ordinal, then define $j\upharpoonright_{\alpha}\triangleleft k\upharpoonright_{\alpha}$ if and only if
there is some limit ordinal $\beta$ with $\beta\leq\alpha$ and where $j\upharpoonright_{\beta}<k\upharpoonright_{\beta}$ but where
for all limit ordinals $\gamma$ with $\gamma<\beta$, we have $j\upharpoonright_{\gamma}=k\upharpoonright_{\gamma}$.
\end{defn}
By elementarity, the ordering $\triangleleft$ on $\mathcal{E}_{\lambda}[A]$ or on $\mathcal{E}_{\lambda}[A]\upharpoonright_{\gamma}$ is
compatible with $*$.

From Theorem \ref{t4h82ugbwerh}, we deduce the following purely algebraic result about finite permutative LD-systems.
\begin{thm}
\label{42gyu9bf2HU}
Suppose that there exists an I1 cardinal. Suppose furthermore that 
\begin{enumerate}
\item $U_{1},\ldots ,U_{p},V_{1},\ldots ,V_{m}$ and $(W_{r,s})_{1\leq r\leq n,1\leq s\leq p}$ are unary terms in the language with function symbols
$*,\circ$,

\item $L$ is an $np+1$-ary term in the language with function symbols $*,\circ$,

\item $v$ is a natural number,

\item There is some classical Laver table $A_{N}$ where in $A_{N}$, we have
\[\mu=\mathrm{crit}_{v}(W_{1,1}(1),\ldots ,W_{n,1}(1))=\ldots =\mathrm{crit}_{v}(W_{1,p}(1),\ldots ,W_{n,p}(1))<\mathrm{crit}(2^{N}).\]

\item $W_{r,1}(1)\equiv^{\mu}\ldots\equiv^{\mu}W_{r,p}(1)$ for $1\leq r\leq n$ in the classical Laver table $A_{N}$, and

\item $U_{1}(1)\equiv^{\mu^{+}}\ldots\equiv^{\mu^{+}}U_{p}(1)$ where
$\mu^{+}$ denotes the least critical point in $A_{N}$ greater than $\mu$.
\end{enumerate}
If $Y$ is a finite reduced permutative LD-system, then let $\approx$ be the relation on
$Y^{<\omega}$ where $(x_{1},\ldots ,x_{m})\approx(y_{1},\ldots ,y_{n})$ if and only if $m=n$ and whenever
$\langle x_{1},\ldots ,x_{m}\rangle$ and $\langle y_{1},\ldots ,y_{n}\rangle$ both have more than $v+1$ critical points, then
there is an isomorphism 
\[\iota:\langle x_{1},\ldots ,x_{m}\rangle/\equiv^{\mathrm{crit}_{v}(x_{1},\ldots ,x_{m})}\rightarrow
\langle y_{1},\ldots ,y_{n}\rangle/\equiv^{\mathrm{crit}_{v}(y_{1},\ldots ,y_{n})}\]
where $\iota([x_{i}])=[y_{i}]$ for $1\leq i\leq n$.

Then there is some finite permutative LD-monoid $X$ along with 
\[x,(y_{r,s})_{1\leq r\leq n,1\leq s\leq p}\in X\]
such that 
\begin{enumerate}
\item $X$ has a compatible linear ordering,

\item $(z*z)^{\sharp}(\alpha)\leq z^{\sharp}(\alpha)$ whenever $z\in X,\alpha\in\mathrm{crit}[X]$,

\item if $1\leq s\leq p$, then

\[(U_{s}(x)*V_{1}(x),\ldots ,U_{s}(x)*V_{m}(x),W_{1,s}(x),\ldots ,W_{n,s}(x))\]
\[\approx(V_{1}(x),\ldots ,V_{m}(x),y_{1,s},\ldots ,y_{n,s}),\]

\item there is some critical point $\alpha$ where $\mathrm{crit}_{v}(y_{1,s},\ldots ,y_{n,s})=\alpha$ for all $s$,

\item $y_{r,1}\equiv^{\alpha}\ldots\equiv^{\alpha}y_{r,p}$, and

\item $L(x,(y_{r,s})_{1\leq r\leq n,1\leq s\leq p})\neq 1$.
\end{enumerate}
\end{thm}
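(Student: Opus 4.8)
The plan is to realize the hypothesis about $A_{N}$ inside an algebra of rank-into-rank embeddings, apply Theorem~\ref{t4h82ugbwerh}, and then read off a finite quotient by some $\equiv^{\gamma}$. First I would fix an I1 cardinal, obtaining a limit ordinal $\lambda$ and (by a standard argument, after choosing a well-ordering $R$ of $V_{\lambda}$) an I1 embedding $j\in\mathcal{E}_{\lambda}[R]$. By Theorem~\ref{Pwehjit} the map $x\mapsto j_{[x]}$ induces an isomorphism of LD-monoids $\mathrm{Iter}(j)/\equiv^{\mathrm{crit}_{N}(j)}\cong A_{N}$ sending $1$ to the class of $j$; moreover this quotient is closed under composition and equals the sub-LD-monoid generated by $[j]$, so that for any unary $\{*,\circ\}$-term $s$ the element $s(1)\in A_{N}$ corresponds to $[s(j)]_{\mathrm{crit}_{N}(j)}$, where $s(j)\in\mathcal{E}_{\lambda}[R]$ is computed in $(\mathcal{E}_{\lambda},*,\circ)$. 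I then set $\ell_{s}=U_{s}(j)$, $j_{i}=V_{i}(j)$, $k_{r,s}=W_{r,s}(j)$ (all in $\mathcal{E}_{\lambda}[R]$, which is closed under $*,\circ$; each $\ell_{s}$ is again I1 since the I1 embeddings of $\mathcal{E}_{\lambda}$ form a sub-LD-monoid), and $x_{s}=T(\ell_{s}*j_{1},\dots,\ell_{s}*j_{m},k_{1,s},\dots,k_{n,s})$.

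Next I would translate hypotheses (4)--(6) of the statement into hypotheses (7)--(9) of Theorem~\ref{t4h82ugbwerh}. Since $\mu=\mathrm{crit}_{v}(W_{1,s}(1),\dots,W_{n,s}(1))$ is a critical point of $A_{N}$ strictly below $\mathrm{crit}(2^{N})=\max(\mathrm{crit}[A_{N}])$, it corresponds to an ordinal of the form $\mathrm{crit}_{i}(j)<\mathrm{crit}_{N}(j)$, which is therefore ``visible'' in the quotient; hence in $\mathcal{E}_{\lambda}$ we get $\mathrm{crit}_{v}(k_{1,s},\dots,k_{n,s})=\mathrm{crit}_{i}(j)=:\mu$ for every $s$ and $k_{r,1}\equiv^{\mu}\dots\equiv^{\mu}k_{r,p}$ from (5). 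For (6): $\mu^{+}$ in $A_{N}$ corresponds to the inaccessible cardinal $\mathrm{crit}_{i+1}(j)$, which is well above $\mu+\omega$, so $U_{s}(1)\equiv^{\mu^{+}}U_{s'}(1)$ yields $\ell_{s}\equiv^{\mathrm{crit}_{i+1}(j)}\ell_{s'}$ and thus $\ell_{1}\equiv^{\mu+\omega}\dots\equiv^{\mu+\omega}\ell_{p}$ (a finer congruence refines a coarser one). Theorem~\ref{t4h82ugbwerh} then delivers $(w_{r,s})$ in $\mathcal{E}_{\lambda}[R]$ and an ordinal $\alpha<\lambda$ with $T(j_{1},\dots,j_{m},w_{1,s},\dots,w_{n,s})=x_{s}$, $\mathrm{crit}_{v}(w_{1,s},\dots,w_{n,s})=\alpha$ for all $s$, and $w_{r,1}\equiv^{\alpha}\dots\equiv^{\alpha}w_{r,p}$.

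Finally I would build $X$. Choose a limit ordinal $\gamma<\lambda$ exceeding $\alpha$, all the finitely many critical points $\mathrm{crit}_{v}$ of the generated subalgebras above, and $\mathrm{crit}(L(j,(w_{r,s})))$; the last quantity is defined because any $\{*,\circ\}$-term applied to non-identity embeddings is a non-identity embedding, and each $w_{r,s}\neq 1_{V_{\lambda}}$: the $T$-equation forces $w_{r,s}$ to sit, in the matching position, inside a subalgebra isomorphic to $\langle\ell_{s}*j_{1},\dots,k_{1,s},\dots,k_{n,s}\rangle$ via a generator-matching isomorphism, so $w_{r,s}$ is a left-identity of its subalgebra iff $k_{r,s}=W_{r,s}(j)$ is one of its, which is impossible since $W_{r,s}(j)\neq 1_{V_{\lambda}}$ and in $\mathcal{E}_{\lambda}$ only $1_{V_{\lambda}}$ satisfies $e*e=e$. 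Put $X=\langle j,(w_{r,s})_{r,s}\rangle/\equiv^{\gamma}$, a quotient of the sub-LD-monoid of $(\mathcal{E}_{\lambda}[R],*,\circ)$ by $\equiv^{\gamma}$ (a congruence for $\circ$ by Proposition~\ref{428tu0hj4ueri24tidef}), with $x=[j]_{\gamma}$, $y_{r,s}=[w_{r,s}]_{\gamma}$; this is a finite reduced permutative LD-monoid (finitely generated sub-LD-monoids of the locally Laver-like $\mathcal{E}_{\lambda}/\equiv^{\gamma}$ are finite, and the only $z$ there with $z*z=z$ is $[1_{V_{\lambda}}]_{\gamma}$). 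Conclusion (1) holds because $X$ inherits the order $\triangleleft$ on $\mathcal{E}_{\lambda}[R]\upharpoonright_{\gamma}\cong\mathcal{E}_{\lambda}[R]/\equiv^{\gamma}$, compatible with $*$ because $R$ is a definable well-ordering; (2) is Lemma~\ref{t42ijt0t24j}, since in $\mathcal{E}_{\lambda}$ one has $\mathrm{crit}((k*k)*l)=(k*k)(\mathrm{crit}(l))\le k(\mathrm{crit}(l))=\mathrm{crit}(k*l)$, which descends to $X$; (3) is the $T$-equation pushed through the quotient map (terms commute with it, and the relevant $\mathrm{crit}_{v}$'s lie below $\gamma$), giving exactly the isomorphism in the definition of $\approx$; (4) and (5) are the last two conclusions of Theorem~\ref{t4h82ugbwerh} read in $X$, where $\alpha<\gamma$ is still a genuine critical point; and (6) follows since $L(x,(y_{r,s}))=[L(j,(w_{r,s}))]_{\gamma}$ with $0<\mathrm{crit}(L(j,(w_{r,s})))<\gamma$.

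The main obstacle is not the algebraic computation but the dictionary plus two set-theoretic points: arranging a well-ordering $R$ together with an I1 embedding preserving it, so that the compatible order of conclusion~(1) lives inside $\mathcal{E}_{\lambda}[R]$; using the closure of I1 embeddings under $*$ and $\circ$ (so that $\ell_{s}=U_{s}(j)$ stays I1, which is precisely what allows Theorem~\ref{t4h82ugbwerh} to move its I1 embeddings through the $V_{\lambda+1}$-definable datum $T$); and verifying non-triviality of the output embeddings $w_{r,s}$, the one place where we must extract information from the bare $T$-equation rather than read it directly off the statement of Theorem~\ref{t4h82ugbwerh}.
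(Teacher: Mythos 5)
Your proof follows the same strategy as the paper's: fix an I1 embedding $j$ and a definable ordering $R$ with $j^{+}(R)=R$, set $\ell_{s}=U_{s}(j),\ j_{i}=V_{i}(j),\ k_{r,s}=W_{r,s}(j)$, invoke Theorem~\ref{t4h82ugbwerh} to get the $w_{r,s}$, and quotient the subalgebra generated by $j$ and the $w_{r,s}$ by $\equiv^{\gamma}$ for sufficiently large $\gamma<\lambda$. The paper's argument at this point is little more than a one-line sketch ending ``satisfies all the requirements''; you have correctly filled in the two steps it leaves tacit — the translation of the $A_{N}$-hypotheses (4)--(6) into hypotheses (7)--(9) of Theorem~\ref{t4h82ugbwerh} via the isomorphism $\mathrm{Iter}(j)/\equiv^{\mathrm{crit}_{N}(j)}\cong A_{N}$, and the non-triviality of the $w_{r,s}$ (hence of $L(j,(w_{r,s}))$), needed to guarantee conclusion~(6). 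These are the details that genuinely need to be checked, and your treatment of both is correct.
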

\begin{proof}
Suppose that there exists some I1-embedding $j:V_{\lambda}\rightarrow V_{\lambda}$. Let $\kappa=\mathrm{crit}(j)$ and let
$A$ be a linear ordering of $V_{\kappa}$. Then let $R$ be the unique subset of $V_{\lambda}$ such that
$R\cap V_{\kappa}=A$ and $j^{+}(R)=R$.

Then let
\[\ell_{1}=U_{1}(j),\ldots ,\ell_{p}=U_{p}(\ell_{p}),j_{1}=V_{1}(j),\ldots ,j_{m}=V_{m}(j)\]
and $k_{r,s}=W_{r,s}(j)$ for $1\leq r\leq n,1\leq s\leq p$. Apply Theorem \ref{t4h82ugbwerh} to obtain the elementary embeddings
$(w_{r,s})_{1\leq r\leq n,1\leq s\leq p}$ in $\mathcal{E}_{\lambda}[R]$. Then let $\gamma$ be a sufficiently large limit ordinal subject to the condition that
$\gamma<\lambda$. Let 
\[X^{\sharp}=\mathcal{E}_{\lambda}/\equiv^{\gamma},x=[j]_{\gamma},y_{r,s}=[w_{r,s}]_{\gamma}\]
for $1\leq r\leq n,1\leq s\leq p$ and let $X$ be the subalgebra of $X^{\sharp}$ generated by $x,(y_{r,s})_{1\leq r\leq n,1\leq s\leq p}$. Let
$\leq$ be the linear ordering of $X$ induced by $\triangleleft_{R}$.
Then 
\[(X,x,\leq,(y_{r,s})_{1\leq r\leq n,1\leq s\leq p})\]
satisfies all the requirements for this result.
\end{proof}
Theorems \ref{t4h82ugbwerh} and \ref{42gyu9bf2HU} can be generalized to stronger but more complex results by
using the large cardinal axioms $E_{n}(\lambda)$ which were studied by Laver in \cite{L97} and whose consistency strength lies between
I3 and I1.

\begin{exam}
Suppose that $s,t$ are unary terms in the language with function symbols $*,\circ$ and $u$ is a binary term in the language
with function symbols $*,\circ$. Suppose furthermore that suppose that $\mathrm{crit}(x)=\mathrm{crit}(s(x))=\mathrm{crit}(t(x))$ whenever $X$ is a permutative LD-system and $x\in X$. Then from Theorem \ref{42gyu9bf2HU},
there exists a finite permutative LD-monoid $(X,\circ,*)$ with a compatible linear ordering along with $x,y,z\in X$ such that
\begin{enumerate}
\item $s(x)*x=z$,

\item $t(y)*y=z$,

\item $u(y,z)\neq 1$,

\item $(r*r)^{\sharp}(\alpha)\leq r^{\sharp}(\alpha)$ for all $r\in X,\alpha\in\mathrm{crit}[X]$, and

\item $\mathrm{crit}(x)=\mathrm{crit}(y)$.
\end{enumerate}

\label{t42ihu9tgy2u}
\end{exam}
The conclusion of Example \ref{t42ihu9tgy2u} is a rather strong statement since from Example
\ref{t42ihu9tgy2u}, we may deduce the following results which are not known to be provable in ZFC:
\begin{enumerate}
\item $(1)_{n\in\omega}$ generates a free subalgebra of $\varprojlim_{n}A_{n}$.

\item $o_{n}(1)\leq o_{n}(2)$ for all $n\in\mathbb{N}$.
\end{enumerate}

% ----------------------------------------------------------------
\subsection{Algebras of elementary embeddings between extendible and rank-into-rank cardinals}
In this section, we shall briefly outline without proofs the basic theory of the algebras of elementary embeddings around $n$-hugeness.

Suppose that $\gamma$ is a limit ordinal and $f:V_{\gamma+1}\rightarrow V_{\gamma+1}$ is a mapping. Then
$f$ is said to be an \index{extendibility mapping}\emph{extendibility mapping} for $\gamma$ if for all $\alpha>\gamma$ there is an ordinal $\beta$ and elementary embedding
$j:V_{\alpha}\rightarrow V_{\beta}$ with $f=j\upharpoonright_{\gamma+1}$.

\begin{prop}
Suppose that $f,g$ are extendibility mappings for $\gamma$. Then $f\circ g$ is also an extendibility mapping for $\gamma$.
\end{prop}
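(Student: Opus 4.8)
The plan is to unwind the definition of an extendibility mapping and compose the witnesses. Suppose $f$ and $g$ are extendibility mappings for $\gamma$. I want to show $f \circ g$ is an extendibility mapping for $\gamma$, so I must show that for every $\alpha > \gamma$ there is an ordinal $\beta$ and an elementary embedding $j : V_\alpha \to V_\beta$ with $f \circ g = j \upharpoonright_{\gamma+1}$. First I would fix $\alpha > \gamma$. Since $f$ is an extendibility mapping, there is an ordinal $\alpha'$ and an elementary embedding $j_f : V_{\alpha} \to V_{\alpha'}$ with $f = j_f \upharpoonright_{\gamma+1}$. Now I need a witness for $g$ living above $\alpha'$ so the two embeddings can be composed; apply the extendibility of $g$ at the ordinal $\alpha'$ (note $\alpha' > \gamma$ since $j_f(\gamma) \geq \gamma$ and in fact $\alpha' > \gamma$ as the codomain of an elementary embedding whose domain strictly contains $V_{\gamma+1}$): there is $\beta$ and an elementary embedding $j_g : V_{\alpha'} \to V_{\beta}$ with $g = j_g \upharpoonright_{\gamma+1}$.

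Next I would set $j = j_g \circ j_f : V_\alpha \to V_\beta$, which is a composite of elementary embeddings and hence elementary. The remaining task is to verify $j \upharpoonright_{\gamma+1} = f \circ g$, i.e. that for each $x \in V_{\gamma+1}$ we have $j(x) \cap V_{\gamma+1} = f(g(x))$. Here I would have to be a little careful about the direction of composition and the way $\upharpoonright_\gamma$ interacts with it — recall $f = j_f\upharpoonright_{\gamma+1}$ means $f(x) = j_f(x) \cap V_{\gamma+1}$, and similarly for $g$ and $j$. So I need to check $j_g(j_f(x)) \cap V_{\gamma+1} = (j_f\upharpoonright_{\gamma+1})((j_g\upharpoonright_{\gamma+1})(x))$ — wait, I should double check which order the paper intends $f \circ g$ to mean; given the earlier conventions in the excerpt the composition $f \circ g$ is ordinary function composition, so $(f\circ g)(x) = f(g(x))$. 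The computation then is: $g(x) = j_g(x) \cap V_{\gamma+1}$ lies in $V_{\gamma+1}$, and $f(g(x)) = j_f(g(x)) \cap V_{\gamma+1} = j_f(j_g(x)\cap V_{\gamma+1})\cap V_{\gamma+1}$.

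The key technical point — and the main obstacle — is the commutation lemma: if $k : V_\mu \to V_\nu$ is elementary with $\mathrm{crit}$ behaviour such that $k$ fixes enough, then $k(y \cap V_{\gamma+1}) \cap V_{\gamma+1} = k(y) \cap V_{\gamma+1}$ for $y$ in the domain. This is exactly the kind of fact used implicitly in the excerpt (e.g.\ in the computations around $j^+$ and $\upharpoonright_\gamma$), and the cleanest route is to note that $V_{\gamma+1}$ is a definable parameter-free class inside each $V_\mu$ (for $\mu > \gamma$) so elementarity gives $k(V_{\gamma+1}^{V_\mu}) = V_{k(\gamma)+1}^{V_\nu} \supseteq V_{\gamma+1}$, whence truncation to $V_{\gamma+1}$ commutes with applying $k$ on sets of rank $\leq \gamma$. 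Using this twice — once to push $j_f$ past the truncation coming from $g$, and once to assemble everything — collapses $f(g(x))$ to $j_f(j_g(x)) \cap V_{\gamma+1} = j(x) \cap V_{\gamma+1}$, which is precisely $(j\upharpoonright_{\gamma+1})(x)$. I would write the commutation step as a short preliminary observation and then the main argument is three lines. I expect no difficulty beyond getting this truncation-commutes-with-$j$ bookkeeping exactly right, including confirming the composition order matches the paper's $f \circ g = (f\circ g)(x) = f(g(x))$ convention; everything else is formal.
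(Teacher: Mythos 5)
Your overall plan --- compose witnesses and use a truncation-commutes lemma --- is the right one, and the truncation lemma itself is correctly stated and justified. But the composition order is reversed, and the error surfaces in your final collapse.

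You construct $j_f : V_\alpha \to V_{\alpha'}$ witnessing $f$ and then $j_g : V_{\alpha'} \to V_\beta$ witnessing $g$, and set $j = j_g \circ j_f$. This $j$ is the natural witness for $g \circ f$, not $f \circ g$: for $x \in V_{\gamma+1}$ one has $j(x) \cap V_{\gamma+1} = j_g(j_f(x)) \cap V_{\gamma+1}$, and your truncation lemma applied at the outer embedding $j_g$ reduces this to $j_g\bigl(j_f(x) \cap V_{\gamma+1}\bigr) \cap V_{\gamma+1} = g(f(x))$. Your claimed chain $f(g(x)) = j_f(j_g(x)) \cap V_{\gamma+1} = j(x) \cap V_{\gamma+1}$ does not hold: the middle expression $j_f(j_g(x))$ is not even well-typed, since $j_g(x) \in V_\beta$ while $\mathrm{dom}(j_f) = V_\alpha$ and $\beta > \alpha$; and setting that aside, $j_f \circ j_g$ and $j_g \circ j_f = j$ are different embeddings, so the final equality would fail anyway. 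You flagged that the direction of composition needed care, but the resolution you landed on papers over the issue rather than fixing it.

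The fix is to start with the $g$-witness. Apply extendibility of $g$ at $\alpha$ to get $j_g : V_\alpha \to V_{\alpha'}$ with $g = j_g\upharpoonright_{\gamma+1}$; then apply extendibility of $f$ at $\alpha'$ (noting $\alpha' \geq \alpha > \gamma$) to get $j_f : V_{\alpha'} \to V_\beta$ with $f = j_f\upharpoonright_{\gamma+1}$; and set $j = j_f \circ j_g : V_\alpha \to V_\beta$. Then for $x \in V_{\gamma+1}$,
\[
j(x) \cap V_{\gamma+1} = j_f(j_g(x)) \cap V_{\gamma+1} = j_f\bigl(j_g(x) \cap V_{\gamma+1}\bigr) \cap V_{\gamma+1} = j_f(g(x)) \cap V_{\gamma+1} = f(g(x)),
\]
where the second equality is a single application of your truncation lemma at $j_f$ (legal since $j_g(x)$ and $j_g(x)\cap V_{\gamma+1}$ both lie in $V_{\alpha'} = \mathrm{dom}(j_f)$). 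This gives $j\upharpoonright_{\gamma+1} = f \circ g$ as required.
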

\begin{prop}
Suppose that $f,g$ are extendibility mappings for $\gamma$. Suppose now that $\alpha_{1},\alpha_{2},\beta_{1},\beta_{2}>\gamma$ and
$j:V_{\alpha_{1}}\rightarrow V_{\beta_{1}},k:V_{\alpha_{2}}\rightarrow V_{\beta_{2}}$ are elementary embeddings with
$j\upharpoonright_{\gamma+1}=k\upharpoonright_{\gamma+1}$. Then $j(g)\upharpoonright_{\gamma+1}=k(g)\upharpoonright_{\gamma+1}$ and
$j(g)\upharpoonright_{\gamma+1}$ is an extendibility mapping for $\gamma$.
\end{prop}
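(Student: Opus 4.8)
The plan is to prove the two assertions separately by the kind of reflection and elementarity arguments that underlie the Square-Root Lemma and the remarks on the formula $\phi$ recovering $(j*k)\upharpoonright_{\gamma}$ from $j\upharpoonright_{\gamma}$ and $k\upharpoonright_{\gamma}$. I read the hypothesis as saying that the common value $j\upharpoonright_{\gamma+1}=k\upharpoonright_{\gamma+1}$ is the extendibility mapping $f$ (so in particular $\mathrm{crit}(j)=\mathrm{crit}(k)=\gamma$, which is forced once $f$ is non-trivial), and I use the convention $j(g)\upharpoonright_{\gamma+1}(y)=j(g)(y)\cap V_{\gamma+1}$ in analogy with the definition of $j\upharpoonright_{\gamma}$.

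For the first assertion the key point is a commutation lemma: for any elementary $J\colon V_{\alpha'}\to V_{\beta'}$ with $\alpha',\beta'>\gamma$ and $\mathrm{crit}(J)=\gamma$, and any $y\in V_{\alpha'}$,
\[
J(y)\cap V_{\gamma+1}=(J\upharpoonright_{\gamma+1})(y\cap V_{\gamma}).
\]
This comes from applying $J$ to the definition $y\cap V_{\gamma}=\{u\in y:\mathrm{rank}(u)<\gamma\}$, which yields $J(y\cap V_{\gamma})=\{u\in J(y):\mathrm{rank}(u)<J(\gamma)\}$, and then intersecting both sides with $V_{\gamma+1}$ and using $\gamma<J(\gamma)$. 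Feeding this identity, together with $J(g)(J(u))=J(g(u))$ and $J\upharpoonright V_{\gamma}=\mathrm{id}$, into the recursive description of the set $J(g)$, one sees that $J(g)\upharpoonright_{\gamma+1}$ is obtained from $J\upharpoonright_{\gamma+1}$ and $g$ by an operation that is uniformly definable over any sufficiently large $V_{\theta}$ — this is exactly the analogue, for the single set $g$, of the formula $\phi$ recovering $(j*k)\upharpoonright_{\gamma}$ from $j\upharpoonright_{\gamma}$ and $k\upharpoonright_{\gamma}$. Since $j\upharpoonright_{\gamma+1}=k\upharpoonright_{\gamma+1}$, applying this definable operation gives $j(g)\upharpoonright_{\gamma+1}=k(g)\upharpoonright_{\gamma+1}$ immediately.

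For the second assertion I argue by reflection. By the first assertion $j(g)\upharpoonright_{\gamma+1}$ depends only on $f=j\upharpoonright_{\gamma+1}$ and $g$, not on the particular witness $j$; so, to show it is an extendibility mapping for $\gamma$, I fix an arbitrary $\alpha>\gamma$ and replace $j$ by any elementary $j'\colon V_{\alpha_1'}\to V_{\beta_1'}$ with $j'\upharpoonright_{\gamma+1}=f$, which exists with $\alpha_1'$ as large as desired because $f$ is an extendibility mapping for $\gamma$. Choose $\alpha_1'$ to be a limit ordinal above $\alpha$ and sufficiently closed that $V_{\alpha_1'}$ correctly sees that $g$ is an extendibility mapping for $\gamma$; in particular $V_{\alpha_1'}\models\exists\beta'\,\exists h\,(h\colon V_{\alpha}\to V_{\beta'}\text{ elementary}\ \wedge\ h\upharpoonright_{\gamma+1}=g)$. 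Fix such $h\colon V_{\alpha}\to V_{\rho}$ with $\rho<\alpha_1'$ and $h\in V_{\alpha_1'}$. Applying $j'$ and using that $x\mapsto x\upharpoonright_{\gamma+1}$ is first-order definable, $j'(h)\colon V_{j'(\alpha)}\to V_{j'(\rho)}$ is elementary with $j'(h)\upharpoonright_{j'(\gamma)+1}=j'(g)$. Put $m:=j'(h)\upharpoonright V_{\alpha}$, an elementary embedding of $V_{\alpha}$ into some $V_{\beta}$; since successive truncations collapse, $m\upharpoonright_{\gamma+1}=(j'(h)\upharpoonright_{j'(\gamma)+1})\upharpoonright_{\gamma+1}=j'(g)\upharpoonright_{\gamma+1}=j(g)\upharpoonright_{\gamma+1}$. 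As $\alpha>\gamma$ was arbitrary, $j(g)\upharpoonright_{\gamma+1}$ is an extendibility mapping for $\gamma$.

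I expect the main obstacle to be the bookkeeping in the reflection step: guaranteeing that the "for all $\alpha'>\gamma$ there exist $\beta'$ and $h\colon V_{\alpha'}\to V_{\beta'}$" clause defining extendibility mappings is witnessed with targets $\beta'$ small enough to be visible inside $V_{\alpha_1'}$ (equivalently, choosing $\alpha_1'$ in a suitable club of closure points), together with the routine but fiddly checks that $\mathrm{crit}$, rank, and the truncations $\upharpoonright_{\gamma+1}$, $\upharpoonright_{j(\gamma)+1}$ behave as claimed. The commutation lemma itself is a short elementarity computation once the conventions are pinned down.
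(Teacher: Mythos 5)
The paper gives no proof here: the section is explicitly labelled ``we shall briefly outline without proofs,'' so you are filling a gap the paper left open rather than reproducing an argument. Your reflection argument for the second assertion (extendibility of $j(g)\upharpoonright_{\gamma+1}$) is sound: replacing $j$ by a witness $j'$ with a large enough domain, reflecting the statement that $g$ is an extendibility mapping, applying $j'$ to a witness $h$, and restricting $j'(h)$ to $V_\alpha$ is exactly the right move, and the ``successive truncations collapse'' computation checks out.

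The first assertion, however, is where the real work lies, and your write-up papers over it. Your commutation lemma $J(y)\cap V_{\gamma+1}=(J\upharpoonright_{\gamma+1})(y\cap V_\gamma)$ is correct, but it controls the \emph{element-rank} truncation $J(y)\cap V_{\gamma+1}$, whereas the conclusion concerns the \emph{functional} truncation $J(g)\upharpoonright_{\gamma+1}(x)=J(g)(x)\cap V_{\gamma+1}$ for $x\in V_{\gamma+1}$, and these are not the same thing. Applying the lemma with $y=g$ only gives you $J(g)\cap V_{\gamma+1}$, which (because a Kuratowski pair of rank-$\gamma$ objects has rank $\gamma+2$) tells you nothing about the pairs $(x,v)$ of $J(g)$ with $x$ or $v$ of rank exactly $\gamma$. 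The phrase ``feeding this identity into the recursive description of the set $J(g)$'' does not name an actual argument, and the functional equation you need is not an instance of the commutation lemma. A concrete way to close the gap, under your working assumption $\mathrm{crit}(j)=\mathrm{crit}(k)=\gamma$: pick a single elementary $h$ in the common domain of $j$ and $k$ with $h\upharpoonright_{\gamma+1}=g$ and $\mathrm{crit}(h)<\gamma$. One first checks $j(g)\upharpoonright_{\gamma+1}=j(h)\upharpoonright_{\gamma+1}$. Then use the algebraic identity $j\circ h=j(h)\circ j$ and apply the commutation lemma on \emph{both} sides to obtain, for $x\in V_{\gamma+1}$,
\[
(j(h)\upharpoonright_{\gamma+1})\bigl(j(x)\cap V_\gamma\bigr)=(j\upharpoonright_{\gamma+1})\bigl(h(x)\cap V_\gamma\bigr).
\]
Since $\mathrm{crit}(j)=\gamma$ forces $j(x)\cap V_\gamma=x$ for all $x\in V_{\gamma+1}$, this pins down $j(h)\upharpoonright_{\gamma+1}(x)=(j\upharpoonright_{\gamma+1})(g(x)\cap V_\gamma)$, which depends only on $j\upharpoonright_{\gamma+1}$ and $g$; the case where $g$ acts trivially below $\gamma$ (so $\mathrm{crit}(h)\geq\gamma$ and $j(h)$ fixes $V_{\gamma+1}$) must be treated separately and gives the identity. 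This, not the commutation lemma alone, is the analogue of the paper's unproved ``formula $\phi$.'' Also note two smaller points: the $f$ in the statement is never used and should be identified with $j\upharpoonright_{\gamma+1}$; and nontriviality of $f$ forces $\mathrm{crit}(j)\leq\gamma$, not $=\gamma$, so the reduction to $\mathrm{crit}(j)=\gamma$ deserves a word of justification rather than the assertion that it is ``forced.''
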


If $\gamma$ is an inaccessible cardinal, then let $\mathbf{EE}_{\gamma}$ be the set of all extendibility mappings for
$\gamma$. Then $\mathbf{EE}_{\gamma}$ can be endowed with operations $*,\circ$ such that $\circ$ denotes composition and
$*$ is the mapping where $f*g=j(g)\upharpoonright_{\gamma+1}$ for some elementary embedding $j:V_{\alpha}\rightarrow V_{\beta}$ with
$j\upharpoonright_{\gamma+1}=f$.

\begin{thm}
If $\gamma$ is inaccessible, then $\mathbf{EE}_{\gamma}$ is a Laver-like LD-monoid.
\end{thm}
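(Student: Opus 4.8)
\textbf{Proof plan for: if $\gamma$ is inaccessible, then $\mathbf{EE}_{\gamma}$ is a Laver-like LD-monoid.}

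The plan is to verify, in order, that $(\mathbf{EE}_{\gamma},*,\circ,\mathrm{id})$ is an LD-monoid and then that the hull-type condition defining Laver-likeness holds, namely that $\mathrm{Li}(\mathbf{EE}_{\gamma})$ is a left-ideal and for any sequence $(f_{n})_{n\in\omega}$ of extendibility mappings there is some $n$ with $f_{0}*\cdots*f_{n}\in\mathrm{Li}(\mathbf{EE}_{\gamma})$. First I would pin down the algebraic identities. The operation $\circ$ is genuine composition of functions $V_{\gamma+1}\to V_{\gamma+1}$, so associativity of $\circ$ and the fact that $\mathrm{id}_{V_{\gamma+1}}$ is a two-sided unit are immediate (and $\mathrm{id}$ is an extendibility mapping, witnessed by $j=\mathrm{id}$ on any $V_{\alpha}$). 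The identities $(f\circ g)*h=f*(g*h)$, $f*(g\circ h)=(f*g)\circ(f*h)$, $f\circ g=(f*g)\circ f$, and self-distributivity $f*(g*h)=(f*g)*(f*h)$ should all be transported from the corresponding identities in $\mathcal{E}_{\alpha,\beta}$-style elementary embedding algebras: pick $j:V_{\alpha}\to V_{\beta}$ with $j\upharpoonright_{\gamma+1}=f$ (and then $k:V_{\beta}\to V_{\delta}$ with $k\upharpoonright_{\gamma+1}=g$, etc., using $\gamma$ inaccessible so that the relevant restrictions land inside the domains), compute $j(g\upharpoonright_{\text{appropriate rank}})$ and use that $j$ applied to a truncated embedding is again a truncated embedding together with the well-definedness proposition stated just above the theorem (the statement that $j(g)\upharpoonright_{\gamma+1}$ depends only on $j\upharpoonright_{\gamma+1}$ and $g$). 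The identities $1*f=f$ and $f*1=1$ follow because $j(\mathrm{id}_{V_{\gamma+1}})=\mathrm{id}_{V_{j(\gamma)+1}}$ and $\mathrm{id}\upharpoonright_{\gamma+1}=\mathrm{id}$, and because the identity map on a larger rank restricts to the identity. So $(\mathbf{EE}_{\gamma},*,\circ,\mathrm{id})$ is an LD-monoid.

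Next I would identify the left-identities. I claim $\mathrm{Li}(\mathbf{EE}_{\gamma})=\{\mathrm{id}_{V_{\gamma+1}}\}$, by exactly the argument in Proposition \ref{t4r4ghtj0qhyqfha}: if $f\neq\mathrm{id}$ is an extendibility mapping, then $f$ moves some ordinal, hence has a critical point $\kappa<\gamma$, and by elementarity applied to a witnessing $j$, $\mathrm{crit}(f*f)=f(\kappa)>\kappa$, so $f*f\neq f$ and $f$ is not a left-identity. Since $\mathrm{Li}$ is a singleton containing the unit, it is trivially a left-ideal. For the Laver–Steel-type condition, the key point is that the notion of a critical point makes sense for extendibility mappings: each non-identity $f\in\mathbf{EE}_{\gamma}$ has $\mathrm{crit}(f)<\gamma$, and for $f$ with $\mathrm{crit}(f)\le\mathrm{crit}(g)$ one has $\mathrm{crit}(f*g)=f(\mathrm{crit}(g))>\mathrm{crit}(g)$ (again by elementarity of a witness). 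I would then run the Laver–Steel argument in the form of Theorem \ref{j3rti90}/\ref{235wfe}: given $(f_{n})_{n\in\omega}$, look at the partial compositions $f_{0}*\cdots*f_{n}$; their critical points (when not yet the identity) form a strictly increasing sequence of ordinals below $\gamma$, and the Laver–Steel theorem forces this to be bounded, i.e. some $f_{0}*\cdots*f_{n}$ must equal $\mathrm{id}=$ the unique element of $\mathrm{Li}(\mathbf{EE}_{\gamma})$. The cleanest way to invoke this is to note that $f\mapsto f\upharpoonright_{\gamma}$ and the truncation structure embed the relevant finitely generated piece of $\mathbf{EE}_{\gamma}$ into an honest quotient of an elementary-embedding algebra, where the Laver–Steel theorem applies verbatim; alternatively, reprove the combinatorial core directly for extendibility mappings using that an extendibility mapping for $\gamma$ is a restriction of a genuine elementary embedding between $V_{\alpha}$'s.

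The main obstacle I anticipate is the bookkeeping around ranks and well-definedness: unlike $\mathcal{E}_{\lambda}$ where everything lives at one fixed rank $\lambda$, here $f*g$ is defined by applying an elementary embedding $j\supseteq f$ whose target rank $\beta$ varies, and one must check carefully (i) that $j(g\upharpoonright_{\text{rank}})\upharpoonright_{\gamma+1}$ is independent of the choice of $j$ and of $\alpha,\beta$ (this is the proposition quoted just before the theorem, so I may cite it), (ii) that the objects being fed to $j$ genuinely lie in $V_{\alpha}$ — this is exactly where inaccessibility of $\gamma$ is used, to guarantee $V_{\gamma+1}\in V_{\alpha}$ for $\alpha>\gamma$ and hence that extendibility mappings and their composites are legitimate elements to apply $j$ to — and (iii) that critical points of extendibility mappings behave under $*$ the way critical points of elementary embeddings do. Once these three points are nailed down, the LD-monoid identities and the Laver–Steel property follow by transporting the corresponding facts for $\mathcal{E}_{\lambda}$, and the theorem is proved. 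A secondary subtlety is checking that $\mathrm{Li}$ really is just $\{\mathrm{id}\}$ rather than larger — but the $\mathrm{crit}(f*f)>\mathrm{crit}(f)$ argument handles this uniformly.
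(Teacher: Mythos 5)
The paper states this theorem without proof, so I am evaluating your argument on its own merits. The LD-monoid portion of your plan is sound: transporting the identities from elementary-embedding algebras via witnessing embeddings $j:V_\alpha\to V_\beta$, using inaccessibility of $\gamma$ for the well-definedness of $*$, and identifying $\mathrm{Li}(\mathbf{EE}_{\gamma})=\{\mathrm{id}\}$ via the $\mathrm{crit}(f*f)>\mathrm{crit}(f)$ argument are all correct. The gap is in the Laver-likeness step.

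You write that the critical points of $f_0*\cdots*f_n$ ``form a strictly increasing sequence of ordinals below $\gamma$, and the Laver–Steel theorem forces this to be bounded.'' Both clauses are wrong. First, the critical points of left-associated iterated products are not monotone: from $\mathrm{crit}(j*k)=j(\mathrm{crit}(k))$ one gets increases exactly when $\mathrm{crit}(j_0*\cdots*j_{n-1})\le\mathrm{crit}(j_n)$, and decreases otherwise (already in $A_4$ the critical points of the powers of $1$ oscillate). Second, Laver–Steel asserts \emph{un}boundedness of $\sup_n\mathrm{crit}(j_0*\cdots*j_n)$, not boundedness. The correct shape of the argument is the opposite of what you wrote: if no $f_0*\cdots*f_n$ equals $\mathrm{id}$, a Laver–Steel-type argument should force the critical points to be unbounded in $\gamma$, while regularity of $\gamma$ (inaccessibility) bounds any countable set of ordinals strictly below $\gamma$; the contradiction forces some product to be the identity. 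You have the two halves of this tension in hand but have them pointed the wrong way, so as written the step does not close.

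There is a second, structural problem with your ``cleanest way'' of invoking Laver–Steel, namely embedding a finitely generated piece of $\mathbf{EE}_{\gamma}$ into a quotient $\mathcal{E}_{\lambda}/\equiv^{\gamma}$. This presupposes the existence of a rank-into-rank cardinal $\lambda$ above $\gamma$, which is a far stronger hypothesis than $\gamma$ being inaccessible with a rich $\mathbf{EE}_{\gamma}$: a $\gamma$-extendibility mapping lives well below I3 in consistency strength, so there need not be any $\mathcal{E}_{\lambda}$ to quote Laver–Steel in. The fallback you mention — reproving the combinatorial core directly for embeddings $V_\alpha\to V_\beta$ with the bound $\mu=\sup_n\mathrm{crit}(f_0*\cdots*f_n)<\gamma$ fixed and all witnessing embeddings truncated to a common $V_\alpha$ — is the route that actually has to be taken, and it is the real content of the theorem; as it stands you have only gestured at it. You should also make the role of regularity explicit here: it is not merely a convenience for rank bookkeeping but the load-bearing hypothesis in the Laver-likeness argument.
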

\subsection{Computing in endomorphic Laver tables}
The following example illustrates that the output of the fundamental operations on endomorphic Laver table operations is often too large for modern computers to handle.
\begin{exam}
Suppose the following.
\begin{enumerate}
\item $T:A_{n}^{2}\rightarrow A_{n}$ is a mapping where $x*T(y,z)=T(x*y,x*z)$ and $T(x,x)=x$ for all $x,y,z\in A_{n}$

\item $t:A_{n}^{3}\rightarrow A_{n}$ is the mapping where $t(x,y,z)=T(x,y)*z$ for all $x,y,z\in A_{n}$.

\item $L=\mathbf{L}((1)_{r\in\{e\}},A_{n},t)$. 

\item $\ell_{1}=e$ and $\ell_{s+1}=t(\ell_{s},\ell_{s})$ for $1\leq s<2^{n}$. 
\end{enumerate}
Then each term $\ell_{s}$ has $2^{s-1}$ instances of the varible $e$. However, $t^{\sharp}(\ell_{r},\ell_{r},\ell_{s})=\ell_{r*_{n}s}$ which has $2^{r*_{n}s}$ instances of the variable $e$. Therefore, in this case, it is infeasible to print the entire
term $t^{\sharp}(\ell_{r},\ell_{r},\ell_{s})$ except when $r*_{n}s$ is small.
\end{exam}

\begin{exam}
Let $t^{\bullet}:A_{2}\rightarrow A_{2}$ be the mapping where $t^{\bullet}(x,y,z)=y*z$.
Define $(L,t^{\sharp})=\mathbf{L}((1)_{r\in\{e\}},A_{2},t^{\bullet})$ and
define terms $\ell_{n}$ for all $n\in\omega$ by letting $\ell_{0}=e$ and $\ell_{i+1}=t(\ell_{i},e)$ for $i\in\omega$.
Then $t^{\sharp}(e,e,\ell_{n})$ has $6\cdot 2^{n}-4$ instances of the term $e$.
\end{exam}

We shall now develop a method for gathering information from the output of endomorphic Laver table operations even if the entire output of the endomorphic Laver table operations is very large. For notational simplicity, in the following development, we shall only work with endomorphic algebras with one fundamental operation.

\begin{defn}
Let $t$ be a function symbol. Then define 
\index{$\mathrm{sub}$}
\[\mathrm{sub}:\mathbf{T}_{\{t\}}[X]\times\{1,\ldots ,n\}^{*}\rightarrow\mathbf{T}_{\{t\}}[X]\]
inductively according to the following rules.
\begin{enumerate}
\item $\mathrm{sub}(\ell,\varepsilon)=\ell$.

\item $\mathrm{sub}(\ell,\mathbf{x}a)=\#$ whenever $\mathrm{sub}(\ell,\mathbf{x})=\#$ or $\mathrm{sub}(\ell,\mathbf{x})$
is a variable.

\item $\mathrm{sub}(\ell,\mathbf{x}i)=\ell_{i}$ whenever $\mathrm{sub}(\ell,\mathbf{x})=t(\ell_{1},\ldots ,\ell_{n})$.
\end{enumerate}
\end{defn}
\begin{defn}
Suppose the following.
\begin{enumerate}
\item $\mathcal{X}=(X,t^{\bullet})$ is an $n+1$-ary endomorphic Laver-like algebra.

\item $x'$ is a variable for each $x\in X$.

\item $X'=\{x'|x\in X\}$

\item $L=\mathbf{L}((x)_{x'\in X'},(X,t^{\bullet}))$.

\item $\phi:L\rightarrow(X,t^{\bullet})$ is the canonical homomorphism where $\phi(x')=x$ for $x\in X$.

\item $\#$ is a symbol.
\end{enumerate}

Let \index{$\mathbf{tree}^{\mathcal{X}}(\ell)$}
$\mathbf{tree}^{\mathcal{X}}(\ell):\{0,1\}^{*}\rightarrow X\cup\{\#\}$ be the mapping where $\mathbf{tree}^{\mathcal{X}}(\ell)(\mathbf{x})=\#$ whenever $\mathrm{sub}(\ell,\mathbf{x})=\#$ and $\mathbf{tree}^{\mathcal{X}}(\ell)(\mathbf{x})=\phi(\mathrm{sub}(\ell,\mathbf{x}))$ otherwise.

Let \index{$\Diamond(\mathcal{X})$}$\Diamond(\mathcal{X})=\{\mathbf{tree}^{\mathcal{X}}(\ell)\mid\ell\in L\}$.

Define an operation $t^{\sharp}$ on $\Diamond(\mathcal{X})$ by letting
\[t^{\sharp}(\mathbf{tree}^{\mathcal{X}}(\ell_{1}),\ldots ,\mathbf{tree}^{\mathcal{X}}(\ell_{n}),\mathbf{tree}^{\mathcal{X}}(\ell))\]
\[=\mathbf{tree}^{\mathcal{X}}(t^{\sharp}(\ell_{1},\ldots ,\ell_{n})).\]

Since the kernel of the function $\mathbf{tree}^{\mathcal{X}}$ is the equivalence relation $\simeq$ in Theorem
\ref{thanhz9auntsc2nj52}, we conclude that the operation $t^{\sharp}$ on $\Diamond(\mathcal{X})$ is well-defined.
\end{defn}
\begin{defn}
If $\mathfrak{l}\in\Diamond(\mathcal{X})$ and $\mathfrak{l}\neq\mathbf{tree}^{\mathcal{X}}(x')$ for all $x\in X$, then let
\index{$\mathfrak{l}\upharpoonright_{i}$}
$\mathfrak{l}\upharpoonright_{i}\in\Diamond(\mathcal{X})$ be defined by
$\mathfrak{l}\upharpoonright_{i}(\mathbf{x})=\mathfrak{l}(\mathbf{x}i)$.
\end{defn}
\begin{defn}
If $\mathfrak{l}_{1},\ldots ,\mathfrak{l}_{n}\in\Diamond(\mathcal{X})$ and
$(t,\mathfrak{l}_{1},\ldots ,\mathfrak{l}_{n})\not\in\mathrm{Li}(\Diamond(\mathcal{X}))$,
then let $t_{x}(\mathfrak{l}_{1},\ldots ,\mathfrak{l}_{n})\in\Diamond(\mathcal{X})$ be defined by
\[t_{x}(\mathfrak{l}_{1},\ldots ,\mathfrak{l}_{n})(\varepsilon)=t^{\bullet}(\mathfrak{l}_{1}(\varepsilon),\ldots ,\mathfrak{l}_{n}(\varepsilon),x)\]
and $t_{x}(\mathfrak{l}_{1},\ldots ,\mathfrak{l}_{n})(\mathbf{x}i)=\mathfrak{l}_{i}(\mathbf{x})$.
\index{$t_{x}(\mathfrak{l}_{1},\ldots ,\mathfrak{l}_{n})$}
\end{defn}
\begin{algo}
\label{y5rji8g0rwynuho}
$t^{\sharp}(\mathfrak{l}_{1},\ldots ,\mathfrak{l}_{n},\mathfrak{l})(\mathbf{x})$ can be calculated recursively using the following conditional statement
and dynamic programming.
\begin{enumerate}
\item If $\mathbf{x}=\varepsilon$, then return
$t^{\bullet}(\mathfrak{l}_{1}(\varepsilon),\ldots ,\mathfrak{l}_{n}(\varepsilon),\mathfrak{l}(\varepsilon))$,

\item else if $(t,\mathfrak{l}_{1}(\varepsilon),\ldots ,\mathfrak{l}_{n}(\varepsilon))\in\mathrm{Li}(\Gamma(\mathcal{X}))$, then
return $\mathfrak{l}(\mathbf{x})$,

\item else if $\mathfrak{l}(i)=\#$ for $1\leq i\leq n$, then set $\mathfrak{l}(\varepsilon)=x$ and return
$t_{x}(\mathfrak{l}_{1},\ldots ,\mathfrak{l}_{n})(\mathbf{x})$,

\item else if $\mathfrak{l}(\varepsilon)=x$, then return
\[t^{\sharp}(t^{\sharp}(\mathfrak{l}_{1},\ldots ,\mathfrak{l}_{n},\mathfrak{l}\upharpoonright_{1}),\ldots ,
t^{\sharp}(\mathfrak{l}_{1},\ldots ,\mathfrak{l}_{n},\mathfrak{l}\upharpoonright_{n}),
t_{x}(\mathfrak{l}_{1},\ldots ,\mathfrak{l}_{n})).\]
\end{enumerate}
\end{algo}

If $t^{\bullet}(x,y,z):A_{n}^{3}\rightarrow A_{n}$ is the mapping defined by $(x\wedge y)*z$ WHAT IS WEDGE, then
Algorithm \ref{y5rji8g0rwynuho} can usually calculate $t^{\sharp}(\mathfrak{l}_{1},\mathfrak{l}_{2},\mathfrak{l})(\mathbf{x})$ in a couple of seconds
even when $\mathbf{x}$ is a randomly generated string of bits with $|\mathbf{x}|=100$. We usually run out of memory when
attempting to compute $t^{\sharp}(\mathfrak{l}_{1},\mathfrak{l}_{2},\mathfrak{l})(\mathbf{x})$
if $t^{\sharp}(\mathfrak{l}_{1},\mathfrak{l}_{2},\mathfrak{l})(\mathbf{x})\neq\#$ and if $|\mathbf{x}|>1000$.

% ----------------------------------------------------------------
\subsection{Selectors}
In this section, we shall use the notion of a selector to easily produce various operations $t:X^{n}\rightarrow X$ such that $x*t(x_{1},\ldots,x_{n})=t(x*x_{1},\ldots,x*x_{n})$.

Suppose that $k$ is a natural number. A classical Laver table selector is a function $T$ with domain 
$\{(n,x_{1},\ldots,x_{k}):n\in\omega,x_{1},\ldots,x_{k}\in A_{n}\}$ and with range $\{0,1\}$ where 
$T(m,x_{1},\ldots,x_{k})=T(n,y_{1},\ldots,y_{k})$ whenever there is an isomorphism
\[\iota:\langle x_{1},\ldots,x_{k}\rangle^{A_{m}}\rightarrow \langle y_{1},\ldots,y_{k}\rangle^{A_{n}}\]
preserving the linear ordering $\leq^{\sharp}$ (there can only be one such isomorphism) and where $\iota(x_{i})=y_{i}$ for $1\leq i\leq k$.

Suppose that $T$ is a classical Laver table selector. Then the $n$-th selector function for $T$ is the function defined
inductively according to the following rules:
\begin{enumerate}
\item The 0-th selector function is the unique function $t_{n}:A_{0}^{k}\rightarrow A_{0}$.

\item Suppose that the $n-1$-th selector function $t_{n-1}:A_{n-1}^{k}\rightarrow A_{n-1}$ has already been defined.
Suppose that $x_{1},\ldots,x_{k}\in A_{n}$. Let $r=t_{n-1}((x_{1})_{2^{n-1}},\ldots,(x_{k})_{2^{n-1}})$. If
$|\{r,r+2^{n-1}\}\cap\langle x_{1},\ldots,x_{k}\rangle|=1$, then let $t_{n}(x_{1},\ldots,x_{k})$ be the unique element in
$\{r,r+2^{n-1}\}\cap\langle x_{1},\ldots,x_{k}\rangle$. Otherwise, define
\[t_{n}(x_{1},\ldots,x_{k})=t_{n-1}((x_{1})_{2^{n-1}},\ldots,(x_{k})_{2^{n-1}})+2^{n-1}\cdot T(n,x_{1},\ldots,x_{k}).\]
\end{enumerate}
\begin{prop}
If $t_{n}$ is the $n$-th selector function for $T$, then $t_{n}$ satisfies the identity
$x*t_{n}(x_{1},\ldots,x_{k})=t_{n}(x*x_{1},\ldots,x*x_{k})$.
\end{prop}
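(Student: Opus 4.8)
The plan is to proceed by induction on $n$, mirroring exactly the inductive definition of the $n$-th selector function $t_n$. The base case $n=0$ is trivial since $A_0$ has a single element, so every operation on $A_0^k$ is vacuously self-distributive (indeed every identity holds). For the inductive step, I would assume $t_{n-1}:A_{n-1}^k\to A_{n-1}$ satisfies $x *_{n-1} t_{n-1}(x_1,\ldots,x_k)=t_{n-1}(x*_{n-1}x_1,\ldots,x*_{n-1}x_k)$ and work to establish the identity for $t_n$.

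First I would fix $x,x_1,\ldots,x_k\in A_n$ and unwind the definition of $t_n(x_1,\ldots,x_k)$. The key structural facts I would invoke are: the projection $\pi:A_n\to A_{n-1}$, $\pi(y)=(y)_{2^{n-1}}$, is a homomorphism; the map $L:A_{n-1}\to A_n$, $L(y)=y+2^{n-1}$, is a homomorphism; and the interaction of $*_n$ with these (Propositions on periodicity, especially $(x+2^n)*_{n+1}y=2^n+(x*_n(y)_{2^n})$ and the formulas for $\gcd$/critical point behaviour). Letting $r=t_{n-1}(\pi(x_1),\ldots,\pi(x_k))$, I would split into the two cases of the definition. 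In the case where $|\{r,r+2^{n-1}\}\cap\langle x_1,\ldots,x_k\rangle|=1$, so $t_n(x_1,\ldots,x_k)$ is the unique element of that intersection, I need that $x*_n$ sends this configuration to the corresponding configuration for $(x*_nx_1,\ldots,x*_nx_k)$: since $L_{*_n,x}$ is an endomorphism it maps $\langle x_1,\ldots,x_k\rangle$ into (indeed onto) $\langle x*_nx_1,\ldots,x*_nx_k\rangle$, and it commutes with $\pi$, so $x*_n r$ reduces mod $2^{n-1}$ to $t_{n-1}(\pi(x*_nx_1),\ldots,\pi(x*_nx_k))$ by the induction hypothesis; then a short argument using injectivity of $L_{*_n,x}$ on the relevant pair (or the observation that $x*_n$ either fixes critical points or strictly raises them, so it cannot collapse $\{r,r+2^{n-1}\}$ onto a single element unless it was already a left-ideal situation) shows the uniqueness of the intersection is preserved and $x*_n t_n(x_1,\ldots,x_k)$ is the right element.

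In the second case, where $t_n(x_1,\ldots,x_k)=r+2^{n-1}\cdot T(n,x_1,\ldots,x_k)$, I would use the defining property of the classical Laver table selector $T$: namely that $T$ depends only on the isomorphism type of $\langle x_1,\ldots,x_k\rangle^{A_n}$ together with the marked generators, and this isomorphism type is preserved when one applies the automorphism-like inner endomorphism $L_{*_n,x}$ — more precisely, $L_{*_n,x}$ restricts to a $\leq^{\sharp}$-order-preserving isomorphism from $\langle x_1,\ldots,x_k\rangle$ onto $\langle x*_nx_1,\ldots,x*_nx_k\rangle$ when the critical point configuration is nontrivial, so $T(n,x_1,\ldots,x_k)=T(n,x*_nx_1,\ldots,x*_nx_k)$. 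Combined with the induction hypothesis for the first coordinate $r$ and the periodicity formula describing $x*_n(r+2^{n-1}T)$ in terms of $(x*_n r)$ and $2^{n-1}$, the two sides of the desired identity coincide. The main obstacle I anticipate is precisely verifying that $L_{*_n,x}$ behaves as an order-isomorphism on generated subalgebras in exactly the way $T$'s invariance requires — one has to be careful that $*_n$ may fail to be injective (it collapses things into left-ideals), so the argument must separate the "left-identity / high critical point" cases (where both sides land in the top congruence class and agree trivially) from the "genuinely nontrivial" cases (where $L_{*_n,x}$ really is injective and order-preserving on the relevant finite subalgebra). Handling that case split cleanly, using the $\gcd$-formulas from Proposition \ref{t482u9onjkrw} to track critical points through the inner endomorphism, is where the real work lies.
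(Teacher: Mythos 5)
The paper gives no proof of this proposition, so there is nothing to compare against directly; your inductive framework (induction on $n$ tracking the two clauses of the recursion) is the natural one. However, your proposal has a genuine error that infects both cases of your argument. You claim that $L_{*_n,x}$ cannot collapse $\{r,r+2^{n-1}\}$ except in a degenerate "left-ideal situation", and in Case~2 you claim $L_{*_n,x}$ restricts to a $\leq^{\sharp}$-preserving isomorphism of $\langle x_1,\ldots,x_k\rangle$ onto $\langle x*x_1,\ldots,x*x_k\rangle$. Both claims are false: for every $x<2^n$ one has $o_n(x)\leq n-1$ (since $o_n(x)=n$ would force $L_{*_n,x}$ to be the increasing bijection, i.e.\ the identity, contradicting $x*_n 1=x+1$), so $2^{o_n(x)}\mid 2^{n-1}$ and $L_{*_n,x}$ \emph{always} identifies $r$ with $r+2^{n-1}$. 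In particular, in your Case~2 the subalgebra $\langle x_1,\ldots,x_k\rangle$ contains both $r$ and $r+2^{n-1}$, which $L_{*_n,x}$ identifies, so $L_{*_n,x}$ is never injective on it there, and the appeal to the defining invariance of $T$ via an isomorphism is not available.

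Consequently your Case~1 conclusion that "the uniqueness of the intersection is preserved" also fails: it is entirely possible for $|\{r,r+2^{n-1}\}\cap\langle x_1,\ldots,x_k\rangle|=1$ while $|\{r',r'+2^{n-1}\}\cap\langle x*x_1,\ldots,x*x_k\rangle|=2$. Concretely, in $A_2$ take $k=2$, $(x_1,x_2)=(2,3)$, $x=1$, and a selector with $T(1,2,1)=0$; then $r=1$, $\{1,3\}\cap\{2,3,4\}=\{3\}$ has size one, but $(1*2,1*3)=(4,2)$ gives $\langle 4,2\rangle=\{2,4\}$ with $r'=2$ and $\{2,4\}\cap\{2,4\}$ of size two. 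The identity does still hold here (one checks $1*_2 t_2(2,3)=2$ and $T(2,4,2)=T(1,2,1)=0$ by the unique marked order-isomorphism $\{1,2\}\to\{2,4\}$, giving $t_2(4,2)=2$), but it holds because the value $x*_n t_n(x_1,\ldots,x_k)=x*_n r$ is forced \emph{independently} of $T(n,x_1,\ldots,x_k)$ (precisely because $L_{*_n,x}$ collapses the fiber), and one must then separately argue that the $T$-value on the \emph{image} tuple produces this same forced element, via the isomorphism invariance of $T$ applied to a subalgebra of $A_n$ that $\pi$ does not separate. That extra argument — relating $T(n,x*x_1,\ldots,x*x_k)$ to the forced value $x*_n r$ when $\{r',r'+2^{n-1}\}\subseteq\langle x*x_1,\ldots,x*x_k\rangle$ — is the heart of the matter and is absent from your sketch; the injectivity-based route you propose cannot supply it.
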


% ----------------------------------------------------------------
\subsection{Possible applications to cryptography}

In this section, we shall propose several public key cryptosystems that use endomorphic Laver tables as their platforms. If these cryptosystems are successful, then the endomorphic Laver tables will offer the first practical application of modern set theory!

The public key cryptosystems in practical use today withstand all attacks against classical computers. However, these same cryptosystems
can be broken by quantum computers. We therefore need to develop cryptosystems which can withstand the attacks from quantum computers but are also efficient enough to apply in practice \cite{PQC}. We do not believe that quantum computers would be any more effective at breaking endomorphic Laver table based cryptosystems than classical computers are since the current quantum algorithms generally
use only a limited number of techniques (such as the quantum Fourier transform) to solve specific classes of problems \cite{AM}.
It is unknown as to how well endomorphic Laver table based cryptosystems fare against attacks from even classical computers though.

We observe that the public key cryptosystems that use LD-systems as platforms are analogous to non-abelian group based cryptosystems. In this section, we shall modify these cryptosystems so that the endomorphic Laver tables may be used as a platform for these cryptosystems.

The classical Laver tables are currently an insecure platform for all cryptosystems mainly since
$A_{48}$ is the largest classical Laver table ever computed (which gives at most 48 bits of security) and because in practice $x*_{n}y$ is either close to $2^{n}$ or close to $x$. The multigenic Laver tables are not secure platforms for any known cryptosystem either since the factorization problem for multigenic Laver tables is easily solvable.

\begin{cryprob}(Factorization problem for magmas)

Input: A magma $(X,*)$, a subset $R\subseteq X^{2}$, and $x\in X$.

Objective: Find some $(r,s)\in R$ such that $r*s=x$ or determine that no such pair $(r,s)$ exists.

\label{t2hawo3tqwne}
\end{cryprob}

\begin{algo}
Suppose that $M$ is a multigenic Laver table over the alphabet $A$ and $\mathbf{z}\in M$.
Then the following algorithm can be used to find all factorizations of $\mathbf{z}$ into a product
$\mathbf{x}*\mathbf{y}$. We shall call a factorization $\mathbf{x}*\mathbf{y}$ of $\mathbf{z}$ a minimal factorization if
$(\mathbf{x},\mathbf{y})$ is a forward sequence.

Let $S:=\{(\mathbf{z'},a)\}$ where $\mathbf{z}=\mathbf{z'}a$.
\begin{enumerate}
\item For $1\leq i\leq n-2$, let $\mathbf{z}_{i}$ be the prefix of $\mathbf{z}$ of length $i$, and
\begin{enumerate}
\item for $j\geq 1$ do 
\begin{enumerate}
\item if $j=1$ then let $a_{j}=\mathbf{z}[i+1]$,

\item else if $(\mathbf{z}_{i}*a_{1}\ldots a_{j-1})\circ\mathbf{z}_{i}$ is a proper substring of $\mathbf{z}$ then let
$a_{j}$ be the letter so that $((\mathbf{z}_{i}*a_{1}\ldots a_{j-1})\circ\mathbf{z}_{i})a_{j}$ is a prefix of $\mathbf{z}$ and
if $\mathbf{z}=\mathbf{z}_{i}*a_{1}\ldots a_{j}$ then let $S:=S\cup\{(\mathbf{z}_{i},a_{1}\ldots a_{j})\}$ and terminate the loop for $j$,

\item else terminate the loop for $j$.
\end{enumerate}
\end{enumerate}
\end{enumerate}
Then $S$ is the set of all minimal factorizations of $\mathbf{z}$. The factorizations of $\mathbf{z}$ are precisely the factorizations
$\mathbf{x}*\mathbf{z}$ where $\mathbf{x}\in\mathrm{Li}(M)$, along with $\mathbf{x}*\mathbf{ry}$ where $(\mathbf{x},\mathbf{y})\in S$
and $\mathbf{r}=\varepsilon$ or $\mathbf{x}*\mathbf{r}\in\mathrm{Li}(M)$.
\end{algo}

Unlike the classical and multigenic Laver tables, the endomorphic Laver tables could be used as platforms for public key cryptosystems.

The following key-establishment cryptosystem is a simplified version of the cryptosystem in \cite{CLCHKP} for semigroups.
\begin{crypt}
\label{t432uhbto42h24ti4e}
A semigroup $(X,\circ)$ and an element $x\in X$ are known to the public.
\begin{enumerate}
\item Alice selects an $a\in X$ and publicly sends $r=a\circ x$ to Bob.

\item Bob selects an element $b\in X$ and then publicly sends $s=x\circ b$ to Alice.

The shared key is the element $K=a\circ x\circ b$.

\item Alice computes $K$. Alice is able to compute $K$ since $K=a\circ s$.

\item Bob computes $K$. Bob is able to compute $K$ since $K=r\circ b$.
\end{enumerate}
While Alice and Bob can compute $K$, an eavesdropper will, in principle, not be able to compute $K$ even when knowing $x$ and the communications $r,s$.
\end{crypt}

The security of cryptosystem \ref{t432uhbto42h24ti4e} depends on the difficulty of the following two problems.

\begin{cryprob}

(\textbf{finding left factor})

Input: A semigroup $(X,\circ)$ and $r,x\in X$ such that $r=a\circ x$ for some $a\in X$.

Problem: Find an $a'\in X$ such that $r=a'\circ x$.
\end{cryprob}
\begin{cryprob}

(\textbf{finding right factor})

Input: A semigroup $(X,\circ)$ and $s,x\in X$ such that $s=x\circ b$ for some $b\in X$.

Problem: Find an $b'\in X$ such that $s=x\circ b'$.
\end{cryprob}
The following key-establishment cryptosystem is a reformulation of cryptosystem \ref{t432uhbto42h24ti4e} for left-distributivity.
\begin{crypt}
An LD-system $X$ and some $x\in X$ are public.

\begin{enumerate}
\item Alice chooses some $a\in X$ and $n>1$ and sends $r_{1}=t_{n+1}(a,x),r_{2}=t_{n}(a,x)$ to Bob.

\item Bob chooses some $b\in X$ and sends $s=x*b$ to Alice.

The shared key is $K=a*(x*b)$.

\item Alice computes the shared key $K$. Alice is able to compute $K$ since
\[K=a*s.\]

\item Bob computes the shared key $K$. Bob is able to compute $K$ since
\[K=r_{1}*(r_{2}*b).\]
\end{enumerate}
\end{crypt}
\begin{crypt}
\label{42thumbknvc942dgio}

Suppose that $\mathcal{X}$ is a Laver-like endomorphic algebra with only one fundamental operation $t^{\bullet}$ of arity $n+1$. Then the public information consists of some algorithms for computing elements $\mathfrak{l}_{1},\ldots ,\mathfrak{l}_{n}\in\Diamond(\mathcal{X})$ where $(t,\mathfrak{l}_{1},\ldots ,\mathfrak{l}_{n})\not\in\mathrm{Li}(\Gamma(\Diamond(\mathcal{X})))$ along with
a string $\mathbf{x}\in\{1,\ldots ,n\}^{*}$ and an element $i\in\{1,\ldots ,n\}$.
\begin{enumerate}
\item Alice selects algorithms for computing some $\mathfrak{u}_{1},\ldots ,\mathfrak{u}_{n}\in\Diamond(\mathcal{X})$ and then sets
$(t,\mathfrak{j}_{1},\ldots ,\mathfrak{j}_{n})=(t,\mathfrak{u}_{1},\ldots ,\mathfrak{u}_{n})\circ(t,\mathfrak{l}_{1},\ldots ,\mathfrak{l}_{n})$.

\item Bob selects some algorithm for computing some $\mathfrak{v}_{1},\ldots ,\mathfrak{v}_{n}\in\Diamond(\mathcal{X})$ and then sets
$(t,\mathfrak{k}_{1},\ldots ,\mathfrak{k}_{n})=(t,\mathfrak{l}_{1},\ldots ,\mathfrak{l}_{n})\circ(t,\mathfrak{v}_{1},\ldots ,\mathfrak{v}_{n})$.

Define
\[(t,\mathfrak{w}_{1},\ldots ,\mathfrak{w}_{n})=(t,\mathfrak{u}_{1},\ldots ,\mathfrak{u}_{n})\circ(t,\mathfrak{l}_{1},\ldots ,\mathfrak{l}_{n})\circ
(t,\mathfrak{v}_{1},\ldots ,\mathfrak{v}_{n}).\]

Let $K=\mathfrak{w}_{i}(\mathbf{x})$ be the common key.

\item Alice computes the common key $K$ using her private algorithm for computing $\mathfrak{u}_{1},\ldots ,\mathfrak{u}_{n}$ and by inquiring Bob about specific values of the form $\mathfrak{k}_{j}(\mathbf{y})$.

\item Bob computes the common key $K$ using his private algorithm for computing $\mathfrak{v}_{1},\ldots ,\mathfrak{v}_{n}$ and by inquiring
Alice about specific values of the form $\mathfrak{j}_{j}(\mathbf{y})$.
\end{enumerate}
\end{crypt}

The following authentication scheme (Cryptosystem \ref{4thinuo0t42h08t42}) is a modification of the authentication system found in \cite{SCPD} so that any permutative partially endomorphic algebra could be used as a platform.

\begin{defn}
Let $(X,E,F)$ be a permutative partially endomorphic algebra and suppose $\alpha\in\mathrm{crit}(\Gamma(X,E))$. Let
$(t,x_{1},\ldots ,x_{n_{t}})\in\Gamma(X,E)$ be an involutive element with $\mathrm{crit}(t,x_{1},\ldots ,x_{n_{t}})=\alpha$. Then define
the congruence $\equiv^{\alpha}$ on $(X,E,F)$ by letting $x\equiv^{\alpha}y$ if and only if $t(x_{1},\ldots ,x_{n_{t}},x)=t(x_{1},\ldots ,x_{n_{t}},y)$. The congruence $\equiv^{\alpha}$ does not depend on the choice of $(t,x_{1},\ldots ,x_{n_{t}})$. We shall write
$[x]_{\alpha}$ for the equivalence class of $x$ with respect to $\equiv^{\alpha}$.
\end{defn}
\begin{crypt}
\label{4thinuo0t42h08t42}
Let $(X,E,F)$ be an efficiently computable permutative partially endomorphic algebra. Let $\alpha\in\mathrm{crit}(\Gamma(X,E))$. Alice's public key consists of elements $[a_{1}]_{\alpha},\ldots ,[a_{m}]_{\alpha}\in X/\equiv^{\alpha}$ along with $m+n$-ary terms $s_{1},\ldots ,s_{k},t_{1},\ldots ,t_{k}$. Alice's private key consists of $[x_{1}]_{\alpha},\ldots ,[x_{n}]_{\alpha}\in X/\equiv^{\alpha}$ where 
\[s_{i}([a_{1}]_{\alpha},\ldots ,[a_{m}]_{\alpha},[x_{1}]_{\alpha},\ldots ,[x_{n}]_{\alpha})=t_{i}([a_{1}]_{\alpha},\ldots ,[a_{m}]_{\alpha},[x_{1}]_{\alpha},\ldots ,[x_{n}]_{\alpha})\]
for $1\leq i\leq k$. In the following exchange, Alice proves to Bob that she knows elements
$x_{1}',\ldots ,x_{n}'$ so that 
\[s_{i}([a_{1}]_{\alpha},\ldots ,[a_{m}]_{\alpha},[x_{1}']_{\alpha},\ldots ,[x_{n}']_{\alpha})=t_{i}([a_{1}]_{\alpha},\ldots ,[a_{m}]_{\alpha},[x_{1}']_{\alpha},\ldots ,[x_{n}']_{\alpha})\]
for $1\leq i\leq k$ without revealing to Bob the elements $x_{1}',\ldots ,x_{n}'$ themselves. The following steps are repeated as many times as Bob desires.
\begin{enumerate}
\item Alice selects some inner endomorphism $L$ where
$(g,L(v_{1}),\ldots ,L(v_{n_{g}}))\in\mathrm{Li}(\Gamma(X,E))$ if and only if $\mathrm{crit}(g,v_{1},\ldots ,v_{n_{g}})\geq\alpha$. Alice then sends the information $y_{1}=L(x_{1}),\ldots ,y_{n}=L(x_{n})$ to Bob.

\item Bob randomly selects a bit $e\in\{0,1\}$ and sends $e$ to Alice.

\item If $e=0$, then Alice sends $L$ to Bob. Bob then verifies that 
$(g,L(v_{1}),\ldots ,L(v_{n_{g}}))\in\mathrm{Li}(\Gamma(X,E))$ if and only if $\mathrm{crit}(g,v_{1},\ldots ,v_{n_{g}})\geq\alpha$ and 
\[y_{1}=L(x_{1}),\ldots ,y_{n}=L(x_{n}).\]

\item If $e=1$, then Alice sends $b_{1}=L(a_{1}),\ldots ,b_{m}=L(a_{m})$ to Bob. Bob then verifies that
\[s_{i}(b_{1},\ldots ,b_{m},y_{1},\ldots ,y_{n})=t_{i}(b_{1},\ldots ,b_{m},y_{1},\ldots ,y_{n}).\]
\end{enumerate}
\end{crypt}

One can modify Cryptosystem \ref{4thinuo0t42h08t42} in the same way that we have modified Cryptosystem \ref{t432uhbto42h24ti4e} to 
obtain Cryptosystem \ref{42thumbknvc942dgio} to use the endomorphic Laver tables as a platform.

In \cite{KT}, the authors have constructed a key exchange protocol which could use any LD-system as a platform, and this key exchange could be considered as a distributive version of the Anshel-Anshel Goldfeld Key Exchange \cite{AAG}. We shall now present a slight generalization of the key exchange protocol in \cite{KT} so that one can use any partially endomorphic algebra as a platform instead of an LD-system.

\begin{crypt}
In this key exchange, a partially endomorphic algebra $(X,E,F)$ and subalgebras $A=\langle x_{1},\ldots ,x_{m}\rangle,B=\langle y_{1},\ldots ,y_{n}\rangle$ are public.

\begin{enumerate}
\label{4h2udnbfjkdsu0}

\item Alice's private key is an element $a\in A$ and a term $t$ with
$t(x_{1},\ldots,x_{m})=a$ along with some $f\in E,\mathbf{a}=(a_{1},\ldots,a_{n_{f}})\in X^{n_{f}}$.
Bob's private key is some $g\in E,\mathbf{b}=(b_{1},\ldots,b_{n_{g}})\in B^{n_{g}}$ along with terms
$t_{1},\ldots,t_{n_{g}}$ where $t_{i}(y_{1},\ldots,y_{n})=b_{i}$ for $1\leq i\leq n_{g}$.

\item Alice sends $u_{1}=f(\mathbf{a},y_{1}),\ldots,u_{n}=f(\mathbf{a},y_{n}),p_{0}=f(\mathbf{a},a)$ to Bob.
Bob sends the elements $v_{1}=g(\mathbf{b},x_{1}),\ldots,v_{m}=g(\mathbf{b},x_{m})$ to Alice.

Let $K=f(\mathbf{a},g(\mathbf{b},a))$.

\item Alice computes $K$. Alice can compute $K$ since $K=f(\mathbf{a},g(\mathbf{b},a))$, Alice knows $f,\mathbf{a}$ and
\[g(\mathbf{b},a)=g(\mathbf{b},t(x_{1},\ldots,x_{m}))\]
\[=t(g(\mathbf{b},x_{1}),\ldots,g(\mathbf{b},x_{m}))=t(v_{1},\ldots,v_{m})\]
and Alice knows $v_{1},\ldots,v_{m}$ and the function $t$.

\item Bob computes $K$. Bob can compute $K$ as well since
\[K=f(\mathbf{a},g(\mathbf{b},a))\]
\[=g(f(\mathbf{a},b_{1}),\ldots,f(\mathbf{a},b_{n_{g}}),f(\mathbf{a},a))\]
\[=g(f(\mathbf{a},t_{1}(y_{1},\ldots,y_{n})),\ldots,f(\mathbf{a},t_{n_{f}}(y_{1},\ldots,y_{n})),f(\mathbf{a},a))\]
\[=g(t_{1}(f(\mathbf{a},y_{1}),\ldots,f(\mathbf{a},y_{n})),\ldots,t_{n_{f}}(f(\mathbf{a},y_{1}),\ldots,f(\mathbf{a},y_{n})),f(\mathbf{a},a))\]
\[=g(t_{1}(u_{1},\ldots,u_{n}),\ldots,t_{n_{f}}(u_{1},\ldots,u_{n}),p_{0}),\]
and Bob knows $g,t_{1},\ldots,t_{n_{f}},u_{1},\ldots,u_{n},p_{0}$.
\end{enumerate}
No other party can compute $K$. Therefore $K$ is the common key between Bob and Alice.
\end{crypt}

The security of Cryptosystem \ref{4h2udnbfjkdsu0} depends on the difficulty of the following problem.
\begin{cryprob}(multiple endomorphism search problem)

Input: A partially endomorphic algebra $(X,E,F)$ along with $x_{1},\ldots ,x_{n},y_{1},\ldots ,y_{n}\in X$ such that there is an inner endomorphism
$L$ with \[L(x_{1})=y_{1},\ldots ,L(x_{n})=y_{n}.\]

Problem: Find some inner endomorphism $L'$ such that
\[L'(x_{1})=y_{1},\ldots,L'(x_{n})=y_{n}.\]
\end{cryprob}

As with cryptosystems \ref{4thinuo0t42h08t42} and \ref{t432uhbto42h24ti4e}, one can easily modify cryptosystem \ref{4h2udnbfjkdsu0} to obtain an endomorphic Laver table based cryptosystems.

The following proposition may be used to factorize elements in endomorphic Laver tables.
\begin{prop}
\label{4t2huo42th4q2e0}
Suppose that $\mathcal{X}$ is an $n+1$-ary Laver-like algebra. Suppose that
$\mathfrak{j}\in\Diamond(L)$ and $\mathfrak{j}=f^{\sharp}(\mathfrak{l}_{1},\ldots,\mathfrak{l}_{n},\mathfrak{l})$. If
$\mathfrak{j}(\mathbf{a})=\#$, then there is some suffix $\mathbf{b}$ of $\mathbf{a}$ such that
$\mathfrak{l}_{i}(\mathbf{c})=\mathfrak{j}(\mathbf{c}i\mathbf{b})$ for $1\leq i\leq n$ and for all strings $\mathbf{c}$.
\end{prop}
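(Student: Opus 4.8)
The plan is to unwind $\mathfrak{j}$ to the level of terms and exploit the explicit description of $f^{\sharp}(\ell_{1},\ldots,\ell_{n},\ell)$ supplied by Proposition \ref{t4gh2ui04gthn}. Write $\mathfrak{l}_{i}=\mathbf{tree}^{\mathcal{X}}(\ell_{i})$ and $\mathfrak{l}=\mathbf{tree}^{\mathcal{X}}(\ell)$, so that $\mathfrak{j}=\mathbf{tree}^{\mathcal{X}}(\ell^{*})$ with $\ell^{*}=f^{\sharp}(\ell_{1},\ldots,\ell_{n},\ell)$, and recall that $\mathfrak{j}(\mathbf{w})=\#$ exactly when the navigation of $\ell^{*}$ along $\mathbf{w}$ runs off a leaf, i.e.\ $\mathrm{sub}(\ell^{*},\mathbf{w})=\#$ (I keep track silently of the order in which children are appended in $\mathbf{tree}^{\mathcal{X}}$, $\upharpoonright_{i}$ and $t_{x}(\cdot)$ versus the left-to-right recursion defining $\mathrm{sub}$; this is a single reversal convention and immaterial). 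First I would dispose of the degenerate case $\mathrm{rank}(f,\ell_{1},\ldots,\ell_{n})=0$: there $\ell^{*}=\ell$, and the claimed conclusion would force $\mathfrak{l}$ to already contain grafted copies of the $\mathfrak{l}_{i}$, so the proposition is really being asserted under the standing hypothesis $\mathrm{rank}(f,\ell_{1},\ldots,\ell_{n})>0$, equivalently $(f,\mathfrak{l}_{1}(\varepsilon),\ldots,\mathfrak{l}_{n}(\varepsilon))\notin\mathrm{Li}(\Gamma(\mathcal{X}))$, which I assume from now on. Note also that $\mathfrak{j}(\varepsilon)=\phi(\ell^{*})\in X$, so $\mathbf{a}\neq\varepsilon$.

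Under this hypothesis, Proposition \ref{t4gh2ui04gthn} gives a unique skeleton term $t=t(x_{1},\ldots,x_{l})$ with $\ell^{*}=t\otimes(f,\ell_{1},\ldots,\ell_{n})=t(f_{x_{1}}(\ell_{1},\ldots,\ell_{n}),\ldots,f_{x_{l}}(\ell_{1},\ldots,\ell_{n}))$; that is, $\ell^{*}$ is obtained from $t$ by substituting the compound term $f_{x_{j}}(\ell_{1},\ldots,\ell_{n})$ for each occurrence of the variable $x_{j}$. The heart of the proof is then the following combinatorial fact, proved by the standard description of the subterms of a substitution (a short induction on $|\mathbf{w}|$ from the recursive definition of $\mathrm{sub}$): $\mathrm{sub}(\ell^{*},\mathbf{w})=\#$ holds precisely when $\mathbf{w}=\mathbf{p}\,i\,\mathbf{q}$ where $\mathbf{p}$ is the position in $t$ of some variable-occurrence (a ``skeleton leaf''), $1\le i\le n$, and $\mathrm{sub}(\ell_{i},\mathbf{q})=\#$. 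Moreover, for that same skeleton-leaf position $\mathbf{p}$ one has $\mathrm{sub}(\ell^{*},\mathbf{p})=f_{x_{j}}(\ell_{1},\ldots,\ell_{n})$ for the appropriate $j$, hence $\mathrm{sub}(\ell^{*},\mathbf{p}\,i'\,\mathbf{c})=\mathrm{sub}(\ell_{i'},\mathbf{c})$ for every $1\le i'\le n$ and every path $\mathbf{c}$ — this equality holds verbatim, including the case where both sides are $\#$, because once the left side reaches $\ell_{i'}$ the two recursions coincide.

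Applying the first part with $\mathbf{w}=\mathbf{a}$ (after the reversal), the hypothesis $\mathfrak{j}(\mathbf{a})=\#$ yields a decomposition $\mathbf{a}=\mathbf{q}'\,i\,\mathbf{b}$ in which $\mathbf{b}$ is (the reversal of) the skeleton-leaf position $\mathbf{p}$ lying above $\mathbf{a}$; in particular $\mathbf{b}$ is a suffix of $\mathbf{a}$. Then for each $1\le i'\le n$ and each string $\mathbf{c}$,
\[\mathfrak{j}(\mathbf{c}\,i'\,\mathbf{b})=\phi\bigl(\mathrm{sub}(\ell^{*},\mathbf{p}\,i'\,\mathbf{c}^{R})\bigr)=\phi\bigl(\mathrm{sub}(\ell_{i'},\mathbf{c}^{R})\bigr)=\mathbf{tree}^{\mathcal{X}}(\ell_{i'})(\mathbf{c})=\mathfrak{l}_{i'}(\mathbf{c}),\]
which is exactly the assertion of the proposition. (When the skeleton $t$ is a single variable one has $\mathbf{p}=\varepsilon$, so $\mathbf{b}=\varepsilon$, recovering the base case of Algorithm \ref{y5rji8g0rwynuho}; in general the case analysis above mirrors the four cases of that algorithm.)

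I expect the main obstacle to be the combinatorial fact in the second paragraph: one must argue carefully that every ``dead end'' of $\ell^{*}$ sits strictly below a skeleton leaf and, from there, strictly below one of the arguments $\ell_{i}$ of the grafted copy $f_{x_{j}}(\ell_{1},\ldots,\ell_{n})$. The subtle point is that the skeleton-internal positions of $\ell^{*}$ are never dead ends — they are headed by function symbols, since the variable-leaves of $t$ have been filled by compound terms — so a $\#$ can only be created inside one of the plugged-in $\ell_{i}$'s; this is precisely what pins down the suffix $\mathbf{b}$ and is what makes the ``shifted copy of $\mathfrak{l}_{i'}$'' appear below every node. A secondary, purely bookkeeping, nuisance is fixing the reversal convention consistently across $\mathrm{sub}$, $\mathbf{tree}^{\mathcal{X}}$, $\upharpoonright_{i}$ and $t_{x}(\cdot)$, but this does not affect the substance of the argument.
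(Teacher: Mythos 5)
The paper states this proposition without proof, so there is no author's argument to compare against; I will assess your proposal on its own merits.

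Your argument is correct, and the route through Proposition \ref{t4gh2ui04gthn} is the natural one. Two features of your write-up are worth underscoring. First, your observation that the statement is literally false when $\mathrm{rank}(f,\ell_1,\ldots,\ell_n)=0$ is a genuine catch: in that case $\mathfrak{j}=\mathfrak{l}$, and taking $\ell$ to be a variable yields $\mathfrak{j}(\mathbf{c}\,i\,\mathbf{b})=\#$ for every $\mathbf{c}\neq\varepsilon$, while $\mathfrak{l}_i(\varepsilon)\neq\#$, so no suffix $\mathbf{b}$ can work. The proposition must therefore be read under the implicit hypothesis that the factorization is non-degenerate, which is consistent with how the paper uses it (recovering the factors of a genuinely composite $\mathfrak{j}$). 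Second, your observation about the reversal is real and not just pedantry: the definition of $\mathbf{tree}^{\mathcal{X}}$ as written uses $\mathrm{sub}(\ell,\mathbf{x})$, but the definitions of $\mathfrak{l}\upharpoonright_i$ and $t_x(\mathfrak{l}_1,\ldots,\mathfrak{l}_n)$ attach the index $i$ at the right end of the address, which matches $\mathrm{sub}$ only after reversing the string; for the $\Diamond$ formalism to be internally consistent one must read $\mathbf{tree}^{\mathcal{X}}(\ell)(\mathbf{x})$ as $\phi(\mathrm{sub}(\ell,\mathbf{x}^{R}))$. You are right that this is a single global conjugation and affects nothing substantive.

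The key combinatorial fact you state — that a dead end of $\ell^{*}=s\otimes(f,\ell_1,\ldots,\ell_n)$ can only arise strictly below a variable-occurrence of the skeleton $s$, after descending into one of the grafted arguments $\ell_i$, because the skeleton positions of $\ell^{*}$ are all headed by function symbols once the substitution has been performed — is exactly what is needed, and the short induction on the length of the address you sketch does prove it. Once this is in place, the suffix $\mathbf{b}$ is the (unique, for the given $\mathbf{a}$) address of the skeleton leaf that the navigation of $\mathbf{a}$ passes through, and the identity $\mathfrak{j}(\mathbf{c}\,i\,\mathbf{b})=\mathfrak{l}_i(\mathbf{c})$ for all $i$ and $\mathbf{c}$ follows because all $n$ children of the grafted node $f_{x_j}(\ell_1,\ldots,\ell_n)$ are present, not just the one on the path of $\mathbf{a}$.

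One could instead try to prove the proposition by induction following the recursion of Algorithm \ref{y5rji8g0rwynuho}, since its base case (step 3) delivers $\mathbf{b}=\varepsilon$ and its recursive case (step 4) expresses $\mathfrak{j}$ as $t^{\sharp}(\mathfrak{m}_1,\ldots,\mathfrak{m}_n,t_x(\mathfrak{l}_1,\ldots,\mathfrak{l}_n))$ with $\mathfrak{m}_i=t^{\sharp}(\mathfrak{l}_1,\ldots,\mathfrak{l}_n,\mathfrak{l}\upharpoonright_i)$. However, the left factors change from $\mathfrak{l}_i$ to $\mathfrak{m}_i$ in the recursion, so the induction hypothesis would need to be applied twice and the two resulting suffixes spliced together; your direct argument via the skeleton decomposition avoids this bookkeeping entirely and is cleaner.
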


Even though Proposition \ref{4t2huo42th4q2e0} allows one to factorize in endomorphic Laver tables, Proposition \ref{4t2huo42th4q2e0} does not easily break endomorphic Laver table based cryptosystems since the enemy will only know a handful of specific values of elements $\mathfrak{j}\in\Diamond(\mathcal{X})$. 

We remark that the braid based cryptosystems are the closest cryptosystems to functional endomorphic Laver table based cryptosystems, but braid based cryptography is now considered to be insecure. We believe that if there is an insecurity with functional endomorphic Laver table based cryptosystems, then such an insecurity will come from a heuristic attack rather than a mathematically proven break simply due to the relentless complexity of functional endomorphic Laver tables.

The functional endomorphic Laver table based cryptosystems are highly interactive since Alice and Bob will probably have to exchange information several times before establishing a common key. This interactivity may be considered to be an efficiency weakness since such a key exchange will require many interactions among possibly very distant parties. An adversary could also use the time the information is inquired in order to attack the cryptosystems. However, one could modify the functional endomorphic based cryptosystems so that Alice and Bob make up their secret keys as they interact with each other. Since Alice and Bob could use their interactions to further develop their secret functions, Alice and Bob can use this information to increase the security and efficiency of these cryptosystems.

% ----------------------------------------------------------------
\section{Conclusion and further research}

% ----------------------------------------------------------------
\subsection{Philosophy on generalizations of Laver tables}
Before we have started investigating generalizations of Laver tables, most of the interesting results
about the classical Laver tables have been proven in the early to mid 1990's. In fact, no research on Laver tables has been published from around 1997 to around 2012 when Matthew Smedberg published \cite{MS} and when Patrick Dehornoy and Victoria Lebed calculated groups of cocycles in Laver tables in \cite{DL},\cite{VLC}. However, the calculations of the cocycle groups of the classical Laver tables follow from the fact that the classical Laver tables are retractive than from the more non-trivial properties of the classical Laver tables. On the other hand, the theory of the generalizations of Laver tables provides many more opportunities for investigation than we have simply by using the classical Laver tables. For instance, the multigenic Laver tables $(A^{\leq 2^{n}})^{+}$ have much more combinatorial complexity to study than the classical Laver tables have.

Although little has been published recently on Laver tables, the Laver tables and their generalizations have very noteworthy properties and they are unlike any other mathematical structure for both mathematical and philosophical reasons. On one hand, these algebraic structures are usually either finite or recursive and they are structures that mathematicians would naturally study for purely algebraic reasons. One could even argue that self-distributivity is fundamental to our understanding of algebraic structures and that the bulk of our understanding and interest in self-distributivity arises from knots and braids and from Laver-like LD-systems. On the other hand, some of these structures arise from rank-into-rank cardinals, and we have proven results about the generalizations of Laver tables which have no known proof that does not involve large cardinal axioms. These algebras are also the only objects which occur in modern set theory that have been studied through extensive computer calculations and which produce intricate fractal images. The classical Laver tables also produce functions which if total are known grow faster than any primitive recursive function and which possibly eventually dominate every function that ZFC proves recursive. Finally, the endomorphic Laver tables may be applicable to public key cryptography (though it is too early to judge the security of these cryptosystems) and give the first practical application of large cardinals.

It should not be too much of a surprise that large cardinals axioms can be used to prove elegant combinatorial theorems
that cannot be proven otherwise. If $P$ is a large cardinal axiom, then
$\mathrm{Con}(ZFC+P)$ can be thought of as a rather complicated finite combinatorial statement that cannot be proven in
$ZFC$ by Godel's second incompleteness theorem but which can be proven from stronger large cardinal axioms. Furthermore, the large cardinal axioms are natural statements that by their definitions encompass ideas in logic, measure theory, Ramsey theory, combinatorics, category theory, and other areas of mathematics. Therefore since the large cardinal axioms are elegant statements that already prove finitary combinatorial statements independent of $ZFC$ (such as $\mathrm{Con}(ZFC)$), it is plausible that large cardinal axioms would prove elegant finitary combinatorial statements (such as the fact that the finite self-distributive algebras do not satisfy any more identities than self-distributive algebras in general do).

Harvey Friedman has done much work on formulating combinatorial statements about
finite or countable objects that require large cardinal to prove including \cite{HF97},\cite{HF06},\cite{HF98},\cite{HF11}. 
These statements by Harvey Friedman currently do not have any relation with self-distributive algebras and these statements
about finite or countable objects use various levels of the large cardinal hierarchy. There is currently some debate as to whether these statements by Harvey Friedman about finite or countable objects should truly be considered natural statements of independent mathematical interest outside set theory. Harvey Friedman stated ``We believe that Concrete Mathematical Incompleteness -
where large cardinals are shown to be sufficient, and
weaker large cardinals are shown to be insufficient - will
ultimately become commonplace" in \cite{HF11}[p. 62] (here Friedman is referring to results about finite structures proven using large cardinals). It seems like in the near future mathematicians will prove using large cardinal hypotheses many statements about finite or recursive generalizations of Laver tables that cannot be proven in ZFC (though it is currently unclear how one could show that such statements about generalizations of Laver tables are independent of ZFC). We also believe that in the future mathematicians would prove results using the algebras of elementary embeddings in various areas of mathematics (such as algebraic topology) which cannot be established in ZFC. The mathematical community should therefore embrace the algebras of elementary embeddings as an irreplacable technique that goes beyond ZFC that can be used to prove results about finite or countable objects.

Set theorists generally believe that the large cardinal axioms below the Kunen inconsistency are consistent, and the author mostly  agrees with this position. Nevertheless, if there were an inconsistency in the large cardinal hierarchy, then
it seems like such an inconsistency is most likely to be first appear in the very large cardinals (beyond the cardinals in which one has canonical inner models and possibly beyond the huge cardinals). If someone doubts the existence or the consistency of large cardinals, then the author recommends for any such doubter to investigate these large cardinals and to investigate whether such a doubt of the consistency of large cardinals is warranted. We recommend algebras of elementary embeddings as a place to search for possible inconsistencies in the upper levels of the large cardinal hierarchy. The nightmare Laver tables seem like a good avenue to test the consistency of very large cardinals since if a nightmare Laver table appears in an algebra of elementary embeddings, then the large cardinal axiom that allows one to construct that nightmare Laver table in an algebra of elementary embeddings is inconsistent.

However, if after time the study of algebras of elementary embeddings continue to prove many results about finite or countable combinatorial objects that have no proof or much more difficult proofs otherwise, then these results suggest that these large cardinals are consistent and that they actually exist. A proof that such combinatorial statements have large cardinal consistency strength will further support the consistency and even the existence of these large cardinals.

% ----------------------------------------------------------------
\subsection{Open problems}
In this paper, we have only scratched the surface of the investigations on generalizations of Laver tables, so there are many questions not answered in this paper in which the author simply has not had the time to investigate. In this subsection, we shall list a collection of problems which we believe are tractible or important to the theory of generalizations of Laver tables.

\begin{prob}
$A_{48}$ is currently the largest classical Laver table ever computed in the sense that in $A_{48}$, $x*y$ can be computed
quickly from pre-computed data. $A_{48}$ was originally computed in the 1990's by Randall Dougherty \cite{RD95}. We believe that it is feasible to compute $A_{96}$ with today's technology. Once one is able to compute $A_{96}$, one should be able to compute entries in the final matrices $FM_{n}^{-}(x,y)$ for $n\leq 96$ and $x,y\in\{1,\ldots ,2^{n}\}$.

The pre-computed data for $A_{96}$ shall consist of a $2^{32}\times 96$ matrix whose entries are in
$\{1,\ldots ,2^{96}\}$. However, the $2^{32}\times 96$ matrix should be highly compressible. Therefore, after one computes
and compresses the $2^{32}\times 96$ matrix, we expect to for $A_{96}$ to be easily computable by making minor calculations and looking up information from the pre-computed data.
\end{prob}
\begin{prob}$\textbf{Hugeness calibration project:}$ 
We propose a sequence of new large cardinal axioms which shall refine the $n$-hugeness hierarchy. We shall say a cardinal $\kappa$ is $n$-tremendous if there is some $\gamma>\kappa$ and some $f\in\mathbf{EE}_{\gamma}$ with $\mathrm{crit}(f)=\kappa$ and where $\langle f\rangle\simeq A_{n}$. The goal of the Hugeness calibration project is to fit the $n$-tremendous cardinals and similar cardinals among the large cardinal hierarchy and to compare their sizes and consistency strengths.
\end{prob}
\begin{prob}
Assuming large cardinal hypotheses, which finite Laver-like LD-systems $X$ embed into some
$\mathcal{E}_{\lambda}^{V[G]}/\equiv^{\gamma}$ for some cardinal $\lambda$ and ordinal $\gamma<\lambda$ and forcing extension
$V[G]$? Which finite Laver-like LD-systems embed into some $\mathbf{EE}_{\gamma}^{V[G]}$?
\end{prob}
\begin{prob}
Our techniques for computing new multigenic Laver tables and new finite Laver-like LD-systems are still quite limited.
Suppose that $M$ is a finite multigenic Laver table. Then is there a much more efficient algorithm than Algorithm \ref{t42h890gt4bh} that enumerates all multigenic Laver tables $M'$ that cover $M$? We also need to refine our techniques for finding one-point extensions of finite permutative LD-systems since one-point extensions give us another method of producing new finite permutative LD-systems from old ones.
We would like to also obtain recursive constructions of new infinite families of finite Laver-like LD-systems with intricate combinatorial structure like the classical Laver tables.
\end{prob}
\begin{prob}
Suppose that $M$ is a multigenic Laver table. Then does there necessarily exist a multigenic Laver table $M'$ that covers $M$.
\end{prob}
\begin{prob}
Are the permutative LD-systems precisely the locally Laver-like LD-systems? We conjecture that there exists permutative LD-systems
which are not locally Laver-like.
\end{prob}
\begin{prob}
Very little is known about the free endomorphic algebras. Assume that for all $n$, there exists an $n$-huge cardinal. Do the free (partially, twistedly) endomorphic algebras embed into inverse limits of (partially, twistedly) endomorphic Laver tables? Are there any other natural examples of free (partially, twistedly) endomorphic algebras? Is the word problem for these free (partially, twistedly) endomorphic algebras solvable?

In the same way that the braid groups produce free left-distributive algebras, one can generalize the braid groups
to groups which we shall call the hyperbraid groups which produce endomorphic algebras.
Let $G$ be the group with presentation given by the generators $(\tau_{n})_{n\geq 1}$ and relations
$\tau_{n}\tau_{n+2}\tau_{n+2}\tau_{n}=\tau_{n+2}\tau_{n}\tau_{n+1}\tau_{n+2}$ and
$\tau_{i}\tau_{j}=\tau_{j}\tau_{i}$ whenever $|i-j|>2$. Let $sh:G\rightarrow G$ be the homomorphism where
$\mathrm{sh}(\tau_{i})=\tau_{i+1}$. Define a ternary operation $t$ on $G$ by letting
$t(x,y,z)=x\cdot\mathrm{sh}(y)\cdot\mathrm{sh}^{2}(z)\tau_{1}\cdot\mathrm{sh}(x\cdot\mathrm{sh}(y))^{-1}$. Then $t$ is a ternary self-distributive operation. This self-distributive algebra $(G,t)$ can be generalized in a straightforward way to other signatures of 
endomorphic algebras. We conjecture that each element in $(G,t)$ generates a free endomorphic algebra.
It is unclear how much of the theory of braid groups extends to the theory of hyperbraid groups.
\end{prob}

% ----------------------------------------------------------------
\subsection{Unexplained phenomena in the final matrices}
The Sierpinski triangle structure is the most prominent feature of the final matrices. However, we have not been able to prove that the set of all pairs $(x,y)$ where $FM_{n}^{-}(x,y)>0$ is a subset of the Sierpinski triangle. The following conjecture states that the set of all pairs $(x,y)$ where $FM_{n}^{-}(x,y)>0$ is truly a subset of the Sierpinski triangle. This conjecture is the most important conjecture regarding multigenic Laver tables since all of the combinatorial complexity of the multigenic Laver tables of the form $(A^{\leq 2^{n}})^{+}$ is contained in the corresponding final matrices.

\begin{defn}
Let $\mathrm{ST}_{0}=(1,1)$. Let $(x,y)\in \mathrm{ST}_{n+1}$ if and only if $((x)_{2^{n}},(y)_{2^{n}})\in \mathrm{ST}_{n}$ and either $x\leq 2^{n}$ or $y>2^{n}$.
\end{defn}
\begin{conj}
Suppose $n\geq 0$ and $\ell\leq x*y$ in $A_{n}$. Then
\begin{enumerate}
\item if $M_{n}(x,y,\ell)>0$, then $(x*M_{n}(x,y,\ell),\ell)\in \mathrm{ST}_{n}$, and

\item if $M_{n}(x,y,\ell)<0$, then $(-M_{n}(x,y,\ell),\ell)\in \mathrm{ST}_{n}$.
\end{enumerate}
\end{conj}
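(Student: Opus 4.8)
The plan is to prove this conjecture by a descending-ascending double induction on $A_n$ that runs in parallel with the recursive characterization of $M_n$ given by the six-clause proposition earlier in the excerpt. The key observation is that the set $\mathrm{ST}_n$ has a self-similar recursive structure that matches the recursive structure of $M_n$: membership in $\mathrm{ST}_{n+1}$ is controlled by membership in $\mathrm{ST}_n$ of the ``reduced'' coordinates $((x)_{2^n},(y)_{2^n})$ together with the quadrant condition ``$x\le 2^n$ or $y>2^n$''. So the natural approach is to first prove the statement for fixed $n$ by induction following the recursion for $M_n(x,y,\ell)$ (descending on $x$, ascending on $y$, and using the induction on $\ell$ implicit in clause 6), and then — if that does not close — to add an outer induction on $n$ exploiting the homomorphisms $\pi:A_{n+1}\to A_n$ and $L:A_n\to A_{n+1}$ together with the periodicity propositions for $o_n$ and $*_n$.

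First I would unwind the base cases: clauses 1--3 of the $M_n$ proposition handle $M_n(2^n,y,\ell)=\ell$, $M_n(x,1,\ell)=-\ell$ for $\ell\le x<2^n$, and $M_n(x,1,x+1)=1$. For these I must check $(2^n\cdot 2^n\bmod 2^n,\ell)$ — wait, $2^n*_n M_n = 2^n*_n\ell = \ell$ so I need $(\ell,\ell)\in\mathrm{ST}_n$, which holds since the diagonal is always in the Sierpinski triangle (a trivial sub-induction on $n$). For $M_n(x,1,\ell)=-\ell$ I need $(\ell,\ell)\in\mathrm{ST}_n$ again, and for $M_n(x,1,x+1)=1$ with $x<2^n$ I need $(x*_n 1,x+1)=(x+1,x+1)\in\mathrm{ST}_n$, diagonal again. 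The inductive cases are clauses 4--6: in clause 4 we have $M_n(x,y+1,\ell)=-M_n(x*y,x+1,\ell)$ when $0<M_n(x*y,x+1,\ell)\le x$; here I apply the inductive hypothesis to $x*_n y$ (which, when $x<2^n$, satisfies $x*_n y > x$, so it is ``below'' $x$ in the descending induction... actually $x*_n y \geq x$, so I must be careful and instead induct on a well-founded order such as the pair $(x*_n y$ is not smaller$)$ — the correct induction parameter is the one used to define $M_n$ itself, namely the double induction descending on $x$, ascending on $y$). The main work is translating the arithmetic relation $M_n(x,y+1,\ell)=-M_n(x*y,x+1,\ell)$ into the geometric relation between $\mathrm{ST}_n$-membership of $(x*(x*y*M),\ell)$-type points, using that $x+1\le 2^n$ and $\gcd$-monotonicity facts (Proposition \ref{t482u9onjkrw}) to control which quadrant we land in.

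The hard part will be clause 6: $M_n(x,y+1,\ell)=M_n(x,y,-M_n(x*y,x+1,\ell))$ when $M_n(x*y,x+1,\ell)<0$. Here one composes two applications of $M_n$, and I must show that the composite lands in $\mathrm{ST}_n$ given that each factor does. This requires understanding how $\mathrm{ST}_n$ interacts with the ``substitution'' $\ell\mapsto -M_n(x*y,x+1,\ell)$, i.e. that $\mathrm{ST}_n$ is closed under a certain composition operation reflecting the associativity-type identity $(x\circ_n y)*_n z = x*_n(y*_n z)$ from Proposition \ref{t482u9onjkrw}. I expect the cleanest route is to prove a strengthened statement simultaneously about the whole function $M_n$ — something like: the ``letter-position'' structure of $\mathbf{x}*_n\mathbf{y}$ respects the Sierpinski self-similarity at every intermediate stage — using the $\mathrm{Red}$/$\mathrm{Eval}$ forward-sequence machinery of Algorithm \ref{r2g466ui8b2we} and the decomposition $\mathbf{x}*_n\mathbf{y}a = \mathbf{x}*_n\mathbf{y}*_n(\mathbf{x}a)_{O_n(|\mathbf{x}|*_n|\mathbf{y}|)}$, which already exhibits the mod-$2^{o_n}$ periodicity that drives the fractal. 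Combining this with the side-by-side comparison structure between $A_n$ and $FM_n^-$ visible in the images, I would then derive the conjecture as the special cases $y=O_n(x)$ (for $FM_n^-$) and $y=2^n$ (for $FM_n^+$), noting $FM_n^+(x,y)>0\iff FM_n^-(x,y)>0$ from the earlier proposition. The principal obstacle throughout is that no closed form for $x*_n y$ is known and the function $f$ grows uncontrollably, so the argument must be purely structural/recursive and cannot appeal to any explicit description of where the positive entries sit.
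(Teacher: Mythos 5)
The statement you are asked to prove is labeled \texttt{conj} in the paper and is explicitly an \emph{open} conjecture: the authors state immediately before it that they ``have not been able to prove that the set of all pairs $(x,y)$ where $FM_{n}^{-}(x,y)>0$ is a subset of the Sierpinski triangle,'' and they flag it as ``the most important conjecture regarding multigenic Laver tables.'' There is no proof in the paper to compare your attempt against.

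Your write-up, read on its own terms, is an outline of an approach rather than a proof, and you yourself mark the load-bearing steps as unresolved. You handle the base cases cleanly (the observation that the diagonal $\{(\ell,\ell)\}$ lies in $\mathrm{ST}_n$ is correct and follows from a straightforward induction on the definition of $\mathrm{ST}_n$), but the inductive clauses 4--6 of the $M_n$ recursion are exactly where you hedge: you note that $x*_n y\geq x$ so the naive descending induction on $x$ does not apply to the first argument of $M_n(x*y,x+1,\ell)$, you flag clause 6 as ``the hard part'' because it composes two instances of $M_n$, and you end by saying you ``expect the cleanest route'' is a stronger simultaneous statement whose precise form you never pin down. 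In particular, the crucial claim that $\mathrm{ST}_n$ is closed under the substitution $\ell\mapsto -M_n(x*_n y,\,x+1,\,\ell)$ in a way compatible with the quadrant condition ``$x\leq 2^n$ or $y>2^n$'' is stated as something that must be shown, not something you show. Since the authors report this conjecture as beyond their reach, and your sketch identifies but does not close the same obstruction (no structural control over where $M_n$ is positive), the honest assessment is that the proposal does not constitute a proof; it is a plausible research plan that foregrounds the correct difficulties without overcoming them.
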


Let $A=\{0, 1, 2, 3, 4, 5, 7, 8, 9, 15, 16, 17, 31, 32, 47, 48\}$. Then $A$ is the set of all
natural numbers $k$ with $k\leq 48$ and such that $|FM_{k}^{-}(2^{x},2^{y})|$ is a power of $2$ whenever
$0\leq x\leq k$ and $0\leq y\leq k.$

\begin{conj}
Suppose that $k=2^{n}$. Then $|FM_{k}^{-}(2^{x},2^{y})|$ is a power of $2$ whenever
$0\leq x\leq k$ and $0\leq y\leq k.$
\end{conj}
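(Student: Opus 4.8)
The plan is to strip off the ``easy'' parts of the matrix by an elementary prefix argument and then attack the remaining range by a descending induction on $n$ built around the block--embedding of Lemma \ref{h42ur9}. Throughout, recall that $FM_{n}^{-}(x,y)=M_{n}(x,O_{n}(x),y)$ records the $y$-th letter of $\mathbf{a}*_{n}\mathbf{b}$ in $(A^{\leq 2^{n}})^{+}$, where $|\mathbf{a}|=x$, $|\mathbf{b}|=O_{n}(x)$, the letter being $\mathbf{a}[-FM_{n}^{-}(x,y)]$ when the value is negative and $\mathbf{b}[FM_{n}^{-}(x,y)]$ when it is positive.

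First I would dispose of the boundary cases. If $a=n$ then $\mathbf{a}\in\mathrm{Li}((A^{\leq 2^{n}})^{+})$, so $\mathbf{a}*_{n}\mathbf{b}=\mathbf{b}$ and $FM_{n}^{-}(2^{n},2^{b})=2^{b}$. If $a<n$ then $\mathbf{a}\notin\mathrm{Li}$, so by Proposition \ref{t4uh4aP} the string $\mathbf{a}$ is a proper prefix of $\mathbf{a}*_{n}\mathbf{b}$, whence $FM_{n}^{-}(2^{a},2^{b})=-2^{b}$ for all $b\leq a$; and by the same proposition $\mathbf{b}$ and $\mathbf{a}*_{n}\mathbf{b}$ have the same last letter, so $FM_{n}^{-}(2^{a},2^{n})=O_{n}(2^{a})=2^{o_{n}(2^{a})}$. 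In every one of these cases the absolute value is a power of $2$, so only the range $a<b<n$ (both $2^{a},2^{b}<2^{n}$) remains.

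For $a<b<n$ I would induct on $n$ using the block structure. From $o_{a}(2^{a})=a$ together with the periodicity inequality $o_{n}(x)\leq o_{n+1}(x)$ one gets $o_{n}(2^{a})\geq a$, so both $|\mathbf{a}|=2^{a}$ and $|\mathbf{b}|=2^{o_{n}(2^{a})}$ are divisible by $2^{s}$ for each $s\leq a$. Choosing such an $s$ that also satisfies $n-s\leq\gcd(2^{n},2s)$, Lemma \ref{h42ur9} gives $\mathbf{a}*_{n}\mathbf{b}=\iota(\tilde{\mathbf{a}}*'\tilde{\mathbf{b}})$, where $*'$ is the operation of $(B^{\leq 2^{n-s}})^{+}$ with $B=A^{2^{s}}$ (built from $A_{n-s}$), $\tilde{\mathbf{a}}$ has block-length $2^{a-s}$ and $\tilde{\mathbf{b}}$ has block-length $2^{o_{n}(2^{a})-s}$. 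Since $b>a\geq s$, the position $2^{b}$ is the last letter of the $2^{b-s}$-th block, and chasing $\iota$ yields $FM_{n}^{-}(2^{a},2^{b})=\pm 2^{s}\cdot M_{n-s}(2^{a-s},2^{o_{n}(2^{a})-s},2^{b-s})$. Hence it is natural to prove, by induction on $m$, the slightly stronger assertion that $|M_{m}(2^{a'},2^{c'},2^{b'})|$ is always a power of $2$ for every admissible triple (the stated conjecture being the case $c'=o_{m}(2^{a'})$); the boundary cases $a'=m$, $b'\leq a'$, $2^{c'}=1$ are handled exactly as above from the defining recursion of $M_{m}$ and Proposition \ref{t4uh4aP}, and the inductive step is the block reduction just described.

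The hard core is that no admissible block size need exist: $n-s\leq\gcd(2^{n},2s)$ forces $s$ to be fairly large (roughly $s\gtrsim n/3$), while we also need $s\leq a$, so for small rows --- and fatally for row $1$, where $a=0$ and no $s\geq 1$ is available --- the block reduction collapses entirely. This is precisely where the behavior of the first row of the Laver table, which governs $o_{n}(1)$ and is not known to be primitive recursive, enters, so I expect row $1$ (and small $a$ in general) to be the genuine obstacle. For that regime I would analyse the descending/ascending recursion for $M_{n}(1,\cdot,\cdot)$ directly --- using in particular $1*_{n}(k+1)=(1*_{n}k)*_{n}2$, so that the first row is a single $(-)*_{n}2$-orbit in $A_{n}$ --- in conjunction with the preceding Sierpinski-triangle conjecture. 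One caveat: that conjecture only places $(2^{a}*_{n}FM_{n}^{-}(2^{a},2^{b}),2^{b})$ inside $\mathrm{ST}_{n}$, and a power-of-$2$ column of $\mathrm{ST}_{n}$ contains \emph{every} index up to $2^{b}$, so a real strengthening --- presumably to the effect that along a power-of-$2$ row the Sierpinski structure meets only the corner diagonal --- will be needed to conclude. On the easy side, the $\gcd$ corollaries already show that the $2$-adic valuation of $|FM_{n}^{-}(2^{a},2^{b})|$ is at most $b$, so the task that remains is to exclude odd prime factors rather than to bound the size.
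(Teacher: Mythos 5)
The statement you were given is a conjecture in the paper — it appears in the subsection on unexplained phenomena in the final matrices and is left open, with no proof attempted. So there is no argument in the paper to compare your attempt to, and your write-up, which you frame explicitly as a plan rather than a proof, does not close it either.

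What you have is a correct reconnaissance with a self-diagnosed gap. The boundary computations are right: $a=n$ gives $FM_{n}^{-}(2^{n},2^{b})=2^{b}$; $b\leq a<n$ gives $-2^{b}$ from the prefix property in Proposition \ref{t4uh4aP}; and $b=n$ gives $O_{n}(2^{a})=2^{o_{n}(2^{a})}$. The block reduction via Lemma \ref{h42ur9} is correctly set up whenever an admissible block exponent $s\leq a$ with $n-s\leq\gcd(2^{n},2s)$ exists, and your observation that this forces $s\gtrsim n/3$ — so the reduction is silent for small $a$ and collapses entirely at $a=0$ — is the crux. That gap is genuine and is not patched by the devices you suggest: the Sierpinski-triangle conjecture and $SE_{n}$ are themselves unproved, the paper in fact shows that under a rank-into-rank hypothesis $SE_{n}$ fails for some $n$ (so no argument depending on it can be uniform in $n$), and, as you note yourself, membership in $\mathrm{ST}_{n}$ is much weaker than the power-of-two claim. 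Your use of the $\gcd$ corollary to bound the $2$-adic part by $2^{b}$ and reduce the task to excluding odd factors is correct but does not reach the first row, which is governed by $o_{n}(1)$ and hence by the conjecturally non-primitive-recursive function $f$. In short: the approach is sound where it applies, and the place where it fails to apply is exactly the open content of the conjecture.
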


Let $k=16$. The $(x,y)$-th entry in the following diagram is $\log_{2}(FM_{k}^{-}(2^{x},2^{y}))$ whenever $FM_{k}^{-}(2^{x},2^{y})$ is positive, and $\log_{2}(-FM_{k}^{-}(2^{x},2^{y}))$ whenever $FM_{k}^{-}(2^{x},2^{y})$ is negative. If $FM_{k}(2^{x},2^{y})=1$, then the $(x,y)$-th entry is $+0$ and if $FM_{k}(2^{x},2^{y})=-1$, then the $(x,y)$-th entry is $-0$.

$\begin{array}{r|rrrrrrrrrrrrrrrrr}
\small
*&0&1&2&3&4&5&6&7&8&9&10&11&12&13&14&15&16\\

\hline

0&-0&+0&-0&+0&-0&+0&-0&+0&-0&+0&-0&+0&-0&+0&-0&+0&4\\
1&-0&-1&-0&-1&-0&-1&-0&-1&-0&-1&-0&-1&-0&-1&-0&-1&4\\
2&-0&-1&-2&2&-0&-1&-2&2&-0&-1&-2&2&-0&-1&-2&2&4\\
3&-0&-1&-2&-3&-0&-1&-2&-3&-0&-1&-2&-3&-0&-1&-2&-3&4\\
4&-0&-1&-2&-3&-4&4&-4&4&-0&-1&-2&-3&-4&4&-4&4&7\\
5&-0&-1&-2&-3&-4&-5&-4&-5&-0&-1&-2&-3&-4&-5&-4&-5&8\\
6&-0&-1&-2&-3&-4&-5&-6&6&-0&-1&-2&-3&-4&-5&-6&6&8\\
7&-0&-1&-2&-3&-4&-5&-6&-7&-0&-1&-2&-3&-4&-5&-6&-7&8\\
8&-0&-1&-2&-3&-4&-5&-6&-7&-8&8&-8&8&-8&8&-8&8&11\\
9&-0&-1&-2&-3&-4&-5&-6&-7&-8&-9&-8&-9&-8&-9&-8&-9&12\\
10&-0&-1&-2&-3&-4&-5&-6&-7&-8&-9&-10&10&-8&-9&-10&10&12\\
11&-0&-1&-2&-3&-4&-5&-6&-7&-8&-9&-10&-11&-8&-9&-10&-11&12\\
12&-0&-1&-2&-3&-4&-5&-6&-7&-8&-9&-10&-11&-12&12&-12&12&14\\
13&-0&-1&-2&-3&-4&-5&-6&-7&-8&-9&-10&-11&-12&-13&-12&-13&14\\
14&-0&-1&-2&-3&-4&-5&-6&-7&-8&-9&-10&-11&-12&-13&-14&14&15\\
15&-0&-1&-2&-3&-4&-5&-6&-7&-8&-9&-10&-11&-12&-13&-14&-15&15\\
16&+0&1&2&3&4&5&6&7&8&9&10&11&12&13&14&15&16
\end{array}$

The following conjecture has been experimentally verified for $n\leq 5.$

\begin{conj}$(SE_{n})$

Suppose that $i,j\in\{0,\ldots,2^{n}-1\}$. Then 
\begin{enumerate}
\item $FM_{2^{n}}^{-}(2^{i},2^{j})>0$ if and only if $i$ is even, $j$ is odd, and $(i+1,j)\in \mathrm{ST}_{n}$.

\item If $FM_{2^{n}}^{-}(2^{i},2^{j})>0$, then $FM_{2^{n}}^{-}(2^{i},2^{j})=2^{i}.$

\item Suppose that $FM_{2^{n}}^{-}(2^{i},2^{j})<0$. Then $FM_{2^{n}}^{-}(2^{i},2^{j})=-2^{k}$ for some $k$.
Furthermore,
\begin{enumerate}
\item if $j$ is odd, then $0<k\leq i$, and
\[FM_{2^{n}}^{-}(2^{i'},2^{j})=-2^{k}\]
for $k\leq i'\leq i$, and
\[FM_{2^{n}}^{-}(2^{k-1},2^{j})=2^{k-1}.\]

\item if $j$ is even and $FM_{2^{n}}^{-}(2^{i},2^{j+1})>0$, then
\[FM_{2^{n}}^{-}(2^{i},2^{j})=-FM_{2^{n}}^{-}(2^{i},2^{j+1}).\]

\item if $j$ is even and $FM_{2^{n}}^{-}(2^{i},2^{j+1})<0$, then
\[2\cdot FM_{2^{n}}^{-}(2^{i},2^{j})=FM_{2^{n}}^{-}(2^{i},2^{j+1}).\]
\end{enumerate}
\end{enumerate}
\end{conj}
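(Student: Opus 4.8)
The plan is to prove $SE_n$ by a simultaneous induction on $n$, using the structural results on multigenic Laver tables and the final matrix established in the excerpt, especially Theorem \ref{t4hu824t09}, Proposition \ref{4j4hu29gegouetfdfghjd}, its Corollary, and the homomorphisms $\pi:A_{n+1}\to A_n$ and $L:A_n\to A_{n+1}$ together with the induced maps on $(A^{\leq 2^{n}})^{+}$. The key observation to exploit is that all entries in question are evaluated at powers of two, and powers of two have the simplest possible behavior under $*_n$: by the periodicity propositions, $o_n(2^k)$ is determined recursively and $2^k *_n y$ depends only on $(y)_{2^{o_n(2^k)}}$, so the forward-sequence decompositions $1^{2^i}\simeq(1,1+O_n(1),\ldots,1+O_n(2^i-1))$ used in the final subsection become especially tractable. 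First I would set up the bookkeeping: express $FM^-_{2^n}(2^i,2^j)$ via the chain-algebra representation of $1^{2^i}*_{2^n}1^{2^j}$ and track, letter by letter, which position of which argument produces the $j$-th output letter; this is exactly the data recorded by $M_{2^n}$ and the Sierpinski set $\mathrm{ST}_n$ appears because the relation "$(i+1,j)\in\mathrm{ST}_n$" is precisely the condition under which the $2^j$-th letter of a forward-sequence product stays on the left factor at level $i$ while crossing to the right factor at all lower levels.

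Second, I would prove part (1) and part (2) together. The "only if" direction and the value $2^i$ in part (2) should follow from Proposition \ref{4j4hu29gegouetfdfghjd} and its Corollary applied at argument lengths that are powers of two: when $\gcd(2^{2^n},2^i)>\gcd(2^{2^n},2^j-1)$ one lands on the right factor, and when $\gcd(2^{2^n},2^i)\le\gcd(2^{2^n},2^j-1)$ one lands on $a_x$, i.e.\ $FM^-=-2^i$; the parity conditions "$i$ even, $j$ odd" translate these gcd inequalities into the recursive definition of $\mathrm{ST}_n$ after one unfolds $O_{2^n}(2^i)$. For the "if" direction I would use the cover/restriction machinery ($M\upharpoonright\alpha$, the homomorphisms $\phi$ and $L$ between multigenic Laver tables at consecutive levels) to reduce the claim at level $n$ to the claim at level $n-1$: the image of a positive entry under the canonical projection $(A^{\leq 2^{2^n}})^{+}\to(A^{\leq 2^{2^{n-1}}})^{+}$ is again a positive entry, and the self-similarity of $\mathrm{ST}_n$ ($\mathrm{ST}_{n+1}$ built from two shifted copies of $\mathrm{ST}_n$) matches the doubling $2^{n}\mapsto 2^{n+1}$ in the index set exactly.

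Third, for part (3) I would separate the two cases on the parity of $j$. When $j$ is odd, the descending stability statement "$FM^-(2^{i'},2^j)=-2^k$ for $k\le i'\le i$" is a direct consequence of Theorem \ref{t4hu824t09}(1) (the statement $FM^+_n(y,\ell)=-j\Rightarrow FM^+_n(y',\ell)=-j$ for $j\le y'\le y$, transported to $FM^-$ via the conversion proposition relating $FM^+_n$ and $FM^-_n$), and the jump to a positive entry $FM^-(2^{k-1},2^j)=2^{k-1}$ is Theorem \ref{t4hu824t09}(2) combined with Lemma \ref{4t24tj0i}. When $j$ is even, the relations linking $FM^-(2^i,2^j)$ to $FM^-(2^i,2^{j+1})$ (equality up to sign, or a factor of two) should come from the recursion in the defining proposition for $M_n$ — specifically the clauses $M_n(x,y+1,\ell)=-M_n(x*y,x+1,\ell)$ and $M_n(x,y+1,\ell)=M_n(x,y,-M_n(x*y,x+1,\ell))$ — evaluated at $x=2^i$, $\ell=2^j$, using that $2^i*_{2^n}2^j$ has a controlled length and that $O_{2^n}(2^i)$ is itself a power of two; the factor of two in case (c) reflects that a letter which was on $a_x$ at the column $j+1$ migrates one block deeper, doubling $|FM^-|$, when we pass to column $j$. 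The main obstacle I expect is the "if" direction of part (1) and the case distinction (b) versus (c) in part (3)(iii): controlling exactly when the crossing letter stays positive versus flips sign requires a precise induction hypothesis about the joint behavior of $FM^-(2^i,2^j)$ and $FM^-(2^i,2^{j+1})$ across all $i$, and getting that hypothesis strong enough to be self-propagating — while still being implied by what $SE_{n-1}$ gives — is the delicate point; I would likely need to strengthen $SE_n$ to an auxiliary statement that also records $FM^-$ at indices $2^i\pm$ small even shifts, prove the strengthened version by the same induction, and then derive $SE_n$ as a corollary.
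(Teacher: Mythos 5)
This is not a theorem in the paper but an explicitly labeled conjecture, and the paper gives no proof of it; it only notes that $SE_n$ has been experimentally verified for $n\leq 5$. More to the point, immediately after stating $SE_n$, the paper proves that the conjecture fails: Proposition~\ref{t4hui34bndjfw2} shows (from the existence of a rank-into-rank cardinal) that for each pair $x,y\geq 1$ with $y>x+1$ there is an $N$ such that $FM_{n}^{-}(x,y)=-1$ for all $n\geq N$, and from this the paper immediately deduces that there is an $n$ for which $SE_n$ is false. The paper goes on to remark that this first failing $n$ is expected to be extremely large, which is consistent with the experimental evidence at small $n$ but contradicts the universal statement $\forall n\, SE_n$.

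Your proposal attempts an induction on $n$ that, if completed, would prove $SE_n$ for all $n$. This cannot succeed, since $SE_n$ is false for some $n$ under a widely believed large cardinal hypothesis (and, should one reject that hypothesis, the conjecture is still open, not established). The specific mechanism of failure is visible in your own plan: the "if" direction of part (1) and the case split in part (3)(b)/(c) depend on an inductive hypothesis that you acknowledge must be strengthened to be self-propagating, but Proposition~\ref{t4hui34bndjfw2} shows precisely that the sign/positivity pattern you want to propagate is eventually destroyed -- for fixed $(i,j)$ with $2^j > 2^i+1$, the entry $FM_{2^n}^{-}(2^i,2^j)$ eventually becomes $-1=-2^0$, which violates both the positivity predicted by part (1) when $i$ is even, $j$ odd, and $(i+1,j)\in\mathrm{ST}_n$, and the bound $0<k\leq i$ in part (3)(a) when it should instead be negative. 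No choice of strengthened auxiliary statement can rescue an induction on a false $\forall n$ claim. If you want to contribute here, the realistic targets are either a ZFC proof of some weakened, eventually-true variant, or a sharpening of the paper's argument to locate or bound the first $n$ where $SE_n$ fails.
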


\begin{prop}
Suppose that there exists a rank-into-rank cardinal. Then for all $x,y\geq 1$ with $y>x+1$ there exists an $N$ such that
$FM_{n}^{-}(x,y)=-1$ for all $n\geq N$.

\label{t4hui34bndjfw2}
\end{prop}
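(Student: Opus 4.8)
The plan is to use the hypothesis that there is a rank-into-rank cardinal to translate the statement about final matrices into a statement about elementary embeddings, and then to exploit the behaviour of critical points under iteration. Recall from Theorem \ref{42t0iieghjo} (or the slightly weaker $n$-huge hypothesis) that under a rank-into-rank cardinal the thread $(1)_{n\in\omega}$ freely generates a sub-LD-system of $\varprojlim_{n}A_{n}$; equivalently, $1*_{n}2^{m}<2^{n}$ for all $m$ once $n$ is large enough, so $o_{n}(1)\to\infty$. The key observation is that $FM_{n}^{-}(x,y)$, via the description $a_{-1}\dots a_{-x}*_{n}a_{1}\dots a_{y}=a_{FM_{n}^{-}(x,1)}\dots a_{FM_{n}^{-}(x,x*y)}$ and via the factorization $a_{-1}\dots a_{-x}\circ_{n}(a_{1}\dots a_{y-1})$, records which letter sits in the last position of a product of short words in $(A^{\leq 2^{n}})^{+}$. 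By Proposition \ref{4j4hu29gegouetfdfghjd}, when $y>1$ the second-to-last letter of $a_{1}\dots a_{x}*^{M}b_{1}\dots b_{y}$ is $b_{y-1}$ precisely when $\mathrm{crit}(a_{1}\dots a_{x})>\mathrm{crit}(b_{1}\dots b_{y-1})$, and is $a_{x}$ otherwise, so $FM_{n}^{-}(x,y)=-1$ (meaning the last letter is $a_{1}$, i.e.\ $\mathbf{x}$'s first letter) should correspond to a critical-point comparison in the classical Laver table $A_{n}$.

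The main steps, in order, would be: (1) Rewrite $FM_{n}^{-}(x,y)=-1$ in terms of $A_{n}$: the entry equals $-1$ exactly when, evaluating $0^{x}*_{n}(0^{y}$-worth of structure$)$ inside $(A^{\leq 2^{n}})^{+}$, the reduction process ``bottoms out'' to the single generator at the front of $\mathbf{x}$, which by the inductive definition of $*^{M}$ and Proposition \ref{G0Wuoeh2g2EHO2uo} / Corollary \ref{10W44eh2g2EHO2uo} happens iff a certain word built from $\mathbf{x}$ and the prefix of $\mathbf{y}$ of length $y-1$ lies in $\mathrm{Li}(M)$. (2) Use Theorem \ref{t4hu824t09}: since $FM_{n}^{+}(x,\ell)=-j$ forces $FM_{n}^{+}(j-1,\ell)>0$ and then $FM_{n}^{+}(1,\cdot)=-1$, the value $-1$ propagates, and the question reduces to showing that for fixed $x$ and $y>x+1$ the relevant word is a left-identity for all large $n$. (3) Translate this into critical points: by Proposition \ref{Rhu4t0HI42h2} and Proposition \ref{t482u9onjkrw}, critical-point comparisons $\mathrm{crit}(j_{[x]})$ vs.\ $\mathrm{crit}(j_{[y]})$ in $\mathrm{Iter}(j)/\equiv^{\mathrm{crit}_{n}(j)}$ are governed by $\gcd(x,2^{n})$ vs.\ $\gcd(y,2^{n})$. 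Since $y>x+1$, for $n$ large enough $\gcd(x,2^{n})$ and $\gcd(y-1,2^{n})$ stabilize, and combined with $o_{n}(1)\to\infty$ (from the rank-into-rank hypothesis) one gets that the relevant word has critical point reaching the top of $\mathrm{crit}[A_{n}]$, i.e.\ lands in $\mathrm{Li}$. (4) Conclude that $FM_{n}^{-}(x,y)=-1$ for all $n\geq N$ where $N$ depends on $x,y$.

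The hard part will be step (1)–(2): carefully matching the combinatorial definition of $FM_{n}^{-}$ (which is about last letters of products of explicit words $a_{-1}\dots a_{-x}*a_{1}\dots a_{y}$) with a clean statement purely about $A_{n}$, so that the stabilization argument can be run. Theorem \ref{t4hu824t09} reduces ``$FM_{n}^{-}(x,y)=-j$ for some $j$'' to the $j=1$ case, but I still need to argue that the sign is negative and that $j=1$ specifically, not some other $j\le x$; this is where the hypothesis $y>x+1$ must be used in an essential way, presumably via Proposition \ref{4j4hu29gegouetfdfghjd} applied iteratively together with the fact that under a rank-into-rank cardinal the orbit of $1$ behaves like a genuine critical sequence of an embedding. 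Once the correspondence with critical points of $\mathrm{Iter}(j)/\equiv^{\mathrm{crit}_{n}(j)}$ is pinned down, the eventual constancy is essentially automatic from $o_{n}(1)\to\infty$ and the periodicity propositions for $o_{n}$.
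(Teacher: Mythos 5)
Your high-level reading of the situation is right: the rank-into-rank hypothesis enters only through the stabilization of periods ($o_n(\cdot)\to\infty$), after which the question becomes a purely combinatorial statement about products of short words in $(A^{\le 2^n})^+$ for ``extremely large'' $n$. But the proposal stops exactly where the actual work is. You acknowledge in your last paragraph that you ``still need to argue that the sign is negative and that $j=1$ specifically,'' and nothing in steps (1)--(4) actually closes that gap: Theorem \ref{t4hu824t09} only propagates a value of $-j$ once you already have one, and the critical-point translations of Propositions \ref{Rhu4t0HI42h2}, \ref{t482u9onjkrw} and the ``second-to-last letter'' statement of Proposition \ref{4j4hu29gegouetfdfghjd} compare critical points but do not pin down a letter at a fixed position $y$ strictly less than the end.

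The paper's argument is much more direct and bypasses all of that machinery. Taking $x>1$ (the case $x=1$ is analogous), one expands $a_{1}\ldots a_{x}*b_{1}b_{2}$ using only the defining recursion of $*$ together with the prefix monotonicity of Proposition \ref{t4uh4aP}: since $a_{1}a_{2}$ is a prefix of $a_{1}\ldots a_{x}b_{2}$, one has
\[a_{1}\ldots a_{x}*b_{1}b_{2}=a_{1}\ldots a_{x}b_{1}*a_{1}\ldots a_{x}b_{2}\succeq a_{1}\ldots a_{x}b_{1}*a_{1}a_{2}=a_{1}\ldots a_{x}b_{1}a_{1}*a_{1}\ldots a_{x}b_{1}a_{2},\]
and iterating (each step valid for $n$ large enough that nothing hits $\mathrm{Li}$) appends one more $a_{1}$, so $a_{1}\ldots a_{x}*b_{1}b_{2}$ has $a_{1}\ldots a_{x}b_{1}a_{1}a_{1}\ldots a_{1}$ as a prefix of any desired fixed length. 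Positions $1,\ldots,x$ read $a_{1},\ldots,a_{x}$, position $x+1$ reads $b_{1}$, and every position beyond reads $a_{1}$; that is precisely where the hypothesis $y>x+1$ bites. Hence the $y$-th letter is $a_{1}$ and $FM_{n}^{-}(x,y)=-1$. This is an explicit unfolding of $*$, not a critical-point comparison, and it identifies the exact letter rather than merely a sign. I'd recommend abandoning the detour through $\mathrm{Iter}(j)/\equiv^{\mathrm{crit}_{n}(j)}$ and $\gcd$'s and instead carrying out this elementary expansion.
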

\begin{proof}
We shall prove this result in the case that $x>1$. The case where $x=1$ is similar.
For extremely large $n$, we have
\[a_{1}\ldots a_{x}*b_{1}b_{2}=a_{1}\ldots a_{x}b_{1}*a_{1}\ldots a_{x}b_{2}\]
\[\succeq a_{1}\ldots a_{x}b_{1}*a_{1}a_{2}=a_{1}\ldots a_{x}b_{1}a_{1}*a_{1}\ldots a_{x}b_{1}a_{2}\]
\[\succeq a_{1}\ldots a_{x}b_{1}a_{1}*a_{1}a_{2}=a_{1}\ldots a_{x}b_{1}a_{1}a_{1}*a_{1}\ldots a_{x}b_{1}a_{1}a_{2}\]
\[\succeq a_{1}\ldots a_{x}b_{1}a_{1}a_{1}*a_{1}a_{2}\]
\[\succeq\ldots\succeq a_{1}\ldots a_{x}b_{1}a_{1}\ldots a_{1}.\]
Thus,
\[(a_{1}\ldots a_{x}*b_{1}b_{2})[y]=a_{1}\ldots a_{x}b_{1}a_{1}\ldots a_{1}[y]=a_{1},\]
so $FM_{n}^{-}(x,y)=-1$.
\end{proof}

\begin{prop}
Suppose that there exists a rank-into-rank cardinal. Then there exists a natural number $n$ where conjecture $SE_{n}$ is false.
\end{prop}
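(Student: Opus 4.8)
The plan is to exhibit, for all sufficiently large $n$, an explicit pair of exponents $(i,j)$ for which conjecture $SE_n$ predicts $FM_{2^n}^-(2^i,2^j)>0$ while Proposition \ref{t4hui34bndjfw2} forces $FM_{2^n}^-(2^i,2^j)=-1<0$. Concretely I would take $i=0$ and $j=3$: then $i$ is even, $j$ is odd, $2^j=8>2=2^i+1$, and $i,j\in\{0,\dots,2^n-1\}$ as soon as $n\geq 2$.

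First I would apply Proposition \ref{t4hui34bndjfw2} with $x=1$ and $y=8$ (legitimate, since $8>x+1=2$, and the argument there handles $x=1$). This yields a natural number $N$ such that $FM_m^-(1,8)=-1$ for every $m\geq N$; in particular $FM_{2^n}^-(1,8)=-1$ for every $n$ with $2^n\geq N$.

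Next I would check by induction on $n$ that $(1,3)\in\mathrm{ST}_n$ for all $n\geq 2$, directly from the recursive definition of $\mathrm{ST}_{n+1}$. The base case $(1,3)\in\mathrm{ST}_2$ follows from $(1,1)\in\mathrm{ST}_1$, which in turn follows from $(1,1)\in\mathrm{ST}_0$. For the inductive step, when $n\geq 2$ we have $(1)_{2^n}=1$ and $(3)_{2^n}=3$, so $(1,3)\in\mathrm{ST}_{n+1}$ reduces to the conjunction of $(1,3)\in\mathrm{ST}_n$ and the clause $1\leq 2^n$, the latter being automatic. The point is that the first coordinate being $1$ keeps the pair safely inside the Sierpinski gasket at every resolution.

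Finally I would fix any $n\geq 2$ with $2^n\geq N$. Clause (1) of $SE_n$, applied to $i=0$, $j=3$ (where $i$ is even, $j$ is odd, and $(i+1,j)=(1,3)\in\mathrm{ST}_n$), implies $FM_{2^n}^-(2^0,2^3)=FM_{2^n}^-(1,8)>0$, contradicting $FM_{2^n}^-(1,8)=-1$. Hence $SE_n$ is false for every such $n$ (in fact for all sufficiently large $n$), which proves the statement. I expect the only delicate points to be bookkeeping between the two indexing conventions — the level $n$ in $SE_n$ versus the Laver-table index $2^n$ appearing in $FM^-$ — and the routine membership verification for $\mathrm{ST}_n$; there is no substantive obstacle here, since Proposition \ref{t4hui34bndjfw2} already carries out all of the large-cardinal content.
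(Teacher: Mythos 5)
Your argument is correct and follows the same route as the paper: the paper's proof is the one sentence ``Proposition \ref{t4hui34bndjfw2} contradicts the statement $\forall n\in\omega,SE_{n}$,'' and you have simply instantiated this with the concrete witness $(i,j)=(0,3)$, together with the routine verification that $(1,3)\in\mathrm{ST}_n$ for $n\geq 2$ and the bookkeeping between the $n$ in $SE_n$ and the table index $2^n$. All the steps check out — in particular $8>1+1$ makes Proposition \ref{t4hui34bndjfw2} applicable, and once $2^n\geq N$ and $n\geq 2$ clause (1) of $SE_n$ forces $FM_{2^n}^-(1,8)>0$, giving the contradiction.
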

\begin{proof}
Proposition \ref{t4hui34bndjfw2} contradicts the statement $\forall n\in\omega,SE_{n}$.
\end{proof}
We expect that the first natural number $n$ where $SE_{n}$ fails to be an extremely large natural number.

\begin{conj}
$(EP_{n})$. Suppose that $r$ is a natural number and $x<2^{2^{r}}-1$. Then
$FM_{n}^{-}(x,2^{y})=FM_{n}^{-}(x,2^{z})$ whenever $y=z\mod 2^{r}$ and $\max(y,z)<j\cdot 2^{r}<n$ for some $j$.
\end{conj}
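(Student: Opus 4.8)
The plan is to attack $EP_{n}$ by the same descending-on-$x$, ascending-on-the-right double induction that the recursion for $M_{n}$ already encodes, and to locate precisely where the hypothesis $x<2^{2^{r}}-1$ enters. Writing $FM_{n}^{-}(x,2^{y})=M_{n}(x,O_{n}(x),2^{y})$ and unfolding the six-clause characterization of $M_{n}$, the letter occupying position $2^{y}$ of $\mathbf{x}*_{n}\mathbf{y}$ (with $|\mathbf{x}|=x$, $|\mathbf{y}|=O_{n}(x)$) is produced from data of the form $M_{n}(x*_{n}w,x+1,\cdot)$ for $w<O_{n}(x)$ together with $M_{n}(x,w,\cdot)$ for $w<O_{n}(x)$, the third coordinate always tracing back to $2^{y}$. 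The first thing I would prove is a \emph{stability lemma}: for fixed $x$ and fixed position $\ell=2^{y}$ in the stated range, $M_{n}(x,v,2^{y})$ depends only on $v\bmod 2^{o_{n}(x)}$, which is immediate from the recursion together with the classical periodicity $x*_{n}v=x*_{n}v'$ iff $v\equiv v'\pmod{2^{o_{n}(x)}}$ and the monotonicity properties of $o_{n}$. This collapses the problem to understanding how $M_{n}(x,O_{n}(x),2^{y})$ varies with the exponent $y$.

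The second ingredient is a comparison of the two positions. Since $y\equiv z\pmod{2^{r}}$ and both lie below some multiple $m=j\cdot 2^{r}$ of $2^{r}$ with $m<n$, I would pass through the canonical homomorphism $(A^{\leq 2^{n}})^{+}\to(A^{\leq 2^{m}})^{+}$ (the multigenic extension of $A_{n}\to A_{m}$) and compare $2^{y}$ and $2^{z}$ inside the length-$2^{m}$ string $\mathbf{x}*_{m}\mathbf{y}$, using Theorems \ref{4tij0poqlke} and \ref{t4hu824t09} to control which source letter --- negative, i.e.\ coming from $\mathbf{x}$, versus positive, coming from $\mathbf{y}$ --- occupies each position and to propagate this through the recursion. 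The bound $x<2^{2^{r}}-1$ should force the part of the recursion tree that determines positions $2^{y}$ and $2^{z}$ to involve at most $2^{r}$ distinct ``scales'' of $A_{m}$, so that $y\equiv z\pmod{2^{r}}$ yields matching source letters at matching depths. A descending induction on $x$ then closes the loop: the auxiliary rows $x*_{n}w$ appearing in the recursion are strictly larger than $x$ and hence covered, the residue bookkeeping is carried by the stability lemma, and the case in which $x$ is too large to recurse into is excluded because $x<2^{2^{r}}-1$ together with $j\cdot 2^{r}<n$ bounds how far up the recursion can climb.

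The hard part --- and the reason $EP_{n}$ remains a conjecture --- is exactly the ``at most $2^{r}$ scales'' claim: I do not see how to bound the effective width of the recursion uniformly in $n$ without a sharper understanding of the periodicity of the classical Laver tables themselves. The final matrices $FM_{n}^{\pm}$ inherit every known intractability of $A_{n}$: there is no closed form for $x*_{n}y$, the associated function $f$ is not primitive recursive, and even $\lim_{n}o_{n}(1)=\infty$ is open and equivalent to $(1)_{n}$ freely generating $\varprojlim A_{n}$. So a clean proof of $EP_{n}$ would most likely have to be extracted either from a new stabilization theorem for $o_{n}$ on small rows, or in tandem with the Sierpinski-type conjectures $SE_{n}$ and the conjectures of Example \ref{t42ihu9tgy2u}. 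As with Theorems \ref{42t0iieghjo} and \ref{fy1DWY2v4g8uah}, it is even conceivable that the shortest route first proves the corresponding periodicity inside $\mathbf{M}(([j_{a}])_{a\in A},\mathcal{E}_{\lambda}/\equiv^{\gamma})$ under a rank-into-rank or $n$-huge hypothesis and then transfers it down, although it is not clear that the uniformity in $n$ needed here is available from that approach either.
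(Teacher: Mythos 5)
The statement $(EP_{n})$ you were handed is labeled as a conjecture in the paper, and the paper offers no proof of it; you were right not to claim one. But your strategic discussion points in exactly the wrong direction, and in a way the paper itself already settles: immediately after stating $EP_{n}$ the paper proves that, assuming a rank-into-rank cardinal, there is some $n$ for which $EP_{n}$ is \emph{false}. The mechanism is Proposition~\ref{t4hui34bndjfw2}, which under the rank-into-rank hypothesis shows that for every fixed $x,y\geq 1$ with $y>x+1$ one has $FM_{n}^{-}(x,y)=-1$ for all sufficiently large $n$. Concretely, take $r=1$ and $x=1<2^{2^{1}}-1$. The column $\ell=2$ has $FM_{n}^{-}(1,2)=1$ for every $n$, because the length-two string $b*_{n}a_{1}=ba_{1}$ is an initial segment of $b*_{n}a_{1}\dots a_{O_{n}(1)}$, so position~$2$ is always the first letter of $\mathbf{y}$. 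On the other hand, by Proposition~\ref{t4hui34bndjfw2} the column $\ell=8$ satisfies $FM_{n}^{-}(1,8)=-1$ for all large $n$. Since $1\equiv 3\pmod{2}$ and $\max(1,3)<2\cdot 2<n$ once $n>4$, the instance $y=1$, $z=3$ of $EP_{n}$ would force $FM_{n}^{-}(1,2^{1})=FM_{n}^{-}(1,2^{3})$, which fails for large $n$.

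This means the closing speculation in your proposal --- that a route through $\mathbf{M}(([j_{a}])_{a\in A},\mathcal{E}_{\lambda}/\equiv^{\gamma})$ under a rank-into-rank or $n$-huge hypothesis might establish $EP_{n}$ and ``transfer it down,'' in analogy with Theorems~\ref{42t0iieghjo} and~\ref{fy1DWY2v4g8uah} --- is exactly backwards: large cardinal hypotheses do not support $\forall n\,EP_{n}$, they refute it. The same observation kills the uniform plan you sketch (the ``stability lemma'' and the ``at most $2^{r}$ scales'' bound): there can be no descending-on-$x$ induction proving $EP_{n}$ with constants independent of $n$, because the statement is false for some $n$. The reason you could not ``bound the effective width of the recursion uniformly in $n$'' is that no such bound exists. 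What remains genuinely open is the behavior at small $n$ --- the paper expects the first counterexample to $SE_{n}$, and by the same reasoning to $EP_{n}$, to be an extremely large natural number --- and for that restricted, non-uniform question your idea of passing through the homomorphisms $(A^{\leq 2^{n}})^{+}\to(A^{\leq 2^{m}})^{+}$ and controlling the source of each letter via Theorems~\ref{4tij0poqlke} and~\ref{t4hu824t09} is reasonable, but it must be framed as an analysis for particular $n$ rather than as a general theorem.
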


\begin{prop}
If there exists a rank-into-rank cardinal, then there is a natural number $n$ where the conjecture $EP_{n}$ is false.
\end{prop}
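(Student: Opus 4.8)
Looking at the structure, the statement to prove is: \emph{If there exists a rank-into-rank cardinal, then there is a natural number $n$ where the conjecture $EP_{n}$ is false.}

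\textbf{The overall strategy.} The conjecture $EP_{n}$ asserts a strong periodicity in the columns of the final matrix at argument $2^{y}$: namely $FM_{n}^{-}(x,2^{y})=FM_{n}^{-}(x,2^{z})$ whenever $y\equiv z\bmod 2^{r}$ (with $x<2^{2^{r}}-1$ and both $y,z$ below some multiple of $2^{r}$ that is below $n$). The plan is to follow exactly the template already used in the excerpt to refute $SE_{n}$: use Proposition \ref{t4hui34bndjfw2}, which says that under a rank-into-rank cardinal, for each fixed $x\geq 1$ and each $y>x+1$ there is an $N$ such that $FM_{n}^{-}(x,y)=-1$ for all $n\geq N$. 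This ``eventual collapse to $-1$'' is flatly incompatible with any nontrivial periodicity, so it should directly contradict $\forall n\in\omega,\,EP_{n}$.

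\textbf{Key steps, in order.} First I would fix a small concrete $x$ (say $x=2$, so that $x<2^{2^{r}}-1$ holds already for $r=1$, and in fact for all $r\geq 1$) and pick $r=1$, so the claimed period is $2^{r}=2$. Conjecture $EP_{n}$ with these parameters asserts $FM_{n}^{-}(2,2^{y})=FM_{n}^{-}(2,2^{z})$ whenever $y\equiv z\bmod 2$ and $\max(y,z)<2j<n$ for some $j$. Second, I would apply Proposition \ref{t4hui34bndjfw2} to conclude that there is an $N$ with $FM_{n}^{-}(2,2^{y})=-1$ for all $n\geq N$, \emph{for each} value $2^{y}$ with $2^{y}>3$, i.e.\ for each $y\geq 2$; by taking a maximum over finitely many small $y$ one gets a single $N$ that works simultaneously for the handful of columns one needs. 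Third — and this is where one must be slightly careful — I would exhibit two exponents $y,z$ of the \emph{same parity} for which $FM_{n}^{-}(2,2^{y})$ and $FM_{n}^{-}(2,2^{z})$ differ for small $n$ but would be forced equal by $EP_{n}$ once $n$ is large; since both eventually equal $-1$ by Step 2, the actual contradiction must instead be located at a value of $n$ where one of the two columns has \emph{not yet} collapsed to $-1$ while the other has. Concretely: choose $y$ with $FM_{n}^{-}(2,2^{y})\ne -1$ for some specific $n_0$ in the valid range (such $y,n_0$ exist because the column is not identically $-1$ for small $n$ — e.g.\ one reads off from the $FM_{4}^{-}$ and small tables that $FM_{2^{n}}^{-}(2,\cdot)$ takes values other than $-1$), then pick $z=y+2$ (same parity) and a large $n_1$ where $FM_{n_1}^{-}(2,2^{z})=-1$ by Proposition \ref{t4hui34bndjfw2} but $FM_{n_1}^{-}(2,2^{y})$ has not yet collapsed; pick $n$ large enough that $\max(y,z)<2j<n$. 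At that $n$, $EP_{n}$ demands $FM_{n}^{-}(2,2^{y})=FM_{n}^{-}(2,2^{z})=-1$, contradicting the choice of $n$.

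\textbf{The main obstacle.} The delicate point is the same one the authors flagged for $SE_{n}$: Proposition \ref{t4hui34bndjfw2} only gives \emph{eventual} collapse, with no control on the threshold $N$, and the ``valid range'' constraint $\max(y,z)<2j<n$ in $EP_{n}$ ties the allowed $(y,z)$ to $n$. So one must argue that there genuinely is a window of $n$ in which one column of the final matrix $FM_{n}^{-}(2,\cdot)$ at a power-of-two argument equals $-1$ and another (same-parity) one does not, while simultaneously $n$ is large enough to make the periodicity clause of $EP_{n}$ bite. I expect this to require a short combinatorial argument — either a nonconstructive one showing the two columns cannot both collapse ``at the same rate'' (because the thresholds $N$ depend on $y$ and are unbounded), or a more explicit computation in a small $(A^{\leq 2^{n}})^{+}$ exhibiting $FM_{n}^{-}(2,2^{y})\ne FM_{n}^{-}(2,2^{z})$ for suitable same-parity $y,z$ and suitable $n$ in range. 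As with the $SE_{n}$ result, I would remark at the end that the least $n$ witnessing the failure of $EP_{n}$ is expected to be astronomically large, so the refutation is purely existential and gives no effective bound.
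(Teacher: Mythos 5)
Your high-level plan is the same as the paper's: the paper's proof is literally the one-liner ``Proposition \ref{t4hui34bndjfw2} contradicts the statement $\forall n\in\omega,\,EP_{n}$,'' and you correctly identify that proposition as the engine of the argument. However, your attempt to spell out \emph{which} two columns of the final matrix produce the contradiction has a genuine gap, and the patch you sketch does not close it.

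The problem is in your Step 3. You pick two exponents $y$ and $z=y+2$ with $2^{y},2^{z}$ both strictly greater than $x+1$, so Proposition \ref{t4hui34bndjfw2} applies to \emph{both} columns and tells you that each eventually collapses to $-1$. You then want to locate an $n$ where one has collapsed while the other has not, and where $n$ is also large enough to make the constraint $\max(y,z)<j\cdot2^{r}<n$ kick in. But Proposition \ref{t4hui34bndjfw2} is purely qualitative: it gives a threshold $N=N(x,\ell)$ with no control whatsoever on how $N$ varies with $\ell$. The two thresholds might well coincide, or be ordered so that the column you chose to ``not have collapsed yet'' is in fact the one that collapses first. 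Nothing in the proposition rules this out, and so no contradiction is forced. Your nonconstructive alternative (``the thresholds depend on $y$ and are unbounded'') also does not help, because unboundedness of $N(x,\ell)$ in $\ell$ is compatible with both columns agreeing (at the value $-1$) at every $n$ in the $EP_{n}$-relevant range.

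The correct repair is to take one of the two indices \emph{outside} the scope of Proposition \ref{t4hui34bndjfw2}, so that it never collapses. Choose $z$ with $2^{z}\le x+1$; then the hypothesis $\ell>x+1$ of Proposition \ref{t4hui34bndjfw2} fails and the value $FM_{n}^{-}(x,2^{z})$ can be computed directly and is a fixed quantity different from $-1$ for all large $n$. Concretely, take $x=1$, $r=1$, $z=1$, $y=3$. Since $a_{-1}\not\in\mathrm{Li}(M)$, the word $a_{-1}\ast a_{1}=a_{-1}a_{1}$ is a proper prefix of $a_{-1}\ast a_{1}\cdots a_{m}$ for every $m\le O_{n}(1)$ by Proposition \ref{t4uh4aP}, so the second letter of $a_{-1}\ast a_{1}\cdots a_{O_{n}(1)}$ is $a_{1}$ and $FM_{n}^{-}(1,2)=1$ for every $n\ge1$. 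On the other hand, $8=2^{3}>x+1=2$, so Proposition \ref{t4hui34bndjfw2} gives an $N$ with $FM_{n}^{-}(1,8)=-1$ for all $n\ge N$. Finally, $3\equiv1\pmod 2$ and $\max(3,1)=3<2\cdot2<n$ for $n\ge5$, so $EP_{n}$ with $x=1$, $r=1$, $j=2$ forces $FM_{n}^{-}(1,2)=FM_{n}^{-}(1,8)$ for all $n\ge5$. Taking $n\ge\max(5,N)$ yields $1=FM_{n}^{-}(1,2)=FM_{n}^{-}(1,8)=-1$, a contradiction. (Your choice $x=2$ works too, provided you pair $z=1$, where $FM_{n}^{-}(2,2)=-2$ for $n\ge2$, against an odd $y\ge3$; the essential point is that one of the two columns must have index $\le x+1$.)
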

\begin{proof}
Proposition \ref{t4hui34bndjfw2} contradicts the statement $\forall n\in\omega,SE_{n}$.
\end{proof}

% ----------------------------------------------------------------
\section{Appendix: other computed data}
In this section, we shall show some computed data on multigenic Laver tables.
\footnote{At the author's website, one can find much more computed data on classical, multigenic, and endomorphic Laver tables along with the code and algorithms that allow one to compute such data}.

\begin{comp}
The $n$-th entry in the following 48 element list is the sample probability (with sample size 100000) that
$FM_{n}^{-}(x,y)>0$ given that $x,y\in\{1,\ldots ,2^{n}\}$ and $((x+1)_{2^{n}},y)\in \mathrm{ST}_{n}$. The list starts with the
1st element instead of the 0th element. For readability, we shall group the 48 elements in 6 sequences of length 8.

\hfill\break
(1.0000, 1.00000, 1.00000, 0.96394, 0.93958, 0.87997, 0.82056, 0.74388; 

0.68904, 0.62298, 0.56396, 0.50157, 0.45788, 0.40679, 0.36146, 0.31913;
	
0.28988, 0.26055, 0.23090, 0.20321, 0.18422, 0.16128, 0.14627, 0.12733; 
	
0.11542, 0.10156, 0.09137, 0.08083, 0.07270, 0.06393, 0.05771, 0.05028;
	
0.04613, 0.03936, 0.03665, 0.03203, 0.02917, 0.02506, 0.02264, 0.02026;

0.01854, 0.01604, 0.01355, 0.01242, 0.01097, 0.01023, 0.00843, 0.00790)
\end{comp}
We informally estimate for the fractal dimension of $\{(x,y):FM_{n}^{-}(x,y)>0\}\subseteq\{1,\ldots ,2^{n}\}^{2}$ to be 1.42. 

\begin{comp}
The following function $F$ is the function where $F(x)$ is the number of multigenic Laver tables
with alphabet $0,1$ of cardinality $x$. The domain of the function $F$ is the set of all natural numbers $x$ from
$1$ to $128$ such that there exists a multigenic Laver table with alphabet $0,1$ of cardinality $x$.
Extensions of the domain of this function beyond 128 have not been computed.

\hfill\break
F=\{ ( 2, 1 ), ( 4, 2 ), ( 6, 1 ), ( 8, 4 ), ( 12, 2 ), ( 16, 8 ), ( 18, 2 ), ( 24, 4 ), ( 30, 1 ), ( 32, 16 ), ( 36, 2 ), ( 48, 12 ), ( 54, 4 ), ( 64, 36 ), ( 72, 6 ), ( 90, 2 ), ( 96, 38 ), ( 108, 2 ), ( 120, 4 ), ( 128, 102 ) \}

\hfill\break
The function $F^{+}$ is the function where $\mathrm{Dom}(F^{+})=\mathrm{Dom}(F)$ and $F^{+}(x)$ is the number of multigenic Laver tables
with alphabet $\{0,1\}$ of cardinality at most $x$.

\hfill\break
$F^{+}$={ ( 2, 1 ), ( 4, 3 ), ( 6, 4 ), ( 8, 8 ), ( 12, 10 ), ( 16, 18 ), ( 18, 20 ), ( 24, 24 ), ( 30, 25 ), ( 32, 41 ),
  ( 36, 43 ), ( 48, 55 ), ( 54, 59 ), ( 64, 95 ), ( 72, 101 ), ( 90, 103 ), ( 96, 141 ), ( 108, 143 ), ( 120, 147 ),
  ( 128, 249 ) }

\hfill\break
24 out of these 249 multigenic Laver tables with alphabet $\{0,1\}$ are nightmare Laver tables.
There are 2 nightmare Laver tables over 0,1 of cardinality 64. There are 6 nightmare Laver tables over 0,1 of cardinality 96.
There are 16 nightmare Laver tables over 0,1 of cardinality 128.
\end{comp}
\begin{comp}
The function $G$ is the function where $G(x)$ is the number of multigenic Laver tables with a three element alphabet $0,1,2$
of cardinality $x$. The domain of the function $G$ is the set of all natural numbers $x\leq 108$ such that there exists a
multigenic Laver table on a three element alphabet of cardinality $x$. Extensions of the domain of this function past
108 have not been computed.

\hfill\break
G=\{ ( 3, 1 ), ( 6, 3 ), ( 9, 3 ), ( 12, 10 ), ( 18, 12 ), ( 24, 30 ), ( 27, 6 ), ( 36, 45 ), ( 45, 3 ), ( 48, 93 ), ( 54, 27 ), ( 72, 153 ), ( 81, 12 ), ( 84, 3 ), ( 90, 15 ), ( 96, 294 ), ( 108, 123 ) \}
\end{comp}
\begin{comp}
$$\{|M((i,j),A_{5})\mid :i,j\in\{1,\ldots ,32\}\}$$
$=\{$2, 4, 6, 8, 12, 16, 18, 24, 30, 32, 36, 48, 54, 64, 72, 90, 96, 108, 120, 144, 162, 192, 216, 240, 252, 270, 288,
  336, 360, 432, 450, 486, 504, 510, 576, 720, 756, 810, 864, 936, 1080, 1152, 1350, 1440, 1890, 1920, 2016, 2250,
  2400, 2430, 2520, 2688, 2970, 3360, 3528, 4050, 5712, 5850, 7650, 9576, 12096, 12600, 15120, 17010, 17550, 19710,
  20250, 36450, 41184, 45864, 65790, 86112, 131070, 157950, 313470, 8486910, 9003150, 9143550, 11372670, 11530350,
  22883310, 539017470, 547438590, 4295032830, 8589934590$\}$
\end{comp}
\begin{comp}
\[\{\frac{M((i,j),A_{6})}{M(((i)_{32},(j)_{32}),A_{5})}\mid i,j\in\{1,\ldots ,64\}\}=\]

\{1, 2, 3, 4, 5, 6, 7, 8, 9, 10, 11, 12, 13, 14, 15, 16, 17, 19, 20, 21, 23, 24, 25, 26, 27, 30, 31, 37, 39, 41, 44,
  45, 49, 51, 65, 73, 76, 79, 91, 92, 100, 108, 121, 129, 153, 172, 201, 225, 257, 289, 309, 324, 441, 513, 521, 529,
  577, 585, 617, 665, 785, 828, 969, 1161, 1289, 1473, 2761, 4161, 8193, 8321, 16385, 32769, 65537, 262145, 262153,
  266241, 270401, 295425, 296009, 300105, 300681, 532481, 594433, 595017, 1189449, 16777217, 16777345, 17039361,
  17571841, 1073741825, 1073774593, 2147483649, 4294967297.\}
\end{comp}
\begin{comp}
The following set is the set of all multigenic Laver tables over the alphabet $\{0,1\}$ of cardinality at most 16.

$\{$ $\{$ 0, 1 $\}$, $\{$ 0, 1, 00, 01 $\}$, $\{$ 1, 0, 10, 11 $\}$, $\{$ 0, 1, 00, 01, 10, 11 $\}$,

$\{$ 0, 00, 000, 1, 01, 001, 0000, 0001 $\}$, $\{$ 0, 1, 10, 00, 01, 11, 100, 101 $\}$,

$\{$ 1, 11, 111, 0, 10, 110, 1110, 1111 $\}$, $\{$ 1, 0, 01, 10, 11, 00, 010, 011 $\}$,
	
$\{$ 0, 1, 00, 10, 000, 01, 11, 001, 100, 101, 0000, 0001 $\}$, $\{$ 1, 0, 11, 01, 111, 10, 00, 010, 011, 110, 1110, 1111 $\}$,
			
$\{$ 0, 1, 00, 10, 000, 100, 1000, 01, 11, 001, 101, 0000, 0001, 1001, 10000, 10001 $\}$, 
			
$\{$ 0, 1, 10, 00, 000, 001, 0010, 01, 11, 100, 101, 0000, 0001, 0011, 00100, 00101 $\}$,

$\{$ 0, 1, 10, 11, 110, 111, 1110, 00, 01, 100, 101, 1100, 1101, 1111, 11100, 11101 $\}$,

$\{$ 1, 0, 01, 00, 001, 000, 0001, 10, 11, 010, 011, 0010, 0011, 0000, 00010, 00011 $\}$,

$\{$ 1, 0, 01, 11, 111, 110, 1101, 00, 10, 010, 011, 1110, 1111, 1100, 11010, 11011 $\}$,

$\{$ 1, 0, 11, 01, 111, 011, 0111, 10, 00, 110, 010, 1110, 1111, 0110, 01110, 01111 $\}$,

$\{$ 0, 00, 000, 0000, 00000, 000000, 0000000, 1, 01, 001, 0001, 00001, 000001, 0000001, 00000000, 00000001 $\}$,

$\{$ 1, 11, 111, 1111, 11111, 111111, 1111111, 0, 10, 110, 1110, 11110, 111110, 1111110, 11111110, 11111111 $\}$ $\}$
\end{comp}
\begin{comp}
The $n$-th position in the following sequence denotes the number of multigenic Laver tables $M$ whose alphabet
is of the form $\{1,\ldots,r\}$ and such that $|M|=n$. To improve readability, blocks of 10 elements are grouped together by semicolons.
Only the first 80 terms of this sequence are given. As expected, due to Lagrange's theorem for multigenic Laver tables, the count of multigenic Laver tables of highly composite cardinalities is much higher than the count of multigenic Laver tables whose cardinalities have few prime factors. 

\hfill\break
( 1, 2, 1, 4, 1, 5, 1, 10, 4, 6; 1, 25, 1, 8, 11, 38, 1, 39, 1, 47; 22, 12, 1, 167, 6, 14, 43, 99, 1, 158; 1, 238, 56, 18,
  36, 471, 1, 20, 79, 510; 1, 387, 1, 309, 285, 24, 1, 1650, 8, 266; 137, 482, 1, 1034, 331, 1346, 172, 30, 1, 3171; 1, 32, 547, 2486, 716, 1827, 1, 1004, 254, 1597; 1, 7912, 1, 38, 1736,
  1369, 463, 3524, 1, 8474 )
\end{comp}
\begin{comp}
For $n\in\{0,\ldots ,13\}$, the function $f_{n}$ is the function with domain
$\{1,\ldots ,2^{n}\}$ such that $f_{n}(x)=|\{(r,s)\in\{1,\ldots ,2^{n}\}^{2}\mid FM_{n}^{-}(r,s)=x\}|$ for $x\in\{1,\ldots ,2^{n}\}$.
Here $\chi_{R}$ denotes the characteristic function for $R$. More specifically, $\chi_{R}$ has domain $R$ and $\chi_{R}(x)=1$ for all $x\in R$.
\hfill\break

$f_{0}=\bigcup\{\chi_{\{1\}}\cdot 1\}$\newline

$f_{1}=\bigcup\{\chi_{\{2\}}\cdot 1,\chi_{\{1\}}\cdot 2\}$\newline

$f_{2}=\bigcup\{\chi_{\{3,4\}}\cdot 1,\chi_{\{2\}}\cdot 3,\chi_{\{1\}}\cdot 4\}$\newline

$f_{3}=\bigcup\{\chi_{\{5,\ldots ,8\}}\cdot 1,\chi_{\{3,4\}}\cdot 4,\chi_{\{2\}}\cdot 7,\chi_{\{1\}}\cdot 8\}$\newline

$f_{4}=\bigcup
\{\chi_{\{9,\ldots ,16\}}\cdot 1,\chi_{\{5,\ldots ,8\}}\cdot 3,\chi_{\{3,4\}}\cdot 11,\chi_{\{2\}}\cdot 15,\chi_{\{1\}}\cdot 21\}$\newline

$f_{5}=\bigcup\{\chi_{\{17,\ldots,32\}}\cdot 1,\chi_{\{9,\ldots,16\}}\cdot 4,\chi_{\{6,\ldots,8\}}\cdot 10,\chi_{\{5\}}\cdot 15,\chi_{\{3,4\}}\cdot 26,\chi_{\{2\}}\cdot 31,\chi_{\{1\}}\cdot 52\}$\newline

$f_{6}=\bigcup\{\chi_{\{33,\ldots,64\}}\cdot 1,\chi_{\{17,\ldots,32\}}\cdot 3,\chi_{\{9,\ldots,16\}}\cdot 12,\chi_{\{6,\ldots,8\}}\cdot 24,\chi_{\{5\}}\cdot 39,\chi_{\{4\}}\cdot 62,\chi_{\{3\}}\cdot 67,\chi_{\{2\}}\cdot 72,\chi_{\{1\}}\cdot 153\}$\newline

$f_{7}=\bigcup\{\chi_{\{65,\ldots,128\}}\cdot 1,\chi_{\{33,\ldots,64\}}\cdot 4,\chi_{\{17,\ldots,32\}}\cdot 7,\chi_{\{12,14,15,16\}}\cdot 31,\chi_{\{10,11,13\}}\cdot 33,\chi_{\{9\}}\cdot 39,\chi_{\{6,\ldots,8\}}\cdot 53,\chi_{\{5\}}\cdot 88,
\chi_{\{4\}}\cdot 149,\chi_{\{3\}}\cdot 182,\chi_{\{2\}}\cdot 185,\chi_{\{1\}}\cdot 463\}$\newline

$f_{8}=\bigcup\{\chi_{\{129,\ldots,256\}}\cdot 1,\chi_{\{65,\ldots,128\}}\cdot 3,\chi_{\{33,\ldots,64\}}\cdot 11,\chi_{\{17,\ldots,32\}}\cdot 15,\chi_{\{12,14,15,16\}}\cdot 77,\chi_{\{10,11,13\}}\cdot 83,\chi_{\{9\}}\cdot 101,\chi_{\{8\}}\cdot 116, 
\chi_{\{6,7\}}\cdot 118,\chi_{\{5\}}\cdot 197,\chi_{\{4\}}\cdot 373,\chi_{\{2\}}\cdot 503,\chi_{\{3\}}\cdot 505,\chi_{\{1\}}\cdot 1381\}$\newline

$f_{9}=\bigcup\{\chi_{\{257,\ldots,512\}}\cdot 1,\chi_{\{129,\ldots,256\}}\cdot 4,\chi_{\{81,\ldots,128\}}\cdot 10,\chi_{\{65,\ldots,80\}}\cdot 15,\chi_{\{33,\ldots,64\}}\cdot 26,\chi_{\{17,\ldots,32\}}\cdot 32,\chi_{\{12,14,15,16\}}\cdot 181,\chi_{\{13\}}\cdot 204,
\chi_{\{10,11\}}\cdot 221,\chi_{\{8\}}\cdot 251,\chi_{\{6,7\}}\cdot 259,\chi_{\{9\}}\cdot 339,\chi_{\{5\}}\cdot 449,\chi_{\{4\}}\cdot 943,\chi_{\{3\}}\cdot 1388,\chi_{\{2\}}\cdot 1409,\chi_{\{1\}}\cdot 4064\}$\newline

$f_{10}=\bigcup\{\chi_{\{513,\ldots,1024\}}\cdot 1,\chi_{\{257,\ldots,512\}}\cdot 3,\chi_{\{129,\ldots,256\}}\cdot 12,\chi_{\{81,\ldots,128\}}\cdot 24,\chi_{\{65,\ldots,80\}}\cdot 39,\chi_{\{49,\ldots,64\}}\cdot 63,\chi_{\{33,\ldots,48\}}\cdot 68,\chi_{\{17,\ldots,32\}}\cdot 75,
\chi_{\{12,14,15,16\}}\cdot 432,\chi_{\{13\}}\cdot 489,\chi_{\{10,11\}}\cdot 540,\chi_{\{8\}}\cdot 560,\chi_{\{6\}}\cdot 591,\chi_{\{7\}}\cdot 592,\chi_{\{9\}}\cdot 858,\chi_{\{5\}}\cdot 1027,\chi_{\{4\}}\cdot 2417,\chi_{\{3\}}\cdot 3824, 
\chi_{\{2\}}\cdot 3958,\chi_{\{1\}}\cdot 11746\}$\newline

$f_{11}=\bigcup\{\chi_{\{1025,\ldots,2048\}}\cdot 1,\chi_{\{513,\ldots,1024\}}\cdot 4,\chi_{\{257,\ldots,512\}}\cdot 7, 
\chi_{\{177,\ldots,192,209,\ldots,256\}}\cdot 31,
\chi_{\{145,\ldots,176,193,\ldots,208\}}\cdot 33,\chi_{\{129,\ldots,144\}}\cdot 39,\chi_{\{81,\ldots,128\}}\cdot 54,\chi_{\{65,\ldots,80\}}\cdot 89,\chi_{\{49,\ldots,64\}}\cdot 153, 
\chi_{\{33,\ldots,48\}}\cdot 186,\chi_{\{17,\ldots,32\}}\cdot 192,\chi_{\{12,14,15,16\}}\cdot 1048,\chi_{\{13\}}\cdot 1177,\chi_{\{8\}}\cdot 1287,\chi_{\{10,11\}}\cdot 1296,\chi_{\{6\}}\cdot 1391,\chi_{\{7\}}\cdot 1394,\chi_{\{9\}}\cdot 2026, 
\chi_{\{5\}}\cdot 2377,\chi_{\{4\}}\cdot 6312,\chi_{\{3\}}\cdot 10615,\chi_{\{2\}}\cdot 11180,\chi_{\{1\}}\cdot 33750\}$\newline

$f_{12}=\bigcup\{\chi_{\{2049,\ldots,4096\}}\cdot 1,\chi_{\{1025,\ldots,2048\}}\cdot 3,\chi_{\{513,\ldots,1024\}}\cdot 11,\chi_{\{257,\ldots,512\}}\cdot 15, 
\chi_{\{177,\ldots,192,209,\ldots,256\}}\cdot 78, 
\chi_{\{145,\ldots,176,193,\ldots,208\}}\cdot 84,\chi_{\{129,\ldots,144\}}\cdot 102,\chi_{\{113,\ldots,128\}}\cdot 119,\chi_{\{81,\ldots,112\}}\cdot 121,\chi_{\{65,\ldots,80\}}\cdot 200, 
\chi_{\{49,\ldots,64\}}\cdot 384,\chi_{\{33,\ldots,48\}}\cdot 516,\chi_{\{17,\ldots,32\}}\cdot 518,\chi_{\{12,14,15,16\}}\cdot 2588,\chi_{\{13\}}\cdot 2861,\chi_{\{8\}}\cdot 3044,\chi_{\{10,11\}}\cdot 3116,\chi_{\{6\}}\cdot 3361, 
\chi_{\{7\}}\cdot 3369,\chi_{\{9\}}\cdot 4670,\chi_{\{5\}}\cdot 5599,\chi_{\{4\}}\cdot 16433,\chi_{\{3\}}\cdot 28882,\chi_{\{2\}}\cdot 30821,\chi_{\{1\}}\cdot 93933\}$\newline

$f_{13}=\bigcup\{\chi_{\{4097,\ldots,8192\}}\cdot 1,\chi_{\{2049,\ldots,4096\}}\cdot 4,\chi_{\{1281,\ldots,2048\}}\cdot 10,\chi_{\{1025,\ldots,1280\}}\cdot 15,\chi_{\{513,\ldots,1024\}}\cdot 26,\chi_{\{257,\ldots,512\}}\cdot 31,
\chi_{\{177,\ldots,192,209,\ldots,256\}}\cdot 184, 
\chi_{\{193,\ldots,208\}}\cdot 204,\chi_{\{145,\ldots,176\}}\cdot 209,\chi_{\{113,\ldots,128\}}\cdot 260,\chi_{\{81,\ldots,112\}}\cdot 268,\chi_{\{129,\ldots,144\}}\cdot 279,\chi_{\{65,\ldots,80\}}\cdot 449,\chi_{\{49,\ldots,64\}}\cdot 966, 
\chi_{\{17,\ldots,32\}}\cdot 1450,\chi_{\{33,\ldots,48\}}\cdot 1459,\chi_{\{16\}}\cdot 6580,\chi_{\{12,14\}}\cdot 6612,\chi_{\{15\}}\cdot 6613,\chi_{\{13\}}\cdot 7199,\chi_{\{8\}}\cdot 7493,\chi_{\{10,11\}}\cdot 7736,\chi_{\{6\}}\cdot 8414, 
\chi_{\{7\}}\cdot 8430,\chi_{\{9\}}\cdot 11204,\chi_{\{5\}}\cdot 13562,\chi_{\{4\}}\cdot 43958,\chi_{\{3\}}\cdot 80121,\chi_{\{2\}}\cdot 86449,\chi_{\{1\}}\cdot 264699\}$
\end{comp}
\begin{comp}
The following functions $g_{0},\ldots,g_{13}$ are the functions where for each $n\in\{0,\ldots,13\}$, the domain of
$g_{n}$ is $\{1,\ldots,2^{n}\}$ and where $g_{n}(x)=|\{(r,s)\in\{1,\ldots,2^{n}\}^{2}\mid FM^{+}_{n}(r,s)=x\}|$.
\hfill\break

$g_{0}=\bigcup\{\chi_{\{1\}}\cdot 1\}$\newline
 
$g_{1}=
\bigcup\{\chi_{\{1\}}\cdot 1,\chi_{\{2\}}\cdot 2\}$\newline
 
$g_{2}=
\bigcup\{\chi_{\{1,2\}}\cdot 1,\chi_{\{3\}}\cdot 3,\chi_{\{4\}}\cdot 4\}$\newline
 
$g_{3}=
\bigcup\{\chi_{\{1,\ldots,4\}}\cdot 1,\chi_{\{5,6\}}\cdot 4,\chi_{\{7\}}\cdot 7,\chi_{\{8\}}\cdot 8\}$\newline
 
$g_{4}=
\bigcup\{\chi_{\{1,\ldots,8\}}\cdot 1,\chi_{\{9,\ldots,12\}}\cdot 3,\chi_{\{14\}}\cdot 11,\chi_{\{15\}}\cdot 15,\chi_{\{13,16\}}\cdot 16\}$\newline
 
$g_{5}=
\bigcup\{\chi_{\{1,\ldots,16\}}\cdot 1,\chi_{\{17,\ldots,24\}}\cdot 4,\chi_{\{26,\ldots,28\}}\cdot 10,\chi_{\{25\}}\cdot 15,\chi_{\{30\}}\cdot 26,\chi_{\{31\}}\cdot 31,\chi_{\{32\}}\cdot 32,\chi_{\{29\}}\cdot 46\}$\newline
 
$g_{6}=
\bigcup\{\chi_{\{1,\ldots,32\}}\cdot 1,\chi_{\{33,\ldots,48\}}\cdot 3,\chi_{\{53,\ldots,56\}}\cdot 12,\chi_{\{49,\ldots,52\}}\cdot 17,\chi_{\{60\}}\cdot 24,\chi_{\{58\}}\cdot 28,\chi_{\{59\}}\cdot 29,\chi_{\{57\}}\cdot 53,\chi_{\{62\}}\cdot 57,\chi_{\{63\}}\cdot 63,\chi_{\{64\}}\cdot 64,\chi_{\{61\}}\cdot 127\}$\newline
 
$g_{7}=
\bigcup\{\chi_{\{1,\ldots,64\}}\cdot 1,\chi_{\{65,\ldots,96\}}\cdot 4,\chi_{\{97,\ldots,112\}}\cdot 7,\chi_{\{118,\ldots,120\}}\cdot 31,\chi_{\{117\}}\cdot 33,\chi_{\{124\}}\cdot 53,\chi_{\{116\}}\cdot 60,\chi_{\{114\}}\cdot 70,\chi_{\{115\}}\cdot 71, 
\chi_{\{122\}}\cdot 74,\chi_{\{113\}}\cdot 76,\chi_{\{123\}}\cdot 77,\chi_{\{126\}}\cdot 120,\chi_{\{127\}}\cdot 127,\chi_{\{128\}}\cdot 128,\chi_{\{121\}}\cdot 161,\chi_{\{125\}}\cdot 345\}$\newline
 
$g_{8}=
\bigcup\{\chi_{\{1,\ldots,128\}}\cdot 1,\chi_{\{129,\ldots,192\}}\cdot 3,\chi_{\{209,\ldots,224\}}\cdot 11,\chi_{\{225,\ldots,240\}}\cdot 15,\chi_{\{193,\ldots,208\}}\cdot 16,\chi_{\{248\}}\cdot 72,\chi_{\{246,247\}}\cdot 74,\chi_{\{245\}}\cdot 84, 
\chi_{\{252\}}\cdot 111,\chi_{\{250\}}\cdot 188,\chi_{\{244\}}\cdot 193,\chi_{\{251\}}\cdot 196,\chi_{\{242\}}\cdot 244,\chi_{\{243\}}\cdot 246,\chi_{\{254\}}\cdot 247,\chi_{\{255\}}\cdot 255,\chi_{\{256\}}\cdot 256,\chi_{\{241\}}\cdot 274, 
\chi_{\{249\}}\cdot 461,\chi_{\{253\}}\cdot 914\}$\newline
 
$g_{9}=
\bigcup\{\chi_{\{1,\ldots,256\}}\cdot 1,\chi_{\{257,\ldots,384\}}\cdot 4,\chi_{\{401,\ldots,448\}}\cdot 10,\chi_{\{385,\ldots,400\}}\cdot 15,\chi_{\{465,\ldots,480\}}\cdot 26,\chi_{\{481,\ldots,496\}}\cdot 32,\chi_{\{449,\ldots,464\}}\cdot 46,\chi_{\{504\}}\cdot 161, 
\chi_{\{502,503\}}\cdot 169,\chi_{\{501\}}\cdot 208,\chi_{\{508\}}\cdot 231,\chi_{\{506\}}\cdot 462,\chi_{\{507\}}\cdot 478,\chi_{\{510\}}\cdot 502,\chi_{\{511\}}\cdot 511,\chi_{\{512\}}\cdot 512,\chi_{\{500\}}\cdot 582,\chi_{\{499\}}\cdot 820, 
\chi_{\{498\}}\cdot 848,\chi_{\{497\}}\cdot 1067,\chi_{\{505\}}\cdot 1239,\chi_{\{509\}}\cdot 2452\}$\newline
 
$g_{10}=
\bigcup\{\chi_{\{1,\ldots,512\}}\cdot 1,\chi_{\{513,\ldots,768\}}\cdot 3,\chi_{\{833,\ldots,896\}}\cdot 12,\chi_{\{769,\ldots,832\}}\cdot 17,\chi_{\{945,\ldots,960\}}\cdot 24,\chi_{\{913,\ldots,928\}}\cdot 28,\chi_{\{929,\ldots,944\}}\cdot 29,\chi_{\{897,\ldots,912\}}\cdot 53, 
\chi_{\{977,\ldots,992\}}\cdot 58,\chi_{\{993,\ldots,1008\}}\cdot 66,\chi_{\{961,\ldots,976\}}\cdot 128,\chi_{\{1016\}}\cdot 343,\chi_{\{1014\}}\cdot 374,\chi_{\{1015\}}\cdot 375,\chi_{\{1020\}}\cdot 471,\chi_{\{1013\}}\cdot 495, 
\chi_{\{1022\}}\cdot 1013,\chi_{\{1023\}}\cdot 1023,\chi_{\{1024\}}\cdot 1024,\chi_{\{1018\}}\cdot 1119,\chi_{\{1019\}}\cdot 1153,\chi_{\{1012\}}\cdot 1658,\chi_{\{1011\}}\cdot 2491,\chi_{\{1010\}}\cdot 2649,\chi_{\{1017\}}\cdot 3244,
\chi_{\{1009\}}\cdot 3555,\chi_{\{1021\}}\cdot 6459\}$\newline
 
$g_{11}=
\bigcup\{\chi_{\{1,\ldots,1024\}}\cdot 1,\chi_{\{1025,\ldots,1536\}}\cdot 4,\chi_{\{1537,\ldots,1792\}}\cdot 7,\chi_{\{1873,\ldots,1920\}}\cdot 31,\chi_{\{1857,\ldots,1872\}}\cdot 33,\chi_{\{1969,\ldots,1984\}}\cdot 54,\chi_{\{1841,\ldots,1856\}}\cdot 60, 
\chi_{\{1809,\ldots,1824\}}\cdot 70,\chi_{\{1825,\ldots,1840\}}\cdot 71,\chi_{\{1937,\ldots,1952\}}\cdot 75,\chi_{\{1793,\ldots,1808\}}\cdot 76,\chi_{\{1953,\ldots,1968\}}\cdot 78,\chi_{\{2001,\ldots,2016\}}\cdot 124,\chi_{\{2017,\ldots,2032\}}\cdot 134, 
\chi_{\{1921,\ldots,1936\}}\cdot 162,\chi_{\{1985,\ldots,2000\}}\cdot 349,\chi_{\{2040\}}\cdot 713,\chi_{\{2038\}}\cdot 817,\chi_{\{2039\}}\cdot 820,\chi_{\{2044\}}\cdot 952,\chi_{\{2037\}}\cdot 1164,\chi_{\{2046\}}\cdot 2036, 
\chi_{\{2047\}}\cdot 2047,\chi_{\{2048\}}\cdot 2048,\chi_{\{2042\}}\cdot 2736,\chi_{\{2043\}}\cdot 2806,\chi_{\{2036\}}\cdot 4654,\chi_{\{2035\}}\cdot 7351,\chi_{\{2034\}}\cdot 7975,\chi_{\{2041\}}\cdot 8506,\chi_{\{2033\}}\cdot 11183, 
\chi_{\{2045\}}\cdot 17125\}$\newline
 
$g_{12}=
\bigcup\{\chi_{\{1,\ldots,2048\}}\cdot 1,\chi_{\{2049,\ldots,3072\}}\cdot 3,\chi_{\{3329,\ldots,3584\}}\cdot 11,\chi_{\{3585,\ldots,3840\}}\cdot 15,\chi_{\{3073,\ldots,3328\}}\cdot 16,\chi_{\{3953,\ldots,3968\}}\cdot 73,\chi_{\{3921,\ldots,3952\}}\cdot 75, 
\chi_{\{3905,\ldots,3920\}}\cdot 85,\chi_{\{4017,\ldots,4032\}}\cdot 114,\chi_{\{3985,\ldots,4000\}}\cdot 191,\chi_{\{3889,\ldots,3904\}}\cdot 194,\chi_{\{4001,\ldots,4016\}}\cdot 199,\chi_{\{3857,\ldots,3872\}}\cdot 245,\chi_{\{3873,\ldots,3888\}}\cdot 247, 
\chi_{\{4049,\ldots,4064\}}\cdot 258,\chi_{\{4065,\ldots,4080\}}\cdot 270,\chi_{\{3841,\ldots,3856\}}\cdot 275,\chi_{\{3969,\ldots,3984\}}\cdot 464,\chi_{\{4033,\ldots,4048\}}\cdot 930,\chi_{\{4088\}}\cdot 1458,\chi_{\{4086\}}\cdot 1775, 
\chi_{\{4087\}}\cdot 1783,\chi_{\{4092\}}\cdot 1914,\chi_{\{4085\}}\cdot 2726,\chi_{\{4094\}}\cdot 4083,\chi_{\{4095\}}\cdot 4095,\chi_{\{4096\}}\cdot 4096,\chi_{\{4090\}}\cdot 6688,\chi_{\{4091\}}\cdot 6832,\chi_{\{4084\}}\cdot 12678, 
\chi_{\{4083\}}\cdot 20737,\chi_{\{4089\}}\cdot 22027,\chi_{\{4082\}}\cdot 22808,\chi_{\{4081\}}\cdot 32930,\chi_{\{4093\}}\cdot 44847\}$\newline
 
$g_{13}=
\bigcup\{\chi_{\{1,\ldots,4096\}}\cdot 1,\chi_{\{4097,\ldots,6144\}}\cdot 4,\chi_{\{6401,\ldots,7168\}}\cdot 10,\chi_{\{6145,\ldots,6400\}}\cdot 15,\chi_{\{7425,\ldots,7680\}}\cdot 26,\chi_{\{7681,\ldots,7936\}}\cdot 31,\chi_{\{7169,\ldots,7424\}}\cdot 46, 
\chi_{\{8049,\ldots,8064\}}\cdot 164,\chi_{\{8017,\ldots,8048\}}\cdot 172,\chi_{\{8001,\ldots,8016\}}\cdot 208,\chi_{\{8113,\ldots,8128\}}\cdot 240,\chi_{\{8081,\ldots,8096\}}\cdot 487,\chi_{\{8097,\ldots,8112\}}\cdot 503, 
\chi_{\{8145,\ldots,8160\}}\cdot 528,\chi_{\{8161,\ldots,8176\}}\cdot 543,\chi_{\{7985,\ldots,8000\}}\cdot 582,\chi_{\{7953,\ldots,7968\}}\cdot 829,\chi_{\{7969,\ldots,7984\}}\cdot 837,\chi_{\{7952\}}\cdot 984,\chi_{\{7937,\ldots,7950\}}\cdot 1016, 
\chi_{\{7951\}}\cdot 1017,\chi_{\{8065,\ldots,8080\}}\cdot 1303,\chi_{\{8129,\ldots,8144\}}\cdot 2505,\chi_{\{8184\}}\cdot 2960,\chi_{\{8188\}}\cdot 3841,\chi_{\{8182\}}\cdot 3881,\chi_{\{8183\}}\cdot 3897,\chi_{\{8181\}}\cdot 6467, 
\chi_{\{8190\}}\cdot 8178,\chi_{\{8191\}}\cdot 8191,\chi_{\{8192\}}\cdot 8192,\chi_{\{8186\}}\cdot 16809,\chi_{\{8187\}}\cdot 17097,\chi_{\{8180\}}\cdot 35088,\chi_{\{8185\}}\cdot 58160,\chi_{\{8179\}}\cdot 59119,\chi_{\{8178\}}\cdot 65722, 
\chi_{\{8177\}}\cdot 97065,\chi_{\{8189\}}\cdot 120350\}$\newline
\end{comp}

\begin{comp}
The following table lists all classical Laver table periods for $A_{5}$.

[ [ 2, 12, 14, 16, 18, 28, 30, 32 ], [ 3, 12, 15, 16, 19, 28, 31, 32 ], [ 4, 8, 12, 16, 20, 24, 28, 32 ], [ 5, 6, 7, 8, 13, 14, 15, 16, 21, 22, 23, 24, 29, 30, 31, 32 ],
      [ 6, 24, 30, 32 ], [ 7, 24, 31, 32 ], [ 8, 16, 24, 32 ], [ 9, 10, 11, 12, 13, 14, 15, 16, 25, 26, 27, 28, 29, 30, 31, 32 ], [ 10, 28, 30, 32 ], [ 11, 28, 31, 32 ],
      [ 12, 16, 28, 32 ], [ 13, 14, 15, 16, 29, 30, 31, 32 ], [ 14, 16, 30, 32 ], [ 15, 16, 31, 32 ], [ 16, 32 ], [ 17, 18, 19, 20, 21, 22, 23, 24, 25, 26, 27, 28, 29, 30, 31, 32 ],
      [ 18, 28, 30, 32 ], [ 19, 28, 31, 32 ], [ 20, 24, 28, 32 ], [ 21, 22, 23, 24, 29, 30, 31, 32 ], [ 22, 24, 30, 32 ], [ 23, 24, 31, 32 ], [ 24, 32 ],
      [ 25, 26, 27, 28, 29, 30, 31, 32 ], [ 26, 28, 30, 32 ], [ 27, 28, 31, 32 ], [ 28, 32 ], [ 29, 30, 31, 32 ], [ 30, 32 ], [ 31, 32 ], [ 32 ],[ 1, 2, 3, 4, 5, 6, 7, 8, 9, 10, 11, 12, 13, 14, 15, 16, 17, 18, 19, 20, 21, 22, 23, 24, 25, 26, 27, 28, 29, 30, 31, 32 ] ]
\end{comp}
\begin{comp}
The following table lists all classical Laver table periods for $A_{6}$.
			
  [ [ 2, 12, 14, 48, 50, 60, 62, 64 ], [ 3, 12, 15, 48, 51, 60, 63, 64 ], [ 4, 8, 12, 16, 20, 24, 28, 32, 36, 40, 44, 48, 52, 56, 60, 64 ],
      [ 5, 6, 7, 8, 45, 46, 47, 48, 53, 54, 55, 56, 61, 62, 63, 64 ], [ 6, 56, 62, 64 ], [ 7, 56, 63, 64 ], [ 8, 48, 56, 64 ],
      [ 9, 10, 11, 12, 45, 46, 47, 48, 57, 58, 59, 60, 61, 62, 63, 64 ], [ 10, 60, 62, 64 ], [ 11, 60, 63, 64 ], [ 12, 48, 60, 64 ],
      [ 13, 14, 15, 16, 29, 30, 31, 32, 45, 46, 47, 48, 61, 62, 63, 64 ], [ 14, 48, 62, 64 ], [ 15, 48, 63, 64 ], [ 16, 32, 48, 64 ],
      [ 17, 18, 19, 20, 21, 22, 23, 24, 25, 26, 27, 28, 29, 30, 31, 32, 49, 50, 51, 52, 53, 54, 55, 56, 57, 58, 59, 60, 61, 62, 63, 64 ], [ 18, 28, 30, 32, 50, 60, 62, 64 ],
      [ 19, 28, 31, 32, 51, 60, 63, 64 ], [ 20, 24, 28, 32, 52, 56, 60, 64 ], [ 21, 22, 23, 24, 29, 30, 31, 32, 53, 54, 55, 56, 61, 62, 63, 64 ], [ 22, 56, 62, 64 ],
      [ 23, 56, 63, 64 ], [ 24, 32, 56, 64 ], [ 25, 26, 27, 28, 29, 30, 31, 32, 57, 58, 59, 60, 61, 62, 63, 64 ], [ 26, 60, 62, 64 ], [ 27, 60, 63, 64 ], [ 28, 32, 60, 64 ],
      [ 29, 30, 31, 32, 61, 62, 63, 64 ], [ 30, 32, 62, 64 ], [ 31, 32, 63, 64 ], [ 32, 64 ], [ 33, 34, 35, 36, 37, 38, 39, 40, 41, 42, 43, 44, 45, 46, 47, 48,
          49, 50, 51, 52, 53, 54, 55, 56, 57, 58, 59, 60, 61, 62, 63, 64 ], [ 34, 44, 46, 48, 50, 60, 62, 64 ], [ 35, 44, 47, 48, 51, 60, 63, 64 ], [ 36, 40, 44, 48, 52, 56, 60, 64 ],
      [ 37, 38, 39, 40, 45, 46, 47, 48, 53, 54, 55, 56, 61, 62, 63, 64 ], [ 38, 56, 62, 64 ], [ 39, 56, 63, 64 ], [ 40, 48, 56, 64 ],
      [ 41, 42, 43, 44, 45, 46, 47, 48, 57, 58, 59, 60, 61, 62, 63, 64 ], [ 42, 60, 62, 64 ], [ 43, 60, 63, 64 ], [ 44, 48, 60, 64 ],
      [ 45, 46, 47, 48, 61, 62, 63, 64 ], [ 46, 48, 62, 64 ], [ 47, 48, 63, 64 ], [ 48, 64 ], [ 49, 50, 51, 52, 53, 54, 55, 56, 57, 58, 59, 60, 61, 62, 63, 64 ], [ 50, 60, 62, 64 ],
      [ 51, 60, 63, 64 ], [ 52, 56, 60, 64 ], [ 53, 54, 55, 56, 61, 62, 63, 64 ], [ 54, 56, 62, 64 ], [ 55, 56, 63, 64 ], [ 56, 64 ], [ 57, 58, 59, 60, 61, 62, 63, 64 ],
      [ 58, 60, 62, 64 ], [ 59, 60, 63, 64 ], [ 60, 64 ], [ 61, 62, 63, 64 ], [ 62, 64 ], [ 63, 64 ], [ 64 ],
      [ 1, 2, 3, 4, 5, 6, 7, 8, 9, 10, 11, 12, 13, 14, 15, 16, 17, 18, 19, 20, 21, 22, 23, 24, 25, 26, 27, 28, 29, 30, 31,
          32, 33, 34, 35, 36, 37, 38, 39, 40, 41, 42, 43, 44, 45, 46, 47, 48, 49, 50, 51, 52, 53, 54, 55, 56, 57, 58, 59, 60, 61, 62, 63, 64 ] ]
\end{comp}
\begin{comp}
The following table gives the period data for $(\{1,\ldots ,2^{5}\},\#_{5}).$

[ [ [ 1 ], 17, 18, 23, 24, 27, 28, 31, 32 ], [ [ 2 ], 18, 24, 28, 32 ], [ [ 3 ], 19, 24, 28, 32 ], [ [ 4 ], 20, 24, 28, 32 ], [ [ 5 ], 21, 24, 30, 32 ],
  [ [ 6 ], 22, 24, 30, 32 ], [ [ 7 ], 23, 24, 31, 32 ], [ [ 8 ], 24, 32 ], [ [ 9 ], 25, 26, 27, 28, 29, 30, 31, 32 ], [ [ 10 ], 26, 28, 30, 32 ], [ [ 11 ], 27, 28, 31, 32 ],
  [ [ 12 ], 28, 32 ], [ [ 13 ], 29, 30, 31, 32 ], [ [ 14 ], 30, 32 ], [ [ 15 ], 31, 32 ], [ [ 16 ], 32 ], [ [ 17 ], 9, 10, 15, 16, 27, 28, 31, 32 ],
  [ [ 18 ], 10, 16, 28, 32 ], [ [ 19 ], 11, 16, 28, 32 ], [ [ 20 ], 12, 16, 28, 32 ], [ [ 21 ], 13, 16, 30, 32 ], [ [ 22 ], 14, 16, 30, 32 ], [ [ 23 ], 15, 16, 31, 32 ], [ [ 24 ], 16, 32 ],
  [ [ 25 ], 5, 6, 7, 8, 13, 14, 15, 16, 21, 22, 23, 24, 29, 30, 31, 32 ], [ [ 26 ], 6, 8, 14, 16, 22, 24, 30, 32 ], [ [ 27 ], 7, 8, 15, 16, 23, 24, 31, 32 ],
  [ [ 28 ], 8, 16, 24, 32 ], [ [ 29 ], 3, 4, 7, 8, 11, 12, 15, 16, 19, 20, 23, 24, 27, 28, 31, 32 ], [ [ 30 ], 4, 8, 12, 16, 20, 24, 28, 32 ],
  [ [ 31 ], 2, 4, 6, 8, 10, 12, 14, 16, 18, 20, 22, 24, 26, 28, 30, 32 ], [ [ 32 ], 1, 2, 3, 4,
      5, 6, 7, 8, 9, 10, 11, 12, 13, 14, 15, 16, 17, 18, 19, 20, 21, 22, 23, 24, 25, 26, 27, 28, 29, 30, 31, 32 ] ]

\end{comp}
\begin{comp}
The following table gives the period data for $(\{1,\ldots ,2^{6}\},\#_{6}).$

[ [ [ 1 ], 33, 36, 45, 48, 53, 56, 62, 64 ], [ [ 2 ], 34, 36, 46, 48, 54, 56, 62, 64 ], [ [ 3 ], 35, 36, 47, 48, 55, 56, 63, 64 ], [ [ 4 ], 36, 48, 56, 64 ],
  [ [ 5 ], 37, 48, 56, 64 ], [ [ 6 ], 38, 48, 56, 64 ], [ [ 7 ], 39, 48, 56, 64 ], [ [ 8 ], 40, 48, 56, 64 ], [ [ 9 ], 41, 48, 60, 64 ], [ [ 10 ], 42, 48, 60, 64 ],
  [ [ 11 ], 43, 48, 60, 64 ], [ [ 12 ], 44, 48, 60, 64 ], [ [ 13 ], 45, 48, 62, 64 ], [ [ 14 ], 46, 48, 62, 64 ], [ [ 15 ], 47, 48, 63, 64 ], [ [ 16 ], 48, 64 ],
  [ [ 17 ], 49, 50, 51, 52, 53, 54, 55, 56, 57, 58, 59, 60, 61, 62, 63, 64 ], [ [ 18 ], 50, 52, 54, 56, 58, 60, 62, 64 ], [ [ 19 ], 51, 52, 55, 56, 59, 60, 63, 64 ],
  [ [ 20 ], 52, 56, 60, 64 ], [ [ 21 ], 53, 56, 62, 64 ], [ [ 22 ], 54, 56, 62, 64 ], [ [ 23 ], 55, 56, 63, 64 ], [ [ 24 ], 56, 64 ], [ [ 25 ], 57, 60, 62, 64 ],
  [ [ 26 ], 58, 60, 62, 64 ], [ [ 27 ], 59, 60, 63, 64 ], [ [ 28 ], 60, 64 ], [ [ 29 ], 61, 62, 63, 64 ], [ [ 30 ], 62, 64 ], [ [ 31 ], 63, 64 ], [ [ 32 ], 64 ],
  [ [ 33 ], 17, 20, 29, 32, 53, 56, 62, 64 ], [ [ 34 ], 18, 20, 30, 32, 54, 56, 62, 64 ], [ [ 35 ], 19, 20, 31, 32, 55, 56, 63, 64 ], [ [ 36 ], 20, 32, 56, 64 ],
  [ [ 37 ], 21, 32, 56, 64 ], [ [ 38 ], 22, 32, 56, 64 ], [ [ 39 ], 23, 32, 56, 64 ], [ [ 40 ], 24, 32, 56, 64 ], [ [ 41 ], 25, 32, 60, 64 ], [ [ 42 ], 26, 32, 60, 64 ],
  [ [ 43 ], 27, 32, 60, 64 ], [ [ 44 ], 28, 32, 60, 64 ], [ [ 45 ], 29, 32, 62, 64 ], [ [ 46 ], 30, 32, 62, 64 ], [ [ 47 ], 31, 32, 63, 64 ], [ [ 48 ], 32, 64 ],
  [ [ 49 ], 9, 12, 14, 16, 25, 28, 30, 32, 41, 44, 46, 48, 57, 60, 62, 64 ], [ [ 50 ], 10, 12, 14, 16, 26, 28, 30, 32, 42, 44, 46, 48, 58, 60, 62, 64 ],
  [ [ 51 ], 11, 12, 15, 16, 27, 28, 31, 32, 43, 44, 47, 48, 59, 60, 63, 64 ], [ [ 52 ], 12, 16, 28, 32, 44, 48, 60, 64 ],
  [ [ 53 ], 13, 14, 15, 16, 29, 30, 31, 32, 45, 46, 47, 48, 61, 62, 63, 64 ], [ [ 54 ], 14, 16, 30, 32, 46, 48, 62, 64 ], [ [ 55 ], 15, 16, 31, 32, 47, 48, 63, 64 ],
  [ [ 56 ], 16, 32, 48, 64 ], [ [ 57 ], 5, 8, 14, 16, 21, 24, 30, 32, 37, 40, 46, 48, 53, 56, 62, 64 ], [ [ 58 ], 6, 8, 14, 16, 22, 24, 30, 32, 38, 40, 46, 48, 54, 56, 62, 64 ],
  [ [ 59 ], 7, 8, 15, 16, 23, 24, 31, 32, 39, 40, 47, 48, 55, 56, 63, 64 ], [ [ 60 ], 8, 16, 24, 32, 40, 48, 56, 64 ],
  [ [ 61 ], 3, 4, 7, 8, 11, 12, 15, 16, 19, 20, 23, 24, 27, 28, 31, 32, 35, 36, 39, 40, 43, 44, 47, 48, 51, 52, 55, 56, 59, 60, 63, 64 ],
  [ [ 62 ], 4, 8, 12, 16, 20, 24, 28, 32, 36, 40, 44, 48, 52, 56, 60, 64 ], [ [ 63 ], 2, 4, 6, 8, 10, 12, 14, 16, 18, 20, 22, 24, 26, 28, 30, 32, 34, 36, 38, 40, 42, 44, 46, 48, 50,
      52, 54, 56, 58, 60, 62, 64 ], [ [ 64 ], 1, 2, 3, 4, 5, 6, 7, 8, 9, 10, 11, 12, 13, 14, 15, 16, 17, 18, 19, 20, 21, 22, 23, 24, 25,
      26, 27, 28, 29, 30, 31, 32, 33, 34, 35, 36, 37, 38, 39, 40, 41, 42, 43, 44, 45, 46, 47, 48, 49, 50, 51, 52, 53, 54, 55, 56, 57, 58, 59, 60, 61, 62, 63, 64 ] ]
\end{comp}

\printindex

% ----------------------------------------------------------------
\section*{Acknowledgement}

I am grateful for Mohamed Elhamdadi for initially motivating my research on the generalizations of Laver tables. Mohamed originally suggested to work on the algebras of elementary embeddings and self-distributive algebras. Furthermore, I and Mohamed have had several informal seminars about the algebras of elementary embeddings which have prepared me for my original observations about the generalizations of Laver tables.

% ----------------------------------------------------------------
\bibliographystyle{amsplain}

\begin{thebibliography}{9}
\bibitem{AAG}
Iris Anshel, Michael Anshel, and Dorian Goldfeld. An algebraic method for public--key cryptography. Mathematical Research Letters 6, 1–-5 (1999)


\bibitem{B83} Hans-J. Bandelt and Jarmila Hedlíková. Median algebras.
Discrete Mathematics. Volume 45, Issue 1, Pages 1--30. 1983

\bibitem{PQC}
Daniel J. Bernstein, Johannes Buchmann, Erik Dahmen. Post-Quantum Cryptography. Springer 2009.

\bibitem{FIB}
Jean Berstel. Fibonacci words— A Survey. In: The Book of L. Springer, Berlin, Heidelberg.


\bibitem{B} Guy Roger Biyogmam. Lie n-racks. Comptes Rendus Mathematique. Volume 349, Issues 17--18, Pages 957--960. September 2011.


\bibitem{PK} Paul Corazza. The gap between I3 and the wholeness axiom. 2003. Fundamenta Mathematicae. 179. 1. 43--60 .

\bibitem{VHOD} Paul Corazza. Consistency of V = HOD With the Wholeness Axiom. Archive for Mathematical Logic. April 2000, Volume 39, Issue 3, pp 219–-226.

\bibitem{SCPD}Patrick Dehornoy. Using shifted conjugacy in braid-based cryptography.  L. Gerritzen et al. (ed.), Algebraic Methods in Cryptography (Mainz, 2005; Bochum, 2005), Contemporary Mathematics 418, AMS, Providence (2006) 65–-73.

\bibitem{D} Patrick Dehornoy. Braids and self-distributivity.Birkh\"auser Verlag, Basel. Progress in Mathematics. vol 192. 2000.

%{book}{1 2 25 1 1 2 25 2 1 2 25 26 1 2
%   author={Dehornoy, P.},
%   title={Braids and self-distributivity},
%   series={Progress in Mathematics},
%   volume={192},
%   publisher={Birkh\"auser Verlag, Basel},
%   date={2000},
%   pages={xx+623},
%   isbn={3-7643-6343-6},
%   review={\MR{1778150 (2001j:20057)}},
   %doi={10.1007/978-3-0348-8442-6},}

\bibitem{D94}
P. Dehornoy. Braid groups and left distributive operations. Trans. Amer. Math. Soc. 345--1, 115--151. (1994)

\bibitem{LRLDT}
Laver's results and low-dimensional topology. Patrick Dehornoy. Arch. Math. Logic (2016) 55: 49.

\bibitem{D89}
Patrick Dehornoy. Sur la structure des gerbes libres. Comptes Rendus
de l’Acad´emie des Sciences de Paris, 309:143--148, 1989.

\bibitem{D9901} Patrick Dehornoy. Construction of Self-Distributive Operations and Charged Braids. International Journal of Algebra and ComputationVol. 10, No. 01, pp. 173--190 (2000) 


\bibitem{D97} Patrick Dehornoy. Multiple left distributive systems. Comment.Math.Univ.Carolin. 38,4 (1997) 615–-625.

\bibitem{DL}
Patrick Dehornoy and Victoria Lebed. Two- and three-cocycles for Laver tables J. Knot Theory Ramifications Vol. 23, No. 04, 1450017 (2014)

\bibitem{RD92} Randall Dougherty. Critical points in an algebra of elementary embeddings I. Annals of Pure and Applied Logic 65 (3):211--241 (1993) 

\bibitem{RD95} Randall Dougherty. Critical points in an algebra of elementary embeddings (II). In Wilfrid Hodges, editor, Logic: From Foundations to Applications, pages 103-–136. Academic Press, San Diego, 1996.

\bibitem{DJ} Finite Left-Distributive Algebras and Embedding Algebras. Randall Dougherty, Thomas Jech. Advances in Mathematics. Volume 130, Issue 2, 25 September 1997, Pages 201--241.

\bibitem{AD93}Homomorphisms of primitive left distributive groupoids. Aleš Drápal. Pages 2579--2592 Received 01 Mar 1993, Published online: 27 Jun 2007


\bibitem{AD97} Drapal. 1997 Finite left distributive algebras with one generator, J. Pure Appl. Algebra, 121, 233--25l. 1

\bibitem{AD97+} Drapal. 1997 Finite left distributive groupoids with one generator, Int. J. Algebra 8 Computation, 7--6, 723--748. 

\bibitem{EGM} Ternary Distributive Structures and Quandles. Mohamed Elhamdadi. Matthew Green. Abdenacer Makhlouf. Preprint.

\bibitem{FK} Handbook of set theory. Vols. 1, 2, 3. Foreman, M.
Kanamori, A. 2010. Springer, Dordrecht.
%{collection}{
%   title={Handbook of set theory. Vols. 1, 2, 3},
%   editor={Foreman, M.},
%   editor={Kanamori, A.},
%   publisher={Springer, Dordrecht},
%   date={2010},
%   pages={Vol. 1: xiv+736 pp.; Vol. 2: pp. i--xiv and 737--1447; Vol. %3: pp.
%   i--xiv and 1449--2197},
%   isbn={978-1-4020-4843-2},
%   review={\MR{2768678 (2012c:03001)}}}

\bibitem{HF97} Applications of Large Cardinals to Graph Theory, October 23, 1997, 36 pages, Harvey Friedman

\bibitem{HF06} $\Pi_{0}^{1}$-incompleteness: finite graph theory 1, Harvey Friedman

\bibitem{HF98}Finite functions and the necessary use of large cardinals.  Harvey M. Friedman. Annals of Mathematics
Second Series, Vol. 148, No. 3 (Nov., 1998), pp. 803--893.

\bibitem{HF11} Boolean relation theory and incompleteness. Harvey M. Friedman. Book.

\bibitem{J} A classifying invariant of knots, the knot quandle. Joyce, D. J. Pure Appl. Algebra. 1982.

%{article}{
%   author={Joyce, D.},
%   title={A classifying invariant of knots, the knot quandle},
%   journal={J. Pure Appl. Algebra},
%   volume={23},
%   date={1982},
%   number={1},
%   pages={37--65},
%   issn={0022-4049},
%   review={\MR{638121 (83m:57007)}},
   %doi={10.1016/0022-4049(82)90077-9},
%}


\bibitem{K} Kanamori, A. The higher infinite. Springer Monographs in Mathematics. 2009.

%{book}{
%   author={Kanamori, A.},
%   title={The higher infinite},
%   series={Springer Monographs in Mathematics},
%   edition={2},
%   note={Large cardinals in set theory from their beginnings;
%   Paperback reprint of the 2003 edition},
%   publisher={Springer-Verlag, Berlin},
%   date={2009},
%   pages={xxii+536},
%   isbn={978-3-540-88866-6},
%   review={\MR{2731169}}}

\bibitem{TK} Tomáš Kepka, Notes on left distributive groupoids. Acta Universitatis Carolinae. Mathematica et Physica (1981). Volume: 022, Issue: 2, page 23-37. ISSN: 0001--7140


\bibitem{CLCHKP}
K. Ko, S. Lee, J. Cheon, J. Han, J. kang C. Park. New public key cryptosystem using braid groups. Crypto’2000, LNCS 1880, pp. 166--183, Springer 2000.

\bibitem{DL1994a}
David Larue. Left-distributive and left-distributive idempotent algebras, Ph.D. Thesis. University of Colorado, Boulder. 1994

\bibitem{L95} Richard Laver. On the algebra of elementary embeddings of a rank into itself, Advances in Math. Volume 110, Issue 2, February 1995, Pages 334--346

\bibitem{L92}
Richard Laver. The left distributive law and the freeness of an algebra of elementary embeddings. Advances in Mathematics. Volume 91, Issue 2, February 1992, Pages 209--231

\bibitem{L97} Implications between strong large cardinal axioms. Richard Laver. Annals of Pure and Applied Logic. Volume 90, Issues 1–3, 15 December 1997, Pages 79--90

\bibitem{VLC}
Cohomology of finite monogenic self-distributive structures. Victoria Lebed. Journal of Pure and Applied Algebra. Volume 220, Issue 2, February 2016, Pages 711--734.

\bibitem{AM}
Quantum algorithms: an overview. Ashley Montanaro. npj Quantum Information volume 2, Article number: 15023 (2016)

\bibitem{SR}A. B. Romanowska, and Jonathan D. H. Smith. Modes. River Edge, NJ: World Scientific, 2002.
%{article}
%{author={Romanowska, A. B., and Jonathan D. H. Smith. Modes. River %Edge, NJ: World Scientific, 2002.}}

\bibitem{KS}
Kentaro Sato. Double helix in large large cardinals and iteration of elementary embeddings. Annals of Pure and Applied Logic. Volume 146, Issues 2–3, May 2007, Pages 199--236

\bibitem{MS}
Matthew Smedberg. A dense family of well-behaved finite monogenerated left-distributive groupoids. Archive for Mathematical Logic, Volume 52, Issue 3, pp 377--402. May 2013


\bibitem{S} J. R. Steel. The Well-Foundedness of the Mitchell Order. The Journal of Symbolic Logic. Vol. 58, No. 3 (Sep. 1993), pp. 931--940


\bibitem{KT} Arkadius Kalka and Mina Teicher. Non-associative key establishment for left distributive systems. Groups Complexity Cryptology. Volume 5. Issue 2. Published online 2013.


\end{thebibliography}

\end{document}